\setheadfoot{\onelineskip}{2\onelineskip} 
\renewenvironment{abstract}
 {\small
  \begin{center}
  \bfseries \abstractname\vspace{-.5em}\vspace{0pt}
  \end{center}
  \list{}{%
    \setlength{\leftmargin}{10mm}% <---------- CHANGE HERE
    \setlength{\rightmargin}{\leftmargin}%
  }%
  \item\relax}
 {\endlist}
\newif\ifNoChapNumber
\titleformat{\section}{\large\bfseries}{\thesection}{0.5em}{}
\titleformat{\subsection}{\large\bfseries}{\thesubsection}{0.5em}{}
\titleformat{\subsubsection}[runin]{\normalsize\bfseries\filright}{\thesubsubsection}{0.5em}{}[.]
\newcommand\MyBox[1]{%
    \fbox{\parbox[c][2.5cm][c]{4cm}{\centering #1}}%
    % Size of boxes
}
\newcommand\MyVBox[1]{%
    \parbox[c][2cm][c]{2cm}{\centering\bfseries #1}%
}  
\newcommand\MyHBox[2][\dimexpr4cm+2\fboxsep\relax]{%
    \parbox[c][1cm][c]{#1}{\centering\bfseries #2}%
}  
\newcommand\MyTBox[4]{%
    \MyVBox{#1}
    \MyBox{#2}\hspace*{-\fboxrule}%
    \MyBox{#3}\par\vspace{-\fboxrule}%
}  
\newtheorem{cnt}{counter}[section]
\theoremstyle{definition}
\newtheorem{dfn}[cnt]{Definition}
\newtheorem{rem}[cnt]{Remark}
\newtheorem{bsp}[cnt]{Example}
\newtheorem{ntn}[cnt]{Notation}
\theoremstyle{plain}
\newtheorem{thm}[cnt]{Theorem}
\newtheorem{lem}[cnt]{Lemma}
\newtheorem{prp}[cnt]{Proposition}
\newtheorem{cor}[cnt]{Corollary}
\DeclareMathOperator*{\AI}{A}
\DeclareMathOperator*{\BI}{B}
\DeclareMathOperator*{\CI}{C}
\DeclareMathOperator*{\HI}{H}
\DeclareMathOperator*{\NI}{N}
\DeclareMathOperator*{\AII}{\mathcal{A}}
\DeclareMathOperator*{\BII}{\mathcal{B}}
\DeclareMathOperator*{\CII}{\mathcal{C}}
\DeclareMathOperator*{\DII}{\mathcal{D}}
\DeclareMathOperator*{\FII}{\mathcal{F}}
\DeclareMathOperator*{\HII}{\mathcal{H}}
\DeclareMathOperator*{\KII}{\mathcal{K}}
\DeclareMathOperator*{\LII}{\mathcal{L}}
\DeclareMathOperator*{\PII}{\mathcal{P}}
\DeclareMathOperator*{\RII}{\mathcal{R}}
\DeclareMathOperator*{\SII}{\mathcal{S}}
\DeclareMathOperator*{\UII}{\mathcal{U}}
\DeclareMathOperator*{\Ad}{Ad}
\DeclareMathOperator*{\s}{s}
\DeclareMathOperator*{\bds}{bds}
\DeclareMathOperator*{\w}{w}
\DeclareMathOperator*{\bdw}{bdw}
\DeclareMathOperator*{\sr}{sr}
\DeclareMathOperator*{\dom}{dom}
\DeclareMathOperator*{\im}{im}
\DeclareMathOperator*{\supp}{supp}
\DeclareMathOperator*{\suppc}{supp^{c}}
\DeclareMathOperator*{\suppcN}{supp_{\textit{N}}^{c}}
\DeclareMathOperator*{\UBII}{\UII\hspace{-0.075cm}\BII}
\DeclareMathOperator*{\esssup}{ess\,sup}
\DeclareMathOperator*{\RE}{Re\hspace{0.00785cm}}
\DeclareMathOperator*{\IM}{Im}
\DeclareMathOperator*{\rsl}{rsl\hspace{0.025cm}}
\DeclareMathOperator*{\spec}{spec\hspace{0.025cm}}
\DeclareMathOperator*{\specA}{spec_{\textit{A}}}
\DeclareMathOperator*{\specB}{spec_{\textit{B}}}
\DeclareMathOperator*{\specM}{spec_{\textit{M}}}
\DeclareMathOperator*{\specN}{spec_{\textit{N}}}
\DeclareMathOperator*{\specNnull}{spec_{\textit{N}_{0}}}
\DeclareMathOperator*{\specNone}{spec_{\textit{N}_{1}}}
\DeclareMathOperator*{\specBH}{spec_{\BII(\textit{H})}}
\newcommand{\blank}{\hspace{0.05cm}{-}}
\DeclareMathOperator*{\Cliff}{Cliff}
\DeclareMathOperator*{\If}{if}
\DeclareMathOperator*{\Else}{else}
\DeclareMathOperator*{\GL}{GL\hspace{0.025cm}}
\DeclareMathOperator*{\Inn}{Inn\hspace{0.025cm}}
\DeclareMathOperator*{\Int}{Int}
\DeclareMathOperator*{\Rj}{R_{\textit{j}}}
\DeclareMathOperator*{\sgn}{sgn}
\DeclareMathOperator*{\tr}{tr}
\DeclareMathOperator*{\vol}{\hspace{-0.05cm}\vstretch{1}{\vert}vol\vstretch{1}{\vert}}
\DeclareMathOperator*{\com}{com}
\DeclareMathOperator*{\comAj}{com_{\textit{A}_{\textit{j}}}}
\DeclareMathOperator*{\comAjperp}{com_{\textit{A}_{\textit{j}}^{\perp}}}
\DeclareMathOperator*{\comN}{com_{\textit{N}}}
\DeclareMathOperator*{\comV}{com_{\textit{V}}}
\DeclareMathOperator*{\comVperp}{com_{\textit{V}^{\perp}}}
\DeclareMathOperator*{\comVnull}{com_{\textit{V}_{0}}}
\DeclareMathOperator*{\comVone}{com_{\textit{V}_{1}}}
\DeclareMathOperator*{\comW}{com_{\textit{W}}}
\DeclareMathOperator*{\comp}{com_{\textit{p}}}
\DeclareMathOperator*{\comunit}{com_{1_{\textit{N}}}}
\DeclareMathOperator*{\comunitperp}{com_{1_{\textit{N}}^{\perp}}}
\DeclareMathOperator*{\comunitunit}{com_{1_{\textit{N}}\otimes 1_{\textit{N}}}}
\DeclareMathOperator*{\id}{id}
\DeclareMathOperator*{\inc}{inc}
\DeclareMathOperator*{\incj}{inc_{\textit{j}}}
\DeclareMathOperator*{\inck}{inc_{\textit{k}}}
\DeclareMathOperator*{\inckj}{inc_{\textit{kj}}}
\DeclareMathOperator*{\incp}{inc_{\textit{p}}}
\DeclareMathOperator*{\incq}{inc_{\textit{q}}}
\DeclareMathOperator*{\res}{res}
\DeclareMathOperator*{\resj}{res_{\textit{j}}}
\DeclareMathOperator*{\resk}{res_{\textit{k}}}
\DeclareMathOperator*{\resjk}{res_{\textit{jk}}}
\DeclareMathOperator*{\Adj}{Adj}
\DeclareMathOperator*{\op}{op}
\DeclareMathOperator*{\SIIp}{\mathcal{S}_{\textit{p}}}
\DeclareMathOperator*{\SIIq}{\mathcal{S}_{\textit{q}}}
\DeclareMathOperator*{\Fix}{Fix}
\DeclareMathOperator*{\relint}{relint}
\DeclareMathOperator*{\AC}{AC}
\DeclareMathOperator*{\Adm}{Adm}
\DeclareMathOperator*{\Admab}{Adm^{[\textit{a},\textit{b}]}}
\DeclareMathOperator*{\Admcd}{Adm^{[\textit{c},\textit{d}]}}
\DeclareMathOperator*{\Admabj}{Adm_{\textit{j}}^{[\textit{a,b}]}}
\DeclareMathOperator*{\Admabk}{Adm_{\textit{k}}^{[\textit{a,b}]}}
\DeclareMathOperator*{\Admnullone}{Adm^{[0,1]}}
\DeclareMathOperator*{\Geo}{Geo}
\DeclareMathOperator*{\Geoj}{Geo_{\textit{j}}}
\DeclareMathOperator*{\Geok}{Geo_{\textit{k}}}
\DeclareMathOperator*{\Ent}{Ent}
\DeclareMathOperator*{\Enttau}{Ent^{\tau}}
\DeclareMathOperator*{\Enttauj}{Ent^{\tau_{\textit{j}}}}
\DeclareMathOperator*{\grad}{grad}
\DeclareMathOperator*{\Hess}{Hess}
\DeclareMathOperator*{\var}{var}
\DeclareMathOperator*{\BE}{BE}
\DeclareMathOperator*{\CNV}{CNV}
\DeclareMathOperator*{\EVI}{EVI}
\DeclareMathOperator*{\Ric}{Ric\hspace{0.025cm}}
\DeclareMathOperator*{\Ilog}{I^{\log}}
\DeclareMathOperator*{\HWI}{HWI}
\DeclareMathOperator*{\MLSI}{MLSI}
\DeclareMathOperator*{\TW}{TW}
\newenvironment*{reapply}{}{}
\let\originalleft\left
\let\originalright\right
\renewcommand{\left}{\mathopen{}\mathclose\bgroup\originalleft}
\renewcommand{\right}{\aftergroup\egroup\originalright}
\DeclareMathOperator*{\lgl}{\langle}
\DeclareMathOperator*{\rgl}{\rangle}
\DeclareMathOperator*{\dblv}{\|}
\begin{document}

%%% RESIZE IN-DOCUMENT %%%
% Curved brackets
\newcommand{\lc}{(}
\newcommand{\rc}{)}

\BeforeBeginEnvironment{align}{\renewcommand{\lc}{\left(}}
\BeforeBeginEnvironment{align}{\renewcommand{\rc}{\right)}}
\AfterEndEnvironment{align}{\renewcommand{\lc}{(}}
\AfterEndEnvironment{align}{\renewcommand{\rc}{)}}

\BeforeBeginEnvironment{align*}{\renewcommand{\lc}{\left(}}
\BeforeBeginEnvironment{align*}{\renewcommand{\rc}{\right)}}
\AfterEndEnvironment{align*}{\renewcommand{\lc}{(}}
\AfterEndEnvironment{align*}{\renewcommand{\rc}{)}}

\BeforeBeginEnvironment{equation}{\renewcommand{\lc}{\left(}}
\BeforeBeginEnvironment{equation}{\renewcommand{\rc}{\right)}}
\AfterEndEnvironment{equation}{\renewcommand{\lc}{(}}
\AfterEndEnvironment{equation}{\renewcommand{\rc}{)}}

\BeforeBeginEnvironment{equation*}{\renewcommand{\lc}{\left(}}
\BeforeBeginEnvironment{equation*}{\renewcommand{\rc}{\right)}}
\AfterEndEnvironment{equation*}{\renewcommand{\lc}{(}}
\AfterEndEnvironment{equation*}{\renewcommand{\rc}{)}}

\BeforeBeginEnvironment{itemize}{\renewcommand{\lc}{\left(}}
\BeforeBeginEnvironment{itemize}{\renewcommand{\rc}{\right)}}
\AfterEndEnvironment{itemize}{\renewcommand{\lc}{(}}
\AfterEndEnvironment{itemize}{\renewcommand{\rc}{)}}

\AfterEndEnvironment{reapply}{\renewcommand{\lc}{\left(}}
\AfterEndEnvironment{reapply}{\renewcommand{\rc}{\right)}}

% Angled brackets
\newcommand{\lb}{[}
\newcommand{\rb}{]}

\BeforeBeginEnvironment{align}{\renewcommand{\lb}{\left[}}
\BeforeBeginEnvironment{align}{\renewcommand{\rb}{\right]}}
\AfterEndEnvironment{align}{\renewcommand{\lb}{[}}
\AfterEndEnvironment{align}{\renewcommand{\rb}{]}}

\BeforeBeginEnvironment{align*}{\renewcommand{\lb}{\left[}}
\BeforeBeginEnvironment{align*}{\renewcommand{\rb}{\right]}}
\AfterEndEnvironment{align*}{\renewcommand{\lb}{[}}
\AfterEndEnvironment{align*}{\renewcommand{\rb}{]}}

\BeforeBeginEnvironment{equation}{\renewcommand{\lb}{\left[}}
\BeforeBeginEnvironment{equation}{\renewcommand{\rb}{\right]}}
\AfterEndEnvironment{equation}{\renewcommand{\lb}{[}}
\AfterEndEnvironment{equation}{\renewcommand{\rb}{]}}

\BeforeBeginEnvironment{equation*}{\renewcommand{\lb}{\left[}}
\BeforeBeginEnvironment{equation*}{\renewcommand{\rb}{\right]}}
\AfterEndEnvironment{equation*}{\renewcommand{\lb}{[}}
\AfterEndEnvironment{equation*}{\renewcommand{\rb}{]}}

\BeforeBeginEnvironment{itemize}{\renewcommand{\lb}{\left[}}
\BeforeBeginEnvironment{itemize}{\renewcommand{\rb}{\right]}}
\AfterEndEnvironment{itemize}{\renewcommand{\lb}{[}}
\AfterEndEnvironment{itemize}{\renewcommand{\rb}{]}}

\AfterEndEnvironment{reapply}{\renewcommand{\lb}{\left[}}
\AfterEndEnvironment{reapply}{\renewcommand{\rb}{\right]}}

% Curly brackets & vertical line
\newcommand{\lset}{\vstretch{1.125}{\{}\hspace{0.007125cm}}
\newcommand{\rset}{\hspace{0.007125cm}\vstretch{1.125}{\}}}
\newcommand{\vset}{\vert}

\BeforeBeginEnvironment{align}{\renewcommand{\lset}{\bigg\{\hspace{0.025cm}}}
\BeforeBeginEnvironment{align}{\renewcommand{\rset}{\hspace{0.025cm}\bigg\}}}
\BeforeBeginEnvironment{align}{\renewcommand{\vset}{\big\vert}}
\AfterEndEnvironment{align}{\renewcommand{\lset}{\vstretch{1.125}{\{}\hspace{0.007125cm}}}
\AfterEndEnvironment{align}{\renewcommand{\rset}{\hspace{0.007125cm}\vstretch{1.125}{\}}}}
\AfterEndEnvironment{align}{\renewcommand{\vset}{\vert}}

\BeforeBeginEnvironment{align*}{\renewcommand{\lset}{\bigg\{\hspace{0.025cm}}}
\BeforeBeginEnvironment{align*}{\renewcommand{\rset}{\hspace{0.025cm}\bigg\}}}
\BeforeBeginEnvironment{align*}{\renewcommand{\vset}{\big\vert}}
\AfterEndEnvironment{align*}{\renewcommand{\lset}{\vstretch{1.125}{\{}\hspace{0.007125cm}}}
\AfterEndEnvironment{align*}{\renewcommand{\rset}{\hspace{0.007125cm}\vstretch{1.125}{\}}}}
\AfterEndEnvironment{align*}{\renewcommand{\vset}{\vert}}

% Resize \psi to \phi
\newcommand{\bpsi}{\mathrlap{\phantom{\phi}}\psi}

% Right-hand restriction command w. automatic sizing
\newcommand\restr[3]{{
    \mathrlap{#2}
        \left.\kern-\nulldelimiterspace
        \vstretch{#1}{\phantom{#2}}
        \right\vert_{#3}
    }}

% Absolute values w. manual sizing
\newcommand\absv[2]{{
    \vstretch{#1}{\vert}#2\vstretch{#1}{\vert}
    }}

\newcommand\babsv[2]{{
    \vstretch{#1}{\big\vert}#2\vstretch{#1}{\big\vert}
    }}

\newcommand\bbabsv[2]{{
    \vstretch{#1}{\bigg\vert}#2\vstretch{#1}{\bigg\vert}
    }}

\newcommand\bbbabsv[2]{{
    \vstretch{#1}{\Bigg\vert}#2\vstretch{#1}{\Bigg\vert}
    }}

%%%%%%%%%%%%%%%%%%%%%
%%%% FRONT MATTER %%%
%%%%%%%%%%%%%%%%%%%%%

\frontmatter

\title{\Large{\textbf{QUANTUM OPTIMAL TRANSPORT\\ FOR AF-$\mathbf{C^{*}}$-ALGEBRAS}}}
\author{\large{DAVID F. HORNSHAW}}
\date{}

\maketitle
\pagenumbering{gobble}

%%%%%%%%%%%%%%%%%
%%%% ABSTRACT %%%
%%%%%%%%%%%%%%%%%

\begin{abstract}
We introduce quantum optimal transport of states on tracial AF-$C^{*}$-algebras to study non-spatial transport of quantum information, and view it as the pointwise case of a general parametrised one. We define quantum optimal transport distances as dynamic transport distances in a tracial but non-ergodic and infinite-dimensional quantum setting, called AF-$C^{*}$-setting, clearly motivated by Benamou-Brenier-type distances. Pointwise division is replaced with inverses of evaluated operator means in the sense of Kubo and Ando, i.e.~with noncommutative division operators. To this end, we initially extend quasi-entropies after Hiai and Petz to the AF-$C^{*}$-setting and use the latter to define energy functionals. We further extend foundational results of Carlen and Maas to the AF-$C^{*}$-setting and develop a theory of quantum optimal transport yielding non-spatial lower Ricci bounds suitable for meaningful geometric analysis. Essential for our discussion is a coarse graining process arising from the underlying metric geometry as encoding scheme of the given tracial AF-$C^{*}$-algebra. Since energy functionals are $\Gamma$-limits w.r.t.~the coarse graining process, the latter reduces the AF-$C^{*}$-setting to the finite-dimensional one s.t.~ergodicity is recovered up to a controlled remainder. In the logarithmic mean setting, i.e.~for all quantum $L^{2}$-Wasserstein distances, we apply the coarse graining process to show equivalence of the $\EVI_{\lambda}$-gradient flow property for quantum relative entropy, its strong geodesic $\lambda$-convexity, a, possibly infinite-dimensional, Bakry-\'Emery condition, and a Hessian lower bound condition. We then define lower Ricci bounds of our quantum gradients using any one of said equivalent conditions, give sufficient conditions for lower Ricci bounds of direct sum quantum gradients and, assuming lower Ricci bounds, derive functional inequalities $\HWI_{\lambda}$, $\MLSI_{\lambda}$ and $\TW_{\lambda}$ in the AF-$C^{*}$-setting alongside their chain of implications. Fundamental example classes give quantum optimal transport of normal states on hyperfinite factors of type I and II with both non-negative and strictly positive lower Ricci bounds. An application is given by first and second quantisation of spectral triples. Upon passing to second quantisation, we introduce gauge fields as spatial coordinates in a first effort to parametrise quantum optimal transport. This yields an ansatz to study noncommutative gauge theories through the dynamics of such generalised gauge fields described as gradient flows driven by a proposed internalisation of the spectral action on gauge fields known from the celebrated spectral action principle of Connes and Chamseddine.
\end{abstract}

%%%%%%%%%%%%%%%%
%%%% PREFACE %%%
%%%%%%%%%%%%%%%%

\chapter*{Preface}

This work fully presents the author's doctoral thesis in mathematics at the Institute for Applied Mathematics of the University of Bonn under the supervision of Karl-Theodor Sturm starting in October 2016. It was and is motivated by the lack of a general notion of curvature in Connes' program of noncommutative geometry, sufficiency of lower Ricci bounds for meaningful geometric analysis in the classical case, and, at its inception still recent, work of Carlen and Maas for lower Ricci bounds in an ergodic finite-dimensional setting using a dynamic formulation of quantum optimal transport distances. Its main goal is to extend results of Carlen and Maas, in particular their notion of lower Ricci bound based on the first properly noncommutative analogue of a classical equivalence for $\EVI_{\lambda}$-gradient flows of relative entropy, to a tracial infinite-dimensional setting in order to derive novel quantitative statements in noncommutative geometry.\par
The discussion given in this work includes such an extension to a well-behaved yet sufficiently general approximately finite-dimensional, or AF-$C^{*}$-setting. However, its exact nature, formulation and implications were not visible from the outset and thus underwent several iterations during two principal phases of work. From October 2016 to October 2020, the author worked as a member of the research group of Karl-Theodor Sturm and presented earlier versions of this work at seminars in Bonn, IST Austria, and the Oberwolfach Research Institute for Mathematics. From November 2020 onwards, he held two public-private research and development positions as applied mathematician in operations research for the German federal government and at IABG mbH.\par
The discussion which emerged during this time moves beyond the author's initial expectations and goal. The theory of quantum optimal transport as presented here lies in the intersection of noncommutative gauge theory, quantum statistical mechanics and quantum information theory. Whereas some technical effort ensures classical optimal transport theory is emulated successfully, its spatial interpretation as mass transport is invalidated by the simplest properly noncommutative example, i.e.~transport of states on two-dimensional complex matrices encoding a single qubit, since spin and mass are independent intrinsic properties of elementary particles. Any reasonable extension to the infinite-dimensional quantum setting hence requires a non-spatial interpretation as transport of, suitably general, quantum information. The author hopes to have provided the latter in this discussion, with an eye towards future applications to noncommutative gauge theory upon explicit introduction of gauge fields as spatial coordinates acting as control parameters for varying encoding schemes. An account of relations to other work at the end of the introduction focuses on the foundational work of Carlen and Maas, as well as the work of Wirth and Zhang in the tracial infinite-dimensional setting.\par

%NEWPAGE
%NEWPAGE
%NEWPAGE

\newpage

%NEWPAGE
%NEWPAGE
%NEWPAGE

The author's work was financed and supported by several institutions. The author gratefully acknowledges support by the European Research Council through its ERC
Advanced Grant \textit{Metric measure spaces and Ricci curvature - analytic, geometric, and probabilistic
challenges}, as well as the German Research Foundation through the SFB 1060 \textit{The Mathematics of Emergent Effects} as part of the subproject \textit{C01 - Wasserstein geometry, Ricci curvature and geometric analysis}. The author expresses gratitude to the University of Bonn and IST Austria for
supporting a research visit at IST Austria from October 2018 to January 2019, the Erwin Schr\"odinger Institute for supporting a related research visit during the workshop on \textit{Optimal Transport in Analysis and Probability} from May to June 2019, as well as the Oberwolfach Research Institute for Mathematics for the opportunity to present an earlier version of this work during the workshop on \textit{Heat Kernels, Stochastic Processes and Functional
Inequalities} in November 2019.\par
The author's work was made possible only through the advice and enduring support of several people. The author is grateful to Karl-Theodor Sturm, Matthias Erbar and Maria Gordina for their advice and support, Jan Maas for his support and invitation to visit his research group at IST Austria, as well as to Werner Ballmann and Matthias Lesch for teaching him \lc{}non-\rc{}commutative geometry during the author's many short years of study at the University of Bonn. The author further thanks Melchior Wirth for our discussions during early stages of research, Constantin Eichenberg, Lorenzo Dello Schiavo and Julian Schl\"oder for our discussions concerning mathematics, the finer points of LaTeX typesetting and time-dependent coffee selection, Thomas Rieth and Franz-Josef Schulz at IABG mbH for imparting their experience, Clemens Listner for his insights into analogue computing, as well as Michael von Prollius for his exemplary attempts to put theory into practice. The author is eternally beholden to his loving wife Marija Hornshaw, without whom this work could never have been completed, and their wonderful daughter Anne Maria Hornshaw for having made his journey worth the effort in excess of any scientific pursuits.\par

\bigskip
\bigskip
\bigskip
\bigskip

\hfill David Francis Hornshaw\par
\smallskip
\hfill Berlin, March 2024\par
\smallskip
\hfill A.M.D.G.

%NEWPAGE
%NEWPAGE
%NEWPAGE

\newpage

%NEWPAGE
%NEWPAGE
%NEWPAGE

%%%%%%%%%%%%%%%%%%%%%%%%%%
%%%% TABLE OF CONTENTS %%%
%%%%%%%%%%%%%%%%%%%%%%%%%%

\phantom{.}\vspace{-1.35cm}
\renewcommand{\contentsname}{Table of Contents}
\tableofcontents*
\addtocontents{toc}{\vspace{-0.45cm}}

%%%%%%%%%%%%%%%%%%%%
%%%% MAIN MATTER %%%
%%%%%%%%%%%%%%%%%%%%

\mainmatter

%%%%%%%%%%%%%%%%
%%%% CHAPTER %%%
%%%%%%%%%%%%%%%%

\chapter{Introduction}

Connes' program of noncommutative geometry \cite{BK.Con.1994.NCG}\cite{COL.Con.2021.NCG_Spectral_POV}\cite{BK.Kha.2013.NCG_Basic}\cite{BK.Kha_Mar.2008.NCG_Invitation} unifies continuous and discrete geometries \cite{BK.Gra_Var_Fig.2001.NCG_Elements}\cite{BK.vSui.2015.NCG_AF_Particle_Physics}\cite{BK.Var.2006.NCG_Elements_Short} using operator theory \cite{BK.Bla.2006.OpAlg}\cite{BK.Tak.1979.OpAlg_I}\cite{BK.Tak.2003.OpAlg_II}\cite{BK.Tak.2003.OpAlg_III}. The program lacks a general notion of curvature \cite{COL.Fat_Kha.2019.NCG_Curv_Review}\cite{COL.Les_Mos.2019.NCG_Mod_Curv_Morita_Review} even as several exist for example classes such as noncommutative tori \cite{ART.Con_Mos.2014.NCG_Mod_Curv_Tori_2D}\cite{ART.Don_Gho_Kha.2020.NCG_Tori_Ricci_Curv_3D}\cite{ART.Fat_Kha.2015.NCG_Tori_Scal_Curv_4D}\cite{ART.Les_Mos.2016.NCG_Mod_Curv_Morita}. We instead study non-spatial lower Ricci bounds, rather than curvature directly, since these often suffice for classical geometric analysis \cite{ART.Li_Yau.1986.Parabolic_Kernel_Schroedinger}\cite{BK.SalCos.2002.Aspects_Sobolev}. Lower Ricci bounds \cite{ART.Lot_Vil.2009.Classical_OT_Ricci_Bounds}\cite{ART.Stu.2006.Classical_OT_I}\cite{ART.Stu.2006.Classical_OT_II} for optimal transport on continuous geometries \cite{BK.Amb_Gig_Sav.2008.Classical_OT_GradFlow}\cite{ART.Dol_Naz_Sav.2009.Generalised_OT}\cite{BK.Vil.2009.OT} are displacement convexity \cite{ART.CorEra_McCan_Sch.2001.Displacement_Convexity_Riemannian}\cite{ART.McCan.1997.Displacement_Convexity_Local} of relative entropy. In the infinitesimally Hilbertian setting, they act as limiting cases for Bochner inequalities \cite{ART.Erb_Kuw_Stu.2015.Classical_OT_Equivalence} and imply a chain of functional inequalities \cite{ART.Lot_Vil.2009.Classical_OT_Ricci_Bounds}\cite{ART.Ott_Vil.2000.Classical_OT_LogSobolev_Talagrand} probing the underlying metric geometry. Maas \cite{ART.Maa.2011.Discrete_OT_Markov} and Mielke \cite{ART.Mie.2011.Discrete_OT_RctDiff} extended optimal transport to discrete geometries. Pointwise division is replaced with inverses of evaluated operator means in the sense of Kubo and Ando \cite{ART.And_Kub.1979.Operator_Means}. Erbar and Maas further extended lower Ricci bounds and functional inequalities \cite{ART.Erb_Fat.2018.Discrete_OT_LogSobolev}\cite{ART.Erb_Maa.2012.Discrete_OT_Ricci_Bounds}\cite{ART.Erb_Maa.2014.Discrete_OT_Porous_Medium}. Operator means let Carlen and Maas extend to an ergodic finite-dimensional quantum setting \cite{ART.Car_Maa.2014.Quantum_OT_I}\cite{ART.Car_Maa.2017.Quantum_OT_II}\cite{ART.Car_Maa.2020.Quantum_OT_III}. They allow for, possibly non-tracial, weights \cite{BK.Tak.2003.OpAlg_II}. We in turn extend their results to a tracial but non-ergodic and infinite-dimensional quantum setting, called AF-$C^{*}$-setting, and develop a theory of quantum optimal transport yielding non-spatial lower Ricci bounds suitable for meaningful geometric analysis. We in fact study a non-spatial transport of quantum information \cite{BK.Nie_Chu.2000.Quantum_Computation_Information} and view it as the pointwise case of a general parametrised one with an ansatz to study noncommutative gauge theories \cite{ART.Cha_Con.1996.NCG_Spectral_Action_I}\cite{ART.Cha_Con_vSui.2013.NCG_Inner_Fluctuations}\cite{ART.Cha_Con_vSui.2020.NCG_Second_Quantisation}\cite{BK.vSui.2015.NCG_AF_Particle_Physics}\cite{BK.Var.2006.NCG_Elements_Short}.\par
We emulate the classical case in the infinitesimally Hilbertian setting. Following work of Jordan, Kinderlehrer and Otto for Fokker-Planck equations \cite{ART.Jor_Kin_Ott.1998.Fokker_Planck}, resp.~Otto for porous medium equations \cite{ART.Ott.2001.Classical_OT_Porous_Medium}\cite{ART.Ott.2005.Classical_OT_GradFlow_DisConvex}, Ambrosio, Gigli and Savar\'e give $\EVI_{\lambda}$-gradient flows of proper l.s.c.~functionals defined on metric spaces \cite{BK.Amb_Gig_Sav.2008.Classical_OT_GradFlow} to study evolution partial differential equations using gradient flows absent differential structures \cite{ART.Dan_Sav.2008.Classical_OT_GradFlow_DisConvex}\cite{ART.Mur_Sav.2020.Classical_OT_EVI}. If $\EVI_{\lambda}$-gradient flow of relative entropy exists for $L^{2}$-Wasserstein distances determined by weak upper gradients \cite{ART.Amb_Mar_Sav.2014.Equivalence_Gradients}\cite{ART.Chee.1999.Relaxed_Gradients} inducing Dirichlet forms \cite{BK.Fuk_Osh_Tak.2011.Dirichlet_Markov}, then it is heat flow \cite{ART.Amb_Gig_Sav.2014.Classical_OT_Ricci_Bounds_I}\linebreak\cite{ART.Amb_Gig_Sav.2014.Classical_OT_Ricci_Bounds_II}. Existence is equivalent to $\lambda$-convexity of relative entropy \cite{ART.Amb_Gig_Sav.2014.Classical_OT_Ricci_Bounds_I}\cite{ART.Amb_Gig_Sav.2014.Classical_OT_Ricci_Bounds_II} and Bakry-\'Emery conditions \cite{COL.Bak_Em.1985.Hypercontractivity_Condition}\cite{ART.Bak_Led.2006.Hypercontractivity_Condition} linking heat flow to a weak Riemannian structure \cite{BK.Amb_Gig_Sav.2008.Classical_OT_GradFlow}\cite{ART.Erb.2010.Weak_Riemannian_structure} for the given classical $L^{2}$-Wasserstein distance \cite{ART.Amb_Gig_Sav.2015.Classical_OT_Ricci_Bounds_III}\cite{ART.Amb_Mon_Sav.2019.Classical_OT_Nonlinear_Diffusion}\cite{ART.Erb_Kuw_Stu.2015.Classical_OT_Equivalence}. Sturm \cite{ART.Stu.2006.Classical_OT_I}\cite{ART.Stu.2006.Classical_OT_II}, as well as Lott and Villani \cite{ART.Lot_Vil.2009.Classical_OT_Ricci_Bounds}, each established $\lambda$-convexity of relative entropy \cite{ART.CorEra_McCan_Sch.2001.Displacement_Convexity_Riemannian}\cite{ART.McCan.1997.Displacement_Convexity_Local} as synthetic lower Ricci bounds \cite{ART.Ren_Stu.2005.Smooth_OT_Equivalences}. The latter imply a $\HWI_{\lambda}$-interpolation inequality, a modified logarithmic Sobolev inequality $\MLSI_{\lambda}$, and a Talagrand inequality $\TW_{\lambda}$ \cite{ART.Lot_Vil.2009.Classical_OT_Ricci_Bounds}\cite{ART.Ott_Vil.2000.Classical_OT_LogSobolev_Talagrand}.\par

%NEWPAGE
%NEWPAGE
%NEWPAGE

\newpage

%NEWPAGE
%NEWPAGE
%NEWPAGE

Equivalent characterisation of heat flow as $\EVI_{\lambda}$-gradient flow of relative entropy and functional inequalities are extended to the discrete cases \cite{ART.Maa.2011.Discrete_OT_Markov}\cite{ART.Mie.2011.Discrete_OT_RctDiff} in \cite{ART.Erb_Maa.2012.Discrete_OT_Ricci_Bounds}, resp.~to the ergodic finite-dimensional setting in \cite{ART.Car_Maa.2014.Quantum_OT_I}\cite{ART.Car_Maa.2017.Quantum_OT_II}\cite{ART.Car_Maa.2020.Quantum_OT_III}. Note Datta and Rouz\'e extended results as per \cite{ART.Car_Maa.2020.Quantum_OT_III} to the finite-dimensional Lindblad setting in \cite{ART.Dat_Rou.2020.Quantum_OT_Equivalences}. In addition, see \cite{ART.Bar_Rou.2022.Quantum_Markov_Hypercontractivity}\cite{ART.Olk_Zeg.1999.NC_Hypercontractivity_Inductive}. Equivalence in \cite{ART.Car_Maa.2020.Quantum_OT_III} uses arguments fully given by Erbar and Maas in \cite{ART.Erb_Maa.2012.Discrete_OT_Ricci_Bounds} alone. The logarithmic operator mean yields analogues of $L^{2}$-Wasserstein distances and allows a Hessian lower bound condition crucial to show equivalence. In our logarithmic mean setting, which does assume the AF-$C^{*}$-setting, yet neither ergodicity nor finite trace, we extend results in \cite{ART.Car_Maa.2014.Quantum_OT_I}\cite{ART.Car_Maa.2017.Quantum_OT_II}\cite{ART.Car_Maa.2020.Quantum_OT_III} and \cite{ART.Erb_Maa.2012.Discrete_OT_Ricci_Bounds}. This demands an involved technical discussion for which we summarise our twelve main contributions as follows:

\begin{itemize}
\item[A.1)] We introduce noncommutative differential structures. They collect the data which define quantum optimal transport distances. Theorem \ref{THM.QE_QForm_Representation} and Theorem \ref{THM.NCD_Operator_Compressed_PMO} show they lets us define noncommutative division operators. They determine, and are in turn determined by, quasi-entropies in the sense of Hiai and Petz \cite{ART.Hia_Pet.2012.Quasi_Entropy_I}\cite{ART.Hia_Pet.2013.Quasi_Entropy_II} extended to the AF-$C^{*}$-setting as per Theorem \ref{THM.QE_AF}.

\item[A.2)] We define and discuss quantum optimal transport distances of states on tracial AF-$C^{*}$-algebras. These are dynamic transport distances in the AF-$C^{*}$-setting and motivated by Benamou-Brenier-type distances \cite{ART.Ben_Bre.2000.Dynamic_OT}\cite{ART.Dol_Naz_Sav.2009.Generalised_OT}. We thus define and use both quantum gradients and noncommutative division operators in our analogous constructions. Assuming traciality but allowing non-ergodicity, defined as complex kernel dimension larger than one for quantum Laplacians, we extend \cite{ART.Maa.2011.Discrete_OT_Markov}\cite{ART.Mie.2011.Discrete_OT_RctDiff} and \cite{ART.Car_Maa.2014.Quantum_OT_I}\cite{ART.Car_Maa.2017.Quantum_OT_II}\cite{ART.Car_Maa.2020.Quantum_OT_III} to the AF-$C^{*}$-setting as discussed above.

\item[A.3)] Theorem \ref{THM.QOT_Distance} shows accessibility components of quantum optimal transport distances are complete geodesic length-metric spaces \cite{BK.Amb_Gig_Sav.2008.Classical_OT_GradFlow}\cite{BK.Bur_Bur_Iv.2001.Metric_Geometry}. States at finite distance have identical fixed parts under noncommutative heat semigroups of quantum Laplacians. Non-ergodicity implies differing fixed parts. Assuming spectral gaps of quantum Laplacians and fixed parts, Theorem \ref{THM.QOT_Distance_AC_L2} classifies accessibility components of square integrable normal states using fixed parts.

\item[A.4)] We in turn use the above classification to formulate a coarse graining process as per Diagram \ref{EQ.SEC.INTRO_19}. The latter reduces the AF-$C^{*}$-setting to the finite-dimensional one s.t.~ergodicity is recovered up to a controlled remainder by reducing to accessibility components in the finite-dimensional setting. We take great care to show objects and properties are compatible with compression and finite-dimensional approximation, i.e.~restrict suitably and are scaling limits as $j\uparrow\infty$ \cite{BK.Gor_Kaz_Ott_The.2006.Coarse_Graining}.

\item[A.5)] Theorem \ref{THM.Energy_Functional_Representation} shows energy functionals are $\Gamma$-limits \cite{BK.DalMas.1993.Gamma_Convergence} w.r.t.~the coarse graining process as per Diagram \ref{EQ.SEC.INTRO_19}. We formalise the latter as existence of sufficient minimising geodesics approximated in finite dimensions. Theorem \ref{THM.QOT_Minimiser_Approximation} gives such existence. Using the latter, the coarse graining process lets us view quantum optimal transport as transport of, suitably general, quantum information. Upon allowing mixed states \cite{ART.Fri_Sio.2011.QC_Spin_Mixed_States}, we transport scaling limits of uniformly conditioned spin states encoding sequences of qubits \cite{ART.Bur_DiVi_Loss.1999.QC_Spin_QDots_as_QGates}\cite{ART.Bur_Lad_Nic.2023.QC_Spin_Overview}\cite{BK.Nie_Chu.2000.Quantum_Computation_Information}\cite{ART.DiVi.2000.Criteria}\cite{ART.DiVi_Loss.1998.Quantum_Information_Physical}.
\end{itemize}

\begin{itemize}
\item[B.1)] We extend quantum relative entropy in the sense of Araki \cite{ART.Ara.1975.Rel_Ent_I}\cite{ART.Ara.1977.Rel_Ent_II} and Umegaki \cite{ART.Ume.1962.Rel_Ent} to the AF-$C^{*}$-setting. Specifically, we extend Kosaki's formula \cite{BK.Ohy_Pet.1993.Rel_Ent} in the second variable to, possibly non-finite, traces. We require properties of the strongly unital finite-trace case. We introduce finitely supported accessibility components to rectify this. Upon restriction, Theorem \ref{THM.QOT_Distance_AC_FS} shows we recover said case as per Theorem \ref{THM.Rel_Ent_AF_Cstar_Trace} depending on the given finitely supported fixed state.

\item[B.2)] Following a maximum entropy production principle \cite{ART.Dew.2003.MaxEnt_Information_I}\cite{ART.Dew.2005.MaxEnt_Information_II}\cite{ART.Mar_Sel.2006.MaxEnt_Review}, we view quantum Laplacians as generators of quantum noise evolution. Theorem \ref{THM.L2W_Log_Mean_QNE} shows quantum Laplacians satisfy, up to sign, a quantum Fokker-Planck equation with vanishing drift term in scaling limit, i.e.~only noise diffusion term.

\item[B.3)] Theorem \ref{THM.L2W_EVI_Equivalence} yields equivalence of $\EVI_{\lambda}$-gradient flow, $\lambda$-convexity, Bakry-\'Emery and Hessian lower bound conditions by means of the coarse graining process as claimed above. We are motivated in our proof by analogous arguments in \cite{ART.Car_Maa.2020.Quantum_OT_III} and \cite{ART.Erb_Maa.2012.Discrete_OT_Ricci_Bounds}. However, Theorem \ref{THM.L2W_Log_Mean_NCDS_Fin_Hessian} replaces essential steps therein letting us argue using Riemannian metrics on relative interiors.

\item[B.4)] Lower Ricci bounds are given by $\lambda$-convexity of quantum information along minimising geodesics measured by quantum relative entropy. Their non-spatiality is further visible as follows. Assuming strictly positive lower Ricci bounds and finitely supported fixed part, Theorem \ref{THM.QOT_Distance_AC_Rel_Ent} classifies accessibility components of normal states with finite quantum relative entropy using fixed parts. Using the latter, we show strictly positive lower Ricci bounds determine energy-information trade-offs pa\-ra\-metrised by lower bounds on quantum noise.

\item[B.5)] Theorem \ref{THM.L2W_Ric} gives sufficient conditions for lower Ricci bounds of direct sum quantum gradients. In order to do so, we adapt the proof of Theorem 10.9 in \cite{ART.Car_Maa.2020.Quantum_OT_III} to the AF-$C^{*}$-setting by means of the coarse graining process. Lemma \ref{LEM.L2W_Ric} provides detailed and, to our knowledge, initially lacking proof of a necessary extension of Theorem 5 in \cite{ART.Hia_Pet.2012.Quasi_Entropy_I} to all finite-dimensional $C^{*}$-algebras.

\item[B.6)] Theorem \ref{THM.L2W_Ric_FI} derives functional inequalities $\HWI_{\lambda}$, $\MLSI_{\lambda}$ and $\TW_{\lambda}$ in the AF-$C^{*}$-setting. Non-ergodicity requires relative entropy of finitely supported fixed states in their formulation. We adapt the proofs of Theorem 11.3, Theorem 11.4 and Theorem 11.5 in \cite{ART.Car_Maa.2020.Quantum_OT_III} to the AF-$C^{*}$-setting by means of the coarse graining process. We introduce quantum Fisher information in the AF-$C^{*}$-setting.

\item[C)] We provide fundamental example classes. The latter yield quantum optimal transport of normal states on hyperfinite factors of type I and II \cite{BK.Ped.2018.Cstar_Algebras}. An application is given by first and second quantisation of spectral triples \cite{ART.Cha_Con_vSui.2013.NCG_Inner_Fluctuations}\cite{ART.Cha_Con_vSui.2020.NCG_Second_Quantisation}\cite{BK.vSui.2015.NCG_AF_Particle_Physics}\cite{BK.Var.2006.NCG_Elements_Short}. This yields our ansatz to study noncommutative gauge theories based on a proposed internalised spectral action \cite{ART.Cha_Con.1996.NCG_Spectral_Action_I}\cite{ART.Cha_Con.1997.NCG_Spectral_Action_II}\cite{ART.Cha_Con_Mar.2007.NCG_Standard_Model_Recovered}\cite{BK.vSui.2015.NCG_AF_Particle_Physics}\cite{BK.Var.2006.NCG_Elements_Short}.
\end{itemize}

\noindent The remaining introduction details $\AI.1\rc$ to $\AI.5\rc$ as per Chapter \ref{CH.NCDS} and Chapter \ref{CH.QOT}, as well as $\BI.1\rc$ to $\BI.6\rc$ as per Chapter \ref{CH.L2W}. We do not detail $\CI\rc$ here. At the end, we explain use of notation, give structure of our discussion, and elaborate on relations to other work.\par
We summarise our discussion of noncommutative differential structures given in Chapter \ref{CH.NCDS} and construction of quantum optimal transport distances as per Chapter \ref{CH.QOT}. Noncommutative differential structures collect the data which define quantum optimal transport distances. Each consists of two components and one setting. Let $(\phi,\bpsi,\gamma,\nabla)$ be such noncommutative differential structure for tracial AF-$C^{*}$-algebras $(A,\tau)$ and $(B,\omega)$ in $\lc{}f,\theta\rc$-setting. We briefly describe its components and setting necessary to establish our underlying noncommutative topology, measures and integrals.\par
The approximately finite-dimensional, or AF-$C^{*}$-algebras $A$ and $B$ are $C^{*}$-algebras s.t.~$A$ is norm closure of $A_{0}=\bigcup_{j\in\mathbb{N}}A_{j}$ and $B$ is that of $B_{0}=\bigcup_{j\in\mathbb{N}}B_{j}$ for ascending chains $\lset{}A_{j}\rset_{j\in\mathbb{N}}$ and $\lset{}B_{j}\rset_{j\in\mathbb{N}}$ of finite-dimensional $C^{*}$-algebras \cite{BK.Bla.2006.OpAlg}\cite{BK.Bro_Oza.2008.Cstar_AF}\cite{BK.Tak.1979.OpAlg_I}. Their f.s.n.~traces $\tau:A_{+}\longrightarrow [0,\infty]$ and $\omega:B_{+}\longrightarrow [0,\infty]$ are finite on $A_{0}$, resp.~$B_{0}$ \cite{BK.Dix.1977.Cstar_Algebras}\cite{BK.Tak.1979.OpAlg_I}\cite{BK.Tak.2003.OpAlg_II}. For all\linebreak $p\in [1,\infty]$, we define noncommutative $L^{p}$-spaces $L^{p}(A,\tau)$ and $L^{p}(B,\omega)$ of measurable operators equipped with $L^{p}$-norm \cite{BK.Joh_Lin.2003.Wstar_Lp_II}\cite{ART.Nel.1974.Wstar_Integration}. They fulfil H\"older inequalities. We have a modified standard pairing encoding duality \cite{BK.Tak.2003.OpAlg_II}. In particular, get $L^{\infty}(A,\tau)=L^{1}(A,\tau)^{*}$ and $L^{\infty}(B,\omega)=L^{1}(B,\omega)^{*}$. We have state space $\SII(A)=\lset\mu\in A_{+}^{*}\ \vset\ \|\mu\|_{A}=1\rset$ and normal state space $\mathcal{S}^{\NI}(A)=\SII(A)\cap L^{1}(A,\tau)^{\flat}$ of $A$. We do not require state spaces of $B$ in our discussion. We see $\tau$ and $\omega$ are, possibly unbounded \cite{BK.Pap.2002.Measures}\cite{BK.Ped.1989.Analysis_Now}, noncommutative Radon measures \lc{}cf.~Example \ref{BSP.Wstar}\rc{}. States on $A$ are noncommutative probability measures. They are normal if they have noncommutative density in $L^{1}(A,\tau)$. Elements in $B^{*}$ are noncommutative totally finite signed outer regular Radon measures \cite{BK.Pap.2002.Measures}\cite{BK.Ped.1989.Analysis_Now}.\par
We use two components in a single setting. First, we have AF-$A$-bimodule structure $(\phi,\bpsi,\gamma)$ on $B$ given by local $^{*}$-homomorphisms $\phi,\bpsi:A\longrightarrow B$ and anti-linear involution $\gamma:L^{2}(B,\omega)\longrightarrow L^{2}(B,\omega)$. AF-$C^{*}$-bimodules generalise the notion of tracial AF-$^{*}$-algebras s.t.~underlying noncommutative topologies, measures and integrals interact through local $^{*}$-homomorphisms under anti-linear involutions. We have bounded $A$-bimodule action on $B$, called the $(\phi,\bpsi)$-action, given by

\begin{align}\label{EQ.SEC.INTRO_1}
xuy=L_{x}^{\phi}\lc{}R_{y}^{\bpsi}(u)\rc{}=\phi(x)u\bpsi(y) 
\end{align}

\noindent for all $x,y\in A$ and $u\in B$. The $(\phi,\bpsi)$-action satisfies $\gamma$-symmetry given by

\begin{align}\label{EQ.SEC.INTRO_2}
\gamma\lc\phi(x)u\bpsi(y)\rc{}=\phi(y^{*})\gamma(u)\bpsi(x^{*})
\end{align}

\noindent in each case. Locality of $\phi$ and $\psi$ lets us extend Equation \ref{EQ.SEC.INTRO_1} to a normal, unital and bounded $L^{\infty}(A,\tau)$-bimodule action on $L^{2}(B,\omega)$. Moreover, Equation \ref{EQ.SEC.INTRO_2} extends in turn. We thereby see $L^{2}(B,\omega)$ is a symmetric $W^{*}$-bimodule over $L^{\infty}(A,\tau)$. This establishes, in full, noncommutative topology, measures and integrals. Secondly, we have a quantum gradient $\nabla:A_{0}\longrightarrow L^{2}(B,\omega)$. It satisfies its own locality condition. The latter shows $\nabla$ is a symmetric $W^{*}$-derivation. These are noncommutative gradients with likewise chain rule. The relationship between gradients, heat semi\-groups and Dirichlet forms extends to the noncommutative setting \cite{ART.Cip.1997.NC_Dirichlet_Markov}\cite{ART.Cip_Sav.2003.NC_Dirichlet_Grad}. We further know $\nabla\lc{}A_{0}\rc\subset B_{0}$ and $\nabla^{*}\lc{}B_{0}\rc\subset A_{0}$. Dualising $\nabla:A_{0}\longrightarrow B_{0}$ provides the weak formulation of a continuity equation as per Equation \ref{EQ.SEC.INTRO_11}. Elements in $B^{*}$ serve as synthetic tangent vectors \cite{BK.Amb_Gig_Sav.2008.Classical_OT_GradFlow}\cite{ART.Dol_Naz_Sav.2009.Generalised_OT}\cite{ART.Erb.2010.Weak_Riemannian_structure}.\par
The data collected is, by definition or construction, compatible with compression and finite-dimensional approximation by their locality properties. These are two general operations we formalise in a coarse graining process as per Diagram \ref{EQ.SEC.INTRO_19}. To this end, we\linebreak give two classes of compression used throughout our discussion.\par
We use two classes of compression. First, we compress to induced AF-$C^{*}$-bimodules. For all $j\in\mathbb{N}$, we have induced AF-$A_{j}$-bimodule structure $(\phi_{j},\bpsi_{j},\gamma_{j})=(\phi\vert_{A_{j}},\bpsi\vert_{A_{j}},\gamma\vert_{A_{j}})$ on $B_{j}$ and $j$-th restricted quantum gradient $\nabla_{\hspace{-0.055cm} j}=\nabla\vert_{A_{j}}:A_{j}\longrightarrow B_{j}$. Finite-dimensional approximation is given by $j\uparrow\infty$ for suitable convergence. Secondly, we compress with projections. Let $p\in L^{\infty}(A,\tau)$ be a projection. We have tracial $W^{*}$-algebra $L^{\infty}(A[p],\tau)=pL^{\infty}(A,\tau)p$ and symmetric $W^{*}$-bimodule $L^{2}(B[p],\omega)=pL^{2}(B,\omega)p$ over the former. We thereby compress the extended $(\phi,\bpsi)$-action with $p$ as

\begin{align}\label{EQ.SEC.INTRO_3}
xuy=L_{x,p}^{\phi}\lc{}R_{y,p}^{\bpsi}(u)\rc{}=\phi(pxp)u\bpsi(pyp)    
\end{align}

\noindent for all $x,y\in L^{\infty}(A[p],\tau)$ and $u\in L^{2}(B[p],\omega)$. Locality lets us extend Equation \ref{EQ.SEC.INTRO_3} to a unital unbounded $L^{0}(A[p],\tau)$-bimodule action on $L^{0}(B[p],\omega)$, i.e.~to their spaces of measurable operators. For all $x,y\in L^{0}(A[p],\tau)_{h}$, get joint spectral measure $E_{x,y,L^{\infty}(A[p],\tau)}$ and its domain set $\SIIp\lc{}E_{x,y}\rc$ of suitable $E_{x,y,L^{\infty}(A[p],\tau)}$-a.e.~defined $g:\mathbb{R}\times\mathbb{R}\longrightarrow\mathbb{C}$ satisfying strong resolvent convergence \cite{BK.deOli.2009.OpAlg_Quantum_Dynamics} as $\varepsilon\downarrow 0$ upon $\varepsilon$-perturbation. Each such joint spectral measure determines compressed pulled-back joint functional calculus 

\begin{align}\label{EQ.SEC.INTRO_4}
\Gamma_{x,y,p}^{L^{\phi},R^{\bpsi}}:\SIIp\lc{}E_{x,y}\rc\longrightarrow\UBII\lc{}L^{2}(B[p],\omega)\rc_{h}
\end{align}

\noindent of extended AF-$C^{*}$-bimodule actions as per Equation \ref{EQ.SEC.INTRO_3}. Note $\UBII\lc{}L^{2}(B[p],\omega)\rc$ is the set of all unbounded operators on $L^{2}(B[p],\omega)$ here. Let $A_{0,L^{\infty}(A[p],\tau)}$ be the $^{*}$-subalgebra generated by $pA_{0}p$ in $L^{\infty}(A[p],\tau)$. If $p$ satisfies additional technical properties, then we have $p$-compressed quantum gradient $\nabla_{\hspace{-0.055cm} p}=\nabla\vert_{A_{0,L^{\infty}(A[p],\tau)}}:A_{0,L^{\infty}(A[p],\tau)}\longrightarrow L^{2}(B[p],\omega)$.\par
Finally, we have a representing function $f:(0,\infty)\longrightarrow (0,\infty)$ of an operator mean \cite{ART.And_Kub.1979.Operator_Means} together with an interpolation factor $\theta\in [0,1]$ s.t.~$\|\omega\|^{1-\theta}=\omega\lc{}1_{B}\rc^{1-\theta}<\infty$. We have mean $m_{f}:(0,\infty)\times (0,\infty)\longrightarrow (0,\infty)$ given by $m_{f}(t,s)=f(ts^{-1})s$ for all $t,s>0$. For all $\varepsilon>0$, we\linebreak furthermore have mean $m_{f,\varepsilon}:[0,\infty)\longrightarrow (0,\infty)$ perturbed with $\varepsilon$ given by $m_{f,\varepsilon}(t,s)=m_{f}\lc{}t+\varepsilon,s+\varepsilon\rc$ for all $t,s\geq 0$. For all $x,y\in L^{0}(A[p],\tau)_{+}$, we have the noncommutative division operators of $x$ and $y$ given by

\begin{align}\label{EQ.SEC.INTRO_5}
\mathcal{D}_{x,y,p}^{\theta}=\Gamma_{x,y,p}^{L^{\phi},R^{\bpsi}}\lc{}m_{f}^{-\theta}\rc{}=m_{f}^{-\theta}\lc{}L_{x,p}^{\phi},R_{y,p}^{\bpsi}\rc{}
\end{align}

\noindent if $m_{f}^{-1}\in\SIIp\lc{}E_{x,y}\rc$. For all $x,y\in L^{0}(A[p],\tau)_{+}$ and $\varepsilon>0$, we also have the noncommutative division operator of $x$ and $y$ perturbed with $\varepsilon$ given by

\begin{align}\label{EQ.SEC.INTRO_6}
\mathcal{D}_{x^{\flat},y^{\flat},\varepsilon}^{\theta}=\Gamma_{x,y,p}^{L^{\phi},R^{\bpsi}}\lc{}m_{f,\varepsilon}^{-\theta}\rc{}=m_{f}^{-\theta}\lc{}L_{\mu,\varepsilon}^{\phi},R_{\eta,\varepsilon}^{\bpsi}\rc{}.
\end{align}

%NEWPAGE
%NEWPAGE
%NEWPAGE

\pagebreak

%NEWPAGE
%NEWPAGE
%NEWPAGE

Strong resolvent convergence as $\varepsilon\downarrow 0$ upon $\varepsilon$-perturbation is given by

\begin{align}\label{EQ.SEC.INTRO_7}
\mathcal{D}_{x,y,p}^{\theta}=\sr\textrm{-}\lim_{\varepsilon\downarrow 0}\hspace{0.025cm} \mathcal{D}_{x^{\flat},y^{\flat},\varepsilon}^{\theta}=\sr\textrm{-}\lim_{\varepsilon\downarrow 0}\hspace{0.025cm} \Gamma_{x,y,p}^{L^{\phi},R^{\bpsi}}\lc{}m_{f,\varepsilon}^{-\theta}\rc{}=\sr\textrm{-}\lim_{\varepsilon\downarrow 0}\hspace{0.025cm} m_{f,\varepsilon}^{-\theta}\lc{}L_{x,p}^{\phi},R_{y,p}^{\bpsi}\rc{}
\end{align}

\noindent if $m_{f}^{-1}\in\SIIp\lc{}E_{x,y}\rc$. This holds for applications of Equation \ref{EQ.SEC.INTRO_5} since, assuming fixed parts with integrable support, we show heat flow instantaneously regularises normal states on $A[p]$ to be, possibly unboundedly, invertible up to fixed part. States at finite distance have identical fixed parts under noncommutative heat semigroups of quantum Laplacians as per Equation \ref{EQ.SEC.INTRO_14}. We show a technical but weaker assumption on majorants of local support as per Equation \ref{EQ.SEC.INTRO_24} is stable under heat flow and ensures integrable support. The latter in turn implies suitable compressibility.\par
Equation \ref{EQ.SEC.INTRO_7} itself extends to all states on $A$ without any assumptions by means of quasi-entropies \cite{ART.Hia_Pet.2012.Quasi_Entropy_I}\cite{ART.Hia_Pet.2013.Quasi_Entropy_II}. Note quasi-entropies generalise quantum $f$-divergences \cite{ART.Hia.2018.QFD_I}\cite{ART.Hia.2019.QFD_II}, a class of dissimilarity measures for information encoded in states of quantum systems \cite{BK.Nie_Chu.2000.Quantum_Computation_Information}\cite{ART.Kra.1971.State_Changes}. We use the modified standard pairing, in particular their flat and sharp operators. For all $j\in\mathbb{N}$, we have quasi-entropy $\mathcal{I}_{j}^{f,\theta}:A_{j,+}^{*}\times A_{j,+}^{*}\times B_{j}^{*}\longrightarrow [0,\infty]$ in the finite-dimensional setting given by

\begin{align}\label{EQ.SEC.INTRO_8}
\mathcal{I}_{j}^{f,\theta}\lc\mu_{j},\eta_{j},w_{j}\rc{}=\sup_{\varepsilon>0}\hspace{0.025cm} \lgl\mathcal{D}_{\mu_{j},\eta_{j},\varepsilon}^{\theta}\lc\sharp w_{j}\rc{},\sharp w_{j}\rgl_{\omega}
\end{align}

\noindent for all $\mu,\eta\in A_{+}^{*}$ and $w\in B^{*}$. Note subscripts $j\in\mathbb{N}$ in Equation \ref{EQ.SEC.INTRO_8} denote restriction to $A_{j}$, resp.~$B_{j}$. Equation \ref{EQ.SEC.INTRO_8} uses the induced AF-$A_{j}$-bimodule structure $(\phi_{j},\bpsi_{j},\gamma_{j})$ on $B_{j}$ in each case. Monotonicity of quasi-entropies lets us extend Equation \ref{EQ.SEC.INTRO_8} as claimed to a quasi-entropy $\mathcal{I}^{f,\theta}:A_{+}^{*}\times A_{+}^{*}\times B^{*}\longrightarrow [0,\infty]$ given by

\begin{align}\label{EQ.SEC.INTRO_9}
\mathcal{I}^{f,\theta}(\mu,\eta,w)=\sup_{j\in\mathbb{N}}\hspace{0.025cm} \mathcal{I}_{j}^{f,\theta}\lc\mu_{j},\eta_{j},w_{j}\rc{}=\lim_{j\in\mathbb{N}}\hspace{0.025cm} \mathcal{I}_{j}^{f,\theta}\lc\mu_{j},\eta_{j},w_{j}\rc{}
\end{align}

\noindent for all $\mu,\eta\in A_{+}^{*}$ and $w\in B^{*}$. Equation \ref{EQ.SEC.INTRO_9} gives quasi-entropies for AF-$C^{*}$-bimodules.\par
Moreover, Equation \ref{EQ.SEC.INTRO_8} implies Equation \ref{EQ.SEC.INTRO_9} decomposes as

\begin{align}\label{EQ.SEC.INTRO_10}
\mathcal{I}^{f,\theta}(\mu,\eta,w)=\sup_{j\in\mathbb{N}}\hspace{0.025cm} \sup_{\varepsilon>0}\hspace{0.025cm} \lgl\mathcal{D}_{\mu_{j},\eta_{j},\varepsilon}^{\theta}\lc\sharp w_{j}\rc{},\sharp w_{j}\rgl_{\omega}=\sup_{\varepsilon>0}\hspace{0.025cm} \sup_{j\in\mathbb{N}}\hspace{0.025cm} \lgl\mathcal{D}_{\mu_{j},\eta_{j},\varepsilon}^{\theta}\lc\sharp w_{j}\rc{},\sharp w_{j}\rgl_{\omega}
\end{align}

\noindent in each case. Using monotonicity of nets in Equation \ref{EQ.SEC.INTRO_7} and the Kato-Robinson theorem \cite{BK.deOli.2009.OpAlg_Quantum_Dynamics}, we kill both suprema in Equation \ref{EQ.SEC.INTRO_10} by taking limits. We consequently obtain closed positive unbounded quadratic forms on $L^{2}(B,\omega)$ represented uniquely by those positive unbounded operators which extend Equation \ref{EQ.SEC.INTRO_7} to all states. Quasi-entropies as per Equation \ref{EQ.SEC.INTRO_9} define energy functionals as per Equation \ref{EQ.SEC.INTRO_12} by integrating their own evaluation on admissible paths. Altogether, we extend the quasi-entropy approach for defining noncommutative division operators in \cite{ART.Car_Maa.2020.Quantum_OT_III} to AF-$C^{*}$-bimodules.\par
We construct quantum optimal transport distances using data as above. This follows the classical case \cite{ART.Dol_Naz_Sav.2009.Generalised_OT}. Let $\overline{\SII(A)}$ denote the $w^{*}$-closure of $\SII(A)\subset A_{+}^{*}$. We metricise its $w^{*}$-topology and obtain a compact metric space. This uses separability of $A$. Note the Arzel\`{a}-Ascoli theorem applies to paths in compact metric spaces \cite{BK.Kel.1975.Topology}. For all $I=[a,b]\subset\mathbb{R}$, we have the set $\AC\lc{}I,\SII(A)\rc$ of all weakly absolutely continuous $\mu:I\longrightarrow\overline{\SII(A)}$ s.t.~$\im\mu\subset\SII(A)$. We say that $(\mu,w)\in\AC\lc{}[a,b],\SII(A)\rc\times L^{2}([a,b],B^{*})_{\w}$ is an admissible path if $(\mu,w)$ satisfies

\begin{align}\label{EQ.SEC.INTRO_11}
\frac{d}{dt}\mu(t)(x)=w(t)\lc\nabla x\rc{}=\lim_{j\in\mathbb{N}}\hspace{0.025cm} w_{j}(t)\lc\nabla_{\hspace{-0.055cm} j}x_{j}\rc{}
\end{align}

\noindent for all $x\in A_{0}$ and a.e.~$t\in [a,b]$. We call $\mu\lc{}a\rc{},\mu\lc{}b\rc\in\SII(A)$ the marginals of $(\mu,w)$, resp.~$\mu$ in this case. Note $L^{2}([a,b],B^{*})_{\w}$ is the Banach dual space of the Bochner $L^{2}$-space $L^{2}\lc{}[a,b],B\rc$, and the second identity in Equation \ref{EQ.SEC.INTRO_11} holds in general.\par
We require some bookkeeping. For all $\mu^{0},\mu^{1}\in\SII(A)$, we have the set $\Admab\lc\mu^{0},\mu^{1}\rc$ of all admissible paths defined on $[a,b]\subset\mathbb{R}$ with marginals $\mu^{0}$ and $\mu^{1}$. We further have the set $\Admab$ of all admissible paths defined on $[a,b]\subset\mathbb{R}$ regardless of marginals, as well as the set $\Adm$ of all admissible paths regardless of either definition intervals or marginals. We therefore have energy functional $E^{f,\theta}:\Adm\longrightarrow [0,\infty]$ given by

\begin{align}\label{EQ.SEC.INTRO_12}
E^{f,\theta}(\mu,w)=\int_{a}^{b}\mathcal{I}^{f,\theta}\lc\mu(t),\mu(t),w(t)\rc{}dt=\lim_{j\in\mathbb{N}}\hspace{0.025cm} \int_{a}^{b}\mathcal{I}_{j}^{f,\theta}\lc\bar{\mu}_{j}(t),\bar{\mu}_{j}(t),\bar{w}_{j}(t)\rc{}dt
\end{align}

\noindent for all $[a,b]\subset\mathbb{R}$ and $(\mu,w)\in\Admab$. Note, in contrast to Equation \ref{EQ.SEC.INTRO_11}, subscripts $j\in\mathbb{N}$ in Equation \ref{EQ.SEC.INTRO_12} denote normalised restriction to $A_{j}$, resp.~$B_{j}$ via bars. We normalise to norm one in the first two variables, and in the third one s.t.~Equation \ref{EQ.SEC.INTRO_11} remains satisfied. Since normalisation invalidates monotonicity of quasi-entropies, Equation \ref{EQ.SEC.INTRO_12} is not a supremum in general even as Equation \ref{EQ.SEC.INTRO_9} is. Upon restricting domains to sets of admissible paths with identical interval and marginals, we further show energy functionals as per Equation \ref{EQ.SEC.INTRO_12} are $\Gamma$-limits \cite{BK.DalMas.1993.Gamma_Convergence} of suitable restrictions.\par
We therefore have the quantum optimal transport distance of $(\phi,\bpsi,\gamma,\nabla)$ on $\SII(A)$ in $\lc{}f,\theta\rc$-setting given by

\begin{align}\label{EQ.SEC.INTRO_13}
\mathcal{W}_{\nabla}^{f,\theta}\lc\mu^{0},\mu^{1}\rc{}=\inf_{\Admnullone(\mu^{0},\mu^{1})}\hspace{0.025cm} \sqrt{E^{f,\theta}(\mu,w)}
\end{align}

\noindent for all $\mu^{0},\mu^{1}\in\SII(A)$. Accessibility components of quantum optimal transport distances are complete geodesic length-metric spaces. Metric geometry reduces to accessibility components. There may exist uncountable infinitely many since sets of states at finite distance have identical fixed parts under noncommutative heat semigroups of quantum Laplacians. Assuming spectral gaps of quantum Laplacians and fixed parts, we use such fixed parts to classify accessibility components of square integrable normal states. We in turn use the latter classification for the coarse graining process since its assumptions are satisfied for all accessibility components in the finite-dimensional setting.\par
Classification uses regularisation of normal states under heat flow as mentioned for Equation \ref{EQ.SEC.INTRO_7}. We have heat semigroup $h:[0,\infty)\longrightarrow\BII\lc{}L^{2}(A,\tau)\rc$ of $\Delta=\nabla^{*}\nabla$ given by

\begin{align}\label{EQ.SEC.INTRO_14}
h_{t}(u)=e^{-t\Delta}(u)
\end{align}

\noindent for all $t\geq 0$ and $u\in L^{2}(A,\tau)$. The heat semigroup of $\Delta$ extends as follows. For all $j\in\mathbb{N}$, we have symmetric $C^{*}$-derivation $\nabla_{\hspace{-0.055cm} j}:A_{j}\longrightarrow B_{j}$. We obtain $C^{*}$-Dirichlet form $u\mapsto \dblv{}\nabla_{\hspace{-0.055cm} j}u\dblv_{\tau}^{2}$ on $A_{j}$ in each case \cite{ART.Cip_Sav.2003.NC_Dirichlet_Grad}. Using the latter, we have completely Markovian semigroup $h^{j}:[0,\infty)\longrightarrow\BII(A_{j})$ as well \cite{ART.Cip.1997.NC_Dirichlet_Markov}. Note completely Markovian semigroups \cite{ART.Dav.1979.Quantum_Markov_SG_Generators}\cite{ART.Dav_Lin.1992.Noncommutative_Markov_SG_I}\cite{ART.Dav_Lin.1993.Noncommutative_Markov_SG_II} and their extensions to Banach dual spaces are given by completely positive dilations \cite{ART.Cip.1997.NC_Dirichlet_Markov}. Iterated dualisation using the modified standard pairing extends Equation \ref{EQ.SEC.INTRO_14} accordingly. Altogether, we have noncommutative heat semigroup of $\Delta$ mapping to $\BII(V)$ if $V=A^{*}$ or $V=L^{p}(A,\tau)$ for $p\in\lset{}1,2,\infty\rset$.\par
For all $\mu\in A^{*}$, $h(\mu)=h_{\infty}(\mu)$ is its fixed part and $h^{\perp}(\mu)=\mu-h(\mu)$ its image part. We call $\xi\in\SII(A)$ a fixed state, or fixed if $h(\xi)=\xi$. For all fixed states $\xi\in\SII(A)$, we have the set $\Fix_{A}(\xi)=\lset\mu\in\SII(A)\ \vset\ h(\mu)=\xi\rset$ of states on $A$ with fixed part $\xi$, as well as the set $\CII_{A}(\xi)=\lset\mu\in\SII(A)\ \vset\ \mu\sim\xi\rset$ of states on $A$ at finite distance to $\xi$. Intersecting with $\mathcal{S}^{\NI}(A)$ yields the set $\Fix_{A}^{\NI}(\xi)$, resp.~$\CII_{A}^{\NI}(\xi)$ of such normal states on $A$. These sets underpin both classification and regularisation. For all fixed states $\xi\in\SII(A)$, we have $\CII_{A}(\xi)\subset\Fix_{A}(\xi)$ and decomposition

\begin{align}\label{EQ.SEC.INTRO_15}
\textrm{Fix}_{A}(\xi)=\coprod_{\CII\subset\Fix_{A}(\xi)}\CII
\end{align}

\noindent into accessibility components. Let $\xi\in\SII(A)$ be a fixed state. We say that an accessibility component $\CII\subset (\SII(A),\mathcal{W}_{\nabla}^{f,\theta})$ has fixed part $\xi$ if $\CII\subset\Fix_{A}(\xi)$.\par
Assume $\xi\in\mathcal{S}^{\NI}(A)$ has integrable support. For all $\mu\in\Fix_{A}^{\NI}(\xi)$, we have

\begin{align}\label{EQ.SEC.INTRO_16}
h_{t}(\mu)\in\mathcal{S}_{>0}^{\NI}\big(A[\supp\xi]\big)
\end{align}

\noindent for all $t\in (0,\infty]$. Note Equation \ref{EQ.SEC.INTRO_16} uses the support projection $\supp\xi\in L^{\infty}(A,\tau)$ of $\xi$. We have $\supp\xi$-compressibility and write $A_{\xi}=A[\supp\xi]$. As such, the subscript in Equation \ref{EQ.SEC.INTRO_16} denotes normal states on $A_{\xi}$ s.t.~densities are unboundedly invertible in $\UBII\lc{}L^{2}(A_{\xi},\tau)\rc$ under compressed canonical left-~and right-action. If $\xi$, resp.~its density is boundedly invertible in this sense and square integrable, then, assuming $\Delta$ has spectral gap, regularisation as per Equation \ref{EQ.SEC.INTRO_16} lets us show

\begin{align}\label{EQ.SEC.INTRO_17}
\mathcal{C}_{A}^{\NI,2}(\xi)=\mathcal{C}_{A}(\xi)\cap\mathcal{S}^{\NI,2}(A)=\textrm{Fix}_{A}(\xi)\cap\mathcal{S}^{\NI,2}(A).
\end{align}

\noindent Upon intersecting $\Fix_{A}(\xi)$ with the set $\mathcal{S}^{\NI,2}(A)$ of all square integrable normal states on $A$, we at once see Equation \ref{EQ.SEC.INTRO_15} and Equation \ref{EQ.SEC.INTRO_17} show we classify as claimed. In the finite-dimensional setting, assumptions as above are always satisfied and we therefore classify all accessibility components using fixed parts.\par

%NEWPAGE
%NEWPAGE
%NEWPAGE

\pagebreak

%NEWPAGE
%NEWPAGE
%NEWPAGE

The coarse graining process as per Diagram \ref{EQ.SEC.INTRO_19} uses classification of accessibility components as per Equation \ref{EQ.SEC.INTRO_17} in the finite-dimensional setting and lets us view quantum optimal transport as transport of, suitably general, quantum information \cite{ART.Bur_Lad_Nic.2023.QC_Spin_Overview}\linebreak\cite{BK.Nie_Chu.2000.Quantum_Computation_Information}\cite{ART.DiVi_Loss.1998.Quantum_Information_Physical}. We use compression for all its vertical chains of arrows and finite-dimensional approximation for its horizontal ones. The coarse graining process decomposes global pictures, objects and properties into sequences of local ones together with a uniformity condition ensuring convergence of limits.\par
For all $j\in\mathbb{N}$, we use induced AF-$A_{j}$-bimodule structure on $B_{j}$ and $j$-th restricted quantum gradient $\nabla_{\hspace{-0.055cm} j}:A_{j}\longrightarrow B_{j}$. For all $\mu^{0},\mu^{1}\in\SII(A)$, we have

\begin{align}\label{EQ.SEC.INTRO_18}
\mathcal{W}_{\nabla}^{f,\theta}\lc\mu^{0},\mu^{1}\rc{}=\lim_{j\in\mathbb{N}}\hspace{0.025cm} \mathcal{W}_{\nabla_{\hspace{-0.055cm} j}}^{f,\theta}\big(\bar{\mu}_{j}^{0},\bar{\mu}_{j}^{1}\big).
\end{align}

\noindent Note we do have a uniformity condition as required for Equation \ref{EQ.SEC.INTRO_18} because $\mathcal{W}_{\nabla}^{f,\theta}$ is l.s.c.~in $w^{*}$-topology. In particular, we show, a priori, states are at finite distance if and only if the limit on the right-hand side of Equation \ref{EQ.SEC.INTRO_18} exists.\par
Diagram \ref{EQ.SEC.INTRO_19} itself expands the underlying process generating the limit on the right-hand side of Equation \ref{EQ.SEC.INTRO_18}. Let $j_{\min}\in\mathbb{N}$ minimal among all $j\in\mathbb{N}$ s.t.~$\xi_{j}\neq 0$. For all $j\geq j_{\min}$ in $\mathbb{N}$, we consider normalised restriction $\bar{\xi}_{j}\in\SII(A_{j})$, i.e.~a fixed state, as well $\mathcal{F}_{A_{j}}\lc\bar{\xi}_{j}\rc$ and $\mathcal{C}_{A_{j}}\lc\bar{\xi}_{j}\rc$. We require convex subset $K\subset\SII(A)$ to have lower left corner in

\begin{equation}\label{EQ.SEC.INTRO_19}
\begin{tikzcd}
A^{*}\arrow[rr, two heads] & & \ \cdots \arrow[r, two heads] & A_{j}^{*}\arrow[r, two heads] & \cdots \arrow[r, two heads] & A_{j_{\min}}^{*} \\
& & & & & \\
& & & & & \\
\mathcal{F}_{A}(\xi)\arrow[rr, two heads]\arrow[uuu,hook] & & \ \cdots \arrow[r, two heads] & \mathcal{F}_{A_{j}}\lc\bar{\xi}_{j}\rc\arrow[r, two heads]\arrow[uuu,hook] & \ \cdots \arrow[r, two heads] & \mathcal{F}_{A_{j_{\min}}}\lc\bar{\xi}_{j_{\min}}\rc\arrow[uuu,hook] \\
& & & & & \\
\mathcal{C}\cap K\arrow[rr, two heads]\arrow[uu,hook] & & \ \cdots \arrow[r, two heads] & \mathcal{C}_{A_{j}}\lc\bar{\xi}_{j}\rc\arrow[r, two heads]\arrow[uu,hook] & \cdots \arrow[r, two heads] & \mathcal{C}_{A_{j_{\min}}}\lc\bar{\xi}_{j_{\min}}\rc\arrow[uu,hook]
\end{tikzcd}
\end{equation}

\medskip

\noindent We show the AF-$C^{*}$-setting yields noncommutative analogues of scaling limits \cite{BK.Gor_Kaz_Ott_The.2006.Coarse_Graining}. As such, Diagram \ref{EQ.SEC.INTRO_19} lets us argue we transport scaling limits of uniformly conditioned spin states encoding sequences of qubits \cite{ART.Bur_DiVi_Loss.1999.QC_Spin_QDots_as_QGates}\cite{ART.Bur_Lad_Nic.2023.QC_Spin_Overview}\cite{BK.Nie_Chu.2000.Quantum_Computation_Information}\cite{ART.DiVi.2000.Criteria}\cite{ART.DiVi_Loss.1998.Quantum_Information_Physical}. Non-ergodicity restricts information-bearing degrees of freedom. Since energy functionals are $\Gamma$-limits w.r.t.~the coarse graining process, the latter reduces the AF-$C^{*}$-setting to the finite-dimensional one s.t.~ergodicity is recovered up to a controlled remainder by reducing to accessibility components in the finite-dimensional setting. If $K$ in Diagram \ref{EQ.SEC.INTRO_19} equals the domain of quantum relative entropy as per Equation \ref{EQ.SEC.INTRO_23}, then we are able to apply the coarse graining process in Chapter \ref{CH.L2W}. Altogether, we study a non-spatial transport of quantum information with restricted information-bearing degrees of freedom.\par
We describe our results in Chapter \ref{CH.L2W} for the logarithmic mean setting. We are in the latter setting if $\theta=1$ and we use the unique symmetric representing function $f=f_{\log}$ of the logarithmic operator mean $m_{\log}=m_{f_{\log}}:(0,\infty)\times (0,\infty)\longrightarrow (0,\infty)$ given by

\begin{align}\label{EQ.SEC.INTRO_20}
m_{\log}(t,s)=\frac{t-s}{\log t-\log s}=\int_{0}^{1}t^{\alpha}s^{1-\alpha}d\alpha 
\end{align}

\noindent for all $t,s>0$. We consider fixed state $\xi\in\SII(A)$ as above. We further suppress its, by assumption integrable, support projection $\supp\xi$ in all subscripts and write $\xi$ instead. If $x>0$ in $L^{\infty}(A_{\xi},\tau)$, then Equation \ref{EQ.SEC.INTRO_20} implies the noncommutative division operator of $x=y$ as per Equation \ref{EQ.SEC.INTRO_5} acts by
 
\begin{align}\label{EQ.SEC.INTRO_21}
\mathcal{D}_{x,\xi}(u)=\int_{0}^{\infty}\big(\alpha I+L_{x,p}^{\phi}\big)^{-1}\lc\big(\alpha I+R_{x,p}^{\bpsi}\big)^{-1}(u)\rc{}d\alpha
\end{align}

\noindent for all $u\in L^{2}(B_{\xi},\omega)$. Note Equation \ref{EQ.SEC.INTRO_21} corresponds to multiplication with inverses of densities in the classical case \cite{ART.Dol_Naz_Sav.2009.Generalised_OT}, resp.~use of the Kubo-Mori-Bogoliubov inner product \cite{ART.Pet_Tot.1993.Inner_Product} in \cite{ART.Car_Maa.2020.Quantum_OT_III}. As such, Equation \ref{EQ.SEC.INTRO_21} yields quantum $L^{2}$-Wasserstein distances in direct analogy to the classical case \cite{ART.Dol_Naz_Sav.2009.Generalised_OT}.\par
If $x\in L^{\infty}(A_{\xi},\tau)_{\nabla}$ s.t.~$x>0$ in $L^{\infty}(A_{\xi},\tau)$, i.e.~a boundedly invertible element in the $C^{1}$-algebra of $\nabla$ upon compressing the latter with $\supp\xi$, then $\log x\in L^{\infty}(A_{\xi},\tau)_{\nabla}$ as well and the noncommutative chain rule shows we have

\begin{align}\label{EQ.SEC.INTRO_22}
\nabla_{\hspace{-0.055cm} \xi}\log x=\mathcal{D}_{x,\xi}\nabla_{\hspace{-0.055cm} \xi}x.
\end{align}

\noindent Using results in \cite{ART.Ped.2000.OpAlg_Diff_Functions}, Equation \ref{EQ.SEC.INTRO_22} implies heat flow is, up to coarse graining, gradient flow of quantum relative entropy as per Equation \ref{EQ.SEC.INTRO_23} on relative interiors. Heat flow further satisfies a steepest entropy ascent property \cite{ART.Ber_Con_Mon.2015.MaxEnt_SEA} by considering the steepest descent property of gradient flows in smooth Riemannian manifolds \cite{BK.Lan.1995.Riemannian_Manifolds} and taking limits. We seek conditions s.t.~steepest entropy ascent implies quantum noise evolution as per $\BI.2\rc$. We accomplish this with our maximum entropy production principle \cite{ART.Dew.2003.MaxEnt_Information_I}\linebreak\cite{ART.Dew.2005.MaxEnt_Information_II}\cite{ART.Mar_Sel.2006.MaxEnt_Review}. Applying heat flow to a state for $t>0$ introduces quantum noise in $\BI.4\rc$.\par
Umegaki defined relative entropy for semi-finite $W^{*}$-algebras \cite{ART.Ume.1962.Rel_Ent}. Using relative modular operators, Araki generalised to all $W^{*}$-algebras \cite{ART.Ara.1975.Rel_Ent_I}\cite{ART.Ara.1977.Rel_Ent_II}. We extend Kosaki's formula \cite{BK.Ohy_Pet.1993.Rel_Ent} in the second variable to get the relative entropy $\Enttau:A_{+}^{*}\longrightarrow [-\infty,\infty]$ w.r.t.~$\tau$, i.e.~quantum relative entropy. It measures information required to discriminate a given state and, possibly non-finite, trace through observation. If $\mu\notin L^{1}(A,\tau)_{+}^{\flat}$, then $\mu\notin\dom\Enttau$, i.e.~$\absv{1.15}{\Ent(\mu,\tau)\hspace{0.025cm}}=\infty$ as expected. If $\mu\in L^{1}(A,\tau)_{+}^{\flat}$ and $p\in L^{1}(A,\tau)\cap L^{\infty}(A,\tau)$ is a projection s.t.~$\supp\mu\leq p$, then $\Ent(\mu,\tau)>-\infty$ and we have

\begin{align}\label{EQ.SEC.INTRO_23}
\Ent(\mu,\tau)=\sup_{\substack{n\in\mathbb{N},\\ F\in\mathcal{T}_{n}^{u}(A[p])}}\left\{\|\mu\|_{A[p]^{*}}\log n-\int_{n^{-1}}^{\infty}t^{-1}\dblv{}p-F(t)\dblv_{\mu}^{2}+t^{-2}\dblv{}F(t)\dblv_{\tau}^{2}dt\right\},
\end{align}

\noindent where we take the supremum over all suitable step functions $F:(n^{-1},\infty)\longrightarrow A[p]$ and use the GNS-inner product $\|.\|_{\mu}$ of $\mu$, resp.~$\|.\|_{\tau}$ of $\tau$ \cite{BK.Tak.1979.OpAlg_I}\cite{BK.Tak.2003.OpAlg_II}.\par

%NEWPAGE
%NEWPAGE
%NEWPAGE

\pagebreak

%NEWPAGE
%NEWPAGE
%NEWPAGE

The negative of Umegaki's definition is quantum entropy, i.e.~von Neumann entropy \lc{}cf.~p.17 in \cite{BK.Ohy_Pet.1993.Rel_Ent}\rc{}. Equation \ref{EQ.SEC.INTRO_23} reduces to Umegaki's definition if $\tau<\infty$. We further know it is jointly convex, l.s.c.~in $w^{*}$-topology of $L^{\infty}(A,\tau)$ and has restriction property in this case. Either may fail if $(A,\tau)$ is not strongly unital. Uniform majorisation of the local support of fixed parts suffices to prevent failure and recover a finite-dimensional approximation property. As such, we require l.s.c.~in topology of the given quantum optimal transport distance on all accessibility components with suitable fixed part, as well as compatibility with compression and finite-dimensional approximation.\par
For this, we compress with projections as per Equation \ref{EQ.SEC.INTRO_23} in general. We say that $p\in L^{1}(A,\tau)\cap L^{\infty}(A,\tau)$ majorises the local support of $\xi$ if

\begin{align}\label{EQ.SEC.INTRO_24}
\supp\xi_{j}\leq p
\end{align}

\noindent in $L^{\infty}(A,\tau)$ for a.e.~$j\in\mathbb{N}$. We further call $p$ a majorant of the local support of $\xi$. We say that $\xi$ is finitely supported if $\xi\in\dom\Enttau$ and there exists a majorant of its local support. Assume the latter. For all $\mu\in\Fix_{A}^{\NI}(\xi)$, finite-dimensional approximation is

\begin{align}\label{EQ.SEC.INTRO_25}
\Ent(\mu,\tau)=\lim_{j\in\mathbb{N}}\hspace{0.025cm} \Ent\lc\mu_{j},\tau\rc{}=\lim_{j\in\mathbb{N}}\hspace{0.025cm} \Ent\lc\bar{\mu}_{j},\tau\rc{}.
\end{align}

\noindent Finally, we show $\Enttau:\Fix_{A}^{\NI}(\xi)\longrightarrow (-\infty,\infty]$ is l.s.c.~in $\mathcal{W}_{\nabla}^{f,\theta}$-topology. We need not assume the logarithmic mean setting in our discussion of quantum relative entropy.\par
We use quantum relative entropy as measure of quantum information. Assume the logarithmic mean setting. We write $\mathcal{I}^{\log}:=\mathcal{I}^{f,1}$, as well as $E^{\log}=E^{f,1}$ and $\mathcal{W}_{\nabla}^{\log}=\mathcal{W}_{\nabla}^{f,1}$. For all $\mu^{0},\mu^{1}\in\SII(A)$, the set $\Geo\lc\mu^{0},\mu^{1}\rc$ of all minimising geodesics with marginals $\mu^{0}$ and $\mu^{1}$ is non-empty if the latter are at finite distance. Lower Ricci bounds are given by $\lambda$-convexity of quantum information as per $\CNV_{\lambda}$ below along minimising geodesics measured by quantum relative entropy. Let $\xi\in\SII(A)$ be a finitely supported fixed state. Let $\mathcal{C}\subset (\SII(A),\mathcal{W}_{\nabla}^{\log})$ be finitely supported with fixed part $\xi$ s.t.~$\mathcal{C}\cap\dom\Enttau\neq\emptyset$. Let $\lambda\in\mathbb{R}$ here. We know $\Enttau$ is $\lambda$-convex in the sense of metric geometry \cite{BK.Amb_Gig_Sav.2008.Classical_OT_GradFlow}\cite{ART.Mur_Sav.2020.Classical_OT_EVI} if for all $\mu^{0},\mu^{1}\in\mathcal{C}\cap\dom\Enttau$ and $(\mu,w)\in\Geo\lc\mu^{0},\mu^{1}\rc$ s.t.~$\mu(t)\in\dom\Enttau$ for all $t\geq 0$, we have

\begin{align}
\Ent\lc\mu(t),\tau\rc\leq \lc{}1-t\rc\Ent\lc\mu^{0},\tau\rc{}+t\Ent\lc\mu^{1},\tau\rc{}-\frac{\lambda}{2}t\lc{}1-t\rc\mathcal{W}_{\nabla}^{\log}\lc\mu^{0},\mu^{1}\rc^{2} \tag{$\CNV_{\lambda}$}
\end{align}

\noindent for all $t\in [0,1]$. We follow \cite{ART.Lot_Vil.2009.Classical_OT_Ricci_Bounds} and \cite{ART.Stu.2006.Classical_OT_I}\cite{ART.Stu.2006.Classical_OT_II}, resp.~\cite{ART.Car_Maa.2020.Quantum_OT_III}\cite{ART.Erb_Maa.2012.Discrete_OT_Ricci_Bounds} in our definition. We use $\CNV_{\lambda}$ to view lower Ricci bounds as measurement convexity of quantum information. If we have noncommutative analogues of displacement interpolations \cite{ART.CorEra_McCan_Sch.2001.Displacement_Convexity_Riemannian}\cite{ART.McCan.1997.Displacement_Convexity_Local}, then such measurement convexity in the Schr\"odinger picture is convexity under measurement of observables in the Heisenberg picture. Unfortunately, existence results are unknown to us. We instead show strictly positive lower Ricci bounds determine energy-information trade-offs pa\-ra\-metrised by lower bounds on quantum noise. Lower resolution implies lower energy paths. We avoid spatial interpretations of the classical case \cite{ART.Dol_Naz_Sav.2009.Generalised_OT}\cite{ART.Lot_Vil.2009.Classical_OT_Ricci_Bounds}.\par

%NEWPAGE
%NEWPAGE
%NEWPAGE

\pagebreak

%NEWPAGE
%NEWPAGE
%NEWPAGE

Strictly speaking, we apply our equivalence theorem to define lower Ricci bounds of quantum gradients in direct analogy to the classical case \cite{ART.Amb_Gig_Sav.2014.Classical_OT_Ricci_Bounds_I}\cite{ART.Amb_Gig_Sav.2014.Classical_OT_Ricci_Bounds_II}\cite{ART.Amb_Gig_Sav.2015.Classical_OT_Ricci_Bounds_III}\cite{ART.Amb_Mon_Sav.2019.Classical_OT_Nonlinear_Diffusion}\cite{ART.Erb_Kuw_Stu.2015.Classical_OT_Equivalence}, resp.~as per \cite{ART.Car_Maa.2020.Quantum_OT_III}\cite{ART.Erb_Maa.2012.Discrete_OT_Ricci_Bounds} using $\CNV_{\lambda}$ together with all of the following equivalent conditions. We see $h:[0,\infty)\times\mathcal{C}\cap\dom\Enttau\longrightarrow\mathcal{C}\cap\dom\Enttau$ is $\EVI_{\lambda}$-gradient flow of $\Enttau$ in $\mathcal{C}\cap\dom\Enttau$ in the sense of metric geometry \cite{BK.Amb_Gig_Sav.2008.Classical_OT_GradFlow}\cite{ART.Mur_Sav.2020.Classical_OT_EVI} if for all $\mu,\eta\in\mathcal{C}\cap\dom\Enttau$, we have

\begin{align}
\frac{e^{\lambda\lc{}t-s\rc{}}}{2}\mathcal{W}_{\nabla}^{\log}\lc{}h_{t}(\mu),\eta\rc^{2}-\frac{1}{2}\mathcal{W}_{\nabla}^{\log}\lc{}h_{s}(\mu),\eta\rc^{2}\leq\int_{0}^{t-s}e^{\lambda r}dr\cdot \bigg(\hspace{-0.028975cm} \Ent(\eta,\tau)-\Ent\lc{}h_{t}(\mu),\tau\rc\bigg) \tag{$\EVI_{\lambda}^{\int}$}
\end{align}

\noindent for all $0<s<t<\infty$. Note $\EVI_{\lambda}^{\int}$ as above is the well-known integral characterisation of $\EVI_{\lambda}$-gradient flows \cite{BK.Amb_Gig_Sav.2008.Classical_OT_GradFlow}, denoted by $\EVI_{\lambda}$ throughout our discussion. If $\EVI_{\lambda}$-gradient flow of relative entropy exists, then it is heat flow as above.\par
Equivalence of $\EVI_{\lambda}$ and $\CNV_{\lambda}$ is also well-known \cite{ART.Mur_Sav.2020.Classical_OT_EVI}. We have three equivalent global conditions. Upon ranging over all finitely supported accessibility components as above, the first one is $\EVI_{\lambda}$ and the second one is $\CNV_{\lambda}$. The third one is a, possibly infinite-dimensional, Bakry-\'Emery condition \cite{COL.Bak_Em.1985.Hypercontractivity_Condition}\cite{ART.Bak_Led.2006.Hypercontractivity_Condition} adapted to the logarithmic mean setting as per \cite{ART.Car_Maa.2020.Quantum_OT_III}. We say that $h$ satisfies $\BE_{\lambda}$ if for all finitely supported fixed states $\xi\in\SII(A)$ and $\mathcal{C}\subset (\SII(A),\mathcal{W}_{\nabla}^{\log})$ with fixed part $\xi$, we have

\begin{align}
\dblv{}\mathcal{M}_{\sharp\mu}^{\frac{1}{2}}\nabla h_{t}(u)\dblv_{\omega}^{2}\leq e^{-2\lambda t}\dblv{}\mathcal{M}_{h_{t}(\sharp\mu)}^{\frac{1}{2}}\nabla u\dblv_{\omega}^{2} \tag{$\BE_{\lambda}$}
\end{align}

\noindent for all $\mu\in\mathcal{C}\cap L^{2,\infty}(A_{\xi},\tau)^{\flat}$, $u\in\dom\nabla_{\hspace{-0.055cm} \xi}$ and $t\geq 0$. Note $\BE_{\lambda}$ uses those noncommutative multiplication operators whose inverses are noncommutative division operators as per Equation \ref{EQ.SEC.INTRO_5}. Compatibility with compression and finite-dimensional approximation of all objects involved, in particular but not only finite-dimensional approximation as per Equation \ref{EQ.SEC.INTRO_18} and Equation \ref{EQ.SEC.INTRO_25}, ensure all three global conditions arise from and are equivalent to three local conditions mirroring the above in the finite-dimensional setting for a.e.~induced noncommutative differential structure.\par
We therefore have $\EVI_{\lambda}$-gradient flow, $\lambda$-convexity and Bakry-\'Emery conditions in global and local form. We cannot show their equivalence directly. For this, we consider a Hessian lower bound condition $\HI_{\lambda}$ as per \cite{ART.Car_Maa.2020.Quantum_OT_III}. In the finite-dimensional logarithmic mean setting, we require such to show equivalence as claimed. We are motivated in our proof by analogous arguments in \cite{ART.Car_Maa.2020.Quantum_OT_III} and \cite{ART.Erb_Maa.2012.Discrete_OT_Ricci_Bounds}. However, we must use two differential equations for Hessians of quantum relative entropy in order to replace essential steps therein letting us argue using Riemannian metrics on relative interiors induced by the given quasi-entropy. We say that $\Hess\Enttau$ has lower bound $\lambda$ if for all for all finitely supported fixed states $\xi\in\SII(A)$ and a.e.~$j\in\mathbb{N}$ in each case, we have

\begin{align}
\Hess\hspace{-0.1525cm} \phantom{.}_{\mu}\Enttau(\eta)\geq\lambda g_{\mu}^{\bar{\xi}_{j}}(\eta,\eta) \tag{$\HI_{\lambda}$}
\end{align}

\noindent for all $\mu\in\vartheta\lc\bar{\xi}_{j}\rc$ and $\eta\in I\lc\Delta_{\bar{\xi}_{j}}\rc^{\flat}$. Each $\vartheta\lc\bar{\xi}_{j}\rc{}=\relint\mathcal{C}_{A_{j}}\lc\bar{\xi}_{j}\rc$ and $\vartheta\lc\bar{\xi}_{j}\rc\times I\lc\Delta_{\bar{\xi}_{j}}\rc^{\flat}$ is a smooth Riemannian manifold, resp.~its trivial tangent bundle plus Riemannian metric as per the right-hand side of $\HI_{\lambda}$ above. Taking limits yields equivalence as claimed.\par

%NEWPAGE
%NEWPAGE
%NEWPAGE

\pagebreak

%NEWPAGE
%NEWPAGE
%NEWPAGE

Following Diagram \ref{EQ.SEC.INTRO_19}, it is $\HI_{\lambda}$ which most clearly shows how underlying metric geometric properties such as lower Ricci bounds may be scaling limits of Riemannian ones up to heat flow regularised boundary. This requires suitable $K$ in Diagram \ref{EQ.SEC.INTRO_19}. Let $\xi\in\SII(A)$ be a fixed state. If $\xi\in\SII(A)$ is finitely supported fixed state, then, assuming strictly positive lower Ricci bounds, existence of a unique minimum for $\EVI_{\lambda}$-gradient flows of l.s.c.~functionals with complete sublevels \cite{ART.Mur_Sav.2020.Classical_OT_EVI} lets us show

\begin{align}\label{EQ.SEC.INTRO_26}
\mathcal{C}_{A}^{\Ent}(\xi)=\mathcal{C}_{A}(\xi)\cap\dom\Enttau=\textrm{Fix}_{A}(\xi)\cap\dom\Enttau\neq\emptyset.
\end{align}

\noindent As for Equation \ref{EQ.SEC.INTRO_17} and $K=\mathcal{S}^{\NI,2}(A)$, Equation \ref{EQ.SEC.INTRO_26} and $K=\dom\Enttau$ readily show we classify accessibility components of normal states with finite quantum relative entropy using fixed parts. This yields suitable $K$, as is visible from our equivalent conditions above. Strictly lower Ricci bounds avoid assumptions on spectral gaps. Equation \ref{EQ.SEC.INTRO_26} lets us formulate energy-information trade-offs as claimed using Talagrand inequality $\TW_{\lambda}$ for $\lambda\geq 0$ as given below. It formulates an energy-information trade-off since lower energy paths are obtained by introducing quantum noise. The latter requires our view of quantum Laplacians as generators of quantum noise evolution as per $\BI.2\rc$.\par
We then give sufficient conditions for strictly positive lower Ricci bounds of direct sum quantum gradients. We adapt the proof of Theorem 10.9 in \cite{ART.Car_Maa.2020.Quantum_OT_III} for $\lambda$-intertwining symmetric $C^{*}$-derivations to the AF-$C^{*}$-setting by means of the coarse graining process. We give an essential estimate for quasi-entropies evaluated on states under heat flow extending its analogue in \cite{ART.Car_Maa.2020.Quantum_OT_III} to the AF-$C^{*}$-setting. Our proof requires an extension of Theorem 5 in \cite{ART.Hia_Pet.2012.Quasi_Entropy_I} to all finite-dimensional $C^{*}$-algebras. Examples for strictly positive lower Ricci bounds are twisted dynamic quantum gradients induced by intertwining sets of Clifford generators. This generalises \cite{ART.Car_Maa.2014.Quantum_OT_I} but needs detailed implementation of Bogoliubov automorphisms on anti-symmetric Fock space \cite{BK.Gra_Var_Fig.2001.NCG_Elements}\cite{BK.Ply_Rob.1994.Clifford_Algebras}.\par
Assuming lower Ricci bounds, we derive functional inequalities $\HWI_{\lambda}$, $\MLSI_{\lambda}$ and $\TW_{\lambda}$ for $\lambda\geq 0$, resp.~$\lambda>0$ as per \cite{ART.Car_Maa.2020.Quantum_OT_III}. Non-ergodicity requires relative entropy of finitely supported fixed states in their formulation. We introduce quantum Fisher information in the AF-$C^{*}$-setting. Its r\^ole mirrors the classical case \cite{ART.Lot_Vil.2009.Classical_OT_Ricci_Bounds}\cite{ART.Ott_Vil.2000.Classical_OT_LogSobolev_Talagrand}. We have quantum Fisher information $\Ilog:A_{+}^{*}\longrightarrow [0,\infty]$ given by

\begin{align}\label{EQ.SEC.INTRO_27}
\Ilog(\mu)=\sup_{j\in\mathbb{N}}\hspace{0.025cm} \mathcal{I}_{j}^{\log}\lc\mu_{j},\mu_{j},\lc\nabla\sharp\mu_{j}\rc^{\flat}\rc{}
\end{align}

\noindent for all $\mu\in A_{+}^{*}$. Equation \ref{EQ.SEC.INTRO_27} immediately shows quantum Fisher information inherits properties of quasi-entropies. For all finitely supported fixed states $\xi\in\SII(A)$, we use the inherited properties and the gradient flow property to show

\begin{align}\label{EQ.SEC.INTRO_28}
\Ilog(\mu)=-\restr{0.925}{\frac{d}{dt}}{t=0}\Enttau\lc{}h_{t}(\mu)\rc{}
\end{align}

\noindent for all $\mu\in\Fix_{A}^{\NI}(\xi)\cap\mathcal{S}^{\NI}(A_{\xi})\cap\GL\lc{}L^{\infty}(A_{\xi},\tau)\rc\cap\lc\dom\Delta\rc^{\flat}$. Note $\GL\lc{}L^{\infty}(A_{\xi},\tau)\rc$ is the set of all boundedly invertible elements in $L^{\infty}(A_{\xi},\tau)$. Equation \ref{EQ.SEC.INTRO_28} implies $\Ilog(\mu)$ is indeed a noncommutative analogue for parametrisations $\lset{}h_{t}(\mu)\rset_{t\geq 0}$ given $\mu\in\mathcal{S}^{\NI}(A)$.\par

%NEWPAGE
%NEWPAGE
%NEWPAGE

\pagebreak

%NEWPAGE
%NEWPAGE
%NEWPAGE

We adapt the proof of Proposition 11.2 in \cite{ART.Car_Maa.2020.Quantum_OT_III} to the AF-$C^{*}$-setting by means of the coarse graining process. For all $\mu,\eta\in\SII(A)$, Equation \ref{EQ.SEC.INTRO_28} lets us show

\begin{align}\label{EQ.SEC.INTRO_29}
\limsup_{j\in\mathbb{N}}\hspace{0.025cm} \frac{d^{+}}{dt}\mathcal{W}_{\nabla}^{\log}\lc{}h_{t}\lc\bar{\mu}_{j}\rc{},\bar{\eta}_{j}\rc\leq\sqrt{\Ilog\lc{}h_{t}(\mu)\rc{}}
\end{align}

\noindent for all $t\geq 0$. Equation \ref{EQ.SEC.INTRO_29} in turn provides sufficient control of metric derivatives using quantum Fisher information. It is the crucial estimate allowing us to adapt the proofs of Theorem 11.3, Theorem 11.4 and Theorem 11.5 in \cite{ART.Car_Maa.2020.Quantum_OT_III} to the AF-$C^{*}$-setting by means of the coarse graining process.\par
We derive three functional inequalities. Let $\lambda\in\mathbb{R}$. We say that $\Enttau$ satisfies $\HWI_{\lambda}$ if for all finitely supported fixed states $\xi\in\SII(A)$ and $\mathcal{C}\subset (\SII(A),\mathcal{W}_{\nabla}^{\log})$ with fixed part $\xi$ s.t.~$\mathcal{C}\cap\dom\Enttau\neq\emptyset$, we have

\begin{align}
\Ent(\mu,\tau)\leq\mathcal{W}_{\nabla}^{\log}(\mu,\xi)\sqrt{\Ilog(\mu)}-\frac{\lambda}{2}\mathcal{W}_{\nabla}^{\log}(\mu,\xi)^{2}+\Ent(\xi,\tau) \tag{$\HWI_{\lambda}$}
\end{align} 

\noindent for all $\mu\in\mathcal{C}$. Assume $\lambda>0$. We say that $\Enttau$ satisfies $\MLSI_{\lambda}$ if for all finitely supported fixed states $\xi\in\SII(A)$ and $\mathcal{C}\subset (\SII(A),\mathcal{W}_{\nabla}^{\log})$ with fixed part $\xi$ s.t.~$\mathcal{C}\cap\dom\Enttau\neq\emptyset$, we have

\begin{align}
\Ent(\mu,\tau)\leq\frac{1}{2\lambda}\Ilog(\mu)+\Ent(\xi,\tau) \tag{$\MLSI_{\lambda}$}
\end{align}

\noindent for all $\mu\in\mathcal{C}$. We further say that $\Enttau$ satisfies $\TW_{\lambda}$ if for all finitely supported fixed states $\xi\in\SII(A)$ and $\mathcal{C}\subset (\SII(A),\mathcal{W}_{\nabla}^{\log})$ with fixed part $\xi$ s.t.~$\mathcal{C}\cap\dom\Enttau\neq\emptyset$, we have

\begin{align}
\mathcal{W}_{\nabla}^{\log}(\mu,\xi)\leq\sqrt{\frac{2}{\lambda}\lc\Ent(\mu,\tau)-\Ent(\xi,\tau)\rc{}} \tag{$\TW_{\lambda}$}
\end{align}

\noindent for all $\mu\in\mathcal{C}$. We obtain the following implications in direct analogy to the classical case \cite{ART.Lot_Vil.2009.Classical_OT_Ricci_Bounds}\cite{ART.Ott_Vil.2000.Classical_OT_LogSobolev_Talagrand} and extending results in \cite{ART.Car_Maa.2020.Quantum_OT_III} to the AF-$C^{*}$-setting as claimed. Analogous to our equivalent conditions for lower Ricci bounds, all three functional inequalities above are scaling limits w.r.t.~the coarse graining process.\par
We have an expected chain of functional inequalities. If we do have lower Ricci bounds for $\lambda\in\mathbb{R}$, then $\Enttau$ satisfies $\HWI_{\lambda}$. If $\Enttau$ satisfies $\HWI_{\lambda}$ for $\lambda>0$, then $\Enttau$ in turn satisfies $\MLSI_{\lambda}$. If $\Enttau$ satisfies $\MLSI_{\lambda}$, then $\Enttau$ finally satisfies $\TW_{\lambda}$. Their proofs pass through the finite-dimensional setting.

\medskip

\noindent\textbf{Notation.} We follow notational conventions of our stated standard references whenever possible. However, we must tie together different ones tailored to our use. We establish a single coherent notation in our definitions and paragraphs marked as \textbf{Notation}. Unless stated otherwise, the latter are in force once stated. This includes notation given in the appendix. The latter are revisited in the main matter prior to first use.

\noindent\textbf{Structure.} We divide our discussion into main matter and its appendix. The latter gives auxiliary technical results. In Chapter \ref{CH.NCDS}, we discuss the data necessary to define quantum optimal transport distances and collect such initial data in noncommutative differential structures. This covers $\AI.1\rc$. In Chapter \ref{CH.QOT}, we define our quantum optimal transport distances, discuss fundamental properties and provide fundamental example classes. This covers $\AI.2\rc$ to $\AI.5\rc$, and $\CI\rc$. In Chapter \ref{CH.L2W}, we construct quantum relative entropy for, possibly non-finite, traces, discuss the logarithmic mean setting, and extend results in \cite{ART.Car_Maa.2014.Quantum_OT_I}\cite{ART.Car_Maa.2017.Quantum_OT_II}\cite{ART.Car_Maa.2020.Quantum_OT_III} and \cite{ART.Erb_Maa.2012.Discrete_OT_Ricci_Bounds} to the AF-$C^{*}$-setting. This covers $\BI.1\rc$ to $\BI.6\rc$.

\medskip

\noindent\textbf{Relations to other work.} We may categorise noncommutative optimal transport into dynamic \cite{ART.Bre_Vor.2020.Quantum_OT_Dynamic_Full_Matrix_Measures}\cite{ART.Car_Maa.2014.Quantum_OT_I}\cite{ART.Car_Maa.2017.Quantum_OT_II}\cite{ART.Car_Maa.2020.Quantum_OT_III}\cite{ART.Che_Geo_Tan.2018.Quantum_OT_Dynamic_Full_Matrix_I}\cite{ART.Che_Geo_Tan.2018.Quantum_OT_Dynamic_Full_Matrix_II}\cite{ART.Chi_Pey_Sol_Via.2019.Quantum_OT_Dynamic_Entropic}\cite{PRE.Wir.2018.NC_OT} and static \cite{PRE.Ant_Cav.Quantum_OT_Static_Grassmannian}\cite{ART.DePa_Tre.2021.Quantum_OT_Static_Channels}\cite{ART.Duv.2022.Quantum_OT_Static_Transport_Plans_I}\cite{ART.Fel_Ger_Por.2023.Quantum_OT_Static_Entropic}\cite{ART.Col_Eck_Fri_Zyc.2022.Quantum_OT_Static_Bipartite} formulations. As explained above, quantum optimal transport distances as per $\AI.2\rc$ are dynamic transport distances motivated by Benamou-Brenier-type distances \cite{ART.Ben_Bre.2000.Dynamic_OT}\cite{ART.Dol_Naz_Sav.2009.Generalised_OT}. The latter is shared by all dynamic formulations. Following work of Maas and Mielke for the discrete cases \cite{ART.Maa.2011.Discrete_OT_Markov}\cite{ART.Mie.2011.Discrete_OT_RctDiff}, Carlen and Maas pioneered the dynamic formulation in \cite{ART.Car_Maa.2017.Quantum_OT_II}\cite{ART.Car_Maa.2020.Quantum_OT_III} to study quantum Fokker-Planck equations \cite{ART.Car_Maa.2014.Quantum_OT_I}. Our discussion, resp.~any of its prior versions, and independent but concurrent work of Wirth \cite{PRE.Wir.2018.NC_OT} together with Zhang \cite{ART.Wir_Zha.2021.Quantum_OT_Complete_Gradient_Estimates} are the first infinite-dimensional dynamic formulations and extensions of results in \cite{ART.Car_Maa.2014.Quantum_OT_I}\cite{ART.Car_Maa.2017.Quantum_OT_II}\cite{ART.Car_Maa.2020.Quantum_OT_III}. Assuming regular operator mean and restricting to densities, i.e.~normal states, the dynamic formulation in \cite{PRE.Wir.2018.NC_OT} and our discussion coincide. However, each has considerably different technical foundation, assumptions and applicability. Results and their proofs, as well as range of examples, differ accordingly. We closely examine these differences further below. In general terms, we see our bottom-up design yields flexible architecture for the AF-$C^{*}$-setting capable of stronger results therein.\par
All dynamic formulations avoid the lack of a natural noncommutative analogue of conditioning for couplings \cite{ART.Car_Leb_Lie.2013.Quantum_OT_Static_Extension_Problem}. Recent static ones consider specific sets of couplings or quantum channels for trace-class operators on Hilbert spaces \cite{PRE.Ant_Cav.Quantum_OT_Static_Grassmannian}\cite{ART.DePa_Tre.2021.Quantum_OT_Static_Channels}\cite{ART.Col_Eck_Fri_Zyc.2022.Quantum_OT_Static_Bipartite}, balanced transport plans \cite{ART.Duv.2022.Quantum_OT_Static_Transport_Plans_I}\cite{PRE.Duv_Sko_Sny.2022.Quantum_OT_Static_Transport_Plans_II}, or use entropic regularisation \cite{ART.Fel_Ger_Por.2023.Quantum_OT_Static_Entropic}. Noncommutative duality formulas remain difficult to find. Following the work of Erbar, Maas and Wirth \cite{ART.Erb_Maa_Wir.2019.Discrete_OT_Dual} and Gangbo, Li and Mou \cite{ART.Gan_Li_Mou.2019.Discrete_OT_Dual} for discrete cases, Wirth gives such a duality formula \cite{ART.Wir.2021.Quantum_OT_Dual} for quantum optimal transport distances in the finite-dimensional setting, resp.~their entropic regularisations \cite{ART.Beck_Li.2021.Quantum_OT_Dynamic_Entropic_Stat_Learning}, via subsolutions of Hamilton-Jacobi-Bellmann equations \cite{ART.Bob_Gen_Led.2001.HJB_Hypercontracitivity}\cite{ART.Ott_Vil.2000.Classical_OT_LogSobolev_Talagrand}. We do not have an infinite-dimensional extension but consider finding one by means of the coarse graining process a test of our approach we defer to future work.\par
We focus on the relation of our main contributions to the two most related dynamic\linebreak formulations \cite{ART.Car_Maa.2014.Quantum_OT_I}\cite{ART.Car_Maa.2017.Quantum_OT_II}\cite{ART.Car_Maa.2020.Quantum_OT_III} and \cite{PRE.Wir.2018.NC_OT}. Moreover, we consider the use of our discussion for studying noncommutative gauge theories \cite{ART.Cha_Con.1996.NCG_Spectral_Action_I}\cite{ART.Cha_Con_vSui.2013.NCG_Inner_Fluctuations}\cite{ART.Cha_Con_vSui.2020.NCG_Second_Quantisation}\cite{BK.vSui.2015.NCG_AF_Particle_Physics}\cite{BK.Var.2006.NCG_Elements_Short} within Connes' program of noncommutative geometry \cite{BK.Con.1994.NCG}\cite{COL.Con.2021.NCG_Spectral_POV}\cite{BK.Kha.2013.NCG_Basic}\cite{BK.Kha_Mar.2008.NCG_Invitation}. This leads us to view quantum optimal transport as transport of quantum information \cite{BK.Nie_Chu.2000.Quantum_Computation_Information} without considering spatial coordinates \cite{ART.Con.1996.NCG_Reconstruction}. Furthermore, we view quantum Laplacians as generators of quantum noise evolution as per $\BI.2\rc$ in order to have non-spatiality of lower Ricci bounds as per $\BI.4\rc$ and associated energy-information trade-offs. Applications of other approaches to entropic inequalities \cite{ART.Amb_Gio_DePa_Tre.2018.Quantum_Information_Gaussian_Optimizer_Inequalities}, quantum channels \cite{ART.DePa_Tre.2021.Quantum_OT_Static_Channels}\cite{ART.Duv.2023.Quantum_OT_Static_Channels_L1}\cite{PRE.Gao_Rou.2021.Quantum_OT_Static_Channels_Ricci_Curvature}, statistical learning \cite{ART.Beck_Li.2021.Quantum_OT_Dynamic_Entropic_Stat_Learning} and variational algorithms \cite{ART.Fra_Mar_DePa_Rou.2023.Quantum_OT_Algorithms_Variation}, among several more \cite{ART.Chi_Pey_Sol_Via.2019.Quantum_OT_Dynamic_Entropic}\cite{ART.Dat_Rou.2019.Quantum_OT_Concentration_Inequalities}\cite{ART.Dat_Rou.2020.Quantum_OT_Equivalences}, fit our point of view.\par

%NEWPAGE
%NEWPAGE
%NEWPAGE

\newpage

%NEWPAGE
%NEWPAGE
%NEWPAGE

We explain similarities and differences of foundational work of Carlen and Maas \cite{ART.Car_Maa.2014.Quantum_OT_I}\cite{ART.Car_Maa.2017.Quantum_OT_II}\cite{ART.Car_Maa.2020.Quantum_OT_III}, as well as related work of Wirth \cite{PRE.Wir.2018.NC_OT}, resp.~Wirth and Zhang \cite{ART.Wir_Zha.2021.Quantum_OT_Complete_Gradient_Estimates}, to our discussion. As explained in the introduction, we extend results in \cite{ART.Car_Maa.2014.Quantum_OT_I}\cite{ART.Car_Maa.2017.Quantum_OT_II}\cite{ART.Car_Maa.2020.Quantum_OT_III} and \cite{ART.Erb_Maa.2012.Discrete_OT_Ricci_Bounds} to the AF-$C^{*}$-setting as per $\AI.1\rc$ by means of the coarse graining process as per $\AI.4\rc$. Assumptions differ from ours in two points apart from dimensionality. First, they allow for, possibly non-tracial, weights \cite{BK.Tak.2003.OpAlg_II}. Of course, they are finite. We assume traciality but not finiteness. Traciality implies neither our discussion nor \cite{PRE.Wir.2018.NC_OT} fully subsumes \cite{ART.Car_Maa.2020.Quantum_OT_III}. Secondly, they assume ergodicity and we do not. Using our assumptions, which let us cover all fundamental example classes as per $\CI\rc$, we extend the equivalence in \cite{ART.Car_Maa.2020.Quantum_OT_III} to that of the $\EVI_{\lambda}$-gradient flow of quantum relative entropy as per $\BI.1\rc$, its strong geodesic $\lambda$-convexity, a, possibly infinite-dimensional, Bakry-\'Emery condition, and a Hessian lower bound condition as per $\BI.3\rc$. This is our equivalence theorem. We further obtain non-spatial lower Ricci bounds as per $\BI.4\rc$, sufficient conditions for lower Ricci bounds of direct sum quantum gradients as per $\BI.5\rc$, and derive functional inequalities $\HWI_{\lambda}$, $\MLSI_{\lambda}$ and $\TW_{\lambda}$ as per $\BI.6\rc$. Finite-dimensional cases are given in \cite{ART.Car_Maa.2020.Quantum_OT_III}.\par
Yet we cannot naively extend results to the AF-$C^{*}$-setting by taking limits. We thus consider objects and properties compatible with compression and finite-dimensional approximation as per $\AI.4\rc$. We explain such compatibility at the end of Chapter \ref{CH.NCDS} and formalise it in the coarse graining process \cite{BK.Gor_Kaz_Ott_The.2006.Coarse_Graining} in Chapter \ref{CH.QOT}. This in turn demands an involved technical discussion culminating in our introduction of finitely supported accessibility components as per $\BI.1\rc$ and our restriction of quantum relative entropy to the latter. An essential technique is compressed pulled-back joint functional calculus of extended AF-$C^{*}$-bimodule actions explained in Chapter \ref{CH.NCDS} based on Appendix \ref{APP.A} and Appendix \ref{APP.B}. However, use of the coarse graining process requires us to adapt or even replace essential arguments in \cite{ART.Car_Maa.2020.Quantum_OT_III} and \cite{ART.Erb_Maa.2012.Discrete_OT_Ricci_Bounds} as explained for our main contributions and throughout our discussion when proving suitable analogous results.\par
Wirth gives a dynamic formulation \cite{PRE.Wir.2018.NC_OT} in a tracial infinite-dimensional setting. Assuming energy dominant trace \cite{ART.Jun_Zen.2015.Energy_Dominant_Trace} but not ergodicity, these are noncommutative optimal transport distances of densities, i.e.~normal states, in tracial $W^{*}$-algebras. They are determined by suitable symmetric $C^{*}$-derivations inducing $C^{*}$-Dirichlet forms on noncommutative $L^{2}$-spaces of tracial $W^{*}$-algebras \cite{ART.Cip.1997.NC_Dirichlet_Markov}\cite{ART.Cip_Sav.2003.NC_Dirichlet_Grad}. Results in \cite{PRE.Wir.2018.NC_OT}\cite{ART.Wir_Zha.2021.Quantum_OT_Complete_Gradient_Estimates} often assume tracial state and may assume ergodicity. Note \cite{ART.Wir_Zha.2021.Quantum_OT_Complete_Gradient_Estimates} is based on \cite{PRE.Wir.2018.NC_OT}. Assuming tracial state and ergodicity, Wirth shows a, possibly infinite-dimensional, Bakry-\'Emery condition \cite{PRE.Wir.2018.NC_OT} as per \cite{ART.Car_Maa.2020.Quantum_OT_III} implies heat flow is $\EVI_{\lambda}$-gradient flow of relative entropy for $W^{*}$-algebras \cite{BK.Ohy_Pet.1993.Rel_Ent} and therefore, by standard arguments \cite{ART.Mur_Sav.2020.Classical_OT_EVI}, $\lambda$-convexity of such relative entropy. This cannot satisfyingly define lower Ricci bounds since \cite{PRE.Wir.2018.NC_OT} lacks full equivalence as per $\BI.3\rc$. Assuming tracial state, Wirth and Zhang give sufficient conditions for satisfying Bakry-\'Emery conditions \cite{ART.Wir_Zha.2021.Quantum_OT_Complete_Gradient_Estimates} as per \cite{ART.Car_Maa.2020.Quantum_OT_III} using intertwining property for general families of bounded linear operators. They need not assume direct sum noncommutative gradients since they give an argument dual to the monotonicity argument in \cite{ART.Car_Maa.2020.Quantum_OT_III} we extend. Assuming tracial state, Wirth and Zhang obtain functional inequalities $\HWI_{\lambda}$, $\MLSI_{\lambda}$ \cite{ART.Wir_Zha.2021.Quantum_OT_Complete_Gradient_Estimates} and $\TW_{\lambda}$ \cite{PRE.Wir.2018.NC_OT} as per \cite{ART.Car_Maa.2020.Quantum_OT_III} using relative entropy for $W^{*}$-algebras conditioned to fixed-point subalgebras. Such a priori conditioning handles non-ergodicity but does not emerge from an underlying metric geometry.\par
Note \cite{PRE.Wir.2018.NC_OT}\cite{ART.Wir_Zha.2021.Quantum_OT_Complete_Gradient_Estimates} and our discussion share the tracial infinite-dimensional setting. Yet each approach has considerably different technical foundation, assumptions and applicability. Results and their proofs, as well as range of examples, differ accordingly. We examine these differences. Whereas noncommutative differential structures collect our initial data, \cite{PRE.Wir.2018.NC_OT} considers $C^{*}$-Dirichlet forms \cite{ART.Alb_Ser.1977.Cstar_Dirichlet_Markov} in order to define test algebras of observables via Lipschitz seminorms using the induced noncommutative gradient \cite{ART.Cip.1997.NC_Dirichlet_Markov}\linebreak\cite{ART.Cip_Sav.2003.NC_Dirichlet_Grad} and given operator mean \cite{ART.And_Kub.1979.Operator_Means}. They may equivalently assume a given symmetric $C^{*}$-derivation, i.e.~noncommutative gradient, as we do using quantum gradients. If both approaches apply, then test algebras in \cite{PRE.Wir.2018.NC_OT} are larger and contain ours, i.e.~unions of all generating $C^{*}$-subalgebras. Assuming regular operator mean and restricting to densities, the dynamic formulation in \cite{PRE.Wir.2018.NC_OT} and our discussion coincide for two further reasons. First, \cite{PRE.Wir.2018.NC_OT} assumes energy dominant trace in order to have $\sigma$-weak extensions of bimodule actions. We show a general extension of AF-$C^{*}$-bimodule actions to spaces of measurable operators using extendability of local $^{*}$-homomorphisms. Note we thereby avoid use of $C^{*}$-Dirichlet forms as in \cite{ART.Jun_Zen.2015.Energy_Dominant_Trace}\cite{PRE.Wir.2018.NC_OT}. Secondly, \cite{PRE.Wir.2018.NC_OT} uses noncommutative multiplication operators for densities. We construct noncommutative division operators for all states. We show both choices are equivalent in the finite-dimensional setting by considering vector fields along admissible paths minimising the given quasi-entropy at a.e.~time. Taking limits shows both dynamic formulations coincide as claimed.\par
Assumptions and applicability differ from ours in several points. Results in \cite{PRE.Wir.2018.NC_OT}\linebreak\cite{ART.Wir_Zha.2021.Quantum_OT_Complete_Gradient_Estimates} often assume tracial state and may assume ergodicity. As stated above, we assume neither. We give three differences to \cite{PRE.Wir.2018.NC_OT} and two to \cite{ART.Wir_Zha.2021.Quantum_OT_Complete_Gradient_Estimates} showing why their results are insufficient for our purposes. First, they have weaker results concerning existence of minimising geodesics. Assuming tracial state, the logarithmic mean setting and heat flow is $\EVI_{\lambda}$-gradient flow of relative entropy for $W^{*}$-algebras, \cite{PRE.Wir.2018.NC_OT} shows existence of minimising geodesics for densities at finite distance in the domain of relative entropy for $W^{*}$-algebras. We show each accessibility component for any given symmetric operator mean is a geodesic length-metric space s.t.~minimising geodesics approximated in finite dimensions as per $\AI.5\rc$ exist between states at finite distance. This only requires our initial data. Secondly, they lack classification of accessibility components. We show two such classifications for varying assumptions on states as per $\AI.3\rc$ and $\BI.4\rc$. These coincide in the finite-dimensional logarithmic mean setting. Thirdly, they do not prove an equivalence theorem as per \cite{ART.Car_Maa.2020.Quantum_OT_III} or $\BI.3\rc$. Assuming tracial state and ergodicity, \cite{PRE.Wir.2018.NC_OT} shows the chain of implications starting from a Bakry-\'Emery condition stated above.\par
We assume neither and prove full equivalence as per $\BI.3\rc$. We state and prove such using existence of sufficient minimising geodesics approximated in finite dimensions and classification. We use the latter for the coarse graining process and our control of quantum relative entropy as per $\BI.1\rc$ on finitely supported accessibility components. In contrast, tracial states are necessary in \cite{PRE.Wir.2018.NC_OT} for existence of geodesics and since it gives no extension of relative entropy for $W^{*}$-algebras as per $\BI.1\rc$. The proof of equivalence in \cite{ART.Car_Maa.2020.Quantum_OT_III} uses direct calculations involving the Hessian of quantum relative entropy in the finite-dimensional Riemannian setting. We engage in our own and apply the coarse graining process. We see no substitute for this in \cite{PRE.Wir.2018.NC_OT}, resp.~its continued development in \cite{ART.Wir_Zha.2021.Quantum_OT_Complete_Gradient_Estimates}\cite{ART.Wir_Zha.2021.Quantum_OT_Curvature_Dimension_Conditions}. We expect alternatives to be new even in the finite-dimensional setting.\par
We turn to \cite{ART.Wir_Zha.2021.Quantum_OT_Complete_Gradient_Estimates}. First, sufficient conditions for satisfying Bakry-\'Emery conditions as per \cite{ART.Car_Maa.2020.Quantum_OT_III} differ in applicability. Assuming tracial state, \cite{ART.Wir_Zha.2021.Quantum_OT_Complete_Gradient_Estimates} gives sufficient ones as stated above using a novel intertwining property for general families of bounded linear operators. They do not assume direct sum noncommutative gradients and are therefore more general than us in the finite-trace case. This provides means to construct examples for complete gradient estimates stable under tensoring which otherwise appear difficult according to \cite{ART.Wir_Zha.2021.Quantum_OT_Complete_Gradient_Estimates} itself. These do not cover crucial fundamental example classes as per $\CI\rc$, resp.~further iterations on the latter using standard constructions.\par
We assume direct sum noncommutative gradients, rather than complete gradient estimates, but not finite trace. We cover natural examples given by dynamic quantum gradients \cite{ART.Kad.1966.OpAlg_Derivations}, e.g.~intertwining sets of Clifford generators, which indeed have no finite trace. They use tensor product AF-$C^{*}$-bimodules in each summand and thus generalise \cite{ART.Car_Maa.2014.Quantum_OT_I} to infinite dimensions. In the logarithmic mean setting, we use sufficient conditions as per $\BI.5\rc$ to show they have strictly positive lower Ricci bounds. Secondly, functional inequalities in \cite{ART.Wir_Zha.2021.Quantum_OT_Complete_Gradient_Estimates} require use of relative entropy for $W^{*}$-algebras conditioned in the second variable to the given fixed-point subalgebra. Assuming tracial state, \cite{ART.Wir_Zha.2021.Quantum_OT_Complete_Gradient_Estimates} gives functional inequalities as stated above. Recent work of Brannan, Gao and Junge \cite{ART.Bra_Gao_Jun.2022.Quantum_OT_LogSob_I}\cite{ART.Bra_Gao_Jun.2023.Quantum_OT_LogSob_II} independently obtained similar results to Wirth and Zhang \cite{ART.Wir_Zha.2021.Quantum_OT_Complete_Gradient_Estimates} for tracial states using likewise a priori conditioning of relative entropy for $W^{*}$-algebras. Their assumptions imply neither approach covers all fundamental example classes as per $\CI\rc$, in particular our strictly positive case, nor considers its conditioning as determined by the underlying metric geometry. We do show restriction to finitely supported accessibility components is compression of quantum relative entropy with support projections of the given fixed part. We therefore have a conditioning determined by the underlying metric geometry as restriction to finitely supported accessibility components. This is used in the coarse graining process, necessary for our equivalence theorem, and yields non-spatial lower Ricci bounds plus functional inequalities using unconditioned quantum relative entropy as only functional - regardless of finiteness or ergodicity. We thereby ensure functional inequalities reveal properties of the given metric geometry.\par
As explained in the introduction, we study non-spatial lower Ricci bounds as per $\BI.4\rc$ and apply functional inequalities as per $\BI.6\rc$ to probe any underlying metric geometry arising from one of our fundamental example classes as per $\CI\rc$, resp.~an iteration using standard constructions. Following our examination of differences above, we see results in \cite{PRE.Wir.2018.NC_OT}\cite{ART.Wir_Zha.2021.Quantum_OT_Complete_Gradient_Estimates}, as well as \cite{ART.Bra_Gao_Jun.2022.Quantum_OT_LogSob_I}\cite{ART.Bra_Gao_Jun.2023.Quantum_OT_LogSob_II}, are insufficient for our purposes. Conversely, allowing for non-traciality and non-ergodicity lets us cover quantum optimal transport of normal states on arbitrary hyperfinite factors and therefore our motivating application given by first and second quantisation of spectral triples. We appear to have comparatively higher control of fine-structures determined by our initial data, a control we ensure is inherited by all objects we consider through compatibility. We see this bottom-up design yields flexible architecture with the coarse graining process its step-by-step reduction process terminating in a well-behaved finite-dimensional Riemannian setting open to direct calculation. We apply the latter to show stronger results for a wider range of examples in the AF-$C^{*}$-setting covering common algebras of observables in quantum statistical mechanics \cite{BK.Bra.1987.OpAlg_Quantum_StM_I}\cite{BK.Bra.1987.OpAlg_Quantum_StM_II}\cite{BK.Nes_Sto.2006.Rel_Ent}. We view our approach as complementary to \cite{PRE.Wir.2018.NC_OT}.\par 
This concludes our explanation of similarities and differences. We consider the use of our discussion for studying noncommutative gauge theories \cite{ART.Cha_Con.1996.NCG_Spectral_Action_I}\cite{ART.Cha_Con_vSui.2013.NCG_Inner_Fluctuations}\cite{ART.Cha_Con_vSui.2020.NCG_Second_Quantisation}\cite{BK.vSui.2015.NCG_AF_Particle_Physics}\cite{BK.Var.2006.NCG_Elements_Short} within Connes' program of noncommutative geometry \cite{BK.Con.1994.NCG}\cite{COL.Con.2021.NCG_Spectral_POV}\cite{BK.Kha.2013.NCG_Basic}\cite{BK.Kha_Mar.2008.NCG_Invitation}. The program so far lacks a general notion of curvature \cite{COL.Fat_Kha.2019.NCG_Curv_Review}\cite{COL.Les_Mos.2019.NCG_Mod_Curv_Morita_Review} independent of a particular class of spectral triples \cite{ART.Con.1996.NCG_Reconstruction}\cite{COL.Con.2021.NCG_Spectral_POV}\cite{BK.Gra_Var_Fig.2001.NCG_Elements}\cite{BK.Var.2006.NCG_Elements_Short}. Noncommutative tori are a challenge \cite{ART.Con_Mos.2014.NCG_Mod_Curv_Tori_2D}\cite{ART.Don_Gho_Kha.2020.NCG_Tori_Ricci_Curv_3D}\cite{ART.Fat_Kha.2015.NCG_Tori_Scal_Curv_4D}\linebreak\cite{ART.Les_Mos.2016.NCG_Mod_Curv_Morita}. We study the weaker notion of curvature bounds for $\EVI_{\lambda}$-gradient flows driven by l.s.c.~functionals for relevant metric geometries \cite{BK.Amb_Gig_Sav.2008.Classical_OT_GradFlow}\cite{ART.Mur_Sav.2020.Classical_OT_EVI}. The spectral paradigm of noncommutative geometry \cite{ART.Cha_Con.1996.NCG_Spectral_Action_I}\cite{ART.Cha_Con.1997.NCG_Spectral_Action_II}\cite{ART.Cha_Con_Mar.2007.NCG_Standard_Model_Recovered}\cite{ART.Con.1996.NCG_Reconstruction}\cite{COL.Con.2021.NCG_Spectral_POV} based on Gelfand duality \cite{BK.Tak.1979.OpAlg_I} implies a suitable notion must cover continuous, discrete and finally mixed continuous-discrete noncommutative geometries \cite{BK.Gra_Var_Fig.2001.NCG_Elements}\cite{BK.vSui.2015.NCG_AF_Particle_Physics}\cite{BK.Var.2006.NCG_Elements_Short}. Unfortunately, the AF-$C^{*}$-setting does not consider spatial coordinates, i.e.~non-discrete geometries, unless we introduce them in form of pa\-ra\-metrisations for continuous fields of AF-$C^{*}$-algebras \cite{BK.vSui.2015.NCG_AF_Particle_Physics}. First and second quantisation of spectral triples exemplify such lack of spatial coordinates.\par
First quantisation considers commutative spectral triples, i.e.~first quantisation of compact spin manifolds \cite{ART.Con.1996.NCG_Reconstruction}. We show quantum optimal transport is transversal to spatial optimal transport in this case. Second quantisation rectifies this by quantising all spatial coordinates. We apply a characterisation in \cite{ART.Cha_Con_vSui.2020.NCG_Second_Quantisation} to obtain sufficient conditions s.t.~the quantum gradients used are infinitesimal evolution of observables at thermal equilibrium determined by KMS-states \cite{BK.Bra.1987.OpAlg_Quantum_StM_II}. Each assumes fixed gauge field \cite{ART.Cha_Con.1996.NCG_Spectral_Action_I}\cite{BK.vSui.2015.NCG_AF_Particle_Physics}\linebreak\cite{BK.Var.2006.NCG_Elements_Short}. Varying von Neumann entropy \cite{BK.Ohy_Pet.1993.Rel_Ent} of such KMS-states w.r.t.~the canonical trace yields description of the spectral action on gauge fields \cite{ART.Cha_Con.1996.NCG_Spectral_Action_I}\cite{ART.Cha_Con.1997.NCG_Spectral_Action_II}\cite{ART.Cha_Con_Mar.2007.NCG_Standard_Model_Recovered} in terms of quantum statistical mechanics using quantum relative entropy as per $\BI.1\rc$ \cite{ART.Cha_Con_vSui.2020.NCG_Second_Quantisation}. Upon passing to second quantisation, we introduce gauge fields as spatial coordinates. We consider all normalised Radon measures on finite-dimensional spaces of admissible gauge fields evaluating in CAR-algebras \cite{BK.Nes_Sto.2006.Rel_Ent}, i.e.~states on continuous fields of AF-$C^{*}$-algebras. We thereby generalise to quantum optimal transport parametrised by gauge fields and give an internalised spectral action on the aforementioned states using relative entropy for $W^{*}$-algebras. This gives our ansatz as per $\CI\rc$ in Chapter \ref{CH.QOT}. If key technical challenges are solved in future work, then we hope to study the dynamics of such generalised gauge fields described as gradient flows driven by the internalised spectral action for the given parametrised quantum optimal transport. We are motivated by the classical approach of Jordan, Kinderlehrer and Otto for Fokker-Planck equations \cite{ART.Jor_Kin_Ott.1998.Fokker_Planck}\cite{ART.Ott.2001.Classical_OT_Porous_Medium}\cite{ART.Ott.2005.Classical_OT_GradFlow_DisConvex}.\par 
We may relax assumptions on fibres to cover disintegration of tracial $W^{*}$-algebras into direct integrals of hyperfinite factors according to the von Neumann disintegration theorem \cite{BK.Tak.1979.OpAlg_I}. We see fundamental example classes using tracial AF-$C^{*}$-algebras generating hyperfinite factors of type I and II by $\sigma$-weak closure are of particular interest. We thereby define general parametrised quantum optimal transport. We view quantum optimal transport as its pointwise case. We explain states on CAR-algebras are scaling limits of spin states encoding qubits \cite{ART.Bur_DiVi_Loss.1999.QC_Spin_QDots_as_QGates}\cite{ART.Bur_Lad_Nic.2023.QC_Spin_Overview}\cite{BK.Nie_Chu.2000.Quantum_Computation_Information}\cite{ART.DiVi.2000.Criteria}\cite{ART.DiVi_Loss.1998.Quantum_Information_Physical}, but not necessarily pure \cite{ART.Fri_Sio.2011.QC_Spin_Mixed_States}. Using noncommutative conditional expectations \cite{BK.Tak.1979.OpAlg_I}, we therefore consider states on tracial AF-$C^{*}$-algebras as scaling limit of uniformly conditioned spin states encoding a sequence of qubits without use of any spatial coordinates. We view quantum optimal transport as transport of quantum information, and the parametrised one as transport of densities of quantum information over encoding schemes, at the end of Chapter \ref{CH.QOT}.

%%%%%%%%%%%%%%%%
%%%% CHAPTER %%%
%%%%%%%%%%%%%%%%

\chapter{Noncommutative Differential Structures}\label{CH.NCDS}

Noncommutative differential structures collect the data which define quantum optimal transport distances. Each consists of two components and one setting. First, we have an AF-$C^{*}$-bimodule over a, possibly different, tracial AF-$C^{*}$-algebra. This establishes noncommutative topology, measures and integrals. Secondly, we have a quantum gradient for the given AF-$C^{*}$-bimodule. These are noncommutative gradients with likewise chain rule. The relationship between gradients, heat semi\-groups and Dirichlet forms extends to the noncommutative setting \cite{ART.Cip.1997.NC_Dirichlet_Markov}\cite{ART.Cip_Sav.2003.NC_Dirichlet_Grad}. Finally, we have a representing function of an operator mean together with an interpolation factor. This lets us define noncommutative division operators. They determine, and are in turn determined by, quasi-entropies \cite{ART.Hia_Pet.2012.Quasi_Entropy_I}\linebreak\cite{ART.Hia_Pet.2013.Quasi_Entropy_II} used to define energy functionals. In Chapter \ref{CH.QOT}, we readily see our construction of quantum optimal transport distances follows the classical case \cite{ART.Dol_Naz_Sav.2009.Generalised_OT} but using data as above. Thus Banach dual spaces of AF-$C^{*}$-bimodules serve as synthetic tangent spaces for the weak formulation of continuity equations in the AF-$C^{*}$-setting.\par
The data collected is, by definition or construction, compatible with compression and finite-dimensional approximation. These are two general operations we formalise in a coarse graining process. Compatibility transfers to quantum Laplacians, i.e.~Laplacians of quantum gradients, their noncommutative heat semigroups, as well as continuity equations. Compatibility therefore transfers to quantum optimal transport. The coarse graining process formalising the latter is thereby essential for the majority of our results as it reduces the AF-$C^{*}$-setting to the finite-dimensional one s.t.~ergodicity is recovered up to a controlled remainder.

\medskip

\noindent\textbf{Structure.} In Section \ref{SEC.NCDS_AF}, we discuss AF-$C^{*}$-bimodules over tracial AF-$C^{*}$-algebras. In Section \ref{SEC.NCDS_NCD}, we discuss noncommutative division operators. In Section \ref{SEC.NCDS_NCG}, we discuss quantum gradients for AF-$C^{*}$-bimodules. We then define noncommutative differential structures, discuss compatibility and outline the coarse graining process.\par

%NEWPAGE
%NEWPAGE
%NEWPAGE

\newpage

%NEWPAGE
%NEWPAGE
%NEWPAGE

%%%%%%%%%%%%%%%%
%%% SECTION %%%%
%%%%%%%%%%%%%%%%

\section[The AF-$C^{*}$-Setting]{The AF-$\mathbf{C}^{*}$-Setting}\label{SEC.NCDS_AF}

AF-$C^{*}$-bimodules over tracial AF-$C^{*}$-algebras are the setting for continuity equations of states compatible with compression and finite-dimensional approximation. Elements in Banach dual spaces of AF-$C^{*}$-bimodules serve as likewise compatible synthetic tangent vectors in our weak formulation. In particular, AF-$C^{*}$-bimodules have an extension of bimodule actions to spaces of measurable operators s.t.~their noncommutative $L^{2}$-spaces are symmetric $W^{*}$-bimodules. The latter are Hilbert spaces on which noncommutative division operators act even upon compression. As such, they provide suitable setting for the Leibniz rule and serve as codomains of quantum gradients.

\medskip

\noindent\textbf{Structure.} In Subsection \ref{SSEC.NCDS_AF_BIM}, we study AF-$C^{*}$-bimodules over tracial AF-$C^{*}$-algebras and extensions of AF-$C^{*}$-bimodule actions. In Subsection \ref{SSEC.NCDS_AF_FC}, we discuss compressed pulled-back joint functional calculus of extended AF-$C^{*}$-bimodule actions.

%%%%%%%%%%%%%%%%%%%
%%% SUBSECTION %%%%
%%%%%%%%%%%%%%%%%%%

\subsection[AF-$C^{*}$-bimodules over tracial AF-$C^{*}$-algebras]{AF-$\mathbf{C}^{*}$-bimodules over tracial AF-$\mathbf{C}^{*}$-algebras}\label{SSEC.NCDS_AF_BIM}

AF-$C^{*}$-bimodules over tracial AF-$C^{*}$-algebras are defined using local $^{*}$-homomorphisms of tracial AF-$C^{*}$-algebras. These are $^{*}$-homomorphisms of $C^{*}$-algebras compatible with all AF-$C^{*}$-structures in use, further extending to spaces of measurable operators. Noncommutative $L^{2}$-spaces of AF-$C^{*}$-bimodules are symmetric $W^{*}$-bimodules.

%%%%%%%%%%%%%
%%% PART %%%%
%%%%%%%%%%%%%

\subsubsection*{Tracial $C^{*}$-algebras and spaces of measurable operators}

Let $(M,\tau)$ be a tracial $W^{*}$-algebra, i.e.~$W^{*}$-algebra $M$ and f.s.n.~trace $\tau:M_{+}\longrightarrow [0,\infty]$ with definition domain $\mathfrak{m}_{\tau}$ \lc{}cf.~Definition \ref{DFN.Wstar_Trace} and Definition \ref{DFN.Wstar_Trace_FSN}\rc{}. Uniform closure of $M$ in measure topology is the space of measurable operators $L^{0}(M,\tau)$ \lc{}cf.~Definition \ref{DFN.Wstar_SMO_I}\rc{}. Algebra involution on $M$ extends to $L^{0}(M,\tau)$. We obtain the space $L^{0}(M,\tau)_{h}$ of self-adjoint, as well as the space $L^{0}(M,\tau)_{+}$ of positive elements \lc{}cf.~Definition \ref{DFN.Wstar_SMO_III}\rc{}. Since $M_{+}$ generates the partial order on $M$ \lc{}cf.~Proposition \ref{PRP.Cstar_PO}\rc{}, note $L^{0}(M,\tau)_{+}$ generates the partial order on $L^{0}(M,\tau)$ by density in measure topology \lc{}cf.~Proposition \ref{PRP.Wstar_NCI_V}\rc{}. We extend the f.s.n.~trace to $\tau:L^{0}(M,\tau)_{+}\longrightarrow [0,\infty]$ \lc{}cf.~Definition \ref{DFN.Wstar_SMO_Trace}\rc{}. For details on $C^{*}$-~and $W^{*}$-algebras, we refer to Subsection \ref{SSEC.A_Fnd_CWstar}. For details on tracial $W^{*}$-algebras and their spaces of measurable operators, we refer to Subsection \ref{SSEC.B_SMO_Wstar_Trace}.\par
Let $p\in [1,\infty]$. Noncommutative $L^{p}$-space $\lc{}L^{p}(M,\tau),\|.\|_{p}\rc\subset L^{0}(M,\tau)$ is a Banach space \lc{}cf.~Definition \ref{DFN.NC_Int_Lp}\rc{}. Algebra involution on $M$ extends to $L^{p}(M,\tau)$. We obtain the space $L^{p}(M,\tau)_{h}$ of self-adjoint, as well as the space $L^{p}(M,\tau)_{+}$ of positive elements. We may decompose accordingly \lc{}cf.~Proposition \ref{PRP.Wstar_NCI_IV}\rc{}. If $p=1$, then $\tau\in L^{1}(A,\tau)_{+}^{*}$ \lc{}cf.~$3)$ in Proposition \ref{PRP.Wstar_NCI_I}\rc{}. If $p=2$, then $\lc{}L^{2}(M,\tau),\|.\|_{2}\rc$ is a Hilbert space. If $p=\infty$, then $\lc{}L^{\infty}(M,\tau),\|.\|_{\infty}\rc{}=\lc{}M,\|.\|_{M}\rc$. Noncommutative $L^{p}$-spaces fulfil H\"older inequalities. Note Definition \ref{DFN.Wstar_Trace_MSP_Musical} uses the modified standard pairing as per Remark \ref{REM.Wstar_Trace_MSP}. For details on noncommutative integration, we refer to Subsection \ref{SSEC.B_SMO_NCI}.

\begin{dfn}\label{DFN.Wstar_Trace_MSP_Musical}
For all $\mu\in L^{1}(M,\tau)^{\flat}$, let $\sharp\mu\in L^{1}(M,\tau)$ be unique s.t.~$\mu=\lc\sharp\mu\rc^{\flat}$. If $p=q=2$, then set $\sharp:=\flat^{-1}\in\BII\lc{}L^{2}(M,\tau)\rc$ and call $\lc\flat,\sharp\rc$ musical isomorphisms on $L^{2}(M,\tau)$.
\end{dfn}

\begin{rem}\label{REM.Wstar_Trace_MSP}
Let $p,q\in [1,\infty]$. If $1=p^{-1}+q^{-1}$, then the modified standard pairing 

\begin{align}\label{EQ.REM.Wstar_Trace_MSP_1}
(x,y)\mapsto x^{\flat}(y)=\tau(x^{*}y)
\end{align}

\noindent defined on $L^{p}(A,\tau)\times L^{q}(A,\tau)$ is bounded, anti-linear in the first and linear in the second variable, as well as non-degenerate \lc{}cf.~Definition \ref{DFN.Wstar_NCI_MSP} and Proposition \ref{PRP.Wstar_NCI_VI}\rc{}. For all $x\in L^{p}(M,\tau)$ and $y\in L^{q}(A,\tau)$, get $\tau(x^{*}y)=\tau\lc{}yx^{*}\rc$ and $\overline{\tau(x^{*}y)}=\tau\lc{}xy^{*}\rc$ by traciality.\par
If $p=1$ and $q=\infty$, then $\flat:L^{1}(M,\tau)\longrightarrow M^{*}$ is positivity-preserving and anti-linear isometry onto the set $M_{*}\subset M^{*}$ of all normal bounded functional on $M$ equipped with the dual space partial order \lc{}cf.~Proposition \ref{PRP.Wstar_NCI_VI} and Remark \ref{REM.Wstar_NCI_MSP}\rc{}. If $A\subset M$ is a $\sigma$-weakly dense $C^{*}$-subalgebra, then $A\subset M$ is strongly dense. Normality therefore yields $L^{1}(A,\tau)^{\flat}\subset A^{*}$ as partially ordered Banach spaces. For all $\mu\in L^{1}(A,\tau)^{\flat}$, get unique $\sharp\mu\in L^{1}(A,\tau)$ s.t.~$\mu=\lc\sharp\mu\rc^{\flat}$. If $p=q=2$, then $\flat\in\GL\lc\BII\lc{}L^{2}(M,\tau)\rc\rc$.
\end{rem}

Positive elements generate the partial order on $C^{*}$-~and $W^{*}$-algebras, as well as their Banach dual spaces \lc{}cf.~Definition \ref{DFN.Cstar} and Proposition \ref{PRP.Cstar_PO}\rc{}. Definition \ref{DFN.Cstar_Trace_Abstract} gives abstract tracial $C^{*}$-algebras \lc{}cf.~Remark \ref{REM.Abstract_Concrete}\rc{}. Following Remark \ref{REM.Cstar_Trace_Abstract_Concrete}, the latter extends Definition \ref{DFN.Wstar_Trace} and subsumes the concrete case s.t.~we have consistent use of canonical left-~and right-actions for joint functional calculus of self-adjoint measurable operators. As consequence, compressing with projections as per Lemma \ref{LEM.Cstar_Trace_Abstract_Projection} extends readily from one to two variables as special case of the tracial $W^{*}$-algebra setting.

\begin{dfn}\label{DFN.Cstar_Trace_Abstract}
Let $A\subset M$ be a $\sigma$-weakly dense $C^{*}$-subalgebra. We call $(A,\tau)$ a tracial $C^{*}$-algebra in $M$. Set $1_{A}:=1_{M}$.

\begin{itemize}
\item[1)] Set $L^{0}(A,\tau):=L^{0}(M,\tau)$ and

\begin{align}\label{EQ.DFN.Cstar_Trace_Abstract_1}
L^{0}(A,\tau)_{h}:=L^{0}(M,\tau)_{h},\ L^{0}(A,\tau)_{+}:=L^{0}(M,\tau)_{+}.    
\end{align}

\begin{reapply}
\end{reapply}

\item[2)] For all $p\in [1,\infty]$, set $\lc{}L^{p}(A,\tau),\|.\|_{p}\rc{}:=\lc{}L^{p}(M,\tau),\|.\|_{p}\rc$ and

\begin{align}\label{EQ.DFN.Cstar_Trace_Abstract_2}
L^{p}(A,\tau)_{h}:=L^{p}(M,\tau)_{h},\ L^{p}(A,\tau)_{+}:=L^{p}(M,\tau)_{+}.
\end{align}

\begin{reapply}
\end{reapply}

\item[3)] For all $p,q\in [1,\infty]$, set $L^{p,q}(A,\tau):=L^{p}(A,\tau)\cap L^{q}(A,\tau)$ and

\begin{align}\label{EQ.DFN.Cstar_Trace_Abstract_3}
L^{p,q}(A,\tau)_{h}:=L^{p}(A,\tau)_{h}\cap L^{q}(A,\tau)_{h},\ L^{p,q}(A,\tau)_{+}:=L^{p}(A,\tau)_{+}\cap L^{q}(A,\tau)_{+}.
\end{align}

\begin{reapply}
\end{reapply}

\end{itemize}
\end{dfn}

\begin{ntn}\label{NTN.Cstar_Trace_Abstract}
Unless stated otherwise, we write $(A,\tau)$ and $\|.\|_{\tau}=\|.\|_{2}$ for all $\sigma$-weakly dense $C^{*}$-subalgebras $A\subset M$. This differs from the distinct notation $\lc\HII(M,\tau),\|.\|_{\tau}\rc$ and $\lc{}L^{2}(M,\tau),\|.\|_{2}\rc$ used in the appendix. Notation remains unambiguous throughout since we only use $\lc{}L^{2}(A,\tau),\|.\|_{\tau}\rc{}=\lc{}L^{2}(M,\tau),\|.\|_{2}\rc$ in the main matter.
\end{ntn}

\begin{rem}\label{REM.Cstar_Trace_Abstract_Concrete}
Let $(A,\tau)$ be a tracial $C^{*}$-algebra and $\LII:A\longrightarrow\BII\lc\HII(A,\tau)\rc$ canonical left-action of $A$ on $\HII(A,\tau)$ \lc{}cf.~Definition \ref{DFN.Wstar_Trace} and Definition \ref{DFN.Wstar_Trace_CLA}\rc{}. Using normal extension \lc{}cf.~Proposition \ref{PRP.Wstar_Equivalence} and Proposition \ref{PRP.Wstar_Trace_Ext_I}\rc{}, get tracial $W^{*}$-algebra $\lc\mathcal{L}(A)'',\tau\rc$ with $\sigma$-weakly dense $C^{*}$-subalgebra $\LII(A)\subset\LII(A)''$. We thereby construct the tracial $C^{*}$-algebra $\lc\mathcal{L}(A),\tau\rc$ in $\mathcal{L}(A)''$. This is the concrete case.\par
If $(A,\tau)$ is a tracial $C^{*}$-algebra in $M$, then the canonical left-action $\LII$ of $A$ on $\HII(A,\tau)$ is not the canonical left-action $L$ of $M$ on $L^{2}(M,\tau)$ in general \lc{}cf.~Definition \ref{DFN.Wstar_CLRA}\rc{}. Note $L$ subsumes $\LII$ by twisting with natural Hilbert space isometry $\HII(A,\tau)\cong L^{2}(M,\tau)$ \lc{}cf.~Proposition \ref{PRP.Wstar_CLRA_I}\rc{}, $L$ is given by multiplication in $L^{0}(M,\tau)$, as well as inclusions $A\subset M\subset L^{0}(M,\tau)$ of $^{*}$-subalgebras. The analogous holds for canonical right-actions and opposite algebras. Altogether, requiring $M$ in Definition \ref{DFN.Cstar_Trace_Abstract} avoids difficulties arising from identification of $A\cong\LII(A)$, as is common yet implicit in the literature, while using canonical left-~and right-actions for joint functional calculus of self-adjoint measurable operators \lc{}cf.~Remark \ref{REM.Wstar_CLRA}\rc{}
\end{rem}

Note suitable inclusion maps of Banach dual spaces arise from Banach duals of noncommutative conditional expectations. For abstract tracial $C^{*}$-algebras, Definition \ref{DFN.Cstar_Trace_Abstract_Dualisation} gives inclusion maps obtained from compressing with projections in $W^{*}$-algebras. This uses abstract compression maps. Assuming positivity and fixed norm, get injectivity in the non-unital case as per $1)$ in Proposition \ref{PRP.Cstar_Trace_Abstract_Dualisation_II}. For tracial AF-$C^{*}$-algebras, further note Definition \ref{DFN.AF_Cstar_Trace_Dualisation} gives inclusion maps obtained from Hilbert space projections to generating $C^{*}$-subalgebras. This additionally yields restriction maps.\par
We compress $C^{*}$-subalgebras with projections. Let $A\subset M$ be a $C^{*}$-subalgebra and $p\in M$ be a projection. We have compressed $C^{*}$-subalgebra $A[p]=pC^{*}\lc{}A,p\rc{}p\subset M$ \lc{}cf.~$2)$ in Definition \ref{DFN.Compression_Abstract_Bd}\rc{}. If $p=1_{M}$, then we recover the unitalisation $A[1_{M}]=C^{*}\lc{}A,1_{M}\rc$ of $A$ in $M$ \lc{}cf.~Definition \ref{DFN.Cstar_Unitalisation}\rc{}. If $A=M$, then $M[p]=pMp\subset (M,\tau)$ is a semi-finite $W^{*}$-subalgebra \lc{}cf.~Remark \ref{REM.Compression_Abstract_Bd}, Definition \ref{DFN.Wstar_Trace_SF_Subalg} and $2)$ in Proposition \ref{PRP.Wstar_Trace_NCE_II}\rc{}. We have tracial $W^{*}$-algebra $\lc{}M[p],\tau\rc$ \lc{}cf.~$1)$ in Proposition \ref{PRP.Wstar_Trace_SF_Subalg}\rc{}. Note $p^{\perp}=1_{M}-p$ and $M[p][1_{M}]=M[p]\oplus\langle p^{\perp}\rangle_{\mathbb{C}}$ \lc{}cf.~Proposition \ref{PRP.Wstar_Unitalisation}\rc{}. Assume $A\subset M$ is a $\sigma$-weakly dense $C^{*}$-subalgebra. The compressed $C^{*}$-subalgebra $A[p]\subset M[p]$ is $\sigma$-weakly dense itself in this case. We moreover have $A[p][1_{M}]=A[p]\oplus\langle p^{\perp}\rangle_{\mathbb{C}}$ \lc{}cf.~Proposition \ref{PRP.Cstar_Unitalisation}\rc{}.

\begin{lem}\label{LEM.Cstar_Trace_Abstract_Projection}
Let $A\subset M$ be a $\sigma$-weakly dense $C^{*}$-subalgebra. For all projections $p\in M$ and $q\in [1,\infty]$, we have

\vspace{-0.02787125cm}
\begin{itemize}
\item[1)] tracial $C^{*}$-algebra $(A[p],\tau)$ in $M[p]$,

\vspace{-0.02787125cm}
\item[2)] $L^{0}(A[p],\tau)=pL^{0}(A,\tau)p$ and $L^{q}(A[p],\tau)=pL^{q}(A,\tau)p$.
\end{itemize}
\end{lem}
\begin{proof}
We know $1)$. Thus $L^{\infty}(A[p],\tau)=M[p]$, hence $2)$ follows by Proposition \ref{PRP.Wstar_L2Red_III}.
\end{proof}

The abstract compression map $\comp:A[1_{M}]\longrightarrow A[p]$ is given by $\comp x=pxp$ for all $x\in A[1_{M}]$ \lc{}cf.~Definition \ref{DFN.Compression_Abstract_Bd}\rc{}. Note $\comp$ is a completely positive, normal, unital and surjective bounded linear map \lc{}cf.~Proposition \ref{PRP.Compression_Abstract_Bd}\rc{}. If $A=M$, then we recover noncommutative conditional expectations as per Remark \ref{REM.Cstar_Trace_Abstract_Dualisation}. For details on compressed $C^{*}$-subalgebras and their abstract compression maps, we refer to Subsection \ref{SSEC.A_Maps_Compression}. For details on semi-finite $W^{*}$-subalgebras, we refer to Subsection \ref{SSEC.B_JFC_L2Red}.\par

%NEWPAGE
%NEWPAGE
%NEWPAGE

\pagebreak

%NEWPAGE
%NEWPAGE
%NEWPAGE

We have positivity-preserving injective Banach dual $\com_{p}^{*}:A[p]^{*}\longrightarrow A[1_{M}]^{*}$. If we restrict to $A\subset A[1_{M}]$, then we further have positivity-preserving bounded linear map $\com_{p}^{*}:A[p]^{*}\longrightarrow A^{*}$. The latter is not injective in general. If $q\in L^{\infty}(A,\tau)$ is a projection s.t.~$p\leq q$, then $pq=p$ implies $\comp\lc{}A\lb{}q\rb\rc{}=A[p]$. Get positivity-preserving injective Banach dual $\com_{p}^{*}:A[p]^{*}\longrightarrow A\lb{}q\rb^{*}\subset A[1_{M}]^{*}$ in this case.

\begin{dfn}\label{DFN.Cstar_Trace_Abstract_Dualisation}
Let $A\subset M$ be a $C^{*}$-subalgebra. For all projections $p\in L^{\infty}(A,\tau)$, we define the $p$-th inclusion $\incp:=\com_{p}^{*}:A[p]^{*}\longrightarrow A[1_{M}]^{*}$.
\end{dfn}

\begin{rem}\label{REM.Cstar_Trace_Abstract_Dualisation}
Semi-finite $W^{*}$-subalgebras have unique noncommutative conditional expectations \lc{}cf.~Definition \ref{DFN.Wstar_Trace_NCE}, Remark \ref{REM.Wstar_Trace_NCE} and Definition \ref{DFN.Wstar_Trace_NCE_MSP}\rc{}. For all projections $p\in M$, $\comp:M\longrightarrow M[p]$ is the noncommutative conditional expectation $\pi_{M[p]}^{M}$ from $M$ to $M[p]$ \lc{}cf.~Proposition \ref{PRP.Wstar_Trace_NCE_I} and $2)$ in Proposition \ref{PRP.Wstar_Trace_NCE_II}\rc{}.
\end{rem}

\begin{prp}\label{PRP.Cstar_Trace_Abstract_Dualisation_I}
Let $A\subset M$ be a $\sigma$-weakly dense $C^{*}$-subalgebra. All inclusion maps in Definition \ref{DFN.Cstar_Trace_Abstract_Dualisation} are bounded linear, positivity-preserving and injective. They furthermore satisfy the following.

\begin{itemize}
\item[1)] All inclusion maps in Definition \ref{DFN.Cstar_Trace_Abstract_Dualisation} are $w^{*}$-continuous.

\item[2)] For all projections $p\leq q$ in $L^{\infty}(A,\tau)$, we have $A[p]^{*}\subset A\lb{}q\rb^{*}\subset A[1_{A}]^{*}$ as partially ordered Banach spaces. 
\end{itemize}
\end{prp}
\begin{proof}
Bounded linearity and $1)$ are immediate. Let $p\in L^{\infty}(A,\tau)$ be a projection. Since $\comp$ is positivity-preserving \lc{}cf.~Proposition \ref{PRP.Compression_Abstract_Bd}\rc{}, $\incp$ is as well. If $q\in L^{\infty}(A,\tau)$ is a projection s.t.~$p\leq q$, then $pq=p$ implies $\comp\circ\hspace{0.0275cm} \com_{q}=\comp$ and therefore $2)$.
\end{proof}

\begin{ntn}\label{NTN.Cstar_Trace_Abstract_Dualisation}
Let $A\subset M$ be a $\sigma$-weakly dense $C^{*}$-subalgebra. For all projections $p\leq q$ in $L^{\infty}(A,\tau)$, we suppress $\incp$ and $\incq$ on $A[p]^{*}$.
\end{ntn}

States on abstract tracial $C^{*}$-algebras are noncommutative probability measures. They are normal if they have noncommutative density. Equation \ref{EQ.DFN.Cstar_Trace_Abstract_State_Space_1} and Equation \ref{EQ.DFN.Cstar_Trace_Abstract_State_Space_2} use spectral measures and spectra, as well as bounded measurable functional calculus of self-adjoint measurable operators \lc{}cf.~Definition \ref{DFN.Wstar_CLRA_FC_I} and Lemma \ref{LEM.Wstar_CLRA_FC}\rc{}.

\begin{dfn}\label{DFN.Cstar_Trace_Abstract_State_Space}
Let $A\subset M$ be a $\sigma$-weakly dense $C^{*}$-subalgebra.

\begin{itemize}
\item[1)] We define the state space $\SII(A):=\lset\mu\in A_{+}^{*}\ \vset\ \|\mu\|_{A}=1\rset$ and the normal state space $\mathcal{S}^{\NI}(A):=\SII(A)\cap L^{1}(A,\tau)^{\flat}$ of $A$. Set

\begin{align}\label{EQ.DFN.Cstar_Trace_Abstract_State_Space_1}
\mathcal{S}_{>0}^{\NI}(A):=\mathcal{S}^{\NI}(A)\cap\lset{}x\in L^{1}(A,\tau)_{h}\ \vset\ \Gamma_{x,L^{\infty}(A,\tau)}(\delta_{0})=0\rset^{\flat}.
\end{align}

\begin{reapply}
\end{reapply}

\item[2)] For all $p\in (1,\infty]$, set $\mathcal{S}^{\NI,p}(A):=\SII(A)\cap L^{1,p}(A,\tau)^{\flat}$ and

\begin{align}\label{EQ.DFN.Cstar_Trace_Abstract_State_Space_2}
\mathcal{S}_{-1}^{\NI,p}(A):=\mathcal{S}^{\NI,p}(A)\cap\lset{}x\in L^{1}(A,\tau)_{h}\ \vset\ 0\notin\specM x\rset^{\flat}.  
\end{align}

\begin{reapply}
\end{reapply}

\end{itemize}
\end{dfn}

\begin{rem}\label{REM.Cstar_Trace_Abstract_State_Space}
For all $\sigma$-weakly dense $C^{*}$-subalgebras $A\subset M$, our construction and Remark \ref{REM.Wstar_Trace_MSP} shows $\mathcal{S}^{\NI}(A)=\SII(M)\cap L^{1}(M,\tau)^{\flat}=\mathcal{S}^{\NI}(M)$ \lc{}cf.~Definition \ref{DFN.Wstar_NCI_State_Space}\rc{}. For all $\sigma$-weakly dense $C^{*}$-subalgebras $A\subset M$, projections $p\in L^{\infty}(A,\tau)$ and $x\in L^{1}(A[p],\tau)$, we have $\lc\incp x^{\flat}\rc{}(y)=\tau(x^{*}y)$ for all $y\in A[p]$. Lemma \ref{LEM.Cstar_Trace_Abstract_Projection} shows $x=xp=px$ in each case. 
\end{rem}

\begin{prp}\label{PRP.Cstar_Trace_Abstract_Dualisation_II}
Let $A\subset M$ be a $\sigma$-weakly dense $C^{*}$-subalgebra. For all projections $p\leq q$ in $L^{\infty}(A,\tau)$ and $r\in (1,\infty]$, we have

\begin{itemize}
\item[1)] $\SII(A[p])\subset\SII\lc{}A\lb{}q\rb\rc\subset\SII(A)$ and $\mathcal{S}^{\NI}(A[p])\subset\mathcal{S}^{\NI}\lc{}A\lb{}q\rb\rc\subset\mathcal{S}^{\NI}(A)$,

\item[2)] $\mathcal{S}^{\NI,r}(A[p])\subset\mathcal{S}^{\NI,r}\lc{}A\lb{}q\rb\rc\subset\mathcal{S}^{\NI,r}(A)$.
\end{itemize}
\end{prp}
\begin{proof}
Proposition \ref{PRP.Cstar_Trace_Abstract_Dualisation_I} shows positive elements are preserved. Let $p$ in $L^{\infty}(A,\tau)$ be a projection. Since $\mu(1_{A})=\mu\lc{}p\rc{}=\|\mu\|_{A[p]^{*}}$ for all $\mu\in A[p]_{+}^{*}\subset A[1_{A}]^{*}$, fixed norm ensures injectivity upon restriction to $A\subset A[1_{A}]$. Get $1)$. Using the latter, get $2)$.
\end{proof}

%%%%%%%%%%%%%
%%% PART %%%%
%%%%%%%%%%%%%

\subsubsection*{Tracial AF-$C^{*}$-algebras}

Approximately finite-dimensional, or AF-$C^{*}$-algebras are all $C^{*}$-algebras which are norm closures of ascending chains of finite-dimensional $C^{*}$-algebras. We index all AF-$C^{*}$-algebras over $\mathbb{N}$. This is equivalent to using countable directed sets by existence of cofinal subsets isomorphic to $\mathbb{N}$. Tracial AF-$C^{*}$-algebras are both AF-$C^{*}$-algebras and abstract tracial $C^{*}$-algebras.

\begin{dfn}\label{DFN.AF_Cstar}
Let $A$ be a $C^{*}$-algebra.

\begin{itemize}
\item[1)] A sequence $\lset{}A_{j}\rset_{j\in\mathbb{N}}$ of finite-dimensional $C^{*}$-algebras is ascending if $A_{j}\subset A_{j+1}$ is a $C^{*}$-subalgebra for all $j\in\mathbb{N}$. Set $A_{0}:=\bigcup_{j\in\mathbb{N}}A_{j}$.

\item[2)] We call $A$ an AF-$C^{*}$-algebra if $A=\overline{A_{0}}^{\|.\|_{A}}$ for an ascending sequence $\lset{}A_{j}\rset_{j\in\mathbb{N}}$ of finite-dimensional $C^{*}$-subalgebras. We further call $\lset{}A_{j}\rset_{j\in\mathbb{N}}$ a generating sequence of $A$ and say that $A$ is generated by $\lset{}A_{j}\rset_{j\in\mathbb{N}}$.

\item[3)] If $A$ is an AF-$C^{*}$-algebra generated by $\lset{}A_{j}\rset_{j\in\mathbb{N}}$, then we say that $A$ is

\begin{itemize}
\item[3.1)] strongly unital if $1_{A_{j}}=1_{A_{k}}$ for all $j,k\in\mathbb{N}$,

\item[3.2)] finite-dimensional if $\dim_{\mathbb{C}}A<\infty$,

\item[3.3)] finite if $A=A_{j}$ for all $j\in\mathbb{N}$.
\end{itemize}

\begin{reapply}
\end{reapply}

\end{itemize}
\end{dfn}

\begin{ntn}\label{NTN.AF_Cstar_Fin_Isometry}
For all $n\in\mathbb{N}$, $I_{n}\in M_{n}(\mathbb{C})$ denotes the unit and $\tr_{n}$ the non-normalised canonical trace. In infinite dimensions, i.e.~$n=\infty$, we suppress the subscript and write $I$ and $\tr$. Up to $C^{*}$-isometries, finite-dimensional $C^{*}$-algebras are of form $\oplus_{l=1}^{n}M_{n_{l}}(\mathbb{C})$ for $n\in\mathbb{N}$ \cite{BK.Bro_Oza.2008.Cstar_AF}. If $A$ is a AF-$C^{*}$-algebra generated by $\lset{}A_{j}\rset_{j\in\mathbb{N}}$, then we fix $C^{*}$-isometries

\begin{align}\label{EQ.NTN.AF_Cstar_Fin_Isometry_1}
r_{A}:=\lset r_{A_{j}}:A_{j}\longrightarrow\oplus_{l=1}^{n_{j}}M_{n_{j,l}}(\mathbb{C})\rset_{j\in\mathbb{N}}.
\end{align}

\noindent If $A$ is furthermore finite, then set $\lset{}r_{A_{j}}\rset_{j\in\mathbb{N}}$ to be constant unless stated otherwise.
\end{ntn}

\begin{prp}\label{PRP.AF_Cstar_Unit}
Let $A$ be an AF-$C^{*}$-algebra and $M$ a $W^{*}$-algebra. If $A$ is generated by $\lset{}A_{j}\rset_{j\in\mathbb{N}}$, then 

\begin{itemize}
\item[1)] $\lset{}1_{A_{j}}\rset_{j\in\mathbb{N}}\subset A$ is a left-~and right-approximate identity in $A$,

\item[2)] $1_{M}=\s$-$\lim_{j\in\mathbb{N}}1_{A_{j}}$ if $A\subset M$ is a $\sigma$-weakly dense $C^{*}$-subalgebra.
\end{itemize}
\end{prp}
\begin{proof}
Get $1)$ since $\bigcup_{j\in\mathbb{N}}A_{j}\subset A$ is $\|.\|_{A}$-dense and $1_{A_{j}}\lc{}1_{A_{k}}-1_{A_{j}}\rc{}=\lc{}1_{A_{k}}-1_{A_{j}}\rc{}1_{A_{j}}=0$ for all $j\leq k$ in $\mathbb{N}$. Get $2)$ by $1)$ and uniqueness of units in $C^{*}$-algebras.
\end{proof}

\begin{rem}
Note $1)$ in Proposition \ref{PRP.AF_Cstar_Unit} shows strong unitality implies unitality. In the setting of $2)$ in Proposition \ref{PRP.AF_Cstar_Unit}, we have $1_{M}=1_{A}$ if $A$ is unital.
\end{rem}

Definition \ref{DFN.AF_Cstar_Trace_Abstract} gives tracial AF-$C^{*}$-algebras using abstract formulation. Following Remark \ref{REM.Cstar_Trace_Abstract_Concrete} and Remark \ref{REM.AF_Cstar_Trace_Abstract_Concrete}, the latter therefore subsumes the concrete case in the AF-$C^{*}$-setting s.t.~we have consistent use of canonical left-~and right-actions for joint functional calculus of self-adjoint measurable operators.

\begin{dfn}\label{DFN.AF_Cstar_Trace_Abstract}
Let $A$ be an AF-$C^{*}$-algebra generated by $\lset{}A_{j}\rset_{j\in\mathbb{N}}$ and $(M,\tau)$ a tracial $W^{*}$-algebra. We call $(A,\tau)$ a tracial AF-$C^{*}$-algebra in $M$ generated by $\lset{}A_{j}\rset_{j\in\mathbb{N}}$ if $A\subset M$ is a $\sigma$-weakly dense $C^{*}$-subalgebra and $A_{0}\subset\mathfrak{m}_{\tau}$.
\end{dfn}

\begin{rem}\label{REM.AF_Cstar_Trace_Abstract_Concrete}
Let $(A,\tau)$ be a tracial $C^{*}$-algebra and AF-$C^{*}$-algebra generated by $\lset{}A_{j}\rset_{j\in\mathbb{N}}$ s.t.~$A_{0}\subset\mathfrak{m}_{\tau}$. Following construction in Remark \ref{REM.Cstar_Trace_Abstract_Concrete}, get tracial AF-$C^{*}$-algebra $\lc\LII(A),\tau\rc$ in $\LII(A)''$ generated by $\lset\hspace{-0.0325cm} \LII(A)_{j}\rset_{j\in\mathbb{N}}:=\lset\hspace{-0.0325cm} \LII(A_{j})\rset_{j\in\mathbb{N}}$. This is the concrete case of the AF-$C^{*}$-setting. Note this requires $\LII$ to be a faithful $^{*}$-representation.
\end{rem}

\begin{prp}\label{PRP.AF_Cstar_Trace_I}
For all tracial AF-$C^{*}$-algebras $(A,\tau)$, we have

\begin{itemize}
\item[1)] $A_{0}\subset L^{\infty}(A,\tau)$ is strongly dense,

\item[2)] $A_{0}\subset L^{2}(A,\tau)$ is $\|.\|_{\tau}$-dense,

\item[3)] $A_{0}\subset L^{1}(A,\tau)$ is $\|.\|_{1}$-dense.
\end{itemize}
\end{prp}
\begin{proof}
We show Proposition \ref{PRP.Wstar_NCI_VIII} applies. We know $A_{0}\subset A$ is $\|.\|_{A}$-dense. We show $A_{0}\subset L^{2}(A,\tau)$ is $\|.\|_{\tau}$-dense. For all $j\in\mathbb{N}$, set $\mathcal{A}_{j}:=1_{A_{j}}A_{0}1_{A_{j}}\subset A_{0}$ and note

\begin{align}\label{EQ.PRP.AF_Cstar_Trace_I_1}
\mathcal{M}_{j}:=\overline{\mathcal{A}_{j}}=L^{\infty}(A,\tau)[1_{A_{j}}]=1_{A_{j}}L^{\infty}(A,\tau)1_{A_{j}}\subset L^{\infty}(A,\tau)
\end{align}

\noindent w.r.t.~closure in strong operator topology \lc{}cf.~$2)$ in Definition \ref{DFN.Compression_Abstract_Bd}\rc{}. Equation \ref{EQ.PRP.AF_Cstar_Trace_I_1} shows $L^{2}(\mathcal{M}_{j},\tau)=1_{A_{j}}L^{2}(A,\tau)1_{A_{j}}$ by Lemma \ref{LEM.Cstar_Trace_Abstract_Projection} in each case. Thus $\bigcup_{j\in\mathbb{N}}L^{2}(\mathcal{M}_{j},\tau)\subset L^{2}(A,\tau)$ is $\|.\|_{\tau}$-dense by Proposition \ref{PRP.AF_Cstar_Unit}, hence

\begin{align}\label{EQ.PRP.AF_Cstar_Trace_I_2}
L^{2}(A,\tau)=\overline{\bigcup_{j\in\mathbb{N}}L^{2}(\mathcal{M}_{j},\tau)}^{\|.\|_{\tau}}\subset\overline{\bigcup_{j\in\mathbb{N}}\mathcal{A}_{j}}^{\|.\|_{\tau}}\subset\overline{A_{0}}^{\|.\|_{\tau}}\subset L^{2}(A,\tau).
\end{align}

\noindent Equation \ref{EQ.PRP.AF_Cstar_Trace_I_2} shows $A_{0}\subset L^{2}(A,\tau)$ is $\|.\|_{\tau}$-dense.
\end{proof}

We consider the finite-dimensional setting. Example \ref{BSP.AF_Cstar_Trace_Fin} provides finite case. We discuss the case of restricting to generating $C^{*}$-subalgebras. We use Notation \ref{NTN.PO}. For all AF-$C^{*}$-algebras $A$ generated by $\lset{}A_{j}\rset_{j\in\mathbb{N}}$ and $j\in\mathbb{N}$, $A_{j,h}$ denotes the self-adjoint and $A_{j,+}$ the positive elements in $A_{j}$.

\begin{bsp}\label{BSP.AF_Cstar_Trace_Fin}
Let $(A,\tau)$ be a finite-dimensional tracial $C^{*}$-algebra. Note $L^{\infty}(A,\tau)=A$ by $\sigma$-weak density. For all $j\in\mathbb{N}$, set $A_{j}=A$. This defines finite tracial AF-$C^{*}$-algebra $(A,\tau)$ in $A$. Finiteness does not hold in general.
\end{bsp}

\begin{dfn}\label{DFN.AF_Cstar_Trace_Restriction}
Let $(A,\tau)$ be a tracial AF-$C^{*}$-algebra. For all $j\in\mathbb{N}$, set $\tau_{j}:=\tau\vert_{A_{j}}$ and we define sequence of finite-dimensional $C^{*}$-algebras by setting

\begin{align*}
A_{j,l}:=
\begin{cases}
A_{l} & \If\ l<j\ \textrm{in}\ \mathbb{N}, \\
A_{j} & \Else.
\end{cases}
\end{align*}
\end{dfn}

\begin{rem}\label{REM.AF_Cstar_Trace_Fin}
Unless stated otherwise, any finite-dimensional tracial $C^{*}$-algebra we consider alone is a finite tracial AF-$C^{*}$-algebras as per Example \ref{BSP.AF_Cstar_Trace_Fin}. For generating $C^{*}$-subalgebras, we instead restrict as per Definition \ref{DFN.AF_Cstar_Trace_Restriction}.
\end{rem}

\begin{prp}\label{PRP.AF_Cstar_Trace_II}
Let $(A,\tau)$ be a tracial AF-$C^{*}$-algebra. For all $j\in\mathbb{N}$, we have

\begin{itemize}
\item[1)] tracial AF-$C^{*}$-algebra $(A_{j},\tau)=(A_{j},\tau_{j})$ in $A_{j}$ generated by $\lset{}A_{j,l}\rset_{l\in\mathbb{N}}$,

\item[2)] $\tau_{j}=\oplus_{l=1}^{n_{j}}C_{j,l}\tr_{n_{j,l}}\circ~ r_{A_{j}}$ with $C_{j,l}>0$ for all $l\in\lset{}1,\ldots,n_{j}\rset$,

\item[3)] $r_{A_{j}}(1_{A_{j}})=\sum_{l=1}^{n_{j}}I_{n_{j,l}}$.
\end{itemize}
\end{prp}
\begin{proof}
We have $1)$ since $A_{0}\subset\mathfrak{m}_{\tau}$. Restricting to summands shows $2)$ by uniqueness of the normalised trace on full matrix algebras. Get $3)$ by unitality.
\end{proof}

\begin{rem}\label{REM.AF_Cstar_Trace_Restriction}
For all $j\in\mathbb{N}$, $\lgl\hspace{0.025cm}.\hspace{0.025cm},\hspace{0.025cm}.\hspace{0.025cm}\rgl_{\tau\vert_{A_{j}}}$ equals $\sum_{l=1}^{n_{j}}C_{j,l}\lgl\hspace{0.025cm}.\hspace{0.025cm},\hspace{0.025cm}.\hspace{0.025cm}\rgl_{\tr_{n_{j,l}}}$ pulled back along $r_{A_{j}}^{-1}$.
\end{rem}

\begin{prp}\label{PRP.AF_Cstar_Trace_III}
Let $(A,\tau)$ be a tracial AF-$C^{*}$-algebra. For all $j\in\mathbb{N}$, we consider the Hilbert space projection $\pi_{j}^{A}:L^{2}(A,\tau)\longrightarrow A_{j}$. We have

\begin{itemize}
\item[1)] $\bigcup_{j\in\mathbb{N}}A_{j,+}\subset A_{+}$ is $\|.\|_{A}$-dense,

\item[2)] $\bigcup_{j\in\mathbb{N}}A_{j,+}\subset L^{\infty}(A,\tau)_{+}$ is strongly dense,

\item[3)] $I_{L^{2}(A,\tau)}=\s$-$\lim_{j\in\mathbb{N}}\pi_{j}^{A}$.
\end{itemize}
\end{prp}
\begin{proof}
For all $j\in\mathbb{N}$, get $A_{j,+}\subset A_{+}\subset L^{\infty}(A,\tau)$. If $\{x_{n}\}_{n\in\mathbb{N}}\subset A_{0}$ s.t.~$\|.\|_{A}$-$\lim_{n\in\mathbb{N}}x_{n}=x\geq 0$ in $A$, then $\|.\|_{A}$-$\lim_{n\in\mathbb{N}}\max\{x_{n},0\}=x$. Using strong convergence, the analogous statement follows if $x\geq 0$ in $L^{\infty}(A,\tau)$. This shows $1)$ and $2)$. We know $A_{0}\subset L^{2}(A,\tau)$ is $\|.\|_{\tau}$-dense by Proposition \ref{PRP.AF_Cstar_Trace_I}. Thus $\|.\|_{\tau}$-$\lim_{j\in\mathbb{N}}\pi_{j}^{A}(x)=x$ for all $x\in A_{0}$, hence $3)$ follows.
\end{proof}
	
%%%%%%%%%%%%%
%%% PART %%%%
%%%%%%%%%%%%%

\subsubsection*{Banach dual spaces of tracial AF-$C^{*}$-algebras}

Inclusion and restriction maps of tracial AF-$C^{*}$-algebras in Definition \ref{DFN.AF_Cstar_Trace_Dualisation} are used for bookkeeping. Notation \ref{NTN.AF_Cstar_Trace_Dualisation} fixes conventions. We use the modified standard pairing, in particular their flat and sharp operators as per Definition \ref{DFN.Wstar_Trace_MSP_Musical} and Remark \ref{REM.Wstar_Trace_MSP}.\par
Let $(A,\tau)$ be a tracial AF-$C^{*}$-algebra. For all $j\in\mathbb{N}$, we have $A_{j}\cong A_{j}^{*}$ via musical isomorphisms. Let $\mathfrak{A}$ as per Definition \ref{DFN.AF_Cstar_Trace_Dualisation}. Note $A_{0}\subset\mathfrak{A}$.

\begin{dfn}\label{DFN.AF_Cstar_Trace_Dualisation}
For all Hilbert subspaces $V\subset L^{2}(A,\tau)$, let $\pi_{V}^{A}:L^{2}(A,\tau)\longrightarrow V$ be the Hilbert space projection. Let $\mathfrak{A}=A$ or $\mathfrak{A}=L^{p}(A,\tau)$ for $p\in [1,\infty]$.

\begin{itemize}
\item[1)] For all $j\in\mathbb{N}$, let $\iota_{j}^{A}:A_{j}\longrightarrow\mathfrak{A}$ be the inclusion and set $\pi_{j}^{A}:=\pi_{A_{j}}^{A}$. \phantom{\bigg)}

\item[2)] For all $j\leq k$ in $\mathbb{N}$, let $\iota_{kj}^{A}:A_{j}\longrightarrow A_{k}$ be the inclusion and set $\pi_{jk}^{A}:=\pi_{A_{j}}^{A_{k}}$. \phantom{\bigg)}

\item[3)] For all $j\leq k$ in $\mathbb{N}$, we define the $j$-th inclusion and $j$-th restriction\phantom{\bigg)}

\begin{align}\label{EQ.DFN.AF_Cstar_Trace_Dualisation_1}
\incj:=\flat\circ\iota_{j}^{A}\circ\sharp:A_{j}^{*}\longrightarrow\mathfrak{A}^{*},\ \resj:=\big(\iota_{j}^{A}\big)^{*}:\mathfrak{A}^{*}\longrightarrow A_{j}^{*},
\end{align}

\begin{reapply}
\end{reapply}

as well as the $kj$-inclusion and $jk$-restriction\phantom{\bigg)}

\begin{align}\label{EQ.DFN.AF_Cstar_Trace_Dualisation_2}
\inckj:=\big(\pi_{jk}^{A}\big)^{*}:A_{j}^{*}\longrightarrow A_{k}^{*},\ \resjk:=\big(\iota_{kj}^{A}\big)^{*}:A_{k}^{*}\longrightarrow A_{j}^{*}.
\end{align}

\begin{reapply}
\end{reapply}

\end{itemize}
\end{dfn}

\begin{prp}\label{PRP.AF_Cstar_Trace_Dualisation_I}
All inclusion and restriction maps in Definition \ref{DFN.AF_Cstar_Trace_Dualisation} are bounded linear, positivity-preserving, as well as injective, resp.~surjective. They furthermore satisfy the following.

\begin{itemize}
\item[1)] All inclusion and restriction maps in Definition \ref{DFN.AF_Cstar_Trace_Dualisation} are $w^{*}$-continuous.

\item[2)] For all indices, $\res\circ\inc=\id$. For all $j\leq k$ in $\mathbb{N}$, we have $A_{j}^{*}\subset A_{k}^{*}\subset A^{*}$ as partially ordered Banach spaces and

\begin{itemize}
\item[2.1)] $\inckj=\flat\circ\iota_{kj}^{A}\circ\sharp$ and $\resjk=\flat\circ\pi_{jk}^{A}\circ\sharp$,

\item[2.2)] $\inckj=\resk\circ\incj$ and $\resjk=\resj\circ\inck$.
\end{itemize}

\begin{reapply}
\end{reapply}

\end{itemize}
\end{prp}
\begin{proof}
Bounded linearity is immediate. Since $A_{0}\subset A$ is $\|.\|_{A}$-dense, testing on $A_{0}$ shows continuity in each case. We directly verify all remaining claims.
\end{proof}

\begin{ntn}\label{NTN.AF_Cstar_Trace_Dualisation}
For all $j\leq k$ in $\mathbb{N}$, the following holds. We suppress $\incj$ and $\inckj$ on $A_{j}^{*}$. We neither distinguish $\pi_{j}^{A}$ and $\pi_{jk}^{A}$ on $A_{k}$, nor $\resj$ and $\resjk$ on $A_{k}^{*}$.
\end{ntn}

%NEWPAGE
%NEWPAGE
%NEWPAGE

\pagebreak

%NEWPAGE
%NEWPAGE
%NEWPAGE

\begin{dfn}\label{DFN.AF_Cstar_Trace_Dualisation_Restriction}
Let $j\in\mathbb{N}$ and $p\in [1,\infty]$.

\begin{itemize}
\item[1)] For all $\mu\in A^{*}$, set $\mu_{j}:=\resj\mu\in A_{j}^{*}$.

\item[2)] For all $x\in L^{p}(A,\tau)$, set $x_{j}:=\sharp\resj x^{\flat}\in A_{j}$.
\end{itemize}
\end{dfn}

\begin{prp}\label{PRP.AF_Cstar_Trace_Dualisation_II}\hspace{1cm}
\begin{itemize}
\item[1)] For all $\mu\in A^{*}$, we have

\begin{itemize}
\item[1.1)] $\|\mu\|_{A^{*}}=\sup_{j\in\mathbb{N}}\|\mu_{j}\|_{A^{*}}=\lim_{j\in\mathbb{N}}\|\mu_{j}\|_{A^{*}}$,

\item[1.2)] $\mu=w^{*}$-$\lim_{j\in\mathbb{N}}\mu_{j}$.
\end{itemize}

\begin{reapply}
\end{reapply}

\item[2)] Let $p\in [1,\infty]$. For all $x\in L^{p}(A,\tau)$, we have

\begin{itemize}
\item[2.1)] $\|x\|_{p}=\sup_{j\in\mathbb{N}}\|x_{j}\|_{p}=\lim_{j\in\mathbb{N}}\|x_{j}\|_{p}$,

\item[2.2)] $x=w^{*}$-$\lim_{j\in\mathbb{N}}x_{j}$.
\end{itemize}

\begin{reapply}
\end{reapply}

\item[3)] For all $x\in L^{\infty}(A,\tau)$, we have $x=\bds$-$\lim_{j\in\mathbb{N}}x_{j}=\bdw$-$\lim_{j\in\mathbb{N}}x_{j}$.
\end{itemize}
\end{prp}
\begin{proof}
We directly verify $1.1)$ and $2.1)$. They ensure uniform boundedness upon testing for $1.2)$, $2.2)$ and $3)$ on $A_{0}$. We conclude by density in each case.
\end{proof}

\begin{rem}\label{REM.AF_Cstar_Trace_Dualisation}
Let $j\in\mathbb{N}$. For all $x\in L^{2}(A,\tau)$, we have $x_{j}=\pi_{j}^{A}(x)$. Note Theorem \ref{THM.AF_Cstar_Bimodule_CLRA_SR} furthermore generalises strong convergence as per $3)$ in Proposition \ref{PRP.AF_Cstar_Trace_Dualisation_II} to strong resolvent convergence of positive and suitably integrable measurable operators under canonical left-~and right-actions of AF-$C^{*}$-bimodules.
\end{rem}

Following Notation \ref{NTN.AF_Cstar_Trace_Dualisation}, we treat restriction as single operation even if domains vary or identified with duals via musical isomorphisms. We use Notation \ref{NTN.PO}. For all $j\in\mathbb{N}$, $A_{j,h}^{*}$ denotes the real and $A_{j,+}^{*}$ the positive elements in $A_{j}^{*}$.

\begin{prp}\label{PRP.AF_Cstar_Trace_Dualisation_III}
For all $j\leq k$ in $\mathbb{N}$, we have

\begin{itemize}
\item[1)] $A_{j,+}^{*}\subset A_{k,+}^{*}\subset A_{+}^{*}$ and $\SII(A_{j})\subset\SII(A_{k})\subset\mathcal{S}^{\NI}(A)$,

\item[2)] $\resj\big(A_{+}^{*}\big)\subset A_{j,+}^{*}$ and $\resj\big(A_{k,+}^{*}\big)\subset A_{j,+}^{*}$.
\end{itemize}
\end{prp}
\begin{proof}
For all $j\in\mathbb{N}$ and $\mu\in A_{j}^{*}$, get $\lim_{k\in\mathbb{N}}\mu(1_{A_{k}})=\|\mu\|_{A_{j}^{*}}$. Apply Proposition \ref{PRP.AF_Cstar_Trace_Dualisation_I}.
\end{proof}

Semi-finite $W^{*}$-subalgebras have unique noncommutative conditional expectations as per Remark \ref{REM.Cstar_Trace_Abstract_Dualisation}. In the unital finite-dimensional case, they are averages of unitary conjugations \cite{COL.Car.2010.Quantum_Entropy}\cite{ART.Hia_Pet.2012.Quasi_Entropy_I}\cite{ART.Hia_Pet.2013.Quasi_Entropy_II} as per Proposition \ref{PRP.AF_Cstar_Trace_NCE_I}. Proposition \ref{PRP.AF_Cstar_Trace_NCE_II} generalises to the non-unital finite-dimensional one. In Subsection \ref{SSEC.NCDS_NCD_QE}, Lemma \ref{LEM.QE_Fin_II} moreover uses Proposition \ref{PRP.AF_Cstar_Trace_NCE_II} to show monotonicity of quasi-entropies.\par

%NEWPAGE
%NEWPAGE
%NEWPAGE

\pagebreak

%NEWPAGE
%NEWPAGE
%NEWPAGE

Assume $A$ is finite-dimensional. Let $N\subset A$ be a $C^{*}$-subalgebra. The commutant $N'\subset A$ of $N$ in $A$ is a $C^{*}$-algebra. The unitaries $\UII(A)$ of $A$ are a compact group, hence $\UII\lc{}N'\rc{}=\UII(A)\cap N'$ is one. We know $\UII\lc{}N'\rc{}=\UII\lc{}N[1_{A}]'\rc$ since $N'=N[1_{A}]'$. We therefore have $1_{N}^{\perp}=1_{A}-1_{N}$ and $N[1_{A}]=N\oplus\langle 1_{N}^{\perp}\rangle_{\mathbb{C}}$ using direct sum of $C^{*}$-algebras. Finally, we use the rescaling map $\kappa_{N}^{A}:A\longrightarrow\mathbb{C}$ \lc{}cf.~Definition \ref{DFN.Wstar_Trace_NCE_Kappa}\rc{}.

\begin{prp}\label{PRP.AF_Cstar_Trace_NCE_I}
Assume $A$ is finite-dimensional. Let $N\subset A$ be a $C^{*}$-subalgebra and $\nu_{N}$ the Haar probability measure on $\UII\lc{}N'\rc$. The noncommutative conditional expectation from $A$ to $N[1_{A}]$ is given by $\pi_{N[1_{A}]}^{A}(x)=\int_{\UII\lc{}N'\rc{}}uxu^{*}d\nu_{N}$ for all $x\in A$.
\end{prp}
\begin{proof}
We have $N[1_{A}]=\UII\lc{}N[1_{A}]'\rc{}'=\UII\lc{}N'\rc{}'$ \lc{}cf.~Proposition \ref{PRP.Wstar_Generated}\rc{}. For all $x\in A$, set $P(x):=\int_{\UII\lc{}N'\rc{}}uxu^{*}d\nu_{N}(u)$. Note transformation of Haar measures under group actions implies $P(x)\in\UII\lc{}N'\rc{}'=N[1_{A}]$. Using uniqueness of the noncommutative conditional expectation from $A$ to $N[1_{A}]$ \lc{}cf.~Definition \ref{DFN.Wstar_Trace_NCE}\rc{}, we directly verify our claim.
\end{proof}

\begin{prp}\label{PRP.AF_Cstar_Trace_NCE_II}
Assume $A$ is finite-dimensional. Let $N\subset A$ be a $C^{*}$-subalgebra. We have $\pi_{N}^{A}=\pi_{N[1_{A}]}^{A}-\kappa_{N}^{A}1_{N}^{\perp}$. For all $x\in A$, this Hilbert space projection is given by

\begin{align}\label{EQ.PRP.AF_Cstar_Trace_NCE_II_1}
\pi_{N}^{A}(x)=
\begin{cases}
\int_{\UII\lc{}N'\rc{}}uxu^{*}d\nu_{N}+\tau(1_{N}^{\perp})^{-1}\tau\vstretch{0.9375}{\bigg(}\pi_{\langle 1_{N}^{\perp}\rangle_{\mathbb{C}}}^{A}(x)\vstretch{0.9375}{\bigg)}\cdot 1_{N}^{\perp} & \If\ 1_{A}\neq 1_{N}, \\
\int_{\UII\lc{}N'\rc{}}uxu^{*}d\nu_{N} & \Else.
\end{cases}
\end{align}
\end{prp}
\begin{proof}
Apply Proposition \ref{PRP.AF_Cstar_Trace_NCE_I} and $1)$ in Proposition \ref{PRP.Wstar_Trace_NCE_II}.
\end{proof}

%%%%%%%%%%%%%
%%% PART %%%%
%%%%%%%%%%%%%

\subsubsection*{Definition using local $^{*}$-homomorphisms}

We use local $^{*}$-homomorphisms to define AF-$C^{*}$-bimodule actions. In addition, Lemma \ref{LEM.AF_Cstar_Local_Hom} and Corollary \ref{COR.AF_Cstar_Local_Hom} show local $^{*}$-homomorphisms extend to a $^{*}$-homomorphism of spaces of measurable operators s.t.~$L^{p}$-norms are preserved. Definition \ref{DFN.AF_Cstar_Bimodule} gives AF-$C^{*}$-bimodules.

\begin{dfn}\label{DFN.AF_Cstar_Local_Hom}
Let $(A,\tau)$ and $(B,\omega)$ be tracial AF-$C^{*}$-algebras. Let $\phi:A\longrightarrow B$ be a $^{*}$-homomorphism. 

\begin{itemize}
\item[1)] For all $j\in\mathbb{N}$ s.t.~$\phi(A_{j})\subset B_{j}$, set $\phi_{j}:=\phi\vert_{A_{j}}:(A_{j},\|.\|_{\tau})\longrightarrow (B_{j},\|.\|_{\omega})$ and $\phi_{j}^{*}:=\big(\phi_{j}\big)^{*}$ for its adjoint.

\item[2)] We say that $\phi$ satisfies

\begin{itemize}
\item[2.1)] local unitality if $\phi(1_{A_{j}})=1_{B_{j}}$ for all $j\in\mathbb{N}$,

\item[2.2)] locality if $\phi(A_{j})\subset B_{j}$ and $\phi_{k}^{*}(B_{j})\subset A_{j}$ for all $j\leq k$ in $\mathbb{N}$,

\item[2.3)] extendability if $\sup_{j\in\mathbb{N}}\|\phi_{j}^{*}(1_{B_{j}})\|_{A},\sup_{j\in\mathbb{N}}\|\phi_{j}^{*}\|_{\BII(B_{j},A_{j})}<\infty$.
\end{itemize}

\begin{reapply}
\end{reapply}

\item[3)] We call $\phi$ local if it satisfies locality, local unitality and extendability.
\end{itemize}
\end{dfn}

\begin{bsp}
For all tracial AF-$C^{*}$-algebras $(A,\tau)$, its identity map $\id_{A}$ is local.
\end{bsp}

\begin{prp}\label{PRP.AF_Cstar_Local_Unitality}
Let $(A,\tau)$ be a tracial AF-$C^{*}$-algebra s.t.~$\tau<\infty$. If $T\in\BII\lc{}L^{2}(A,\tau)\rc$ s.t.~$T(1_{A_{j}})=1_{A_{j}}$ for all $j\in\mathbb{N}$, then $T(1_{A})=1_{A}$.
\end{prp}
\begin{proof}
Since $\tau<\infty$, get $1_{A}\in L^{2}(A,\tau)$ and $A_{0}\subset L^{2}(A,\tau)$. Thus $2)$ in Proposition \ref{PRP.AF_Cstar_Unit} implies  $1_{A}=\s$-$\lim_{j\in\mathbb{N}}1_{A_{j}}$, hence $1_{A}=\|.\|_{\tau}$-$\lim_{j\in\mathbb{N}}1_{A_{j}}$.
\end{proof}

Let $(A,\tau)$ and $(B,\omega)$ be tracial AF-$C^{*}$-algebras. Note $2)$ in Proposition \ref{PRP.AF_Cstar_Trace_I} shows $A_{0}\subset L^{2}(A,\tau)$ is $\|.\|_{\tau}$-dense and $B_{0}\subset L^{2}(B,\omega)$ is $\|.\|_{\omega}$-dense. We use such density for Equation \ref{EQ.SSEC.NCDS_AF_BIM_2}. Let $\phi:A\longrightarrow B$ be a $^{*}$-homomorphism. If $\phi$ satisfies locality, then 

\begin{align}\label{EQ.SSEC.NCDS_AF_BIM_1}
\restr{0.925}{\phi_{k}^{*}}{B_{j}}=\phi_{j}^{*}
\end{align}

\noindent for all $j\leq k$ in $\mathbb{N}$. Assume $\phi$ is local. For all $x\in A\cap L^{2}(A,\tau)$ and $u\in L^{2}(B,\omega)$, we use density and extendability to get $\phi(x)\in L^{2}(B,\omega)$ and

\begin{align}\label{EQ.SSEC.NCDS_AF_BIM_2}
\lgl\phi(x),u\rgl_{\omega}\leq \|x\|_{\tau}\cdot \sup_{j\in\mathbb{N}}\hspace{0.025cm} \|\phi_{j}^{*}\|_{\BII(B_{j},A_{j})}\hspace{0.025cm} \|u\|_{\omega}<\infty.
\end{align}

\noindent Equation \ref{EQ.SSEC.NCDS_AF_BIM_2} yields extension $\phi^{2}\in\BII(L^{2}(A,\tau),L^{2}(B,\omega))$ of $\phi$ with norm

\begin{align}\label{EQ.SSEC.NCDS_AF_BIM_3}
\|\phi\|_{2}:=\|\phi^{2}\|_{\BII(L^{2}(A,\tau),L^{2}(B,\omega))}\leq\sup_{j\in\mathbb{N}}\hspace{0.025cm} \|\phi_{j}^{*}\|_{\BII(B_{j},A_{j})}.
\end{align}

\begin{dfn}\label{DFN.AF_Cstar_Local_Hom_L2}
Let $(A,\tau)$ and $(B,\omega)$ be tracial AF-$C^{*}$-algebras. Let $\phi:A\longrightarrow B$ be a local $^{*}$-homomorphism. We call $\phi^{2}\in\BII(L^{2}(A,\tau),L^{2}(B,\omega))$ the $L^{2}$-extension of $\phi$. Let $\phi^{2,*}:=\big(\phi^{2}\big)^{*}$ be its adjoint.
\end{dfn}

\begin{prp}\label{PRP.AF_Cstar_Local_Hom_I}
Let $(A,\tau)$ and $(B,\omega)$ be tracial AF-$C^{*}$-algebras. Let $\phi:A\longrightarrow B$ be a local $^{*}$-homomorphism.

\begin{itemize}
\item[1)] $\phi^{*}\circ\flat\vert_{B_{0}}=\flat\circ\lc\restr{0.925}{\phi^{2,*}}{B_{0}}\rc$ using Banach dual $\phi^{*}:B^{*}\longrightarrow A^{*}$.

\item[2)] $\phi^{2,*}$ is positivity-preserving.

\item[3)] For all $j\in\mathbb{N}$, we have

\begin{itemize}
\item[3.1)] $\restr{0.925}{\phi^{2,*}}{B_{j}}=\phi_{j}^{*}$ and $\big[\pi_{j}^{A},\phi^{2}\big]=0$,

\item[3.2)] $\dblv{}\phi^{2,*}(u)\dblv_{A}\leq \|\phi_{j}^{*}(1_{B_{j}})\|_{A}\|u\|_{B}$ for all $u\in B_{j,h}$.
\end{itemize}

\begin{reapply}
\end{reapply}

\end{itemize}
\end{prp}
\begin{proof}
We have $3.1)$ by locality. Using $3.1)$, we directly verify $1)$ by testing on $A_{0}$ in each case. Then $1)$ shows $2)$ since $\phi$ is a $^{*}$-homomorphism and $\flat$ is positivity-preserving. For all Hilbert spaces $H$, $T\in\BII(H)_{h}$ and $C\geq 0$, we have $\| T\|_{\BII(H)}\leq C$ if and only if $-CI\leq T\leq CI$. Using the latter, note $2)$ and $3.1)$ show $3.2)$ immediately.
\end{proof}

\begin{lem}\label{LEM.AF_Cstar_Local_Hom}
Let $(A,\tau)$ and $(B,\omega)$ be tracial AF-$C^{*}$-algebras. Let $\phi:A\longrightarrow B$ be a local $^{*}$-homomorphism.

\begin{itemize}
\item[1)] There exists positivity-preserving $w^{*}$-continuous $\phi^{1}\in\BII\lc{}L^{1}(A,\tau),L^{1}(B,\omega)\rc$ with\linebreak norm $\|\phi\|_{1}:=\dblv{}\phi^{1}\dblv{}\leq 2\sup_{j\in\mathbb{N}}\|\phi_{j}^{*}(1_{B_{j}})\|_{A}$ extending $\phi$. Let $\phi^{1,*}:=\big(\phi^{1}\big)^{*}$ be its\linebreak Banach dual. We have

\begin{itemize}
\item[1.1)] $\omega\lc\phi^{1}(x)^{*}u\rc{}=\tau\lc{}x^{*}\phi^{1,*}(u)\rc$ for all $x\in L^{1}(A,\tau)$ and $u\in L^{\infty}(B,\omega)$,

\item[1.2)] $\phi_{0}^{*}:=\restr{0.925}{\phi^{1,*}}{B_{0}}=\restr{0.925}{\phi^{2,*}}{B_{0}}$.
\end{itemize}

\begin{reapply}
\end{reapply}

\item[2)] There exists normal unital $^{*}$-homomorphism $\phi^{\infty}:L^{\infty}(A,\tau)\longrightarrow L^{\infty}(B,\omega)$ with norm $\|\phi\|_{\infty}:=\dblv{}\phi^{\infty}\dblv{}=1$ extending $\phi$. Let $\phi^{\infty,*}:=\big(\phi^{\infty}\big)^{*}$ be its Banach dual. We have $\phi^{\infty,*}\circ\flat\vert_{B_{0}}=\flat\circ\phi_{0}^{*}$.

\item[3)] For all $j\in\mathbb{N}$, $\phi^{1}(x_{j})=\phi^{1}(x)_{j}$ for all $x\in L^{1}(A,\tau)$.
\end{itemize}
\end{lem}
\begin{proof}
Note $\lc\sigma\textrm{-}\rc$weak-~and $w^{*}$-convergence coincide on bounded sets \lc{}cf.~Lemma II.2.5 in \cite{BK.Tak.1979.OpAlg_I} and Proposition \ref{PRP.Wstar_Equivalence}\rc{}. We use bounded strong and bounded weak convergence \lc{}cf.~Definition \ref{DFN.Wstar_BdCon_I} and Remark \ref{REM.Wstar_BdCon_I}\rc{}. In particular, multiplication in $W^{*}$-algebras is bounded strongly continuous \lc{}cf.~Remark \ref{REM.Wstar_BdCon_II}\rc{}. We know Proposition \ref{PRP.Wstar_BdCon} applies to $A_{0}\subset L^{\infty}(A,\tau)$ and $B_{0}\subset L^{\infty}(B,\omega)$ by $\sigma$-weak density.\par
We show $1)$. Let $x\in A_{0}$ and $u\in L^{\infty}(B,\omega)$. If $\|u\|_{\infty}=1$, then Proposition \ref{PRP.Wstar_BdCon} yields $\lset{}u_{k}\rset_{k\in K}\subset B_{0}$ s.t.~$\sup_{k\in K}\dblv{}u_{k}\dblv_{B}\leq 1$ and $u=w^{*}$-$\lim_{k\in K}u_{k}$. If we furthermore apply $3.2)$ in Proposition \ref{PRP.AF_Cstar_Local_Hom_I} to $\RE(u_{k})$ and $\IM(u_{k})$ for all $k\in K$ \lc{}cf.~Proposition \ref{PRP.Wstar_NCI_IV}\rc{}, then we calculate

\begin{align*}
\babsv{1}{\omega\lc\phi(x)^{*}u\rc{}} & = \limsup_{k\in K}\hspace{0.025cm} \babsv{1}{\omega\lc{}x^{*}\phi^{2,*}(u_{k})\rc{}} \phantom{\bigg)} \\
& \leq\ \limsup_{k\in K}\hspace{0.025cm} \babsv{1}{\omega\lc{}x^{*}\phi^{2,*}\big(\RE(u_{k})\big)\rc{}}+\limsup_{k\in K}\hspace{0.025cm} \babsv{1}{\omega\lc{}x^{*}\phi^{2,*}\big(\IM(u_{k})\big)\rc{}} \phantom{\bigg)} \\
& \leq \|x\|_{1}\cdot 2\sup_{j\in\mathbb{N}}\hspace{0.025cm} \|\phi_{j}^{*}(1_{B_{j}})\|_{A}. \phantom{\bigg)}
\end{align*}

\noindent Using the above calculation, linearity and extendability of $\phi$ let us estimate

\begin{align}\label{EQ.LEM.AF_Cstar_Local_Hom_1}
\babsv{1}{\omega\lc\phi(x)^{*}u\rc{}}\leq \|x\|_{1}\cdot 2\sup_{j\in\mathbb{N}}\hspace{0.025cm} \|\phi_{j}^{*}(1_{B_{j}})\|_{A}\|u\|_{\infty}<\infty.
\end{align}

\noindent Equation \ref{EQ.LEM.AF_Cstar_Local_Hom_1} yields extension $\phi^{1}\in\BII\lc{}L^{1}(A,\tau),L^{1}(B,\omega)\rc$ of $\phi$ with norm estimate as claimed. Using boundedness and $3.1)$ in Proposition \ref{PRP.AF_Cstar_Local_Hom_I}, we directly verify $1.1)$ by testing on $A_{0}$ and $B_{0}$. Note $1.1)$ implies $1.2)$. Using properties of the modified standard pairing \lc{}cf.~Proposition \ref{PRP.Wstar_NCI_VI}\rc{}, we additionally see $1.1)$ implies positivity-preservation and $w^{*}$-continuity of $\phi^{1}$. Altogether, get $1)$.\par

%NEWPAGE
%NEWPAGE
%NEWPAGE

\pagebreak

%NEWPAGE
%NEWPAGE
%NEWPAGE

We show $2)$. Using $1.1)$, traciality lets us calculate

\begin{align}\label{EQ.LEM.AF_Cstar_Local_Hom_2}
\omega\lc\lc\phi(x)-\phi(y)\rc^{*}u\rc{}=\tau\lc\lc{}x^{*}-y^{*}\rc\phi^{1,*}(u)\rc{}=\overline{\tau\lc\lc{}x-y\rc\phi^{1,*}(u)^{*}\rc{}}
\end{align}

\noindent for all $x,y\in A_{0}$ and $u\in L^{\infty}(B,\omega)$. For all $x\in L^{\infty}(A,\tau)$, Proposition \ref{PRP.Wstar_BdCon} shows there exists bounded net $\{x_{k}\}_{k\in K}\subset A_{0}$ s.t.~$x=w^{*}$-$\lim_{k\in K}x_{k}$. Using the latter in order to test on $A_{0}$, Equation \ref{EQ.LEM.AF_Cstar_Local_Hom_2} yields positivity-preserving and $w^{*}$-continuous linear extension $\phi^{\infty}:L^{\infty}(A,\tau)\longrightarrow L^{\infty}(B,\omega)$ of $\phi$ by boundedness. For all $u\in B_{0}$, $2)$ in Proposition \ref{PRP.AF_Cstar_Local_Hom_I} implies $\phi^{2,*}\lc{}uu^{*}\rc{}=a_{u}^{2}\geq 0$ for a self-adjoint $a_{u}\in A_{0}$. For all $x,y\in A_{0}$ and $u\in B_{0}$, get

\begin{align}\label{EQ.LEM.AF_Cstar_Local_Hom_3}
\dblv{}\lc\phi(x)-\phi(y)\rc{}u\dblv_{\omega}^{2}=\lgl\phi\lc\lc{}x-y\rc^{*}\lc{}x-y\rc\rc{},uu^{*}\rgl_{\omega}=\dblv{}\lc{}x-y\rc{}a_{u}\dblv_{\tau}^{2}.
\end{align}

\noindent We know $B_{0}\subset L^{2}(B,\omega)$ is $\|.\|_{\omega}$-dense. Thus Equation \ref{EQ.LEM.AF_Cstar_Local_Hom_3} shows $\phi^{\infty}$ is bounded strongly convergent, hence $\phi$ is a $^{*}$-homomorphism. Ergo $\phi^{\infty}$ is normal by Proposition \ref{PRP.Wstar_Normal}, as well as unital by local unitality and $2)$ in Proposition \ref{PRP.AF_Cstar_Unit}. Get $\dblv{}\phi^{\infty}\dblv{}=1$. Using $1)$ in Proposition \ref{PRP.AF_Cstar_Local_Hom_I}, we directly verify $\phi^{\infty,*}\circ\flat\vert_{B_{0}}=\flat\circ\phi_{0}^{*}$. Altogether, get $2)$.\par
We show $3)$. Following Remark \ref{REM.Wstar_Trace_MSP}, noncommutative $L^{1}$-spaces are subsets of their Banach double dual spaces. Following Notation \ref{NTN.AF_Cstar_Trace_Dualisation}, get $3)$ if

\begin{align}\label{EQ.LEM.AF_Cstar_Local_Hom_4}
\phi^{1}(x_{j})=\phi^{1}\lc\sharp\resj x^{\flat}\rc{}=\sharp\resj\phi^{1}(x)^{\flat}=\phi^{1}(x)_{j}
\end{align}

\noindent for all $x\in L^{1}(A,\tau)$ and $j\in\mathbb{N}$. Using $1.1)$ and $\phi^{2,*}\lc{}B_{0}\rc\subset A_{0}$, get Equation \ref{EQ.LEM.AF_Cstar_Local_Hom_4} at once.
\end{proof}

\begin{cor}\label{COR.AF_Cstar_Local_Hom}
Let $(A,\tau)$ and $(B,\omega)$ be tracial AF-$C^{*}$-algebras. Let $\phi:A\longrightarrow B$ be a local $^{*}$-homomorphism. There exists unital $^{*}$-homomorphism $\phi:L^{0}(A,\tau)\longrightarrow L^{0}(B,\omega)$ continuous in measure topologies extending $\phi$.
\end{cor}
\begin{proof}
We use uniform structures \lc{}cf.~Equation \ref{EQ.SSEC.B_SMO_NCI_1}\rc{}. If $p\in L^{\infty}(A,\tau)$ is a projection, then $\phi\lc{}p\rc\in L^{\infty}(B,\omega)$ is a projection and $\phi\lc{}p^{\perp}\rc{}=\phi\lc{}p\rc^{\perp}$ by $2)$ in Lemma \ref{LEM.AF_Cstar_Local_Hom}. If furthermore $p^{\perp}\in L^{1}(A,\tau)$, then $\phi\lc{}p\rc^{\perp}\in L^{1}(B,\omega)$ by $1)$ in Lemma \ref{LEM.AF_Cstar_Local_Hom}. Let $\varepsilon,\delta>0$. If $x\in L^{\infty}(A,\tau)$ and $p\in L^{\infty}(A,\tau)$ projection s.t.~$\|xp\|_{\infty}<\varepsilon$ and $\tau\lc{}p^{\perp}\rc{}<\delta$, then $\dblv{}\phi(x)\phi\lc{}p\rc\dblv_{\infty}\leq \|xp\|_{\infty}<\varepsilon$ by $2)$ in Lemma \ref{LEM.AF_Cstar_Local_Hom} and $\tau\lc\phi\lc{}p\rc^{\perp}\rc\leq \|\phi\|_{1}\tau\lc{}p^{\perp}\rc{}<\|\phi\|_{1}\delta$ by $1)$ in Lemma \ref{LEM.AF_Cstar_Local_Hom}.\par
For all $\varepsilon,\delta>0$, get $\phi\lc{}N\lc\varepsilon,\delta\rc\rc\subset N\lc\varepsilon,\|\phi\|_{1}\delta\rc$. Thus $\phi$ maps bounded Cauchy nets to bounded Cauchy nets in measure topologies, hence extends as claimed. For this, note algebra involution and multiplication in spaces of measurable operators are continuous in measure topology on bounded subsets \lc{}cf.~Theorem IX.2.2 in \cite{BK.Tak.2003.OpAlg_II} or \cite{ART.Nel.1974.Wstar_Integration}\rc{}.
\end{proof}

\begin{ntn}\label{NTN.AF_Cstar_Local_Hom}
All extensions of local $^{*}$-homomorphisms as discussed above coincide on intersections of domains. Unless stated otherwise, we do not discern extensions. For all local $^{*}$-homomorphisms $\phi$, we write $\phi$ for extensions and $\phi^{*}$ for their adjoints.
\end{ntn}

%NEWPAGE
%NEWPAGE
%NEWPAGE

\pagebreak

%NEWPAGE
%NEWPAGE
%NEWPAGE

Definition \ref{DFN.AF_Cstar_Bimodule} gives AF-$C^{*}$-bimodules. Proposition \ref{PRP.AF_Cstar_Bimodule} moreover shows they induce symmetric $W^{*}$-bimodules as per Definition \ref{DFN.CWstar_Bimodule}, i.e.~as per Definition \ref{DFN.CWstar_Bimodule_Induced} in all further use below. Let $\phi,\bpsi:A\longrightarrow B$ be local $^{*}$-homomorphisms. We define bounded $A$-bimodule action on $B$ by setting

\begin{align}\label{EQ.SSEC.NCDS_AF_BIM_4}
xuy:=\phi(x)u\bpsi(y)    
\end{align}

\noindent for all $x,y\in A$ and $u\in B$. Applying $2)$ in Lemma \ref{LEM.AF_Cstar_Local_Hom}, we extend Equation \ref{EQ.SSEC.NCDS_AF_BIM_4} to a normal, unital and bounded $L^{\infty}(A,\tau)$-bimodule action on $L^{2}(B,\omega)$. Symmetry requires anti-linear involution, with algebra involution the canonical example.

\begin{dfn}
Let $(A,\tau)$ be a tracial AF-$C^{*}$-algebra. We call anti-linear isometric involution $\gamma:L^{2}(A,\tau)\longrightarrow L^{2}(A,\tau)$ local if $\gamma(A_{j})\subset A_{j}$ and $\gamma(1_{A_{j}})=1_{A_{j}}$ for all $j\in\mathbb{N}$.
\end{dfn}

\begin{bsp}\label{BSP.AF_Cstar_Bimodule_Canonical}
For all tracial AF-$C^{*}$-algebras $(A,\tau)$, note the algebra involution on $A$ itself extends to a local anti-linear isometric involution $\Adj:L^{2}(A,\tau)\longrightarrow L^{2}(A,\tau)$ since $A_{0}\subset\mathfrak{m}_{\tau}$ \lc{}cf.~Proposition \ref{PRP.Wstar_NCI_I}\rc{}.
\end{bsp}

\begin{dfn}\label{DFN.AF_Cstar_Bimodule}
Let $(A,\tau)$ and $(B,\omega)$ be tracial AF-$C^{*}$-algebras. Let $\phi,\bpsi:A\longrightarrow B$ be local $^{*}$-homomorphisms. Let $\gamma:L^{2}(B,\omega)\longrightarrow L^{2}\lc{}B,\gamma\rc$ be a local anti-linear isometric involution.

\begin{itemize}
\item[1)] The AF-$A$-bimodule action given by Equation \ref{EQ.SSEC.NCDS_AF_BIM_4} is called the $(\phi,\bpsi)$-action of $A$ on $B$. Its extension to $L^{\infty}(A,\tau)$ acting on $L^{2}(B,\omega)$ is called normal extension.

\item[2)] We say that the $(\phi,\bpsi)$-action satisfies $\gamma$-symmetry if

\begin{align}\label{EQ.DFN.AF_Cstar_Bimodule_1}
\gamma\lc\phi(x)u\bpsi(y)\rc{}=\phi(y^{*})\gamma(u)\bpsi(x^{*})    
\end{align}

\begin{reapply}
\end{reapply}

\noindent for all $x,y\in A$ and $u\in B$.

\item[3)] We call $(\phi,\bpsi,\gamma)$ an AF-$A$-bimodule structure on $B$, or AF-$A$-bimodule over $B$ if the $(\phi,\bpsi)$-action satisfies $\gamma$-symmetry. We further call $(\phi,\bpsi,\gamma)$ an AF-$C^{*}$-bimodule.

\item[4)] Let $(\phi,\bpsi,\gamma)$ be an AF-$A$-bimodule structure on $B$. For all $j\in\mathbb{N}$, we consider tracial AF-$C^{*}$-algebras $(A_{j},\tau)$ and $(B_{j},\omega)$ as per Definition \ref{DFN.AF_Cstar_Trace_Restriction}. We furthermore call $(\phi_{j},\bpsi_{j},\gamma_{j}):=(\phi\vert_{A_{j}},\bpsi\vert_{A_{j}},\gamma\vert_{A_{j}})$ the induced AF-$A_{j}$-bimodule structure on $B_{j}$.

\item[5)] Assume $\phi=\psi=\id_{A}$ and further $\gamma=\Adj$ as per Example \ref{BSP.AF_Cstar_Bimodule_Canonical} for $A$ as anti-linear involution. We call $\lc\id_{A},\id_{A},\Adj\rc$ the canonical AF-$A$-bimodule structure on $A$.
\end{itemize}
\end{dfn}

\begin{prp}\label{PRP.AF_Cstar_Bimodule_Well_Defined}
Let $(A,\tau)$ and $(B,\omega)$ be tracial AF-$C^{*}$-algebras. If $(\phi,\bpsi,\gamma)$ is an AF-$A$-bimodule structure on $B$, then we have AF-$A_{j}$-bimodule structure $(\phi_{j},\bpsi_{j},\gamma_{j})$ on $B_{j}$ for all $j\in\mathbb{N}$. If $\phi=\bpsi=\id_{A}$, then we have AF-$A$-bimodule structure $\lc\id_{A},\id_{A},\Adj\rc$ on $A$.
\end{prp}
\begin{proof}
By construction of either case.
\end{proof}

%NEWPAGE
%NEWPAGE
%NEWPAGE

\pagebreak

%NEWPAGE
%NEWPAGE
%NEWPAGE

\begin{dfn}\label{DFN.CWstar_Bimodule}
Let $A$ be a $C^{*}$-algebra and $\phi,\bpsi:A\longrightarrow\BII(H)$ $^{*}$-homomorphisms. For all $x,y\in A$, let $\lb\phi(x),\bpsi(y)\rb{}=0$. Let $H$ be a Hilbert space and $\gamma:H\longrightarrow H$ an anti-linear isometric involution. We define bounded $A$-bimodule action by setting

\begin{align}\label{EQ.DFN.CWstar_Bimodule_1}
xuy:=\phi(x)\lc\bpsi(y)(u)\rc{}
\end{align}

\noindent for all $x,y\in A$ and $u\in H$. The $A$-bimodule action given by Equation \ref{EQ.DFN.CWstar_Bimodule_1} is called the $(\phi,\bpsi)$-action of $A$ on $H$. We say that the $(\phi,\bpsi)$-action satisfies $\gamma$-symmetry if

\begin{align}\label{EQ.DFN.CWstar_Bimodule_2}
\gamma\lc{}xuy\rc{}=y^{*}\gamma(u)x^{*}    
\end{align}

\noindent for all $x,y\in A$ and $u\in H$. We call $H$ a symmetric $C^{*}$-bimodule over $A$ if the bounded $A$-bimodule action satisfies $\gamma$-symmetry. We call $H$ a symmetric $W^{*}$-bimodule if $A=M$ is a $W^{*}$-algebra, $H$ is a symmetric $C^{*}$-bimodule over $M$, and $\phi,\bpsi$ are normal unital.
\end{dfn}

\begin{prp}\label{PRP.AF_Cstar_Bimodule}
Let $(A,\tau)$ and $(B,\omega)$ be tracial AF-$C^{*}$-algebras. If $(\phi,\bpsi,\gamma)$ is an AF-$A$-bimodule structure on $B$, then $L^{2}(B,\omega)$ equipped with the normal extension of the $(\phi,\bpsi)$-action and $\gamma$ is a symmetric $W^{*}$-bimodule over $L^{\infty}(A,\tau)$.
\end{prp}
\begin{proof}
Note $(\phi,\bpsi)$-action as per Equation \ref{EQ.SSEC.NCDS_AF_BIM_4} is $\lc{}L^{\phi},R^{\bpsi}\rc$-action as per Definition \ref{DFN.CWstar_Bimodule} and Definition \ref{DFN.AF_Cstar_Bimodule_CLRA}. Thus $L^{2}(B,\omega)$ is symmetric $C^{*}$-bimodule over $A$ for $\gamma$ anti-linear involution. We extend by $2)$ in Lemma \ref{LEM.AF_Cstar_Local_Hom} and bounded strong continuity of $\gamma$.
\end{proof}

\begin{dfn}\label{DFN.CWstar_Bimodule_Induced}
Let $(A,\tau)$ and $(B,\omega)$ be tracial AF-$C^{*}$-algebras. Let $(\phi,\bpsi,\gamma)$ be an AF-$A$-bimodule structure on $B$. We equip $L^{2}(B,\omega)$ with the normal extension of the $(\phi,\bpsi)$-action and $\gamma$. We call $L^{2}(B,\omega)$ the induced symmetric $W^{*}$-bimodule of $(\phi,\bpsi,\gamma)$.
\end{dfn}

%%%%%%%%%%%%%%%%%%%
%%% SUBSECTION %%%%
%%%%%%%%%%%%%%%%%%%

\subsection[Functional calculus for AF-$C^{*}$-bimodules]{Functional calculus for AF-$\mathbf{C}^{*}$-bimodules}\label{SSEC.NCDS_AF_FC}

We discuss canonical left-~and right-actions of AF-$C^{*}$-bimodules. Theorem \ref{THM.JFC_Compression} states sufficient conditions for compressing joint functional calculus pulled-back along such canonical left-~and right-actions to joint functional calculus of self-adjoint measurable operators. This defines the compressed pulled-back joint functional calculus of extended AF-$C^{*}$-bimodule actions. In Subsection \ref{SSEC.NCDS_NCD_Operators}, we use the latter to construct and control noncommutative division operators of positive measurable operators.

%%%%%%%%%%%%%
%%% PART %%%%
%%%%%%%%%%%%%

\subsubsection*{Canonical left-~and right-actions of AF-$C^{*}$-bimodules}

Tracial $W^{*}$-algebras determine canonical left-~and right-actions of their spaces of measurable operators on noncommutative $L^{2}$-space \lc{}cf.~Definition \ref{DFN.Wstar_CLRA}\rc{}. Compression of AF-$C^{*}$-bimodules uses semi-finite $W^{*}$-algebras and canonical inclusions of spaces of measurable operators as per Theorem \ref{THM.Wstar_L2Red} \lc{}cf.~Definition \ref{DFN.Wstar_Trace_SF_Subalg} and Remark \ref{REM.Wstar_L2Red}\rc{}. For details on underlying compression maps, we refer to Subsection \ref{SSEC.B_JFC_L2Red}.\par

%NEWPAGE
%NEWPAGE
%NEWPAGE

\pagebreak

%NEWPAGE
%NEWPAGE
%NEWPAGE

Pulling back along AF-$C^{*}$-bimodule actions defines canonical left-~and right-actions of AF-$C^{*}$-bimodules. We use the opposite algebra construction \lc{}cf.~Definition \ref{DFN.Oppalg}\rc{}. Let $(A,\tau)$ and $(B,\omega)$ be tracial AF-$C^{*}$-algebras. Let $(\phi,\bpsi,\gamma)$ be an AF-$A$-bimodule structure on $B$. Corollary \ref{COR.AF_Cstar_Local_Hom} lets us extend to unital $^{*}$-homomorphisms $\phi:L^{0}(A,\tau)\longrightarrow L^{0}(B,\omega)$ and $\bpsi:L^{0}(A,\tau)^{\op}\longrightarrow L^{0}(B,\omega)^{\op}$. We have canonical left-~and right-action

\begin{align}\label{EQ.SSEC.NCDS_AF_FC_1}
L_{L^{\infty}(B,\omega)}:L^{0}(B,\omega)\longrightarrow\UBII\lc{}L^{2}(B,\omega)\rc{},\ R_{L^{\infty}(B,\omega)}:L^{0}(B,\omega)^{\op}\longrightarrow\UBII\lc{}L^{2}(B,\omega)\rc{} 
\end{align}

\noindent of $L^{0}(B,\omega)$ on $L^{2}(B,\omega)$ \lc{}cf.~Definition \ref{DFN.Wstar_CLRA} and Definition \ref{DFN.Wstar_CLRA_Unbd_Representation}\rc{}. Moreover, we know they are unbounded faithful unital $^{*}$-representations \lc{}cf.~Corollary \ref{COR.Wstar_CLRA_III}\rc{}. For details on canonical left-~and right-actions, we refer to Subsection \ref{SSEC.B_SMO_CLRA}.

\begin{dfn}\label{DFN.AF_Cstar_Bimodule_CLRA}
Set $L^{\phi}:=L_{L^{\infty}(B,\omega)}\circ\phi$ and $R^{\bpsi}:=R_{L^{\infty}(B,\omega)}\circ\bpsi$. We thereby define canonical left-~and right-action

\begin{align}\label{EQ.DFN.AF_Cstar_Bimodule_CLRA_1}
L^{\phi}:L^{0}(A,\tau)\longrightarrow\UBII\lc{}L^{2}(B,\omega)\rc{},\ R^{\bpsi}:L^{0}(A,\tau)^{\op}\longrightarrow\UBII\lc{}L^{2}(B,\omega)\rc{}
\end{align}

\noindent of $L^{0}(A,\tau)$ on $L^{2}(B,\omega)$.
\end{dfn}

\begin{ntn}\label{NTN.AF_Cstar_Bimodule_CLRA}
For all $x\in L^{0}(A,\tau)$, we write $L_{x}^{\phi}:=L^{\phi}(x)$ and $R_{x}^{\bpsi}:=R^{\bpsi}(x)$. We suppress $\phi$ and $\bpsi$ in Definition \ref{DFN.AF_Cstar_Bimodule_CLRA} if $\phi=\bpsi=\id_{A}$.
\end{ntn}

\begin{bsp}\label{BSP.AF_Cstar_Bimodule_Canonical_CLRA}
In the setting of $5)$ in Definition \ref{DFN.AF_Cstar_Bimodule}, note Definition \ref{DFN.AF_Cstar_Bimodule_CLRA} is in fact canonical left-~and right-action of $L^{0}(A,\tau)$ on $L^{2}(A,\tau)$. 
\end{bsp}

Proposition \ref{PRP.AF_Cstar_Bimodule_CLRA_I} shows canonical left-~and right-actions as per Definition \ref{DFN.AF_Cstar_Bimodule_CLRA} are unbounded faithful unital $^{*}$-representations. Restriction to the bounded case yields induced symmetric $W^{*}$-bimodule actions. Proposition \ref{PRP.AF_Cstar_Bimodule_CLRA_II} uses bounded measurable functional calculus of self-adjoint measurable operator \lc{}cf.~Definition \ref{DFN.Wstar_CLRA_FC_III}\rc{}. The latter ensures positivity-preservation and shows parts of Lemma \ref{LEM.AF_Cstar_Bimodule_Compression_I}.

\begin{prp}\label{PRP.AF_Cstar_Bimodule_CLRA_I}
For all $x\in L^{0}(A,\tau)$, $L_{x}^{\phi}$ and $R_{x}^{\bpsi}$ are densely defined closed operators on $L^{2}(M,\tau)$. For all $x,y\in L^{0}(B,\omega)$ and $\lambda\in\mathbb{C}$, we have

\begin{itemize}
\item[1)] $L_{\lambda_{1}x+\lambda_{2}y}^{\phi}=\overline{\lambda_{1}L_{x}^{\phi}+\lambda_{2}L_{y}^{\phi}}$ and $R_{\lambda_{1}x+\lambda_{2}y}^{\bpsi}=\overline{\lambda_{1}R_{x}^{\bpsi}+\lambda_{2}R_{y}^{\bpsi}}$,

\item[2)] $L_{xy}^{\phi}=\overline{L_{x}^{\phi}L_{y}^{\phi}}$ and $R_{xy}^{\bpsi}=\overline{R_{y}^{\bpsi}R_{x}^{\bpsi}}$,

\item[3)] $L_{x^{*}}^{\phi}=\big(L_{x}^{\phi}\big)^{*}$ and $R_{x^{*}}^{\bpsi}=\big(R_{x}^{\bpsi}\big)^{*}$.
\end{itemize}
\end{prp}
\begin{proof}
Apply Corollary \ref{COR.AF_Cstar_Local_Hom} and Corollary \ref{COR.Wstar_CLRA_III}.
\end{proof}

%NEWPAGE
%NEWPAGE
%NEWPAGE

\pagebreak

%NEWPAGE
%NEWPAGE
%NEWPAGE

\begin{prp}\label{PRP.AF_Cstar_Bimodule_CLRA_II}
For all $x,y\in L^{0}(A,\tau)_{h}$, we have

\begin{itemize}
\item[1)] $L_{x}^{\phi},R_{y}^{\bpsi}\in\UBII\lc{}L^{2}(B,\omega)\rc_{+}$ commute strongly,

\item[2)] $L^{\phi}\lc\Gamma_{x,L^{\infty}(A,\tau)}\lc{}R_{\pm i}\rc\rc{}=R_{\pm i}\big(L_{x}^{\phi}\big)$ and $R^{\bpsi}\lc\Gamma_{y,L^{\infty}(A,\tau)}\lc{}R_{\pm i}\rc\rc{}=R_{\pm i}\big(R_{y}^{\bpsi}\big)$.
\end{itemize}
\end{prp}
\begin{proof}
Note $R_{\pm i}$ are resolvents in $\pm i$ \lc{}cf.~Notation \ref{NTN.Resolvents}\rc{}. Let $x,y\in L^{0}(A,\tau)_{+}$. Then $\phi(x),\bpsi(y)\in L^{0}(B,\omega)_{h}$ by Corollary \ref{COR.AF_Cstar_Local_Hom}. Get $\Gamma_{x,L^{\infty}(A,\tau)}\lc{}R_{\pm i}\rc{},\Gamma_{y,L^{\infty}(A,\tau)}\lc{}R_{\pm i}\rc\in L^{\infty}(A,\tau)$ using their bounded measurable functional calculus. Moreover, Proposition \ref{PRP.JFC_Strong_Com} and $2)$ in Lemma \ref{LEM.Wstar_CLRA_FC} imply canonical left-~and right-actions of self-adjoint measurable operators commute strongly. Yet $L^{\phi}=L_{L^{\infty}(B,\omega)}\circ\phi$ and $R^{\bpsi}=R_{L^{\infty}(B,\omega)}\circ\bpsi$. Thus $1)$ follows by Proposition \ref{PRP.AF_Cstar_Bimodule_CLRA_I}. We have

\begin{align}\label{EQ.PRP.AF_Cstar_Bimodule_CLRA_II_1}
\phi\lc\Gamma_{x,L^{\infty}(A,\tau)}\lc{}R_{\pm i}\rc\rc{}=\Gamma_{\phi(x),L^{\infty}(B,\omega)}\lc{}R_{\pm i}\rc{},\ \bpsi\lc\Gamma_{y,L^{\infty}(A,\tau)}\lc{}R_{\pm i}\rc\rc{}=\Gamma_{\bpsi(x),L^{\infty}(B,\omega)}\lc{}R_{\pm i}\rc{}
\end{align}

\noindent by the $^{*}$-homomorphism property. Hence $2)$ follows by Equation \ref{EQ.PRP.AF_Cstar_Bimodule_CLRA_II_1}.
\end{proof}

Definition \ref{DFN.AF_Cstar_Bimodule_Compression} gives compression of AF-$C^{*}$-bimodules by compressing canonical left-~and right-actions. We use the compressibility property in Definition \ref{DFN.JFC_Compression_III}, itself based on Definition \ref{DFN.JFC_Compression_I}, for the pair of normal unital $^{*}$-homomorphisms

\begin{align}\label{EQ.SSEC.NCDS_AF_FC_2}
L^{\phi}:L^{\infty}(A,\tau)\longrightarrow\BII\lc{}L^{2}(B,\omega)\rc{},\ R^{\bpsi}:L^{\infty}(A,\tau)^{\op}\longrightarrow\BII\lc{}L^{2}(B,\omega)\rc{}.
\end{align}

\noindent We give two classes of compression. First, we compress to induced AF-$C^{*}$-bimodules in Corollary \ref{COR.AF_Cstar_Bimodule_Compression_Restriction}. Secondly, we compress with projections in Corollary \ref{COR.AF_Cstar_Bimodule_Projection}.

\begin{dfn}\label{DFN.AF_Cstar_Bimodule_Compression}
Let $N\subset \lc{}L^{\infty}(A,\tau),\tau\rc$ and $V\subset L^{2}(B,\omega)$ be a Hilbert subspace. We say that $(\phi,\bpsi,\gamma)$ is $(N,V)$-compressible, and call $(N,V)$ a compression of $(\phi,\bpsi,\gamma)$, if $\lc{}L^{\phi},R^{\bpsi}\rc$ is $(N,V)$-compressible as per Definition \ref{DFN.JFC_Compression_III} and $\gamma(V)\subset V$.
\end{dfn}

\begin{rem}\label{REM.AF_Cstar_Bimodule_Compression}
Let $N\subset \lc{}L^{\infty}(A,\tau),\tau\rc$ and $V\subset L^{2}(B,\omega)$ be a Hilbert subspace. Note $\pi_{V}^{B}:L^{2}(B,\omega)\longrightarrow V$ is the Hilbert space projection. Then $\lc{}L^{\phi},R^{\bpsi}\rc$ is $(N,V)$-compressible if $L^{\phi}\lc{}L^{\infty}(A,\tau)\rc{},R^{\bpsi}\lc{}L^{\infty}(A,\tau)\rc\subset\BII(V)$ and

\begin{align}\label{EQ.REM.AF_Cstar_Bimodule_Compression_1}
\pi_{V}^{B}=L_{1_{A}}^{\phi}\pi_{V}^{B}=R_{1_{A}}^{\bpsi}\pi_{V}^{B}.    
\end{align}
\end{rem}

\begin{prp}\label{PRP.AF_Cstar_Bimodule_Compression}
Let $N\subset \lc{}L^{\infty}(A,\tau),\tau\rc$ and $V\subset L^{2}(B,\omega)$ be a Hilbert subspace. If $(\phi,\bpsi,\gamma)$ is $(N,V)$-compressible, then

\begin{itemize}
\item[1)] $\phi(N)V\subset V$, $V\bpsi(N)\subset V$ and $\gamma(V)\subset V$,

\item[2)] $V$ equipped with the $(\phi,\bpsi)$-action and $\gamma$ is a symmetric $W^{*}$-bimodule over $N$.
\end{itemize}
\end{prp}
\begin{proof}
Following our discussion in Remark \ref{REM.AF_Cstar_Bimodule_Compression}, we directly verify all claims.
\end{proof}

Following Lemma \ref{LEM.AF_Cstar_Bimodule_Compression_I}, note compressibility as per Definition \ref{DFN.AF_Cstar_Bimodule_Compression} lets us apply Theorem \ref{THM.JFC_Compression}. We use reducing Hilbert subspaces and restriction to Hilbert subspaces given by concrete compression maps \lc{}cf.~Definition \ref{DFN.Compression_Concrete} and Definition \ref{DFN.Reducible}\rc{}. Then Definition \ref{DFN.AF_Cstar_Bimodule_Compression_CLRA} gives compressed canonical left-~and right-actions. Restriction to the bounded case yields compressed induced symmetric $W^{*}$-bimodule actions.

\begin{lem}\label{LEM.AF_Cstar_Bimodule_Compression_I}
Let $N\subset \lc{}L^{\infty}(A,\tau),\tau\rc$ and $V\subset L^{2}(B,\omega)$ be a Hilbert subspace. Let $(\phi,\bpsi,\gamma)$ $(N,V)$-compressible. If $x,y\in L^{0}(N,\tau)_{h}$, then $L_{x}^{\phi},R_{y}^{\bpsi}\in\UBII_{V}\lc{}L^{2}(B,\omega)\rc$ commute strongly and we have 

\begin{align}\label{EQ.PRP.AF_Cstar_Bimodule_Compression_II_1}
L^{\phi}\lc\Gamma_{x,L^{\infty}(A,\tau)}\lc{}R_{\pm i}\rc\rc{}=R_{\pm i}\big(L_{x}^{\phi}\big),\ R^{\bpsi}\lc\Gamma_{y,L^{\infty}(A,\tau)}\lc{}R_{\pm i}\rc\rc{}=R_{\pm i}\big(R_{y}^{\bpsi}\big).   
\end{align}
\end{lem}
\begin{proof}
Let $x,y\in L^{0}(N,\tau)_{h}$. Proposition \ref{PRP.AF_Cstar_Bimodule_CLRA_II} shows all claims except $V$-reducibility of $L_{x}^{\phi}$ and $R_{y}^{\bpsi}$. Set $\phi_{L}:=L^{\phi}\circ L_{L^{\infty}(A,\tau)}^{-1}$ and $\bpsi_{R}:=R^{\bpsi}\circ R_{L^{\infty}(A,\tau)}^{-1}$ on their respective images. Note resolvents are preserved under canonical left-~and right-actions.\par
Arguing as in the proof of Lemma \ref{LEM.RP}, we know mapping $C^{*}$-generators as per Equation \ref{EQ.PRP.AF_Cstar_Bimodule_Compression_II_1} and subsequent closing in $\sigma$-weak operator topology readily yields normal unital $^{*}$-isomorphisms

\begin{align}\label{EQ.PRP.AF_Cstar_Bimodule_Compression_II_2}
\phi_{L}:W^{*}\lc{}L_{x,L^{\infty}(A,\tau)}\rc\longrightarrow W^{*}\big(L_{x}^{\phi}\big),\ \bpsi_{R}:W^{*}\lc{}R_{y,L^{\infty}(A,\tau)}\rc\longrightarrow W^{*}\big(R_{y}^{\bpsi}\big).
\end{align}

\noindent Moreover, said argument for the $^{*}$-homomorphisms in Equation \ref{EQ.PRP.AF_Cstar_Bimodule_Compression_II_2} shows

\begin{align}\label{EQ.PRP.AF_Cstar_Bimodule_Compression_II_3}
\phi_{L}\lc{}E_{L_{x,L^{\infty}(A,\tau)}}(Z)\rc{}=E_{L_{x}^{\phi}}(Z),\ \bpsi_{R}\lc{}E_{R_{y,L^{\infty}(A,\tau)}}(Z)\rc{}=E_{R_{y}^{\bpsi}}(Z).
\end{align}

\noindent for all $Z\in\mathfrak{B}(\mathbb{R})$. Using $2)$ in Lemma \ref{LEM.Wstar_CLRA_FC}, Equation \ref{EQ.PRP.AF_Cstar_Bimodule_Compression_II_3} implies

\begin{align}\label{EQ.PRP.AF_Cstar_Bimodule_Compression_II_4}
\phi\lc{}E_{x,L^{\infty}(A,\tau)}(Z)\rc{}=E_{L_{x}^{\phi}}(Z),\ \bpsi\lc{}E_{L^{\infty}(A,\tau)}(Z)\rc{}=E_{R_{y}^{\bpsi}}(Z)
\end{align}

\noindent in each case. Using $1)$ in Proposition \ref{PRP.AF_Cstar_Bimodule_Compression}, Equation \ref{EQ.PRP.AF_Cstar_Bimodule_Compression_II_4} in turn shows

\begin{align}\label{EQ.PRP.AF_Cstar_Bimodule_Compression_II_5}
\lb{}E_{L_{x}^{\phi}}(Z),\pi_{V}^{B}\rb{}=\lb{}E_{R_{y}^{\bpsi}}(Z),\pi_{V}^{B}\rb{}=0
\end{align}

\noindent for all $Z\in\mathfrak{B}(\mathbb{R})$. Corollary \ref{COR.Compression_Preservation_II} shows Equation \ref{EQ.PRP.AF_Cstar_Bimodule_Compression_II_5} implies $V$-reducibility.
\end{proof}

The $^{*}$-homomorphism property ensures $\phi$ and $\bpsi$ preserve real and imaginary parts \lc{}cf.~Proposition \ref{PRP.Wstar_NCI_IV}\rc{}. Following Proposition \ref{PRP.AF_Cstar_Bimodule_CLRA_I}, note restriction viewed as concrete compression map shows unbounded operators as per Equation \ref{EQ.DFN.AF_Cstar_Bimodule_Compression_CLRA_1} are densely defined and closed \lc{}cf.~Proposition \ref{PRP.Reducible}\rc{}.

\begin{dfn}\label{DFN.AF_Cstar_Bimodule_Compression_CLRA}
Let $N\subset \lc{}L^{\infty}(A,\tau),\tau\rc$ and $V\subset L^{2}(B,\omega)$ be a Hilbert subspace. Let $(\phi,\bpsi,\gamma)$ be $(N,V)$-compressible. For all $x,y\in L^{0}(A,\tau)$, set

\begin{align}\label{EQ.DFN.AF_Cstar_Bimodule_Compression_CLRA_1}
L_{x,N}^{\phi}:=\restr{0.875}{L_{x}^{\phi}}{V},\ R_{y,N}^{\bpsi}:=\restr{0.875}{R_{y}^{\bpsi}}{V}
\end{align}
\end{dfn}

\begin{ntn}\label{NTN.AF_Cstar_Bimodule_Compression}
We suppress $\phi$ and $\bpsi$ in Definition \ref{DFN.AF_Cstar_Bimodule_Compression_CLRA} if $\phi=\bpsi=\id_{A}$. We further suppress $N$ if $N=L^{\infty}(A,\tau)$. In particular, $L_{\phi(x)}$ and $R_{\bpsi(y)}$ denote evaluated canonical left-~and right-actions of $L^{0}(B,\omega)$ on $L^{2}(B,\omega)$.
\end{ntn}

\begin{lem}\label{LEM.AF_Cstar_Bimodule_Compression_II}
Let $N_{A}\subset \lc{}L^{\infty}(A,\tau),\tau\rc$ and $N_{B}\subset \lc{}L^{\infty}(B,\omega),\omega\rc$. If

\begin{itemize}
\item[1)] $\phi\lc{}N_{A}\rc{},\bpsi\lc{}N_{A}\rc\subset N_{B}$ and $\phi(1_{N_{A}})=\bpsi(1_{N_{A}})=1_{N_{B}}$,

\item[2)] $\gamma\lc{}N_{B}\cap L^{2}(B,\omega)\rc\subset N_{B}\cap L^{2}(B,\omega)$,
\end{itemize}

\noindent then $(\phi,\bpsi,\gamma)$ is $\lc{}N_{A},L^{2}\lc{}N_{B},\omega\rc\rc$-compressible.
\end{lem}
\begin{proof}
Following our discussion in Remark \ref{REM.AF_Cstar_Bimodule_Compression}, we directly verify all claims.
\end{proof}

\begin{cor}\label{COR.AF_Cstar_Bimodule_Compression_Restriction}
For all $j\in\mathbb{N}$, $(\phi,\bpsi,\gamma)$ is $(A_{j},B_{j})$-compressible.
\end{cor}
\begin{proof}
Let $j\in\mathbb{N}$. Apply Lemma \ref{LEM.AF_Cstar_Bimodule_Compression_II} to $N_{A}=A_{j}$ and $N_{B}=B_{j}$.
\end{proof}

Let $p\in L^{\infty}(A,\tau)$ be a projection. We know $L^{\infty}(A,\tau)[p]\subset \lc{}L^{\infty}(A,\tau),\tau\rc$. Lemma \ref{LEM.Cstar_Trace_Abstract_Projection} shows $(A[p],\tau)$ is a tracial $C^{*}$-algebra in $L^{\infty}(A,\tau)[p]$. Note $L^{\infty}(A[p],\tau)=pL^{\infty}(A,\tau)p$.

\begin{dfn}\label{DFN.AF_Cstar_Bimodule_Projection}
Let $p\in L^{\infty}(A,\tau)$ be a projection. Set $L^{2}(B[p],\omega):=pL^{2}(B,\omega)p$. For all $u\in L^{2}(B,\omega)$, further set

\begin{align}\label{EQ.DFN.AF_Cstar_Bimodule_Projection_1}
\pi_{p}(u):=pup,\ \pi_{p}^{\perp}(u):=pup^{\perp}+p^{\perp}up+p^{\perp}up^{\perp}.
\end{align}
\end{dfn}

\begin{cor}\label{COR.AF_Cstar_Bimodule_Projection}
For all projections $p\in L^{\infty}(A,\tau)$, we have

\begin{itemize}
\item[1)] $L^{2}(B[p],\omega)\subset L^{2}(B,\omega)$ is a Hilbert subspace and $\pi_{L^{2}(B[p],\omega)}^{B}=\pi_{p}$,

\item[2)] $(\phi,\bpsi,\gamma)$ is $\lc{}L^{\infty}(A[p],\tau),L^{2}(B[p],\omega)\rc$-compressible.
\end{itemize}
\end{cor}
\begin{proof}
Apply Lemma \ref{LEM.Cstar_Trace_Abstract_Projection}, $2)$ in Proposition \ref{PRP.Wstar_Trace_NCE_II} and Equation \ref{EQ.DFN.AF_Cstar_Bimodule_Projection_1}.
\end{proof}

%%%%%%%%%%%%%
%%% PART %%%%
%%%%%%%%%%%%%

\subsubsection*{Functional calculus}

Following Lemma \ref{LEM.AF_Cstar_Bimodule_Compression_I}, we apply Theorem \ref{THM.JFC_Compression} to get compressed pulled-back joint functional calculus of extended AF-$C^{*}$-bimodule actions \lc{}cf.~Definition \ref{DFN.JFC_Compression_IV}\rc{}. For this, we compress joint functional calculus pulled-back along canonical left-~and right-actions of AF-$C^{*}$-bimodules. We use joint functional calculus of strongly commuting self-adjoint unbounded operators \lc{}cf.~Definition \ref{DFN.JFC_Unbd}\rc{}, as well as bounded measurable joint functional calculus of self-adjoint measurable operators \lc{}cf.~Definition \ref{DFN.Wstar_CLRA_FC_VI}\rc{}. For details on the former, we refer to Subsection \ref{SSEC.A_Fnd_FC}. For details on the latter, we refer to Subsection \ref{SSEC.B_SMO_CLRA}.\par

%NEWPAGE
%NEWPAGE
%NEWPAGE

\pagebreak

%NEWPAGE
%NEWPAGE
%NEWPAGE

Let $(A,\tau)$ and $(B,\omega)$ be tracial AF-$C^{*}$-algebras. Let $(\phi,\bpsi,\gamma)$ be an AF-$A$-bimodule structure on $B$. Let $N\subset \lc{}L^{\infty}(A,\tau),\tau\rc$ and $V\subset L^{2}(B,\omega)$ be a Hilbert subspace. Assume $(\phi,\bpsi,\gamma)$ is $(N,V)$-compressible. Let $x,y\in L^{0}(A,\tau)_{h}$. Lemma \ref{LEM.AF_Cstar_Bimodule_Compression_I} shows Theorem \ref{THM.JFC_Compression} applies to $T=L_{x,N}^{\phi}$ and $S=R_{y,N}^{\bpsi}$ using $\lc{}L^{\phi},R^{\bpsi}\rc$ as $(N,V)$-compressible pair. Following $1)$ in Definition \ref{DFN.JFC_Compression_IV}, we have bounded measurable joint functional calculus

\begin{align}\label{EQ.SSEC.NCDS_AF_FC_3}
\Gamma_{x,y,N}^{L^{\phi},R^{\bpsi}}:L^{\infty}\lc\specN x\times y,dE_{x,y,N}\rc\longrightarrow\BII(V)
\end{align}

\noindent of $x\otimes y$ in $N\otimes N^{\op}$ under $L^{\phi}\otimes R^{\bpsi}$. Following $2)$ and $3)$ in Definition \ref{DFN.JFC_Compression_IV}, we have joint functional calculus

\begin{align}\label{EQ.SSEC.NCDS_AF_FC_4}
\Gamma_{x,y,N}^{L^{\phi},R^{\bpsi}}:\mathcal{S}_{V}\lc{}E_{x,y,N}\rc\longrightarrow\UBII\lc{}V\rc_{h}
\end{align}

\noindent of $x\otimes y$ in $N\otimes N^{\op}$ under $L^{\phi}\otimes R^{\bpsi}$ \lc{}cf.~Corollary \ref{COR.JFC_Compression_I}\rc{}.

\begin{rem}
In the setting of $5)$ in Definition \ref{DFN.AF_Cstar_Bimodule}, we have $\Gamma_{x,y,N}^{L^{\phi},R^{\bpsi}}=\Gamma_{x,y,N}$.
\end{rem}

Let $H$ be a Hilbert space. If $V\subset H$ is a Hilbert subspace and $\pi_{V}:H\longrightarrow V$ its Hilbert space projection, then get positivity-preserving canonical inclusion $\UBII(V)\subset\UBII(H)$ by mapping $T\mapsto \comV T=\pi_{V}T\pi_{V}$. This yields $\BII(V)\oplus\BII(V^{\perp})\subset\BII(H)$ \lc{}cf.~Equation \ref{EQ.SSEC.A_Maps_Compression_2}\rc{}.

\begin{lem}\label{LEM.AF_NCD_FC_I}
Let $(\phi,\bpsi,\gamma)$ be $(N,V)$-compressible. We consider $L^{2}(B,\omega)=V\oplus V^{\perp}$.

\begin{itemize}
\item[1)] For all $x\in L^{0}(N,\tau)_{h}$, $L_{x}^{\phi}=L_{x,N}^{\phi}+L_{x}^{\phi}\lc{}I-\pi_{V}^{B}\rc$ and $R_{x}^{\bpsi}=R_{x,N}^{\bpsi}+R_{x}^{\bpsi}\lc{}I-\pi_{V}^{B}\rc$.

\item[2)] Let $x\in L^{0}(N,\tau)_{h}$. If $g\in L^{\infty}\lc\spec_{L^{\infty}(A,\tau)} x\times y,dE_{x,y,L^{\infty}(A,\tau)}\rc$, then

\begin{itemize}
\item[2.1)] $g\in L^{\infty}\lc\specN x\times y,dE_{x,y,N}\rc$ by restricting to $\specN x\times y\subset\spec_{L^{\infty}(A,\tau)}x\times y$, \phantom{\bigg)}

\item[2.2)] $g\lc{}L_{x}^{\phi},R_{y}^{\bpsi}\rc\in\BII\lc{}L^{2}(B,\omega)\rc\cap\UBII_{V}\lc{}L^{2}(B,\omega)\rc$ and $\restr{0.925}{g\lc{}L_{x}^{\phi},R_{y}^{\bpsi}\rc}{V}=g\lc{}L_{x,N}^{\phi},R_{y,N}^{\bpsi}\rc$, \phantom{\bigg)}

\item[2.3)] $g\lc{}L_{x}^{\phi},R_{y}^{\bpsi}\rc{}=g\lc{}L_{x,N}^{\phi},R_{y,N}^{\bpsi}\rc\oplus \restr{0.925}{g\lc{}L_{x}^{\phi},R_{y}^{\bpsi}\rc}{V^{\perp}}\in\BII(V)\oplus\BII(V^{\perp})$. \phantom{\bigg)}
\end{itemize}

\begin{reapply}
\end{reapply}

\item[3)] If $x,y\in L^{0}(N,\tau)_{+}$, $\alpha,\beta\geq 0$ and $g\in C_{b}\lc{}[0,\infty)\times [0,\infty)\rc$, then

\begin{align}\label{EQ.LEM.AF_NCD_FC_I_1}
\restr{0.875}{g\bigg(L_{x+\alpha 1_{N}^{\perp}}^{\phi},R_{y+\beta 1_{N}^{\perp}}^{\bpsi}\bigg)}{V}=g\lc{}L_{x,N}^{\phi},R_{y,N}^{\bpsi}\rc{}.
\end{align}

\begin{reapply}
\end{reapply}

\end{itemize}
\end{lem}
\begin{proof}
Let $x,y\in L^{0}(N,\tau)_{h}$. Lemma \ref{LEM.AF_Cstar_Bimodule_Compression_I} shows $L_{x}^{\phi},R_{y}^{\bpsi}\in\UBII_{V}\lc{}L^{2}(B,\omega)\rc$, i.e.~each is a $V$-reducible self-adjoint unbounded operator. Get $1)$ by $1.3)$ in Proposition \ref{PRP.Reducible}. Note using abstract and concrete spectral measures yields identical commutative $L^{\infty}$-spaces in the uncompressed, resp.~compressed case.\par

%NEWPAGE
%NEWPAGE
%NEWPAGE

\pagebreak

%NEWPAGE
%NEWPAGE
%NEWPAGE

We therefore have

\begin{align}\label{EQ.LEM.AF_NCD_FC_I_2}
\Gamma_{L_{x}^{\phi},R_{y}^{\bpsi}}(g)=g\lc{}L_{x}^{\phi},R_{y}^{\bpsi}\rc{},\ \Gamma_{L_{x,N}^{\phi},R_{y,N}^{\bpsi}}(g)=g\lc{}L_{x,N}^{\phi},R_{y,N}^{\bpsi}\rc{}=g\lc\restr{0.875}{L_{x}^{\phi}}{V},\restr{0.875}{R_{y}^{\bpsi}}{V}\rc{}
\end{align}

\noindent in each case. Following Lemma \ref{LEM.AF_Cstar_Bimodule_Compression_I}, we apply Theorem \ref{THM.JFC_Compression} as discussed above. Spectra restrict as claimed and have

\begin{align}\label{EQ.LEM.AF_NCD_FC_I_3}
\restr{0.925}{g\lc{}L_{x}^{\phi},R_{y}^{\bpsi}\rc}{V}=g\lc\restr{0.875}{L_{x}^{\phi}}{V},\restr{0.875}{R_{y}^{\bpsi}}{V}\rc{}.
\end{align}

\noindent Equation \ref{EQ.LEM.AF_NCD_FC_I_2} and Equation \ref{EQ.LEM.AF_NCD_FC_I_3} imply $2.1)$ and $2.2)$ at once. Using $2.1)$, get $2.3)$ by $1.3)$ in Proposition \ref{PRP.Reducible}. We have direct sum by boundedness. Altogether, get $2)$.\par
We show $3)$. Let $x,y\in L^{0}(N,\tau)_{+}$, $\alpha,\beta\geq 0$ and $g\in C_{b}\lc{}[0,\infty)\times [0,\infty)\rc$. Theorem \ref{THM.JFC_Compression} implies Equation \ref{EQ.LEM.AF_NCD_FC_I_1} is

\begin{align}\label{EQ.LEM.AF_NCD_FC_I_4}
\lc{}L^{\phi}\otimes_{V} R^{\bpsi}\rc\lc\comunitunit\lc\Gamma_{x+\alpha 1_{N}^{\perp},y+\beta 1_{N}^{\perp},L^{\infty}(A,\tau)}(g)\rc\rc{}=\lc{}L^{\phi}\otimes_{V} R^{\bpsi}\rc\lc\Gamma_{x,y,N}(g)\rc{}.
\end{align}

\noindent Applying $L^{\phi}\otimes_{V} R^{\bpsi}$ to Equation \ref{EQ.COR.JFC_Compression_III_1} in Corollary \ref{COR.JFC_Compression_III} yields Equation \ref{EQ.LEM.AF_NCD_FC_I_4}.
\end{proof}

\begin{ntn}\label{NTN.AF_Cstar_Bimodule_Projection}
Assume $(N,V)=\lc{}L^{\infty}(A[p],\tau),L^{2}(B[p],\omega)\rc$ for projection $p\in L^{\infty}(A,\tau)$. For all $x,y\in L^{0}(A,\tau)_{h}$, we write

\begin{itemize}
\item[1)] $L_{x,p}^{\phi}:=L_{x,L^{\infty}(A[p],\tau)}^{\phi}$ and $R_{y,p}^{\bpsi}:=R_{y,L^{\infty}(A[p],\tau)}^{\bpsi}$,

\item[2)] $\Gamma_{x,y,p}^{\phi,\bpsi}:=\Gamma_{x,y,L^{\infty}(A[p],\tau)}^{L^{\phi},R^{\bpsi}}$ and $\SIIp\lc{}E_{x,y}\rc:=\mathcal{S}_{L^{2}(B[p],\omega)}\lc{}E_{x,y,L^{\infty}(A[p],\tau)}\rc$.
\end{itemize}

\noindent We suppress $\phi$ and $\bpsi$ if $\phi=\bpsi=\id_{A}$. We further suppress $p$ if $p=1_{A}$.
\end{ntn}

\begin{lem}\label{LEM.AF_NCD_FC_II}
Assume $(N,V)=\lc{}L^{\infty}(A[p],\tau),L^{2}(B[p],\omega)\rc$ for projection $p\in L^{\infty}(A,\tau)$. If $x,y\in L^{0}(A[p],\tau)_{+}$ and $g\in C_{b}\lc{}[0,\infty)\times [0,\infty)\rc$, then

\begin{itemize}
\item[1)] $g\lc{}L_{x}^{\phi},R_{y}^{\bpsi}\rc\pi_{p}=g\lc{}L_{x,p}^{\phi},R_{y,p}^{\bpsi}\rc\pi_{p}$,

\item[2)] $g\lc{}L_{x}^{\phi},R_{y}^{\bpsi}\rc\pi_{p}^{\perp}=g\lc{}L_{x}^{\phi},0\rc{}L_{p}^{\phi}\mathrlap{\phantom{R}^{\bpsi}}R_{p^{\perp}}+g\lc{}0,R_{y}^{\bpsi}\rc\mathrlap{\phantom{L}^{\phi}}L_{p^{\perp}}R_{p}^{\bpsi}+g\lc{}0,0\rc\pi_{p^{\perp}}$.
\end{itemize}
\end{lem}
\begin{proof}
For details on the tensor product of normal unital $^{*}$-homomorphisms, we refer to Corollary \ref{COR.Wstar_TP}. By definition, we have

\begin{align}\label{EQ.LEM.AF_NCD_FC_II_1}
\Gamma_{x,y,L^{\infty}(A,\tau)}^{L^{\phi},R^{\bpsi}}=\lc{}L^{\phi}\otimes R^{\bpsi}\rc\circ\Gamma_{x,y,L^{\infty}(A,\tau)}.
\end{align}

%NEWPAGE
%NEWPAGE
%NEWPAGE

\pagebreak

%NEWPAGE
%NEWPAGE
%NEWPAGE

Following Notation \ref{NTN.AF_Cstar_Bimodule_Projection}, Equation \ref{EQ.DFN.AF_Cstar_Bimodule_Projection_1} rewrites as

\begin{align}\label{EQ.LEM.AF_NCD_FC_II_2}
\pi_{p}=\pi_{L^{2}(B[p],\omega)}^{B}=L_{p}^{\phi}R_{p}^{\bpsi},\ \pi_{p}^{\perp}=L_{p}^{\phi}\mathrlap{\phantom{R}^{\bpsi}}R_{p^{\perp}}+\mathrlap{\phantom{L}^{\phi}}L_{p^{\perp}}R_{p}^{\bpsi}+\mathrlap{\phantom{L}^{\phi}}L_{p^{\perp}}\mathrlap{\phantom{R}^{\bpsi}}R_{p^{\perp}}.
\end{align}

\noindent Using $\pi_{p}$ itself as our Hilbert space projection, get $1)$ by $2)$ in Lemma \ref{LEM.AF_NCD_FC_I} and $1.1)$ in Proposition \ref{PRP.Reducible}. We show $2)$ by arguing as in the proof of Corollary \ref{COR.JFC_Compression_III}. We assume $p,p^{\perp}\in W_{L^{\infty}(A,\tau)}^{*}(x)\cap W_{L^{\infty}(A,\tau)}^{*}(y)$ without loss of generality. Since $W_{L^{\infty}(A,\tau)}^{*}(x,y)=W_{L^{\infty}(A,\tau)}^{*}(x)\otimes W_{L^{\infty}(A,\tau)}^{*}(y)$ \lc{}cf.~$2)$ in Definition \ref{DFN.Wstar_CLRA_FC_IV}\rc{}, Equation \ref{EQ.LEM.AF_NCD_FC_II_1} and Equation \ref{EQ.LEM.AF_NCD_FC_II_2} let us calculate

\begin{align}\label{EQ.LEM.AF_NCD_FC_II_3}
\lc{}L^{\phi}\otimes R^{\bpsi}\rc^{-1}\lc\Gamma_{x,y,L^{\infty}(A,\tau)}^{\phi,\bpsi}(g)\pi_{p}^{\perp}\rc{}=\Gamma_{x,y,L^{\infty}(A,\tau)}(g)\lc{}p\otimes p^{\perp}+p^{\perp}\otimes p+p^{\perp}\otimes p^{\perp}\rc{}.
\end{align}

\noindent We write each summand in Equation \ref{EQ.LEM.AF_NCD_FC_II_3} as element in $W_{L^{\infty}(A,\tau)}^{*}(x,y)$. Theorem \ref{THM.JFC_Compression} then implies

\vspace{-0.375cm}
\begin{align}\label{EQ.LEM.AF_NCD_FC_II_4}
\Gamma_{x,y,L^{\infty}(A,\tau)}(g)\lc{}p\otimes p^{\perp}\rc{} =& \ \Gamma_{x,0,L^{\infty}(A,\tau)}(g)\lc{}p\otimes p^{\perp}\rc{}, \phantom{\vstretch{0.9175}{\bigg)}} \\
\Gamma_{x,y,L^{\infty}(A,\tau)}(g)\lc{}p^{\perp}\otimes p\rc{} =& \ \Gamma_{0,y,L^{\infty}(A,\tau)}(g)\lc{}p^{\perp}\otimes p\rc{}, \phantom{\vstretch{0.9175}{\bigg)}} \\
\Gamma_{x,y,L^{\infty}(A,\tau)}(g)\lc{}p^{\perp}\otimes p^{\perp}\rc{} =& \ \Gamma_{0,0,L^{\infty}(A,\tau)}(g)\lc{}p^{\perp}\otimes p^{\perp}\rc{}. \phantom{\vstretch{0.9175}{\bigg)}}
\end{align}

\noindent Upon applying $L^{\phi}\otimes R^{\bpsi}$ to Equation \ref{EQ.LEM.AF_NCD_FC_II_3}, the above equations show $2)$ at once.
\end{proof}

%%%%%%%%%%%%%%%%
%%% SECTION %%%%
%%%%%%%%%%%%%%%%

\section{Noncommutative division operators}\label{SEC.NCDS_NCD}

Noncommutative division operators generalise division by densities in the classical case \cite{ART.Dol_Naz_Sav.2009.Generalised_OT}. In the tracial finite-dimensional cases of \cite{ART.Car_Maa.2014.Quantum_OT_I}\cite{ART.Car_Maa.2017.Quantum_OT_II}\cite{ART.Car_Maa.2020.Quantum_OT_III}, they determine, and are in turn determined by, quasi-entropies \cite{ART.Hia_Pet.2012.Quasi_Entropy_I}\cite{ART.Hia_Pet.2013.Quasi_Entropy_II} used to define energy functionals. Note\linebreak quasi-entropies generalise quantum $f$-divergences \cite{ART.Hia.2018.QFD_I}\cite{ART.Hia.2019.QFD_II}, a class of dissimilarity measures for information encoded in states of quantum systems \cite{BK.Nie_Chu.2000.Quantum_Computation_Information}\cite{ART.Kra.1971.State_Changes}. Applying the Kato-Robinson theorem \cite{BK.deOli.2009.OpAlg_Quantum_Dynamics} lets us extend the approach in \cite{ART.Car_Maa.2020.Quantum_OT_III} to AF-$C^{*}$-bimodules.\par
Noncommutative division operators represent closed positive unbounded quadratic forms determined by quasi-entropies. Quasi-entropies are non-negative, jointly convex and $w^{*}$-l.s.c.~functionals on Banach dual spaces of AF-$C^{*}$-bimodules. The Kato-Robinson theorem shows noncommutative division operators are strong resolvent limits of, upon suitable evaluation for each, perturbed inverses of operator means \cite{ART.And_Kub.1979.Operator_Means} as perturbation tends to zero. Such perturbed inverses are expressed using compressed pulled-back joint functional calculus of extended AF-$C^{*}$-bimodule actions. We recover noncommutative division operators of positive measurable operators if and only if the strong resolvent limit is likewise expressed using compressed pulled-back joint functional calculus.

\medskip

\noindent\textbf{Structure.} In Subsection \ref{SSEC.NCDS_NCD_QE}, we discuss operator means, noncommutative division operators of positive measurable operators and quasi-entropies for AF-$C^{*}$-bimodules. In Subsection \ref{SSEC.NCDS_NCD_Operators}, we represent closed positive unbounded quadratic forms determined by quasi-entropies using noncommutative division operators.

%%%%%%%%%%%%%%%%%%%
%%% SUBSECTION %%%%
%%%%%%%%%%%%%%%%%%%

\subsection[Quasi-entropies for AF-$C^{*}$-bimodules]{Quasi-entropies for AF-$\mathbf{C}^{*}$-bimodules}\label{SSEC.NCDS_NCD_QE}

We define noncommutative division operators of positive measurable operators, as well as perturbed ones. Following Lemma \ref{LEM.AF_NCD_FC_I} and Lemma \ref{LEM.AF_NCD_FC_II}, we have control as per Lemma \ref{LEM.NCD_Operator_Compressed_PMO_I}. We define quasi-entropies in the finite-dimensional setting by letting perturbation tend to zero upon applying perturbed noncommutative division operators of positive measurable operators. Using monotonicity under restriction maps, we extend quasi-entropies to AF-$C^{*}$-bimodules. Theorem \ref{THM.QE_AF} collects fundamental properties.

%%%%%%%%%%%%%
%%% PART %%%%
%%%%%%%%%%%%%

\subsubsection*{Noncommutative division operators of positive measurable operators}

\hspace{-0.15cm} Let $(A,\tau)$ and $(B,\omega)$ be tracial AF-$C^{*}$-algebras. Let $(\phi,\bpsi,\gamma)$ be an AF-$A$-bimodule structure on $B$. Let $N\subset \lc{}L^{\infty}(A,\tau),\tau\rc$ and $V\subset L^{2}(B,\omega)$ be a Hilbert subspace. Let $f$ be representing function of an operator mean as per $2)$ in Definition \ref{DFN.OM_Representing_Fct} and $\theta\in [0,1]$.

\begin{dfn}\label{DFN.OM_Representing_Fct}
Let $f:(0,\infty)\longrightarrow (0,\infty)$.

\begin{itemize}
\item[1)] We call $f$ symmetric if $f(t)=f(t^{-1})$ for all $t>0$. 

\item[2)] We call $f$ representing function of an operator mean, or representing function if it is operator monotone and $f(1)=1$. We define its mean $m_{f}:(0,\infty)\times (0,\infty)\longrightarrow (0,\infty)$ by setting $m_{f}(t,s):=f(ts^{-1})s$ for all $t,s>0$. For all $\varepsilon>0$, we furthermore define its mean $m_{f,\varepsilon}:[0,\infty)\longrightarrow (0,\infty)$ perturbed with $\varepsilon$ by setting $m_{f,\varepsilon}(t,s):=m_{f}\lc{}t+\varepsilon,s+\varepsilon\rc$ for all $t,s\geq 0$.

\item[3)] Let $\mathcal{A}$ be a unital $^{*}$-algebra equipped with partial order generated by positive elements. Set $\mathcal{A}_{>0}:=\lset{}x\in A_{+}\ \vset\ \exists\varepsilon>0:\ x\geq\varepsilon 1_{\mathcal{A}} \rset$. For all $x\in \mathcal{A}$, we say that $x>0$ in $\mathcal{A}$ if $x\in \mathcal{A}_{>0}$.
\end{itemize}
\end{dfn}

\begin{rem}
If $f$ is symmetric, then $m_{f}(t,s)=m_{f}(s,t)$ for all $t,s>0$. If $f$ is a representing function, then given separable Hilbert space $H$ and letting $m_{f}\lc{}T,S\rc$ for all commuting $T,S>0$ in $\BII(H)$ defines operator mean following Kubo and Ando \cite{ART.And_Kub.1979.Operator_Means}.
\end{rem}

\begin{prp}\label{PRP.OM_Representing_Fct}
For all $t_{1}\geq t_{0}>0$ and $s_{1}\geq s_{0}>0$, get $m_{f}(t_{1},s_{1})\geq m_{f}(t_{0},s_{0})$. There exists unique continuous extension of $m_{f}$ to $[0,\infty)\times [0,\infty)$. 
\end{prp}
\begin{proof}
Let $\mathbb{C}\cong\langle I\rangle_{\mathbb{C}}\subset\BII(H)$ for a separable Hilbert space $H$. For all $t,s>0$, get $m_{f}(t,s)=f\lc{}tI\cdot s^{-1}I\rc\cdot sI$. Operator means are connections by Theorem 3.2 in \cite{ART.And_Kub.1979.Operator_Means}. We see our first claim follows by (I), and our second one by \lc{}III\rc{} on p.206 in \cite{ART.And_Kub.1979.Operator_Means}.
\end{proof}

\begin{rem}\label{REM.OM_Representing_Fct}
For all $\varepsilon>0$, get $m_{f,\varepsilon}^{-1}\in C_{b}\lc[\varepsilon,\infty)\times [\varepsilon,\infty)\rc$ by Proposition \ref{PRP.OM_Representing_Fct}.
\end{rem}

Definition \ref{DFN.NCD_Operator_Compressed_PMO} uses joint functional calculus to give noncommutative multiplication and division operators of positive measurable operators. Proposition \ref{PRP.NCD_Operator_Compressed_PMO_I} ensures $2)$ and $3)$ in Definition \ref{DFN.NCD_Operator_Compressed_PMO} are well-defined.\par

%NEWPAGE
%NEWPAGE
%NEWPAGE

\pagebreak

%NEWPAGE
%NEWPAGE
%NEWPAGE

\begin{prp}\label{PRP.NCD_Operator_Compressed_PMO_I}
Let $(\phi,\bpsi,\gamma)$ be $(N,V)$-compressible.

\begin{itemize}
\item[1)] If $x,y\in L^{0}(N,\tau)_{+}$ s.t.~$m_{f}^{-1}\in\SII\lc{}E_{x,y,N}\rc$, then $m_{f}^{-\theta}\in\mathcal{S}_{V}\lc{}E_{x,y,N}\rc$.

\item[2)] If $x>0$ in $L^{0}(N,\tau)$, then there exists $\varepsilon>0$ s.t.~$\specN x\subset [\varepsilon,\infty)$.
\end{itemize}
\end{prp}
\begin{proof}
Let $x,y\in L^{0}(N,\tau)_{+}$. Note $I_{V}\in\BII(V)$ is the unit. By functional calculus, get

\begin{align}\label{EQ.PRP.NCD_Operator_Compressed_PMO_I_1}
m_{f}^{-\theta}\lc{}L_{x,N}^{\phi}+\varepsilon I_{V},R_{y,N}^{\bpsi}+\varepsilon I_{V}\rc{}=m_{f,\varepsilon}^{-\theta}\lc{}L_{x,N}^{\phi},R_{y,N}^{\bpsi}\rc{}.
\end{align}

\noindent Let $\lset\varepsilon_{n}\rset_{n\in\mathbb{N}}\subset (0,\infty)$ be descending sequence converging to zero. Then Proposition 10.1.8 in \cite{BK.deOli.2009.OpAlg_Quantum_Dynamics} implies

\begin{align}\label{EQ.PRP.NCD_Operator_Compressed_PMO_I_2}
L_{x,N}^{\phi}=\sr\textrm{-}\lim_{n\in\mathbb{N}}\hspace{0.025cm} L_{x,N}^{\phi}+\varepsilon_{n}I_{V},\ R_{y,N}^{\bpsi}=\sr\textrm{-}\lim_{n\in\mathbb{N}}\hspace{0.025cm} R_{y,N}^{\bpsi}+\varepsilon_{n}I_{V}.
\end{align}

\noindent We show $1)$. Assume $m_{f}^{-1}\in\SII\lc{}E_{x,y,N}\rc$. Note, by definition, $1)$ holds if $m_{f}^{-\theta}$ satisfies $1)$ and $2)$ in Corollary \ref{COR.JFC_Compression_I}. Get $1)$ in the corollary by Remark \ref{REM.OM_Representing_Fct}. In order to get $2)$ in the corollary, we calculate strong convergence of resolvents. Using Equation \ref{EQ.PRP.NCD_Operator_Compressed_PMO_I_1} and Equation \ref{EQ.PRP.NCD_Operator_Compressed_PMO_I_2}, we apply Lemma \ref{LEM.FC_SR} in the one-variable case to get

\begin{align*}
R_{\pm i}\lc{}m_{f}^{-\theta}\lc{}L_{x,N}^{\phi},R_{y,N}^{\bpsi}\rc\rc{} & = \s\textrm{-}\lim_{\varepsilon\downarrow 0}\hspace{0.025cm} R_{\pm i}\lc{}m_{f}^{-\theta}\lc{}L_{x,N}^{\phi}+\varepsilon I_{V},R_{y,N}^{\bpsi}+\varepsilon I_{V}\rc\rc{} \phantom{\Bigg)} \\
& = \s\textrm{-}\lim_{\varepsilon\downarrow 0}\hspace{0.025cm} R_{\pm i}\lc{}m_{f,\varepsilon}^{-\theta}\lc{}L_{x,N}^{\phi},R_{y,N}^{\bpsi}\rc\rc{}. \phantom{\Bigg)}
\end{align*}

\noindent Get $1)$. Using $2)$ in Lemma \ref{LEM.Wstar_CLRA_FC} and Corollary \ref{COR.Wstar_CLRA_III}, we directly verify $2)$.
\end{proof}

\begin{dfn}\label{DFN.NCD_Operator_Compressed_PMO}
Let $(\phi,\bpsi,\gamma)$ be $(N,V)$-compressible. For all $x,y\in L^{0}(N,\tau)_{+}$, we define

\begin{itemize}
\item[1)] $\mathcal{M}_{x,y,N}:=m_{f}\lc{}L_{x,N}^{\phi},R_{y,N}^{\bpsi}\rc$ and $\mathcal{M}_{x,N}:=\mathcal{M}_{x,x,N}$,

\item[2)] $\mathcal{D}_{x,y,N}:=\mathcal{M}_{x,y,N}^{-1}$ and $\mathcal{D}_{x,N}:=\mathcal{D}_{x,x,N}$ if $m_{f}^{-1}\in\SII\lc{}E_{x,y,N}\rc$,

\item[3)] $\mathcal{D}_{x,y,N,\varepsilon}:=\mathcal{D}_{x+\varepsilon 1_{N},y+\varepsilon 1_{N},N}$ for all $\varepsilon>0$.
\end{itemize}
\end{dfn}

\begin{ntn}\label{NTN.NCD_Operator_Compressed_PMO}
We suppress $N$ in Definition \ref{DFN.NCD_Operator_Compressed_PMO} if $N=L^{\infty}(A,\tau)$.
\end{ntn}

\begin{rem}
All unbounded operators in Definition \ref{DFN.NCD_Operator_Compressed_PMO} are positive. If $x,y>0$ in $L^{0}(N,\tau)$, then $\mathcal{D}_{x,y,N}\in\BII\lc{}V\rc_{+}$ by $2)$ in Proposition \ref{PRP.NCD_Operator_Compressed_PMO_I}. If further $x,y\in L^{\infty}(A,\tau)$, then $\mathcal{D}_{x,y,N}>0$ in $\BII(V)$ by construction.
\end{rem}

%NEWPAGE
%NEWPAGE
%NEWPAGE

\pagebreak

%NEWPAGE
%NEWPAGE
%NEWPAGE

\begin{ntn}
Assume $(N,V)=\lc{}L^{\infty}(A[p],\tau),L^{2}(B[p],\omega)\rc$ for projection $p\in L^{\infty}(A,\tau)$. For all $x,y\in L^{0}(A[p],\tau)_{+}$, we write

\begin{itemize}
\item[1)] $\mathcal{M}_{x,y,p}:=\mathcal{M}_{x,y,L^{\infty}(A[p],\tau)}$ and $\mathcal{M}_{x,p}:=\mathcal{M}_{x,x,p}$,

\item[2)] $\mathcal{D}_{x,y,p}:=\mathcal{D}_{x,y,L^{\infty}(A[p],\tau)}$ and $\mathcal{D}_{x,p}:=\mathcal{D}_{x,x,L^{\infty}(A[p],\tau)}$ if $m_{f}^{-1}\in\SIIp\lc{}E_{x,y}\rc$,
\item[3)] $\mathcal{D}_{x,y,p,\varepsilon}:=\mathcal{D}_{x,y,L^{\infty}(A[p],\tau),\varepsilon}$ for all $\varepsilon>0$.
\end{itemize}

\noindent We suppress $p$ if $p=1_{A}$.
\end{ntn}

\begin{prp}\label{PRP.NCD_Operator_Compressed_PMO_II}
Let $(\phi,\bpsi,\gamma)$ be $(N,V)$-compressible. For all $x,y\in L^{0}(N,\tau)_{+}$, we have

\begin{itemize}
\item[1)] $\mathcal{D}_{x,y,N,\varepsilon}^{\theta}=m_{f}^{-\theta}\lc{}L_{x,N}^{\phi}+\varepsilon I_{V},R_{y,N}^{\bpsi}+\varepsilon I_{V}\rc{}=m_{f,\varepsilon}^{-\theta}\lc{}L_{x,N}^{\phi},R_{y,N}^{\bpsi}\rc$ for all $\varepsilon>0$,

\item[2)] $\mathcal{D}_{x,y,N,\varepsilon_{1}}^{\theta}\leq\mathcal{D}_{x,y,N,\varepsilon_{0}}^{\theta}$ in $\BII(V)$ for all $\varepsilon_{1}\geq\varepsilon_{0}>0$ in $\mathbb{R}$,

\item[3)] $\mathcal{D}_{x,y,N}^{\theta}=\sr$-$\lim_{\varepsilon\downarrow 0}\mathcal{D}_{x,y,N,\varepsilon}^{\theta}$ if $m_{f}^{-1}\in\SII\lc{}E_{x,y,N}\rc$.
\end{itemize}
\end{prp}
\begin{proof}
Since $I_{V}=L_{1_{N},N}^{\phi}=R_{1_{N},N}^{\bpsi}$ by unitality as per $2)$ in Lemma \ref{LEM.AF_Cstar_Local_Hom}, Equation \ref{EQ.PRP.NCD_Operator_Compressed_PMO_I_1} shows $1)$. Bounded measurable joint functional calculus is positivity-preserving since it is a normal unital $^{*}$-homomorphism \lc{}cf.~$1)$ in Proposition \ref{PRP.JFC_Bd}\rc{}. Thus $2)$ follows from $1)$ and Proposition \ref{PRP.OM_Representing_Fct}. Proposition \ref{PRP.NCD_Operator_Compressed_PMO_II} shows $3)$ follows from Corollary \ref{COR.JFC_Compression_I}.
\end{proof}

\begin{lem}\label{LEM.AF_NCD_FC_III}
Let $(\phi,\bpsi,\gamma)$ be $(N,V)$-compressible. Let $x\in N_{h}$ and $g\in C_{b}\lc\mathbb{R}\times\mathbb{R}\rc$. If $K\subset\mathbb{R}$ is compact s.t.~$\specN x\subset K$ and $g(t,s)=g(s,t)$ for all $t,s\in K$, then 

\begin{align}\label{EQ.LEM.AF_NCD_FC_III_1}
\lb\gamma,g\lc{}L_{x,N}^{\phi},R_{x,N}^{\bpsi}\rc\rb{}=0.    
\end{align}
\end{lem}
\begin{proof}
Since $K$ is compact and $g(t,s)=g(s,t)$ for all $t,s\in K$, approximate $g$ uniformly on $K\times K$ by symmetric polynomials. Thus reduce to $g$ polynomial by $\specN x\subset K$. Apply $\gamma$-symmetry as per Equation \ref{EQ.DFN.AF_Cstar_Bimodule_1}.
\end{proof}

\begin{cor}\label{COR.AF_NCD_FC_III}
Let $(\phi,\bpsi,\gamma)$ be $(N,V)$-compressible, $f$ symmetric. For all $x\in N_{+}$, get

\begin{itemize}
\item[1)] $\mathcal{M}_{x,N}^{\theta}\circ\gamma=\gamma\circ\mathcal{M}_{x,N}^{\theta}$,

\item[2)] $\mathcal{D}_{x,N}^{\theta}\circ\gamma=\gamma\circ\mathcal{D}_{x,N}^{\theta}$ if $x>0$ in $N$,

\item[3)] $\mathcal{D}_{x,N,\varepsilon}^{\theta}\circ\gamma=\gamma\circ\mathcal{D}_{x,N,\varepsilon}^{\theta}$ for all $\varepsilon>0$.
\end{itemize}
\end{cor}
\begin{proof}
By symmetry, get $m_{f}(t,s)^{\theta}=m_{f}(s,t)^{\theta}$ for all $t,s\geq 0$. Apply Lemma \ref{LEM.AF_NCD_FC_II}.
\end{proof}

%NEWPAGE
%NEWPAGE
%NEWPAGE

\pagebreak

%NEWPAGE
%NEWPAGE
%NEWPAGE

\begin{lem}\label{LEM.NCD_Operator_Compressed_PMO_I}
Let $(\phi,\bpsi,\gamma)$ be $(N,V)$-compressible.

\begin{itemize}
\item[1)] For all $x,y\in L^{0}(N,\tau)_{+}$, we have $\mathcal{M}_{x,y}^{\theta}\in\UBII_{V}\lc{}L^{2}(B,\omega)\rc$ and

\begin{align}\label{EQ.LEM.NCD_Operator_Compressed_PMO_I_1}
\restr{0.925}{\mathcal{M}_{x,y}^{\theta}}{V}=\mathcal{M}_{x,y,N}^{\theta}.
\end{align}

\begin{reapply}
\end{reapply}

\item[2)] For all $x,y>0$ in $L^{0}(N,\tau)$ and $\alpha,\beta>0$, we have $\mathcal{D}_{x,y,N}^{\theta}=\lc\restr{0.925}{\mathcal{M}_{x,y}}{V}\rc^{-\theta}$ and

\begin{align}\label{EQ.LEM.NCD_Operator_Compressed_PMO_I_2}
\restr{0.925}{\mathcal{D}_{x+\alpha 1_{N}^{\perp},y+\beta 1_{N}^{\perp}}^{\theta}}{V}=\restr{0.925}{\mathcal{M}_{x+\alpha 1_{N}^{\perp},y+\beta 1_{N}^{\perp}}^{-\theta}}{V}=\mathcal{M}_{x,y,N}^{-\theta}=\mathcal{D}_{x,y,N}^{\theta}.
\end{align}

\begin{reapply}
\end{reapply}

\item[3)] Let $N_{A}\subset \lc{}L^{\infty}(A,\tau),\tau\rc$ and $N_{B}\subset \lc{}L^{\infty}(B,\omega),\omega\rc$ be finite-dimensional s.t.~$1)$ and $2)$ in Lemma \ref{LEM.AF_Cstar_Bimodule_Compression_II} hold. For all $x,y>0$ in $N_{A}$ and $\alpha,\beta>0$, we have

\begin{align}\label{EQ.LEM.NCD_Operator_Compressed_PMO_I_3}
\mathcal{D}_{x+\alpha 1_{N_{A}}^{\perp},y+\beta 1_{N_{A}}^{\perp}}^{\theta}=\mathcal{D}_{x,y,N_{A}}^{\theta}\oplus m_{f}\lc\alpha,\beta\rc^{-\theta}I_{\vstretch{0.7125}{\big\langle}\hspace{-0.0275cm} 1_{N_{B}}^{\perp}\hspace{-0.03cm} \vstretch{0.7125}{\big\rangle}_{\mathbb{C}}}
\end{align}

\begin{reapply}
\end{reapply}

\noindent w.r.t.~$\BII(N_{B})\oplus\BII(\vstretch{0.8575}{\big\langle}\hspace{-0.0275cm} 1_{N_{B}}^{\perp}\hspace{-0.03cm} \vstretch{0.8575}{\big\rangle}_{\mathbb{C}})$.

\item[4)] Assume $(N,V)=\lc{}L^{\infty}(A[p],\tau),L^{2}(B[p],\omega)\rc$ for a projection $p\in L^{\infty}(A,\tau)$. For all $x,y\in L^{0}(A[p],\tau)_{+}$ and $\varepsilon>0$, we have

\begin{align}\label{EQ.LEM.NCD_Operator_Compressed_PMO_I_4}
\mathcal{D}_{x,y,\varepsilon}^{\theta}=\mathcal{D}_{x,y,p,\varepsilon}^{\theta}\oplus\lc\mathcal{D}_{x,0,\varepsilon}^{\theta}L_{p}^{\phi}\mathrlap{\phantom{R}^{\bpsi}}R_{p^{\perp}}+\mathcal{D}_{0,y,\varepsilon}^{\theta}\mathrlap{\phantom{L}^{\phi}}L_{p^{\perp}}R_{p}^{\bpsi}+\varepsilon^{-\theta}\pi_{p^{\perp}}\rc{}
\end{align}

\begin{reapply}
\end{reapply}

\noindent w.r.t.~$\BII\lc{}L^{2}(B[p],\omega)\rc\oplus\BII\lc{}L^{2}(B[p],\omega)^{\perp}\rc$.
\end{itemize}
\end{lem}
\begin{proof}
We have $1)$ by applying $2)$ in Lemma \ref{LEM.AF_NCD_FC_I} to $g=m_{f}^{\theta}$. We use $1)$ to obtain $2)$ by likewise application of $3)$ in Lemma \ref{LEM.AF_NCD_FC_I}. This uses strict positivity since application demands, for $g$ here, an extension from compact joint spectra to $[0,\infty)\times [0,\infty)$.\par
We show $3)$. Assume its setting. Let $x,y>0$ in $N_{A}$ and $\alpha,\beta>0$. Using $2)$, we have

\begin{align}\label{EQ.LEM.NCD_Operator_Compressed_PMO_I_5}
\mathcal{D}_{x+\alpha 1_{N_{A}}^{\perp},y+\beta 1_{N_{A}}^{\perp}}^{\theta}\pi_{N_{B}}^{B}=\mathcal{D}_{x,y,N_{A}}^{\theta}.
\end{align}

\noindent Note $N_{B}1_{N_{B}}^{\perp}=1_{N_{B}}^{\perp}N_{B}=0$ and $\phi\lc{}1_{N_{A}}^{\perp}\rc{}=\bpsi\lc{}1_{N_{A}}^{\perp}\rc{}=1_{N_{B}}^{\perp}$. Approximating $m_{f,\varepsilon}^{-\theta}$ uniformly using polynomials as in the proof of Lemma \ref{LEM.AF_NCD_FC_III}, we calculate

\begin{align}\label{EQ.LEM.NCD_Operator_Compressed_PMO_I_6}
\mathcal{D}_{x+\alpha 1_{N_{A}}^{\perp},y+\beta 1_{N_{A}}^{\perp}}^{\theta}\lc{}1_{N_{B}}^{\perp}\rc{}=m_{f}^{-\theta}\lc\alpha 1_{N_{B}}^{\perp},\beta 1_{N_{B}}^{\perp}\rc{}=m_{f}^{-\theta}\lc\alpha,\beta\rc{}1_{N_{B}}^{\perp}.
\end{align}

\noindent Using $2.3)$ in Lemma \ref{LEM.AF_NCD_FC_I}, Equation \ref{EQ.LEM.NCD_Operator_Compressed_PMO_I_5} and Equation \ref{EQ.LEM.NCD_Operator_Compressed_PMO_I_6} imply Equation \ref{EQ.LEM.NCD_Operator_Compressed_PMO_I_3} at once. Get $3)$.\par

%NEWPAGE
%NEWPAGE
%NEWPAGE

\pagebreak

%NEWPAGE
%NEWPAGE
%NEWPAGE

We show $4)$. Assume its setting. Let $x,y\in L^{0}(A[p],\tau)_{+}$ and $\varepsilon>0$. Note $m_{f,\varepsilon}\lc\varepsilon,\varepsilon\rc{}=f(1)\varepsilon=\varepsilon$ since $f(1)=1$. In addition, $1)$ in Proposition \ref{PRP.NCD_Operator_Compressed_PMO_II} implies

\begin{align}\label{EQ.LEM.NCD_Operator_Compressed_PMO_I_7}
\mathcal{D}_{x,y,\varepsilon}=m_{f,\varepsilon}^{-\theta}\lc{}L_{x}^{\phi},R_{y}^{\bpsi}\rc{},\ \mathcal{D}_{x,y,p,\varepsilon}=m_{f,\varepsilon}^{-\theta}\lc{}L_{x,p}^{\phi},R_{y,p}^{\bpsi}\rc{}.
\end{align}

\noindent Equation \ref{EQ.LEM.NCD_Operator_Compressed_PMO_I_7} shows $4)$ by Lemma \ref{LEM.AF_NCD_FC_II} and $2.3)$ in Lemma \ref{LEM.AF_NCD_FC_I} applied to $m_{f,\varepsilon}^{-\theta}$.
\end{proof}

Assume $A$ and $B$ are finite-dimensional. Let $B=r_{B}^{-1}\lc\oplus_{l=1}^{n}M_{n_{l}}(\mathbb{C})\rc$ equipped with its canonical AF-$B$-bimodule structure. The latter uses Notation \ref{NTN.AF_Cstar_Fin_Isometry}. Corollary \ref{COR.FC_Preservation} implies normal unital $^{*}$-homomorphisms preserve functional calculus. For all normal $z\in B$, get $\specB z=\bigcup_{l=1}^{n}\spec r_{B}(z)_{l}$. Thus $z$ is positive, resp.~strictly positive if and only if all $\lset{}r_{B}(z)_{l}\rset_{l=1}^{n}$ are. For all $x,y\in A_{h}$ and $g\in L^{\infty}\lc\specA x\times y,dE_{x,y,A}\rc$, we obtain

\begin{align}\label{EQ.SSEC.NCDS_NCD_QE_1}
g\lc{}L_{x}^{\phi},R_{y}^{\bpsi}\rc{}(u)=r_{B}^{-1}\lc\hspace{-0.025cm}\oplus_{l=1}^{n}g\lc{}L_{r_{B}\lc\phi(x)\rc_{l}},R_{r_{B}\lc\bpsi(y)\rc_{l}}\rc\big(r_{B}(u)_{l}\big)\rc{}
\end{align}

\noindent for all $u\in B$.

\begin{prp}\label{PRP.NCD_Operator_Compressed_PMO_Fin}
Assume $A$ and $B$ are finite-dimensional. Let $B=r_{B}^{-1}\lc\oplus_{l=1}^{n}M_{n_{l}}(\mathbb{C})\rc$ and equip $B$ with its canonical AF-$B$-bimodule structure. For all $x,y\in A_{+}$, we have

\begin{itemize}
\item[1)] $\mathcal{M}_{x,y}^{\theta}=r_{B}^{-1}\circ\lc\oplus_{l=1}^{n}\mathcal{M}_{r_{B}\lc\phi(x)\rc_{l},r_{B}\lc\bpsi(y)\rc_{l}}^{\theta}\rc\circ r_{B}$,

\item[2)] $\mathcal{D}_{x,y}^{\theta}=r_{B}^{-1}\circ\lc\oplus_{l=1}^{n}\mathcal{D}_{r_{B}\lc\phi(x)\rc_{l},r_{B}\lc\bpsi(y)\rc_{l}}^{\theta}\rc\circ r_{B}$ if $x,y>0$ in $A$.
\end{itemize}
\end{prp}
\begin{proof}
Equation \ref{EQ.SSEC.NCDS_NCD_QE_1} for $g=m_{f}^{\theta}$, resp.~$g=m_{f}^{-\theta}$.
\end{proof}

%%%%%%%%%%%%%
%%% PART %%%%
%%%%%%%%%%%%%

\subsubsection*{Quasi-entropies in the finite-dimensional setting}

Following the notion of monotone metric \cite{ART.Pet.1996.Monotone_Metrics}, quasi-entropies for full matrix algebras are given in \cite{ART.Hia_Pet.2012.Quasi_Entropy_I}\cite{ART.Hia_Pet.2013.Quasi_Entropy_II}. These are used to define energy functionals in \cite{ART.Car_Maa.2014.Quantum_OT_I}\cite{ART.Car_Maa.2017.Quantum_OT_II}\cite{ART.Car_Maa.2020.Quantum_OT_III}. Quasi-entropies, elsewhere known as quasi-entropy type functions instead, generalise quantum $f$-divergences \cite{ART.Hia.2018.QFD_I}\linebreak\cite{ART.Hia.2019.QFD_II}. We clarify and use terminology as per Remark \ref{REM.QE_Fin}.\par
Let $(A,\tau)$ and $(B,\omega)$ be finite-dimensional tracial AF-$C^{*}$-algebras. Let $(\phi,\bpsi,\gamma)$ be an AF-$A$-bimodule structure on $B$. Let $N\subset A$ and $V\subset B$ be a Hilbert subspace. Let $f$ be representing function of an operator mean and $\theta\in [0,1]$.

\begin{dfn}\label{DFN.QE_Fin_Strict}
We define functional $\mathcal{I}_{A,B}^{f,\theta}:A_{>0}\times A_{>0}\times B\longrightarrow [0,\infty)$ by setting

\begin{align}\label{EQ.DFN.QE_Fin_Strict_1}
\mathcal{I}_{A,B}^{f,\theta}(x,y,u):=\lgl\mathcal{D}_{x,y}^{\theta}(u),u\rgl_{\omega}
\end{align}

\noindent for all $x,y>0$ in $A$ and $u\in B$.
\end{dfn}

\begin{rem}\label{REM.QE_Fin}
In case of full matrix algebras, the terminology in both \cite{ART.Hia_Pet.2012.Quasi_Entropy_I} and \cite{ART.Hia_Pet.2013.Quasi_Entropy_II} is quasi-entropy type functions, rather than quasi-entropies. The latter are a special case for $\theta=-1$ fixed. We nevertheless use quasi-entropies consistent with \cite{ART.Car_Maa.2020.Quantum_OT_III}.
\end{rem}

%NEWPAGE
%NEWPAGE
%NEWPAGE

\pagebreak

%NEWPAGE
%NEWPAGE
%NEWPAGE

\begin{lem}\label{LEM.QE_Fin_I}
$\mathcal{I}^{f,\theta}$ is jointly convex. For all $u\in B$, the map $(x,y)\mapsto\mathcal{I}^{f,\theta}(x,y,u)$ decreases in partial order on $A_{>0}\times A_{>0}$ induced by pairs of positive elements.
\end{lem}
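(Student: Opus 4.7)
The two claims reduce to three standard operator-theoretic facts applied to the commuting bounded positive operators $L_x, R_y \in \mathcal{B}(B)$: (i) joint monotonicity and joint concavity of the Kubo--Ando operator mean $m_f$ on pairs of commuting positive operators (cf.~\cite{ART.ANKU79}); (ii) operator antitonicity and operator convexity of $s \mapsto s^{-\theta}$ on $(0, \infty)$ for $\theta \in (0,1]$, which sits in the classical operator convexity range $s^{\alpha}$, $\alpha \in [-1,0]$; and (iii) the composition principle that an operator convex, operator decreasing scalar function applied to an operator concave operator-valued map yields an operator convex composition (obtained by chaining the defining inequalities).

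For monotonicity, the maps $x \mapsto L_x$ and $y \mapsto R_y$ are linear and positivity-preserving into commuting operators on $B$, and $L_x, R_y > 0$ whenever $x, y > 0$ by unitality of $\phi, \psi$ combined with the block decomposition of Proposition~\ref{PRP.QE_Fin_Operations}. If $x_0 \leq x_1$ and $y_0 \leq y_1$ in $A_{>0}$, joint monotonicity of $m_f$ gives $\mathcal{M}_{x_0, y_0} \leq \mathcal{M}_{x_1, y_1}$, and then (ii) applied to strictly positive operators yields $\mathcal{D}_{x_0, y_0}^{\theta} \geq \mathcal{D}_{x_1, y_1}^{\theta}$. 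Pairing against $u \in B$ delivers the asserted decreasing property of $\mathcal{I}^{f,\theta}(\cdot, \cdot, u^{\flat})$.

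For joint convexity, linearity of $x \mapsto L_x$ and $y \mapsto R_y$ together with (i) make $(x,y) \mapsto \mathcal{M}_{x,y}$ operator concave in the sense that for $t \in [0,1]$ and admissible inputs, $\mathcal{M}_{tx_1 + (1-t)x_2,\, ty_1 + (1-t)y_2} \geq t\, \mathcal{M}_{x_1, y_1} + (1-t)\, \mathcal{M}_{x_2, y_2}$. By (ii) and the composition principle (iii), $(x,y) \mapsto \mathcal{D}_{x,y}^{\theta} = \mathcal{M}_{x,y}^{-\theta}$ is operator convex. Since $S \mapsto \langle Su, u\rangle_{\omega}$ is a positive linear functional on self-adjoint operators, operator convexity passes to ordinary convexity of the scalar functional $\mathcal{I}^{f,\theta}(\cdot, \cdot, u^{\flat})$.

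The main subtlety is ensuring the Kubo--Ando joint concavity of $m_f$ applies to the commuting pair $(L_x, R_y) \in \mathcal{B}(B)$ obtained from the joint functional calculus of Section~2.1; once this identification is explicit, the remaining ingredients are classical, and both assertions of the lemma follow from the same two-step composition.
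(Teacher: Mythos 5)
Your proof is correct, and the monotonicity half coincides with the paper's argument (joint monotonicity of the mean, antitonicity of inversion, L\"owner--Heinz for the exponent $\theta$). The convexity half, however, takes a genuinely different route. The paper first reduces to full matrix algebras by splitting $\mathcal{I}_{A,B}^{f,\theta}$ into blocks via $r_{B}$ and the trace characterisation of Proposition \ref{PRP.AF_Trace_Characterisation}, and then cites Theorem 2.1 of Hiai--Petz as a black box for joint convexity of the resulting quasi-entropy type functions. You instead prove the convexity directly: joint operator concavity of $(x,y)\longmapsto\mathcal{M}_{x,y}$ follows from Kubo--Ando superadditivity of connections applied to the commuting pair $(L_{x},R_{y})$ together with linearity of $x\mapsto L_{x}$, $y\mapsto R_{y}$, and composing with the operator convex, operator antitone function $s\mapsto s^{-\theta}$ ($\theta\in(0,1]$, so $-\theta\in[-1,0)$) yields joint operator convexity of $\mathcal{D}_{x,y}^{\theta}$; your two-step inequality chain for the composition is valid at the operator level. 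This is essentially Ando's original method, i.e.\ you have unpacked the proof of the cited theorem rather than invoked it. What each approach buys: the paper's version is shorter given the citation and keeps the statement anchored in the Hiai--Petz framework (which also covers other parameter ranges); yours is self-contained, avoids the block decomposition entirely since the Kubo--Ando properties hold for arbitrary commuting positive operators on a Hilbert space, and in fact delivers the stronger conclusion that $(x,y)\longmapsto\mathcal{D}_{x,y}^{\theta}$ is jointly \emph{operator} convex, from which the scalar convexity of $\mathcal{I}^{f,\theta}(\cdot,\cdot,u^{\flat})$ follows by pairing with $u$. The only point worth making explicit in your write-up is that $\mathcal{M}_{x,y}\geq m_{f}(\varepsilon,\varepsilon)I>0$ for $x,y\geq\varepsilon 1_{A}$, so that all intermediate operators in the composition chain lie in the domain where $s^{-\theta}$ is operator antitone and operator convex; you gesture at this via unitality of $\phi,\psi$, which suffices.
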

\begin{proof}
Proposition \ref{PRP.AF_Cstar_Trace_II} and $2)$ in Proposition \ref{PRP.NCD_Operator_Compressed_PMO_Fin} imply we have 

\begin{align}\label{EQ.LEM.QE_Fin_I_1}
\mathcal{I}_{A,B}^{f,\theta}(x,y,u)=\sum_{l=1}^{n}C_{l}\textrm{tr}_{n_{l}}\lc{}r_{B}(u)_{l}^{*}m_{f}^{-\theta}\lc{}L_{r_{B}\lc\phi(x)\rc_{l}},R_{r_{B}\lc\bpsi(y)\rc_{l}}\rc\big(r_{B}(u)_{l}\big)\rc{}
\end{align}

\noindent for all $x,y>0$ in $A$ and $u\in B$. If each summand on the right-hand side of Equation \ref{EQ.LEM.QE_Fin_I_1} satisfies the claimed properties, then our claims follow. We therefore reduce to the case of full matrix algebras since $\phi,\bpsi$ and $r_{B}$ are $^{*}$-homomorphisms.\par
Assume $A=B=M_{n}(\mathbb{C})$ for $n\in\mathbb{N}$ and $\phi=\bpsi=\id_{M_{n}(\mathbb{C})}$ without loss of generality. Note $\gamma$ is of no consequence here. Following \cite{ART.Hia_Pet.2013.Quasi_Entropy_II}, get the quasi-entropy type function

\begin{align}
\lc{}X,Y,U\rc\mapsto\lgl\mathcal{D}_{X,Y}(U),U\rgl_{\tr}=\textrm{tr}\lc{}U^{*}m_{f}^{-\theta}\lc{}L_{X},R_{Y}\rc{}(U)\rc{}
\end{align}

\noindent defined on $M_{n}(\mathbb{C})_{>0}\times M_{n}(\mathbb{C})_{>0}\times M_{n}(\mathbb{C})$. Theorem 2.1 in \cite{ART.Hia_Pet.2013.Quasi_Entropy_II} gives joint convexity of such functionals since $f$ is operator monotone and $\theta\in [0,1]$. We have operator mean $\lc{}X,Y\rc\mapsto \mathcal{M}_{X,Y}=m_{f}\lc{}L_{X},R_{Y}\rc$. Operator means are monotonically increasing on positive bounded operators \cite{ART.And_Kub.1979.Operator_Means}. Since inversion additionally reverts partial order on strictly positive bounded operators \lc{}cf.~Proposition \ref{PRP.Unbd_PO_Inversion}\rc{}, the map $\lc{}X,Y\rc\mapsto\mathcal{D}_{X,Y}$ decreases in partial order. Exponentiation with $\theta\in [0,1]$ preserves order, hence we obtain the map $\lc{}X,Y\rc\mapsto\textrm{tr}\lc{}U^{*}m_{f}^{-\theta}\lc{}L_{X},R_{Y}\rc{}(U)\rc$ decreases in partial order for all $U\in M_{n}(\mathbb{C})$.
\end{proof}

Identifying via musical isomorphisms, $A\cong A^{*}$ and $B\cong B^{*}$ as partially ordered vector spaces. Using $2)$ in Proposition \ref{PRP.NCD_Operator_Compressed_PMO_II}, we extend Equation \ref{EQ.DFN.QE_Fin_Strict_1} and therefore $\mathcal{I}_{A,B}^{f,\theta}$ to $A_{+}\cong A_{+}^{*}$ in the first two variables.

\begin{dfn}\label{DFN.QE_Fin}
We define quasi-entropy $\mathcal{I}_{A,B}^{f,\theta}:A_{+}^{*}\times A_{+}^{*}\times B^{*}\longrightarrow [0,\infty]$ by setting

\begin{align}\label{EQ.DFN.QE_Fin_1}
\mathcal{I}_{A,B}^{f,\theta}(\mu,\eta,w):=\sup_{\varepsilon>0}\hspace{0.025cm} \lgl\mathcal{D}_{\sharp\mu,\sharp\eta,\varepsilon}^{\theta}\lc\sharp w\rc{},\sharp w\rgl_{\omega}
\end{align}

\noindent for all $\mu,\eta\in A_{+}^{*}$ and $w\in B^{*}$.
\end{dfn}

\begin{ntn}
Let $\mathcal{I}_{B,B}^{f,\theta}$ denote the quasi-entropy for $B$ equipped with its canonical AF-$B$-bimodule structure.
\end{ntn}

\begin{prp}\label{PRP.QE_Fin_I}
$\mathcal{I}_{A,B}^{f,\theta}$ is jointly convex and l.s.c.~in $w^{*}$-topology.
\end{prp}
\begin{proof}
Lemma \ref{LEM.QE_Fin_I} shows joint convexity. For all $\varepsilon>0$, note $(x,y,u)\mapsto\lgl\mathcal{D}_{x,y,\varepsilon}^{\theta}(u),u\rgl_{\omega}$ defined on $A_{+}\times A_{+}\times B$ is norm continuous. Equation \ref{EQ.DFN.QE_Fin_1} shows l.s.c.~in $w^{*}$-topology by finite-dimensionality.
\end{proof}

%NEWPAGE
%NEWPAGE
%NEWPAGE

\pagebreak

%NEWPAGE
%NEWPAGE
%NEWPAGE

\begin{prp}\label{PRP.QE_Fin_II}
For all $x,y\in A_{+}$ and $u\in B$, we have

\begin{itemize}
\item[1)] $\mathcal{D}_{x,y,\varepsilon}^{\theta}=\mathcal{D}_{\phi(x),\bpsi(y),\varepsilon}^{\theta}$,

\item[2)] $\mathcal{I}_{A,B}^{f,\theta}\lc{}x^{\flat},y^{\flat},u^{\flat}\rc{}=\mathcal{I}_{B,B}\lc\phi(x)^{\flat},\bpsi(y)^{\flat},u^{\flat}\rc$.
\end{itemize}
\end{prp}
\begin{proof}
Apply $1)$ in Proposition \ref{PRP.NCD_Operator_Compressed_PMO_II} to get $1)$. The latter yields $2)$ by construction.
\end{proof}

\begin{lem}\label{LEM.QE_Fin_II}
Let $(\phi,\bpsi,\gamma)$ be $(N,V)$-compressible.

\begin{itemize}
\item[1)] For all $x,y\in N_{+}$ and $u\in V$, we have

\begin{align}\label{EQ.LEM.QE_Fin_II_1}
\mathcal{I}_{A,B}^{f,\theta}\lc{}x^{\flat},y^{\flat},u^{\flat}\rc{}=\sup_{\varepsilon>0}\hspace{0.025cm} \lgl\mathcal{D}_{x,y,N,\varepsilon}^{\theta}(u),u\rgl_{\omega}.
\end{align}

\begin{reapply}
\end{reapply}

\item[2)] Let $N_{A}\subset A$ and $N_{B}\subset B$ be $C^{*}$-subalgebras s.t.~$1)$ and $2)$ in Lemma \ref{LEM.AF_Cstar_Bimodule_Compression_II} hold. Let $\phi^{*}(N_{B}),\bpsi^{*}(N_{B})\subset N_{A}$. For all $\mu,\eta\in A_{+}^{*}$ and $w\in B^{*}$, we have

\begin{align}\label{EQ.LEM.QE_Fin_II_2}
\mathcal{I}_{N_{A},N_{B}}^{f,\theta}\lc\mu\vert_{N_{A}},\eta\vert_{N_{A}},w\vert_{N_{B}}\rc\leq\mathcal{I}_{A,B}^{f,\theta}(\mu,\eta,w).
\end{align}

\begin{reapply}
\end{reapply}

\end{itemize}
\end{lem}
\begin{proof}
In this proof, $\gamma$ is of no consequence. We have $1)$ at once by $2)$ in Lemma \ref{LEM.NCD_Operator_Compressed_PMO_I}. We show $2)$. Assume its setting. Note Remark \ref{REM.AF_Cstar_Trace_Fin}. We therefore consider $\lc{}N_{A},\tau\rc$ and $\lc{}N_{B},\omega\rc$ to be finite tracial AF-$C^{*}$-algebras.\par
We know $\phi,\bpsi:N_{A}\longrightarrow N_{B}$ are local $^{*}$-homomorphisms. We have AF-$N_{A}$-bimodule $(\phi,\bpsi,\gamma)$ on $N_{B}$. Set $\pi_{A}:=\pi_{N_{A}}^{A},\pi_{A,\textrm{u}}:=\pi_{N_{A}[1_{A}]}^{A}$ and $\pi_{B}:=\pi_{N_{B}}^{B},\pi_{B,\textrm{u}}:=\pi_{N_{B}[1_{B}]}^{B}$ here. Using $3.1)$ in Proposition \ref{PRP.AF_Cstar_Local_Hom_I}, we have

\begin{align}\label{EQ.LEM.QE_Fin_II_3}
\phi\vert_{N_{A}}\circ\pi_{A}=\pi_{B}\circ\phi\vert_{N_{A}},\ \bpsi\vert_{N_{A}}\circ\pi_{A}=\pi_{B}\circ\bpsi\vert_{N_{A}}.
\end{align}

\noindent Arguing as for $2.1)$ in Proposition \ref{PRP.AF_Cstar_Trace_Dualisation_I}, note identifying $A^{*}\cong A$ and $B^{*}\cong B$ via musical isomorphisms yields

\begin{align}\label{EQ.LEM.QE_Fin_II_4}
\textrm{res}_{N_{A}}=\pi_{A},\ \textrm{res}_{N_{B}}=\pi_{B} 
\end{align}

\noindent for restriction maps $\res_{N_{A}}:A^{*}\longrightarrow N_{A}$ and $\res_{N_{B}}:B^{*}\longrightarrow N_{B}$ obtained by dualising the given $C^{*}$-subalgebra inclusion maps. Finite-dimensionality shows we are in the setting of Proposition \ref{PRP.AF_Cstar_Trace_NCE_II}. Using Proposition \ref{PRP.AF_Cstar_Trace_NCE_II}, we see both noncommutative conditional expectations $\pi_{A}:A\longrightarrow N_{A}$ and $\pi_{B}:B\longrightarrow N_{B}$ decompose as

\begin{align}\label{EQ.LEM.QE_Fin_II_5}
\pi_{A}=\pi_{A,\textrm{u}}-\kappa_{N_{A}}^{A}1_{N_{A}}^{\perp},\ \pi_{B}=\pi_{B,\textrm{u}}-\kappa_{N_{B}}^{B}1_{N_{B}}^{\perp}.
\end{align}

Equation \ref{EQ.LEM.QE_Fin_II_3} and Equation \ref{EQ.LEM.QE_Fin_II_4} hold if we use $N_{A}[1_{A}]$ and $N_{B}[1_{B}]$ instead, i.e.~$\pi_{A,\textrm{u}}$ and $\pi_{B,\textrm{u}}$. Let $\mu,\eta\in A_{+}^{*}$ and $w\in B^{*}$. Set $x:=\sharp\mu$, $y:=\sharp\eta$ and $z:=\sharp w$. Equation \ref{EQ.LEM.QE_Fin_II_3} and Equation \ref{EQ.LEM.QE_Fin_II_4} show

\begin{align}\label{EQ.LEM.QE_Fin_II_7}
\pi_{B}\lc\phi(x)\rc{}=\phi\lc\pi_{A}(x)\rc{}=\phi\lc\sharp\mu\vert_{N_{A}}\rc{},\ \pi_{B}\lc\bpsi(y)\rc{}=\bpsi\lc\pi_{A}(y)\rc{}=\bpsi\lc\sharp\eta\vert_{N_{A}}\rc{}
\end{align}

\noindent and

\begin{align}\label{EQ.LEM.QE_Fin_II_8}
\pi_{B}(z)=\sharp w\vert_{N_{B}}.
\end{align}

\noindent We may use $N_{A}[1_{A}]$ and $N_{B}[1_{B}]$ instead. Using $1)$ in our setting, we see Equation \ref{EQ.LEM.QE_Fin_II_7} and Equation \ref{EQ.LEM.QE_Fin_II_8} show

\begin{align}\label{EQ.LEM.QE_Fin_II_9}
\mathcal{I}_{A,B}^{f,\theta}\lc\pi_{A}(x)^{\flat},\pi_{A}(y)^{\flat},\pi_{B}(z)^{\flat}\rc{}=\mathcal{I}_{N_{A},N_{B}}^{f,\theta}\lc\mu\vert_{N_{A}},\eta\vert_{N_{A}},w\vert_{N_{B}}\rc{}.
\end{align}

\noindent Equation \ref{EQ.LEM.QE_Fin_II_9} and $2)$ in Proposition \ref{PRP.QE_Fin_II} imply Equation \ref{EQ.LEM.QE_Fin_II_2} if for all $\varepsilon>0$, we have

\begin{align}\label{EQ.LEM.QE_Fin_II_10}
\lgl\mathcal{D}_{\pi_{A}(x),\pi_{A}(y),\varepsilon}^{\theta}\lc\pi_{B}(z)\rc{},\pi_{B}(z)\rgl_{\omega}\leq\lgl\mathcal{D}_{\pi_{A,\textrm{u}}(x),\pi_{A,\textrm{u}}(y),\varepsilon}^{\theta}\lc\pi_{B,\textrm{u}}(z)\rc{},\pi_{B,\textrm{u}}(z)\rgl_{\omega}
\end{align}

\noindent and

\begin{align}\label{EQ.LEM.QE_Fin_II_11}
\lgl\mathcal{D}_{\pi_{A,\textrm{u}}(x),\pi_{A,\textrm{u}}(y),\varepsilon}^{\theta}\lc\pi_{B,\textrm{u}}(z)\rc{},\pi_{B,\textrm{u}}(z)\rgl_{\omega}\leq\lgl\mathcal{D}_{x,y,\varepsilon}^{\theta}(z),z\rgl_{\omega}.
\end{align}

We show Equation \ref{EQ.LEM.QE_Fin_II_10}. Let $\varepsilon>0$. Using $3)$ in Lemma \ref{LEM.NCD_Operator_Compressed_PMO_I}, Equation \ref{EQ.LEM.QE_Fin_II_5}, as well as unitality of noncommutative conditional expectations for unital $C^{*}$-subalgebras, we see writing $\varepsilon 1_{A}=\varepsilon 1_{N}+\varepsilon 1_{N}^{\perp}$ yields

\begin{align}\label{EQ.LEM.QE_Fin_II_12}
\mathcal{D}_{\pi_{A,\textrm{u}}(x),\pi_{A,\textrm{u}}(y),\varepsilon}^{\theta}=\mathcal{D}_{\pi_{A}(x),\pi_{A}(y),N_{A},\varepsilon}\oplus m_{f}\lc\varepsilon+\kappa_{N_{A}}^{A}(x),\varepsilon+\kappa_{N_{A}}^{A}(y)\rc^{-\theta}I_{\vstretch{0.7125}{\big\langle}\hspace{-0.0275cm} 1_{N_{B}}^{\perp}\hspace{-0.03cm} \vstretch{0.7125}{\big\rangle}_{\mathbb{C}}}
\end{align}

\noindent w.r.t.~$\BII(N_{B})\oplus\BII(\vstretch{0.8575}{\big\langle}\hspace{-0.0275cm} 1_{N_{B}}^{\perp}\hspace{-0.03cm} \vstretch{0.8575}{\big\rangle}_{\mathbb{C}})$. Note $\mathcal{D}_{\pi_{A}(x),\pi_{A}(y),N_{A},\varepsilon}(N_{B})\subset N_{B}\subset\vstretch{0.8575}{\big\langle}\hspace{-0.0275cm} 1_{N_{B}}^{\perp}\hspace{-0.03cm} \vstretch{0.8575}{\big\rangle}_{\mathbb{C}}^{\perp}$. We obtain

\begin{align}\label{EQ.LEM.QE_Fin_II_13}
m_{f}\lc\varepsilon+\kappa_{N_{A}}^{A}(x),\varepsilon+\kappa_{N_{A}}^{A}(y)\rc^{-\theta}\kappa_{N_{B}}^{B}(z)\geq 0.
\end{align}

\noindent Using Equation \ref{EQ.LEM.QE_Fin_II_12} and Equation \ref{EQ.LEM.QE_Fin_II_13}, we estimate

\begin{align*}
& \ \lgl\mathcal{D}_{\pi_{A,\textrm{u}}(x),\pi_{A,\textrm{u}}(y),\varepsilon}^{\theta}\lc\pi_{B,\textrm{u}}(z)\rc{},\pi_{B,\textrm{u}}(z)\rgl_{\omega} \phantom{\bigg)} \\
=& \ \lgl\mathcal{D}_{\pi_{A}(x),\pi_{A}(y),\varepsilon}^{\theta}\lc\pi_{B}(z)\rc{},\pi_{B}(z)\rgl_{\omega}+m_{f}\lc\varepsilon+\kappa_{N_{A}}^{A}(x),\varepsilon+\kappa_{N_{A}}^{A}(y)\rc^{-\theta}\kappa_{N_{B}}^{B}(z)\cdot \|1_{N}^{\perp}\|_{\omega} \phantom{\bigg)} \\
\geq & \ \lgl\mathcal{D}_{\pi_{A}(x),\pi_{A}(y),\varepsilon}^{\theta}\lc\pi_{B}(z)\rc{},\pi_{B}(z)\rgl_{\omega}. \phantom{\bigg)}
\end{align*}

\noindent The above calculation shows Equation \ref{EQ.LEM.QE_Fin_II_10}.\par

%NEWPAGE
%NEWPAGE
%NEWPAGE

\pagebreak

%NEWPAGE
%NEWPAGE
%NEWPAGE

We show Equation \ref{EQ.LEM.QE_Fin_II_11}. Let $\varepsilon>0$. Using Equation \ref{EQ.LEM.QE_Fin_II_7} and Equation \ref{EQ.LEM.QE_Fin_II_8} for $N_{A}[1_{A}]$ and $N_{B}[1_{B}]$, $1)$ in Proposition \ref{PRP.QE_Fin_II} lets us calculate

\begin{align*}
& \ \lgl\mathcal{D}_{\pi_{A,\textrm{u}}(x),\pi_{A,\textrm{u}}(y),\varepsilon}^{\theta}\lc\pi_{B,\textrm{u}}(z)\rc{},\pi_{B,\textrm{u}}(z)\rgl_{\omega} \phantom{\bigg)} \\
=& \ \lgl\mathcal{D}_{\phi\lc\pi_{A,\textrm{u}}(x)\rc{},\bpsi\lc\pi_{A,\textrm{u}}(y)\rc{},\varepsilon}^{\theta}\lc\pi_{B,\textrm{u}}(z)\rc{},\pi_{B,\textrm{u}}(z)\rgl_{\omega} \phantom{\bigg)} \\
=& \ \lgl\mathcal{D}_{\pi_{B,\textrm{u}}\lc\phi(x)\rc{},\pi_{B,\textrm{u}}\lc\bpsi(y)\rc{},\varepsilon}^{\theta}\lc\pi_{B,\textrm{u}}(z)\rc{},\pi_{B,\textrm{u}}(z)\rgl_{\omega}. \phantom{\bigg)}
\end{align*}

\noindent Proposition \ref{PRP.AF_Cstar_Trace_NCE_I} shows

\begin{align}\label{EQ.LEM.QE_Fin_II_6_Rearranged}
\pi_{B,\textrm{u}}(v)=\int_{\UII(N_{B}')}uvu^{*}d\nu_{N_{B}}  
\end{align}

\noindent for all $v\in B$. Equation \ref{EQ.LEM.QE_Fin_II_6_Rearranged} expresses $\pi_{B}$ as average of unitary conjugations. Note the application of perturbed noncommutative division operators is jointly convex \lc{}cf.~proof of Lemma \ref{LEM.QE_Fin_I}\rc{}. We therefore apply the Jensen inequality \cite{ART.Per.1974.Vector_Valued_Jensen} to estimate

\begin{align}\label{EQ.LEM.QE_Fin_II_14}
\lgl\mathcal{D}_{\pi_{B,\textrm{u}}\lc\phi(x)\rc{},\pi_{B,\textrm{u}}\lc\bpsi(y)\rc{},\varepsilon}^{\theta}\lc\pi_{B,\textrm{u}}(z)\rc{},\pi_{B,\textrm{u}}(z)\rgl_{\omega}\leq\lgl\mathcal{D}_{\phi(x),\bpsi(y),\varepsilon}^{\theta}(z),z\rgl_{\omega}.
\end{align}

\noindent Altogether, Equation \ref{EQ.LEM.QE_Fin_II_14} and $1)$ in Proposition \ref{PRP.QE_Fin_II} imply Equation \ref{EQ.LEM.QE_Fin_II_11}.
\end{proof}

\begin{rem}\label{REM.QE_Monotonicity}
Equation \ref{EQ.LEM.QE_Fin_II_2} in Lemma \ref{LEM.QE_Fin_II} is the monotonicity of quasi-entropies under restriction maps, called monotonicity. We distinguish this from monotonicity of operators means implied by $2)$ in Proposition \ref{PRP.NCD_Operator_Compressed_PMO_II}. 
\end{rem}

\begin{lem}\label{LEM.QE_Fin_III}
Assume $f$ is symmetric. For all $x,y>0$ in $A$ and $u\in B$, we have

\begin{itemize}
\item[1)] $\|u\|_{\omega}^{2}\leq\mathcal{I}_{A,B}^{f,\theta}\lc{}x^{\flat},y^{\flat},u^{\flat}\rc\cdot 2^{-\theta}\lc\|x\|_{\infty}^{\theta}+\hspace{0.025cm}\| y\|_{\infty}^{\theta}\rc$,

\item[2)] $\|u\|_{1}^{2}\leq\mathcal{I}_{A,B}^{f,\theta}\lc{}x^{\flat},y^{\flat},u^{\flat}\rc\cdot 2^{-\theta}\lc\|\phi\|_{1}^{\theta}\|x\|_{1}^{\theta}+\|\bpsi\|_{1}^{\theta}\| y\|_{1}^{\theta}\rc\cdot \omega\lc{}1_{B}\rc^{1-\theta}$.
\end{itemize}
\end{lem}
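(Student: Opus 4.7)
The plan is to reduce both inequalities to a single operator bound: symmetry of $f$ yields $m_f(s,t)\le (s+t)/2$ for all $s,t>0$, hence by joint functional calculus $\mathcal{M}_{x,y}\le\tfrac12(L_x+R_y)$ on $L^2(B,\omega)$ whenever $x,y\ge 0$ in $A$. Since $x,y>0$, $\mathcal{M}_{x,y}$ is strictly positive, and inversion together with operator monotonicity of $t\mapsto t^\theta$ on $[0,\infty)$ for $\theta\in(0,1]$ gives
\begin{equation*}
\mathcal{D}_{x,y}^{\theta}\ \ge\ 2^{\theta}(L_x+R_y)^{-\theta}.
\end{equation*}
Thus $\mathcal{I}_{A,B}^{f,\theta}(x^\flat,y^\flat,u^\flat)\ge 2^{\theta}\langle (L_x+R_y)^{-\theta}u,u\rangle_\omega$, which is the common starting point for the two bounds. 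To see $m_f\le $ arithmetic mean, I would use that a symmetric operator monotone $f$ with $f(1)=1$ satisfies $f'(1)=\tfrac12$ (differentiate $f(s^{-1})=f(s)/s$ at $s=1$) and is concave (being operator monotone on $(0,\infty)$), so $f(s)\le 1+\tfrac12(s-1)=\tfrac{s+1}2$.

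For the first inequality I would exploit subadditivity $(A+B)^\theta\le A^\theta+B^\theta$ for commuting positive operators $A,B$ and $\theta\in(0,1]$, a pointwise consequence of $(a+b)^\theta\le a^\theta+b^\theta$ applied via joint spectral calculus for the commuting pair $(L_x,R_y)$. Combined with $L_x^\theta=L_{x^\theta}\le\|x\|_\infty^\theta I$ and $R_y^\theta=R_{y^\theta}\le\|y\|_\infty^\theta I$ (since $\phi,\psi$ are $*$-homomorphisms of norm one), this yields
\begin{equation*}
(L_x+R_y)^{\theta}\ \le\ \bigl(\|x\|_\infty^\theta+\|y\|_\infty^\theta\bigr) I,\quad\text{hence}\quad(L_x+R_y)^{-\theta}\ \ge\ \bigl(\|x\|_\infty^\theta+\|y\|_\infty^\theta\bigr)^{-1} I,
\end{equation*}
so $\|u\|_2^{2}\le\bigl(\|x\|_\infty^\theta+\|y\|_\infty^\theta\bigr)\langle (L_x+R_y)^{-\theta}u,u\rangle_\omega$. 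Combining with the starting bound gives part 1.

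For the second inequality I would use a polar decomposition $u=v|u|$ with partial isometry $v\in B$, $vv^*$ and $v^*v$ projections in $B$, so that $\|u\|_1=\omega(v^*u)=\langle u,v\rangle_\omega$. Setting $T:=L_x+R_y>0$ and applying Cauchy--Schwarz to the vectors $T^{-\theta/2}u$ and $T^{\theta/2}v$ yields
\begin{equation*}
\|u\|_1^{2}\ \le\ \langle T^{-\theta}u,u\rangle_\omega\cdot\langle T^{\theta}v,v\rangle_\omega.
\end{equation*}
The second factor is estimated as above by $\langle (L_{x^\theta}+R_{y^\theta})v,v\rangle_\omega=\omega(\phi(x^\theta)vv^*)+\omega(\psi(y^\theta)v^*v)$; since $vv^*,v^*v\le 1_B$ and $\phi(x^\theta),\psi(y^\theta)\ge 0$, each summand is bounded by $\omega(\phi(x^\theta))$ resp.\ $\omega(\psi(y^\theta))$ (write $\phi(x^\theta)=a^2$ and use $a\,vv^*a\le a\,1_B\,a$). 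Then Proposition 2.2.13 gives $\omega(\phi(x^\theta))=\tau(x^\theta\phi^*(1_B))$. By H\"older with exponents $1/\theta,1/(1-\theta)$ followed by Riesz--Thorin interpolation (using $\|\phi^*(1_B)\|_1=\tau(\phi^*(1_B))=\omega(\phi(1_A))=\omega(1_B)$ by $\phi(1_A)=1_B$ in finite dimensions),
\begin{equation*}
\omega(\phi(x^\theta))\ \le\ \|x\|_1^\theta\,\|\phi^*(1_B)\|_{1/(1-\theta)}\ \le\ \|x\|_1^\theta\,\|\phi^*(1_B)\|_\infty^\theta\,\omega(1_B)^{1-\theta}
\end{equation*}
and analogously for $\psi$, $y$. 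Inserting everything yields part 2.

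The main obstacle is the symmetric-mean bound $m_f\le $ arithmetic mean, because it ties the whole argument to the hypothesis that $f$ is symmetric; once this is in hand, the remaining steps are a controlled chain of standard inequalities (operator monotonicity of $t^\theta$, subadditivity of $t^\theta$ via joint functional calculus, Cauchy--Schwarz for weighted inner products, H\"older, and $L^p$-interpolation).
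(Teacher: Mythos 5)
Your proposal is correct and follows essentially the same route as the paper: both rest on the maximality of the arithmetic mean among symmetric operator means (you derive $m_f\le\tfrac{s+t}{2}$ from concavity and $f'(1)=\tfrac12$ where the paper cites Kubo--Ando), subadditivity of $r\mapsto r^{\theta}$ via joint functional calculus, a Cauchy--Schwarz duality splitting for the $L^{1}$ bound, and the pairing $\omega(\phi(z))=\tau(z\,\phi^{*}(1_{B}))$ combined with an interpolation step to produce $\omega(1_{B})^{1-\theta}$. The only cosmetic differences are that you use a polar-decomposition witness instead of the supremum over the unit ball of $B$, and H\"older plus Lyapunov interpolation where the paper invokes Jensen's inequality $\|u^{\theta}\|_{1}\le\omega(1_{B})^{1-\theta}\|u\|_{1}^{\theta}$; these yield identical bounds.
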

\begin{proof}
The arithmetic operator mean is the maximal symmetric one \lc{}cf.~Theorem 4.5 in \cite{ART.And_Kub.1979.Operator_Means}\rc{}. Note $r\mapsto r^{\theta}$ on $[0,\infty)$ preserves order. For all $x,y>0$ in $A$, get 

\begin{align}\label{EQ.LEM.QE_Fin_III_1}
\mathcal{M}_{x,y}^{\theta}=m_{f}^{\theta}\lc{}L_{x}^{\phi},R_{y}^{\bpsi}\rc\leq 2^{-\theta}\big(L_{x}^{\phi}+R_{y}^{\bpsi}\big)^{\theta}.
\end{align}

\noindent For all $x,y>0$ in $A$ and $u\in B$, apply $L_{x}^{\phi}+R_{y}^{\bpsi}\leq\big(\|\phi\|_{\infty}\|x\|_{\infty}+\|\bpsi\|_{\infty}\| y\|_{\infty}\big)\cdot I$ to estimate

\begin{align}\label{EQ.LEM.QE_Fin_III_2}
\|u\|_{\omega}^{2}\leq \dblv{}\mathcal{M}_{x,y}^{\theta}\dblv{}\cdot \dblv{}\mathcal{D}_{x,y}^{\frac{\theta}{2}}(u)\dblv_{\omega}^{2}\leq\mathcal{I}_{A,B}^{f,\theta}\lc{}x^{\flat},y^{\flat},u^{\flat}\rc\cdot 2^{-\theta}\lc\|\phi\|_{\infty}\|x\|_{\infty}+\|\bpsi\|_{\infty}\| y\|_{\infty}\rc^{\theta}
\end{align}

\noindent using Equation \ref{EQ.LEM.QE_Fin_III_1}. Note $\|\phi\|_{\infty}=\|\bpsi\|_{\infty}=1$ by $2)$ in Lemma \ref{LEM.AF_Cstar_Local_Hom}. Further, $r\mapsto r^{\theta}$ is concave and therefore subadditive on $[0,\infty)$ since $\theta\in [0,1]$. Equation \ref{EQ.LEM.QE_Fin_III_2} shows $1)$.\par

%NEWPAGE
%NEWPAGE
%NEWPAGE

\pagebreak

%NEWPAGE
%NEWPAGE
%NEWPAGE

We prove $2)$. For all $x,y>0$ in $A$ and $u\in B$, we use the maximal symmetric operator mean property as above to estimate

\begin{align}\label{EQ.LEM.QE_Fin_III_3}
\babsv{1}{\omega\lc{}u^{*}z\rc{}}^{2}\leq \dblv{}\mathcal{D}_{x,y}^{\frac{\theta}{2}}(u)\dblv_{\omega}^{2}\cdot \dblv{}\mathcal{M}_{x,y}^{\frac{\theta}{2}}(z)\dblv_{\omega}^{2}\leq \dblv{}\mathcal{D}_{x,y}^{\frac{\theta}{2}}(u)\dblv_{\omega}^{2}\cdot 2^{-\theta}\lgl\big(L_{x}^{\phi}+R_{y}^{\bpsi}\big)^{\theta}(z),z\rgl_{\omega}.
\end{align}

\noindent Subadditivity of $r\mapsto r^{\theta}$ implies $\lc{}S+T\rc^{\theta}\leq S^{\theta}+T^{\theta}$ for commuting bounded operators $T,S\geq 0$ by functional calculus. Since $L^{\phi}$ and $R^{\bpsi}$ are $^{*}$-representations, we obtain

\begin{align}\label{EQ.LEM.QE_Fin_III_4}
\lgl\big(L_{x}^{\phi}+R_{y}^{\bpsi}\big)^{\theta}(z),z\rgl_{\omega}\leq\lgl\phi(x)^{\theta}(z),z\rgl_{\omega}+\lgl z\bpsi(y)^{\theta},z\rgl_{\omega}\leq\big(\|\phi(x)^{\theta}\|_{1}+\|\bpsi(y)^{\theta}\|_{1}\big)\cdot \| z\|_{B}^{2}.
\end{align}

\noindent For all $v\in B_{+}$ and $\theta\in [0,1]$, $\|v^{\theta}\|_{1}\leq\omega\lc{}1_{B}\rc^{1-\theta}\| v\|_{1}^{\theta}$ by Jensen's inequality. Equation \ref{EQ.LEM.QE_Fin_III_3} and Equation \ref{EQ.LEM.QE_Fin_III_4} together show $2)$ as norm is obtained by testing on $B$. For this, note $\dblv{}\phi(x)\dblv_{1}\leq \|\phi\|_{1}\|x\|_{1}$ and $\dblv{}\bpsi(y)\dblv_{1}\leq \|\bpsi\|_{1}\| y\|_{1}$.
\end{proof}

%%%%%%%%%%%%%
%%% PART %%%%
%%%%%%%%%%%%%

\subsubsection*{Extending to AF-$C^{*}$-bimodules}

Monotonicity extends quasi-entropies from the finite-dimensional setting to the AF-$C^{*}$-setting. Theorem \ref{THM.QE_AF} collects fundamental properties. We view each symmetric representing function $f$ as determining a class of energetic structures with $\theta\in [0,1]$ as interpolation parameter. Proposition \ref{PRP.QOT_Distance_Interpolation_Parameter} shows $\theta=0$ gives quantum $\lc{}-1,2\rc$-Sobolev distance independent of $f$. In the logarithmic mean setting, i.e.~$f$ represents the logarithmic operator mean and $\theta=1$, we obtain quantum $L^{2}$-Wasserstein distances in direct analogy to the classical case \cite{ART.Dol_Naz_Sav.2009.Generalised_OT}.\par
Let $(A,\tau)$ and $(B,\omega)$ be tracial AF-$C^{*}$-algebras. Let $(\phi,\bpsi,\gamma)$ be an AF-$A$-bimodule structure on $B$. Let $f$ be representing function of an operator mean and $\theta\in [0,1]$. For all $j\in\mathbb{N}$, we use induced AF-$A_{j}$-bimodule structure on $B_{j}$ as per $4)$ in Definition \ref{DFN.AF_Cstar_Bimodule}.

\begin{dfn}\label{DFN.QE_AF}\hspace{1cm}
\begin{itemize}
\item[1)] For all $j\in\mathbb{N}$, we call $\mathcal{I}_{A,B,j}^{f,\theta}:=\mathcal{I}_{A_{j},B_{j}}^{f,\theta}$ the $j$-th restricted quasi-entropy.

\item[2)] We define quasi-entropy $\mathcal{I}_{A,B}^{f,\theta}:A_{+}^{*}\times A_{+}^{*}\times B^{*}\longrightarrow [0,\infty]$ by setting

\begin{align}\label{EQ.DFN.QE_AF_1}
\mathcal{I}_{A,B}^{f,\theta}(\mu,\eta,w):=\sup_{j\in\mathbb{N}}\hspace{0.025cm} \mathcal{I}_{A,B,j}^{f,\theta}\lc\mu_{j},\eta_{j},w_{j}\rc{}
\end{align}

\begin{reapply}
\end{reapply}

\noindent for all $\mu,\eta\in A_{+}^{*}$ and $w\in B^{*}$.
\end{itemize}
\end{dfn}

\begin{ntn}\label{NTN.QE_AF}
Unless stated otherwise, we suppress $A$ and $B$ in Definition \ref{DFN.QE_AF}.
\end{ntn}

\begin{cor}\label{COR.QE_AF}
For all $j\leq k$ in $\mathbb{N}$, we have

\begin{itemize}
\item[1)] $\mathcal{I}_{j}^{f,\theta}(\mu,\eta,w)=\mathcal{I}_{k}^{f,\theta}(\mu,\eta,w)$ for all $\mu,\eta\in A_{j,+}^{*}$ and $w\in B_{j}^{*}$,

\item[2)] $\mathcal{I}_{j}^{f,\theta}\lc\mu_{j},\eta_{j},w_{j}\rc\leq\mathcal{I}_{k}^{f,\theta}(\mu,\eta,w)$ for all $\mu,\eta\in A_{k,+}^{*}$ and $w\in B_{k}^{*}$.
\end{itemize}
\end{cor}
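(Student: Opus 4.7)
The corollary is a direct transcription of Lemma \ref{LEM.QE_Fin_II} to the pair of finite-dimensional $C^{*}$-bimodules $(A_{j},B_{j})\subset(A_{k},B_{k})$. By Remark \ref{REM.Bimodule_Restriction_I}, this pair is a bimodule restriction in the sense of Definition \ref{DFN.Bimodule_Restriction} sitting in the setting of Example \ref{BSP.Bimodule_Restriction_I}: the induced AF-$A_{j}$-bimodule structure $(\phi_{j},\psi_{j},\gamma_{j})$ on $B_{j}$ is the restriction of $(\phi_{k},\psi_{k},\gamma_{k})$. The traces agree by convention ($\tau_{j}=\tau_{k|A_{j}}$ and $\omega_{j}=\omega_{k|B_{j}}$), so the Hilbert space pairings used to construct $\mathcal{I}_{j}^{f,\theta}$ and $\mathcal{I}_{k}^{f,\theta}$ are compatible on the smaller subspaces.

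For part 1), I would fix $\mu,\eta\in A_{j,+}^{*}$ and $w\in B_{j}^{*}$, write $x=\sharp(\mu),y=\sharp(\eta)\in A_{j,+}$ and $u=\sharp(w)\in B_{j}$, and invoke Lemma \ref{LEM.QE_Fin_II} part 1). This gives the value of $\mathcal{I}_{A_{k},B_{k}}^{f,\theta}$ at the $A_{k}$-dualisations of $x,y,u$ as a supremum over $\varepsilon>0$ of pairings with $\mathcal{D}_{x+\varepsilon 1_{A_{j}},y+\varepsilon 1_{A_{j}},A_{j}}^{\theta}(u)$. By Definition \ref{DFN.Bimodule_Restriction_Operators} together with Definition \ref{DFN.QE_Fin}, this supremum equals $\mathcal{I}_{j}^{f,\theta}(\mu,\eta,w)$. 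Proposition \ref{PRP.AF_Dualisation} identifies the $A_{k}$-dualisations of $x,y$ with $\inckj(\mu),\inckj(\eta)$, and the $B_{k}$-dualisation of $u$ with $\inckj(w)$, yielding part 1).

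For part 2), the only additional input is the extra hypothesis of Lemma \ref{LEM.QE_Fin_II} part 2), namely $\phi_{k}^{*}(B_{j}),\psi_{k}^{*}(B_{j})\subset A_{j}$, which is precisely the locality axiom 2) of Definition \ref{DFN.Local_Hom} applied to the local $^{*}$-homomorphisms $\phi,\psi$. Applying the lemma and using $\nu|_{A_{j}}=\resjk(\nu)$ (and analogously on $B_{k}^{*}$) produces the claimed inequality. The proof is essentially bookkeeping; no substantive obstacle arises. The only point requiring care is the identification of the finite-dimensional dualisation and restriction maps implicit in Lemma \ref{LEM.QE_Fin_II} with the AF-dualisation and restriction maps $\inckj,\resjk$ of Definition \ref{DFN.AF_Dualisation}, for which Proposition \ref{PRP.AF_Dualisation} together with trace compatibility is the required bridge.
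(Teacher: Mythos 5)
Your proposal is correct and follows the same route as the paper: identify $(A_{j},B_{j})\subset(A_{k},B_{k})$ as a bimodule restriction in the setting of Example \ref{BSP.Bimodule_Restriction_I} (via Remark \ref{REM.Bimodule_Restriction_I}) and apply parts $1)$ and $2)$ of Lemma \ref{LEM.QE_Fin_II}. The additional bookkeeping you supply — matching the restricted division operators with the intrinsic quasi-entropy of $(A_{j},B_{j})$ and identifying $\inckj,\resjk$ with the finite-dimensional dualisations via Proposition \ref{PRP.AF_Dualisation}, and noting that the hypothesis $\phi^{*}(B_{j}),\psi^{*}(B_{j})\subset A_{j}$ is exactly locality — is precisely what the paper leaves implicit.
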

\begin{proof}
For all $j\leq k$ in $\mathbb{N}$, apply Lemma \ref{LEM.QE_Fin_II} to $N_{A}=A_{j}$ and $N_{B}=B_{j}$ in the setting of the induced AF-$A_{k}$-bimodule $B_{k}$. This shows both claims at once.
\end{proof}

%NEWPAGE
%NEWPAGE
%NEWPAGE

\pagebreak

%NEWPAGE
%NEWPAGE
%NEWPAGE

\begin{dfn}\label{DFN.QE_AF_Trace_Dualisation}
For all $j\in\mathbb{N}$, we define

\begin{itemize}
\item[1)] $\incj:A_{j,+}^{*}\times A_{j,+}^{*}\times B_{j}^{*}\longrightarrow A_{+}^{*}\times A_{+}^{*}\times B^{*}$ by setting 

\begin{align}\label{EQ.DFN.QE_AF_Trace_Dualisation_1}
\incj(\mu,\eta,w):=(\mu,\eta,w)    
\end{align}

\begin{reapply}
\end{reapply}

\noindent for all $\mu,\eta\in A_{j,+}^{*}$ and $w\in B_{j}^{*}$,

\item[2)] $\resj:A_{+}^{*}\times A_{+}^{*}\times B^{*}\longrightarrow A_{j,+}^{*}\times A_{j,+}^{*}\times B_{j}^{*}$ by setting 

\begin{align}\label{EQ.DFN.QE_AF_Trace_Dualisation_2}
\resj(\mu,\eta,w):=\lc\mu_{j},\eta_{j},w_{j}\rc{}    
\end{align}

\begin{reapply}
\end{reapply}

\noindent for all $\mu,\eta\in A_{+}^{*}$ and $w\in B^{*}$.
\end{itemize}
\end{dfn}

\begin{thm}\label{THM.QE_AF}
Let $(A,\tau)$ and $(B,\omega)$ be tracial AF-$C^{*}$-algebras. Let $(\phi,\bpsi,\gamma)$ be an AF-$A$-bimodule structure on $B$. Let $f$ be representing function of an operator mean and $\theta\in [0,1]$.

\begin{itemize}
\item[1)] $\mathcal{I}^{f,\theta}$ is jointly convex and l.s.c.~in $w^{*}$-topology. \phantom{\bigg)}

\item[2)] $\mathcal{I}_{j}^{f,\theta}=\mathcal{I}_{k}^{f,\theta}\circ\inckj=\mathcal{I}^{f,\theta}\circ\incj$ for all $j\leq k$ in $\mathbb{N}$. \phantom{\bigg)}

\item[3)] $\mathcal{I}_{j}^{f,\theta}\circ\resj\leq \mathcal{I}_{k}^{f,\theta}\circ\resk$ for all $j\leq k$ in $\mathbb{N}$. \phantom{\bigg)}

\item[4)] Assume $f$ is symmetric. For all $\mu,\eta\in A_{+}^{*}\cap L^{\infty}(A,\tau)^{\flat}$ and $w\in B^{*}\cap L^{2}(B,\omega)^{\flat}$, we have

\begin{align}\label{EQ.THM.QE_AF_1}
\|\sharp w\|_{\omega}^{2}\leq\mathcal{I}^{f,\theta}(\mu,\eta,w)\cdot 2^{-\theta}\lc\|\sharp\mu\|_{\infty}^{\theta}+\|\sharp\eta\|_{\infty}^{\theta}\rc{}.
\end{align}

\begin{reapply}
\end{reapply}

\item[5)] Assume $f$ is symmetric. For all $\mu,\eta\in A_{+}^{*}$ and $w\in B^{*}$, we have

\begin{align}\label{EQ.THM.QE_AF_2}
\|w\|_{B^{*}}^{2}\leq\mathcal{I}^{f,\theta}(\mu,\eta,w)\cdot 2^{-\theta}\lc\|\phi\|_{1}^{\theta}\|\mu\|_{A^{*}}^{\theta}+\|\bpsi\|_{1}^{\theta}\|\eta\|_{A^{*}}^{\theta}\rc\cdot \|\omega\|^{1-\theta}
\end{align}

\begin{reapply}
\end{reapply}

\noindent with $\|\omega\|:=\omega\lc{}1_{B}\rc{}=\sup_{j\in\mathbb{N}}\omega(1_{B_{j}})$ the volume and convention $\|\omega\|^{0}:=1$.
\end{itemize}
\end{thm}
\begin{proof}
Since restriction is $w^{*}$-continuous, Proposition \ref{PRP.QE_Fin_I} implies $1)$. Get $2)$ and $3)$ by Corollary \ref{COR.QE_AF}. Following $2.1)$ in Proposition \ref{PRP.AF_Cstar_Trace_Dualisation_II}, all noncommutative $L^{1}$-, $L^{2}$-~and $L^{\infty}$-norms in use are the suprema over $j\in\mathbb{N}$ of their restrictions to $A_{j}$, resp.~$B_{j}$. Writing norms as such, get $4)$ and $5)$ by Lemma \ref{LEM.QE_Fin_III}.
\end{proof}

\begin{rem}
We know $\|\phi\|_{1},\|\bpsi\|_{1}<\infty$ by $2.1)$ in Lemma \ref{LEM.AF_Cstar_Local_Hom}. If $(A,\tau)=(B,\omega)$ with self-adjoint local $^{*}$-homomorphisms, then $\|\phi\|_{1}=\|\bpsi\|_{1}\leq 2$. If $\theta=1$, then the volume term in Equation \ref{EQ.THM.QE_AF_2} vanishes. This allows estimates for unbounded traces.
\end{rem}

%%%%%%%%%%%%%%%%%%%
%%% SUBSECTION %%%%
%%%%%%%%%%%%%%%%%%%

\subsection{Noncommutative division operators from quasi-entropies}\label{SSEC.NCDS_NCD_Operators}

Quasi-entropies determine closed positive unbounded quadratic forms on symmetric $W^{*}$-bimodules given pairs of positive bounded functionals on tracial AF-$C^{*}$-algebras. Theorem \ref{THM.QE_QForm_Representation} shows such quadratic forms have unique representing operators. These are, by definition, noncommutative division operators of positive bounded functionals on tracial AF-$C^{*}$-algebras. Under the modified standard pairing, normal positive bounded functionals on tracial AF-$C^{*}$-algebras are positive measurable operators. Using results in Theorem \ref{THM.AF_Cstar_Bimodule_CLRA_SR}, Theorem \ref{THM.NCD_Operator_Compressed_PMO} states necessary and sufficient conditions to recover noncommutative division operators of positive measurable operators.\par
We construct noncommutative division operators as follows using the Kato-Robinson theorem \lc{}cf.~Theorem 10.4.2 in \cite{BK.deOli.2009.OpAlg_Quantum_Dynamics}\rc{}. We define perturbed left-~and right-division with positive bounded functionals on tracial AF-$C^{*}$-algebras. Inverses exist and are strongly commuting positive unbounded operators. Using their joint spectral calculus, we define perturbed noncommutative division operators in direct analogy to positive measurable operators. Theorem \ref{THM.QE_QForm_Representation} shows strong resolvent limits exist as perturbation tends to zero. These limits are noncommutative division operators as above. Standard reference for unbounded quadratic forms and the Kato-Robinson theorem is \cite{BK.deOli.2009.OpAlg_Quantum_Dynamics}.

%%%%%%%%%%%%%
%%% PART %%%%
%%%%%%%%%%%%%

\subsubsection*{Unbounded quadratic forms and the Kato-Robinson theorem}

Let $H$ be a Hilbert space. The Kato-Robinson theorem relates closed positive unbounded quadratic forms, their representing operators and strong resolvent limits as follows.\par
The full Kato-Robinson theorem, i.e.~its general formulation, uses strong resolvent convergence of positive unbounded operators on Hilbert subspaces. Definition \ref{DFN.SR_Subspace} generalises Definition \ref{DFN.SR} accordingly. Proposition \ref{PRP.SR_Subspace} shows uniform reducibility lets us restrict again to strong resolvent convergence on Hilbert spaces. For details on strong resolvent convergence on Hilbert spaces, we refer to Subsection \ref{SSEC.A_Maps_SR}. For details on reducing subspaces, we refer to Subsection \ref{SSEC.A_Maps_Compression}.

\begin{dfn}\label{DFN.SR_Subspace}
Let $V\subset H$ be a Hilbert subspace. We call $\lset{}T_{n}\rset_{n\in\mathbb{N}}\subset\UBII(H)_{+}$ strong resolvent convergent to $T\in\UBII\lc{}V\rc_{+}$ on $V$ in $H$ if for all $a>0$, we have

\begin{align}\label{EQ.DFN.SR_Subspace_1}
R_{-a}(T)(u)=\|.\|_{V}\textrm{-}\lim_{n\in\mathbb{N}}\hspace{0.025cm} R_{-a}(T_{n})\big(\pi_{V}(u)\big)
\end{align}

\noindent for all $u\in H$.
\end{dfn}

\begin{ntn}\label{NTN.SR_Subspace}
Let $T=\sr$-$\lim_{n\in\mathbb{N}}T_{n}$ on $V$ in $H$ denote strong resolvent convergence. We drop \textit{on V} if $V$ is clear from context, resp.~\textit{in H} if $H$ is.
\end{ntn}

\begin{rem}
The resolvents in Equation \ref{EQ.DFN.SR_Subspace_1} are given by bounded measurable functional calculus in a priori different $W^{*}$-algebras. If $V=H$, then Equation \ref{EQ.DFN.SR_Subspace_1} is strong convergence and we recover Definition \ref{DFN.SR} for positive unbounded operators by Lemma \ref{LEM.FC_SR} and $1)$ in Proposition \ref{PRP.SR}.
\end{rem}

\begin{prp}\label{PRP.SR_Subspace}
If $T=\sr$-$\lim_{n\in\mathbb{N}}T_{n}$ on $V$ in $H$ and $\lset{}T_{n}\rset_{n\in\mathbb{N}}\subset\UBII_{V}(H)$, then we have $T=\sr$-$\lim_{n\in\mathbb{N}}\restr{0.925}{T_{n}}{V}$ on $V$.
\end{prp}
\begin{proof}
Using $1)$ in Proposition \ref{PRP.Reducible} and $2)$ in Lemma \ref{LEM.Compression_Preservation_I}, Equation \ref{EQ.DFN.SR_Subspace_1} for fixed but arbitrary $a>0$ restricts to 

\begin{align}\label{EQ.PRP.SR_Subspace_1}
R_{-a}(T)=\s\textrm{-}\lim_{n\in\mathbb{N}}R_{-a}\lc\restr{0.925}{T_{n}}{V}\rc.
\end{align}

\noindent Using $1)$ in Proposition \ref{PRP.SR}, Equation \ref{EQ.PRP.SR_Subspace_1} shows $T=\sr$-$\lim_{n\in\mathbb{N}}\restr{0.925}{T_{n}}{V}$ on $V$.
\end{proof}

\begin{dfn}
Let $H$ be a separable Hilbert space. For all positive unbounded quadratic forms $Q:H\longrightarrow [0,\infty]$, set

\begin{itemize}
\item[1)] $\dom Q:=\big\{\hspace{0.025cm} u\in H\ \vset\ Q(u)<\infty\hspace{0.025cm} \big\}$,

\item[2)] $H\lc{}Q\rc{}:=\overline{\dom Q}^{\|.\|_{H}}$.
\end{itemize}
\end{dfn}

Let $H$ be a Hilbert space. Theorem 9.3.7 in \cite{BK.deOli.2009.OpAlg_Quantum_Dynamics} gives positivity-preserving bijection between closed positive unbounded quadratic forms and representing operators. If $Q$ is a closed positive unbounded quadratic form on $H$, then it has representing operator $T\in\UBII\lc{}H\lc{}Q\rc\rc_{+}$ s.t.~$\dom Q=\dom\sqrt{T}$ and

\begin{align}\label{EQ.SSEC.NCDS_NCD_Operators_1}
Q(u)=\lgl\sqrt{T}(u),\sqrt{T}(u)\rgl_{H}
\end{align}

\noindent for all $u\in\dom Q$. For all monotonically increasing sequences $\lset{}T_{n}\rset_{n\in\mathbb{N}}\subset\BII(H)_{+}$, we define closed positive unbounded quadratic form on $H$ by setting

\begin{align}\label{EQ.SSEC.NCDS_NCD_Operators_2}
Q(u):=\sup_{n\in\mathbb{N}}\hspace{0.025cm}\lgl T_{n}(u),u\rgl_{H}\in [0,\infty] 
\end{align}

\noindent for all $u\in H$. If $T$ is its representing operator, then $T=\sr$-$\lim_{n\in\mathbb{N}}T_{n}$ on $H\lc{}Q\rc$ by the Kato-Robinson theorem. Remark \ref{REM.QE_QForm_SR} below fixes conventions for using uncountable monotonically decreasing sequences instead.

\begin{rem}\label{REM.QE_QForm_SR}
Consider monotonically increasing $\lset{}T_{\varepsilon}\rset_{\varepsilon>0}\subset\BII(H)_{+}$ in dual order. All sequences $\lset{}T_{\varepsilon_{n}}\rset_{n\in\mathbb{N}}$ given fixed but arbitrary monotonically decreasing $\lset\varepsilon_{n}\rset_{n\in\mathbb{N}}\subset (0,\infty)$ generate identical quadratic form as per Equation \ref{EQ.SSEC.NCDS_NCD_Operators_2}. Uniqueness ensures each has identical strong resolvent limit, denoted by $T=\sr$-$\lim_{\varepsilon\downarrow 0}T_{\varepsilon}$ in this case.
\end{rem}

%%%%%%%%%%%%%
%%% PART %%%%
%%%%%%%%%%%%%

\subsubsection*{The unbounded operator representation of quasi-entropies}

Let $(A,\tau)$ and $(B,\omega)$ be tracial AF-$C^{*}$-algebras. Let $(\phi,\bpsi,\gamma)$ be an AF-$A$-bimodule structure on $B$. Let $f$ be representing function of an operator mean and $\theta\in [0,1]$. For all $\mu,\eta\in A_{+}^{*}$, we extend the map $u\mapsto\mathcal{I}^{f,\theta}(\mu,\eta,u)$ from $B_{0}$ to $L^{2}(B,\omega)$. Such extensions determine closed positive unbounded quadratic forms on $L^{2}(B,\omega)$. We equip $A_{+}^{*}\times A_{+}^{*}\times L^{2}(B,\omega)$ with the product topology of the given $w^{*}$-topologies. We use Notation \ref{NTN.AF_Cstar_Trace_Dualisation}.\par

%NEWPAGE
%NEWPAGE
%NEWPAGE

\pagebreak

%NEWPAGE
%NEWPAGE
%NEWPAGE

\begin{dfn}\label{DFN.Quadratic_Form}\hspace{1cm}

\begin{itemize}
\item[1)] For all $\mu,\eta\in A_{+}^{*}$, set

\begin{itemize}
\item[1.1)] $Q_{\mu,\eta}^{f,\theta}(u):=\sup_{j\in\mathbb{N}}\mathcal{I}_{j}^{f,\theta}\lc\mu_{j},\eta_{j},u_{j}\rc$ for all $u\in L^{2}(B,\omega)$, 

\item[1.2)] $\dom Q_{\mu,\eta}^{f,\theta}:=\big\{\hspace{0.025cm} u\in L^{2}(B,\omega)\ \vset\ Q_{\mu,\eta}^{f,\theta}(u)<\infty\hspace{0.025cm} \big\}$.
\end{itemize}

\begin{reapply}
\end{reapply}

\item[2)] We define $Q^{f,\theta}:A_{+}^{*}\times A_{+}^{*}\times L^{2}(B,\omega)\longrightarrow [0,\infty]$ by setting $Q^{f,\theta}(\mu,\eta,u):=Q_{\mu,\eta}^{f,\theta}(u)$ for all $\mu,\eta\in A_{+}^{*}$ and $u\in L^{2}(B,\omega)$.
\end{itemize}
\end{dfn}

\begin{rem}\label{REM.Quadratic_Form}
Note $L^{1,2}(B,\omega)\subset L^{1}(B,\omega)\subset B^{*}$. Further note $\mathcal{I}^{f,\theta}$ and $Q^{f,\theta}$ coincide on $A_{+}^{*}\times A_{+}^{*}\times L^{1,2}(B,\omega)$. If $\omega<\infty$, then $L^{2}(B,\omega)=L^{1,2}(B,\omega)$ and $Q^{f,\theta}$ is the restriction of $\mathcal{I}^{f,\theta}$ to $A_{+}^{*}\times A_{+}^{*}\times L^{2}(B,\omega)$. 
\end{rem}

\begin{prp}\label{PRP.Quadratic_Form}
We have

\begin{itemize}
\item[1)] $Q^{f,\theta}$ is jointly convex and l.s.c.~in $w^{*}$-topology,

\item[2)] $Q^{f,\theta}\circ\incj=\mathcal{I}_{j}^{f,\theta}$ for all $j\in\mathbb{N}$.
\end{itemize}
\end{prp}
\begin{proof}
Get $1)$ and $2)$ by arguing as for $1)$, resp.~$2)$ in Theorem \ref{THM.QE_AF}.
\end{proof}

We construct perturbed left-~and right-division by positive bounded functionals. For all $\mu\in A_{+}^{*}$, $\varepsilon>0$ and $j\in\mathbb{N}$, as well as fix but arbitrary $\eta\in A_{+}^{*}$, we have positive bounded quadratic form on $L^{2}(B,\omega)$ defined by

\begin{align}\label{EQ.SSEC.NCDS_NCD_Operators_3}
\mathbf{L}_{\mu_{j},\varepsilon}^{-\phi}(u):=Q_{\mu_{j}+\varepsilon 1_{A_{j}},\eta_{j}+\varepsilon 1_{A_{j}}}^{t,1}(u_{j})=\lgl\pi_{j}^{B}\lc\lc\phi\lc\mu_{j}\rc{}+\varepsilon I\rc^{-1}\pi_{j}^{B}(u)\rc{},u\rgl_{\omega}
\end{align}

\noindent for all $u\in L^{2}(B,\omega)$ using $(t,s)\mapsto t$ as our representing function. The right-hand side of Equation \ref{EQ.SSEC.NCDS_NCD_Operators_3} does not depend on $\eta\in A_{+}^{*}$. For all $j\leq k$ in $\mathbb{N}$, get $\pi_{jk}^{B}(1_{B_{k}})=1_{B_{j}}$. Thus $2)$ in Proposition \ref{PRP.Quadratic_Form} and $3)$ in Theorem \ref{THM.QE_AF} yield monotonically increasing sequence of uniformly positive and bounded quadratic forms on $L^{2}(B,\omega)$ s.t.~

\begin{align}\label{EQ.SSEC.NCDS_NCD_Operators_4}
0\leq\mathbf{L}_{\mu_{1},\varepsilon}^{-\phi}\leq\ldots\leq\mathbf{L}_{\mu_{j},\varepsilon}^{-\phi}\leq\mathbf{L}_{\mu_{j+1},\varepsilon}^{-\phi}\leq\ldots\leq\varepsilon^{-1}I.
\end{align}

\noindent Note Equation \ref{EQ.SSEC.NCDS_NCD_Operators_4} gives monotonically increasing sequence $\lset\pi_{j}^{B}(L_{\phi\lc\mu_{j}\rc{}}+\varepsilon I)^{-1}\pi_{j}^{B}\rset_{j\in\mathbb{N}}$ of uniformly positive and bounded operators as determined by Equation \ref{EQ.SSEC.NCDS_NCD_Operators_3}. Hence the Kato-Robinson theorem shows its strong limit is the unique positive bounded operator representing the positive bounded quadratic form defined by

\begin{align}\label{EQ.SSEC.NCDS_NCD_Operators_5}
\mathbf{L}_{\mu,\varepsilon}^{-\phi}(u):=\sup_{j\in\mathbb{N}}\hspace{0.025cm} \mathbf{L}_{\mu_{j},\varepsilon}^{-\phi}(u)=\sup_{j\in\mathbb{N}}\hspace{0.025cm} \lgl\pi_{j}^{B}\lc\lc\phi\lc\mu_{j}\rc{}+\varepsilon I\rc^{-1}\pi_{j}^{B}(u)\rc{},u\rgl_{\omega}
\end{align}

\noindent for all $\mu\in A_{+}^{*}$, $\varepsilon>0$ and $u\in L^{2}(B,\omega)$.\par

%NEWPAGE
%NEWPAGE
%NEWPAGE

\pagebreak

%NEWPAGE
%NEWPAGE
%NEWPAGE

We analogously construct perturbed right-division using $(t,s)\mapsto s$ as representing function. For all $\eta\in A_{+}^{*}$, $\varepsilon>0$ and $j\in\mathbb{N}$, we have positive bounded quadratic form on $L^{2}(B,\omega)$ defined by

\begin{align}\label{EQ.SSEC.NCDS_NCD_Operators_6}
\mathbf{R}_{\eta_{j},\varepsilon}^{-\bpsi}(u):=\lgl\pi_{j}^{B}\lc\pi_{j}^{B}(u)\lc\bpsi\lc\eta_{j}\rc{}+\varepsilon I\rc^{-1}\rc{},u\rgl_{\omega}
\end{align}

\noindent for all $u\in L^{2}(B,\omega)$. As above, we have monotonically increasing sequence of uniformly positive and bounded operators as determined by Equation \ref{EQ.SSEC.NCDS_NCD_Operators_6}. The Kato-Robinson theorem shows its strong limit is the unique positive bounded operator representing the positive bounded quadratic form defined by

\begin{align}\label{EQ.SSEC.NCDS_NCD_Operators_7}
\mathbf{R}_{\eta,\varepsilon}^{-\bpsi}(u):=\sup_{j\in\mathbb{N}}\hspace{0.025cm} \mathbf{R}_{\eta_{j},\varepsilon}^{-\bpsi}(u)=\sup_{j\in\mathbb{N}}\hspace{0.025cm} \lgl\pi_{j}^{B}\lc\pi_{j}^{B}(u)\lc\bpsi\lc\eta_{j}\rc{}+\varepsilon I\rc^{-1}\rc{},u\rgl_{\omega}
\end{align}

\noindent for all $\eta\in A_{+}^{*}$, $\varepsilon>0$ and $u\in L^{2}(B,\omega)$.

\begin{rem}
Note the Kato-Robinson theorem by itself only implies strong resolvent convergence. Using Proposition 10.1.13 in \cite{BK.deOli.2009.OpAlg_Quantum_Dynamics}, we know uniform boundedness together with strong resolvent convergence implies strong convergence. If uniform boundedness is given when applying the Kato-Robinson theorem, then we have strong convergence.
\end{rem}

\begin{prp}\label{PRP.QE_QForm_I}
For all $\mu,\eta\in A_{+}^{*}$ and $\varepsilon>0$, we have

\begin{itemize}
\item[1)] positive bounded quadratic form $\mathbf{L}_{\mu,\varepsilon}^{-\phi}$ on $L^{2}(B,\omega)$ s.t.~

\begin{itemize}
\item[1.1)] its representing operator $L_{\mu,\varepsilon}^{-\phi}\in\BII\lc{}L^{2}(B,\omega)\rc_{+}$ is injective,

\item[1.2)] $0\leq L_{\mu,\varepsilon}^{-\phi}=\s$-$\lim_{j\in\mathbb{N}}\pi_{j}^{B}\big(L_{\phi\lc\mu_{j}\rc{}}+\varepsilon I\big)^{-1}\pi_{j}^{B}\leq\varepsilon^{-1} I$,
\end{itemize}

\begin{reapply}
\end{reapply}

\item[2)] positive bounded quadratic form $\mathbf{R}_{\eta,\varepsilon}^{-\bpsi}$ on $L^{2}(B,\omega)$ s.t.~

\begin{itemize}
\item[2.1)] its representing operator $R_{\eta,\varepsilon}^{-\bpsi}\in\BII\lc{}L^{2}(B,\omega)\rc_{+}$ is injective,

\item[2.2)] $0\leq R_{\eta,\varepsilon}^{-\bpsi}=\s$-$\lim_{j\in\mathbb{N}}\pi_{j}^{B}\big(R_{\bpsi\lc\eta_{j}\rc{}}+\varepsilon I\big)^{-1}\pi_{j}^{B}\leq\varepsilon^{-1} I$. 
\end{itemize}

\begin{reapply}
\end{reapply}

\end{itemize}
\end{prp}
\begin{proof}
Let $\mu,\eta\in A_{+}^{*}$ and $\varepsilon>0$. Equation \ref{EQ.SSEC.NCDS_NCD_Operators_5} and Equation \ref{EQ.SSEC.NCDS_NCD_Operators_7} show $\mathbf{L}_{\mu,\varepsilon}^{-\phi}$ and $\mathbf{R}_{\eta,\varepsilon}^{-\bpsi}$ are positive bounded quadratic forms on $L^{2}(B,\omega)$. The Kato-Robinson theorem ensures the existence of the positive bounded representing operators.\par
For all $u\in L^{2}(B,\omega)$ and $j\in\mathbb{N}$, we have

\begin{align}\label{EQ.PRP.QE_QForm_I_1}
\lc\|\mu_{j}\|_{\infty}+\varepsilon\rc^{-1}\|u_{j}\|_{\omega}^{2}\leq\lgl\lc\phi\lc\mu_{j}\rc{}+\varepsilon I\rc^{-1}u_{j},u_{j}\rgl_{\omega}.    
\end{align}

\noindent Since $\|u\|_{\omega}=\sup_{j\in\mathbb{N}}\|u_{j}\|_{\omega}$ in each case, Equation \ref{EQ.PRP.QE_QForm_I_1} implies injectivity of $L_{\mu,\varepsilon}^{-\phi}$. Get $1.1)$. We know $1.2)$ by Equation \ref{EQ.SSEC.NCDS_NCD_Operators_4}. Altogether, $1)$ holds. We show $2)$ analogously.
\end{proof}

\begin{dfn}\label{DFN.QE_QForm_CLRA}
For all $\mu,\eta\in A_{+}^{*}$ and $\varepsilon>0$, we call

\begin{itemize}
\item[1)] the representing operator $L_{\mu,\varepsilon}^{-\phi}$ of $\mathbf{L}_{\mu,\varepsilon}^{-\phi}$ left-division by $\mu$ perturbed with $\varepsilon$, and $L_{\mu,\varepsilon}^{\phi}:=\big(L_{\mu,\varepsilon}^{-\phi}\big)^{-1}$ left-multiplication by $\mu$ perturbed with $\varepsilon$,

\item[2)] the representing operator $R_{\eta,\varepsilon}^{-\bpsi}$ of $\mathbf{R}_{\eta,\varepsilon}^{-\bpsi}$ right-division by $\eta$ perturbed with $\varepsilon$, and $R_{\eta,\varepsilon}^{\bpsi}:=\big(R_{\eta,\varepsilon}^{-\bpsi}\big)^{-1}$ right-multiplication by $\eta$ perturbed with $\varepsilon$.
\end{itemize}
\end{dfn}

\begin{ntn}\label{NTN.QE_QForm_CLRA}
We suppress $\phi$ and $\bpsi$ in Definition \ref{DFN.QE_QForm_CLRA} if $\phi=\bpsi=\id_{A}$.
\end{ntn}

\begin{rem}\label{REM.QE_QForm_CLRA}
For all $\mu,\eta\in A_{+}^{*}$, $\varepsilon>0$ and $j\in\mathbb{N}$, note $I=\s$-$\lim_{j\in\mathbb
{N}}\pi_{j}^{B}$ implies

\begin{align}\label{EQ.REM.QE_QForm_CLRA_1}
L_{\mu_{j},\varepsilon}^{-\phi}=\big(L_{\phi\lc\mu_{j}\rc{}}+\varepsilon I\big)^{-1},\ R_{\eta_{j},\varepsilon}^{-\bpsi}=\big(R_{\bpsi\lc\eta_{j}\rc{}}+\varepsilon I\big)^{-1}
\end{align}

\noindent and therefore $L_{\mu_{j},\varepsilon}^{\phi}=L_{\phi\lc\mu_{j}\rc{}}+\varepsilon I$, $R_{\eta_{j},\varepsilon}^{\bpsi}=R_{\bpsi\lc\eta_{j}\rc{}}+\varepsilon I$.
\end{rem}

\begin{prp}\label{PRP.QE_QForm_II}
For all $\mu,\eta\in A_{+}^{*}$ and $\varepsilon>0$, we have 

\begin{itemize}
\item[1)] $L_{\mu,\varepsilon}^{\phi},R_{\eta,\varepsilon}^{\bpsi}\in\UBII\lc{}L^{2}(B,\omega)\rc_{+}$ commute strongly and $L_{\mu,\varepsilon}^{\phi},R_{\eta,\varepsilon}^{\bpsi}\geq\varepsilon I$,

\item[2)] $L_{\mu,\varepsilon}^{\phi}=\sr$-$\lim_{j\in\mathbb{N}}L_{\mu_{j},\varepsilon}^{\phi}$ and $R_{\eta,\varepsilon}^{\bpsi}=\sr$-$\lim_{j\in\mathbb{N}}R_{\eta_{j},\varepsilon}^{\bpsi}$.
\end{itemize}
\end{prp}
\begin{proof}
We know $L_{\mu,\varepsilon}^{\phi},R_{\eta,\varepsilon}^{\bpsi}\in\UBII\lc{}L^{2}(B,\omega)\rc_{+}$ and $L_{\mu,\varepsilon}^{\phi},R_{\eta,\varepsilon}^{\bpsi}\geq\varepsilon I$ by Proposition \ref{PRP.QE_QForm_I} as inversion reverts partial order \lc{}cf.~Proposition \ref{PRP.Unbd_PO_Inversion}\rc{}. We show strong commutativity. Since we have uniform lower bound $\varepsilon>0$, resolvents in $a=0$ are respective perturbed division operators. Using sequential strong continuity of multiplication and the inverse of Equation \ref{EQ.REM.QE_QForm_CLRA_1}, we calculate

\begin{align}\label{EQ.PRP.QE_QForm_II_1}
L_{\mu,\varepsilon}^{-\phi}R_{\eta,\varepsilon}^{-\bpsi}=\s\textrm{-}\lim_{j\in\mathbb{N}}\hspace{0.025cm} L_{\mu_{j},\varepsilon}^{-\phi}R_{\eta_{j},\varepsilon}^{-\bpsi}=\s\textrm{-}\lim_{j\in\mathbb{N}}\hspace{0.025cm} R_{\eta_{j},\varepsilon}^{-\bpsi}L_{\mu_{j},\varepsilon}^{-\phi}=R_{\eta,\varepsilon}^{-\bpsi}L_{\mu,\varepsilon}^{-\phi}.
\end{align}

\noindent Equation \ref{EQ.PRP.QE_QForm_II_1} is commutativity of resolvents in $a=0$. Proposition 5.27 in \cite{BK.Sch.2012.Unbounded_Operators} then implies strong commutativity. Get $1)$. We have $2)$ by $1)$ in Proposition \ref{PRP.SR} for $a=0$.
\end{proof}

Definition \ref{DFN.QE_QForm_Perturbed_Division} uses bounded measurable joint functional calculus of strongly commuting self-adjoint unbounded operators \lc{}cf.~Definition \ref{DFN.JFC_Unbd}\rc{}. For details on spectral integration and the latter functional calculus, we refer to Subsection \ref{SSEC.A_Fnd_FC}.

\begin{dfn}\label{DFN.QE_QForm_Perturbed_Division}
For all $\mu,\eta\in A_{+}^{*}$ and $\varepsilon>0$, we call $\mathcal{D}_{\mu,\eta,\varepsilon}:=m_{f}^{-1}\lc{}L_{\mu,\varepsilon}^{\phi},R_{\eta,\varepsilon}^{\bpsi}\rc$ the noncommutative division operator of $\mu$ and $\eta$ perturbed with $\varepsilon$.
\end{dfn}

\begin{prp}\label{PRP.QE_QForm_Perturbed_Division}
Let $\mu,\eta\in A_{+}^{*}$.

\begin{itemize}
\item[1)] For all $\varepsilon>0$, we have 

\begin{itemize}
\item[1.1)] $\mathcal{D}_{\mu,\eta,\varepsilon}=\s$-$\lim_{j\in\mathbb{N}}\mathcal{D}_{\mu_{j},\eta_{j},\varepsilon}\in\BII\lc{}L^{2}(B,\omega)\rc_{+}$ and $\big\|\mathcal{D}_{\mu,\eta,\varepsilon}\big\|_{\BII\lc{}L^{2}(B,\omega)\rc{}}\leq\varepsilon^{-1}$,

\item[1.2)] $\mathcal{D}_{\mu_{j},\eta_{j},\varepsilon}=\mathcal{D}_{\mu_{j}+\varepsilon 1_{A},\eta_{j}+\varepsilon 1_{A}}$ for all $j\in\mathbb{N}$.
\end{itemize}

\begin{reapply}
\end{reapply}

\item[2)] We have monotonically increasing net $\lset\mathcal{D}_{\mu,\eta,\varepsilon}\rset_{\varepsilon>0}\subset\BII\lc{}L^{2}(B,\omega)\rc_{+}$ in dual order.
\end{itemize}
\end{prp}

%NEWPAGE
%NEWPAGE
%NEWPAGE

\pagebreak

%NEWPAGE
%NEWPAGE
%NEWPAGE

\begin{proof}
Using Proposition \ref{PRP.QE_QForm_II} and Remark \ref{REM.OM_Representing_Fct}, get $1.1)$ by Lemma \ref{LEM.FC_SR} applied to $m_{f}^{-1}$. Using Proposition \ref{PRP.AF_Cstar_Bimodule_CLRA_I}, get $1.2)$ by functional calculus upon taking inverses in Equation \ref{EQ.REM.QE_QForm_CLRA_1} since $\phi,\bpsi$ are unital. Altogether, get $1)$.\par
We show $2)$. For all $j\in\mathbb{N}$ and $\varepsilon_{1}\geq\varepsilon_{0}>0$ in $\mathbb{R}$, we use $2)$ in Proposition \ref{PRP.Quadratic_Form} and $3)$ in Theorem \ref{THM.QE_AF} to estimate

\begin{align}\label{EQ.PRP.QE_QForm_Perturbed_Division_1}
\mathbf{L}_{\mu_{j},\varepsilon_{1}}^{-\phi}\leq \mathbf{L}_{\mu_{j},\varepsilon_{0}}^{-\phi},\ \mathbf{R}_{\eta_{j},\varepsilon_{1}}^{-\bpsi}\leq \mathbf{R}_{\eta_{j},\varepsilon_{0}}^{-\bpsi}.
\end{align}

\noindent Using positivity-preservation of representing operators, Equation \ref{EQ.PRP.QE_QForm_Perturbed_Division_1} shows

\begin{align}\label{EQ.PRP.QE_QForm_Perturbed_Division_2}
L_{\mu_{j},\varepsilon_{1}}^{-\phi}\leq L_{\mu_{j},\varepsilon_{0}}^{-\phi},\ R_{\eta_{j},\varepsilon_{1}}^{-\bpsi}\leq R_{\eta_{j},\varepsilon_{0}}^{-\bpsi}.
\end{align}

\noindent Letting $j\uparrow\infty$ in Equation \ref{EQ.PRP.QE_QForm_Perturbed_Division_2} yields

\begin{align}\label{EQ.PRP.QE_QForm_Perturbed_Division_3}
L_{\mu,\varepsilon_{1}}^{-\phi}\leq L_{\mu,\varepsilon_{0}}^{-\phi},\ R_{\eta,\varepsilon_{1}}^{-\bpsi}\leq R_{\eta,\varepsilon_{0}}^{-\bpsi}
\end{align}

\noindent in strong limit. Since inversion reverts partial order \lc{}cf.~Proposition \ref{PRP.Unbd_PO_Inversion}\rc{}, taking the inverses in Equation \ref{EQ.PRP.QE_QForm_Perturbed_Division_3} shows

\begin{align}\label{EQ.PRP.QE_QForm_Perturbed_Division_4}
L_{\mu,\varepsilon_{0}}^{\phi}\leq L_{\mu,\varepsilon_{1}}^{\phi},\ R_{\eta,\varepsilon_{0}}^{\bpsi}\leq R_{\eta,\varepsilon_{1}}^{\bpsi}.
\end{align}

\noindent Using $1.2)$ and Proposition \ref{PRP.OM_Representing_Fct}, Equation \ref{EQ.PRP.QE_QForm_Perturbed_Division_4} implies $2)$ by functional calculus.
\end{proof}

\begin{lem}\label{LEM.QE_QForm_Representation}
For all $\mu,\eta\in A_{+}^{*}$ and $u\in L^{2}(B,\omega)$, we have

\begin{itemize}
\item[1)] $\mathcal{I}_{j}^{f,\theta}\lc\mu_{j},\eta_{j},u_{j}\rc{}=\sup_{\varepsilon>0}\lgl\mathcal{D}_{\mu_{j},\eta_{j},\varepsilon}^{\theta}(u_{j}),u_{j}\rgl_{\omega}$ for all $j\in\mathbb{N}$,

\item[2)] $\sup_{j\in\mathbb{N}}\sup_{\varepsilon>0}\lgl\mathcal{D}_{\mu_{j},\eta_{j},\varepsilon}^{\theta}(u_{j}),u_{j}\rgl_{\omega}=\sup_{\varepsilon>0}\sup_{j\in\mathbb{N}}\lgl\mathcal{D}_{\mu_{j},\eta_{j},\varepsilon}^{\theta}(u_{j}),u_{j}\rgl_{\omega}$,

\item[3)] $\sup_{j\in\mathbb{N}}\lgl\mathcal{D}_{\mu_{j},\eta_{j},\varepsilon}^{\theta}(u_{j}),u_{j}\rgl_{\omega}=\lgl\mathcal{D}_{\mu,\eta,\varepsilon}^{\theta}(u),u\rgl_{\omega}$ for all $\varepsilon>0$.
\end{itemize}
\end{lem}
\begin{proof}
Let $\mu,\eta\in A_{+}^{*}$ and $u\in L^{2}(B,\omega)$. We use Corollary \ref{COR.AF_Cstar_Bimodule_Compression_Restriction}. For all $\varepsilon>0$ and $j\in\mathbb{N}$, we see $1.2)$ in Proposition \ref{PRP.QE_QForm_Perturbed_Division} and $2)$ in Lemma \ref{LEM.NCD_Operator_Compressed_PMO_I} show

\begin{align}\label{EQ.LEM.QE_QForm_Representation_1}
\restr{0.925}{\mathcal{D}_{\mu_{j},\eta_{j},\varepsilon}^{\theta}}{B_{j}}=\restr{0.925}{\mathcal{D}_{\mu_{j}+\varepsilon 1_{A},\eta_{j}+\varepsilon 1_{A}}^{\theta}}{B_{j}}=\mathcal{D}_{\mu_{j},\eta_{j},B_{j},\varepsilon}^{\theta}.
\end{align}

\noindent Equation \ref{EQ.LEM.QE_QForm_Representation_1} shows $1)$ by construction of quasi-entropies. Note $\sup_{j\in J}\sup_{k\in K}a_{j,k}=\sup_{k\in K}\sup_{j\in J}a_{j,k}$ for all double-indexed real sequences. The latter shows $2)$ at once. For all $\varepsilon>0$, monotonicity of quasi-entropies shows

\begin{align}\label{EQ.LEM.QE_QForm_Representation_2}
\sup_{j\in\mathbb{N}}\hspace{0.025cm} \lgl\mathcal{D}_{\mu_{j},\eta_{j},\varepsilon}^{\theta}(u_{j}),u_{j}\rgl_{\omega}=\lim_{j\in\mathbb{N}}\hspace{0.025cm} \lgl\mathcal{D}_{\mu_{j},\eta_{j},\varepsilon}^{\theta}(u_{j}),u_{j}\rgl_{\omega}.
\end{align}

\noindent For all $j\in\mathbb{N}$, $u_{j}=\pi_{j}^{B}(u)$ . Thus $u=\|.\|_{\omega}$-$\lim_{j\in\mathbb{N}}u_{j}$, hence $3)$ follows by Equation \ref{EQ.LEM.QE_QForm_Representation_2} and $1.1)$ in Proposition \ref{PRP.QE_QForm_Perturbed_Division}. We use uniform boundedness in our calculation.
\end{proof}

\begin{thm}\label{THM.QE_QForm_Representation}
Let $(A,\tau)$ and $(B,\omega)$ be tracial AF-$C^{*}$-algebras. Let $(\phi,\bpsi,\gamma)$ be an AF-$A$-bimodule structure on $B$. Let $f$ be representing function of an operator mean and $\theta\in [0,1]$. For all $\mu,\eta\in A_{+}^{*}$, we have

\begin{itemize}
\item[1)] $Q_{\mu,\eta}^{f,\theta}:L^{2}(B,\omega)\longrightarrow [0,\infty]$ is closed positive unbounded quadratic form on $L^{2}(B,\omega)$ represented uniquely by the positive unbounded operator defined by

\begin{align}\label{EQ.THM.QE_QForm_Representation_1}
\mathcal{D}_{\mu,\eta}^{\theta}:=\sr\textrm{-}\lim_{\varepsilon\downarrow 0}\hspace{0.025cm} \mathcal{D}_{\mu,\eta,\varepsilon}^{\theta}
\end{align}

\begin{reapply}
\end{reapply}

\noindent on $H\big(Q_{\mu,\eta}^{f,\theta}\hspace{0.015cm} \big)$,

\item[2)] $Q_{\mu,\eta}^{f,\theta}(u)=\big\|\mathcal{D}_{\mu,\eta}^{\frac{\theta}{2}}(u)\big\|_{\omega}^{2}=\sup_{\varepsilon>0}\lgl\mathcal{D}_{\mu,\eta,\varepsilon}^{\theta}(u),u\rgl_{\omega}$ for all $u\in L^{2}(B,\omega)$.
\end{itemize}
\end{thm}
\begin{proof}
For all $u\in L^{2}(B,\omega)$, get $Q_{\mu,\eta}^{f,\theta}(u)=\sup_{j\in\mathbb{N}}\mathcal{I}_{j}^{f,\theta}\lc\mu_{j},\eta_{j},u_{j}\rc$ by definition. Consecutive application of $1)$ to $3)$ in Lemma \ref{LEM.QE_QForm_Representation} lets us calculate

\begin{align}\label{EQ.THM.QE_QForm_Representation_2}
Q_{\mu,\eta}^{f,\theta}(u)=\sup_{\varepsilon>0}\hspace{0.025cm} \sup_{j\in\mathbb{N}}\hspace{0.025cm} \lgl\mathcal{D}_{\mu_{j},\eta_{j},\varepsilon}^{\theta}(u_{j}),u_{j}\rgl_{\omega}=\sup_{\varepsilon>0}\hspace{0.025cm} \lgl\mathcal{D}_{\mu,\eta,\varepsilon}^{\theta}(u),u\rgl_{\omega}
\end{align}

\noindent for all $u\in L^{2}(B,\omega)$. Equation \ref{EQ.THM.QE_QForm_Representation_2} implies our claims by $2)$ in Proposition \ref{PRP.QE_QForm_Perturbed_Division} and the Kato-Robinson theorem \lc{}cf.~Theorem 10.4.2 in \cite{BK.deOli.2009.OpAlg_Quantum_Dynamics}\rc{}. Note Remark \ref{REM.QE_QForm_SR} for uniqueness of strong resolvent limits for Equation \ref{EQ.THM.QE_QForm_Representation_1}.
\end{proof}

\begin{dfn}\label{DFN.QE_QForm_Representation}
For all $\mu,\eta\in A_{+}^{*}$, we call $\mathcal{D}_{\mu,\eta}^{\theta}$ in Equation \ref{EQ.THM.QE_QForm_Representation_1} the noncommutative division operator of $\mu$ and $\eta$.
\end{dfn}

%%%%%%%%%%%%%
%%% PART %%%%
%%%%%%%%%%%%%

\subsubsection*{Noncommutative division operators in the normal case}

Definition \ref{DFN.NCD_Operator_Compressed_PMO} and Definition \ref{DFN.QE_QForm_Representation} are a priori different definitions of noncommutative division in the AF-$C^{*}$-setting. Using results in Theorem \ref{THM.AF_Cstar_Bimodule_CLRA_SR} and assuming the representing function induces operator mean vanishing on $[0,\infty)\times\lset{}0\rset\cup\lset{}0\rset\times [0,\infty)$, Theorem \ref{THM.NCD_Operator_Compressed_PMO} implies Definition \ref{DFN.QE_QForm_Representation} reduces to Definition \ref{DFN.NCD_Operator_Compressed_PMO} if and only if operator means have finite inverses w.r.t.~compressed joint spectral measures. Since we do not suppress the flat operator for positive integrable measurable operators, we distinguish inverses of canonical left-~and right-actions from perturbed noncommutative left-~and right-division.\par
Let $(A,\tau)$ and $(B,\omega)$ be tracial AF-$C^{*}$-algebras. Let $(\phi,\bpsi,\gamma)$ be an AF-$A$-bimodule structure on $B$. Let $f$ be representing function of an operator mean and $\theta\in [0,1]$.

\begin{prp}\label{PRP.AF_Cstar_Bimodule_L2Inf_SR}
Let $p\in\lset{}2,\infty\rset$ and $x\in L^{p}(A,\tau)_{h}$.

\begin{itemize}
\item[1)] For all $j\in\mathbb{N}$, we have $L_{x_{j}},R_{x_{j}}\in\BII\lc{}L^{2}(A,\tau)\rc_{h}\cap\UBII_{A_{j}}\lc{}L^{2}(A,\tau)\rc$ and

\begin{align}\label{EQ.PRP.AF_Cstar_Bimodule_L2Inf_SR_1}
\pi_{j}^{A}L_{x_{j}}=\comAj L_{x},\ \pi_{j}^{A}R_{x_{j}}=\comAj R_{x}.
\end{align}

\begin{reapply}
\end{reapply}

\item[2)] $L_{x}=\sr$-$\lim_{j\in\mathbb{N}}L_{x_{j}}$ and $R_{x}=\sr$-$\lim_{j\in\mathbb{N}}R_{x_{j}}$.
\end{itemize}
\end{prp}

%NEWPAGE
%NEWPAGE
%NEWPAGE

\pagebreak

%NEWPAGE
%NEWPAGE
%NEWPAGE

\begin{proof}
For all $T\in\BII\lc{}L^{2}(A,\tau)\rc$, get $\comAj T=\pi_{j}^{A}T\pi_{j}^{A}$ \lc{}cf.~Definition \ref{DFN.Compression_Concrete}\rc{}. We prove all claims for canonical left-action. This readily transfers to canonical right-actions. For all $j\in\mathbb{N}$, we directly verify the identity of bounded operators

\begin{align}\label{EQ.PRP.AF_Cstar_Bimodule_L2Inf_SR_2}
\pi_{j}^{A}L_{x_{j}}=\comAj L_{x}    
\end{align}

\noindent on inner products. The above calculation uses $A_{j}$ is a $^{*}$-algebra. If $x$ is self-adjoint, then Equation \ref{EQ.PRP.AF_Cstar_Bimodule_L2Inf_SR_2} implies $A_{j}$-reducibility. Get $1)$. We show $2)$. Assume $x$ is self-adjoint. If $p=2$, then $2)$ in Proposition \ref{PRP.SR} for core $L^{2,\infty}(A,\tau)$ and Corollary \ref{COR.Wstar_CLRA_V} show our claim. If $p=\infty$, then $2)$ in Proposition \ref{PRP.SR} for core $L^{2}(A,\tau)$ and boundedness do.
\end{proof}

\begin{lem}\label{LEM.AF_Cstar_Bimodule_L1_SR}
Let $x\in L^{1}(A,\tau)_{+}$.

\begin{itemize}
\item[1)] $L_{x}=\sr$-$\lim_{n\in\mathbb{N}}L_{\min\{x,n\}}$ and $R_{x}=\sr$-$\lim_{n\in\mathbb{N}}R_{\min\{x,n\}}$.

\item[2)] For all $j\in\mathbb{N}$, we have

\begin{align}\label{EQ.LEM.AF_Cstar_Bimodule_L1_SR_1}
\comAj L_{x_{j}}\leq L_{\pi_{j}^{A}(\sqrt{x})}^{2},\ \comAj R_{x_{j}}\leq R_{\pi_{j}^{A}(\sqrt{x})}^{2}.
\end{align}

\begin{reapply}
\end{reapply}

\item[3)] For all $\varepsilon>0$, we have

\begin{align}\label{EQ.LEM.AF_Cstar_Bimodule_L1_SR_2}
R_{-\varepsilon}(L_{x})\leq L_{x^{\flat},\varepsilon}^{-\id_{A}},\ R_{-\varepsilon}(R_{x})\leq R_{x^{\flat},\varepsilon}^{-\id_{A}}.    
\end{align}

\begin{reapply}
\end{reapply}

\end{itemize}
\end{lem}
\begin{proof}
We prove all claims for canonical left-action. This readily transfers to canonical right-actions. By $2)$ in Lemma \ref{LEM.Wstar_CLRA_FC} and functional calculus, we have monotonically increasing $\lset{}L_{\min\{x,n\}}\rset_{n\in\mathbb{N}}=\lset\hspace{-0.0375cm} \min\{L_{x},n\}\rset_{n\in\mathbb{N}}\subset\BII\lc{}L^{2}(A,\tau)\rc_{+}$. Applying the Kato-Robinson theorem, we directly verify $1)$ on closed positive unbounded quadratic forms.\par
We show $2)$. Let $j\in\mathbb{N}$. We know $\sqrt{x}_{j}=\pi_{j}^{A}(\sqrt{x})$. Using $1)$ in Proposition \ref{PRP.AF_Cstar_Bimodule_L2Inf_SR} and $1.3)$ in Proposition \ref{PRP.Reducible}, we have the identity of bounded operators

\begin{align}\label{EQ.LEM.AF_Cstar_Bimodule_L1_SR_3}
L_{\pi_{j}^{A}(\sqrt{x})}=\comAj L_{\sqrt{x}}+\big(I-\pi_{j}^{A}\big)L_{\pi_{j}^{A}(\sqrt{x})}\big(I-\pi_{j}^{A}\big).
\end{align}

\noindent Multiplying out terms as per Equation \ref{EQ.LEM.AF_Cstar_Bimodule_L1_SR_3} lets us estimate

\begin{align}\label{EQ.LEM.AF_Cstar_Bimodule_L1_SR_4}
L_{\pi_{j}^{A}(\sqrt{x})}^{2}=\lc\comAj L_{\sqrt{x}}\rc^{2}+\lc\big(I-\pi_{j}^{A}\big)L_{\pi_{j}^{A}(\sqrt{x})}\big(I-\pi_{j}^{A}\big)\rc^{2}\geq\lc\comAj L_{\sqrt{x}}\rc^{2}.
\end{align}

\noindent Furthermore, we calculate

\begin{align*}
\lgl\lc\comAj L_{\sqrt{x}}\rc^{2}(u),u\rgl_{\tau} & =\tau\big(x\pi_{j}^{A}(u)\pi_{j}^{A}(u)^{*}\big)+\lgl\big(I-\pi_{j}^{A}\big)\lc{}L_{\sqrt{x}}\pi_{j}^{A}(u)\rc{},L_{\sqrt{x}}\pi_{j}^{A}(u)\rgl_{\tau} \phantom{\bigg)} \\
& \geq\tau\big(x\pi_{j}^{A}(u)\pi_{j}^{A}(u)^{*}\big)=\lgl\comAj L_{x_{j}}(u),u\rgl_{\tau}\phantom{\bigg)}
\end{align*}

\noindent for all $u\in L^{2}(A,\tau)$.\par

%NEWPAGE
%NEWPAGE
%NEWPAGE

\pagebreak

%NEWPAGE
%NEWPAGE
%NEWPAGE

The above calculation implies

\begin{align}\label{EQ.LEM.AF_Cstar_Bimodule_L1_SR_5}
\lc\comAj L_{\sqrt{x}}\rc^{2}\geq\comAj L_{x_{j}}.
\end{align}

\noindent Equation \ref{EQ.LEM.AF_Cstar_Bimodule_L1_SR_4} and Equation \ref{EQ.LEM.AF_Cstar_Bimodule_L1_SR_5} show $2)$.\par
We show $3)$. Let $\varepsilon>0$. For all $j\in\mathbb{N}$, Equation \ref{EQ.LEM.AF_Cstar_Bimodule_L1_SR_1} yields

\begin{align}\label{EQ.LEM.AF_Cstar_Bimodule_L1_SR_6}
R_{-\varepsilon}\bigg(L_{\pi_{j}^{A}(\sqrt{x})}^{2}\bigg)\leq R_{-\varepsilon}\lc\pi_{j}^{A}L_{x_{j}}\pi_{j}^{A}\rc{}
\end{align}

\noindent since inversion reverts partial order \lc{}cf.~Proposition \ref{PRP.Unbd_PO_Inversion}\rc{}. Let $j\in\mathbb{N}$. Then using $2)$ in Lemma \ref{LEM.Compression_Preservation_I}, we directly verify

\begin{align}\label{EQ.LEM.AF_Cstar_Bimodule_L1_SR_7}
R_{-\varepsilon}\lc\comAj L_{x_{j}}\rc{}=\comAj\lc\comAj L_{x_{j}}+\varepsilon\pi_{j}^{A}\rc^{-1}+\varepsilon^{-1}\big(I-\pi_{j}^{A}\big),
\end{align}

\noindent and

\begin{align}\label{EQ.LEM.AF_Cstar_Bimodule_L1_SR_8}
\comAj L_{x_{j}^{\flat},\varepsilon}^{-\id_{A}}=\comAj\lc\comAj L_{x_{j}}+\varepsilon\pi_{j}^{A}\rc^{-1}.
\end{align}

\noindent Equation \ref{EQ.LEM.AF_Cstar_Bimodule_L1_SR_6}, Equation \ref{EQ.LEM.AF_Cstar_Bimodule_L1_SR_7} and Equation \ref{EQ.LEM.AF_Cstar_Bimodule_L1_SR_8} let us estimate

\begin{align}\label{EQ.LEM.AF_Cstar_Bimodule_L1_SR_9}
R_{-\varepsilon}\bigg(L_{\pi_{j}^{A}(\sqrt{x})}^{2}\bigg) & \leq R_{-\varepsilon}\lc\comAj L_{x_{j}}\rc{}=\comAj L_{x_{j}^{\flat},\varepsilon}^{-\id_{A}}+\varepsilon^{-1}\big(I-\pi_{j}^{A}\big).
\end{align}

Note $I=\s$-$\lim_{j\in\mathbb{N}}\pi_{j}^{A}$ is uniformly bounded in norm \lc{}cf.~$3)$ in Proposition \ref{PRP.AF_Cstar_Trace_III}\rc{}. By construction of perturbed left-division, sequential strong continuity of multiplication ensures

\begin{align}\label{EQ.LEM.AF_Cstar_Bimodule_L1_SR_10}
L_{x^{\flat},\varepsilon}^{-\id_{A}}=\s\textrm{-}\lim_{j\in\mathbb{N}}\hspace{0.025cm} \comAj L_{x_{j}^{\flat},\varepsilon}^{-\id_{A}}+\varepsilon^{-1}\big(I-\pi_{j}^{A}\big).
\end{align}

\noindent The map $t\mapsto R_{-\varepsilon}\lc{}t^{2}\rc$ lies in $C_{b}(\mathbb{R})$. Using Lemma \ref{LEM.FC_SR}, we see $2)$ in Proposition \ref{PRP.AF_Cstar_Bimodule_L2Inf_SR} thus implies

\begin{align}\label{EQ.LEM.AF_Cstar_Bimodule_L1_SR_11}
R_{-\varepsilon}(L_{x})=R_{-\varepsilon}\lc{}L_{\sqrt{x}}^{2}\rc{}=\s\textrm{-}\lim_{j\in\mathbb{N}}\hspace{0.025cm} R_{-\varepsilon}\bigg(L_{\pi_{j}^{A}(\sqrt{x})}^{2}\bigg)
\end{align}

\noindent by functional calculus.\par

%NEWPAGE
%NEWPAGE
%NEWPAGE

\pagebreak

%NEWPAGE
%NEWPAGE
%NEWPAGE

Using Equation \ref{EQ.LEM.AF_Cstar_Bimodule_L1_SR_9}, Equation \ref{EQ.LEM.AF_Cstar_Bimodule_L1_SR_10} and Equation \ref{EQ.LEM.AF_Cstar_Bimodule_L1_SR_11}, we calculate

\begin{align*}
R_{-\varepsilon}(L_{x}) & = \s\textrm{-}\lim_{j\in\mathbb{N}}\hspace{0.025cm} R_{-\varepsilon}\bigg(L_{\pi_{j}^{A}(\sqrt{x})}^{2}\bigg) \phantom{\Bigg)} \\
& \leq \s\textrm{-}\lim_{j\in\mathbb{N}}\hspace{0.025cm} \comAj L_{x_{j}^{\flat},\varepsilon}^{-\id_{A}}+\varepsilon^{-1}\big(I-\pi_{j}^{A}\big) \phantom{\Bigg)} \\
& = L_{x^{\flat},\varepsilon}^{-\id_{A}}. \phantom{\Bigg)}
\end{align*}

\noindent The above calculation shows $3)$ at once.
\end{proof}

\begin{thm}\label{THM.AF_Cstar_Bimodule_CLRA_SR}
Let $(A,\tau)$ and $(B,\omega)$ be tracial AF-$C^{*}$-algebras. Let $(\phi,\bpsi,\gamma)$ be an AF-$A$-bimodule structure on $B$. Let $f$ be representing function of an operator mean and $\theta\in [0,1]$. Let $p\in\lset{}1,2,\infty\rset$. For all $x\in L^{p}(A,\tau)_{+}$, we have

\begin{align}\label{EQ.THM.AF_Cstar_Bimodule_CLRA_SR_1}
L_{x}^{\phi}=\sr\textrm{-}\lim_{j\in\mathbb{N}}\hspace{0.025cm} L_{x_{j}}^{\phi},\ R_{x}^{\bpsi}=\sr\textrm{-}\lim_{j\in\mathbb{N}}\hspace{0.025cm} R_{x_{j}}^{\bpsi}.
\end{align}
\end{thm}
\begin{proof}
We prove all claims for canonical left-action. This readily transfers to canonical right-actions. First, we show our claim for canonical AF-$C^{*}$-bimodules. Secondly, we extend to the general case. In this proof, $\gamma$ is of no consequence. Let $x\in L^{p}(A,\tau)_{+}$.\par
Assume $(A,\tau)=(B,\omega)$ is equipped with its canonical AF-$A$-bimodule structure. By $1)$ in Proposition \ref{PRP.SR}, $R_{-\varepsilon}(L_{x})=\s$-$\lim_{j\in\mathbb{N}}R_{-\varepsilon}\lc{}L_{x_{j}}\rc$ for fixed but arbitrary $\varepsilon>0$ implies $L_{x}=\sr$-$\lim_{j\in\mathbb{N}}L_{x_{j}}$. Let $\varepsilon>0$. Using $1.2)$ in Proposition \ref{PRP.QE_QForm_I}, uniform boundedness and sequential strong continuity of multiplication, we calculate

\begin{align}\label{EQ.THM.AF_Cstar_Bimodule_CLRA_SR_2}
L_{x^{\flat},\varepsilon}^{-\id_{A}}=\s\textrm{-}\lim_{j\in\mathbb{N}}\hspace{0.025cm} \comAj\lc{}L_{x_{j}}+\varepsilon I\rc^{-1}=\s\textrm{-}\lim_{j\in\mathbb{N}}\hspace{0.025cm} \lc{}L_{x_{j}}+\varepsilon I\rc^{-1}=\s\textrm{-}\lim_{j\in\mathbb{N}}\hspace{0.025cm} R_{-\varepsilon}\lc{}L_{x_{j}}\rc{}.
\end{align}

\noindent It suffices to have $R_{-\varepsilon}(L_{x})=L_{x^{\flat},\varepsilon}^{-\id_{A}}$, ergo 

\begin{align}\label{EQ.THM.AF_Cstar_Bimodule_CLRA_SR_Fluff}
R_{-\varepsilon}(L_{x})\geq L_{x^{\flat},\varepsilon}^{-\id_{A}}    
\end{align}

\noindent by $3)$ in Lemma \ref{LEM.AF_Cstar_Bimodule_L1_SR}.\par
We use the following. For all $u\in L^{2}(A,\tau)$, get $w^{*}$-l.s.c.~map

\begin{align}\label{EQ.THM.AF_Cstar_Bimodule_CLRA_SR_3}
\mu\mapsto\mathbf{L}_{\mu,\varepsilon}^{-\id_{A}}(u)=\sup_{j\in\mathbb{N}}\hspace{0.025cm} \mathbf{L}_{\mu_{j},\varepsilon}^{-\id_{A}}(u)
\end{align}

\noindent defined on $A_{+}^{*}$ by $1)$ in Proposition \ref{PRP.Quadratic_Form}. Let $y\in L^{1,\infty}(A,\tau)_{+}$. Using $\sup_{j\in\mathbb{N}}\| y_{j}\|_{\infty}=\| y\|_{\infty}<\infty$, we estimate

\begin{align}\label{EQ.THM.AF_Cstar_Bimodule_CLRA_SR_4}
L_{y^{\flat},\varepsilon}^{-\id_{A}},L_{y_{j}^{\flat},\varepsilon}^{-\id_{A}}\geq\big(\| y\|_{\infty}+\varepsilon\big)^{-1}I>0
\end{align}

\noindent in $\BII\lc{}L^{2}(A,\tau)\rc$ for all $j\in\mathbb{N}$.\par

%NEWPAGE
%NEWPAGE
%NEWPAGE

\pagebreak

%NEWPAGE
%NEWPAGE
%NEWPAGE

Taking inverses in Equation \ref{EQ.THM.AF_Cstar_Bimodule_CLRA_SR_4} yields uniform bound s.t.~$2)$ in Proposition \ref{PRP.QE_QForm_II} implies

\begin{align}\label{EQ.THM.AF_Cstar_Bimodule_CLRA_SR_5}
L_{y^{\flat},\varepsilon}^{\id_{A}}=\s\textrm{-}\lim_{j\in\mathbb{N}}\hspace{0.025cm} L_{y_{j}^{\flat},\varepsilon}^{\id_{A}}.    
\end{align}

\noindent Following Remark \ref{EQ.REM.QE_QForm_CLRA_1}, Equation \ref{EQ.THM.AF_Cstar_Bimodule_CLRA_SR_5} shows

\begin{align}\label{EQ.THM.AF_Cstar_Bimodule_CLRA_SR_6}
L_{y^{\flat},\varepsilon}^{\id_{A}}=\s\textrm{-}\lim_{j\in\mathbb{N}}\hspace{0.025cm} L_{y_{j}}+\varepsilon I.      
\end{align}

\noindent Normality implies $L_{y}=\w$-$\lim_{j\in\mathbb{N}}L_{y_{j}}$ by $2)$ in Proposition \ref{PRP.AF_Cstar_Trace_Dualisation_II} \lc{}cf.~Remark \ref{REM.Wstar_Con} and Remark \ref{REM.Wstar_Normal_I}\rc{}. Therefore, Equation \ref{EQ.THM.AF_Cstar_Bimodule_CLRA_SR_6} lets us calculate

\begin{align}\label{EQ.THM.AF_Cstar_Bimodule_CLRA_SR_7}
L_{y^{\flat},\varepsilon}^{\id_{A}}=\s\textrm{-}\lim_{j\in\mathbb{N}}\hspace{0.025cm} L_{y_{j}}+\varepsilon I=L_{y}+\varepsilon I.     
\end{align}

\noindent Taking inverses in Equation \ref{EQ.THM.AF_Cstar_Bimodule_CLRA_SR_7}, we have $L_{y^{\flat},\varepsilon}^{-\id_{A}}=R_{-\varepsilon}(L_{y})$ and therefore

\begin{align}\label{EQ.THM.AF_Cstar_Bimodule_CLRA_SR_8}
\lgl R_{-\varepsilon}(L_{y})(u),u\rgl_{\tau}=\mathbf{L}_{y^{\flat},\varepsilon}^{-\id_{A}}(u)
\end{align}

\noindent for all $u\in L^{2}(A,\tau)$.\par
For all $n\in\mathbb{N}$, get $x_{n}:=\min\{x,n\}\in L^{1,\infty}(A,\tau)_{+}$ by positivity, as well as

\begin{align}\label{EQ.THM.AF_Cstar_Bimodule_CLRA_SR_9}
\lgl R_{-\varepsilon}\lc{}L_{x_{n}}\rc{}(u),u\rgl_{\tau}=\mathbf{L}_{x_{n}^{\flat},\varepsilon}^{-\id_{A}}(u)
\end{align}

\noindent for all $u\in L^{2}(A,\tau)$ by Equation \ref{EQ.THM.AF_Cstar_Bimodule_CLRA_SR_8}. Then $1)$ in Lemma \ref{LEM.AF_Cstar_Bimodule_L1_SR} and Lemma \ref{LEM.FC_SR} show

\begin{align}\label{EQ.THM.AF_Cstar_Bimodule_CLRA_SR_10}
R_{-\varepsilon}(L_{x})=\s\textrm{-}\lim_{n\in\mathbb{N}}\hspace{0.025cm} R_{-\varepsilon}\lc{}L_{x_{n}}\rc{}.
\end{align}

\noindent Equation \ref{EQ.THM.AF_Cstar_Bimodule_CLRA_SR_9} and Equation \ref{EQ.THM.AF_Cstar_Bimodule_CLRA_SR_10} let us calculate

\begin{align}\label{EQ.THM.AF_Cstar_Bimodule_CLRA_SR_11}
\lgl R_{-\varepsilon}(L_{x})(u),u\rgl_{\tau}=\lim_{n\in\mathbb{N}}\hspace{0.025cm} \lgl R_{-\varepsilon}\lc{}L_{x_{n}}\rc{}(u),u\rgl_{\tau}=\lim_{n\in\mathbb{N}}\hspace{0.025cm} \mathbf{L}_{x_{n}^{\flat}}^{-\id_{A},\varepsilon}(u)
\end{align}

\noindent for all $u\in L^{2}(A,\tau)$. Finally, note $x^{\flat}=w^{*}$-$\lim_{n\in\mathbb{N}}x_{n}^{\flat}$ by $2.2)$ in Proposition \ref{PRP.AF_Cstar_Trace_Dualisation_II}. Since the map in Equation \ref{EQ.THM.AF_Cstar_Bimodule_CLRA_SR_3} is $w^{*}$-l.s.c.~, Equation \ref{EQ.THM.AF_Cstar_Bimodule_CLRA_SR_11} shows

\begin{align}\label{EQ.THM.AF_Cstar_Bimodule_CLRA_SR_12}
\lgl R_{-\varepsilon}(L_{x})(u),u\rgl_{\tau}=\liminf_{n\in\mathbb{N}}\hspace{0.0675cm} \mathbf{L}_{x_{n}^{\flat}}^{-\id_{A},\varepsilon}(u)\geq\mathbf{L}_{x^{\flat}}^{-\id_{A}}(u)
\end{align}

\noindent for all $u\in L^{2}(A,\tau)$. Equation \ref{EQ.THM.AF_Cstar_Bimodule_CLRA_SR_12} implies Equation \ref{EQ.THM.AF_Cstar_Bimodule_CLRA_SR_Fluff}.\par

%NEWPAGE
%NEWPAGE
%NEWPAGE

\pagebreak

%NEWPAGE
%NEWPAGE
%NEWPAGE

Thus our claim holds assuming canonical AF-$C^{*}$-bimodule structure. Assume the general case. The map $t\mapsto R_{\pm i}(t)$ lies in $C_{b}(\mathbb{R})$. Applying $2)$ in Lemma \ref{LEM.Wstar_CLRA_FC} and using our above discussion, Lemma \ref{LEM.FC_SR} implies

\begin{align}\label{EQ.THM.AF_Cstar_Bimodule_CLRA_SR_13}
\Gamma_{x,L^{\infty}(A,\tau)}\lc{}R_{\pm i}\rc{}=\s\textrm{-}\lim_{n\in\mathbb{N}}\hspace{0.025cm} \Gamma_{x_{j},L^{\infty}(A,\tau)}\lc{}R_{\pm i}\rc{}.
\end{align}

\noindent For all $y\in L^{0}(A,\tau)_{h}$, Lemma \ref{LEM.AF_Cstar_Bimodule_Compression_I} shows

\begin{align}\label{EQ.THM.AF_Cstar_Bimodule_CLRA_SR_14}
L^{\phi}\lc\Gamma_{y,L^{\infty}(A,\tau)}\lc{}R_{\pm i}\rc\rc{}=R_{\pm i}\big(L_{y}^{\phi}\big).
\end{align}

\noindent We know $L^{\phi}:L^{\infty}(A,\tau)\longrightarrow\BII\lc{}L^{2}(B,\omega)\rc$ is normal unital $^{*}$-homomorphism. Thus $L^{\phi}$ is strongly continuous, hence Equation \ref{EQ.THM.AF_Cstar_Bimodule_CLRA_SR_13} and Equation \ref{EQ.THM.AF_Cstar_Bimodule_CLRA_SR_14} show

\begin{align*}
R_{\pm i}\big(L_{x}^{\phi}\big) & = L^{\phi}\lc\Gamma_{x,L^{\infty}(A,\tau)}\lc{}R_{\pm i}\rc\rc \phantom{\Bigg)} \\
& = \s\textrm{-}\lim_{n\in\mathbb{N}}\hspace{0.025cm} L^{\phi}\lc\Gamma_{x_{j},L^{\infty}(A,\tau)}\lc{}R_{\pm i}\rc\rc \phantom{\Bigg)} \\
& = \s\textrm{-}\lim_{n\in\mathbb{N}}\hspace{0.025cm} R_{\pm i}\big(L_{x_{j}}^{\phi}\big). \phantom{\Bigg)}
\end{align*}

\noindent The above calculation shows our claim.
\end{proof}

\begin{cor}\label{COR.AF_Cstar_Bimodule_CLRA_SR}
For all $x,y\in L^{1}(A,\tau)_{+}$ and $\varepsilon>0$, we have

\begin{itemize}
\item[1)] $L_{x^{\flat},\varepsilon}^{\phi}=L_{x}^{\phi}+\varepsilon I$ and $R_{y^{\flat},\varepsilon}^{\bpsi}=R_{y}^{\bpsi}+\varepsilon I$,

\item[2)] $\mathcal{D}_{x^{\flat},y^{\flat},\varepsilon}^{\theta}=\mathcal{D}_{x,y,\varepsilon}^{\theta}$.
\end{itemize}
\end{cor}
\begin{proof}
Let $x,y\in L^{1}(A,\tau)_{+}$ and $\varepsilon>0$. Equation \ref{EQ.REM.QE_QForm_CLRA_1} in Remark \ref{REM.QE_QForm_CLRA} rewrites as

\begin{align}\label{EQ.COR.AF_Cstar_Bimodule_CLRA_SR_1}
L_{x_{j}^{\flat},\varepsilon}^{-\phi}=\big(L_{x_{j}}^{\phi}+\varepsilon I\big)^{-1},\ R_{y_{j}^{\flat},\varepsilon}^{-\bpsi}=\big(R_{y_{j}}^{\bpsi}+\varepsilon I\big)^{-1}.
\end{align}

\noindent Using Equation \ref{EQ.PRP.NCD_Operator_Compressed_PMO_I_2} to ensure perturbation tends to zero as required, get $1)$ by applying Theorem \ref{THM.AF_Cstar_Bimodule_CLRA_SR} to inverses in Equation \ref{EQ.COR.AF_Cstar_Bimodule_CLRA_SR_1}. We then get $2)$ by $1)$ in Proposition \ref{PRP.NCD_Operator_Compressed_PMO_II} applied to the trivial compression.
\end{proof}

\begin{dfn}
We say that $f$ vanishes at the boundary if $m_{f}(\lambda,0)=0$ for all $\lambda\geq 0$.
\end{dfn}

\begin{rem}
If $f$ vanishes at the boundary, then $m_{f}(\lambda,0)=m_{f}(0,\lambda)=0$ for all $\lambda\geq 0$ by symmetry. Both the geometric and logarithmic operator means have representing function vanishing at the boundary. The arithmetic operator mean does not.
\end{rem}

\begin{lem}\label{LEM.NCD_Operator_Compressed_PMO_II}
Let $f$ vanish at the boundary.

\begin{itemize}
\item[1)] Let $p\in L^{\infty}(A,\tau)$ be a projection. For all $x,y\in L^{1}(A[p],\tau)_{+}$ and $\varepsilon>0$, we have

\begin{align}\label{EQ.LEM.NCD_Operator_Compressed_PMO_II_1}
\lb\mathcal{D}_{x,0,\varepsilon}^{\theta},L_{p}^{\phi}\rb{}=\lb\mathcal{D}_{0,y,\varepsilon}^{\theta},R_{p}^{\bpsi}\rb{}=0.    
\end{align}

\begin{reapply}
\end{reapply}

\item[2)] For all $x,y\in L^{1}(A,\tau)_{+}$ and $u\in L^{2}(B,\omega)$, we have 

\begin{align}\label{EQ.LEM.NCD_Operator_Compressed_PMO_II_2}
\sup_{\varepsilon>0}\hspace{0.025cm} \lgl\mathcal{D}_{x,0,\varepsilon}^{\theta}(u),u\rgl_{\omega}=\sup_{\varepsilon>0}\hspace{0.025cm} \lgl\mathcal{D}_{0,y,\varepsilon}^{\theta}(u),u\rgl_{\omega}=\infty. 
\end{align}

\begin{reapply}
\end{reapply}

\end{itemize}
\end{lem}
\begin{proof}
Let $x,y\in L^{1}(A,\tau)_{+}$. We prove all claims for $x$. Their proof readily transfers to the analogous one for $y$. We show $1)$. Let $p\in L^{\infty}(A,\tau)$ be a projection s.t.~$x\in L^{1}(A[p],\tau)_{+}$ and $\varepsilon>0$. By bounded measurable joint functional calculus \lc{}cf.~Proposition \ref{PRP.JFC_Bd}\rc{}, we have

\begin{align}\label{EQ.LEM.NCD_Operator_Compressed_PMO_II_3}
\mathcal{D}_{x,0,\varepsilon}^{\theta}=m_{f,\varepsilon}^{-\theta}\lc{}L_{x}^{\phi},0\rc\in W^{*}\big(L_{x}^{\phi},I\big).
\end{align}

\noindent Note $W^{*}\lc{}L_{x}^{\phi},I\rc{}=W^{*}(L_{x}^{\phi})\otimes W^{*}(I)\cong W^{*}(L_{x}^{\phi})\subset\BII\lc{}L^{2}(B,\omega)$ following Equation \ref{EQ.SSEC.A_Fnd_FC_8} in our construction of bounded measurable joint functional calculus. Since $x\in L^{1}(A[p],\tau)$, we know $\Gamma_{x,L^{\infty}(A,\tau)}\lc{}R_{\pm i}\rc\in W_{L^{\infty}(A,\tau)}^{*}(x)$.\par
Corollary \ref{COR.Wstar_Compression_Preservation_II} therefore implies

\begin{align}\label{EQ.LEM.NCD_Operator_Compressed_PMO_II_4}
\lb\Gamma_{x,L^{\infty}(A,\tau)}\lc{}R_{\pm i}\rc{},p\rb{}=\lb\Gamma_{x,L^{\infty}(A[p],\tau)}\lc{}R_{\pm i}-\mp i\rc{}+\mp i1_{A},p\rb{}=0.
\end{align}

\noindent Using Lemma \ref{LEM.AF_Cstar_Bimodule_Compression_I}, $2)$ in Lemma \ref{LEM.Wstar_CLRA_FC} and Equation \ref{EQ.LEM.NCD_Operator_Compressed_PMO_II_4}, we calculate

\begin{align}\label{EQ.LEM.NCD_Operator_Compressed_PMO_II_5}
\lb{}R_{\pm i}\big(L_{x}^{\phi}\big),L_{p}^{\phi}\rb{}=\lb{}L^{\phi}\lc\Gamma_{x,L^{\infty}(A,\tau)}\lc{}R_{\pm i}\rc\rc{},L_{p}^{\phi}\rb{}=L^{\phi}\lc\lb\Gamma_{x,L^{\infty}(A,\tau)}\lc{}R_{\pm i}\rc{},p\rb\rc{}=0.
\end{align}

\noindent Following Remark \ref{REM.JFC_Strong_Com}, Equation \ref{EQ.LEM.NCD_Operator_Compressed_PMO_II_5} shows

\begin{align}\label{EQ.LEM.NCD_Operator_Compressed_PMO_II_6}
\lb{}T,L_{p}^{\phi}\rb{}=0    
\end{align}

\noindent for all $T\in W^{*}(L_{x}^{\phi})$. Equation \ref{EQ.LEM.NCD_Operator_Compressed_PMO_II_3} and Equation \ref{EQ.LEM.NCD_Operator_Compressed_PMO_II_6} imply $1)$ at once.\par

%NEWPAGE
%NEWPAGE
%NEWPAGE

\pagebreak

%NEWPAGE
%NEWPAGE
%NEWPAGE

We show $2)$. Let $u\in L^{2}(B,\omega)$. Note $2)$ in Proposition \ref{PRP.NCD_Operator_Compressed_PMO_II} shows

\begin{align}\label{EQ.LEM.NCD_Operator_Compressed_PMO_II_7}
\sup_{\varepsilon>0}\hspace{0.025cm} \lgl\mathcal{D}_{x,0,\varepsilon}^{\theta}(u),u\rgl_{\omega}=\liminf_{\varepsilon\downarrow 0}\hspace{0.025cm} \lgl\mathcal{D}_{x,0,\varepsilon}^{\theta}(u),u\rgl_{\omega}.
\end{align}

\noindent Lemma \ref{LEM.Wstar_CLRA_FC} and Lemma \ref{LEM.Wstar_CLRA_JFC} imply

\begin{align}\label{EQ.LEM.NCD_Operator_Compressed_PMO_II_8}
\textrm{spec}_{L^{\infty}(A,\tau)}\hspace{0.055cm} x\times 0=\supp E_{L_{x}^{\phi},0}\times\vstretch{1.125}{\{}\hspace{0.007125cm} 0\hspace{0.007125cm}\vstretch{1.125}{\}}\subset\supp E_{L_{x}^{\phi}}\times\vstretch{1.125}{\{}\hspace{0.007125cm} 0\hspace{0.007125cm}\vstretch{1.125}{\}}=\textrm{spec}_{L^{\infty}(A,\tau)}\hspace{0.055cm} x\times 0.
\end{align}

\noindent For all $\varepsilon>0$, Equation \ref{EQ.LEM.NCD_Operator_Compressed_PMO_II_8} and Equation \ref{EQ.REM.SpecInt_Identities_1} show

\begin{align}\label{EQ.LEM.NCD_Operator_Compressed_PMO_II_9}
\lgl\mathcal{D}_{x,0,\varepsilon}^{\theta}(u),u\rgl_{\omega}=\int_{\spec_{L^{\infty}(A,\tau)}x\times 0}m_{f,\varepsilon}^{-\theta}\lc{}t,0\rc{}dE_{L_{x}^{\phi},0}^{u}.
\end{align}

Let $\lset\varepsilon_{n}\rset_{n\in\mathbb{N}}\subset (0,\infty)$ be a descending sequence converging to zero. Since $f$ vanishes at the boundary, we have

\begin{align}\label{EQ.LEM.NCD_Operator_Compressed_PMO_II_10}
\liminf_{n\in\mathbb{N}}\hspace{0.025cm} m_{f,\varepsilon_{n}}^{-\theta}\lc{}t,0\rc{}=\infty
\end{align}

\noindent for all $t\in\spec_{L^{\infty}(A,\tau)} x$. Applying Fatou's Lemma to Equation \ref{EQ.LEM.NCD_Operator_Compressed_PMO_II_9}, Equation \ref{EQ.LEM.NCD_Operator_Compressed_PMO_II_10} shows

\begin{align}\label{EQ.LEM.NCD_Operator_Compressed_PMO_II_11}
\liminf_{n\in\mathbb{N}}\hspace{0.025cm} \lgl\mathcal{D}_{x,0,\varepsilon_{n}}^{\theta}(u),u\rgl_{\omega}\geq\int_{\spec_{L^{\infty}(A,\tau)}x\times 0}\liminf_{n\in\mathbb{N}}\hspace{0.025cm} m_{f,\varepsilon_{n}}^{-\theta}\lc{}t,0\rc{}dE_{L_{x}^{\phi},0}^{u}=\infty.
\end{align}

\noindent The sequence used for Equation \ref{EQ.LEM.NCD_Operator_Compressed_PMO_II_10} is fixed but arbitrary. As such, Equation \ref{EQ.LEM.NCD_Operator_Compressed_PMO_II_7} and Equation \ref{EQ.LEM.NCD_Operator_Compressed_PMO_II_11} imply $2)$ since no descending sequence yields a finite value.
\end{proof}

\begin{thm}\label{THM.NCD_Operator_Compressed_PMO}
Let $(A,\tau)$ and $(B,\omega)$ be tracial AF-$C^{*}$-algebras. Let $(\phi,\bpsi,\gamma)$ be an AF-$A$-bimodule structure on $B$. Let $f$ be representing function of an operator mean and $\theta\in [0,1]$. Let $p\in L^{\infty}(A,\tau)$ be a projection. For all $x,y\in L^{1}(A[p],\tau)_{+}$, we have

\begin{itemize}
\item[1)] $H\big(Q_{x^{\flat},y^{\flat}}^{f,\theta}\big)\subset L^{2}(B[p],\omega)$,

\item[2)] $\mathcal{D}_{x^{\flat},y^{\flat}}^{\theta}=\sr$-$\lim_{\varepsilon\downarrow 0}\mathcal{D}_{x,y,p,\varepsilon}^{\theta}$ on $H\big(Q_{x^{\flat},y^{\flat}}^{f,\theta}\big)$,

\item[3)] $Q_{x^{\flat},y^{\flat}}^{f,\theta}(u)=\sup_{\varepsilon>0}\lgl\mathcal{D}_{x,y,p,\varepsilon}^{\theta}(u),u\rgl_{\omega}$ for all $u\in H\big(Q_{x^{\flat},y^{\flat}}^{f,\theta}\big)$.
\end{itemize}
\end{thm}
\begin{proof}
Let $x,y\in L^{1}(A,\tau)_{+}$. We show $1)$. Theorem \ref{THM.QE_QForm_Representation} and Corollary \ref{COR.AF_Cstar_Bimodule_CLRA_SR} imply

\begin{align}\label{EQ.THM.NCD_Operator_Compressed_PMO_1}
Q_{x^{\flat},y^{\flat}}^{f,\theta}(u)=\sup_{\varepsilon>0}\hspace{0.025cm} \lgl\mathcal{D}_{x,y,\varepsilon}^{f,\theta}(u),u\rgl_{\omega}
\end{align}

\noindent for all $u\in L^{2}(B,\omega)$.\par

%NEWPAGE
%NEWPAGE
%NEWPAGE

\pagebreak

%NEWPAGE
%NEWPAGE
%NEWPAGE

Let $u\in L^{2}(B,\omega)$ and $\varepsilon>0$. Equation \ref{EQ.LEM.AF_NCD_FC_II_2} shows

\begin{align}\label{EQ.THM.NCD_Operator_Compressed_PMO_2}
L^{2}(B[p],\omega)^{\perp}=pL^{2}(B,\omega)p^{\perp}\oplus p^{\perp}L^{2}(B,\omega)p\oplus L^{2}\lc{}B\lc{}p^{\perp}\rc{},\omega\rc{}.
\end{align}

\noindent Using $4)$ in Lemma \ref{LEM.NCD_Operator_Compressed_PMO_I} and $1)$ in Lemma \ref{LEM.NCD_Operator_Compressed_PMO_II}, we have

\begin{align}\label{EQ.THM.NCD_Operator_Compressed_PMO_3}
\mathcal{D}_{x,y,\varepsilon}^{\theta}=\mathcal{D}_{x,y,p,\varepsilon}^{\theta}\oplus\lc\mathcal{D}_{x,0,\varepsilon}^{\theta}L_{p}^{\phi}\mathrlap{\phantom{R}^{\bpsi}}R_{p^{\perp}}\oplus\mathcal{D}_{0,y,\varepsilon}^{\theta}\mathrlap{\phantom{L}^{\phi}}L_{p^{\perp}}R_{p}^{\bpsi}\oplus\varepsilon^{-\theta}\pi_{p^{\perp}}\rc{}
\end{align}

\noindent w.r.t.~$\BII\lc{}L^{2}(B[p],\omega)\rc\oplus\BII\lc{}pL^{2}(B,\omega)p^{\perp}\rc\oplus\BII\lc{}p^{\perp}L^{2}(B,\omega)p\rc\oplus\BII\lc{}L^{2}\lc{}B\lc{}p^{\perp}\rc{},\omega\rc\rc$. Moreover, all bounded operators in Equation \ref{EQ.THM.NCD_Operator_Compressed_PMO_3} are positive. Equation \ref{EQ.THM.NCD_Operator_Compressed_PMO_3} lets us estimate

\begin{align*}
\lgl\mathcal{D}_{x,y,\varepsilon}^{\theta}(u),u\rgl_{\omega} & = \dblv{}\lc\mathcal{D}_{x,0,\varepsilon}^{\theta}L_{p}^{\phi}\mathrlap{\phantom{R}^{\bpsi}}R_{p^{\perp}}\oplus\mathcal{D}_{0,y,\varepsilon}^{\theta}\mathrlap{\phantom{L}^{\phi}}L_{p^{\perp}}R_{p}^{\bpsi}\oplus\varepsilon^{-\theta}\pi_{p^{\perp}}\rc^{\frac{1}{2}}\lc\pi_{p}^{\perp}(u)\rc\dblv_{\omega}^{2}+\dblv{}\mathcal{D}_{x,y,\varepsilon}^{\frac{\theta}{2}}\pi_{p}(u)\dblv_{\omega}^{2} \phantom{\bigg)} \\
& \geq \dblv{}\lc\mathcal{D}_{x,0,\varepsilon}^{\theta}L_{p}^{\phi}\mathrlap{\phantom{R}^{\bpsi}}R_{p^{\perp}}\oplus\mathcal{D}_{0,y,\varepsilon}^{\theta}\mathrlap{\phantom{L}^{\phi}}L_{p^{\perp}}R_{p}^{\bpsi}\oplus\varepsilon^{-\theta}\pi_{p^{\perp}}\rc^{\frac{1}{2}}\lc\pi_{p}^{\perp}(u)\rc\dblv_{\omega}^{2} \phantom{\bigg)} \\
& = \dblv{}\mathcal{D}_{x,0,\varepsilon}^{\frac{\theta}{2}}\lc{}pup^{\perp}\rc\dblv_{\omega}^{2}+\dblv{}\mathcal{D}_{0,y,\varepsilon}^{\frac{\theta}{2}}\lc{}p^{\perp}up\rc\dblv_{\omega}^{2}+\varepsilon^{-\theta}\dblv{}\pi_{p^{\perp}}(u)\dblv_{\omega}^{2}. \phantom{\bigg)}
\end{align*}

\noindent Using Equation \ref{EQ.THM.NCD_Operator_Compressed_PMO_1} and $2)$ in Proposition \ref{PRP.NCD_Operator_Compressed_PMO_II}, taking suprema in $\varepsilon>0$ yields

\begin{align}\label{EQ.THM.NCD_Operator_Compressed_PMO_4}
Q_{x^{\flat},y^{\flat}}^{f,\theta}(u)\geq\sup_{\varepsilon>0}\hspace{0.025cm} \dblv{}\mathcal{D}_{x,0,\varepsilon}^{\frac{\theta}{2}}\lc{}pup^{\perp}\rc\dblv_{\omega}^{2}+\sup_{\varepsilon>0}\hspace{0.025cm} \dblv{}\mathcal{D}_{0,y,\varepsilon}^{\frac{\theta}{2}}\lc{}p^{\perp}up\rc\dblv_{\omega}^{2}+\sup_{\varepsilon>0}\hspace{0.025cm} \varepsilon^{-\theta}\dblv{}\pi_{p^{\perp}}(u)\dblv_{\omega}^{2}.
\end{align}

\noindent Using $2)$ in Lemma \ref{LEM.NCD_Operator_Compressed_PMO_II}, Equation \ref{EQ.THM.NCD_Operator_Compressed_PMO_4} implies $1)$ at once. We therefore get $2)$ and $3)$ by Equation \ref{EQ.THM.NCD_Operator_Compressed_PMO_1}, Equation \ref{EQ.THM.NCD_Operator_Compressed_PMO_3} and Theorem \ref{THM.QE_QForm_Representation}.
\end{proof}

\begin{cor}\label{COR.NCD_Operator_Compressed_PMO_I}
Let $p\in L^{\infty}(A,\tau)$ be a projection. If $x,y\in L^{0}(A[p],\tau)_{+}$ s.t.~we have $m_{f}^{-1}\in\SIIp\lc{}E_{x,y}\rc$, then

\begin{itemize}
\item[1)] $H\big(Q_{x^{\flat},y^{\flat}}^{f,\theta}\big)=L^{2}(B[p],\omega)$,

\item[2)] $\mathcal{D}_{x^{\flat},y^{\flat}}^{\theta}=\mathcal{D}_{x,y,p}^{\theta}=\sr$-$\lim_{\varepsilon\downarrow 0}\mathcal{D}_{x,y,p,\varepsilon}^{\theta}$ on $L^{2}(B[p],\omega)$,

\item[3)] $Q_{x^{\flat},y^{\flat}}^{f,\theta}(u)=\big\|\mathcal{D}_{x,y,p}^{\frac{\theta}{2}}(u)\big\|_{\omega}^{2}=\sup_{\varepsilon>0}\lgl\mathcal{D}_{x,y,p,\varepsilon}^{\theta}(u),u\rgl_{\omega}$ for all $u\in L^{2}(B[p],\omega)$.
\end{itemize}
\end{cor}
\begin{proof}
Note $\mathcal{D}_{x,y,p}^{\theta}=\sr$-$\lim_{\varepsilon\downarrow 0}\mathcal{D}_{x,y,p,\varepsilon}^{\theta}$ on $L^{2}(B[p],\omega)$ by $3)$ in Proposition \ref{PRP.NCD_Operator_Compressed_PMO_II}. Thus $2)$ in Proposition \ref{PRP.NCD_Operator_Compressed_PMO_II} and the Kato-Robinson theorem imply

\begin{align}\label{EQ.COR.NCD_Operator_Compressed_PMO_I_1}
\sup_{\varepsilon>0}\hspace{0.025cm} \lgl\mathcal{D}_{x,y,p,\varepsilon}^{\theta}(u),u\rgl_{\omega}=\dblv{}\mathcal{D}_{x,y,p}^{\frac{\theta}{2}}(u)\dblv_{\omega}^{2}<\infty    
\end{align}

\noindent for all $u\in\dom\mathcal{D}_{x,y,p}^{\frac{\theta}{2}}$. We know $\mathcal{D}_{x,y,p}^{\theta}$ is densely defined as per $1)$ in Proposition \ref{PRP.NCD_Operator_Compressed_PMO_I} by hypothesis and construction of compressed pulled-back joint functional calculus. Using Equation \ref{EQ.COR.NCD_Operator_Compressed_PMO_I_1}, Theorem \ref{THM.NCD_Operator_Compressed_PMO} hence implies our claims.
\end{proof}

\begin{rem}\label{REM.NCD_Operator_Compressed_PMO_I}
If $x,y\in L^{1}(A[p],\tau)_{+}$ s.t.~$L_{x,p}$ and $R_{y,p}$ injective, then $m_{f}^{-1}\in\SIIp\lc{}E_{x,y}\rc$. Since $E_{x,L^{\infty}(A[p],\tau)}$ and $E_{y,L^{\infty}(A[p],\tau)}$ have no mass at zero if injectivity is given, we know $[0,\infty)\times\lset{}0\rset\cup\lset{}0\rset\times [0,\infty)\in\mathcal{N}\lc{}E_{x,y,L^{\infty}(A[p],\tau)}\rc$. Zero may still lie in $\spec_{L^{\infty}(A[p],\tau)}x\times y$.
\end{rem}

\begin{cor}\label{COR.NCD_Operator_Compressed_PMO_II}
Let $p,q\in L^{\infty}(A,\tau)$ be projections. If $x,y\in L^{0}(A[p],\tau)_{+}\cap L^{0}\lc{}A\lc{}q\rc{},\tau\rc_{+}$ s.t.~$m_{f}^{-1}\in\SIIp\lc{}E_{x,y}\rc\cap\SIIq\lc{}E_{x,y}\rc$, then $p=q$.
\end{cor}
\begin{proof}
We equip $A$ with its canonical AF-$A$-bimodule structure. We have $\pi_{p}=\pi_{q}$ by $1)$ in Corollary \ref{COR.NCD_Operator_Compressed_PMO_I}. For all $j\in\mathbb{N}$, note $p1_{A_{j}}p=q1_{A_{j}}q$. Using sequential strong continuity of multiplication and Proposition \ref{PRP.AF_Cstar_Unit}, get $p=\s$-$\lim_{j\in\mathbb{N}}p1_{A_{j}}p=\s$-$\lim_{j\in\mathbb{N}}q1_{A_{j}}q=q$.
\end{proof}

%%%%%%%%%%%%%%%%
%%% SECTION %%%%
%%%%%%%%%%%%%%%%

\section{Noncommutative gradients}\label{SEC.NCDS_NCG}

Symmetric $C^{*}$-derivations are noncommutative gradients \cite{ART.Cip.1997.NC_Dirichlet_Markov}\cite{ART.Cip_Sav.2003.NC_Dirichlet_Grad}. We introduce and consider the special case of symmetric $W^{*}$-derivations. Using symmetric $W^{*}$-bimodules induced by AF-$C^{*}$-bimodules as codomains, quantum gradients are, by construction, a class of symmetric $W^{*}$-derivations compatible with compression and finite-dimensional approximation. Their dualisation provides the weak formulation of continuity equations in the AF-$C^{*}$-setting. Thus Banach dual spaces of AF-$C^{*}$-bimodules serve as synthetic tangent spaces. Compatibility transfers to quantum Laplacians, their noncommutative heat semigroups, as well as continuity equations. Compatibility therefore transfers to quantum optimal transport.

\medskip

\noindent\textbf{Structure.} In Subsection \ref{SSEC.NCDS_NCG_Ubd_Derivation}, we review symmetric $C^{*}$-~and $W^{*}$-derivations. We study their compression. In Subsection \ref{SSEC.NCDS_NCG_QG}, we define quantum gradients, collect properties and give standard constructions. We further construct dynamic quantum gradients from twisted conjugation groups. In Subsection \ref{SSEC.NCDS_NCG_Notion}, we define noncommutative differential structures, discuss compatibility and outline the coarse graining process.

%%%%%%%%%%%%%%%%%%%
%%% SUBSECTION %%%%
%%%%%%%%%%%%%%%%%%%

\subsection[Symmetric $C^{*}$-~and $W^{*}$-derivations]{Symmetric $\mathbf{C}^{*}$-~and $\mathbf{W}^{*}$-derivations}\label{SSEC.NCDS_NCG_Ubd_Derivation}

Symmetric $C^{*}$-derivations are closable unbounded module derivations for symmetric $C^{*}$-bimodules intertwining adjoining and anti-linear involution. They determine noncommutative analogues of Dirichlet forms \cite{BK.Fuk_Osh_Tak.2011.Dirichlet_Markov}, called $C^{*}$-Dirichlet forms \cite{ART.Alb_Ser.1977.Cstar_Dirichlet_Markov}\cite{ART.Cip.1997.NC_Dirichlet_Markov}\cite{ART.Cip_Sav.2003.NC_Dirichlet_Grad}. Following likewise generalised Beurling-Deny formula \cite{ART.Beu_Den.1959.Dirichlet}, representing operators of conservative $C^{*}$-Dirichlet forms are concatenations of symmetric $C^{*}$-derivations and their adjoints \lc{}cf.~Theorem 8.3 in \cite{ART.Cip_Sav.2003.NC_Dirichlet_Grad}\rc{}. These in turn generate completely Markovian semigroups for tracial $C^{*}$-algebras \lc{}cf.~Theorem 4.11 in \cite{ART.Cip.1997.NC_Dirichlet_Markov}\rc{}. Altogether, we say that symmetric $C^{*}$-derivations are noncommutative gradients which determine Laplacians and view completely Markovian semigroups generated by the latter as noncommutative heat semigroups. The relationship between gradients, heat semigroups and Dirichlet forms extends to the noncommutative setting \cite{ART.Cip.1997.NC_Dirichlet_Markov}\cite{ART.Cip_Sav.2003.NC_Dirichlet_Grad}.\par

%NEWPAGE
%NEWPAGE
%NEWPAGE

\pagebreak

%NEWPAGE
%NEWPAGE
%NEWPAGE

We define symmetric $W^{*}$-derivations to be symmetric $C^{*}$-derivations for symmetric $W^{*}$-bimodules, moreover closable w.r.t.~bounded strong convergence s.t.~units are in the kernel upon closure. Using symmetric $W^{*}$-bimodules induced by AF-$C^{*}$-bimodules as codomains, we have compression based on compression of AF-$C^{*}$-bimodules. We then define quantum gradients to be symmetric $W^{*}$-derivations with sufficient compression to have finite-dimensional approximation. Standard references for $C^{*}$-bimodules and $C^{*}$-derivations are \cite{ART.Cip.1997.NC_Dirichlet_Markov}\cite{ART.Cip_Sav.2003.NC_Dirichlet_Grad}. The latter are collected in \cite{COL.Cip.2008.NC_Dirichlet} on p.161-276 in \cite{BK.Cip.2008.NC_Dirichlet}.

%%%%%%%%%%%%%
%%% PART %%%%
%%%%%%%%%%%%%

\subsubsection*{Unbounded module derivations}

Definition \ref{DFN.CWstar_Derivation} collects notions of unbounded module derivations we use, including symmetric $C^{*}$-~and $W^{*}$-derivations. This yields a more general definition of symmetric $C^{*}$-derivations than in \cite{ART.Cip_Sav.2003.NC_Dirichlet_Grad}. Remark \ref{REM.CWstar_Derivation} shows results for symmetric $C^{*}$-derivations in \cite{ART.Cip.1997.NC_Dirichlet_Markov}\cite{ART.Cip_Sav.2003.NC_Dirichlet_Grad} apply regardless. Proposition \ref{PRP.Wstar_Derivation_Chain} states the chain rule for symmetric $W^{*}$-derivations.\par
Let $(M,\tau)$ be a tracial $W^{*}$-algebra.

\begin{ntn}\label{NTN.Ubd_Closure}
Unless stated otherwise, we use the identical symbols for unbounded operators and all of their closures. For all closable unbounded operators $T:H_{0}\longrightarrow H_{1}$ of Hilbert spaces, let $\|.\|_{T}$ denote its graph norm.
\end{ntn}

\begin{dfn}\label{DFN.CWstar_Derivation}
Let $A\subset M$ be a $\sigma$-weakly dense $C^{*}$-subalgebra and $H$ a symmetric $W^{*}$-bimodule over $M$. Let $\mathcal{A}\subset A$ be a $^{*}$-subalgebra and $\nabla:\mathcal{A}\longrightarrow H$ a linear map.

\begin{itemize}
\item[1)] We say that $\nabla$ satisfies

\begin{itemize}
\item[1.1)] the Leibniz rule if $\nabla xy=\nabla x\cdot y+x\nabla y$ for all $x,y\in\mathcal{A}$,

\item[1.2)] symmetry if $\nabla x^{*}=\gamma\lc\nabla x\rc$ for all $x\in\mathcal{A}$.
\end{itemize}

\begin{reapply}
\end{reapply}

\item[2)] We say that $\nabla$ is an $\mathcal{A}$-module derivation if it satisfies the Leibniz rule, and further call $\nabla$ symmetric if it satisfies symmetry.

\item[3)] We say that $\nabla$ is a symmetric $C^{*}$-derivation if

\begin{itemize}
\item[3.1)] $\nabla$ is a symmetric $\mathcal{A}$-module derivation,

\item[3.2)] $\mathcal{A}\subset A$ is $\|.\|_{A}$-dense and $\mathcal{A}\subset L^{2}(M,\tau)$ is $\|.\|_{\tau}$-dense,

\item[3.3)] $\nabla$ is $\lc\|.\|_{A},\|.\|_{H}\rc$-closable and $\nabla\vert_{\mathcal{A}\cap L^{2}(M,\tau)}$ is $\lc\|.\|_{\tau},\|.\|_{H}\rc$-closable.
\end{itemize}

\begin{reapply}
\end{reapply}

\noindent Then its $\lc\|.\|_{\tau},\|.\|_{H}\rc$-closure defines the Laplacian $\Delta:=\nabla^{*}\nabla$ of $\nabla$.

\item[4)] We say that $\nabla$ is a symmetric $W^{*}$-derivation if

\begin{itemize}
\item[4.1)] $\nabla$ is a symmetric $C^{*}$-derivation,

\item[4.2)] for all nets $\{x_{k}\}_{k\in K}\subset\mathcal{A}$ s.t.~$\bds$-$\lim_{k\in K}x_{k}=\bds$-$\lim_{k\in K}x_{k}^{*}=0$, get existence of $\|.\|_{H}$-$\lim_{k\in K}\nabla x_{k}$ if and only if $\|.\|_{H}$-$\lim_{k\in K}\nabla x_{k}=0$,

\item[4.3)] there exists a net $\{x_{k}\}_{k\in K}\subset\mathcal{A}$ s.t.~$\bds$-$\lim_{k\in K}x_{k}=\bds$-$\lim_{k\in K}x_{k}^{*}=1_{M}$ and $\|.\|_{H}$-$\lim_{k\in K}\nabla x_{k}=0$.
\end{itemize}

\begin{reapply}
\end{reapply}

\end{itemize}
\end{dfn}

\begin{rem}\label{REM.CWstar_Derivation}
Assume the setting of Definition \ref{DFN.CWstar_Derivation}. Let $\nabla:\mathcal{A}\longrightarrow H$ be a symmetric $C^{*}$-derivation. Restricting the bimodule action of $M$ to $A$ yields symmetric $C^{*}$-bimodule $H$ over $A$ as per Definition \ref{DFN.CWstar_Bimodule}. If $\tau\vert_{A_{+}}$ is semi-finite, then $(A,\tau)$ is tracial $C^{*}$-algebra and $\nabla$ is symmetric $C^{*}$-derivation used in \cite{ART.Cip_Sav.2003.NC_Dirichlet_Grad}. Since semi-finiteness does not affect the chain rule, the relationship between gradients, heat semigroups and Dirichlet forms in the noncommutative setting uses the $\lc\|.\|_{\tau},\|.\|_{H}\rc$-closure of $\nabla$. We therefore know results for symmetric $C^{*}$-derivations in \cite{ART.Cip.1997.NC_Dirichlet_Markov}\cite{ART.Cip_Sav.2003.NC_Dirichlet_Grad} apply to our general notion. However, we apply them only if $A$ is unital and $\tau<\infty$. Note semi-finiteness of $\tau\vert_{A_{+}}$ is always given in this case \lc{}cf.~$2)$ in Proposition \ref{PRP.Wstar_Trace_Fin_I}\rc{}. If $\tau<\infty$, then replacing $\lc\|.\|_{A},\|.\|_{H}\rc$-closable in $3.3)$ in Definition \ref{DFN.CWstar_Derivation} by $\lc\|.\|_{A},\|.\|_{H}\rc$-closed yields identical $\lc\|.\|_{\tau},\|.\|_{H}\rc$-closures.
\end{rem}

\begin{dfn}\label{DFN.Wstar_Derivation_BdCon_I}
Let $A\subset M$ be a $\sigma$-weakly dense $C^{*}$-subalgebra and $H$ a symmetric $W^{*}$-bimodule over $M$. Let $\nabla:\mathcal{A}\longrightarrow H$ be a symmetric $W^{*}$-derivation.

\begin{itemize}
\item[1)] We call a net $\{x_{k}\}_{k\in K}\subset\mathcal{A}$ bounded strongly convergent to $x\in M$ for $\nabla$ if

\begin{itemize}
\item[1.1)] $x=\bds$-$\lim_{k\in K}x_{k}$ and $x^{*}=\bds$-$\lim_{k\in K}x_{k}^{*}$,

\item[1.2)] $\lset\nabla x_{k}\rset_{k\in K}\subset H$ is Cauchy net in norm.
\end{itemize}

\begin{reapply}
\end{reapply}

Let $x=\bds^{\nabla}$-$\lim_{k\in K}x_{k}$ denote bounded strong convergence for $\nabla$.

\item[2)] Set $M_{\nabla}:=\big\{\hspace{0.025cm} x\in M\ \vset\ \exists\{x_{k}\}_{k\in K}\subset\mathcal{A}:\ x=\bds^{\nabla}\textrm{-}\lim_{k\in K}x_{k}\hspace{0.025cm} \big\}$.
\end{itemize}
\end{dfn}

Let $A\subset M$ be a $\sigma$-weakly dense $C^{*}$-subalgebra and $H$ a symmetric $W^{*}$-bimodule over $M$. Let $\nabla:\mathcal{A}\longrightarrow H$ be a symmetric $W^{*}$-derivation. By $4.2)$ in Definition \ref{DFN.CWstar_Derivation}, we define bounded strong closure $\nabla:M_{\nabla}\longrightarrow H$ of $\nabla$ by setting

\begin{align}\label{EQ.SSEC.NCDS_NCG_Ubd_Derivation_1}
\nabla x:=\|.\|_{H}\textrm{-}\lim_{k\in K}\hspace{0.025cm} \nabla x_{k} 
\end{align}

\noindent for all $x\in M_{\nabla}$. In each case, we use fixed but arbitrary net $\{x_{k}\}_{k\in K}\subset\mathcal{A}$ bounded strongly convergent to $x\in M$ for $\nabla$.

\begin{dfn}\label{DFN.Wstar_Derivation_BdCon_II}
Let $A\subset M$ be a $\sigma$-weakly dense $C^{*}$-subalgebra and $H$ a symmetric $W^{*}$-bimodule over $M$. For all symmetric $W^{*}$-derivations $\nabla:\mathcal{A}\longrightarrow H$, its bounded strong closure $\nabla:M_{\nabla}\longrightarrow H$ is defined by Equation \ref{EQ.SSEC.NCDS_NCG_Ubd_Derivation_1}.
\end{dfn}

\begin{prp}\label{PRP.Wstar_Derivation_BdCon}
Let $A\subset M$ be a $\sigma$-weakly dense $C^{*}$-subalgebra and $H$ a symmetric $W^{*}$-bimodule over $M$. For all symmetric $W^{*}$-derivations $\nabla:\mathcal{A}\longrightarrow H$, we have

\begin{itemize}
\item[1)] $1_{M}\in M_{\nabla}$ and unital $^{*}$-subalgebra $M_{\nabla}\subset M$,

\item[2)] symmetric $M_{\nabla}$-module derivation $\nabla:M_{\nabla}\longrightarrow H$ and $\nabla 1_{M}=0$.
\end{itemize}
\end{prp}
\begin{proof}
Multiplication in $M$ is jointly continuous in strong operator topology. We thus know $M_{\nabla}\subset M$ is a $^{*}$-subalgebra, further having unit $1_{M}\in M_{\nabla}$ with $\nabla 1_{M}=0$ by $4.3)$ in Definition \ref{DFN.CWstar_Derivation}. Since we use normal unital $^{*}$-homomorphisms to define the bimodule action of $M$ on $H$ as per Definition \ref{DFN.CWstar_Bimodule}, the Leibniz rule extends from $\mathcal{A}$ to $M_{\nabla}$. Note symmetry follows by construction. Altogether, get $1)$ and $2)$.
\end{proof}

%NEWPAGE
%NEWPAGE
%NEWPAGE

\pagebreak

%NEWPAGE
%NEWPAGE
%NEWPAGE

The Leibniz rule formulates a noncommutative chain rule using functional calculus of left-~and right-bimodule actions of symmetric $W^{*}$-bimodules. Following notation in Definition \ref{DFN.CWstar_Bimodule}, we use $(\phi,\bpsi)$-action of $M$ on $H$ for normal unital $^{*}$-homomorphisms $\phi,\bpsi:M\longrightarrow\BII(H)$. For all $x,y\in M$, $\phi(x),\bpsi(y)\in\BII(H)$ commute by definition.

\begin{dfn}\label{DFN.QT_Derivation}
Let $I\subset\mathbb{R}$ be a closed interval. For all $g\in C^{1}(I)$, we define functional derivative of $g$ on $I\times I$ by setting

\begin{align*}
Dg(t,s):=
\begin{cases}
\frac{g(t)-g(s)}{t-s} & \If\ t\neq s, \\
\frac{d}{dt}g(t) & \Else. \phantom{\bigg)}
\end{cases}
\end{align*}
\end{dfn}

\begin{rem}\label{REM.QT_Derivation}
Note $Dg\in C(I\times I)$ s.t.~$\big\| Dg\big\|_{C(I\times I)}\leq \big\|\frac{d}{dt}g\big\|_{C(I)}$ in each case.
\end{rem}

\begin{prp}\label{PRP.QT_Derivation}
Let $x\in M_{h}$. If $I\subset\mathbb{R}$ is a closed interval s.t.~$\specM x\subset I$, then

\begin{itemize}
\item[1)] $C(I\times I)\subset C\lc\specM x\times\specM x\rc\subset L^{\infty}\lc\spec\phi(x)\times\bpsi(x),dE_{\phi(x),\bpsi(x)}\rc$,

\item[2)] $\big\|\Gamma_{\phi(x),\bpsi(x)}(h)\big\|_{\BII(H)}\leq \|h\|_{C(I\times I)}$ for all $h\in C(I\times I)$.
\end{itemize}
\end{prp}
\begin{proof}
Note $\spec\phi(x),\spec\bpsi(x)\subset\specM x$ as $\phi$ and $\bpsi$ are unital $^{*}$-homomorphisms. Get 

\begin{align}\label{EQ.PRP.QT_Derivation_1}
\spec\phi(x)\times\bpsi(x)\subset\specM x\times\specM x\subset I\times I.   
\end{align}

\noindent Equation \ref{EQ.PRP.QT_Derivation_1} implies $1)$ by dualisation. Bounded measurable joint functional calculus $\Gamma_{\phi(x),\bpsi(x)}:L^{\infty}\lc\spec\phi(x)\times\bpsi(x),dE_{\phi(x),\bpsi(x)}\rc\longrightarrow\BII(H)$ is a normal unital $^{*}$-homomorphism \lc{}cf.~$1)$ in Proposition \ref{PRP.JFC_Bd}\rc{}. Using $\dblv{}\Gamma_{\phi(x),\bpsi(x)}\dblv{}\leq 1$ and $1)$, we obtain $2)$ at once.
\end{proof}

\begin{prp}\label{PRP.Wstar_Derivation_Chain}
Let $A\subset M$ be a $\sigma$-weakly dense $C^{*}$-subalgebra and $H$ a symmetric $W^{*}$-bimodule over $M$. Let $\nabla:\mathcal{A}\longrightarrow H$ be a symmetric $W^{*}$-derivation, $x\in M_{\nabla}$ self-adjoint and $I\subset\mathbb{R}$ a closed interval s.t.~$\specM x\subset I$. If $g\in C^{1}(I)$, then

\begin{itemize}
\item[1)] $g(x)\in M_{\nabla}$ self-adjoint and $\nabla g(x)=\Gamma_{\phi(x),\bpsi(x)}\lc{}Dg\rc\lc\nabla x\rc$,

\item[2)] $\dblv{}\nabla g(x)\dblv_{H}\leq \big\|\frac{d}{dt}g\big\|_{C(I)}\cdot \|\nabla x\|_{H}$.
\end{itemize}
\end{prp}
\begin{proof}
Note $\nabla 1_{M}=0$ by $2)$ in Proposition \ref{PRP.Wstar_Derivation_BdCon}. If $g$ is polynomial, then we directly verify $1)$ and $2)$ using the Leibniz rule, symmetry, and $\nabla 1_{M}=0$. Let $I=[a,b]$ for $a\leq b$ in $\mathbb{R}$ and $g\in C^{1}(I)$. Since $\nabla 1_{M}=0$, we assume $g\lc{}a\rc{}=0$ without loss of generality.\par
We know $\frac{d}{dt}g\in C(I)$. Let $\{q_{n}\}_{n\in\mathbb{N}}\subset C(\mathbb{R})$ be polynomials s.t.~$\frac{d}{dt}g=\|.\|_{\infty}$-$\lim_{n\in\mathbb{N}}q_{n}$. For all $n\in\mathbb{N}$, set $g_{n}(t):=\int_{a}^{t}q_{n}(s)ds$ for all $t\in I$. Get $g_{n}\in C^{1}(I)$ with derivative $q_{n}$ in each case. Using standard arguments for integration \cite{BK.Eva.2010.Partial_Differential_Equations}\cite{BK.Koe.1993.Analysis_II}\cite{BK.Koe.2004.Analysis_I}, norm convergence of derivatives implies $g=\|.\|_{\infty}$-$\lim_{n\in\mathbb{N}}g_{n}$ since $g\lc{}a\rc{}=g_{n}\lc{}a\rc{}=0$ for all $n\in\mathbb{N}$. Following our definition of bounded strong closure as per Equation \ref{EQ.SSEC.NCDS_NCG_Ubd_Derivation_1}, such approximation reduces our claims to the polynomial case by linearity of the functional derivative and Proposition \ref{PRP.QT_Derivation}.
\end{proof}

%%%%%%%%%%%%%
%%% PART %%%%
%%%%%%%%%%%%%

\subsubsection*{Compressing symmetric $\mathbf{W}^{*}$-derivations}

Definition \ref{DFN.Wstar_Derivation_Compression_I} gives compression of symmetric $W^{*}$-derivations. It is based on compression of AF-$C^{*}$-bimodules. The two classes of compression given in Subsection \ref{SSEC.NCDS_AF_FC} each provide compression of symmetric $W^{*}$-derivations. First, we compress to induced AF-$C^{*}$-bimodules in Corollary \ref{COR.Wstar_Derivation_Compression_Restriction}. Secondly, we compress with projections in Corollary \ref{COR.Wstar_Derivation_Projection}.\par
Let $(A,\tau)$ and $(B,\omega)$ be tracial AF-$C^{*}$-algebras. Let $(\phi,\bpsi,\gamma)$ be an AF-$A$-bimodule structure on $B$. Let $\nabla:\mathcal{A}\longrightarrow L^{2}(B,\omega)$ be a symmetric $W^{*}$-derivation.

\begin{lem}\label{LEM.Reducible_Operator}
Let $H_{0}$ and $H_{1}$ be Hilbert spaces, $V_{0}\subset H_{0}$ and $V_{1}\subset H_{1}$ Hilbert subspaces, and $T:H_{0}\longrightarrow H_{1}$ closed unbounded operator. If $\CII$ is core of $T$ s.t.~

\begin{itemize}
\item[1)] $\pi_{V_{0}}^{H_{0}}\lc\CII\rc\subset\dom T$,

\item[2)] $\pi_{V_{1}}^{H_{1}}\big(T(x)\big)=T\big(\pi_{V_{0}}^{H_{0}}(x)\big)$ for all $x\in\CII$,
\end{itemize}

\noindent then $\pi_{V_{1}}^{H_{1}}T\subset T\pi_{V_{0}}^{H_{0}}$.
\end{lem}
\begin{proof}
Let $x\in\dom T$ and $x=\|.\|_{T}$-$\lim_{k\in K}x_{k}$ for a net $\{x_{k}\}_{k\in K}\subset\CII$. Using $1)$ and $2)$, get

\begin{align}\label{EQ.LEM.Reducible_Operator_1}
\pi_{V_{1}}^{H_{1}}\big(T(x)\big)=\|.\|_{H_{1}}\textrm{-}\lim_{k\in K}\hspace{0.025cm} \pi_{V_{1}}^{H_{1}}\big(T(x_{k})\big)=\|.\|_{H_{1}}\textrm{-}\lim_{k\in K}\hspace{0.025cm} T\big(\pi_{V_{0}}^{H_{0}}(x_{k})\big).
\end{align}

\noindent Equation \ref{EQ.LEM.Reducible_Operator_1} shows $\pi_{V_{1}}^{H_{1}}\big(T(x)\big)=T\big(\pi_{V_{0}}^{H_{0}}(x)\big)$ since $T$ is closed.
\end{proof}

\begin{rem}\label{REM.Reducible}
Assume $H:=H_{0}=H_{1}$ and $V:=V_{0}=V_{1}$ in the setting of Lemma \ref{LEM.Reducible_Operator}. If $T\in\UBII(H)_{h}$ has core as per Lemma \ref{LEM.Reducible_Operator}, then $T$ is $V$-reducible. If $T\in\UBII_{V}(H)$ and $\CII$ core of $T$, then $\CII$ satisfies $1)$ and $2)$ in Lemma \ref{LEM.Reducible_Operator}.
\end{rem}

\begin{dfn}\label{DFN.Wstar_Derivation_Compression_I}
Let $(\phi,\bpsi,\gamma)$ be $(N,V)$-compressible. We say that $\nabla:\mathcal{A}\longrightarrow L^{2}(B,\omega)$ is $(N,V)$-compressible, and call $(N,V)$ a compression of $\nabla$, if

\begin{itemize}
\item[1)] $\pi_{L^{2}(N,\tau)}^{A}\lc\mathcal{A}\rc\subset N\cap L^{\infty}(A,\tau)_{\nabla}$,

\item[2)] $\pi_{L^{2}(N,\tau)}^{A}\lc\mathcal{A}\rc\subset N$ is $\sigma$-weakly dense and $\pi_{L^{2}(N,\tau)}^{A}\lc\mathcal{A}\rc\subset L^{2}(N,\tau)$ is $\|.\|_{\tau}$-dense,

\item[3)] $\pi_{L^{2}(N,\tau)}^{A}\lc\mathcal{A}\rc\subset\dom\nabla$, $\pi_{V}^{B}\lc\dom\nabla^{*}\rc\subset\dom\nabla^{*}$, and

\begin{align}\label{EQ.DFN.Wstar_Derivation_Compression_I_1}
\pi_{V}^{B}\lc\nabla x\rc{}=\nabla\pi_{L^{2}(N,\tau)}^{A}(x),\ \pi_{L^{2}(N,\tau)}^{A}\big(\nabla^{*}u\big)=\nabla^{*}\pi_{V}^{B}(u)
\end{align}

\begin{reapply}
\end{reapply}

\noindent for all $x\in\mathcal{A}$ and $u\in\dom\nabla^{*}$.
\end{itemize}
\end{dfn}

%NEWPAGE
%NEWPAGE
%NEWPAGE

\pagebreak

%NEWPAGE
%NEWPAGE
%NEWPAGE

\begin{cor}\label{COR.Wstar_Derivation_Compression_Restriction}
Let $\mathcal{A}=A_{0}$. If $j\in\mathbb{N}$ s.t.~

\begin{align}\label{EQ.COR.Wstar_Derivation_Compression_Res_1}
\nabla(A_{j})\subset B_{j},\ \nabla^{*}(B_{j})\subset A_{j},
\end{align}

\noindent then $\nabla$ is $(A_{j},B_{j})$-compressible.
\end{cor}
\begin{proof}
Let $j\in\mathbb{N}$ s.t.~Equation \ref{EQ.COR.Wstar_Derivation_Compression_Res_1} holds. We know $(\phi,\bpsi,\gamma)$ is $(A_{j},B_{j})$-compressible by Corollary \ref{COR.AF_Cstar_Bimodule_Compression_Restriction}. Using $\pi_{L^{2}(N,\tau)}^{A}=\pi_{j}^{A}$, $\pi_{V}^{B}=\pi_{j}^{B}$ and Equation \ref{EQ.COR.Wstar_Derivation_Compression_Res_1}, we directly verify $1)$ to $3)$ in Definition \ref{DFN.Wstar_Derivation_Compression_I}.
\end{proof}

\begin{cor}\label{COR.Wstar_Derivation_Projection}
If $p\in L^{\infty}(A,\tau)$ is a projection and $\lset{}p_{k}\rset_{k\in K}\subset\mathcal{A}\cap A_{h}$ a net s.t.~

\begin{itemize}
\item[1)] $p=\bds$-$\lim_{k\in K}p_{k}$,

\item[2)] $p_{k}\in\ker\nabla$ for all $k\in K$,
\end{itemize}

\noindent then $p\in L^{\infty}(A,\tau)_{\nabla}$, $\nabla p=0$, and $\nabla$ is $\lc{}L^{\infty}(A[p],\tau),L^{2}(B[p],\omega)\rc$-compressible.
\end{cor}
\begin{proof}
We use the following results. Let $p\in L^{\infty}(A,\tau)$ be a projection. We know $(\phi,\bpsi,\gamma)$ is $\lc{}L^{\infty}(A[p],\tau),L^{2}(B[p],\omega)\rc$-compressible by Corollary \ref{COR.AF_Cstar_Bimodule_Projection}. The latter shows

\begin{align}\label{EQ.COR.Wstar_Derivation_Projection_1}
\pi_{L^{2}(N,\tau)}^{A}=\pi_{L^{2}(A[p],\tau)}^{A}=L_{p}R_{p},\ \pi_{V}^{B}=\pi_{L^{2}(B[p],\omega)}^{B}=L_{p}^{\phi}R_{p}^{\bpsi}.
\end{align}

\noindent Equation \ref{EQ.COR.Wstar_Derivation_Projection_1} in turn implies

\begin{align}\label{EQ.COR.Wstar_Derivation_Projection_2}
\pi_{L^{2}(N,\tau)}^{A}\lc\mathcal{A}\rc{}=p\mathcal{A}p,\ \pi_{V}^{B}\lc\dom\nabla^{*}\rc{}=p\lc\dom\nabla^{*}\rc{}p.
\end{align}

\noindent Lemma \ref{LEM.Cstar_Trace_Abstract_Projection} shows $p\mathcal{A}p\subset pL^{\infty}(A,\tau)p=L^{\infty}(A[p],\tau)$, as well as $p\mathcal{A}p\subset pL^{2}(A,\tau)p=L^{2}(A[p],\tau)$. We use these inclusions to show $1)$ to $3)$ in Definition \ref{DFN.Wstar_Derivation_Compression_I}.\par
Let $\lset{}p_{k}\rset_{k\in K}\subset\mathcal{A}\cap A_{h}$ be a net s.t.~$1)$ and $2)$ holds. Get $p\in L^{\infty}(A,\tau)_{\nabla}$ and $\nabla p=0$. Thus $1)$ in Proposition \ref{PRP.Wstar_Derivation_BdCon} yields $p\mathcal{A}p\subset L^{\infty}(A,\tau)_{\nabla}$, hence Equation \ref{EQ.COR.Wstar_Derivation_Projection_2} shows $1)$ in Definition \ref{DFN.Wstar_Derivation_Compression_I}. Using density of $\mathcal{A}$ as per $3.2)$ in Definition \ref{DFN.CWstar_Derivation}, Equation \ref{EQ.COR.Wstar_Derivation_Projection_2} further shows $2)$ in Definition \ref{DFN.Wstar_Derivation_Compression_I}. Using $p\in L^{\infty}(A,\tau)_{\nabla}$, Equation \ref{EQ.COR.Wstar_Derivation_Projection_2} and $2)$ in Proposition \ref{PRP.Wstar_Derivation_BdCon}, we directly verify $3)$ in Definition \ref{DFN.Wstar_Derivation_Compression_I} on inner products.
\end{proof}

Definition \ref{DFN.Wstar_Derivation_Compression_II} gives $^{*}$-subalgebras generated by compressions. Proposition \ref{PRP.Wstar_Derivation_Compression_I} lifts properties in Definition \ref{DFN.Wstar_Derivation_Compression_I} to such $^{*}$-subalgebras. The latter therefore serve as domains of compressed symmetric $W^{*}$-derivations. Definition \ref{DFN.Wstar_Derivation_Compression_II} gives compressed symmetric $W^{*}$-derivations. Proposition \ref{PRP.Wstar_Derivation_Compression_II} collects their properties. Notation \ref{NTN.Wstar_Derivation_Compression} fixes conventions.

\begin{dfn}\label{DFN.Wstar_Derivation_Compression_II}
For all compressions $(N,V)$ of $\nabla:\mathcal{A}\longrightarrow L^{2}(B,\omega)$, let $\mathcal{A}_{N}\subset N$ be the $^{*}$-subalgebra generated by $\pi_{L^{2}(N,\tau)}^{A}\lc\mathcal{A}\rc$ in $N$.
\end{dfn}

\begin{rem}
We do not require $^{*}$-subalgebras to be closed in any topology.
\end{rem}

\begin{prp}\label{PRP.Wstar_Derivation_Compression_I}
For all compressions $(N,V)$ of $\nabla:\mathcal{A}\longrightarrow L^{2}(B,\omega)$, we have

\begin{itemize}
\item[1)] $\mathcal{A}_{N}\subset N\cap L^{\infty}(A,\tau)_{\nabla}$ is a $^{*}$-subalgebra,

\item[2)] $\mathcal{A}_{N}\subset N$ is $\sigma$-weakly dense and $\mathcal{A}_{N}\subset L^{2}(N,\tau)$ is $\|.\|_{\tau}$-dense,

\item[3)] $\mathcal{A}_{N}\subset\dom\nabla$, $\pi_{V}^{B}\lc\dom\nabla^{*}\rc\subset\dom\nabla^{*}$, and

\begin{align}\label{EQ.PRP.Wstar_Derivation_Compression_I_1}
\pi_{V}^{B}\lc\nabla x\rc{}=\nabla\pi_{L^{2}(N,\tau)}^{A}(x),\ \pi_{L^{2}(N,\tau)}^{A}\big(\nabla^{*}u\big)=\nabla^{*}\pi_{V}^{B}(u)
\end{align}

\begin{reapply}
\end{reapply}

\noindent for all $x\in\mathcal{A}$ and $u\in\dom\nabla^{*}$.
\end{itemize}
\end{prp}
\begin{proof}
Get $1)$ by $1)$ in Proposition \ref{PRP.Wstar_Derivation_BdCon}. We have $2)$ as H\"older ensures $\mathcal{A}_{N}\subset L^{2}(N,\tau)$. Using $2)$ in Proposition \ref{PRP.Wstar_Derivation_BdCon}, we obtain $3)$ by extending $3)$ in Definition \ref{DFN.Wstar_Derivation_Compression_I}.
\end{proof}

\begin{prp}\label{PRP.Wstar_Derivation_Compression_II}
For all compressions $(N,V)$ of $\nabla:\mathcal{A}\longrightarrow L^{2}(B,\omega)$, we have

\begin{itemize}
\item[1)] $\pi_{V}^{B}\nabla\subset\nabla\pi_{L^{2}(N,\tau)}^{A}$ and $\pi_{L^{2}(N,\tau)}^{A}\nabla^{*}\subset\nabla^{*}\pi_{V}^{B}$,

\item[2)] $\nabla\vert_{\mathcal{A}_{N}}:\mathcal{A}_{N}\longrightarrow V$ is a symmetric $W^{*}$-derivation and

\begin{itemize}
\item[2.1)] $\pi_{L^{2}(N,\tau)}^{A}\lc\dom\nabla\rc{}=L^{2}(N,\tau)\cap\dom\nabla$, \phantom{\big)}

\item[2.2)] $\nabla\vert_{L^{2}(N,\tau)}:L^{2}(N,\tau)\cap\dom\nabla\longrightarrow V$ is $\lc\|.\|_{\tau},\|.\|_{\omega}\rc$-closure of $\nabla\vert_{\mathcal{A}_{N}}$, \phantom{\big)}
\end{itemize}

\begin{reapply}
\end{reapply}

\item[3)] $\big(\nabla\vert_{V}\big)^{*}:V\cap\dom\nabla^{*}\longrightarrow L^{2}(N,\tau)$ is a closed unbounded operator and

\begin{itemize}
\item[3.1)] $\pi_{V}^{B}\lc\dom\nabla^{*}\rc{}=V\cap\dom\nabla^{*}$, \phantom{\big)}

\item[3.2)] $\big(\nabla\vert_{V}\big)^{*}=\big(\nabla\vert_{L^{2}(N,\tau)}\big)^{*}$, \phantom{\big)}
\end{itemize}

\begin{reapply}
\end{reapply}

\item[4)] $\Delta\in\UBII\lc{}L^{2}(A,\tau)\rc_{+}\cap\UBII_{L^{2}(N,\tau)}\lc{}L^{2}(A,\tau)\rc$ and $\Delta\vert_{L^{2}(N,\tau)}=\big(\nabla\vert_{L^{2}(N,\tau)}\big)^{*}\big(\nabla\vert_{L^{2}(N,\tau)}\big)$.
\end{itemize}
\end{prp}
\begin{proof}
Proposition \ref{PRP.Wstar_Derivation_Compression_I} ensures Lemma \ref{LEM.Reducible_Operator} applies to $\nabla:L^{2}(A,\tau)\longrightarrow L^{2}(B,\omega)$ for core $\mathcal{A}$ and $\nabla^{*}:L^{2}(B,\omega)\longrightarrow L^{2}(A,\tau)$ for $\dom\nabla^{*}$. Note $(\phi,\bpsi,\gamma)$ being $(N,V)$-compressible implies $V$ is a symmetric $W^{*}$-bimodule over $N$ as per $2)$ in Proposition \ref{PRP.AF_Cstar_Bimodule_Compression}. Set

\begin{align}\label{EQ.PRP.Wstar_Derivation_Compression_II_1}
A_{N}:=\overline{\mathcal{A}_{N}}^{\|.\|_{A}}=C^{*}\lc\mathcal{A}_{N}\rc{}.
\end{align}

\noindent The second identity in Equation \ref{EQ.PRP.Wstar_Derivation_Compression_II_1} follows from $1)$ in Proposition \ref{PRP.Wstar_Derivation_Compression_I}. Using $2)$ in Proposition \ref{PRP.Wstar_Derivation_Compression_I}, note $A_{N}\subset N$ is a $\sigma$-weakly dense $C^{*}$-subalgebra. Using the Leibniz rule and symmetry, Equation \ref{EQ.PRP.Wstar_Derivation_Compression_I_1} shows $\nabla\lc\mathcal{A}_{N}\rc\subset V$. Thus $2)$ in Proposition \ref{PRP.Wstar_Derivation_BdCon} and $1)$ in Proposition \ref{PRP.Wstar_Derivation_Compression_I} show the restriction $\nabla:\mathcal{A}_{N}\longrightarrow V$ of $\nabla:L^{\infty}(A,\tau)_{\nabla}\longrightarrow L^{2}(B,\omega)$ to $\mathcal{A}_{N}$ is a symmetric $\mathcal{A}_{N}$-module derivation. It satisfies $3.2)$ in Definition \ref{DFN.CWstar_Derivation} by $2)$ in Proposition \ref{PRP.Wstar_Derivation_Compression_I}. Since $\pi_{L^{2}(N,\tau)}^{A}\lc\dom\nabla\rc$ is $\|.\|_{\nabla}$-closure of $\mathcal{A}_{N}$, it satisfies $2.1)$ and in turn $2.2)$ by $3)$ in Proposition \ref{PRP.Wstar_Derivation_Compression_I}. Hence $\nabla:\mathcal{A}_{N}\longrightarrow V$ satisfies $3.3)$ in Definition \ref{DFN.CWstar_Derivation} and is a symmetric $C^{*}$-derivation. We show it is a symmetric $W^{*}$-derivation.\par

%NEWPAGE
%NEWPAGE
%NEWPAGE

\pagebreak

%NEWPAGE
%NEWPAGE
%NEWPAGE

We use the following. We already show and use $1)$ above. Being the restriction to $\mathcal{A}_{N}$ ensures $4.2)$ in Definition \ref{DFN.CWstar_Derivation}. Semi-finiteness of $N$ moreover shows there exists noncommutative conditional expectation from $L^{\infty}(A,\tau)$ to $N$ as per Remark \ref{REM.Cstar_Trace_Abstract_Dualisation}, i.e.~a normal unital bounded linear map

\begin{align}\label{EQ.PRP.Wstar_Derivation_Compression_II_2}
\pi_{N}^{L^{\infty}(A,\tau)}:L^{\infty}(A,\tau)\longrightarrow N
\end{align}

\noindent restricting to $\pi_{L^{2}(N,\tau)}^{A}$ on $L^{2,\infty}(A,\tau)$ and satisfying a trace identity \lc{}cf.~Remark \ref{REM.Wstar_Trace_NCE}\rc{}.\par
Applying the noncommutative conditional expectation to an approximating net for $\nabla:\mathcal{A}\longrightarrow L^{2}(B,\omega)$ as per $4.3)$ in Definition \ref{DFN.CWstar_Derivation} yields one for $\nabla:\mathcal{A}_{N}\longrightarrow V$. This uses $1)$ and restriction of the noncommutative conditional expectation to the Hilbert space projection. Thus $\nabla:\mathcal{A}_{N}\longrightarrow V$ is a symmetric $W^{*}$-derivation, hence $2)$ follows since we have $2.1)$ and $2.2)$. Using $1)$, we directly verify $3.1)$ and $\nabla^{*}\lc{}V\cap\dom\nabla^{*}\rc\subset L^{2}(N,\tau)$. The latter implies

\begin{align}\label{EQ.PRP.Wstar_Derivation_Compression_II_3}
\dom\big(\nabla\vert_{L^{2}(N,\tau)}\big)^{*}=\pi_{V}^{B}\lc\dom\nabla^{*}\rc{}.   
\end{align}

\noindent Equation \ref{EQ.PRP.Wstar_Derivation_Compression_II_3} and $2.2)$ show $3.2)$. Altogether, get $1)$ to $3)$. Note $3)$ implies $4)$.
\end{proof}

\begin{dfn}\label{DFN.Wstar_Derivation_Compression_III}
For all compressions $(N,V)$ of $\nabla:\mathcal{A}\longrightarrow L^{2}(B,\omega)$, set

\begin{itemize}
\item[1)] $\nabla_{\hspace{-0.055cm} N}:=\nabla\vert_{\mathcal{A}_{N}}$,

\item[2)] $\Delta_{N}:=\mathrlap{\phantom{\nabla}_{\hspace{-0.055cm} N}}\nabla^{*}\nabla_{\hspace{-0.055cm} N}$.
\end{itemize}
\end{dfn}

\begin{ntn}\label{NTN.Wstar_Derivation_Compression}
Following Notation \ref{NTN.Ubd_Closure}, we additionally use $\nabla_{\hspace{-0.055cm} N}$ to denote closures in Definition \ref{DFN.Wstar_Derivation_Compression_III} and throughout our discussion. Proposition \ref{PRP.Wstar_Derivation_Compression_II} therefore states $\nabla_{\hspace{-0.055cm} N}=\nabla\vert_{L^{2}(A,\tau)}$, $\mathrlap{\phantom{\nabla}_{\hspace{-0.055cm} N}}\nabla^{*}=\big(\nabla\vert_{V}\big)^{*}$ and $\Delta_{N}=\Delta\vert_{L^{2}(A,\tau)}$.
\end{ntn}

\begin{prp}\label{PRP.Wstar_Derivation_Compression_III}
Let $\nabla:\mathcal{A}\longrightarrow L^{2}(B,\omega)$ be $(N,V)$-compressible.

\begin{itemize}
\item[1)] For all $g\in C_{b}\lc{}[0,\infty)\rc$, we have

\begin{align}\label{EQ.PRP.Wstar_Derivation_Compression_III_1}
g(\Delta)=g\lc\Delta_{N}\rc\oplus g\lc\Delta\vert_{L^{2}(N,\tau)^{\perp}}\rc{}    
\end{align}

\begin{reapply}
\end{reapply}

\noindent w.r.t.~$\BII\lc{}L^{2}(N,\tau)\rc\oplus\BII\lc{}L^{2}(N,\tau)^{\perp}\rc$.

\item[2)] $g\lc\Delta_{N}\rc\in\BII\lc{}L^{2}(N,\tau)\rc\subset\BII_{V}\lc{}L^{2}(A,\tau)\rc$ and $g\lc\Delta_{N}\rc{}=\com_{L^{2}(A,\tau)}g\lc\Delta_{N}\rc$.
\end{itemize}
\end{prp}
\begin{proof}
Note $4)$ in Proposition \ref{PRP.Wstar_Derivation_Compression_II} shows $\Delta\geq 0$ on $L^{2}(A,\tau)$ is $L^{2}(N,\tau)$-reducible. Using the latter, get $1)$ and $2)$ by $2)$ in Corollary \ref{COR.Compression_Preservation_I} because $\Delta_{N}=\Delta\vert_{L^{2}(N,\tau)}$.
\end{proof}

%%%%%%%%%%%%%%%%%%%
%%% SUBSECTION %%%%
%%%%%%%%%%%%%%%%%%%

\subsection[Quantum gradients for AF-$C^{*}$-bimodules]{Quantum gradients for AF-$\mathbf{C}^{*}$-bimodules}\label{SSEC.NCDS_NCG_QG}

In the AF-$C^{*}$-setting, Equation \ref{EQ.COR.Wstar_Derivation_Compression_Res_1} provides the sufficient condition for compressing symmetric $W^{*}$-derivations to induced AF-$C^{*}$-bimodules. We therefore define quantum gradients to be symmetric $W^{*}$-derivations s.t.~Equation \ref{EQ.COR.Wstar_Derivation_Compression_Res_1} holds for all $j\in\mathbb{N}$. Their compression is Definition \ref{DFN.Wstar_Derivation_Compression_III}. Compressing to induced AF-$C^{*}$-bimodules and taking limits is finite-dimensional approximation of quantum gradients. Proposition \ref{PRP.Wstar_Derivation_Compression_II} and Proposition \ref{PRP.Wstar_Derivation_Compression_III} imply such compatibility transfers as claimed.\par
Standard constructions of quantum gradients are direct sum, tensor product, as well as internal quantum gradients. We further construct dynamic quantum gradients by weak differentiation of twisted conjugation groups. We include a non-twisted case. In Subsection \ref{SSEC.QOT_DT_BSP}, standard constructions using dynamic quantum gradients provide fundamental example classes. Standard references for unbounded algebra derivations generating $C^{*}$-dynamical systems are \cite{BK.Ped.2018.Cstar_Algebras} and \cite{BK.Sak.1991.OpAlg_Dynamics}. We moreover refer to \cite{BK.Bra.1987.OpAlg_Quantum_StM_I}\cite{BK.Bra.1987.OpAlg_Quantum_StM_II} as comprehensive treatment of $C^{*}$-dynamical systems in quantum statistical mechanics. Standard reference for the weak differentiation of, in general non-twisted, conjugation groups is \cite{ART.Chr.2016.Weakly_Differentiable_Operators}. Their weak derivatives generalise inner derivations \cite{ART.Kad.1966.OpAlg_Derivations}.

%%%%%%%%%%%%%
%%% PART %%%%
%%%%%%%%%%%%%

\subsubsection*{Definition and properties}

Definition \ref{DFN.Wstar_Derivation_QG} gives quantum gradients. They are symmetric $W^{*}$-derivations by Proposition \ref{PRP.Wstar_Derivation_QG_I}. Proposition \ref{PRP.Wstar_Derivation_QG_I} collects properties. We compress quantum gradients. First, we compress to induced AF-$C^{*}$-bimodules as per Corollary \ref{COR.Wstar_Derivation_Compression_Restriction}. Secondly, we compress with projections as per Corollary \ref{COR.Wstar_Derivation_Projection} assuming additional properties. Using the first one, finite-dimensional approximation is $4)$ in Proposition \ref{PRP.Wstar_Derivation_QG_I}. This is compatibility of quantum gradients with compression and finite-dimensional approximation.\par
Let $(A,\tau)$ and $(B,\omega)$ be tracial AF-$C^{*}$-algebras. Let $(\phi,\bpsi,\gamma)$ be an AF-$A$-bimodule structure on $B$. Note Remark \ref{REM.Wstar_Derivation_QG} concerning closure of quantum gradients.

\begin{dfn}\label{DFN.Wstar_Derivation_QG}
Let $\nabla:A_{0}\longrightarrow L^{2}(B,\omega)$ be a symmetric $A_{0}$-module derivation.

\begin{itemize}
\item[1)] We say that $\nabla$ is a quantum gradient if $B_{0}\subset\dom\nabla^{*}$ and

\begin{align}\label{EQ.DFN.Wstar_Derivation_QG_1}
\nabla(A_{j})\subset B_{j},\ \nabla^{*}(B_{j})\subset A_{j}
\end{align}

\begin{reapply}
\end{reapply}

\noindent for all $j\in\mathbb{N}$. Equation \ref{EQ.DFN.Wstar_Derivation_QG_1} is called locality. We further call $\Delta:=\nabla^{*}\nabla$ a quantum Laplacian.

\item[2)] Let $\nabla$ be a quantum gradient. For all $j\in\mathbb{N}$, we call $\nabla_{\hspace{-0.055cm} j}:=\nabla\vert_{A_{j}}:A_{j}\longrightarrow B_{j}$ the $j$-th restricted quantum gradient and $\Delta_{j}:=\mathrlap{\phantom{\nabla}_{\hspace{-0.055cm} j}}\nabla^{*}\nabla_{\hspace{-0.055cm} j}$ the $j$-th restricted Laplacian.
\end{itemize}
\end{dfn}

\begin{rem}\label{REM.Wstar_Derivation_QG}
Let $\nabla:A_{0}\longrightarrow L^{2}(B,\omega)$ be a quantum gradient. Since $B_{0}\subset L^{2}(B,\omega)$ is $\|.\|_{\omega}$-dense and $B_{0}\subset\dom\nabla^{*}$, we see $\nabla$ is $\lc\|.\|_{\tau},\|.\|_{\omega}\rc$-closable \lc{}cf.~Theorem 5.1.5 in \cite{BK.Ped.1989.Analysis_Now}\rc{}.
\end{rem}

%NEWPAGE
%NEWPAGE
%NEWPAGE

\pagebreak

%NEWPAGE
%NEWPAGE
%NEWPAGE

\begin{prp}\label{PRP.Wstar_Derivation_QG_I}
Let $\nabla:A_{0}\longrightarrow L^{2}(B,\omega)$ be a quantum gradient.

\begin{itemize}
\item[1)] $\nabla:A_{0}\longrightarrow L^{2}(B,\omega)$ is a symmetric $W^{*}$-derivation.

\item[2)] For all $j\leq k$ in $\mathbb{N}$, we have

\begin{itemize}
\item[2.1)] $\pi_{j}^{B}\nabla\subset\nabla\pi_{j}^{A}$ and $\pi_{j}^{A}\nabla^{*}\subset\nabla^{*}\pi_{j}^{B}$, 

\item[2.2)] $\pi_{j}^{B}\nabla\pi_{k}^{A}\subset\nabla\pi_{j}^{A}$ and $\pi_{j}^{A}\nabla^{*}\pi_{k}^{B}\subset\nabla^{*}\pi_{j}^{B}$.
\end{itemize}

\begin{reapply}
\end{reapply}

\item[3)] For all $j\in\mathbb{N}$, we have

\begin{itemize}
\item[3.1)] $\nabla:A_{0}\longrightarrow L^{2}(B,\omega)$ is $(A_{j},B_{j})$-compressible,

\item[3.2)] $\nabla_{\hspace{-0.055cm} j}:A_{j}\longrightarrow B_{j}$ is a quantum gradient, $\mathrlap{\phantom{\nabla}_{\hspace{-0.055cm} j}}\nabla^{*}=\big(\nabla\vert_{B_{j}}\big)^{*}$ and $\Delta_{j}=\Delta_{A_{j}}$.
\end{itemize}

\begin{reapply}
\end{reapply}

\item[4)] We have

\begin{itemize}
\item[4.1)]  $A_{0}$ is core of $\nabla$ and $u=\|.\|_{\nabla}$-$\lim_{j\in\mathbb{N}}\pi_{j}^{A}(u)$ for all $u\in\dom\nabla$,

\item[4.2)] $B_{0}$ is core of $\nabla^{*}$ and $v=\|.\|_{\nabla^{*}}$-$\lim_{j\in\mathbb{N}}\pi_{j}^{B}(v)$ for all $v\in\dom\nabla^{*}$,

\item[4.3)] $A_{0}$ is core of $\Delta$ and $w=\|.\|_{\Delta}$-$\lim_{j\in\mathbb{N}}\pi_{j}^{A}(w)$ for all $w\in\dom\Delta$.
\end{itemize}

\begin{reapply}
\end{reapply}

\item[5)] We have $\gamma\lc\dom\nabla^{*}\rc{}=\dom\nabla^{*}$. For all $u\in\dom\nabla^{*}$, we have $\nabla^{*}\gamma(u)=\big(\nabla^{*}u\big)^{*}$. 
\end{itemize}
\end{prp}
\begin{proof}
Using $\lc\|.\|_{\tau},\|.\|_{\omega}\rc$-closure of $\nabla:A_{0}\longrightarrow L^{2}(B,\omega)$, we have $\nabla:L^{2}(A,\tau)\longrightarrow L^{2}(B,\omega)$ with core $A_{0}\subset\dom\nabla$. We further have $\nabla^{*}:L^{2}(B,\omega)\longrightarrow L^{2}(A,\tau)$ and $B_{0}\subset\dom\nabla^{*}$. Note $A_{0}\subset L^{2}(A,\tau)$ and $B_{0}\subset L^{2}(B,\omega)$ are dense in respective Hilbert space norms.\par
We use $4.1)$ to show $1)$. We see $3)$ in Proposition \ref{PRP.AF_Cstar_Trace_III} shows $4.1)$ if Equation \ref{EQ.PRP.Wstar_Derivation_QG_I_1} holds for all $j\in\mathbb{N}$. We use Lemma \ref{LEM.Reducible_Operator}. Set $H_{0}=A_{j}$ and $H_{1}=B_{j}$ in each case. By testing on the inner product, density in Hilbert space norms and Equation \ref{EQ.DFN.Wstar_Derivation_QG_1} show Lemma \ref{LEM.Reducible_Operator} applies to $\nabla:\dom\nabla\longrightarrow L^{2}(B,\omega)$ using core $A_{0}$. For all $j\in\mathbb{N}$, we have

\begin{align}\label{EQ.PRP.Wstar_Derivation_QG_I_1}
\pi_{j}^{B}\nabla\subset\nabla\pi_{j}^{A}.
\end{align}

\noindent Get $4.1)$. We show $1)$. Note $\nabla:A_{0}\longrightarrow L^{2}(B,\omega)$ is a symmetric $C^{*}$-derivation if it is $\lc\|.\|_{A},\|.\|_{\omega}\rc$-closable. Using $A_{0}\subset L^{\infty}(A,\tau)=L^{1}(A,\tau)^{*}$ and $B_{0}\subset L^{2}(B,\omega)$ $\|.\|_{\omega}$-dense, we directly verify closability. Using $4.1)$, we directly verify $4.2)$ and $4.3)$ in Definition \ref{DFN.CWstar_Derivation}. For $4.3)$ in Definition \ref{DFN.CWstar_Derivation}, we use $\lset{}1_{A_{j}}\rset_{j\in\mathbb{N}}$ as approximating sequence of $1_{A}$. We see $\nabla:A_{0}\longrightarrow L^{2}(B,\omega)$ is a symmetric $W^{*}$-derivation. Get $1)$.\par

%NEWPAGE
%NEWPAGE
%NEWPAGE

\pagebreak

%NEWPAGE
%NEWPAGE
%NEWPAGE

We know $1)$. We therefore have $3)$ by Equation \ref{EQ.DFN.Wstar_Derivation_QG_1}, Corollary \ref{COR.Wstar_Derivation_Compression_Restriction}, as well as $3)$ and $4)$ in Proposition \ref{PRP.Wstar_Derivation_Compression_II}. Equation \ref{EQ.PRP.Wstar_Derivation_QG_I_1} shows all claims for $\nabla$ in $2)$. We directly verify all claims for $\nabla^{*}$ in $2)$ by restricting $\nabla^{*}$ as per $3.2)$. This shows $4.2)$ by $3)$ in Proposition \ref{PRP.AF_Cstar_Trace_III}. Then $4.3)$ follows by $1)$. Altogether, get $1)$ to $4)$.\par
We show $5)$. The anti-linear isometric property of $\gamma$ is $\lgl\gamma(v),\gamma(w)\rgl_{\omega}=\overline{\lgl v,w\rgl}$ for all $v,w\in L^{2}(B,\omega)$. For all $x\in A_{0}$ and $u\in B_{0}$, we apply $\gamma\lc\nabla x\rc{}=\nabla x^{*}$ and the anti-linear isometry property to calculate

\begin{align}\label{EQ.PRP.Wstar_Derivation_QG_I_2}
\lgl\nabla^{*}\gamma(u),x\rgl_{\tau}=\lgl\gamma(u),\nabla x\rgl_{\omega}=\overline{\lgl u,\nabla x^{*}\rgl}_{\omega}=\lgl\big(\nabla^{*}u\big)^{*},x\rgl_{\tau}.
\end{align}

\noindent Using $4)$, Equation \ref{EQ.PRP.Wstar_Derivation_QG_I_2} shows $5)$ by closure.
\end{proof}

\begin{dfn}\label{DFN.Wstar_Derivation_QG_Projection}
Let $\nabla:A_{0}\longrightarrow L^{2}(B,\omega)$ be a quantum gradient.

\begin{itemize}
\item[1)] Let $p\in L^{\infty}(A,\tau)$ be a projection. We say that $\nabla$ is $p$-compressible if the conditions of Corollary \ref{COR.Wstar_Derivation_Projection} are satisfied for $\nabla$ and $p$.

\item[2)] Let $\nabla$ be $p$-compressible and $A_{0,L^{\infty}(A[p],\tau)}$ the $^{*}$-subalgebra generated by $pA_{0}p$ in $L^{\infty}(A[p],\tau)$. We call $\nabla_{p}:=\nabla_{L^{\infty}(A[p],\tau)}:A_{0,L^{\infty}(A[p],\tau)}\longrightarrow L^{2}(B[p],\omega)$ a $p$-compressed quantum gradient and $\Delta_{p}:=\Delta_{L^{\infty}(A[p],\tau)}$ its $p$-compressed Laplacian.
\end{itemize}
\end{dfn}

\begin{prp}\label{PRP.Wstar_Derivation_QG_II}
Let $\nabla:A_{0}\longrightarrow L^{2}(B,\omega)$ be a quantum gradient.

\begin{itemize}
\item[1)] For all $j\in\mathbb{N}$, we have $\Delta_{j}=\Delta_{A_{j}}=\mathrlap{\phantom{\nabla}_{\hspace{-0.055cm} j}}\nabla^{*}\nabla_{\hspace{-0.055cm} j}$.

\item[2)] If $\nabla$ is $p$-compressible, then $\Delta_{p}=\Delta\vert_{L^{2}(A[p],\tau)}=\mathrlap{\phantom{\nabla}_{\hspace{-0.055cm} p}}\nabla^{*}\nabla_{\hspace{-0.055cm} p}$.
\end{itemize}
\end{prp}
\begin{proof}
Apply $4)$ in Proposition \ref{PRP.Wstar_Derivation_Compression_II}.
\end{proof}

%%%%%%%%%%%%%
%%% PART %%%%
%%%%%%%%%%%%%

\subsubsection*{Standard constructions}

We use just three standard constructions for quantum gradients: direct sums, tensor products and internal products. We collect constructions and properties here for use throughout our discussion. For details on direct sums and tensor products of $C^{*}$-~and $W^{*}$-algebras, we refer to Subsection \ref{SSEC.A_Fnd_CWstar}.

\begin{ntn}\label{NTN.DS}
We use superscripts before subscripts to denote instances of objects whenever possible. If this does not yield suitable notation, in particular to prevent any overload of exponents, then we revert to subscripts even as it may introduce double subscripts. The latter must be clear from context, e.g.~$A_{n,j}$ denotes the $j$-th generating $C^{*}$-subalgebra of an AF-$C^{*}$-algebra $A_{n}$ for $n\in\mathbb{N}$. Let $m\in\mathbb{N}$. For all objects $E$ with direct sums, set $E^{m}:=\oplus_{n=1}^{m}E$. For all direct sums $\oplus_{n=1}^{m}H_{n}$ of Hilbert spaces, let 

\begin{align}
\pi_{k}:\oplus_{n=1}^{m}H_{n}\longrightarrow H_{k}
\end{align}

\noindent be the Hilbert space projection from $\oplus_{n=1}^{m}H_{n}$ to $H_{k}$ for all $k\in\lset{}1,\ldots,m\rset$.
\end{ntn}

Definition \ref{DFN.Wstar_Derivation_QG_DS} gives direct sum AF-$C^{*}$-bimodules and quantum gradients for the following data. Let $m\in\mathbb{N}$ and $(A,\tau)$ be a tracial AF-$C^{*}$-algebra. For all $n\in\lset{}1,\ldots,m\rset$, let $\lc{}B_{n},\omega_{n}\rc$ be a tracial AF-$C^{*}$-algebra and $\lc\phi_{n},\bpsi_{n},\gamma_{n}\rc$ an AF-$A$-bimodule structure on $B_{n}$. We define f.s.n.~trace $\oplus_{n=1}^{m}\omega_{n}$ on $\oplus_{n=1}^{m}L^{\infty}(B_{n},\omega_{n})$ by setting

\begin{align}\label{EQ.SSEC.NCDS_NCG_QG_1}
\lc\oplus_{n=1}^{m}\omega_{n}\rc{}(x):=\sum_{n=1}^{m}\omega_{n}\lc{}x_{n}\rc{}
\end{align}

\noindent for all $x=(x_{1},\ldots,x_{n})\in\oplus_{n=1}^{m}L^{\infty}(B_{n},\omega_{n})_{+}$. Get tracial AF-$C^{*}$-algebra $\lc\oplus_{n=1}^{m}B_{n},\oplus_{n=1}^{m}\omega_{n}\rc$ in $\oplus_{n=1}^{m}L^{\infty}(B_{n},\omega_{n})$ generated by $\lset\oplus_{n=1}^{m}B_{n,j}\rset_{j\in\mathbb{N}}$. Let $p\in\lset{}1,2,\infty\rset$. Equation \ref{EQ.SSEC.NCDS_NCG_QG_1} shows $L^{p}\lc\oplus_{n=1}^{m}B_{n},\oplus_{n=1}^{m}\omega_{n}\rc{}=\oplus_{n=1}^{m}L^{p}(B_{n},\omega_{n})$. We obtain local $^{*}$-homomorphisms

\begin{align}\label{EQ.SSEC.NCDS_NCG_QG_2}
\oplus_{n=1}^{m}\phi_{n},\oplus_{n=1}^{m}\bpsi_{n}:A\longrightarrow\oplus_{n=1}^{m}B_{n}    
\end{align}

\noindent by restricting direct sum $^{*}$-homomorphisms to the diagonal $A\subset A^{m}$. We use direct sum anti-linear isometric involution

\begin{align}\label{EQ.SSEC.NCDS_NCG_QG_3}
\oplus_{n=1}^{m}\gamma_{n}:L^{2}\lc\oplus_{n=1}^{m}B_{n},\oplus_{n=1}^{m}\omega_{n}\rc\longrightarrow L^{2}\lc\oplus_{n=1}^{m}B_{n},\oplus_{n=1}^{m}\omega_{n}\rc{}.
\end{align}

\begin{prp}\label{PRP.Wstar_Derivation_QG_DS_I}
Let $m\in\mathbb{N}$ and $(A,\tau)$ tracial AF-$C^{*}$-algebra. For all $n\in\lset{}1,\ldots,m\rset$, let\linebreak $(B_{n},\omega_{n})$ be a tracial AF-$C^{*}$-algebra, $\lc\phi_{n},\bpsi_{n},\gamma_{n}\rc$ an AF-$A$-bimodule structure on $B_{n}$, and $\partial_{n}:A_{0}\longrightarrow L^{2}(B_{n},\omega_{n})$ a quantum gradient. We have

\begin{itemize}
\item[1)] AF-$A$-bimodule structure $\lc\oplus_{n=1}^{m}\phi_{n},\oplus_{n=1}^{m}\bpsi_{n},\oplus_{n=1}^{m}\gamma_{n}\rc$ on $\oplus_{n=1}^{m}B_{n}$,

\item[2)] quantum gradient $\nabla^{\oplus}:=\oplus_{n=1}^{m}\partial_{n}:A_{0}\longrightarrow L^{2}\lc\oplus_{n=1}^{m}B_{n},\oplus_{n=1}^{m}\omega_{n}\rc$ defined by

\begin{align}\label{EQ.PRP.Wstar_Derivation_QG_DS_I_1}
\nabla^{\oplus} x:=\lc\partial_{1} x,\ldots,\partial_{n}x\rc{}
\end{align}

\begin{reapply}
\end{reapply}

\noindent for all $x\in A_{0}$,

\item[3)] $\nabla^{\oplus,*}:=\lc\oplus_{n=1}^{m}\partial_{n}\rc^{*}=\sum_{n=1}^{m}\mathrlap{\phantom{\partial}^{*}}\partial_{n}\pi_{n}$ with core $\oplus_{n=1}^{m}B_{n,0}$ and given by

\begin{align}\label{EQ.PRP.Wstar_Derivation_QG_DS_I_2}
\nabla^{\oplus,*} u=\sum_{n=1}^{m}\mathrlap{\phantom{\partial}^{*}}\partial_{n}u_{n}
\end{align}

\begin{reapply}
\end{reapply}

\noindent for all $u=(u_{1},\ldots,u_{m})\in\dom\nabla^{\oplus,*}=\oplus_{n=1}^{m}\dom\mathrlap{\phantom{\partial}^{*}}\partial_{n}$,

\item[4)] $\Delta^{\oplus}:=\nabla^{\oplus,*}\nabla^{\oplus}=\sum_{n=1}^{m}\mathrlap{\phantom{\partial}^{*}}\partial_{n}\partial_{n}$ with core $A_{0}$ and given by

\begin{align}\label{EQ.PRP.Wstar_Derivation_QG_DS_I_3}
\Delta^{\oplus} u=\sum_{n=1}^{m}\mathrlap{\phantom{\partial}^{*}}\partial_{n}\partial_{n} u 
\end{align}

\begin{reapply}
\end{reapply}

\noindent for all $u\in\dom\Delta^{\oplus}=\bigcap_{n=1}^{m}\dom\mathrlap{\phantom{\partial}^{*}}\partial_{n}\partial_{n}$.
\end{itemize}
\end{prp}
\begin{proof}
Get $1)$ by construction. Using direct sum construction, Equation \ref{EQ.PRP.Wstar_Derivation_QG_DS_I_1} shows $2)$ and Equation \ref{EQ.PRP.Wstar_Derivation_QG_DS_I_2} by reducing to summands. Equation \ref{EQ.PRP.Wstar_Derivation_QG_DS_I_1} and Equation \ref{EQ.PRP.Wstar_Derivation_QG_DS_I_2} thus imply $3)$ and $4)$ by Proposition \ref{PRP.Wstar_Derivation_QG_I}.
\end{proof}

\begin{prp}\label{PRP.Wstar_Derivation_QG_DS_II}
Assume the setting of Proposition \ref{PRP.Wstar_Derivation_QG_DS_I}. Let $f$ be a representing function and $\theta\in [0,1]$. For all $\mu,\eta\in A_{+}^{*}$ and $w\in B^{*}$, we have

\begin{align}\label{EQ.PRP.Wstar_Derivation_QG_DS_II_2}
\mathcal{I}^{f,\theta}(\mu,\eta,w)=\sum_{n=1}^{m}\mathcal{I}_{A,B_{n}}^{f,\theta}\lc\mu,\eta,w\vert_{B_{n}}\rc{}.
\end{align}
\end{prp}
\begin{proof}
We reduce to the finite-dimensional setting by $3)$ in Theorem \ref{THM.QE_AF}. Note $\mathcal{I}^{f,\theta}$ and each $\mathcal{I}_{A,B_{n}}^{f,\theta}$ are l.s.c.~in $w^{*}$-topology by $1)$ in Theorem \ref{THM.QE_AF}. L.s.c.~in $w^{*}$-topology shows Equation \ref{EQ.PRP.Wstar_Derivation_QG_DS_II_2} if it holds for all $\mu,\eta\in A_{+}^{*}$ s.t.~$\sharp\mu,\sharp\eta>0$ in $A$. Equation \ref{EQ.PRP.Wstar_Derivation_QG_DS_II_2} itself follows by construction of noncommutative division operators in this case.
\end{proof}

\begin{dfn}\label{DFN.Wstar_Derivation_QG_DS}
Assume the setting of Proposition \ref{PRP.Wstar_Derivation_QG_DS_I}. We call

\begin{itemize}
\item[1)] $\lc\oplus_{n=1}^{m}\phi_{n},\oplus_{n=1}^{m}\bpsi_{n},\oplus_{n=1}^{m}\gamma_{n}\rc$ their direct sum AF-$C^{*}$-bimodule,

\item[2)] $\nabla^{\oplus}:A_{0}\longrightarrow L^{2}\lc\oplus_{n=1}^{m}B_{n},\oplus_{n=1}^{m}\omega_{n}\rc$ their direct sum quantum gradient,

\item[3)] $\partial_{n}$ the $n$-th partial gradient, $\mathrlap{\phantom{\partial}^{*}}\partial_{n}$ the $n$-th partial adjoint, and $\Delta_{n}:=\mathrlap{\phantom{\partial}^{*}}\partial_{n}\partial_{n}$ the $n$-th Laplacian for all $n\in\lset{}1,\ldots,m\rset$.
\end{itemize}
\end{dfn}

Definition \ref{DFN.Wstar_Derivation_QG_TP} gives tensor product AF-$C^{*}$-bimodules and quantum gradients for the following data. For all $n\in\lset{}1,2\rset$, let $(A_{n},\tau_{n})$ and $(B_{n},\omega_{n})$ be tracial AF-$C^{*}$-algebras with $(\phi_{n},\bpsi_{n},\gamma_{n})$ an AF-$A_{n}$-bimodule structure on $B_{n}$. We determine f.s.n.~trace $\tau_{1}\otimes\tau_{2}$ on $L^{\infty}(A_{1},\tau_{1})\otimes L^{\infty}(A_{2},\tau_{2})$ by setting

\begin{align}\label{EQ.SSEC.NCDS_NCG_QG_4}
\big(\tau_{1}\otimes\tau_{2}\big)\lc{}x\otimes y\rc{}:=\tau_{1}(x)\tau_{2}(y)
\end{align}

\noindent for all $x\in\mathfrak{m}_{\tau_{1}}$ and $y\in\mathfrak{m}_{\tau_{2}}$. Note both $A_{1}$ and $A_{2}$ are nuclear. Get tracial AF-$C^{*}$-algebra $(A_{1}\otimes A_{2},\tau_{1}\otimes\tau_{2})$ in $L^{\infty}(A_{1},\tau_{1})\otimes L^{\infty}(A_{2},\tau_{2})$ generated by $\lset{}A_{1,j}\otimes A_{2,j}\rset_{j\in\mathbb{N}}$.\par
Let $p\in\lset{}2,\infty\rset$. Equation \ref{EQ.SSEC.NCDS_NCG_QG_4} shows 

\begin{align}\label{EQ.SSEC.NCDS_NCG_QG_EXTRA_1}
L^{p}(A_{1}\otimes A_{2},\tau_{1}\otimes\tau_{2})=L^{p}(A_{1},\tau_{1})\otimes L^{p}(A_{2},\tau_{2}). 
\end{align}

\noindent Note the above construction likewise yields tracial AF-$C^{*}$-algebra $(B_{1}\otimes B_{2},\omega_{1}\otimes\omega_{2})$ in $L^{\infty}(B_{1},\omega_{1})\otimes L^{\infty}(B_{2},\omega_{2})$ generated by $\lset{}B_{1,j}\otimes B_{2,j}\rset_{j\in\mathbb{N}}$ s.t.~Equation \ref{EQ.SSEC.NCDS_NCG_QG_EXTRA_1} is

\begin{align}\label{EQ.SSEC.NCDS_NCG_QG_EXTRA_2}
L^{p}(B_{1}\otimes B_{2},\omega_{1}\otimes\omega_{2})=L^{p}(B_{1},\omega_{1})\otimes L^{p}(B_{2},\omega_{2})
\end{align}

\noindent in each case.\par
Note Corollary \ref{COR.Wstar_TP} shows we obtain local $^{*}$-homomorphisms

\begin{align}\label{EQ.SSEC.NCDS_NCG_QG_5}
\phi_{1}\otimes\phi_{2},\bpsi_{1}\otimes\bpsi_{2}:A_{1}\otimes A_{2}\longrightarrow B_{1}\otimes B_{2}
\end{align}

\noindent by restricting tensored $^{*}$-homomorphisms to $A_{1}\otimes A_{2}\subset L^{\infty}(A_{1}\otimes A_{2},\tau_{1}\otimes\tau_{2})$. We use tensor product anti-linear isometric involution 

\begin{align}\label{EQ.SSEC.NCDS_NCG_QG_6}
\gamma_{1}\otimes\gamma_{2}:L^{2}(B_{1}\otimes B_{2},\omega_{1}\otimes\omega_{2})\longrightarrow L^{2}(B_{1}\otimes B_{2},\omega_{1}\otimes\omega_{2}).
\end{align}

\begin{prp}\label{PRP.Wstar_Derivation_QG_TP}
For all $n\in\lset{}1,2\rset$, let $(A_{n},\tau_{n})$ and $(B_{n},\omega_{n})$ be tracial AF-$C^{*}$-algebras with $(\phi_{n},\bpsi_{n},\gamma_{n})$ an AF-$A_{n}$-bimodule structure on $B_{n}$, as well as $\delta_{n}:A_{n,0}\longrightarrow L^{2}(B_{n},\omega_{n})$ a quantum gradient. We have

\begin{itemize}
\item[1)] AF-$A_{1}\otimes A_{2}$-bimodule structure $\lc\phi_{1}\otimes\phi_{2},\bpsi_{1}\otimes\bpsi_{2},\gamma_{1}\otimes\gamma_{2}\rc$ on $B_{1}\otimes B_{2}$,

\item[2)] quantum gradient $\nabla^{\otimes}:A_{1,0}\odot A_{2,0}\longrightarrow L^{2}(B_{1}\otimes B_{2},\omega_{1}\otimes\omega_{2})$ defined by

\begin{align}\label{EQ.PRP.Wstar_Derivation_QG_TP_1}
\nabla^{\otimes}x\otimes y:=\delta_{1} x\otimes\bpsi_{2}(y)+\phi_{1}(x)\otimes\delta_{2}y
\end{align}

\begin{reapply}
\end{reapply}

\noindent for all $x\in A_{1,0}$ and $y\in A_{2,0}$,

\item[3)] $\nabla^{\otimes,*}:=\big(\nabla^{\otimes}\big)^{*}$ with core $\dom\delta_{1}^{*}\odot\dom\delta_{2}^{*}$ and determined by

\begin{align}\label{EQ.PRP.Wstar_Derivation_QG_TP_2}
\nabla^{\otimes,*}u\otimes v=\delta_{1}^{*}u\otimes\bpsi_{2}^{*}(v)+\phi_{1}^{*}(u)\otimes\delta_{2}^{*}v
\end{align}

\begin{reapply}
\end{reapply}

\noindent for all $u\in\dom\delta_{1}^{*}$ and $v\in\dom\delta_{2}^{*}$.
\end{itemize}
\end{prp}
\begin{proof}
Get $1)$ by construction. Using tensor product construction, Equation \ref{EQ.PRP.Wstar_Derivation_QG_TP_1} shows $\nabla^{\otimes}$ is a symmetric $A_{1,0}\odot A_{2,0}$-module derivation, implies Equation \ref{EQ.PRP.Wstar_Derivation_QG_TP_2}, and yields

\begin{align}\label{EQ.PRP.Wstar_Derivation_QG_TP_3}
B_{1,0}\otimes B_{2,0}\subset\dom\delta_{1}^{*}\odot\dom\delta_{2}^{*}\subset\dom\nabla^{\otimes,*}
\end{align}

\noindent by reducing to elementary tensors. Using inclusions in Equation \ref{EQ.PRP.Wstar_Derivation_QG_TP_3}, Equation \ref{EQ.PRP.Wstar_Derivation_QG_TP_1} and Equation \ref{EQ.PRP.Wstar_Derivation_QG_TP_2} imply locality. Proposition \ref{PRP.Wstar_Derivation_QG_I} implies all remaining claims.
\end{proof}

\begin{dfn}\label{DFN.Wstar_Derivation_QG_TP}
Assume the setting of Proposition \ref{PRP.Wstar_Derivation_QG_TP}. We call

\begin{itemize}
\item[1)] $\lc\phi_{1}\otimes\phi_{2},\bpsi_{1}\otimes\bpsi_{2},\gamma_{1}\otimes\gamma_{2}\rc$ their tensor product AF-$C^{*}$-bimodule,

\item[2)] $\nabla^{\otimes}:A_{1,0}\odot A_{2,0}\longrightarrow L^{2}(B_{1}\otimes B_{2},\omega_{1}\otimes\omega_{2})$ their tensor product quantum gradient.
\end{itemize}
\end{dfn}

\begin{rem}\label{REM.Wstar_Derivation_QG_TP}
Tensor product quantum gradients have Laplacians with mixed terms coupling their factors. This follows by construction. It differs from the decomposition given by Equation \ref{EQ.PRP.Wstar_Derivation_QG_DS_I_3} for direct sum quantum gradients.
\end{rem}

Definition \ref{DFN.CWstar_Derivation_Generalised_Discrete} gives generalised discrete derivatives. Example \ref{BSP.QOT_KL_Parametrised} shows these specialise to discrete derivatives and internal quantum gradients. Let $(A,\tau)$ be unital tracial $C^{*}$-algebra in $M$ s.t.~$\tau<\infty$. We have $^{*}$-homomorphisms 

\begin{align}\label{EQ.SSEC.NCDS_NCG_QG_7}
\phi^{\Int}:=\restr{1.075}{\textrm{id}_{A\otimes A}}{A\otimes\langle 1_{A}\rangle_{\mathbb{C}}},\bpsi^{\Int}:=\restr{1.075}{\textrm{id}_{A\otimes A}}{\langle 1_{A}\rangle_{\mathbb{C}}\otimes A}:A\longrightarrow A\otimes A
\end{align}

\noindent given by restriction to unital $C^{*}$-subalgebras $A\cong A\otimes\langle 1_{A}\rangle_{\mathbb{C}}\cong\langle 1_{A}\rangle_{\mathbb{C}}\otimes A$ of $A\otimes A$. We have f.s.n.~trace $\tau\otimes\tau$ on $M\otimes M$ and unital tracial $C^{*}$-algebra $A\otimes A$ in $M\otimes M$. Thus $L^{2}(A\otimes A,\tau\otimes\tau)$ is a symmetric $C^{*}$-bimodule over $A$ equipped with $\lc{}L_{M}\circ\phi,R_{M}\circ\bpsi\rc$-action and $\gamma=\Adj$ as per Example \ref{BSP.AF_Cstar_Bimodule_Canonical} for $A\otimes A$ as anti-linear involution.

\begin{dfn}\label{DFN.CWstar_Derivation_Generalised_Discrete}
Let $(A,\tau)$ be unital tracial $C^{*}$-algebra in $M$ s.t.~$\tau<\infty$. We define symmetric $C^{*}$-bimodule $L^{2}(A\otimes A,\tau\otimes\tau)$ over $A$ by Equation \ref{EQ.SSEC.NCDS_NCG_QG_7}. We define generalised discrete derivative $\delta:A\longrightarrow L^{2}(A\otimes A,\tau\otimes\tau)$ on $A$ by setting

\begin{align}\label{EQ.DFN.CWstar_Derivation_Generalised_Discrete_1}
\delta x:=x\otimes 1_{A}-1_{A}\otimes x
\end{align}

\noindent for all $x\in A$.
\end{dfn}

\begin{prp}\label{PRP.CWstar_Derivation_Generalised_Discrete}
If $(A,\tau)$ is a unital tracial $C^{*}$-algebra in $M$ s.t.~$\tau<\infty$, then $\delta$ as per Definition \ref{DFN.CWstar_Derivation_Generalised_Discrete} is a bounded symmetric $A$-module derivation.
\end{prp}
\begin{proof}
Note $A\otimes A\subset L^{2}(A\otimes A,\tau\otimes\tau)$ since $\tau\otimes\tau<\infty$. On $A\otimes A$, the symmetric $C^{*}$-bimodule action reduces to left-~and right-algebra multiplication in $A\otimes A$ using $^{*}$-homomorphisms given by Equation \ref{EQ.SSEC.NCDS_NCG_QG_7} as in Equation \ref{EQ.SSEC.NCDS_AF_BIM_4}. For all $x,y\in A$, we use $\delta x,\delta y\in A\otimes A$ and Equation \ref{EQ.DFN.CWstar_Derivation_Generalised_Discrete_1} to calculate $\delta xy=\lc{}x\otimes 1_{A}\rc\lc{}y\otimes 1_{A}-1_{A}\otimes y\rc{}+\lc{}x\otimes 1_{A}-1_{A}\otimes x\rc\lc{}1_{A}\otimes y\rc{}=x\lc\delta y\rc{}+\lc\delta x\rc{} y$. Thus $\delta$ satisfies the Leibniz rule. Symmetry follows at once.
\end{proof}

Let $(A,\tau)$ be a strongly unital tracial AF-$C^{*}$-algebra s.t.~$\tau<\infty$. We equip $A$ with its canonical AF-$A$-bimodule structure. Then $(A\otimes A,\tau\otimes\tau)$ has canonical AF-$A\otimes A$-bimodule structure and the $^{*}$-homomorphisms given by Equation \ref{EQ.SSEC.NCDS_NCG_QG_7} are local.

\begin{prp}\label{PRP.Wstar_Derivation_QG_Internal}
Let $(A,\tau)$ be a strongly unital tracial AF-$C^{*}$-algebra s.t.~$\tau<\infty$. For all $\lambda\geq 0$, we have

\begin{itemize}
\item[1)] AF-$A$-bimodule structure $\lc\phi^{\Int},\bpsi^{\Int},\Adj\rc$ on $A\otimes A$,

\item[2)] bounded quantum gradient $\nabla:A_{0}\longrightarrow L^{2}(A\otimes A,\tau\otimes\tau)$ defined by

\begin{align}\label{EQ.PRP.Wstar_Derivation_QG_Internal_1}
\nabla^{\lambda}x:=\sqrt{\frac{\lambda}{2\tau(1_{A})}}\cdot \big(x\otimes 1_{A}-1_{A}\otimes x\big)
\end{align}

\begin{reapply}
\end{reapply}

\noindent for all $x\in A_{0}$,

\item[3)] $\nabla^{\lambda,*}:=\big(\nabla^{\lambda}\big)^{*}$ bounded and determined by

\begin{align}\label{EQ.PRP.Wstar_Derivation_QG_Internal_2}
\nabla^{\lambda,*}y\otimes z=\sqrt{\frac{\lambda}{2\tau(1_{A})}}\cdot \bigg(\lgl 1_{A},z\rgl_{\tau}y-\lgl 1_{A},y\rgl_{\tau}z\bigg)
\end{align}

\begin{reapply}
\end{reapply}

\noindent for all $y,z\in A$,

\item[4)] $\Delta^{\lambda}:=\nabla^{\lambda,*}\nabla^{\lambda}=\lambda\pi_{\ker\tau}^{A}$.
\end{itemize}
\end{prp}
\begin{proof}
We have $1)$ by construction. Set $\lambda:=2\tau(1_{A})$ without loss of generality. We then suppress $\lambda$ in the superscript. We show $2)$. Note $\nabla=\delta\vert_{A_{0}}$. Proposition \ref{PRP.CWstar_Derivation_Generalised_Discrete} shows it is a symmetric $A_{0}$-module derivation. Using Equation \ref{EQ.PRP.Wstar_Derivation_QG_Internal_1}, we directly verify it is a quantum gradient. Equation \ref{EQ.PRP.Wstar_Derivation_QG_Internal_1} further shows boundedness upon closure. Get $2)$.\par
We show $3)$. Boundedness of $\nabla$ implies $\nabla^{*}$ is bounded and determined on elementary tensors. For all $x\in A_{0}$ and $y,z\in A$, Equation \ref{EQ.PRP.Wstar_Derivation_QG_Internal_1} lets us calculate 

\begin{align}\label{EQ.PRP.Wstar_Derivation_QG_Internal_3}
\lgl\nabla x,y\otimes z\rgl_{\tau\otimes\tau}=\lgl x,\lgl 1_{A},z\rgl_{\tau}y\rgl_{\tau}-\lgl x,\lgl 1_{A},y\rgl_{\tau}z\rgl_{\tau}=\lgl x,\lgl 1_{A},z\rgl_{\tau}y-\lgl 1_{A},y\rgl_{\tau}z\rgl_{\tau}.
\end{align}

\noindent Equation \ref{EQ.PRP.Wstar_Derivation_QG_Internal_3} shows Equation \ref{EQ.PRP.Wstar_Derivation_QG_Internal_2}. Get $3)$. We show $4)$. For all $x\in A_{0}$, Equation \ref{EQ.PRP.Wstar_Derivation_QG_Internal_1} and Equation \ref{EQ.PRP.Wstar_Derivation_QG_Internal_2} show $\Delta x=2\tau(1_{A})\pi_{\ker\tau}^{A}(x)$. Get $4)$ by boundedness.
\end{proof}

\begin{dfn}\label{DFN.Wstar_Derivation_QG_Internal}
Let $(A,\tau)$ be a strongly unital tracial AF-$C^{*}$-algebra s.t.~$\tau<\infty$. For all $\lambda\geq 0$, we call

\begin{itemize}
\item[1)] $\lc\phi^{\Int},\bpsi^{\Int},\Adj\rc$ the internal AF-$A$-bimodule structure on $A\otimes A$,

\item[2)] $\nabla^{\lambda}:A_{0}\longrightarrow L^{2}(A\otimes A,\tau\otimes\tau)$ the $\lambda$-internal quantum gradient on $A$.
\end{itemize}
\end{dfn}

%%%%%%%%%%%%%
%%% PART %%%%
%%%%%%%%%%%%%

\subsubsection*{Dynamic quantum gradients}

Definition \ref{DFN.Wstar_Derivation_QG_Dynamic_Generator} gives sufficient conditions to construct quantum gradients by weak differentiation of conjugation groups twisted with self-adjoint involutive local $^{*}$-homomorphisms. These dynamic quantum gradients are either twisted or non-twisted. Generators on Hilbert spaces control weak derivatives as per Equation \ref{EQ.SSEC.NCDS_NCG_QG_8} and Equation \ref{EQ.SSEC.NCDS_NCG_QG_9}. We pull back along canonical left-actions upon weak differentiation in our construction, and twist as per Remark \ref{REM.AF_Cstar_Local_Hom_Self_Adjoint}. We use one-parameter semigroups of bounded operators on Banach spaces \cite{BK.Eng_Nag.2000.Semigroups}.\par
Definition \ref{DFN.Wstar_Derivation_QG_Dynamic} gives dynamic quantum gradients. We give two classes of dynamic quantum gradient. First, we consider trace-preserving local $C^{*}$-dynamical systems in Corollary \ref{COR.Wstar_Derivation_QG_Dynamic_System}. Secondly, we consider intertwining self-adjoint unbounded operators generating suitable conjugation groups in Corollary \ref{COR.Wstar_Derivation_QG_Intertwining_I}. Whereas Corollary \ref{COR.Wstar_Derivation_QG_Dynamic_System} yields only non-twisted examples, Corollary \ref{COR.Wstar_Derivation_QG_Intertwining_I} yields both twisted and non-twisted ones. In Subsection \ref{SSEC.QOT_DT_BSP}, standard constructions using dynamic quantum gradients provide fundamental example classes. Those using tracial AF-$C^{*}$-algebras generating hyperfinite factors of type I and II by $\sigma$-weak closure are of particular interest.

\begin{dfn}\label{DFN.Inner_Semigroup}
Let $H$ be a Hilbert space and $\DII\in\UBII(H)_{h}$. We define conjugation group $\Ad^{\mathcal{D}}:\mathbb{R}\longrightarrow\mathrm{Aut}\lc\BII(H)\rc$ of $\DII$ by setting 

\begin{align}\label{EQ.DFN.Inner_Semigroup_1}
\textrm{Ad}_{t}^{\mathcal{D}}(S):=e^{it\DII}Se^{-it\DII}
\end{align}

\noindent for all $t\in\mathbb{R}$ and $S\in\BII(H)$.
\end{dfn}

Let $H$ be a Hilbert space and $\DII\in\UBII(H)_{h}$. For all $S\in\BII(H)$, the weak derivative

\begin{align}\label{EQ.SSEC.NCDS_NCG_QG_8}
\restr{0.925}{\frac{d}{dt}}{t=0,\w}\textrm{Ad}_{t}^{\mathcal{D}}(S)=\w\textrm{-}\lim_{t\rightarrow 0}\hspace{0.025cm} t^{-1}\lc\textrm{Ad}_{t}^{\mathcal{D}}(S)-S\rc{}
\end{align}

\noindent exists if and only if the following two conditions are satisfied \lc{}cf.~Theorem 3.8 in \cite{ART.Chr.2016.Weakly_Differentiable_Operators}\rc{}. First, $S(u)\in\dom\DII$ for all $u\in\dom\DII$. Secondly, that $\DII S-S\DII\in\UBII(H)$ is bounded and closable. Then $\dom\DII S-S\DII=\dom\DII$ and Equation \ref{EQ.SSEC.NCDS_NCG_QG_8} is 

\begin{align}\label{EQ.SSEC.NCDS_NCG_QG_9}
\restr{0.925}{\frac{d}{dt}}{t=0,\w}\textrm{Ad}_{t}^{\mathcal{D}}(S)=\overline{i\lc\DII S-S\DII\rc{}}\in\BII(H).
\end{align}

\noindent Note $\DII S-S\DII$ is bounded, but not a bounded operator in general. This necessitates closure. For conjugation groups, differentiation in weak and strong operator topologies is equivalent \cite{ART.Chr.2016.Weakly_Differentiable_Operators}. We use strong limits in Equation \ref{EQ.SSEC.NCDS_NCG_QG_8} without loss of generality. If $\DII\in\BII(H)$, then $\restr{0.925}{\frac{d}{dt}}{t=0,\w}\Ad_{t}^{\mathcal{D}}(S)=i\lb\DII,S\rb$ for all $S\in\BII(H)$. In fact, all bounded module derivations on $W^{*}$-algebras are inner \lc{}cf.~Theorem XI.3.5 in \cite{BK.Tak.2003.OpAlg_II}\rc{}. This explains use of unbounded module derivations, resp.~non-canonical AF-$C^{*}$-bimodule structures.

\begin{dfn}\label{DFN.AF_Cstar_Local_Hom_Self_Adjoint}
Let $(A,\tau)$ be a tracial AF-$C^{*}$-algebra. A local $^{*}$-homomorphism \linebreak $\phi:A\longrightarrow A$ is self-adjoint if $\phi\in\BII\lc{}L^{2}(A,\tau)\rc_{h}$ as per Definition \ref{DFN.AF_Cstar_Local_Hom_L2}.
\end{dfn}

\begin{rem}\label{REM.AF_Cstar_Local_Hom_Self_Adjoint}
Let $(A,\tau)$ be a tracial AF-$C^{*}$-algebra and $\phi:A\longrightarrow A$ a self-adjoint involutive local $^{*}$-homomorphism. Thus its $L^{2}$-extension is self-adjoint by hypothesis and involutive since $A_{0}\subset L^{2}(A,\tau)$ is $\|.\|_{\tau}$-dense, hence $\phi\in\UII\lc\BII\lc{}L^{2}(A,\tau)\rc\rc$ and $\phi^{\dagger}$ as per Definition \ref{DFN.Unbd_Twist}. For all $T\in\UBII\lc{}L^{2}(A,\tau)\rc$, get $\phi^{\dagger}(T)=\phi T\phi$. For all $x\in L^{0}(A,\tau)$, we have $\phi^{\dagger}(L_{x})=L_{\phi(x)}$ using canonical AF-$C^{*}$-bimodule action on $A$. We obtain

\begin{align}\label{EQ.REM.AF_Cstar_Local_Hom_Self_Adjoint_1}
\phi^{\dagger}\circ L=L\circ\phi.    
\end{align}

\noindent If $T\in L(A)$, then $\phi^{\dagger}(T)\in L(A)$. If $T\in \phi L(A)$, then $T\phi\in L(A)$.
\end{rem}

Let $\AII$ be a $^{*}$-algebra. For all $x,y\in\AII$, we use their commutator $[x,y]=xy-yx$ and anti-commutator $\{x,y\}=xy+yx$ throughout our discussion. Definition \ref{DFN.Wstar_Derivation_QG_Dynamic_Generator} gives twisted commutators and anti-commutators in an unbounded case.\par

%NEWPAGE
%NEWPAGE
%NEWPAGE

\newpage

%NEWPAGE
%NEWPAGE
%NEWPAGE

\begin{dfn}\label{DFN.Wstar_Derivation_QG_Dynamic_Generator}
Let $(A,\tau)$ be a tracial AF-$C^{*}$-algebra and $\phi:A\longrightarrow A$ a self-adjoint involutive local $^{*}$-homomorphism. Note $\phi=\phi^{2}\in\BII(L^{2}(A,\tau))$. Let $\DII\in\UBII\lc{}L^{2}(A,\tau)\rc_{h}$.

\begin{itemize}
\item[1)] We call $\lc\DII,\phi\rc$ generator of a dynamic quantum gradient if

\begin{itemize}
\item[1.1)] for all $j\in\mathbb{N}$ and $x\in A_{j}$, we have

\begin{itemize}
\item[1.1.1)] $\restr{0.925}{\frac{d}{dt}}{t=0,\w}\Ad_{t}^{\mathcal{D}}(L_{x})=\overline{i\lc\DII L_{x}-L_{x}\DII\rc{}}\in\phi L(A_{j})$, \phantom{\vstretch{0.875}{\bigg)}}

\item[1.1.2)] $\restr{0.925}{\frac{d}{dt}}{t=0,\w}\Ad_{-t}^{\mathcal{D}}\lc L_{x}\phi\rc=\overline{i\lc L_{x}\phi\DII-\DII L_{x}\phi\rc}\in L(A_{j})$, \phantom{\vstretch{0.875}{\bigg)}}
\end{itemize}

\begin{reapply}
\end{reapply}

\item[1.2)] for all $x,y\in A_{0}$, we have

\begin{align}\label{EQ.DFN.Wstar_Derivation_QG_Dynamic_Generator_1}
\bigg\langle x,L^{-1}\lc\restr{0.925}{\frac{d}{dt}}{t=0,\w}\textrm{Ad}_{t}^{\mathcal{D}}(L_{y})\phi\rc\bigg\rangle_{\tau}=\bigg\langle L^{-1}\lc\restr{0.925}{\frac{d}{dt}}{t=0,\w}\textrm{Ad}_{-t}^{\mathcal{D}}\lc L_{x}\phi\rc\rc{},y\bigg\rangle_{\tau}.
\end{align}

\begin{reapply}
\end{reapply}

\end{itemize}

\begin{reapply}
\end{reapply}

\item[2)] Let $\lc\DII,\phi\rc$ be generator of a dynamic quantum gradient. We define $\phi$-twisted commutator $\lb\DII,\blank\rb_{A}^{\phi}:A_{0}\longrightarrow L^{2}(A,\tau)$ and anti-commutator $\lset\DII,\blank\rset_{A}^{\phi}:A_{0}\longrightarrow L^{2}(A,\tau)$ by setting

\begin{align}\label{EQ.DFN.Wstar_Derivation_QG_Dynamic_Generator_2}
[\DII,x]_{A}^{\phi}:=L^{-1}\lc\lc\overline{\DII L_{\phi(x)}-L_{\phi(x)}\DII}\rc\phi\rc{},\ \{\DII,x\}_{A}^{\phi}:=L^{-1}\lc\phi\lc\overline{L_{x}\phi\DII-\DII L_{x}\phi}\rc\phi\rc{}
\end{align}

\begin{reapply}
\end{reapply}

\noindent for all $x\in A_{0}$.
\end{itemize}
\end{dfn}

\begin{rem}\label{REM.Wstar_Derivation_QG_Dynamic_Generator}
If $\phi=\id_{A}$, then Equation \ref{EQ.DFN.Wstar_Derivation_QG_Dynamic_Generator_2} reduces to commutators and their negatives. Using $2)$ in Lemma \ref{LEM.Wstar_Derivation_QG_Intertwining}, we see non-trivial $\phi$ as per Example \ref{BSP.QOT_Type_II_Twisted} yield anti-commutators up to twist generalising \cite{ART.Car_Maa.2014.Quantum_OT_I}.
\end{rem}

Let $(A,\tau)$ be a tracial AF-$C^{*}$-algebra. Let $\phi:A\longrightarrow A$ be a self-adjoint involutive local $^{*}$-homomorphism. We define anti-linear isometric involution $\gamma^{\phi}:L^{2}(A,\tau)\longrightarrow L^{2}(A,\tau)$ by setting 

\begin{align}\label{EQ.SSEC.NCDS_NCG_QG_10}
\gamma^{\phi}(u):=\phi(u^{*})
\end{align}

\noindent for all $u\in L^{2}(A,\tau)$.

\begin{prp}\label{PRP.Wstar_Derivation_QG_Dynamic}
Let $(A,\tau)$ be a tracial AF-$C^{*}$-algebra. For all generators $\lc\DII,\phi\rc$ of dynamic quantum gradients, we have

\begin{itemize}
\item[1)] AF-$A$-bimodule structure $\lc\phi,\id_{A},\gamma^{\phi}\rc$ on $A$,

\item[2)] quantum gradient $\nabla^{\DII,\phi}:A_{0}\longrightarrow L^{2}(A,\tau)$ defined by

\begin{align}\label{EQ.PRP.Wstar_Derivation_QG_Dynamic_1}
\nabla^{\DII,\phi}x:=L^{-1}\lc\restr{0.925}{\frac{d}{dt}}{t=0,\w}\textrm{Ad}_{t}^{\mathcal{D}}\lc{}L_{\phi(x)}\rc\phi\rc{}=i[\DII,x]_{A}^{\phi}
\end{align}

\begin{reapply}
\end{reapply}

\noindent for all $x\in A_{0}$,

\item[3)] $\nabla^{\DII,\phi,*}:=\lc\nabla^{\DII,\phi}\rc^{*}$ with core $A_{0}$ and determined by

\begin{align}\label{EQ.PRP.Wstar_Derivation_QG_Dynamic_2}
\nabla^{\DII,\phi,*}x=L^{-1}\lc\phi\lc\restr{0.925}{\frac{d}{dt}}{t=0,\w}\textrm{Ad}_{-t}^{\mathcal{D}}\lc L_{x}\phi\rc\rc\phi\rc{}=i\{\DII,x\}_{A}^{\phi}
\end{align}

\begin{reapply}
\end{reapply}

\noindent for all $x\in A_{0}$,

\item[4)] $\Delta^{\DII,\phi}:=\nabla^{\DII,\phi,*}\nabla^{\DII,\phi}$ with core $A_{0}$ and determined by

\begin{align}\label{EQ.PRP.Wstar_Derivation_QG_Dynamic_3}
\Delta^{\DII,\phi}x=-\big\{\DII,[\DII,x]_{A}^{\phi}\big\}_{A}^{\phi}
\end{align}

\begin{reapply}
\end{reapply}

\noindent for all $x\in A_{0}$.
\end{itemize}
\end{prp}
\begin{proof}
We have $1)$ by construction. For both Equation \ref{EQ.PRP.Wstar_Derivation_QG_Dynamic_1} and Equation \ref{EQ.PRP.Wstar_Derivation_QG_Dynamic_2}, we see $1)$ in Definition \ref{DFN.Wstar_Derivation_QG_Dynamic_Generator} shows existence of weak derivatives. Equation \ref{EQ.DFN.Wstar_Derivation_QG_Dynamic_Generator_2} implies the second identity in Equation \ref{EQ.PRP.Wstar_Derivation_QG_Dynamic_1} and therefore Equation \ref{EQ.PRP.Wstar_Derivation_QG_Dynamic_1} itself.\par
Note weak differentiation in Equation \ref{EQ.PRP.Wstar_Derivation_QG_Dynamic_1} is strong differentiation \cite{ART.Chr.2016.Weakly_Differentiable_Operators}. We know all weak, resp.~strong derivatives in use exist. Using the latter and sequential strong continuity of multiplication, we directly verify the Leibniz rule for $\nabla^{\DII,\phi}$. Equation \ref{EQ.PRP.Wstar_Derivation_QG_Dynamic_1} implies symmetry. Thus $\nabla^{\DII,\phi}$ is a symmetric $A_{0}$-derivation. Using self-adjointness of $\phi$ for the second identity below, Equation \ref{EQ.REM.AF_Cstar_Local_Hom_Self_Adjoint_1} and Equation \ref{EQ.DFN.Wstar_Derivation_QG_Dynamic_Generator_1} let us calculate

\begin{align*}
\lgl x,\nabla^{\DII,\phi}y\rgl_{\tau} & = \bigg\langle L^{-1}\lc\restr{0.925}{\frac{d}{dt}}{t=0,\w}\textrm{Ad}_{-t}^{\mathcal{D}}\lc L_{x}\phi\rc\rc{},\phi(y)\bigg\rangle_{\tau} \phantom{\Bigg)} \\
& = \bigg\langle \phi\lc{}L^{-1}\lc\restr{0.925}{\frac{d}{dt}}{t=0,\w}\textrm{Ad}_{-t}^{\mathcal{D}}\lc L_{x}\phi\rc\rc\rc{},y\bigg\rangle_{\tau} \phantom{\Bigg)} \\
& = \bigg\langle L^{-1}\lc\phi\lc\restr{0.925}{\frac{d}{dt}}{t=0,\w}\textrm{Ad}_{-t}^{\mathcal{D}}\lc L_{x}\phi\rc\rc\phi\rc{},y\bigg\rangle_{\tau} \phantom{\Bigg)}
\end{align*}

\noindent for all $t\in\mathbb{R}$ and $x,y\in A_{0}$. Since $A_{0}\subset L^{2}(A,\tau)$ is $\|.\|_{\tau}$-dense, we see the above calculation implies Equation \ref{EQ.PRP.Wstar_Derivation_QG_Dynamic_2}. Hence $A_{0}$ lies in domain of the adjoint. Locality follows by $1.1)$ in Definition \ref{DFN.Wstar_Derivation_QG_Dynamic_Generator}. We have $2)$ and $3)$. They imply $4)$ and therefore Equation \ref{EQ.PRP.Wstar_Derivation_QG_Dynamic_3}.
\end{proof}

\begin{dfn}\label{DFN.Wstar_Derivation_QG_Dynamic}
Let $(A,\tau)$ be a tracial AF-$C^{*}$-algebra. For all generators $\lc\DII,\phi\rc$ of dynamic quantum gradients, we call

\begin{itemize}
\item[1)] $\lc\phi,\id_{A},\gamma^{\phi}\rc$ the $\phi$-intertwined AF-$A$-bimodule structure on $A$,

\item[2)] $\nabla^{\DII,\phi}$ the dynamic quantum gradient generated by $\lc\DII,\phi\rc$,

\item[3)] $\nabla^{\DII,\phi}$ non-twisted if $\phi=\id_{A}$, else twisted.
\end{itemize}
\end{dfn}

\begin{ntn}\label{NTN.Wstar_Derivation_QG_Dynamic}
We suppress $\phi$ in Definition \ref{DFN.Wstar_Derivation_QG_Dynamic_Generator} and Definition \ref{DFN.Wstar_Derivation_QG_Dynamic}, as well as all objects in Proposition \ref{PRP.Wstar_Derivation_QG_Dynamic}, if $\phi=\id_{A}$ or $\mathcal{D}=\mathcal{D}_{\phi}$ as per $2)$ in Definition \ref{DFN.Wstar_Derivation_QG_Intertwining}.
\end{ntn}

%NEWPAGE
%NEWPAGE
%NEWPAGE

\pagebreak

%NEWPAGE
%NEWPAGE
%NEWPAGE

Definition \ref{DFN.Wstar_Derivation_QG_Dynamic_System} gives trace-preserving local $C^{*}$-dynamical systems. Lemma \ref{LEM.Wstar_Derivation_QG_Dynamic_System} yields canonical extensions to conjugation groups. Corollary \ref{COR.Wstar_Derivation_QG_Dynamic_System}, by considering such extensions, gives non-twisted dynamic quantum gradients by norm differentiation of trace-preserving local $C^{*}$-dynamical systems. Note Remark \ref{REM.QOT_Type_II_1}.

\begin{dfn}\label{DFN.Wstar_Derivation_QG_Dynamic_System}
Let $I\in\lset\mathbb{R},[0,\infty)\rset$.

\begin{itemize}
\item[1)] Let $V$ be a Banach space. We say that a semigroup $G:I\longrightarrow\BII(V)$ is strongly continuous if $x=\|.\|_{V}$-$\lim_{t\rightarrow 0}G_{t}(v)$ for all $v\in V$.

\item[2)] Let $A$ be a $C^{*}$-algebra. Let $\mathrm{Aut}(A)\subset\BII(A)$ be the automorphism group of $A$ given by all $^{*}$-isomorphisms. A $C^{*}$-dynamical system $\lc{}A,\mathbb{R},\alpha\rc$ is a strongly continuous group homomorphism $\alpha:\mathbb{R}\longrightarrow\mathrm{Aut}(A)$.

\item[3)] Let $(A,\tau)$ be a tracial AF-$C^{*}$-algebra. We call a $C^{*}$-dynamical system $\lc{}A,\mathbb{R},\alpha\rc$

\begin{itemize}
\item[3.1)] $\tau$-preserving if $\alpha_{t}$ is $\tau$-preserving for all $t\in\mathbb{R}$,

\item[3.2)] local if $\alpha_{t}(A_{j})\subset A_{j}$ for all $t\in\mathbb{R}$ and $j\in\mathbb{N}$.
\end{itemize}

\begin{reapply}
\end{reapply}

\end{itemize}
\end{dfn}

\begin{lem}\label{LEM.Wstar_Derivation_QG_Dynamic_System}
For all $\tau$-preserving local $C^{*}$-dynamical systems $\lc{}A,\mathbb{R},\alpha\rc$, there exists unique $\mathcal{D}_{\alpha}\in\UBII\lc{}L^{2}(A,\tau)\rc_{h}$ s.t.~$L\circ\alpha_{t}=\Ad_{t}^{\mathcal{D}_{\alpha}}\circ L$ for all $t\in\mathbb{R}$.
\end{lem}
\begin{proof}
Let $\lc{}A,\mathbb{R},\alpha\rc$ be a $\tau$-preserving local $C^{*}$-dynamical system. We extend to strongly continuous unitary group on $L^{2}(A,\tau)$ s.t.~Stone's theorem implies our claim. Let $t\in\mathbb{R}$. For all $u,v\in A_{0}$, get $\alpha_{t}(u),\alpha_{t}(v)\in L^{2}(A,\tau)$ by locality, as well as 

\begin{align}\label{EQ.LEM.Wstar_Derivation_QG_Dynamic_System_1}
\lgl \alpha_{t}(u),\alpha_{t}(v)\rgl_{\tau}^{2}=\tau\lc\alpha_{t}(u)^{*}\alpha_{t}(v)\rc{}=\tau\lc\alpha_{t}\lc{}u^{*}v\rc\rc{}=\lgl u,v\rgl_{\tau}^{2}    
\end{align}

\noindent by the $^{*}$-homomorphism property and $\tau$-preservation. Using $A_{0}\subset L^{2}(A,\tau)$ $\|.\|_{\tau}$-dense and Equation \ref{EQ.LEM.Wstar_Derivation_QG_Dynamic_System_1}, we extend to $\alpha_{t}\in\UII\lc\BII\lc{}L^{2}(A,\tau)\rc\rc$ here. We have unitary group $\alpha:\mathbb{R}\longrightarrow\UII\lc\BII\lc{}L^{2}(A,\tau)\rc\rc$. We show its strong continuity. For all $j\in\mathbb{N}$, set $\alpha_{t}^{j}:=\alpha_{t}\vert_{A_{j}}$ for all $t\in\mathbb{R}$. Locality shows we have strongly continuous group $\alpha^{j}:\mathbb{R}\longrightarrow\textrm{Aut}(A_{j})$ w.r.t.~$\|.\|_{A}$ in each case. Finite-dimensionality further implies strong continuity w.r.t.~$\|.\|_{\tau}$.\par
For all $t\in\mathbb{R}$, Equation \ref{EQ.LEM.Wstar_Derivation_QG_Dynamic_System_1} shows $\alpha_{t}\in \UII\lc\BII\lc{}L^{2}(A,\tau)\rc\rc$ is an isometry. Using the latter get uniform bounds, note $3)$ in Proposition \ref{PRP.AF_Cstar_Trace_III} implies $\alpha:\mathbb{R}\longrightarrow\UII\lc\BII\lc{}L^{2}(A,\tau)\rc\rc$ is strongly continuous since $\alpha^{j}$ is for all $j\in\mathbb{N}$. Stone's theorem yields unique generator $\mathcal{D}_{\alpha}\in\UBII\lc{}L^{2}(A,\tau)\rc_{h}$ s.t.~

\begin{align}\label{EQ.LEM.Wstar_Derivation_QG_Dynamic_System_2}
\alpha_{t}=e^{it\mathcal{D}_{\alpha}}
\end{align}

\noindent for all $t\in\mathbb{R}$ \lc{}cf.~Theorem 5.6.36 in \cite{BK.Kad_Rin.1997.OpAlg_I}\rc{}. For all $t\in\mathbb{R}$ and $x\in A$, get $L_{\alpha_{t}(x)}=\alpha_{t}L_{x}\alpha_{t}^{*}$ by the $^{*}$-homomorphism property. Using $A_{0}\subset L^{2}(A,\tau)$ $\|.\|_{\tau}$-dense, Equation \ref{EQ.LEM.Wstar_Derivation_QG_Dynamic_System_2} then implies our claim. Altogether, our proof is extension of invariant $C^{*}$-dynamical systems in our special case \lc{}cf.~Proposition 7.4.12 \cite{BK.Ped.2018.Cstar_Algebras}\rc{}.
\end{proof}

\begin{cor}\label{COR.Wstar_Derivation_QG_Dynamic_System}
Let $(A,\tau)$ be a tracial AF-$C^{*}$-algebra. For all $\tau$-preserving local\linebreak $C^{*}$-dynamical systems $\lc{}A,\mathbb{R},\alpha\rc$, $\lc\mathcal{D}_{\alpha},\id_{A}\rc$ is a generator of dynamic quantum gradient and we have

\begin{itemize}
\item[1)] quantum gradient $\nabla^{\mathcal{D}_{\alpha}}:A_{0}\longrightarrow L^{2}(A,\tau)$ given by

\begin{align}\label{EQ.COR.Wstar_Derivation_QG_Dynamic_System_1}
\nabla^{\mathcal{D}_{\alpha}}x=i[\mathcal{D}_{\alpha},x]_{A}=iL^{-1}\lc\overline{\mathcal{D}_{\alpha}L_{x}-L_{x}\mathcal{D}_{\alpha}}\rc{}
\end{align}

\begin{reapply}
\end{reapply}

\noindent for all $x\in A_{0}$,

\item[2)] $\nabla^{\mathcal{D}_{\alpha},*}=\lc\nabla^{\mathcal{D}_{\alpha}}\rc^{*}$ with core $A_{0}$ and determined by

\begin{align}\label{EQ.COR.Wstar_Derivation_QG_Dynamic_System_2}
\nabla^{\mathcal{D}_{\alpha},*}x=-\nabla^{\mathcal{D}_{\alpha}}x=-i[\mathcal{D}_{\alpha},x]_{A}
\end{align}

\begin{reapply}
\end{reapply}

\noindent for all $x\in A_{0}$,

\item[3)] $\Delta^{\mathcal{D}_{\alpha}}=\nabla^{\mathcal{D}_{\alpha},*}\nabla^{\mathcal{D}_{\alpha}}$ with core $A_{0}$ and determined by

\begin{align}\label{EQ.COR.Wstar_Derivation_QG_Dynamic_System_3}
\Delta^{\mathcal{D}_{\alpha}}x=-\lc\nabla^{\mathcal{D}_{\alpha}}\rc^{2}(x)=[\mathcal{D}_{\alpha},[\mathcal{D}_{\alpha},x]_{A}]_{A}
\end{align}

\begin{reapply}
\end{reapply}

\noindent for all $x\in A_{0}$.
\end{itemize}
\end{cor}
\begin{proof}
Let $j\in\mathbb{N}$ and $x\in A_{j}$. Note we use locality to define strongly continuous group $\alpha^{j}:\mathbb{R}\longrightarrow\textrm{Aut}(A_{j})$ in the proof of Lemma \ref{LEM.Wstar_Derivation_QG_Dynamic_System}. It is local and $\tau$-preserving since we have finite tracial AF-$C^{*}$-algebra $(A_{j},\tau)$ as per Example \ref{BSP.AF_Cstar_Trace_Fin}. Applying Lemma \ref{LEM.Wstar_Derivation_QG_Dynamic_System} to $(A_{j},\tau)$ and $\alpha^{j}$ yields $\mathcal{D}_{j}\in\BII(A_{j})_{h}$ s.t.~

\begin{align}\label{EQ.COR.Wstar_Derivation_QG_Dynamic_System_4}
L_{\alpha_{t}^{j}(x),A_{j}}=\textrm{Ad}_{t}^{\mathcal{D}_{j}}\big(L_{x,A_{j}}\big)
\end{align}

\noindent for all $t\in\mathbb{R}$. The conjugation group $\Ad^{\mathcal{D}_{j}}:\mathbb{R}\longrightarrow\textrm{Aut}\lc\BII(A_{j})\rc$ is norm differentiable at zero for all $S\in\BII(A_{j})$. Thus locality and Equation \ref{EQ.COR.Wstar_Derivation_QG_Dynamic_System_4} imply

\begin{align}\label{EQ.COR.Wstar_Derivation_QG_Dynamic_System_5}
\restr{0.925}{\frac{d}{dt}}{t=0,\|.\|_{A}}\alpha_{t}(x)=\restr{0.925}{\frac{d}{dt}}{t=0,\|.\|_{A}}\alpha_{t}^{j}(x)=iL_{A_{j}}^{-1}\big(\mathcal{D}_{j}L_{x,A_{j}}-L_{x,A_{j}}\mathcal{D}_{j}\big)\in A_{j}.
\end{align}

\noindent Weak, strong and norm differentiation coincide in the finite-dimensional setting. Using normality of canonical left-actions, Lemma \ref{LEM.Wstar_Derivation_QG_Dynamic_System} and Equation \ref{EQ.COR.Wstar_Derivation_QG_Dynamic_System_5} show

\begin{align}\label{EQ.COR.Wstar_Derivation_QG_Dynamic_System_6}
\restr{0.925}{\frac{d}{dt}}{t=0,\w}\textrm{Ad}_{t}^{\mathcal{D}_{\alpha}}(L_{x})=L\lc\restr{0.925}{\frac{d}{dt}}{t=0,\|.\|_{A}}\alpha_{t}(x)\rc\in L(A_{j}).
\end{align}

Since $\alpha_{t}^{*}=\alpha_{-t}$ for all $t\in\mathbb{R}$, Lemma \ref{LEM.Wstar_Derivation_QG_Dynamic_System} and Equation \ref{EQ.COR.Wstar_Derivation_QG_Dynamic_System_6} imply

\begin{align}\label{EQ.COR.Wstar_Derivation_QG_Dynamic_System_7}
\restr{0.925}{\frac{d}{dt}}{t=0,\w}\textrm{Ad}_{-t}^{\mathcal{D}_{\alpha}}(L_{x})=-L\lc\restr{0.925}{\frac{d}{dt}}{t=0,\|.\|_{A}}\alpha_{t}(x)\rc\in L(A_{j}).
\end{align}

Equation \ref{EQ.COR.Wstar_Derivation_QG_Dynamic_System_6} and Equation \ref{EQ.COR.Wstar_Derivation_QG_Dynamic_System_7} imply $1.1)$ in Definition \ref{DFN.Wstar_Derivation_QG_Dynamic_Generator} at once. Using $\alpha_{t}^{*}=\alpha_{-t}$ for all $t\in\mathbb{R}$ and Lemma \ref{LEM.Wstar_Derivation_QG_Dynamic_System}, note Equation \ref{EQ.COR.Wstar_Derivation_QG_Dynamic_System_6} and Equation \ref{EQ.COR.Wstar_Derivation_QG_Dynamic_System_7} show Equation \ref{EQ.DFN.Wstar_Derivation_QG_Dynamic_Generator_1} is given by

\begin{align}\label{EQ.COR.Wstar_Derivation_QG_Dynamic_System_8}
\lgl x,\restr{0.925}{\frac{d}{dt}}{t=0,\|.\|_{A}}\alpha_{-t}(y)\rgl_{\tau}=\lgl\restr{0.925}{\frac{d}{dt}}{t=0,\|.\|_{A}}\alpha_{t}(x),y\rgl_{\tau}
\end{align}

\noindent for all $x,y\in A_{0}$. Equation \ref{EQ.COR.Wstar_Derivation_QG_Dynamic_System_8} shows $1.2)$ in Definition \ref{DFN.Wstar_Derivation_QG_Dynamic_Generator}. We therefore have $1)$ in Definition \ref{DFN.Wstar_Derivation_QG_Dynamic_Generator}, i.e.~$\lc\mathcal{D}_{\alpha},\id_{A}\rc$ is a generator of dynamic quantum gradient. Apply Proposition \ref{PRP.Wstar_Derivation_QG_Dynamic} to $\lc\mathcal{D}_{\alpha},\id_{A}\rc$. Equation \ref{EQ.PRP.Wstar_Derivation_QG_Dynamic_1} shows Equation \ref{EQ.COR.Wstar_Derivation_QG_Dynamic_System_1}. Equation \ref{EQ.COR.Wstar_Derivation_QG_Dynamic_System_1} and Equation \ref{EQ.COR.Wstar_Derivation_QG_Dynamic_System_8} show Equation \ref{EQ.COR.Wstar_Derivation_QG_Dynamic_System_2} and Equation \ref{EQ.COR.Wstar_Derivation_QG_Dynamic_System_3}. Get $1)$ to $3)$.
\end{proof}

Following identities in Lemma \ref{LEM.Wstar_Derivation_QG_Intertwining}, Corollary \ref{COR.Wstar_Derivation_QG_Intertwining_I} gives twisted and non-twisted dynamic quantum gradients by using intertwining self-adjoint unbounded operators as generators of twisted conjugation groups.\par
Definition \ref{DFN.Local_Operator} gives necessary local and strongly local properties underlying both twisted and non-twisted dynamic quantum gradients. Proposition \ref{PRP.Local_Operator} collects several implied properties, in particular splitting of induced semigroups as per Equation \ref{EQ.PRP.Local_Operator_1} applicable to heat semigroups of their quantum Laplacians, used in our discussion.

\begin{dfn}\label{DFN.Local_Operator}
Let $(A,\tau)$ be a tracial AF-$C^{*}$-algebra and $T\in\UBII\lc{}L^{2}(A,\tau)\rc_{h}$.

\begin{itemize}
\item[1)] We say that $T$ is local if $A_{0}\subset\dom T$ and $T(A_{j})\subset A_{j}$ for all $j\in\mathbb{N}$. We say that $T$ is strongly local if for all $j\in\mathbb{N}$ and $x\in A_{j}$, we have

\begin{align}\label{EQ.DFN.Local_Operator_1}
T\pi_{j}^{A}L_{x}\in L(A_{j}),\ T\pi_{j}^{\perp}L_{x}=0.
\end{align}

\noindent 

\item[2)] Let $T$ be local. For all $j\in\mathbb{N}$, set $T_{j}:=\comAj T$ and $T_{j}^{\perp}:=\comAjperp T$.
\end{itemize}
\end{dfn}

\begin{rem}\label{REM.Local_Operator}
Let $(A,\tau)$ be a tracial AF-$C^{*}$-algebra and $T\in\UBII\lc{}L^{2}(A,\tau)\rc_{h}$ strongly local. For all $j\in\mathbb{N}$ and $u\in A_{j}$, $T\pi_{j}^{A}L_{u}\in L(A_{j})$ implies $u\in\dom T$ and

\begin{align}\label{EQ.REM.Local_Operator_1}
T(u)=T\big(\pi_{j}^{A}(u)\big)=T\big(\pi_{j}^{A}(u1_{A_{j}})\big)=\big(T\pi_{j}^{A}L_{u}\big)(1_{A_{j}})\in A_{j}.
\end{align}

\noindent Equation \ref{EQ.REM.Local_Operator_1} shows $T$ is local. Therefore, strongly local implies local.
\end{rem}

%NEWPAGE
%NEWPAGE
%NEWPAGE

\pagebreak

%NEWPAGE
%NEWPAGE
%NEWPAGE

\begin{prp}\label{PRP.Local_Operator}
Let $(A,\tau)$ be a tracial AF-$C^{*}$-algebra and $T\in\UBII\lc{}L^{2}(A,\tau)\rc_{h}$.

\begin{itemize}
\item[1)] $T$ is local if and only if $T:\lc\dom T,\|.\|_{T}\rc\longrightarrow L^{2}(A,\tau)$ has orthonormal eigenbasis $\{e_{n}\}_{n\in\mathbb{N}}\subset A_{0}$ s.t.~it is furthermore orthonormal eigenbasis of $\pi_{j}^{A}$ for all $j\in\mathbb{N}$.

\item[2)] Let $T\in\UBII\lc{}L^{2}(A,\tau)\rc_{h}$ be local. 

\begin{itemize}
\item[2.1)] $T$ has core $A_{0}$. For all $j\in\mathbb{N}$, get $T\in\UBII_{A_{j}}\lc{}L^{2}(A,\tau)\rc$. \phantom{\big)}

\item[2.2)] For all $t\in\mathbb{R}$ and $j\in\mathbb{N}$, we have \phantom{\big)}

\begin{align}\label{EQ.PRP.Local_Operator_1}
e^{itT}=e^{itT_{j}}\oplus e^{itT_{j}^{\perp}}
\end{align}

\begin{reapply}
\end{reapply}

\noindent w.r.t.~$\BII(A_{j})\oplus\BII(A_{j}^{\perp})$.
\end{itemize}

\begin{reapply}
\end{reapply}

\item[3)] Let $T\in\UBII\lc{}L^{2}(A,\tau)\rc_{h}$ be strongly local. For all $j\in\mathbb{N}$, $x\in A_{j}$ and $t\in\mathbb{R}$, we have

\begin{itemize}
\item[3.1)] $TL_{x}=T_{j}L_{x}\in L(A_{j})$ and $L_{x}T=L_{x}T_{j}\in L(A_{j})$, \phantom{\big)}

\item[3.2)] $e^{itT}L_{x}=e^{itT_{j}}L_{x}$ and $L_{x}e^{itT}=L_{x}e^{itT_{j}}$. \phantom{\big)}
\end{itemize}

\begin{reapply}
\end{reapply}

\end{itemize}
\end{prp}
\begin{proof}
We directly verify $1)$. Let $T$ be local. Then $1)$ implies $2.1)$. Using reducibility as per $2.1)$, get $2.2)$ by $2)$ in Corollary \ref{COR.Compression_Preservation_I}. Get $2)$. For all $j\in\mathbb{N}$ and $x\in A_{j}$, $\lb\pi_{j}^{A},L_{x}\rb{}=0$ by $1)$ in Proposition \ref{PRP.AF_Cstar_Bimodule_L2Inf_SR}\rc{}. Let $T$ be strongly local. $T$ is local by Remark \ref{REM.Local_Operator}. We see Equation \ref{EQ.DFN.Local_Operator_1} implies $3.1)$ by $1.3)$ in Proposition \ref{PRP.Reducible} since we have reducibility. Moreover, Equation \ref{EQ.DFN.Local_Operator_1} and Equation \ref{EQ.PRP.Local_Operator_1} show $3.2)$. Get $3)$.
\end{proof}

Strong locality shows Equation \ref{EQ.DFN.Wstar_Derivation_QG_Intertwining_1} is well-defined by $3)$ in Proposition \ref{PRP.Local_Operator}. We use Example \ref{BSP.Wstar_Derivation_QG_Intertwining} for sets of Clifford generators as per Definition \ref{DFN.Wstar_Derivation_QG_Intertwining_Clifford}.

\begin{dfn}\label{DFN.Wstar_Derivation_QG_Intertwining}
Let $(A,\tau)$ be a tracial AF-$C^{*}$-algebra and $\phi:A\longrightarrow A$ a self-adjoint involutive local $^{*}$-homomorphism. Let $\mathcal{D}\in\UBII\lc{}L^{2}(A,\tau)\rc_{h}$.

\begin{itemize}
\item[1)] We say that $\mathcal{D}$ is $\phi$-intertwining if 

\begin{itemize}
\item[1.1)] $\mathcal{D}$ is strongly local,

\item[1.2)] $\phi\lc\dom D\rc\subset\dom\mathcal{D}$, $\mathcal{D}\phi\neq 0$, and $\mathcal{D}\phi=\pm\phi \mathcal{D}$,

\item[1.3)] for all $j\in\mathbb{N}$ and $x,y\in A_{j}$, we have

\begin{align}\label{EQ.DFN.Wstar_Derivation_QG_Intertwining_1}
\lgl x,L^{-1}\lc\mathcal{D}_{j}L_{\phi(y)}-L_{y}\mathcal{D}_{j}\rc\rgl_{\tau}=\lgl L^{-1}\lc\sgn\lc\mathcal{D}\rc{}L_{x}\mathcal{D}_{j}-\mathcal{D}_{j}L_{\phi(x)}\rc{},y\rgl_{\tau}\phantom{\bigg)}.
\end{align}

\begin{reapply}
\end{reapply}

\end{itemize}

\begin{reapply}
\end{reapply}

\item[2)] Let $\mathcal{D}$ be $\phi$-intertwining. Let $\sgn\lc\mathcal{D}\rc\in\lset\hspace{-0.025cm} \pm 1\rset$ s.t.~$\mathcal{D}\phi=\sgn\lc\mathcal{D}\rc\phi\mathcal{D}$ be its sign. Its sign delta is $\delta\lc\DII\rc{}:=\delta_{-1}\lc\sgn\lc\DII\rc\rc\in\lset{}0,1\rset$. Set

\begin{align}
\mathcal{D}_{\phi}:=(-i)^{\delta\lc\DII\rc{}}\DII\phi.
\end{align}

\begin{reapply}
\end{reapply}

\end{itemize}
\end{dfn}

\begin{bsp}\label{BSP.Wstar_Derivation_QG_Intertwining}
Let $(A,\tau)$ be a tracial AF-$C^{*}$-algebra and $\phi:A\longrightarrow A$ a self-adjoint involutive local $^{*}$-homomorphism. Let $d\in L^{\infty}(A,\tau)_{h}\setminus\lset{}0\rset$ s.t.~$L_{d}$ is strongly local and $\phi\lc{}d\rc{}=-d$. We show $L_{d}$ is $\phi$-intertwining s.t.~$\sgn\lc{}L_{d}\rc{}=-1$.\par
We know $1.1)$ in Definition \ref{DFN.Wstar_Derivation_QG_Intertwining}. Equation \ref{EQ.REM.AF_Cstar_Local_Hom_Self_Adjoint_1} implies $-L_{d}=L_{\phi\lc{}d\rc{}}=\phi L_{d}\phi$ and therefore $1.2)$ in Definition \ref{DFN.Wstar_Derivation_QG_Intertwining}. We moreover have $\sgn\lc{}L_{d}\rc{}=-1$. Compressing as per Corollary \ref{COR.AF_Cstar_Bimodule_Compression_Restriction} and using $1)$ in Proposition \ref{PRP.AF_Cstar_Bimodule_L2Inf_SR}, we calculate

\begin{align*}
\lgl x,L^{-1}\big(\pi_{j}^{A}L_{d}\pi_{j}^{A}L_{\phi(y)}-L_{y}\pi_{j}^{A}L_{d}\pi_{j}^{A}\big)\rgl_{\tau} & = \lgl x,\big(\restr{0.875}{L}{A_{j}}\big)^{-1}\big(\pi_{j}^{A}(L_{d\phi(y)-yd})\pi_{j}^{A}\big)\rgl_{\tau} \phantom{\bigg)} \\
& = \lgl L^{-1}\big(-L_{x}\pi_{j}^{A}L_{d}\pi_{j}^{A}-\pi_{j}^{A}L_{d}\pi_{j}^{A}L_{\phi(x)}\big),y\rgl_{\tau} \phantom{\bigg)}
\end{align*}

\noindent for all $j\in\mathbb{N}$ and $x,y\in A_{j}$. The above calculation shows Equation \ref{EQ.DFN.Wstar_Derivation_QG_Intertwining_1} in our case since $\sgn\lc{}L_{d}\rc{}=-1$. Thus $1.3)$ in Definition \ref{DFN.Wstar_Derivation_QG_Intertwining}, hence our claim holds.
\end{bsp}

\begin{lem}\label{LEM.Wstar_Derivation_QG_Intertwining}
Let $(A,\tau)$ be a tracial AF-$C^{*}$-algebra and $\phi:A\longrightarrow A$ a self-adjoint involutive local $^{*}$-homomorphism. For all $\phi$-intertwining $\mathcal{D}\in\UBII\lc{}L^{2}(A,\tau)\rc_{h}$, $\lc\mathcal{D}_{\phi},\phi\rc$ is a generator of dynamic quantum gradient and we have

\begin{itemize}
\item[1)] $[\mathcal{D}_{\phi},x]_{A}^{\phi}=(-i)^{\delta\lc\DII\rc{}}L^{-1}\lc\overline{\mathcal{D}L_{x}-L_{\phi(x)}\mathcal{D}}\rc$,

\item[2)] $\{\mathcal{D}_{\phi},x\}_{A}^{\phi}=-(-i)^{\delta\lc\DII\rc{}}L^{-1}\lc\overline{\sgn\lc\mathcal{D}\rc\mathcal{D}L_{x}-L_{\phi(x)}\mathcal{D}}\rc$,

\item[3)] $\big\{\mathcal{D}_{\phi},[\mathcal{D}_{\phi},x]_{A}^{\phi}\big\}_{A}^{\phi}=-L^{-1}\lc\overline{\mathcal{D}^{2}L_{x}+L_{x}\mathcal{D}^{2}-2\mathcal{D}L_{\phi(x)}\mathcal{D}}\rc$,
\end{itemize}

\noindent for all $x\in A_{0}$.
\end{lem}
\begin{proof}
Let $\mathcal{D}\in\UBII\lc{}L^{2}(A,\tau)\rc_{h}$ be $\phi$-intertwining. Then $1.1)$ and $1.2)$ in Definition \ref{DFN.Wstar_Derivation_QG_Intertwining} imply $\mathcal{D}_{\phi}$ is strongly local since $\lb\pi_{j}^{A},\phi\rb{}=0$ for all $j\in\mathbb{N}$ by $3.1)$ in Proposition \ref{PRP.AF_Cstar_Local_Hom_I}. In addition, $\phi\lc\dom \mathcal{D}_{\phi}\rc\subset\dom \mathcal{D}_{\phi}$ and $\mathcal{D}_{\phi}\phi=\sgn\lc\mathcal{D}\rc\phi \mathcal{D}_{\phi}$.\par
Let $j\in\mathbb{N}$ and $x\in A_{j}$. Note $\lb\pi_{j}^{A},\phi\rb{}=0$. Set

\begin{align}\label{EQ.LEM.Wstar_Derivation_QG_Intertwining_1}
\mathcal{D}_{\phi,j}:=\comAj\mathcal{D}_{\phi}=(-i)^{\delta\lc\DII\rc{}}\mathcal{D}_{j}\phi,\ \mathcal{D}_{\phi,j}^{\perp}:=\comAjperp\mathcal{D}_{\phi}=(-i)^{\delta\lc\DII\rc{}}\mathcal{D}_{j}^{\perp}\phi.
\end{align}

\noindent The bounded operators in Equation \ref{EQ.LEM.Wstar_Derivation_QG_Intertwining_1} are those in Proposition \ref{PRP.Local_Operator} for $\mathcal{D}_{\phi}$. Using $3.2)$ in Proposition \ref{PRP.Local_Operator} and $L_{\phi(x)}=\phi L_{x}\phi$, the first identity in Equation \ref{EQ.LEM.Wstar_Derivation_QG_Intertwining_1} shows

\begin{align}\label{EQ.LEM.Wstar_Derivation_QG_Intertwining_2}
\textrm{Ad}_{t}^{\mathcal{D}_{\phi}}(L_{x})=\textrm{Ad}_{t}^{\mathcal{D}_{\phi,j}}(L_{x})
\end{align}

\noindent and

\begin{align}\label{EQ.LEM.Wstar_Derivation_QG_Intertwining_3}
\textrm{Ad}_{-t}^{\mathcal{D}_{\phi}}\lc L_{x}\phi\rc=\textrm{Ad}_{-t}^{\mathcal{D}_{\phi,j}}\lc L_{x}\phi\rc
\end{align}

\noindent for all $t\in\mathbb{R}$.\par

%NEWPAGE
%NEWPAGE
%NEWPAGE

\pagebreak

%NEWPAGE
%NEWPAGE
%NEWPAGE

Equation \ref{EQ.LEM.Wstar_Derivation_QG_Intertwining_2} shows

\begin{align}\label{EQ.LEM.Wstar_Derivation_QG_Intertwining_4}
\restr{0.925}{\frac{d}{dt}}{t=0,\w}\textrm{Ad}_{t}^{\mathcal{D}_{\phi}}(L_{x})=\overline{i\lc\mathcal{D}_{\phi}L_{x}-L_{x}\mathcal{D}_{\phi}\rc{}}=i\lc\mathcal{D}_{\phi,j}L_{x}-L_{x}\mathcal{D}_{\phi,j}\rc{},
\end{align}

\noindent whereas Equation \ref{EQ.LEM.Wstar_Derivation_QG_Intertwining_3} shows

\begin{align}\label{EQ.LEM.Wstar_Derivation_QG_Intertwining_5}
\restr{0.925}{\frac{d}{dt}}{t=0,\w}\textrm{Ad}_{-t}^{\mathcal{D}_{\phi}}\lc L_{x}\phi\rc=\overline{i\lc{}L_{x}\phi \mathcal{D}_{\phi}-\mathcal{D}_{\phi}L_{x}\phi\rc{}}=i\lc{}L_{x}\phi\mathcal{D}_{\phi,j}-\mathcal{D}_{\phi,j}L_{x}\phi\rc{}.
\end{align}

\noindent Using $3.1)$ in Proposition \ref{PRP.Local_Operator} and $L_{\phi(x)}=\phi L_{x}\phi$, the first identity in Equation \ref{EQ.LEM.Wstar_Derivation_QG_Intertwining_1} further shows

\begin{align}\label{EQ.LEM.Wstar_Derivation_QG_Intertwining_6}
\mathcal{D}_{\phi,j}L_{x}-L_{x}\mathcal{D}_{\phi,j}=(-i)^{\delta\lc\DII\rc{}}\sgn\lc\mathcal{D}\rc\phi\lc\mathcal{D}_{j}L_{x}-L_{\phi(x)}\mathcal{D}_{j}\rc\in\phi L(A_{j})
\end{align}

\noindent and 

\begin{align}\label{EQ.LEM.Wstar_Derivation_QG_Intertwining_7}
L_{x}\phi \mathcal{D}_{\phi,j}-\mathcal{D}_{\phi,j}L_{x}\phi=(-i)^{\delta\lc\DII\rc{}}\lc\sgn\lc\mathcal{D}\rc{}L_{x}\mathcal{D}_{j}-\mathcal{D}_{j}L_{\phi(x)}\rc\in L(A_{j}).
\end{align}

Equation \ref{EQ.LEM.Wstar_Derivation_QG_Intertwining_4} and Equation \ref{EQ.LEM.Wstar_Derivation_QG_Intertwining_6} in turn show

\begin{align}\label{EQ.LEM.Wstar_Derivation_QG_Intertwining_8}
\restr{0.925}{\frac{d}{dt}}{t=0,\w}\textrm{Ad}_{t}^{\mathcal{D}_{\phi}}(L_{x})=(-i)^{\delta\lc\DII\rc{}}\sgn\lc\mathcal{D}\rc\phi\lc\mathcal{D}_{j}L_{x}-L_{\phi(x)}\mathcal{D}_{j}\rc\in\phi L(A_{j}),
\end{align}

\noindent whereas Equation \ref{EQ.LEM.Wstar_Derivation_QG_Intertwining_5} and Equation \ref{EQ.LEM.Wstar_Derivation_QG_Intertwining_7} show

\begin{align}\label{EQ.LEM.Wstar_Derivation_QG_Intertwining_9}
\restr{0.925}{\frac{d}{dt}}{t=0,\w}\textrm{Ad}_{-t}^{\mathcal{D}_{\phi}}\lc L_{x}\phi\rc=(-i)^{\delta\lc\DII\rc{}}\lc\sgn\lc\mathcal{D}\rc{}L_{x}\mathcal{D}_{j}-\mathcal{D}_{j}L_{\phi(x)}\rc\in L(A_{j}).
\end{align}

Equation \ref{EQ.LEM.Wstar_Derivation_QG_Intertwining_8} and Equation \ref{EQ.LEM.Wstar_Derivation_QG_Intertwining_9} imply $1.1)$ in Definition \ref{DFN.Wstar_Derivation_QG_Dynamic_Generator} at once. Using Equation \ref{EQ.LEM.Wstar_Derivation_QG_Intertwining_8} for the first, Equation \ref{EQ.DFN.Wstar_Derivation_QG_Intertwining_1} for the second, and finally Equation \ref{EQ.LEM.Wstar_Derivation_QG_Intertwining_9} for the third identity below, we calculate

\begin{align*}
\bigg\langle x,L^{-1}\lc\restr{0.925}{\frac{d}{dt}}{t=0,\w}\textrm{Ad}_{t}^{\mathcal{D}_{\phi}}(L_{y})\phi\rc\bigg\rangle_{\tau}\phantom{\bigg)} =& \ (-i)^{\delta\lc\DII\rc{}}\lgl x,L^{-1}\lc\sgn\lc\mathcal{D}\rc\phi\lc\mathcal{D}_{j}L_{y}-L_{\phi(y)}\mathcal{D}_{j}\rc\phi\rc\rgl_{\tau} \phantom{\Bigg)} \\
=& \ (-i)^{\delta\lc\DII\rc{}}\lgl L^{-1}\lc\sgn\lc\mathcal{D}\rc{}L_{x}\mathcal{D}_{j}-\mathcal{D}_{j}L_{\phi(x)}\rc{},y\rgl_{\tau} \phantom{\Bigg)} \\
=& \ \bigg\langle L^{-1}\lc\restr{0.925}{\frac{d}{dt}}{t=0,\w}\textrm{Ad}_{-t}^{\mathcal{D}_{\phi}}\lc L_{x}\phi\rc\rc{},y\bigg\rangle_{\tau} \phantom{\Bigg)}
\end{align*}

\noindent for all $j\in\mathbb{N}$ and $x,y\in A_{j}$. The above calculation shows $1.2)$ in Definition \ref{DFN.Wstar_Derivation_QG_Dynamic_Generator}. We therefore have $1)$ in Definition \ref{DFN.Wstar_Derivation_QG_Dynamic_Generator}, i.e.~$\lc\mathcal{D}_{\phi},\phi\rc$ is a generator of dynamic quantum gradient. Apply Proposition \ref{PRP.Wstar_Derivation_QG_Dynamic} to $\lc\mathcal{D}_{\phi},\phi\rc$.\par

%NEWPAGE
%NEWPAGE
%NEWPAGE

\pagebreak

%NEWPAGE
%NEWPAGE
%NEWPAGE

Using Equation \ref{EQ.DFN.Wstar_Derivation_QG_Dynamic_Generator_2}, we directly verify $1)$ and $2)$. We show $3)$. Note $A_{0}\subset\dom\mathcal{D}^{2}$ by locality. For all $x,u\in A_{0}$, we have $xu\in\dom\mathcal{D}^{2}$. Using Equation \ref{EQ.DFN.Wstar_Derivation_QG_Dynamic_Generator_2}, strong locality and $(-i)^{2\delta\lc\DII_{\phi}\rc{}}=\sgn\lc\DII\rc$, we apply $1)$ and $2)$ in each finite-dimensional case to get

\begin{align}\label{EQ.LEM.Wstar_Derivation_QG_Intertwining_10}
L\lc\big\{\mathcal{D}_{\phi},[\mathcal{D}_{\phi},x]_{A}^{\phi}\big\}_{A}^{\phi}\rc{}(u)=-\lc\mathcal{D}_{j}^{2}L_{x}+L_{x}\mathcal{D}_{j}^{2}-2\mathcal{D}_{j}L_{\phi(x)}\mathcal{D}_{j}\rc{}(u)
\end{align}

\noindent for all $j\in\mathbb{N}$, $x\in A_{j}$ and $u\in A_{0}$. Using strong locality, Equation \ref{EQ.LEM.Wstar_Derivation_QG_Intertwining_10} shows

\begin{align}\label{EQ.LEM.Wstar_Derivation_QG_Intertwining_11}
L\lc\big\{\mathcal{D}_{\phi},[\mathcal{D}_{\phi},x]_{A}^{\phi}\big\}_{A}^{\phi}\rc{}(u)=-\lc\pi_{k}^{A}\lc\mathcal{D}^{2}L_{x}+L_{x}\mathcal{D}^{2}-2\mathcal{D}L_{\phi(x)}\mathcal{D}\rc\pi_{k}^{A}\rc{}(u)
\end{align}

\noindent for all $j\leq k$ in $\mathbb{N}$, $x\in A_{j}$ and $u\in A_{0}$. For fix but arbitrary $u\in A_{0}$, $\pi_{k}^{A}$ on the right-hand side of the inner bracket in Equation \ref{EQ.LEM.Wstar_Derivation_QG_Intertwining_11} vanishes without loss of generality. Using $3)$ in Proposition \ref{PRP.AF_Cstar_Trace_III}, letting $k\uparrow\infty$ in Equation \ref{EQ.LEM.Wstar_Derivation_QG_Intertwining_11} yields

\begin{align}\label{EQ.LEM.Wstar_Derivation_QG_Intertwining_12}
L\lc\big\{\mathcal{D}_{\phi},[\mathcal{D}_{\phi},x]_{A}^{\phi}\big\}_{A}^{\phi}\rc{}(u)=-\lc\mathcal{D}^{2}L_{x}+L_{x}\mathcal{D}^{2}-2\mathcal{D}L_{\phi(x)}\mathcal{D}\rc{}(u)
\end{align}

\noindent for all $x\in A_{j}$ and $u\in A_{0}$. Note the left-hand side of Equation \ref{EQ.LEM.Wstar_Derivation_QG_Intertwining_12} evaluates a bounded operator. Since $A_{0}\subset L^{2}(A,\tau)$ is $\|.\|_{\tau}$-dense, the right-hand side of Equation \ref{EQ.LEM.Wstar_Derivation_QG_Intertwining_11} in fact evaluates a bounded and closable operator defined on $\dom\mathcal{D}^{2}$. Get $3)$ by closure.
\end{proof}

\begin{cor}\label{COR.Wstar_Derivation_QG_Intertwining_I}
Let $(A,\tau)$ be a tracial AF-$C^{*}$-algebra and $\phi:A\longrightarrow A$ a self-adjoint involutive local $^{*}$-homomorphism. For all $\phi$-intertwining $\mathcal{D}\in\UBII\lc{}L^{2}(A,\tau)\rc_{h}$, $\lc\mathcal{D}_{\phi},\phi\rc$ is a generator of dynamic quantum gradient and we have

\begin{itemize}
\item[1)] quantum gradient $\nabla^{\mathcal{D}_{\phi}}:A_{0}\longrightarrow L^{2}(A,\tau)$ given by

\begin{align}\label{EQ.COR.Wstar_Derivation_QG_Intertwining_I_1}
\nabla^{\mathcal{D}_{\phi}}x=i[\mathcal{D}_{\phi},x]_{A}^{\phi}=i(-i)^{\delta\lc\DII\rc{}}L^{-1}\lc\overline{\mathcal{D}L_{x}-L_{\phi(x)}\mathcal{D}}\rc{}
\end{align}

\begin{reapply}
\end{reapply}

\noindent for all $x\in A_{0}$,

\item[2)] $\nabla^{\mathcal{D}_{\phi},*}=\lc\nabla^{\mathcal{D}_{\phi}}\rc^{*}$ with core $A_{0}$ and determined by

\begin{align}\label{EQ.COR.Wstar_Derivation_QG_Intertwining_I_2}
\nabla^{\mathcal{D}_{\phi},*}x=i\{\mathcal{D}_{\phi},x\}_{A}^{\phi}=-i(-i)^{\delta\lc\DII\rc{}}L^{-1}\lc\overline{\sgn\lc\mathcal{D}\rc\mathcal{D}L_{x}-L_{\phi(x)}\mathcal{D}}\rc{}
\end{align}

\begin{reapply}
\end{reapply}

\noindent for all $x\in A_{0}$,

\item[3)] $\Delta^{\mathcal{D}_{\phi}}=\nabla^{\mathcal{D}_{\phi},*}\nabla^{\mathcal{D}_{\phi}}$ with core $A_{0}$ and determined by

\begin{align}\label{EQ.COR.Wstar_Derivation_QG_Intertwining_I_3}
\Delta^{\mathcal{D}_{\phi}}x=-\big\{\mathcal{D}_{\phi},[\mathcal{D}_{\phi},x]_{A}^{\phi}\big\}_{A}^{\phi}=L^{-1}\lc\overline{\mathcal{D}^{2}L_{x}+L_{x}\mathcal{D}^{2}-2\mathcal{D}L_{\phi(x)}\mathcal{D}}\rc{}
\end{align}

\begin{reapply}
\end{reapply}

\noindent for all $x\in A_{0}$.
\end{itemize} 
\end{cor}
\begin{proof}
Apply Proposition \ref{PRP.Wstar_Derivation_QG_Dynamic} and Lemma \ref{LEM.Wstar_Derivation_QG_Intertwining}.
\end{proof}

%NEWPAGE
%NEWPAGE
%NEWPAGE

\pagebreak

%NEWPAGE
%NEWPAGE
%NEWPAGE

\begin{cor}\label{COR.Wstar_Derivation_QG_Intertwining_II}
Let $(A,\tau)$ be a tracial AF-$C^{*}$-algebra and $\phi:A\longrightarrow A$ a self-adjoint involutive local $^{*}$-homomorphism. Let $\mathcal{D}\in\UBII\lc{}L^{2}(A,\tau)\rc_{h}$ be $\phi$-intertwining. For all $j\in\mathbb{N}$ and $x\in A_{j}$, we have

\begin{itemize}
\item[1)] $\nabla^{\mathcal{D}_{\phi}}x=i(-i)^{\delta\lc\DII\rc{}}L^{-1}\lc\mathcal{D}_{j}L_{x}-L_{\phi(x)}\mathcal{D}_{j}\rc$,

\item[2)] $\nabla^{\mathcal{D}_{\phi},*}x=-i(-i)^{\delta\lc\DII\rc{}}L^{-1}\lc\sgn\lc\mathcal{D}\rc\mathcal{D}_{j}L_{x}-L_{\phi(x)}\mathcal{D}_{j}\rc$,

\item[3)] $\Delta^{\mathcal{D}_{\phi}}x=L^{-1}\lc\mathcal{D}_{j}^{2}L_{x}+L_{x}\mathcal{D}_{j}^{2}-2\mathcal{D}_{j}L_{\phi(x)}\mathcal{D}_{j}\rc$.
\end{itemize}
\end{cor}
\begin{proof}
Apply Corollary \ref{COR.Wstar_Derivation_QG_Intertwining_I} in each finite-dimensional case.
\end{proof}

Definition \ref{DFN.Wstar_Derivation_QG_Intertwining_Clifford} gives intertwining sets of Clifford generators. In the logarithmic mean setting, Example \ref{BSP.L2W_Ric_Wstar_Derivation_QG_Intertwining_Clifford} shows their direct sum quantum gradients yield strictly positive lower Ricci bounds. This requires Lemma \ref{LEM.Wstar_Derivation_QG_Intertwining_Clifford}.

\begin{dfn}\label{DFN.Wstar_Derivation_QG_Intertwining_Clifford}
Let $(A,\tau)$ be a tracial AF-$C^{*}$-algebra and $\phi:A\longrightarrow A$ a self-adjoint involutive local $^{*}$-homomorphism. Let $m\in\mathbb{N}$ and $\lset{}d_{n}\rset_{n=1}^{m}\subset L^{\infty}(A,\tau)_{h}$.

\begin{itemize}
\item[1)] We say that $\lset{}d_{n}\rset_{n=1}^{m}$ is a $\phi$-intertwining set of Clifford generators for $C>0$ if for all $n,k\in\lset{}1,\ldots,m\rset$, we have

\begin{itemize}
\item[1.1)] $L_{d_{n}}$ strongly local and $\phi\lc{}d_{n}\rc{}=-d_{n}$,

\item[1.2)] $d_{n}d_{k}+d_{k}d_{n}=2C\delta_{nk}1_{A}$.
\end{itemize}

\begin{reapply}
\end{reapply}

\item[2)] Let $\lset{}d_{n}\rset_{n=1}^{m}$ be a $\phi$-intertwining set of Clifford generators for $C>0$ as above. For all $n\in\lset{}1,\ldots,m\rset$, set $\partial_{n}:=\nabla^{-iL_{d_{n}},\phi}$ and $\Delta_{n}:=\Delta^{-iL_{d_{n}},\phi}$.
\end{itemize}
\end{dfn}

\begin{lem}\label{LEM.Wstar_Derivation_QG_Intertwining_Clifford}
Let $(A,\tau)$ be a tracial AF-$C^{*}$-algebra, $\phi:A\longrightarrow A$ a self-adjoint involutive local $^{*}$-homomorphism and $m\in\mathbb{N}$. If $\lset{}d_{n}\rset_{n=1}^{m}\subset L^{\infty}(A,\tau)_{h}$ is a $\phi$-intertwining set of Clifford generators for $C>0$, then

\begin{align}\label{EQ.LEM.Wstar_Derivation_QG_Intertwining_Clifford_1}
\partial_{n}\Delta_{k}=\big(\Delta_{k}+\delta_{nk}4C\cdot I\big)\partial_{n}
\end{align}

\noindent for all $n,k\in\lset{}1,\ldots,m\rset$.
\end{lem}
\begin{proof}
Let $\lset{}d_{n}\rset_{n=1}^{m}\subset L^{\infty}(A,\tau)_{h}$ be a $\phi$-intertwining set of Clifford generators for $C>0$. Lemma \ref{LEM.Wstar_Derivation_QG_Intertwining_Clifford_Identities} gives three identities we use in this proof. If $n,k\in\lset{}1,\ldots,m\rset$ s.t.~$n\neq k$, then we see Equation \ref{EQ.LEM.Wstar_Derivation_QG_Intertwining_Clifford_Identities_1} and Equation \ref{EQ.LEM.Wstar_Derivation_QG_Intertwining_Clifford_Identities_2} let us calculate

\begin{align}\label{EQ.LEM.Wstar_Derivation_QG_Intertwining_Clifford_2}
\partial_{n}\Delta_{k}=\partial_{n}\mathrlap{\phantom{\partial}^{*}}\partial_{k}\partial_{k}=(-1)^{2}\cdot \Delta_{k}\partial_{n}=\Delta_{k}\partial_{n}.
\end{align}

%NEWPAGE
%NEWPAGE
%NEWPAGE

\pagebreak

%NEWPAGE
%NEWPAGE
%NEWPAGE

Equation \ref{EQ.LEM.Wstar_Derivation_QG_Intertwining_Clifford_Identities_1} implies $\partial_{n}^{2}=0$ in each case. If $n=k$, then we see Equation \ref{EQ.LEM.Wstar_Derivation_QG_Intertwining_Clifford_Identities_3} together with $\partial_{n}^{2}=0$ lets us calculate

\begin{align}\label{EQ.LEM.Wstar_Derivation_QG_Intertwining_Clifford_3}
\partial_{n}\Delta_{n}=4C\partial_{n}=\big(\Delta_{n}+4C\cdot I\big)\partial_{n}.
\end{align}

\noindent Equation \ref{EQ.LEM.Wstar_Derivation_QG_Intertwining_Clifford_2} and Equation \ref{EQ.LEM.Wstar_Derivation_QG_Intertwining_Clifford_3} show Equation \ref{EQ.LEM.Wstar_Derivation_QG_Intertwining_Clifford_1}.
\end{proof}

%%%%%%%%%%%%%%%%%%%
%%% SUBSECTION %%%%
%%%%%%%%%%%%%%%%%%%

\subsection{Noncommutative differential structures and compatibility}\label{SSEC.NCDS_NCG_Notion}

Noncommutative differential structures collect the data which define quantum optimal transport distances. Each consists of two components and one setting. The data collected is compatible with compression and finite-dimensional approximation. These are two general operations we formalise in a coarse graining process.

%%%%%%%%%%%%%
%%% PART %%%%
%%%%%%%%%%%%%

\subsubsection*{The notion of noncommutative differential structure}

This chapter provides all necessary data. Definition \ref{DFN.NCDS} gives noncommutative differential structures. We explain our notions of compression and finite-dimensional approximation, as well as compatibility with either. To this end, we use the terms \textit{noncommutative} and \textit{quantum} in our discussion as means to distinguish classes of objects as per Figure 2.1.\par
We further explain the data for $1)$ in Definition \ref{DFN.NCDS} satisfies such compatibility by construction. In Subsection \ref{SSEC.QOT_DT_CEN} and Subsection \ref{SSEC.QOT_DT_MG}, we show compatibility transfers to quantum optimal transport. In Subsection \ref{SSEC.QOT_CG}, we then formalise compatibility in the coarse graining process as per Diagram \ref{EQ.SSEC.QOT_QIT_Encoding_16}. This completes our explanation.

\begin{dfn}\label{DFN.NCDS}
Let $(A,\tau)$ and $(B,\omega)$ be tracial AF-$C^{*}$-algebras. Let $(\phi,\bpsi,\gamma)$ be an AF-$A$-bimodule structure on $B$. Let $f$ be symmetric representing function of an operator mean and $\theta\in [0,1]$ s.t.~$\|\omega\|^{1-\theta}<\infty$. Let $\nabla:A_{0}\longrightarrow L^{2}(B,\omega)$ be a quantum gradient.

\begin{itemize}
\item[1)] We call $(\phi,\bpsi,\gamma,\nabla)$ noncommutative differential structure for $(A,\tau)$ and $(B,\omega)$ in $\lc{}f,\theta\rc$-setting.

\item[2)] For all $j\in\mathbb{N}$, we consider the induced AF-$A_{j}$-bimodule structure $(\phi_{j},\bpsi_{j},\gamma_{j})$ on $B_{j}$ as per $4)$ in Definition \ref{DFN.AF_Cstar_Bimodule} together with the $j$-th restricted quantum gradient $\nabla_{\hspace{-0.055cm} j}:A_{j}\longrightarrow B_{j}$ as per $2)$ in Definition \ref{DFN.Wstar_Derivation_QG} and call $\lc\phi_{j},\bpsi_{j},\gamma_{j},\nabla_{\hspace{-0.055cm} j}\rc$ the induced noncommutative differential structure for $(A_{j},\tau)$ and $(B_{j},\omega)$ in $\lc{}f,\theta\rc$-setting.
\end{itemize}
\end{dfn}

\begin{rem}\label{REM.NCDS}
Definition \ref{DFN.NCDS} is motivated by Definition 4.7 in \cite{ART.Car_Maa.2020.Quantum_OT_III}. The latter uses absolutely continuous finite weights \cite{BK.Tak.2003.OpAlg_II} w.r.t.~a given finite trace. Proposition 4.12 in \cite{ART.Car_Maa.2020.Quantum_OT_III} shows a detailed balance condition for Laplacians. We see \cite{ART.Car_Maa.2020.Quantum_OT_III} generalises \cite{ART.Maa.2011.Discrete_OT_Markov}. Yet the detailed balance condition as per Proposition 4.12 in \cite{ART.Car_Maa.2020.Quantum_OT_III} implies ergodicity of the given noncommutative heat semigroup. As such, Definition 4.7 in \cite{ART.Car_Maa.2020.Quantum_OT_III} assumes the ergodic finite-dimensional setting but not traciality, whereas Definition \ref{DFN.NCDS} assumes it but allows for infinite dimensions, possibly non-finite traces, as well as non-ergodicity of noncommutative heat semigroups. We account for these differences.
\end{rem}

\begin{figure}\label{FIG.NC_Setting_Decomposition}
    \begin{center}
    {
        \offinterlineskip

        \hspace*{2.15cm}\vspace*{0.0795cm}\MyHBox[\dimexpr3.4cm+6\fboxsep\relax]{}\par

        \hspace*{2.15cm}\vspace*{0.0795cm}\MyHBox{Commutative}\MyHBox{Not Commutative}\par

        \MyTBox{\hspace{-1.625cm}{Quantum}}{$C_{0}\lc\mathbb{N})$}{$\KII\lc\ell^{2}\lc\mathbb{N}))$}

        \MyTBox{\hspace{-1.625cm}{Not Quantum}}{$C_{0}\lc\mathbb{R})$}{$C_{0}\lc\mathbb{R})\otimes\KII\lc\ell^{2}\lc\mathbb{N}))$}

    }
    \end{center}
    \caption{Matrix for example $C^{*}$-algebras decomposing the noncommutative setting according to commutativity and inclusion in the AF-$C^{*}$-setting. The noncommutative setting subsumes the commutative and properly noncommutative one. Note all function spaces use elements evaluating in complex numbers and vanishing at infinity.}
\end{figure}

We use the two terms \textit{noncommutative} and \textit{quantum} in our discussion as means to distinguish classes of objects as per Figure 2.1. The former denotes objects in the full noncommutative setting, in particular the AF-$C^{*}$-setting. The latter denotes objects in the AF-$C^{*}$-setting compatible with compression and finite-dimensional approximation. Note tracial AF-$C^{*}$-algebras generating hyperfinite factors of type I and II by $\sigma$-weak closure, i.e.~Example \ref{BSP.QOT_Type_I}, Example \ref{BSP.QOT_Type_II_1} and Example \ref{BSP.QOT_Type_II_Infty}, are common algebras of observables in quantum statistical mechanics \cite{BK.Bra.1987.OpAlg_Quantum_StM_I}\cite{BK.Bra.1987.OpAlg_Quantum_StM_II}\cite{BK.Nes_Sto.2006.Rel_Ent}.\par
We use the above to explain compression and finite-dimensional approximation, as well as compatibility with either. For compression, we apply compression maps to tracial AF-$C^{*}$-algebras as per Remark \ref{REM.Cstar_Trace_Abstract_Dualisation}. It acts on and yields objects and properties in the noncommutative setting. For finite-dimensional approximation, we apply restriction maps, possibly up to rescaling as per $1)$ in Definition \ref{DFN.AF_Cstar_Trace_Dualisation_Paths}, to tracial AF-$C^{*}$-algebras as per Definition \ref{DFN.AF_Cstar_Trace_Dualisation_Restriction}. It acts on objects and properties in the AF-$C^{*}$-setting and yields description of these as limits of restricted analogues in the finite-dimensional setting. If we have notions of compression and finite-dimensional approximation for a class of objects or properties, which we give explicitly for each use case in our discussion, then we say such a class is compatible with both. We use compression and finite-dimensional approximation for the coarse graining process as per Diagram \ref{EQ.SSEC.QOT_QIT_Encoding_16}. This demands data compatible with both. The data for $1)$ in Definition \ref{DFN.NCDS} satisfies such compatibility by their locality properties. The coarse graining process, hence compatibility, is essential for our discussion because it reduces the AF-$C^{*}$-setting to the finite-dimensional one s.t.~ergodicity is recovered up to a controlled remainder. Note Remark \ref{REM.NCDS}.

%%%%%%%%%%%%%%%%
%%%% CHAPTER %%%
%%%%%%%%%%%%%%%%

\chapter{Quantum Optimal Transport}\label{CH.QOT}

Quantum optimal transport is described using dynamic transport distances of states on tracial AF-$C^{*}$-algebras. Noncommutative differential structures collect the data which define such dynamic transport distances. First, quantum gradients define continuity equations for states on tracial AF-$C^{*}$-algebras. Continuity equations in turn define sets of admissible paths. Secondly, quasi-entropies define energy functionals by integrating their own evaluation on admissible paths. Minimising square roots of energy functionals over all admissible paths for fixed marginals defines dynamic transport distances, called quantum optimal transport distances. This follows the classical case \cite{ART.Dol_Naz_Sav.2009.Generalised_OT}. We show our construction extends the discrete cases \cite{ART.Maa.2011.Discrete_OT_Markov}\cite{ART.Mie.2011.Discrete_OT_RctDiff}, as well as tracial finite-dimensional ones in \cite{ART.Car_Maa.2014.Quantum_OT_I}\cite{ART.Car_Maa.2017.Quantum_OT_II}\cite{ART.Car_Maa.2020.Quantum_OT_III}. We provide fundamental example classes. The latter themselves yield quantum optimal transport of normal states on hyperfinite factors of type I and II \cite{BK.Ped.2018.Cstar_Algebras}. An application is given by first and second quantisation of spectral triples \cite{ART.Cha_Con_vSui.2013.NCG_Inner_Fluctuations}\cite{ART.Cha_Con_vSui.2020.NCG_Second_Quantisation}\linebreak\cite{BK.vSui.2015.NCG_AF_Particle_Physics}\cite{BK.Var.2006.NCG_Elements_Short}. This yields our ansatz to study noncommutative gauge theories based on a proposed internalised spectral action \cite{ART.Cha_Con.1996.NCG_Spectral_Action_I}\cite{ART.Cha_Con.1997.NCG_Spectral_Action_II}\cite{ART.Cha_Con_Mar.2007.NCG_Standard_Model_Recovered}\cite{BK.vSui.2015.NCG_AF_Particle_Physics}\cite{BK.Var.2006.NCG_Elements_Short}.\par
However, we defer a detailed discussion to future work as it requires generalisation to dynamic transport distances of states on continuous fields of AF-$C^{*}$-algebras. We still view quantum optimal transport as the pointwise case of a general parametrised one since this strongly motivates non-spatiality. First quantisation considers commutative spectral triples, i.e.~first quantisation of compact spin manifolds \cite{ART.Con.1996.NCG_Reconstruction}. We show quantum optimal transport is transversal to spatial optimal transport in this case. Second quantisation rectifies this by quantising all spatial coordinates. We apply a characterisation in \cite{ART.Cha_Con_vSui.2020.NCG_Second_Quantisation} to obtain sufficient conditions s.t.~the quantum gradients used are infinitesimal evolution of observables at thermal equilibrium determined by KMS-states \cite{BK.Bra.1987.OpAlg_Quantum_StM_II}. Each assumes fixed gauge field \cite{ART.Cha_Con.1996.NCG_Spectral_Action_I}\cite{BK.vSui.2015.NCG_AF_Particle_Physics}\cite{BK.Var.2006.NCG_Elements_Short}. Varying von Neumann entropy \cite{BK.Ohy_Pet.1993.Rel_Ent} of such KMS-states w.r.t.~the canonical trace yields description of the spectral action on gauge fields \cite{ART.Cha_Con.1996.NCG_Spectral_Action_I}\cite{ART.Cha_Con.1997.NCG_Spectral_Action_II}\cite{ART.Cha_Con_Mar.2007.NCG_Standard_Model_Recovered} in terms of quantum statistical mechanics \cite{BK.Bra.1987.OpAlg_Quantum_StM_I}\cite{BK.Bra.1987.OpAlg_Quantum_StM_II} using quantum relative entropy \cite{ART.Cha_Con_vSui.2020.NCG_Second_Quantisation}. Upon passing to second quantisation, we introduce gauge fields as spatial coordinates. We consider it a model, and therefore expect several properties of quantum optimal transport: quantum gradients and thus continuity equations do not use spatial coordinates, we have a description of quantum Laplacians in terms of quantum statistical mechanics, and non-ergodic noncommutative heat semigroups are the rule. We avoid spatial interpretations of the classical case \cite{ART.Dol_Naz_Sav.2009.Generalised_OT}\cite{ART.Lot_Vil.2009.Classical_OT_Ricci_Bounds}, e.g.~as mass transport \cite{BK.Amb_Gig_Sav.2008.Classical_OT_GradFlow}\cite{BK.Vil.2009.OT}, but do require an alternative one for quantum optimal transport.\par

%NEWPAGE
%NEWPAGE
%NEWPAGE

\newpage

%NEWPAGE
%NEWPAGE
%NEWPAGE

The coarse graining process provides such an alternative as it lets us view quantum optimal transport as transport of, suitably general, quantum information. We transport scaling limits of uniformly conditioned spin states encoding sequences of qubits. We avoid spatial interpretations because spin states have physical realisation \cite{ART.Bur_Lad_Nic.2023.QC_Spin_Overview}\cite{BK.Nie_Chu.2000.Quantum_Computation_Information}\cite{ART.DiVi_Loss.1998.Quantum_Information_Physical} s.t.~manipulation of encoded qubits does not consider spatial coordinates. We thereby have non-spatiality, as well as an immediate link to quantum statistical mechanics since information is physical \cite{BK.Cam_Def.2019.Quantum_StM_Information}\cite{ART.DiVi_Loss.1998.Quantum_Information_Physical}\cite{ART.Lan.1961.Information_Physical_I}\cite{ART.Lan.1961.Information_Physical_II}. This link ought to be noticeable if the given quantum system provides physical realisation of a quantum computer \cite{ART.Ash_Geo_Nor.2014.Quantum_Simulation}\cite{BK.Nie_Chu.2000.Quantum_Computation_Information}.\par
Non-ergodicity, defined as complex kernel dimension larger than one for quantum Laplacians, restricts information-bearing degrees of freedom. Since energy functionals are $\Gamma$-limits w.r.t.~the coarse graining process, the latter reduces the AF-$C^{*}$-setting to the finite-dimensional one s.t.~ergodicity is recovered up to a controlled remainder by reducing to accessibility components in the finite-dimensional setting. There may exist\linebreak uncountable infinitely many since sets of states at finite distance have identical fixed parts under noncommutative heat semigroups of quantum Laplacians. Assuming spectral gaps of quantum Laplacians and fixed parts, we use such fixed parts to classify accessibility components of square integrable normal states. Altogether, we study a non-spatial transport of quantum information with restricted information-bearing degrees of freedom. In Chapter \ref{CH.L2W}, we moreover obtain a description of quantum Laplacians in terms of both quantum statistical mechanics and quantum information theory.

\medskip

\noindent\textbf{Structure.} In Section \ref{SEC.QOT_DT}, we discuss quantum optimal transport distances given our noncommutative differential structures. We provide fundamental example classes. In Section \ref{SEC.QOT_AC}, we review support projections of normal states, discuss our use of quantum Fokker-Planck equations, and subsequently study noncommutative heat semigroups of quantum Laplacians. Finally, we classify accessibility components of square integrable normal states. In Section \ref{SEC.QOT_QIT}, we explain the coarse graining process and use it to view quantum optimal transport as transport of quantum information.

%%%%%%%%%%%%%%%%
%%% SECTION %%%%
%%%%%%%%%%%%%%%%

\section{Description using dynamic transport distances}\label{SEC.QOT_DT}

Quantum optimal transport requires two notions. First, admissible paths determined by continuity equations. Secondly, energy functionals given by integrating quasi-entropies evaluated on admissible paths. Minimising square roots of energy functionals over all admissible paths for fixed marginals defines quantum optimal transport distances. We show existence of minimising geodesics. Energy functionals are $\Gamma$-limits if restricted to sets of admissible paths with identical interval and marginals, and therefore w.r.t.~the coarse graining process. We formalise the latter as existence of sufficient minimising geodesics approximated in finite dimensions.

\medskip

\noindent\textbf{Structure.} In Subsection \ref{SSEC.QOT_DT_CEN}, we use quasi-entropies to define energy functionals on admissible paths determined by continuity equations. In Subsection \ref{SSEC.QOT_DT_MG}, we discuss quantum optimal transport distances, minimising geodesics and their approximation in finite dimensions. In Subsection \ref{SSEC.QOT_DT_BSP}, we provide all fundamental example classes. An application is given by first and second quantisation of spectral triples.

%%%%%%%%%%%%%%%%%%%
%%% SUBSECTION %%%%
%%%%%%%%%%%%%%%%%%%

\subsection{Energy functionals on admissible paths}\label{SSEC.QOT_DT_CEN}

Quantum gradients define continuity equations for states on tracial AF-$C^{*}$-algebras. Note each contains the codomain of the given quantum gradient. Continuity equations define sets of admissible paths. We formulate the latter using Banach dual spaces of Bochner $L^{2}$-spaces. Quasi-entropies define energy functionals by integrating their own evaluation on admissible paths. Altogether, we obtain energy functionals on admissible paths of states on tracial AF-$C^{*}$-algebras.\par
We use compression of quantum gradients and therefore continuity equations to show energy functionals are $\Gamma$-limits. Compressing to induced AF-$C^{*}$-bimodules yields energy functionals on admissible paths of states on generating $C^{*}$-subalgebras. We must initially extend inclusion and restriction maps for Banach dual spaces of tracial AF-$C^{*}$-algebras as per Definition \ref{DFN.AF_Cstar_Trace_Dualisation} to sets of admissible paths. We then compress as above by restricting to induced AF-$C^{*}$-bimodules. Taking limits recovers the initial set of admissible paths. Using the latter, Theorem \ref{THM.Energy_Functional_Representation} shows energy functionals are $\Gamma$-limits if restricted to sets of admissible paths with identical interval and marginals. We thereby extend finite-dimensional approximation of quantum gradients to energy functionals. Standard reference for Bochner $L^{2}$-spaces and their Banach dual spaces is \cite{BK.Ion_Ion.1969.Vector_Valued_Lp_Duals}. Standard reference for $\Gamma$-convergence of functionals is \cite{BK.DalMas.1993.Gamma_Convergence}.

%%%%%%%%%%%%%
%%% PART %%%%
%%%%%%%%%%%%%

\subsubsection*{Banach dual spaces of Bochner $\mathbf{L}^{2}$-spaces}

Bochner $L^{2}$-spaces have locally convex topological vector spaces as codomains of integration and are not reflexive in general \cite{BK.Ion_Ion.1969.Vector_Valued_Lp_Duals}. We rectify this by considering $w^{*}$-topologies.\par
Let $V$ be a separable Banach space.

\begin{ntn}
Let $I\subset\mathbb{R}$ denote a closed interval. We commonly use $I=[a,b]\subset\mathbb{R}$.
\end{ntn}

We equip all closed intervals $I\subset\mathbb{R}$ with the Lebesgue measure. Radon measures are strictly localisable \cite{BK.Pap.2002.Measures}. Theorem IV.5 in \cite{BK.Ion_Ion.1969.Vector_Valued_Lp_Duals} therefore shows results in \cite{BK.Ion_Ion.1969.Vector_Valued_Lp_Duals} used here apply. A map $h:I\longrightarrow V$ is Bochner measurable if and only if the map $t\mapsto\mu\lc{}h(t)\rc$ is measurable for all $\mu\in V^{*}$. A map $g:I\longrightarrow V^{*}$ is $w^{*}$-measurable if and only if the map $t\mapsto g(t)(v)$ is measurable for all $v\in V$. Separability implies equivalence.

\begin{dfn}\label{DFN.Bochner_L2}
Let $I\subset\mathbb{R}$ be a closed interval.

\begin{itemize}
\item[1)] Set $L^{2}\lc{}I,V\rc{}:=\lset{}h:I\longrightarrow V\ \vset\ \textrm{Bochner measurable},\ \|h\|_{V}^{2}\in L^{1}(I)\rset$. We call $L^{2}\lc{}I,V\rc$ the Bochner $L^{2}$-space of functions from $I$ to $V$. For all $h\in L^{2}\lc{}I,V\rc$, set

\begin{align}\label{EQ.DFN.Bochner_L2_1}
\|h\|_{2}:=\int_{I}\|h(t)\|_{V}^{2}dt.
\end{align}

\begin{reapply}
\end{reapply}

\item[2)] Set $L^{2}\lc{}I,V^{*}\rc_{\w}:=\lset{}g:I\longrightarrow V^{*}\ \vset\ w^{*}\textrm{-measurable},\ \|g\|_{V^{*}}^{2}\in L^{1}(I)\rset$. We call $L^{2}\lc{}I,V^{*}\rc_{\w}$ the $L^{2}$-space of $w^{*}$-functions from $I$ to $V^{*}$. For all $g\in L^{2}\lc{}I,V^{*}\rc_{\w}$, set

\begin{align}\label{EQ.DFN.Bochner_L2_2}
\|g\|_{2}:=\int_{I}\|g(t)\|_{V^{*}}^{2}dt.
\end{align}

\begin{reapply}
\end{reapply}

\end{itemize}
\end{dfn}

\begin{prp}\label{PRP.Bochner_L2}
For all closed intervals $I\subset\mathbb{R}$, we have

\begin{itemize}
\item[1)] $\lc{}L^{2}\lc{}I,V\rc{},\|.\|_{2}\rc$ and $\lc{}L^{2}\lc{}I,V^{*}\rc_{\w},\|.\|_{2}\rc$ are Banach spaces,

\item[2)] $L^{2}\lc{}I,V\rc^{*}=L^{2}\lc{}I,V^{*}\rc_{\w}$.
\end{itemize} 
\end{prp}
\begin{proof}
Let $I\subset\mathbb{R}$ be a closed interval. We use notation in \cite{BK.Ion_Ion.1969.Vector_Valued_Lp_Duals}. Note $L^{2}\lc{}I,V\rc{}=L_{V'}^{2}$ and $L^{2}\lc{}I,V^{*}\rc_{\w}=L_{V'}^{2}[V]$ are Banach spaces. Get $1)$. For all $F\in L^{2}\lc{}I,V\rc^{*}$, Theorem VII.9 in \cite{BK.Ion_Ion.1969.Vector_Valued_Lp_Duals} and its immediate corollary show there exists unique $g_{F}\in L^{2}\lc{}I,V^{*}\rc_{\w}$ s.t.~

\begin{align}\label{EQ.PRP.Bochner_L2_1}
F(h)=\int_{I}g_{F}(t)(h(t))dt
\end{align}

\noindent for all $h\in L^{2}\lc{}I,V\rc$. Equation \ref{EQ.DFN.Bochner_L2_1} and Equation \ref{EQ.DFN.Bochner_L2_2} further imply

\begin{align}\label{EQ.PRP.Bochner_L2_2}
\dblv{}F\dblv_{L^{2}\lc{}I,V\rc^{*}}=\sup_{\substack{h\in L^{2}(I,V),\\ \|h\|_{2}\leq 1}}\hspace{0.025cm} \bbbabsv{1}{\int_{I}g_{F}(t)(h(t))dt}=\int_{I}\dblv{}g_{F}(t)\dblv_{V^{*}}^{2}dt=\dblv{}g_{F}\dblv_{2}
\end{align}

\noindent in each case. Therefore, $L^{2}\lc{}I,V\rc^{*}=L^{2}\lc{}I,V^{*}\rc_{\w}$. Get $2)$.
\end{proof}

\begin{rem}\label{REM.Weak_Metrisability}
Let $V$ be a separable Banach space and $K\subset V^{*}$ norm bounded. Given $\{v_{n}\}_{n\in\mathbb{N}}\subset V\setminus\lset{}0\rset$ with $\|.\|_{V}$-dense linear span, set $d\lc\rho,\rho'\rc{}:=\sum_{n=1}^{\infty}2^{-n}\| v_{n}\|_{V}^{-1}\absv{1}{\rho\lc{}v_{n}\rc{}-\rho'\lc{}v_{n}\rc{}}$ for all $\rho,\rho'\in K$. This defines a distance metricising the $w^{*}$-topology on $K$.
\end{rem}

%%%%%%%%%%%%%
%%% PART %%%%
%%%%%%%%%%%%%

\subsubsection*{Admissible paths determined by continuity equations}

Definition \ref{DFN.Continuity_Equation}, in particular Equation \ref{EQ.DFN.Continuity_Equation_1}, gives continuity equations. Definition \ref{DFN.Admissible_Paths} gives admissible paths determined by continuity equations. Admissible paths lie in state spaces of tracial AF-$C^{*}$-algebras as per Definition \ref{DFN.Cstar_Trace_Abstract_State_Space}. Proposition \ref{PRP.Continuity_Equation_Mass_Preservation} shows norm-preservation.\par
Let $(\phi,\bpsi,\gamma,\nabla)$ be noncommutative differential structure for tracial AF-$C^{*}$-algebras $(A,\tau)$ and $(B,\omega)$ in $\lc{}f,\theta\rc$-setting.

\begin{dfn}\label{DFN.Continuity_Equation}
Let $I\subset\mathbb{R}$ be a closed interval.

\begin{itemize}
\item[1)] We say that $\mu:I\longrightarrow A_{+}^{*}$ is weakly absolutely continuous if $t\mapsto\mu(t)(x)$ is absolutely continuous for all $x\in A_{0}$.

\item[2)] Let $\mu:I\longrightarrow A_{+}^{*}$ be weakly absolutely continuous and $w\in L^{2}\lc{}I,B^{*}\rc_{\w}$. The pair $(\mu,w)$ satisfies the continuity equation for $\nabla$ on $I$ if

\begin{align}\label{EQ.DFN.Continuity_Equation_1}
\frac{d}{dt}\mu(t)(x)=w(t)\lc\nabla x\rc{}
\end{align}

\begin{reapply}
\end{reapply}

\noindent for all $x\in A_{0}$ and a.e.~$t\in I$.
\end{itemize}
\end{dfn}

%NEWPAGE
%NEWPAGE
%NEWPAGE

\pagebreak

%NEWPAGE
%NEWPAGE
%NEWPAGE

\begin{prp}\label{PRP.Continuity_Equation_Mass_Preservation}
Let $\mu:I\longrightarrow A_{+}^{*}$ be weakly absolutely continuous and $w\in L^{2}\lc{}I,B^{*}\rc_{\w}$. If $(\mu,w)$ satisfies the continuity equation for $\nabla$ on $I$, then

\begin{align}\label{EQ.PRP.Continuity_Equation_Mass_Preservation_1}
\big\|\mu(t)\big\|_{A^{*}}=\big\|\mu(s)\big\|_{A^{*}}
\end{align}

\noindent for all $t,s\in I$.
\end{prp}
\begin{proof}
For all $j\in\mathbb{N}$, note $\nabla 1_{A_{j}}=0$ the Leibniz rule. Thus $\frac{d}{dr}\mu(r)(1_{A_{j}})=0$ for a.e.~$r\in I$ for all $j\in\mathbb{N}$, hence 

\begin{align}\label{EQ.PRP.Continuity_Equation_Mass_Preservation_2}
\mu(t)(1_{A_{j}})=\mu(s)(1_{A_{j}})  
\end{align}

\noindent for all $t,s\in I$ and $j\in\mathbb{N}$. Set $\mu_{j}(t):=\mu(t)_{j}=\mu(t)\vert_{A_{j}}$ in each case. Positivity ensures

\begin{align}\label{EQ.PRP.Continuity_Equation_Mass_Preservation_3}
\big\|\mu_{j}(t)\big\|_{A^{*}}=\mu_{j}(t)(1_{A_{j}})=\mu(t)(1_{A_{j}})
\end{align}

\noindent in each case. Using $1.1)$ in Proposition \ref{PRP.AF_Cstar_Trace_Dualisation_II}, we see Equation \ref{EQ.PRP.Continuity_Equation_Mass_Preservation_2} and Equation \ref{EQ.PRP.Continuity_Equation_Mass_Preservation_3} imply Equation \ref{EQ.PRP.Continuity_Equation_Mass_Preservation_1} at once.
\end{proof}

\begin{dfn}\label{DFN.Admissible_Paths}
Let $\overline{\SII(A)}$ denote the $w^{*}$-closure of $\SII(A)\subset A_{+}^{*}$.

\begin{itemize}
\item[1)]  Let $I=[a,b]\subset\mathbb{R}$. Set

\begin{itemize}
\item[1.1)] $\AC(I,\overline{\SII(A)}):=\big\{\hspace{0.025cm} \mu:I\longrightarrow\overline{\SII(A)}\ \vset\ \mu\ \textrm{is weakly absolutely continuous}\hspace{0.025cm} \big\}$, \phantom{\big)}

\item[1.2)] $\AC\lc{}I,\SII(A)\rc{}:=\big\{\hspace{0.025cm} \mu\in\AC(I,\overline{\SII(A)})\ \vset\ \im\mu\subset\SII(A)\hspace{0.025cm} \big\}$. \phantom{\big)}
\end{itemize}

\begin{reapply}
\end{reapply}

\item[2)] We say that $(\mu,w)\in\AC\lc{}[a,b],\SII(A)\rc\times L^{2}([a,b],B^{*})_{\w}$ is an admissible path if $(\mu,w)$ satisfies the continuity equation for $\nabla$ on $[a,b]$. We further call $\mu\lc{}a\rc{},\mu\lc{}b\rc\in\SII(A)$ the marginals of $(\mu,w)$, resp.~$\mu$.

\item[3)] For all $\mu^{0},\mu^{1}\in\SII(A)$, let $\Admab\lc\mu^{0},\mu^{1}\rc$ be the set of all admissible paths defined on $[a,b]\subset\mathbb{R}$ with marginals $\mu^{0}$ and $\mu^{1}$. Set

\begin{itemize}
\item[3.1)] $\Adm\lc\mu^{0},\mu^{1}\rc{}:=\bigcup_{[a,b]\subset\mathbb{R}}\Admab\lc\mu^{0},\mu^{1}\rc$ for all $\mu^{0},\mu^{1}\in\SII(A)$, \phantom{\bigg)}

\item[3.2)] $\Admab:=\bigcup_{\mu^{0},\mu^{1}\in\SII(A)}\Admab\lc\mu^{0},\mu^{1}\rc$ for all $[a,b]\subset\mathbb{R}$, \phantom{\bigg)}

\item[3.3)] $\Adm:=\bigcup_{[a,b]\subset\mathbb{R}}\bigcup_{\mu^{0},\mu^{1}\in\SII(A)}\Admab\lc\mu^{0},\mu^{1}\rc$. \phantom{\bigg)}
\end{itemize}

\begin{reapply}
\end{reapply}

\end{itemize}
\end{dfn}

\begin{ntn}\label{NTN.Admissible_Paths}
For all $j\in\mathbb{N}$, we use $\Adm_{j}$ when denoting sets of admissible paths in Definition \ref{DFN.Admissible_Paths} for the induced noncommutative differential structure $\lc\phi_{j},\bpsi_{j},\gamma_{j},\nabla_{\hspace{-0.055cm} j}\rc$.
\end{ntn}

\begin{rem}\label{REM.Admissible_Paths}
Let $I\subset\mathbb{R}$ be a closed interval and $j\in\mathbb{N}$. We have $A_{j}\cong A_{j}^{*}$ and $B_{j}\cong B_{j}^{*}$ via musical isomorphisms and therefore

\begin{align}\label{EQ.REM.Admissible_Paths_1}
L^{2}(I,A_{j}^{*})_{\w}\cong L^{2}(I,A_{j}),\ L^{2}(I,B_{j}^{*})_{\w}\cong L^{2}(I,B_{j}).
\end{align}

\noindent Each Bochner $L^{2}$-space in Equation \ref{EQ.REM.Admissible_Paths_1} is norm equivalent to the respective Hilbert space of square integrable functions. Up to musical isomorphisms applied to codomains of integration, each $L^{2}$-space of $w^{*}$-functions in Equation \ref{EQ.REM.Admissible_Paths_1} is therefore likewise norm equivalent to such a Hilbert space of square integrable functions.
\end{rem}

Definition \ref{DFN.Admissible_Paths_Convergence} gives the canonical topology on sets of admissible paths alongside a related notion of convergence for the latter. Proposition \ref{PRP.Admissible_Paths_Convergence} collects properties. Let $[a,b]\subset\mathbb{R}$. Since $\AC([a,b],\overline{\SII(A)})\subset L^{2}([a,b],A^{*})_{\w}$ up to null sets, we obtain the canonical inclusion 

\begin{align}\label{EQ.SSEC.QOT_DT_CEN_Fluff}
\Admab\subset L^{2}([a,b],A^{*})_{\w}\times L^{2}([a,b],B^{*})_{\w}.
\end{align}

\noindent The relative topology on $\Admab$ w.r.t.~the $w^{*}$-topology on $L^{2}([a,b],A^{*})_{\w}\times L^{2}([a,b],B^{*})_{\w}$ given by the above canonical inclusion is called the relative $w^{*}$-topology.\par
We define a second topology on $\Admab$ by equipping

\begin{align}\label{EQ.SSEC.QOT_DT_CEN_1}
\overline{\SII(A)}^{[a,b]}:=\prod_{t\in [a,b]}\overline{\SII(A)}   
\end{align}

\noindent with the product topology given by the $w^{*}$-topology on $\overline{\SII(A)}$. Pointwise convergence in $w^{*}$-topology is convergence in the product topology. We further consider $w^{*}$-topology on $L^{2}\lc{}[a,b],B\rc^{*}=L^{2}([a,b],B^{*})_{\w}$ as per $2)$ in Proposition \ref{PRP.Bochner_L2}. The relative topology given by the canonical inclusion

\begin{align}\label{EQ.SSEC.QOT_DT_CEN_2}
\Admab\subset\overline{\SII(A)}^{[a,b]}\times L^{2}([a,b],B^{*})_{\w}
\end{align}

\noindent is called the canonical topology on $\Admab$.

\begin{dfn}\label{DFN.Admissible_Paths_Convergence}
For all $[a,b]\subset\mathbb{R}$, the relative topology as per Equation \ref{EQ.SSEC.QOT_DT_CEN_2} is called the canonical topology on $\Admab$. We say that $(\mu^{n},w^{n})_{n\in\mathbb{N}}\subset\Admab$ converges to $(\mu,w)\in\Admab$ if

\begin{itemize}
\item[1.1)] $\mu(t)=w^{*}$-$\lim_{n\in\mathbb{N}}\mu^{n}(t)$ in $\SII(A)$ for all $t\in [a,b]$,

\item[1.2)] $w=w^{*}$-$\lim_{n\in\mathbb{N}}w^{n}$ in $L^{2}([a,b],B^{*})_{\w}$.
\end{itemize}

\begin{reapply}
\end{reapply}

\noindent We further write $(\mu,w)=\lim_{n\in\mathbb{N}}\hspace{0.025cm} (\mu^{n},w^{n})$ in $\Admab$.
\end{dfn}

\begin{prp}\label{PRP.Admissible_Paths_Convergence}
Let $(\mu^{n},w^{n})_{n\in\mathbb{N}}\subset\Admab$.

\begin{itemize}
\item[1)] Let $(\mu,w)\in\AC([a,b],\overline{\SII(A)})\times L^{2}([a,b],B^{*})_{\w}$ s.t.~

\begin{itemize}
\item[1.1)] $\mu(t)=w^{*}$-$\lim_{n\in\mathbb{N}}\mu^{n}(t)$ for all $t\in [a,b]$,

\item[1.2)] $w=w^{*}$-$\lim_{n\in\mathbb{N}}w^{n}$.
\end{itemize}

\begin{reapply}
\end{reapply}

\noindent If there exists $t_{0}\in [a,b]$ s.t.~$\|\mu(t_{0})\|_{A^{*}}=1$, then $(\mu,w)\in\Admab$.

\item[2)] If $(\mu,w)=\lim_{n\in\mathbb{N}}\hspace{0.025cm} (\mu^{n},w^{n})$ in $\Admab$, then $(\mu,w)=w^{*}$-$\lim_{n\in\mathbb{N}}\hspace{0.025cm} (\mu^{n},w^{n})$.
\end{itemize}
\end{prp}
\begin{proof}
We show $1)$. Assume its setting. For all $x\in A_{0}$, the map $t\mapsto g(t):=\nabla x$ defined on $[a,b]$ lies in $L^{2}\lc{}[a,b],B\rc$ by locality if we identify as per Remark \ref{REM.Admissible_Paths}. For all $x\in A_{0}$ and $h\in (0,1)$, we apply the continuity equation in order to rewrite the difference quotient

\begin{align}\label{EQ.PRP.Admissible_Paths_Convergence_1}
\frac{1}{h}\lc\mu\lc{}t+h\rc{}(x)-\mu(0)(x)\rc{}=\lim_{n\in\mathbb{N}}\hspace{0.025cm} \frac{1}{h}\int_{0}^{t+h}\frac{d}{ds}w^{n}(s)\lc\nabla x\rc{}ds=\frac{1}{h}\int_{0}^{t+h}w(s)\lc\nabla x\rc{}ds.
\end{align}

\noindent Letting $h\rightarrow 0$ in Equation \ref{EQ.PRP.Admissible_Paths_Convergence_1} shows $(\mu,w)$ satisfies the continuity equation for $\nabla$ on $[a,b]$. Proposition \ref{PRP.Continuity_Equation_Mass_Preservation} shows norm-preservation. We see $1)$ at once. Moreover, standard arguments show $2)$ by dominated convergence.
\end{proof}

Definition \ref{DFN.AF_Cstar_Trace_Dualisation_Paths} extends restriction maps in Definition \ref{DFN.AF_Cstar_Trace_Dualisation} to all paths in Banach dual spaces of tracial AF-$C^{*}$-algebras. Proposition \ref{PRP.AF_Cstar_Trace_Dualisation_Admissible_Paths} further extends inclusion and restriction maps to sets of admissible paths. Restricting paths rescales norm.

\begin{dfn}\label{DFN.AF_Cstar_Trace_Dualisation_Paths}
Let $\mathcal{A}$ be a tracial AF-$C^{*}$-algebra, $I\subset\mathbb{R}$ a closed interval and $j\in\mathbb{N}$.
 
\begin{itemize}
\item[1)] For all $\rho\in\mathcal{A}_{+}^{*}$, set

\begin{align*}
\bar{\rho}_{j}:=
\begin{cases}
\rho(1_{\mathcal{A}_{j}})^{-1}\rho_{j} & \If\ \rho(1_{\mathcal{A}_{j}})\neq 0, \\
0 & \Else.
\end{cases}
\end{align*}

\begin{reapply}
\end{reapply}

\item[2)] Let $\rho:I\longrightarrow\mathcal{A}_{+}^{*}$ be defined for a.e.$~t\in I$. We define $\rho_{j}:I\longrightarrow\mathcal{A}_{j}^{*}$ and $\bar{\rho}_{j}:I\longrightarrow\mathcal{A}_{j}^{*}$ by setting

\begin{align}\label{EQ.DFN.AF_Cstar_Trace_Dualisation_Paths_1}
\rho_{j}(t):=\rho(t)_{j},\ \bar{\rho}_{j}(t):=\overline{\rho(t)}_{j}
\end{align}

\begin{reapply}
\end{reapply}

\noindent for a.e.~$t\in I$.
\end{itemize}
\end{dfn}

\begin{rem}\label{REM.AF_Cstar_Trace_Dualisation_Paths}
For all $\rho\in\mathcal{A}_{+}^{*}$, we have $\dblv{}\rho_{j}\dblv_{A^{*}}=\rho(1_{A_{j}})$ for all $j\in\mathbb{N}$ by positivity. We obtain $\rho(1_{A_{j}})\neq 0$ for a.e.~$j\in\mathbb{N}$ by $1)$ in Proposition \ref{PRP.AF_Cstar_Trace_Dualisation_II}. For all $\mu\in\SII(A)$, we have $\bar{\mu}_{j}\in\mathcal{S}(A_{j})$ if and only if $\mu_{j}\neq 0$. We use this throughout our discussion.
\end{rem}

Following Remark \ref{REM.AF_Cstar_Trace_Dualisation_Admissible_Paths}, we assume strictly positive norm for at least one marginal if we apply restriction maps as per Proposition \ref{PRP.AF_Cstar_Trace_Dualisation_Admissible_Paths}. A more rigorous but cumbersome notation may further include marginals in sets of admissible paths.

\begin{prp}\label{PRP.AF_Cstar_Trace_Dualisation_Admissible_Paths}
For all $[a,b]\subset\mathbb{R}$ and $j\leq k$ in $\mathbb{N}$, we define

\begin{itemize}
\item[1)] the $j$-th inclusion and restriction

\begin{align}\label{EQ.PRP.AF_Cstar_Trace_Dualisation_Admissible_Paths_1}
\incj:\Admabj\longrightarrow\Admab,\ \resj:\Admab\longrightarrow\Admabj
\end{align}

\begin{reapply}
\end{reapply}

\noindent by setting

\begin{align}\label{EQ.PRP.AF_Cstar_Trace_Dualisation_Admissible_Paths_2}
\incj(\mu,w):=(\mu,w),\ \resj(\mu,w):=\lc\bar{\mu}_{j},\mu\lc{}a\rc{}(1_{A_{j}})^{-1}w_{j}\rc{}
\end{align}

\begin{reapply}
\end{reapply}

\noindent for all $(\mu,w)\in\Admabj$, resp.~$(\mu,w)\in\Admab$.

\item[2)] the $kj$-inclusion and $jk$-restriction

\begin{align}\label{EQ.PRP.AF_Cstar_Trace_Dualisation_Admissible_Paths_3}
\inckj:\Admabj\longrightarrow\Admabk,\ \resjk:\Admabk\longrightarrow\Admabj   
\end{align}

\begin{reapply}
\end{reapply}

\noindent by setting

\begin{align}\label{EQ.PRP.AF_Cstar_Trace_Dualisation_Admissible_Paths_4}
\inckj(\mu,w):=(\mu,w),\ \resjk(\mu,w):=\lc\bar{\mu}_{j},\mu\lc{}a\rc{}(1_{A_{j}})^{-1}w_{j}\rc{}
\end{align}

\begin{reapply}
\end{reapply}

\noindent for all $(\mu,w)\in\Admabj$, resp.~$(\mu,w)\in\Admabk$.
\end{itemize}
\end{prp}
\begin{proof}
We show $1)$, i.e.~the case of $k=\infty$. We obtain $2)$ by analogous argument for $k<\infty$. We know $w^{*}$-continuity of inclusion and restriction maps by $1)$ in Proposition \ref{PRP.AF_Cstar_Trace_Dualisation_I}. Upon identifying as per Remark \ref{REM.Admissible_Paths}, $\incj$ maps to $\AC\lc{}I,\SII(A)\rc\times L^{2}\lc{}I,B^{*}\rc_{\w}$ and $\resj$ to $\AC\lc{}I,\SII(A_{j})\rc\times L^{2}\lc{}I,B_{j}\rc$. Using the latter and Proposition \ref{PRP.Wstar_Derivation_QG_I}, we directly verify all claimed continuity equations.
\end{proof}

\begin{rem}\label{REM.AF_Cstar_Trace_Dualisation_Admissible_Paths}
If $(\mu,w)$ satisfies the continuity equation for $\nabla$ on $I$, then $\bar{\mu}(t)_{j}=\bar{\mu}(0)_{j}$ for all $t\in I$ and $j\in\mathbb{N}$ by Proposition \ref{PRP.Continuity_Equation_Mass_Preservation}. Non-trivial restriction requires $\mu(0)_{j}\neq 0$ in each case. If we apply restriction maps as per Proposition \ref{PRP.AF_Cstar_Trace_Dualisation_Admissible_Paths}, then we either assume $\mu(0)_{j}\neq 0$ for all $j\in\mathbb{N}$ as part of a statement itself or we assume it implicitly without loss of generality since Remark \ref{REM.AF_Cstar_Trace_Dualisation_Paths} ensures $\mu(0)_{j}\neq 0$ for a.e.~$j\in\mathbb{N}$.
\end{rem}

%%%%%%%%%%%%%
%%% PART %%%%
%%%%%%%%%%%%%

\subsubsection*{Energy functionals from quasi-entropies}

Definition \ref{DFN.Energy_Functional} describes energy functionals given by integrating quasi-entropies evaluated on admissible paths. Note Remark \ref{REM.Energy_Functional}. Definition \ref{DFN.Energy_Functional_Extension} gives an a priori different description. They coincide on admissible paths. Proposition \ref{PRP.Energy_Functional_Restriction} extends results in Theorem \ref{THM.QE_AF} concerning inclusion and restriction maps to energy functionals.\par
Let $(\phi,\bpsi,\gamma,\nabla)$ be noncommutative differential structure for tracial AF-$C^{*}$-algebras $(A,\tau)$ and $(B,\omega)$ in $\lc{}f,\theta\rc$-setting.

\begin{dfn}\label{DFN.Energy_Functional} 
We define the energy functional $E^{f,\theta}:\Adm\longrightarrow [0,\infty]$ by setting

\begin{align}\label{EQ.DFN.Energy_Functional_1}
E^{f,\theta}(\mu,w):=\int_{a}^{b}\mathcal{I}^{f,\theta}\lc\mu(t),\mu(t),w(t)\rc{}dt
\end{align}

\noindent for all $[a,b]\subset\mathbb{R}$ and $(\mu,w)\in\Admab$.
\end{dfn}

\begin{ntn}\label{NTN.Energy_Functional} 
For all $j\in\mathbb{N}$, let $E_{j}^{f,\theta}$ denote energy functional in Definition \ref{DFN.Energy_Functional} for the induced noncommutative differential structure $\lc\phi_{j},\bpsi_{j},\gamma_{j},\nabla_{\hspace{-0.055cm} j}\rc$.
\end{ntn}

\begin{rem}\label{REM.Energy_Functional} 
For all $j\in\mathbb{N}$, Equation \ref{EQ.DFN.Energy_Functional_1} is

\begin{align}\label{EQ.REM.Energy_Functional_1}
E_{j}^{f,\theta}(\mu,w)=\int_{a}^{b}\mathcal{I}_{j}^{f,\theta}\lc\mu(t),\mu(t),w(t)\rc{}dt
\end{align}

\noindent for all $[a,b]\subset\mathbb{R}$ and $(\mu,w)\in\Admabj$.
\end{rem}

\begin{prp}\label{PRP.Energy_Functional_Restriction}
Let $[a,b]\subset\mathbb{R}$ and $j\leq k$ in $\mathbb{N}$.

\begin{itemize}
\item[1)] For all $(\mu,w)\in\Admabj\lc\mu^{0},\mu^{1}\rc$, we have

\begin{align}\label{EQ.PRP.Energy_Functional_Restriction_1}
E_{j}^{f,\theta}(\mu,w)=E_{k}^{f,\theta}\lc\inckj(\mu,w)\rc{}=E^{f,\theta}\lc\incj(\mu,w)\rc{}.
\end{align}

\begin{reapply}
\end{reapply}

\item[2)] Assume $\mu^{0}(1_{A_{j}})\neq 0$ in all statements below.

\begin{itemize}\label{EQ.PRP.Energy_Functional_Restriction_2}
\item[2.1)] For all $(\mu,w)\in\Admab\lc\mu^{0},\mu^{1}\rc$, we have

\begin{align}
E_{j}^{f,\theta}\lc\resj(\mu,w)\rc\leq\mu^{0}(1_{A_{j}})^{-1}E^{f,\theta}(\mu,w).
\end{align}

\begin{reapply}
\end{reapply}

\item[2.2)] For all $(\mu,w)\in\Admabk\lc\mu^{0},\mu^{1}\rc$, we have

\begin{align}\label{EQ.PRP.Energy_Functional_Restriction_3}
E_{j}^{f,\theta}\lc\resjk(\mu,w)\rc\leq\mu^{0}(1_{A_{j}})^{-1}E_{k}^{f,\theta}(\mu,w).
\end{align}

\begin{reapply}
\end{reapply}

\end{itemize}

\begin{reapply}
\end{reapply}

\end{itemize}
\end{prp}
\begin{proof}
Equation \ref{EQ.REM.Energy_Functional_1} shows $1)$ by $2)$ in Theorem \ref{THM.QE_AF}. We show $2)$. Assume its setting. For all $\mu,\eta\in A_{+}^{*}$, $w\in B^{*}$ and $\lambda\geq 0$, get $\mathcal{I}^{f,\theta}\lc\lambda\mu,\lambda\eta,\lambda w\rc{}=\lambda\mathcal{I}^{f,\theta}(\mu,\eta,w)$ by construction of quasi-entropies. For all $t\in [0,1]$, Equation \ref{EQ.PRP.Continuity_Equation_Mass_Preservation_2} shows $\mu(t)(1_{A_{j}})=\mu^{0}(1_{A_{j}})\neq 0$. Using the latter, we obtain $2)$ by $3)$ in Theorem \ref{THM.QE_AF}.
\end{proof}

Proposition \ref{PRP.Energy_Functional_Reparametrisation} gives a change of variables formula for energy functionals. For this, Remark \ref{REM.Change_Of_Variables} states a general one for reparametrisations of measurable functions \lc{}cf.~Corollary 6 to Theorem 3 in \cite{ART.Ser_Var.1969.General_Chain_Rule}\rc{}. We commonly use affine transformations as per Remark \ref{REM.Energy_Functional_Reparametrisation}. Proposition \ref{PRP.Energy_Functional_Lipschitz_Sq} extends $5)$ in Theorem \ref{THM.QE_AF} to energy functionals and derives Lipschitz continuity. 

\begin{rem}\label{REM.Change_Of_Variables}
Let $g:[a,b]\longrightarrow\mathbb{R}$ be Lebesgue integrable. If $\varphi:[c,d]\longrightarrow [a,b]$ is monotone and absolutely continuous, then $\dot{\varphi}\cdot \lc{}g\circ\varphi\rc$ is Lebesgue integrable and we have 

\begin{align}\label{EQ.REM.Change_Of_Variables_1}
\int_{\varphi\lc{}c\rc{}}^{\varphi\lc{}d\rc{}}g(t)dt=\int_{a}^{b}\dot{\varphi}(t)g\lc\varphi(t)\rc{}dt.
\end{align}
\end{rem}

\begin{prp}\label{PRP.Energy_Functional_Reparametrisation}
Let $\varphi:[c,d]\longrightarrow [a,b]$ be monotone and absolutely continuous with $\dot{\varphi}(t)\neq 0$ for a.e.~$t\in [c,d]$. If $(\mu,w)\in\Admab\lc\mu^{0},\mu^{1}\rc$, then $\lc\mu\circ\varphi,\dot{\varphi}\cdot \lc{}w\circ\varphi\rc\rc\in\Admcd\lc\mu^{0},\mu^{1}\rc$ and we have

\begin{align}\label{EQ.PRP.Energy_Functional_Reparametrisation_1}
E^{f,\theta}(\mu,w)=\int_{c}^{d}\dot{\varphi}(t)^{-1}\mathcal{I}^{f,\theta}\lc\mu\lc\varphi(t)\rc{},\mu\lc\varphi(t)\rc{},\dot{\varphi}(t)w\lc\varphi(t)\rc\rc{}dt.
\end{align}
\end{prp}
\begin{proof}
Since $\varphi$ is monotone and $t\mapsto\mu(t)(x)$ is absolutely continuous for all $x\in A_{0}$, the chain rule holds for $\mu\circ\varphi$ upon evaluation by Theorem 2 and Corollary 4 in \cite{ART.Ser_Var.1969.General_Chain_Rule}. Thus $\lc\mu\circ\varphi,\dot{\varphi}\cdot \lc{}w\circ\varphi\rc\rc$ satisfies the continuity equation for $\nabla$ on $[c,d]$. All remaining properties of admissible paths are inherited. For all $\mu,\eta\in A_{+}^{*}$, $w\in B^{*}$ and $\lambda\geq 0$, get $\mathcal{I}^{f,\theta}\lc\mu,\eta,\lambda w\rc{}=\lambda^{2}\mathcal{I}^{f,\theta}(\mu,\eta,w)$ by construction of quasi-entropies. Using the latter, Equation \ref{EQ.REM.Change_Of_Variables_1} shows Equation \ref{EQ.PRP.Energy_Functional_Reparametrisation_1} immediately.
\end{proof}

\begin{rem}\label{REM.Energy_Functional_Reparametrisation}
Let $[a,b],[c,d]\subset\mathbb{R}$ s.t.~$a\neq b,c\neq d$. We define monotone and absolutely continuous homeomorphism $\varphi:[c,d]\longrightarrow [a,b]$ by setting 

\begin{align}\label{EQ.REM.Energy_Functional_Reparametrisation_1}
\varphi(t):=\frac{b-a}{d-c}\lc{}t-c\rc{}+a    
\end{align}

\noindent for all $t\in [c,d]$. Using Proposition \ref{PRP.Energy_Functional_Reparametrisation}, Equation \ref{EQ.PRP.Energy_Functional_Reparametrisation_1} shows

\begin{align}\label{EQ.REM.Energy_Functional_Reparametrisation_2}
E^{f,\theta}(\mu,w)=\frac{d-c}{b-a}E^{f,\theta}\bigg(\mu\circ\varphi,\frac{b-a}{d-c}\lc{}w\circ\varphi\rc\bigg).
\end{align}
\end{rem}

%NEWPAGE
%NEWPAGE
%NEWPAGE

\pagebreak

%NEWPAGE
%NEWPAGE
%NEWPAGE

\begin{prp}\label{PRP.Energy_Functional_Lipschitz_Sq}
Let $[a,b]\subset\mathbb{R}$.

\begin{itemize}
\item[1)] For all $(\mu,w)\in\Admab$, we have

\begin{align}\label{EQ.PRP.Energy_Functional_Lipschitz_Sq_1}
\|w\|_{L^{2}([a,b],B^{*})_{\w}}^{2}\leq E^{f,\theta}(\mu,w)\cdot 2^{-\theta}\lc\|\phi\|_{1}^{\theta}+\|\bpsi\|_{1}^{\theta}\rc\cdot \|\omega\|^{1-\theta}.
\end{align}

\begin{reapply}
\end{reapply}

\item[2)] For all $(\mu,w)\in\Admab$, $x\in A_{0}$ and $t,s\in [a,b]$, we have

\begin{align}\label{EQ.PRP.Energy_Functional_Lipschitz_Sq_2}
\babsv{1}{\lc\mu(t)-\mu(s)\rc{}(x)}^{2}\leq \absv{1}{t-s}\cdot E^{f,\theta}(\mu,w)\cdot 2^{-\theta}\lc\|\phi\|_{1}^{\theta}+\|\bpsi\|_{1}^{\theta}\rc\cdot \|\omega\|^{1-\theta}\cdot \|\nabla x\|_{B}^{2}.
\end{align}

\begin{reapply}
\end{reapply}

\end{itemize}
\end{prp}
\begin{proof}
Note Equation \ref{EQ.DFN.Bochner_L2_2} ensures $1)$ follows by $5)$ in Theorem \ref{THM.QE_AF}. We show $2)$. For all $(\mu,w)\in\Admab$, $x\in A_{0}$ and $t,s\in [a,b]$, we use the continuity equation and apply H\"older in order to estimate

\begin{align*}
\babsv{1}{\lc\mu(t)-\mu(s)\rc{}(x)} & = \bbbabsv{1}{\int_{s}^{t}\frac{d}{dr}\mu(r)(x)dr} \phantom{\bigg)} \\
& \leq \bbbabsv{1}{\int_{s}^{t}\|w(r)\|_{B^{*}}\|\nabla x\|_{B}dr} \phantom{\bigg)} \\
& \leq \sqrt{\absv{1}{t-s}}\cdot \|w\|_{L^{2}([a,b],B^{*})_{\w}}\cdot \|\nabla x\|_{B}. \phantom{\bigg)}
\end{align*}

\noindent We obtain $2)$ by applying Equation \ref{EQ.PRP.Energy_Functional_Lipschitz_Sq_1} to the above calculation.
\end{proof}

Definition \ref{DFN.Energy_Functional_Extension} gives an a priori different, as well as more general, description of energy functionals than Definition \ref{DFN.Energy_Functional} for a larger domain. Lemma \ref{LEM.Energy_Functional_I} shows both descriptions coincide on admissible paths. Moreover, extensions of energy functionals as per Definition \ref{DFN.Energy_Functional_Extension} are l.s.c~in $w^{*}$-topology. Lemma \ref{LEM.Energy_Functional_II} leverages the latter in order to show l.s.c.~of energy functionals w.r.t.~convergence in canonical topology, and further ensures the direct method in the calculus of variations \cite{BK.DalMas.1993.Gamma_Convergence}\cite{BK.Eva.2010.Partial_Differential_Equations} applies.

\begin{dfn}\label{DFN.Energy_Functional_Extension}
We define $\mathbf{E}^{f,\theta}:\bigcup_{[a,b]\subset\mathbb{R}}L^{2}([a,b],A^{*})_{\w}\times L^{2}([a,b],B^{*})_{\w}\longrightarrow [0,\infty]$ by setting

\begin{align}\label{EQ.DFN.Energy_Functional_Extension_1}
\mathbf{E}^{f,\theta}(\mu,w):=\sup_{j\in\mathbb{N}}\hspace{0.025cm} \int_{a}^{b}\mathcal{I}_{j}^{f,\theta}\lc\mu_{j}(t),\mu_{j}(t),w_{j}(t)\rc{}dt
\end{align}

\noindent for all $[a,b]\subset\mathbb{R}$ and $(\mu,w)\in L^{2}([a,b],A^{*})_{\w}\times L^{2}([a,b],B^{*})_{\w}$.
\end{dfn}

For all $[a,b]\subset\mathbb{R}$, the inclusion in Equation \ref{EQ.SSEC.QOT_DT_CEN_2} extends to 

\begin{align}\label{EQ.SSEC.QOT_DT_CEN_3}
\Admab\subset L^{2}([a,b],A^{*})_{\w}\times L^{2}([a,b],B^{*})_{\w}.  
\end{align}

\noindent Thus Equation \ref{EQ.SSEC.QOT_DT_CEN_3} shows $\Adm\subset\bigcup_{[a,b]\subset\mathbb{R}}L^{2}([a,b],A^{*})_{\w}\times L^{2}([a,b],B^{*})_{\w}$, hence we have functional $\mathbf{E}^{f,\theta}:\Adm\longrightarrow [0,\infty]$ by restricting to $\Adm$.

\begin{lem}\label{LEM.Energy_Functional_I}
For all $[a,b]\subset\mathbb{R}$, $\restr{0.925}{\mathbf{E}^{f,\theta}}{\Admab}$ is l.s.c.~in $w^{*}$-topology. We further have 

\begin{align}\label{EQ.LEM.Energy_Functional_I_1}
E^{f,\theta}=\restr{0.925}{\mathbf{E}^{f,\theta}}{\Adm}.
\end{align}
\end{lem}
\begin{proof}
Let $[a,b]\subset\mathbb{R}$. We show $\restr{0.925}{\mathbf{E}^{f,\theta}}{\Admab}$ is l.s.c.~in $w^{*}$-topology. For all $j\in\mathbb{N}$, we show

\begin{align}\label{EQ.LEM.Energy_Functional_I_2}
(\mu,w)\mapsto\int_{a}^{b}\mathcal{I}_{j}^{f,\theta}\lc\mu_{j}(t),\mu_{j}(t),w_{j}(t)\rc{}dt
\end{align}

\noindent is l.s.c.~in $w^{*}$-topology. Compactness shows pointwise restriction yields $w^{*}$-continuous map from $L^{2}([a,b],A^{*})\times L^{2}([a,b],B^{*})$ to $L^{2}\lc{}[a,b],A_{j}\rc\times L^{2}\lc{}[a,b],B_{j}\rc$. We reduce l.s.c.~in $w^{*}$-topology to the finite-dimensional setting. Assume $A$ and $B$ are finite-dimensional. Note $\mathcal{I}^{f,\theta}$ is jointly convex and l.s.c.~in $w^{*}$-topology by $1)$ in Theorem \ref{THM.QE_AF}. Further note joint convexity implies $\mathbf{E}^{f,\theta}$ is jointly convex. Following Remark \ref{REM.Admissible_Paths}, it suffices to show sequential l.s.c.~in norm as the domain is norm equivalent to a product of Hilbert spaces. We extract pointwise a.e.-converging subsequences and conclude by l.s.c.~of $\mathcal{I}^{f,\theta}$ in $w^{*}$-topology and Fatou's lemma. We obtain l.s.c.~in $w^{*}$-topology as discussed above.\par
Return to the general setting. We show Equation \ref{EQ.LEM.Energy_Functional_I_1}. Let $(\mu,w)\in\Admab$. For all $k\in\mathbb{N}$, definition of quasi-entropy as suprema yields

\begin{align}\label{EQ.LEM.Energy_Functional_I_3}
\mathcal{I}_{k}^{f,\theta}\lc\mu_{k}(t),\mu_{k}(t),w_{k}(t)\rc\leq\mathcal{I}^{f,\theta}\lc\mu(t),\mu(t),w(t)\rc{}    
\end{align}

\noindent for a.e.~$t\in [a,b]$. Note we restrict pointwise. Equation \ref{EQ.LEM.Energy_Functional_I_3} shows $\mathbf{E}^{f,\theta}(\mu,w)\leq E^{f,\theta}(\mu,w)$. Using $1.2)$ in Proposition \ref{PRP.AF_Cstar_Trace_Dualisation_II}, get $w^{*}$-$\lim_{j\in\mathbb{N}}\mu_{j}(t)=\mu(t)$ and $w^{*}$-$\lim_{j\in\mathbb{N}}w_{j}(t)=w(t)$ for a.e.~$t\in [a,b]$. Then l.s.c.~of $\mathcal{I}^{f,\theta}$ and Fatou's lemma imply

\begin{align}\label{EQ.LEM.Energy_Functional_I_4}
E^{f,\theta}(\mu,w)\leq\liminf_{j\in\mathbb{N}}\hspace{0.025cm} \int_{a}^{b}\mathcal{I}_{j}^{f,\theta}\lc\mu_{j}(t),\mu_{j}(t),w_{j}(t)\rc{}dt.
\end{align}

\noindent Yet the right-hand side of Equation \ref{EQ.LEM.Energy_Functional_I_4} equals $\mathbf{E}^{f,\theta}(\mu,w)$ by $3)$ in Theorem \ref{THM.QE_AF}. We altogether obtain our second claim.
\end{proof}

\begin{lem}\label{LEM.Energy_Functional_II}
Let $(\mu^{n},w^{n})_{n\in\mathbb{N}}\subset\Admab$.

\begin{itemize}
\item[1)] If $(\mu,w)=\lim_{n\in\mathbb{N}}\hspace{0.025cm} (\mu^{n},w^{n})_{n\in\mathbb{N}}$ in $\Admab$, then $E^{f,\theta}(\mu,w)\leq\liminf_{n\in\mathbb{N}}E^{f,\theta}(\mu^{n},w^{n})$.

\item[2)] If $\liminf_{n\in\mathbb{N}}E^{f,\theta}(\mu^{n},w^{n})<\infty$ and $t_{0}\in [a,b]$ s.t.~$w^{*}$-$\lim_{n\in\mathbb{N}}\mu^{n}(t_{0})\in\SII(A)$, then there exists a subsequence of $(\mu^{n},w^{n})_{n\in\mathbb{N}}$ converging in canonical topology.
\end{itemize}
\end{lem}
\begin{proof}
By $2)$ in Proposition \ref{PRP.Admissible_Paths_Convergence}, convergence in $\Admab$ implies $w^{*}$-convergence in $L^{2}([a,b],A^{*})_{\w}\times L^{2}([a,b],B^{*})_{\w}$. Thus $1)$ follows from Lemma \ref{LEM.Energy_Functional_I}. We show $2)$. Assume its setting. By passing to subsequences, we furthermore assume $\sup_{n\in\mathbb{N}}E^{f,\theta}(\mu^{n},w^{n})<\infty$ without loss of generality. This is necessary for uniform bounds.\par
Following Remark \ref{REM.Weak_Metrisability}, we metricise the $w^{*}$-topology on $\overline{\SII(A)}$ using $\{x_{n}\}_{n\in\mathbb{N}}\subset A_{0}$ for which the linear span lies $\|.\|_{A}$-dense in $A$ and s.t.~$\dblv{}\nabla x_{n}\dblv_{B}\leq 1$ for all $n\in\mathbb{N}$. Using bounded limit inferior and $2)$ in Proposition \ref{PRP.Energy_Functional_Lipschitz_Sq}, we see $\lset\mu^{n}\rset_{n\in\mathbb{N}}\subset AC([a,b],\overline{\SII(A)})$ is equicontinuous. Note the Arzel\`{a}-Ascoli theorem applies to paths in compact metric spaces \cite{BK.Kel.1975.Topology}. We extract converging sub\-sequence $\lset\mu^{n}\rset_{n\in\mathbb{N}}$. For all $t\in [a,b]$, we obtain $\mu(t):=w^{*}$-$\lim_{n\in\mathbb{N}}\mu^{n}(t)\in\overline{\SII(A)}$. Using $1)$ in Proposition \ref{PRP.Energy_Functional_Lipschitz_Sq} instead, get uniform bound on $\lset{}w^{n}\rset_{n\in\mathbb{N}}\subset L^{2}([a,b],B^{*})_{\w}$. We extract $w^{*}$-converging subsequence $\lset{}w^{n}\rset_{n\in\mathbb{N}}$. Finally, we conclude by applying $1)$ in Proposition \ref{PRP.Admissible_Paths_Convergence} to $(\mu^{n},w^{n})_{n\in\mathbb{N}}$.
\end{proof}

Definition \ref{DFN.Energy_Functional_Restricted} gives suitable restriction of energy functionals. Let $[a,b]\subset\mathbb{R}$. For all $j\in\mathbb{N}$, we know $\resj\circ\incj=\incj$ and $\resjk\circ\inckj=\inckj$ by $2)$ in Proposition \ref{PRP.AF_Cstar_Trace_Dualisation_I}. We therefore identify

\begin{align}\label{EQ.SSEC.QOT_DT_CEN_4}
\Admabj\cong\incj\big(\Admabj\big)\subset\Admab
\end{align}

\noindent in each case. Notation \ref{NTN.AF_Cstar_Trace_Dualisation} thereby likewise extends to admissible paths. For all $j\in\mathbb{N}$ and $(\mu,w)\in\Admabj$, note $1)$ in Proposition \ref{PRP.Energy_Functional_Restriction} shows

\begin{align}\label{EQ.SSEC.QOT_DT_CEN_5}
E_{j}^{f,\theta}\lc\resj(\mu,w)\rc{}=E_{j}^{f,\theta}(\mu,w)=E^{f,\theta}(\mu,w)
\end{align}

\noindent under identification as per Equation \ref{EQ.SSEC.QOT_DT_CEN_4}. Note Equation \ref{EQ.SSEC.QOT_DT_CEN_5} shows Definition \ref{DFN.Energy_Functional_Restricted} extends Equation \ref{EQ.REM.Energy_Functional_1}, i.e.~Definition \ref{DFN.Energy_Functional} for induced noncommutative differential structures. We account for rescaling of norm.

\begin{dfn}\label{DFN.Energy_Functional_Restricted} 
We define the $j$-th restricted energy functional $E^{f,\theta}:\Adm\longrightarrow [0,\infty]$ for $j\in\mathbb{N}$ by setting

\begin{align*}
E_{j}^{f,\theta}(\mu,w):=
\begin{cases}
E_{j}^{f,\theta}\lc\resj(\mu,w)\rc{} & \If\ \mu_{j}\lc{}a\rc\neq 0, \\
0 & \Else.
\end{cases}
\end{align*}
\end{dfn}

\begin{cor}\label{COR.Energy_Functional}
Let $(\mu,w)\in\Admab$. If $E^{f,\theta}(\mu,w)<\infty$, then

\begin{itemize}
\item[1)] $(\mu,w)=\lim_{j\in\mathbb{N}}\resj(\mu,w)$ in $\Admab$,

\item[2)] $E^{f,\theta}(\mu,w)=\lim_{j\in\mathbb{N}}E_{j}^{f,\theta}(\mu,w)$.
\end{itemize}
\end{cor}
\begin{proof}
Let $(\mu,w)\in\Admab$ s.t.~$E^{f,\theta}(\mu,w)<\infty$. Using $1)$ in Proposition \ref{PRP.Energy_Functional_Lipschitz_Sq} to have uniform bounds, get $1)$ by dominated convergence. The necessary pointwise convergence for $\mu$ and $w$ holds by $1)$ in Proposition \ref{PRP.AF_Cstar_Trace_Dualisation_II} as for Remark \ref{REM.AF_Cstar_Trace_Dualisation_Paths}. Lemma \ref{LEM.Energy_Functional_I} and $1)$ in Lemma \ref{LEM.Energy_Functional_II} imply $2)$ since rescaling of norm vanishes in the limit.
\end{proof}

Upon restricting domains to sets of admissible paths with identical interval and marginals, Theorem \ref{THM.Energy_Functional_Representation} shows energy functionals are $\Gamma$-limits of restrictions as per Definition \ref{DFN.Energy_Functional_Restricted}. Sequential descriptions of $\Gamma$-limits require first countability of the domain \cite{BK.DalMas.1993.Gamma_Convergence}. This does not hold for function spaces parametrised by intervals with more than one point \lc{}cf.~Corollary 1.5 in \cite{ART.McCoy.1980.Countability_Function_Spaces}\rc{}. We fix marginals up to restriction in order to get sequential descriptions as per Definition \ref{DFN.Energy_Functional_Lower_Upper_Limits} for non-trivial intervals.

\begin{dfn}\label{DFN.Energy_Functional_Lower_Upper_Limits}
Let $[a,b]\subset\mathbb{R}$.

\begin{itemize}
\item[1)] For all $(\mu,w)\in\Admab$, let $\mathcal{C}(\mu,w)$ be the set of all $\lc\mu^{j},w^{j}\rc_{j\in\mathbb{N}}\subset\Admab$ s.t.~

\begin{itemize}
\item[1.1)] $\lc\mu^{j},w^{j}\rc\in\Admabj\lc\bar{\mu}\lc{}a\rc_{j},\bar{\mu}\lc{}b\rc_{j}\rc$ for a.e.~$j\in\mathbb{N}$, \phantom{\big)}

\item[1.2)] $(\mu,w)=\lim_{j\in\mathbb{N}}\hspace{0.025cm}\lc\mu^{j},w^{j}\rc$ in $\Admab$. \phantom{\big)}
\end{itemize}

\begin{reapply}
\end{reapply}

\item[2)] We define the restricted lower $\Gamma$-limit, resp.~the restricted upper $\Gamma$-limit of $E^{f,\theta}$ by setting

\begin{itemize}
\item[2.1)] $E_{L}^{f,\theta}(\mu,w):=\inf_{C(\mu,w)}\liminf_{j\in\mathbb{N}}E_{j}^{f,\theta}\lc\mu^{j},w^{j}\rc$, \phantom{\big)}

\item[2.2)] $E_{U}^{f,\theta}(\mu,w):=\inf_{C(\mu,w)}\limsup_{j\in\mathbb{N}}E_{j}^{f,\theta}\lc\mu^{j},w^{j}\rc$ \phantom{\big)}
\end{itemize}

\begin{reapply}
\end{reapply}

\noindent for all $(\mu,w)\in\Admab$.
\end{itemize}
\end{dfn}

\begin{rem}\label{REM.Energy_Functional_Lower_Upper_Limits}
Note $1)$ in Definition \ref{DFN.Energy_Functional_Lower_Upper_Limits} equivalently uses truncation $j\geq m$ for fixed but arbitrary $m\in\mathbb{N}$ rather than a.e.~$j\in\mathbb{N}$. The sets we obtain are identical. We use this in the proof of Theorem \ref{THM.Energy_Functional_Representation} if the norm vanishes for finitely many indices.
\end{rem}

\begin{thm}\label{THM.Energy_Functional_Representation}
Let $(\phi,\bpsi,\gamma,\nabla)$ be noncommutative differential structure for tracial AF-$C^{*}$-algebras $(A,\tau)$ and $(B,\omega)$ in $\lc{}f,\theta\rc$-setting. For all $[a,b]\subset\mathbb{R}$ and $\mu^{0},\mu^{1}\in\SII(A)$, get

\begin{align}\label{EQ.THM.Energy_Functional_Representation_1}
\restr{0.925}{E^{f,\theta}}{\Admab\lc\mu^{0},\mu^{1}\rc{}}=\Gamma\textrm{-}\lim_{j\in\mathbb{N}}\hspace{0.05cm} \restr{0.925}{E_{j}^{f,\theta}}{\Admab\lc\mu^{0},\mu^{1}\rc{}}
\end{align}

\noindent and

\begin{align}\label{EQ.THM.Energy_Functional_Representation_2}
\Gamma\textrm{-}\lim_{j\in\mathbb{N}}\hspace{0.05cm} \restr{0.925}{E_{j}^{f,\theta}}{\Admab\lc\mu^{0},\mu^{1}\rc{}}=\restr{0.925}{E_{L}^{f,\theta}}{\Admab\lc\mu^{0},\mu^{1}\rc{}}=\restr{0.925}{E_{U}^{f,\theta}}{\Admab\lc\mu^{0},\mu^{1}\rc{}}.    
\end{align}
\end{thm}
\begin{proof}
Let $[a,b]\subset\mathbb{R}$ and $\mu^{0},\mu^{1}\in\SII(A)$. We use the canonical topology on $\Admab$. Set $X:=\Admab\lc\mu^{0},\mu^{1}\rc$. For all $(\mu,w)\in X$, let $\mathcal{N}(\mu,w)\subset X$ be its set of open neighbourhoods in the relative topology given by $X\subset\Admab$. We show Equation \ref{EQ.THM.Energy_Functional_Representation_1} in the first part of our proof using standard bounds for lower and upper $\Gamma$-limits. We show Equation \ref{EQ.THM.Energy_Functional_Representation_2} in the second part of our proof using Lemma \ref{LEM.Energy_Functional_II} and Corollary \ref{COR.Energy_Functional}. We truncate indices $j\geq m$ in $\mathbb{N}$ as per Remark \ref{REM.Energy_Functional_Lower_Upper_Limits} throughout this proof.\par

%NEWPAGE
%NEWPAGE
%NEWPAGE

\newpage

%NEWPAGE
%NEWPAGE
%NEWPAGE

Let $(\mu,w)\in X$. We show the energy functional bounds its upper $\Gamma$-limit, i.e.~

\begin{align}\label{EQ.THM.Energy_Functional_Representation_3}
\sup_{U\in\mathcal{N}(\mu,w)}\hspace{0.025cm} \limsup_{j\in\mathbb{N}}\hspace{0.025cm} \inf_{(\eta,v)\in U}\hspace{0.025cm} E_{j}^{f,\theta}(\eta,v)\leq E^{f,\theta}(\mu,w).
\end{align}

\noindent Lemma \ref{LEM.Energy_Functional_II} shows $\restr{0.925}{E^{f,\theta}}{X}=\restr{0.925}{\mathbf{E}^{f,\theta}}{X}$. We further know $\lim_{j\in\mathbb{N}}\mu^{0}(1_{A_{j}})=\lim_{j\in\mathbb{N}}\dblv{}\mu_{j}^{0}\dblv_{A^{*}}=1$ by $1.1)$ in Proposition \ref{PRP.AF_Cstar_Trace_Dualisation_II}. For all $U\in\mathcal{N}(\mu,w)$, we have $(\mu,w)\in U$ by definition of open neighbourhood and use $2.1)$ in Proposition \ref{PRP.Energy_Functional_Restriction} in order to estimate

\begin{align*}
\limsup_{j\in\mathbb{N}}\hspace{0.025cm} \inf_{(\eta,v)\in U}\hspace{0.025cm} E_{j}^{f,\theta}(\eta,v) & \leq \limsup_{j\in\mathbb{N}}\hspace{0.025cm} E_{j}^{f,\theta}(\mu,w) \phantom{\bigg)} \\
& \leq \limsup_{j\in\mathbb{N}}\hspace{0.025cm} \mu^{0}(1_{A_{j}})^{-1}\cdot E^{f,\theta}(\mu,w) \phantom{\bigg)} \\
& = E^{f,\theta}(\mu,w). \phantom{\bigg)}
\end{align*}

\noindent Equation \ref{EQ.THM.Energy_Functional_Representation_3} follows by applying the supremum in $\mathcal{N}(\mu,w)$.\par
We show the energy functional is bounded by its lower $\Gamma$-limit, i.e.~

\begin{align}\label{EQ.THM.Energy_Functional_Representation_4}
E^{f,\theta}(\mu,w)\leq\sup_{U\in\mathcal{N}(\mu,w)}\hspace{0.025cm} \liminf_{j\in\mathbb{N}}\hspace{0.025cm} \inf_{(\eta,v)\in U}\hspace{0.025cm} E_{j}^{f,\theta}(\eta,v).
\end{align}

\noindent For all $U\in\mathcal{N}(\mu,w)$, the right-hand side of Equation \ref{EQ.THM.Energy_Functional_Representation_4} is either finite or our claim holds. We assume finiteness without loss of generality. Let $U\in\mathcal{N}(\mu,w)$. We construct a sequence associated to each such open set. For a.e.~$j\in\mathbb{N}$, we have

\begin{align}\label{EQ.THM.Energy_Functional_Representation_5}
E\lc{}U,j\rc{}:=\inf_{(\eta,v)\in U}\hspace{0.025cm} E_{j}^{f,\theta}(\eta,v)<\infty
\end{align}

\noindent by finiteness. We consider subsequence $\lset{}E\lc{}U,j_{n}\rc\rset_{n\in\mathbb{N}}\subset [0,\infty)$ s.t.

\begin{align}\label{EQ.THM.Energy_Functional_Representation_6}
\lim_{n\in\mathbb{N}}\hspace{0.025cm} E\lc{}U,j_{n}\rc{}=\liminf_{j\in\mathbb{N}}\hspace{0.025cm} \inf_{(\eta,v)\in U}\hspace{0.025cm} E_{j}^{f,\theta}(\eta,v)<\infty.
\end{align}

\noindent For a.e.~$j\in\mathbb{N}$, select $\lc\mu^{j},w^{j}\rc\in U$ s.t.~

\begin{align}\label{EQ.THM.Energy_Functional_Representation_7}
\inf_{(\eta,v)\in U}\hspace{0.025cm} E_{j}^{f,\theta}(\eta,v)=E_{j}^{f,\theta}\lc\mu^{j},w^{j}\rc{}+j^{-1}.
\end{align}

\noindent Since marginals are fixed, we use $2)$ in Lemma \ref{LEM.Energy_Functional_II} for $t_{0}=0$ to get subsequence of $\textrm{res}_{jn}\lc\mu^{j_{n}},w^{j_{n}}\rc_{n\in\mathbb{N}}\subset X$ converging in $\Admab$. Note convergence in Equation \ref{EQ.THM.Energy_Functional_Representation_6} is invariant under passing to a subsequence. We relabel the subsequence obtained by Lemma \ref{LEM.Energy_Functional_II} as $\textrm{res}_{jn}\lc\mu^{j_{n}},w^{j_{n}}\rc_{n\in\mathbb{N}}$. Let $\lc\mu^{U},w^{U}\rc$ be its limit in canonical topology.\par

%NEWPAGE
%NEWPAGE
%NEWPAGE

\pagebreak

%NEWPAGE
%NEWPAGE
%NEWPAGE

Using the respective sequence constructed as above in each case, i.e.~s.t.~we have $\lc\mu^{U},w^{U}\rc{}=\lim_{n\in\mathbb{N}}\textrm{res}_{jn}\lc\mu^{j_{n}},w^{j_{n}}\rc$ in $\Admab$, Equation \ref{EQ.THM.Energy_Functional_Representation_5} and Equation \ref{EQ.THM.Energy_Functional_Representation_7} show

\begin{align}\label{EQ.THM.Energy_Functional_Representation_8}
\lim_{n\in\mathbb{N}}\hspace{0.025cm} \bbabsv{1}{\hspace{0.05cm} E_{j_{n}}^{f,\theta}\lc\mu^{j_{n}},w^{j_{n}}\rc{}-E\lc{}U,j_{n}\rc{}\hspace{0.035cm} }=\lim_{n\in\mathbb{N}}\hspace{0.025cm} j_{n}^{-1}=0
\end{align}

\noindent for all $U\in\mathcal{N}(\mu,w)$. Using $1)$ in Lemma \ref{LEM.Energy_Functional_II}, Equation \ref{EQ.THM.Energy_Functional_Representation_6} and Equation \ref{EQ.THM.Energy_Functional_Representation_8} in turn let us calculate

\begin{align*}
E^{f,\theta}\lc\mu^{U},w^{U}\rc{} & \leq \liminf_{n\in\mathbb{N}}\hspace{0.025cm} E_{j_{n}}^{f,\theta}\lc\mu^{j_{n}},w^{j_{n}}\rc \phantom{\bigg)} \\
& = \lim_{n\in\mathbb{N}}\hspace{0.025cm} E\lc{}U,j_{n}\rc \phantom{\bigg)} \\
& = \liminf_{j\in\mathbb{N}}\hspace{0.025cm} \inf_{(\eta,v)\in U}\hspace{0.025cm} E_{j}^{f,\theta}(\eta,v) \phantom{\bigg)}
\end{align*}

\noindent in each case. Equation \ref{EQ.THM.Energy_Functional_Representation_4} therefore follows if

\begin{align}\label{EQ.THM.Energy_Functional_Representation_9}
E^{f,\theta}(\mu,w)\leq\sup_{U\in\mathcal{N}(\mu,w)}\hspace{0.025cm} E^{f,\theta}\lc\mu^{U},w^{U}\rc{}.
\end{align}

\noindent For all $U\in\mathcal{N}(\mu,w)$, we have $\lc\mu^{U},w^{U}\rc\in\overline{U}$ by construction. We thus show Equation \ref{EQ.THM.Energy_Functional_Representation_9} by constructing a sequence of open sets s.t.~Lemma \ref{LEM.Energy_Functional_II} lets us extract subsequence converging to $(\mu,w)$ in $\Admab$ and apply l.s.c.~of the energy functional.\par
Let $K\subset L^{2}([a,b],B^{*})_{\w}$ be a norm bounded closed set s.t.~$w\in K$. We consider $\SII(A)$ and $K$ as metric spaces using $w^{*}$-topology as per Remark \ref{REM.Weak_Metrisability}. All open balls used in this proof are in one of these two metric spaces. Let $\lset{}t_{n}\rset_{n\in\mathbb{N}}\subset [a,b]$ be a dense and monotone increasing sequence. For all $n\in\mathbb{N}$ and $t\in [a,b]$, we define open $V_{n,t}\subset\SII(A)$ by setting

\begin{align*}
V_{n,t}:=
\begin{cases}
B_{n^{-1}}\lc\mu(t_{l})\rc{} & \If\ t=t_{l}\ \textrm{for}\ l\leq n, \\
\SII(A) & \Else.
\end{cases}
\end{align*}

\noindent For all $n\in\mathbb{N}$, set $V_{n}:=\prod_{t\in [a,b]}V_{n,t}$. Each of the latter is an open set in

\begin{align}\label{EQ.THM.Energy_Functional_Representation_10}
\SII(A)^{[a,b]}:=\prod_{t\in [a,b]}\SII(A).
\end{align}

\noindent There exists open sets $\lset{}W_{n}\rset_{n\in\mathbb{N}}\subset\PII\lc{}L^{2}([a,b],B^{*})_{\w}\rc$, the latter denoting the respective power set, s.t.~for all $n\in\mathbb{N}$, we have $W_{n}\cap K=B_{n^{-1}}(w)$ and $W_{n+1}\subset W_{n}$. For all $n\in\mathbb{N}$, we obtain open set $V_{n}\times W_{n}\subset\SII(A)^{[a,b]}\times L^{2}([a,b],B^{*})_{\w}$ and therefore open set

\begin{align}\label{EQ.THM.Energy_Functional_Representation_11}
U_{n}:=\big(V_{n}\times W_{n}\big)\cap X\subset X.
\end{align}

The above construction further ensures

\begin{align}\label{EQ.THM.Energy_Functional_Representation_12}
U_{n+1}\subset U_{n}
\end{align}

\noindent for all $n\in\mathbb{N}$, as well as

\begin{align}\label{EQ.THM.Energy_Functional_Representation_13}
\big\{\hspace{0.025cm} (\mu,w)\hspace{0.025cm} \big\}=\bigcap_{n\in\mathbb{N}}\overline{U}_{n}.
\end{align}

\noindent For all $n\in\mathbb{N}$, let $(\mu^{n},w^{n})\in\overline{U}_{n}$. Using $2)$ in Lemma \ref{LEM.Energy_Functional_II} for $t_{0}=0$, get subsequence converging to $(\mu,w)$ in $\Admab$ by Equation \ref{EQ.THM.Energy_Functional_Representation_12} and Equation \ref{EQ.THM.Energy_Functional_Representation_13}. Equation \ref{EQ.THM.Energy_Functional_Representation_9} follows by applying $1)$ in Lemma \ref{LEM.Energy_Functional_II} to such a subsequence. Equation \ref{EQ.THM.Energy_Functional_Representation_4} follows as discussed above. Altogether, we have Equation \ref{EQ.THM.Energy_Functional_Representation_3} and Equation \ref{EQ.THM.Energy_Functional_Representation_4}. Using standard arguments for $\Gamma$-convergence from upper and lower $\Gamma$-limits \cite{BK.DalMas.1993.Gamma_Convergence}, we see Equation \ref{EQ.THM.Energy_Functional_Representation_3} and Equation \ref{EQ.THM.Energy_Functional_Representation_4} show Equation \ref{EQ.THM.Energy_Functional_Representation_1} immediately.\par
We have $E_{L}^{f,\theta}\leq E_{U}^{f,\theta}$ by definition. We are left to show

\begin{align}\label{EQ.THM.Energy_Functional_Representation_14}
\restr{0.925}{E_{U}^{f,\theta}}{X}\leq\restr{0.925}{E^{f,\theta}}{X}\leq\restr{0.925}{E_{L}^{f,\theta}}{X}.
\end{align}

\noindent Using Equation \ref{EQ.SSEC.QOT_DT_CEN_5} and $1)$ in Lemma \ref{LEM.Energy_Functional_II}, we directly verify 

\begin{align}\label{EQ.THM.Energy_Functional_Representation_15}
\restr{0.925}{E^{f,\theta}}{X}\leq\restr{0.925}{E_{L}^{f,\theta}}{X}.
\end{align}

\noindent Equation \ref{EQ.THM.Energy_Functional_Representation_15} reduces us to $(\mu,w)\in X$ s.t.~$E^{f,\theta}(\mu,w)<\infty$. We assume the latter without loss of generality. Thus $\lset\resj(\mu,w)\rset_{j\in\mathbb{N}}\in\mathcal{C}(\mu,w)$ by $1)$ in Corollary \ref{COR.Energy_Functional}, hence

\begin{align}\label{EQ.THM.Energy_Functional_Representation_16}
\restr{0.925}{E_{U}^{f,\theta}}{X}\leq \restr{0.925}{E^{f,\theta}}{X}
\end{align}

\noindent by $2)$ in Corollary \ref{COR.Energy_Functional}. Equation \ref{EQ.THM.Energy_Functional_Representation_15} and Equation \ref{EQ.THM.Energy_Functional_Representation_16} show Equation \ref{EQ.THM.Energy_Functional_Representation_14}.
\end{proof}

%%%%%%%%%%%%%%%%%%%
%%% SUBSECTION %%%%
%%%%%%%%%%%%%%%%%%%

\subsection{Quantum optimal transport distances}\label{SSEC.QOT_DT_MG}

We define quantum optimal transport distances. Theorem \ref{THM.QOT_Distance} collects properties of their metric geometries. Accessibility components are complete geodesic length-metric spaces. Theorem \ref{THM.QOT_Minimiser_Approximation} gives existence of sufficient minimising geodesics approximated in finite dimensions. Standard references for metric geometry are \cite{BK.Amb_Gig_Sav.2008.Classical_OT_GradFlow} and \cite{BK.Bur_Bur_Iv.2001.Metric_Geometry}.

%%%%%%%%%%%%%
%%% PART %%%%
%%%%%%%%%%%%%

\subsubsection*{Quantum optimal transport as dynamic transport distance}

Let $(\phi,\bpsi,\gamma,\nabla)$ be noncommutative differential structure for tracial AF-$C^{*}$-algebras $(A,\tau)$ and $(B,\omega)$ in $\lc{}f,\theta\rc$-setting. Definition \ref{DFN.QOT_Distance} gives quantum optimal transport distances. It extends the tracial finite-dimensional cases in \cite{ART.Car_Maa.2014.Quantum_OT_I}\cite{ART.Car_Maa.2017.Quantum_OT_II}\cite{ART.Car_Maa.2020.Quantum_OT_III} by construction.

\begin{ntn}
Let $X$ be a set and $d:X\times X\longrightarrow [0,\infty]$ a metric, or distance function on $X$. We say that the metric space $\lc{}X,d\rc$ is equipped with $d$-topology. For all subsets $Y\subset X$, we write $\lc{}Y,d\rc{}=\lc{}Y,d\vert_{Y\times Y}\rc$ for its relative metric space.
\end{ntn}

\begin{dfn}\label{DFN.QOT_Distance}
We define the quantum optimal transport distance of $(\phi,\bpsi,\gamma,\nabla)$ on $\SII(A)$ in $\lc{}f,\theta\rc$-setting by setting

\begin{align}\label{EQ.DFN.QOT_Distance_1}
\mathcal{W}_{\nabla}^{f,\theta}\lc\mu^{0},\mu^{1}\rc{}:=\inf_{\Admnullone(\mu^{0},\mu^{1})}\hspace{0.025cm} \sqrt{E^{f,\theta}(\mu,w)}\in [0,\infty]
\end{align}

\noindent for all $\mu^{0},\mu^{1}\in\SII(A)$.
\end{dfn}

\begin{rem}\label{REM.QOT_Distance}
Neither symmetry of $f$ nor $\|\omega\|^{1-\theta}<\infty$ is required to define admissible paths and energy functionals. They do ensure accessibility components are complete geodesic length-metric spaces. In the logarithmic mean setting, i.e.~$f$ represents the logarithmic operator mean and $\theta=1$, we have symmetric $f$ and $\|\omega\|^{0}=1$.
\end{rem}

We require accessibility components of quantum optimal transport distances to be complete geodesic length-metric spaces. Definition \ref{DFN.Length_Functional} describes length functionals given by integrating square roots of quasi-entropies, i.e.~speed, evaluated on admissible paths. Proposition \ref{PRP.Length_Functional_Structure} shows using square roots of quasi-entropies as speed defines length structures for state spaces in $w^{*}$-topology. Corollary \ref{COR.Length_Functional_Reparametrisation}, which uses constant speed parametrisations of admissible paths on the unit interval as per Lemma \ref{LEM.Length_Functional_Reparametrisation}, in turn shows quantum optimal transport distances are intrinsic distances of such length structures by Proposition 2.4.1 in \cite{BK.Bur_Bur_Iv.2001.Metric_Geometry}. Equation \ref{EQ.COR.Length_Functional_Reparametrisation_1} gives their necessary standard representation. Using our subsequent discussion, Corollary \ref{COR.QOT_Distance_AC_II} shows accessibility components are complete geodesic length-metric spaces.

\begin{dfn}\label{DFN.Length_Functional}\hspace{1cm}
\begin{itemize}
\item[1)] For all $[a,b]\subset\mathbb{R}$ and $(\mu,w)\in\Admab$, set 

\begin{align}\label{EQ.DFN.Length_Functional_1}
\mathcal{N}^{f,\theta}\lc\mu(t),w(t)\rc{}:=\sqrt{\mathcal{I}^{f,\theta}\lc\mu(t),\mu(t),w(t)\rc{}}    
\end{align}

\begin{reapply}
\end{reapply}

\noindent for a.e.~$t\in [a,b]$.

\item[2)] We define the length functional $L^{f,\theta}:\Adm\longrightarrow [0,\infty]$ by setting

\begin{align}\label{EQ.DFN.Length_Functional_2}
L^{f,\theta}(\mu,w):=\int_{a}^{b}\mathcal{N}^{f,\theta}\lc\mu(t),w(t)\rc{}dt
\end{align}

\begin{reapply}
\end{reapply}

\noindent for all $[a,b]\subset\mathbb{R}$ and $(\mu,w)\in\Admab$.
\end{itemize}
\end{dfn}

We restrict admissible paths and therefore length functionals to subintervals as per Remark \ref{REM.Length_Functional}. Proposition \ref{PRP.Length_Functional_Reparametrisation} shows length functionals are invariant under change of variables. Proposition \ref{PRP.Length_Functional_Lipschitz} derives Lipschitz continuity, as well as standard upper bounds involving energy functionals.

\begin{rem}\label{REM.Length_Functional}
We restrict admissible paths to subintervals. Equation \ref{EQ.REM.Length_Functional_1} restricts their length accordingly. Let $[a,b]\subset\mathbb{R}$. For all $[s,t]\subset [a,b]$ and $(\mu,w)\in\Admab$, we have $(\mu,w)\vert_{[s,t]}:=\lc\mu\vert_{[s,t]},w\vert_{[s,t]}\rc\in\Adm^{[s,t]}$ and set

\begin{align}\label{EQ.REM.Length_Functional_1}
\restr{0.925}{L^{f,\theta}(\mu,w)}{[s,t]}:=L^{f,\theta}\lc\mu\vert_{[s,t]},w\vert_{[s,t]}\rc{}=\int_{s}^{t}\mathcal{N}^{f,\theta}\lc\mu(r),w(r)\rc{}dr.
\end{align}
\end{rem}

\begin{prp}\label{PRP.Length_Functional_Reparametrisation}
Let $\varphi:[c,d]\longrightarrow [a,b]$ be monotone and absolutely continuous. If $(\mu,w)\in\Admab\lc\mu^{0},\mu^{1}\rc$, then $\lc\mu\circ\varphi,\dot{\varphi}\cdot \lc{}w\circ\varphi\rc\rc\in\Admcd\lc\mu^{0},\mu^{1}\rc$ and we have

\begin{align}\label{EQ.PRP.Length_Functional_Reparametrisation_1}
L^{f,\theta}(\mu,w)=L^{f,\theta}\lc\mu\circ\varphi,\dot{\varphi}\cdot \lc{}w\circ\varphi\rc\rc{}.
\end{align}
\end{prp}
\begin{proof}
We argue as in the proof of Proposition \ref{PRP.Energy_Functional_Reparametrisation}. However, we integrate over the evaluated square root $\mathcal{N}^{f,\theta}=\sqrt{\mathcal{I}^{f,\theta}}$. Thus we do not require $\dot{\varphi}$ to have $t$-a.e.~defined inverse, hence Equation \ref{EQ.REM.Change_Of_Variables_1} shows Equation \ref{EQ.PRP.Length_Functional_Reparametrisation_1} immediately.
\end{proof}

\begin{prp}\label{PRP.Length_Functional_Lipschitz}
Let $[a,b]\subset\mathbb{R}$.

\begin{itemize}
\item[1)] For all $(\mu,w)\in\Admab$, $x\in A_{0}$ and $[s,t]\subset [a,b]$, we have

\begin{align}\label{EQ.PRP.Length_Functional_Lipschitz_1}
\babsv{1}{\lc\mu(t)-\mu(s)\rc{}(x)}\leq\restr{0.925}{L^{f,\theta}(\mu,w)}{[s,t]}\cdot \sqrt{2^{-\theta}\lc\|\phi\|_{1}^{\theta}+\|\bpsi\|_{1}^{\theta}\rc\cdot \|\omega\|^{1-\theta}}\cdot \|\nabla x\|_{B}.
\end{align}

\begin{reapply}
\end{reapply}

\item[2)] For all $(\mu,w)\in\Admab$, we have

\begin{align}\label{EQ.PRP.Length_Functional_Lipschitz_2}
L^{f,\theta}(\mu,w)^{2}\leq (b-a)\cdot E^{f,\theta}(\mu,w).
\end{align}

\begin{reapply}
\end{reapply}

\noindent Furthermore, we have equality in Equation \ref{EQ.PRP.Length_Functional_Lipschitz_1} if and only if $t\mapsto\mathcal{N}^{f,\theta}\lc\mu(t),w(t)\rc$ is $t$-a.e.~constant on $[a,b]$.
\end{itemize}
\end{prp}
\begin{proof}
We show $1)$. We argue as in the proof of $2)$ in Proposition \ref{PRP.Energy_Functional_Lipschitz_Sq}, where we use the continuity equation to estimate. Rather than subsequent application of H\"older, we instead apply $5)$ in Theorem \ref{THM.QE_AF}. Equation \ref{EQ.PRP.Length_Functional_Lipschitz_1} holds. We show $2)$. We reduce to $[a,b]=[0,1]$ by applying Proposition \ref{PRP.Length_Functional_Reparametrisation} to the left-~, resp.~Proposition \ref{PRP.Energy_Functional_Reparametrisation} to the right-hand side of Equation \ref{EQ.PRP.Length_Functional_Lipschitz_2}. We use affine transformations as per Remark \ref{REM.Energy_Functional_Reparametrisation} in both cases. Having reduced to $[0,1]$ as described, both Equation \ref{EQ.PRP.Length_Functional_Lipschitz_2} and our claim concerning equality follow by Jensen's inequality.
\end{proof}

\begin{prp}\label{PRP.Length_Functional_Structure}
$\lc\Adm,L^{f,\theta}\rc$ is a length structure for $\SII(A)$ in $w^{*}$-topology.
\end{prp}
\begin{proof}
Proposition \ref{PRP.Length_Functional_Reparametrisation} shows $\Adm$ is a class of admissible paths in the sense of metric geometry \cite{BK.Bur_Bur_Iv.2001.Metric_Geometry}. Our claim follows if $L^{f,\theta}$ satisfies conditions $1)$ to $4)$ on p.27 in \cite{BK.Bur_Bur_Iv.2001.Metric_Geometry}. Using Equation \ref{EQ.DFN.Length_Functional_2}, we directly verify the first three conditions. The fourth one is equivalent to the following statement. If $\lset\mu^{n}\rset_{n\in\mathbb{N}}\subset\SII(A)$ and $\mu^{0}\in\SII(A)$ s.t.~

\begin{align}\label{EQ.PRP.Length_Functional_Structure_1}
\lim_{n\in\mathbb{N}}\hspace{0.025cm} \inf_{\Adm(\mu^{0},\mu^{n})}\hspace{0.025cm} L^{f,\theta}(\mu,w)=0,
\end{align}

\noindent then $\mu=w^{*}$-$\lim\mu^{n}$ in $\SII(A)$. This is ensured by $1)$ in Proposition \ref{PRP.Length_Functional_Lipschitz}.
\end{proof}

\begin{lem}\label{LEM.Length_Functional_Reparametrisation}
Let $\mu^{0},\mu^{1}\in\SII(A)$ and $(\mu,w)\in\Admnullone\lc\mu^{0},\mu^{1}\rc$. If $L^{f,\theta}(\mu,w)\in (0,\infty)$, then there exists $(\tilde{\mu},\tilde{w})\in\Admnullone\lc\mu^{0},\mu^{1}\rc$ s.t.~

\begin{itemize}
\item[1)] $L^{f,\theta}(\tilde{\mu},\tilde{w})=L^{f,\theta}(\mu,w)$,

\item[2)] $t\mapsto\mathcal{N}^{f,\theta}\lc\tilde{\mu}(t),\tilde{w}(t)\rc\neq 0$ is $t$-a.e.~constant.
\end{itemize}
\end{lem}
\begin{proof}
Assume $L^{f,\theta}(\mu,w)\in (0,\infty)$. For all $t\in [0,1]$, set

\begin{align}\label{EQ.LEM.Length_Functional_Reparametrisation_1}
\varphi(t):=\varphi^{\mu,w}(t):=L^{f,\theta}(\mu,w)^{-1}\cdot \int_{0}^{t}\mathcal{N}^{f,\theta}\lc\mu(s),w(s)\rc{}ds.
\end{align}

\noindent Since $L^{f,\theta}(\mu,w)>0$, get $\mu^{0}\neq\mu^{1}$ by Proposition \ref{PRP.Length_Functional_Structure}. Since $L^{f,\theta}(\mu,w)<\infty$, we know $\varphi$ is monotone and absolutely continuous. We reduce to $\varphi$ strictly monotone.\par
Assume $\varphi$ is not strictly monotone. There exists $[c,d]\subset [0,1]$ s.t.~$t\mapsto\mathcal{N}^{f,\theta}\lc\mu(t),w(t)\rc$ vanishes for a.e.~$t\in [c,d]$. Thus $\mu\vert_{[c,d]}$ is constant by $1)$ in Proposition \ref{PRP.Length_Functional_Lipschitz}. We select $[c,d]$ maximal. If $[c,d]\subset I$ proper for a closed interval $I\subset\mathbb{R}$, then $\mu\vert_{I}$ is not constant on $I$. Since $\mu^{0}\neq\mu^{1}$ and $[0,1]$ is compact, there exists $m\in\mathbb{N}$ and non-intersecting maximal intervals $\lset{}[c_{n},d_{n}]\rset_{n=1}^{m}\subset\PII(\mathbb{R})$ satisfying $\textrm{R}.1\rc$ and $\textrm{R}.2\rc$ below. For all $n\in\lset{}1,\ldots,m\rset$, let

\begin{itemize}
\item[R.1)] $0<c_{n}<d_{n}<1$ s.t.~$\mu\vert_{[c_{n},d_{n}]}$ is constant,

\item[R.2)] there exists no $(a,b)\subset [0,1]\mathbin{\big\backslash}\lc\bigcup_{n=1}^{m}[c_{n},d_{n}]\rc$ s.t.~$\mu\vert_{[a,b]}$ is constant.
\end{itemize}

Set $d_{0}:=0$ and $c_{m+1}:=1$. For all $n\in\lset{}0,\ldots,m\rset$, get $(\mu,w)\vert_{[d_{n},c_{n+1}]}\in\Adm$. Thus $\textrm{R}.1\rc$ immediately yields

\begin{align}\label{EQ.LEM.Length_Functional_Reparametrisation_2}
L^{f,\theta}(\mu,w)=\sum_{n=0}^{m}\restr{0.925}{L^{f,\theta}(\mu,w)}{[d_{n},c_{n+1}]}.
\end{align}

\noindent For all $n\in\lset{}0,\ldots,m\rset$, we reparametrise $(\mu,w)\vert_{[d_{n},c_{n+1}]}$ to $[n\lc{}m+1\rc^{-1},\lc{}n+1\rc\lc{}m+1\rc^{-1}]$ using affine transformation as per Remark \ref{REM.Energy_Functional_Reparametrisation}. We concatenate reparametrised paths via canonical topological path composition.\par

%NEWPAGE
%NEWPAGE
%NEWPAGE

\pagebreak

%NEWPAGE
%NEWPAGE
%NEWPAGE

Altogether, we obtain a rectified path

\begin{align}\label{EQ.LEM.Length_Functional_Reparametrisation_3}
(\tilde{\mu},\tilde{w}):=\restr{0.925}{(\mu,w)}{\big[0,\frac{1}{m+1}\big]}\circ\cdots \circ\restr{0.925}{(\mu,w)}{\big[\frac{m}{m+1},1\big]}\in\Admnullone.
\end{align}

\noindent Proposition \ref{PRP.Length_Functional_Reparametrisation} and Equation \ref{EQ.LEM.Length_Functional_Reparametrisation_2} show $L^{f,\theta}(\tilde{\mu},\tilde{w})=L^{f,\theta}(\mu,w)$. Yet $\textrm{R}.2\rc$ shows there exists no $(a,b)\subset [0,1]$ s.t.~$\tilde{\mu}\vert_{[a,b]}$ is constant. Hence $\varphi^{\tilde{\mu},\tilde{w}}$ is strictly monotone by $1)$ in Proposition \ref{PRP.Length_Functional_Lipschitz}. We may reduce since its construction preserves length.\par
We assume $\varphi:=\varphi^{\mu,w}$ is strictly monotone without loss of generality. Thus $\varphi$ is a homeomorphism onto $[0,1]$, hence $\varphi^{-1}$ exists and is monotone. Monotonicity ensures $\varphi^{-1}$ has $t$-a.e.~finite derivative $\frac{d}{dt}\varphi^{-1}$. The chain rule holds for $\mu\circ\varphi^{-1}$ upon testing with $A_{0}$ \lc{}cf.~Corollary 4 in \cite{ART.Ser_Var.1969.General_Chain_Rule}\rc{}. We therefore have

\begin{align}\label{EQ.LEM.Length_Functional_Reparametrisation_4}
(\tilde{\mu},\tilde{w}):=\bigg(\mu\circ\varphi^{-1},\frac{d}{dt}\varphi^{-1}\cdot \lc{}w\circ\varphi^{-1}\rc\bigg)\in\Admnullone\lc\mu^{0},\mu^{1}\rc{}.
\end{align}

\noindent Proposition \ref{PRP.Length_Functional_Reparametrisation} shows $L^{f,\theta}(\tilde{\mu},\tilde{w})=L^{f,\theta}(\mu,w)$. Since we have $t$-a.e.~finite derivatives for $\varphi$, $\varphi^{-1}$ and $\id_{[0,1]}$, the chain rule holds for $t=\varphi\lc\varphi^{-1}(t)\rc$ \lc{}cf.~Theorem 2 in \cite{ART.Ser_Var.1969.General_Chain_Rule}\rc{}. We use chain rule to derive the first, and Equation \ref{EQ.LEM.Length_Functional_Reparametrisation_1} for the second identity in

\begin{align}\label{EQ.LEM.Length_Functional_Reparametrisation_5}
\restr{0.925}{\frac{d}{ds}}{s=t}\varphi^{-1}(s)=\lc\restr{0.925}{\frac{d}{ds}}{s=\varphi^{-1}(t)}\varphi(s)\rc^{-1}=L^{f,\theta}(\mu,w)\cdot \mathcal{N}^{f,\theta}\lc\mu\lc\varphi^{-1}(t)\rc{},w\lc\varphi^{-1}(t)\rc\rc^{-1}
\end{align}

\noindent for a.e.~$t\in [0,1]$. Using Equation \ref{EQ.LEM.Length_Functional_Reparametrisation_5}, we further calculate

\begin{align}\label{EQ.LEM.Length_Functional_Reparametrisation_6}
\mathcal{N}^{f,\theta}\lc\tilde{\mu}(t),\tilde{w}(t)\rc{}=\restr{0.925}{\frac{d}{ds}}{s=t}\varphi^{-1}(s)\cdot \mathcal{N}^{f,\theta}\lc\mu\lc\varphi^{-1}(t)\rc{},w\lc\varphi^{-1}(t)\rc\rc{}=L^{f,\theta}(\mu,w)\neq 0
\end{align}

\noindent in each case. Equation \ref{EQ.LEM.Length_Functional_Reparametrisation_6} shows our claim.
\end{proof} 

\begin{rem}
In the proof of Lemma \ref{LEM.Length_Functional_Reparametrisation}, we alternatively show $\tilde{\mu}$ has constant and non-vanishing metric derivative. Minimality and finite length let us bound from below using the metric derivative in order to show $\mathcal{N}^{f,\theta}\lc\tilde{\mu}(t),\tilde{w}(t)\rc\neq 0$ for a.e.~$t\in [0,1]$.
\end{rem}

\begin{cor}\label{COR.Length_Functional_Reparametrisation}
For all $\mu^{0},\mu^{1}\in\SII(A)$, we have 

\begin{align}\label{EQ.COR.Length_Functional_Reparametrisation_1}
\mathcal{W}_{\nabla}^{f,\theta}\lc\mu^{0},\mu^{1}\rc{}=\inf_{\Admnullone(\mu^{0},\mu^{1})}\hspace{0.025cm} L^{f,\theta}(\mu,w)\ =\inf_{\Adm(\mu^{0},\mu^{1})}\hspace{0.025cm} L^{f,\theta}(\mu,w).
\end{align}
\end{cor}
\begin{proof}
Let $\mu^{0},\mu^{1}\in\SII(A)$. Either $\mu^{0}\neq\mu^{1}$ or all terms equal zero. We assume $\mu^{0}\neq\mu^{1}$ without loss of generality. Proposition \ref{PRP.Length_Functional_Reparametrisation} shows

\begin{align}\label{EQ.COR.Length_Functional_Reparametrisation_2}
\inf_{\Admnullone(\mu^{0},\mu^{1})}\hspace{0.025cm} L^{f,\theta}(\mu,w)\ =\inf_{\Adm(\mu^{0},\mu^{1})}\hspace{0.025cm} L^{f,\theta}(\mu,w).
\end{align}

Moreover, $2)$ in Proposition \ref{PRP.Length_Functional_Lipschitz} shows

\begin{align}\label{EQ.COR.Length_Functional_Reparametrisation_3}
\inf_{\Admnullone(\mu^{0},\mu^{1})}\hspace{0.025cm} L^{f,\theta}(\mu,w)\leq\mathcal{W}_{\nabla}^{f,\theta}\lc\mu^{0},\mu^{1}\rc{}.
\end{align}

\noindent Let $\mathbf{S}:=\lset{}(\mu,w)\in\Admnullone\lc\mu^{0},\mu^{1}\rc\ \vset\ t\mapsto\mathcal{N}^{f,\theta}\lc\mu(t),w(t)\rc\ \textrm{is}\ t\textrm{-a.e.~constant}\rset$. Lemma \ref{LEM.Length_Functional_Reparametrisation} implies $\mathbf{S}\neq\emptyset$ and further

\begin{align}\label{EQ.COR.Length_Functional_Reparametrisation_4}
\inf_{\mathbf{S}}\ \hspace{0.025cm} L^{f,\theta}(\mu,w)\ =\inf_{\Admnullone(\mu^{0},\mu^{1})}\hspace{0.025cm} L^{f,\theta}(\mu,w).
\end{align}

\noindent Using the statement on equality for Equation \ref{EQ.PRP.Length_Functional_Lipschitz_2}, we see Equation \ref{EQ.COR.Length_Functional_Reparametrisation_4} shows

\begin{align}\label{EQ.COR.Length_Functional_Reparametrisation_5}
\mathcal{W}_{\nabla}^{f,\theta}\lc\mu^{0},\mu^{1}\rc\leq\inf_{\mathbf{S}}\ \hspace{0.025cm} L^{f,\theta}(\mu,w)\ =\inf_{\Admnullone(\mu^{0},\mu^{1})}\hspace{0.025cm} L^{f,\theta}(\mu,w).
\end{align}

\noindent Equation \ref{EQ.COR.Length_Functional_Reparametrisation_2}, Equation \ref{EQ.COR.Length_Functional_Reparametrisation_3} and Equation \ref{EQ.COR.Length_Functional_Reparametrisation_5} imply Equation \ref{EQ.COR.Length_Functional_Reparametrisation_1}.
\end{proof}

Definition \ref{DFN.QOT_Distance_Geodesics} gives minimising geodesics and distance minimisers \cite{BK.Amb_Gig_Sav.2008.Classical_OT_GradFlow}\cite{BK.Bur_Bur_Iv.2001.Metric_Geometry}. The notions coincide by $4)$ in Theorem \ref{THM.QOT_Distance}. In Section \ref{SEC.L2W_EVI}, we apply results in variational analysis for metric geometry using minimising geodesics \cite{ART.Dan_Sav.2008.Classical_OT_GradFlow_DisConvex}\cite{ART.Mur_Sav.2020.Classical_OT_EVI}.

\begin{dfn}\label{DFN.QOT_Distance_Geodesics}\hspace{1cm}
\begin{itemize}
\item[1)] Let $\mu^{0},\mu^{1}\in\SII(A)$ and $[a,b]\subset\mathbb{R}$. We call $(\mu,w)\in\Admab\lc\mu^{0},\mu^{1}\rc$ a minimising geodesic from $\mu^{0}$ to $\mu^{1}$ if there exists $C\geq 0$ s.t.~

\begin{align}\label{EQ.DFN.QOT_Distance_Geodesics_1}
\mathcal{W}_{\nabla}^{f,\theta}\lc\mu(t),\mu(s)\rc{}=C\absv{1}{t-s}
\end{align}

\begin{reapply}
\end{reapply}

\noindent for all $t,s\in [a,b]$.

\item[2)] Let $\mu^{0},\mu^{1}\in\SII(A)$. We call $(\mu,w)\in\Admnullone\lc\mu^{0},\mu^{1}\rc$ a distance minimiser if

\begin{align}\label{EQ.DFN.QOT_Distance_Geodesics_2}
\mathcal{W}_{\nabla}^{f,\theta}\lc\mu^{0},\mu^{1}\rc{}=\sqrt{E^{f,\theta}(\mu,w)}<\infty.
\end{align}

\begin{reapply}
\end{reapply}

\item[3)] For all $\mu^{0},\mu^{1}\in\SII(A)$, let $\Geo\lc\mu^{0},\mu^{1}\rc$ be the set of all distance minimisers with marginals $\mu^{0}$ and $\mu^{1}$. Set $\Geo:=\bigcup_{\mu^{0},\mu^{1}\in\SII(A)}\Geo\lc\mu^{0},\mu^{1}\rc$.
\end{itemize}
\end{dfn}

\begin{ntn}\label{NTN.QOT_Distance_Geodesics}
For all $j\in\mathbb{N}$, we use $\Geoj$ when denoting sets of distance minimisers in Definition \ref{DFN.QOT_Distance_Geodesics} for induced noncommutative differential structure $\lc\phi_{j},\bpsi_{j},\gamma_{j},\nabla_{\hspace{-0.055cm} j}\rc$.
\end{ntn}

%NEWPAGE
%NEWPAGE
%NEWPAGE

\pagebreak

%NEWPAGE
%NEWPAGE
%NEWPAGE

\begin{prp}\label{PRP.QOT_Distance_Geodesics}\hspace{1cm}
\begin{itemize}
\item[1)] Let $\mu^{0},\mu^{1}\in\SII(A)$. If $(\mu,w)\in \Geo\lc\mu^{0},\mu^{1}\rc$, then $t\mapsto\mathcal{N}^{f,\theta}\lc\mu(t),w(t)\rc$ is $t$-a.e.~constant.

\item[2)] For all $j\leq k$ in $\mathbb{N}$ and $\mu^{0},\mu^{1}\in\mathcal{S}(A_{j})$, we have length- and energy-preserving maps

\begin{itemize}
\item[2.1)] $\Geoj\lc\mu^{0},\mu^{1}\rc\overset{\inckj}{\lhook\joinrel\longrightarrow}\Geok\lc\mu^{0},\mu^{1}\rc\overset{\incj}{\lhook\joinrel\longrightarrow}\Geo\lc\mu^{0},\mu^{1}\rc$, \phantom{\bigg)}

\item[2.2)] $\Geo\lc\mu^{0},\mu^{1}\rc\overset{\resk}{\longrightarrow}\Geok\lc\mu^{0},\mu^{1}\rc\overset{\resjk}{\longrightarrow}\Geoj\lc\mu^{0},\mu^{1}\rc$. \phantom{\bigg)}
\end{itemize}

\begin{reapply}
\end{reapply}

\end{itemize}
\end{prp}
\begin{proof}
We show $1)$. For all $(\mu,w)\in\Geo\lc\mu^{0},\mu^{1}\rc$, $2)$ in Proposition \ref{PRP.Length_Functional_Lipschitz}, Corollary \ref{COR.Length_Functional_Reparametrisation} and Equation \ref{EQ.DFN.QOT_Distance_Geodesics_2} yield

\begin{align}\label{EQ.PRP.QOT_Distance_Geodesics_1}
\mathcal{W}_{\nabla}^{f,\theta}\lc\mu^{0},\mu^{1}\rc\leq L^{f,\theta}\leq\sqrt{E^{f,\theta}(\mu,w)}=\mathcal{W}_{\nabla}^{f,\theta}\lc\mu^{0},\mu^{1}\rc{}.
\end{align}

\noindent Furthermore, we have equality in Equation \ref{EQ.PRP.QOT_Distance_Geodesics_1} if and only if $t\mapsto\mathcal{N}^{f,\theta}\lc\mu(t),w(t)\rc$ is $t$-a.e.~constant on $[0,1]$. We have $1)$. We show $2)$. For all distance minimisers, equality in Equation \ref{EQ.PRP.QOT_Distance_Geodesics_1} implies length and square root of energy coincide. It suffices to show energy is preserved. Inclusions in $2.1)$ preserve energy by $1)$ in Proposition \ref{PRP.Energy_Functional_Restriction}, and restrictions in $2.2)$ do not increase energy by $2)$ in Proposition \ref{PRP.Energy_Functional_Restriction}. We obtain $2)$ by Equation \ref{EQ.DFN.QOT_Distance_Geodesics_2} since restriction maps are left-inverses of inclusion maps.
\end{proof}

\begin{ntn}
For all $\mu^{k}\in\SII(A)$ and $k\in\lset{}0,1\rset$, set $\bar{\mu}^{k}:=\overline{\mu^{k}}$ as per Definition \ref{DFN.AF_Cstar_Trace_Dualisation_Paths}.
\end{ntn}

\begin{thm}\label{THM.QOT_Distance}
Let $(\phi,\bpsi,\gamma,\nabla)$ be noncommutative differential structure for tracial AF-$C^{*}$-algebras $(A,\tau)$ and $(B,\omega)$ in $\lc{}f,\theta\rc$-setting.

\begin{itemize}
\item[1)] $\big(\hspace{-0.03875cm} \SII(A),\mathcal{W}_{\nabla}^{f,\theta}\big)$ is a length-metric space with topology stronger than $w^{*}$-topology.

\item[2)] For all $j\leq k$ in $\mathbb{N}$, we have isometric inclusions

\begin{align}\label{EQ.THM.QOT_Distance_1}
\big(\mathcal{S}(A_{j}),\mathcal{W}_{\nabla_{\hspace{-0.055cm} j}}^{f,\theta}\big)\overset{\inckj}{\lhook\joinrel\longrightarrow}\big(\mathcal{S}(A_{k}),\mathcal{W}_{\nabla_{\hspace{-0.055cm} k}}^{f,\theta}\big)\overset{\inck}{\lhook\joinrel\longrightarrow}\big(\hspace{-0.03875cm} \SII(A),\mathcal{W}_{\nabla}^{f,\theta}\big){}.
\end{align}

\begin{reapply}
\end{reapply}

\item[3)] $\mathcal{W}_{\nabla}^{f,\theta}$ is l.s.c.~in $w^{*}$-topology. For all $\mu^{0},\mu^{1}\in\SII(A)$, we have

\begin{align}\label{EQ.THM.QOT_Distance_2}
\mathcal{W}_{\nabla}^{f,\theta}\lc\mu^{0},\mu^{1}\rc{}=\lim_{j\in\mathbb{N}}\hspace{0.025cm} \mathcal{W}_{\nabla_{\hspace{-0.055cm} j}}^{f,\theta}\big(\bar{\mu}_{j}^{0},\bar{\mu}_{j}^{1}\big).
\end{align}

\begin{reapply}
\end{reapply}

\item[4)] Let $\mu^{0},\mu^{1}\in\SII(A)$.

\begin{itemize}
\item[4.1)] If $\mathcal{W}_{\nabla}^{f,\theta}\lc\mu^{0},\mu^{1}\rc{}<\infty$, then $\Geo\lc\mu^{0},\mu^{1}\rc\neq\emptyset$. \phantom{\big)}

\item[4.2)] For all $(\mu,w)\in\Admnullone\lc\mu^{0},\mu^{1}\rc$, we have $(\mu,w)\in\Geo\lc\mu^{0},\mu^{1}\rc$ if and only if $\mu$ is a minimising geodesic from $\mu^{0}$ to $\mu^{1}$. \phantom{\big)}
\end{itemize}

\begin{reapply}
\end{reapply}

\end{itemize}
\end{thm}

%NEWPAGE
%NEWPAGE
%NEWPAGE

\pagebreak

%NEWPAGE
%NEWPAGE
%NEWPAGE

\begin{proof}
We know $1)$ by Proposition \ref{PRP.Length_Functional_Structure} and Corollary \ref{COR.Length_Functional_Reparametrisation}. Then $2)$ follows from $2)$ in Proposition \ref{PRP.QOT_Distance_Geodesics}. We show $3)$. Let $\mu^{0},\mu^{1}\in\SII(A)$. For all $k\in\lset{}0,1\rset$, let $\lset\mu^{n,k}\rset_{n\in\mathbb{N}}\subset\SII(A)$ s.t.~$\mu^{k}=w^{*}$-$\lim_{n\in\mathbb{N}}\mu^{n,k}$, $\mathcal{W}_{\nabla}^{f,\theta}\lc\mu^{n,0},\mu^{n,1}\rc{}<\infty$ for all $n\in\mathbb{N}$, as well as

\begin{align}\label{EQ.THM.QOT_Distance_3}
\liminf_{n\in\mathbb{N}}\hspace{0.0675cm} \mathcal{W}_{\nabla}^{f,\theta}\lc\mu^{n,0},\mu^{n,1}\rc{}<\infty.
\end{align}

In order to show l.s.c.~in $w^{*}$-topology, it suffices to consider such subsequences. For all $n\in\mathbb{N}$, let $(\mu^{n},w^{n})\in\Admnullone\lc\mu^{n,0},\mu^{n,1}\rc$ s.t.~

\begin{align}\label{EQ.THM.QOT_Distance_4}
E^{f,\theta}(\mu^{n},w^{n})=\mathcal{W}_{\nabla}^{f,\theta}\lc\mu^{n,0},\mu^{n,1}\rc^{2}+n^{-1}.
\end{align}

\noindent Using $w^{*}$-convergence of marginals, Equation \ref{EQ.THM.QOT_Distance_4} shows Lemma \ref{LEM.Energy_Functional_II} for $t_{0}=0$ yields $(\mu,w)\in\Admnullone\lc\mu^{0},\mu^{1}\rc$ s.t.~we have estimate

\begin{align}\label{EQ.THM.QOT_Distance_5}
\mathcal{W}_{\nabla}^{f,\theta}\lc\mu^{0},\mu^{1}\rc\leq\sqrt{E^{f,\theta}(\mu,w)}\leq\liminf_{n\in\mathbb{N}}\hspace{0.0675cm} \mathcal{W}_{\nabla}^{f,\theta}\lc\mu^{n,0},\mu^{n,1}\rc{}.
\end{align}

\noindent Equation \ref{EQ.THM.QOT_Distance_5} shows l.s.c.~in $w^{*}$-topology. In particular, we see Equation \ref{EQ.THM.QOT_Distance_2} follows at once if

\begin{align}\label{EQ.THM.QOT_Distance_6}
\limsup_{j\in\mathbb{N}}\hspace{0.025cm} \mathcal{W}_{\nabla_{\hspace{-0.055cm} j}}^{f,\theta}\big(\bar{\mu}_{j}^{0},\bar{\mu}_{j}^{1}\big)\leq\mathcal{W}_{\nabla}^{f,\theta}\lc\mu^{0},\mu^{1}\rc{}.
\end{align}

\noindent Equation \ref{EQ.THM.QOT_Distance_6} holds by $2.1)$ in Proposition \ref{PRP.Energy_Functional_Restriction}. Get $3)$.\par
We show $4)$. Lemma \ref{LEM.Energy_Functional_II} for $t_{0}=0$ implies $4.1)$. Let $(\mu,w)\in\Geo\lc\mu^{0},\mu^{1}\rc$. Using Corollary \ref{COR.Length_Functional_Reparametrisation} and $1)$ in Proposition \ref{PRP.QOT_Distance_Geodesics}, get $C:=\mathcal{N}^{f,\theta}\lc\mu(t),w(t)\rc$ for a.e.~$t\in [0,1]$ and estimate

\begin{align}\label{EQ.THM.QOT_Distance_7}
\mathcal{W}_{\nabla}^{f,\theta}\lc\mu(s),\mu(t)\rc\leq\int_{s}^{t}\mathcal{N}^{f,\theta}\lc\mu(r),w(r)\rc{}dr=C\absv{1}{t-s}
\end{align}

\noindent for all $t,s\in [0,1]$. Let $[s,t]\subset [0,1]$ proper. If equality in Equation \ref{EQ.THM.QOT_Distance_6} does not hold for $[s,t]\subset [0,1]$, then there exists a distance minimiser from $\mu(s)$ to $\mu(t)$ with strictly less length than $(\mu,w)\vert_{[s,t]}$. Note Remark \ref{REM.Length_Functional}. This contradicts minimality on $[0,1]$. Thus equality holds in each case, hence $\mu$ is a minimising geodesic. The converse then follows by Equation \ref{EQ.DFN.QOT_Distance_Geodesics_1} and Theorem 2.7.6 in \cite{BK.Bur_Bur_Iv.2001.Metric_Geometry}. Get $4.2)$. Altogether, get $4)$.
\end{proof}

%NEWPAGE
%NEWPAGE
%NEWPAGE

\pagebreak

%NEWPAGE
%NEWPAGE
%NEWPAGE

%%%%%%%%%%%%%
%%% PART %%%%
%%%%%%%%%%%%%

\subsubsection*{Accessibility components and minimising geodesics}

Definition \ref{DFN.QOT_Distance_AC} gives accessibility components of quantum optimal transport distances. They are maximal sets of states at finite distance. Corollary \ref{COR.QOT_Distance_AC_II} shows accessibility components are complete geodesic length-metric spaces s.t.~intrinsic distances of their length structures are quantum optimal transport distances. Thus accessibility components are maximal sets of points connected by minimising geodesics, hence metric geometry reduces to the latter. We use this throughout our discussion.\par
Let $(\phi,\bpsi,\gamma,\nabla)$ be noncommutative differential structure for tracial AF-$C^{*}$-algebras $(A,\tau)$ and $(B,\omega)$ in $\lc{}f,\theta\rc$-setting.

\begin{dfn}\label{DFN.QOT_Distance_AC}\hspace{1cm}
\begin{itemize}
\item[1)] We call $\mathcal{C}\subset\SII(A)$ accessible if $\mathcal{W}_{\nabla}^{f,\theta}\lc\mu^{0},\mu^{1}\rc{}<\infty$ for all $\mu^{0},\mu^{1}\in\mathcal{C}$.

\item[2)] We say that $\mathcal{C}\subset\SII(A)$ is an accessibility component if there exists no accessible $\mathcal{C}'\subset\SII(A)$ s.t.~$\mathcal{C}'\subset\mathcal{C}$ proper. If $\mathcal{C}\subset\SII(A)$ is an accessibility component, then we write 

\begin{align}\label{EQ.DFN.QOT_Distance_AC_1}
\mathcal{C}\subset\big(\hspace{-0.03875cm} \SII(A),\mathcal{W}_{\nabla}^{f,\theta}\big).    
\end{align}

\begin{reapply}
\end{reapply}

\item[3)] For all $\mathcal{C}\subset\big(\hspace{-0.03875cm} \SII(A),\mathcal{W}_{\nabla}^{f,\theta}\big)$, set $\Adm_{\mathcal{C}}:=\bigcup_{\mu,\eta\in\mathcal{C}}\Adm(\mu,\eta)$.
\end{itemize}
\end{dfn}

\begin{cor}\label{COR.QOT_Distance_AC_I}\hspace{1cm}
\begin{itemize}
\item[1)] An equivalence relation on $\SII(A)$ is given by

\begin{align}\label{EQ.COR.QOT_Distance_AC_I_1}
\mu\sim\eta\Leftrightarrow \mu,\eta\in\mathcal{C}\subset\big(\hspace{-0.03875cm} \SII(A),\mathcal{W}_{\nabla}^{f,\theta}\big)
\end{align}

\begin{reapply}
\end{reapply}

\noindent for all $\mu,\eta\in\SII(A)$.

\item[2)] For all $\mu,\eta\in\SII(A)$, we have $\mu\sim\eta$ if and only if

\begin{itemize}
\item[2.1)] $\bar{\mu}_{j}\sim\bar{\eta}_{j}$ for a.e.~$j\in\mathbb{N}$,

\item[2.2)] $\lim\sup_{j\in\mathbb{N}}\mathcal{W}_{\nabla_{\hspace{-0.055cm} j}}^{f,\theta}\lc\bar{\mu}_{j},\bar{\eta}_{j}\rc{}<\infty$.
\end{itemize}

\begin{reapply}
\end{reapply}

\end{itemize}
\end{cor}
\begin{proof}
Let $\mathcal{C}\subset (\SII(A),\mathcal{W}_{\nabla}^{f,\theta})$. If $\mu^{0}\in\mathcal{C}$, then $1)$ in Theorem \ref{THM.QOT_Distance} and maximality of $\mathcal{C}$ as set of finite-length admissible paths shows

\begin{align}\label{EQ.COR.QOT_Distance_AC_I_2}
\mathcal{C}=\lset\mu^{1}\in\SII(A)\ \vset\ \exists(\mu,w)\in\Admnullone\lc\mu^{0},\mu^{1}\rc{}:\ L^{f,\theta}(\mu,w)<\infty\rset{}.
\end{align}

\noindent Using Equation \ref{EQ.COR.QOT_Distance_AC_I_2}, we directly verify Equation \ref{EQ.COR.QOT_Distance_AC_I_1} defines an equivalence relation on $\SII(A)$. Thus $1)$ holds, hence $2)$ follows from $3)$ in Theorem \ref{THM.QOT_Distance}.
\end{proof}

\begin{cor}\label{COR.QOT_Distance_AC_II}
For all $\mathcal{C}\subset (\SII(A),\mathcal{W}_{\nabla}^{f,\theta})$, we have

\begin{itemize}
\item[1)] $\lc{}L^{f,\theta},\Adm_{\mathcal{C}}\rc$ is a length structure for $\mathcal{C}$ in $w^{*}$-topology, \phantom{\bigg)}

\item[2)] $\mathcal{W}_{\nabla\vert\CII\times\CII}^{f,\theta}$ is the unique intrinsic distance of $\lc{}L^{f,\theta},\Adm_{\mathcal{C}}\rc$ on $\CII$, \phantom{\bigg)}

\item[3)] $\big(\mathcal{C},\mathcal{W}_{\nabla}^{f,\theta}\big)$ is a complete geodesic length-metric space. \phantom{\bigg)}
\end{itemize}
\end{cor}
\begin{proof}
Let $\mathcal{C}\subset (\SII(A),\mathcal{W}_{\nabla}^{f,\theta})$. Equation \ref{EQ.COR.QOT_Distance_AC_I_2} shows $1)$ and $2)$ alike. We see $(\mathcal{C},\mathcal{W}_{\nabla}^{f,\theta})$ is a length-metric space. Furthermore, maximality of $\CII$ and $4)$ in Theorem \ref{THM.QOT_Distance} imply it is geodesic. Thus $3)$ follows if we show its completeness.\par
Let $\lset\mu^{n}\rset_{n\in\mathbb{N}}\subset\mathcal{C}$ be a Cauchy sequence. Using $1)$ in Theorem \ref{THM.QOT_Distance}, get $\mu\in\overline{\SII(A)}$ s.t.~$\mu=w^{*}$-$\lim_{n\in\mathbb{N}}\mu^{n}$. Since $\mu^{n}\sim\mu^{m}$ for all $n,m\in\mathbb{N}$, Equation \ref{EQ.PRP.Continuity_Equation_Mass_Preservation_2} further implies 

\begin{align}\label{EQ.COR.QOT_Distance_AC_II_1}
\mu^{n}(1_{A_{j}})=\mu^{m}(1_{A_{j}})
\end{align}

\noindent for all $j,n,m\in\mathbb{N}$. Using $1.1)$ in Proposition \ref{PRP.AF_Cstar_Trace_Dualisation_II}, Equation \ref{EQ.COR.QOT_Distance_AC_II_1} lets us calculate

\begin{align}\label{EQ.COR.QOT_Distance_AC_II_2}
\|\mu\|_{A^{*}}=\lim_{j\in\mathbb{N}}\hspace{0.025cm} \mu(1_{A_{j}})=\lim_{j\in\mathbb{N}}\hspace{0.025cm} \lim_{n\in\mathbb{N}}\hspace{0.025cm} \mu^{n}(1_{A_{j}})=\lim_{j\in\mathbb{N}}\hspace{0.025cm} \mu^{1}(1_{A_{j}})=\|\mu^{1}\|_{A^{*}}=1. 
\end{align}

\noindent Equation \ref{EQ.COR.QOT_Distance_AC_II_2} shows $\mu\in\SII(A)$. For all $\varepsilon>0$, there exists $n_{\varepsilon}\in\mathbb{N}$ s.t.~$\mathcal{W}_{\nabla}^{f,\theta}\lc\mu^{n},\mu^{m}\rc{}<\varepsilon$ for all $n,m\geq n_{\varepsilon}$. For all $\varepsilon>0$ and $m\geq n_{\varepsilon}$, l.s.c.~in $w^{*}$-topology as per $3)$ in Theorem \ref{THM.QOT_Distance} implies

\begin{align}\label{EQ.COR.QOT_Distance_AC_II_3}
\mathcal{W}_{\nabla}^{f,\theta}\lc\mu,\mu^{m}\rc\leq\liminf_{n\in\mathbb{N}}\hspace{0.0675cm} \mathcal{W}_{\nabla}^{f,\theta}\lc\mu^{n},\mu^{m}\rc{}<\varepsilon.
\end{align}

\noindent Equation \ref{EQ.COR.QOT_Distance_AC_II_3} shows $\lim_{m\in\mathbb{N}}\mathcal{W}_{\nabla}^{f,\theta}\lc\mu,\mu^{m}\rc{}=0$. We obtain $\mu\in\CII$.
\end{proof}

We formalise here, to the extend necessary for the study of metric geometry, energy functionals being $\Gamma$-limits w.r.t.~the coarse graining process as existence of sufficient minimising geodesics approximated in finite dimensions. Motivated by the sequential descriptions as per Definition \ref{DFN.Energy_Functional_Lower_Upper_Limits} and used in Theorem \ref{THM.Energy_Functional_Representation}, Definition \ref{DFN.QOT_Minimiser_Approximation} gives finite-dimensional approximation of minimising geodesics. We consider closure of

\begin{align}\label{EQ.SSEC.QOT_DT_MG_1}
\textrm{Geo}_{0}:=\bigcup_{j\in\mathbb{N}}\Geoj\subset\Geo
\end{align}

\noindent w.r.t.~suitable notion of convergence. Note $2.1)$ in Proposition \ref{PRP.QOT_Distance_Geodesics} shows inclusion used in Equation \ref{EQ.SSEC.QOT_DT_MG_1}. Theorem \ref{THM.QOT_Minimiser_Approximation} gives existence of sufficient minimising geodesics approximated in finite dimensions. For details on the coarse graining process, we refer to Subsection \ref{SSEC.QOT_CG}.

\begin{dfn}\label{DFN.QOT_Minimiser_Approximation}
Let $\mu^{0},\mu^{1}\in\SII(A)$ s.t.~$\mathcal{W}_{\nabla}^{f,\theta}\lc\mu^{0},\mu^{1}\rc{}<\infty$. We call $(\mu,w)\in\Geo\lc\mu^{0},\mu^{1}\rc$ approximated in finite dimensions if there exists $m\in\mathbb{N}$ and $\lc\mu^{j},w^{j}\rc_{j\geq m}\subset\Geo_{0}$ s.t.~

\begin{itemize}
\item[1)] $\lc\mu^{j},w^{j}\rc\in\Geoj\big(\bar{\mu}_{j}^{0},\bar{\mu}_{j}^{1}\big)$ for all $j\geq m$,

\item[2)] $\lc\mu^{j},w^{j}\rc_{j\geq m}$ has subsequence converging to $(\mu,w)$ in $\Admnullone$.
\end{itemize}
\end{dfn}

\begin{thm}\label{THM.QOT_Minimiser_Approximation}
Let $(\phi,\bpsi,\gamma,\nabla)$ be noncommutative differential structure for tracial AF-$C^{*}$-algebras $(A,\tau)$ and $(B,\omega)$ in $\lc{}f,\theta\rc$-setting. If $\mu^{0},\mu^{1}\in\SII(A)$ and $\mathcal{W}_{\nabla}^{f,\theta}\lc\mu^{0},\mu^{1}\rc{}<\infty$, then there exists $(\mu,w)\in\Geo\lc\mu^{0},\mu^{1}\rc$ approximated in finite dimensions.
\end{thm}
\begin{proof}
Let $\mu^{0},\mu^{1}\in\SII(A)$ s.t.~$\mathcal{W}_{\nabla}^{f,\theta}\lc\mu^{0},\mu^{1}\rc{}<\infty$. Apply $3)$ and $4)$ in Theorem \ref{THM.QOT_Distance} to get $m\in\mathbb{N}$ s.t.~for all $j\geq m$, we have

\begin{align}\label{EQ.THM.QOT_Minimiser_Approximation_1}
\Geoj\big(\bar{\mu}_{j}^{0},\bar{\mu}_{j}^{1}\big)\neq\emptyset.
\end{align}

\noindent For all $j\geq m$, let $\lc\mu^{j},w^{j}\rc\in\Geoj(\bar{\mu}_{j}^{0},\bar{\mu}_{j}^{1})$. Using $1)$ in Proposition \ref{PRP.Energy_Functional_Restriction} and further $3)$ in Theorem \ref{THM.QOT_Distance}, i.e.~Equation \ref{EQ.THM.QOT_Distance_2}, we calculate

\begin{align}\label{EQ.THM.QOT_Minimiser_Approximation_2}
\liminf_{j\in\mathbb{N}}\hspace{0.025cm} E^{f,\theta}\lc\mu^{j},w^{j}\rc{}=\liminf_{j\in\mathbb{N}}\hspace{0.0675cm} \mathcal{W}_{\nabla_{\hspace{-0.055cm} j}}^{f,\theta}\big(\bar{\mu}_{j}^{0},\bar{\mu}_{j}^{1}\big)^{2}=\mathcal{W}_{\nabla}^{f,\theta}\lc\mu^{0},\mu^{1}\rc^{2}<\infty.
\end{align}

\noindent Equation \ref{EQ.THM.QOT_Minimiser_Approximation_2} ensures we may extract suitable subsequence. Using $2)$ in Lemma \ref{LEM.Energy_Functional_II} for $t_{0}=0$, get subsequence of $\lc\mu^{j},w^{j}\rc_{j\geq m}$ converging to a $(\mu,w)\in\Admnullone$. Using $1)$ in Lemma \ref{LEM.Energy_Functional_II}, we obtain $(\mu,w)\in\Geo\lc\mu^{0},\mu^{1}\rc$ as claimed.
\end{proof}

%%%%%%%%%%%%%
%%% PART %%%%
%%%%%%%%%%%%%

\subsubsection*{The interpolation parameter}

We view each symmetric representing function $f$ as determining a class of energetic structures with $\theta\in [0,1]$ as interpolation parameter. Proposition \ref{PRP.QOT_Distance_Interpolation_Parameter} shows $\theta=0$ gives quantum $\lc{}-1,2\rc$-Sobolev distance independent of $f$. In the logarithmic mean setting, variation of $\theta\in [0,1]$ interpolates between, due to independence from $f$, non-geometric quantum $\lc{}-1,2\rc$-Sobolev distances and quantum $L^{2}$-Wasserstein distances. This follows the classical case \cite{ART.Dol_Naz_Sav.2009.Generalised_OT}.

\begin{prp}\label{PRP.QOT_Distance_Interpolation_Parameter}
Let $(\phi,\bpsi,\gamma,\nabla)$ be noncommutative differential structure for tracial AF-$C^{*}$-algebras $(A,\tau)$ and $(B,\omega)$ in $\lc{}f,0\rc$-setting. For all $\mu^{0},\mu^{1}\in\SII(A)$, we have

\begin{align}\label{EQ.PRP.QOT_Distance_Interpolation_Parameter_1}
\mathcal{W}_{\nabla}^{f,0}\lc\mu^{0},\mu^{1}\rc{}=\sup\hspace{0.025cm} \lset{} \babsv{1}{\lc\mu^{1}-\mu^{0}\rc{}(x)}\ \vset\ x\in A_{0},\ \|\nabla x\|_{\omega}\leq 1\rset{}.
\end{align}
\end{prp}

%NEWPAGE
%NEWPAGE
%NEWPAGE

\pagebreak

%NEWPAGE
%NEWPAGE
%NEWPAGE

\begin{proof}
We reduce to the finite-dimensional setting. Assume Equation \ref{EQ.PRP.QOT_Distance_Interpolation_Parameter_1} holds in the latter. For all $\mu^{0},\mu^{1}\in\SII(A)$, we use $3)$ in Theorem \ref{THM.QOT_Distance} to calculate

\begin{align*}
\mathcal{W}^{f,0}\lc\mu^{0},\mu^{1}\rc{} & = \limsup_{j\in\mathbb{N}}\hspace{0.025cm} \mu^{0}(1_{A_{j}})^{-1}\sup\hspace{0.025cm} \lset\babsv{1}{\lc\mu^{1}-\mu^{0}\rc{}(x)}\ \vset\ x\in A_{j},\ \|\nabla x\|_{\omega}\leq 1\rset \phantom{\bigg)} \\
& = \sup_{j\in\mathbb{N}}\hspace{0.025cm} \sup\hspace{0.025cm} \lset\babsv{1}{\lc\mu^{1}-\mu^{0}\rc{}(x)}\ \vset\ x\in A_{j},\ \|\nabla x\|_{\omega}\leq 1\rset \phantom{\bigg)} \\
& = \sup\hspace{0.025cm} \lset\babsv{1}{\lc\mu^{1}-\mu^{0}\rc{}(x)}\ \vset\ x\in A_{0},\ \|\nabla x\|_{\omega}\leq 1\rset{}. \phantom{\bigg)}
\end{align*}

\noindent It suffices to show Equation \ref{EQ.PRP.QOT_Distance_Interpolation_Parameter_1} in the finite-dimensional setting.\par
Assume $A$ and $B$ are finite-dimensional. Equation \ref{EQ.PRP.QOT_Distance_Interpolation_Parameter_2} below states $\mu$, $\eta$ and $f$ are irrelevant if $\theta=0$. This follows since their contributions are perturbed noncommutative division operators to the power of zero, i.e.~the identity operator. For all $\mu,\eta\in\SII(A)$ and $w\in B^{*}$, we have

\begin{align}\label{EQ.PRP.QOT_Distance_Interpolation_Parameter_2}
\mathcal{I}^{f,0}(\mu,\eta,w)=\sup_{j\in\mathbb{N}}\hspace{0.025cm} \|w\|_{\omega}^{2}\in [0,\infty].
\end{align}

\noindent For all $\mu^{0},\mu^{1}\in\SII(A)$, set 

\begin{align}\label{EQ.PRP.QOT_Distance_Interpolation_Parameter_3}
d\lc\mu^{0},\mu^{1}\rc{}:=\sup\hspace{0.025cm} \lset\babsv{1}{\lc\mu^{1}-\mu^{0}\rc{}(x)}\ \vset\ x\in A,\ \|\nabla x\|_{\omega}\leq 1\rset{}.
\end{align}

\noindent Let $\mu^{0},\mu^{1}\in\SII(A)$ s.t.~$d\lc\mu^{0},\mu^{1}\rc{}<\infty$. Finiteness implies $\mu^{1}(x)=\mu^{0}(x)$ for all $x\in\ker\nabla$ by scaling with strictly positive constants. We therefore define bounded linear functional $F_{\mu^{0},\mu^{1}}:\im\nabla\cong\ker\nabla^{\perp}\longrightarrow\mathbb{C}$ by setting 

\begin{align}\label{EQ.PRP.QOT_Distance_Interpolation_Parameter_4}
F_{\mu^{0},\mu^{1}}(x):=\lc\mu^{1}-\mu^{0}\rc{}(x)    
\end{align}

\noindent for all $\nabla x\in\im\nabla$. Equation \ref{EQ.PRP.QOT_Distance_Interpolation_Parameter_4} determines unique $w\in\im\nabla$ s.t.~$\|w\|_{\omega}=d\lc\mu^{0},\mu^{1}\rc$ and $\lc\mu^{1}-\mu^{0}\rc{}(x)=\lgl w,\nabla x\rgl_{\omega}$ for all $x\in A$. We define $(\mu,w)\in\Admnullone\lc\mu^{0},\mu^{1}\rc$ by setting

\begin{align}\label{EQ.PRP.QOT_Distance_Interpolation_Parameter_5}
\mu(t):=\lc{}1-t\rc\mu^{0}+t\mu^{1},\ w(t):=w
\end{align}

\noindent for all $t\in [0,1]$. Equation \ref{EQ.PRP.QOT_Distance_Interpolation_Parameter_2} and Equation \ref{EQ.PRP.QOT_Distance_Interpolation_Parameter_5} imply

\begin{align}\label{EQ.PRP.QOT_Distance_Interpolation_Parameter_6}
\mathcal{W}_{\nabla}^{f,0}\lc\mu^{0},\mu^{1}\rc\leq L^{f,\theta}(\mu,w)=\|w\|_{\omega}=d\lc\mu^{0},\mu^{1}\rc{}.
\end{align}

%NEWPAGE
%NEWPAGE
%NEWPAGE

\pagebreak

%NEWPAGE
%NEWPAGE
%NEWPAGE

We show the converse. If $(\mu,w)\in\Admnullone\lc\mu^{0},\mu^{1}\rc$, then Equation \ref{EQ.PRP.QOT_Distance_Interpolation_Parameter_2} shows

\begin{align}\label{EQ.PRP.QOT_Distance_Interpolation_Parameter_7}
\int_{0}^{1}\|w(t)\|_{\omega}dt=L^{f,0}(\mu,w).
\end{align}

\noindent Equation \ref{EQ.PRP.QOT_Distance_Interpolation_Parameter_7} in turn shows

\begin{align}\label{EQ.PRP.QOT_Distance_Interpolation_Parameter_8}
\babsv{1}{\lc\mu^{1}-\mu^{0}\rc{}(x)}\leq\int_{0}^{1}\|w(t)\|_{\omega}dt=L^{f,0}(\mu,w)
\end{align}

\noindent for all $x\in A$ s.t.~$\|\nabla x\|_{\omega}\leq 1$. Take the infimum over all admissible paths with marginals $\mu^{0}$ and $\mu^{1}$ in Equation \ref{EQ.PRP.QOT_Distance_Interpolation_Parameter_8}, followed by the supremum over all $x\in A$ s.t.~$\|\nabla x\|_{\omega}\leq 1$. This yields the converse to Equation \ref{EQ.PRP.QOT_Distance_Interpolation_Parameter_6}. Note our use of Corollary \ref{COR.Length_Functional_Reparametrisation}. Equation \ref{EQ.PRP.QOT_Distance_Interpolation_Parameter_1} holds in the finite-dimensional setting. The general case follows as discussed above.
\end{proof}

%%%%%%%%%%%%%%%%%%%
%%% SUBSECTION %%%%
%%%%%%%%%%%%%%%%%%%

\subsection{Fundamental example classes}\label{SSEC.QOT_DT_BSP}

We provide fundamental example classes. We specify neither symmetric representing function nor interpolation parameter. First, we use generalised discrete derivatives to construct quantum optimal transport distances for tracial AF-$C^{*}$-algebras parame\-trised over finite sets. This generalises the discrete cases \cite{ART.Maa.2011.Discrete_OT_Markov}\cite{ART.Mie.2011.Discrete_OT_RctDiff} and those using internal quantum gradients. Secondly, we use dynamic quantum gradients to construct quantum optimal transport distances for tracial AF-$C^{*}$-algebras generating hyperfinite factors of type I and II by $\sigma$-weak closure. These are common algebras of observables in quantum statistical mechanics \cite{BK.Bra.1987.OpAlg_Quantum_StM_I}\cite{BK.Bra.1987.OpAlg_Quantum_StM_II}\cite{BK.Nes_Sto.2006.Rel_Ent}.\par
In the non-twisted case, we have an iterative construction. Self-adjoint unbounded operator with compact resolvent induce examples for type I-factors. We extend to the type II$_{1}$-factor using natural extensions of bounded operators on separable Hilbert space to elements in CAR-algebras \cite{BK.Nes_Sto.2006.Rel_Ent} under Clifford representations \cite{BK.Gra_Var_Fig.2001.NCG_Elements}\cite{BK.Ply_Rob.1994.Clifford_Algebras}. We tensor both to the type II$_{\infty}$-factor. In the twisted case, we show intertwining sets of Clifford generators yield direct sums of dynamic quantum gradients for tracial AF-$C^{*}$-algebras closing to the type II$_{\infty}$-factor. In the logarithmic mean setting, the non-twisted and twisted case have non-negative, resp.~strictly positive lower Ricci bounds. Thirdly, examples using non-twisted dynamic quantum gradients are given by first and second quantisation of spectral triples \cite{ART.Cha_Con_vSui.2013.NCG_Inner_Fluctuations}\cite{ART.Cha_Con_vSui.2020.NCG_Second_Quantisation}\cite{BK.vSui.2015.NCG_AF_Particle_Physics}\cite{BK.Var.2006.NCG_Elements_Short}. First quantisation of spectral triples gives examples for type I-factors induced by noncommutative Dirac operators. Second quantisation of spectral triples is extension to the type II$_{1}$-factor. Finally, we outline how second quantisation of spectral triples yields our ansatz to study noncommutative gauge theories \cite{ART.Cha_Con.1996.NCG_Spectral_Action_I}\cite{ART.Cha_Con.1997.NCG_Spectral_Action_II}\cite{ART.Cha_Con_Mar.2007.NCG_Standard_Model_Recovered}\cite{BK.vSui.2015.NCG_AF_Particle_Physics}\cite{BK.Var.2006.NCG_Elements_Short} if we generalise to quantum optimal transport parametrised by gauge fields. We view quantum optimal transport as the pointwise case. We therefore see our discussion lies in the intersection of noncommutative gauge theory, quantum statistical mechanics and quantum information theory \cite{BK.Nie_Chu.2000.Quantum_Computation_Information}.\par

%NEWPAGE
%NEWPAGE
%NEWPAGE

\pagebreak

%NEWPAGE
%NEWPAGE
%NEWPAGE

Standard references for factor $W^{*}$-algebras, in particular hyperfinite ones, are \cite{BK.Ped.2018.Cstar_Algebras} and \cite{BK.Tak.1979.OpAlg_I}\cite{BK.Tak.2003.OpAlg_II}\cite{BK.Tak.2003.OpAlg_III}. We refer to \cite{BK.Nes_Sto.2006.Rel_Ent} for details on CAR-algebras, as well as \cite{BK.Gra_Var_Fig.2001.NCG_Elements} and \cite{BK.Ply_Rob.1994.Clifford_Algebras} for Clifford representations over anti-symmetric Fock space. Standard references for noncommutative geometry are \cite{BK.Gra_Var_Fig.2001.NCG_Elements}\cite{BK.Var.2006.NCG_Elements_Short} and \cite{BK.vSui.2015.NCG_AF_Particle_Physics}. Whereas \cite{BK.Gra_Var_Fig.2001.NCG_Elements} provides a rather comprehensive treatment, note \cite{BK.Var.2006.NCG_Elements_Short} gives a condensed version of the former. Standard references for quantum statistical mechanics are \cite{BK.Bra.1987.OpAlg_Quantum_StM_I}\cite{BK.Bra.1987.OpAlg_Quantum_StM_II}, \cite{BK.Dav.1976.Quantum_Markov_SG}, \cite{BK.Gar_Zol.2004.Quantum_Noise}, \cite{BK.Ohy_Pet.1993.Rel_Ent} and \cite{BK.Ste_vLee.2013.Full_Quantum_StM}.

%%%%%%%%%%%%%
%%% PART %%%%
%%%%%%%%%%%%%

\subsubsection*{Generalised discrete derivatives over finite sets}

We use generalised discrete derivatives to construct quantum optimal transport distances in Example \ref{BSP.QOT_KL_Parametrised} for tracial AF-$C^{*}$-algebras parametrised over finite sets. This generalises the discrete cases \cite{ART.Maa.2011.Discrete_OT_Markov}\cite{ART.Mie.2011.Discrete_OT_RctDiff} and those using internal quantum gradients.\par
Let $X$ be a finite set and $u\in C(X)_{+}$. We define f.s.n.~trace $\nu_{u}$ on $C(X)$ by setting

\begin{align}\label{EQ.SSEC.QOT_DT_BSP_1}
\nu_{u}(F):=\sum_{x\in X}F(x)u(x)
\end{align}

\noindent for all $F\in C(X)$. We have finite tracial AF-$C^{*}$-algebra $\lc{}C(X),\nu_{u}\rc$ as per Example \ref{BSP.AF_Cstar_Trace_Fin}. Let $(A,\tau)$ be a tracial AF-$C^{*}$-algebra. Since $\absv{1.115}{X}<\infty$, note $C\lc{}X,A\rc\cong C(X)\otimes A\cong A^{\absv{1.115}{X}}$ as AF-$C^{*}$-algebras. Proposition \ref{PRP.Wstar_Derivation_QG_TP} yields tracial AF-$C^{*}$-algebra $\lc{}C\lc{}X,A\rc{},\nu_{u}\otimes\tau\rc$ in $C\lc{}X,L^{\infty}(A,\tau)\rc$ generated by $\lset{}C\lc{}X,A_{j}\rc\rset_{j\in\mathbb{N}}$. We have f.s.n.~trace $\nu_{u}\otimes\tau$ on $C\lc{}X,L^{\infty}(A,\tau)\rc$ given by

\begin{align}\label{EQ.SSEC.QOT_DT_BSP_2}
\big(\nu_{u}\otimes\tau\big)(F)=\sum_{x\in X}\tau\big(F(x)\big)u(x)   
\end{align}

\noindent for all $F\in C\lc{}X,L^{\infty}(A,\tau)\rc_{+}$.

\begin{bsp}\label{BSP.QOT_KL_Parametrised}
Let $X$ be a finite set and $K\in C\lc{}X\times X\rc_{+}$ an irreducible Markov kernel with steady state $u_{K}\in C(X)_{+}$ having full support. Let $(A,\tau)$ be a strongly unital tracial AF-$C^{*}$-algebra s.t.~$\tau<\infty$. We tensor $\lc{}C(X),\nu_{u_{K}}\rc$ and $(A,\tau)$ as per Equation \ref{EQ.SSEC.QOT_DT_BSP_2}. We likewise tensor $\lc{}C\lc{}X\times X\rc{},\nu_{K}\rc$ and $(A\otimes A,\tau\otimes\tau)$.\par
We have f.s.n.~trace $\tau_{K}$ on $C\lc{}X,L^{\infty}(A,\tau)\rc$ given by

\begin{align}\label{EQ.BSP.QOT_KL_Parametrised_1}
\tau_{K}(F):=\big(\nu_{u_{K}}\otimes\tau\big)(F)=\sum_{x\in X}\tau\big(F(x)\big)u_{K}(x)
\end{align}

\noindent for all $F\in C\lc{}X,L^{\infty}(A,\tau)\rc_{+}$, as well as f.s.n.~trace $\omega_{K}$ on $C\lc{}X\times X,L^{\infty}(A,\tau)\otimes L^{\infty}(A,\tau)\rc_{+}$ given by

\begin{align}\label{EQ.BSP.QOT_KL_Parametrised_2}
\omega_{K}(G):=\big(\nu_{K}\otimes(\tau\otimes\tau)\big)(G)=\sum_{x,y\in X}(\tau\otimes\tau)\big(G(x,y)\big)K(x,y)
\end{align}

\noindent for all $G\in C\lc{}X\times X,L^{\infty}(A,\tau)\otimes L^{\infty}(A,\tau)\rc_{+}$. Altogether, we obtain strongly unital tracial AF-$C^{*}$-algebra $\lc{}C\lc{}X,A\rc{},\tau_{K}\rc$ generated by $\lset{}C\lc{}X,A_{j}\rc\rset_{j\in\mathbb{N}}$, as well as $\lc{}C\lc{}X\times X,A\otimes A\rc{},\omega_{K}\rc$ generated by $\lset{}C\lc{}X\times X,A_{j}\odot A_{j}\rc\rset_{j\in\mathbb{N}}$.\par

%NEWPAGE
%NEWPAGE
%NEWPAGE

\pagebreak

%NEWPAGE
%NEWPAGE
%NEWPAGE

We know Equation \ref{EQ.BSP.QOT_KL_Parametrised_1} shows $L^{2}\lc{}C\lc{}X,A\rc{},\tau_{K}\rc{}=C\lc{}X,L^{2}(A,\tau)\rc$ and Equation \ref{EQ.BSP.QOT_KL_Parametrised_2} shows $L^{2}\lc{}C\lc{}X\times X,A\otimes A\rc{},\omega_{K}\rc{}=C\lc{}X\times X,L^{2}(A\otimes A,\tau\otimes\tau)\rc$. Equation \ref{EQ.SSEC.NCDS_NCG_QG_7} further shows we define local $^{*}$-homomorphisms $\phi,\bpsi:C\lc{}X,A\rc\longrightarrow C\lc{}X\times X,A\otimes A\rc$ by setting

\begin{align}\label{EQ.BSP.QOT_KL_Parametrised_3}
\phi(F)(x,y):=\phi^{\Int}\big(F(x)\big),\ \bpsi(F)(x,y):=\bpsi^{\Int}\big(F(y)\big)
\end{align}

\noindent for all $F\in C\lc{}X,A\rc$ and $x,y\in X$. Finally, pointwise algebra involution defines anti-linear isometric involution $\gamma:C\lc{}X\times X,L^{2}(A\otimes A,\tau\otimes\tau)\rc\longrightarrow C\lc{}X\times X,L^{2}(A\otimes A,\tau\otimes\tau)\rc$. We have AF-$C\lc{}X,A\rc$-bimodule structure $(\phi,\bpsi,\gamma)$ on $C\lc{}X\times X,A\otimes A\rc$.\par
Let $\lambda\geq 0$. Following Equation \ref{EQ.BSP.QOT_KL_Parametrised_3}, we define the $\lc{}K,\lambda\rc$-parametrised quantum gradient $\nabla_{K}^{\lambda}:C\lc{}X,A_{0}\rc\longrightarrow C\lc{}X\times X,L^{2}(A\otimes A,\tau\otimes\tau)\rc$ by setting

\begin{align}\label{EQ.BSP.QOT_KL_Parametrised_4}
\lc\nabla_{K}^{\lambda}F\rc(x,y):=\sqrt{\frac{\lambda}{2\tau(1_{A})}}\cdot \big(F(x)\otimes 1_{A}-1_{A}\otimes F(y)\big)
\end{align}

\noindent for all $F\in C\lc{}X,A_{0}\rc$ and $x,y\in X$. We have $C\lc{}X\times X,A\otimes A\rc\cong C\lc{}X,A\rc\otimes C\lc{}X,A\rc$. Using the latter and up to positive constant, $\nabla_{K}^{\lambda}$ is the generalised discrete derivative on $C\lc{}X,A\rc$ as per Definition \ref{DFN.CWstar_Derivation_Generalised_Discrete} restricted to $C\lc{}X,A_{0}\rc$. Proposition \ref{PRP.CWstar_Derivation_Generalised_Discrete} shows said generalised discrete derivative is a bounded symmetric $C\lc{}X,A\rc$-module derivation. Equation \ref{EQ.BSP.QOT_KL_Parametrised_4} shows $\nabla_{K}^{\lambda}$ commutes with Hilbert space projections to generating $C^{*}$-subalgebras. Thus $\nabla_{K}^{\lambda}$ is a quantum gradient. If $(A,\tau)=\lc\mathbb{C},1\rc$, then Equation \ref{EQ.BSP.QOT_KL_Parametrised_4} specialises to the discrete derivative. If $\absv{1.115}{X}=1$, then Equation \ref{EQ.BSP.QOT_KL_Parametrised_4} instead specialises to the $\lambda$-internal quantum gradient on $A$ as per Definition \ref{DFN.Wstar_Derivation_QG_Internal}. If $\absv{1.115}{X}>1$, then $\nu_{K}\neq\nu_{u_{K}}\otimes\nu_{u_{K}}$ since $K$ and $u_{K}$ are stochastic. Hence $\nabla_{K}^{\lambda}$ is internal quantum gradient if and only if $\absv{1.115}{X}=1$. Parametrised quantum gradients as per Equation \ref{EQ.BSP.QOT_KL_Parametrised_4} therefore generalise discrete derivatives and internal quantum gradients by using $(A,\tau)=\lc\mathbb{C},1\rc$, resp.~$\absv{1.115}{X}=1$.\par
We obtain noncommutative differential structures which define quantum optimal transport distances of discrete densities evaluating in tracial AF-$C^{*}$-algebras. If we use $(A,\tau)=\lc\mathbb{C},1\rc$ here with $K$ as in \cite{ART.Maa.2011.Discrete_OT_Markov}, then we recover discrete Wasserstein distances associated to Markov chains with detailed balance condition \cite{ART.Maa.2011.Discrete_OT_Markov}. We likewise recover \cite{ART.Mie.2011.Discrete_OT_RctDiff}. In summary, we generalise the discrete cases \cite{ART.Maa.2011.Discrete_OT_Markov}\cite{ART.Mie.2011.Discrete_OT_RctDiff} and any using internal quantum gradients. We recover these by using trivial codomain, resp.~domain.
\end{bsp}

%%%%%%%%%%%%%
%%% PART %%%%
%%%%%%%%%%%%%

\subsubsection*{Dynamic quantum gradients for hyperfinite factors of type I and II}

We use dynamic quantum gradients to construct quantum optimal transport distances for tracial AF-$C^{*}$-algebras generating hyperfinite factors of type I and II by $\sigma$-weak closure \cite{BK.Bra.1987.OpAlg_Quantum_StM_I}\cite{BK.Bra.1987.OpAlg_Quantum_StM_II}\cite{BK.Nes_Sto.2006.Rel_Ent}. The iterative construction of non-twisted dynamic quantum gradients is given by following Example \ref{BSP.QOT_Type_I}, Example \ref{BSP.QOT_Type_II_1} and Example \ref{BSP.QOT_Type_II_Infty} in order. Note Example \ref{BSP.QOT_Second_Quantisation_Parametrised} clarifies their importance. We construct direct sums of twisted dynamic quantum gradients, each induced by a Clifford generator, in Example \ref{BSP.QOT_Type_II_Twisted}. In the logarithmic mean setting, Example \ref{BSP.L2W_Ric_Wstar_Derivation_QG_Dynamic_System} and Example \ref{BSP.L2W_Ric_Wstar_Derivation_QG_Intertwining_Clifford} in Subsection \ref{SSEC.L2W_EVI_Ric}, both of which use Theorem \ref{THM.L2W_Ric}, imply all examples constructed here have non-negative lower Ricci bounds. Example \ref{BSP.L2W_Ric_Wstar_Derivation_QG_Intertwining_Clifford} shows strict positivity in the twisted case.\par
We give the iterative construction of non-twisted dynamic quantum gradients. Each step in the construction is induced by trace-preserving local $C^{*}$-dynamical systems as per Corollary \ref{COR.Wstar_Derivation_QG_Dynamic_System}. We cover type I-factors in Example \ref{BSP.QOT_Type_I}, the type II$_{1}$-factor in Example \ref{BSP.QOT_Type_II_1}, and the type II$_{\infty}$-factor Example \ref{BSP.QOT_Type_II_Infty}. We apply Example \ref{BSP.QOT_Type_I} to get first, and Example \ref{BSP.QOT_Type_II_1} to get second quantisation of spectral triples.

\begin{bsp}\label{BSP.QOT_Type_I}
Hyperfinite factors of type I are of form $\BII(H)$ for a separable Hilbert space $H$. Let $H$ be a separable Hilbert space, $D\in\UBII(H)_{h}$ with compact resolvent and $\{e_{j}\}_{j\in\mathbb{N}}$ an orthonormal eigenbasis of the latter. For all $j\in\mathbb{N}$, let $P_{j}:H\longrightarrow\langle e_{1},\ldots,e_{j}\rangle_{\mathbb{C}}$ be the Hilbert space projection. The orthonormal eigenbasis determines unique unitary $U:H\longrightarrow\ell^{2}\lc\mathbb{N}\rc$. For all $j\in\mathbb{N}$, set $H_{j}:=P_{j}H$, $U_{j}:=\com_{H_{j}}U=P_{j}UP_{j}$ and

\begin{align}\label{EQ.BSP.QOT_Type_I_1}
A_{j}:=\BII(H_{j})=U_{j}^{*}M_{j}(\mathbb{C})U_{j}.    
\end{align}

\noindent We have tracial AF-$C^{*}$-algebra $\lc\KII(H),\tr\rc$ in $\BII(H)$ generated by $\lset{}A_{j}\rset_{j\in\mathbb{N}}$. We equip $\KII(H)$ with its canonical AF-$\KII(H)$-bimodule structure.\par
For all $j\in\mathbb{N}$, Equation \ref{EQ.BSP.QOT_Type_I_1} shows

\begin{align}\label{EQ.BSP.QOT_Type_I_2}
\textrm{Ad}_{t}^{D}(A_{j})\subset A_{j}
\end{align}

\noindent for all $t\in\mathbb{R}$. Equation \ref{EQ.BSP.QOT_Type_I_2} shows we have $\tr$-preserving local $C^{*}$-dynamical system $\lc\KII(H),\mathbb{R},\restr{0.925}{\Ad^{D}}{\KII(H)}\rc$. Note $L^{2}\lc\KII(H),\tr\rc{}=S^{2}(H)$ for the Hilbert-Schmidt operators on $H$ \cite{BK.Bla.2006.OpAlg}. We apply Lemma \ref{LEM.Wstar_Derivation_QG_Dynamic_System} to get unique $\mathcal{D}_{\Ad}\in\UBII\lc{}S^{2}(H)\rc$ s.t.~

\begin{align}\label{EQ.BSP.QOT_Type_I_3}
L_{\textrm{Ad}_{t}^{D}(x)}=\textrm{Ad}_{t}^{\mathcal{D}_{\Ad}}(L_{x})
\end{align}

\noindent for all $t\in\mathbb{R}$ and $x\in\KII(H)$. By norm differentiation of Equation \ref{EQ.BSP.QOT_Type_I_3}, Corollary \ref{COR.Wstar_Derivation_QG_Dynamic_System} yields non-twisted dynamic quantum gradient given by

\begin{align}\label{EQ.BSP.QOT_Type_I_4}
\nabla^{\mathcal{D}_{\Ad}}x=i\lb\mathcal{D}_{\Ad},x\rb_{A}=iL^{-1}\lc\overline{\mathcal{D}_{\Ad}L_{x}-L_{x}\mathcal{D}_{\Ad}}\rc{}  
\end{align}

\noindent for all $x\in\KII(H)_{0}$. Equation \ref{EQ.BSP.QOT_Type_I_3} and Equation \ref{EQ.BSP.QOT_Type_I_4} show $\mathcal{D}_{\Ad}$ is $\id_{\KII(H)}$-intertwining. We know the identities for dynamic quantum gradient, its adjoint and finally Laplacian given in Corollary \ref{COR.Wstar_Derivation_QG_Intertwining_I} and Corollary \ref{COR.Wstar_Derivation_QG_Intertwining_II} apply.\par
We pull back along $L$ to $\KII(H)$ as follows. For all $j\in\mathbb{N}$, note Equation \ref{EQ.BSP.QOT_Type_I_1} shows $D$ is $H_{j}$-reducible and set $D_{\hspace{-0.055cm} j}:=\com_{H_{j}}D=P_{j}DP_{j}$. For all $t\in\mathbb{R}$ and $j\in\mathbb{N}$, arguing as for Equation \ref{EQ.PRP.Local_Operator_1} in Proposition \ref{PRP.Local_Operator} shows

\begin{align}\label{EQ.BSP.QOT_Type_I_5}
e^{itD}=e^{itD_{\hspace{-0.055cm} j}}\oplus e^{itD_{\hspace{-0.055cm} j}^{\perp}}    
\end{align}

\noindent w.r.t.~$\BII(H_{j})\oplus\BII(H_{j}^{\perp})$.\par
Note $\KII(H)_{0}\subset\BII(H)$ is strongly dense and $\dblv{}e^{itD}\dblv_{\BII(H)}=1$ in each case. Using the latter, Equation \ref{EQ.BSP.QOT_Type_I_5} and sequential strong continuity of multiplication show

\begin{align}\label{EQ.BSP.QOT_Type_I_6}
t\mapsto L_{e^{itD}}\in\UII\lc\BII\lc{}S^{2}(H)\rc\rc{}    
\end{align}

\noindent is a strongly continuous unitary group. Equation \ref{EQ.BSP.QOT_Type_I_3} additionally shows

\begin{align}\label{EQ.BSP.QOT_Type_I_7}
L_{e^{itD}}L_{x}L_{e^{-itD}}=\textrm{Ad}_{t}^{\mathcal{D}_{\Ad}}(L_{x})
\end{align}

\noindent for all $t\in\mathbb{R}$, $j\in\mathbb{N}$ and $x\in A_{j}$. Equation \ref{EQ.BSP.QOT_Type_I_7} extends to $\KII(H)$ by norm density. Then uniqueness, Corollary \ref{COR.Wstar_Derivation_QG_Intertwining_II}, and Equation \ref{EQ.BSP.QOT_Type_I_7} imply

\begin{align}\label{EQ.BSP.QOT_Type_I_8}
\nabla^{D}x:=\nabla^{\mathcal{D}_{\Ad}}x=i\lb{}D_{\hspace{-0.055cm} j},x\rb{}=i\lc{}D_{\hspace{-0.055cm} j}x-xD_{\hspace{-0.055cm} j}\rc{}=\overline{i\lc{}Dx-xD\rc{}}
\end{align}

\noindent for all $j\in\mathbb{N}$ and $x\in A_{j}$. The identities for dynamic quantum gradient, its adjoint and Laplacian given in Corollary \ref{COR.Wstar_Derivation_QG_Intertwining_I} and Corollary \ref{COR.Wstar_Derivation_QG_Intertwining_II} pull back accordingly.\par
We obtain noncommutative differential structures which define quantum optimal transport distances of density operators. Note all constructions of non-twisted dynamic quantum gradients reduce to Equation \ref{EQ.BSP.QOT_Type_I_8} in this example. In the logarithmic mean setting, which uses $\theta=1$, accessibility components are complete geodesic length-metric spaces even for $\dim_{\mathbb{C}}H=\infty$. We use this for first quantisation of spectral triples.
\end{bsp}

Equation \ref{EQ.SSEC.QOT_DT_BSP_3} are abstract canonical anti-commutation relations of CAR-algebras \cite{BK.Nes_Sto.2006.Rel_Ent}. Clifford representations determined by Equation \ref{EQ.SSEC.QOT_DT_BSP_9} provide natural concrete realisations \cite{BK.Gra_Var_Fig.2001.NCG_Elements}\cite{BK.Ply_Rob.1994.Clifford_Algebras}. Let $H$ be a separable Hilbert space. The CAR-algebra $\AII(H)$ over $H$ is defined as the unique unital $C^{*}$-algebra, up to isometric $^{*}$-isomorphism, equipped with a bounded anti-linear map $a:H\longrightarrow\mathcal{A}(H)$ s.t.~$C^{*}\lc\im a,1_{\mathcal{A}(H)}\rc{}=\mathcal{A}(H)$ and

\begin{align}\label{EQ.SSEC.QOT_DT_BSP_3}
a(u)^{*}a(v)+a(v)a(u)^{*}=\lgl u,v\rgl_{H}\cdot 1_{\AII(H)},\ a(u)a(v)+a(v)a(u)=0
\end{align}

\noindent for all $u,v\in H$. We consider CAR-algebras as Clifford algebras here. For all $u\in H$, set $b(u):=a(u)+a(u)^{*}$. Then Equation \ref{EQ.SSEC.QOT_DT_BSP_3} are equivalent to the Clifford relations

\begin{align}\label{EQ.SSEC.QOT_DT_BSP_4}
b(u)b(v)+b(v)b(u)=2\RE\lgl u,v\rgl_{H}\cdot 1_{\AII(H)}
\end{align}

\noindent for all $u,v\in H$. Thus the universal property of Clifford algebras applies and lets us extend bounded linear maps preserving Equation \ref{EQ.SSEC.QOT_DT_BSP_3}, hence Equation \ref{EQ.SSEC.QOT_DT_BSP_4}, from $H$ to $\AII(H)$ \lc{}cf.~Proposition 5.1 in \cite{BK.Gra_Var_Fig.2001.NCG_Elements}\rc{}. For all $\varphi\in\UII\lc\BII(H)\rc$, said universal property determines the Bogoliubov automorphism $\Cliff(\varphi)\in\textrm{Aut}\lc\AII(H)\rc$ of $\varphi$ by setting

\begin{align}\label{EQ.SSEC.QOT_DT_BSP_5}
\Cliff(\varphi)\big(a(u)\big):=a\lc\varphi(u)\rc{}    
\end{align}

\noindent for all $u\in H$ \lc{}cf.~Example 5.2 in \cite{BK.Gra_Var_Fig.2001.NCG_Elements}\rc{}.\par

%NEWPAGE
%NEWPAGE
%NEWPAGE

\pagebreak

%NEWPAGE
%NEWPAGE
%NEWPAGE

We determine l.s.c.~faithful semi-finite trace $\tau$ on $\AII(H)$ by setting

\begin{align}\label{EQ.SSEC.QOT_DT_BSP_6}
\tau\lc{}a\lc{}u_{1}\rc^{*}\ldots a\lc{}u_{n}\rc^{*}a\lc{}v_{m}\rc\ldots a\lc{}v_{1}\rc\rc{}:=\delta_{nm}\det\lc\frac{1}{2}\lc\lgl u_{k},v_{l}\rgl_{H}\rc_{k,l=1}^{n}\rc{}
\end{align}

\noindent for all $n,m\in\mathbb{N}$ and $\lset{}u_{k}\rset_{k=1}^{n},\lset{}v_{l}\rset_{l=1}^{m}\subset H$ \cite{BK.Nes_Sto.2006.Rel_Ent}. Note $\tau$ is the unique normalised trace on $\AII(H)$. If $\dim_{\mathbb{C}}H=n$, then $\lc\AII(H),\tau\rc\cong \lc\otimes_{k=1}^{n}M_{2}(\mathbb{C}),2^{-n}\otimes_{k=1}^{n}\tr_{2}\rc\cong \lc{}M_{2^{n}}(\mathbb{C}),2^{-n}\tr_{2^{n}}\rc$ as tracial $C^{*}$-algebras \cite{BK.Nes_Sto.2006.Rel_Ent}. If $\dim_{\mathbb{C}}H=\infty$, then $\AII(H)''$ is the hyperfinite factor of type II$_{1}$ up to choice of faithful unital $^{*}$-representation \cite{BK.Ped.2018.Cstar_Algebras}.\par
We associate faithful unital $^{*}$-representations of CAR-algebras over anti-symmetric Fock space to orthogonal complex structures. Such representations are called Clifford representations \cite{BK.Gra_Var_Fig.2001.NCG_Elements}\cite{BK.Ply_Rob.1994.Clifford_Algebras}. We equip $H$ with Euclidean structure of $\|.\|_{H}$. Let $J$ be an orthogonal complex structure on $H$. Using $J$ as imaginary unital left-multiplication, we complexify to $H[J]=H\oplus iH$. We define inner product of $H[J]$ by setting

\begin{align}\label{EQ.SSEC.QOT_DT_BSP_7}
\lgl u,v\rgl_{H[J]}:=\RE\lgl u,v\rgl_{H}+i\RE\lgl u,J(v)\rgl_{H}    
\end{align}

\noindent for all $u,v\in H[J]$. Thus $\lc{}H[J],\|.\|_{J}\rc$ is a Hilbert space. Equation \ref{EQ.SSEC.QOT_DT_BSP_7} induces inner product $\bigwedge\|.\|_{J}$ of anti-symmetric Fock space $\mathcal{F}(H[J]):=\bigwedge H[J]$ by universal property of the exterior algebra \cite{BK.Gra_Var_Fig.2001.NCG_Elements}. Hence $\lc\mathcal{F}(H[J]),\bigwedge\|.\|_{J}\rc$ is a Hilbert space. We define bounded anti-linear map $a_{J}:H\longrightarrow\BII\lc\FII(H[J])\rc$ by setting

\begin{align}\label{EQ.SSEC.QOT_DT_BSP_8}
\big(a_{J}(u)\big)^{*}(v):=u\wedge v
\end{align}

\noindent for all $u\in H$ and $v\in\FII(H[J])$. Using $\bigwedge\|.\|_{J}$ and Equation \ref{EQ.SSEC.QOT_DT_BSP_8} to obtain adjoints in $\BII\lc\FII(H[J])\rc$, we directly verify Equation \ref{EQ.SSEC.QOT_DT_BSP_3} for $a_{J}$ \lc{}cf.~pp.186-187 in \cite{BK.Gra_Var_Fig.2001.NCG_Elements}\rc{}.\par
Finally, we determine the Clifford representation $\rho_{J}:\AII(H)\longrightarrow\BII\lc\FII(H[J])\rc$ for $J$ by setting

\begin{align}\label{EQ.SSEC.QOT_DT_BSP_9}
\rho_{J}(u):=a_{J}(u)+a_{J}(u)^{*}
\end{align}

\noindent for all $u\in H$. Note $\rho_{J}(u)=\rho_{J}\lc{}b(u)\rc$ in each case since we consider $H\cong b(H)\subset\AII(H)$ as set of generators. Thus $2a_{J}(u)=\rho_{J}(u)-i\rho_{J}\lc{}J(u)\rc$ for all $u\in H$ by \lc{}anti-\rc{}linearity, hence $\rho_{J}$ is a faithful unital $^{*}$-representation s.t.~

\begin{align}\label{EQ.SSEC.QOT_DT_BSP_10}
\AII(H[J]):=\rho_{J}\lc\AII(H)\rc\cong\AII(H)
\end{align}

\noindent is CAR-algebra over $H$ and Clifford algebra of $\|.\|_{H}^{2}$. Note the unique normalised and l.s.c.~faithful semi-finite trace $\tau$ on $\AII(H[J])$ is determined by Equation \ref{EQ.SSEC.QOT_DT_BSP_6} for $a_{J}$. We have tracial $C^{*}$-algebra $\lc\AII(H[J]),\tau\rc$ in $\AII(H[J])''\subset\BII\lc\FII(H[J])\rc$.

\begin{bsp}\label{BSP.QOT_Type_II_1}
The hyperfinite factor of type II$_{1}$ is $\AII(H)''$ for a separable Hilbert space $H$. Let $H$ be a separable Hilbert space, $J$ orthogonal complex structure on $H$ and $H_{1}\subset H_{2}\subset\ldots\subset H$ Hilbert subspaces with

\begin{align}\label{EQ.BSP.QOT_Type_II_1}
H=\overline{\bigcup_{j\in\mathbb{N}}H_{j}}^{\|.\|_{H}}
\end{align}

\noindent and s.t.~

\begin{align}\label{EQ.BSP.QOT_Type_II_2}
J(H_{j})\subset H_{j}
\end{align}

\noindent for all $j\in\mathbb{N}$. Equation \ref{EQ.BSP.QOT_Type_II_2} shows $J$ is orthogonal complex structure on $H_{j}$ and

\begin{align}\label{EQ.BSP.QOT_Type_II_3}
H_{j}[J]\subset H_{j+1}[J]\subset H[J]
\end{align}

\noindent in each case. Equation \ref{EQ.BSP.QOT_Type_II_1} and Equation \ref{EQ.BSP.QOT_Type_II_3} show $H[J]=\overline{\bigcup_{j\in\mathbb{N}}H_{j}[J]}^{\|.\|_{H[J]}}$. They moreover show analogous restriction properties for $\rho_{J}$ and $\AII(H[J])$. We have tracial AF-$C^{*}$-algebra $\lc\AII(H[J]),\tau\rc$ in $\AII(H[J])''\subset\BII\lc\FII(H[J])\rc$ generated by $\lset\hspace{-0.025cm} \AII(H_{j}[J])\rset_{j\in\mathbb{N}}$. We equip $\AII(H[J])$ with its canonical AF-$\AII(H[J])$-bimodule structure.\par
We construct $\tau$-preserving local $C^{*}$-dynamical system. Let $\varphi\in\UII\lc\BII(H)\rc$ s.t.~we have $\varphi(H_{j}),\varphi^{-1}(H_{j})\subset H_{j}$ for all $j\in\mathbb{N}$. Using $\varphi^{-1}=\varphi^{*}$, we directly verify $\com_{H_{j}}\varphi\in\UII\lc\BII(H_{j})\rc$ for all $j\in\mathbb{N}$. We obtain the $J$-twisted Bogoliubov automorphism

\begin{align}\label{EQ.BSP.QOT_Type_II_4}
\textrm{Cliff}_{J}(\varphi):=\rho_{J}\circ\Cliff(\varphi)\circ\rho_{J}^{-1}\in\textrm{Aut}\lc\AII(H[J])\rc{}
\end{align}

\noindent s.t.~$\restr{0.925}{\textrm{Cliff}_{J}(\varphi)}{\AII(H_{j}[J])}=\rho_{J}\circ\Cliff\lc\com_{H_{j}}\varphi\rc\circ\rho_{J}^{-1}\in\textrm{Aut}\lc\AII(H_{j}[J])\rc$ for all $j\in\mathbb{N}$. We select compatible Dirac operator. Let $D\in\UBII(H)_{h}$ with compact resolvent and orthonormal eigenbasis $\{e_{j}\}_{j\in\mathbb{N}}$. For all $j\in\mathbb{N}$, let $H_{j}=\langle e_{1},\ldots,e_{j}\rangle_{\mathbb{C}}$. For all $t\in\mathbb{R}$, Equation \ref{EQ.BSP.QOT_Type_I_5} shows $e^{itD}\in\UII\lc\BII(H)\rc$ satisfies our assumptions on $\varphi$ in Equation \ref{EQ.BSP.QOT_Type_II_4}. For all $t\in\mathbb{R}$, set 

\begin{align}\label{EQ.BSP.QOT_Type_II_5}
\alpha_{t}:=\textrm{Cliff}_{J}\lc{}e^{itD}\rc\in\textrm{Aut}\lc\AII(H[J])\rc{}.   
\end{align}

\noindent For all $t\in\mathbb{R}$ and $j\in\mathbb{N}$, Equation \ref{EQ.SSEC.QOT_DT_BSP_6} shows $\alpha_{t}$ is $\tau$-preserving and Equation \ref{EQ.BSP.QOT_Type_II_4} shows $\alpha_{t}\lc\AII(H_{j}[J])\rc\subset\AII(H_{j}[J])$. We show strong continuity of $\alpha:\mathbb{R}\longrightarrow\textrm{Aut}\lc\AII(H[J])\rc$ to conclude. Since $\dblv{}e^{itD}\dblv_{\BII(H)}=1$ for all $t\in\mathbb{R}$ by functional calculus, we see locality of $\alpha$ as above in fact reduces to $\|.\|_{\AII(H[J])}$-continuity upon evaluation on $\AII(H[J])_{0}$. Taken together with the $^{*}$-homomorphism property, we further reduce to $\|.\|_{\AII(H[J])}$-continuity upon evaluation on $a\lc\bigcup_{j\in\mathbb{N}}H_{j}\rc$.\par

%NEWPAGE
%NEWPAGE
%NEWPAGE

\pagebreak

%NEWPAGE
%NEWPAGE
%NEWPAGE

For all $u\in\bigcup_{j\in\mathbb{N}}H_{j}$, Equation \ref{EQ.SSEC.QOT_DT_BSP_5} shows the map

\begin{align}\label{EQ.BSP.QOT_Type_II_6}
t\mapsto\alpha_{t}\big(a_{J}(u)\big)=\rho_{J}\lc\Cliff\lc{}e^{itD}\rc\big(a(u)\big)\rc{}=a_{J}\lc{}e^{itD}u\rc{}
\end{align}

\noindent is $\|.\|_{\AII(H[J])}$-continuous. Equation \ref{EQ.BSP.QOT_Type_II_6} implies strong continuity. We have $\tau$-preserving local $C^{*}$-dynamical system $\lc\AII(H[J]),\mathbb{R},\alpha\rc$. Corollary \ref{COR.Wstar_Derivation_QG_Dynamic_System} yields non-twisted dynamic quantum gradient. In Example \ref{BSP.QOT_Second_Quantisation}, note Equation \ref{EQ.BSP.QOT_Second_Quantisation_2} gives an explicit formula for Equation \ref{EQ.BSP.QOT_Type_II_5} using wedged conjugation group of $\absv{1.15}{D}$ and for suitable $J$ depending on eigenvalues of $D$. The formula is taken from Proposition 2.6 in \cite{ART.Cha_Con_vSui.2020.NCG_Second_Quantisation}. However, we explicitly solve the associated implementation problem \cite{BK.Ply_Rob.1994.Clifford_Algebras} in Lemma \ref{LEM.QOT_Second_Quantisation_Implementation_Fock}.\par
We obtain noncommutative differential structures which define quantum optimal transport distances of states on CAR-algebras. Note the non-twisted dynamic quantum gradients used are induced by trace-preserving local $C^{*}$-dynamical systems lifted from Example \ref{BSP.QOT_Type_I} via Clifford representations to Equation \ref{EQ.BSP.QOT_Type_II_5}. We use this to get second quantisation of spectral triples as extension of their first quantisation.
\end{bsp}

\begin{rem}\label{REM.QOT_Type_II_1}
In contrast to Example \ref{BSP.QOT_Type_I}, the construction of dynamic quantum gradients in Example \ref{BSP.QOT_Type_II_1} does not pull back along canonical left-actions. We derive explicit formula for Equation \ref{EQ.BSP.QOT_Type_II_5} in Example \ref{BSP.QOT_Second_Quantisation} using wedged conjugation groups analogous to the construction in Example \ref{BSP.QOT_Type_I}. Choice of orthogonal complex structure is necessary to solve the associated implementation problem.
\end{rem}

\begin{bsp}\label{BSP.QOT_Type_II_Infty}
The hyperfinite factor of type II$_{\infty}$ is $W^{*}$-tensor product $\BII(H)\otimes\AII(\HII)''$ for infinite-dimensional separable Hilbert spaces $H$ and $\HII$. We do not identify $H\cong\HII$ since their finite-dimensional approximation differs in general. Let $H$ be a separable Hilbert space and assume the setting of Example \ref{BSP.QOT_Type_I} for $D\in\UBII(H)_{h}$. Let $\HII$ be a separable Hilbert space and assume the setting of Example \ref{BSP.QOT_Type_II_1} for $\DII\in\UBII(\HII)_{h}$. By $1)$ in Proposition \ref{PRP.Wstar_Derivation_QG_TP}, the tensor product construction yields tracial AF-$C^{*}$-algebra $\lc\KII(H)\otimes\AII\lc\HII[J]\rc{},\tr\otimes\tau\rc$ in $\BII(H)\otimes\AII\lc\HII[J]\rc{}''$ generated by $\lset\KII(H_{j})\odot\AII\lc\HII_{j}[J]\rc\rset_{j\in\mathbb{N}}$. We equip $\KII(H)\otimes\AII\lc\HII[J]\rc$ with its canonical AF-$\KII(H)\otimes\AII\lc\HII[J]\rc$-bimodule structure.\par
We require $\tau$-preserving local $C^{*}$-dynamical system. For all $t\in\mathbb{R}$, set

\begin{align}\label{EQ.BSP.QOT_Type_II_Infty_1}
\alpha_{t}:=\textrm{Ad}_{t}^{D}\otimes\textrm{Cliff}_{J}\lc{}e^{it\DII}\rc\in\textrm{Aut}\lc\KII(H)\otimes\AII\lc\HII[J]\rc\rc{}.
\end{align}

\noindent Example \ref{BSP.QOT_Type_I} and Example \ref{BSP.QOT_Type_II_1} show we have $\tr\otimes\tau$-preserving local $C^{*}$-dynamical system $\lc\KII(H)\otimes\AII\lc\HII[J]\rc{},\mathbb{R},\alpha\rc$ by reducing to elementary tensors. Corollary \ref{COR.Wstar_Derivation_QG_Dynamic_System} yie\-lds non-twisted dynamic quantum gradient. Since the latter are furthermore defined by norm differentiation, it is the tensor product quantum gradient of the dynamic ones as per Example \ref{BSP.QOT_Type_I} and Example \ref{BSP.QOT_Type_II_1} given by Proposition \ref{PRP.Wstar_Derivation_QG_TP}.\par
We obtain noncommutative differential structures which define quantum optimal transport distances of density operators evaluating in CAR-algebras. The non-twisted dynamic quantum gradients used are induced by trace-preserving local $C^{*}$-dynamical systems which are tensor products of those in Example \ref{BSP.QOT_Type_I} and Example \ref{BSP.QOT_Type_II_1}. This finalises our three-step iterative construction.
\end{bsp}

We construct direct sums of twisted dynamic quantum gradients induced by Clifford generators. For this, we use tracial AF-$C^{*}$-algebras in the setting of Example \ref{BSP.QOT_Type_II_Infty}.

\begin{bsp}\label{BSP.QOT_Type_II_Twisted}
Let $H$ and $\HII$ be separable Hilbert spaces. Let $T\in\BII(H)_{h}$ s.t.~we have $\spec T\subset\lset\hspace{-0.025cm} \pm 1\rset$ and with orthonormal eigenbasis $\lset{}u_{j}\rset_{j\in\mathbb{N}}$. Let $\lset{}v_{j}\rset_{j\in\mathbb{N}}$ be orthonormal basis of $\HII$. Let $m\in\mathbb{N}$. For all $j\in\mathbb{N}$, set 

\begin{align}\label{EQ.BSP.QOT_Type_II_Twisted_1}
H_{j}:=\lgl u_{1},\ldots,u_{j}\rgl_{\mathbb{C}},\ \mathcal{H}_{j}:=\lgl v_{1},\ldots,v_{m-1},\ldots v_{m-1+j}\rgl_{\mathbb{C}}.
\end{align}

We use trivial orthogonal complex structure $J:=iI_{\HII}$ on $\HII$ and suppress it. Using finite-dimensional approximation given by Equation \ref{EQ.BSP.QOT_Type_II_Twisted_1} and following construction in Example \ref{BSP.QOT_Type_II_Infty}, we have tracial AF-$C^{*}$-algebra $\lc\KII(H)\otimes\AII(\HII),\tr\otimes\tau\rc$ generated by $\lset\KII(H_{j})\odot\AII\lc\HII_{j}\rc\rset_{j\in\mathbb{N}}$. We determine the principle automorphism $\varphi\in\textrm{Aut}\lc\AII(H)\rc$ of $\AII(H)$ by setting 

\begin{align}\label{EQ.BSP.QOT_Type_II_Twisted_2}
\varphi\lc\rho(u)\rc{}=-\rho(u)
\end{align}

\noindent for all $u\in\HII$. Since $\varphi$ is a self-adjoint involutive local $^{*}$-homomorphism, we know $\phi:=\id_{\BII(H)}\otimes\varphi\in\textrm{Aut}\lc\KII(H)\otimes\AII(\HII)\rc$ is one. We have AF-$\KII(H)\otimes\AII(\HII)$-bimodule structure $\lc\phi,\id_{\KII(H)\otimes\AII(\HII)},\gamma^{\phi}\rc$ on $\KII(H)\otimes\AII(\HII)$ as per $1)$ in Proposition \ref{PRP.Wstar_Derivation_QG_Dynamic}.\par
Let $C>0$. For all $n\in\lset{}1,\ldots,m\rset$, set $d_{n}:=T\otimes C^{\frac{1}{2}}\rho\lc{}v_{n}\rc$. Get $T^{2}=I_{H}$ by $\spec T\subset\lset\hspace{-0.025cm} \pm 1\rset$. Equation \ref{EQ.SSEC.QOT_DT_BSP_4} and Equation \ref{EQ.BSP.QOT_Type_II_Twisted_1} show $\lset{}d_{n}\rset_{n=1}^{m}\subset\BII(H)\otimes\AII(\HII)''$ is a $\phi$-intertwining set of Clifford generators for $C>0$ as per $1)$ in Definition \ref{DFN.Wstar_Derivation_QG_Intertwining_Clifford}. For all $n\in\lset{}1,\ldots,m\rset$, we know Corollary \ref{COR.Wstar_Derivation_QG_Intertwining_I} yields twisted dynamic quantum gradient $\partial_{n}=\nabla^{-iL_{d_{n}},\phi}$ and its Laplacian $\Delta_{n}=\mathrlap{\phantom{\partial}^{*}}\partial_{n}\partial_{n}$ as per $2)$ in Definition \ref{DFN.Wstar_Derivation_QG_Intertwining_Clifford}. Proposition \ref{PRP.Wstar_Derivation_QG_DS_I} yields direct sum quantum gradient $\nabla^{\oplus}=\oplus_{n=1}^{m}\partial_{n}:\KII(H)_{0}\odot\AII(\HII)_{0}\longrightarrow L^{2}\lc\oplus_{n=1}^{m}\KII(H)\otimes\AII(\HII),\oplus_{n=1}^{m}\tr\otimes\tau\rc$ given by

\begin{align}\label{EQ.BSP.QOT_Type_II_Twisted_3}
\nabla^{\oplus}x=\lc\partial_{1}x,\ldots,\partial_{m}x\rc{}=\lc\nabla^{-iL_{d_{1}},\phi}x,\ldots,\nabla^{-iL_{d_{m}},\phi}x\rc{}
\end{align}

\noindent for all $x\in\KII(H)_{0}\odot\AII(\HII)_{0}$. Since $\Delta^{\oplus}=\sum_{n=1}^{m}\Delta_{n}$ by $4)$ in Proposition \ref{PRP.Wstar_Derivation_QG_DS_I}, Lemma \ref{LEM.Wstar_Derivation_QG_Intertwining_Clifford} implies

\begin{align}\label{EQ.BSP.QOT_Type_II_Twisted_4}
\partial_{n}\Delta^{\oplus}=\big(\Delta^{\oplus}+4C\cdot I\big)\partial_{n}
\end{align}

\noindent for all $n\in\lset{}1,\ldots,m\rset$. Note Equation \ref{EQ.BSP.QOT_Type_II_Twisted_4} lets us apply Theorem \ref{THM.L2W_Ric} to show strictly positive lower Ricci bounds in Example \ref{BSP.L2W_Ric_Wstar_Derivation_QG_Intertwining_Clifford}. If we rescale each partial gradient of $\nabla^{\oplus}$ as $\partial_{n}\mapsto\lambda\partial_{n}$ for $\lambda>0$, then $4C$ in Equation \ref{EQ.BSP.QOT_Type_II_Twisted_4} is $\lambda\cdot 4C$ instead.\par
We obtain noncommutative differential structures which define quantum optimal transport distances of density operators evaluating in CAR-algebras. Yet in contrast to Example \ref{BSP.QOT_Type_II_Infty}, we use direct sum quantum gradients of twisted dynamic quantum gradients induced by intertwining sets of Clifford generators. Equation \ref{EQ.BSP.QOT_Type_II_Twisted_4} holds and implies strictly positive lower Ricci bounds. We therefore obtain arbitrary lower Ricci bounds by rescaling this example.
\end{bsp}

\begin{rem}\label{REM.QOT_Type_II_Twisted}
Example \ref{BSP.QOT_Type_II_Twisted} for $H=\mathbb{C}$, $\dim_{\mathbb{C}}\HII<\infty$, and fixed $C=\frac{1}{4}$ is introduced in \cite{ART.Car_Maa.2014.Quantum_OT_I}. For all $j\in\mathbb{N}$, note $\rho\lc{}v_{m+j+1}\rc\AII\lc\HII_{j}\rc\subset\AII\lc\HII_{j}\rc^{\perp}\subset L^{2}\lc\AII(\HII),\tau\rc$. If $m=\infty$, then we cannot use $\lset\partial_{n}\rset_{n\in\mathbb{N}}$ as noncommutative directional derivatives since locality is violated if we do not fix $m<\infty$ for Equation \ref{EQ.BSP.QOT_Type_II_Twisted_1} in Example \ref{BSP.QOT_Type_II_Twisted}.
\end{rem}

%%%%%%%%%%%%%
%%% PART %%%%
%%%%%%%%%%%%%

\subsubsection*{First and second quantisation of spectral triples}

Connes' program of noncommutative geometry \cite{BK.Con.1994.NCG}\cite{COL.Con.2021.NCG_Spectral_POV}\cite{BK.Kha.2013.NCG_Basic}\cite{BK.Kha_Mar.2008.NCG_Invitation} unifies continuous and discrete geometries \cite{BK.Gra_Var_Fig.2001.NCG_Elements}\cite{BK.vSui.2015.NCG_AF_Particle_Physics}\cite{BK.Var.2006.NCG_Elements_Short} using operator theory \cite{BK.Bla.2006.OpAlg}\cite{BK.Tak.1979.OpAlg_I}\cite{BK.Tak.2003.OpAlg_II}\cite{BK.Tak.2003.OpAlg_III}. His spectral reconstruction theorem shows commutative real spectral triples are operator algebraic formulation of compact spin geometry \cite{ART.Con.1996.NCG_Reconstruction}. All real spectral triples define noncommutative gauge theories \cite{ART.Cha_Con.1996.NCG_Spectral_Action_I}\cite{BK.vSui.2015.NCG_AF_Particle_Physics}\cite{BK.Var.2006.NCG_Elements_Short}. Inner fluctuations of noncommutative Dirac operators \cite{ART.Cha_Con.1996.NCG_Spectral_Action_I}\cite{ART.Cha_Con_vSui.2013.NCG_Inner_Fluctuations}\linebreak\cite{ART.Cha_Con_vSui.2020.NCG_Second_Quantisation}\cite{BK.vSui.2015.NCG_AF_Particle_Physics}\cite{BK.Var.2006.NCG_Elements_Short}, the latter being given as part of real spectral triples, determine a spectral action on gauge fields \cite{ART.Cha_Con.1996.NCG_Spectral_Action_I}\cite{ART.Cha_Con.1997.NCG_Spectral_Action_II}\cite{ART.Cha_Con_Mar.2007.NCG_Standard_Model_Recovered}. Following the spectral action principle of Connes and Chamseddine \cite{ART.Cha_Con.1997.NCG_Spectral_Action_II}, it is used as action functional driving the dynamics of bosonic gauge fields \cite{ART.Cha_Con.1996.NCG_Spectral_Action_I}\cite{BK.vSui.2015.NCG_AF_Particle_Physics}\cite{BK.Var.2006.NCG_Elements_Short}. This spectral paradigm derives the Standard Model of particle physics \cite{ART.Gai_Gra_Sci.1999.The_Standard_Model} from almost commutative geometries \cite{ART.Cha_Con_Mar.2007.NCG_Standard_Model_Recovered}, i.e.~mixed continuous-discrete noncommutative geometries. We review noncommutative gauge theory, give first and second quantisation of spectral triples, and outline how the latter yields our ansatz to study noncommutative gauge theories based on a proposed internalised spectral action if we generalise to quantum optimal transport parametrised by gauge fields.\par
We review noncommutative gauge theory. All spectral triples $\lc\mathfrak{A},H,D\rc$ consist of a unital pre-$C^{*}$-algebra $\mathfrak{A}$, faithful unital $^{*}$-representation $\pi:\mathfrak{A}\longrightarrow\BII(H)$ over separable Hilbert space $H$, as well as $D\in\UBII(H)_{h}$ with compact resolvent \lc{}cf.~Definition 4.30 in \cite{BK.vSui.2015.NCG_AF_Particle_Physics}\rc{}. Moreover, note $D$ satisfies properties showing it is a noncommutative analogue of an Atiyah–Singer–Dirac operator. We say that $\lc\mathfrak{A},H,D\rc$ is a real spectral triple if it is further equipped with real structure $J$ on $H$ intertwining with $D$ s.t.~the commutant property and first-order condition

\begin{align}\label{EQ.SSEC.QOT_DT_BSP_11}
\lb\pi(x),J\pi(y)^{*}J^{-1}\rb{}=0,\ \lb\overline{D\pi(x)-\pi(x)D},J\pi(y)^{*}J^{-1}\rb{}=0    
\end{align}

\noindent are satisfied for all $x,y\in\mathfrak{A}$ \lc{}cf.~Equation 4.3.1 in \cite{BK.vSui.2015.NCG_AF_Particle_Physics}\rc{}. The first-order condition as per Equation \ref{EQ.SSEC.QOT_DT_BSP_11} is an operator algebraic characterisation of $D$ as differential operator of order one. We ignore gradient operators here as they only signify even or odd dimension. We may disregard the first-order condition \cite{ART.Cha_Con_vSui.2013.NCG_Inner_Fluctuations} but do not do so here. First quantisation of compact spin manifolds as per Example \ref{BSP.QOT_First_Quantisation} clarifies the above analogies as it gives all commutative real spectral triples \cite{ART.Con.1996.NCG_Reconstruction}. Equation \ref{EQ.SSEC.QOT_DT_BSP_16} implies triviality of gauge groups in this case. We see general real spectral triples are necessary to describe abelian and non-abelian gauge theories \cite{ART.Gra_Mar_Var.1998.NCG_Standard_Model_Noncommutative_Necessary}. Second quantisation of spectral triples as per Example \ref{BSP.QOT_Second_Quantisation} yields description of the spectral action as per Equation \ref{EQ.SSEC.QOT_DT_BSP_15} in terms of quantum statistical mechanics \cite{BK.Bra.1987.OpAlg_Quantum_StM_I}\cite{BK.Bra.1987.OpAlg_Quantum_StM_II} as per Equation \ref{EQ.BSP.QOT_Second_Quantisation_6} using quantum relative entropy as per Definition \ref{DFN.Rel_Ent_AF_Cstar_Trace}. Two essential results in Example \ref{BSP.QOT_Second_Quantisation} are taken from \cite{ART.Cha_Con_vSui.2020.NCG_Second_Quantisation}. It leads us to formulate the internalised spectral action as per Equation \ref{EQ.BSP.QOT_Second_Quantisation_Parametrised_9}.\par

%NEWPAGE
%NEWPAGE
%NEWPAGE

\pagebreak

%NEWPAGE
%NEWPAGE
%NEWPAGE

To this end, we summarise relevant parts of noncommutative gauge theories defined by real spectral triples satisfying the first-order condition. Let $\lc\mathfrak{A},H,D,J\rc$ be such a real spectral triple. Norm closure of $\mathfrak{A}$ generates unital $C^{*}$-algebra $A$ s.t.~$\pi:A\longrightarrow\BII(H)$ is faithful unital $^{*}$-representation. Its $\mathfrak{A}$-bimodule of differential one-forms is defined by

\begin{align}\label{EQ.SSEC.QOT_DT_BSP_12}
\Omega_{D}^{1}\lc\mathfrak{A}\rc{}:=\lset{}T\in\BII(H)\ \vset\ \exists\lc{}x_{k},y_{k}\rc_{k=1}^{n}\subset\mathfrak{A}\times\mathfrak{A}:\ T=\sum_{k=1}^{n}\pi(x_{k})\cdot \overline{D\pi\lc{}y_{k}\rc{}-\pi\lc{}y_{k}\rc{}D}\rset{}
\end{align}

\noindent \lc{}cf.~Definition 4.36 in \cite{BK.vSui.2015.NCG_AF_Particle_Physics}\rc{}. Closure of unbounded commutators in Equation \ref{EQ.SSEC.QOT_DT_BSP_12} is ensured by the axioms of spectral triples. Moreover, get $\epsilon\in\lset\hspace{-0.025cm} \pm 1\rset$ s.t.~$JDJ^{-1}=\epsilon D$. For all hermitian connections $\nabla:\mathfrak{A}\longrightarrow\Omega_{D}^{1}\lc\mathfrak{A}\rc$, Theorem 6.15 and Theorem 6.16 in \cite{BK.vSui.2015.NCG_AF_Particle_Physics} imply the inner fluctuation of $D$ defined by

\begin{align}\label{EQ.SSEC.QOT_DT_BSP_13}
D_{T}:=D+T+\epsilon JTJ^{-1}
\end{align}

\noindent with $T:=\nabla 1_{A}\in\Omega_{D}^{1}\lc\mathfrak{A}\rc\cap\BII(H)_{h}$ yields real spectral triple $\lc\mathfrak{A},H,D_{T},J\rc$ \lc{}cf.~pp.112-114 in \cite{BK.vSui.2015.NCG_AF_Particle_Physics}\rc{}. We say that $T\in\Omega_{D}^{1}\lc\mathfrak{A}\rc\cap\BII(H)_{h}$ is a gauge field in this case. Proposition 6 in \cite{ART.Cha_Con_vSui.2013.NCG_Inner_Fluctuations} shows we have gauge semigroup

\begin{align}\label{EQ.SSEC.QOT_DT_BSP_14}
\Inn\lc\mathfrak{A},H,D\rc{}:=\lset{}T\in\Omega_{D}^{1}\lc\mathfrak{A}\rc\cap\BII(H)_{h}\ \vset\ T\ \textrm{is a gauge field}\rset{}
\end{align}

\noindent of $\lc\mathfrak{A},H,D,J\rc$. Its semigroup structure is not relevant to us. The map $T\mapsto D_{T}$ defined on $\Inn\lc\mathfrak{A},H,D\rc$ is a deformation of noncommutative Dirac operators parametrised by gauge fields. Assuming even $h:\mathbb{R}\longrightarrow [0,\infty)$ s.t.~finite trace is ensured in Equation \ref{EQ.SSEC.QOT_DT_BSP_15} below, the spectral action $S_{b}:\Inn\lc\mathfrak{A},H,D\rc\longrightarrow\mathbb{R}$ is defined by

\begin{align}\label{EQ.SSEC.QOT_DT_BSP_15}
S_{b}(T):=\tr\big(h\lc{}D_{T}\rc\big)
\end{align}

\noindent for all $T\in\Inn\lc\mathfrak{A},H,D\rc$ \lc{}cf.~Definition 7.1 in \cite{BK.vSui.2015.NCG_AF_Particle_Physics}\rc{}. We give suitable $h$ for our purposes in Example \ref{BSP.QOT_Second_Quantisation}. The subscript of $S_{b}$ denotes its use as action functional driving the dynamics of bosonic gauge fields \lc{}cf.~Theorem 11.10 in \cite{BK.vSui.2015.NCG_AF_Particle_Physics}\rc{}. There exist alternatives for other gauge fields, e.g.~the fermionic action \lc{}cf.~Definition 7.1 in \cite{BK.vSui.2015.NCG_AF_Particle_Physics}\rc{}.\par
The spectral action is a spectral invariant of $\lc\mathfrak{A},H,D,J\rc$. Proposition 6.17 in \cite{BK.vSui.2015.NCG_AF_Particle_Physics} shows each unitary Morita self-equivalences of $\lc\mathfrak{A},H,D,J\rc$ is implemented by a unique $U=\pi(u)J\pi(u)J^{-1}\in\UII(H)$ for $u\in\UII\lc\mathfrak{A}\rc$ s.t.~$T_{U}=\pi(u)\overline{D\pi(u)-\pi(u)D}\in\Inn\lc\mathfrak{A},H,D\rc$ is a gauge field. Proposition 6.5 in \cite{BK.vSui.2015.NCG_AF_Particle_Physics} shows we have gauge group

\begin{align}\label{EQ.SSEC.QOT_DT_BSP_16}
\mathfrak{G}\lc\mathfrak{A},H,D\rc{}:=\lset{}U\in\UII(H)\ \vset\ \exists u\in\UII\lc\mathfrak{A}\rc{}:\ U=\pi(u)J\pi(u)J^{-1}\rset\cong\UII\lc\mathfrak{A}\rc\big /\UII\lc\mathfrak{A}_{J}\rc{}
\end{align}

\noindent of $\lc\mathfrak{A},H,D,J\rc$ \lc{}cf.~Definition 6.4 in \cite{BK.vSui.2015.NCG_AF_Particle_Physics}\rc{}. Note $\UII(A_{j})\vartriangleleft\UII(A)$ as for Equation \ref{EQ.SSEC.QOT_DT_BSP_16} since $A_{J}=\lset{}x\in A\ \vset\ \pi(x)J=J^{*}\pi(x)\rset\subset Z(A)$. For all $U\in \mathfrak{G}\lc\mathfrak{A},H,D\rc$, we have $D_{T_{U}}=UDU^{*}$ by the first-order condition. We therefore see Equation \ref{EQ.SSEC.QOT_DT_BSP_15} is invariant under gauge transformations \lc{}cf.~Lemma \ref{LEM.FC_Unitary_Com} and Lemma \ref{LEM.FC_Preservation_II}\rc{}.\par

%NEWPAGE
%NEWPAGE
%NEWPAGE

\pagebreak

%NEWPAGE
%NEWPAGE
%NEWPAGE

We give first and second quantisation of spectral triples. Example \ref{BSP.QOT_First_Quantisation} gives first quantisation of compact spin manifolds \cite{ART.Con.1996.NCG_Reconstruction}. We further include general spectral triples as their own first quantisation by convention. Example \ref{BSP.QOT_Second_Quantisation} gives second quantisation of spectral triples \cite{ART.Cha_Con_vSui.2020.NCG_Second_Quantisation}. Remark \ref{REM.QOT_First_Second_Quantisation} briefly reviews the terminology of first and second quantisation as used in our discussion. Both underlying fundamental example classes use non-twisted dynamic quantum gradients arising from weak, equivalently norm, differentiation of trace-preserving $C^{*}$-dynamical systems determined by noncommutative Dirac operators, i.e.~assumes fixed gauge field. Example \ref{BSP.QOT_First_Quantisation} and Example \ref{BSP.QOT_Second_Quantisation} give quantum optimal transport without considering spatial coordinates. Upon passing to second quantisation, we introduce gauge fields as spatial coordinates. Example \ref{BSP.QOT_Second_Quantisation_Parametrised} generalises to quantum optimal transport parametrised by gauge fields via deforming noncommutative Dirac operators as per Equation \ref{EQ.SSEC.QOT_DT_BSP_13}.\par
We assume fixed gauge field and summarise results. First, Example \ref{BSP.QOT_First_Quantisation} arises from a conjugation group which splits into a spatial and quantum component as per Equation \ref{EQ.BSP.QOT_First_Quantisation_4}. We see quantum optimal transport is transversal to spatial optimal transport in this case. Equation \ref{EQ.BSP.QOT_First_Quantisation_2} shows the spatial component is generated by a quantisation of the gradient w.r.t.~the given Riemannian metric using the Clifford action \cite{BK.vSui.2015.NCG_AF_Particle_Physics}\cite{BK.Var.2006.NCG_Elements_Short}. Secondly, Example \ref{BSP.QOT_Second_Quantisation} rectifies transversality by quantising all spatial coordinates as per Equation \ref{EQ.BSP.QOT_Second_Quantisation_2}. We instead have a description in terms of quantum statistical mechanics without considering spatial coordinates. Equation \ref{EQ.BSP.QOT_Second_Quantisation_2} gives an explicit formula for Equation \ref{EQ.BSP.QOT_Type_II_5}. The formula is taken from Proposition 2.6 in \cite{ART.Cha_Con_vSui.2020.NCG_Second_Quantisation}. However, note we explicitly solve the associated implementation problem \cite{BK.Ply_Rob.1994.Clifford_Algebras} in Lemma \ref{LEM.QOT_Second_Quantisation_Implementation_Fock}. Assuming trace-class, Equation \ref{EQ.BSP.QOT_Second_Quantisation_2} shows the given non-twisted dynamic quantum gradient is infinitesimal evolution of observables in wedged Heisenberg representation at thermal equilibrium determined by a KMS-state \cite{BK.Bra.1987.OpAlg_Quantum_StM_II} of the given trace-preserving local $C^{*}$-dynamical system. Up to sign, Corollary \ref{COR.Wstar_Derivation_QG_Dynamic_System} shows such description transfers to quantum Laplacians by twice application. We therefore expect properties of quantum optimal transport as stated in the introduction of this chapter.

\begin{bsp}\label{BSP.QOT_First_Quantisation}
We assume commutative real spectral triples, i.e.~first quantisation of compact spin manifolds \cite{ART.Con.1996.NCG_Reconstruction}. Let $\lc{}X,g\rc$ be a compact spin manifold, $S\longrightarrow X$ its spinor bundle and $D$ its Atiyah–Singer–Dirac operator \cite{ART.Con.1996.NCG_Reconstruction}\cite{BK.Tha.1992.The_Dirac_Equation}\cite{BK.vSui.2015.NCG_AF_Particle_Physics}\cite{BK.Var.2006.NCG_Elements_Short}.\par
We have unital pre-$C^{*}$-algebra $C^{\infty}(X)$ and $C^{*}$-algebra $C(X)$ \lc{}cf.~Example \ref{BSP.Cstar_Commutative}\rc{}. For all $x\in X$, the finite-dimensional Clifford algebra $S_{x}=\AII\lc{}T_{x}^{*}X\rc$ has inner product induced by the cotangent Riemannian metric. We extend pointwise left-multiplication of scalars from $C^{\infty}\lc{}X,T^{*}X\rc$ to $L^{2}\lc{}X,S\rc$. This defines faithful unital $^{*}$-representation $L:C(X)\longrightarrow\BII\lc{}L^{2}\lc{}X,S\rc\rc$. Fibrewise right-multiplication of elements in Clifford algebras defines Clifford action $c:C^{\infty}\lc{}X,T^{*}X\rc\longrightarrow\BII\lc{}L^{2}\lc{}X,S\rc\rc$. Up to $L_{-i}$, $D$ is the concatenation of $c$ and the spin connection of $\lc{}X,g\rc$, i.e.~the unique lift of the Levi-Civita connection associated to $\lc{}X,g\rc$ from $T^{*}X$ to $S$. The charge conjugation $J_{X}$ of $S$ is a suitable real structure on $L^{2}\lc{}X,S\rc$. Altogether, we construct the canonical commutative real spectral triple $\lc\mathfrak{A},H,D,J\rc{}=\lc{}C^{\infty}(X),L^{2}\lc{}X,S\rc{},D,J_{X}\rc$ \cite{ART.Con.1996.NCG_Reconstruction}. We assume the latter without loss of generality. For details on the construction and our application of its properties, we refer to Chapter 4 in \cite{BK.vSui.2015.NCG_AF_Particle_Physics} and Chapter 3 in \cite{BK.Var.2006.NCG_Elements_Short}.\par
We know spectra of elements in $\KII\lc{}L^{2}\lc{}X,S\rc\rc_{h}$ are discrete by the spectral theorem for self-adjoint unbounded operators \lc{}cf.~Theorem 5.7 in \cite{BK.Sch.2012.Unbounded_Operators}\rc{}. Continuity of elements in $C(X)$ implies $L\lc{}C(X)\rc\cap\KII(L^{2}(X,S))=0$ by the intermediate value theorem as spectra of continuous functions on $X$ are subsets of their images by compactness.\par
We claim the conjugation group $\Ad^{D}:\mathbb{R}\longrightarrow\mathrm{Aut}\lc\BII\lc{}L^{2}\lc{}X,S\rc\rc\rc$ of $D$ given by

\begin{align}\label{EQ.BSP.QOT_First_Quantisation_1}
\textrm{Ad}_{t}^{D}(S)=e^{itD}Se^{-itD}
\end{align}

\noindent for all $t\in\mathbb{R}$ and $S\in\BII\lc{}L^{2}\lc{}X,S\rc\rc$ splits into a spatial and quantum component as per Equation \ref{EQ.BSP.QOT_First_Quantisation_4} upon restriction to $L\lc{}C(X)\rc\oplus\KII\lc{}L^{2}\lc{}X,S\rc\rc\subset\BII\lc{}L^{2}\lc{}X,S\rc\rc$. We consider their generators. If $h\in C^{\infty}(X)$, then the two conditions for Equation \ref{EQ.SSEC.NCDS_NCG_QG_9} are met for $H=L^{2}\lc{}X,S\rc$, $\DII=D$ and $S=L_{h}$. As per Theorem 4.20 in \cite{BK.vSui.2015.NCG_AF_Particle_Physics} and explained on pp.8-10 in \cite{BK.Var.2006.NCG_Elements_Short}, we obtain

\begin{align}\label{EQ.BSP.QOT_First_Quantisation_2}
\nabla^{\textrm{spt}}h:=\restr{0.925}{\frac{d}{dt}}{t=0,\w}\textrm{Ad}_{t}^{D}\lc{}L_{h}\rc{}=\overline{i\lc{}DL_{h}-L_{h}D\rc{}}=-ic\lc{}dh\rc{}
\end{align}

\noindent for all $h\in C^{\infty}(X)$. Smoothness and Equation \ref{EQ.BSP.QOT_First_Quantisation_2} imply $-ic\lc{}dh\rc\in L\lc{}C^{\infty}(X)\rc$ in each case by the first-order property \lc{}cf.~Equation 4.3.1 in \cite{BK.vSui.2015.NCG_AF_Particle_Physics}\rc{}. As such, Equation \ref{EQ.BSP.QOT_First_Quantisation_2} integrates to $\restr{0.925}{\Ad^{D}}{L\lc{}C(X)\rc{}}:\mathbb{R}\longrightarrow\mathrm{Aut}\lc{}L(X)\rc$. Applying constructions in Example \ref{BSP.QOT_Type_I} to $H=L^{2}\lc{}X,S\rc$ and $D\in\UBII\lc{}L^{2}\lc{}X,S\rc\rc$, we see Equation \ref{EQ.BSP.QOT_Type_I_8} yields non-twisted dynamic quantum gradient $\nabla^{\textrm{qtm}}:\KII\lc{}L^{2}\lc{}X,S\rc\rc_{0}\longrightarrow S^{2}\lc{}L^{2}\lc{}X,S\rc\rc$ given by 

\begin{align}\label{EQ.BSP.QOT_First_Quantisation_3}
\nabla^{\textrm{qtm}}x:=\nabla^{D}x=\restr{0.925}{\frac{d}{dt}}{t=0,\w}\textrm{Ad}_{t}^{D}(T)=\overline{i\lc{}Dx-xD\rc{}}
\end{align}

\noindent for all $x\in\KII\lc{}L^{2}\lc{}X,S\rc\rc_{0}$. Note $\overline{i\lc{}Dx-xD\rc{}}\in\KII\lc{}L^{2}\lc{}X,S\rc\rc_{0}$ in each case by construction. As such, Equation \ref{EQ.BSP.QOT_First_Quantisation_3} integrates to $\restr{0.925}{\Ad^{D}}{\KII(L^{2}(X,S))}:\mathbb{R}\longrightarrow\mathrm{Aut}\lc\KII\lc{}L^{2}\lc{}X,S\rc\rc\rc$.\par
Using $L\lc{}C(X)\rc\cap\KII(L^{2}(X,S))=0$, note Equation \ref{EQ.BSP.QOT_First_Quantisation_2} and Equation \ref{EQ.BSP.QOT_First_Quantisation_3} together integrate to $\restr{0.925}{\Ad^{D}}{L\lc{}C(X)\rc\oplus\KII(L^{2}(X,S))}:\mathbb{R}\longrightarrow\mathrm{Aut}\lc{}L\lc{}C(X)\rc\oplus\KII\lc{}L^{2}\lc{}X,S\rc\rc\rc$ given by

\begin{align}\label{EQ.BSP.QOT_First_Quantisation_4}
\restr{0.925}{\textrm{Ad}_{t}^{D}}{L\lc{}C(X)\rc\oplus\KII(L^{2}(X,S))}=\restr{0.925}{\textrm{Ad}_{t}^{D}}{L\lc{}C(X)\rc{}}\oplus\restr{0.925}{\textrm{Ad}_{t}^{D}}{\KII(L^{2}(X,S))}
\end{align}

\noindent for all $t\in\mathbb{R}$. Note Equation \ref{EQ.BSP.QOT_First_Quantisation_2} and Equation \ref{EQ.BSP.QOT_First_Quantisation_3} are infinitesimal evolution of observables in Heisenberg representation \lc{}cf.~pp.3-15 in \cite{BK.Bra.1987.OpAlg_Quantum_StM_I}\rc{}. They are first quantisations in the sense of Remark \ref{REM.QOT_First_Second_Quantisation}. Equation \ref{EQ.BSP.QOT_First_Quantisation_2} in fact quantises the gradient on $X$ w.r.t.~$g$ using the Clifford action. We say that $\restr{0.925}{\Ad^{D}}{L\lc{}C(X)\rc{}}$ is the spatial, and $\restr{0.925}{\Ad^{D}}{\KII(L^{2}(X,S))}$ the quantum component of the time-evolution $\Ad^{D}$ of observables.\par
Example \ref{BSP.QOT_Type_I} shows the quantum gradient $\nabla^{\textrm{qtm}}=\nabla^{D}$ lets us define quantum optimal transport for the tracial AF-$C^{*}$-algebra $\lc\KII(L^{2}(X,S)),\tr\rc$. Assuming the spatial gradient $\nabla^{\textrm{spt}}$ yields a notion of spatial optimal transport for the tracial $C^{*}$-algebra $\lc{}C(X),\int d\vol\rc$, e.g.~as per \cite{PRE.Wir.2018.NC_OT}, we see quantum optimal transport is transversal to spatial optimal transport by the direct sum decomposition in Equation \ref{EQ.BSP.QOT_First_Quantisation_4}.\par

%NEWPAGE
%NEWPAGE
%NEWPAGE

\pagebreak

%NEWPAGE
%NEWPAGE
%NEWPAGE

Spatiality of $C(X)$, resp.~$L\lc{}C(X)\rc$, is obvious as spatial coordinates parametrise the Riemannian manifold $X$ and therefore observables given by elements in $L\lc{}C(X)\rc$. We consider $L\lc{}C(X)\rc\subset\BII\lc{}L^{2}\lc{}X,S\rc\rc$ to formulate a necessary condition for spatiality here. We use quantum relative entropy as per Definition \ref{DFN.Rel_Ent_AF_Cstar_Trace}. We apply said condition in Example \ref{BSP.QOT_Second_Quantisation} to argue second quantisation quantises all, ergo considers no, spatial coordinates. For all $\mu\in\SII\lc\KII\lc{}L^{2}\lc{}X,S\rc\rc$, $\Ent\lc\mu,\tr\rc\in [-\infty,\infty]$ is the relative entropy of $\mu$ w.r.t.~$\tr$ as per Equation \ref{EQ.DFN.Rel_Ent_AF_Cstar_Trace_1}. Theorem \ref{THM.Rel_Ent_AF_Cstar_Trace} ensures it measures information required to discriminate $\mu$ and $\tr$ through observation by extending its use from the strongly unital finite-trace case \lc{}cf.~pp.1-11 in \cite{BK.Ohy_Pet.1993.Rel_Ent}\rc{}. If $h\in C(X)_{+}$, then Lemma \ref{LEM.Rel_Ent_AF_Cstar_Trace_II} shows there exists no weakly dense subset $K\subset \KII\lc{}L^{2}\lc{}X,S\rc\rc\cap S^{1}\lc{}L^{2}\lc{}X,S\rc\rc{}=S^{1}\lc{}L^{2}\lc{}X,S\rc\rc$ and $C>0$ s.t.~the map $\tilde{\mu}_{h}:K\longrightarrow\mathbb{C}$ defined by

\begin{align}\label{EQ.BSP.QOT_First_Quantisation_5}
\tilde{\mu}_{h}(T):=C^{-1}\tr\lc{}L_{x}T\rc{}
\end{align}

\noindent for all $T\in K$ extends to a $\mu_{h}\in\SII\lc\KII\lc{}L^{2}\lc{}X,S\rc\rc\rc$ with

\begin{align}\label{EQ.BSP.QOT_First_Quantisation_6}
\babsv{1.15}{\Ent\lc\mu_{h},\tr\rc{}}<\infty.
\end{align}

\noindent Indeed, $1)$ in Lemma \ref{LEM.Rel_Ent_AF_Cstar_Trace_II} shows Equation \ref{EQ.BSP.QOT_First_Quantisation_6} implies $h\in L\lc{}C(X)\rc\cap S^{1}\lc{}L^{2}\lc{}X,S\rc\rc{}=0$ since $S^{1}\lc{}L^{2}\lc{}X,S\rc\rc\subset\KII\lc{}L^{2}\lc{}X,S\rc$ \lc{}cf.~Example \ref{BSP.Wstar}\rc{}. Assuming hyperfinite factor, our necessary condition for spatiality is non-extension w.r.t.~the canonical trace.\par
We motivate our condition. The volume form $d\vol$ is a non-atomic Radon measure on $X$ \lc{}cf.~pp.299-306 in \cite{BK.Lan.1995.Riemannian_Manifolds}\rc{}. Non-extension implies our measuring process of quantum information as difference of observables from quantum white noise, up to musical isomorphisms, fails for all positive continuous ones parametrised by spatial coordinates. Corollary \ref{COR.Rel_Ent_AF_Cstar_Trace} shows said process, in contrast to any associated to relative entropy w.r.t.~$\int d\vol$, only considers differences on discrete spectra. Naturally, such a countable process cannot discern observables as above by the intermediate value theorem. We see our measuring process fails since it requires us to measure with absolute precision \cite{ART.Dav_Lew.1970.Wstar_Quantum_Probability} and this is prevented by infinitesimal length elements \cite{BK.Con.1994.NCG}\cite{BK.Lan.1995.Riemannian_Manifolds}.
\end{bsp}

\begin{bsp}\label{BSP.QOT_Second_Quantisation}
Let $H$ be a separable Hilbert space and $D\in\UBII(H)_{h}$ with compact resolvent, e.g.~given by a real spectral triple. We use finite-dimensional approximation $\lset{}H_{j}\rset_{j\in\mathbb{N}}$ of $H$ via eigenvectors as per Example \ref{BSP.QOT_Type_I}. The setting of Example \ref{BSP.QOT_Type_II_1} requires orthogonal complex structure $J$ on $H$ s.t.~it is $H_{j}$-reducible for all $j\in\mathbb{N}$. We use the one in \cite{ART.Cha_Con_vSui.2020.NCG_Second_Quantisation}. Let $P_{\pm}:H\longrightarrow E_{\pm}$ be Hilbert space projections onto the eigenvectors of $D$ with non-negative, resp.~non-positive eigenvalues. Set $J:=i\lc{}P_{+}-P_{-}\rc$. We directly verify $J$ is orthogonal complex structure on $H$ s.t.~

\begin{align}\label{EQ.BSP.QOT_Second_Quantisation_1}
DJ=JD.
\end{align}

\noindent Equation \ref{EQ.BSP.QOT_Second_Quantisation_1} shows we are in the setting of Example \ref{BSP.QOT_Type_II_1}. The second quantisation map $T\mapsto\bigwedge T$ from $\BII(H)$ to $\BII\lc\FII(H[J])\rc$ exists \lc{}cf.~pp.6-10 in \cite{BK.Bra.1987.OpAlg_Quantum_StM_II}\rc{}. Example \ref{BSP.QOT_Type_II_1} also gives $\tau$-preserving local $C^{*}$-dynamical system $\lc\AII(H[J]),\mathbb{R},\Cliff_{J}\lc{}e^{itD}\rc\rc$.\par
For all $t\in\mathbb{R}$ and $x\in\AII(H[J])$, Lemma \ref{LEM.QOT_Second_Quantisation_Implementation_Fock} shows

\begin{align}\label{EQ.BSP.QOT_Second_Quantisation_2}
\textrm{Cliff}_{J}\lc{}e^{itD}\rc{}(x)=\bigwedge e^{it\absv{1.15}{D}}x\bigwedge e^{-it\absv{1.15}{D}}\in\AII(H[J]).
\end{align}

\noindent Equation \ref{EQ.BSP.QOT_Second_Quantisation_2} is the claimed explicit formula for Equation \ref{EQ.BSP.QOT_Type_II_5}. Passing from $D$ to $\absv{1.15}{D}$ in the second quantisation map avoids negative eigenvalues, i.e.~the Dirac sea \cite{ART.Cha_Con_vSui.2020.NCG_Second_Quantisation}\cite{BK.Tha.1992.The_Dirac_Equation}. We prove Lemma \ref{LEM.QOT_Second_Quantisation_Implementation_Fock} by solving the associated implementation problem \cite{BK.Ply_Rob.1994.Clifford_Algebras}. We say that $\Cliff_{J}\lc{}e^{itD}\rc$ is implemented on $\FII(H[J])$ by $\bigwedge e^{it\absv{1.15}{D}}$ in each case.\par
Following Equation \ref{EQ.BSP.QOT_Second_Quantisation_2}, Equation \ref{EQ.BSP.QOT_Second_Quantisation_3} links quantum optimal transport and quantum statistical mechanics \cite{BK.Bra.1987.OpAlg_Quantum_StM_I}\cite{BK.Bra.1987.OpAlg_Quantum_StM_II}. We use KMS-states below \lc{}cf.~Definition 5.3.1 in \cite{BK.Bra.1987.OpAlg_Quantum_StM_II}\rc{}. If $e^{-\beta\absv{1.15}{D}}\in S^{1}(H)$ for given inverse temperature $\beta\in\mathbb{R}$, then Proposition 2.6 in \cite{ART.Cha_Con_vSui.2020.NCG_Second_Quantisation} specifies Example 5.3.2 in \cite{BK.Bra.1987.OpAlg_Quantum_StM_II} by showing the unique KMS$_{\beta}$-state of the $\tau$-preserving local $C^{*}$-dynamical system $\lc\AII(H[J]),\mathbb{R},\Cliff_{J}\lc{}e^{itD}\rc\rc$ has density operator 

\begin{align}\label{EQ.BSP.QOT_Second_Quantisation_3}
P_{D}:=\tr\lc\bigwedge e^{-\beta\absv{1.15}{D}}\rc^{-1}\cdot \bigwedge e^{-\beta\absv{1.15}{D}}\in S^{1}\lc\FII(H[J])\rc_{+}.
\end{align}

\noindent Applying constructions in Example \ref{BSP.QOT_Type_II_1} to $\alpha_{t}=\Cliff_{J}\lc{}e^{itD}\rc$, Corollary \ref{COR.Wstar_Derivation_QG_Dynamic_System} yields non-twisted dynamic quantum gradient $\nabla^{\textrm{qtm}}:\AII(H[J])_{0}\longrightarrow L^{2}\lc\AII(H[J]),\tau\rc$ given by 

\begin{align}\label{EQ.BSP.QOT_Second_Quantisation_4}
\nabla^{\textrm{qtm}}x:=\nabla^{\DII_{\alpha}}(x)=\restr{0.925}{\frac{d}{dt}}{t=0,\w}\bigwedge e^{it\absv{1.15}{D}}x\bigwedge e^{-it\absv{1.15}{D}}
\end{align}

\noindent for all $x\in\AII(H[J])_{0}$. Note Equation \ref{EQ.BSP.QOT_Second_Quantisation_3} then implies Equation \ref{EQ.BSP.QOT_Second_Quantisation_4} is infinitesimal evolution of observables in wedged Heisenberg representation at thermal equilibrium determined by $P_{D}$. This is a second quantisation in the sense of Remark \ref{REM.QOT_First_Second_Quantisation}. Whereas Equation \ref{EQ.BSP.QOT_First_Quantisation_3} has closed form as unbounded commutator, use of the infinite exterior algebra on the right-hand side of Equation \ref{EQ.BSP.QOT_Second_Quantisation_4} introduces converging double sums with varying left-and right-multiplication of $\pm i\absv{1.15}{D}$ preventing a ready closed form.\par
Example \ref{BSP.QOT_Type_II_1} shows the quantum gradient $\nabla^{\textrm{qtm}}=\nabla^{\DII_{\alpha}}$ lets us define quantum optimal transport for the tracial AF-$C^{*}$-algebra $\lc\AII(H[J]),\tau\rc$. Following our discussion at the end of Example \ref{BSP.QOT_First_Quantisation}, $\tau<\infty$ implies our necessary condition for spatiality is not satisfied. For all $\mu\in\SII\lc\AII(H[J])\rc$, $\Ent(\mu,\tau)\in [-\infty,\infty]$ is the relative entropy of $\mu$ w.r.t.~$\tau$ as per Equation \ref{EQ.DFN.Rel_Ent_AF_Cstar_Trace_1}. We know $\AII(H[J])\subset L^{1}\lc\AII(H[J]),\tau\rc$ is weakly dense since $\tau<\infty$ \lc{}cf.~Proposition \ref{PRP.Wstar_NCI_VIII}\rc{}. For all $x\in \AII(H[J])_{+}$, Corollary \ref{COR.Rel_Ent_AF_Cstar_Trace} for $p=1_{A}$ shows 

\begin{align}\label{EQ.BSP.QOT_Second_Quantisation_5}
\mu_{x}:=\tau(x)^{-1}x^{\flat}\in\mathcal{S}^{\NI}\lc\AII(H[J])\rc{}
\end{align}

\noindent as per Equation \ref{EQ.BSP.QOT_First_Quantisation_5} for $K=\AII(H[J])$ has $\absv{1.15}{\Ent\lc\mu_{x},\tau\rc\hspace{0.025cm}}<\infty$. Our necessary condition is not satisfied. We see $\nabla^{\textrm{qtm}}$ quantises all, ergo considers no, spatial coordinates. Assuming commutative real spectral triple, $\nabla^{\textrm{qtm}}$ subsumes the generators of both components on the right-hand side of Equation \ref{EQ.BSP.QOT_First_Quantisation_4} because Equation \ref{EQ.BSP.QOT_Second_Quantisation_2} is a second quantisation of the unrestricted time-evolution as per Equation \ref{EQ.BSP.QOT_First_Quantisation_1}.\par
If $H$ and $D\in\UBII(H)_{h}$ are given by a real spectral triple, commutative or not, then we describe the spectral action as per Equation \ref{EQ.SSEC.QOT_DT_BSP_15} using the negative of quantum relative entropy w.r.t.~$\tr$, i.e.~von Neumann entropy \lc{}cf.~p.17 in \cite{BK.Ohy_Pet.1993.Rel_Ent}\rc{}. Let $T$ be the fixed gauge field, $D_{T}:=D$ and $P_{T}:=P_{D}$. For all $\lambda\in\mathbb{R}$, set $h(\lambda):=\log\lc{}1+e^{-\lambda}\rc{}+\lambda e^{-\lambda}\lc{}1+e^{-\lambda}\rc^{-1}$. Corollary 3.2 in \cite{ART.Cha_Con_vSui.2020.NCG_Second_Quantisation} implies $h:\mathbb{R}\longrightarrow [0,\infty)$ is even. Theorem 3.4 in \cite{ART.Cha_Con_vSui.2020.NCG_Second_Quantisation} shows

\begin{align}\label{EQ.BSP.QOT_Second_Quantisation_6}
S_{b}(T)=\tr\big(h\lc{}D_{T}\rc\big)=-\tr\big(P_{T}\log P_{T}\big)=-\Ent\hspace{-0.0375cm} \big(P_{T}^{\flat},\tr\big)<\infty.
\end{align}

\noindent Unfortunately, Equation \ref{EQ.BSP.QOT_Second_Quantisation_6} uses quantum relative entropy w.r.t.~$\tr$ and not $\tau$. We want the latter for an ansatz to study the dynamics of gauge fields driven by varying Equation \ref{EQ.BSP.QOT_Second_Quantisation_6} via deforming Equation \ref{EQ.SSEC.QOT_DT_BSP_13}. We propose to internalise Equation \ref{EQ.BSP.QOT_Second_Quantisation_6} as per Equation \ref{EQ.SSEC.QOT_DT_BSP_17} and generalise to Equation \ref{EQ.BSP.QOT_Second_Quantisation_Parametrised_9} in Example \ref{BSP.QOT_Second_Quantisation_Parametrised}. Note \cite{ART.Cha_Con_vSui.2020.NCG_Second_Quantisation} is based on \cite{ART.Cha_Con.1997.NCG_Spectral_Action_II}\cite{ART.Cha_Con_vSui.2013.NCG_Inner_Fluctuations}. We moreover refer to \cite{BK.vSui.2015.NCG_AF_Particle_Physics} as comprehensive treatment of the latter. The general noncommutative geometric approach to quantum thermodynamics used in \cite{ART.Cha_Con_vSui.2020.NCG_Second_Quantisation} is introduced and explained as part of \cite{ART.Con_Rov.1994.Wstar_Time_Thermodynamics}.
\end{bsp}

\begin{rem}\label{REM.QOT_First_Second_Quantisation}
First and second quantisation denotes, to our knowledge, Hamiltonian formalism for a single quantum system, resp.~multiple, often countable infinitely many interacting ones \lc{}cf.~pp.1-38 in \cite{BK.Ste_vLee.2013.Full_Quantum_StM}\rc{}. The latter arises as infinitely many copies of the former by applying to it the second quantisation map. If we consider time-evolution of fermions in Heisenberg representation \lc{}cf.~pp.3-15 in \cite{BK.Bra.1987.OpAlg_Quantum_StM_I} and pp.6-10 in \cite{BK.Bra.1987.OpAlg_Quantum_StM_II}\rc{}, then Example \ref{BSP.QOT_Second_Quantisation} indeed lifts time-evolution in Example \ref{BSP.QOT_First_Quantisation} as per Equation \ref{EQ.BSP.QOT_Second_Quantisation_2} by mapping both given constituent semigroups to their wedged form.
\end{rem}

Example \ref{BSP.QOT_Second_Quantisation_Parametrised} outlines how Example \ref{BSP.QOT_Second_Quantisation}, specifically Equation \ref{EQ.BSP.QOT_Second_Quantisation_6}, yields an ansatz to study noncommutative gauge theories based on the internalised spectral action as per Equation \ref{EQ.BSP.QOT_Second_Quantisation_Parametrised_9} if we generalise to quantum optimal transport parametrised by gauge fields. Let $\lc\mathfrak{A},H,D,J\rc$ be a real spectral triple. We suppress $J$ below as we use its symbol for orthogonal complex structures as per Example \ref{BSP.QOT_Second_Quantisation}. For all gauge fields $T\in\Inn\lc\mathfrak{A},H,D\rc$, we have $J_{T}$ as per Example \ref{BSP.QOT_Second_Quantisation} for $D_{T}$ as per Equation \ref{EQ.SSEC.QOT_DT_BSP_13}. If we further have a map $\Inn:\Inn\lc\mathfrak{A},H,D\rc\longrightarrow\SII\lc\AII(H)\rc$, then its associated internalisation of Equation \ref{EQ.BSP.QOT_Second_Quantisation_6} using quantum relative entropy w.r.t.~$\tau$ is given by

\begin{align}\label{EQ.SSEC.QOT_DT_BSP_17}
S_{b}^{\Inn}(T)=-\Ent\hspace{-0.0375cm} \bigg(\big(\rho_{J_{T}}^{-1}\big)^{*}\big(\Inn(T)\big),\tau\bigg)
\end{align}

\noindent for all $T\in\Inn\lc\mathfrak{A},H,D\rc$. Note Equation \ref{EQ.SSEC.QOT_DT_BSP_17} uses $\rho_{J_{T}}$ as per Equation \ref{EQ.SSEC.QOT_DT_BSP_9} in each case. We generalise Equation \ref{EQ.SSEC.QOT_DT_BSP_17} to Equation \ref{EQ.BSP.QOT_Second_Quantisation_Parametrised_9} in Example \ref{BSP.QOT_Second_Quantisation_Parametrised} by considering all normalised Radon measures on finite-dimensional spaces of admissible gauge fields evaluating in $\AII(H)$ up to varying $\rho_{J_{T}}$ as per Equation \ref{EQ.BSP.QOT_Second_Quantisation_Parametrised_1}, i.e.~states on continuous fields of AF-$C^{*}$-algebras. If key technical challenges are solved in future work, then we hope to study the dynamics of such generalised gauge fields described as gradient flows driven by the internalised spectral action for the given parametrised quantum optimal transport. We are motivated by the classical approach of Jordan, Kinderlehrer and Otto for Fokker-Planck equations \cite{ART.Jor_Kin_Ott.1998.Fokker_Planck}\cite{ART.Ott.2001.Classical_OT_Porous_Medium}\cite{ART.Ott.2005.Classical_OT_GradFlow_DisConvex}.

\begin{bsp}\label{BSP.QOT_Second_Quantisation_Parametrised}
Let $\lc\mathfrak{A},H,D,J\rc$ be a real spectral triple. We suppress $J$. For all gauge fields $T\in\Inn\lc\mathfrak{A},H,D\rc$, we have $J_{T}$ as per Example \ref{BSP.QOT_Second_Quantisation} for $D_{T}$ as per Equation \ref{EQ.SSEC.QOT_DT_BSP_13}. We do not know of a locally compact topology on $\Inn\lc\mathfrak{A},H,D\rc$ allowing for constructions as below. We instead consider $X\subset\Inn\lc\mathfrak{A},H,D\rc$ s.t.~four conditions are satisfied.\par
First, let $\lc{}X,g\rc$ be a smooth Riemannian manifold. We equip $T^{*}X\cong TX$ with its canonical dual Riemannian metric. Secondly, let $d\vol$ be a finite unoriented volume form, also called volume element, on $X$ \lc{}cf.~pp.299-306 in \cite{BK.Lan.1995.Riemannian_Manifolds}\rc{}. Thirdly, let $\beta:X\longrightarrow\mathbb{R}$ be smooth s.t.~$e^{-\beta(T)\absv{1.15}{D_{T}}}\in S^{1}(H)$ for all $T\in X$. Fourthly, let

\begin{align}\label{EQ.BSP.QOT_Second_Quantisation_Parametrised_1}
A_{X}:=\coprod_{T\in X}\AII\lc{}H\lb{}J_{T}\rb\rc{}=\coprod_{T\in X}\rho_{J_{T}}\lc\AII(H)\rc{}
\end{align}

\noindent determine both a smooth vector bundle and u.s.c.~$C^{*}$-bundle over $X$ \lc{}cf.~Definition $6.18$ in \cite{BK.vSui.2015.NCG_AF_Particle_Physics}\rc{}. Its space of continuous sections $\Gamma\lc{}A_{X}\rc$ is a $C^{*}$-algebra with norm given by $\dblv{}F\dblv_{\Gamma\lc{}A_{X}\rc{}}:=\sup_{T\in X}\dblv{}F(T)\dblv_{\AII\lc{}H\lb{}J_{T}\rb\rc{}}$ for all $F\in\Gamma\lc{}A_{X}\rc$ \lc{}cf.~Proposition 6.19 in \cite{BK.vSui.2015.NCG_AF_Particle_Physics}\rc{}. We define l.s.c.~faithful semi-finite trace $\int_{X}\tau d\vol$ on $\Gamma\lc{}A_{X}\rc$ by setting

\begin{align}\label{EQ.BSP.QOT_Second_Quantisation_Parametrised_2}
\lc\int_{X}\tau d\vol\rc(F):=\int_{X}\tau\big(F(T)\big)d\vol   
\end{align}

\noindent for all $F\in\Gamma\lc{}A_{X}\rc_{+}$. We have tracial $C^{*}$-algebra $\lc\Gamma\lc{}A_{X}\rc{},\int_{X}\tau d\vol\rc$ in the space of bounded measurable sections $L^{\infty}\lc\Gamma\lc{}A_{X}\rc{},\int_{X}\tau d\vol\rc$ \lc{}cf.~Proposition \ref{PRP.Wstar_Trace_Ext_I} and Proposition \ref{PRP.Wstar_Trace_Ext_III}\rc{}. Note $L^{2}\lc\Gamma\lc{}A_{X}\rc{},\int_{X}\tau d\vol\rc$ equipped with canonical left-~and right-actions and pointwise algebra involution is a symmetric $W^{*}$-bimodule over $L^{\infty}\lc\Gamma\lc{}A_{X}\rc{},\int_{X}\tau d\vol\rc$.\par
We define noncommutative gradient as per Equation \ref{EQ.BSP.QOT_Second_Quantisation_Parametrised_6} with domain

\begin{align}\label{EQ.BSP.QOT_Second_Quantisation_Parametrised_3}
\Gamma_{0}^{\infty}\lc{}A_{X}\rc{}:=\lset{}F\in\Gamma^{\infty}\lc{}A_{X}\rc\ \vset\ \forall T\in X:\ F(T)\in\AII\lc{}H\lb{}J_{T}\rb\rc_{0}\rset{}.
\end{align}

\noindent Equation \ref{EQ.BSP.QOT_Second_Quantisation_Parametrised_4} gives the fundamental compatibility condition for spatial and quantum components. The latter assumes continuous action of $\Gamma^{\infty}\lc{}T^{*}X\rc$ on $\Gamma^{\infty}\lc{}A_{X}\rc$ motivated by tensor contraction. We allow for loss of regularity. Let $\nabla^{X}:\Gamma^{\infty}\lc{}A_{X}\rc\longrightarrow\Gamma^{\infty}\lc{}T^{*}X\otimes A_{X}\rc$ be a covariant derivative and $\mathfrak{C}:\Gamma^{\infty}\lc{}T^{*}X\otimes A_{X}\rc\longrightarrow\Gamma\lc{}A_{X}\rc$ a bounded linear map. Assume we define symmetric $W^{*}$-derivation $\nabla^{h}:\Gamma_{0}^{\infty}\lc{}A_{X}\rc\longrightarrow L^{2}\lc\Gamma\lc{}A_{X}\rc{},\int_{X}\tau d\vol\rc$ by setting

\begin{align}\label{EQ.BSP.QOT_Second_Quantisation_Parametrised_4}
\big(\nabla^{h} F\big)(T):=\mathfrak{C}\lc\nabla^{X}F\rc{}(T)
\end{align}

\noindent for all $F\in\Gamma_{0}^{\infty}\lc{}A_{X}\rc$ and $T\in X$. We call it a spatial, or horizontal gradient. Applying constructions in Example \ref{BSP.QOT_Second_Quantisation} to $J_{T}$ and $D_{T}$ in each case, we see Equation \ref{EQ.BSP.QOT_Second_Quantisation_2} lets us define symmetric $W^{*}$-derivation $\nabla^{v}:\Gamma_{0}^{\infty}\lc{}A_{X}\rc\longrightarrow\Gamma\lc{}A_{X}\rc$ by setting 

\begin{align}\label{EQ.BSP.QOT_Second_Quantisation_Parametrised_5}
\big(\nabla^{v} F\big)(T):=\restr{0.925}{\frac{d}{dt}}{t=0,\w}\bigwedge e^{it\absv{1.15}{D_{T}}}F(T)\bigwedge e^{-it\absv{1.15}{D_{T}}}
\end{align}

\noindent for all $F\in\Gamma_{0}^{\infty}\lc{}A_{X}\rc$ and $T\in X$. We call it a total quantum, or vertical gradient.\par

%NEWPAGE
%NEWPAGE
%NEWPAGE

\pagebreak

%NEWPAGE
%NEWPAGE
%NEWPAGE

We define symmetric $W^{*}$-derivation $\nabla:\Gamma_{0}^{\infty}\lc{}A_{X}\rc\longrightarrow L^{2}\lc\Gamma\lc{}A_{X}\rc{},\int_{X}\tau d\vol\rc$ by setting

\begin{align}\label{EQ.BSP.QOT_Second_Quantisation_Parametrised_6}
\big(\nabla F\big)(T):=\big(\nabla^{h} F\big)(T)+\big(\nabla^{v} F\big)(T)
\end{align}

\noindent for all $F\in\Gamma_{0}^{\infty}\lc{}A_{X}\rc$ and $T\in X$. Equation \ref{EQ.BSP.QOT_Second_Quantisation_Parametrised_6} yields noncommutative gradient for a mixed continuous-discrete noncommutative geometry. We define continuity equations as per $2)$ in Definition \ref{DFN.Continuity_Equation} by testing on $\Gamma_{0}^{\infty}\lc{}A_{X}\rc$ and therefore admissible paths as per Definition \ref{DFN.Admissible_Paths}. Let $f$ be symmetric representing function of an operator mean and $\theta\in [0,1]$. For all $F,G\in L^{1}\lc\Gamma\lc{}A_{X}\rc{},\int_{X}\tau d\vol\rc$, we define closed positive unbounded quadratic form on $L^{2}\lc\Gamma\lc{}A_{X}\rc{},\int_{X}\tau d\vol\rc$ as per Theorem \ref{THM.NCD_Operator_Compressed_PMO} by setting

\begin{align}\label{EQ.BSP.QOT_Second_Quantisation_Parametrised_7}
Q_{F^{\flat},G^{\flat}}^{f,\theta}\lc{}W\rc{}:=\int_{X}\mathcal{I}_{\AII\lc{}H\lb{}J_{T}\rb\rc{},\AII\lc{}H\lb{}J_{T}\rb\rc{}}^{f,\theta}\big(F(U)^{\flat},G(U)^{\flat},W(U)^{\flat}\big)d\vol
\end{align}

\noindent for all $W\in L^{2}\lc\Gamma\lc{}A_{X}\rc{},\int_{X}\tau d\vol\rc$. If we show Equation \ref{EQ.BSP.QOT_Second_Quantisation_Parametrised_7} extends to a quasi-entropy for $A_{X}$, then it defines energy functionals as per Definition \ref{DFN.Energy_Functional}. Altogether, we define dynamic transport distances as per Definition \ref{DFN.QOT_Distance}.\par
We call these generalised quantum optimal transport distances parametrised by gauge fields, or parametrised quantum optimal transport distances. If $\nabla^{h}=0$, then $\nabla=\nabla^{v}$ determines a mean quantum optimal transport for normalised averages of positive bounded functionals on CAR-algebras as per Example \ref{BSP.QOT_Second_Quantisation}. The latter is recovered as the singular case of dimension zero given by $X=\lset\textrm{pt}\rset$ and $d\vol=\delta_{\textrm{pt}}$. We therefore know it is indeed $\nabla^{v}$ allowing for cross-fibre transport. How much non-ergodicity in the AF-$C^{*}$-setting is in fact due to a lack of such cross-fibre transport is unknown to us.\par
We generalise Equation \ref{EQ.SSEC.QOT_DT_BSP_17} and define the internalised spectral action. For all $F\in\SII\lc\Gamma\lc{}A_{X}\rc\rc$, $\Ent^{\int_{X}\tau d\vol}(F):=\Ent\lc{}F,\int_{X}\tau d\vol\rc\in [-\infty,\infty]$ is the relative entropy of $F$ w.r.t.~$\int_{X}\tau d\vol$ as per Equation \ref{EQ.PRP.Rel_Ent_Cstar_II_1}. Assume smooth map $\Inn:X\longrightarrow\SII\lc\AII(H)\rc$ using the $w^{*}$-topology on $\SII\lc\AII(H)\rc$. For all $T\in X$, we rewrite Equation \ref{EQ.SSEC.QOT_DT_BSP_17} as

\begin{align}\label{EQ.BSP.QOT_Second_Quantisation_Parametrised_8}
S_{b}^{\Inn}\lc\textrm{id}_{\AII\lc{}H\lb{}J_{T}\rb\rc{}}\delta_{T}\rc{}=-\Ent\hspace{-0.0375cm} \bigg(\big(\rho_{J_{T}}^{-1}\big)^{*}\big(\Inn(T)\big)\delta_{T},\int_{X}\tau d\vol\bigg).
\end{align}

\noindent Note the right-hand side of Equation \ref{EQ.BSP.QOT_Second_Quantisation_Parametrised_8} is infinite in general since it evaluates Dirac measures. We consider a more direct definition by further subsuming precomposition in Equation \ref{EQ.BSP.QOT_Second_Quantisation_Parametrised_8} using more general internalisation maps. If we have weakly smooth map $\Inn:\SII\lc\Gamma\lc{}A_{X}\rc\rc\longrightarrow\SII\lc\Gamma\lc{}A_{X}\rc\rc$ w.r.t.~the $w^{*}$-topology on $\SII\lc\Gamma\lc{}A_{X}\rc\rc$, then we define its associated internalised spectral action by setting

\begin{align}\label{EQ.BSP.QOT_Second_Quantisation_Parametrised_9}
S_{b}^{\Inn}(\mu):=-\Ent\hspace{-0.0375cm} \bigg(\Inn(\mu),\int_{X}\tau d\vol\bigg)
\end{align}

\noindent for all $\mu\in\SII\lc\Gamma\lc{}A_{X}\rc\rc$. Note Equation \ref{EQ.BSP.QOT_Second_Quantisation_Parametrised_9} transforms the spectral action into an action functional on generalised gauge fields rather than mere points. An obvious but trivial choice for the internalisation map $\Inn:\SII\lc\Gamma\lc{}A_{X}\rc\rc\longrightarrow\SII\lc\Gamma\lc{}A_{X}\rc\rc$ is the identity map.\par
We see our choice of internalisation map is essential. Specific forms, e.g.~all of those utilising $\beta:X\longrightarrow\mathbb{R}$ due to its use for density operators as per Equation \ref{EQ.BSP.QOT_Second_Quantisation_3}, are of interest. If we have a regularisation property for internalisation maps w.r.t.~a weak Riemannian geometry in the logarithmic mean setting, then Equation \ref{EQ.BSP.QOT_Second_Quantisation_Parametrised_11} suggests a gradient flow description of the dynamics of generalised gauge fields driven by the internalised spectral action. For details on relative entropy for $W^{*}$-algebras, the logarithmic mean setting and non-spatial lower Ricci bounds, we refer to Chapter \ref{CH.L2W}.\par
Let $\mathcal{S}_{-1}^{\NI,\infty}\lc\Gamma\lc{}A_{X}\rc\rc$ as per $2)$ in Definition \ref{DFN.Cstar_Trace_Abstract_State_Space} equipped with $\|.\|_{\infty}$-topology. We may have to weaken it. Assume it has a weak Riemannian metric induced by the given quasi-entropy as per Equation \ref{EQ.BSP.QOT_Second_Quantisation_Parametrised_7} in the logarithmic mean setting analogous to the finite-dimensional case as per Definition \ref{DFN.RM_II}. We use identical notation. Assume $\Delta:=\nabla^{*}\nabla$ has $\ker\Delta=\langle 1_{A_{X}}\rangle_{\mathbb{C}}$ to avoid non-ergodicity. Moreover, we demand stronger smooth regularisation $\Inn:\SII\lc\Gamma\lc{}A_{X}\rc\rc\longrightarrow\mathcal{S}_{-1}^{\NI,\infty}\lc\Gamma\lc{}A_{X}\rc\rc$ from $\|.\|_{\Gamma\lc{}A_{X}\rc^{*}}$-~to $\|.\|_{\infty}$-topology. We see a weaker topology weakens our regularity assumption. Let $F:\lc{}-\varepsilon,\varepsilon\rc\longrightarrow\SII\lc\Gamma\lc{}A_{X}\rc\rc$ with $F(0)=\mu$ be smooth. Equation \ref{EQ.BSP.QOT_Second_Quantisation_Parametrised_9} implies

\begin{align}\label{EQ.BSP.QOT_Second_Quantisation_Parametrised_10}
\restr{0.925}{\frac{d}{d\varepsilon}}{\varepsilon=0}S_{b}^{\Inn}\lc{}F(\varepsilon)\rc{}=-d_{\Inn(\mu)}\textrm{Ent}^{\int_{X}\tau d\vol}\bigg(d_{\mu}\Inn\big(\dot{F}(0)\big)\bigg).
\end{align}

\noindent We want $\grad_{\eta}\Ent^{\int_{X}\tau d\vol}=\lc\sharp\Delta\eta\rc^{\flat}$ for all $\eta\in\mathcal{S}_{-1}^{\NI,\infty}\lc\Gamma\lc{}A_{X}\rc\rc\cap\lc\dom\Delta\rc^{\flat}$ in direct analogy to Equation \ref{EQ.PRP.L2W_Log_Mean_QNE_GradFlow_2} in the proof of Proposition \ref{PRP.L2W_Log_Mean_QNE_GradFlow}. If we do lift said finite-dimensional pointwise case, then, for $\xi:=\lc\int\tau\lc{}1_{A_{X}}\rc{}d\vol\rc^{-1}1_{A_{X}}\in\mathcal{S}^{\NI}\lc\Gamma\lc{}A_{X}\rc\rc$, Equation \ref{EQ.BSP.QOT_Second_Quantisation_Parametrised_10} is

\begin{align}\label{EQ.BSP.QOT_Second_Quantisation_Parametrised_11}
\restr{0.925}{\frac{d}{d\varepsilon}}{\varepsilon=0}S_{b}^{\Inn}\lc{}F(\varepsilon)\rc{}=-g_{\Inn(\mu)}^{\xi}\bigg(d_{\mu}\Inn\big(\dot{F}(0)\big),\lc\Delta\sharp\Inn(\mu)\rc^{\flat}\bigg).
\end{align}

\noindent If regularisation allows pointwise adjoining of the derivatives in Equation \ref{EQ.BSP.QOT_Second_Quantisation_Parametrised_11} s.t.~we adjoin to the given quasi-entropy precomposed with a well-behaved map, then we may use it to express metric slopes as per Equation \ref{EQ.SSEC.L2W_EVI_Equivalence_1} and control any $\EVI_{\lambda}$-gradient flow of $S_{b}^{\Inn}$ \cite{BK.Amb_Gig_Sav.2008.Classical_OT_GradFlow}\cite{ART.Mur_Sav.2020.Classical_OT_EVI}. If we show lower Ricci bounds are Hessian lower bounds as per $\HI\rc$ in Definition \ref{DFN.L2W_EVI_Equivalence} for our choice of weak Riemannian geometry, then a given one may use the above adjoining relation to impact the dynamics of $\SII\lc\Gamma\lc{}A_{X}\rc\rc$ driven by $S_{b}^{\Inn}$.\par
We must solve key technical challenges, ranging from our initial construction to choice of internalisation map, its interplay with a suitable weak Riemannian structure and the $\EVI_{\lambda}$-gradient flow property for $\Ent^{\int_{X}\tau d\vol}$. We may therefore seek to relax the problem as follows. We use, as in the AF-$C^{*}$-setting, canonical $C^{*}$-bimodule structures. If we instead consider general u.s.c.~$C^{*}$-bundles and $C^{*}$-bimodule actions s.t.~each fibre in Equation \ref{EQ.BSP.QOT_Second_Quantisation_Parametrised_1} is a tracial AF-$C^{*}$-algebra, then we also consider Equation \ref{EQ.BSP.QOT_Second_Quantisation_Parametrised_6} for more general noncommutative gradients. Such disintegration of tracial $W^{*}$-algebras into direct integrals of factors follows from the von Neumann disintegration theorem in operator theory \lc{}cf.~Theorem IV.8.21 in \cite{BK.Tak.1979.OpAlg_I}\rc{}. We see fundamental example classes using tracial AF-$C^{*}$-algebras generating hyperfinite factors of type I and II by $\sigma$-weak closure are of particular interest. We thereby define general parametrised quantum optimal transport. We view quantum optimal transport as its pointwise case since the latter is the singular case of dimension zero.
\end{bsp}

%%%%%%%%%%%%%%%%
%%% SECTION %%%%
%%%%%%%%%%%%%%%%

\section{Accessibility components}\label{SEC.QOT_AC}

Accessibility components of quantum optimal transport distances are complete geodesic length-metric spaces. Metric geometry reduces to accessibility components. There may exist uncountable infinitely many since sets of states at finite distance have identical fixed parts under noncommutative heat semigroups of quantum Laplacians. Assuming spectral gaps of quantum Laplacians and fixed parts, we use such fixed parts to classify accessibility components of square integrable normal states. We in turn use the latter classification for the coarse graining process since its assumptions are satisfied for all accessibility components in the finite-dimensional setting.\par
Classification uses regularisation of normal states under heat flow. Assuming fixed parts with integrable support, we show heat flow instantaneously regularises normal states to be, possibly unboundedly, invertible up to fixed part. This uses compatibility with compression and finite-dimensional approximation. Note we avoid any regularity assumptions for noncommutative heat semigroups. Under assumptions as above, we use such regularisation for classification since spectral gaps of square integrable normal fixed parts imply integrable support. In the logarithmic mean setting and assuming finitely supported fixed parts, we further use it to show heat flow induces finite-energy admissible paths for all states with finite quantum relative entropy.\par
We show classification and regularisation by passing through the finite-dimensional setting. In the latter setting, accessibility components are norm closed convex subsets of states having identical fixed part. States at finite distance have support projections in the unique compressed $C^{*}$-subalgebra which is given by compressing with the support projection of their common fixed part. Relative interiors consist of all invertible states on, resp.~densities in, such a given compressed $C^{*}$-subalgebra. They are also connected Riemannian manifolds with Riemannian metric induced by the given quasi-entropy. Using finite-dimensionality, we directly verify heat flow yields finite energy paths from relative boundaries to relative interiors. We thereby connect all states with identical fixed part. This yields classification and regularisation in the finite-dimensional case. Under assumptions as above, we extend regularisation and classification to the general case. We require the notion of reducible support as finite-dimensional approximation of support projections. We show it is implied by integrable support. We are therefore able to pass through the finite-dimensional setting.

\medskip

\noindent\textbf{Structure.} In Subsection \ref{SSEC.QOT_AC_SUPP}, we review support projections of normal states, as well as spectral gaps. We introduce the notion of reducible support. In Subsection \ref{SSEC.QOT_AC_HSG}, we discuss both completely Markovian semigroups and Lindblad master equations, our use of quantum Fokker-Planck equations, and subsequently study noncommutative heat semigroups of quantum Laplacians. In Subsection \ref{SSEC.QOT_AC_RM}, we apply the latter to classify accessibility components of square integrable normal states as discussed above.

%%%%%%%%%%%%%%%%%%%
%%% SUBSECTION %%%%
%%%%%%%%%%%%%%%%%%%

\subsection{Support projections of normal states}\label{SSEC.QOT_AC_SUPP}

We review canonical order-preserving bijections from projections of $W^{*}$-algebras to faces of normal state spaces. They are determined by support projections of normal states.\par
In the AF-$C^{*}$-setting, reducible support is finite-dimensional approximation of such support projections. Theorem \ref{THM.AF_Support_Projection} shows integrable support implies reducible support as required. Standard references for convex geometry of norm closed convex subsets in pre-duals of $W^{*}$-algebras are \cite{BK.Alf_Shu.2001.OpAlg_State_Spaces_I}\cite{BK.Alf_Shu.2003.OpAlg_State_Spaces_II}. Standard reference for differential and Riemannian geometry is \cite{BK.Lan.1995.Riemannian_Manifolds}.

%%%%%%%%%%%%%
%%% PART %%%%
%%%%%%%%%%%%%

\subsubsection*{Faces of normal state spaces}

Lemma \ref{LEM.Support_Projection} represents faces of normal state spaces of abstract tracial $C^{*}$-algebras as per Definition \ref{DFN.Cstar_Trace_Abstract_State_Space} and Remark \ref{REM.Cstar_Trace_Abstract_State_Space}. This uses normal state spaces of compressed $C^{*}$-subalgebras and their canonical inclusions as per $1)$ in Proposition \ref{PRP.Cstar_Trace_Abstract_Dualisation_II}. We use the modified standard pairing, in particular their flat and sharp operators as per Definition \ref{DFN.Wstar_Trace_MSP_Musical} and Remark \ref{REM.Wstar_Trace_MSP}.\par
Let $(M,\tau)$ be a tracial $W^{*}$-algebra and $A\subset M$ a $\sigma$-weakly dense $C^{*}$-subalgebra. Ergo $M=L^{\infty}(A,\tau)$ and $M_{*}=L^{1}(A,\tau)$. For all $x\in L^{1}(A,\tau)_{+}$, we have unique carrier projection $\supp x\in L^{\infty}(A,\tau)$ of $\lset{}x^{\flat}\rset$ \lc{}cf.~Definition 3.20 and Lemma 3.21 in \cite{BK.Alf_Shu.2001.OpAlg_State_Spaces_I}\rc{}. Each $\supp x$ is the minimal projection in $L^{\infty}(A,\tau)$ s.t.~$x=x\cdot \supp x$ holds. If we have $x=xp$ for a projection $p\in L^{\infty}(A,\tau)$, then $\supp x\leq p$. Note $x=xp$, $x=px$ and $x=pxp$ are equivalent.

\begin{dfn}\label{DFN.Support_Projection_I}
Let $x\in L^{1}(A,\tau)_{+}$. 

\begin{itemize}
\item[1)] The carrier projection $\supp x\in L^{\infty}(A,\tau)$ of $\lset{}x^{\flat}\rset$ is the support projection of $x$.

\item[2)] If $x\in L^{0}(N,\tau)$ for $N\subset \lc{}L^{\infty}(A,\tau),\tau\rc$, then we say that $\suppcN x:=1_{N}-\supp x$ is the kernel projection of $x$ in $N$.
\end{itemize}
\end{dfn}

\begin{ntn}\label{NTN.Support_Projection}
We suppress $N$ in Definition \ref{DFN.Support_Projection_I} if $N=L^{\infty}(A,\tau)$.
\end{ntn}

\begin{prp}\label{PRP.Support_Projection_I}
Let $N\subset \lc{}L^{\infty}(A,\tau),\tau\rc$.

\begin{itemize}
\item[1)] Let $x\in L^{1}(N,\tau)_{+}$. We have $\supp x\in N$. Furthermore, we have $\suppc x\in N[1_{A}]$ and $\suppcN x=\comunit\suppc x\in N$.

\item[2)] Let $x,y\in L^{1}(A,\tau)_{+}$. If $\tau\lc{}yp\rc{}=0$ for all projections $p\in L^{\infty}(A,\tau)$ s.t.~$\tau\lc{}xp\rc{}=0$, then $\supp y\leq\supp x$.
\end{itemize}
\end{prp}
\begin{proof}
Since $\comunit 1_{A}=1_{N}$, we know $1)$ by definition. Get $2)$ by Lemma 3.25 in \cite{BK.Alf_Shu.2001.OpAlg_State_Spaces_I}.
\end{proof}

Proposition \ref{PRP.Support_Projection_II} shows support projections are invariant under change of canonical left-~and right-actions. Let $N\subset \lc{}L^{\infty}(A,\tau),\tau\rc$ and $x\in L^{0}(N,\tau)_{+}$. Using the latter and following Remark \ref{REM.FC_Injectivity}, note $2)$ in Lemma \ref{LEM.Wstar_CLRA_FC} shows

\begin{align}\label{EQ.SSEC.QOT_AC_SUPP_1}
\Gamma_{x,N}\lc\chi_{(0,\infty]}\rc{}=L_{N}^{-1}\lc\pi_{\im L_{x,N}}^{A}\rc{}=R_{N}^{-1}\lc\pi_{\im R_{x,N}}^{A}\rc{}
\end{align}

\noindent and

\begin{align}\label{EQ.SSEC.QOT_AC_SUPP_2}
\Gamma_{x,N}(\delta_{0})=L_{N}^{-1}\lc\pi_{\ker L_{x,N}}^{A}\rc{}=R_{N}^{-1}\lc\pi_{\ker R_{x,N}}^{A}\rc{}.
\end{align}

\begin{prp}\label{PRP.Support_Projection_II}
Let $N\subset \lc{}L^{\infty}(A,\tau),\tau\rc$. For all $x\in L^{1}(N,\tau)_{+}$, we have

\begin{itemize}
\item[1)] $\supp x=\Gamma_{x,N}\lc\chi_{(0,\infty]}\rc{}=L_{N}^{-1}\lc\pi_{\im L_{x,N}}^{A}\rc{}=R_{N}^{-1}\lc\pi_{\im R_{x,N}}^{A}\rc$, \phantom{\bigg)}

\item[2)] $\suppcN x=\Gamma_{x,N}(\delta_{0})=L_{N}^{-1}\lc\pi_{\ker L_{x,N}}^{A}\rc{}=R_{N}^{-1}\lc\pi_{\ker R_{x,N}}^{A}\rc$, \phantom{\bigg)}

\item[3)] $L_{\supp x,N}=\com\hspace{-0.055cm}_{L^{2}(N,\tau)}L_{\supp x,L^{\infty}(A,\tau)}$ and $R_{\supp x,N}=\com\hspace{-0.055cm}_{L^{2}(N,\tau)}R_{\supp x,L^{\infty}(A,\tau)}$. \phantom{\bigg)}
\end{itemize}
\end{prp}
\begin{proof}
Let $x\in L^{1}(N,\tau)_{+}$. Note we have $x=x\Gamma_{x,N}\lc\chi_{(0,\infty]}\rc$ and $0=x\Gamma_{x,N}(\delta_{0})$ by functional calculus. Thus minimality of support projections implies $\supp x\leq\Gamma_{x,N}\lc\chi_{(0,\infty]}\rc$, hence $\supp x\cdot \Gamma_{x,N}\lc\chi_{(0,\infty]}\rc{}=\supp x$ since both are projections. We prove the converse. For all $u\in L^{2}(A,\tau)$, let $\{u_{n}\}_{n\in\mathbb{N}}\subset\dom L_{x,N}$ s.t.~$\pi_{\im L_{x}}^{A}(u)=\|.\|_{\tau}$-$\lim_{n\in\mathbb{N}}xu_{n}$. Using the latter in each case and further $\supp x\cdot \Gamma_{x,N}\lc\chi_{(0,\infty]}\rc{}=\supp x$, Equation \ref{EQ.SSEC.QOT_AC_SUPP_1} lets us calculate

\begin{align*}
\dblv{}\supp x\cdot u\dblv_{\tau} & = \dblv{}L_{\supp x\cdot\Gamma_{x,N}\lc\chi_{(0,\infty]}\rc{},N}\lc\pi_{L^{2}(N,\tau)}^{A}(u)\rc\dblv_{\tau} \phantom{\Bigg)} \\
& = \dblv{}L_{\supp x,N}\lc\pi_{\im L_{x,N}}^{A}(u)\rc\dblv_{\tau} \phantom{\Bigg)} \\
& = \lim_{n\in\mathbb{N}}\hspace{0.025cm} \dblv{}L_{\supp x,N}\lc{}xu_{n}\rc\dblv_{\tau} \phantom{\Bigg)} \\
& = \dblv{}\pi_{\im L_{x}}^{A}(u)\dblv_{\tau}=\dblv{}\Gamma_{x,N}\lc\chi_{(0,\infty]}\rc{}\cdot u\dblv_{\tau} \phantom{\Bigg)}
\end{align*}

\noindent for all $u\in L^{2}(A,\tau)$. Since $\supp x\leq\Gamma_{x,N}\lc\chi_{(0,\infty]}\rc$, get $\supp x=\Gamma_{x,N}\lc\chi_{(0,\infty]}\rc$ and therefore $1)$ by Equation \ref{EQ.SSEC.QOT_AC_SUPP_1}. Then $\suppcN x=1_{N}-\Gamma_{x,N}\lc\chi_{(0,\infty]}\rc{}=\Gamma_{x,N}(\delta_{0})$ by functional calculus and we have $2)$ by Equation \ref{EQ.SSEC.QOT_AC_SUPP_2}. Using $1)$, get $3)$ by Corollary \ref{COR.Wstar_Compression_Preservation_I}.
\end{proof}

Theorem 3.35 in \cite{BK.Alf_Shu.2001.OpAlg_State_Spaces_I} classifies norm closed convex subsets of normal state space using support projections. We review this below for abstract tracial $C^{*}$-algebras. Let $V$ be a normed vector space and $K\subset V$ a norm closed convex subset. Its relative interior
 
\begin{align}\label{EQ.SSEC.QOT_AC_SUPP_3}
\relint K=\lset\mu\in K\ \vset\ \forall\eta\in K\ \exists t>1:\ t\mu+\lc{}1-t\rc\eta\in K\rset{}
\end{align}

\noindent is open, and its relative boundary $\partial K=K\setminus\relint K$ is closed in relative topology. A norm closed convex subset $\mathcal{F}\subset K$ is a face of $K$ if for all $x,y\in K$, we know $\lc{}1-t\rc{}x+ty\in\mathcal{F}$ for any $t\in (0,1)$ implies $x,y\in\mathcal{F}$.

\begin{lem}\label{LEM.Support_Projection}
For all projections $p\in L^{\infty}(A,\tau)$, we know

\begin{align}
\mathcal{F}_{A}\lc{}p\rc{}:=\lset{}x\in L^{1}(A,\tau)_{+}\ \vset\ \|x\|_{1}=1,\ \supp x\leq p\rset^{\flat}=\mathcal{S}^{\NI}(A[p])
\end{align}

\noindent is a face of $\mathcal{S}^{\NI}(A)$. Furthermore, the map $p\mapsto\mathcal{F}_{A}\lc{}p\rc$ from projections in $L^{\infty}(A,\tau)$ to faces of $\mathcal{S}^{\NI}(A)$ is an order-preserving bijection.
\end{lem}
\begin{proof}
We use $1)$ in Proposition \ref{PRP.Cstar_Trace_Abstract_Dualisation_II} here and throughout our discussion. Using $1)$ in Theorem 3.35 and Lemma 3.21 in \cite{BK.Alf_Shu.2001.OpAlg_State_Spaces_I}, we obtain a face

\begin{align}\label{EQ.LEM.Support_Projection_1}
\mathcal{F}_{A}\lc{}p\rc{}=\lset\mu\in\mathcal{S}^{\NI}(A)\ \vset\ \mu\lc{}p\rc{}=1\rset{}=\lset{}x\in L^{1}(A,\tau)_{+}\ \vset\ \|x\|_{1}=1,\ \supp x\leq p\rset^{\flat}
\end{align}

\noindent of $\mathcal{S}^{\NI}(A)$ in each case. Theorem 3.35 in \cite{BK.Alf_Shu.2001.OpAlg_State_Spaces_I} states the map $p\mapsto\mathcal{F}_{A}\lc{}p\rc$ from projections in $L^{\infty}(A,\tau)$ to faces of $\mathcal{S}^{\NI}(A)$ is an order-preserving bijection. Let $p\in L^{\infty}(A,\tau)$ be a projection. If $x\in\mathcal{F}_{A}\lc{}p\rc$, then $\supp x\leq p$ implies $\supp x\cdot p=\supp x$ and therefore $x=xp$ by minimality. Thus $x\in L^{0}(A[p],\tau)_{+}$ by Lemma \ref{LEM.Cstar_Trace_Abstract_Projection}, hence $x^{\flat}\in\mathcal{S}^{\NI}(A[p])$. The converse follows because $x^{\flat}\in\mathcal{S}^{\NI}(A[p])$ likewise implies $x=xp$, which in turn implies $\supp x\leq p$ by Lemma 3.21 in \cite{BK.Alf_Shu.2001.OpAlg_State_Spaces_I}.
\end{proof}

\begin{dfn}\label{DFN.Support_Projection_II}
For all $\mu\in L^{1}(A,\tau)_{+}^{\flat}$, set

\begin{itemize}
\item[1)] $\supp\mu:=\supp\sharp\mu$ and call $\supp\mu$ the support projection of $\mu$,

\item[2)] $\mathcal{F}_{A}(\mu):=\mathcal{F}_{A}\lc\supp\mu\rc$ and call $\mathcal{F}_{A}(\mu)$ the face of $\mu$ on $A$.
\end{itemize}
\end{dfn}

\begin{rem}\label{REM.Support_Projection_I}
Let $\mu\in L^{1}(A,\tau)^{\flat}$. For all projections $p\in L^{\infty}(A,\tau)$ s.t.~$\supp\mu\leq p$, we have $\mu\in L^{1}(A[p],\tau)^{\flat}$ and therefore $\FII_{A}(\mu)=\FII_{A[p]}(\mu)$ by Lemma \ref{LEM.Support_Projection}.
\end{rem}

\begin{cor}\label{COR.Support_Projection_I}
Let $p,q\in L^{\infty}(A,\tau)$ be projections. 

\begin{itemize}
\item[1)] We have $p\leq q$ if and only if $\mathcal{S}^{\NI}(A[p])\subset\mathcal{S}^{\NI}\lc{}A\lb{}q\rb\rc$.

\item[2)] Assume $p\leq q$. If $K\subset\mathcal{S}^{\NI}(A[p])$ is a face, then $K\subset\mathcal{S}^{\NI}\lc{}A\lb{}q\rb\rc$ is a face.
\end{itemize}
\end{cor}
\begin{proof}
Apply Lemma \ref{LEM.Support_Projection}.
\end{proof}

\begin{rem}\label{REM.Support_Projection_II}
Let $p\in L^{\infty}(A,\tau)$ be a projection. If $\mu\in\mathcal{S}^{\NI}(A[p])$, then $\supp\mu\leq p$ by Lemma \ref{LEM.Support_Projection} and therefore $\FII_{A}(\mu)\subset\mathcal{S}^{\NI}(A[p])$ by $1)$ in Corollary \ref{COR.Support_Projection_I}. If $K\subset\mathcal{S}^{\NI}(A[p])$ is a face, then $K\subset\mathcal{S}^{\NI}(A)$ is a face by $2)$ in Corollary \ref{COR.Support_Projection_I}.
\end{rem}

\begin{cor}\label{COR.Support_Projection_II}
For all $\mu\in\mathcal{S}^{\NI}(A)$, we have $\FII_{A}(\mu)=\lset\mu\rset$ if and only if $\mu$ is pure.
\end{cor}
\begin{proof}
Let $\mu\in\mathcal{S}^{\NI}(A)$. If $\FII_{A}(\mu)=\lset\mu\rset$, then purity of $\mu$ follows by the face property. Assume $\mu$ is pure. We know $K:=\lset\mu\rset\subset\mathcal{S}^{\NI}(A[p])$ is a face. Using $2)$ in Corollary \ref{COR.Support_Projection_I} and following Remark \ref{REM.Support_Projection_II}, Lemma \ref{LEM.Support_Projection} yields unique projection $q\in L^{\infty}(A[p],\tau)$ s.t.~we have $K=\SII\lc{}A\lb{}q\rb\rc$. Since $\mu\in K$, the lemma further shows $\supp\mu\leq q$ and therefore $\FII_{A}(\mu)\subset K$. We obtain $\FII_{A}(\mu)=\lset\mu\rset$ as claimed.
\end{proof}

Let $p\in L^{\infty}(A,\tau)$ be a projection. Note $A[p]_{+}^{*}\cap\GL\lc{}L^{\infty}(A[p],\tau)\rc^{\flat}\subset A[p]_{h}^{*}$ open in norm topology. Using real vector space structure, we see $A[p]_{+}^{*}\cap\GL\lc{}L^{\infty}(A[p],\tau)\rc^{\flat}$ is a Banach manifold. We have $\mathcal{S}_{-1}^{\NI,\infty}(A[p])=\mathcal{S}^{\NI,\infty}(A[p])\cap\GL\lc{}L^{\infty}(A[p],\tau)\rc^{\flat}$ by boundedness.\par

%NEWPAGE
%NEWPAGE
%NEWPAGE

\pagebreak

%NEWPAGE
%NEWPAGE
%NEWPAGE

\begin{cor}\label{COR.Support_Projection_III}
Let $p\in L^{\infty}(A,\tau)$ be a projection s.t.~$\tau\lc{}p\rc{}<\infty$.

\begin{itemize}
\item[1)] We have embedded Banach submanifold

\begin{align}\label{EQ.COR.Support_Projection_III_1}
\mathcal{S}_{-1}^{\NI,\infty}(A[p])=\relint\mathcal{S}^{\NI,\infty}(A[p])\subset A[p]_{+}^{*}\cap\GL\lc{}L^{\infty}(A[p],\tau)\rc^{\flat}.    
\end{align}

\begin{reapply}
\end{reapply}

\item[2)] For all $\mu\in\mathcal{S}^{\NI}(A[p])$, we have

\begin{itemize}
\item[2.1)] $\mathcal{F}_{A}(\mu)=\mathcal{S}^{\NI}(A[p])$ if and only if $\mu\in\mathcal{S}_{>0}^{\NI}(A[p])$,

\item[2.2)] $\mathcal{F}_{A}(\mu)\subset\partial\mathcal{S}^{\NI}(A[p])$ if and only if $\mu\notin\mathcal{S}_{>0}^{\NI}(A[p])$.
\end{itemize}

\begin{reapply}
\end{reapply}

\end{itemize}
\end{cor}
\begin{proof}
Set $\xi_{p}:=\tau\lc{}p\rc^{-1}p^{\flat}$. We show $1)$. For all $\mu\in\mathcal{S}^{\NI,\infty}(A[p])$, we have $\mu\in\mathcal{S}_{-1}^{\NI,\infty}(A[p])$ if and only if $\sharp\mu\in\GL\lc{}L^{\infty}(A[p],\tau)\rc$ by $2)$ in Definition \ref{DFN.Cstar_Trace_Abstract_State_Space}. In particular, the equivalence ensures $\xi_{p}\in\mathcal{S}_{-1}^{\NI,\infty}(A[p])$. For all $\mu\in\relint\mathcal{S}^{\NI,\infty}(A[p])$, there exists $t>1$ s.t.~

\begin{align}\label{EQ.COR.Support_Projection_III_2}
\mu\geq\frac{t-1}{t}\xi_{p}\geq 0.
\end{align}

\noindent Since $\xi_{p}\in\mathcal{S}_{-1}^{\NI,\infty}(A[p])$, note Equation \ref{EQ.COR.Support_Projection_III_2} shows $\relint\mathcal{S}^{\NI,\infty}(A[p])\subset\mathcal{S}_{-1}^{\NI,\infty}(A[p])$. We directly verify the converse. Thus Equation \ref{EQ.COR.Support_Projection_III_1} holds, hence

\begin{align}\label{EQ.COR.Support_Projection_III_3}
\relint\mathcal{S}^{\NI,\infty}(A[p])=\lc\restr{0.925}{\tau}{A[p]_{+}^{*}\cap\GL\lc{}L^{\infty}(A[p],\tau)\rc^{\flat}}\rc^{-1}(1).
\end{align}

\noindent Equation \ref{EQ.COR.Support_Projection_III_3} implies $1)$ by the submersion theorem \cite{BK.Lan.1995.Riemannian_Manifolds}.\par
We show $2)$. For all $\mu\in\mathcal{S}^{\NI}(A[p])$, $\mu\in\mathcal{S}_{>0}^{\NI}(A[p])$ if and only if $\Gamma_{\sharp\mu,L^{\infty}(A[p],\tau)}(\delta_{0})=0$ by $1)$ in Definition \ref{DFN.Cstar_Trace_Abstract_State_Space}. Proposition \ref{PRP.Support_Projection_II} further shows the latter is equivalent to $\supp\mu=p$. Then $1)$ in Corollary \ref{COR.Support_Projection_I} yields $2.1)$. Lemma \ref{LEM.Support_Projection} shows $\supp\mu\leq p$ in each case, i.e.~$\mathcal{F}_{A}(\mu)\subset\mathcal{S}^{\NI}(A[p])$ by $1)$ in Corollary \ref{COR.Support_Projection_I}. Using the latter and $2.1)$, note Equation \ref{EQ.COR.Support_Projection_III_2} for $\relint\mathcal{S}^{\NI}(A[p])$ derives $2.2)$ by contradiction.
\end{proof}

%%%%%%%%%%%%%
%%% PART %%%%
%%%%%%%%%%%%%

\subsubsection*{Support projections in the AF-$\mathbf{C}^{*}$-setting}

Definition \ref{DFN.AF_Support_Projection_II} gives reducible support. Theorem \ref{THM.AF_Support_Projection} shows integrable support implies reducible support. Spectral gaps imply integrable support. Lemma \ref{LEM.AF_Support_Projection_II} shows spectral gaps of square integrable positive elements are limits of spectral gaps of their restrictions. This shows the utility of assuming spectral gaps in order to use finite-dimensional approximation.\par
Let $H$ be a Hilbert space. Let $(A,\tau)$ be a tracial AF-$C^{*}$-algebra.

\begin{lem}\label{LEM.AF_Support_Projection_I}
Let $T=\sr$-$\lim_{n\in\mathbb{N}}T_{n}$ on $H$. If $T_{n}\geq 0$ for all $n\in\mathbb{N}$, then

\begin{align}\label{EQ.LEM.AF_Support_Projection_I_1}
0\leq\limsup_{j\in\mathbb{N}}\hspace{0.025cm} \dblv{}\lc\chi_{(0,\infty]}(T)-\chi_{(0,\infty]}(T_{n})\rc{}(u)\dblv_{H}\leq 2\dblv{}\delta_{0}(T)(u)\dblv_{H}
\end{align}

\noindent for all $u\in H$.
\end{lem}
\begin{proof}
For all $\varepsilon>0$, we define $g_{\varepsilon}\in C_{b}\lc{}[0,\infty)\rc$ by setting

\begin{align*}
g_{\varepsilon}(t):=
\begin{cases}
\varepsilon^{-1}t & \If\ t\leq\varepsilon, \\
1 & \Else.
\end{cases}
\end{align*}

\noindent By construction, get $g_{\varepsilon}(0)=0$, $\dblv{}g_{\varepsilon}\dblv_{\infty}=1$ and $0\leq\chi_{(0,\infty]}-g_{\varepsilon}\leq I-g_{\varepsilon}$ in each case. We moreover have pointwise convergence $\chi_{(0,\infty]}=\lim_{\varepsilon\downarrow 0}g_{\varepsilon}$ on $[0,\infty)$. Let $S\in\UBII(H)_{+}$. Note $\chi_{(0,\infty]}(S)=\pi_{\im S}$ and $\delta_{0}(S)=\pi_{\ker S}$ \lc{}cf.~Remark \ref{REM.FC_Injectivity}\rc{}. Thus $\chi_{(0,\infty]}(S)=\s$-$\lim_{\varepsilon\downarrow 0}g_{\varepsilon}(S)$ by uniform boundedness, hence $\delta_{0}(S)=\s$-$\lim_{\varepsilon\downarrow 0}1-g_{\varepsilon}(S)$.\par
Let $u\in H$ and $\varepsilon>0$. Then $\dblv{}\lc\chi_{(0,\infty]}(T)-g_{\varepsilon}(T)\rc{}(u)\dblv_{H}\leq \dblv{}\lc{}I-g_{\varepsilon}(T)\rc{}(u)\dblv_{H}$, as well as $\dblv{}\lc\chi_{(0,\infty]}(T_{n})-g_{\varepsilon}(T)\rc{}(u)\dblv_{H}\leq \dblv{}\lc{}I-g_{\varepsilon}(T_{n})\rc{}(u)\dblv_{H}$ for all $n\in\mathbb{N}$, by functional calculus. For all $n\in\mathbb{N}$, we therefore bound $\dblv{}\lc\chi_{(0,\infty]}(T)-\chi_{(0,\infty]}(T_{n})\rc{}(u)\dblv_{H}$ from above by

\begin{align}\label{EQ.LEM.AF_Support_Projection_I_2}
\dblv{}\big(I-g_{\varepsilon}(T)\big)(u)\dblv_{H}+\dblv{}\big(I-g_{\varepsilon}(T_{n})\big)(u)\dblv_{H}+\dblv{}\big(g_{\varepsilon}(T)-g_{\varepsilon}(T_{n})\big)(u)\dblv_{H}.
\end{align}

\noindent Since $g_{\varepsilon}\in C_{b}\lc{}[0,\infty)$, we know $g_{\varepsilon}(T)=\s$-$\lim_{n\in\mathbb{N}}g_{\varepsilon}(T_{n})$ by Lemma \ref{LEM.FC_SR}. Applying the latter to Equation \ref{EQ.LEM.AF_Support_Projection_I_2} shows

\begin{align}\label{EQ.LEM.AF_Support_Projection_I_3}
0\leq\limsup_{n\in\mathbb{N}}\hspace{0.025cm} \dblv{}\lc\chi_{(0,\infty]}(T)-\chi_{(0,\infty]}(T_{n})\rc{}(u)\dblv_{H}\leq 2\dblv{}\big(I-g_{\varepsilon}(T)\big)(u)\dblv_{H}.
\end{align}

\noindent Using $\pi_{\ker T}=\s$-$\lim_{\varepsilon\downarrow 0}I-g_{\varepsilon}(T)$, letting $\varepsilon\downarrow 0$ in Equation \ref{EQ.LEM.AF_Support_Projection_I_3} yields Equation \ref{EQ.LEM.AF_Support_Projection_I_1}.
\end{proof}

For all $T\in\UBII(H)_{+}$, we have spectral gap $\sigma(T)=\inf\hspace{0.025cm} \lset\lambda>0\ \vset\ \lambda\in\spec T\rset$ and say that $T$ has spectral gap if $\sigma(T)>0$ \lc{}cf.~Definition \ref{DFN.Spectral_Gap}\rc{}. Definition \ref{DFN.AF_Support_Projection_I} gives spectral gaps of positive measurable operators and normal positive bounded functionals. Using canonical left-~and right-actions, Proposition \ref{PRP.AF_Support_Projection} recovers spectral gaps of positive unbounded operators. Spectral gaps are invariant under compression. For details on spectral gaps of positive unbounded operators, we refer to Subsection \ref{SSEC.A_Maps_Compression}.

\begin{dfn}\label{DFN.AF_Support_Projection_I}
Let $N\subset \lc{}L^{\infty}(A,\tau),\tau\rc$.

\begin{itemize}
\item[1)] For all $x\in L^{0}(N,\tau)_{+}$, we call $\sigma(x):=\inf\hspace{0.025cm} \lset\lambda>0\ \vset\ \lambda\in\spec_{L^{\infty}(A,\tau)}x\rset$ the spectral gap of $x$. We further say that $x$ has spectral gap if $\sigma(x)>0$.

\item[2)] For all $\mu\in L^{1}(A,\tau)^{\flat}$, set $\sigma(\mu):=\sigma\lc\sharp\mu\rc$ and call $\sigma(\mu)$ the spectral gap of $\mu$. We further say that $\mu$ has spectral gap if $\sigma(\mu)>0$.
\end{itemize}
\end{dfn}

\begin{prp}\label{PRP.AF_Support_Projection}
Let $N\subset \lc{}L^{\infty}(A,\tau),\tau\rc$. For all $x\in L^{0}(N,\tau)_{+}$, we have

\begin{align}\label{EQ.PRP.AF_Support_Projection_1}
\sigma(x)=\inf\hspace{0.025cm} \lset\lambda>0\ \vset\ \lambda\in\specN x\rset{}=\sigma\lc{}L_{x,N}\rc{}=\sigma\lc{}R_{x,N}\rc{}.
\end{align}
\end{prp}
\begin{proof}
Let $x\in L^{0}(N,\tau)$. Thus $\spec_{L^{\infty}(A,\tau)}x=\specN x\cup\lset{}0\rset$ by $1)$ in Corollary \ref{COR.Wstar_Compression_Preservation_I}, hence we obtain the first identity in Equation \ref{EQ.PRP.AF_Support_Projection_1} by positivity. The second and third one follow at once from $2)$ in Proposition \ref{PRP.Wstar_CLRA_FC} and $2)$ in Lemma \ref{LEM.Wstar_CLRA_FC}.
\end{proof}

%NEWPAGE
%NEWPAGE
%NEWPAGE

\pagebreak

%NEWPAGE
%NEWPAGE
%NEWPAGE

\begin{rem}\label{REM.AF_Support_Projection}
Let $N\subset \lc{}L^{\infty}(A,\tau),\tau\rc$ and $\mu\in L^{1}(N,\tau)^{\flat}$. Note Proposition \ref{PRP.Wstar_NCI_VI} shows we have $\mu\in L^{1}(N,\tau)^{\flat}$ if and only if $\sharp\mu\in L^{1}(N,\tau)_{+}$. Proposition \ref{PRP.AF_Support_Projection} further implies

\begin{align}\label{EQ.REM.AF_Support_Projection_1}
\sigma(\mu)=\inf\hspace{0.025cm} \lset\lambda>0\ \vset\ \lambda\in\specN\sharp\mu\rset{}=\sigma\lc{}L_{\sharp\mu,N}\rc{}=\sigma\lc{}R_{\sharp\mu,N}\rc{}.    
\end{align}

\noindent Equation \ref{EQ.REM.AF_Support_Projection_1} holds if $N\subset \lc{}L^{\infty}(A,\tau),\tau\rc$ lies in one of the two classes of compression given in Subsection \ref{SSEC.NCDS_AF_FC}, i.e.~either if we compress to induced AF-$C^{*}$-bimodules or if we compress with projections. We use this throughout our discussion.
\end{rem}

We use the following estimate. For all $z\in L^{1}(A,\tau)_{+}$ and $j\in\mathbb{N}$, $1)$ in Proposition \ref{PRP.AF_Cstar_Bimodule_L2Inf_SR} and $2)$ in Lemma \ref{LEM.AF_Cstar_Bimodule_L1_SR} show

\begin{align}\label{EQ.SSEC.QOT_AC_SUPP_4}
0\leq L_{z_{j},A_{j}}=\pi_{j}^{A}L_{z_{j}}\pi_{j}^{A}\leq L_{\pi_{j}^{A}(\sqrt{z})}^{2}.
\end{align}

\begin{lem}\label{LEM.AF_Support_Projection_II}
For all $x\in L^{2}(A,\tau)_{+}$, we have

\begin{itemize}
\item[1)] $\sigma(x)=\lim_{j\in\mathbb{N}}\sigma(x_{j})$,

\item[2)] $\chi_{(0,\infty]}(x)=\s$-$\lim_{j\in\mathbb{N}}\chi_{(0,\infty]}(x_{j})$ if $\tau\lc\chi_{(0,\infty]}(x)\rc{}<\infty$.
\end{itemize}
\end{lem}
\begin{proof}
Following Remark \ref{REM.AF_Support_Projection}, Proposition \ref{PRP.AF_Support_Projection} ensures results for spectral gaps of positive unbounded operators likewise apply to spectral gaps of positive measurable operators, resp.~normal positive bounded functionals. Let $x\in L^{2}(A,\tau)_{+}$. For all $j\in\mathbb{N}$, get $\sigma(x_{j})=\sigma(L_{x,A_{j}})$ by Proposition \ref{PRP.AF_Support_Projection}. Theorem \ref{THM.AF_Cstar_Bimodule_CLRA_SR} states $L_{x}=\sr$-$\lim_{j\in\mathbb{N}}L_{x_{j}}$.\par
We show $1)$. Strong resolvent convergence as above implies $\limsup_{j\in\mathbb{N}}\sigma(x_{j})\leq\sigma(x)$ by Lemma \ref{LEM.Spectral_Gap_USC}. We show the converse. If $\sigma(x)=0$, then our claim follows by positivity of spectral gaps. We assume $\sigma(x)>0$ without loss of generality. Thus $x\neq 0$, hence $x_{j}\neq 0$ and therefore $\sigma(x_{j})>0$ for a.e.~$j\in\mathbb{N}$ by finite-dimensionality. We assume $x_{j}\neq 0$ and thereby $\sigma(x_{j})>0$ for all $j\in\mathbb{N}$ without loss of generality. For all $j\in\mathbb{N}$, we have $u^{j}\in A_{j}$ s.t.~$x_{j}u_{j}=\sigma(x_{j})u_{j}$ and $\|u_{j}\|_{\tau}=1$ by finite-dimensionality.\par
Let $j\in\mathbb{N}$. Let $v\in\im L_{x_{j},A_{j}}$ and $w\in A_{j}$ s.t.~$v=x_{j}w$. Note $x_{j}=\pi_{j}^{A}(x)$ by construction. Get $A_{0}\subset\dom L_{x}\cap\dom I-L_{\pi_{j}^{A}(x)}$ by square integrability. We have

\begin{align}\label{EQ.LEM.AF_Support_Projection_II_1}
v=x_{j}w=xw-\big(I-\pi_{j}^{A}\big)(x)w.
\end{align}

\noindent We know $A_{j}^{\perp}A_{j}\subset A_{j}^{\perp}$. Moreover, we have $\chi_{(0,\infty]}(x)x=x$ by functional calculus. Using each of the latter, Equation \ref{EQ.LEM.AF_Support_Projection_II_1} implies

\begin{align}\label{EQ.LEM.AF_Support_Projection_II_2}
v=\pi_{j}^{A}(v)=\pi_{j}^{A}\lc{}xw\rc{}
\end{align}

\noindent and

\begin{align}\label{EQ.LEM.AF_Support_Projection_II_3}
\chi_{(0,\infty]}(x)v=xw-\chi_{(0,\infty]}(x)\big(I-\pi_{j}^{A}\big)(x)w.
\end{align}

%NEWPAGE
%NEWPAGE
%NEWPAGE

\pagebreak

%NEWPAGE
%NEWPAGE
%NEWPAGE

Equation \ref{EQ.LEM.AF_Support_Projection_II_2} and Equation \ref{EQ.LEM.AF_Support_Projection_II_3} show

\begin{align}\label{EQ.LEM.AF_Support_Projection_II_4}
\pi_{j}^{A}\lc\chi_{(0,\infty]}(x)v\rc{}=v-\pi_{j}^{A}\lc\chi_{(0,\infty]}(x)\big(I-\pi_{j}^{A}\big)(x)w\rc{}.
\end{align}

\noindent Expanding $xw=x_{j}w+(I-\pi_{j}^{A})(x)w$ and using $A_{j}^{\perp}A_{j}\subset A_{j}^{\perp}$ for the second summand in the final term below, we apply Equation \ref{EQ.LEM.AF_Support_Projection_II_4} in order to estimate

\begin{align*}
\lgl\pi_{j}^{A}\lc\chi_{(0,\infty]}(x)v\rc{},v\rgl_{\tau} & =\| v\|_{\tau}^{2}-\lgl\big(I-\pi_{j}^{A}\big)(x)w,\chi_{(0,\infty]}(x)v\rgl_{\tau} \phantom{\bigg)} \\
& =\| v\|_{\tau}^{2}-\lgl\big(I-\pi_{j}^{A}\big)(x)w,xw-\chi_{(0,\infty]}(x)\big(I-\pi_{j}^{A}\big)(x)w\rgl_{\tau} \phantom{\bigg)} \\
& =\| v\|_{\tau}^{2}-\dblv{}\big(I-\pi_{j}^{A}\big)(x)w\dblv_{\tau}^{2}+\dblv{}\chi_{(0,\infty]}(x)\big(I-\pi_{j}^{A}\big)(x)w\dblv_{\tau}^{2} \phantom{\bigg)} \\
& \geq \| v\|_{\tau}^{2}-\dblv{}\big(I-\pi_{j}^{A}\big)(x)w\dblv_{\tau}^{2}. \phantom{\bigg)}
\end{align*}

\noindent Assume $v=u^{j}$ and $w=\sigma(x_{j})^{-1}u^{j}$. The above estimate yields

\begin{align}\label{EQ.LEM.AF_Support_Projection_II_5}
\dblv{}\chi_{(0,\infty]}(x)u^{j}\dblv_{\tau}^{2}=\lgl\pi_{j}^{A}\lc\chi_{(0,\infty]}(x)u^{j}\rc{},u^{j}\rgl_{\tau}\geq 1-\sigma(x_{j})^{-2}\dblv{}\big(I-\pi_{j}^{A}\big)(x)u^{j}\dblv_{\tau}^{2}.
\end{align}

\noindent We show $\dblv{}(I-\pi_{j}^{A})(x)u^{j}\dblv_{\tau}^{2}=0$. We calculate

\begin{align*}
\dblv{}\big(I-\pi_{j}^{A}\big)(x)u^{j}\dblv_{\tau}^{2} & = \dblv{}xu^{j}-x_{j}u^{j}\dblv_{\tau}^{2} \phantom{\bigg)} \\
& = \dblv{}xu^{j}\dblv_{\tau}^{2}-2\RE\lgl xu^{j},x_{j}u^{j}\rgl_{\tau}+\dblv{}x_{j}u^{j}\dblv_{\tau}^{2} \phantom{\bigg)} \\
& = \dblv{}xu^{j}\dblv_{\tau}^{2}-2\sigma(x_{j})\RE\lgl x_{j}u^{j},u^{j}\rgl_{\tau}+\sigma(x_{j})^{2} \phantom{\bigg)} \\
& = \dblv{}xu^{j}\dblv_{\tau}^{2}-\sigma(x_{j})^{2}. \phantom{\bigg)}
\end{align*}

\noindent Set $y:=x^{2}\in L^{1}(A,\tau)_{+}$. Then $\sqrt{y}=x$ by definition. Equation \ref{EQ.SSEC.QOT_AC_SUPP_4} for $z=y$ shows

\begin{align}\label{EQ.LEM.AF_Support_Projection_II_6}
\pi_{j}^{A}L_{y_{j}}\pi_{j}^{A}\leq L_{\pi_{j}^{A}(\sqrt{y})}^{2}=L_{x_{j}^{2}}.
\end{align}

\noindent Self-adjointness and Equation \ref{EQ.LEM.AF_Support_Projection_II_6} show

\begin{align}\label{EQ.LEM.AF_Support_Projection_II_7}
\dblv{}xu^{j}\dblv_{\tau}^{2}=\lgl yu^{j},u^{j}\rgl_{\tau}=\lgl y_{j}u^{j},u^{j}\rgl_{\tau}\leq \lgl x_{j}^{2}u^{j},u^{j}\rgl_{\tau}=\sigma(x_{j})^{2}.
\end{align}

\noindent Thus $0\leq \dblv{}(I-\pi_{j}^{A})(x)u^{j}\dblv_{\tau}^{2}=\dblv{}xu^{j}\dblv_{\tau}^{2}-\sigma(x_{j})^{2}\leq 0$, hence $\dblv{}(I-\pi_{j}^{A})(x)u^{j}\dblv_{\tau}^{2}=0$. Applying the latter to Equation \ref{EQ.LEM.AF_Support_Projection_II_5} yields

\begin{align}\label{EQ.LEM.AF_Support_Projection_II_8}
\dblv{}\chi_{(0,\infty]}(x)u^{j}\dblv_{\tau}^{2}=\lgl\pi_{j}^{A}\lc\chi_{(0,\infty]}(x)u^{j}\rc{},u^{j}\rgl_{\tau}\geq 1.
\end{align}

%NEWPAGE
%NEWPAGE
%NEWPAGE

\pagebreak

%NEWPAGE
%NEWPAGE
%NEWPAGE

For all $j\in\mathbb{N}$, choice of $u^{j}$ and Equation \ref{EQ.LEM.AF_Support_Projection_II_8} show

\begin{align}\label{EQ.LEM.AF_Support_Projection_II_9}
\sigma(x_{j})=\lgl x_{j}u^{j},u^{j}\rgl_{\tau}=\lgl xu^{j},u^{j}\rgl_{\tau}\geq\sigma(x)\dblv{}\chi_{(0,\infty]}(x)u^{j}\dblv_{\tau}^{2}\geq\sigma(x).
\end{align}

\noindent Equation \ref{EQ.LEM.AF_Support_Projection_II_9} implies $\limsup_{j\in\mathbb{N}}\sigma(x_{j})\geq\sigma(x)$. Then $1)$ follows as discussed above.\par
We show $2)$. Using $2)$ in Lemma \ref{LEM.Wstar_CLRA_FC}, note Equation \ref{EQ.SSEC.QOT_AC_SUPP_1} and Equation \ref{EQ.SSEC.QOT_AC_SUPP_2} ensure Lemma \ref{LEM.AF_Support_Projection_I} implies $2)$ if

\begin{align}\label{EQ.LEM.AF_Support_Projection_II_10}
\limsup_{j\in\mathbb{N}}\hspace{0.025cm} \dblv{}\chi_{(0,\infty]}\lc{}L_{x_{j}}\rc{}u\dblv_{\tau}=0
\end{align}

\noindent for all $u\in\im\ker L_{x}$. We reduce to $u\in\ker L_{x}\cap L^{\infty}(A,\tau)$. For all $v\in L^{2,\infty}(A,\tau)$, we see Equation \ref{EQ.SSEC.QOT_AC_SUPP_2} shows

\begin{align}\label{EQ.LEM.AF_Support_Projection_II_11}
\pi_{\ker L_{x}}^{A}(v)=L_{\delta_{0}(x)}(v)=\delta_{0}(x)v\in\ker L_{x}\cap L^{\infty}(A,\tau).
\end{align}

Note $2)$ in Proposition \ref{PRP.AF_Cstar_Trace_I} shows $A_{0}\subset L^{2,\infty}(A,\tau)\subset L^{2}(A,\tau)$ is $\|.\|_{\tau}$-dense. Let\linebreak  $u\in\ker L_{x}$ and fix arbitrary $\{u_{n}\}_{n\in\mathbb{N}}\subset L^{2,\infty}(A,\tau)$ s.t.~$u=\|.\|_{\tau}$-$\lim_{n\in\mathbb{N}}u_{n}$. For all $j\in\mathbb{N}$ and $n\in\mathbb{N}$, we have $\pi_{\ker x}^{A}\lc{}u_{n}\rc\in\ker L_{x}\cap L^{\infty}(A,\tau)$ by Equation \ref{EQ.LEM.AF_Support_Projection_II_11} and estimate

\begin{align}\label{EQ.LEM.AF_Support_Projection_II_12}
\dblv{}\chi_{(0,\infty]}(x_{j})u\dblv_{\tau}=\dblv{}\chi_{(0,\infty]}(x_{j})\pi_{\ker L_{x}}^{A}(u)\dblv_{\tau}\leq \dblv{}u-u_{n}\dblv_{\tau}+\dblv{}\chi_{(0,\infty]}(x_{j})\pi_{\ker L_{x}}^{A}\lc{}u_{n}\rc\dblv_{\tau}
\end{align}

\noindent as non-trivial projections have norm one. Equation \ref{EQ.LEM.AF_Support_Projection_II_12} implies Equation \ref{EQ.LEM.AF_Support_Projection_II_10} if

\begin{align}\label{EQ.LEM.AF_Support_Projection_II_13}
\limsup_{j\in\mathbb{N}}\hspace{0.025cm} \dblv{}\chi_{(0,\infty]}(x_{j})u\dblv_{\tau}=0
\end{align}

\noindent for all $u\in\ker L_{x}\cap L^{\infty}(A,\tau)$.\par
We show Equation \ref{EQ.LEM.AF_Support_Projection_II_13}. Assume $\tau\lc\chi_{(0,\infty]}(x)\rc{}<\infty$. Set $y:=x+\chi_{(0,\infty]}(x)\in L^{2}(A,\tau)_{+}$. Note $\chi_{(0,\infty]}(y)=\chi_{(0,\infty]}(x)$ and $\sigma(y)\geq 1$ by functional calculus. We know restriction maps are positivity-preserving by Proposition \ref{PRP.AF_Cstar_Trace_Dualisation_I}. For all $j\in\mathbb{N}$, get $x_{j}\leq y_{j}=x_{j}+\chi_{(0,\infty]}(x)_{j}$ and therefore 

\begin{align}\label{EQ.LEM.AF_Support_Projection_II_14}
\chi_{(0,\infty]}(x_{j})=\supp x_{j}\leq\supp y_{j}=\chi_{(0,\infty]}(y_{j})
\end{align}

\noindent by $2)$ in Proposition \ref{PRP.Support_Projection_I} and $1)$ in Proposition \ref{PRP.Support_Projection_II}. Note $2)$ in the latter proposition shows $\ker L_{x}=\ker L_{y}$ since we have $\chi_{(0,\infty]}(y)=\chi_{(0,\infty]}(x)$. For all $u\in\ker L_{x}\cap L^{\infty}(A,\tau)=\ker L_{y}\cap L^{\infty}(A,\tau)$ and $j\in\mathbb{N}$, we calculate

\begin{align*}
0=\lgl yu,u\rgl_{\tau} & = \lgl y_{j}u,u\rgl+\lgl\big(I-\pi_{j}^{A}\big)(y)u,u\rgl_{\tau} \phantom{\bigg)} \\
& \geq \sigma(y_{j})\dblv{}\chi_{(0,\infty]}(y_{j})u\dblv_{\tau}^{2}+\lgl\big(I-\pi_{j}^{A}\big)(y)u,u\rgl_{\tau}. \phantom{\bigg)}
\end{align*}

For all $j\in\mathbb{N}$, we have $\dblv{}\chi_{(0,\infty]}(x_{j})u\dblv_{\tau}^{2}\leq \dblv{}\chi_{(0,\infty]}(y_{j})u\dblv_{\tau}^{2}$ by Equation \ref{EQ.LEM.AF_Support_Projection_II_14} and further $\absv{1.15}{\lgl(I-\pi_{j}^{A})(y)u,u\rgl_{\tau}}\leq \dblv{}(I-\pi_{j}^{A})(x)\dblv_{\tau}\|u\|_{\infty}\|u\|_{\tau}<\infty$ by reducing to $\ker L_{x}\cap L^{\infty}(A,\tau)$. We also use $1)$ for $\lim_{j\in\mathbb{N}}\sigma(y_{j})=1$, and note $3)$ in Proposition \ref{PRP.AF_Cstar_Trace_III}. Altogether, we have

\begin{align}\label{EQ.LEM.AF_Support_Projection_II_15}
0=\lgl yu,u\rgl=\limsup_{j\in\mathbb{N}}\hspace{0.025cm} \lgl y_{j}u,u\rgl_{\tau}\geq\limsup_{j\in\mathbb{N}}\hspace{0.025cm} \dblv{}\chi_{(0,\infty]}(x_{j})u\dblv_{\tau}^{2}\geq 0
\end{align}

\noindent for all $u\in\ker x\cap L^{\infty}(A,\tau)$. Equation \ref{EQ.LEM.AF_Support_Projection_II_15} shows Equation \ref{EQ.LEM.AF_Support_Projection_II_13}. We see Equation \ref{EQ.LEM.AF_Support_Projection_II_10} and therefore $2)$ follows as discussed above.
\end{proof}

\begin{dfn}\label{DFN.AF_Support_Projection_II}
Let $x\in L^{1}(A,\tau)_{+}$. We say that $x$ has

\begin{itemize}
\item[1)] integrable support if $\tau\lc\supp x\rc{}<\infty$,

\item[2)] reducible support if $\supp x=\s$-$\lim_{j\in\mathbb{N}}\supp x_{j}$.
\end{itemize}
\end{dfn}

\begin{thm}\label{THM.AF_Support_Projection}
Let $(A,\tau)$ be a tracial AF-$C^{*}$-algebra. Let $x\in L^{1}(A,\tau)_{+}$. If we have $\tau\lc\supp x\rc{}<\infty$, then $\supp x=\s$-$\lim_{j\in\mathbb{N}}\supp x_{j}$.
\end{thm}
\begin{proof}
Theorem \ref{THM.AF_Cstar_Bimodule_CLRA_SR} states $L_{x}=\sr$-$\lim_{j\in\mathbb{N}}L_{x_{j}}$. Using $2)$ in Lemma \ref{LEM.Wstar_CLRA_FC}, as well as $1)$ and $2)$ in Proposition \ref{PRP.Support_Projection_II}, Equation \ref{EQ.SSEC.QOT_AC_SUPP_1} and Equation \ref{EQ.SSEC.QOT_AC_SUPP_2} show Lemma \ref{LEM.AF_Support_Projection_I} implies our claim if

\begin{align}\label{EQ.THM.AF_Support_Projection_1}
\limsup_{j\in\mathbb{N}}\hspace{0.025cm} \dblv{}\supp x_{j}\cdot u\dblv_{\tau}=0  
\end{align}

\noindent for all $u\in\ker L_{x}$. Note $1)$ in Proposition \ref{PRP.Support_Projection_II} shows $\supp x=\chi_{(0,\infty]}(\sqrt{x})$ by positivity and functional calculus. For all $j\in\mathbb{N}$, get $\supp x_{j}=\chi_{(0,\infty]}\lc\sqrt{x_{j}}\rc$. Equation \ref{EQ.SSEC.QOT_AC_SUPP_4} for $z=x$ further shows $\tau\lc\sqrt{x_{j}}p\rc{}=0$ for all projections $p\in A_{j}$ s.t.~$\tau\lc\pi_{j}^{A}(\sqrt{x})p\rc{}=0$.\par
For all $j\in\mathbb{N}$, $2)$ in Proposition \ref{PRP.Support_Projection_I} and $1)$ in Proposition \ref{PRP.Support_Projection_II} therefore show

\begin{align}\label{EQ.THM.AF_Support_Projection_2}
\chi_{(0,\infty]}\lc\sqrt{x_{j}}\rc\leq\chi_{(0,\infty]}\lc\pi_{j}^{A}(\sqrt{x})\rc{}.
\end{align}

\noindent Thus $1)$ in Proposition \ref{PRP.Support_Projection_II} and $2)$ in Lemma \ref{LEM.AF_Support_Projection_II} show

\begin{align}\label{EQ.THM.AF_Support_Projection_3}
\supp x=\s\textrm{-}\lim_{j\in\mathbb{N}}\hspace{0.025cm} \supp\pi_{j}^{A}(\sqrt{x})=\s\textrm{-}\lim_{j\in\mathbb{N}}\hspace{0.025cm} \chi_{(0,\infty]}\lc\pi_{j}^{A}(\sqrt{x})\rc{},
\end{align}

\noindent hence Equation \ref{EQ.THM.AF_Support_Projection_2} and Equation \ref{EQ.THM.AF_Support_Projection_3} let us estimate

\begin{align}\label{EQ.THM.AF_Support_Projection_4}
0=\dblv{}\supp x\cdot u\dblv_{\tau}^{2}=\lim_{j\in\mathbb{N}}\hspace{0.025cm} \dblv{}\chi_{(0,\infty]}\lc\pi_{j}^{A}(\sqrt{x})\rc\cdot u\dblv_{\tau}^{2}\geq\limsup_{j\in\mathbb{N}}\hspace{0.025cm} \dblv{}\supp x_{j}\cdot u\dblv_{\tau}^{2}\geq 0
\end{align}

\noindent for all $u\in\ker L_{x}$. Equation \ref{EQ.THM.AF_Support_Projection_4} immediately shows Equation \ref{EQ.THM.AF_Support_Projection_1}. We obtain our claim as described above.
\end{proof}

\begin{cor}\label{COR.AF_Support_Projection_I}
If $\tau<\infty$, then all $x\in L^{1}(A,\tau)_{+}$ have reducible support.
\end{cor}
\begin{proof}
Apply Theorem \ref{THM.AF_Support_Projection}. 
\end{proof}

\begin{cor}\label{COR.AF_Support_Projection_II}
If $x\in L^{1}(A,\tau)_{+}$ has spectral gap, then $x$ has reducible support.
\end{cor}
\begin{proof}
Note $1)$ in Proposition \ref{PRP.Support_Projection_II} shows $x\geq\sigma(x)\cdot \supp x$ by functional calculus. Thus $\tau\lc\supp x\rc\leq\sigma(x)^{-1}\tau(x)<\infty$ since $\sigma(x)>0$, hence Theorem \ref{THM.AF_Support_Projection} applies.
\end{proof}

Theorem \ref{THM.AF_Support_Projection} gives sufficient conditions for reducible support. Non-integrability does not exclude reducible support in general. All injective $x\in L^{1}(A,\tau)_{+}$ have reducible support by Lemma \ref{LEM.AF_Support_Projection_I} \lc{}cf.~Proposition \ref{PRP.FC_Injectivity}\rc{}. If $(A,\tau)=\lc\KII(H),\tr\rc$ for a separable Hilbert space $H$, then Example \ref{BSP.AF_Support_Projection} shows integrable support is equivalent to being a finite-dimensional matrix.

\begin{bsp}\label{BSP.AF_Support_Projection}
Let $H$ be a separable Hilbert space. Assume $(A,\tau)=\lc\KII(H),\tr\rc$. Let $x\in S_{1}(H)$. There exists $U\in\UII\lc\BII(H)\rc$ s.t.~$UxU^{*}$ has diagonal form. We know the latter is determined by $\lset\lambda_{n}\rset_{n\in\mathbb{N}}\subset [0,\infty)$ up to reordering. Applications of unitary conjugations are normal unital $^{*}$-homomorphisms. Thus $\supp x=U^{*}\lc\supp UxU^{*}\rc{}U$ by Lemma \ref{LEM.FC_Preservation_II} and Corollary \ref{COR.FC_Preservation}, hence $\tr\lc\supp x\rc{}=\tr\lc\supp UxU^{*}\rc$. Ergo $\tau\lc\supp x\rc{}<\infty$ if and only if $\lambda_{n}=0$ for a.e.~$n\in\mathbb{N}$, i.e.~$\tr\lc\supp x\rc{}<\infty$ if and only if $\supp x\in\KII(H)_{0}=\bigcup_{n\in\mathbb{N}}M_{n}(\mathbb{C})$.
\end{bsp}

%%%%%%%%%%%%%%%%%%%
%%% SUBSECTION %%%%
%%%%%%%%%%%%%%%%%%%

\subsection{Noncommutative heat semigroups of quantum Laplacians}\label{SSEC.QOT_AC_HSG}

Noncommutative heat semigroups of quantum Laplacians are trace-preserving, as well as completely Markovian. In the finite-dimensional setting, self-adjointness implies quantum Laplacians satisfy, up to sign, a quantum Fokker-Planck equation with vanishing drift term \cite{BK.Gar_Zol.2004.Quantum_Noise}, i.e.~only diffusion term. The latter solve special cases of general Lindblad master equations \cite{BK.Dav.1976.Quantum_Markov_SG}\cite{BK.Gar_Zol.2004.Quantum_Noise}\cite{ART.Spo.1980.Physics_Markov_SG_Master_Equation} describing purely irreversible time-evolution of dissipative quantum systems \cite{BK.Bra.1987.OpAlg_Quantum_StM_I}\cite{BK.Bra.1987.OpAlg_Quantum_StM_II}\cite{BK.Dav.1976.Quantum_Markov_SG}\cite{BK.Gar_Zol.2004.Quantum_Noise}\cite{BK.Ohy_Pet.1993.Rel_Ent}\cite{BK.Ste_vLee.2013.Full_Quantum_StM}. Of course, the sign occurs since negatives of quantum Laplacians generate noncommutative heat semigroups.\par
We view such diffusion terms of quantum Fokker-Planck equations as infinitesimal applications of quantum channels \cite{ART.Bih_Lid_Wha.2001.Quantum_Markov_CPM_Reconstruction}\cite{ART.Cub_Eis_Wol.2012.Quantum_Markov_SG_Reconstruction} transmitting change of states of the given quantum system determined by irreversible interaction with its environment \cite{BK.Nie_Chu.2000.Quantum_Computation_Information}\cite{ART.Kra.1971.State_Changes}. The extension \cite{BK.Cam_Def.2019.Quantum_StM_Information}\cite{ART.DiVi_Loss.1998.Quantum_Information_Physical} of Landauer's principle \cite{ART.Lan.1961.Information_Physical_I}\cite{ART.Lan.1961.Information_Physical_II} gives strictly positive lower bounds on production of quantum entropy upon application of quantum channels due to minimal heat dissipation \cite{ART.Ara_Ber_Cil_Dil_Lut_Pet.2012.Information_Physical_Experimental_Verification}\cite{ART.Burz_Gau_Lui_Mae_vandZan.2018.Quantum_Information_Physical_Experimental_Verification}\cite{ART.Sag_Ueda.2008.Quantum_Information_Physical_From_2nd_Law_ThDy}. Following a maximum entropy production principle \cite{ART.Dew.2003.MaxEnt_Information_I}\cite{ART.Dew.2005.MaxEnt_Information_II}\cite{ART.Mar_Sel.2006.MaxEnt_Review}, we select noise diffusion terms in the finite-dimensional setting by maximising production of quantum entropy under constraints on energy spent and assume stability under scaling limits. Following our discussion of the coarse graining process in Subsection \ref{SSEC.QOT_CG}, we show quantum Laplacians satisfy, up to sign, a quantum Fokker-Planck equation with vanishing drift term in scaling limit, i.e.~only noise diffusion term. Altogether, we therefore view quantum Laplacians as generators of quantum noise evolution in Subsection \ref{SSEC.L2W_Log_Mean_QNE}, and obtain a description of quantum Laplacians in terms of both quantum statistical mechanics \cite{BK.Bra.1987.OpAlg_Quantum_StM_I}\cite{BK.Bra.1987.OpAlg_Quantum_StM_II} and quantum information theory \cite{BK.Nie_Chu.2000.Quantum_Computation_Information} as claimed in the introduction of this chapter.\par
We require regularisation of normal states under heat flow. Theorem \ref{THM.Wstar_Derivation_QG_HSG_Regularity} shows such regularisation by combining compressing with support projections of normal fixed states and finite-dimensional approximation. This uses compatibility with compression and finite-dimensional approximation. As such, each step of the coarse graining process terminates at accessibility components in the finite-dimensional setting s.t.~heat flow maps to their relative interiors for all non-zero times. Standard references for quantum statistical mechanics are \cite{BK.Bra.1987.OpAlg_Quantum_StM_I}\cite{BK.Bra.1987.OpAlg_Quantum_StM_II}, \cite{BK.Dav.1976.Quantum_Markov_SG}, \cite{BK.Gar_Zol.2004.Quantum_Noise}, \cite{BK.Ohy_Pet.1993.Rel_Ent} and \cite{BK.Ste_vLee.2013.Full_Quantum_StM}. Standard reference for quantum in\-formation theory is \cite{BK.Nie_Chu.2000.Quantum_Computation_Information}. We further use and refer to \cite{BK.Cam_Def.2019.Quantum_StM_Information} as comprehensive treatment of the quantum statistical mechanics of quantum information.

%%%%%%%%%%%%%
%%% PART %%%%
%%%%%%%%%%%%%

\subsubsection*{Completely Markovian semigroups}

We discuss both completely Markovian semigroups and Lindblad master equations, as well as their special case of quantum Fokker-Planck equations. Generalising the uniformly continuous case \cite{ART.Chr_Eva.1979.Noncommutative_Markov_SG_Lindblad}\cite{ART.Lin.1976.Quantum_Markov_SG_Master_Equation_I}\cite{ART.Lin.1976.Quantum_Markov_SG_Master_Equation_II} applied to open quantum systems \cite{ART.Dav.1974.Quantum_Markov_SG_Master_Equation_I}\cite{ART.Dav.1975.Quantum_Markov_SG_Master_Equation_III}\cite{ART.Dav.1976.Quantum_Markov_SG_Master_Equation_II}\cite{ART.Fri_Gor_Kos_Sud_Ver.1978.Quantum_Markov_SG_Master_Equation}, completely Markovian semigroups \cite{ART.Dav.1979.Quantum_Markov_SG_Generators}\cite{ART.Dav_Lin.1992.Noncommutative_Markov_SG_I}\cite{ART.Dav_Lin.1993.Noncommutative_Markov_SG_II} describe time-evolution of dissipative quantum systems weakly coupled to a heat bath \cite{BK.Dav.1976.Quantum_Markov_SG}\cite{BK.Gar_Zol.2004.Quantum_Noise}\cite{ART.Spo.1980.Physics_Markov_SG_Master_Equation}. Symmetric $C^{*}$-derivations are noncommutative gradients and define Laplacians generating completely Markovian noncommutative heat semigroups \cite{ART.Cip.1997.NC_Dirichlet_Markov}\cite{ART.Cip_Sav.2003.NC_Dirichlet_Grad}. Following Remark \ref{REM.CWstar_Derivation}, we specialise to the AF-$C^{*}$-setting in order to study noncommutative heat semigroups of quantum Laplacians.\par
Definition \ref{DFN.Wstar_CP_Markovian_SG} gives completely Markovian semigroups for tracial $C^{*}$-algebras. We use completely positive and completely Markovian maps \lc{}cf.~Definition \ref{DFN.Wstar_CP} and Definition \ref{DFN.Wstar_CP_Markovian}\rc{}. Lemma \ref{LEM.Wstar_CP_Markovian_SG} gives sufficient conditions for satisfying Equation \ref{EQ.LEM.Wstar_CP_Markovian_SG_1} as special case of general Lindblad master equations \cite{BK.Dav.1976.Quantum_Markov_SG}\cite{BK.Gar_Zol.2004.Quantum_Noise}\cite{ART.Spo.1980.Physics_Markov_SG_Master_Equation}. This yields Lindblad decompositions as per Definition \ref{DFN.Wstar_CP_Markovian_SG_II}. Following Remark \ref{REM.Wstar_CP_Markovian_SG}, Equation \ref{EQ.LEM.Wstar_CP_Markovian_SG_1} is a quantum Fokker-Planck equation with drift and diffusion terms as per Equation \ref{EQ.REM.Wstar_CP_Markovian_SG_1}. We view such diffusion terms as infinitesimal applications of quantum channels \cite{ART.Bih_Lid_Wha.2001.Quantum_Markov_CPM_Reconstruction}\cite{ART.Cub_Eis_Wol.2012.Quantum_Markov_SG_Reconstruction} transmitting change of states of the given quantum system determined by irreversible interaction with its environment \cite{BK.Nie_Chu.2000.Quantum_Computation_Information}\cite{ART.Kra.1971.State_Changes}. If self-adjointness in the finite-dimensional setting is given, then Corollary \ref{COR.Wstar_CP_Markovian_SG} shows we may assume vanishing drift term.\par
Let $(A,\tau)$ be a tracial $C^{*}$-algebra.

\begin{dfn}\label{DFN.Wstar_CP_Markovian_SG}
A semigroup $G:[0,\infty)\longrightarrow\BII\lc{}L^{\infty}(A,\tau)\rc$ is completely Markovian if $G_{t}:L^{\infty}(A,\tau)\longrightarrow L^{\infty}(A,\tau)$ is a completely Markovian normal map for all $t\geq 0$.
\end{dfn}

\begin{lem}\label{LEM.Wstar_CP_Markovian_SG}
Assume $\tau<\infty$. Let $S\in\BII\lc{}L^{2}(A,\tau)\rc_{h}$ s.t.~$S\neq 0$, $S\lc{}L^{\infty}(A,\tau)\rc\subset L^{\infty}(A,\tau)$ and $S(1_{A})=0$. We have semigroup $G^{S}:[0,\infty)\longrightarrow\BII\lc{}L^{\infty}(A,\tau)\rc$ by setting $G_{t}^{S}:=e^{tS}$ for all $t\geq 0$. If $S:L^{\infty}(A,\tau)\longrightarrow L^{\infty}(A,\tau)$ is normal and $G^{S}:[0,\infty)\longrightarrow\BII\lc{}L^{\infty}(A,\tau)\rc$ is a completely Markovian semigroup, then there exists $H\in L^{\infty}(A,\tau)_{h}$, completely positive normal $\varphi:L^{\infty}(A,\tau)\longrightarrow L^{\infty}(A,\tau)$ with $\dblv{}\varphi(1_{A})\dblv_{\infty}=1$, and $C>0$ satisfying the Lindblad master equation

\begin{align}\label{EQ.LEM.Wstar_CP_Markovian_SG_1}
S(x)=i[H,x]+\frac{C}{2}\bigg(2\varphi(x)-\big\{\varphi(1_{A}),x\big\}\bigg)
\end{align}

\noindent for all $x\in L^{\infty}(A,\tau)$.
\end{lem}

\begin{proof}
Note $G^{S}:[0,\infty)\longrightarrow\BII\lc{}L^{\infty}(A,\tau)\rc$ is a semigroup by boundedness and functional calculus. Assume $S:L^{\infty}(A,\tau)\longrightarrow L^{\infty}(A,\tau)$ is normal and $G^{S}:[0,\infty)\longrightarrow\BII\lc{}L^{\infty}(A,\tau)\rc$ is a completely Markovian semigroup. Set $\AII:=L_{L^{\infty}(A,\tau)}(A)$. Then $\AII''=L_{L^{\infty}(A,\tau)}\lc{}L^{\infty}(A,\tau)\rc$ is the $\sigma$-weak closure \lc{}cf.~Proposition \ref{PRP.Wstar_Equivalence} and Proposition \ref{PRP.Wstar_Trace_Ext_II}\rc{}. Theorem 3.1 in \cite{ART.Chr_Eva.1979.Noncommutative_Markov_SG_Lindblad} applies to the canonical lift of $G^{S}$ to $\AII''$. For all $t\geq 0$, set

\begin{align}\label{EQ.LEM.Wstar_CP_Markovian_SG_2}
S^{\dagger}:=L_{L^{\infty}(A,\tau)}\circ S\circ L_{L^{\infty}(A,\tau)}^{-1},\ G_{t}^{S,\dagger}:=L_{L^{\infty}(A,\tau)}\circ G_{t}^{S}\circ L_{L^{\infty}(A,\tau)}^{-1}.
\end{align}

\noindent We have $G_{t}^{S,\dagger}=e^{tS^{\dagger}}$ in each case by norm differentiation. Since $^{*}$-homomorphisms are completely positive \lc{}cf.~Example \ref{BSP.Wstar_CP_II}\rc{}, conjugation with canonical left-actions as per Equation \ref{EQ.LEM.Wstar_CP_Markovian_SG_2} preserves complete positivity. Moreover, normality is preserved by the GNS-construction \lc{}cf.~Proposition \ref{PRP.Wstar_Trace_Ext_II}\rc{}. Thus $G^{S,\dagger}:[0,\infty)\longrightarrow\BII\lc\AII''\rc$ is a uniformly $\|.\|_{\AII''}$-continuous semigroup s.t.~$G_{t}^{S,\dagger}:\AII''\longrightarrow\AII''$ is a completely Markovian normal map for all $t\geq 0$, hence Theorem 3.1 in \cite{ART.Chr_Eva.1979.Noncommutative_Markov_SG_Lindblad} applies.\par
We apply Theorem 3.1 in \cite{ART.Chr_Eva.1979.Noncommutative_Markov_SG_Lindblad}. The theorem yields $H^{\dagger}\in\AII_{h}''$ and completely positive $\varphi^{\dagger}:\AII''\longrightarrow\AII''$ s.t.~

\begin{align}\label{EQ.LEM.Wstar_CP_Markovian_SG_3}
S^{\dagger}\lc{}L_{x,L^{\infty}(A,\tau)}\rc{}=i\lb{}H^{\dagger},L_{x,L^{\infty}(A,\tau)}\rb{}+\varphi^{\dagger}\lc{}L_{x,L^{\infty}(A,\tau)}\rc{}-\frac{1}{2}\big\{\varphi^{\dagger}(I),L_{x,L^{\infty}(A,\tau)}\big\}
\end{align}

\noindent for all $x\in L^{\infty}(A,\tau)$. Using $S^{\dagger}:\AII''\longrightarrow\AII''$ normal, Equation \ref{EQ.LEM.Wstar_CP_Markovian_SG_3} implies $\varphi^{\dagger}:\AII''\longrightarrow\AII''$ is normal by rearranging terms accordingly. Set

\begin{align}\label{EQ.LEM.Wstar_CP_Markovian_SG_4}
H:=L_{L^{\infty}(A,\tau)}^{-1}\circ H^{\dagger}\circ L_{L^{\infty}(A,\tau)},\ \varphi^{S}:=L_{L^{\infty}(A,\tau)}^{-1}\circ\varphi^{\dagger}\circ L_{L^{\infty}(A,\tau)}.
\end{align}

\noindent Since we conjugate with normal $^{*}$-homomorphisms, get $H\in L^{\infty}(A,\tau)_{h}$ and completely positive normal $\varphi^{S}:L^{\infty}(A,\tau)\longrightarrow L^{\infty}(A,\tau)$. Using the latter, applying Equation \ref{EQ.LEM.Wstar_CP_Markovian_SG_2} and Equation \ref{EQ.LEM.Wstar_CP_Markovian_SG_4} to Equation \ref{EQ.LEM.Wstar_CP_Markovian_SG_3} shows

\begin{align}\label{EQ.LEM.Wstar_CP_Markovian_SG_5}
S(x)=i[H,x]+\frac{1}{2}\bigg(2\varphi^{S}(x)-\big\{\varphi^{S}(1_{A}),x\big\}\bigg)
\end{align}

\noindent for all $x\in L^{\infty}(A,\tau)$. Note $\varphi^{S}(1_{A})=0$ implies $\varphi^{S}=0$ by positivity-preservation. Since $H\in L^{\infty}(A,\tau)_{h}$, as well as $S\in\BII\lc{}L^{2}(A,\tau)\rc_{h}$ and $S\neq 0$, Equation \ref{EQ.LEM.Wstar_CP_Markovian_SG_5} shows $\varphi^{S}(1_{A})\neq 0$ by self-adjointness. Equation \ref{EQ.LEM.Wstar_CP_Markovian_SG_5} therefore shows

\begin{align}\label{EQ.LEM.Wstar_CP_Markovian_SG_6}
H,\ \varphi:=\dblv{}\varphi^{S}(1_{A})\dblv_{\infty}^{-1}\varphi^{S},\ C:=\dblv{}\varphi^{S}(1_{A})\dblv_{\infty}
\end{align}

\noindent satisfy Equation \ref{EQ.LEM.Wstar_CP_Markovian_SG_1} for $S$ as claimed.
\end{proof}

%NEWPAGE
%NEWPAGE
%NEWPAGE

\pagebreak

%NEWPAGE
%NEWPAGE
%NEWPAGE

\begin{dfn}\label{DFN.Wstar_CP_Markovian_SG_II}
Assume the setting of Lemma \ref{LEM.Wstar_CP_Markovian_SG}.

\begin{itemize}
\item[1)] We call $G^{S}:[0,\infty)\longrightarrow\BII\lc{}L^{\infty}(A,\tau)\rc$ the induced semigroup of $S$.

\item[2)] If $\restr{0.925}{S}{L^{\infty}(A,\tau)}:L^{\infty}(A,\tau)\longrightarrow L^{\infty}(A,\tau)$ is normal and $G^{S}$ completely Markovian, then we call $(H,\varphi,C)$ as per Equation \ref{EQ.LEM.Wstar_CP_Markovian_SG_1} a Lindblad decomposition of $S$.
\end{itemize}
\end{dfn}

\begin{cor}\label{COR.Wstar_CP_Markovian_SG}
Assume $A$ is finite-dimensional. Let $S\in\BII(A)_{h}$ s.t.~$S\neq 0$ and $S(1_{A})=0$. If $S$ has completely Markovian induced semigroup, then there exists completely positive self-adjoint normal $\varphi:A\longrightarrow A$ with $\dblv{}\varphi(1_{A})\dblv_{\infty}=1$ and $C>0$ s.t.~$\lc{}0,\varphi,C\rc$ is a Lindblad decomposition of $S$.
\end{cor}
\begin{proof}
We have finite-dimensional tracial $W^{*}$-algebra $\lc\BII(A),\tr\rc$. For all $T\in\BII(A)$, we decompose $T=\RE(T)+i\IM(T)$ into real and imaginary parts

\begin{align}\label{EQ.COR.Wstar_CP_Markovian_SG_1}
\RE(T)=\frac{T+T^{*}}{2},\ \IM(T)=-i\frac{T-T^{*}}{2}
\end{align}

\noindent as per $1)$ in Proposition \ref{PRP.Wstar_NCI_IV}. Equation \ref{EQ.COR.Wstar_CP_Markovian_SG_1} yields $\BII(A)=\BII(A)_{h}\oplus\BII(A)_{h}$ using direct sum of real vector spaces. Let $T\in\BII(A)$. We have $T\in\BII(A)_{h}$ if and only if $\IM(T)=0$. For all $u,v\in A$, set $x:=v^{*}v,y:=uu^{*}\in A_{+}$ and calculate

\begin{align}\label{EQ.COR.Wstar_CP_Markovian_SG_2}
\lgl L_{T(x)}u,u\rgl_{\tau}=\lgl v^{*}v,T^{*}(y)\rgl_{\tau}=\lgl v,vT^{*}(y)\rgl_{\tau}=\lgl v,R_{T^{*}(y)}(v)\rgl_{\tau}=\lgl R_{\lc{}T^{*}(y)\rc^{*}}(v),v\rgl_{\tau}.
\end{align}

\noindent For all $y\in A_{+}$, we have $T^{*}(y)\geq 0$ if and only if $\lc{}T^{*}(y)\rc^{*}\geq 0$ since $A_{+}\subset A_{h}$. Using the latter and $3)$ in Proposition \ref{PRP.Wstar_CLRA_FC}, Equation \ref{EQ.COR.Wstar_CP_Markovian_SG_2} implies $T$ is positivity-preserving if and only if $T^{*}$ is. For all $n\in\mathbb{N}$, we argue analogously upon replacing $(A,\tau)$ with the finite-dimensional tracial $C^{*}$-algebra $\lc{}A\otimes M_{n}(\mathbb{C}),\tau\otimes\tr_{n}\rc$. Altogether, we know $T$ is completely positive if and only if $T^{*}$ is. We may also use Proposition \ref{PRP.AF_Cstar_Trace_II} and reduce to Choi's theorem \cite{BK.Dav.1976.Quantum_Markov_SG} for pairs of summands in $A\cong\oplus_{l=1}^{n}M_{n_{l}}(\mathbb{C})$, i.e.~representations as per Equation \ref{EQ.SSEC.QOT_AC_HSG_1} up to conjugation with projections, for alternative proof. If $T\in\BII(A)_{h}$ is completely positive, then the first identity in Equation \ref{EQ.COR.Wstar_CP_Markovian_SG_1} shows $\RE(T)$ is completely positive, and the second one $\RE(T)(1_{A})=T(1_{A})$ since $T(1_{A})=T^{*}(1_{A})$ by $\IM(T)=0$.\par
Normality is equivalent to boundedness in the finite-dimensional setting. Assume $S$ has completely Markovian induced semigroup. We are in the setting of Lemma \ref{LEM.Wstar_CP_Markovian_SG}. Let $(H,\varphi,C)$ be a Lindblad decomposition of $S$. Note $[H,\blank]\in\BII(A)_{h}$. Using the latter and $S\in\BII(A)_{h}$, Equation \ref{EQ.LEM.Wstar_CP_Markovian_SG_1} and Equation \ref{EQ.COR.Wstar_CP_Markovian_SG_1} show

\begin{align}\label{EQ.COR.Wstar_CP_Markovian_SG_3}
S=\RE(S)=\frac{C}{2}\bigg(2\RE(\varphi)-\big\{\varphi(1_{A}),\blank\big\}\bigg),\ 0=\IM(S)=i[H,\blank]+iC\IM(\varphi).
\end{align}

\noindent Using $\lb{}H,1_{A}\rb{}=0$, the second identity in Equation \ref{EQ.COR.Wstar_CP_Markovian_SG_3} shows $\IM(S)(1_{A})=0$ at once and therefore $\RE(\varphi)(1_{A})=\varphi(1_{A})$. Thus $\dblv{}\RE(\varphi)(1_{A})\dblv_{\infty}=\dblv{}\varphi(1_{A})\dblv_{\infty}=1$, hence we have completely positive self-adjoint normal $\RE(\varphi):A\longrightarrow A$ with $\dblv{}\RE(\varphi)(1_{A})\dblv_{\infty}=1$ since $\varphi:A\longrightarrow A$ is completely positive normal with $\dblv{}\varphi(1_{A})\dblv_{\infty}=1$ by hypothesis. The first identity in Equation \ref{EQ.COR.Wstar_CP_Markovian_SG_3} shows $\lc{}0,\RE(\varphi),C\rc$ is Lindblad decomposition of $S$.
\end{proof}

We show Equation \ref{EQ.LEM.Wstar_CP_Markovian_SG_1} is a special case of a general Lindblad master equation \lc{}cf.~Equation 5.2.29 in \cite{BK.Gar_Zol.2004.Quantum_Noise}\rc{}. Assume the setting of Lemma \ref{LEM.Wstar_CP_Markovian_SG}. We use notation from its proof. Assume $A$ is separable. Note $\tau<\infty$ ensures $L^{2}(A,\tau)$ is separable.\par
Let $(H,\varphi,C)$ be a Lindblad decomposition of $S$. Upon conjugation with canonical left-actions as per Equation \ref{EQ.LEM.Wstar_CP_Markovian_SG_2}, Theorem 3.1 in \cite{ART.Chr_Eva.1979.Noncommutative_Markov_SG_Lindblad} yields Lindblad decomposition $\lc{}H^{\dagger},\varphi^{\dagger},C\rc$ of $S^{\dagger}$. Using separability of $L^{2}(A,\tau)$ in order to have a sequence, Theorem 2.3 in Chapter 9 in \cite{BK.Dav.1976.Quantum_Markov_SG} shows there exist $\lset{}W_{n}\rset_{n\in\mathbb{N}}\subset\BII\lc{}L^{2}(A,\tau)\rc$ s.t.~we have $\sum_{n\in\mathbb{N}}\mathrlap{\phantom{W}_{n}}W^{*}TW_{n}=\w$-$\lim_{m\in\mathbb{N}}\sum_{n=1}^{m}\mathrlap{\phantom{W}_{n}}W^{*}TW_{n}$ and further

\begin{align}\label{EQ.SSEC.QOT_AC_HSG_1}
\varphi^{\dagger}(T)=\sum_{n\in\mathbb{N}}\mathrlap{\phantom{W}_{n}}W^{*}TW_{n}
\end{align}

\noindent for all $T\in\AII''$. Using unitality of canonical left-actions of tracial $W^{*}$-algebras, we have $\sum_{n\in\mathbb{N}}\mathrlap{\phantom{W}_{n}}W^{*}W_{n}\leq I$ since $\varphi^{\dagger}:\AII''\longrightarrow\AII''$ is positivity-preserving with $\dblv{}\varphi^{\dagger}(I)\dblv_{\infty}=1$. This lets us relax unitality $\sum_{n\in\mathbb{N}}\mathrlap{\phantom{W}_{n}}W^{*}W_{n}=I$ in the definition of quantum channels \cite{BK.Nie_Chu.2000.Quantum_Computation_Information}\cite{ART.Kra.1971.State_Changes}.\par
Equation \ref{EQ.SSEC.QOT_AC_HSG_1} is a Kraus operator representation of $\varphi^{\dagger}$ with $\lset{}W_{n}\rset_{n\in\mathbb{N}}\subset\BII\lc{}L^{2}(A,\tau)\rc$ its Kraus operators \cite{ART.Kra.1971.State_Changes}. Applying Equation \ref{EQ.SSEC.QOT_AC_HSG_1} to Equation \ref{EQ.LEM.Wstar_CP_Markovian_SG_1} for $S^{\dagger}$ yields

\begin{align}\label{EQ.SSEC.QOT_AC_HSG_2}
S^{\dagger}(T)=i\lb{}H^{\dagger},T\rb{}+\sum_{n\in\mathbb{N}}\mathrlap{\phantom{W}_{n}}W^{*}TW_{n}-\frac{1}{2}\big\{\mathrlap{\phantom{W}_{n}}W^{*}W_{n},T\big\}
\end{align}

\noindent for all $T\in\AII''$. Pulled-back along the canonical left-action, we have

\begin{align}\label{EQ.SSEC.QOT_AC_HSG_3}
\varphi(x)=L_{L^{\infty}(A,\tau)}^{-1}\vstretch{1.1875}{\Bigg(}\sum_{n\in\mathbb{N}}\mathrlap{\phantom{W}_{n}}W^{*}L_{x,L^{\infty}(A,\tau)}W_{n}\vstretch{1.1875}{\Bigg)}
\end{align}

\noindent for all $x\in L^{\infty}(A,\tau)$. Equation \ref{EQ.SSEC.QOT_AC_HSG_2} and Equation \ref{EQ.SSEC.QOT_AC_HSG_3} show

\begin{align}\label{EQ.SSEC.QOT_AC_HSG_4}
S(x)=i[H,x]+\frac{C}{2}L_{L^{\infty}(A,\tau)}^{-1}\vstretch{1.1875}{\Bigg(}\sum_{n\in\mathbb{N}}\mathrlap{\phantom{W}_{n}}W^{*}L_{x,L^{\infty}(A,\tau)}W_{n}-\frac{1}{2}\big\{\mathrlap{\phantom{W}_{n}}W^{*}W_{n},L_{x,L^{\infty}(A,\tau)}\big\}\vstretch{1.1875}{\Bigg)}
\end{align}

\noindent for all $x\in L^{\infty}(A,\tau)$. Equation \ref{EQ.SSEC.QOT_AC_HSG_4} is called a Kraus operator representation of $S$ and Equation \ref{EQ.LEM.Wstar_CP_Markovian_SG_1}. Up to strictly positive constants, Equation \ref{EQ.SSEC.QOT_AC_HSG_2}, i.e.~Equation \ref{EQ.LEM.Wstar_CP_Markovian_SG_1} via Kraus operator representation as per Equation \ref{EQ.SSEC.QOT_AC_HSG_4}, is a general Lindblad master equation. Following Remark \ref{REM.Wstar_CP_Markovian_SG}, we additionally know Equation \ref{EQ.LEM.Wstar_CP_Markovian_SG_1} is a quantum Fokker-Planck equation with drift and diffusion terms as per Equation \ref{EQ.REM.Wstar_CP_Markovian_SG_1} s.t.~their diffusion terms are infinitesimal applications of quantum channels.

\begin{rem}\label{REM.Wstar_CP_Markovian_SG}
Note general Lindblad master equations \lc{}cf.~Equation 5.2.29 in \cite{BK.Gar_Zol.2004.Quantum_Noise}\rc{} specialise to quantum Fokker-Planck equations as follows. If quantum white noise is the input for a given quantum system, then its associated quantum Langevin equation \lc{}cf.~Equation 5.3.15 in \cite{BK.Gar_Zol.2004.Quantum_Noise}\rc{} determines a quantum stochastic differential equation in It\^o form \lc{}cf.~Equation 5.3.50 in \cite{BK.Gar_Zol.2004.Quantum_Noise}\rc{} based on a quantum Wiener process.\par

%NEWPAGE
%NEWPAGE
%NEWPAGE

\pagebreak

%NEWPAGE
%NEWPAGE
%NEWPAGE

Using reduced trace obtained by the weak coupling assumption, dualisation yields a linear differential equation of density operators \lc{}cf.~Equation 5.4.12 in \cite{BK.Gar_Zol.2004.Quantum_Noise}\rc{}. It is a quantum Fokker-Planck equation describing time-evolution of the given quantum system under quantum white noise similar to the classical case \cite{BK.Ris.1989.Fokker_Planck}. Indeed, it is a general Lindblad master equation s.t.~commutators are taken w.r.t.~the Hamiltonian of the given quantum system, and separates into distinct drift and diffusion terms arising from corresponding terms with physical meaning in the quantum Langevin equation. The former arise from all reversible interactions within quantum systems, whereas the latter do from all irreversible ones with their given environments. For details on general Lindblad master equations and the above derivation, we refer to Chapter 5 in \cite{BK.Gar_Zol.2004.Quantum_Noise}. For details on their many applications, we refer to \cite{BK.Dav.1976.Quantum_Markov_SG}, \cite{BK.Gar_Zol.2004.Quantum_Noise} and \cite{ART.Spo.1980.Physics_Markov_SG_Master_Equation}.\par
Assume the setting of Lemma \ref{LEM.Wstar_CP_Markovian_SG}. Let $(H,\varphi,C)$ be a Lindblad decomposition of $S$. We consider $H$ as Hamiltonian of a quantum system. Using the latter and following our above discussion, note Equation \ref{EQ.LEM.Wstar_CP_Markovian_SG_1} is a quantum Fokker-Planck equation s.t.~its commutator is taken w.r.t.~$H$. We have drift term $S^{\textrm{Drift}}\in i\BII\lc{}L^{2}(A,\tau)\rc_{h}$ and diffusion term $S^{\textrm{Diff}}\in\BII\lc{}L^{2}(A,\tau)\rc_{h}$ given by

\begin{align}\label{EQ.REM.Wstar_CP_Markovian_SG_1}
S^{\textrm{Drift}}(x)=i[H,x],\ S^{\textrm{Diff}}(x)=\frac{C}{2}\cdot \bigg(2\varphi(x)-\big\{\varphi(1_{A}),x\big\}\bigg)
\end{align}

\noindent for all $x\in L^{\infty}(A,\tau)$. Following our above discussion, $S^{\textrm{Drift}}$ is the reversible part, and $S^{\textrm{Diff}}$ the irreversible part of Equation \ref{EQ.LEM.Wstar_CP_Markovian_SG_1}. Altogether, Equation \ref{EQ.LEM.Wstar_CP_Markovian_SG_1} is described in terms of quantum statistical mechanics \cite{BK.Bra.1987.OpAlg_Quantum_StM_I}\cite{BK.Bra.1987.OpAlg_Quantum_StM_II}. We view $S^{\textrm{Diff}}$ as infinitesimal application of the quantum channel $\varphi:L^{\infty}(A,\tau)\longrightarrow L^{\infty}(A,\tau)$ below. If $H=0$, then we thereby describe Equation \ref{EQ.LEM.Wstar_CP_Markovian_SG_1} in terms of quantum information theory \cite{BK.Nie_Chu.2000.Quantum_Computation_Information}.\par
Completely positive normal unital maps are quantum channels \lc{}cf.~pp.353-373 in \cite{BK.Nie_Chu.2000.Quantum_Computation_Information}\rc{}. We may relax unitality in Kraus operator representations \lc{}cf.~p.360 in \cite{BK.Nie_Chu.2000.Quantum_Computation_Information}\rc{}. Each quantum channel describes a state change due to measurement \lc{}cf.~pp.360-364 in \cite{BK.Nie_Chu.2000.Quantum_Computation_Information} or \cite{ART.Dav_Lew.1970.Wstar_Quantum_Probability}\cite{ART.Kra.1971.State_Changes}\cite{BK.Ohy_Pet.1993.Rel_Ent}\rc{}, i.e.~each transmits a corresponding change of information encoded in states of the given quantum system \lc{}cf.~365-373 in \cite{BK.Nie_Chu.2000.Quantum_Computation_Information}\rc{} providing physical realisation of a quantum computer \lc{}cf.~Chapter 7 in \cite{BK.Nie_Chu.2000.Quantum_Computation_Information} or \cite{ART.Ash_Geo_Nor.2014.Quantum_Simulation}\cite{ART.Bur_Lad_Nic.2023.QC_Spin_Overview}\rc{}. We therefore have quantum channel $\varphi:L^{\infty}(A,\tau)\longrightarrow L^{\infty}(A,\tau)$. The second identity in Equation \ref{EQ.REM.Wstar_CP_Markovian_SG_1} shows

\begin{align}\label{EQ.REM.Wstar_CP_Markovian_SG_2}
S^{\textrm{Diff}}(x)=C\cdot \Bigg(\lc\varphi(x)-x\rc{}-\Bigg[\frac{1}{2}\big\{\varphi(1_{A}),x\big\}-x\Bigg]\Bigg)
\end{align}

\noindent for all $x\in L^{\infty}(A,\tau)$. If $\varphi$ is unital, then the second term in Equation \ref{EQ.REM.Wstar_CP_Markovian_SG_2} vanishes. Up to strictly positive constant, Equation \ref{EQ.REM.Wstar_CP_Markovian_SG_2} shows $S^{\textrm{Diff}}$ is the difference operator given by $\varphi$ minus a correction term controlling for non-unitality. The latter uses anti-commutator given by the arithmetic operator mean for two variables evaluated on $\varphi(1_{A})$ \cite{ART.And_Kub.1979.Operator_Means}. It is a quantum channel and the correction terms its difference operator. Up to energy scale but accounting for non-unitality, Equation \ref{EQ.REM.Wstar_CP_Markovian_SG_2} shows $\varphi$ transmits change of states of the given quantum system arising from irreversible interactions with its environment as per $S^{\textrm{Diff}}$ for a discrete time-step, resp.~applying $S^{\textrm{Diff}}$ yields such change as per $\varphi$ but infinitesimally \cite{ART.Bih_Lid_Wha.2001.Quantum_Markov_CPM_Reconstruction}\cite{ART.Cub_Eis_Wol.2012.Quantum_Markov_SG_Reconstruction}. We therefore view $S^{\textrm{Diff}}$ as infinitesimal application of $\varphi$.
\end{rem}

%%%%%%%%%%%%%
%%% PART %%%%
%%%%%%%%%%%%%

\subsubsection*{Definition and properties}

Definition \ref{DFN.Wstar_Derivation_QG_HSG} gives noncommutative heat semigroups of quantum Laplacians by extending Definition \ref{DFN.Wstar_Derivation_QG_HSG_L2} via the modified standard pairing. Following Remark \ref{REM.CWstar_Derivation}, this is based on the extension of completely Markovian semigroups in \cite{ART.Cip.1997.NC_Dirichlet_Markov} and uses results in \cite{ART.Cip.1997.NC_Dirichlet_Markov}\cite{ART.Cip_Sav.2003.NC_Dirichlet_Grad}. Proposition \ref{PRP.Wstar_Derivation_QG_HSG_II} and Proposition \ref{PRP.Wstar_Derivation_QG_HSG_Fixed_Part_I}\linebreak collect properties. In particular, note $3)$ in Proposition \ref{PRP.Wstar_Derivation_QG_HSG_Fixed_Part_I} shows sets of states at finite\linebreak distance have identical fixed parts.\par
Let $(\phi,\bpsi,\gamma,\nabla)$ be noncommutative differential structure for tracial AF-$C^{*}$-algebras $(A,\tau)$ and $(B,\omega)$ in $\lc{}f,\theta\rc$-setting.

\begin{dfn}\label{DFN.Wstar_Derivation_QG_HSG_L2}
We define heat semigroup $h:[0,\infty)\longrightarrow\BII\lc{}L^{2}(A,\tau)\rc$ of $\Delta$ by setting

\begin{align}\label{EQ.DFN.Wstar_Derivation_QG_HSG_L2_1}
h_{t}(u):=e^{-t\Delta}(u)
\end{align}

\noindent for all $t\geq 0$ and $u\in L^{2}(A,\tau)$.
\end{dfn}

\begin{ntn}\label{NTN.Wstar_Derivation_QG_HSG_L2}
For all $j\in\mathbb{N}$, let $h^{j}:[0,\infty)\longrightarrow\BII(A_{j})$ denote heat semigroup of $\Delta_{j}$ in Definition \ref{DFN.Wstar_Derivation_QG_HSG_L2} for the induced noncommutative differential structure $\lc\phi_{j},\bpsi_{j},\gamma_{j},\nabla_{\hspace{-0.055cm} j}\rc$.
\end{ntn}

\begin{rem}\label{REM.Wstar_Derivation_QG_HSG_L2}
Note $\Delta\in\UBII\lc{}L^{2}(A,\tau)\rc$ is local by $4)$ in Proposition \ref{PRP.Wstar_Derivation_Compression_II} and $3.1)$ in Proposition \ref{PRP.Wstar_Derivation_QG_I}. Thus Proposition \ref{PRP.Local_Operator} applies, hence $1)$ therein yields orthonormal eigenbasis $\{e_{n}\}_{n\in\mathbb{N}}\subset A_{0}$ of $\Delta$ s.t.~it is furthermore orthonormal eigenbasis of $\pi_{j}^{A}$ for all $j\in\mathbb{N}$. By testing on $A_{0}$ using $4)$ in Proposition \ref{PRP.Wstar_Derivation_Compression_II}, $3.1)$ in Proposition \ref{PRP.Wstar_Derivation_QG_I} shows

\begin{align}\label{EQ.REM.Wstar_Derivation_QG_HSG_L2_1}
\lb\pi_{j}^{A},\pi_{\ker\Delta}^{A}\rb{}=0
\end{align}

\noindent for all $j\in\mathbb{N}$ since $A_{0}\subset L^{2}(A,\tau)$ is $\|.\|_{\tau}$-dense. Alternatively, we derive Equation \ref{EQ.REM.Wstar_Derivation_QG_HSG_L2_1} by calculating on an orthonormal basis as above. Equation \ref{EQ.REM.Wstar_Derivation_QG_HSG_L2_1} thereby generalises to $2.2)$ in Proposition \ref{PRP.Wstar_Derivation_QG_HSG_II}.
\end{rem}

The heat semigroup of $\Delta$ extends as follows. For all $j\in\mathbb{N}$, following Remark \ref{REM.Wstar_Derivation_QG_HSG_L2} note $3.1)$ in Proposition \ref{PRP.Wstar_Derivation_QG_I} lets us apply $1)$ in Proposition \ref{PRP.Wstar_Derivation_Compression_III} in order to get
 
\begin{align}\label{EQ.SSEC.QOT_AC_HSG_5}
h_{t}(x)=\lc{}e^{-t\Delta_{j}}\oplus e^{-t\Delta_{j}^{\perp}}\rc{}(x)=e^{-t\Delta_{j}}(x)=h_{t}^{j}(x)\in A_{j}
\end{align}

\noindent for all $t\geq 0$ and $x\in A_{j}$. For all $j\in\mathbb{N}$, we have symmetric $C^{*}$-derivation $\nabla_{\hspace{-0.055cm} j}:A_{j}\longrightarrow B_{j}$ by $1)$ in Proposition \ref{PRP.Wstar_Derivation_QG_I}. Theorem 8.3 in \cite{ART.Cip_Sav.2003.NC_Dirichlet_Grad} shows we have $C^{*}$-Dirichlet form $u\mapsto \dblv{}\nabla_{\hspace{-0.055cm} j}u\dblv_{\tau}^{2}$ on $A_{j}$ in each case. Using the latter, Theorem 4.11 in \cite{ART.Cip.1997.NC_Dirichlet_Markov} shows we have completely Markovian semigroup $h^{j}:[0,\infty)\longrightarrow\BII(A_{j})$ as well. Note our argument here initially yields Markovianity. Completeness follows by likewise application of both theorems to extensions of symmetric $C^{*}$-derivations to full matrix algebras over finite-dimensional tracial $C^{*}$-algebras. Theorem 2.12 in \cite{ART.Cip.1997.NC_Dirichlet_Markov} shows completely Markovian semigroups and their extensions to Banach dual spaces are given by completely positive dilations.\par

%NEWPAGE
%NEWPAGE
%NEWPAGE

\pagebreak

%NEWPAGE
%NEWPAGE
%NEWPAGE

For all $j\in\mathbb{N}$, we therefore have

\begin{align}\label{EQ.SSEC.QOT_AC_HSG_6}
\dblv{}h_{t}^{j}(x)\dblv_{\infty}\leq \|x\|_{\infty}
\end{align}

\noindent for all $x\in A_{j}$. Using $A_{0}\subset A$ $\|.\|_{\infty}$-dense, Equation \ref{EQ.REM.Wstar_Derivation_QG_HSG_L2_1} and Equation \ref{EQ.SSEC.QOT_AC_HSG_6} then yield extension $h_{t}\in\BII(A)$ of Equation \ref{EQ.DFN.Wstar_Derivation_QG_HSG_L2_1} for all $t\geq 0$. Dualisation of such an extended Equation \ref{EQ.DFN.Wstar_Derivation_QG_HSG_L2_1} defines semigroup $h:[0,\infty)\longrightarrow\BII(A^{*})$ by setting

\begin{align}\label{EQ.SSEC.QOT_AC_HSG_7}
h_{t}(\mu)(x):=e^{-t\Delta}(\mu)(x):=\mu\lc{}h_{t}(x)\rc{}
\end{align}

\noindent for all $t\geq 0$, $\mu\in A^{*}$ and $x\in A$. Following Remark \ref{REM.Wstar_Trace_MSP}, normality moreover restricts Equation \ref{EQ.SSEC.QOT_AC_HSG_7} to

\begin{align}\label{EQ.SSEC.QOT_AC_HSG_8}
\sharp\lc\restr{0.925}{h_{t}}{L^{1}(A,\tau)^{\flat}}\rc\circ\flat\in\BII\lc{}L^{1}(A,\tau)\rc{}    
\end{align}

\noindent for all $t\geq 0$. Equation \ref{EQ.SSEC.QOT_AC_HSG_8} defines semigroup $h:[0,\infty)\longrightarrow\BII\lc{}L^{1}(A,\tau)\rc$ by setting

\begin{align}\label{EQ.SSEC.QOT_AC_HSG_9}
h_{t}(x):=e^{-t\Delta}(x):=\sharp\big(h_{t}(x^{\flat})\big)
\end{align}

\noindent for all $t\geq 0$ and $x\in L^{1}(A,\tau)$. Finally, dualisation of Equation \ref{EQ.SSEC.QOT_AC_HSG_9} and accounting for using the modified standard pairing $L^{\infty}(A,\tau)=L^{1}(A,\tau)^{*}$ as per Equation \ref{EQ.SSEC.QOT_AC_HSG_8} defines semigroup $h:[0,\infty)\longrightarrow\BII\lc{}L^{\infty}(A,\tau)\rc$ by setting

\begin{align}\label{EQ.SSEC.QOT_AC_HSG_10}
h_{t}(x)(y):=e^{-t\Delta}(x)(y):=x^{\flat}\lc{}h_{t}(y)\rc{}
\end{align}

\noindent for all $t\geq 0$, $x\in L^{\infty}(A,\tau)$ and $y\in L^{1}(A,\tau)$. Note Equation \ref{EQ.SSEC.QOT_AC_HSG_10} restricts to extension of Equation \ref{EQ.DFN.Wstar_Derivation_QG_HSG_L2_1} to $A$ for all $x\in A$. Up to musical isomorphisms, all extensions coincide on intersections of domains. Altogether, we have noncommutative heat semigroup of $\Delta$ mapping to $\BII(V)$ if $V=A^{*}$ or $V=L^{p}(A,\tau)$ for $p\in\lset{}1,2,\infty\rset$.

\begin{prp}\label{PRP.Wstar_Derivation_QG_HSG_I}
Let $V=A^{*}$ or $V=L^{p}(A,\tau)$ for $p\in\lset{}1,2,\infty\rset$.

\begin{itemize}
\item[1)] For all $v\in V$, $h_{\infty}(v):=w^{*}$-$\lim_{t\rightarrow\infty}h_{t}(v)$ exists.

\item[2)] For all $t\geq 0$ and $u\in L^{2}(A,\tau)$, we have

\begin{itemize}
\item[2.1)] $h_{\infty}(u)=\pi_{\ker\Delta}^{A}(u)$,

\item[2.2)] $h_{t}(u)\neq 0$ if $u\neq 0$.
\end{itemize}

\begin{reapply}
\end{reapply}

\end{itemize}
\end{prp}
\begin{proof}
Following Remark \ref{REM.Wstar_Trace_MSP}, density of $A_{0}$ and normality imply $\|.\|_{V}$ is determined by testing on $A_{0}$. Let $v\in V$. Equation \ref{EQ.SSEC.QOT_AC_HSG_6} shows $\sup_{t\geq 0}\dblv{}h_{t}(v)\dblv_{V}\leq 4\| v\|_{V}$. Thus $1)$ follows if $\lim_{t\rightarrow\infty}h_{t}(v)(x)$ exists for all $x\in A_{0}$. We require $2.1)$. Following Remark \ref{REM.Wstar_Derivation_QG_HSG_L2} and using Equation \ref{EQ.SSEC.QOT_AC_HSG_5}, we calculate $\pi_{\ker\Delta}^{A}(x)=\|.\|_{\tau}$-$\lim_{t\rightarrow\infty}h_{t}(v)(x)$ for all $x\in A_{0}$ on an orthonormal eigenbasis $\{e_{n}\}_{n\in\mathbb{N}}\subset A_{0}$ of $\Delta$ as per the remark. We obtain $2.1)$ by density. Then $2.1)$ implies $1)$. We directly verify $2.2)$ by likewise calculation. Get $2)$. 
\end{proof}

\begin{dfn}\label{DFN.Wstar_Derivation_QG_HSG}
Let $V=A^{*}$ or $V=L^{p}(A,\tau)$ for $p\in\lset{}1,2,\infty\rset$. We define heat semigroup $h:[0,\infty]\longrightarrow\BII\lc{}L^{2}(A,\tau)\rc$ of $\Delta$ by setting

\begin{align}\label{EQ.DFN.Wstar_Derivation_QG_HSG_1}
h_{t}(v):=e^{-t\Delta}(v)
\end{align}

\noindent for all $t\geq 0$ and $v\in V$.
\end{dfn}

\begin{prp}\label{PRP.Wstar_Derivation_QG_HSG_II}
Let $V=A^{*}$ or $V=L^{p}(A,\tau)$ for $p\in\lset{}1,2,\infty\rset$.

\begin{itemize}
\item[1)] We have strongly continuous semigroup $h:[0,\infty)\longrightarrow\BII(V)$. In particular, we have trace-preserving and completely Markovian semigroup $h:[0,\infty)\longrightarrow\BII\lc{}L^{\infty}(A,\tau)\rc$.

\item[2)] For all $t\in [0,\infty]$, we have

\begin{itemize}
\item[2.1)] $h_{t}$ is positivity-preserving and $w^{*}$-continuous on norm bounded sets,

\item[2.2)] $h_{t}^{j}\circ\resj=h_{t}\circ\resj=\resj\circ\hspace{0.0275cm} h_{t}$ for all $j\in\mathbb{N}$,

\item[2.3)] $\|h_{t}\|_{\BII(V)}\leq 1$ and $h_{t}(1_{A})=1_{A}$,

\item[2.4)] $h_{t}\in\BII\lc{}L^{2}(A,\tau)\rc_{h}$ is local.
\end{itemize}

\begin{reapply}
\end{reapply}

\end{itemize}
\end{prp}
\begin{proof}
By construction, $h:[0,\infty)\longrightarrow\BII(V)$ is a semigroup s.t.~$h_{t}$ is $w^{*}$-continuous on norm bounded sets for all $t\geq 0$. We show $1)$. For all $t\geq 0$, testing for $\|.\|_{V}$ on $A_{0}$ lets us apply Equation \ref{EQ.SSEC.QOT_AC_HSG_6} in order to calculate

\begin{align}\label{EQ.PRP.Wstar_Derivation_QG_HSG_II_1}
\|h_{t}\|_{\BII(V)}\leq 1    
\end{align}

\noindent for all $v\in V$ and $t\geq 0$. Equation \ref{EQ.PRP.Wstar_Derivation_QG_HSG_II_1} implies strong continuity. We extend to $t=\infty$ by letting $t\uparrow\infty$ in the latter equation. Assume $V=L^{\infty}(A,\tau)$. For all $j\in\mathbb{N}$, note $\Delta_{j} 1_{A_{j}}=0$ by the Leibniz rule. Using the latter and $2)$ in Proposition \ref{PRP.AF_Cstar_Unit}, Equation \ref{EQ.SSEC.QOT_AC_HSG_5} lets us calculate $h_{t}(1_{A})=\s$-$\lim_{j\in\mathbb{N}}h_{t}(1_{A_{j}})=\s$-$\lim_{j\in\mathbb{N}}1_{A_{j}}=1_{A}$ for all $t\geq 0$. We extend to $t=\infty$ by letting $t\uparrow\infty$ in our calculation. Moreover, we see $h_{t}\in\BII\lc{}L^{\infty}(A,\tau)\rc$ is trace-preserving for all $t\geq 0$ by testing all $x\in L^{1,\infty}(A,\tau)$ with $y=1_{A}$ as per Equation \ref{EQ.SSEC.QOT_AC_HSG_10}.\par
For all $j\in\mathbb{N}$, our construction ensures $h^{j}:[0,\infty)\longrightarrow\BII(A_{j})$ is completely Markovian. Using $2.2)$ in Proposition \ref{PRP.AF_Cstar_Trace_Dualisation_II}, resp.~$2)$ in Proposition \ref{PRP.AF_Cstar_Unit}, we calculate

\begin{align}\label{EQ.PRP.Wstar_Derivation_QG_HSG_II_2}
h_{t}(x)\otimes I_{n}=w^{*}\textrm{-}\lim_{j\in\mathbb{N}}\hspace{0.025cm} h_{t}(x_{j})\otimes I_{n}\geq 0   
\end{align}

\noindent and

\begin{align}\label{EQ.PRP.Wstar_Derivation_QG_HSG_II_3}
h_{t}(1_{A})\otimes I_{n}=w^{*}\textrm{-}\lim_{j\in\mathbb{N}}\hspace{0.025cm} h_{t}(1_{A_{j}})\otimes I_{n}\leq w^{*}\textrm{-}\lim_{j\in\mathbb{N}}\hspace{0.025cm} 1_{A_{j}}\otimes I_{n}=1_{A}\otimes I_{n}
\end{align}

\noindent for all $n\in\mathbb{N}$ and $x\in L^{\infty}(A,\tau)_{+}$. Equation \ref{EQ.PRP.Wstar_Derivation_QG_HSG_II_2} uses restrictions are positivity-preserving by Proposition \ref{PRP.AF_Cstar_Trace_Dualisation_I}. For all $t\geq 0$, Equation \ref{EQ.PRP.Wstar_Derivation_QG_HSG_II_2} shows $h_{t}$ is completely positive and Equation \ref{EQ.PRP.Wstar_Derivation_QG_HSG_II_3} shows $h_{t}$ is completely Markovian. We are left to show normality in each case. Complete positivity and Proposition \ref{PRP.Wstar_Normal} reduce to $\sigma$-weak continuity. Note the latter is equivalent to $w^{*}$-continuity on norm bounded sets \lc{}cf.~Lemma II.2.5 in \cite{BK.Tak.1979.OpAlg_I} and Proposition \ref{PRP.Wstar_Equivalence}\rc{}. Get $1)$.\par

%NEWPAGE
%NEWPAGE
%NEWPAGE

\pagebreak

%NEWPAGE
%NEWPAGE
%NEWPAGE

Assume the general case. We show $2)$. Since we have $w^{*}$-continuity on norm bounded sets, positivity-preservation and therefore $2.1)$ follows by arguing as for Equation \ref{EQ.PRP.Wstar_Derivation_QG_HSG_II_2} in the general case without tensoring. We know all extensions coincide on intersections of domains. Equation \ref{EQ.SSEC.QOT_AC_HSG_5} shows $2.2)$ and Equation \ref{EQ.SSEC.QOT_AC_HSG_6} shows $2.3)$. Then $2.2)$ implies $2.4)$ at once. Altogether, get $2)$. 
\end{proof}

Definition \ref{DFN.Wstar_Derivation_QG_HSG_Fixed_Part_I} gives fixed parts of positive bounded functionals, and thereby fixed states, under noncommutative heat semigroups of quantum Laplacians. Note states are preserved by $1)$ in Proposition \ref{PRP.Wstar_Derivation_QG_HSG_Fixed_Part_I}, and have identical fixed parts if at finite distance by $3)$ in Proposition \ref{PRP.Wstar_Derivation_QG_HSG_Fixed_Part_I}. Following this, Definition \ref{DFN.Wstar_Derivation_QG_HSG_Fixed_Part_II} gives sets of states which are determined by fixed parts. These help to classify accessibility components.

\begin{dfn}\label{DFN.Wstar_Derivation_QG_HSG_Fixed_Part_I}
For all $\mu\in A^{*}$, $h(\mu):=h_{\infty}(\mu)$ is its fixed part and $h^{\perp}(\mu):=\mu-h(\mu)$ its image part. We call $\xi\in\SII(A)$ a fixed state, or fixed if $h(\xi)=\xi$.
\end{dfn}

\begin{prp}\label{PRP.Wstar_Derivation_QG_HSG_Fixed_Part_I}\hspace{1cm}
\begin{itemize}
\item[1)] For all $\mu\in A_{+}^{*}$, $t\in [0,\infty]$ and $j\in\mathbb{N}$, we have

\begin{itemize}
\item[1.1)] $\dblv{}h_{t}(\mu)\dblv_{A^{*}}=\|\mu\|_{A^{*}}$,

\item[1.2)] $\mu=0$ if $h(\mu)=0$,

\item[1.3)] $\overline{h_{t}(\mu)}_{j}=h_{t}\lc\bar{\mu}_{j}\rc$.
\end{itemize}

\begin{reapply}
\end{reapply}

\item[2)] For all $t\in [0,\infty]$, we have

\begin{itemize}
\item[2.1)] $h_{t}(\SII(A))\subset\SII(A)$,

\item[2.2)] $h_{t}(\mathcal{S}^{\NI}(A))\subset\mathcal{S}^{\NI}(A)$.
\end{itemize}

\begin{reapply}
\end{reapply}

\item[3)] For all $(\mu,w)\in\Admnullone$, we have $h(\mu(0))=h(\mu(1))$. In particular, states at finite distance have identical fixed part.
\end{itemize}
\end{prp}
\begin{proof}
Note $1.1)$ and $1.2)$ follows from $1)$ in Proposition \ref{PRP.AF_Cstar_Trace_Dualisation_II} and trace-preservation as per $1)$ in Proposition \ref{PRP.Wstar_Derivation_QG_HSG_II}. Using $1.1)$ for rescaling as per $1)$ in Definition \ref{DFN.AF_Cstar_Trace_Dualisation_Paths}, get $1.3)$ by $2.2)$ in Proposition \ref{PRP.Wstar_Derivation_QG_HSG_II}. Note Remark \ref{REM.AF_Cstar_Trace_Dualisation_Admissible_Paths}. Equation \ref{EQ.SSEC.QOT_AC_HSG_8} shows normality is preserved under $h_{t}\in\BII(A^{*})$ for all $t\in [0,\infty]$. Then $1)$ implies $2)$. For $3)$, we reduce to the finite-dimensional setting by $2)$ in Corollary \ref{COR.QOT_Distance_AC_I} and $1.3)$.\par
Assume $A$ and $B$ are finite-dimensional. Let $(\mu,w)\in\Admnullone$. Thus the continuity equation and finite-dimensionality imply

\begin{align}\label{EQ.PRP.Wstar_Derivation_QG_HSG_Fixed_Part_I_1}
\sharp\dot{\mu}(t)=\nabla^{*}\pi_{\im\hspace{-0.055cm}\nabla}\lc{}w(t)\rc\in\im\Delta    
\end{align}

\noindent for a.e~$t\in [0,1]$. We moreover have $h\lc\mu(t)\rc{}=\pi_{\ker\Delta}^{A}\lc\mu(t)\rc\in\ker\Delta$ for all $t\in [0,1]$ by $2.1)$ in Proposition \ref{PRP.Wstar_Derivation_QG_HSG_I}. Using the latter, Equation \ref{EQ.PRP.Wstar_Derivation_QG_HSG_Fixed_Part_I_1} implies $3)$ in the finite-dimensional setting. The general case follows as discussed above.
\end{proof}

\begin{dfn}\label{DFN.Wstar_Derivation_QG_HSG_Fixed_Part_II}\hspace{1cm}
\begin{itemize}
\item[1)] For all norm closed convex $K\subset\SII(A)$, set $\Fix_{A}(K):=\big\{\hspace{0.025cm} \mu\in\SII(A)\ \vset\ h(\mu)\in K\hspace{0.025cm} \big\}$.

\item[2)] For all fixed states $\xi\in\SII(A)$, set

\begin{itemize}
\item[2.1)] $\Fix_{A}(\xi):=\Fix\hspace{-0.0325cm} \big(\lset\xi\rset{},A\big)$ and $\Fix_{A}^{\NI}(\xi):=\Fix_{A}(\xi)\cap\mathcal{S}^{\NI}(A)$, \phantom{\big)}

\item[2.2)] $\CII_{A}(\xi):=\big\{\hspace{0.025cm} \mu\in\SII(A)\ \vset\ \mu\sim\xi\hspace{0.025cm} \big\}$ and $\mathcal{C}_{A}^{\NI}(\xi):=\CII_{A}(\xi)\cap\mathcal{S}^{\NI}(A)$. \phantom{\big)}
\end{itemize}

\begin{reapply}
\end{reapply}

\end{itemize}
\end{dfn}

\begin{prp}\label{PRP.Wstar_Derivation_QG_HSG_Fixed_Part_II}
Let $K\subset\SII(A)$ be a norm closed convex subset. If $K\subset\SII(A)$ is a face, then $\Fix_{A}(K)$ is a face. 
\end{prp}
\begin{proof}
Let $\mu\in\Fix_{A}(K)$, $\eta_{0},\eta_{0}\in\SII(A)$ and $t\in (0,1)$ s.t.~$\mu=t\eta_{0}+\lc{}1-t\rc\eta_{1}\in\Fix\lc{}K,A\rc$. We have $h\lc\Fix_{A}(K)\rc\subset K$ and therefore $h(\mu)=th\lc\eta_{0}\rc{}+\lc{}1-t\rc{}h\lc\eta_{1}\rc\in K$. Assume $K$ is a face. Thus $h\lc\eta_{0}\rc{},h\lc\eta_{0}\rc\in K$, hence $\eta_{0},\eta_{1}\in\Fix_{A}(K)$. Norm closedness of $\Fix_{A}(K)$ follows by $2.1)$ in Proposition \ref{PRP.Wstar_Derivation_QG_HSG_II}. Altogether, our claim follows.
\end{proof}

%%%%%%%%%%%%%
%%% PART %%%%
%%%%%%%%%%%%%

\subsubsection*{Regularisation of normal states under heat flow}

Assuming fixed parts with integrable support, Theorem \ref{THM.Wstar_Derivation_QG_HSG_Regularity} shows heat flow instantaneously regularises normal states to be, possibly unboundedly, invertible up to fixed part. The latter is equivalent to injectivity up to fixed part. Following Remark \ref{REM.NCD_Operator_Compressed_PMO_I}, we know Theorem \ref{THM.NCD_Operator_Compressed_PMO} applies to noncommutative densities in form of Corollary \ref{COR.NCD_Operator_Compressed_PMO_I} given injectivity up to fixed part. Note Remark \ref{REM.Wstar_Derivation_QG_HSG_Regularity_I}. Theorem \ref{THM.Wstar_Derivation_QG_HSG_Regularity} uses Lemma \ref{LEM.Wstar_Derivation_QG_HSG_Regularity_III}. In the finite-dimensional setting, Lemma \ref{LEM.Wstar_Derivation_QG_HSG_Regularity_II} shows Lemma \ref{LEM.Wstar_Derivation_QG_HSG_Regularity_III}, itself obtained from Lemma \ref{LEM.Wstar_Derivation_QG_HSG_Regularity_I}. We show the latter two lemmas by adapting \cite{ART.Sim.1973.Semigroups_Positivity_Preserving} to the AF-$C^{*}$-setting.\par
Let $(\phi,\bpsi,\gamma,\nabla)$ be noncommutative differential structure for tracial AF-$C^{*}$-algebras $(A,\tau)$ and $(B,\omega)$ in $\lc{}f,\theta\rc$-setting.

\begin{lem}\label{LEM.Wstar_Derivation_QG_HSG_Regularity_I}
Let $T\in\BII\lc{}L^{2}(A,\tau)\rc_{h}$ be positivity-preserving. If $T(u)\neq 0$ for all non-zero $u\in L^{2}(A,\tau)_{+}$, then $\lgl u,v\rgl_{\tau}>0$ implies $\lgl T(u),T(v)\rgl_{\tau}>0$ for all $u,v\in L^{2}(A,\tau)_{+}$.
\end{lem}
\begin{proof}
We adapt Lemma 1 in \cite{ART.Sim.1973.Semigroups_Positivity_Preserving}. For this, we require infima in $L^{2}(A,\tau)_{+}$ using partial order generated by positive elements. Definition 4.3 in \cite{ART.Cip.1997.NC_Dirichlet_Markov} gives a wedge operation on $L^{2}(A,\tau)_{h}$ using projections onto closed convex sets of Hilbert spaces. These describe the infima we use as follows. For all $x\in L^{2}(A,\tau)_{h}$, Proposition \ref{PRP.Wstar_NCI_IV} yields $x_{+},x_{-}\in L^{2}(A,\tau)_{+}$ s.t.~$x=x_{+}-x_{-}$, $-x=x_{-}-x_{+}$ and $x_{+}x_{-}=x_{-}x_{+}=0$. Lemma 4.4 in \cite{ART.Cip.1997.NC_Dirichlet_Markov} states

\begin{align}\label{EQ.LEM.Wstar_Derivation_QG_HSG_Regularity_I_1}
\inf\hspace{0.0375cm} \{u,v\}=v-\lc{}u-v\rc_{-}=u\wedge v=v\wedge u=u-\lc{}v-u\rc_{-}=\inf\hspace{0.0375cm} \{v,u\}
\end{align}

\noindent for all $u,v\in L^{2}(A,\tau)_{+}$. If $u,v\in L^{2}(A,\tau)_{+}$ s.t.~$u\wedge v=0$, then Equation \ref{EQ.LEM.Wstar_Derivation_QG_HSG_Regularity_I_1} shows we have $u=\lc{}v-u\rc_{-}$ and $v=\lc{}u-v\rc_{-}=\lc{}v-u\rc_{+}$. For all $u,v\in L^{2}(A,\tau)_{+}$, we use decomposition as per Proposition \ref{PRP.Wstar_NCI_IV} and thereby see $u\wedge v=0$ implies $uv=vu=0$.\par

%NEWPAGE
%NEWPAGE
%NEWPAGE

\pagebreak

%NEWPAGE
%NEWPAGE
%NEWPAGE

We show our claim using the above. Assume $T(u)\neq 0$ for all non-zero $u\in L^{2}(A,\tau)_{+}$. Let $u,v\in L^{2}(A,\tau)_{+}$ s.t.~$\lgl u,v\rgl_{\tau}>0$. Thus traciality and faithfulness imply $uv\neq 0$, hence Equation \ref{EQ.LEM.Wstar_Derivation_QG_HSG_Regularity_I_1} shows $u\wedge v\neq 0$ as discussed above. Note $u,v\geq u\wedge v\geq 0$ by the infimum property. In particular, $u\wedge v\in L^{2}(A,\tau)_{+}$. Ergo $T\lc{}u\wedge v\rc\neq 0$ by hypothesis. We have

\begin{align}\label{EQ.LEM.Wstar_Derivation_QG_HSG_Regularity_I_2}
\lgl T(u),T(v)\rgl_{\tau}=\dblv{}T\lc{}u\wedge v\rc\dblv_{\tau}^{2}+\lgl T\lc{}u-u\wedge v\rc{},T\lc{}u\wedge v\rc\rgl_{\tau}+\lgl T(u),T\lc{}v-u\wedge v\rc\rgl_{\tau}.
\end{align}

\noindent For all $x,y\in L^{2}(A,\tau)_{+}$, we know $\lgl x,y\rgl_{\tau}\geq 0$ by traciality. Positivity-preservation implies the second and third summand in Equation \ref{EQ.LEM.Wstar_Derivation_QG_HSG_Regularity_I_2} are non-negative. Since $T\lc{}u\wedge v\rc\neq 0$ implies $\dblv{}T\lc{}u\wedge v\rc\dblv_{\tau}^{2}>0$, Equation \ref{EQ.LEM.Wstar_Derivation_QG_HSG_Regularity_I_2} shows our claim.
\end{proof}

\begin{lem}\label{LEM.Wstar_Derivation_QG_HSG_Regularity_II}
For all $x\in L^{1,\infty}(A,\tau)_{+}$ and $u\in L^{2}(A,\tau)$, we have

\begin{itemize}
\item[1)] $\lgl xu,u\rgl_{\tau}>0$ implies $\lgl h_{t}(x)u,u\rgl_{\tau}>0$ for all $t\geq 0$,

\item[2)] the map $t\mapsto h_{t}(x,u):=\lgl h_{t}(x)u,u\rgl_{\tau}$ defined on $(0,\infty)$ is either identically zero or has at most finitely many zeros in each open interval $I\subset (0,\infty)$.
\end{itemize}
\end{lem}
\begin{proof}
For all $t\geq 0$, note $2.2)$ in Proposition \ref{PRP.Wstar_Derivation_QG_HSG_I} and $2.1)$ in Proposition \ref{PRP.Wstar_Derivation_QG_HSG_II} imply Lemma \ref{LEM.Wstar_Derivation_QG_HSG_Regularity_I} applies to $T=h_{t}\in\BII\lc{}L^{2}(A,\tau)\rc_{h}$. We show $1)$. Let $x\in L^{1,\infty}(A,\tau)_{+}$ and $u\in L^{2}(A,\tau)$ s.t.~$\lgl xu,u\rgl_{\tau}>0$. Corollary \ref{COR.Wstar_CLRA_IV} reduces the general case to $u\in L^{2,\infty}(A,\tau)$ and Lemma \ref{LEM.Wstar_Derivation_QG_HSG_Regularity_I} shows our claim in this special case.\par
We reduce to $u\in L^{2,\infty}(A,\tau)$. For all $y\in L^{\infty}(A,\tau)_{+}$ and $w\in L^{2}(A,\tau)$, traciality yields

\begin{align}\label{EQ.LEM.Wstar_Derivation_QG_HSG_Regularity_II_1}
y^{\flat}\lc{}ww^{*}\rc{}=\tau\lc{}yww^{*}\rc{}=\lgl yw,w\rgl_{\tau}=\lc{}w^{*}w\rc^{\flat}(y).
\end{align}

\noindent Set $v:=u^{*}u\in L^{1}(A,\tau)_{+}$. For all $n\in\mathbb{N}$, set $v_{n}:=\min\{v,n\}\in L^{1,\infty}(A,\tau)\subset L^{2,\infty}(A,\tau)$. We have $0\leq v_{n}\leq v$ in each case. Using Equation \ref{EQ.LEM.Wstar_Derivation_QG_HSG_Regularity_II_1}, we therefore estimate

\begin{align}\label{EQ.LEM.Wstar_Derivation_QG_HSG_Regularity_II_2}
\lgl h_{t}(x)\sqrt{v_{n}},\sqrt{v_{n}}\rgl_{\tau}=v_{n}^{\flat}\lc{}h_{t}(x)\rc\leq v_{n+1}^{\flat}\lc{}h_{t}(x)\rc\leq v^{\flat}\lc{}h_{t}(x)\rc{}=\lgl h_{t}(x)u,u\rgl_{\tau}
\end{align}

\noindent for all $t\geq 0$ and $n\in\mathbb{N}$. We have $v=\|.\|_{1}$-$\lim_{n\in\mathbb{N}}v_{n}$ \lc{}cf.~$2)$ in Corollary \ref{COR.Wstar_CLRA_IV}\rc{}. Using the latter, Equation \ref{EQ.LEM.Wstar_Derivation_QG_HSG_Regularity_II_2} shows

\begin{align}\label{EQ.LEM.Wstar_Derivation_QG_HSG_Regularity_II_3}
\lgl h_{t}(x)u,u\rgl_{\tau}=\sup_{n\in\mathbb{N}}\hspace{0.025cm} \lgl h_{t}(x)\sqrt{v_{n}},\sqrt{v_{n}}\rgl_{\tau}=\lim_{n\in\mathbb{N}}\hspace{0.025cm} \lgl h_{t}(x)\sqrt{v_{n}},\sqrt{v_{n}}\rgl_{\tau}
\end{align}

\noindent for all $t\geq 0$. Equation \ref{EQ.LEM.Wstar_Derivation_QG_HSG_Regularity_II_3} shows it suffices to consider $u\in L^{2,\infty}(A,\tau)$.\par
We know $x\in L^{2,\infty}(A,\tau)$. Let $u\in L^{2,\infty}(A,\tau)$. We obtain $uu^{*}\in L^{2}(A,\tau)$. Thus $2.1)$ in Proposition \ref{PRP.Wstar_Derivation_QG_HSG_II} implies there exists maximal $\varepsilon\in (0,\infty]$ s.t.~

\begin{align}\label{EQ.LEM.Wstar_Derivation_QG_HSG_Regularity_II_4}
\lgl h_{\frac{t}{2}}(x),h_{\frac{t}{2}}\lc{}uu^{*}\rc\rgl_{\tau}=\tau\lc{}h_{t}(x)uu^{*}\rc{}=\lgl h_{t}(x)u,u\rgl_{\tau}>0    
\end{align}

\noindent for all $t\in [0,\varepsilon)$. If $\varepsilon=\infty$, then our claim follows. If $\varepsilon<\infty$, then Lemma \ref{LEM.Wstar_Derivation_QG_HSG_Regularity_I} shows $\lgl h_{\frac{\varepsilon}{2}}(x),h_{\frac{\varepsilon}{2}}\lc{}uu^{*}\rc\rgl_{\tau}>0$ contradicting maximality. Hence $1)$ holds. The general case follows as discussed above.\par

%NEWPAGE
%NEWPAGE
%NEWPAGE

\pagebreak

%NEWPAGE
%NEWPAGE
%NEWPAGE

We show $2)$. We adapt Lemma 2 in \cite{ART.Sim.1973.Semigroups_Positivity_Preserving}. Let $x\in L^{1,\infty}(A,\tau)_{+}$ and $u\in L^{2}(A,\tau)$. Note $u\in L^{2,\infty}(A,\tau)$ is not required. Following Remark \ref{REM.Wstar_Derivation_QG_HSG_L2}, we have orthonormal eigenbasis $\{e_{n}\}_{n\in\mathbb{N}}\subset A_{0}$ of $\Delta$. For all $n\in\mathbb{N}$, let $\lambda_{n}$ be the eigenvalue of $e_{n}$. Expressing $x=\sum_{n\in\mathbb{N}}\alpha_{n}e_{n}$ and using uniform convergence shows the non-negative map

\begin{align}\label{EQ.LEM.Wstar_Derivation_QG_HSG_Regularity_II_5}
t\mapsto h_{t}(x,u)=\sum_{m\in\mathbb{N}}\vstretch{1.1875}{\Bigg(}\sum_{n\in\mathbb{N}}\frac{(-1)^{m}\cdot \alpha_{n}\lambda_{n}^{m}}{m!}\lgl e_{n}u,u\rgl_{\tau}\vstretch{1.1875}{\Bigg)}\cdot (t-0)^{m}
\end{align}

\noindent is analytic in the right half plane. Using standard arguments for analytic maps \cite{BK.Lan.1999.Complex_Analysis}, we\linebreak see $1)$ implies we either have $h_{t}(x,u)=0$ for all $t\geq 0$ or for at most finitely many $t\in I$ in each open interval $I\subset (0,\infty)$. Get $2)$.
\end{proof}

\begin{lem}\label{LEM.Wstar_Derivation_QG_HSG_Regularity_III}
Let $\xi\in\SII(A)$ be a fixed state and $j\in\mathbb{N}$ s.t.~$\xi_{j}\neq 0$.

\begin{itemize}
\item[1)] We have

\begin{itemize}
\item[1.1)] $\Fix_{A_{j}}^{\NI}\lc\mathcal{F}_{A_{j}}\lc\bar{\xi}_{j}\rc\rc{}=\mathcal{F}_{A_{j}}\lc\bar{\xi}_{j}\rc$,

\item[1.2)] $\bar{\xi}_{j}\in\mathcal{S}_{-1}^{\NI,\infty}\big(A_{j}[\supp\bar{\xi}_{j}]\big)$.
\end{itemize}

\begin{reapply}
\end{reapply}

\item[2)] For all $\mu\in\Fix_{A_{j}}^{\NI}\lc\bar{\xi}_{j}\rc$, we have

\begin{align}\label{EQ.LEM.Wstar_Derivation_QG_HSG_Regularity_III_1}
h_{t}(\mu)\in\mathcal{S}_{-1}^{\NI,\infty}\big(A_{j}[\supp\bar{\xi}_{j}]\big)
\end{align}

\begin{reapply}
\end{reapply}

\noindent for all $t\in (0,\infty]$.
\end{itemize}
\end{lem}
\begin{proof}
Note $1.3)$ in Proposition \ref{PRP.Wstar_Derivation_QG_HSG_Fixed_Part_I} shows $\bar{\xi}_{j}\in\SII(A_{j})$ is a fixed state. Lemma \ref{LEM.Wstar_Derivation_QG_HSG_Regularity_II} and Proposition \ref{PRP.Wstar_Derivation_QG_HSG_Fixed_Part_II} in particular apply to the induced noncommutative differential structure $\lc\phi_{j},\bpsi_{j},\gamma_{j},\nabla_{\hspace{-0.055cm} j}\rc$ using fixed state $\bar{\xi}_{j}\in\SII(A_{j})$. We reduce to the finite-dimensional setting by $1.3)$ in Proposition \ref{PRP.Wstar_Derivation_QG_HSG_Fixed_Part_I}.\par
Assume $A$ and $B$ are finite-dimensional. All states are normal. Lemma \ref{LEM.Support_Projection} shows $\mathcal{F}_{A}(\xi)\subset\SII(A)$ is a face. Thus Proposition \ref{PRP.Wstar_Derivation_QG_HSG_Fixed_Part_II} shows $\Fix_{A}\lc\mathcal{F}_{A}(\xi)\rc\subset\SII(A)$ is one, hence Lemma \ref{LEM.Support_Projection} yields projection $p\in A$ s.t.~

\begin{align}\label{EQ.LEM.Wstar_Derivation_QG_HSG_Regularity_III_2}
\textrm{Fix}_{A}\lc\mathcal{F}_{A}(\xi)\rc{}=\mathcal{S}(A[p]).
\end{align}

\noindent We have $\tau\lc{}p\rc{}<\infty$ as $A_{0}=A\subset\mathfrak{m}_{\tau}$. The semigroup property and Equation \ref{EQ.LEM.Wstar_Derivation_QG_HSG_Regularity_III_2} imply

\begin{align}\label{EQ.LEM.Wstar_Derivation_QG_HSG_Regularity_III_3}
h_{t}\lc\SII(A[p])\rc\subset\SII(A[p])    
\end{align}

\noindent for all $t\in [0,\infty]$. Finite-dimensionality ensures injectivity and invertibility coincide. In particular, get $\mathcal{S}_{>0}^{\NI,\infty}(A[p])=\mathcal{S}_{-1}^{\NI,\infty}(A[p])$. We apply Corollary \ref{COR.Support_Projection_III} accordingly.\par
Note $1)$ in Corollary \ref{COR.Support_Projection_III} states

\begin{align}\label{EQ.LEM.Wstar_Derivation_QG_HSG_Regularity_III_4}
\mathcal{S}_{-1}^{\NI,\infty}(A[p])=\relint\SII(A[p])\subset A[p]_{+}^{*}\cap\GL(A)^{\flat}
\end{align}

\noindent open in norm topology. Equation \ref{EQ.LEM.Wstar_Derivation_QG_HSG_Regularity_III_4} ensures the following equivalence holds. For all $\eta\in\SII(A[p])$, we have $\eta\in\mathcal{S}_{-1}^{\NI,\infty}(A[p])$ if and only if

\begin{align}\label{EQ.LEM.Wstar_Derivation_QG_HSG_Regularity_III_5}
\lgl\eta u,u\rgl_{\tau}\geq\sigma(\eta)\cdot \|u\|_{\tau}^{2}
\end{align}

\noindent for all $u\in A[p]$. Note the below estimate uses strong continuity and trace-preservation as per $1)$ in, as well as positivity-preservation as per $2.1)$ in Proposition \ref{PRP.Wstar_Derivation_QG_HSG_II}. For all $\eta\in\mathcal{S}_{-1}^{\NI,\infty}(A[p])$, Proposition \ref{PRP.Wstar_Derivation_QG_HSG_II}, Equation \ref{EQ.LEM.Wstar_Derivation_QG_HSG_Regularity_III_5} and traciality let us estimate

\begin{align*}
\lgl h(\eta)u,u\rgl_{\tau} = \lim_{t\rightarrow\infty}\hspace{0.025cm} \tau\lc\eta h_{t}\lc{}uu^{*}\rc\rc{} & \geq \sigma(\eta)\cdot \lim_{t\rightarrow\infty}\hspace{0.025cm} \tau\lc{}h_{t}\lc{}uu^{*}\rc\rc{} \phantom{\bigg)} \\
& = \sigma(\eta)\cdot \lim_{t\rightarrow\infty}\hspace{0.025cm} \tau\lc{}uu^{*}\rc{} \phantom{\bigg)} \\
& = \sigma(\eta)\cdot \|u\|_{\tau}^{2} \phantom{\bigg)}
\end{align*}

\noindent for all $u\in A[p]$. Equation \ref{EQ.LEM.Wstar_Derivation_QG_HSG_Regularity_III_3} and the above estimate, either as stated for $t=\infty$ or without taking limits for $t<\infty$, show

\begin{align}\label{EQ.LEM.Wstar_Derivation_QG_HSG_Regularity_III_6}
h_{t}\big(\mathcal{S}_{-1}^{\NI,\infty}(A[p])\big)\subset\mathcal{S}_{-1}^{\NI,\infty}(A[p]) 
\end{align}

\noindent for all $t\in [0,\infty]$. Note $2)$ in Corollary \ref{COR.Support_Projection_III} states we have $\mathcal{F}_{A}(\xi)=\SII(A[p])$ if and only if $\xi\in\mathcal{S}_{-1}^{\NI,\infty}(A[p])$, resp.~$\mathcal{F}_{A}(\xi)\subset\partial\SII(A[p])$ if and only if $\xi\notin\mathcal{S}_{-1}^{\NI,\infty}(A[p])$. If $\xi\in\mathcal{S}_{-1}^{\NI,\infty}(A[p])$ holds, then $\mathcal{F}_{A}(\xi)=\SII(A[p])$ shows $\supp\xi=p$ by $1)$ in Corollary \ref{COR.Support_Projection_I}. Equation \ref{EQ.LEM.Wstar_Derivation_QG_HSG_Regularity_III_2} and Equation \ref{EQ.LEM.Wstar_Derivation_QG_HSG_Regularity_III_4} therefore imply $1)$ in this case.\par
We show $1)$. Assume $\xi\notin\mathcal{S}_{-1}^{\NI,\infty}(A[p])$. Since $\tau\lc{}p\rc{}<\infty$, $\tau\lc{}p\rc^{-1}p^{\flat}\in\mathcal{S}_{-1}^{\NI,\infty}(A[p])$. Note Equation \ref{EQ.LEM.Wstar_Derivation_QG_HSG_Regularity_III_4}. Thus $\partial\SII(A[p])\subset\SII(A[p])$ proper, hence

\begin{align}\label{EQ.LEM.Wstar_Derivation_QG_HSG_Regularity_III_7}
\mathcal{F}_{A}(\xi)\subset\partial\SII(A[p])\subset\SII(A[p])
\end{align}

\noindent proper as well. For all $\eta\in\mathcal{S}_{-1}^{\NI,\infty}(A[p])\neq\emptyset$, Equation \ref{EQ.LEM.Wstar_Derivation_QG_HSG_Regularity_III_2} and Equation \ref{EQ.LEM.Wstar_Derivation_QG_HSG_Regularity_III_7} imply $h(\eta)\in\partial\SII(A[p])$. This contradicts Equation \ref{EQ.LEM.Wstar_Derivation_QG_HSG_Regularity_III_6} for $t=\infty$. Ergo $\xi\in\mathcal{S}_{-1}^{\NI,\infty}(A[p])$. Get $1)$ as discussed above. We show $2)$. Let $\mu\in\Fix_{A}(\xi)$. Using $1.2)$, openness in norm topology as per Equation \ref{EQ.LEM.Wstar_Derivation_QG_HSG_Regularity_III_4} shows there exists $t_{0}\geq 0$ s.t.~

\begin{align}\label{EQ.LEM.Wstar_Derivation_QG_HSG_Regularity_III_8}
h_{t}(\mu)\in\mathcal{S}_{-1}^{\NI,\infty}\big(A[\supp\xi]\big)    
\end{align}

\noindent for all $t\in (t_{0},\infty]$.\par

%NEWPAGE
%NEWPAGE
%NEWPAGE

\pagebreak

%NEWPAGE
%NEWPAGE
%NEWPAGE

Equation \ref{EQ.LEM.Wstar_Derivation_QG_HSG_Regularity_III_9} shows there exists minimal $t_{\mu}\geq 0$ s.t.~Equation \ref{EQ.LEM.Wstar_Derivation_QG_HSG_Regularity_III_8} is satisfied for all $t\in \lc{}t_{\mu},\infty\rb$. Minimality and Equation \ref{EQ.LEM.Wstar_Derivation_QG_HSG_Regularity_III_6} moreover imply

\begin{align}\label{EQ.LEM.Wstar_Derivation_QG_HSG_Regularity_III_9}
h_{t}(\mu)\notin\mathcal{S}_{-1}^{\NI,\infty}\big(A[\supp\xi]\big)    
\end{align}

\noindent for all $t\in \lb{}0,t_{\mu}\rb$. If $t_{\mu}>0$, then finite-dimensionality ensures Equation \ref{EQ.LEM.Wstar_Derivation_QG_HSG_Regularity_III_9} derives a contradiction to $2)$ in Lemma \ref{LEM.Wstar_Derivation_QG_HSG_Regularity_II}. Thus $t_{\mu}=0$ in each case. Get $2)$. The general case follows as discussed above.
\end{proof}

\begin{thm}\label{THM.Wstar_Derivation_QG_HSG_Regularity}
Let $(\phi,\bpsi,\gamma,\nabla)$ be noncommutative differential structure for tracial AF-$C^{*}$-algebras $(A,\tau)$ and $(B,\omega)$ in $\lc{}f,\theta\rc$-setting. Let $\xi\in\SII(A)$ be a fixed state.

\begin{itemize}
\item[1)] Assume $\xi\in\mathcal{S}^{\NI}(A)$ has reducible support.

\begin{itemize}
\item[1.1)] We have

\begin{itemize}
\item[1.1.1)] $\Fix_{A}^{\NI}\lc\mathcal{F}_{A}(\xi)\rc{}=\mathcal{F}_{A}(\xi)$, \phantom{\big)}

\item[1.1.2)] $\supp\xi\in L^{\infty}(A,\tau)_{\nabla}$, $\supp\xi\in\ker\nabla$, and $\nabla$ is $\supp\xi$-compressible. \phantom{\big)}
\end{itemize}

\begin{reapply}
\end{reapply}

\item[1.2)] For all $\mu\in\mathcal{F}_{A}(\xi)$, we have

\begin{align}\label{EQ.THM.Wstar_Derivation_QG_HSG_Regularity_1}
h_{t}(\mu)\in\mathcal{F}_{A}(\xi)    
\end{align}

\begin{reapply}
\end{reapply}

\noindent for all $t\in [0,\infty]$.

\item[1.3)] For all $\mu\in\Fix_{A}^{\NI}(\xi)$ and $j\in\mathbb{N}$ s.t.~$\xi_{j}\neq 0$, we have

\begin{align}\label{EQ.THM.Wstar_Derivation_QG_HSG_Regularity_2}
h_{t}\lc\bar{\mu}_{j}\rc\in\mathcal{S}_{-1}^{\NI,\infty}\big(A_{j}[\supp\bar{\xi}_{j}]\big)
\end{align}

\begin{reapply}
\end{reapply}

\noindent for all $t\in (0,\infty]$.
\end{itemize}

\begin{reapply}
\end{reapply}

\item[2)] Assume $\xi\in\mathcal{S}^{\NI}(A)$ has integrable support.

\begin{itemize}
\item[2.1)] We have 

\begin{itemize}
\item[2.1.1)] $\Fix_{A}^{\NI}\lc\mathcal{F}_{A}(\xi)\rc{}=\mathcal{F}_{A}(\xi)$, \phantom{\big)}

\item[2.1.2)] $\xi\in\mathcal{S}_{>0}^{\NI}\big(A[\supp\xi]\big)$. \phantom{\big)}
\end{itemize}

\begin{reapply}
\end{reapply}

\item[2.2)] For all $\mu\in\Fix_{A}^{\NI}(\xi)$, we have

\begin{align}\label{EQ.THM.Wstar_Derivation_QG_HSG_Regularity_3}
h_{t}(\mu)\in\mathcal{S}_{>0}^{\NI}\big(A[\supp\xi]\big)
\end{align}

\begin{reapply}
\end{reapply}

\noindent for all $t\in (0,\infty]$.
\end{itemize}

\begin{reapply}
\end{reapply}

\end{itemize}
\end{thm}
\begin{proof}
We use $1.2)$ in Proposition \ref{PRP.AF_Cstar_Trace_Dualisation_II} for weak continuity. Furthermore, we use $1.3)$ in Proposition \ref{PRP.Wstar_Derivation_QG_HSG_Fixed_Part_I} to commute restriction and rescaling with application of heat flow. Assume $\xi\in\mathcal{S}^{\NI}(A)$ has reducible support, i.e.~$\supp\xi=\s$-$\lim_{j\in\mathbb{N}}\supp\xi_{j}$.\par
For all $j\in\mathbb{N}$, we see $\bar{\xi}_{j}\in\SII(A_{j})$ is a fixed state if and only if $\xi_{j}\neq 0$. Thus $1.1)$ in Lemma \ref{LEM.Wstar_Derivation_QG_HSG_Regularity_III} implies

\begin{align}\label{EQ.THM.Wstar_Derivation_QG_HSG_Regularity_4}
\textrm{Fix}_{A}^{\NI}\lc\mathcal{F}_{A}(\xi)\rc{}=\lset\mu\in\mathcal{S}^{\NI}(A)\ \vset\ \bar{\mu}_{j}\in\mathcal{F}_{A_{j}}\lc\bar{\xi}_{j}\rc\ \textrm{for a.e.~} j\in\mathbb{N}\rset{}
\end{align}

\noindent by restricting elements on the left-hand side for all $j\in\mathbb{N}$ s.t.~$\xi_{j}\neq 0$, resp.~taking limits of elements on the right-hand side in $w^{*}$-topology. For all $j\in\mathbb{N}$ s.t.~$\xi_{j}\neq 0$, Lemma \ref{LEM.Support_Projection} and $2)$ in Lemma \ref{LEM.Wstar_Derivation_QG_HSG_Regularity_III} show $h_{t}\lc\bar{\mu}_{j}\rc\in\mathcal{F}_{A_{j}}\lc\bar{\xi}_{j}\rc$ and therefore

\begin{align}\label{EQ.THM.Wstar_Derivation_QG_HSG_Regularity_5}
\sharp h_{t}\lc\bar{\mu}_{j}\rc{}=\supp\bar{\xi}_{j}\cdot \sharp h_{t}\lc\bar{\mu}_{j}\rc\cdot \supp\bar{\xi}_{j}.
\end{align}

Equation \ref{EQ.THM.Wstar_Derivation_QG_HSG_Regularity_4} and Equation \ref{EQ.THM.Wstar_Derivation_QG_HSG_Regularity_5} let us calculate

\begin{align}\label{EQ.THM.Wstar_Derivation_QG_HSG_Regularity_6}
\sharp h_{t}(\mu)=w^{*}\textrm{-}\lim_{j\in\mathbb{N}}\hspace{0.025cm} \sharp h_{t}\lc\bar{\mu}_{j}\rc{}=w^{*}\textrm{-}\lim_{j\in\mathbb{N}}\hspace{0.025cm} \supp\bar{\xi}_{j}\cdot \sharp h_{t}\lc\bar{\mu}_{j}\rc\cdot \supp\bar{\xi}_{j}
\end{align}

\noindent for all $\mu\in\Fix_{A}^{\NI}\lc\mathcal{F}_{A}(\xi)\rc$ and $t\in [0,\infty]$. We show the right-hand side of Equation \ref{EQ.THM.Wstar_Derivation_QG_HSG_Regularity_6} is $\supp\xi\cdot \sharp h_{t}(\mu)\cdot \supp\xi$ in each case. For all $x\in L^{\infty}(A,\tau)$, we know $x=\bds$-$\lim_{j\in\mathbb{N}}x_{j}$ by $3)$ in Proposition \ref{PRP.AF_Cstar_Trace_Dualisation_II}. Using weak continuity as for Equation \ref{EQ.THM.Wstar_Derivation_QG_HSG_Regularity_6} and sequential strong continuity of multiplication, Equation \ref{EQ.THM.Wstar_Derivation_QG_HSG_Regularity_5} together with traciality and normality lets us calculate

\begin{align*}
\tau\lc\sharp h_{t}(\mu)x\rc{} = \lim_{j\in\mathbb{N}}\hspace{0.025cm} \tau\lc\sharp h_{t}\lc\bar{\mu}_{j}\rc{}x_{j}\rc{} & = \lim_{j\in\mathbb{N}}\hspace{0.025cm} \tau\lc\sharp h_{t}\lc\bar{\mu}_{j}\rc\cdot \lc\supp\bar{\xi}_{j}\cdot x_{j}\cdot \supp\bar{\xi}_{j}\rc\rc \phantom{\Bigg)} \\
& = \lim_{j\in\mathbb{N}}\hspace{0.025cm} \tau\lc\sharp h_{t}(\mu)\cdot \lc\supp\bar{\xi}_{j}\cdot x_{j}\cdot \supp\bar{\xi}_{j}\rc\rc \phantom{\Bigg)} \\
& = \tau\lc\lc\supp\xi\cdot \sharp h_{t}(\mu)\cdot \supp\xi\rc\cdot x\rc \phantom{\Bigg)}
\end{align*}

\noindent for all $\mu\in\Fix_{A}^{\NI}\lc\mathcal{F}_{A}(\xi)\rc$, $t\in [0,\infty]$ and $x\in L^{\infty}(A,\tau)$. The above calculation at once shows the right-hand side of Equation \ref{EQ.THM.Wstar_Derivation_QG_HSG_Regularity_6} is of claimed form. We therefore have

\begin{align}\label{EQ.THM.Wstar_Derivation_QG_HSG_Regularity_7}
\sharp h_{t}(\mu)=w^{*}\textrm{-}\lim_{j\in\mathbb{N}}\hspace{0.025cm} \supp\bar{\xi}_{j}\cdot \sharp h_{t}\lc\bar{\mu}_{j}\rc\cdot \supp\bar{\xi}_{j}=\supp\xi\cdot \sharp h_{t}(\mu)\cdot \supp\xi
\end{align}

\noindent for all $\mu\in\Fix_{A}^{\NI}\lc\mathcal{F}_{A}(\xi)\rc$ and $t\in [0,\infty]$.\par

%NEWPAGE
%NEWPAGE
%NEWPAGE

\pagebreak

%NEWPAGE
%NEWPAGE
%NEWPAGE

We show $1)$. Equation \ref{EQ.THM.Wstar_Derivation_QG_HSG_Regularity_7} shows $\Fix_{A}^{\NI}\lc\mathcal{F}_{A}(\xi)\rc\subset\mathcal{F}_{A}(\xi)$ by Lemma \ref{LEM.Support_Projection}. We obtain the converse as follows. Using strong continuity as per $1)$ in Proposition \ref{PRP.Wstar_Derivation_QG_HSG_II} to have norm closure, Lemma \ref{LEM.Support_Projection} and Proposition \ref{PRP.Wstar_Derivation_QG_HSG_Fixed_Part_II} yield inclusion of faces and therefore projection $p\in L^{\infty}(A,\tau)$ s.t.~

\begin{align}\label{EQ.THM.Wstar_Derivation_QG_HSG_Regularity_8}
\textrm{Fix}_{A}^{\NI}\lc\mathcal{F}_{A}(\xi)\rc{}=\mathcal{S}^{\NI}(A[p])\subset\mathcal{F}_{A}(\xi)=\mathcal{S}^{\NI}\big(A[\supp\xi]\big)\subset\mathcal{S}^{\NI}(A).
\end{align}

\noindent We have $\xi\in\Fix_{A}^{\NI}\lc\mathcal{F}_{A}(\xi)\rc$. Thus $\supp\xi\leq p$ by Lemma \ref{LEM.Support_Projection}, hence Equation \ref{EQ.THM.Wstar_Derivation_QG_HSG_Regularity_8} shows $\mathcal{F}_{A}(\xi)\subset\Fix_{A}^{\NI}\lc\mathcal{F}_{A}(\xi)\rc$ by $1)$ in Corollary \ref{COR.Support_Projection_I}. Get $1.1.1)$. For all $j\in\mathbb{N}$, note $\Delta 1_{A_{j}}=0$ by the Leibniz rule and $\xi_{j}\in\ker\Delta$ by $2.1)$ in Proposition \ref{PRP.Wstar_Derivation_QG_HSG_I}. Thus $1)$ in Proposition \ref{PRP.Support_Projection_II} implies

\begin{align}\label{EQ.THM.Wstar_Derivation_QG_HSG_Regularity_9}
\supp\xi_{j}\in C^{*}\lc\xi_{j},1_{A_{j}}\rc\subset A_{j}\cap\ker\Delta    
\end{align}

\noindent in each case. Using Corollary \ref{COR.Wstar_Derivation_Projection}, Equation \ref{EQ.THM.Wstar_Derivation_QG_HSG_Regularity_9} and reducible support of $\xi$ shows $1.1.2)$ since $\ker\nabla=\ker\Delta\subset L^{2}(A,\tau)$. Get $1.1)$. Note $1.1.1)$ shows $1.2)$ and $1.3)$ are claims concerning states on $A$ with fixed part $\xi$. Equation \ref{EQ.THM.Wstar_Derivation_QG_HSG_Regularity_4} and Equation \ref{EQ.THM.Wstar_Derivation_QG_HSG_Regularity_7} further reduce to the finite-dimensional setting as per Lemma \ref{LEM.Wstar_Derivation_QG_HSG_Regularity_III}. The latter lemma shows $1.2)$ and $1.3)$ at once. Altogether, get $1)$.\par
We show $2)$. Assume $\xi\in\mathcal{S}^{\NI,\infty}(A)$ has integrable support, i.e.~$\tau\lc\supp\xi\rc{}<\infty$. Ergo Theorem \ref{THM.AF_Support_Projection} shows $\xi$ has reducible support. Thus $1)$ holds, hence $1.1.1)$ implies $2.1.1)$ at once. We further have $2.1.2)$ by $2.1)$ in Corollary \ref{COR.Support_Projection_III} since $\mathcal{F}_{A}(\xi)=\SII\lc{}A[\supp\xi]\rc$ by definition. Get $2.1)$. We reformulate $1.2)$ to

\begin{align}\label{EQ.THM.Wstar_Derivation_QG_HSG_Regularity_10}
h_{t}\bigg(\mathcal{S}^{\NI}\big(A[\supp\xi]\big)\bigg)\subset\mathcal{S}^{\NI}\big(A[\supp\xi]\big)
\end{align}

\noindent for all $t\in [0,\infty]$. Let $\mu\in\mathcal{F}_{A}(\xi)$. For all $t\in [0,\infty]$, Equation \ref{EQ.THM.Wstar_Derivation_QG_HSG_Regularity_10} and Lemma \ref{LEM.Support_Projection} imply $\supp h_{t}(\mu)\leq\supp\xi$. Ergo Theorem \ref{THM.AF_Support_Projection} shows each $h_{t}(\mu)$ has reducible support. Using the latter, $2)$ in Lemma \ref{LEM.Wstar_Derivation_QG_HSG_Regularity_III} shows

\begin{align}\label{EQ.THM.Wstar_Derivation_QG_HSG_Regularity_11}
\supp h_{t}(\mu)=\s\textrm{-}\lim_{j\in\mathbb{N}}\hspace{0.025cm} \supp h_{t}\lc\bar{\mu}_{j}\rc{}=\s\textrm{-}\lim_{j\in\mathbb{N}}\hspace{0.025cm} \supp\bar{\xi}_{j}=\supp\xi    
\end{align}

\noindent for all $t\in (0,\infty]$. Finally, Equation \ref{EQ.THM.Wstar_Derivation_QG_HSG_Regularity_11} shows $2.2)$ by $1)$ in Corollary \ref{COR.Support_Projection_I} and $2.1)$ in Corollary \ref{COR.Support_Projection_III}. Altogether, get $2)$.
\end{proof}

\begin{rem}\label{REM.Wstar_Derivation_QG_HSG_Regularity_I}
We have injectivity of noncommutative densities in general, but do not get smoothing under heat flow as per Equation \ref{EQ.LEM.Wstar_Derivation_QG_HSG_Regularity_III_1}. Injectivity suffices to apply Theorem \ref{THM.NCD_Operator_Compressed_PMO} as per Corollary \ref{COR.NCD_Operator_Compressed_PMO_I}. Coarse graining recovers smoothing under heat flow as per Equation \ref{EQ.THM.Wstar_Derivation_QG_HSG_Regularity_2}. This depends on fixed parts. Such dependence is a uniform condition on accessibility components by $3)$ in Proposition \ref{PRP.Wstar_Derivation_QG_HSG_Fixed_Part_I}.
\end{rem}

We assume integrable support. Theorem \ref{THM.AF_Support_Projection} ensures reducible support. As per Corollary \ref{COR.Wstar_Derivation_Projection} and following Definition \ref{DFN.Wstar_Derivation_QG_Projection}, note it is $1.1.2)$ in Theorem \ref{THM.Wstar_Derivation_QG_HSG_Regularity} which lets us compress quantum gradients with support projections of normal fixed states. We use this throughout our discussion. As per $3)$ in Corollary \ref{COR.Wstar_Derivation_QG_HSG_Regularity}, we moreover combine compressing with such support projections and finite-dimensional approximation. This gives rise to our coarse graining process. Notation \ref{NTN.Wstar_Derivation_QG_HSG_Regularity} fixes conventions. For details on compressing quantum gradients, we refer to Subsection \ref{SSEC.NCDS_NCG_Ubd_Derivation}.

\begin{ntn}\label{NTN.Wstar_Derivation_QG_HSG_Regularity}
Let $\xi\in\mathcal{S}^{\NI}(A)$ be a fixed state with integrable support.

\begin{itemize}
\item[1)] We write $A_{\xi}:=A[\supp\xi]$, $\mathcal{A}_{\xi}:=\mathcal{A}_{L^{\infty}(A_{\xi},\tau)}$ and $L^{\infty}(A_{\xi},\tau)_{\nabla}:=L^{\infty}(A_{\xi},\tau)_{\nabla_{\hspace{-0.055cm} \supp\xi}}$, as well as $L^{2}(B_{\xi},\omega):=\pi_{\supp\xi}\lc{}L^{2}(B,\omega)\rc$. If $A$ and $B$ are finite-dimensional, then we have $A_{\xi}=L^{2}(A_{\xi},\tau)$ and write $B_{\xi}:=L^{2}(B_{\xi},\omega)$. \phantom{\big)}

\item[2)] For all $x\in L^{0}(A_{\xi},\tau)_{+}$, we write $\mathcal{M}_{x,\xi}:=\mathcal{M}_{x,\supp\xi}$ and further $\mathcal{D}_{x,\xi}:=\mathcal{D}_{x^{\flat},x^{\flat}}=\mathcal{D}_{x,\supp\xi}$ if $m_{f}^{-1}\in\SII_{\supp\xi}\lc{}E_{x,x}\rc$. \phantom{\big)}

\item[3)] We write $\nabla_{\hspace{-0.055cm} \xi}:=\nabla_{\hspace{-0.055cm} \supp\xi}=\nabla_{L^{\infty}(A_{\xi},\tau)}$ and $\Delta_{\xi}:=\Delta_{\supp\xi}=\Delta_{L^{\infty}(A_{\xi},\tau)}$. \phantom{\big)}
\end{itemize}
\end{ntn}

\begin{cor}\label{COR.Wstar_Derivation_QG_HSG_Regularity}
Let $\xi\in\mathcal{S}^{\NI}(A)$ be a fixed state with integrable support.

\begin{itemize}
\item[1)] We have

\begin{itemize}
\item[1.1)] $\supp\xi$-compressed symmetric $W^{*}$-derivation $\nabla_{\hspace{-0.055cm} \xi}:\AII_{\xi}\longrightarrow L^{2}(B_{\xi},\omega)$, \phantom{\big)}

\item[1.2)] $\supp\xi$-compressed Laplacian $\Delta_{\xi}=\mathrlap{\phantom{\nabla}_{\hspace{-0.055cm} \xi}}\nabla^{*}\nabla_{\hspace{-0.055cm} \xi}$. \phantom{\big)}
\end{itemize}

\begin{reapply}
\end{reapply}

\item[2)] For all $t\geq 0$ and $h_{t}\in\BII\lc{}L^{2}(A,\tau)\rc$, we have

\begin{itemize}
\item[2.1)] $\lb{}h_{t},\pi_{\supp\xi}\rb{}=0$, \phantom{\big)}

\item[2.2)] $\com_{L^{2}(A_{\xi},\tau)}h_{t}=e^{-t\Delta_{\xi}}$. \phantom{\big)}
\end{itemize}

\begin{reapply}
\end{reapply}

\item[3)] We have $L^{\infty}(A_{\xi},\tau)_{\nabla}\subset\dom\nabla$ and 

\begin{itemize}
\item[3.1)] $\pi_{\supp\xi}(u)=\|.\|_{\nabla}\textrm{-}\lim_{j\in\mathbb{N}}\pi_{\supp\xi_{j}}(u_{j})$ for all $u\in\dom\nabla$, \phantom{\big)}

\item[3.2)] $x=\bds\hspace{-0.075cm}^{\nabla}\textrm{-}\lim_{j\in\mathbb{N}}\pi_{\supp\xi_{j}}(x_{j})$ for all $x\in L^{\infty}(A_{\xi},\tau)_{\nabla}$. \phantom{\big)}
\end{itemize}

\begin{reapply}
\end{reapply}

\item[4)] We have $\dom\nabla\cap L^{2}(A_{\xi},\tau)\subset\dom\nabla_{\hspace{-0.055cm} \xi}$ and

\begin{itemize}
\item[4.1)] $\dom\nabla_{\hspace{-0.055cm} \xi}=\big\{\hspace{0.025cm} u\in L^{2}(A_{\xi},\tau)\ \vset\ u=\|.\|_{\nabla}\textrm{-}\lim_{j\in\mathbb{N}}\pi_{\supp\xi_{j}}(u_{j})\hspace{0.025cm} \big\}$, \phantom{\big)}

\item[4.2)] $L^{\infty}(A_{\xi},\tau)_{\nabla}=\big\{\hspace{0.025cm} x\in L^{\infty}(A_{\xi},\tau)\ \vset\ x=\bds^{\nabla}\textrm{-}\lim_{j\in\mathbb{N}}\pi_{\supp\xi_{j}}(x_{j})\hspace{0.025cm} \big\}$. \phantom{\big)}
\end{itemize}

\begin{reapply}
\end{reapply}

\end{itemize}
\end{cor}
\begin{proof}
We see $1)$ in Corollary \ref{COR.AF_Cstar_Bimodule_Projection} implies $\AII_{\xi}=\supp\xi\cdot A_{0}\cdot \supp\xi\subset A$ using algebra multiplication as per Definition \ref{DFN.Wstar_Derivation_Compression_II}, resp.~$L^{2}(B_{\xi},\omega)=\supp\xi\cdot L^{2}(B,\omega)\cdot \supp\xi$ using AF-$C^{*}$-bimodule action. We know $\nabla$ is $\supp\xi$-compressible by $1.1.2)$ in Theorem \ref{THM.Wstar_Derivation_QG_HSG_Regularity}. We have $1)$ by Corollary \ref{COR.Wstar_Derivation_Projection} and $2)$ in Proposition \ref{PRP.Wstar_Derivation_QG_II}. By testing on $A_{0}$ using $4)$ in Proposition \ref{PRP.Wstar_Derivation_Compression_II}, $1)$ implies $2.1)$ since $A_{0}\subset L^{2}(A,\tau)$ is $\|.\|_{\tau}$-dense. Moreover, $1)$ implies $2.2)$ by Corollary \ref{PRP.Wstar_Derivation_Compression_III}. Get $2)$.\par

%NEWPAGE
%NEWPAGE
%NEWPAGE

\pagebreak

%NEWPAGE
%NEWPAGE
%NEWPAGE

We show $3)$. The latter implies $4)$ immediately. Using sequential strong continuity of multiplication, we readily see reducible support implies $3.1)$ since $u=\|.\|_{\nabla}$-$\lim_{j\in\mathbb{N}}u_{j}$ for all $u\in\dom\nabla$ by $4.1)$ in Proposition \ref{PRP.Wstar_Derivation_QG_I}. We likewise obtain $3.2)$ if $x=\bds^{\nabla}$-$\lim_{j\in\mathbb{N}}x_{j}$ for all $x\in L^{\infty}(A_{\xi},\tau)_{\nabla}$. Arguing as for Equation \ref{EQ.THM.Wstar_Derivation_QG_HSG_Regularity_7}, we use $3)$ in Proposition \ref{PRP.AF_Cstar_Trace_Dualisation_II} and reducible support to calculate

\begin{align}\label{EQ.COR.Wstar_Derivation_QG_HSG_Regularity_1}
x=\s\textrm{-}\lim_{j\in\mathbb{N}}\hspace{0.025cm} \supp\xi_{j}\cdot x_{j}\cdot \supp\xi_{j}=\supp\xi\cdot x\cdot \supp\xi
\end{align}

\noindent for all $x\in L^{\infty}(A_{\xi},\tau)_{\nabla}$. We further have $\supp\xi\in L^{2}(A,\tau)$ and therefore

\begin{align}\label{EQ.COR.Wstar_Derivation_QG_HSG_Regularity_2}
L^{\infty}(A_{\xi},\tau)=\pi_{\supp\xi}\lc{}L^{\infty}(A,\tau)\rc\subset L^{2}(A_{\xi},\tau)    
\end{align}

\noindent by integrable support \lc{}cf.~Proposition \ref{PRP.Wstar_L2Red_III}\rc{}. Equation \ref{EQ.COR.Wstar_Derivation_QG_HSG_Regularity_1} and Equation \ref{EQ.COR.Wstar_Derivation_QG_HSG_Regularity_2} show $L^{\infty}(A_{\xi},\tau)_{\nabla}\subset\dom\nabla$. Using sequential strong continuity of multiplication and $3.1)$, note reducible support implies $3.2)$ by $3)$ in Proposition \ref{PRP.AF_Cstar_Trace_Dualisation_II}. Thus $3)$, hence $4)$ holds.
\end{proof}

\begin{rem}\label{REM.Wstar_Derivation_QG_HSG_Regularity_II}
Following $2)$ in Corollary \ref{COR.Wstar_Derivation_QG_HSG_Regularity}, the noncommutative heat semigroup of $\nabla_{\hspace{-0.055cm} \xi}$ considered as symmetric $C^{*}$-derivation is given by

\begin{align}\label{EQ.REM.Wstar_Derivation_QG_HSG_Regularity_II_1}
t\mapsto\textrm{com}_{L^{2}(A_{\xi},\tau)}\hspace{0.05cm} h_{t}=e^{-t\Delta_{\xi}}\in\BII\lc{}L^{2}(A_{\xi},\tau)\rc{}.
\end{align}

\noindent Since we only consider semigroups as above if we compress with support projections of normal fixed states, we do not distinguish any from $h:[0,\infty)\longrightarrow\BII\lc{}L^{2}(A,\tau)\rc$.
\end{rem}

%%%%%%%%%%%%%%%%%%%
%%% SUBSECTION %%%%
%%%%%%%%%%%%%%%%%%%

\subsection{Classifying normal accessibility components}\label{SSEC.QOT_AC_RM}

Assuming spectral gaps of quantum Laplacians and fixed parts, Theorem \ref{THM.QOT_Distance_AC_L2} classifies accessibility components of square integrable normal states using fixed parts by showing each one is a norm closed convex subsets of all such states with identical fixed part. Spectral gaps ensure such fixed parts themselves are square integrable normal states with integrable support. In the finite-dimensional setting, assumptions as above are satisfied and we classify all accessibility components using fixed parts.\par
In the finite-dimensional setting, relative interiors are embedded submanifolds, as well as connected Riemannian manifolds with Riemannian metric induced by the given quasi-entropy. Theorem \ref{THM.RM} shows each in turn induces the given quantum optimal transport distance, and Theorem \ref{THM.QOT_Distance_AC_L2} ensures their norm closures are accessibility components. Theorem \ref{THM.Wstar_Derivation_QG_HSG_Regularity} therefore links the finite-dimensional Riemannian case to the general one by compression, finite-dimensional approximation and heat flow. In Chapter \ref{CH.L2W}, we commonly reduce to the finite-dimensional Riemannian setting. This is a fundamental reason to require, from a purely technical point of view, compatibility with compression and finite-dimensional approximation. Standard reference for differential and Riemannian geometry is \cite{BK.Lan.1995.Riemannian_Manifolds}.

%%%%%%%%%%%%%
%%% PART %%%%
%%%%%%%%%%%%%

\subsubsection*{Embedded submanifolds of states in the finite-dimensional setting}

We prepare our discussion further below. Let $(\phi,\bpsi,\gamma,\nabla)$ be noncommutative differential structure for tracial AF-$C^{*}$-algebras $(A,\tau)$ and $(B,\omega)$ in $\lc{}f,\theta\rc$-setting. Assume $A$ and $B$ are finite-dimensional.

\begin{prp}\label{PRP.RM_Compressed_PMO}
Let $p\in A$ be a projection. For all $x,y>0$ in $A[p]$ and $u\in B[p]$, we have 

\begin{itemize}
\item[1)] $\mathcal{I}^{f,\theta}\lc{}x^{\flat},y^{\flat},u^{\flat}\rc{}=\lgl\mathcal{D}_{x,y,p}^{\theta}(u),u\rgl_{\omega}$,

\item[2)] $0<\sigma(x)^{\frac{\theta}{2}}\sigma(y)^{\frac{\theta}{2}}\cdot \|u\|_{\omega}^{2}\leq \lgl\mathcal{M}_{x,y,p}^{\theta}(u),u\rgl_{\omega}$.
\end{itemize}
\end{prp}
\begin{proof}
Following Remark \ref{REM.Quadratic_Form}, we have $1)$ by $3)$ in Corollary \ref{COR.NCD_Operator_Compressed_PMO_I}. We show $2)$. The geometric operator mean is the minimal symmetric one \lc{}cf.~Theorem 4.5 in \cite{ART.And_Kub.1979.Operator_Means}\rc{}. Since $x,y>0$ in $A[p]$, evaluating the geometric operator mean in $L_{x,p}$ and $R_{y,p}$ yields

\begin{align}\label{EQ.PRP.RM_Compressed_PMO_1}
0<\sigma\lc{}L_{x,p}\rc^{\frac{\theta}{2}}\sigma\lc{}R_{y,p}\rc^{\frac{\theta}{2}}\cdot \|u\|_{\omega}^{2}\leq \lgl\mathcal{M}_{x,y,p}^{\theta}(u),u\rgl_{\omega}    
\end{align}

\noindent for all $u\in B[p]$. Equation \ref{EQ.PRP.RM_Compressed_PMO_1} shows $2)$ by Proposition \ref{PRP.AF_Support_Projection}.
\end{proof}

Let $\xi\in\mathcal{S}(A_{\xi})$ be a fixed state. We use Notation \ref{NTN.PO}. Restricting the GNS-inner product of $\tau$ yields real Hilbert space inner product of $A_{\xi,h}=A_{\xi}\cap A_{h}$.

\begin{prp}\label{PRP.RM_Embedded_Submanifold_I}
Let $\xi\in\SII(A)$ be a fixed state.

\begin{itemize}
\item[1)] We have $\Delta_{\xi}\in\BII(A_{\xi})_{h}$, $\supp\xi\in\ker\Delta_{\xi}$ and $\im\Delta_{\xi}=\im\Delta\cap A_{\xi}$.

\item[2)] Setting $I(\Delta_{\xi}):=\im\Delta_{\xi}\cap A_{\xi,h}$ and $K(\Delta_{\xi}):=\langle\supp\xi\rangle_{\mathbb{R}}^{\perp}\subset\ker\Delta_{\xi}\cap A_{\xi,h}$ yields orthogonal decomposition

\begin{align}\label{EQ.PRP.RM_Embedded_Submanifold_I_1}
A_{\xi,h}=I(\Delta_{\xi})\oplus\langle\supp\xi\rangle_{\mathbb{R}}\oplus K(\Delta_{\xi}).
\end{align}

\begin{reapply}
\end{reapply}

\end{itemize}
\end{prp}
\begin{proof}
We known $1)$ by $1.1.2)$ in Theorem \ref{THM.Wstar_Derivation_QG_HSG_Regularity} and $1.2)$ in Corollary \ref{COR.Wstar_Derivation_QG_HSG_Regularity}. We have $\Delta\lc{}A_{h}\rc\subset A_{h}$ by symmetry of $\nabla$. Thus $1)$ implies $2)$ at once.
\end{proof}

We have real Hilbert space projections 

\begin{align}\label{EQ.SSEC.QOT_AC_RM_1}
\pi_{I(\Delta_{\xi})}^{A}:A_{\xi,h}\longrightarrow I(\Delta_{\xi}),\ \pi_{K(\Delta_{\xi})}^{A}:A_{\xi,h}\longrightarrow K(\Delta_{\xi}).   
\end{align}

\noindent We know $I(\Delta_{\xi}),K(\Delta_{\xi})\subset\ker\tau$. Furthermore, we know $\tau\lc\supp\xi\rc{}>0$ by faithfulness and have $\dim_{\mathbb{R}}\im_{\mathbb{R}}\tau\vert_{A_{\xi,h}^{*}}=1$. For all $\mu\in A_{\xi,h}^{*}$, Equation \ref{EQ.PRP.RM_Embedded_Submanifold_I_1} yields decomposition

\begin{align}\label{EQ.SSEC.QOT_AC_RM_2}
\mu=\pi_{I(\Delta_{\xi})}^{A}\lc\sharp\mu\rc^{\flat}+\|\mu\|_{A^{*}}\cdot \tau\lc\supp\xi\rc^{-1}\supp\xi^{\flat}+\pi_{K(\Delta_{\xi})}^{A}\lc\sharp\mu\rc^{\flat}. 
\end{align}

\begin{dfn}\label{DFN.RM_Embedded_Submanifold}
Let $\xi\in\SII(A)$ be a fixed state.

\begin{itemize}
\item[1)] We define $\mathfrak{P}_{\xi}:A_{\xi}^{*}\longrightarrow K(\Delta_{\xi})^{\flat}$ by setting

\begin{align}\label{EQ.DFN.RM_Embedded_Submanifold_1}
\mathfrak{P}_{\xi}(\mu):=\pi_{K(\Delta_{\xi})}^{A}\lc\sharp\mu\rc^{\flat}
\end{align}

\begin{reapply}
\end{reapply}

\noindent for all $\mu\in A_{\xi}^{*}$.

\item[2)] Set $\vartheta(\xi):=\mathfrak{P}_{\xi\vert\mathcal{S}_{-1}^{\NI,\infty}(A_{\xi})}^{-1}\lc\pi_{K(\Delta_{\xi})}^{A}\big(\sharp\xi\big)^{\flat}\rc$.
\end{itemize}
\end{dfn}

\begin{ntn}
Let $X$ be a smooth manifold. We write $TX$ for its tangent bundle. We further write $T_{\mu}X$ for the tangent space upon evaluation at $\mu\in X$.
\end{ntn}

\begin{prp}\label{PRP.RM_Embedded_Submanifold_II}
Let $\xi\in\SII(A)$ be a fixed state. We have

\begin{itemize}
\item[1)] embedded submanifold

\begin{align}\label{EQ.PRP.RM_Embedded_Submanifold_II_1}
\vartheta(\xi)=\relint\Fix\hspace{0cm}_{A}^{\NI}(\xi)\subset\mathcal{S}_{-1}^{\NI,\infty}(A_{\xi}),
\end{align}

\begin{reapply}
\end{reapply}

\item[2)] trivial tangent bundle $T\vartheta(\xi)=\vartheta(\xi)\times I(\Delta_{\xi})^{\flat}$.
\end{itemize}
\end{prp}
\begin{proof}
Using $2.1)$ in Proposition \ref{PRP.Wstar_Derivation_QG_HSG_I}, Equation \ref{EQ.SSEC.QOT_AC_RM_2} shows

\begin{align}\label{EQ.PRP.RM_Embedded_Submanifold_II_2}
h(\mu)^{\perp}=\pi_{I(\Delta_{\xi})}^{A}\lc\sharp h^{\perp}(\mu)\rc^{\flat},\ h(\mu)=\tau\lc\supp\xi\rc^{-1}\supp\xi^{\flat}+\pi_{K(\Delta_{\xi})}^{A}\lc\sharp h(\mu)\rc^{\flat}
\end{align}

\noindent for all $\mu\in\SII(A_{\xi})$. Equation \ref{EQ.PRP.RM_Embedded_Submanifold_II_2} implies

\begin{align}\label{EQ.PRP.RM_Embedded_Submanifold_II_3}
\textrm{Fix}_{A}(\xi)=\mathfrak{P}_{\xi\vert\SII(A_{\xi})}^{-1}\lc\pi_{K(\Delta_{\xi})}^{A}\big(\sharp\xi\big)^{\flat}\rc{}.
\end{align}

\noindent Arguing as for $1)$ in Corollary \ref{COR.Support_Projection_III} but using $\xi\in\mathcal{S}_{-1}^{\NI,\infty}(A_{\xi})$ in Equation \ref{EQ.COR.Support_Projection_III_2} rather than rescaled $\supp\xi$ under the flat operator, we directly verify

\begin{align}\label{EQ.PRP.RM_Embedded_Submanifold_II_4}
\relint\textrm{Fix}_{A}(\xi)=\textrm{Fix}_{A}(\xi)\cap\mathcal{S}_{-1}^{\NI,\infty}(A_{\xi}).
\end{align}

\noindent Equation \ref{EQ.PRP.RM_Embedded_Submanifold_II_3} and Equation \ref{EQ.PRP.RM_Embedded_Submanifold_II_4} show Equation \ref{EQ.PRP.RM_Embedded_Submanifold_II_1}. Thus Equation \ref{EQ.PRP.RM_Embedded_Submanifold_II_2} shows smooth paths in $\mathcal{S}_{-1}^{\NI,\infty}(A_{\xi})$ with image in $\vartheta(\xi)$ vary in $I(\Delta_{\xi})^{\flat}$ only, hence Equation \ref{EQ.PRP.RM_Embedded_Submanifold_II_1} implies $1)$ and therefore $2)$ by the submersion theorem \cite{BK.Lan.1995.Riemannian_Manifolds}.
\end{proof}

%%%%%%%%%%%%%
%%% PART %%%%
%%%%%%%%%%%%%

\subsubsection*{Riemannian metrics induced by quasi-entropies}

Using bounded operators in Definition \ref{DFN.RM_I} determined by quasi-entropies, Definition \ref{DFN.RM_II} gives Riemannian metrics on embedded submanifolds as per Proposition \ref{PRP.RM_Embedded_Submanifold_II}. Restricted to each such embedded submanifold, Theorem \ref{THM.RM} shows the Riemannian distance is the quantum optimal transport distance given by the quasi-entropy inducing Riemannian metric.\par
Let $(\phi,\bpsi,\gamma,\nabla)$ be noncommutative differential structure for tracial AF-$C^{*}$-algebras $(A,\tau)$ and $(B,\omega)$ in $\lc{}f,\theta\rc$-setting. Assume $A$ and $B$ are finite-dimensional. Let $\xi\in\SII(A)$ be a fixed state. This is the finite-dimensional Riemannian setting. We use the following throughout our discussion. Get $\Delta\vert_{\im\Delta}>0$ in $\BII\lc\im\Delta\rc$ by finite-dimensionality. Note $1)$ in Corollary \ref{COR.Wstar_Derivation_QG_HSG_Regularity} shows $\nabla(A_{\xi})\subset B_{\xi}$ and $\nabla^{*}(B_{\xi})\subset A_{\xi}$ by $\supp\xi$-compressibility. For all $x\in A_{\xi,+}$, we see $1)$ in Lemma \ref{LEM.NCD_Operator_Compressed_PMO_I} implies

\begin{align}\label{EQ.SSEC.QOT_AC_RM_3}
\mathcal{M}_{x,\xi}^{\theta}=\restr{0.925}{\mathcal{M}_{x}^{\theta}}{B_{\xi}}.    
\end{align}

\noindent Equation \ref{EQ.SSEC.QOT_AC_RM_3} lets us suppress, upon restriction to $B_{\xi}$, compressing with $\supp\xi$. We suppress accordingly in Definition \ref{DFN.RM_I}.

\begin{dfn}\label{DFN.RM_I}
For all $\mu\in\vartheta(\xi)$, set

\begin{itemize}
\item[1)] $\mathfrak{F}_{\mu}:=\nabla^{*}\mathcal{M}_{\sharp\mu}^{\theta}\nabla\in\BII(\im\Delta_{\xi},A_{\xi})$,

\item[2)] $\mathfrak{G}_{\mu}:=\mathcal{M}_{\sharp\mu}^{\theta}\nabla\in\BII(\im\Delta_{\xi},B_{\xi})$.
\end{itemize}
\end{dfn}

For all $\mu\in\vartheta(\xi)$, we have $\sharp\mu>0$ in $A_{\xi}$ and therefore $\mathfrak{F}_{\mu},\mathrlap{\phantom{\mathfrak{F}}_{\mu}}\mathfrak{F}^{-1}>0$ in $\BII(\im\Delta_{\xi})$ by $1)$ in Proposition \ref{PRP.RM_Embedded_Submanifold_II}. Note $\nabla^{*}\mathfrak{G}_{\mu}=\mathfrak{F}_{\mu}$ in each case by definition.

\begin{prp}\label{PRP.RM_I}
For all $\mu\in\vartheta(\xi)$, we have

\begin{itemize}
\item[1)] $\mathfrak{F}_{\mu},\mathrlap{\phantom{\mathfrak{F}}_{\mu}}\mathfrak{F}^{-1}>0$ in $\BII(\im\Delta_{\xi})$ and $\big\|\mathrlap{\phantom{\mathfrak{F}}_{\mu}}\mathfrak{F}^{-1}\big\|_{\BII(\im\Delta_{\xi})}\leq\sigma(\Delta)^{-1}\sigma(\mu)^{-\theta}$, \phantom{\vstretch{0.75}{\bigg)}}

\item[2)] $\mathfrak{F}_{\mu}\lc{}I(\Delta_{\xi})\rc\subset I(\Delta_{\xi})$ and $\mathrlap{\phantom{\mathfrak{F}}_{\mu}}\mathfrak{F}^{-1}\lc{}I(\Delta_{\xi})\rc\subset I(\Delta_{\xi})$, \phantom{\vstretch{0.75}{\bigg)}}

\item[3)] $\nabla^{*}\mathfrak{G}_{\mu}\mathrlap{\phantom{\mathfrak{F}}_{\mu}}\mathfrak{F}^{-1}=\id_{\im\Delta_{\xi}}$. \phantom{\vstretch{0.75}{\bigg)}}
\end{itemize} 
\end{prp}
\begin{proof}
Let $\mu\in\vartheta(\xi)$. Get $1)$ by $1)$ in Proposition \ref{PRP.RM_Embedded_Submanifold_II}, resp.~$2)$ in Proposition \ref{PRP.RM_Compressed_PMO}. If $\mathfrak{F}_{\mu}\lc{}I(\Delta_{\xi})\rc\subset I(\Delta_{\xi})$, then $\mathrlap{\phantom{\mathfrak{F}}_{\mu}}\mathfrak{F}^{-1}\lc{}I(\Delta_{\xi})\rc\subset I(\Delta_{\xi})$. Note $\nabla$ and $\nabla^{*}$ intertwine adjoining and $\gamma$ by symmetry, resp.~$5)$ in Proposition \ref{PRP.Wstar_Derivation_QG_I}. Symmetry of $f$ implies $\mathcal{M}_{\sharp\mu}^{\theta}\circ\gamma=\gamma\circ \mathcal{M}_{\sharp\mu}^{\theta}$ by $1)$ Corollary \ref{COR.AF_NCD_FC_III}. Get $2)$. We have $3)$ by definition.
\end{proof}

\begin{dfn}\label{DFN.RM_II}
For all $\mu\in\vartheta(\xi)$, set

\begin{align}\label{EQ.DFN.RM_II_1}
g_{\mu}^{\xi}(u,v):=\lgl\mathrlap{\phantom{\mathfrak{F}}_{\mu}}\mathfrak{F}^{-1}\lc\sharp u\rc{},\sharp v\rgl_{\tau}    
\end{align}

\noindent for all $u,v\in I(\Delta_{\xi})^{\flat}$.
\end{dfn}

\begin{prp}\label{PRP.RM_II}\hspace{1cm}
\begin{itemize}
\item[1)] We have connected Riemannian manifold $(\vartheta(\xi),g^{\xi})$.

\item[2)] For all $\mu\in\vartheta(\xi)$, we have

\begin{align}\label{EQ.PRP.RM_II_1}
\mathcal{I}^{f,\theta}\lc\mu,\mu,\lc\mathfrak{G}_{\mu}\mathrlap{\phantom{\mathfrak{F}}_{\mu}}\mathfrak{F}^{-1}\lc\sharp u\rc\rc^{\flat}\rc{}=g_{\mu}^{\xi}(u,u)\leq\sigma(\Delta)^{-1}\sigma(\mu)^{-\theta}\cdot \|\sharp u\|_{\tau}^{2}
\end{align}

\begin{reapply}
\end{reapply}

\noindent for all $u\in I(\Delta_{\xi})^{\flat}$.

\item[3)] Let $\mu\in\vartheta(\xi)$, $u\in I(\Delta_{\xi})^{\flat}$ and $w\in B^{*}$. If $\sharp u=\nabla^{*}\sharp w$, then

\begin{align}\label{EQ.PRP.RM_II_2}
\mathcal{I}^{f,\theta}\lc\mu,\mu,\lc\mathfrak{G}_{\mu}\mathrlap{\phantom{\mathfrak{F}}_{\mu}}\mathfrak{F}^{-1}\lc\sharp u\rc\rc^{\flat}\rc\leq\mathcal{I}^{f,\theta}(\mu,\mu,w).
\end{align}

\begin{reapply}
\end{reapply}

\noindent Furthermore, we have equality in Equation \ref{EQ.PRP.RM_II_2} if and only if $\sharp w=\mathfrak{G}_{\mu}\mathrlap{\phantom{\mathfrak{F}}_{\mu}}\mathfrak{F}^{-1}\lc\sharp u\rc$.
\end{itemize}
\end{prp}
\begin{proof}
The map $\mu\mapsto\mathfrak{F}_{\mu}$ from $\vartheta(\xi)$ to $\BII(\im\Delta_{\xi})_{>0}\subset\GL\lc\BII\lc\im\Delta\rc\rc$ is smooth and invertible by $1)$ and $2)$ in Proposition \ref{PRP.RM_I}. Get $1)$. The identity in Equation \ref{EQ.PRP.RM_II_1} follows by $1)$ in Proposition \ref{PRP.RM_Compressed_PMO}, its subsequent estimate by $1)$ in Proposition \ref{PRP.RM_I}. Get $2)$.\par
We show $3)$. Let $\mu\in\vartheta(\xi)$, $u=x^{\flat}\in I(\Delta_{\xi})^{\flat}$ and $w\in B^{*}$. Assume $x=\nabla^{*}\sharp w$. Then $\nabla^{*}\sharp w=\nabla^{*}\mathfrak{G}_{\mu}\mathrlap{\phantom{\mathfrak{F}}_{\mu}}\mathfrak{F}^{-1}(x)$ by $3)$ in Proposition \ref{PRP.RM_I}. Set $y:=\sharp w-\mathfrak{G}_{\mu}\mathrlap{\phantom{\mathfrak{F}}_{\mu}}\mathfrak{F}^{-1}(x)\in\ker\nabla^{*}$. Using $2)$, get

\begin{align}\label{EQ.PRP.RM_II_3}
\mathcal{I}^{f,\theta}\lc\mu,\mu,\lc\mathfrak{G}_{\mu}\mathrlap{\phantom{\mathfrak{F}}_{\mu}}\mathfrak{F}^{-1}(x)\rc^{\flat}+y^{\flat}\rc{}=g_{\mu}^{\xi}(u,u)+2\RE\lgl y,\mathcal{D}_{\sharp\mu,\xi}^{\theta}\mathfrak{G}_{\mu}\mathrlap{\phantom{\mathfrak{F}}_{\mu}}\mathfrak{F}^{-1}(x)\rgl_{\omega}+\mathcal{I}^{f,\theta}\lc\mu,\mu,y^{\flat}\rc{}.
\end{align}

\noindent Note $\mathcal{D}_{\sharp\mu,\xi}^{\theta}\mathfrak{G}_{\mu}\mathrlap{\phantom{\mathfrak{F}}_{\mu}}\mathfrak{F}^{-1}(x)=\nabla \mathrlap{\phantom{\mathfrak{F}}_{\mu}}\mathfrak{F}^{-1}(x)$. Using $y\in\ker\nabla^{*}$, the latter implies

\begin{align}\label{EQ.PRP.RM_II_4}
\RE\lgl y,\mathcal{D}_{\sharp\mu,\xi}^{\theta}\mathfrak{G}_{\mu}\mathrlap{\phantom{\mathfrak{F}}_{\mu}}\mathfrak{F}^{-1}(x)\rgl_{\omega}=0.  
\end{align}

\noindent Equation \ref{EQ.PRP.RM_II_3} and Equation \ref{EQ.PRP.RM_II_4} show Equation \ref{EQ.PRP.RM_II_2}. Since $\sharp\mu>0$ in $A_{\xi}$, we further have $\mathcal{I}^{f,\theta}\lc\mu,\mu,y^{\flat}\rc{}=0$ if and only if $y=0$. This shows equivalence. Get $3)$.
\end{proof}

We know $T\vartheta(\xi)=\vartheta(\xi)\times I(\Delta_{\xi})^{\flat}$ by $2)$ in Proposition \ref{PRP.RM_Embedded_Submanifold_II}. Definition \ref{DFN.RM_III} gives smooth map $\Theta:T\vartheta(\xi)\longrightarrow B_{\xi}^{*}$. Proposition \ref{PRP.RM_III} shows evaluating the latter on square integrable absolutely continuous paths to $\vartheta(\xi)$ induces admissible paths. Their vector fields minimise energy along a given absolutely continuous path.

\begin{dfn}\label{DFN.RM_III}\hspace{1cm}
\begin{itemize}
\item[1)] We define $\Theta:T\vartheta(\xi)\longrightarrow B_{\xi}^{*}$ by setting

\begin{align}\label{EQ.DFN.RM_III_1}
\Theta(\mu,u):=\lc\mathfrak{G}_{\mu}\mathrlap{\phantom{\mathfrak{F}}_{\mu}}\mathfrak{F}^{-1}\lc\sharp u\rc\rc^{\flat}
\end{align}

\begin{reapply}
\end{reapply}

\noindent for all $\mu\in\vartheta(\xi)$ and $u\in I(\Delta_{\xi})^{\flat}$. 

\item[2)] For all absolutely continuous $\mu:[a,b]\longrightarrow\vartheta(\xi)$, set

\begin{align}\label{EQ.DFN.RM_III_2}
\Theta(\mu,\dot{\mu})(t):=\Theta\lc\mu(t),\dot{\mu}(t)\rc{}
\end{align}

\begin{reapply}
\end{reapply}

\noindent for a.e.~$t\in [0,1]$.
\end{itemize}
\end{dfn}

\begin{rem}\label{REM.RM}
Following $1)$ in Definition \ref{DFN.RM_III}, note Equation \ref{EQ.PRP.RM_II_1} yields

\begin{align}\label{EQ.REM.RM_1}
\mathcal{I}^{f,\theta}\lc\mu,\mu,\Theta(\mu,u)\rc{}=g_{\mu}^{\xi}(u,u)
\end{align}

\noindent for all $\mu\in\vartheta(\xi)$ and $u\in I(\Delta_{\xi})^{\flat}$. We use this throughout our discussion.
\end{rem}

\begin{prp}\label{PRP.RM_III}
We consider Riemannian manifold $(\vartheta(\xi),g^{\xi})$. Let $\mu:[a,b]\longrightarrow\vartheta(\xi)$ be absolutely continuous. If $\int_{a}^{b}\|\dot{\mu}(t)\|_{A^{*}}^{2}dt<\infty$, then

\begin{itemize}
\item[1)] $\lc\mu,\Theta(\mu,\dot{\mu})\rc\in\Admab\lc\mu\lc{}a\rc{},\mu\lc{}b\rc\rc$,

\item[2)] $E^{f,\theta}\lc\mu,\Theta(\mu,\dot{\mu})\rc{}=\int_{a}^{b}g_{\mu(t)}^{\xi}\lc\dot{\mu}(t),\dot{\mu}(t)\rc{}dt<\infty$,

\item[3)] $E^{f,\theta}\lc\mu,\Theta(\mu,\dot{\mu})\rc\leq E(\mu,w)$ for all $(\mu,w)\in\Admab\lc\mu\lc{}a\rc{},\mu\lc{}b\rc\rc$.
\end{itemize}

\noindent Furthermore, we have equality in $3)$ if and only if $w(t)=\Theta(\mu,\dot{\mu})(t)$ for a.e.~$t\in [a,b]$.
\end{prp}
\begin{proof}
Assume $\int_{a}^{b}\|\dot{\mu}(t)\|_{A^{*}}^{2}dt<\infty$. Note continuity by itself implies

\begin{align}\label{EQ.PRP.RM_III_1}
\sup_{t\in [0,1]}\hspace{0.025cm} \sigma\lc\mu(t)\rc^{-1}=\sup_{t\in [0,1]}\hspace{0.025cm} \dblv{}L_{\sharp\mu(t),\supp\xi}^{-1}\dblv{}<\infty.
\end{align}

\noindent All Banach space norms we consider here are equivalent by finite-dimensionality. Using $2)$ in Proposition \ref{PRP.RM_II}, Equation \ref{EQ.PRP.RM_III_1} yields $C>0$ s.t.~

\begin{align}\label{EQ.PRP.RM_III_2}
\mathcal{I}^{f,\theta}\lc\mu(t),\mu(t),\Theta(\mu,\dot{\mu})(t)\rc{}=g_{\mu(t)}^{\xi}\lc\dot{\mu}(t),\dot{\mu}(t)\rc\leq C\cdot \big\|\dot{\mu}(t)\big\|_{A^{*}}^{2}
\end{align}

\noindent for a.e.~$t\in [a,b]$. Using $5)$ in Theorem \ref{THM.QE_AF}, Equation \ref{EQ.PRP.RM_III_2} yields $C',C''>0$ s.t.~

\begin{align}\label{EQ.PRP.RM_III_3}
\int_{a}^{b}\dblv{}\Theta(\mu,\dot{\mu})(t)\dblv_{B^{*}}^{2}dt\leq C'\cdot \int_{a}^{b}g_{\mu(t)}^{\xi}\lc\dot{\mu}(t),\dot{\mu}(t)\rc{}dt\leq C''\cdot \int_{a}^{b}\big\|\dot{\mu}(t)\big\|_{A^{*}}^{2}dt<\infty.
\end{align}

\noindent Equation \ref{EQ.PRP.RM_III_3} shows $\Theta(\mu,\dot{\mu})$ is square integrable. We calculate

\begin{align}\label{EQ.PRP.RM_III_4}
\dot{\mu}(t)=\nabla^{*}\mathfrak{G}_{\mu}\mathrlap{\phantom{\mathfrak{F}}_{\mu}}\mathfrak{F}^{-1}\lc\dot{\mu}(t)\rc{}=\nabla^{*}\Theta(\mu,\dot{\mu})(t)    
\end{align}

\noindent for a.e.~$t\in [a,b]$. Equation \ref{EQ.PRP.RM_III_2} and Equation \ref{EQ.PRP.RM_III_4} show $1)$ and $2)$. We show $3)$. For all $(\mu,w)\in\Admab\lc\mu\lc{}a\rc{},\mu\lc{}b\rc\rc$, note $\sharp\dot{\mu}(t)=\nabla^{*}\sharp w(t)$ for a.e.~$t\in [a,b]$ by the continuity equation. Using $3)$ in Proposition \ref{PRP.RM_II}, the latter implies $3)$ at once.
\end{proof}

Theorem \ref{THM.RM} uses Lemma \ref{LEM.RM}. The latter shows minimising geodesics with marginals in $\vartheta(\xi)$ are suitably approximated by minimising geodesics in $\vartheta(\xi)$ without change of marginals. Corollary \ref{COR.RM} implies $\vartheta(\xi)\subset\mathcal{C}_{A}(\xi)$ is a geodesic subspace as per $2)$ in Definition \ref{DFN.Metric_Functional_EVI_CNV}. The statement of Lemma \ref{LEM.RM} is more general. We show $1)$ in the lemma by extending convolution with Dirac sequences \cite{BK.Eva.2010.Partial_Differential_Equations} to the AF-$C^{*}$-setting. We show $2)$ in the lemma by adapting the proof of Lemma 3.30 in \cite{ART.Maa.2011.Discrete_OT_Markov}.\par
Lemma \ref{LEM.RM} uses the convolution of bounded Bochner measurable maps to $A_{\xi}^{*}$ with smooth maps on $\mathbb{R}$ having integrable first derivative. Definition \ref{DFN.Bochner_Convolution} gives such Bochner convolutions. Note Remark \ref{REM.Bochner_Convolution_I} and Remark \ref{REM.Bochner_Convolution_II}.

\begin{dfn}\label{DFN.Bochner_Convolution}\hspace{1cm}
\begin{itemize}
\item[1)] Set $C^{\infty,1}(\mathbb{R}):=\lset\varphi\in C^{\infty}(\mathbb{R})\ \vset\ \forall k\in\mathbb{N}:\ \frac{d^{k}}{dt^{k}}\varphi\in L^{1}(\mathbb{R})\rset$. For all closed intervals $I\subset\mathbb{R}$, we\linebreak say that a Bochner measurable map $\eta:I\longrightarrow A^{*}$ \cite{BK.Ion_Ion.1969.Vector_Valued_Lp_Duals} is bounded measurable if $\|\eta\|_{\infty}:=\esssup_{t\in I}\|\eta(t)\|_{A^{*}}<\infty$.

\item[2)] Let $\eta:\mathbb{R}\longrightarrow A_{\xi}^{*}$ be bounded measurable. For all $\varphi\in C^{\infty,1}(\mathbb{R})$, we define the Bochner convolution map $\eta\ast\varphi:\mathbb{R}\longrightarrow A_{\xi}^{*}$ by setting

\vspace{-0.01375cm}
\begin{align}\label{EQ.DFN.Bochner_Convolution_1}
\lc\eta\ast\varphi\rc{}(t):=\int_{-\infty}^{\infty}\eta(s)\varphi\lc{}t-s\rc{}ds
\end{align}

\vspace{-0.0125cm}
\noindent for all $t\in\mathbb{R}$.
\end{itemize}
\end{dfn} 

\begin{rem}\label{REM.Bochner_Convolution_I}
In the finite-dimensional setting, Bochner integration specialises to one-dimensional analogues in components. Let $\eta:\mathbb{R}\longrightarrow A^{*}$ be bounded measurable. For all $\varphi\in C^{\infty,1}(\mathbb{R})$, the map $s\mapsto\eta(s)\varphi\lc{}t-s\rc$ is indeed integrable for all $t\in\mathbb{R}$.
\end{rem}

Let $\eta:\mathbb{R}\longrightarrow A_{\xi}^{*}$ be bounded measurable and $\varphi\in C^{\infty,1}(\mathbb{R})$. For all $x\in A$, we consider the map $s\mapsto\eta_{x}(s):=\eta(s)(x)$ and have

\begin{align}\label{EQ.SSEC.QOT_AC_RM_4}
\lc\eta\ast\varphi\rc{}(t)^{\flat}(x)=\int_{-\infty}^{\infty}\eta(s)^{\flat}(x)\varphi\lc{}t-s\rc{}ds=\lc\eta_{x}\ast\varphi\rc{}(t)
\end{align}

\noindent for all $t\in\mathbb{R}$. Equation \ref{EQ.SSEC.QOT_AC_RM_4} shows standard results for convolutions apply \cite{BK.Eva.2010.Partial_Differential_Equations}. We have $\|\eta\ast\varphi\|_{\infty}\leq \|\eta\|_{\infty}\|\varphi\|_{1}$ by H\"older. For all $k\in\mathbb{N}$, we moreover have

\begin{align}\label{EQ.SSEC.QOT_AC_RM_5}
\frac{d^{k}}{dt^{k}}\lc\eta\ast\varphi\rc{}(t)=\bigg(\eta\ast\frac{d^{k}}{dt^{k}}\varphi\bigg)(t)
\end{align}

\noindent for all $t\in\mathbb{R}$. If $\eta$ is $t$-a.e.~differentiable and $\dot{\eta}$ bounded measurable, then

\begin{align}\label{EQ.SSEC.QOT_AC_RM_6}
\frac{d}{dt}\lc\eta\ast\varphi\rc{}(t)=\lc\dot{\eta}\ast\varphi\rc{}(t)    
\end{align}

\noindent for a.e.~$t\in\mathbb{R}$.

\begin{rem}\label{REM.Bochner_Convolution_II}
For all bounded measurable $\eta:\mathbb{R}\longrightarrow A_{\xi}^{*}$, we have bounded measurable $h^{\perp}(\eta):\mathbb{R}\longrightarrow A_{\xi}^{*}$ by setting $h^{\perp}(\eta)(t):=h^{\perp}\lc\eta(t)\rc$ for all $t\in\mathbb{R}$.
\end{rem}

%NEWPAGE
%NEWPAGE
%NEWPAGE

\pagebreak

%NEWPAGE
%NEWPAGE
%NEWPAGE

Let $\mu:[0,1]\longrightarrow\vartheta(\xi)$ be absolutely continuous. We extend to bounded measurable $\mu:\mathbb{R}\longrightarrow\vartheta(\xi)$ by setting $\mu(t):=\xi$ if $t\notin [0,1]$. Thus $h^{\perp}(\mu)(t)=0$ if $t\notin [0,1]$, hence $h^{\perp}(\mu)$ is bounded measurable with compact support in $[0,1]$. Assume $\|\dot{\mu}\|_{\infty}<\infty$ and $\varphi\in C^{\infty,1}(\mathbb{R})$ s.t.~$\varphi\geq 0$ and $\|\varphi\|_{1}=1$. For all $\eta\in\vartheta(\xi)$, get $\sharp\eta>0$ in $A_{\xi}$ by $1)$ in Proposition \ref{PRP.RM_Embedded_Submanifold_II}. Since further $\xi\in\vartheta(\xi)$ by $2)$ in Theorem \ref{THM.Wstar_Derivation_QG_HSG_Regularity}, continuity implies

\begin{align}\label{EQ.SSEC.QOT_AC_RM_7}
\inf_{t\in\mathbb{R}}\hspace{0.025cm} \sigma\lc\mu(t)\rc{}>0.
\end{align}

\noindent Using $\varphi\geq 0$ and $\|\varphi\|_{1}=1$, Equation \ref{EQ.SSEC.QOT_AC_RM_4} and Equation \ref{EQ.SSEC.QOT_AC_RM_7} show

\begin{align}\label{EQ.SSEC.QOT_AC_RM_8}
\sharp\lc\mu\ast\varphi\rc{}(t)\geq\inf_{t\in\mathbb{R}}\hspace{0.025cm} \sigma\lc\mu(t)\rc\cdot \supp\xi>0
\end{align}

\noindent in $A_{\xi}$ for all $t\in\mathbb{R}$. Equation \ref{EQ.SSEC.QOT_AC_RM_8} shows $\lc\mu\ast\varphi\rc{}(t)\in\vartheta(\xi)$ for all $t\in\mathbb{R}$. Taken together with Equation \ref{EQ.SSEC.QOT_AC_RM_5}, we have smooth $\mu\ast\varphi:\mathbb{R}\longrightarrow\vartheta(\xi)$. Equation \ref{EQ.SSEC.QOT_AC_RM_6} shows

\begin{align}\label{EQ.SSEC.QOT_AC_RM_9}
\frac{d}{dt}\lc\mu\ast\varphi\rc{}(t)=\frac{d}{dt}\lc{}h^{\perp}(\mu)\ast\varphi\rc{}(t)=\lc\dot{\mu}\ast\varphi\rc{}(t)\in I(\Delta_{\xi})^{\flat}
\end{align}

\noindent for a.e.~$t\in\mathbb{R}$.

\begin{rem}\label{REM.RM_Heat_Kernel}
For all $n\in\mathbb{N}$, we consider normal distribution for $\sigma^{2}=n^{-1}$ given by

\begin{align}\label{EQ.REM.RM_Heat_Kernel_1}
\varphi_{n}(t):=\sqrt{\frac{n}{2\pi}}\hspace{0.025cm}\exp\bigg(-\frac{t^{2}n}{2}\bigg)
\end{align}

\noindent for all $t\in\mathbb{R}$ \cite{BK.Pap.2002.Measures}. We have $\varphi_{n}\in C^{\infty,1}(\mathbb{R})$, $\varphi_{n}\geq 0$ and $\dblv{}\varphi_{n}\dblv_{1}=1$ in each case. We use such Dirac sequence $\{\varphi_{n}\}_{n\in\mathbb{N}}\subset C^{\infty,1}(\mathbb{R})$ \cite{BK.Eva.2010.Partial_Differential_Equations} for Bochner convolutions in Lemma \ref{LEM.RM}.
\end{rem}

\begin{lem}\label{LEM.RM}
Let $\mu^{0},\mu^{1}\in\vartheta(\xi)$ and $(\mu,w)\in\Admnullone\lc\mu^{0},\mu^{1}\rc$ s.t.~$E^{f,\theta}(\mu,w)<\infty$.

\begin{itemize}
\item[1)] If $\mu:[0,1]\longrightarrow\vartheta(\xi)$ and $\|\dot{\mu}\|_{\infty}<\infty$, then there exists family $\lset\mu^{n}:[0,1] \longrightarrow\vartheta(\xi)\rset_{n\in\mathbb{N}}$ of smooth paths s.t.~

\begin{itemize}
\item[1.1)] $\lc\mu^{n},\Theta\lc\mu^{n},\dot{\mu}^{n}\rc\rc\in\Admnullone$ for all $n\in\mathbb{N}$, \phantom{\big)}

\item[1.2)] $\lim_{n\in\mathbb{N}}\hspace{0.025cm} \lc\mu^{n},\Theta\lc\mu^{n},\dot{\mu}^{n}\rc\rc{}=\lc\mu,\Theta(\mu,\dot{\mu})\rc$ in $\Admnullone$, \phantom{\big)}

\item[1.3)] $\lim_{n\in\mathbb{N}}E^{f,\theta}\lc\mu^{n},\Theta\lc\mu,\dot{\mu}^{n}\rc\rc{}=E^{f,\theta}\lc\mu,\Theta(\mu,\dot{\mu})\rc\leq E^{f,\theta}(\mu,w)$. \phantom{\big)}
\end{itemize}

\begin{reapply}
\end{reapply}

\item[2)] If $\|\dot{\mu}\|_{\infty}<\infty$, then there exists $(\mu^{n},w^{n})_{n\in\mathbb{N}}\subset\Adm\lc\mu^{0},\mu^{1}\rc$ and $C>0$ s.t.~

\begin{itemize}
\item[2.1)] $\mu^{n}:[0,1]\longrightarrow\vartheta(\xi)$ and $\|\dot{\mu}^{n}\|_{\infty}\leq C\|\dot{\mu}\|_{\infty}$ for all $n\in\mathbb{N}$, \phantom{\big)}

\item[2.2)] $\liminf_{n\in\mathbb{N}}E^{f,\theta}(\mu^{n},w^{n})\leq E^{f,\theta}(\mu,w)$. \phantom{\big)}
\end{itemize}

\begin{reapply}
\end{reapply}

\item[3)] If $(\mu,w)\in\Geo\lc\mu^{0},\mu^{1}\rc$, then $\|\dot{\mu}\|_{\infty}<\infty$.
\end{itemize}
\end{lem}

%NEWPAGE
%NEWPAGE
%NEWPAGE

\pagebreak

%NEWPAGE
%NEWPAGE
%NEWPAGE

\begin{proof}
We show $1)$. Assume its setting. In particular, we have $\lc\mu,\Theta(\mu,\dot{\mu})\rc\in\Admnullone$ and $E^{f,\theta}\lc\mu,\Theta(\mu,\dot{\mu})\rc\leq E^{f,\theta}(\mu,w)$ by $1)$, resp.~$3)$ in Proposition \ref{PRP.RM_III}. Continuity implies $\|\mu\|_{\infty}=\esssup_{t\in [0,1]}\|\mu\|_{A^{*}}<\infty$.\par
For all $n\in\mathbb{N}$, let $\varphi_{n}\in C^{1,\infty}(\mathbb{R})$ be the normal distribution as per Equation \ref{EQ.REM.RM_Heat_Kernel_1} and set $\mu^{n}:=\mu\ast\varphi_{n}$. Then $\|\mu\|_{\infty},\|\dot{\mu}\|_{\infty}<\infty$ implies $\mu^{n}:\mathbb{R}\longrightarrow\vartheta(\xi)$ is smooth s.t.~$\dot{\mu}^{n}=\dot{\mu}\ast\varphi_{n}$ in each case. For all $n\in\mathbb{N}$, we directly verify

\begin{align}\label{EQ.LEM.RM_1}
\|\mu^{n}\|_{\infty}\leq \|\mu\|_{\infty}<\infty,\ \|\dot{\mu}^{n}\|_{\infty}\leq \|\dot{\mu}\|_{\infty}<\infty
\end{align}

\noindent using $\|\varphi\|_{1}=1$. We show $\lset\mu^{n}\vert_{[0,1]}:[0,1] \longrightarrow\vartheta(\xi)\rset_{n\in\mathbb{N}}$ is a sequence as claimed. Get $1.1)$ by $1)$ in Proposition \ref{PRP.RM_III}. Testing on $A$, standard properties of Dirac sequences imply $\mu(t)=w^{*}$-$\lim_{n\in\mathbb{N}}\mu^{n}(t)$ for all $t\in [0,1]$ and $\dot{\mu}(t)=w^{*}$-$\lim_{n\in\mathbb{N}}\dot{\mu}^{n}(t)$ for a.e.~$t\in [0,1]$ \cite{BK.Eva.2010.Partial_Differential_Equations}. All norms and operator topologies here are equivalent by finite-dimensionality. Thus $1.2)$ and $1.3)$ follow by dominated convergence if there exists $C>0$ s.t.~

\begin{align}\label{EQ.LEM.RM_2}
\dblv{}\sharp\Theta\lc\mu^{n}(t),\dot{\mu}^{n}(t)\rc\dblv_{\omega},g_{\mu^{n}(t)}^{\xi}\lc\dot{\mu}(t),\dot{\mu}(t)\rc\leq C
\end{align}

\noindent for a.e.~$t\in [0,1]$ and all $n\in\mathbb{N}$.\par
We show there exists $C>0$ as for Equation \ref{EQ.LEM.RM_2}. Using Equation \ref{EQ.LEM.RM_1}, applying $\|\dot{\mu}^{n}\|_{\infty}\leq \|\dot{\mu}\|_{\infty}$ in each case lets us estimate

\begin{align}\label{EQ.LEM.RM_3}
\dblv{}\sharp\Theta\lc\mu^{n}(t),\dot{\mu}^{n}(t)\rc\dblv_{\omega}\leq \dblv{}\mathfrak{G}_{\mu^{n}(t)}\dblv{}\cdot \dblv{}\mathfrak{F}_{\mu^{n}(t)}^{-1}\dblv{}\cdot \|\dot{\mu}\|_{\infty} 
\end{align}

\noindent and

\begin{align}\label{EQ.LEM.RM_4}
g_{\mu^{n}(t)}^{\xi}\lc\dot{\mu}^{n}(t),\dot{\mu}^{n}(t)\rc\leq \dblv{}\mathfrak{F}_{\mu^{n}(t)}^{-\frac{1}{2}}\dblv{}\cdot \|\dot{\mu}\|_{\infty} 
\end{align}

\noindent for a.e.~$t\in [0,1]$ and all $n\in\mathbb{N}$. Since moreover $\|\mu^{n}\|_{\infty}\leq \|\mu\|_{\infty}$ in each case, get uniform bound for $\lset\dblv{}\mathfrak{G}_{\mu^{n}(t)}\dblv{}\rset_{t\in\mathbb{R},n\in\mathbb{N}}$ by continuity. Uniform bound for $\lset\dblv{}\mathfrak{F}_{\mu^{n}(t)}^{-1}\dblv{}\rset_{t\in\mathbb{R},n\in\mathbb{N}}$ follows by $1)$ in Proposition \ref{PRP.RM_I} if

\begin{align}\label{EQ.LEM.RM_5}
\inf_{t\in\mathbb{R}}\hspace{0.025cm} \sigma\lc\mu^{n}(t)\rc\geq\inf_{t\in\mathbb{R}}\hspace{0.025cm} \sigma\lc\mu(t)\rc{}>0
\end{align}

\noindent for all $n\in\mathbb{N}$. Using Lemma \ref{LEM.Spectral_Gap_Maximal}, Equation \ref{EQ.SSEC.QOT_AC_RM_8} shows Equation \ref{EQ.LEM.RM_5} by maximality of spectral gaps. Applying uniform bounds to Equation \ref{EQ.LEM.RM_3} and Equation \ref{EQ.LEM.RM_4} yields $C>0$ as required. Get $1.2)$ and $1.3)$ by dominated convergence. Altogether, get $1)$.\par

%NEWPAGE
%NEWPAGE
%NEWPAGE

\pagebreak

%NEWPAGE
%NEWPAGE
%NEWPAGE

We show $2)$. Assume $\|\dot{\mu}\|_{\infty}<\infty$. We adapt the proof of Lemma 3.30 in \cite{ART.Maa.2011.Discrete_OT_Markov}. We construct two types of perturbed paths and concatenate them. For all $\varepsilon\in (0,1)$, set

\begin{align}\label{EQ.LEM.RM_6}
\mu^{\varepsilon}(t):=(1-\varepsilon)\mu(t)+\varepsilon\xi,\ v^{\varepsilon}(t):=(1-\varepsilon)w(t)
\end{align}

\noindent for all $t\in [0,1]$. Since $\sharp\mu^{0},\sharp\mu^{1},\sharp\xi>0$ in $A_{\xi}$, we see $\mu^{\varepsilon}(t)\in\vartheta(\xi)$ in each case. Moreover, we directly verify $\lc\mu^{\varepsilon},v^{\varepsilon}\rc\in\Admnullone$. This is the first type of perturbed path.\par
For all $\varepsilon\in (0,1)$ and $k\in\lset{}0,1\rset$, set

\begin{align}\label{EQ.LEM.RM_7}
\mu^{k,\varepsilon}(t):=\lc{}1-t\rc\mu(k)+t\mu^{\varepsilon}(k)
\end{align}

\noindent for all $t\in [0,1]$. Since $\sharp\mu^{0},\sharp\mu^{1},\sharp\xi>0$ in $A_{\xi}$, we see $\mu^{k,\varepsilon}(t)\in\vartheta(\xi)$ in each case. There further exists $C>0$ s.t.~

\begin{align}\label{EQ.LEM.RM_8}
\sharp\mu^{k,\varepsilon}(t)\geq C\cdot \supp\xi
\end{align}

\noindent for all $\varepsilon\in (0,1)$, $k\in\lset{}0,1\rset$ and $t\in [0,1]$. For all $\varepsilon\in (0,1)$ and $k\in\lset{}0,1\rset$, set

\begin{align}\label{EQ.LEM.RM_9}
v^{k,\varepsilon}(t):=\varepsilon\cdot \Theta\big(\mu^{k,\varepsilon}(t),\xi-\mu(k)\big)
\end{align}

\noindent for all $t\in [0,1]$. Since $\frac{d}{dt}\mu^{k,\varepsilon}(t)=\varepsilon\lc\xi-\mu(k)\rc$ in each case, get $\lc\mu^{k,\varepsilon},v^{k,\varepsilon}\rc\in\Adm\lc\mu^{0},\mu^{1}\rc$ at once by $1)$ in Proposition \ref{PRP.RM_III}. This is the second type of perturbed path.\par
We concatenate these two types of paths. For all $\varepsilon\in (0,1)$, we define concatenated path on $[0,1]$ by setting

\begin{align*}
\lc\mu^{\varepsilon},w^{\varepsilon}\rc{}(t):=
\begin{cases} 
\lc\mu^{0,\varepsilon},\varepsilon^{-1}v^{0,\varepsilon}\rc\lc\varepsilon^{-1}t\rc & \If\ t\leq\varepsilon, \phantom{\bigg)} \\
\lc\mu^{\varepsilon},\lc{}1-2\varepsilon\rc^{-1}v^{\varepsilon}\rc\lc (1-2\varepsilon)^{-1}(t-\varepsilon)\rc & \If\ \varepsilon<t<1-\varepsilon, \phantom{\big)} \\
\lc\mu^{1,\varepsilon},\varepsilon^{-1}v^{1,\varepsilon}\rc\lc\varepsilon^{-1}(1-t)\rc & \If\ t\geq 1-\varepsilon. \phantom{\bigg)}
\end{cases}
\end{align*}

\noindent We have $\mu^{\varepsilon}:[0,1]\longrightarrow\vartheta(\xi)$ and $\lc\mu^{\varepsilon},w^{\varepsilon}\rc\in\Adm\lc\mu^{0},\mu^{1}\rc$ in each case. Moreover, we directly verify there exists $C>0$ s.t.~$\sup_{\varepsilon\in (0,1)}\|\dot{\mu}^{\varepsilon}\|_{\infty}\leq C\|\dot{\mu}\|_{\infty}$. We readily see $2.1)$ is satisfied for all countable subsequences of $\lc\mu^{\varepsilon},w^{\varepsilon}\rc_{\varepsilon>0}$. We claim $2.2)$ is likewise satisfied.\par
We show $2.2)$. For all $\varepsilon\in (0,1)$ and $k\in\lset{}0,1\rset$, joint convexity of quasi-entropies as per $1)$ in Theorem \ref{THM.QE_AF} shows

\begin{align}\label{EQ.LEM.RM_10} 
E^{f,\theta}\lc\mu^{\varepsilon},v^{\varepsilon}\rc\leq (1-\varepsilon)\cdot E^{f,\theta}(\mu,w). 
\end{align}

\noindent Using Lemma \ref{LEM.Spectral_Gap_Maximal}, note Equation \ref{EQ.LEM.RM_8} shows $\inf_{t\in [0,1]}\sigma\lc\mu^{k,\varepsilon}(t)\rc\geq C$ in each case by maximality of spectral gaps. We invert the latter to get $\sup_{t\in [0,1]}\sigma\lc\mu^{k,\varepsilon}(t)\rc^{-1}\leq C^{-1}$.\par
Using $2)$ in Proposition \ref{PRP.RM_II}, we therefore have

\begin{align}\label{EQ.LEM.RM_11} 
E^{f,\theta}\lc\mu^{k,\varepsilon},v^{k,\varepsilon}\rc\leq\varepsilon^{2}\sigma(\Delta)^{-1}C^{-\theta}\cdot \big\|\sharp\xi-\sharp\mu(k)\big\|_{\tau}^{2}
\end{align}

\noindent for all $\varepsilon\in (0,1)$ and $k\in\lset{}0,1\rset$. Rescaling as per Remark \ref{REM.Energy_Functional_Reparametrisation} shows

\begin{align}\label{EQ.LEM.RM_12}
E^{f,\theta}\lc\mu^{\varepsilon},w^{\varepsilon}\rc{}=\varepsilon^{-1}E^{f,\theta}\lc\mu^{0,\varepsilon},v^{0,\varepsilon}\rc{}+\lc{}1-2\varepsilon\rc^{-1}E^{f,\theta}\lc\mu^{\varepsilon},v^{\varepsilon}\rc{}+\varepsilon^{-1}E^{f,\theta}\lc\mu^{1,\varepsilon},v^{1,\varepsilon}\rc{}
\end{align}

\noindent for all $\varepsilon\in (0,1)$. Applying Equation \ref{EQ.LEM.RM_10} and Equation \ref{EQ.LEM.RM_11} to Equation \ref{EQ.LEM.RM_12} yields

\begin{align}\label{EQ.LEM.RM_13}
E^{f,\theta}\lc\mu^{\varepsilon},w^{\varepsilon}\rc\leq 2\varepsilon\cdot \sigma(\Delta)^{-1}C^{-\theta}\cdot \big\|\sharp\xi-\sharp\mu(k)\big\|_{\tau}^{2}+\frac{1-\varepsilon}{1-2\varepsilon}E^{f,\theta}(\mu,w)
\end{align}

\noindent for all $\varepsilon\in (0,1)$. Letting $\varepsilon\downarrow 0$ in Equation \ref{EQ.LEM.RM_13} yields $E^{f,\theta}(\mu,w)$ on its right-hand side. We therefore have $2.2)$ as claimed. Altogether, get $2)$.\par
We show $3)$. Assume $\mu\in\Geo\lc\mu^{0},\mu^{1}\rc$. Minimising geodesics have $t$-a.e.~constant speed by $1)$ in Proposition \ref{PRP.QOT_Distance_Geodesics}. By definition, the quasi-entropy evaluated on $(\mu,w)$ is thus $t$-a.e.~constant. There exists $C>0$ s.t.~$\sup_{t\in [0,1]}\|\sharp w(t)\|_{\omega}\leq C$ by $4)$ in Theorem \ref{THM.QE_AF}. We have $\|\nabla^{*}\|<\infty$ by finite-dimensionality. The continuity equation lets us calculate

\begin{align}\label{EQ.LEM.RM_14}
\big\|\dot{\mu}(t)\big\|_{\tau}=\dblv{}\nabla^{*}\sharp w(t)\dblv_{\tau}\leq\big\|\nabla^{*}\big\|\cdot \dblv{}\sharp w(t)\dblv_{\omega}\leq\big\|\nabla^{*}\big\|\cdot C<\infty
\end{align}

\noindent for a.e.~$t\in [0,1]$. Equation \ref{EQ.LEM.RM_14} implies $3)$ at once.
\end{proof}

\begin{thm}\label{THM.RM}
Let $(\phi,\bpsi,\gamma,\nabla)$ be noncommutative differential structure for tracial AF-$C^{*}$-algebras $(A,\tau)$ and $(B,\omega)$ in $\lc{}f,\theta\rc$-setting. Assume $A$ and $B$ are finite-dimensional. If $\xi\in\SII(A)$ is a fixed state, then $\mathcal{W}_{\nabla\vert\vartheta(\xi)\times\vartheta(\xi)}^{f,\theta}$ is the distance induced by $g^{\xi}$.
\end{thm}
\begin{proof}
Let $\xi\in\SII(A)$ be a fixed state. Proposition \ref{PRP.RM_III} shows the induced distance $d^{\xi}$ of $g^{\xi}$ is given by minimising

\begin{align}\label{EQ.THM.RM_1}
\sqrt{E^{f,\theta}\lc\mu,\Theta(\mu,\dot{\mu})\rc{}}=\sqrt{\int_{0}^{1}g_{\mu(t)}^{\xi}\lc\dot{\mu}(t),\dot{\mu}(t)\rc{}dt}
\end{align}

\noindent over smooth paths $\mu:[0,1]\longrightarrow\vartheta(\xi)$. Thus $1)$ and $2)$ in Lemma \ref{LEM.RM} show $d^{\xi}$ is given by minimising over absolutely continuous path with marginals in $\vartheta(\xi)$ and bounded measurable derivative, hence we conclude by $3)$ in Lemma \ref{LEM.RM}.
\end{proof}

\begin{cor}\label{COR.RM}
For all $\mu^{0},\mu^{1}\in\vartheta(\xi)$, there exists $(\mu,w)\in\Geo\lc\mu^{0},\mu^{1}\rc$ s.t.~$\mu(t)\in\vartheta(\xi)$ for all $t\in [0,1]$ and $\mu:[0,1]\longrightarrow\vartheta(\xi)$ is a minimising geodesic in distance induced by $g^{\xi}$.
\end{cor}
\begin{proof}
Let $\mu^{0},\mu^{1}\in\vartheta(\xi)$. Get $(\mu,w)\in\Geo\lc\mu^{0},\mu^{1}\rc$ by $3)$ in Corollary \ref{COR.QOT_Distance_AC_II}. Lemma \ref{LEM.RM} implies $\mu(t)\in\vartheta(\xi)$ for all $t\in [0,1]$ by minimality. We conclude by Theorem \ref{THM.RM}.
\end{proof}

%%%%%%%%%%%%%
%%% PART %%%%
%%%%%%%%%%%%%

\subsubsection*{Accessibility components of square integrable normal states}

Assuming spectral gaps of quantum Laplacians and fixed parts, Theorem \ref{THM.QOT_Distance_AC_L2} classifies accessibility components of square integrable normal states by showing each one is a norm closed convex subsets of all such states with identical fixed part. Theorem \ref{THM.QOT_Distance_AC_L2} uses Lemma \ref{LEM.QOT_Distance_AC_L2}. We show the lemma by twice reduction. This lets us adapt the proof of Proposition 9.2 in \cite{ART.Car_Maa.2020.Quantum_OT_III}. In the finite-dimensional setting, assumptions as above are satisfied and Corollary \ref{COR.QOT_Distance_AC_L2} classifies all accessibility components using fixed parts.\par
Moreover, the coarse graining process reveals more general classification schemes by intersecting with convex subsets of states other than square integrable normal ones. In the logarithmic mean setting and assuming strictly positive lower Ricci bounds, as well as finitely supported fixed part but not spectral gaps, Theorem \ref{THM.QOT_Distance_AC_Rel_Ent} classifies accessibility components of normal states with finite quantum relative entropy using fixed parts. Here, Example \ref{BSP.QOT_Distance_AC_L2_2} constructs quantum Laplacians having spectral gaps for the unique hyperfinite type II$_{1}$-factor.\par
Let $(\phi,\bpsi,\gamma,\nabla)$ be noncommutative differential structure for tracial AF-$C^{*}$-algebras $(A,\tau)$ and $(B,\omega)$ in $\lc{}f,\theta\rc$-setting. Assume $\sigma(\Delta)>0$.

\begin{lem}\label{LEM.QOT_Distance_AC_L2}
Let $\xi\in\mathcal{S}_{-1}^{\NI,2}(A_{\xi})$ be a fixed state. For all $\mu,\eta\in\Fix_{A}(\xi)\cap\mathcal{S}^{\NI,2}(A)$ and $\varepsilon\in (0,1]$, we have

\begin{align}\label{EQ.LEM.QOT_Distance_AC_L2_1}
\mathcal{W}_{\nabla}^{f,\theta}(\mu,\eta)\leq 2\sigma(\Delta)^{-\frac{1}{2}}\sigma(\xi)^{-\frac{\theta}{2}}\varepsilon^{-\frac{\theta}{2}}\cdot \bigg(\dblv{}\varepsilon h^{\perp}\lc\sharp\mu\rc\dblv_{\tau}+\dblv{}(1-\varepsilon)h^{\perp}\lc\sharp\mu\rc{}-h^{\perp}\lc\sharp\eta\rc\dblv_{\tau}\bigg)<\infty.
\end{align}
\end{lem}
\begin{proof}
We reduce twice in order to adapt the proof of Proposition 9.2 in \cite{ART.Car_Maa.2020.Quantum_OT_III}. First, we reduce to $\mu,\eta\in\Fix_{A}(\xi)\cap\mathcal{S}^{\NI,2}(A)$ s.t.~$\bar{\mu}_{j},\bar{\eta}_{j}\in\vartheta\lc\bar{\xi}_{j}\rc$ for a.e.~$j\in\mathbb{N}$. Secondly, we reduce to the finite-dimensional setting. Let $\varepsilon\in (0,1]$. Set 

\begin{align}\label{EQ.LEM.QOT_Distance_AC_L2_2}
C_{\varepsilon}:=2\sigma(\Delta)^{-\frac{1}{2}}\sigma(\xi)^{-\frac{\theta}{2}}\varepsilon^{-\frac{\theta}{2}}.    
\end{align}

We engage in the first reduction. Let $\mu,\eta\in\Fix_{A}(\xi)\cap\mathcal{S}^{\NI,2}(A)$. Note $\mathcal{W}_{\nabla}^{f,\theta}$ is l.s.c.~in $w^{*}$-topology by $3)$ in Theorem \ref{THM.QOT_Distance}. In addition, we know $2.1)$ in Proposition \ref{PRP.Wstar_Derivation_QG_HSG_II} ensures $h:[0,\infty]\longrightarrow\BII(A^{*})$ is $w^{*}$-continuous on $\SII(A)$. We obtain

\begin{align}\label{EQ.LEM.QOT_Distance_AC_L2_3}
\mathcal{W}_{\nabla}^{f,\theta}(\mu,\eta)\leq\liminf_{t\downarrow 0}\hspace{0.0675cm} \mathcal{W}_{\nabla}^{f,\theta}\lc{}h_{t}(\mu),h_{t}(\eta)\rc{}.
\end{align}

\noindent Strong continuity of $h:[0,\infty)\longrightarrow\BII\lc{}L^{2}(A,\tau)\rc$ as per $1)$ in Proposition \ref{PRP.Wstar_Derivation_QG_HSG_II} together with $\lb{}h_{t},h^{\perp}\rb{}=0$ for all $t\geq 0$ further yields

\begin{align}\label{EQ.LEM.QOT_Distance_AC_L2_4}
\dblv{}\varepsilon h^{\perp}\lc\sharp\mu\rc\dblv_{\tau}=\lim_{t\downarrow 0}\hspace{0.025cm} \dblv{}\varepsilon h^{\perp}\lc\sharp h_{t}(\mu)\rc\dblv_{\tau}  
\end{align}

\noindent and

\begin{align}\label{EQ.LEM.QOT_Distance_AC_L2_5}
\dblv{}(1-\varepsilon)h^{\perp}\lc\sharp\mu\rc{}-h^{\perp}\lc\sharp\eta\rc\dblv_{\tau}=\lim_{t\downarrow 0}\hspace{0.025cm} \dblv{}(1-\varepsilon)h^{\perp}\lc\sharp h_{t}(\mu)\rc{}-h^{\perp}\lc\sharp h_{t}(\eta)\rc\dblv_{\tau}.  
\end{align}

%NEWPAGE
%NEWPAGE
%NEWPAGE

\pagebreak

%NEWPAGE
%NEWPAGE
%NEWPAGE

Equation \ref{EQ.LEM.QOT_Distance_AC_L2_3}, Equation \ref{EQ.LEM.QOT_Distance_AC_L2_4} and Equation \ref{EQ.LEM.QOT_Distance_AC_L2_5} imply Equation \ref{EQ.LEM.QOT_Distance_AC_L2_1} if

\begin{align}\label{EQ.LEM.QOT_Distance_AC_L2_6}
\mathcal{W}_{\nabla}^{f,\theta}\lc{}h_{t}(\mu),h_{t}(\eta)\rc\leq C_{\varepsilon}\cdot \bigg(\dblv{}\varepsilon h^{\perp}\lc\sharp h_{t}(\mu)\rc\dblv_{\tau}+\dblv{}(1-\varepsilon)h^{\perp}\lc\sharp h_{t}(\mu)\rc{}-h^{\perp}\lc\sharp h_{t}(\eta)\rc\dblv_{\tau}\bigg)
\end{align}

\noindent for all $t>0$. If $\xi_{j}\neq 0$ for $j\in\mathbb{N}$, then $\overline{h_{t}(\mu)}_{j}=h_{t}\lc\bar{\mu}_{j}\rc{},\overline{h_{t}(\eta)}_{j}=h_{t}\lc\bar{\eta}_{j}\rc\in\vartheta\lc\bar{\xi}_{j}\rc$ for all $t>0$ by $1.3)$ in Proposition \ref{PRP.Wstar_Derivation_QG_HSG_Fixed_Part_I} and $1.3)$ in Theorem \ref{THM.Wstar_Derivation_QG_HSG_Regularity}. Since $\xi_{j}\neq 0$ for a.e.~$j\in\mathbb{N}$, we see Equation \ref{EQ.LEM.QOT_Distance_AC_L2_6} lets us apply the first reduction by $3)$ in Theorem \ref{THM.QOT_Distance}.\par
We engage in the second reduction. Let $\mu,\eta\in\Fix_{A}(\xi)\cap\mathcal{S}^{\NI,2}(A)$. Assume there exists $k\in\mathbb{N}$ s.t.~$\bar{\mu}_{j},\bar{\eta}_{j}\in\vartheta\lc\bar{\xi}_{j}\rc$ for all $j\geq k$ in $\mathbb{N}$. Let $0<\delta<\sigma(\xi)$. Set

\begin{align}\label{EQ.LEM.QOT_Distance_AC_L2_7}
C_{\delta}:=\sigma(\xi)-\delta.    
\end{align}

\noindent Following Remark \ref{REM.AF_Support_Projection}, $1)$ in Lemma \ref{LEM.AF_Support_Projection_II} implies there exists $l\in\mathbb{N}$ s.t.~

\begin{align}\label{EQ.LEM.QOT_Distance_AC_L2_8}
0<C_{\delta}\leq\sigma\lc\bar{\xi}_{j}\rc{}
\end{align}

\noindent for all $j\geq l$ in $\mathbb{N}$. Set $m:=\max\hspace{-0.033875cm} \lset{}k,l\rset$. Further set

\begin{align}\label{EQ.LEM.QOT_Distance_AC_L2_9}
\mu^{\varepsilon}:=(1-\varepsilon)\mu+\varepsilon\xi
\end{align}

\noindent for all $t\in [0,1]$. For all $j\in\mathbb{N}$, set $\bar{\mu}_{j}^{\varepsilon}:=\mu^{\varepsilon}(1_{A_{j}})^{-1}\mu_{j}^{\varepsilon}$ as per $1)$ in Definition \ref{DFN.AF_Cstar_Trace_Dualisation_Paths}.\par
Assume

\begin{align}\label{EQ.LEM.QOT_Distance_AC_L2_10}
\mathcal{W}_{\nabla_{\hspace{-0.055cm} j}}^{f,\theta}\lc\bar{\mu}_{j},\bar{\mu}_{j}^{\varepsilon}\rc\leq 2\sigma\lc\Delta_{j}\rc^{-\frac{1}{2}}C_{\delta}^{-\frac{\theta}{2}}\varepsilon^{-\frac{\theta}{2}}\dblv{}\varepsilon h^{\perp}\lc\sharp\bar{\mu}_{j}\rc\dblv_{\tau}    
\end{align}

\noindent and

\begin{align}\label{EQ.LEM.QOT_Distance_AC_L2_11}
\mathcal{W}_{\nabla_{\hspace{-0.055cm} j}}^{f,\theta}\lc\bar{\eta}_{j},\bar{\mu}_{j}^{\varepsilon}\rc\leq 2\sigma\lc\Delta_{j}\rc^{-\frac{1}{2}}C_{\delta}^{-\frac{\theta}{2}}\varepsilon^{-\frac{\theta}{2}}\dblv{}\sharp\bar{\mu}_{j}^{\varepsilon}-\sharp\bar{\eta}_{j}\dblv_{\tau}
\end{align}

\noindent for all $j\geq m$ in $\mathbb{N}$. Using triangle inequality, Equation \ref{EQ.LEM.QOT_Distance_AC_L2_10} and Equation \ref{EQ.LEM.QOT_Distance_AC_L2_11} show

\begin{align}\label{EQ.LEM.QOT_Distance_AC_L2_12}
\mathcal{W}_{\nabla_{\hspace{-0.055cm} j}}^{f,\theta}\lc\bar{\mu}_{j},\bar{\eta}_{j}\rc\leq 2\sigma\lc\Delta_{j}\rc^{-\frac{1}{2}}C_{\delta}^{-\frac{\theta}{2}}\varepsilon^{-\frac{\theta}{2}}\cdot \lc\dblv{}\varepsilon h^{\perp}\lc\sharp\bar{\mu}_{j}\rc\dblv_{\tau}+\dblv{}\sharp\bar{\mu}_{j}^{\varepsilon}-\sharp\bar{\eta}_{j}\dblv_{\tau}\rc{}
\end{align}

\noindent for all $j\geq m$ in $\mathbb{N}$. Note $2)$ in Theorem \ref{THM.QOT_Distance} shows

\begin{align}\label{EQ.LEM.QOT_Distance_AC_L2_13}
\mathcal{W}_{\nabla}^{f,\theta}\lc\bar{\mu}_{j},\bar{\eta}_{j}\rc{}=\mathcal{W}_{\nabla_{\hspace{-0.055cm} j}}^{f,\theta}\lc\bar{\mu}_{j},\bar{\eta}_{j}\rc{}
\end{align}

\noindent in each case as well. Applying Equation \ref{EQ.LEM.QOT_Distance_AC_L2_13} to Equation \ref{EQ.LEM.QOT_Distance_AC_L2_12} yields

\begin{align}\label{EQ.LEM.QOT_Distance_AC_L2_14}
\mathcal{W}_{\nabla}^{f,\theta}\lc\bar{\mu}_{j},\bar{\eta}_{j}\rc\leq 2\sigma\lc\Delta_{j}\rc^{-\frac{1}{2}}C_{\delta}^{-\frac{\theta}{2}}\varepsilon^{-\frac{\theta}{2}}\cdot \lc\dblv{}\varepsilon h^{\perp}\lc\sharp\bar{\mu}_{j}\rc\dblv_{\tau}+\dblv{}\sharp\bar{\mu}_{j}^{\varepsilon}-\sharp\bar{\eta}_{j}\dblv_{\tau}\rc{}
\end{align}

\noindent for all $j\geq m$ in $\mathbb{N}$.\par

%NEWPAGE
%NEWPAGE
%NEWPAGE

\pagebreak

%NEWPAGE
%NEWPAGE
%NEWPAGE

Finally, $4)$ in Proposition \ref{PRP.Wstar_Derivation_Compression_II} implies

\begin{align}\label{EQ.LEM.QOT_Distance_AC_L2_15}
0<\sigma(\Delta)\leq\inf_{j\in\mathbb{N}}\hspace{0.025cm} \sigma\lc\Delta_{j}\rc{}
\end{align}

\noindent by Proposition \ref{PRP.Spectral_Gap}. Applying Equation \ref{EQ.LEM.QOT_Distance_AC_L2_15} to Equation \ref{EQ.LEM.QOT_Distance_AC_L2_14} yields

\begin{align}\label{EQ.LEM.QOT_Distance_AC_L2_16}
\mathcal{W}_{\nabla}^{f,\theta}\lc\bar{\mu}_{j},\bar{\eta}_{j}\rc\leq 2\sigma(\Delta)^{-\frac{1}{2}}C_{\delta}^{-\frac{\theta}{2}}\varepsilon^{-\frac{\theta}{2}}\cdot \lc\dblv{}\varepsilon h^{\perp}\lc\sharp\bar{\mu}_{j}\rc\dblv_{\tau}+\dblv{}\sharp\bar{\mu}_{j}^{\varepsilon}-\sharp\bar{\eta}_{j}\dblv_{\tau}\rc{}
\end{align}

\noindent for all $j\geq m$ in $\mathbb{N}$.\par
We apply limit inferior to both sides in Equation \ref{EQ.LEM.QOT_Distance_AC_L2_16}. In addition, we use l.s.c.~in $w^{*}$-topology for the left-hand side and $I=\s$-$\lim_{j\in\mathbb{N}}\pi_{j}^{A}$ for the right-hand side to get its $\|.\|_{\tau}$-limit. Altogether, applying limit inferior to Equation \ref{EQ.LEM.QOT_Distance_AC_L2_16} lets us estimate

\begin{align}\label{EQ.LEM.QOT_Distance_AC_L2_17}
\mathcal{W}_{\nabla}^{f,\theta}(\mu,\eta)\leq 2\sigma(\Delta)^{-\frac{1}{2}}C_{\delta}^{-\frac{\theta}{2}}\varepsilon^{-\frac{\theta}{2}}\cdot \bigg(\dblv{}\varepsilon h^{\perp}\lc\sharp\mu\rc\dblv_{\tau}+\dblv{}\sharp\mu^{\varepsilon}-\sharp\eta\dblv_{\tau}\bigg).
\end{align}

\noindent Note $\dblv{}\mu^{\varepsilon}-\eta\dblv_{\tau}=\dblv{}(1-\varepsilon)h^{\perp}(\mu)-h^{\perp}(\eta)\dblv_{\tau}$ since $\mu,\eta\in\Fix_{A}(\xi)$. Equation \ref{EQ.LEM.QOT_Distance_AC_L2_17} shows

\begin{align}\label{EQ.LEM.QOT_Distance_AC_L2_18}
\mathcal{W}_{\nabla}^{f,\theta}(\mu,\eta)\leq 2\sigma(\Delta)^{-\frac{1}{2}}C_{\delta}^{-\frac{\theta}{2}}\varepsilon^{-\frac{\theta}{2}}\cdot \bigg(\dblv{}h^{\perp}\lc\sharp\mu\rc\dblv_{\tau}+\dblv{}(1-\varepsilon)h^{\perp}\lc\sharp\mu\rc{}-h^{\perp}\lc\sharp\eta\rc\dblv_{\tau}\bigg).
\end{align}

\noindent If Equation \ref{EQ.LEM.QOT_Distance_AC_L2_10} and Equation \ref{EQ.LEM.QOT_Distance_AC_L2_11} hold for all $j\geq m$ in $\mathbb{N}$, then Equation \ref{EQ.LEM.QOT_Distance_AC_L2_18} in turn holds for $0<\delta<\sigma(\xi)$ fixed but arbitrary. Letting $\delta\downarrow 0$ in Equation \ref{EQ.LEM.QOT_Distance_AC_L2_18} therefore yields Equation \ref{EQ.LEM.QOT_Distance_AC_L2_6}. The latter lets us apply the second reduction.\par
Assume $A$ and $B$ are finite-dimensional. Let $\mu,\eta\in\Fix_{A}(\xi)$. We show Equation \ref{EQ.LEM.QOT_Distance_AC_L2_10} and Equation \ref{EQ.LEM.QOT_Distance_AC_L2_11}. We suppress subscript $j\in\mathbb{N}$ without loss of generality. Set
 
\begin{align}\label{EQ.LEM.QOT_Distance_AC_L2_19}
\mu^{\varepsilon}(s):=\lc{}1-s\rc\mu+s\xi,\ \eta^{\varepsilon}(t):=\lc{}1-t\rc\eta+t\mu^{\varepsilon}
\end{align}

\noindent for all $s\in [0,\varepsilon]$ and $t\in [0,1]$. Note $\mu^{\varepsilon}:[0,\varepsilon]\longrightarrow\vartheta(\xi)$ and $\eta^{\varepsilon}:[0,1]\longrightarrow\vartheta(\xi)$ are absolutely continuous. Using the map $t\mapsto\varphi(t):=\varepsilon t$, rescaling $\mu:=\mu^{\varepsilon}\circ\varphi$ as per Remark \ref{REM.Energy_Functional_Reparametrisation} yields absolutely continuous $\mu:[0,\varepsilon]\longrightarrow\vartheta(\xi)$. We may use double notation for $\mu$ and $\mu^{\varepsilon}$, each denoting state and path, since their meaning is clear from context. We have $\dot{\mu}(t)=-\varepsilon h^{\perp}(\mu)$ and $\dot{\eta}^{\varepsilon}=\mu^{\varepsilon}-\eta$ in each case. Proposition \ref{PRP.RM_III} shows $\mu,\eta^{\varepsilon}:[0,1]\longrightarrow\vartheta(\xi)$ induce admissible paths

\begin{align}\label{EQ.LEM.QOT_Distance_AC_L2_20}
\lc\mu,\Theta\lc\mu,-\varepsilon h^{\perp}(\mu)\rc\rc\in\Admnullone\lc\mu,\mu^{\varepsilon}\rc{},\ \lc\eta^{\varepsilon},\Theta\lc\eta^{\varepsilon},\mu^{\varepsilon}-\eta\rc\rc\in\Admnullone\lc\eta,\mu^{\varepsilon}\rc{}.
\end{align}

%NEWPAGE
%NEWPAGE
%NEWPAGE

\pagebreak

%NEWPAGE
%NEWPAGE
%NEWPAGE

Since $\sharp\xi>0$ in $A_{\xi}$, we have

\begin{align}\label{EQ.LEM.QOT_Distance_AC_L2_21}
\sharp\mu(t),\sharp\eta^{\varepsilon}(t)\geq t\varepsilon\cdot \sharp\xi\geq t\varepsilon\cdot \sigma(\xi)\cdot \supp\xi
\end{align}

\noindent in $A_{\xi}$ for all $t\in (0,1]$. Using Lemma \ref{LEM.Spectral_Gap_Maximal}, Equation \ref{EQ.LEM.QOT_Distance_AC_L2_21} shows

\begin{align}\label{EQ.LEM.QOT_Distance_AC_L2_22}
\sigma\lc\mu(t)\rc{},\sigma\lc\eta^{\varepsilon}(t)\rc\geq t\varepsilon\cdot \sigma(\xi)
\end{align}

\noindent for all $t\in (0,1]$ by maximality of spectral gaps. Using $1)$ in Proposition \ref{PRP.RM_I}, then note Equation \ref{EQ.LEM.QOT_Distance_AC_L2_22} in turn shows

\begin{align}\label{EQ.LEM.QOT_Distance_AC_L2_23}
\mathfrak{F}_{\mu(t)}^{-1},\mathfrak{F}_{\eta^{\varepsilon}(t)}^{-1}\leq\sigma(\Delta)^{-1}t^{-\theta}\varepsilon^{-\theta}\sigma(\xi)^{-\theta}\cdot I
\end{align}

\noindent on $\im\Delta_{\xi}$. We evaluate paths on lengths functionals. Using $2)$ in Proposition \ref{PRP.RM_II} in order to evaluate on the Riemannian metric, Equation \ref{EQ.LEM.QOT_Distance_AC_L2_23} lets us estimate

\begin{align}\label{EQ.LEM.QOT_Distance_AC_L2_24}
L^{f,\theta}\lc\mu,\Theta\lc\mu,-\varepsilon h^{\perp}(\mu)\rc\rc\leq \sigma(\Delta)^{-\frac{1}{2}}\sigma(\xi)^{-\frac{\theta}{2}}\varepsilon^{-\frac{\theta}{2}}\cdot \dblv{}\varepsilon h^{\perp}\lc\sharp\mu\rc\dblv_{\tau}\cdot \int_{0}^{1}t^{-\frac{\theta}{2}}dt
\end{align}

\noindent and

\begin{align}\label{EQ.LEM.QOT_Distance_AC_L2_25}
L^{f,\theta}\lc\eta^{\varepsilon},\Theta\lc\eta^{\varepsilon},\mu^{\varepsilon}-\eta\rc\rc\leq\sigma(\Delta)^{-\frac{1}{2}}\sigma(\xi)^{-\frac{\theta}{2}}\varepsilon^{-\frac{\theta}{2}}\cdot \dblv{}\sharp\mu^{\varepsilon}-\sharp\eta\dblv_{\tau}\cdot \int_{0}^{1}t^{-\frac{\theta}{2}}dt.
\end{align}

\noindent Note $\int_{0}^{1}t^{-\frac{\theta}{2}}dt=\big(1-\frac{\theta}{2}\big)^{-1}\leq 2<\infty$ since $\theta\in [0,1]$. Following Corollary \ref{COR.Length_Functional_Reparametrisation}, we obtain the required estimates at once by minimising the left-hand sides in Equation \ref{EQ.LEM.QOT_Distance_AC_L2_24} and Equation \ref{EQ.LEM.QOT_Distance_AC_L2_25} over admissible paths with marginals chosen accordingly.
\end{proof}

The statement of Theorem \ref{THM.QOT_Distance_AC_L2}, resp.~Corollary \ref{COR.QOT_Distance_AC_L2}, refers to continuity defined on accessibility components of square integrable normal states by norm topology under the standard modified pairing.

\begin{thm}\label{THM.QOT_Distance_AC_L2}
Let $(\phi,\bpsi,\gamma,\nabla)$ be noncommutative differential structure for tracial AF-$C^{*}$-algebras $(A,\tau)$ and $(B,\omega)$ in $\lc{}f,\theta\rc$-setting. Assume $\sigma(\Delta)>0$. If $\xi\in\mathcal{S}_{-1}^{\NI,2}(A_{\xi})$ is a fixed state, then

\begin{itemize}
\item[1)] $\mathcal{C}_{A}^{\NI,2}(\xi):=\mathcal{C}_{A}(\xi)\cap\mathcal{S}^{\NI,2}(A)=\Fix_{A}(\xi)\cap\mathcal{S}^{\NI,2}(A)$,

\item[2)] $\mathcal{W}_{\nabla\vert\mathcal{C}_{A}^{2}(\xi)\times\mathcal{C}_{A}^{2}(\xi)}^{f,\theta}$ is finite and $\|.\|_{\tau}$-continuous.
\end{itemize}
\end{thm}
\begin{proof}
Let $\xi\in\mathcal{S}_{-1}^{\NI,2}(A_{\xi})$ be a fixed state. Using $1.2)$ in Proposition \ref{PRP.AF_Cstar_Trace_Dualisation_II} and $1.3)$ in Proposition \ref{PRP.Wstar_Derivation_QG_HSG_Fixed_Part_I}, $2)$ in Corollary \ref{COR.QOT_Distance_AC_I} implies

\begin{align}\label{EQ.THM.QOT_Distance_AC_L2_1}
\mathcal{C}_{A}(\xi)\cap\mathcal{S}^{\NI,2}(A)\subset\textrm{Fix}_{A}(\xi)\cap\mathcal{S}^{\NI,2}(A). 
\end{align}

\noindent Lemma \ref{LEM.QOT_Distance_AC_L2} shows $\mathcal{W}_{\nabla}^{f,\theta}$ is finite on $\Fix_{A}(\xi)\cap\mathcal{S}^{\NI,2}(A)$, i.e.~converse to Equation \ref{EQ.THM.QOT_Distance_AC_L2_1}. Get $1)$. Note $h^{\perp}\in\BII\lc{}L^{2}(A,\tau)\rc$ is a projection by $2.1)$ in Proposition \ref{PRP.Wstar_Derivation_QG_HSG_I}.\par

%NEWPAGE
%NEWPAGE
%NEWPAGE

\pagebreak

%NEWPAGE
%NEWPAGE
%NEWPAGE

We show $2)$. Let $\mu\in\mathcal{C}_{A}^{\NI,2}(\xi)$ and $\lset\mu^{n}\rset_{n\in\mathbb{N}}\subset\mathcal{C}_{A}^{\NI,2}(\xi)$ s.t.~$\sharp\mu=\|.\|_{\tau}$-$\lim_{n\in\mathbb{N}}\sharp\mu^{n}$. Using Lemma \ref{LEM.QOT_Distance_AC_L2}, there exists $C>0$ s.t.~

\begin{align}\label{EQ.THM.QOT_Distance_AC_L2_2}
0\leq\limsup_{n\in\mathbb{N}}\hspace{0.025cm} \mathcal{W}_{\nabla}^{f,\theta}\lc\mu,\mu^{n}\rc\leq C\varepsilon^{1-\frac{\theta}{2}}\cdot \|h^{\perp}\|_{\BII\lc{}L^{2}(A,\tau)\rc{}}\cdot \|\sharp\mu\|_{\tau}=C\varepsilon^{1-\frac{\theta}{2}}\cdot \|\sharp\mu\|_{\tau}
\end{align}

\noindent for all $\varepsilon\in (0,1]$. Letting $\varepsilon\downarrow 0$ in Equation \ref{EQ.THM.QOT_Distance_AC_L2_2} shows $\|.\|_{\tau}$-continuity by l.s.c.~as per $3)$ in Theorem \ref{THM.QOT_Distance}. Get $2)$.
\end{proof}

\begin{cor}\label{COR.QOT_Distance_AC_L2}
Assume $A$ and $B$ are finite-dimensional. If $\xi\in\SII(A)=\mathcal{S}^{\NI}(A)$ is a fixed state, then

\begin{itemize}
\item[1)] $\mathcal{C}_{A}^{\NI,2}(\xi)=\mathcal{C}_{A}(\xi)=\overline{\vartheta(\xi)}^{\|.\|_{A}}=\Fix_{A}(\xi)=\Fix_{A}^{\NI}(\xi)$,

\item[2)] $\mathcal{W}_{\nabla\vert\mathcal{C}_{A}(\xi)\times\mathcal{C}_{A}(\xi)}^{f,\theta}$ is finite and $\|.\|_{A}$-continuous.
\end{itemize} 
\end{cor}
\begin{proof}
Get $\overline{\vartheta(\xi)}^{\|.\|_{A}}=\Fix_{A}(\xi)$ since $\vartheta(\xi)=\relint\Fix_{A}(\xi)$ by $1)$ in Proposition \ref{PRP.RM_Embedded_Submanifold_II}. We see Theorem \ref{THM.QOT_Distance_AC_L2} implies $1)$ and $2)$ as $\xi\in\mathcal{S}_{-1}^{\NI,2}(A_{\xi})$ by finite-dimensionality.
\end{proof}

\begin{bsp}\label{BSP.QOT_Distance_AC_L2_2}
Assume the setting of Example \ref{BSP.QOT_Type_II_1}. Let $\{e_{j}\}_{j\in\mathbb{N}}$ be orthonormal eigenbasis of $D\in\UBII(H)_{h}$. For all $j\in\mathbb{N}$, get $H_{j}=\langle e_{1},\ldots,e_{j}\rangle_{\mathbb{C}}$ and $A_{j}=\AII(H_{j}[J])$. Note $(A_{j},\tau)\cong \lc\otimes_{k=1}^{j}M_{2}(\mathbb{C}),2^{-j}\otimes_{k=1}^{j}\tr_{2}\rc\cong \lc{}M_{2^{j}}(\mathbb{C}),2^{-j}\tr_{2^{j}}\rc$ in each case \lc{}cf.~p.288 in \cite{BK.Nes_Sto.2006.Rel_Ent}\rc{}.\par
We give the $C^{*}$-isomorphism. Let $j\in\mathbb{N}$. Set 

\begin{align}\label{EQ.BSP.QOT_Distance_AC_L2_1}
V_{J}(e_{j}):=a_{J}(e_{j})a_{J}(e_{j})^{*}-a_{J}(e_{j})^{*}a_{J}(e_{j}).
\end{align}

\noindent For all $k\in\lset{}1,\ldots,j\rset$ and $n,m\in\lset{}1,2\rset$, let $E_{nm}^{k}$ be the $\lc{}n,m\rc$-unit matrix of $M_{2}(\mathbb{C})$ in the $k$-th factor of $\otimes_{k=1}^{j}M_{2}(\mathbb{C})$. We define $C^{*}$-isomorphism from $\otimes_{k=1}^{j}M_{2}(\mathbb{C})$ to $A_{j}$ by setting

\begin{align*}
E_{nm}^{k}\cong
\begin{cases} 
a_{J}(e_{k})a_{J}(e_{k})^{*} & \If\ n=m=1, \\
a_{J}(e_{k})^{*}a_{J}(e_{k}) & \If\ n=m=2, \\
a_{J}(e_{k})V_{J}\lc{}e_{1}\rc\cdots V_{J}\lc{}e_{k-1}\rc{} & \If\ n=1,\ m=2, \\
a_{J}(e_{k})^{*}V_{J}\lc{}e_{1}\rc\cdots V_{J}\lc{}e_{k-1}\rc{} & \If\ n=2,\ m=1.
\end{cases}
\end{align*}

\noindent Letting $j\uparrow\infty$ provides orthonormal basis of $L^{2}\lc\mathcal{A}(H),\tau\rc$ as follows, moreover suited to calculate sufficient conditions for quantum Laplacian with spectral gaps. Indexed over $k\in\mathbb{N}$ and $n,m\in\lset{}1,2\rset$, set

\begin{align*}
\mathbf{E}_{nm}^{k}:=
\begin{cases} 
E_{11}^{k}-E_{22}^{k} & \If\ n=m, \\
2E_{nm}^{k} & \If\ n\neq m.
\end{cases}
\end{align*}

%NEWPAGE
%NEWPAGE
%NEWPAGE

\pagebreak

%NEWPAGE
%NEWPAGE
%NEWPAGE

The set of all finite products $\mathbf{E}$ using factors in $\lset{}I,\mathbf{E}_{nm}^{k}\rset_{k\in\mathbb{N},n,m\in\lset{}1,2\rset}$ is orthonormal basis of $L^{2}\lc\mathcal{A}(H),\tau\rc$. For all $j\in\mathbb{N}$, let $\nu_{j}$ be the eigenvalue of $e_{j}$ and use Equation \ref{EQ.BSP.QOT_Type_II_6} in order to calculate

\begin{align}\label{EQ.BSP.QOT_Distance_AC_L2_2}
\nabla a_{J}(e_{j})=\restr{0.925}{\frac{d}{dt}}{t=0,\w}\alpha_{t}\lc{}a_{J}(e_{j})\rc{}=\restr{0.925}{\frac{d}{dt}}{t=0,\w}e^{-it\nu_{j}}\cdot a_{J}(e_{j})=-i\nu_{j}\cdot a_{J}(e_{j})
\end{align}

\noindent and therefore

\begin{align}\label{EQ.BSP.QOT_Distance_AC_L2_3}
\nabla a_{J}(e_{j})^{*}=\lc\nabla a_{J}(e_{j})\rc^{*}=i\nu_{j}\cdot a_{J}(e_{j}).
\end{align}

\noindent Equation \ref{EQ.BSP.QOT_Distance_AC_L2_2} and Equation \ref{EQ.BSP.QOT_Distance_AC_L2_3} show $\nabla V_{J}(e_{j})=0$ in each case. If $n=m$, then we further have $V_{J}(e_{j})=\mathbf{E}_{nm}^{j}$. For all $\mathbf{E}_{nm}^{k}\in\mathbf{E}$, the Leibniz rule therefore implies

\begin{align}\label{EQ.BSP.QOT_Distance_AC_L2_4}
\nabla\mathbf{E}_{nm}^{k}=\sum_{j\in I}\nu_{j}\cdot\mathbf{E}_{nm}^{k}    
\end{align}

\noindent for finite $I\subset\mathbb{N}$ depending on $\mathbf{E}_{nm}^{k}$. Since $\nabla^{*}=-\nabla$, we see Equation \ref{EQ.BSP.QOT_Distance_AC_L2_4} consequently shows all eigenvalues $\lambda$ of $\Delta$ have form

\begin{align}\label{EQ.BSP.QOT_Distance_AC_L2_5}
\lambda=\bbbabsv{0.95}{\hspace{0.0225cm}\sum_{j\in I}\nu_{j}\hspace{0.0675cm}}^{2}.
\end{align}

\noindent Assume there exists $C>0$ s.t.~$\nu_{j}\in C\mathbb{Z}$ for all $j\in\mathbb{N}$. Then Equation \ref{EQ.BSP.QOT_Distance_AC_L2_5} shows $\lambda=C^{2}q^{2}$ for $q\in\mathbb{Z}$ in each case. We have $\lambda=0$ if and only if $q=0$. Thus $\lambda\neq 0$ implies $|q|\geq 1$ and therefore $\lambda\geq C^{2}>0$, hence $\Delta$ either vanishes or has spectral gap.
\end{bsp}

%%%%%%%%%%%%%%%%
%%% SECTION %%%%
%%%%%%%%%%%%%%%%

\section{Coarse graining and transport of quantum information}\label{SEC.QOT_QIT}

We consider states on tracial AF-${C}^{*}$-algebras as scaling limits of uniformly conditioned spin states encoding sequences of qubits. Scaling limits arise from a coarse graining process associated to noncommutative differential structures. We view quantum optimal transport as transport of quantum information. Since energy functionals are $\Gamma$-limits w.r.t.~the coarse graining process, resp.~using our formalisation of the latter notion in Subsection \ref{SSEC.QOT_DT_MG}, we view minimising geodesics approximated in finite dimensions as optimal transport of information encoded in scaling limits as above.

\medskip

\noindent\textbf{Structure.} In Subsection \ref{SSEC.QOT_QIT_Encoding}, we discuss coarse graining and scaling limits. We consider states on tracial AF-${C}^{*}$-algebras as scaling limits of uniformly conditioned spin states encoding sequences of qubits. In Subsection \ref{SSEC.QOT_CG}, we give the coarse graining process and view quantum optimal transport as transport of quantum information.

%%%%%%%%%%%%%%%%%%%
%%% SUBSECTION %%%%
%%%%%%%%%%%%%%%%%%%

\subsection[Information encoded in states on tracial AF-$C^{*}$-algebras]{Information encoded in states on tracial AF-$\mathbf{C}^{*}$-algebras}\label{SSEC.QOT_QIT_Encoding}

The fundamental unit of quantum information is the quantum bit, or qubit \cite{BK.Nie_Chu.2000.Quantum_Computation_Information}\cite{ART.DiVi_Loss.1998.Quantum_Information_Physical}. We consider spin states encoding qubits \cite{ART.Bur_DiVi_Loss.1999.QC_Spin_QDots_as_QGates} since spin qubit quantum computers \cite{ART.Bur_Lad_Nic.2023.QC_Spin_Overview}\cite{BK.Nie_Chu.2000.Quantum_Computation_Information} operationalise \cite{ART.Ash_Geo_Nor.2014.Quantum_Simulation} spin states according to DiVicenzo's criteria \cite{ART.DiVi.2000.Criteria}\cite{ART.DiVi_Loss.1998.Quantum_Information_Physical}. We generalise to scaling limits of uniformly conditioned spin states encoding sequences of qubits. We show states on tracial AF-$C^{*}$-algebras encode information in such form.\par
We do not claim they have physical realisation in general. However, we show such states are noncommutative analogues of scaling limits arising from projective limits of Banach dual spaces. These are themselves dualisations of direct limits in the category of commutative $C^{*}$-algebras obtained by means of a coarse graining process for locally compact Hausdorff spaces. Spin states are a special case and have well-known physical realisations as spin qubits \cite{ART.Bur_DiVi_Loss.1999.QC_Spin_QDots_as_QGates}\cite{ART.Bur_Lad_Nic.2023.QC_Spin_Overview}\cite{BK.Nie_Chu.2000.Quantum_Computation_Information}\cite{ART.DiVi.2000.Criteria}\cite{ART.DiVi_Loss.1998.Quantum_Information_Physical}. Standard reference for approaches and methods of coarse graining in the commutative setting is \cite{BK.Gor_Kaz_Ott_The.2006.Coarse_Graining}. Standard reference for category theory is \cite{BK.MacLane.1998.Category_Theory}. Standard reference for quantum information theory is \cite{BK.Nie_Chu.2000.Quantum_Computation_Information}.

%%%%%%%%%%%%%
%%% PART %%%%
%%%%%%%%%%%%%

\subsubsection*{Coarse graining and scaling limits}

Following a general description of coarse graining via renormalisation group transformations \lc{}cf.~pp.180-182 in \cite{BK.Gor_Kaz_Ott_The.2006.Coarse_Graining}\rc{}, we obtain scaling limits from direct limits in the category of commutative $C^{*}$-algebras by means of a coarse graining process for locally compact Hausdorff spaces. Dualisation furthermore yields projective limits of their Banach dual spaces. Examples arise from Ehrenfest coarse graining processes for continuity equations \lc{}cf.~pp.117-140 in \cite{BK.Gor_Kaz_Ott_The.2006.Coarse_Graining}\rc{}. We show the AF-$C^{*}$-setting yields noncommutative analogues of scaling limits.\par
We review the classical case. We use Gelfand-Naimark functor defined by Gelfand duality \lc{}cf.~Theorem I.3.11 in \cite{BK.Tak.1979.OpAlg_I}\rc{}. It yields natural transformation for the categories of locally compact Hausdorff spaces and commutative $C^{*}$-algebras \lc{}cf.~Theorem I.4.4 in \cite{BK.Tak.1979.OpAlg_I}\rc{}. The classical case is in said commutative setting \lc{}cf.~Example \ref{BSP.Cstar_Commutative}\rc{}. We use direct and projective limits \lc{}cf.~pp.62-72 in \cite{BK.MacLane.1998.Category_Theory}\rc{}. Let $X$ be a locally compact Hausdorff space. We view $X$ as phase space of a physical system \cite{BK.Gor_Kaz_Ott_The.2006.Coarse_Graining}\cite{BK.Ste_vLee.2013.Full_Quantum_StM}. Let $\lset{}X_{j}\rset_{j\in\mathbb{N}}$ be locally compact Hausdorff spaces s.t.~we have diagram of continuous surjective maps

\begin{equation}\label{EQ.SSEC.QOT_QIT_Encoding_1}
\begin{tikzcd}
X\arrow[r, two heads] & \ \cdots \arrow[r, two heads] & X_{j}\arrow[r, two heads] & \cdots \arrow[r, two heads] & X_{1}
\end{tikzcd}
\end{equation}

\noindent in the category of locally compact Hausdorff spaces. Assume Diagram \ref{EQ.SSEC.QOT_QIT_Encoding_1} maps, under the Gelfand-Naimark functor, to the direct limit diagram

\begin{equation}\label{EQ.SSEC.QOT_QIT_Encoding_2}
\begin{tikzcd}
C_{0}(X_{1})\arrow[r, hook] & \ \cdots \arrow[r, hook] & C_{0}(X_{j})\arrow[r, hook] & \cdots \arrow[r, hook] & C_{0}(X)=\varinjlim C_{0}(X_{j})
\end{tikzcd}
\end{equation}

\noindent in the category of commutative $C^{*}$-algebras. Dualisation reverts arrows and therefore maps Diagram \ref{EQ.SSEC.QOT_QIT_Encoding_2} to the projective limit diagram

\begin{equation}\label{EQ.SSEC.QOT_QIT_Encoding_3}
\begin{tikzcd}
C_{0}(X)^{*}=\varprojlim C_{0}(X_{j})^{*} \arrow[r, two heads] & \ \cdots \arrow[r, two heads] & C_{0}(X_{j})^{*}\arrow[r, two heads] & \cdots \arrow[r, two heads] & C_{0}(X_{1})^{*}
\end{tikzcd}
\end{equation}

\noindent in the category of Banach spaces.\par

%NEWPAGE
%NEWPAGE
%NEWPAGE

\pagebreak

%NEWPAGE
%NEWPAGE
%NEWPAGE

The set of pure states on $C_{0}(X)$ is the set of Dirac measures on $X$ \lc{}cf.~Theorem I.3.11 and Definition I.3.12 in \cite{BK.Tak.1979.OpAlg_I}\rc{}. We view the latter as pointwise measurement of phase space $X$. If each $X_{j}=X\big/\hspace{-0.085cm}\sim_{j}$ is a quotient space for a directed set $\lset\sim_{j}\rset_{j\in\mathbb{N}}$ of equivalence relations on $X$ in dual order, then each step in Diagram \ref{EQ.SSEC.QOT_QIT_Encoding_1} identifies certain sets of pointwise measurements. We thereby define renormalisation group transformations and obtain a coarse graining process \lc{}cf.~p.181 in \cite{BK.Gor_Kaz_Ott_The.2006.Coarse_Graining}\rc{}. Examples arise from identifying interiors of certain cells in Ehrenfest coarse graining \lc{}cf.~pp.117-123 in \cite{BK.Gor_Kaz_Ott_The.2006.Coarse_Graining}\rc{}.\par
We see injections in Diagram \ref{EQ.SSEC.QOT_QIT_Encoding_2} are inclusions of observables on phase space $X$ invariant under certain pointwise measurements. Each step in Diagram \ref{EQ.SSEC.QOT_QIT_Encoding_2} increases the set of observables s.t.~more pointwise measurements are separated. Diagram \ref{EQ.SSEC.QOT_QIT_Encoding_3} further extends Diagram \ref{EQ.SSEC.QOT_QIT_Encoding_1} by extending it to all totally finite signed outer regular Radon measures on $X$ \lc{}cf.~Theorem 6.3.4 in \cite{BK.Ped.1989.Analysis_Now}\rc{} with separability increasing in each step. Sets of identified, i.e.~non-separated, pointwise measurements have characteristic scale, e.g.~the volume of cell interiors. Letting $j\uparrow\infty$ implies these tend to zero since

\begin{align}\label{EQ.SSEC.QOT_QIT_Encoding_4}
C_{0}(X)=\overline{\bigcup_{j\in\mathbb{N}}C_{0}(X_{j})}^{\|.\|_{\infty}}.
\end{align}

\noindent We say that elements in $C_{0}(X)$ and $C_{0}(X)^{*}$, as well as compatible objects or properties using the latter, are scaling limits.\par
We show the AF-$C^{*}$-setting yields noncommutative analogues of scaling limits. Let $(A,\tau)$ be a tracial AF-$C^{*}$-algebra. Definition \ref{DFN.AF_Cstar} shows direct limit diagram

\begin{equation}\label{EQ.SSEC.QOT_QIT_Encoding_5}
\begin{tikzcd}
A_{1}\arrow[r, hook] & \ \cdots \arrow[r, hook] & A_{j}\arrow[r, hook] & \cdots \arrow[r, hook] & A=\overline{A_{0}}^{\|.\|_{A}}
\end{tikzcd}
\end{equation}

\noindent in the category of $C^{*}$-algebras. For all $j\in\mathbb{N}$, $2)$ in Proposition \ref{PRP.AF_Cstar_Trace_Dualisation_I} shows $A_{j}^{*}\subset A^{*}$. We use the modified standard pairing. Dualisation maps Diagram \ref{EQ.SSEC.QOT_QIT_Encoding_5} to the projective limit diagram

\begin{equation}\label{EQ.SSEC.QOT_QIT_Encoding_6}
\begin{tikzcd}
A^{*}=\overline{A_{0}^{*}}^{\|.\|_{A^{*}}}\arrow[r, two heads] & \ \cdots \arrow[r, two heads] & A_{j}^{*}\arrow[r, two heads] & \cdots \arrow[r, two heads] & A_{1}^{*}
\end{tikzcd}
\end{equation}

\noindent in the category of Banach spaces. Following our discussion immediately above, we see elements in $A$ and $A^{*}$, as well as compatible objects or properties using the latter, are noncommutative analogues of scaling limits. Diagram \ref{EQ.SSEC.QOT_QIT_Encoding_6} gives the coarse graining process without rescaling or consideration for the metric geometry of quantum optimal transport distances. In Subsection \ref{SSEC.QOT_CG}, Diagram \ref{EQ.SSEC.QOT_QIT_Encoding_16} extends Diagram \ref{EQ.SSEC.QOT_QIT_Encoding_6}.\par
We are motivated by Ehrenfest coarse graining since it provides a coarse graining process lifting kinetic equations on phase spaces to continuity equations on state spaces by cell averaging \lc{}cf.~pp.123-129 in \cite{BK.Gor_Kaz_Ott_The.2006.Coarse_Graining}\rc{}. However, we neither coarse grain time nor use entropy production to control scaling limits. As such, we do not see Diagram \ref{EQ.SSEC.QOT_QIT_Encoding_16} to be a noncommutative analogue of Ehrenfest coarse graining. The maximum entropy production principle given in Subsection \ref{SSEC.L2W_Log_Mean_QNE} is, to our knowledge, unrelated.

\subsubsection*{Spin states}

We view tracial $C^{*}$-algebras as algebras of observables \cite{BK.Dav.1976.Quantum_Markov_SG}\cite{ART.Dav_Lew.1970.Wstar_Quantum_Probability}\cite{BK.Gar_Zol.2004.Quantum_Noise}\linebreak\cite{BK.Ohy_Pet.1993.Rel_Ent}\cite{BK.Ste_vLee.2013.Full_Quantum_StM}\cite{BK.Tak.1979.OpAlg_I} used in Hamiltonian formalism \cite{BK.Bra.1987.OpAlg_Quantum_StM_I}\cite{BK.Bra.1987.OpAlg_Quantum_StM_II}\cite{BK.Dav.1976.Quantum_Markov_SG}\cite{BK.Gar_Zol.2004.Quantum_Noise}\cite{BK.Ohy_Pet.1993.Rel_Ent}\cite{BK.Ste_vLee.2013.Full_Quantum_StM} for a given quantum system. The set of all propositions $P$ on a given quantum system is a lattice of projections \lc{}cf.~pp.1-11 in \cite{BK.Ohy_Pet.1993.Rel_Ent}\rc{}. If $P$ is equipped with f.s.n.~weight $\omega:P\longrightarrow [0,\infty]$ \cite{BK.Tak.2003.OpAlg_II}, then the GNS-construction for weights defines a faithful unital $^{*}$-representation. This yields generated $W^{*}$-algebra $W^{*}\lc{}P\rc$ \lc{}cf.~Proposition \ref{PRP.Wstar_Equivalence} and Definition \ref{DFN.Wstar_Generated}\rc{}. All tracial $W^{*}$-algebras arise in this manner \lc{}cf.~Proposition \ref{PRP.Wstar_Generated}\rc{}. Let $(A,\tau)$ be a tracial $C^{*}$-algebra. We have f.s.n.~trace $\tau:=\omega:P\lc{}L^{\infty}(A,\tau)\rc\longrightarrow [0,\infty]$ and $L^{\infty}(A,\tau)=W^{*}\lc{}P\lc{}L^{\infty}(A,\tau)\rc\rc$ \lc{}cf.~Proposition \ref{PRP.Wstar_Generated}\rc{}. It suffices to consider $A\subset L^{\infty}(A,\tau)$ as algebra of observables since it is a $\sigma$-weakly dense $C^{*}$-subalgebra. Altogether, we view $A$ as algebra of observables for the quantum system described by the set of all propositions $P\lc{}L^{\infty}(A,\tau)\rc$. We view $\SII(A)$ as its set of states \cite{BK.Ohy_Pet.1993.Rel_Ent}\cite{BK.Tak.1979.OpAlg_I}. Following Remark \ref{REM.Wstar_CP_Markovian_SG}, we know precomposition with quantum channels transmits change of such states.\par
We consider spin states encoding qubits under quantum noise. We do not specify the latter here. However, Example \ref{BSP.L2W_Log_Mean_QNE_QG_Internal} gives the depolarising channel as canonical choice of quantum noise operator \lc{}cf.~pp.378-379 in \cite{BK.Nie_Chu.2000.Quantum_Computation_Information}\rc{}. Let $n\in\mathbb{N}$. Up to scaling of density operators \lc{}cf.~pp.98-105 in \cite{BK.Nie_Chu.2000.Quantum_Computation_Information}\rc{}, pure states of $n$ qubits are given by all Hilbert space projections onto one-dimensional subspaces of $H:=\otimes_{k=1}^{n}\mathbb{C}^{2}$ \lc{}cf.~pp.13-17 in \cite{BK.Nie_Chu.2000.Quantum_Computation_Information}\rc{}. They generate, by construction as a subset of all propositions on a given quantum system with state vectors in $H$, the lattice $P$ of all Hilbert space projections onto any subspace of $H$. Assume $(A,\tau)=\lc\otimes_{k=1}^{n}M_{2}(\mathbb{C}),2^{-n}\otimes_{k=1}^{n}\tr_{2}\rc$. Thus $A=W^{*}\lc{}P\rc$, hence $A$ is an algebra of observables as above. Corollary \ref{COR.Support_Projection_II} implies pure states on $A$, i.e.~the extreme points of $\SII(A)$, are pure states of $n$ qubits. Superposition shows $\SII(A)$ are states of $n$ qubits. Spin qubit quantum computer \cite{ART.Bul_Bur_Loss_Trau.2007.QC_Spin_QDots_in_Graphene}\cite{ART.Bur_DiVi_Loss.1999.QC_Spin_QDots_as_QGates}\cite{ART.Bur_Lad_Nic.2023.QC_Spin_Overview}\cite{ART.DiVi_Loss.1998.QC_Spin_QDots} use spin-entangled electrons \cite{ART.Bur.2007.QC_Spin_Entanglement}\cite{ART.Bur_Lad_Nic.2023.QC_Spin_Overview} as physical realisation of $\SII(A)$ in order to achieve scalable quantum computing according to DiVicenzo's criteria \cite{ART.DiVi.2000.Criteria}\cite{ART.DiVi_Loss.1998.Quantum_Information_Physical}. If initialisation prepares pure states and quantum gates are unitary operations, then $1)$ in Corollary \ref{COR.Support_Projection_III} implies quantum computations are restricted to $\partial\SII(A)$. This is a desired feature but does require challenging control of quantum noise in form of sufficient quantum error correction \cite{ART.Bur_Lad_Nic.2023.QC_Spin_Overview}\cite{BK.Nie_Chu.2000.Quantum_Computation_Information}. The latter may be relaxed to initialisation preparing mixed states while retaining an edge over classical computing \cite{ART.Fri_Sio.2011.QC_Spin_Mixed_States}. We consider each $\mu\in\SII(A)$ as spin state of $n$ qubits under quantum noise and say that it encodes the latter. We ignore the r\^ole of quantum noise here.\par
Spin is an intrinsic property of elementary particles, e.g.~electrons, in the Standard Model of particle physics \cite{ART.Cha_Con_Mar.2007.NCG_Standard_Model_Recovered}\cite{ART.Gai_Gra_Sci.1999.The_Standard_Model}\cite{BK.vSui.2015.NCG_AF_Particle_Physics}. Its independence from mass, in contrast to angular momentum, necessitates use of spinors \cite{BK.Ply_Rob.1994.Clifford_Algebras}\cite{BK.vSui.2015.NCG_AF_Particle_Physics}\cite{BK.Var.2006.NCG_Elements_Short} in the Dirac equation \cite{BK.Tha.1992.The_Dirac_Equation}. Together with non-spatiality as per Example \ref{BSP.QOT_First_Quantisation} and Example \ref{BSP.QOT_Second_Quantisation}, this motivates our view of quantum optimal transport as non-spatial transport of quantum information. If we obtain the latter as analogue quantum simulation \cite{ART.Ash_Geo_Nor.2014.Quantum_Simulation} for sufficiently small $n\in\mathbb{N}$, then we have physical realisation of our interpretation. Noncommutative analogues of push-forward measure representations \cite{ART.CorEra_McCan_Sch.2001.Displacement_Convexity_Riemannian}\cite{ART.McCan.1997.Displacement_Convexity_Local} given by precomposition with quantum channels as per Remark \ref{REM.L2W_EVI_Ric} provide an ansatz but are not known to exist. If we further obtain the classical case as analogue simulation \cite{COL.Bou_Pou.Survey_Analog_Models_Computation}\cite{COL.MacLen.2014.Analog_Computation}, e.g.~for fluid dynamics \cite{ART.Ben_Bre.2000.Dynamic_OT}\cite{ART.Dol_Naz_Sav.2009.Generalised_OT} but without any spatial discretisation of observables, then we suspect similarities and differences of either arise from distinct physical realisations.

%%%%%%%%%%%%%
%%% PART %%%%
%%%%%%%%%%%%%

\subsubsection*{Scaling limits of uniformly conditioned spin states}

Note all formulations of the classical case implicitly assume pure states have vanishing support, i.e.~are Dirac measures. Assuming non-atomic Radon measure, Dirac delta sequences \cite{BK.Eva.2010.Partial_Differential_Equations}\cite{BK.Koe.1993.Analysis_II}\cite{BK.Koe.2004.Analysis_I} show infinitesimal length elements \cite{BK.Con.1994.NCG}\cite{BK.Lan.1995.Riemannian_Manifolds} imply all pure states have infinite relative entropy w.r.t.~the given Radon measure. We consider a different but equally well-known idealisation by letting $n\in\mathbb{N}$ tend to infinity. We thereby allow countable infinitely many interacting quantum systems, e.g.~second quantisation as per Example \ref{BSP.QOT_Second_Quantisation}, as initial approximation for a finite but large number of interacting ones \lc{}cf.~pp.3-5 in \cite{BK.Bra.1987.OpAlg_Quantum_StM_II}\rc{}. In Chapter \ref{CH.L2W}, we rectify the latter for our main contributions by restricting to the domain of quantum relative entropy. We therefore generalise spin states encoding qubits to scaling limits of uniformly conditioned spin states encoding sequences of qubits.\par
We show states on tracial AF-$C^{*}$-algebras are of such form, i.e.~we consider scaling limits of uniformly conditioned spin states encoding sequences of qubits. Let $(A,\tau)$ be a tracial AF-$C^{*}$-algebra. Remark \ref{REM.AF_Cstar_Trace_Dualisation_Admissible_Paths} explains use of restrictions in Equation \ref{EQ.SSEC.QOT_QIT_Encoding_7} below. For all $\mu\in\SII(A)$, we have

\begin{align}\label{EQ.SSEC.QOT_QIT_Encoding_7}
\mu=w^{*}\textrm{-}\lim_{j\in\mathbb{N}}\hspace{0.025cm} \mu_{j}=w^{*}\textrm{-}\lim_{j\in\mathbb{N}}\hspace{0.025cm} \bar{\mu}_{j}.
\end{align}

\noindent Following Diagram \ref{EQ.SSEC.QOT_QIT_Encoding_6}, note Equation \ref{EQ.SSEC.QOT_QIT_Encoding_7} lets us consider each $\mu\in\SII(A)$ as scaling limit. We rescale in each step for a given state but not uniformly on sets of states. We do so for Diagram \ref{EQ.SSEC.QOT_QIT_Encoding_16}. Here, we show how to consider a.e.~$\bar{\mu}_{j}\in A_{j,+}^{*}$ in Equation \ref{EQ.SSEC.QOT_QIT_Encoding_7} as uniformly conditioned spin state encoding qubits. We therefore consider each $\mu\in\SII(A)$ as scaling limit of uniformly conditioned spin states encoding a sequence of qubits.\par
We consider uniformly conditioned spin states encoding qubits. For all $n\in\mathbb{N}$, note Example \ref{BSP.QOT_Distance_AC_L2_2} gives an isomorphism $\lc\otimes_{k=1}^{n}M_{2}(\mathbb{C}),2^{-n}\otimes_{k=1}^{n}\tr_{2}\rc\cong \lc{}M_{2^{n}}(\mathbb{C}),2^{-n}\tr_{2^{n}}\rc$ of tracial $C^{*}$-algebras \cite{BK.Nes_Sto.2006.Rel_Ent}. Let $j\in\mathbb{N}$. There exists minimal $q_{j}\in\mathbb{N}$ s.t.~

\begin{align}\label{EQ.SSEC.QOT_QIT_Encoding_8}
A_{j}\overset{r_{A_{j}}}{\cong}\oplus_{l=1}^{n_{j}}M_{n_{j,l}}(\mathbb{C})\subset M_{2^{q_{j}}}(\mathbb{C})\cong\otimes_{k=1}^{q_{j}}M_{2}(\mathbb{C})
\end{align}

\noindent using inclusion $\oplus_{l=1}^{n_{j}}M_{n_{j,l}}(\mathbb{C})\subset M_{2^{q_{j}}}(\mathbb{C})$ into the upper left corner. Equation \ref{EQ.SSEC.QOT_QIT_Encoding_8} uses Notation \ref{NTN.AF_Cstar_Fin_Isometry}. Set

\begin{align}\label{EQ.SSEC.QOT_QIT_Encoding_9}
N:=\oplus_{l=1}^{n_{j}}M_{n_{j,l}}(\mathbb{C}),\ M:=\otimes_{k=1}^{q_{j}}M_{2}(\mathbb{C}).
\end{align}

\noindent We suppress the second $C^{*}$-isomorphism in Equation \ref{EQ.SSEC.QOT_QIT_Encoding_8} and consider $C^{*}$-subalgebra $N\subset M$. Using the latter, $1)$ in Proposition \ref{PRP.Wstar_Trace_NCE_II} yields noncommutative conditional expectation

\begin{align}\label{EQ.SSEC.QOT_QIT_Encoding_10}
\pi_{j}^{\textrm{sp}}:=\pi_{N}^{M}:M\longrightarrow N
\end{align}

\noindent from $M$ to $N$. Note $\pi_{j}^{\textrm{sp}}$ is unital, surjective and positivity-preserving. Moreover, we know it conditions the set of all propositions $P(M)$ on the given quantum system to a subset of propositions $P(N)$ \cite{BK.Tak.1979.OpAlg_I}.\par

%NEWPAGE
%NEWPAGE
%NEWPAGE

\pagebreak

%NEWPAGE
%NEWPAGE
%NEWPAGE

We obtain positivity-preserving injective Banach dual

\begin{align}\label{EQ.SSEC.QOT_QIT_Encoding_11}
\pi_{j}^{\textrm{sp},*}:=\lc\pi_{N}^{M}\rc^{*}:N^{*}\longrightarrow M^{*}
\end{align}

\noindent s.t.~$\pi_{j}^{\textrm{sp},*}\lc\SII(N)\rc\subset\SII(M)$. Precomposing with $\pi_{j}^{\textrm{sp}}$ restricts each $\mu\in\SII(N)$ from $M$ to $N$ by conditioning $P(M)$ to $P(N)$. We consider each $\mu\in\SII(N)$ as uniformly conditioned spin state of $q_{j}$ qubits and say that it encodes the latter. The first identity in Equation \ref{EQ.SSEC.QOT_QIT_Encoding_8} and Equation \ref{EQ.SSEC.QOT_QIT_Encoding_11} show we have positivity-preserving injective Banach dual

\begin{align}\label{EQ.SSEC.QOT_QIT_Encoding_12}
\iota_{j}^{\textrm{sp}}:=\lc{}r_{A_{j}}\circ\pi_{j}^{\textrm{sp}}\rc^{*}:A_{j}^{*}\longrightarrow M^{*}
\end{align}

\noindent s.t.~$\iota_{j}^{\textrm{sp}}\lc\SII(A_{j})\rc{}=\pi_{j}^{\textrm{sp},*}\lc\SII(N)\rc\subset\SII(M)$. Precomposing with $r_{A_{j}}$ transforms the set of all propositions from $P(N)$ to $P(A_{j})$ by equivalent formulation of observables. We consider each $\mu\in\SII(A_{j})$ as uniformly conditioned spin state of $q_{j}$ qubits and say that it encodes the latter. We furthermore consider scaling limits of uniformly conditioned spin states encoding a sequence of qubits as discussed above.

%%%%%%%%%%%%%%%%%%%
%%% SUBSECTION %%%%
%%%%%%%%%%%%%%%%%%%

\subsection{Transport of quantum information}\label{SSEC.QOT_CG}

We give the coarse graining process and view quantum optimal transport as transport of quantum information. The coarse graining process involves rescaling and considers the metric geometry of quantum optimal transport distances. We use compression and finite-dimensional approximation as used for classification of accessibility components in Subsection \ref{SSEC.QOT_AC_RM} for its construction. We thereby formalise compatibility with both in the coarse graining process as claimed in Subsection \ref{SSEC.NCDS_NCG_Notion}.\par
The coarse graining process applies to accessibility components. These have unique common fixed parts ensuring existence of scaled restriction maps. In order to respect scaling limit description of marginals and fixed parts as per Subsection \ref{SSEC.QOT_QIT_Encoding}, we only consider minimising geodesics approximated in finite dimensions as optimal transport of scaling limits of of quantum information, i.e.~of uniformly conditioned spin states encoding sequences of qubits. Non-ergodicity restricts information-bearing degrees of freedom by the continuity equation. Moreover, the coarse graining process reduces the AF-$C^{*}$-setting to the finite-dimensional one s.t.~ergodicity is recovered up to fixed parts by reducing to those accessibility components in the finite-dimensional setting arising from scaled restriction of the given fixed part. For this, we use classification to determine accessibility components in the finite-dimensional setting.

%%%%%%%%%%%%%
%%% PART %%%%
%%%%%%%%%%%%%

\subsubsection*{The coarse graining process} 

Diagram \ref{EQ.SSEC.QOT_QIT_Encoding_16} extends Diagram \ref{EQ.SSEC.QOT_QIT_Encoding_6} and gives the coarse graining process. We use compression for all its vertical chains of arrows and finite-dimensional approximation for its horizontal ones. The coarse graining process decomposes global pictures, objects and properties into sequences of local ones together with a uniformity condition ensuring convergence of limits. For details on the notions of compression and finite-dimensional approximation, we refer to Subsection \ref{SSEC.NCDS_NCG_Notion}.\par

%NEWPAGE
%NEWPAGE
%NEWPAGE

\pagebreak

%NEWPAGE
%NEWPAGE
%NEWPAGE

Let $(\phi,\bpsi,\gamma,\nabla)$ be noncommutative differential structure for tracial AF-$C^{*}$-algebras $(A,\tau)$ and $(B,\omega)$ in $\lc{}f,\theta\rc$-setting. The coarse graining process applies to accessibility components. These may differ yet have states with identical fixed part. The latter are unique only in that each accessibility component has exactly one. For all fixed states $\xi\in\SII(A)$, note $3)$ in Proposition \ref{PRP.Wstar_Derivation_QG_HSG_Fixed_Part_I} implies $\CII_{A}(\xi)\subset\Fix_{A}(\xi)$ and decomposition

\begin{align}\label{EQ.SSEC.QOT_QIT_Encoding_13}
\textrm{Fix}_{A}(\xi)=\coprod_{\CII\subset\Fix_{A}(\xi)}\CII.
\end{align}

\begin{dfn}\label{DFN.QOT_Distance_AC_Fixed_Part}
Let $\xi\in\SII(A)$ be a fixed state. We say that $\CII\subset (\SII(A),\mathcal{W}_{\nabla}^{f,\theta})$ has fixed part $\xi$ if $\CII\subset\Fix_{A}(\xi)$.
\end{dfn}

\begin{rem}\label{REM.QOT_Distance_AC_Fixed_Part}
If $\CII\subset (\SII(A),\mathcal{W}_{\nabla}^{f,\theta})$, then the above shows $\CII$ has a unique fixed part $\xi$ as per Definition \ref{DFN.QOT_Distance_AC_Fixed_Part}. Yet $\xi$ is only unique among all $\mu\in\CII$. As such, we cannot exclude $\CII\neq\mathcal{C}_{A}(\xi)$ unless we intersect with a suitable convex subset of states, e.g.~$\mathcal{S}^{\NI,2}(A)$ as per $1)$ in Theorem \ref{THM.QOT_Distance_AC_L2}. This is classification and reason for $K$ in Diagram \ref{EQ.SSEC.QOT_QIT_Encoding_16}.
\end{rem}

The lowest horizontal chain of arrows in Diagram \ref{EQ.SSEC.QOT_QIT_Encoding_16} gives the coarse graining process for the following data. Let $\xi\in\SII(A)$ be a fixed state. For all $j\in\mathbb{N}$, we know $\bar{\xi}_{j}\in\SII(A_{j})$ is a fixed state if and only if $\xi_{j}\neq 0$. If $\xi_{j}\neq 0$ for $j\in\mathbb{N}$, then $\xi_{k}\neq 0$ for all $j\leq k$ in $\mathbb{N}$.
Let $j_{\min}\in\mathbb{N}$ minimal among all $j\in\mathbb{N}$ s.t.~$\xi_{j}\neq 0$. For all $j\geq j_{\min}$ in $\mathbb{N}$ and up to rescaling as per $1)$ in Definition \ref{DFN.AF_Cstar_Trace_Dualisation_Paths}, note $1.3)$ and $3)$ in Proposition \ref{PRP.Wstar_Derivation_QG_HSG_Fixed_Part_I} imply

\begin{align}\label{EQ.SSEC.QOT_QIT_Encoding_14}
\resj\lc\mathcal{F}_{A}(\xi)\rc{}=\mathcal{F}_{A_{j}}\lc\bar{\xi}_{j}\rc{}.
\end{align}

\noindent We rescale subsets of $\mathcal{F}_{A}(\xi)$ as per Equation \ref{EQ.SSEC.QOT_QIT_Encoding_14}. Let $K\subset\SII(A)$ be a convex subset s.t.~for all $j\geq j_{\min}$ in $\mathbb{N}$ and up to rescaling as per $1)$ in Definition \ref{DFN.AF_Cstar_Trace_Dualisation_Paths}, we have

\begin{align}\label{EQ.SSEC.QOT_QIT_Encoding_15}
\resj\lc\mathcal{C}\cap K\rc{}=\mathcal{C}_{A_{j}}\lc\bar{\xi}_{j}\rc{}.
\end{align}

\noindent Corollary \ref{COR.QOT_Distance_AC_L2}, which uses Theorem \ref{THM.QOT_Distance_AC_L2}, shows Equation \ref{EQ.SSEC.QOT_QIT_Encoding_15} is satisfied if $K$ equals $\SII(A)$, $\mathcal{S}^{\NI}(A)$, or $\mathcal{S}^{\NI,2}(A)$. Corollary \ref{COR.Rel_Ent_AF_Cstar_Trace} shows Equation \ref{EQ.SSEC.QOT_QIT_Encoding_15} is satisfied if $K$ is the domain of quantum relative entropy as per Definition \ref{DFN.Rel_Ent_AF_Cstar_Trace}. This lets us apply the coarse graining process in Chapter \ref{CH.L2W}. Theorem \ref{THM.QOT_Distance_AC_L2} and Theorem \ref{THM.QOT_Distance_AC_Rel_Ent} yield classification if $K$ equals $\mathcal{S}^{\NI,2}(A)$, resp.~the domain of quantum relative entropy.\par However, each choice implies restriction of the coarse graining process to suitable fixed states. If $K$ is the domain of quantum relative entropy, then our discussion in Section \ref{SEC.L2W_Rel_Ent} yields natural interpretation. For all $\mu\in\SII(A)$, $\Ent(\mu,\tau)\in [-\infty,\infty]$ is the relative entropy of $\mu$ w.r.t.~$\tau$ as per Equation \ref{EQ.DFN.Rel_Ent_AF_Cstar_Trace_1}. Theorem \ref{THM.Rel_Ent_AF_Cstar_Trace} ensures it measures information required to discriminate $\mu$ and $\tau$ through observation by extending its use from the strongly unital finite-trace case \lc{}cf.~pp.1-11 in \cite{BK.Ohy_Pet.1993.Rel_Ent}\rc{}. Restriction implies we only consider normal states, fixed or not, encoding a finite amount of information.\par
We assume data $\xi\in\SII(A)$ and $K$ as above. Using canonical inclusion maps for all vertical arrows, restriction maps for all uppermost horizontal arrows, as well as scaled restriction maps as per Equation \ref{EQ.SSEC.QOT_QIT_Encoding_14}, resp.~Equation \ref{EQ.SSEC.QOT_QIT_Encoding_15} for all lower horizontal arrows, we have diagram

\begin{equation}\label{EQ.SSEC.QOT_QIT_Encoding_16}
\begin{tikzcd}
A^{*}\arrow[rr, two heads] & & \ \cdots \arrow[r, two heads] & A_{j}^{*}\arrow[r, two heads] & \cdots \arrow[r, two heads] & A_{j_{\min}}^{*} \\
& & & & & \\
& & & & & \\
\mathcal{F}_{A}(\xi)\arrow[rr, two heads]\arrow[uuu,hook] & & \ \cdots \arrow[r, two heads] & \mathcal{F}_{A_{j}}\lc\bar{\xi}_{j}\rc\arrow[r, two heads]\arrow[uuu,hook] & \ \cdots \arrow[r, two heads] & \mathcal{F}_{A_{j_{\min}}}\lc\bar{\xi}_{j_{\min}}\rc\arrow[uuu,hook] \\
& & & & & \\
\mathcal{C}\cap K\arrow[rr, two heads]\arrow[uu,hook] & & \ \cdots \arrow[r, two heads] & \mathcal{C}_{A_{j}}\lc\bar{\xi}_{j}\rc\arrow[r, two heads]\arrow[uu,hook] & \cdots \arrow[r, two heads] & \mathcal{C}_{A_{j_{\min}}}\lc\bar{\xi}_{j_{\min}}\rc\arrow[uu,hook]
\end{tikzcd}
\end{equation}

\medskip

\noindent Diagram \ref{EQ.SSEC.QOT_QIT_Encoding_16} extends Diagram \ref{EQ.SSEC.QOT_QIT_Encoding_6}. Assuming a fixed state is necessary for having scaled restriction maps in Diagram \ref{EQ.SSEC.QOT_QIT_Encoding_16}. We use compression for each vertical chain of arrows in Diagram \ref{EQ.SSEC.QOT_QIT_Encoding_16} and finite-dimensional approximation for each horizontal one. This demands data compatible with both. Diagram \ref{EQ.SSEC.QOT_QIT_Encoding_16} relates a global picture given by the leftmost vertical chain of arrows to a sequence of local pictures given by vertical chains of arrows obtained as images of scaled restriction maps.\par
We explain our notion of \textit{locality}. For all $j\geq j_{\min}$ in $\mathbb{N}$, note $A_{j}^{*}\subset A^{*}$ restricts as per Equation \ref{EQ.SSEC.QOT_QIT_Encoding_12} to an equivalent formulation represented on a finite-dimensional model algebra of observables. We thereby restrict $\SII(A)$ to a standard representation of $\SII(A_{j})$ by conditioned testing on direct sums of full matrix algebras. We view the latter as local pictures in direct analogy to notions of locality for pure state spaces in the commutative setting, i.e.~locally compact Hausdorff spaces. Altogether, Diagram \ref{EQ.SSEC.QOT_QIT_Encoding_16} decomposes global pictures, objects and properties into sequences of local ones together with a uniformity condition ensuring convergence of limits.

%%%%%%%%%%%%%
%%% PART %%%%
%%%%%%%%%%%%%

\subsubsection*{Transport of information encoded in states on tracial AF-$\mathbf{C}^{*}$-algebras}

Let $(\phi,\bpsi,\gamma,\nabla)$ be noncommutative differential structure for tracial AF-$C^{*}$-algebras $(A,\tau)$ and $(B,\omega)$ in $\lc{}f,\theta\rc$-setting. Following our discussion in Subsection \ref{SSEC.QOT_QIT_Encoding}, we consider each $\mu\in\SII(A)$ as scaling limit of uniformly conditioned spin states encoding sequences of qubits. Note minimising geodesics do not restrict to other minimising geodesics in general. However, we expect a form of finite-dimensional approximation related to and well-behaved w.r.t.~the coarse graining process, at least for marginals, if transport of quantum information arises from quantum optimal transport. We therefore consider minimising geodesics approximated in finite dimensions in order to respect scaling limit description of marginals and fixed parts as above while retaining geodicity.\par
For all $\mu^{0},\mu^{1}\in\SII(A)$, Theorem \ref{THM.QOT_Minimiser_Approximation} shows we have $\mathcal{W}_{\nabla}^{\log}\lc\mu^{0},\mu^{1}\rc{}<\infty$ if and only if there exists $(\mu,w)\in\Geo\lc\mu^{0},\mu^{1}\rc$ approximated in finite dimensions by a sequence $\lc\mu^{j},w^{j}\rc_{j\geq m}\subset\Geo_{0}$. We moreover have 

\begin{align}\label{EQ.SSEC.QOT_QIT_Encoding_17}
\lc\mu^{j},w^{j}\rc\in\textrm{Geo}_{j}\big(\bar{\mu}_{j}^{0},\bar{\mu}_{j}^{1}\big)
\end{align}

\noindent for all $j\geq m$ and may pass to a subsequence converging to $(\mu,w)$ in $\Admnullone$ in this case. We consider each $\mu^{j}:[0,1]\longrightarrow\SII(A_{j})$ as optimal transport of uniformly conditioned spin states encoding qubits and therefore transport of quantum information. Corollary \ref{COR.QOT_Distance_AC_I} shows convergence to $(\mu,w)$ in $\Admnullone$ yields the global picture, here itself scaling limit w.r.t.~the coarse graining process, using a sequence of local pictures for transport of quantum information. Equation \ref{EQ.SSEC.QOT_QIT_Encoding_17} shows marginals are elements in the scaling limit sequence of marginals as per Equation \ref{EQ.SSEC.QOT_QIT_Encoding_7}.\par
We consider each $(\mu,w)\in\Geo$ approximated in finite-dimensions as optimal transport of scaling limits of uniformly conditioned spin states encoding sequences of qubits. We therefore view quantum optimal transport as transport of quantum information and say that it is compatible with the coarse graining process. Thus non-ergodicity restricts information-bearing degrees of freedom by the continuity equation, as visible from $3)$ in Proposition \ref{PRP.Wstar_Derivation_QG_HSG_Fixed_Part_I} in general, resp.~$2)$ in Proposition \ref{PRP.RM_Embedded_Submanifold_I} and Proposition \ref{PRP.RM_Embedded_Submanifold_II} upon coarse graining. Moreover, our description of transport of quantum information extends suitably to Example \ref{BSP.QOT_Second_Quantisation_Parametrised} and its generalisations.\par
We restrict to Example \ref{BSP.QOT_Second_Quantisation_Parametrised} and use its notation. The given state space $\SII\lc{}A_{X}\rc$ consists of normalised Radon measures on $X$ evaluating in $\AII(H)$ up to $C^{*}$-isometry as per Equation \ref{EQ.SSEC.QOT_DT_BSP_10}. Dualising the minimal $C^{*}$-tensor product \cite{BK.Kad_Rin.1997.OpAlg_II}\cite{BK.Tak.1979.OpAlg_I} yields

\begin{align}\label{EQ.SSEC.QOT_QIT_Encoding_18}
\SII\lc{}A_{X}\rc\cong\lset\mu\in C_{0}(X)^{*}\otimes\AII(H)^{*}\ \vset\ \mu\geq 0,\ \|\mu\|_{C_{0}(X)^{*}\otimes\AII(H)^{*}}=1\rset{},
\end{align}

\noindent where $C_{0}(X)^{*}\cong C_{c}(X)^{*}$ is the Banach space of totally finite signed Radon measures on $X$ by $\sigma$-compactness \lc{}cf.~Proposition 6.3.6 in \cite{BK.Ped.1989.Analysis_Now}\rc{}. Each gauge field $T\in X$ determines an encoding scheme of $\AII(H)_{+}^{*}$ as per Diagram \ref{EQ.SSEC.QOT_QIT_Encoding_16}. These vary since Example \ref{BSP.QOT_Second_Quantisation} applied to obtain each fibre depends entirely on the given inner fluctuation $D_{T}$ of $D$ as per Equation \ref{EQ.SSEC.QOT_DT_BSP_13}. If $\mu:[a,b]\longrightarrow\SII\lc{}A_{X}\rc$ is given by an admissible path s.t.~

\begin{align}\label{EQ.SSEC.QOT_QIT_Encoding_19}
\mu(t)=\delta_{\rho(t)}\otimes\nu(t)\in\lc{}C_{0}(X)^{*}\otimes\AII(H)^{*}\rc_{+}
\end{align}

\noindent under the isomorphism in Equation \ref{EQ.SSEC.QOT_QIT_Encoding_18} for a.e.~$t\in [a,b]$, then $\nu(t)\in\SII\lc\AII(H)\rc$ for a.e.~$t\in [a,b]$ as well. This suppresses encoding schemes. Upon considering said path in $\SII\lc{}A_{X}\rc$, i.e.~we know $t\mapsto\nu(t)\in\SII\lc\AII\lc{}H\lb{}J_{\rho(t)}\rb\rc\rc$ are states on varying CAR-algebras $t\mapsto \AII\lc{}H\lb{}J_{\rho(t)}\rb\rc$ as per Equation \ref{EQ.BSP.QOT_Second_Quantisation_Parametrised_1}, we see minimising geodesics transporting Dirac measures are transport of quantum information under varying encoding schemes. We are therefore motivated, in direct analogy to the classical case \cite{BK.Amb_Gig_Sav.2008.Classical_OT_GradFlow}\cite{ART.Dol_Naz_Sav.2009.Generalised_OT}\cite{BK.Vil.2009.OT} generalising from transport of point mass to transport of mass distributions, to view parametrised quantum optimal transport as transport of densities of quantum information over those encoding schemes of $\AII(H)_{+}^{*}$ as per Diagram \ref{EQ.SSEC.QOT_QIT_Encoding_16} parametrised by $X$.

%%%%%%%%%%%%%%%%
%%%% CHAPTER %%%
%%%%%%%%%%%%%%%%

\chapter[Metric Geometry of Quantum $L^{2}$-Wasserstein Distances]{Metric Geometry of Quantum $\mathbf{L}^{2}$-Wasserstein Distances}\label{CH.L2W}

The logarithmic mean setting uses the logarithmic operator mean for interpolation\linebreak parameter one. This defines quantum $L^{2}$-Wasserstein distances in direct analogy to the classical case \cite{ART.Dol_Naz_Sav.2009.Generalised_OT}. The logarithmic operator mean is characterised as the one inducing the Kubo-Mori-Bogoliubov inner product \cite{ART.Pet_Tot.1993.Inner_Product}. Up to coarse graining in the logarithmic mean setting, the given noncommutative chain rule ensures heat flow is gradient flow of quantum relative entropy. In our logarithmic mean setting, which does assume the AF-$C^{*}$-setting, yet neither ergodicity nor finite trace, we extend results in \cite{ART.Car_Maa.2014.Quantum_OT_I}\cite{ART.Car_Maa.2017.Quantum_OT_II}\cite{ART.Car_Maa.2020.Quantum_OT_III} and \cite{ART.Erb_Maa.2012.Discrete_OT_Ricci_Bounds} to the general case and view lower Ricci bounds as measurement convexity of quantum information. Non-ergodicity and non-finite trace ensure fundamental example classes in Subsection \ref{SSEC.QOT_DT_BSP} are covered. We summarise our contributions below.\par
We extend quantum relative entropy in the sense of Araki \cite{ART.Ara.1975.Rel_Ent_I}\cite{ART.Ara.1977.Rel_Ent_II} and Umegaki \cite{ART.Ume.1962.Rel_Ent} to the AF-$C^{*}$-setting. Note our construction ensures it measures information required to discriminate a given state and, possibly non-finite, trace through observation by extending its use from the strongly unital finite-trace case \cite{BK.Ohy_Pet.1993.Rel_Ent}. If $\EVI_{\lambda}$-gradient flow of quantum relative entropy exist, then it is heat flow. We show claimed equivalence of $\EVI_{\lambda}$-gradient flow, $\lambda$-convexity, Bakry-\'Emery and Hessian lower bound conditions by means of the coarse graining process. We then define lower Ricci bounds of quantum\linebreak gradients using any one of said equivalent conditions, give sufficient conditions for lower Ricci bounds of direct sum quantum gradients and, assuming lower Ricci bounds, derive functional inequalities $\HWI_{\lambda}$, $\MLSI_{\lambda}$ and $\TW_{\lambda}$ in the AF-$C^{*}$-setting.\par
We view quantum Laplacians as generators of quantum noise evolution in order to have non-spatiality of lower Ricci bounds and associated energy-information trade-offs. Following Landauer's principle \cite{ART.Lan.1961.Information_Physical_I}\cite{ART.Lan.1961.Information_Physical_II} and its extension to quantum information theory \cite{BK.Cam_Def.2019.Quantum_StM_Information}\cite{ART.DiVi_Loss.1998.Quantum_Information_Physical}, erasure of quantum information implies strictly positive production of quantum entropy. Yet it is unclear how the $\EVI_{\lambda}$-gradient flow property selects noise diffusion terms, i.e.~generators of quantum noise evolution, in our case. To this end, we formulate a maximum entropy production principle \cite{ART.Dew.2003.MaxEnt_Information_I}\cite{ART.Dew.2005.MaxEnt_Information_II}\cite{ART.Mar_Sel.2006.MaxEnt_Review}. We show quantum Laplacians satisfy, up to sign, a quantum Fokker-Planck equation with vanishing drift term in scaling limit, i.e.~only noise diffusion term. Altogether, we obtain a description of quantum Laplacians in terms of both quantum statistical mechanics \cite{BK.Bra.1987.OpAlg_Quantum_StM_I}\cite{BK.Bra.1987.OpAlg_Quantum_StM_II} and quantum information theory \cite{BK.Nie_Chu.2000.Quantum_Computation_Information} as claimed in the introduction of Chapter \ref{CH.QOT}.\par

%NEWPAGE
%NEWPAGE
%NEWPAGE

\newpage

%NEWPAGE
%NEWPAGE
%NEWPAGE

\noindent\textbf{Structure.} In Section \ref{SEC.L2W_Rel_Ent}, we discuss quantum relative entropy. We extend to, possibly non-finite, traces in the second variable. In Section \ref{SEC.L2W_Log_Mean}, we discuss the logarithmic mean setting and quantum $L^{2}$-Wasserstein distances. Moreover, we formulate our maximum entropy production principle. In Section \ref{SEC.L2W_EVI}, we consider heat flow as $\EVI_{\lambda}$-gradient flow of quantum relative entropy and show our equivalence theorem. We discuss non-spatial lower Ricci bounds and energy-information trade-offs parametrised by lower bounds on quantum noise, give sufficient conditions and derive functional inequalities.

%%%%%%%%%%%%%%%%
%%% SECTION %%%%
%%%%%%%%%%%%%%%%

\section{Quantum relative entropy}\label{SEC.L2W_Rel_Ent}

Quantum relative entropy is an extension of relative entropy for tracial $C^{*}$-algebras to the AF-$C^{*}$-setting. We construct it by extending Kosaki's formula \cite{BK.Ohy_Pet.1993.Rel_Ent} to traces in the second variable. Relative entropy for tracial $C^{*}$-algebras is the fundamental example of quasi-entropies and therefore quantum $f$-divergences \cite{ART.Hia.2018.QFD_I}\cite{ART.Hia.2019.QFD_II}. We also know it measures information required to discriminate two given states through observation \cite{BK.Ohy_Pet.1993.Rel_Ent}. Since it is given by extension of Kosaki's formula, our construction ensures quantum relative entropy likewise measures information required to discriminate a given state and, possibly non-finite, trace through observation.\par
In Subsection \ref{SSEC.L2W_EVI_Equivalence}, we consider heat flow as $\EVI_{\lambda}$-gradient flow of quantum relative entropy. This uses two most essential properties of quantum relative entropy. First, we show the latter is compatible with compression and finite-dimensional approximation. Secondly, we show it satisfies a suitable notion of l.s.c.~in topology of the given quantum optimal transport distance. However, finite-dimensional approximation and l.s.c.~do not hold for all states in general. The latter requires strong unitality and finite trace. Upon restriction to finitely supported accessibility components, i.e.~having finitely supported fixed state, we satisfyingly recover the strongly unital finite-trace case depending on the given finitely supported fixed state by compressing with uniform majorants of their local support. Examples of finitely supported fixed states arise from fixed states on tracial AF-$C^{*}$-algebras generating hyperfinite factors of type I and II by $\sigma$-weak closure.

\medskip

\noindent\textbf{Structure.} In Subsection \ref{SSEC.L2W_Rel_Ent_AF}, we review relative entropy for $C^{*}$-algebras expressed using Kosaki's formula. We construct quantum relative entropy by extending to traces in the second variable. In Subsection \ref{SSEC.L2W_Rel_Ent_AC}, we discuss uniform majorisation, finitely supported fixed states and show all properties required of quantum relative entropy.

%%%%%%%%%%%%%%%%%%%
%%% SUBSECTION %%%%
%%%%%%%%%%%%%%%%%%%

\subsection[Quantum relative entropy for tracial AF-$C^{*}$-algebras]{Quantum relative entropy for tracial AF-$\mathbf{C}^{*}$-algebras}\label{SSEC.L2W_Rel_Ent_AF}

Theorem 5.11 in \cite{BK.Ohy_Pet.1993.Rel_Ent} states Kosaki's formula. It is a variational expression of relative entropy for normal positive bounded functionals on $W^{*}$-algebras w.r.t.~each other. This determines relative entropy for $W^{*}$-algebras. We construct quantum relative entropy by two consecutive extensions of Kosaki's formula. First, we extend to positive bounded functionals on $C^{*}$-algebras by evaluating their canonical normal extensions to universal enveloping $W^{*}$-algebras. This determines relative entropy for $C^{*}$-algebras. Secondly, we extend to positive bounded functionals on tracial AF-$C^{*}$-algebras w.r.t.~the given trace.\par
This determines quantum relative entropy. Lemma \ref{LEM.Rel_Ent_AF_Cstar_Trace_II} recovers Kosaki's formula as per Theorem 5.11 in \cite{BK.Ohy_Pet.1993.Rel_Ent} for normal positive bounded functionals with integrable support. Standard reference for Kosaki's formula, as well as relative entropy for $C^{*}$-~and $W^{*}$-algebras alike, is \cite{BK.Ohy_Pet.1993.Rel_Ent}. We refer to pp.35-36 and pp.98-99 in \cite{BK.Ohy_Pet.1993.Rel_Ent} for a review.

%%%%%%%%%%%%%
%%% PART %%%%
%%%%%%%%%%%%%

\subsubsection*{Relative entropy for tracial $C^{*}$-algebras}

Umegaki defined relative entropy for semi-finite $W^{*}$-algebras \cite{ART.Ume.1962.Rel_Ent}. Using relative modular operators, Araki generalised to all $W^{*}$-algebras \cite{ART.Ara.1975.Rel_Ent_I}\cite{ART.Ara.1977.Rel_Ent_II}. Equation \ref{EQ.DFN.Rel_Ent_Wstar_1} is Kosaki's formula as per Theorem 5.11 in \cite{BK.Ohy_Pet.1993.Rel_Ent}. Using universal enveloping $W^{*}$-algebras in Kosaki's formula, we engage in our first extension by adapting constructions in \cite{BK.Ohy_Pet.1993.Rel_Ent} but with additional detail required for our second one. Assuming tracial $C^{*}$-algebra, Lemma \ref{LEM.Rel_Ent_Cstar_Normality} shows Kosaki's formula uses spaces of bounded measurable operators. Proposition \ref{PRP.Rel_Ent_Cstar_I} and Proposition \ref{PRP.Rel_Ent_Cstar_II} collect properties. We consider two instructive examples here. Example \ref{BSP.Rel_Ent_Cstar_Fin_I} gives the finite-dimensional setting. Example \ref{BSP.Rel_Ent_Cstar_Fin_II} shows necessity of strong unitality.\par
Let $(M,\tau)$ be a tracial $W^{*}$-algebra and $A\subset M$ a $\sigma$-weakly dense $C^{*}$-subalgebra. Ergo $M=L^{\infty}(A,\tau)$ and $M_{*}=L^{1}(A,\tau)$. Following Remark \ref{REM.Wstar_Trace_MSP}, we have $L^{1}(A,\tau)^{\flat}\subset A^{*}$ as partially ordered Banach spaces.

\begin{dfn}\label{DFN.Wstar_Step_Function}
Let $V\subset L^{\infty}(A,\tau)$ be a linear subspace s.t.~$1_{A}\in V$. Let $n\in\mathbb{N}$.

\begin{itemize}
\item[1)] Let $\mathcal{T}_{n}(V)$ be the set of all step functions $F:(n^{-1},\infty)\longrightarrow V$ s.t.~$\absv{1.15}{\im F}<\infty$. Using the constant map $t\mapsto 1_{M}=1_{A}$ on $(n^{-1},\infty)$, set $F^{\perp}:=1_{A}-F$ for all $F\in\mathcal{T}_{n}(V)$.

\item[2)] $\mathcal{T}_{n}^{u}(V):=\big\{\hspace{0.025cm} F\in\mathcal{T}_{n}(V)\ \vset\ \exists t\in (n^{-1},\infty)\ \forall s\geq t:\ F(s)=1_{A}\hspace{0.025cm} \big\}$.
\end{itemize}
\end{dfn}

Definition \ref{DFN.Rel_Ent_Wstar} gives the relative entropy $\Ent:L^{1}(A,\tau)_{+}^{\flat}\times L^{1}(A,\tau)_{+}^{\flat}\longrightarrow (-\infty,\infty]$. Equation \ref{EQ.DFN.Rel_Ent_Wstar_1} is Kosaki's formula which we extend to variational expressions using positive bounded functionals on $C^{*}$-algebras, and w.r.t.~traces in the second variable. We call extensions relative entropy, resp.~Kosaki's formula as well. All extensions coincide on intersections of domains. For all $\mu,\eta\in L^{1}(A,\tau)_{+}^{\flat}$, note $\Ent(\mu,\eta)$ measures information required to discriminate $\mu$ and $\eta$ through observation \lc{}cf.~pp.1-11 in \cite{BK.Ohy_Pet.1993.Rel_Ent}\rc{}. As expected in the commutative setting, Umegaki's definition shows Kosaki's formula yields relative entropy of probability densities, i.e.~Kullback-Leibler divergence \lc{}cf.~pp.35-36 in \cite{BK.Ohy_Pet.1993.Rel_Ent}\rc{}. Theorem \ref{THM.Rel_Ent_AF_Cstar_Trace} extends the above notion of discriminating information.

\begin{dfn}\label{DFN.Rel_Ent_Wstar}
For all $\mu\in L^{1}(A,\tau)_{+}^{\flat}$, set $\|x\|_{\mu}:=\sqrt{\mu(x^{*}x)}$ for all $x\in L^{\infty}(A,\tau)$. For all $\mu,\eta\in L^{1}(A,\tau)_{+}^{\flat}$, the relative entropy of $\mu$ w.r.t.~$\eta$ is defined by

\begin{align}\label{EQ.DFN.Rel_Ent_Wstar_1}
\Ent(\mu,\eta):=\sup_{\substack{n\in\mathbb{N},\\ F\in\mathcal{T}_{n}(L^{\infty}(A,\tau))}}\left\{\|\mu\|_{A^{*}}\log n-\int_{n^{-1}}^{\infty}t^{-1}\dblv{}F^{\perp}(t)\dblv_{\mu}^{2}+t^{-2}\dblv{}F(t)^{*}\dblv_{\eta}^{2}~dt\right\}.
\end{align}
\end{dfn}

\begin{rem}\label{REM.Rel_Ent_Wstar}
Let $V\subset L^{\infty}(A,\tau)$ be a strong$^{*}$-dense linear subspace s.t.~$1_{A}\in V$. Then Theorem 5.11 in \cite{BK.Ohy_Pet.1993.Rel_Ent} shows we may replace the supremum over all $\mathcal{T}_{n}(L^{\infty}(A,\tau))$ with the one over all $\mathcal{T}_{n}(V)$ in Kosaki's formula, hence the one over all $\mathcal{T}_{n}^{u}(V)$. We use this throughout our discussion.
\end{rem}

We review properties of relative entropy for $W^{*}$-algebras. We take the supremum over all $\mathcal{T}_{n}^{u}\lc{}L^{\infty}(A,\tau)\rc$ in Kosaki's formula and apply Fatou's lemma. Kosaki's formula therefore shows the relative entropy is jointly convex and l.s.c.~in $w^{*}$-topology given by $L^{\infty}(A,\tau)=L^{1}(A,\tau)^{*}$. Let $\mu,\eta\in L^{1}(A,\tau)_{+}^{\flat}$. If $\mu,\eta\neq 0$, then Proposition 5.1 in \cite{BK.Ohy_Pet.1993.Rel_Ent} shows

\begin{align}\label{EQ.SSEC.L2W_Rel_Ent_AF_1}
\Ent(\mu,\eta)\geq\lc\log \|\mu\|_{A^{*}}-\log \|\eta\|_{A^{*}}\rc{}\cdot \|\mu\|_{A^{*}}>-\infty
\end{align}

\noindent as $\|\mu\|_{A^{*}},\|\eta\|_{A^{*}}\in (0,\infty)$. Kosaki's formula further implies $\Ent(0,\eta)=0$ and $\Ent(\mu,0)=\infty$ in general \lc{}cf.~proof of Proposition \ref{PRP.Rel_Ent_Cstar_I}\rc{}. If $N\subset L^{\infty}(A,\tau)$ is a unital $W^{*}$-subalgebra, then Corollary 5.12 in \cite{BK.Ohy_Pet.1993.Rel_Ent} shows we have restriction

\begin{align}\label{EQ.SSEC.L2W_Rel_Ent_AF_2}
\Ent(\mu,\eta)\geq\Ent\lc\mu\vert_{N},\eta\vert_{N}\rc{}
\end{align}

\noindent since unital $W^{*}$-algebra inclusions are normal unital Schwarz maps. Altogether, we know $\Ent:L^{1}(A,\tau)_{+}^{\flat}\times L^{1}(A,\tau)_{+}^{\flat}\longrightarrow (-\infty,\infty]$ is jointly convex, l.s.c.~in $w^{*}$-topology of $L^{\infty}(A,\tau)$ and has restriction property as per Equation \ref{EQ.SSEC.L2W_Rel_Ent_AF_2}. Moreover, we may replace suprema in Kosaki's formula as per Remark \ref{REM.Rel_Ent_Wstar}.\par
If $(A,\tau)$ is a strongly unital AF-$C^{*}$-algebra with finite trace, then the relative entropy satisfies the following consequence of the martingale property \lc{}cf.~iv\rc{} in Corollary 5.12 in \cite{BK.Ohy_Pet.1993.Rel_Ent}\rc{}. For all $\mu\in L^{1}(A,\tau)_{+}^{\flat}$, we have finite-dimensional approximation

\begin{align}\label{EQ.SSEC.L2W_Rel_Ent_AF_3}
\Ent(\mu,\tau)=\lim_{j\in\mathbb{N}}\hspace{0.025cm} \Ent\lc\mu_{j},\tau_{j}\rc{}.
\end{align}

\noindent The martingale property requires l.s.c.~in $w^{*}$-topology of $L^{\infty}(A,\tau)$ and Equation \ref{EQ.SSEC.L2W_Rel_Ent_AF_2} for each generating $C^{*}$-subalgebra. If we extend to, possibly non-finite, traces in the second variable, then either may fail. Following Remark \ref{REM.Rel_Ent_AF_Cstar_Trace_I}, l.s.c.~in $w^{*}$-topology of $L^{\infty}(A,\tau)$ fails in general if the trace is non-finite and the relative entropy takes negative infinity as value. Example \ref{BSP.Rel_Ent_Cstar_Fin_II} shows Equation \ref{EQ.SSEC.L2W_Rel_Ent_AF_2} may fail if $(A,\tau)$ is not strongly unital. Uniform majorisation of local support suffices to prevent failure and recover finite-dimensional approximation property as per Equation \ref{EQ.SSEC.L2W_Rel_Ent_AF_3}. Theorem \ref{THM.QOT_Distance_AC_FS} shows the latter on finitely supported accessibility components.\par
Definition \ref{DFN.Rel_Ent_Cstar} extends Definition \ref{DFN.Rel_Ent_Wstar} to $\Ent:A_{+}^{*}\times A_{+}^{*}\longrightarrow (-\infty,\infty]$. We require the following. We have separable Hilbert space $H_{U}$, universal faithful $^{*}$-representation $\pi_{U}:A\longrightarrow\BII\lc{}H_{U}\rc$, and universal enveloping $W^{*}$-algebra $U(A):=\pi_{U}(A)''$ of $A$ \cite{BK.Tak.1979.OpAlg_I}. For all $\mu\in A_{+}^{*}$, get unique $U(\mu)\in U(A)_{*,+}\subset U(A)_{+}^{*}$ s.t.~$U(\mu)\vert_{A}=\mu$. These are called canonical normal extensions. Note $\dblv{}U(\mu)\dblv_{U(A)^{*}}=\|\mu\|_{A^{*}}$ in each case by construction.

\begin{dfn}\label{DFN.Rel_Ent_Cstar}
For all $\mu,\eta\in A_{+}^{*}$, the relative entropy of $\mu$ w.r.t.~$\eta$ is defined by

\begin{align}\label{EQ.DFN.Rel_Ent_Cstar_1}
\Ent(\mu,\eta):=\sup_{\substack{n\in\mathbb{N},\\ F\in\mathcal{T}_{n}(U(A))}}\left\{\|\mu\|_{A^{*}}\log n-\int_{n^{-1}}^{\infty}t^{-1}\dblv{}F^{\perp}(t)\dblv_{U(\mu)}^{2}+t^{-2}\dblv{}F(t)^{*}\dblv_{U(\eta)}^{2}~dt\right\}.
\end{align}
\end{dfn}

\begin{rem}\label{REM.Rel_Ent_Cstar}
Note Definition \ref{DFN.Rel_Ent_Wstar}, as well as those properties of relative entropy for $W^{*}$-algebras given above, do not require traciality. Definition \ref{DFN.Rel_Ent_Cstar} therefore gives relative entropy for $U(A)$. We use this throughout our discussion.
\end{rem}

\begin{prp}\label{PRP.Rel_Ent_Cstar_I}
For all $\mu,\eta\in A_{+}^{*}$ and $a,b>0$ in $\mathbb{R}$, we have

\begin{itemize}
\item[1)] $\Ent(a\mu,b\eta)=a\Ent(\mu,\eta)+a\big(\log a-\log b\big)\cdot \|\mu\|_{A^{*}}$,

\item[2)] $\Ent(\mu,\eta)\geq\lc\log \|\mu\|_{A^{*}}-\log \|\eta\|_{A^{*}}\rc\cdot \|\mu\|_{A^{*}}$ if $\eta\neq 0$,

\item[3)] $\Ent(0,\eta)=0$ and $\Ent(\mu,0)=\infty$ if $\mu\neq 0$,

\item[4)] $\Ent(\mu,\eta)>-\infty$.
\end{itemize}
\end{prp}
\begin{proof}
Let $\mu,\eta\in A_{+}^{*}$ and $a,b>0$ in $\mathbb{R}$. Proposition 5.1 in \cite{BK.Ohy_Pet.1993.Rel_Ent} is $1)$ and $2)$. Kosaki's formula implies $\Ent(0,\eta)=0$ by selecting $F=0$ for all $n\in\mathbb{N}$ in order to estimate the supremum. If $\mu\neq 0$, then Kosaki's formula likewise implies $\Ent(\mu,0)=\infty$ by selecting $F=1_{U(A)}$ in each case. Get $3)$. We see $2)$ and $3)$ imply $4)$ at once.
\end{proof}

We have $\sigma$-weakly dense $C^{*}$-subalgebras $A\subset U(A)$ and $A\subset L^{\infty}(A,\tau)$. Universal property implies there exists unique normal $^{*}$-homomorphism $\varphi:U(A)\longrightarrow L^{\infty}(A,\tau)$ s.t.~$\varphi\circ\pi_{U}=\id_{A}$. It is unital and surjective, further mapping the unit ball in $U(A)$ to the one in $L^{\infty}(A,\tau)$ as per Remark \ref{REM.Rel_Ent_Cstar_Normality}. We define normal trace $U\lc\tau\rc$ on $U(A)$ by setting

\begin{align}\label{EQ.SSEC.L2W_Rel_Ent_AF_4}
U\lc\tau\rc{}(x):=\tau\lc\varphi(x)\rc{}    
\end{align}

\noindent for all $x\in U(A)_{+}$. We neither claim nor use semi-finiteness.

\begin{rem}\label{REM.Rel_Ent_Cstar_Normality}
Since $\varphi\vert_{A}=\id_{A}$, the Kaplansky density theorem shows $\varphi$ maps the unit ball in $U(A)$ to the one in $L^{\infty}(A,\tau)$ \lc{}cf.~Theorem 5.3.5 in \cite{BK.Kad_Rin.1997.OpAlg_I}\rc{}. Thus $\varphi$ is surjective. It is unital by normality and Proposition \ref{PRP.AF_Cstar_Unit}.
\end{rem}

Lemma \ref{LEM.Rel_Ent_Cstar_Normality} ensures Definition \ref{DFN.Rel_Ent_Cstar} is well-behaved w.r.t.~normality. We use the following. For all $^{*}$-subalgebras of $W^{*}$-algebras, closure in strong and weak topology are equivalent. Such closures are equivalent to closure w.r.t.~bounded strong, as well as bounded weak convergence \lc{}cf.~Proposition \ref{PRP.Wstar_BdCon}\rc{}. Note $\lc\sigma\textrm{-}\rc$weak-~and $w^{*}$-convergence coincide on bounded sets \lc{}cf.~Lemma II.2.5 in \cite{BK.Tak.1979.OpAlg_I} and Proposition \ref{PRP.Wstar_Equivalence}\rc{}. Bounded sets in tracial $W^{*}$-algebras are compact in $w^{*}$-topology, ergo weakly compact.

\begin{lem}\label{LEM.Rel_Ent_Cstar_Normality}
For all $\mu\in A_{+}^{*}$, the following are equivalent:

\begin{itemize}
\item[1)] There exists unique normal extension of $\mu$ to $L^{\infty}(A,\tau)$ s.t.~$U(\mu)=\mu\circ\varphi$.

\item[2)] For all projections $p\in U(A)$, $U(\mu)\lc{}p\rc{}=0$ if $U\lc\tau\rc\lc{}p\rc{}=0$.

\item[3)] $\ker\varphi\subset\ker U(\mu)$.

\item[4)] $\mu\in L^{1}(A,\tau)_{+}^{\flat}$.
\end{itemize}
\end{lem}
\begin{proof}
Note Remark \ref{REM.Rel_Ent_Cstar_Normality}. Let $\mu\in A_{+}^{*}$. For all projections $p\in U(A)$, faithfulness of $\tau$ implies $U\lc\tau\rc\lc{}p\rc{}=0$ if and only if $\varphi\lc{}p\rc{}=0$. Thus $3)$ implies $2)$. We know $U(\mu)$ and $\varphi$ are completely positive normal maps \lc{}cf.~Example \ref{BSP.Wstar_CP_I} and Example \ref{BSP.Wstar_CP_II}\rc{}, and therefore bounded weakly continuous by normality \lc{}cf.~Proposition \ref{PRP.Wstar_Normal}\rc{}. Ergo $\ker\varphi$ is a $W^{*}$-subalgebra. Note $W^{*}$-algebras are bounded weakly generated by their projections \lc{}cf.~Proposition \ref{PRP.Wstar_Generated}\rc{}. Hence $2)$ implies $3)$. Altogether, get equivalence of $2)$ and $3)$.\par
Clearly, $1)$ implies $2)$. Assume $\ker\varphi\subset\ker U(\mu)$. For all $x\in L^{\infty}(A,\tau)$, get $\varphi^{-1}(x)\neq\emptyset$ by surjectivity and set $\mu(x):=U(\mu)(y)$ for fixed but arbitrary $y\in\varphi^{-1}(x)$. This is independent of our choice as $3)$ ensures $\ker\varphi\subset\ker U(\mu)$. We thereby define a positivity-preserving linear map $\mu:L^{\infty}(A,\tau)\longrightarrow\mathbb{C}$ s.t.~$U(\mu)=\mu\circ\varphi$. Thus $\|\mu\|_{L^{\infty}(A,\tau)^{*}}=\dblv{}U(\mu)\dblv_{U(A)^{*}}=\|\mu\|_{A^{*}}$ since $\varphi$ is unital, hence we have extension $\mu\in L^{\infty}(A,\tau)_{+}^{*}$. If $x=\bdw$-$\lim_{k\in K}x_{k}$ implies $\lim_{k\in K}\absv{1}{\mu\lc{}x-x_{k}\rc{}}=0$ for all nets $\{x_{k}\}_{k\in K}\subset L^{\infty}(A,\tau)$, then complete positivity of $\mu$ shows its normality \lc{}cf.~Example \ref{BSP.Wstar_CP_I} and Proposition \ref{PRP.Wstar_Normal}\rc{}. Let $x=\bdw$-$\lim_{k\in K}x_{k}$. By considering all accumulation points of $\lset\mu(x_{k})\rset_{k\in K}\subset\mathbb{R}$ and showing they are in fact zero as claimed above, we assume $\lim_{k\in K}\absv{1}{\mu\lc{}x-x_{k}\rc{}}$ exists without loss of generality.\par
Since $\varphi$ is surjective on unit balls, we have both weakly convergent bounded subnet $\{x_{k}\}_{k\in K}\subset L^{\infty}(A,\tau)$ and weakly convergent bounded net $\{\hspace{0.0125cm} y_{k}\hspace{0.0025cm}\}_{k\in K}\subset U(A)$ s.t.~$x_{k}=\varphi\lc{}y_{k}\rc$ for all $k\in K$. Set $y:=\bdw$-$\lim_{k\in K}y_{k}$. Get $x=\varphi(y)$ by normality of $\varphi$. Thus $\lim_{k\in K}\mu\lc{}x-x_{k}\rc{}=\lim_{k\in K}U(\mu)\lc{}y-y_{k}\rc{}=0$ by normality of $U(\mu)$, hence $\mu\in L^{\infty}(A,\tau)_{+}^{*}$ is normal as discussed above and therefore a unique extension as required. Ergo $1)$ implies $2)$. Altogether, get equivalence of $1)$ and $2)$. Note Remark \ref{REM.Wstar_Trace_MSP}. In particular, $L^{1}(A,\tau)_{+}^{\flat}\subset A_{+}^{*}$ is determined by normality. Thus $1)$ implies $4)$. Assume $\mu\in L^{1}(A,\tau)_{+}^{\flat}$. We obtain $U(\mu)\vert_{A}=\mu\circ\varphi\vert_{A}$ by construction. Normality of $\mu$ and $\varphi$ extends the latter identity to $U(A)$. Hence $4)$ implies $1)$. Altogether, get equivalence of $1)$ and $4)$. All statements are equivalent.
\end{proof}

Proposition \ref{PRP.Rel_Ent_Cstar_II} collects further properties. Lemma \ref{LEM.Rel_Ent_Cstar_Normality} implies Equation \ref{EQ.PRP.Rel_Ent_Cstar_II_2} and therefore Equation \ref{EQ.PRP.Rel_Ent_Cstar_II_1}. Example \ref{BSP.Rel_Ent_Cstar_Fin_I} gives the finite-dimensional setting. Quantum entropy is negative quantum relative entropy. Example \ref{BSP.Rel_Ent_Cstar_Fin_II} shows Equation \ref{EQ.SSEC.L2W_Rel_Ent_AF_2} may fail in the finite-dimensional setting if $(A,\tau)$ is not strongly unital.

\begin{prp}\label{PRP.Rel_Ent_Cstar_II}
$\Ent:L^{1}(A,\tau)_{+}^{\flat}\times L^{1}(A,\tau)_{+}^{\flat}\longrightarrow (-\infty,\infty]$ is jointly convex and l.s.c.~in $w^{*}$-topology of $A[1_{A}]^{*}$. Furthermore, $\Ent$ satisfies the following.

\begin{itemize}
\item[1)] For all $\mu,\eta\in L^{1}(A,\tau)_{+}^{\flat}$, we have

\begin{align}\label{EQ.PRP.Rel_Ent_Cstar_II_1}
\Ent(\mu,\eta)=\sup_{\substack{n\in\mathbb{N},\\ F\in\mathcal{T}_{n}^{u}(A[1_{A}])}}\left\{\|\mu\|_{A^{*}}\log n-\int_{n^{-1}}^{\infty}t^{-1}\dblv{}F^{\perp}(t)\dblv_{\mu}^{2}+t^{-2}\dblv{}F(t)^{*}\dblv_{\eta}^{2}~dt\right\}.
\end{align}

\begin{reapply}
\end{reapply}

\item[2)] Let $N\subset L^{\infty}(A,\tau)$ be a unital $W^{*}$-subalgebra. For all $\mu,\eta\in L^{1}(A,\tau)_{+}^{\flat}$, we have

\begin{align}
\Ent(\mu,\eta)\geq\Ent\lc\mu\vert_{N},\eta\vert_{N}\rc{}.
\end{align}

\begin{reapply}
\end{reapply}

\end{itemize}
\end{prp}
\begin{proof}
Note $4)$ in Proposition \ref{PRP.Rel_Ent_Cstar_I} shows $\Ent>-\infty$ on norm bounded sets. Kosaki's formula implies $\Ent$ is jointly convex. Let $\mu,\eta\in L^{1}(A,\tau)_{+}^{\flat}$. We know $U(\mu)=\mu\circ\varphi$ and $U(\eta)=\eta\circ\varphi$ by Lemma \ref{LEM.Rel_Ent_Cstar_Normality}. Since $\varphi$ is a unital surjective $^{*}$-homomorphism, mapping $\mathcal{T}_{n}^{u}(U(A))$ to $\mathcal{T}_{n}^{u}\lc{}L^{\infty}(A,\tau)\rc$ via $F\mapsto G:=\varphi\circ F$ for all $n\in\mathbb{N}$ shows

\begin{align}\label{EQ.PRP.Rel_Ent_Cstar_II_2}
\Ent(\mu,\eta)=\sup_{\substack{n\in\mathbb{N},\\ F\in\mathcal{T}_{n}^{u}\lc{}L^{\infty}(A,\tau)\rc{}}}\left\{\|\mu\|_{A^{*}}\log n-\int_{n^{-1}}^{\infty}t^{-1}\dblv{}G^{\perp}(t)\dblv_{\mu}^{2}+t^{-2}\dblv{}G(t)^{*}\dblv_{\eta}^{2}~dt\right\}.
\end{align}

\noindent Since $\mu,\eta\in L^{\infty}(A,\tau)^{*}$, Equation \ref{EQ.PRP.Rel_Ent_Cstar_II_2} shows $\Ent(\mu,\eta)$ is the relative entropy of $\mu$ w.r.t.~$\eta$ as per Definition \ref{DFN.Rel_Ent_Wstar}. Get $1)$ by replacing $L^{\infty}(A,\tau)$ with $A[1_{A}]$ in the second suprema of the equation. Applying Fatou's lemma to Equation \ref{EQ.PRP.Rel_Ent_Cstar_II_1} then shows l.s.c.~in $w^{*}$-topology of $A[1_{A}]^{*}$. Equation \ref{EQ.SSEC.L2W_Rel_Ent_AF_2} and Equation \ref{EQ.PRP.Rel_Ent_Cstar_II_2} show $2)$ immediately.
\end{proof}

\begin{bsp}\label{BSP.Rel_Ent_Cstar_Fin_I}
Assume $(A,\tau)$ is finite-dimensional. Following Proposition \ref{PRP.AF_Cstar_Trace_II}, we moreover assume $(A,\tau)=\lc{}M_{n}(\mathbb{C}),\tr_{n}\rc$ for $n\in\mathbb{N}$ without loss of generality. The general finite-dimensional case is therefore given by a weighted sum of terms having following form up to pull-back along $C^{*}$-isometries as per Equation \ref{EQ.NTN.AF_Cstar_Fin_Isometry_1}. For all $\mu,\eta\in M_{n}(\mathbb{C})_{+}^{*}$, the relative entropy of $\mu$ w.r.t.~$\eta$ is given by

\begin{align*}
\Ent(\mu,\eta)=
\begin{cases}
0 & \If\ \mu=0, \\
\textrm{tr}_{n}\lc\sharp\mu\cdot \lc\log\sharp\mu-\log\sharp\eta\rc\rc & \If\ \mu\neq 0\ \textrm{and}\ \supp\mu\leq\supp\eta, \\
\infty & \Else.
\end{cases}
\end{align*}

\noindent The above characterisation is Umegaki's definition, except we make vanishing for $\mu=0$ explicit. It generalises to Araki's definition \lc{}cf.~p.77 in \cite{BK.Ohy_Pet.1993.Rel_Ent}\rc{}, which in turn coincides with Kosaki's formula by Theorem 5.11 in \cite{BK.Ohy_Pet.1993.Rel_Ent}. The negative of Umegaki's definition is quantum entropy, i.e.~von Neumann entropy \lc{}cf.~p.17 in \cite{BK.Ohy_Pet.1993.Rel_Ent}\rc{}. Corollary \ref{COR.Rel_Ent_AF_Cstar_Trace} extends such description to the general case.
\end{bsp}

\begin{bsp}\label{BSP.Rel_Ent_Cstar_Fin_II}
Assume $(A,\tau)=\lc{}M_{n}(\mathbb{C}),\tr\rc$ for $n\geq 2$ in $\mathbb{N}$. Note $M_{n-1}(\mathbb{C})\subset M_{n}(\mathbb{C})$ is non-unital. For all $k\in\lset{}1,\ldots,n\rset$, let $\lambda_{k}\in (0,1)$. Following Example \ref{BSP.Rel_Ent_Cstar_Fin_I}, the diagonal matrix $D:=\lc\lambda_{1},\ldots,\lambda_{n}\rc\in M_{n}(\mathbb{C})_{+}$ yields quantum relative entropy

\begin{align}\label{EQ.BSP.Rel_Ent_Cstar_Fin_II_1}
\Ent\hspace{-0.0375cm} \big(D^{\flat},I_{n}^{\flat}\big)=\sum_{k=1}^{n}\lambda_{k}\log\lambda_{k}.
\end{align}

\noindent We know $\big(\restr{0.925}{D}{M_{n-1}(\mathbb{C})}\big)^{\flat}=\lc\lambda_{1},\ldots,\lambda_{n-1}\rc^{\flat}\in M_{n-1}(\mathbb{C})_{+}$ and $\big(\restr{0.925}{I}{M_{n-1}(\mathbb{C})}\big)^{\flat}=I_{n-1}^{\flat}$. Moreover, we have $\lambda_{n}\log\lambda_{n}<0$ by hypothesis. Equation \ref{EQ.BSP.Rel_Ent_Cstar_Fin_II_1} lets us estimate

\begin{align}\label{EQ.BSP.Rel_Ent_Cstar_Fin_II_2}
\Ent\hspace{-0.0375cm} \big(D^{\flat},I_{n}^{\flat}\big)=\Ent\lc\lc\restr{0.925}{D}{M_{n-1}(\mathbb{C})}\rc^{\flat},I_{n-1}^{\flat}\rc{}+\lambda_{n}\log\lambda_{n}<\Ent\lc\lc\restr{0.925}{D}{M_{n-1}(\mathbb{C})}\rc^{\flat},I_{n-1}^{\flat}\rc{}<\infty.
\end{align}

\noindent Equation \ref{EQ.BSP.Rel_Ent_Cstar_Fin_II_2} shows Equation \ref{EQ.SSEC.L2W_Rel_Ent_AF_2} fails since $M_{n-1}(\mathbb{C})\subset M_{n}(\mathbb{C})$ is non-unital.
\end{bsp}

%%%%%%%%%%%%%
%%% PART %%%%
%%%%%%%%%%%%%

\subsubsection*{Extending to traces in the second variable}

Note Equation \ref{EQ.DFN.Rel_Ent_Cstar_1} does not let us take relative entropy w.r.t.~non-finite traces. We extend accordingly. Let $(A,\tau)$ be a tracial AF-$C^{*}$-algebra. Definition \ref{DFN.Rel_Ent_AF_Cstar_Trace} gives the relative entropy $\Enttau:A_{+}^{*}\longrightarrow [-\infty,\infty]$ w.r.t.~$\tau$, i.e.~quantum relative entropy. Proposition \ref{PRP.Rel_Ent_Cstar_II} shows Lemma \ref{LEM.Rel_Ent_AF_Cstar_Trace_II} recovers Equation \ref{EQ.DFN.Rel_Ent_Wstar_1} for normal positive bounded functionals with integrable support.

\begin{dfn}\label{DFN.Rel_Ent_AF_Cstar_Trace} 
Set extended trace norm $\|x\|_{U\lc\tau\rc{}}:=\sqrt{U\lc\tau\rc(x^{*}x)}$ for all $x\in U(A)$. For all $\mu\in A_{+}^{*}$, the relative entropy of $\mu$ w.r.t.~$\tau$ is defined by

\begin{align}\label{EQ.DFN.Rel_Ent_AF_Cstar_Trace_1}
\Ent(\mu,\tau):=\sup_{\substack{n\in\mathbb{N},\\ F\in\mathcal{T}_{n}(U(A))}}\left\{\|\mu\|_{A^{*}}\log n-\int_{n^{-1}}^{\infty}t^{-1}\dblv{}F^{\perp}(t)\dblv_{U(\mu)}^{2}+t^{-2}\dblv{}F(t)\dblv_{U\lc\tau\rc{}}^{2}dt\right\}.
\end{align}

\noindent Set $\Enttau:=\Ent\lc\hspace{0.025cm}.\hspace{0.025cm},\tau\rc{}:A_{+}^{*}\longrightarrow [-\infty,\infty]$ and $\dom\Enttau:=\lset\mu\in A_{+}^{*}\ \vset\ \absv{1.15}{\Ent(\mu,\tau)\hspace{0.025cm}}<\infty\rset$. We call $\Enttau$ quantum relative entropy w.r.t.~$\tau$, or quantum relative entropy.
\end{dfn}

\begin{ntn}\label{NTN.Rel_Ent_AF_Cstar_Trace}
For all $j\in\mathbb{N}$ and $\mu\in A_{j,+}^{*}$, let $\Ent\lc\mu,\tau_{j}\rc$ denote the relative entropy of $\mu$ w.r.t.~$\tau_{j}=\tau\vert_{A_{j}}$ for the tracial AF-$C^{*}$-algebra $(A_{j},\tau)=(A_{j},\tau_{j})$ as per Definition \ref{DFN.AF_Cstar_Trace_Restriction}.
\end{ntn}

\begin{rem}\label{REM.Rel_Ent_AF_Cstar_Trace_I}
For all $n\in\mathbb{N}$ and $F\in\mathcal{T}_{n}(U(A))$, traciality implies 

\begin{align}\label{EQ.REM.Rel_Ent_AF_Cstar_Trace_I_1}
\dblv{}F(t)\dblv_{U\lc\tau\rc{}}^{2}=U\lc\tau\rc\lc{}F(t)^{*}F(t)\rc{}=U\lc\tau\rc\lc{}F(t)F(t)^{*}\rc{}.
\end{align}

\noindent Note $\dblv{}F(t)\dblv_{U(\eta)}^{2}=U(\eta)\lc{}F(t)F(t)^{*}\rc$ in Equation \ref{EQ.DFN.Rel_Ent_Cstar_1}. Compare to Equation \ref{EQ.REM.Rel_Ent_AF_Cstar_Trace_I_1}, i.e.~use of extended trace norm, in Equation \ref{EQ.DFN.Rel_Ent_AF_Cstar_Trace_1}. If $\tau<\infty$, then Equation \ref{EQ.SSEC.L2W_Rel_Ent_AF_4} shows Equation \ref{EQ.DFN.Rel_Ent_AF_Cstar_Trace_1} is Equation \ref{EQ.DFN.Rel_Ent_Cstar_1} using $\eta=\tau$. If $\tau$ is non-finite, then its joint convexity implies $\Enttau$ is not l.s.c.~in $w^{*}$-topology of $A^{*}$ on weakly closed convex $K\subset A_{+}^{*}$ for which there exists $\mu\in K$ s.t.~$\Ent(\mu,\tau)=-\infty$. We argue as for Example 4.4 in \cite{ART.Stu.2006.Classical_OT_I}.
\end{rem}

For all $j\in\mathbb{N}$ and $\mu\in A_{j,+}^{*}$, using quantum relative entropy for $A_{j}$ yields

\begin{align}\label{EQ.SSEC.L2W_Rel_Ent_AF_5}
\Ent\lc\mu,\tau_{j}\rc{}=\sup_{\substack{n\in\mathbb{N},\\ F\in\mathcal{T}_{n}^{u}(A_{j})}}\left\{\|\mu\|_{A_{j}^{*}}\log n-\int_{n^{-1}}^{\infty}t^{-1}\dblv{}F^{\perp}(t)\dblv_{\mu}^{2}+t^{-2}\dblv{}F(t)\dblv_{\tau}^{2}dt\right\}.
\end{align}

\noindent For all $\mu\in A_{+}^{*}$, we expect $\Ent(\mu,\tau)=\lim_{j\in\mathbb{N}}\Ent\lc\mu_{j},\tau_{j}\rc$ if we indeed measure information required to discriminate $\mu$ and $\tau$ through observation. Theorem \ref{THM.Rel_Ent_AF_Cstar_Trace} shows this given uniform majorant of local support. The latter uses Lemma \ref{LEM.Rel_Ent_AF_Cstar_Trace_I} and Lemma \ref{LEM.Rel_Ent_AF_Cstar_Trace_II}.

\begin{prp}\label{PRP.Rel_Ent_AF_Cstar_Trace}
If $\mu\in L^{1}(A,\tau)_{+}^{\flat}$ s.t.~$\Ent(\mu,\tau)>-\infty$, then we have

\begin{align}\label{EQ.PRP.Rel_Ent_AF_Cstar_Trace_1}
\Ent(\mu,\tau)=\sup_{\substack{n\in\mathbb{N},\\ F\in\mathcal{T}_{n}^{u}(L^{2,\infty}(A,\tau))}}\left\{\|\mu\|_{A^{*}}\log n-\int_{n^{-1}}^{\infty}t^{-1}\dblv{}F^{\perp}(t)\dblv_{\mu}^{2}+t^{-2}\dblv{}F(t)\dblv_{\tau}^{2}dt\right\}.
\end{align}
\end{prp}
\begin{proof}
Let $\mu\in L^{1}(A,\tau)_{+}^{\flat}$ s.t.~$\Ent(\mu,\tau)>-\infty$. As for $1)$ in Proposition \ref{PRP.Rel_Ent_Cstar_II}, normality lets us drop $\varphi$ in Kosaki's formula while taking the supremum over all $\mathcal{T}_{n}^{u}\lc{}L^{\infty}(A,\tau)\rc$.\par

%NEWPAGE
%NEWPAGE
%NEWPAGE

\pagebreak

%NEWPAGE
%NEWPAGE
%NEWPAGE

Let $n\in\mathbb{N}$ and $F\in\mathcal{T}_{n}^{u}\lc{}L^{\infty}(A,\tau)\rc$. If there exists $t_{0}\in (n^{-1},\infty)$ s.t.~$F(t_{0})\notin L^{2}(A,\tau)$, then $F$ being a step function implies

\begin{align}\label{EQ.PRP.Rel_Ent_AF_Cstar_Trace_2}
\int_{n^{-1}}^{\infty}t^{-1}\dblv{}F^{\perp}(t)\dblv_{\mu}^{2}+t^{-2}\dblv{}F(t)\dblv_{\tau}^{2}dt\geq\int_{n^{-1}}^{\infty}t^{-2}\dblv{}F(t)\dblv_{\tau}^{2}dt=\infty.
\end{align}

\noindent Thus Equation \ref{EQ.PRP.Rel_Ent_AF_Cstar_Trace_2} and $\Ent(\mu,\tau)>-\infty$ imply we may restrict to $L^{2,\infty}(A,\tau)$, hence we see Equation \ref{EQ.SSEC.L2W_Rel_Ent_AF_4} shows Equation \ref{EQ.PRP.Rel_Ent_AF_Cstar_Trace_1}.
\end{proof}

\begin{lem}\label{LEM.Rel_Ent_AF_Cstar_Trace_I}
For all $j\in\mathbb{N}$ and $\mu\in A_{j,+}^{*}$, we have $\Ent(\mu,\tau)=\Ent\lc\mu,\tau_{j}\rc$.
\end{lem}
\begin{proof}
Let $j\in\mathbb{N}$ and $\mu\in A_{j,+}^{*}$. Note $\|\mu\|_{A_{j}^{*}}=\|\mu\|_{A^{*}}$. For all $x\in A_{j}$, we further know

\begin{align}\label{EQ.LEM.Rel_Ent_AF_Cstar_Trace_I_1}
\dblv{}1_{A_{j}}-x\dblv_{\mu}=\dblv{}1_{A}-x\dblv_{\mu}.
\end{align}

\noindent Lemma \ref{LEM.Rel_Ent_Cstar_Normality} shows $U(\mu)=\mu\circ\varphi$. Since $\varphi$ is a unital surjective $^{*}$-homomorphism, we map $\mathcal{T}_{n}^{u}(A_{j})$ to $\mathcal{T}_{n}^{u}(U(A))$ via $F\mapsto G:=\varphi^{-1}\circ F$ for all $n\in\mathbb{N}$ by choosing pre-images in each case. Equation \ref{EQ.LEM.Rel_Ent_AF_Cstar_Trace_I_1} and unitality show

\begin{align}\label{EQ.LEM.Rel_Ent_AF_Cstar_Trace_I_2}
\dblv{}1_{A_{j}}-F(t)\dblv_{\mu}=\dblv{}1_{A}-F(t)\dblv_{\mu}=\dblv{}G^{\perp}(t)\dblv_{U(\mu)}
\end{align}

\noindent in each case. Equation \ref{EQ.LEM.Rel_Ent_AF_Cstar_Trace_I_2} implies $\Ent\lc\mu,\tau_{j}\rc\leq\Ent(\mu,\tau)$ by Kosaki's formula. We show the converse. Since $\Ent\lc\mu,\tau_{j}\rc{}>-\infty$ by $4)$ in Proposition \ref{PRP.Rel_Ent_Cstar_I}, Proposition \ref{PRP.Rel_Ent_AF_Cstar_Trace} ensures we may use Equation \ref{EQ.PRP.Rel_Ent_AF_Cstar_Trace_1} as Kosaki's formula. For all $n\in\mathbb{N}$ and $F\in\mathcal{T}_{n}^{u}(L^{2,\infty}(A,\tau))$, set $F_{j}(t):=F(t)_{j}$ for all $t\in (n^{-1},\infty)$. Note $F_{j}\in\mathcal{T}_{n}^{u}(A_{j})$ in each case.\par
Let $n\in\mathbb{N}$, $F\in\mathcal{T}_{n}^{u}(L^{2,\infty}(A,\tau))$ and $t\in(n^{-1},\infty)$. Then $F_{j}(t)=\pi_{j}^{A}\lc{}F(t)\rc$ and $I-F_{j}(t)\in A_{j}^{\perp}$ by square integrability. We therefore have

\begin{align}\label{EQ.LEM.Rel_Ent_AF_Cstar_Trace_I_3}
\dblv{}F(t)\dblv_{\tau}^{2}\geq \dblv{}\pi_{j}^{A}\lc{}F(t)\rc\dblv_{\tau}^{2}=\dblv{}F_{j}(t)\dblv_{\tau}^{2}.
\end{align}

\noindent We use $A_{j}A_{j}^{\perp}=A_{j}^{\perp}A_{j}=0$. Proposition \ref{PRP.AF_Cstar_Trace_Dualisation_I} implies restriction maps commute with adjoining as they are positivity-preserving \lc{}cf.~Proposition \ref{PRP.PO_II}\rc{}. We calculate

\begin{align}\label{EQ.LEM.Rel_Ent_AF_Cstar_Trace_I_4}
\dblv{}F(t)\dblv_{\mu}^{2}=\dblv{}F_{j}(t)\dblv_{\mu}^{2}+\dblv{}\big(I-\pi_{j}^{A}\big)\lc{}F_{j}(t)\rc\dblv_{\mu}^{2}\geq \dblv{}F_{j}(t)\dblv_{\mu}^{2}.
\end{align}

\noindent Since $\mu\in A_{j}$ implies $\mu(u)=\mu(u_{j})$ for all $u\in L^{2}(A,\tau)$, multiplying out terms yields

\begin{align}\label{EQ.LEM.Rel_Ent_AF_Cstar_Trace_I_5}
\dblv{}1_{A}-F(t)\dblv_{\mu}^{2}=\mu(1_{A_{j}})-\mu\lc{}F_{j}(t)^{*}\rc{}-\mu\lc{}F_{j}(t)\rc{}+\dblv{}F(t)\dblv_{\mu}^{2}.
\end{align}

\noindent Note Equation \ref{EQ.LEM.Rel_Ent_AF_Cstar_Trace_I_4} lets us estimate the final summand in Equation \ref{EQ.LEM.Rel_Ent_AF_Cstar_Trace_I_5}. We moreover collect terms on the right-hand side of the resulting estimate. In summary, we obtain

\begin{align}\label{EQ.LEM.Rel_Ent_AF_Cstar_Trace_I_6}
\dblv{}F^{\perp}(t)\dblv_{\mu}^{2}=\dblv{}1_{A}-F(t)\dblv_{\mu}^{2}\geq \dblv{}1_{A_{j}}-F_{j}(t)\dblv_{\mu}^{2}=\dblv{}F_{j}^{\perp}(t)\dblv_{\mu}^{2}.
\end{align}

%NEWPAGE
%NEWPAGE
%NEWPAGE

\pagebreak

%NEWPAGE
%NEWPAGE
%NEWPAGE

For all $n\in\mathbb{N}$ and $F\in\mathcal{T}_{n}^{u}(L^{2,\infty}(A,\tau))$, applying Equation \ref{EQ.LEM.Rel_Ent_AF_Cstar_Trace_I_3} and Equation \ref{EQ.LEM.Rel_Ent_AF_Cstar_Trace_I_6} to integrands on the left-hand side below lets us estimate

\begin{align}\label{EQ.LEM.Rel_Ent_AF_Cstar_Trace_I_7}
\int_{n^{-1}}^{\infty}t^{-1}\dblv{}F^{\perp}(t)\dblv_{\mu}^{2}+t^{-2}\dblv{}F(t)\dblv_{\tau}^{2}dt\geq\int_{n^{-1}}^{\infty}t^{-1}\dblv{}F_{j}^{\perp}(t)\dblv_{\mu}^{2}+t^{-2}\dblv{}F_{j}(t)\dblv_{\tau}^{2}dt.
\end{align}

\noindent Using Equation \ref{EQ.SSEC.L2W_Rel_Ent_AF_5}, resp.~Equation \ref{EQ.PRP.Rel_Ent_AF_Cstar_Trace_1} as Kosaki's formula, Equation \ref{EQ.LEM.Rel_Ent_AF_Cstar_Trace_I_7} lets us estimate $\Ent\lc\mu,\tau_{j}\rc\geq\Ent(\mu,\tau)$. Altogether, get $\Ent(\mu,\tau)=\Ent\lc\mu,\tau_{j}\rc$.
\end{proof}

\begin{lem}\label{LEM.Rel_Ent_AF_Cstar_Trace_II}
Let $\mu\in A_{+}^{*}$.

\begin{itemize}
\item[1)] If $\mu\notin L^{1}(A,\tau)_{+}^{\flat}$, then $\mu\notin\dom\Enttau$.

\item[2)] If $\mu\in L^{1}(A,\tau)_{+}^{\flat}$ and $p\in L^{1,\infty}(A,\tau)$ is a projection s.t.~$\supp\mu\leq p$, then $\Ent(\mu,\tau)>-\infty$ and we have

\begin{align}\label{EQ.LEM.Rel_Ent_AF_Cstar_Trace_II_1}
\Ent(\mu,\tau)=\sup_{\substack{n\in\mathbb{N},\\ F\in\mathcal{T}_{n}^{u}(A[p])}}\left\{\|\mu\|_{A[p]^{*}}\log n-\int_{n^{-1}}^{\infty}t^{-1}\dblv{}F^{\perp}(t)\dblv_{\mu}^{2}+t^{-2}\dblv{}F(t)\dblv_{\tau}^{2}dt\right\}.
\end{align}

\begin{reapply}
\end{reapply}

\end{itemize}
\end{lem}
\begin{proof}
We show $1)$. Assume $\mu\notin L^{1}(A,\tau)_{+}^{\flat}$. If $\Ent(\mu,\tau)=-\infty$, then our claim follows at once. We assume $\Ent(\mu,\tau)>-\infty$ without loss of generality. Proposition \ref{PRP.Rel_Ent_AF_Cstar_Trace} ensures we may use Equation \ref{EQ.PRP.Rel_Ent_AF_Cstar_Trace_1} as Kosaki's formula. Using Equation \ref{EQ.PRP.Rel_Ent_AF_Cstar_Trace_1}, each step function is constant for sufficiently large $t>0$ and maps to $L^{2,\infty}(A,\tau)$. Since $\Ent(\mu,\tau)>-\infty$, there exist $x\in U(A)$ s.t.~$\dblv{}1_{A}-x\dblv_{U(\mu)}=0$ and $\varphi(x)\in L^{2,\infty}(A,\tau)$. Since $\mu\notin L^{1}(A,\tau)_{+}^{\flat}$, Lemma \ref{LEM.Rel_Ent_Cstar_Normality} yields projection $p\in U(A)$ s.t.~$U\lc\tau\rc\lc{}p\rc{}=0$ and $U(\mu)\lc{}p\rc{}>0$ holds, and Lemma \ref{LEM.Cstar_Trace_Abstract_Projection} shows $\|\mu\|_{A[p]^{*}}=\|\mu\|_{A^{*}}$. Let $C>0$ s.t.~$2C U(\mu)\lc{}p\rc{}>\|\mu\|_{A^{*}}$.\par
We require suitable sequence to estimate. For all $n\in\mathbb{N}$, set

\begin{align*}
F_{n}(t):=
\begin{cases}
Cp & \If\ t\in (n^{-1},n), \\
x & t\geq n.
\end{cases}
\end{align*}

\noindent Note $F_{n}\in\mathcal{T}_{n}^{u}(U(A))$ in each case. Selecting $F_{n}$ for all $n\in\mathbb{N}$, we estimate

\begin{align}\label{EQ.LEM.Rel_Ent_AF_Cstar_Trace_II_2}
\Ent(\mu,\tau)\geq\sup_{n\in\mathbb{N}}\hspace{0.025cm} \left\{\|\mu\|_{A^{*}}\log n-U(\mu)\lc{}1_{U(A)}-Cp\rc\int_{n^{-1}}^{n}t^{-1}dt-\big\|\varphi(x)\big\|_{\tau}^{2}\int_{n}^{\infty}t^{-2}dt\right\}.
\end{align}

\noindent Since $\int_{n^{-1}}^{n}t^{-1}dt=2\log n$ and $\int_{n}^{\infty}t^{-2}dt=n^{-1}$ for all $n\in\mathbb{N}$, Equation \ref{EQ.LEM.Rel_Ent_AF_Cstar_Trace_II_2} implies

\begin{align}\label{EQ.LEM.Rel_Ent_AF_Cstar_Trace_II_3}
\Ent(\mu,\tau)\geq\sup_{n\in\mathbb{N}}\hspace{0.025cm} \lc{}2C\bar\mu\lc{}p\rc{}-\|\mu\|_{A^{*}}\rc\cdot \log n-\big\|\varphi(x)\big\|_{\tau}^{2}\cdot n^{-1}=\infty.
\end{align}

\noindent Equation \ref{EQ.LEM.Rel_Ent_AF_Cstar_Trace_II_3} shows $\Ent(\mu,\tau)=\infty$ if $\Ent(\mu,\tau)>-\infty$. Altogether, get $1)$.\par
We show $2)$. Assume $\mu\in L^{1}(A,\tau)_{+}^{\flat}$. Let $p\in L^{1,\infty}(A,\tau)$ be a projection s.t.~$\supp\mu\leq p$. Lemma \ref{LEM.Support_Projection} therefore implies $\mu\in L^{1}(A[p],\tau)_{+}^{\flat}$ and $\sharp\mu=\sharp\mu\cdot p=p\cdot \sharp\mu$. Surjectivity of $\varphi$ yields element $x\in\varphi^{-1}\lc{}p\rc\in U(A)$. For all $n\in\mathbb{N}$, set $F_{n}(t):=x$ for all $t\in (n^{-1},\infty)$. Note $F_{n}\in\mathcal{T}_{n}^{u}(U(A))$ in each case. Selecting $F_{n}$ for all $n\in\mathbb{N}$, we estimate

\begin{align}\label{EQ.LEM.Rel_Ent_AF_Cstar_Trace_II_4}
\Ent(\mu,\tau)\geq\sup_{n\in\mathbb{N}}\hspace{0.025cm} \|\mu\|_{A^{*}}\log n-\int_{n^{-1}}^{\infty}t^{-1}\mu\lc{}1_{A}-p\rc{}+t^{-2}\tau\lc{}p\rc{}dt\geq-\tau\lc{}p\rc{}>-\infty.
\end{align}

\noindent Equation \ref{EQ.LEM.Rel_Ent_AF_Cstar_Trace_II_4} shows Proposition \ref{PRP.Rel_Ent_AF_Cstar_Trace} ensures we may use Equation \ref{EQ.PRP.Rel_Ent_AF_Cstar_Trace_1} as Kosaki's formula. We may furthermore take the supremum over all $\mathcal{T}_{n}^{u}\lc{}L^{\infty}(A,\tau)\rc$ \lc{}cf.~proof of Proposition \ref{PRP.Rel_Ent_AF_Cstar_Trace}\rc{}. Since $A[p]\subset L^{\infty}(A,\tau)$ is a $C^{*}$-subalgebra, we bound the variational expression on the right-hand side of Equation \ref{EQ.LEM.Rel_Ent_AF_Cstar_Trace_II_1} from above by $\Ent(\mu,\tau)$. We show the converse. For all $n\in\mathbb{N}$ and $F\in\mathcal{T}_{n}^{u}(L^{2,\infty}(A,\tau))$, set $F_{p}(t):=\comp F(t)=pF(t)p$ for all $t\in (n^{-1},\infty)$. Since $pAp\subset A[p]$ by definition, note $F_{p}\in\mathcal{T}_{n}^{u}(A[p])$.\par
Let $n\in\mathbb{N}$, $F\in\mathcal{T}_{n}^{u}(L^{2,\infty}(A,\tau))$ and $t\in (n^{-1},\infty)$. Using $\sharp\mu=\sharp\mu\cdot p=p\cdot \sharp\mu$, $p^{2}=p$ and traciality, we calculate

\begin{align}\label{EQ.LEM.Rel_Ent_AF_Cstar_Trace_II_5}
\dblv{}F^{\perp}(t)\dblv_{\mu}^{2}=\dblv{}p-F_{p}(t)\dblv_{\mu}^{2}+\mu\lc{}pF(t)^{*}\lc{}1_{A}-p\rc{}F(t)p\rc{}
\end{align}

\noindent and

\begin{align}\label{EQ.LEM.Rel_Ent_AF_Cstar_Trace_II_6}
\dblv{}F(t)\dblv_{\tau}^{2}=\dblv{}F_{p}(t)\dblv_{\tau}^{2}+\tau\lc{}pF(t)\lc{}1_{A}-p\rc{}F(t)^{*}\rc{}+\tau\lc\lc{}1_{A}-p\rc{}F(t)F(t)^{*}\rc{}.
\end{align}

\noindent We know $p,1_{A}-p\in L^{\infty}(A,\tau)_{+}$ by hypothesis. For all $y\in L^{\infty}(A,\tau)$, we therefore have $py^{*}\lc{}1-p\rc{}yp,y\lc{}1_{A}-p\rc{}y^{*},yy^{*}\in L^{\infty}(A,\tau)_{+}$. Using such positivity, Equation \ref{EQ.LEM.Rel_Ent_AF_Cstar_Trace_II_5} lets us estimate

\begin{align}\label{EQ.LEM.Rel_Ent_AF_Cstar_Trace_II_7}
\dblv{}F^{\perp}(t)\dblv_{\mu}^{2}\geq \dblv{}p-F_{p}(t)\dblv_{\mu}^{2},
\end{align}

\noindent and Equation \ref{EQ.LEM.Rel_Ent_AF_Cstar_Trace_II_6} lets us estimate

\begin{align}\label{EQ.LEM.Rel_Ent_AF_Cstar_Trace_II_8}
\dblv{}F(t)\dblv_{\tau}^{2}\geq \dblv{}F_{p}(t)\dblv_{\tau}^{2}.
\end{align}

We conclude by estimating integral terms in Kosaki's formula as follows. For all $n\in\mathbb{N}$ and $F\in\mathcal{T}_{n}^{u}(L^{2,\infty}(A,\tau))$, applying Equation \ref{EQ.LEM.Rel_Ent_AF_Cstar_Trace_II_7} and Equation \ref{EQ.LEM.Rel_Ent_AF_Cstar_Trace_II_8} to integrands on the left-hand side below lets us estimate

\begin{align}\label{EQ.LEM.Rel_Ent_AF_Cstar_Trace_II_9}
\int_{n^{-1}}^{\infty}t^{-1}\dblv{}F^{\perp}(t)\dblv_{\mu}^{2}+t^{-2}\dblv{}F(t)\dblv_{\tau}^{2}dt\geq\int_{n^{-1}}^{\infty}t^{-1}\dblv{}F_{p}^{\perp}(t)\dblv_{\mu}^{2}+t^{-2}\dblv{}F_{p}(t)\dblv_{\tau}^{2}dt.
\end{align}

\noindent Using Equation \ref{EQ.PRP.Rel_Ent_AF_Cstar_Trace_1} as Kosaki's formula, Equation \ref{EQ.LEM.Rel_Ent_AF_Cstar_Trace_II_9} shows we bound the variational expression on the right-hand side of Equation \ref{EQ.LEM.Rel_Ent_AF_Cstar_Trace_II_1} from below by $\Ent(\mu,\tau)$. Thus get Equation \ref{EQ.LEM.Rel_Ent_AF_Cstar_Trace_II_1}, hence using relative entropy for $A[p]$ yields $\Ent(\mu,\tau)=\Ent(\mu,p^{\flat})>-\infty$ by $4)$ in Proposition \ref{PRP.Rel_Ent_Cstar_I} and $1)$ in Proposition \ref{PRP.Rel_Ent_Cstar_II}. Altogether, get $2)$.
\end{proof}

%%%%%%%%%%%%%%%%%%%
%%% SUBSECTION %%%%
%%%%%%%%%%%%%%%%%%%

\subsection{Restriction to finitely supported accessibility components}\label{SSEC.L2W_Rel_Ent_AC}

Theorem \ref{THM.Rel_Ent_AF_Cstar_Trace} shows compatibility of quantum relative entropy with compression and finite-dimensional approximation, as well as suitable l.s.c.~used in Theorem \ref{THM.QOT_Distance_AC_FS} to show l.s.c.~in topology of the given quantum optimal transport distance. We compress quantum relative entropy with uniform majorants of local support. Finite-dimensional approximation and l.s.c.~are given for states with uniform majorant of local support. As such, Theorem \ref{THM.Rel_Ent_AF_Cstar_Trace} shows we recover the strongly unital finite-trace case, and thereby the notion of discriminating information for quantum relative entropy as claimed in the introduction of this section, by compressing with uniform majorants of local support.\par
We further show all states in finitely supported accessibility components have such uniform majorants. Theorem \ref{THM.QOT_Distance_AC_FS} lets us restrict quantum relative entropy to each one s.t.~compatibility and l.s.c.~as above are satisfied. Assuming finitely supported fixed states, we are therefore able to apply the coarse graining process using Diagram \ref{EQ.SSEC.QOT_QIT_Encoding_16} for $K$ the domain of quantum relative entropy. In Section \ref{SEC.L2W_EVI}, we use the latter for our discussion, in particular our equivalence Theorem \ref{THM.L2W_EVI_Equivalence}. Examples of finitely supported fixed states arise from fixed states on tracial AF-$C^{*}$-algebras generating hyperfinite factors of type I and II by $\sigma$-weak closure.

%%%%%%%%%%%%%
%%% PART %%%%
%%%%%%%%%%%%%

\subsubsection*{Uniform majorants of local support}

Definition \ref{DFN.AF_Support_Projection_Majorant_Uniform} gives local support and uniform majorants of local support. Using the latter, we introduce finitely supported accessibility components. Strongly unital tracial AF-$C^{*}$-algebras with finite trace have units as uniform majorants of local support. We give examples for the non-unital and non-finite-trace case. Following Corollary \ref{COR.Rel_Ent_AF_Cstar_Trace}, Example \ref{BSP.AF_Support_Projection_Majorant_Uniform_Type_I} and Example \ref{BSP.AF_Support_Projection_Majorant_Uniform_Type_II_Infty} give examples of finitely supported fixed states.\par
Let $(A,\tau)$ be a tracial AF-$C^{*}$-algebra.

\begin{dfn}\label{DFN.AF_Support_Projection_Majorant_Uniform}
Let $p\in L^{1,\infty}(A,\tau)$ a projection.

\begin{itemize}
\item[1)] For all $\mu\in A_{+}^{*}$, we say that $p$ majorises the local support of $\mu$ if

\begin{align}\label{EQ.DFN.AF_Support_Projection_Majorant_Uniform_1}
\supp\mu_{j}\leq p    
\end{align}

\begin{reapply}
\end{reapply}

\noindent in $L^{\infty}(A,\tau)$ for a.e.~$j\in\mathbb{N}$. We further call $p$ a majorant of the local support of $\mu$ and write $\supp\mu\subset p$.
 
\item[2)] The set of local supports in $L^{\infty}(A,\tau)$ uniformly majorised by $p$ is defined by

\begin{align}\label{EQ.DFN.AF_Support_Projection_Majorant_Uniform_2}
\CI[p]:=\lset\mu\in A_{+}^{*}\ \vset\ \supp\mu\subset p\rset{}.   
\end{align}

\noindent We further call $p$ a uniform majorant of local support of all $\mu\in\CI[p]$.
\end{itemize}
\end{dfn}

\begin{rem}\label{REM.AF_Support_Projection_Majorant_Uniform}
If $\tau<\infty$, then $1_{A}$ majorises the local support of all $\mu\in A_{+}^{*}$.
\end{rem}

\begin{lem}\label{LEM.AF_Support_Projection_Majorant_Uniform}
Let $p\in L^{1,\infty}(A,\tau)$ be a projection. If $\mu\in L^{1}(A,\tau)_{+}^{\flat}\cap\CI[p]$, then $\supp\mu\leq p$ and $\mu$ has integrable support.
\end{lem}
\begin{proof}
Let $\mu\in L^{1}(A,\tau)_{+}^{\flat}\cap\CI[p]$. Let $j\in\mathbb{N}$. Using $2)$ in Lemma \ref{LEM.AF_Cstar_Bimodule_L1_SR}, applying $\comAj$ to Equation \ref{EQ.LEM.AF_Cstar_Bimodule_L1_SR_1} yields

\begin{align}\label{EQ.LEM.AF_Support_Projection_Majorant_Uniform_1}
S_{j}:=\comAj L_{\sharp\mu_{j}}=L_{\sharp\mu_{j},A_{j}}\leq T_{j}:=\comAj L_{\pi_{j}^{A}\big(\sqrt{\sharp\mu}\big){}}^{2}=L_{\pi_{j}^{A}\big(\sqrt{\sharp\mu}\big){},A_{j}}^{2}.
\end{align}

\noindent Equation \ref{EQ.LEM.AF_Support_Projection_Majorant_Uniform_1} shows $\ker T_{j}\subset\ker S_{j}$ and therefore

\begin{align}\label{EQ.LEM.AF_Support_Projection_Majorant_Uniform_2}
\pi_{\ker T_{j}}^{A}\leq\pi_{\ker S_{j}}^{A}.
\end{align}

\noindent Using $2)$ in Proposition \ref{PRP.Support_Projection_II}, Equation \ref{EQ.LEM.AF_Support_Projection_Majorant_Uniform_2} implies

\begin{align}\label{EQ.LEM.AF_Support_Projection_Majorant_Uniform_3}
\textrm{supp}_{A_{j}}^{c}\mu_{j}\leq\textrm{supp}_{A_{j}}^{c}\pi_{j}^{A}\vstretch{1.225}{\big(}\sqrt{\sharp\mu}\vstretch{1.225}{\big)}{}.
\end{align}

\noindent Applying $1)$ in Proposition \ref{PRP.Support_Projection_II} to Equation \ref{EQ.LEM.AF_Support_Projection_Majorant_Uniform_3} yields

\begin{align}\label{EQ.LEM.AF_Support_Projection_Majorant_Uniform_4}
\supp\pi_{j}^{A}\vstretch{1.225}{\big(}\sqrt{\sharp\mu}\vstretch{1.225}{\big)}\leq\supp\mu_{j}\leq p.
\end{align}

Note $1)$ in Proposition \ref{PRP.Support_Projection_II} shows $\supp\mu=\supp\sharp\mu=\chi_{(0,\infty]}\vstretch{1.225}{\big(}\sqrt{\sharp\mu}\vstretch{1.225}{\big)}$ by positivity and functional calculus. Thus $1)$ in Proposition \ref{PRP.Support_Projection_II} and $2)$ in Lemma \ref{LEM.AF_Support_Projection_II} show

\begin{align}\label{EQ.LEM.AF_Support_Projection_Majorant_Uniform_5}
\chi_{(0,\infty]}\vstretch{1.225}{\big(}\sqrt{\sharp\mu}\vstretch{1.225}{\big)}{}=\s\textrm{-}\lim_{j\in\mathbb{N}}\hspace{0.025cm} \chi_{(0,\infty]}\lc\pi_{j}^{A}\vstretch{1.225}{\big(}\sqrt{\sharp\mu}\vstretch{1.225}{\big)}\rc{}=\s\textrm{-}\lim_{j\in\mathbb{N}}\hspace{0.025cm} \supp\pi_{j}^{A}\vstretch{1.225}{\big(}\sqrt{\sharp\mu}\vstretch{1.225}{\big)}{},    
\end{align}

\noindent hence Equation \ref{EQ.LEM.AF_Support_Projection_Majorant_Uniform_4} and Equation \ref{EQ.LEM.AF_Support_Projection_Majorant_Uniform_5} lets us estimate

\begin{align}\label{EQ.LEM.AF_Support_Projection_Majorant_Uniform_6}
\supp\mu=\chi_{(0,\infty]}\vstretch{1.225}{\big(}\sqrt{\sharp\mu}\vstretch{1.225}{\big)}{}=\s\textrm{-}\lim_{j\in\mathbb{N}}\hspace{0.025cm} \supp\pi_{j}^{A}\vstretch{1.225}{\big(}\sqrt{\sharp\mu}\vstretch{1.225}{\big)}\leq p.
\end{align}

\noindent Equation \ref{EQ.LEM.AF_Support_Projection_Majorant_Uniform_6} shows $\supp\mu\leq p$. In particular, $\tau\lc\supp\mu\rc\leq\tau\lc{}p\rc{}<\infty$ as required.
\end{proof}

\begin{cor}\label{COR.AF_Support_Projection_Majorant_Uniform}
Let $p\in L^{1,\infty}(A,\tau)$ be a projection.

\begin{itemize}
\item[1)] $\CI[p]\subset A[p]_{+}^{*}$ and $\SII(A)\cap\CI[p]\subset\SII(A[p])$.

\item[2)] $L^{1}(A,\tau)_{+}^{\flat}\cap\CI[p]\subset L^{1}(A[p],\tau)_{+}^{\flat}$ and $\mathcal{S}^{\NI}(A)\cap\CI[p]\subset\mathcal{S}^{\NI}(A[p])$.

\item[3)] If $\mu\in\CI[p]$ and $\lset{}\mu^{k}\rset_{k\in K}\subset\CI[p]$ is a net s.t.~we have both $\mu=w^{*}$-$\lim_{k\in K}\mu^{k}$ in $A^{*}$ and $\|\mu\|_{A^{*}}=\lim_{k\in K}\dblv{}\mu^{k}\dblv_{A^{*}}$, then $\mu=w^{*}$-$\lim_{k\in K}\mu^{k}$ in $A[p]^{*}$.
\end{itemize}
\end{cor}
\begin{proof}
Lemma \ref{LEM.Cstar_Trace_Abstract_Projection} yields $A[p]_{+}^{*}\cap L^{1}(A[p],\tau)^{\flat}\subset A_{+}^{*}\cap L^{1}(A,\tau)^{\flat}$. Using normality, $2)$ in Proposition \ref{PRP.AF_Cstar_Unit} shows $\incp=\com_{p}^{*}:A[p]_{+}^{*}\cap L^{1}(A[p],\tau)^{\flat}\longrightarrow A_{+}^{*}\cap L^{1}(A,\tau)^{\flat}$ is injective. Thus Lemma \ref{LEM.AF_Support_Projection_Majorant_Uniform} implies $2)$, hence Proposition \ref{PRP.Cstar_Trace_Abstract_Dualisation_I} shows $3)$ at once. We show $1)$. For all $\mu\in\CI[p]$, $1)$ in Proposition \ref{PRP.AF_Cstar_Trace_Dualisation_II} implies $\mu=w^{*}$-$\lim_{j\in\mathbb{N}}\mu_{j}$ in $A^{*}$ and $\|\mu\|_{A^{*}}=\lim_{j\in\mathbb{N}}\|\mu_{j}\|_{A^{*}}$, and Lemma \ref{LEM.Support_Projection} shows $\lset\mu_{j}\rset_{j\in\mathbb{N}}\subset\CI[p]$ by scaling. Then $3)$ implies $1)$.
\end{proof}

%NEWPAGE
%NEWPAGE
%NEWPAGE

\pagebreak

%NEWPAGE
%NEWPAGE
%NEWPAGE

\begin{bsp}\label{BSP.AF_Support_Projection_Majorant_Uniform_Type_I}
Let $H$ be a separable Hilbert space. Assume $(A,\tau)=\lc\KII(H),\tr\rc$. Let $\mu\in S^{1}(H)_{+}^{\flat}$. Following Example \ref{BSP.AF_Support_Projection}, we know $\mu$ has integrable support if and only if $\supp\mu\in\KII(H)_{0}=\bigcup_{n\in\mathbb{N}}M_{n}(\mathbb{C})$. Lemma \ref{LEM.AF_Support_Projection_Majorant_Uniform} therefore implies $\mu$ has integrable support if and only if there exists a majorant of its local support.
\end{bsp}

\begin{bsp}\label{BSP.AF_Support_Projection_Majorant_Uniform_Type_II_Infty}
Let $H$ and $\HII$ be infinite-dimensional separable Hilbert spaces. We assume the setting of Example \ref{BSP.QOT_Type_II_Infty}, i.e.~assume $(A,\tau)=\lc\KII(H)\otimes\AII\lc\HII[J]\rc{},\tr\otimes\tau\rc$. Set $M:=L^{\infty}\lc\KII(H)\otimes\AII\lc\HII[J]\rc{},\tr\otimes\tau\rc$. Let $n\in\mathbb{N}$. We consider $\sigma$-weak closure 

\begin{align}\label{EQ.BSP.AF_Support_Projection_Majorant_Uniform_Type_II_Infty_1}
N:=\overline{M_{n}(\mathbb{C})\odot\AII(H[J])}\subset M.
\end{align}

\noindent Note $L^{1}\lc{}N,\tr\otimes\tau\rc{}=\overline{N}^{\|.\|_{1}}=\overline{M_{n}(\mathbb{C})\odot\AII(H[J])}^{\|.\|_{1}}$. Since $I_{n}\otimes 1_{\AII(H[J])}\in M_{n}(\mathbb{C})\odot\AII(H[J])$ is the unit, we have $1_{N}=I_{n}\otimes 1_{\AII(H[J])}$ by density in $\sigma$-weak topology. Thus $\lc\tr\otimes\tau\rc\lc{}1_{N}\rc{}=n<\infty$, hence $\tr\otimes\tau<\infty$ and therefore $N\subset \lc{}M,\tr\otimes\tau\rc$ \lc{}cf.~$1)$ in Proposition \ref{PRP.Wstar_Trace_Fin_I} and $1)$ in Proposition \ref{PRP.Wstar_Trace_NCE_II}\rc{}. We show $1_{N}$ majorises local support of all $\mu\in L^{1}\lc{}N,\tr\otimes\tau\rc_{+}^{\flat}$.\par
Let $\mu\in L^{1}\lc{}N,\tr\otimes\tau\rc_{+}^{\flat}$. Using separability to obtain sequences, Equation \ref{EQ.BSP.AF_Support_Projection_Majorant_Uniform_Type_II_Infty_1} yields $\{m_{k}\}_{k\in\mathbb{N}}\subset\mathbb{N}$ and $\lset{}x_{l}^{k}\otimes y_{l}^{k}\rset_{k,l\in\mathbb{N}}\subset M_{n}(\mathbb{C})\odot\AII(H[J])$ s.t.~$\sharp\mu=\|.\|_{1}$-$\lim_{k\in\mathbb{N}}\sum_{l=1}^{m_{k}}x_{l}^{k}\otimes y_{l}^{k}$. If $j\geq n$ in $\mathbb{N}$, then construction of $\tr\otimes\tau$ shows 

\begin{align}\label{EQ.BSP.AF_Support_Projection_Majorant_Uniform_Type_II_Infty_2}
\pi_{j}^{\KII(H)\otimes\AII\lc\HII[J]\rc{}}\big(M_{n}(\mathbb{C})\odot\AII(H[J])\big)\subset M_{n}(\mathbb{C})\odot\AII(H_{j}[J])\subset N.
\end{align}

\noindent Using $2.1)$ in Proposition \ref{PRP.AF_Cstar_Trace_Dualisation_I}, applying the flat operator to Equation \ref{EQ.BSP.AF_Support_Projection_Majorant_Uniform_Type_II_Infty_2} yields dual equivalent. For all $j\geq n$ in $\mathbb{N}$, $w^{*}$-continuity and linearity therefore imply

\begin{align}\label{EQ.BSP.AF_Support_Projection_Majorant_Uniform_Type_II_Infty_3}
\sharp\mu_{j}=\|.\|_{\infty}\textrm{-}\lim_{k\in\mathbb{N}}\hspace{0.025cm} \sum_{l=1}^{m_{k}}x_{l}^{k}\otimes y_{l,j}^{k}\in M_{n}(\mathbb{C})\odot\AII(H_{j}[J])\subset N.
\end{align}

\noindent Finite-dimensionality ensures a priori $\|.\|_{1}$-convergence for Equation \ref{EQ.BSP.AF_Support_Projection_Majorant_Uniform_Type_II_Infty_3} is equivalent to $\|.\|_{\infty}$-convergence as used. If $j\leq n$, then $\sharp\mu_{j}\in M_{j}(\mathbb{C})\odot\AII(H_{j}[J])\subset N$ shows $\supp\mu_{j}\in N$ by $1)$ in Proposition \ref{PRP.Support_Projection_II}. If instead $j\geq n$, then Equation \ref{EQ.BSP.AF_Support_Projection_Majorant_Uniform_Type_II_Infty_3} shows $\supp\mu_{j}\in N$ by said proposition. For all $j\in\mathbb{N}$, we therefore have $\supp\mu_{j}\leq 1_{N}$ since each is a projection.
\end{bsp}

%%%%%%%%%%%%%
%%% PART %%%%
%%%%%%%%%%%%%

\subsubsection*{Quantum relative entropy given uniform majorant of local support}

Using Lemma \ref{LEM.Rel_Ent_AF_Cstar_Trace_II}, Lemma \ref{LEM.AF_Support_Projection_Majorant_Uniform} and Lemma \ref{LEM.Rel_Ent_AF_Cstar_Trace_III}, Theorem \ref{THM.Rel_Ent_AF_Cstar_Trace} shows all properties we require of quantum relative entropy. We compress quantum relative entropy as per $1)$ in Theorem \ref{THM.Rel_Ent_AF_Cstar_Trace}. Finite-dimensional approximation is $3)$ in Theorem \ref{THM.Rel_Ent_AF_Cstar_Trace}. This is compatibility of quantum relative entropy with compression and finite-dimensional approximation. Its suitable l.s.c.~in topology of the given quantum optimal transport distance is $2)$ in Theorem \ref{THM.Rel_Ent_AF_Cstar_Trace}. Assuming boundedness, Corollary \ref{COR.Rel_Ent_AF_Cstar_Trace} gives closed term trace description of quantum relative entropy. Example \ref{BSP.Rel_Ent_Cstar_Fin_I} shows its negative is quantum entropy, i.e.~von Neumann entropy \lc{}cf.~p.17 in \cite{BK.Ohy_Pet.1993.Rel_Ent}\rc{}.\par

%NEWPAGE
%NEWPAGE
%NEWPAGE

\pagebreak

%NEWPAGE
%NEWPAGE
%NEWPAGE

Let $(A,\tau)$ be a tracial AF-$C^{*}$-algebra.

\begin{lem}\label{LEM.Rel_Ent_AF_Cstar_Trace_III}
Let $p\in L^{1,\infty}(A,\tau)$ be a projection.

\begin{itemize}
\item[1)] If $\mu\in\CI[p]$, then $\Ent(\mu,\tau)=\Ent\hspace{-0.0375cm} \big(\mu,p^{\flat}\big)>-\infty$ and we have

\begin{align}\label{EQ.LEM.Rel_Ent_AF_Cstar_Trace_III_1}
\Ent\hspace{-0.0375cm} \big(\mu,p^{\flat}\big)=\sup_{\substack{n\in\mathbb{N},\\ F\in\mathcal{T}_{n}^{u}(A[p])}}\left\{\|\mu\|_{A[p]^{*}}\log n-\int_{n^{-1}}^{\infty}t^{-1}\dblv{}F^{\perp}(t)\dblv_{\mu}^{2}+t^{-2}\dblv{}F(t)\dblv_{p^{\flat}}^{2}dt\right\}.
\end{align}

\begin{reapply}
\end{reapply}

\item[2)] If $\mu\in\CI[p]$ and $\lset\mu^{n}\rset_{n\in\mathbb{N}}\subset\CI[p]$ s.t.~we have both $\mu=w^{*}$-$\lim_{n\in\mathbb{N}}\mu^{n}$ in $A^{*}$ and $\|\mu\|_{A^{*}}=\lim_{n\in\mathbb{N}}\|\mu^{n}\|_{A^{*}}$, then

\begin{align}\label{EQ.LEM.Rel_Ent_AF_Cstar_Trace_III_2}
\Ent(\mu,\tau)\leq\liminf_{n\in\mathbb{N}}\hspace{0.025cm} \Ent\lc\mu^{n},\tau\rc{}.
\end{align}

\begin{reapply}
\end{reapply}

\end{itemize}
\end{lem}
\begin{proof}
Let $\mu\in\CI[p]$. We have tracial AF-$C^{*}$-algebra $(A[p],\tau)$ in $L^{\infty}(A[p],\tau)$ generated by $\lset{}A_{j}[p]\rset_{j\in\mathbb{N}}$. As such, we may apply our results concerning quantum relative entropy for tracial AF-$C^{*}$-algebras given in Subsection \ref{SSEC.L2W_Rel_Ent_AF}. We use unit $p$ in $A[p]$.\par
We show $1)$. Compression uses general $W^{*}$-algebras \lc{}cf.~Definition \ref{DFN.Compression_Abstract_Bd}\rc{}. Since $A[p]\subset A$, construction of universal enveloping $W^{*}$-algebras using $\sigma$-weak closure yields $W^{*}$-subalgebra $U(A[p])=\overline{\pi_{U}(A[p])}\subset U(A)$. Note $1_{U(A[p])}=\pi_{U}\lc{}p\rc$. Since $\varphi\circ\pi_{U}=\id_{A}$ extends to $\id_{L^{\infty}(A,\tau)}$ by normality, get $\varphi\lc{}1_{U(A[p])}\rc{}=p$. We therefore have

\begin{align}\label{EQ.LEM.Rel_Ent_AF_Cstar_Trace_III_3}
U(A[p])=U(A)\lb{}1_{U(A[p])}\rb\subset U(A). 
\end{align}

Get $\mu\in A[p]_{+}^{*}$ by $1)$ in Corollary \ref{COR.AF_Support_Projection_Majorant_Uniform}. Equation \ref{EQ.LEM.Rel_Ent_AF_Cstar_Trace_III_3} shows $U(\mu)\vert_{U(A[p])}$ is canonical normal extension of $\mu$ to $U(A[p])$. We know $p^{\flat}$ and $\tau$ coincide on $A[p]_{+}$. Equation \ref{EQ.LEM.Rel_Ent_AF_Cstar_Trace_III_3} shows $U\lc{}p^{\flat}\rc{}=U\lc\tau\rc\vert_{U(A[p])}$ as per Equation \ref{EQ.SSEC.L2W_Rel_Ent_AF_4}. We use relative entropy for $A$, resp.~$A[p]$ in Equation \ref{EQ.LEM.Rel_Ent_AF_Cstar_Trace_III_4} below. Equation \ref{EQ.LEM.Rel_Ent_AF_Cstar_Trace_III_3} lets us estimate

\begin{align}\label{EQ.LEM.Rel_Ent_AF_Cstar_Trace_III_4}
\Ent(\mu,\tau)\geq\Ent\hspace{-0.0375cm} \big(\mu,p^{\flat}\big).
\end{align}

\noindent If $\mu\notin\dom\Enttau$, then $4)$ in Proposition \ref{PRP.Rel_Ent_Cstar_I} and $1)$ in Lemma \ref{LEM.Rel_Ent_AF_Cstar_Trace_II} imply $\mu\notin L^{1}(A,\tau)_{+}^{\flat}$ and $\Ent(\mu,\tau)=\Ent(\mu,p^{\flat})=\infty$.  If $\mu\in\dom\Enttau$, then $2)$ in Lemma \ref{LEM.Rel_Ent_AF_Cstar_Trace_II} further implies $\mu\in L^{1}(A,\tau)_{+}^{\flat}$ and $\Ent(\mu,\tau)=\Ent(\mu,p^{\flat})>-\infty$. Get $1)$.\par
We show $2)$. Assume its setting. Note $3)$ in Corollary \ref{COR.AF_Support_Projection_Majorant_Uniform} shows $\mu=w^{*}$-$\lim_{n\in\mathbb{N}}\mu^{n}$ in $A[p]^{*}$. Thus l.s.c.~in Proposition \ref{PRP.Rel_Ent_Cstar_II} for $A[p]$ implies

\begin{align}\label{EQ.LEM.Rel_Ent_AF_Cstar_Trace_III_5}
\Ent\hspace{-0.0375cm} \big(\mu,p^{\flat}\big)\leq\liminf_{n\in\mathbb{N}}\hspace{0.025cm} \Ent\hspace{-0.0375cm} \big(\mu^{n},p^{\flat}\big),
\end{align}

\noindent hence $1)$ and Equation \ref{EQ.LEM.Rel_Ent_AF_Cstar_Trace_III_5} imply Equation \ref{EQ.LEM.Rel_Ent_AF_Cstar_Trace_III_2}.
\end{proof}

\begin{thm}\label{THM.Rel_Ent_AF_Cstar_Trace}
Let $(A,\tau)$ be a tracial AF-$C^{*}$-algebra. Let $p\in L^{1,\infty}(A,\tau)$ be a projection.

\begin{itemize}
\item[1)] If $\mu\in\SII(A)\cap\CI[p]$, then $\Ent(\mu,\tau)=\Ent\hspace{-0.0375cm} \big(\mu,p^{\flat}\big)>-\infty$ and we have

\begin{align}\label{EQ.THM.Rel_Ent_AF_Cstar_Trace_1}
\Ent\hspace{-0.0375cm} \big(\mu,p^{\flat}\big)=\sup_{\substack{n\in\mathbb{N},\\ F\in\mathcal{T}_{n}^{u}(A[p])}}\left\{\|\mu\|_{A[p]^{*}}\log n-\int_{n^{-1}}^{\infty}t^{-1}\dblv{}F^{\perp}(t)\dblv_{\mu}^{2}+t^{-2}\dblv{}F(t)\dblv_{p^{\flat}}^{2}dt\right\}.
\end{align}

\begin{reapply}
\end{reapply}

\item[2)] If $\mu\in\SII(A)\cap\CI[p]$ and $\lset\mu^{n}\rset_{n\in\mathbb{N}}\subset\SII(A)\cap\CI[p]$ s.t.~$\mu=w^{*}$-$\lim_{n\in\mathbb{N}}\mu^{n}$, then

\begin{align}\label{EQ.THM.Rel_Ent_AF_Cstar_Trace_2}
\Ent(\mu,\tau)\leq\liminf_{n\in\mathbb{N}}\hspace{0.025cm} \Ent\lc\mu^{n},\tau\rc{}.
\end{align}

\begin{reapply}
\end{reapply}

\item[3)] If $\mu\in\SII(A)\cap\CI[p]$, then

\begin{align}\label{EQ.THM.Rel_Ent_AF_Cstar_Trace_3}
\Ent(\mu,\tau)=\lim_{j\in\mathbb{N}}\hspace{0.025cm} \Ent\lc\mu_{j},\tau\rc{}=\lim_{j\in\mathbb{N}}\hspace{0.025cm} \Ent\lc\bar{\mu}_{j},\tau\rc{}.
\end{align}

\begin{reapply}
\end{reapply}

\end{itemize}
\end{thm}
\begin{proof}
Let $\mu\in\SII(A)\cap\CI[p]$. Lemma \ref{LEM.Rel_Ent_AF_Cstar_Trace_III} shows $1)$ and $2)$ at once. Since $\mu=w^{*}$-$\lim_{j\in\mathbb{N}}\mu_{j}$ by $1.2)$ in Proposition \ref{PRP.AF_Cstar_Trace_Dualisation_II}, Equation \ref{EQ.THM.Rel_Ent_AF_Cstar_Trace_3} follows from $2)$ if

\begin{align}\label{EQ.THM.Rel_Ent_AF_Cstar_Trace_4}
\Ent(\mu,\tau)\geq\liminf_{j\in\mathbb{N}}\hspace{0.025cm} \Ent\lc\mu_{j},\tau\rc{}.
\end{align}

\noindent Note $1.1)$ in Proposition \ref{PRP.AF_Cstar_Trace_Dualisation_II} and Proposition \ref{PRP.Rel_Ent_Cstar_I} ensure scaling upon restriction is of no consequence in Equation \ref{EQ.THM.Rel_Ent_AF_Cstar_Trace_3}. We show Equation \ref{EQ.THM.Rel_Ent_AF_Cstar_Trace_4}. For this, we engage in several reduction steps. If $\mu\notin L^{1}(A,\tau)_{+}^{\flat}$, then $\mu\notin\dom\Enttau$ by $1)$ in Lemma \ref{LEM.Rel_Ent_AF_Cstar_Trace_II}. Thus $\Ent(\mu,\tau)>-\infty$ further implies $\Ent(\mu,\tau)=\infty$, hence Equation \ref{EQ.THM.Rel_Ent_AF_Cstar_Trace_4} as well. We assume $\mu\in L^{1}(A,\tau)_{+}^{\flat}$ without loss of generality. We use the following. Lemma \ref{LEM.AF_Support_Projection_Majorant_Uniform} shows $\mu$ has integrable support. Theorem \ref{THM.AF_Support_Projection} ensures $\mu$ has reducible support.\par
We engage in the first reduction. We define remainder terms in Equation \ref{EQ.THM.Rel_Ent_AF_Cstar_Trace_7} below and show the latter implies Equation \ref{EQ.THM.Rel_Ent_AF_Cstar_Trace_4}. For all $j\in\mathbb{N}$, we consider the $C^{*}$-subalgebra $\mathcal{A}_{j}:=\vstretch{0.8575}{\big\langle} 1_{A_{j}}^{\perp}\vstretch{0.8575}{\big\rangle}_{\mathbb{C}}\subset L^{\infty}(A,\tau)$ and set

\begin{align}\label{EQ.THM.Rel_Ent_AF_Cstar_Trace_5}
\mu_{j}^{\perp}:=\mu\vert_{\mathcal{A}_{j}},\nu_{j}:=\restr{0.925}{\lc\supp\mu\rc^{\flat}}{\mathcal{A}_{j}}\in\mathcal{A}_{j,+}^{*}.
\end{align}

\noindent We further define the $j$-th remainder term $\Rj\in\mathbb{R}$ by setting

\begin{align*}
\Rj(\mu):=
\begin{cases}
\Ent\big(\mu_{j}^{\perp},\nu_{j}\big) & \If\ 1_{A_{j}}^{\perp}\neq 0, \\
0 & \Else.
\end{cases}
\end{align*}

Using $1_{A}=\s$-$\lim_{j\in\mathbb{N}}1_{A_{j}}$ as per $2)$ in Proposition \ref{PRP.AF_Cstar_Unit}, Example \ref{BSP.Rel_Ent_Cstar_Fin_I} for $n=1$ shows we either have $\Rj(\mu)=0$ for a.e.~$j\in\mathbb{N}$ or normality and positivity let us estimate

\begin{align}\label{EQ.THM.Rel_Ent_AF_Cstar_Trace_6}
\liminf_{j\in\mathbb{N}}\hspace{0.025cm} \Rj(\mu)=\liminf_{j\in\mathbb{N}}\hspace{0.025cm} \mu\big(1_{A_{j}}^{\perp}\big)\log\lc\mu\big(1_{A_{j}}^{\perp}\big)\rc{}-\mu\big(1_{A_{j}}^{\perp}\big)\log\lc\tau\big(\supp\mu\cdot 1_{A_{j}}^{\perp}\big)\rc\geq 0.
\end{align}

\noindent For details on our estimate of Equation \ref{EQ.THM.Rel_Ent_AF_Cstar_Trace_6}, we refer to Remark \ref{REM.Rel_Ent_AF_Cstar_Trace_II}. Equation \ref{EQ.THM.Rel_Ent_AF_Cstar_Trace_6} shows $\liminf_{j\in\mathbb{N}}\Rj(\mu)\geq 0$. We claim Equation \ref{EQ.THM.Rel_Ent_AF_Cstar_Trace_4} follows if

\begin{align}\label{EQ.THM.Rel_Ent_AF_Cstar_Trace_7}
\Ent(\mu,\tau)\geq\Ent\lc\mu_{j},\tau\rc{}+\Rj(\mu)
\end{align}

\noindent for all $j\in\mathbb{N}$. If we do have Equation \ref{EQ.THM.Rel_Ent_AF_Cstar_Trace_7}, then we apply $\liminf_{j\in\mathbb{N}}\Rj(\mu)\geq 0$ to estimate $\Ent(\mu,\tau)\geq\liminf_{j\in\mathbb{N}}\Ent\lc\mu_{j},\tau\rc{}+\Rj(\mu)\geq\liminf_{j\in\mathbb{N}}\Ent\lc\mu_{j},\tau\rc$. Thus Equation \ref{EQ.THM.Rel_Ent_AF_Cstar_Trace_7} implies Equation \ref{EQ.THM.Rel_Ent_AF_Cstar_Trace_4}, hence it suffices to show the former.\par
We engage in the second reduction. Let $j\in\mathbb{N}$. Note Equation \ref{EQ.THM.Rel_Ent_AF_Cstar_Trace_7} follows if

\begin{align}\label{EQ.THM.Rel_Ent_AF_Cstar_Trace_8}
\Ent(\mu,\tau)\geq\Ent\lc\mu_{j},\lc\supp\mu\rc_{j}^{\flat}\rc{}+\Rj(\mu),\ \Ent\lc\mu_{j},\lc\supp\mu\rc_{j}^{\flat}\rc\geq\Ent\lc\mu_{j},\tau\rc{}.
\end{align}

\noindent Set $\eta_{j}:=\lc\supp\mu\rc_{j}^{\flat}$ and $\nu_{j}:=\lc\supp\mu_{j}\rc^{\flat}$. We show Equation \ref{EQ.THM.Rel_Ent_AF_Cstar_Trace_8}. For this, we show

\begin{align}\label{EQ.THM.Rel_Ent_AF_Cstar_Trace_9}
\Ent\lc\mu_{j},\eta_{j}\rc\geq\Ent\lc\mu_{j},\tau\rc{}.
\end{align}

\noindent Applying Lemma \ref{LEM.Rel_Ent_AF_Cstar_Trace_I} and $1)$ to $\mu_{j}$ for $p=\supp\mu_{j}$ shows

\begin{align}\label{EQ.THM.Rel_Ent_AF_Cstar_Trace_10}
 \Ent\lc\mu_{j},\tau\rc{}=\Ent\lc\mu_{j},\tau_{j}\rc{}=\Ent\lc\mu_{j},\nu_{j}\rc{}.
\end{align}

\noindent Using Equation \ref{EQ.SSEC.L2W_Rel_Ent_AF_5} as Kosaki's formula, Equation \ref{EQ.THM.Rel_Ent_AF_Cstar_Trace_10} implies Equation \ref{EQ.THM.Rel_Ent_AF_Cstar_Trace_9} if $\nu_{j}\geq\eta_{j}$ in $A_{j}^{*}$. Following $2)$ in Proposition \ref{PRP.Support_Projection_II}, we equivalently estimate

\begin{align}\label{EQ.THM.Rel_Ent_AF_Cstar_Trace_11}
x_{j}:=1_{A_{j}}-\sharp\eta_{j}\geq 1_{A_{j}}-\supp\mu_{j}=L_{A_{j}}^{-1}\lc\pi_{\ker L_{\sharp\mu_{j},A_{j}}}^{A}\rc{}
\end{align}

\noindent in $A_{j}$. Note $x_{j}=\pi_{j}^{A}\lc{}1_{A}-\supp\mu\rc\geq 0$ by Proposition \ref{PRP.AF_Cstar_Trace_Dualisation_I}. Since $A_{0}\subset\dom L_{\sqrt{\sharp\mu}}$, get

\begin{align}\label{EQ.THM.Rel_Ent_AF_Cstar_Trace_12}
\dblv{}\sqrt{\sharp\mu}\cdot u\dblv_{\tau}^{2}=\lgl\sharp\mu_{j}u,u\rgl_{\tau}=0
\end{align}

\noindent for all $u\in\ker L_{\sharp\mu_{j},A_{j}}$. Equation \ref{EQ.THM.Rel_Ent_AF_Cstar_Trace_12} shows

\begin{align}\label{EQ.THM.Rel_Ent_AF_Cstar_Trace_13}
\ker L_{\sharp\mu_{j},A_{j}}\subset\ker L_{\sqrt{\sharp\mu}}.
\end{align}

Note $1)$ in Proposition \ref{PRP.Support_Projection_II} shows $\supp\mu=\supp\sharp\mu=\chi_{(0,\infty]}\vstretch{1.225}{\big(}\sqrt{\sharp\mu}\vstretch{1.225}{\big)}$ by positivity and functional calculus. Thus $1)$ and $2)$ in Proposition \ref{PRP.Support_Projection_II} imply

\begin{align}\label{EQ.THM.Rel_Ent_AF_Cstar_Trace_14}
\supp\mu=\chi_{(0,\infty]}\vstretch{1.225}{\big(}\sqrt{\sharp\mu}\vstretch{1.225}{\big)}{}=L^{-1}\lc{}I-\pi_{\ker\sqrt{\sharp\mu}}^{A}\rc{},
\end{align}

\noindent hence Equation \ref{EQ.THM.Rel_Ent_AF_Cstar_Trace_13} shows

\begin{align}\label{EQ.THM.Rel_Ent_AF_Cstar_Trace_15}
\dblv{}\sqrt{\eta_{j}}\cdot u\dblv_{\tau}^{2}=\lgl\supp\mu\cdot u,u\rgl_{\tau}=0
\end{align}

\noindent for all $u\in\ker L_{\sharp\mu_{j},A_{j}}$. Equation \ref{EQ.THM.Rel_Ent_AF_Cstar_Trace_15} shows

\begin{align}\label{EQ.THM.Rel_Ent_AF_Cstar_Trace_16}
\ker L_{\sharp\mu_{j},A_{j}}\subset\ker L_{\sharp\eta_{j},A_{j}}.
\end{align}

For all $u\in A_{j}$, we decompose as per Equation \ref{EQ.THM.Rel_Ent_AF_Cstar_Trace_17} below using the following. For the left-hand side of Equation \ref{EQ.THM.Rel_Ent_AF_Cstar_Trace_17}, apply $1)$ and $2)$ in Proposition \ref{PRP.Support_Projection_II}. For the right-hand side of Equation \ref{EQ.THM.Rel_Ent_AF_Cstar_Trace_17}, we use Equation \ref{EQ.THM.Rel_Ent_AF_Cstar_Trace_16}. Altogether, we have

\begin{align}\label{EQ.THM.Rel_Ent_AF_Cstar_Trace_17}
u=\supp\mu_{j}\cdot u+\pi_{\ker L_{\sharp\mu_{j},A_{j}}}^{A}(u),\ x_{j}\pi_{\ker L_{\sharp\mu_{j},A_{j}}}^{A}(u)=\pi_{\ker L_{\sharp\mu_{j},A_{j}}}^{A}(u)
\end{align}

\noindent for all $u\in A_{j}$. Equation \ref{EQ.THM.Rel_Ent_AF_Cstar_Trace_17} lets us calculate

\begin{align}\label{EQ.THM.Rel_Ent_AF_Cstar_Trace_18}
\lgl x_{j}u,u\rgl_{\tau}=\lgl x_{j}\supp\mu_{j}\cdot u,\supp\mu_{j}\cdot u\rgl_{\tau}+\lgl\pi_{\ker L_{\sharp\mu_{j},A_{j}}}^{A}(u),u\rgl_{\tau}
\end{align}

\noindent for all $u\in A_{j}$. As $x_{j}\geq 0$ yields $\lgl x_{j}\supp\mu_{j}\cdot u,\supp\mu_{j}\cdot u\rgl_{\tau}\geq 0$ in each case, Equation \ref{EQ.THM.Rel_Ent_AF_Cstar_Trace_18} lets us estimate

\begin{align}\label{EQ.THM.Rel_Ent_AF_Cstar_Trace_19}
\lgl x_{j}u,u\rgl_{\tau}\geq\lgl\pi_{\ker L_{\sharp\mu_{j},A_{j}}}^{A}(u),u\rgl_{\tau}
\end{align}

\noindent for all $u\in A_{j}$. Equation \ref{EQ.THM.Rel_Ent_AF_Cstar_Trace_19} shows Equation \ref{EQ.THM.Rel_Ent_AF_Cstar_Trace_11}. Using the latter, Equation \ref{EQ.THM.Rel_Ent_AF_Cstar_Trace_10} then implies Equation \ref{EQ.THM.Rel_Ent_AF_Cstar_Trace_9} as discussed above.\par
We engage in the third reduction. Following Equation \ref{EQ.THM.Rel_Ent_AF_Cstar_Trace_9}, we show

\begin{align}\label{EQ.THM.Rel_Ent_AF_Cstar_Trace_20}
\Ent(\mu,\tau)\geq\Ent\lc\mu_{j},\eta_{j}\rc{}+\Rj(\mu)
\end{align}

\noindent in order to have Equation \ref{EQ.THM.Rel_Ent_AF_Cstar_Trace_8} and therefore $3)$ as discussed above. Set $\eta:=\lc\supp\mu\rc^{\flat}$. Note $2)$ in Proposition \ref{PRP.Rel_Ent_Cstar_II} for $N=A_{j}[1_{A}]\subset L^{\infty}(A,\tau)$ and $1)$ applied to $\mu\in L^{1}(A,\tau)_{+}^{\flat}$ for $p=\supp\mu$ lets us estimate

\begin{align}\label{EQ.THM.Rel_Ent_AF_Cstar_Trace_21}
\Ent(\mu,\tau)=\Ent(\mu,\eta)\geq\Ent\lc\mu\vert_{A_{j}[1_{A}]},\eta\vert_{A_{j}[1_{A}]}\rc{}.
\end{align}

Note $\mathcal{A}_{j}=\vstretch{0.8575}{\big\langle} 1_{A_{j}}^{\perp}\vstretch{0.8575}{\big\rangle}_{\mathbb{C}}\subset L^{\infty}(A,\tau)$. Equation \ref{EQ.THM.Rel_Ent_AF_Cstar_Trace_21} implies Equation \ref{EQ.THM.Rel_Ent_AF_Cstar_Trace_20} if

\begin{align}\label{EQ.THM.Rel_Ent_AF_Cstar_Trace_22}
\Ent\lc\mu\vert_{A_{j}[1_{A}]},\eta\vert_{A_{j}[1_{A}]}\rc{}=\Ent\lc\mu_{j},\eta_{j}\rc{}+\Rj(\mu).    
\end{align}

\noindent If $1_{A_{j}}^{\perp}=0$, then Equation \ref{EQ.THM.Rel_Ent_AF_Cstar_Trace_22} holds since $\mathcal{A}_{j}=0$ and $\Rj(\mu)=0$. Assume $1_{A_{j}}^{\perp}\neq 0$. Get

\begin{align}\label{EQ.THM.Rel_Ent_AF_Cstar_Trace_23}
A_{j}[1_{A}]=A_{j}\oplus\mathcal{A}_{j}
\end{align}

\noindent by hypothesis \lc{}cf.~Proposition \ref{PRP.Cstar_Unitalisation}\rc{}. For all $\nu\in A_{j}[1_{A}]_{+}^{*}$, we decompose its norm

\begin{align}\label{EQ.THM.Rel_Ent_AF_Cstar_Trace_24}
\|\nu\|_{A_{j}[1_{A}]^{*}}=\|\nu\|_{A_{j}^{*}}+\|\nu\|_{\mathcal{A}_{j}^{*}}
\end{align}

\noindent over the direct sum of $C^{*}$-algebras as per Equation \ref{EQ.THM.Rel_Ent_AF_Cstar_Trace_23}. For all $n\in\mathbb{N}$, decomposing as per Equation \ref{EQ.THM.Rel_Ent_AF_Cstar_Trace_24} at each time yields further product decomposition

\begin{align}\label{EQ.THM.Rel_Ent_AF_Cstar_Trace_25}
\mathcal{T}_{n}^{u}\lc{}A_{j}[1_{A}]\rc{}=\mathcal{T}_{n}^{u}(A_{j})\times\mathcal{T}_{n}^{u}(\mathcal{A}_{j}).
\end{align}

\noindent Using Equation \ref{EQ.THM.Rel_Ent_AF_Cstar_Trace_1} as Kosaki's formula, Equation \ref{EQ.THM.Rel_Ent_AF_Cstar_Trace_25} implies

\begin{align}\label{EQ.THM.Rel_Ent_AF_Cstar_Trace_26}
\Ent\lc\mu\vert_{A_{j}[1_{A}]},\eta\vert_{A_{j}[1_{A}]}\rc{}=\Ent\lc\mu_{j},\eta_{j}\rc{}+\Ent\lc\mu\vert_{\mathcal{A}_{j}},\restr{0.925}{\lc\supp\mu\rc^{\flat}}{\mathcal{A}_{j}}\rc{}.
\end{align}

\noindent The second summand on the right-hand side of Equation \ref{EQ.THM.Rel_Ent_AF_Cstar_Trace_26} is $\Rj(\mu)$. Equation \ref{EQ.THM.Rel_Ent_AF_Cstar_Trace_22} holds. Equation \ref{EQ.THM.Rel_Ent_AF_Cstar_Trace_20} and therefore $3)$ follows as discussed above.
\end{proof}

\begin{rem}\label{REM.Rel_Ent_AF_Cstar_Trace_II}
We elaborate on our estimate of Equation \ref{EQ.THM.Rel_Ent_AF_Cstar_Trace_6}. We have

\begin{align}\label{EQ.REM.Rel_Ent_AF_Cstar_Trace_II_1}
\lim_{j\in\mathbb{N}}\hspace{0.025cm} \mu\big(1_{A_{j}}^{\perp}\big)=\lim_{j\in\mathbb{N}}\hspace{0.025cm} \tau\big(\supp\mu\cdot 1_{A_{j}}^{\perp}\big)=0,\ \mu\big(1_{A_{j}}^{\perp}\big),\tau\big(\supp\mu\cdot 1_{A_{j}}^{\perp}\big)\geq 0
\end{align}

\noindent by normality, resp.~for all $j\in\mathbb{N}$ by positivity. Using $\lim_{\lambda\rightarrow 0}\lambda\log\lambda=0$, Equation \ref{EQ.REM.Rel_Ent_AF_Cstar_Trace_II_1} lets us estimate

\begin{align*}
\liminf_{j\in\mathbb{N}}\hspace{0.025cm} \Rj(\mu) & = \liminf_{j\in\mathbb{N}}\hspace{0.025cm} \mu\big(1_{A_{j}}^{\perp}\big)\log\lc\mu\big(1_{A_{j}}^{\perp}\big)\rc{}-\mu\big(1_{A_{j}}^{\perp}\big)\log\lc\tau\big(\supp\mu\cdot 1_{A_{j}}^{\perp}\big)\rc \phantom{\Bigg)} \\
& \geq\liminf_{j\in\mathbb{N}}\hspace{0.025cm} -\mu\big(1_{A_{j}}^{\perp}\big)\log\lc\tau\big(\supp\mu\cdot 1_{A_{j}}^{\perp}\big)\rc \phantom{\Bigg)} \\
& \geq 0 \phantom{\Bigg)}
\end{align*}

\noindent since $\lim_{j\in\mathbb{N}}\log\bigg(\tau\big(\supp\mu\cdot 1_{A_{j}}^{\perp}\big)\bigg)=-\infty$ by normality.
\end{rem}

\begin{cor}\label{COR.Rel_Ent_AF_Cstar_Trace}
Let $p\in L^{1,\infty}(A,\tau)$ be a projection. If $\mu\in L^{1,\infty}(A,\tau)_{+}^{\flat}\cap\CI[p]$, then $\Ent(\mu,\tau)\in (0,\infty)$ and we have

\begin{align}\label{EQ.COR.Rel_Ent_AF_Cstar_Trace_1}
\Ent(\mu,\tau)=\tau\lc\sharp\mu\log\sharp\mu\rc{}=\tau\lc\comp\sharp\mu\log\comp\sharp\mu\rc{}.
\end{align}
\end{cor}
\begin{proof}
Let $\mu\in L^{1,\infty}(A,\tau)_{+}^{\flat}\cap\CI[p]$. We have $\comp\sharp\mu=p\cdot \sharp\mu\cdot p\in L^{\infty}(A[p],\tau)$. We use the following. Lemma \ref{LEM.AF_Support_Projection_Majorant_Uniform} shows $\mu$ has integrable support. Theorem \ref{THM.AF_Support_Projection} ensures $\mu$ has reducible support. Using Lemma \ref{LEM.Rel_Ent_AF_Cstar_Trace_I} and $3)$ in Theorem \ref{THM.Rel_Ent_AF_Cstar_Trace}, we calculate

\begin{align}\label{EQ.COR.Rel_Ent_AF_Cstar_Trace_2}
\Ent(\mu,\tau)=\lim_{j\in\mathbb{N}}\hspace{0.025cm} \Ent\lc\mu_{j},\tau\rc{}=\lim_{j\in\mathbb{N}}\hspace{0.025cm} \Ent\lc\mu_{j},\tau_{j}\rc{}.
\end{align}

\noindent Reduction using Proposition \ref{PRP.AF_Cstar_Trace_II} in Example \ref{BSP.Rel_Ent_Cstar_Fin_I} yields

\begin{align}\label{EQ.COR.Rel_Ent_AF_Cstar_Trace_3}
\Ent\lc\mu_{j},\tau_{j}\rc{}=\tau\lc\sharp\mu_{j}\log\sharp\mu_{j}\rc{}
\end{align}

\noindent for all $j\in\mathbb{N}$. Equation \ref{EQ.COR.Rel_Ent_AF_Cstar_Trace_2} and Equation \ref{EQ.COR.Rel_Ent_AF_Cstar_Trace_3} imply

\begin{align}\label{EQ.COR.Rel_Ent_AF_Cstar_Trace_4}
\Ent(\mu,\tau)=\lim_{j\in\mathbb{N}}\hspace{0.025cm} \tau\lc\sharp\mu_{j}\log\sharp\mu_{j}\rc{}.
\end{align}

\noindent Note Equation \ref{EQ.COR.Rel_Ent_AF_Cstar_Trace_4} shows Equation \ref{EQ.COR.Rel_Ent_AF_Cstar_Trace_1} if

\begin{align}\label{EQ.COR.Rel_Ent_AF_Cstar_Trace_5}
\tau\lc\sharp\mu\log\sharp\mu\rc{}=\lim_{j\in\mathbb{N}}\hspace{0.025cm} \tau\lc\sharp\mu_{j}\log\sharp\mu_{j}\rc{}
\end{align}

\noindent and further

\begin{align}\label{EQ.COR.Rel_Ent_AF_Cstar_Trace_6}
\tau\lc\comp\sharp\mu\log\comp\sharp\mu\rc{}=\lim_{j\in\mathbb{N}}\hspace{0.025cm} \tau\lc\comp\sharp\mu_{j}\log\comp\sharp\mu_{j}\rc{}=\lim_{j\in\mathbb{N}}\hspace{0.025cm} \tau\lc\sharp\mu_{j}\log\sharp\mu_{j}\rc{}.
\end{align}

\noindent Moreover, $1)$ in Theorem \ref{THM.Rel_Ent_AF_Cstar_Trace} and Equation \ref{EQ.COR.Rel_Ent_AF_Cstar_Trace_1} show $\Ent(\mu,\tau)\in (-\infty,\infty)$. It suffices to show the two equations above.\par
We show Equation \ref{EQ.COR.Rel_Ent_AF_Cstar_Trace_5} and Equation \ref{EQ.COR.Rel_Ent_AF_Cstar_Trace_6}. Compressing with projections decreases norm. Thus $3)$ in Proposition \ref{PRP.AF_Cstar_Trace_Dualisation_II} shows

\begin{align}\label{EQ.COR.Rel_Ent_AF_Cstar_Trace_7}
\sharp\mu=\bds\textrm{-}\lim_{j\in\mathbb{N}}\hspace{0.025cm} \sharp\mu_{j},\ \comp\sharp\mu=\bds\textrm{-}\lim_{j\in\mathbb{N}}\hspace{0.025cm} \comp\sharp\mu_{j}
\end{align}

\noindent by sequential strong continuity of multiplication, hence we additionally have uniform boundedness for all operators used in Equation \ref{EQ.COR.Rel_Ent_AF_Cstar_Trace_7}.\par

%NEWPAGE
%NEWPAGE
%NEWPAGE

\pagebreak

%NEWPAGE
%NEWPAGE
%NEWPAGE

Note Lemma \ref{LEM.FC_SR} requires such uniform boundedness. Using Lemma \ref{LEM.FC_SR}, we see Equation \ref{EQ.COR.Rel_Ent_AF_Cstar_Trace_7} shows

\begin{align}\label{EQ.COR.Rel_Ent_AF_Cstar_Trace_8}
\sharp\mu\log\sharp\mu=\s\textrm{-}\lim_{j\in\mathbb{N}}\hspace{0.025cm} \sharp\mu_{j}\log\sharp\mu_{j},\ \comp\sharp\mu\log\comp\sharp\mu=\bds\textrm{-}\lim_{j\in\mathbb{N}}\hspace{0.025cm} \comp\sharp\mu_{j}\log\comp\sharp\mu_{j}
\end{align}

\noindent since $\lambda\mapsto\lambda\log\lambda$ is continuous on $[0,\infty)$ \lc{}cf.~Remark \ref{REM.SR_Equivalence} and Remark \ref{REM.FC_SR}\rc{}. Using $\mu\in L^{1,\infty}(A,\tau)_{+}^{\flat}\cap\CI[p]$ and $\lim_{\lambda\rightarrow 0}\lambda\log\lambda=0$, $3)$ in Corollary \ref{COR.Wstar_Compression_Preservation_I} and Corollary \ref{COR.Wstar_Compression_Preservation_II} therefore imply

\begin{align}\label{EQ.COR.Rel_Ent_AF_Cstar_Trace_9}
\sharp\mu\log\sharp\mu=\comp\sharp\mu\log\comp\sharp\mu,\ \sharp\mu_{j}\log\sharp\mu_{j}=\comp\sharp\mu_{j}\log\comp\sharp\mu_{j}   
\end{align}

\noindent for all $j\in\mathbb{N}$. Equation \ref{EQ.COR.Rel_Ent_AF_Cstar_Trace_8} shows $\tau\lc\comp\sharp\mu\log\comp\sharp\mu\rc{}=\lim_{j\in\mathbb{N}}\tau\lc\comp\sharp\mu_{j}\log\comp\sharp\mu_{j}\rc$ by strong convergence since $\tau\lc{}p\rc{}<\infty$. Using the latter, Equation \ref{EQ.COR.Rel_Ent_AF_Cstar_Trace_8} and Equation \ref{EQ.COR.Rel_Ent_AF_Cstar_Trace_9} let us calculate

\begin{align*}
\tau\lc\sharp\mu\log\sharp\mu\rc{} & = \tau\lc\comp\sharp\mu\log\comp\sharp\mu\rc \phantom{\bigg)} \\
& = \lim_{j\in\mathbb{N}}\hspace{0.025cm} \tau\lc\comp\sharp\mu_{j}\log\comp\sharp\mu_{j}\rc \phantom{\bigg)} \\
& = \lim_{j\in\mathbb{N}}\hspace{0.025cm} \tau\lc\sharp\mu_{j}\log\sharp\mu_{j}\rc{}. \phantom{\bigg)}
\end{align*}

\noindent The above calculation shows Equation \ref{EQ.COR.Rel_Ent_AF_Cstar_Trace_5} and Equation \ref{EQ.COR.Rel_Ent_AF_Cstar_Trace_6} at once. Equation \ref{EQ.COR.Rel_Ent_AF_Cstar_Trace_1} and therefore $\Ent(\mu,\tau)\in (0,\infty)$ follows as discussed above.
\end{proof}

%%%%%%%%%%%%%
%%% PART %%%%
%%%%%%%%%%%%%

\subsubsection*{Finitely supported accessibility components}

Definition \ref{DFN.QOT_Distance_AC_FS} gives finitely supported fixed states and finitely supported accessibility components. The latter are defined by having finitely supported fixed state. Upon restriction, Theorem \ref{THM.QOT_Distance_AC_FS} shows we recover the strongly unital finite-trace case as per Theorem \ref{THM.Rel_Ent_AF_Cstar_Trace} depending on the given finitely supported fixed state. Theorem \ref{THM.L2W_EVI_Equivalence} uses Corollary \ref{COR.QOT_Distance_AC_FS}.\par
Let $(\phi,\bpsi,\gamma,\nabla)$ be noncommutative differential structure for tracial AF-$C^{*}$-algebras $(A,\tau)$ and $(B,\omega)$ in $\lc{}f,\theta\rc$-setting.

\begin{dfn}\label{DFN.QOT_Distance_AC_FS}
Let $\xi\in\SII(A)$ be a fixed state.

\begin{itemize}
\item[1)] We say that $\xi$ is finitely supported if $\xi\in\dom\Enttau$ and there exists a majorant of its local support.

\item[2)] We say that $\mathcal{C}\subset \big(\hspace{-0.03875cm} \SII(A),\mathcal{W}_{\nabla}^{\log}\big)$ is finitely supported with fixed part $\xi$ if $\CII$ has fixed part $\xi$ and the latter is finitely supported.
\end{itemize}
\end{dfn}

%NEWPAGE
%NEWPAGE
%NEWPAGE

\pagebreak

%NEWPAGE
%NEWPAGE
%NEWPAGE

\begin{thm}\label{THM.QOT_Distance_AC_FS}
Let $(\phi,\bpsi,\gamma,\nabla)$ be noncommutative differential structure for tracial AF-$C^{*}$-algebras $(A,\tau)$ and $(B,\omega)$ in $\lc{}f,\theta\rc$-setting. Let $\xi\in\SII(A)$ be a finitely supported fixed state. Let $\mathcal{C}\subset (\SII(A),\mathcal{W}_{\nabla}^{f,\theta})$ be finitely supported with fixed part $\xi$. Let $p\in L^{1,\infty}(A,\tau)$ be a projection s.t.~$\xi\in\CI[p]$.

\begin{itemize}
\item[1)] We have $\CII\subset\Fix_{A}(\xi)\subset\SII(A)\cap\CI[p]$ and $\CII\cap\dom\Enttau\subset\Fix_{A}^{\NI}(\xi)\subset\mathcal{S}^{\NI}(A)\cap\CI[p]$.

\item[2)] For a.e.~$j\in\mathbb{N}$, we have $\mathcal{C}_{A_{j}}\lc\bar{\xi}_{j}\rc\subset\Fix_{A_{j}}\lc\bar{\xi}_{j}\rc\subset\mathcal{S}^{\NI}(A)\cap\CI[p]$.

\item[3)] For all $\mu\in\Fix_{A}^{\NI}(\xi)$, we have

\begin{itemize}
\item[3.1)] $\supp\mu\leq p$ and $\mu$ has integrable support, \phantom{\big)}

\item[3.2)] $\Ent(\mu,\tau)=\lim_{j\in\mathbb{N}}\Ent\lc\mu_{j},\tau\rc{}=\lim_{j\in\mathbb{N}}\Ent\lc\bar{\mu}_{j},\tau\rc$. \phantom{\big)}
\end{itemize}

\begin{reapply}
\end{reapply}

\item[4)] $\Enttau:\Fix_{A}^{\NI}(\xi)\longrightarrow (-\infty,\infty]$ is l.s.c.~in $\mathcal{W}_{\nabla}^{f,\theta}$-topology.
\end{itemize}
\end{thm}
\begin{proof}
If $\supp\bar{\xi}_{j}=\supp\xi_{j}\leq p$ for $j\in\mathbb{N}$, then note $1)$ in Proposition \ref{PRP.AF_Cstar_Trace_Dualisation_III} for the tracial AF-$C^{*}$-algebra $(A[p],\tau)$ and $1)$ in Corollary \ref{COR.Support_Projection_I} for the tracial AF-$C^{*}$-algebra $(A_{j},\tau)$ yield inclusions

\begin{align}\label{EQ.THM.QOT_Distance_AC_FS_1}
\SII\big(A_{j,\bar{\xi}_{j}}\big)\subset\SII\lc{}A_{j}[p]\rc\subset\mathcal{S}^{\NI}(A[p]).
\end{align}

\noindent Using $1.3)$ in Theorem \ref{THM.Wstar_Derivation_QG_HSG_Regularity} and for a.e.~$j\in\mathbb{N}$, Equation \ref{EQ.THM.QOT_Distance_AC_FS_1} shows

\begin{align}\label{EQ.THM.QOT_Distance_AC_FS_2}
h_{t}\lc\mathcal{C}_{A_{j}}\lc\bar{\xi}_{j}\rc\rc\subset h_{t}\lc\textrm{Fix}_{A_{j}}\lc\bar{\xi}_{j}\rc\rc\subset\mathcal{S}_{-1}^{\NI,\infty}\big(A_{j,\bar{\xi}_{j}}\big)\subset\mathcal{S}^{\NI}(A[p])
\end{align}

\noindent for all $t>0$. Note $3)$ in Proposition \ref{PRP.Wstar_Derivation_QG_HSG_Fixed_Part_I} ensures the first inclusion in Equation \ref{EQ.THM.QOT_Distance_AC_FS_2}. Letting $t\downarrow 0$ in Equation \ref{EQ.THM.QOT_Distance_AC_FS_2} implies $2)$ by $1)$ in Proposition \ref{PRP.Wstar_Derivation_QG_HSG_II}. Using $2)$, we readily see $2)$ in Corollary \ref{COR.QOT_Distance_AC_I} yields

\begin{align}\label{EQ.THM.QOT_Distance_AC_FS_3}
\CII\subset\textrm{Fix}_{A}(\xi)\subset\SII(A)\cap\CI[p]
\end{align}

\noindent as per Diagram \ref{EQ.SSEC.QOT_QIT_Encoding_16} for $K=\dom\Enttau$. Using Lemma \ref{LEM.Rel_Ent_AF_Cstar_Trace_II}, Equation \ref{EQ.THM.QOT_Distance_AC_FS_3} shows

\begin{align}\label{EQ.THM.QOT_Distance_AC_FS_4}
\CII\cap\dom\Enttau\subset\textrm{Fix}_{A}^{\NI}(\xi)\subset\mathcal{S}^{\NI}(A)\cap\CI[p].
\end{align}

\noindent Equation \ref{EQ.THM.QOT_Distance_AC_FS_3} and Equation \ref{EQ.THM.QOT_Distance_AC_FS_4} show $1)$ at once. Using the latter, Lemma \ref{LEM.AF_Support_Projection_Majorant_Uniform} in turn implies $3.1)$, whereas $3)$ in Theorem \ref{THM.Rel_Ent_AF_Cstar_Trace} implies $3.2)$. Altogether, get $3)$. Using $1)$ in Theorem \ref{THM.QOT_Distance}, we readily see $2)$ in Theorem \ref{THM.Rel_Ent_AF_Cstar_Trace} shows $4)$.
\end{proof}

\begin{cor}\label{COR.QOT_Distance_AC_FS}
Let $\xi\in\SII(A)$ be a finitely supported fixed state. Let $\mathcal{C}\subset (\SII(A),\mathcal{W}_{\nabla}^{f,\theta})$ be finitely supported with fixed part $\xi$. We consider marginals $\mu^{0},\mu^{1}\in\mathcal{C}\cap\dom\Enttau$ and $(\mu,w)\in\Geo\lc\mu^{0},\mu^{1}\rc$ approximated by $\lc\mu^{j},w^{j}\rc_{j\in\mathbb{N}}\subset\Geo_{0}$ in finite dimensions. If there exists $C>0$ s.t.~for all $t\in (0,1)$, we have

\begin{align}\label{EQ.COR.QOT_Distance_AC_FS_1}
\Ent\lc\mu^{j}(t),\tau\rc\leq C\cdot \max\left\{\Ent\big(\bar{\mu}_{j}^{0},\tau\big),\Ent\big(\bar{\mu}_{j}^{1},\tau\big)\right\}
\end{align}

\noindent for a.e.~$j\in\mathbb{N}$, then $\mu(t)\in\dom\Enttau$ for all $t\in [0,1]$.
\end{cor}
\begin{proof}
Let $p\in L^{1,\infty}(A,\tau)$ be a projection s.t.~$\xi\in\CI[p]$. Let $m\in\mathbb{N}$ s.t.~$\lc\mu^{j},w^{j}\rc_{j\geq m}\subset\Geo_{0}$ as per Definition \ref{DFN.QOT_Minimiser_Approximation} for $(\mu,w)$. Using $2)$ in Theorem \ref{THM.QOT_Distance_AC_FS}, we assume $m\in\mathbb{N}$ s.t.~

\begin{align}\label{EQ.COR.QOT_Distance_AC_FS_2}
\mathcal{C}_{A_{j}}\lc\bar{\xi}_{j}\rc\subset\mathcal{S}^{\NI}(A)\cap\CI[p]
\end{align}

\noindent for all $j\geq m$ without loss of generality. Following $1)$ in Definition \ref{DFN.QOT_Minimiser_Approximation} and further $2)$ in Corollary \ref{COR.QOT_Distance_AC_I}, Equation \ref{EQ.COR.QOT_Distance_AC_FS_2} ensures we may in fact assume 

\begin{align}\label{EQ.COR.QOT_Distance_AC_FS_3}
\mu^{j}(t)\in\mathcal{C}_{A_{j}}\lc\bar{\xi}_{j}\rc\subset\mathcal{S}^{\NI}(A)\cap\CI[p]
\end{align}

\noindent for all $t\in [0,1]$ and $j\geq m$ without loss of generality. Note $\mu^{j}(0)=\mu_{j}^{0}$ and $\mu^{j}(1)=\mu_{j}^{1}$ in each case by hypothesis.\par
Following $2)$ in Definition \ref{DFN.QOT_Minimiser_Approximation}, we select a subsequence $\lc\mu^{j},w^{j}\rc_{j\geq m}$ converging to $(\mu,w)$ in $\Admnullone$. If there exists $C>0$ as per Equation \ref{EQ.COR.QOT_Distance_AC_FS_1}, then Equation \ref{EQ.COR.QOT_Distance_AC_FS_3} lets us apply $2)$ and $3)$ in Theorem \ref{THM.Rel_Ent_AF_Cstar_Trace} to Equation \ref{EQ.COR.QOT_Distance_AC_FS_1}. We calculate

\begin{align*}
\Ent\lc\mu(t),\tau\rc{} & \leq \liminf_{j\in\mathbb{N}}\hspace{0.025cm} \Ent\lc\mu^{j}(t),\tau\rc \phantom{\vstretch{1.15}{\Bigg)}} \\
& \leq C\cdot \max\bigg\{\liminf_{j\in\mathbb{N}}\hspace{0.025cm} \Ent\big(\bar{\mu}_{j}^{0},\tau\big),\liminf_{j\in\mathbb{N}}\hspace{0.025cm} \Ent\big(\bar{\mu}_{j}^{1},\tau\big)\bigg\} \phantom{\vstretch{1.15}{\Bigg)}} \\
& = C\cdot \max\bigg\{\lim_{j\in\mathbb{N}}\hspace{0.025cm} \Ent\big(\bar{\mu}_{j}^{0},\tau\big),\lim_{j\in\mathbb{N}}\hspace{0.025cm} \Ent\big(\bar{\mu}_{j}^{1},\tau\big)\bigg\} \phantom{\vstretch{1.15}{\Bigg)}} \\
& = C\cdot \max\bigg\{\Ent\lc\mu^{0},\tau\rc{},\Ent\lc\mu^{1},\tau\rc\bigg\}<\infty \phantom{\vstretch{1.15}{\Bigg)}}
\end{align*}

\noindent for all $t\in [0,1]$. Moreover, get $\Enttau>-\infty$ on $\SII(A)\cap\CI[p]$ by $1)$ in Theorem \ref{THM.Rel_Ent_AF_Cstar_Trace}.
\end{proof}

%NEWPAGE
%NEWPAGE
%NEWPAGE

\pagebreak

%NEWPAGE
%NEWPAGE
%NEWPAGE
 
%%%%%%%%%%%%%%%%
%%% SECTION %%%%
%%%%%%%%%%%%%%%%

\section{The logarithmic mean setting}\label{SEC.L2W_Log_Mean}

We use quantum relative entropy as measure of quantum information. Assume the logarithmic mean setting. Assuming finitely supported fixed parts, heat flow induces finite-energy admissible paths for all states with finite quantum relative entropy. Up to coarse graining, heat flow is gradient flow of quantum relative entropy. Heat flow further satisfies a steepest entropy ascent property \cite{ART.Ber_Con_Mon.2015.MaxEnt_SEA} by considering the steepest descent property of gradient flows in smooth Riemannian manifolds \cite{BK.Lan.1995.Riemannian_Manifolds} and taking limits. We seek conditions s.t.~steepest entropy ascent implies quantum noise evolution. If we are able to do so, then we obtain slopes of maximal entropy production, i.e.~erasure of quantum information, for sufficiently regular subsets of all bounded normal states. We accomplish this with our maximum entropy production principle \cite{ART.Dew.2003.MaxEnt_Information_I}\cite{ART.Dew.2005.MaxEnt_Information_II}\cite{ART.Mar_Sel.2006.MaxEnt_Review}.\par
In Subsection \ref{SSEC.L2W_EVI_Equivalence}, we consider heat flow as $\EVI_{\lambda}$-gradient flow of quantum relative entropy. We use Euler-Lagrange equations of energy functionals and results concerning Hessians of quantum relative entropy in the finite-dimensional setting. If heat flow is $\EVI_{\lambda}$-gradient flow of quantum relative entropy, then we have metric slopes as per Equation \ref{EQ.SSEC.L2W_EVI_Equivalence_1} \cite{BK.Amb_Gig_Sav.2008.Classical_OT_GradFlow}\cite{ART.Mur_Sav.2020.Classical_OT_EVI}. These generalise slopes of maximal entropy production,  even absent differential structures, to all normal states with finitely supported fixed part and finite quantum relative entropy. By locality, we restrict our maximum entropy production principle to selection of noise diffusion terms in the finite-dimensional setting and assume such selection is stable under scaling limits. We therefore view quantum Laplacians as generators of quantum noise evolution. In Subsection \ref{SSEC.L2W_EVI_Ric}, we use such description to show strictly positive lower Ricci bounds determine energy-information trade-offs parametrised by lower bounds on quantum noise.

\medskip

\noindent\textbf{Structure.} In Subsection \ref{SSEC.L2W_Log_Mean_QT}, we discuss fundamental properties of the logarithmic mean setting, define quantum $L^{2}$-Wasserstein distances and show heat flow induces finite-energy admissible paths. In Subsection \ref{SSEC.L2W_Log_Mean_Fin}, we show Euler-Lagrange equations and give, to us, necessary results concerning Hessians of quantum relative entropy. In Subsection \ref{SSEC.L2W_Log_Mean_QNE}, we formulate our maximum entropy production principle.

%%%%%%%%%%%%%%%%%%%
%%% SUBSECTION %%%%
%%%%%%%%%%%%%%%%%%%

\subsection[Quantum $L^{2}$-Wasserstein distances]{Quantum $\mathbf{L}^{2}$-Wasserstein distances}\label{SSEC.L2W_Log_Mean_QT}

Quantum $L^{2}$-Wasserstein distances are quantum optimal transport distances in the logarithmic mean setting. Assuming the latter and finitely supported fixed parts, note Theorem \ref{THM.L2W_Log_Mean_NCDS} shows heat flow induces finite-energy admissible paths for all states with finite quantum relative entropy. Energy is controlled by time and relative entropy of marginals. Moreover, quantum relative entropy decreases along heat flow.

%%%%%%%%%%%%%
%%% PART %%%%
%%%%%%%%%%%%%

\subsubsection*{The logarithmic operator mean and representing function}

Definition \ref{DFN.L2W_Log_Mean_OM} gives the logarithmic operator mean. Equation \ref{EQ.DFN.L2W_Log_Mean_OM_1} induces the Kubo-Mori-Bogoliubov inner product \cite{ART.Pet_Tot.1993.Inner_Product}. Note Remark \ref{REM.L2W_Log_Mean_OM} for its functional derivative. Proposition \ref{PRP.L2W_Log_Mean_OM} gives its symmetric representing function. For details on such representing functions of operator means, we refer to Subsection \ref{SSEC.NCDS_NCD_QE}.

\begin{dfn}\label{DFN.L2W_Log_Mean_OM}
We define logarithmic operator mean $m_{\log}:(0,\infty)\times (0,\infty)\longrightarrow (0,\infty)$ by setting

\begin{align}\label{EQ.DFN.L2W_Log_Mean_OM_1}
m_{\log}(t,s):=\frac{t-s}{\log t-\log s}=\int_{0}^{1}t^{\alpha}s^{1-\alpha}d\alpha 
\end{align}

\noindent for all $t,s>0$.
\end{dfn}

\begin{rem}\label{REM.L2W_Log_Mean_OM}
Note $m_{\log}$ extends to $t,s\geq 0$ since $t\mapsto t^{\alpha}$ is monotone on $[0,\infty)$ for all $\alpha\in [0,1]$. We have $m_{\log}\lc{}0,0\rc{}=0$. Using functional derivative as per Definition \ref{DFN.QT_Derivation} and in the noncommutative chain rule given by Proposition \ref{PRP.Wstar_Derivation_Chain}, we have

\begin{align}\label{EQ.REM.L2W_Log_Mean_OM_1}
m_{\log}^{-1}(t,s)=\lc{}D\log\rc(t,s)=\int_{0}^{\infty}\lc{}t+\alpha 1\rc^{-1}\lc{}s+\alpha 1\rc^{-1}d\alpha
\end{align}

\noindent for all $t,s>0$. Integral characterisations of $m_{\log}$ and $m_{\log}^{-1}$ are well-known \cite{ART.Ped.2000.OpAlg_Diff_Functions}. 
\end{rem}

\begin{prp}\label{PRP.L2W_Log_Mean_OM}
We define $f_{\log}:(0,\infty)\longrightarrow (0,\infty)$ by setting

\begin{align}\label{EQ.PRP.L2W_Log_Mean_OM_1}
f_{\log}(t):=\frac{t-1}{\log t}    
\end{align}

\noindent for all $t>0$. Then $f_{\log}$ is the unique symmetric representing function s.t.~$m_{f_{\log}}=m_{\log}$.
\end{prp}
\begin{proof}
If $f_{\log}$ is a symmetric representing function s.t.~we have $m_{f_{\log}}(t,s)=f_{\log}(ts^{-1})s=m_{\log}(t,s)$ for all $t,s>0$, then Definition \ref{DFN.OM_Representing_Fct} yields our claim at once. We directly verify symmetry, as well as $f_{\log}(1)=1$ and $f_{\log}(ts^{-1})s=m_{\log}(t,s)$ for all $t,s>0$. The map $t\mapsto t^{\alpha}$ is operator monotone for all $\alpha\in [0,1]$. Since Equation \ref{EQ.PRP.L2W_Log_Mean_OM_1} is Equation \ref{EQ.DFN.L2W_Log_Mean_OM_1} for $s=1$ in each case, we know operator monotonicity of $f_{\log}$ by its integral characterisation.
\end{proof}

%%%%%%%%%%%%%
%%% PART %%%%
%%%%%%%%%%%%%

\subsubsection*{Definition and relation to quantum relative entropy}

Using symmetric representing function as per Proposition \ref{PRP.L2W_Log_Mean_OM}, Definition \ref{DFN.L2W_Log_Mean_NCDS} gives the logarithmic mean setting. Proposition \ref{PRP.L2W_Log_Mean_NCDS} shows the noncommutative chain rule intertwines logarithmic operator mean and noncommutative division operators. Equation \ref{EQ.PRP.L2W_Log_Mean_NCDS_1} links quantum optimal transport and noncommutative heat semigroups of quantum Laplacians. The latter uses both Notation \ref{NTN.Wstar_Derivation_QG_HSG_Regularity} and $1.1)$ in Corollary \ref{COR.Wstar_Derivation_QG_HSG_Regularity}. For details on compressing quantum gradients, we refer to Subsection \ref{SSEC.NCDS_NCG_Ubd_Derivation}.

\begin{dfn}\label{DFN.L2W_Log_Mean_NCDS}
Let $(\phi,\bpsi,\gamma,\nabla)$ be noncommutative differential structure for tracial AF-$C^{*}$-algebras $(A,\tau)$ and $(B,\omega)$ in $\lc{}f,\theta\rc$-setting. We are in the logarithmic mean setting if $f=f_{\log}$ represents $m_{\log}$ and $\theta=1$. We further say that it is finite-dimensional if $A$ and $B$ are finite-dimensional.
\end{dfn}

\begin{ntn}\label{NTN.L2W_Log_Mean_NCDS}
Assume the logarithmic mean setting. We write $\mathcal{I}^{\log}:=\mathcal{I}^{f,1}$, as well as $E^{\log}:=E^{f,1}$ and $\mathcal{W}_{\nabla}^{\log}:=\mathcal{W}_{\nabla}^{f,1}$.
\end{ntn}

\begin{prp}\label{PRP.L2W_Log_Mean_NCDS}
Assume the logarithmic mean setting. Let $\xi\in\mathcal{S}^{\NI}(A)$ be a fixed state with integrable support. If $x\in L^{\infty}(A_{\xi},\tau)_{\nabla}$ s.t.~$x>0$ in $L^{\infty}(A_{\xi},\tau)$, then $\log x\in L^{\infty}(A_{\xi},\tau)_{\nabla}$ and we have

\begin{align}\label{EQ.PRP.L2W_Log_Mean_NCDS_1}
\nabla_{\hspace{-0.055cm} \xi}\log x=\mathcal{D}_{x,\xi}\nabla_{\hspace{-0.055cm} \xi}x.
\end{align}
\end{prp}
\begin{proof}
Let $x\in L^{\infty}(A_{\xi},\tau)_{\nabla}$ s.t.~$x>0$ in $L^{\infty}(A_{\xi},\tau)$. The latter implies $0\notin\spec_{L^{\infty}(A_{\xi},\tau)} x$. Note $\spec_{L^{\infty}(A,\tau)}x=\spec_{L^{\infty}(A_{\xi},\tau)}x\cup\lset{}0\rset$ \lc{}cf.~$1)$ in Corollary \ref{COR.Wstar_Compression_Preservation_I}\rc{}. Let $I\subset [0,\infty)$ be a closed interval s.t.~$\spec_{L^{\infty}(A,\tau)}x\subset I$. Zero is isolated as both spectra are compact.\par
Let $g\in C^{1}(I)$ s.t.~$g(0)=0$ and $g(t)=\log t$ for all $t\in \spec_{L^{\infty}(A_{\xi},\tau)} x$. Such $g$ exists by the above discussion. We have $\log x=g(x)\in L^{\infty}(A_{\xi},\tau)$ \lc{}cf.~Corollary \ref{COR.Wstar_Compression_Preservation_II}\rc{}. Using $m_{\log}^{-1}(t,s)=\lc{}D\log\rc(t,s)$ for all $t,s>0$ as per Equation \ref{EQ.REM.L2W_Log_Mean_OM_1}, we calculate

\begin{align*}
\mathcal{D}_{x,\xi} & = m_{\log}^{-1}\lc{}L_{x,\supp\xi},R_{x,\supp\xi}\rc{} \phantom{\bigg)} \\
& = D\log\lc{}L_{x,\supp\xi},R_{x,\supp\xi}\rc{} \phantom{\bigg)} \\
& = Dg\lc{}L_{x,\supp\xi},R_{x,\supp\xi}\rc{}. \phantom{\bigg)}
\end{align*}

\noindent Following $1.1)$ in Corollary \ref{COR.Wstar_Derivation_QG_HSG_Regularity}, compressing $\nabla:A_{0}\longrightarrow L^{2}(B,\omega)$ with $\supp\xi$ yields symmetric $W^{*}$-derivation $\nabla_{\hspace{-0.055cm} \xi}:\AII_{\xi}\longrightarrow L^{2}(B_{\xi},\omega)$. Using the above calculation in order to account for $\mathcal{D}_{x,\xi}$, applying $1)$ in Proposition \ref{PRP.Wstar_Derivation_Chain} for $\nabla_{\hspace{-0.055cm} \xi}$ to $g$ selected as above shows $\log x=g(x)\in L^{\infty}(A_{\xi},\tau)_{\nabla}$ and furthermore Equation \ref{EQ.PRP.L2W_Log_Mean_NCDS_1}.
\end{proof}

\begin{rem}\label{REM.L2W_Log_Mean_NCDS_I}
Note $\nabla=\nabla_{\hspace{-0.055cm} \xi}$ on $L^{\infty}(A_{\xi},\tau)_{\nabla}$ by $4.2)$ in Corollary \ref{COR.Wstar_Derivation_QG_HSG_Regularity}. We may therefore suppress $\xi$ in the subscript of $\nabla_{\hspace{-0.055cm} \xi}$ in Equation \ref{EQ.PRP.L2W_Log_Mean_NCDS_1}.
\end{rem}

Theorem \ref{THM.L2W_Log_Mean_NCDS} uses Lemma \ref{LEM.L2W_Log_Mean_NCDS}, resp.~its immediate Corollary \ref{COR.L2W_Log_Mean_NCDS}. Up to coarse graining, Lemma \ref{LEM.L2W_Log_Mean_NCDS} implies heat flow is gradient flow of quantum relative entropy. Moreover, Theorem \ref{THM.L2W_Log_Mean_QNE} generalises arguments in their proof without assuming the finite-dimensional setting. We use operator differentiable functions \cite{ART.Ped.2000.OpAlg_Diff_Functions}. We review its general case. Let $H$ be a separable Hilbert space and $T>0$ in $\BII(H)$. Equation \ref{EQ.SSEC.L2W_Log_Mean_QT_1} uses Fr\'echet derivatives in $\BII(H)$. For all $S\in\BII(H)$, we define $d\log_{\hspace{0.05cm}T}(S)\in\BII(H)$ by setting

\begin{align}\label{EQ.SSEC.L2W_Log_Mean_QT_1}
d\log_{\hspace{0.05cm}T}(S):=\restr{0.925}{\frac{d}{dt}}{t=0,\|.\|_{\BII(H)}}\log\varphi(t)
\end{align}

\noindent for all Fr\'echet differentiable maps $t\mapsto \varphi(t)\in\BII(H)_{>0}$ s.t.~$\varphi(0)=T$ and $\dot{\varphi}(0)=S$. We obtain $d\log_{T}:\BII(H)\longrightarrow\BII(H)$. For all $S\in\BII(H)$, we have

\begin{align}\label{EQ.SSEC.L2W_Log_Mean_QT_2}
d\log_{\hspace{0.05cm}T}(S)=\int_{0}^{\infty}\lc\alpha I+T\rc^{-1}S\lc\alpha I+T\rc^{-1}d\alpha,\ \int_{0}^{1}T^{\alpha}d\log_{\hspace{0.05cm}T}(S)T^{1-\alpha}d\alpha=S
\end{align}

\noindent by Subsection 4.3 in \cite{ART.Ped.2000.OpAlg_Diff_Functions}. Identities in Equation \ref{EQ.SSEC.L2W_Log_Mean_QT_2} determine $d\log_{T}$. Equation \ref{EQ.SSEC.L2W_Log_Mean_QT_1} and Equation \ref{EQ.SSEC.L2W_Log_Mean_QT_2} pull back along compressed canonical left-~and right-action as given below in the proof of Lemma \ref{LEM.L2W_Log_Mean_NCDS}.

\begin{lem}\label{LEM.L2W_Log_Mean_NCDS}
Assume the finite-dimensional logarithmic mean setting. Let $\xi\in\SII(A)$ be a fixed state and $[a,b]\subset\mathbb{R}$.

\begin{itemize}
\item[1)] If $\mu:[a,b]\longrightarrow\vartheta(\xi)$ is differentiable for a.e.~$t\in [a,b]$, then 

\begin{align}\label{EQ.LEM.L2W_Log_Mean_NCDS_1}
\frac{d}{dt}\Enttau\lc\mu(t)\rc{}=\tau\lc\sharp\dot{\mu}(t)\log\sharp\mu(t)\rc{}
\end{align}

\begin{reapply}
\end{reapply}

\noindent for a.e.~$t\in [a,b]$.

\item[2)] If $(\mu,w)\in\Admab$ s.t.~$\mu(t)\in\vartheta(\xi)$ for all $t\in [a,b]$ and furthermore $\sharp w(t)\in B_{\xi}$ for a.e.~$t\in [a,b]$, then 

\begin{align}\label{EQ.LEM.L2W_Log_Mean_NCDS_2}
\frac{d}{dt}\Enttau\lc\mu(t)\rc{}=\tau\lc\sharp\dot{\mu}(t)\log\sharp\mu(t)\rc{}=\lgl\mathcal{D}_{\sharp\mu(t),\xi}\sharp w(t),\nabla\sharp\mu(t)\rgl_{\omega}
\end{align}

\begin{reapply}
\end{reapply}

\noindent for a.e.~$t\in [a,b]$.
\end{itemize}
\end{lem}
\begin{proof}
We use Hilbert space $\lc{}A_{\xi},\|.\|_{\tau}\rc$. Pull-back along compressed canonical left-~and right-action preserves Fr\'echet derivatives and therefore identities as above. These use $A_{\xi,>0}$ and $A_{\xi}$, rather than $\BII(A_{\xi})_{>0}$ and $\BII(A_{\xi})$. If $\mu:[a,b]\longrightarrow\vartheta(\xi)$ is differentiable for a.e.~$t\in [a,b]$, then Proposition \ref{PRP.RM_Embedded_Submanifold_II} ensures $\sharp\mu(t)>0$ in $A_{\xi}$ for all $t\in [a,b]$ and $\sharp\dot{\mu}(t)\in I(\Delta_{\xi})$ for a.e.~$t\in [a,b]$. Thus $\log\sharp\mu(t)\in A_{\xi}$ in each case \lc{}cf.~Corollary \ref{COR.Wstar_Compression_Preservation_II}\rc{}, hence the map $s\mapsto\log\sharp\mu(s)\in A_{\xi}$ is Fr\'echet differentiable for all $t\in (a,b)$. Corollary \ref{COR.Rel_Ent_AF_Cstar_Trace} shows $\Ent\lc\mu(t),\tau\rc{}=\tau\lc\sharp\mu(t)\log\sharp\mu(t)\rc$ for all $t\in [a,b]$. Note $2)$ in Proposition \ref{PRP.RM_Embedded_Submanifold_I} implies $I(\Delta_{\xi})\subset\ker\tau$. Using the latter, we argue as follows.\par
We show $1)$. Assume its setting. If $\dot{\mu}(t)$ exists for $t\in [a,b]$, then traciality, the second identity in Equation \ref{EQ.SSEC.L2W_Log_Mean_QT_2},  and $\sharp\dot{\mu}(t)\in\ker\tau$ imply

\begin{align}\label{EQ.LEM.L2W_Log_Mean_NCDS_3}
\tau\lc\sharp\mu(t)d\log_{\sharp\mu(t)}\lc\sharp\dot{\mu}(t)\rc\rc{}=\int_{0}^{1}\tau\lc\sharp\mu(t)^{\alpha}d\log_{\sharp\mu(t)}\lc\sharp\dot{\mu}(t)\rc\sharp\mu(t)^{1-\alpha}\rc{}d\alpha=\tau\lc\sharp\dot{\mu}(t)\rc{}=0.
\end{align}

\noindent We know $\Ent\lc\mu(t),\tau\rc{}=\tau\lc\sharp\mu(t)\log\sharp\mu(t)\rc$ for all $t\in [a,b]$. Using the latter and the Leibniz rule for Fr\'echet derivatives, Equation \ref{EQ.LEM.L2W_Log_Mean_NCDS_3} lets us calculate

\begin{align}\label{EQ.LEM.L2W_Log_Mean_NCDS_4}
\frac{d}{dt}\Enttau\lc\mu(t)\rc{}=\tau\lc\sharp\dot{\mu}(t)\log\sharp\mu(t)\rc{}+\tau\lc\sharp\mu(t)d\log_{\sharp\mu(t)}\lc\sharp\dot{\mu}(t)\rc\rc{}=\tau\lc\sharp\dot{\mu}(t)\log\sharp\mu(t)\rc{}
\end{align}

\noindent for a.e.~$t\in [a,b]$. Equation \ref{EQ.LEM.L2W_Log_Mean_NCDS_4} shows Equation \ref{EQ.LEM.L2W_Log_Mean_NCDS_1}. We obtain $1)$. In particular, we see $1)$ applies to all elements in $\Admab$.\par

%NEWPAGE
%NEWPAGE
%NEWPAGE

\pagebreak

%NEWPAGE
%NEWPAGE
%NEWPAGE

We show $2)$. Assume its setting. Note finite-dimensionality ensures $\xi$ has integrable support. Using Proposition \ref{PRP.L2W_Log_Mean_NCDS}, Equation \ref{EQ.LEM.L2W_Log_Mean_NCDS_1} lets us calculate

\begin{align}\label{EQ.LEM.L2W_Log_Mean_NCDS_5}
\frac{d}{dt}\Enttau\lc\mu(t)\rc{}=\tau\lc\sharp\dot{\mu}(t)\log\sharp\mu(t)\rc{}=\lgl\sharp w(t),\mathcal{D}_{\sharp\mu(t),\xi}\nabla\sharp\mu(t)\rgl_{\omega}
\end{align}

\noindent for a.e.~$t\in [a,b]$. We suppress $\xi$ in the subscript in Equation \ref{EQ.LEM.L2W_Log_Mean_NCDS_5} as per Remark \ref{REM.L2W_Log_Mean_NCDS_I}. We swap $\mathcal{D}_{\sharp\mu(t),\xi}\in\BII(B_{\xi})_{h}$ to the left-hand side of the inner product. Note this requires $\sharp w(t)\in B_{\xi}$. Equation \ref{EQ.LEM.L2W_Log_Mean_NCDS_5} shows Equation \ref{EQ.LEM.L2W_Log_Mean_NCDS_2}. We have $2)$.
\end{proof}

\begin{cor}\label{COR.L2W_Log_Mean_NCDS}
Assume the finite-dimensional logarithmic mean setting. Let $\xi\in\SII(A)$ be a fixed state. For all $\mu\in\Fix_{A}(\xi)$ and $t>0$, we have

\begin{itemize}
\item[1)] $-\frac{d}{dt}\Enttau\lc{}h_{t}(\mu)\rc{}=\tau\lc\Delta h_{t}\lc\sharp\mu\rc\log h_{t}\lc\sharp\mu\rc\rc$, \phantom{\vstretch{0.75}{\bigg)}}

\item[2)] $\tau\lc\Delta h_{t}\lc\sharp\mu\rc\log h_{t}\lc\sharp\mu\rc\rc{}=\big\|\mathcal{D}_{h_{t}\lc\sharp\mu\rc{},\xi}^{\frac{1}{2}}\nabla h_{t}\lc\sharp\mu\rc\big\|_{\omega}^{2}=\mathcal{I}^{\log}\lc{}h_{t}(\mu),h_{t}(\mu),\lc\nabla h_{t}\lc\sharp\mu\rc\rc^{\flat}\rc$. \phantom{\vstretch{0.75}{\bigg)}}
\end{itemize}
\end{cor}
\begin{proof}
For all $t\in\mathbb{R}$, set $\mu(t):=h_{t}(\mu)$ and $w(t):=-\lc\nabla\sharp\mu(t)\rc^{\flat}$. Thus $\sharp\dot{\mu}(t)=-\Delta\sharp\mu(t)=\nabla^{*}\sharp w(t)$ for all $t>0$ by definition of $h:[0,\infty)\longrightarrow\BII(A)$ and construction of extension $h:[0,\infty)\longrightarrow\BII(A^{*})$, hence $(\mu,w)\in\Adm^{[a,b]}$ for all $[a,b]\subset\mathbb{R}$ by finite-dimensionality.\par
If $t>0$, then $\sharp\mu(t)>0$ in $A_{\xi}$ by $2.2)$ in Theorem \ref{THM.Wstar_Derivation_QG_HSG_Regularity} and furthermore $\sharp w(t)\in B_{\xi}$ by $1.1)$ in Corollary \ref{COR.Wstar_Derivation_QG_HSG_Regularity}. Using the latter and $1.2)$ in Corollary \ref{COR.Wstar_Derivation_QG_HSG_Regularity}, applying $2)$ in Lemma \ref{LEM.L2W_Log_Mean_NCDS} for all $[a,b]\subset [0,\infty)$ yields both $1)$ and the first identity in $2)$ at once. Note $1)$ in Proposition \ref{PRP.RM_Compressed_PMO} shows the second one immediately.
\end{proof}

\begin{thm}\label{THM.L2W_Log_Mean_NCDS}
Let $(\phi,\bpsi,\gamma,\nabla)$ be noncommutative differential structure for tracial AF-$C^{*}$-algebras $(A,\tau)$ and $(B,\omega)$ in the logarithmic mean setting. Let $\xi\in\SII(A)$ be a finitely supported fixed state. For all $\mu\in\Fix_{A}^{\NI}(\xi)\cap\dom\Enttau$ and $t\geq 0$, we have

\begin{itemize}
\item[1)] $h_{t}(\mu)\in\dom\Enttau$, \phantom{\vstretch{0.875}{\bigg)}}

\item[2)] $\Ent(\xi,\tau)\leq\Ent\lc{}h_{t}(\mu),\tau\rc\leq\Ent(\mu,\tau)$, \phantom{\vstretch{0.875}{\bigg)}}

\item[3)] $\mathcal{W}_{\nabla}^{\log}\lc\mu,h_{t}(\mu)\rc^{2}\leq t\cdot \bigg(\hspace{-0.028975cm} \Ent(\mu,\tau)-\Ent\lc{}h_{t}(\mu),\tau\rc\bigg)<\infty$. \phantom{\vstretch{0.875}{\bigg)}}
\end{itemize}
\end{thm}
\begin{proof}
Note $2)$ implies $1)$. We show $2)$ and $3)$. Let $\mu\in\Fix_{A}^{\NI}(\xi)\cap\dom\Enttau$. If $\xi_{j}\neq 0$ for $j\in\mathbb{N}$, then $\overline{h_{t}(\mu)}_{j}=h_{t}\lc\bar{\mu}_{j}\rc\in\SII\big(A_{j,\bar{\xi}_{j}}\big)$ for all $t\in [0,\infty]$ by $1.3)$ in Proposition \ref{PRP.Wstar_Derivation_QG_HSG_Fixed_Part_I} and $1.3)$ in Theorem \ref{THM.Wstar_Derivation_QG_HSG_Regularity}. Using the latter and $1.2)$ in Proposition \ref{PRP.AF_Cstar_Trace_Dualisation_II}, we reduce to the finite-dimensional setting. For all $t\in [0,\infty]$, we have

\begin{align}\label{EQ.THM.L2W_Log_Mean_NCDS_1}
\Ent\lc{}h_{t}(\mu),\tau\rc{}=\lim_{j\in\mathbb{N}}\hspace{0.025cm} \Ent\lc{}h_{t}\lc\bar{\mu}_{j}\rc{},\tau\rc{}
\end{align}

\noindent by $3)$ in Theorem \ref{THM.QOT_Distance_AC_FS}, as well as

\begin{align}\label{EQ.THM.L2W_Log_Mean_NCDS_2}
\mathcal{W}_{\nabla}^{\log}\lc\mu,h_{t}(\mu)\rc{}=\lim_{j\in\mathbb{N}}\hspace{0.025cm} \mathcal{W}_{\nabla_{\hspace{-0.055cm} j}}^{\log}\lc\bar{\mu}_{j},h_{t}\lc\bar{\mu}_{j}\rc\rc{} 
\end{align}

\noindent by $1)$ and $3)$ in Theorem \ref{THM.QOT_Distance}. We use both equations above to reduce as follows.\par
Equation \ref{EQ.THM.L2W_Log_Mean_NCDS_1} by itself shows $2)$ if for a.e.~$j\in\mathbb{N}$, we have

\begin{align}\label{EQ.THM.L2W_Log_Mean_NCDS_3}
\Ent\lc\bar{\xi}_{j},\tau\rc\leq\Ent\lc{}h_{t}\lc\bar{\mu}_{j}\rc{},\tau\rc\leq\Ent\lc\bar{\mu}_{j},\tau\rc{}
\end{align}

\noindent for all $t\geq 0$. Equation \ref{EQ.THM.L2W_Log_Mean_NCDS_1} and Equation \ref{EQ.THM.L2W_Log_Mean_NCDS_2} show $3)$ if for a.e.~$j\in\mathbb{N}$, we have

\begin{align}\label{EQ.THM.L2W_Log_Mean_NCDS_4}
\mathcal{W}_{\nabla_{\hspace{-0.055cm} j}}^{\log}\lc\bar{\mu}_{j},h_{t}\lc\bar{\mu}_{j}\rc\rc^{2}\leq t\cdot \bigg(\hspace{-0.028975cm} \Ent\lc\bar{\mu}_{j},\tau\rc{}-\Ent\lc{}h_{t}\lc\bar{\mu}_{j}\rc{},\tau\rc\bigg)
\end{align}

\noindent for all $t\geq 0$. Since $\xi_{j}\neq 0$ for a.e.~$j\in\mathbb{N}$, Equation \ref{EQ.THM.L2W_Log_Mean_NCDS_3} reduces $2)$ and Equation \ref{EQ.THM.L2W_Log_Mean_NCDS_4} reduces $3)$ to the finite-dimensional setting.\par
Assume $A$ and $B$ are finite-dimensional. In particular, $\xi\neq 0$. We show $2)$. Note $2.3)$ in Proposition \ref{PRP.Wstar_Derivation_QG_HSG_II} ensures $\sup_{t\in [0,\infty]}\dblv{}h_{t}(\mu)\dblv_{\infty}\leq \|\mu\|_{\infty}$. Get compact $K\subset [0,\infty)$ s.t.~$\specA h_{t}(\mu)\subset K$ for all $t\in [0,\infty]$. Let $g\in C_{b}(\mathbb{R})$ s.t.~$g(\lambda)=\lambda\log\lambda$ for all $\lambda\in K$. Using Lemma \ref{LEM.FC_SR} and Corollary \ref{COR.Rel_Ent_AF_Cstar_Trace}, we see $g\in C_{b}(\mathbb{R})$ ensures we define continuous map $F:[0,\infty)\longrightarrow\mathbb{R}$ by setting

\begin{align}\label{EQ.THM.L2W_Log_Mean_NCDS_5}
t\mapsto F(t):=\Ent\lc{}h_{t}(\mu),\tau\rc{}=\tau\lc\sharp h_{t}(\mu)\log\sharp h_{t}(\mu)\rc{}=\tau\lc{}g\lc\sharp h_{t}(\mu)\rc\rc{}
\end{align}

\noindent for all $t\geq 0$ \lc{}cf.~Remark \ref{REM.SR_Equivalence} and Remark \ref{REM.FC_SR}\rc{}. This requires strong continuity as per $1)$ in Proposition \ref{PRP.Wstar_Derivation_QG_HSG_II}. We obtain $\Ent(\xi,\tau)=\lim_{t\rightarrow\infty}F(t)$. Corollary \ref{COR.L2W_Log_Mean_NCDS} shows

\begin{align}\label{EQ.THM.L2W_Log_Mean_NCDS_6}
-\frac{d}{dt}F(t)=\dblv{}\mathcal{D}_{h_{t}\lc\sharp\mu\rc{},\xi}^{\frac{1}{2}}\nabla h_{t}\lc\sharp\mu\rc\dblv_{\omega}^{2}=\mathcal{I}^{\log}\lc{}h_{t}(\mu),h_{t}(\mu),\lc\nabla h_{t}\lc\sharp\mu\rc\rc^{\flat}\rc\geq 0
\end{align}

\noindent for all $t>0$. Equation \ref{EQ.THM.L2W_Log_Mean_NCDS_5} and Equation \ref{EQ.THM.L2W_Log_Mean_NCDS_6} show $2)$.\par
We show $3)$. If $t=0$, then our claim follows since $\mathcal{W}_{\nabla}^{\log}$ is a metric. Assume $t>0$. For all $s\in [0,t]$, set $\mu(s):=h_{s}(\mu)$ and $w(s):=-\lc\nabla\sharp\mu(s)\rc^{\flat}$. We show $(\mu,w)\in\Adm^{[0,t]}\lc\mu,h_{t}(\mu)\rc$ in the proof of Corollary \ref{COR.L2W_Log_Mean_NCDS}. Using the map $s\mapsto\varphi(s):=ts$, we rescale $(\mu,w)\in\Adm^{[0,t]}$ to $\lc\mu',w'\rc\in\Admnullone$ as per Remark \ref{REM.Energy_Functional_Reparametrisation}. Using the map $s\mapsto\varphi^{-1}(s):=t^{-1}s$, we likewise rescale $\lc\mu',w'\rc\in\Admnullone$ to $(\mu,w)\in\Adm^{[0,t]}$. We further apply Proposition \ref{PRP.Energy_Functional_Reparametrisation} to the latter below. Equation \ref{EQ.THM.L2W_Log_Mean_NCDS_6} lets us calculate

\begin{align*}
\mathcal{W}_{\nabla}^{\log}\lc\mu,h_{t}(\mu)\rc^{2} & \leq E^{\log}\lc\mu',w'\rc{} \phantom{\bigg)} \\
& = t\cdot \int_{0}^{t}\mathcal{I}^{\log}\lc\mu(s),w(s),w(s)\rc{}ds \phantom{\bigg)} \\
& = t\cdot \bigg(\hspace{-0.028975cm} \Ent(\mu,\tau)-\Ent\lc{}h_{t}(\mu),\tau\rc\bigg). \phantom{\bigg)}
\end{align*}

\noindent Note $2)$ ensures the right-hand side is finite. As such, the above calculation shows $3)$ at once. The general case follows as discussed above.
\end{proof}

%%%%%%%%%%%%%%%%%%%
%%% SUBSECTION %%%%
%%%%%%%%%%%%%%%%%%%

\subsection{The finite-dimensional setting}\label{SSEC.L2W_Log_Mean_Fin}

We discuss the finite-dimensional logarithmic mean setting. We introduce $\Lambda$-operations to simplify those of our calculations involving derivatives and noncommutative division operators. Theorem \ref{THM.L2W_Log_Mean_NCDS_Fin_EL} formulates Euler-Lagrange equations. Theorem \ref{THM.L2W_Log_Mean_NCDS_Fin_Hessian} gives two differential equations for Hessians of quantum relative entropy.

%%%%%%%%%%%%%
%%% PART %%%%
%%%%%%%%%%%%%

\subsubsection*{Euler-Lagrange equations}

Theorem \ref{THM.L2W_Log_Mean_NCDS_Fin_Hessian} requires Euler-Lagrange equations as per Theorem \ref{THM.L2W_Log_Mean_NCDS_Fin_EL}. We introduce $\Lambda$-operations in order to simplify calculations. Let $(\phi,\bpsi,\gamma,\nabla)$ be noncommutative differential structure for tracial AF-$C^{*}$-algebras $(A,\tau)$ and $(B,\omega)$ in the finite-dimensional logarithmic mean setting. Let $\xi\in\SII(A)$ be a fixed state. Finite-dimensionality ensures finite support.\par
Following Definition \ref{DFN.L2W_Log_Mean_OM} and Remark \ref{REM.L2W_Log_Mean_OM}, note Proposition \ref{PRP.L2W_Log_Mean_NCDS_Fin_EL_I} gives integral characterisation of multiplication and division operators in our setting. These allow for direct calculations. We moreover obtain smooth maps in Definition \ref{DFN.L2W_Log_Mean_NCDS_Fin_EL_I}.

\begin{prp}\label{PRP.L2W_Log_Mean_NCDS_Fin_EL_I}
For all $x\in A_{\xi,+}$ and $u\in B_{\xi}$, we have

\begin{itemize}
\item[1)] $\mathcal{M}_{x,\xi}(u)=\int_{0}^{1}\phi(x)^{\alpha}u\bpsi(x)^{1-\alpha}d\alpha$,

\item[2)] $\mathcal{D}_{x,\xi}(u)=\int_{0}^{\infty}\lc\phi(x)+\alpha 1_{B}\rc^{-1}u\lc\bpsi(x)+\alpha 1_{B}\rc^{-1}d\alpha$ if $x>0$ in $A_{\xi}$.
\end{itemize}
\end{prp}
\begin{proof}
We show $1)$. Let $x\in A_{\xi,+}$ and $u\in B_{\xi}$. Using $1)$ in Lemma \ref{LEM.NCD_Operator_Compressed_PMO_I}, Equation \ref{EQ.DFN.L2W_Log_Mean_OM_1} lets us calculate

\begin{align*}
\mathcal{M}_{x,\xi}(u)=\mathcal{M}_{x}(u)\phantom{\bigg)} & = m_{\log}\lc{}L_{x}^{\phi},R_{x}^{\bpsi}\rc{}(u) \phantom{\vstretch{1.075}{\Bigg)}} \\
& = \int_{0}^{1}\big(L_{x}^{\phi}\big)^{\alpha}\lc\big(R_{x}^{\bpsi}\big)^{1-\alpha}(u)\rc{}d\alpha \phantom{\vstretch{1.075}{\Bigg)}} \\
& = \int_{0}^{1}\phi(x)^{\alpha}u\bpsi(x)^{1-\alpha}d\alpha. \phantom{\vstretch{1.075}{\Bigg)}}
\end{align*}

The above calculation shows $1)$. We show $2)$. Assume $x>0$ in $A_{\xi}$. For all $\alpha>0$, we define $g^{\alpha}\in C_{b}\lc{}[0,\infty)\times [0,\infty)\rc$ by setting 

\begin{align}\label{EQ.PRP.L2W_Log_Mean_NCDS_Fin_EL_I_1}
g^{\alpha}(t,s):=\big(t+\alpha\big)^{-1}\big(s+\alpha\big)^{-1}
\end{align}

\noindent for all $t,s\geq 0$. Since $u\in B_{\xi}$, $2.3)$ in Lemma \ref{LEM.AF_NCD_FC_I} implies

\begin{align}\label{EQ.PRP.L2W_Log_Mean_NCDS_Fin_EL_I_2}
g^{\alpha}\lc{}L_{x}^{\phi},R_{x}^{\bpsi}\rc{}(u)=g^{\alpha}\lc{}L_{x,\supp\xi}^{\phi},R_{x,\supp\xi}^{\bpsi}\rc{}(u)
\end{align}

\noindent for all $\alpha>0$. Using $2)$ in Lemma \ref{LEM.NCD_Operator_Compressed_PMO_I}, Equation \ref{EQ.REM.L2W_Log_Mean_OM_1} and Equation \ref{EQ.PRP.L2W_Log_Mean_NCDS_Fin_EL_I_2} show

\begin{align*}
\mathcal{D}_{x,\xi}(u)=\lc\mathcal{M}_{x,\xi}\rc^{-1}(u) & = m_{\log}^{-1}\lc{}L_{x,\supp\xi}^{\phi},R_{x,\supp\xi}^{\bpsi}\rc{}(u) \phantom{\Bigg)} \\
& = \int_{0}^{\infty}\lc{}L_{x,\supp\xi}^{\phi}+\alpha I_{B_{\xi}}\rc^{-1}\lc\lc{}R_{x,\supp\xi}^{\bpsi}+\alpha I_{B_{\xi}}\rc^{-1}(u)\rc{}d\alpha \phantom{\Bigg)} \\
& = \int_{0}^{\infty}g^{\alpha}\lc{}L_{x,\supp\xi}^{\phi},R_{x,\supp\xi}^{\bpsi}\rc{}(u)d\alpha \phantom{\Bigg)} \\
& = \int_{0}^{\infty}g^{\alpha}\lc{}L_{x}^{\phi},R_{x}^{\bpsi}\rc{}(u)d\alpha \phantom{\Bigg)} \\
& = \int_{0}^{\infty}\lc{}L_{x}^{\phi}+\alpha I_{B}\rc^{-1}\lc\lc{}R_{x}^{\bpsi}+\alpha I_{B}\rc^{-1}(u)\rc{}d\alpha \phantom{\Bigg)} \\
& = \int_{0}^{\infty}\lc\phi(x)+\alpha 1_{B}\rc^{-1}u\lc\bpsi(x)+\alpha 1_{B}\rc^{-1}d\alpha. \phantom{\Bigg)}
\end{align*}

\noindent Get $2)$. Note $x>0$ in $A_{\xi}$ is required for $\mathcal{D}_{x,\xi}=\lc\mathcal{M}_{x,\xi}\rc^{-1}$ to be defined.
\end{proof}

We have Riemannian manifold $(\vartheta(\xi),g^{\xi})$ as per Definition \ref{DFN.RM_II} embedded in $\SII(A_{\xi})$ as per Proposition \ref{PRP.RM_Embedded_Submanifold_II} s.t.~its tangent bundle is indeed trivial with fibre $I(\Delta_{\xi})$. For all $\mu\in\vartheta(\xi)$, we introduce operators 

\begin{align}\label{EQ.SSEC.L2W_Log_Mean_Fin_1}
\mathfrak{F}_{\mu}=\nabla^{*}\mathcal{M}_{\sharp\mu,\xi}\nabla=\nabla^{*}\mathcal{M}_{\sharp\mu}\nabla,\ \mathfrak{G}_{\mu}=\mathcal{M}_{\sharp\mu,\xi}\nabla=\mathcal{M}_{\sharp\mu}\nabla
\end{align}

\noindent with domain $\im\Delta_{\xi}$ as per Definition \ref{DFN.RM_I} by Proposition \ref{PRP.RM_I}. In each case, get $\mathfrak{F}_{\mu}>0$ in $\BII(\im\Delta_{\xi})$ s.t.~$[\mathfrak{F}_{\mu},\Adj]=0$. Moreover, note $\mathfrak{G}_{\mu}\in\BII(\im\Delta_{\xi},B_{\xi})$ intertwines adjoining. We therefore restrict to $I(\Delta_{\xi})=\im\Delta_{\xi}\cap A_{\xi,h}$.

\begin{ntn}
Let $X$ and $Y$ be smooth manifolds. We write $dg:TX\longrightarrow TY$ for the first differential form of a smooth map $g:X\longrightarrow Y$ \cite{BK.Lan.1995.Riemannian_Manifolds}, i.e.~its total derivative. We further write $d_{\mu}g\in\textrm{Hom}\lc{}T_{\mu}X,T_{g(\mu)}Y\rc$ upon evaluation at $\mu\in X$.
\end{ntn}

\begin{dfn}\label{DFN.L2W_Log_Mean_NCDS_Fin_EL_I}
We consider Riemannian manifold $(\vartheta(\xi),g^{\xi})$.

\begin{itemize}
\item[1)] We define $\mathcal{M}_{\xi}:\vartheta(\xi)\longrightarrow\GL(\BII(B_{\xi}))$ by setting $\mathcal{M}_{\xi}(\mu):=\mathcal{M}_{\sharp\mu,\xi}$ for all $\mu\in\vartheta(\xi)$. We define $\mathcal{D}_{\xi}:\vartheta(\xi)\longrightarrow\GL(\BII(B_{\xi}))$ by setting $\mathcal{D}_{\xi}(\mu):=\mathcal{D}_{\sharp\mu,\xi}$ for all $\mu\in\vartheta(\xi)$.

\item[2)] We define $\mathfrak{F}:\vartheta(\xi)\longrightarrow\GL(\BII(\im\Delta_{\xi}))$ by setting $\mathfrak{F}(\mu):=\mathfrak{F}_{\mu}$ for all $\mu\in\vartheta(\xi)$. We define $\mathfrak{G}:\vartheta(\xi)\longrightarrow\BII(\im\Delta_{\xi},B_{\xi})$ by setting $\mathfrak{G}(\mu):=\mathfrak{G}_{\mu}$ for all $\mu\in\vartheta(\xi)$.
\end{itemize}
\end{dfn}

\begin{rem}
Proposition \ref{PRP.L2W_Log_Mean_NCDS_Fin_EL_I} shows all maps in Definition \ref{DFN.L2W_Log_Mean_NCDS_Fin_EL_I} are smooth. We use this throughout our discussion.
\end{rem}

Definition \ref{DFN.L2W_Log_Mean_NCDS_Fin_EL_II} gives $\Lambda$-operations. Proposition \ref{PRP.L2W_Log_Mean_NCDS_Fin_EL_II} and Lemma \ref{LEM.L2W_Log_Mean_NCDS_Fin_EL} simplify calculations involving derivatives and noncommutative division operators. Using their results, Theorem \ref{THM.L2W_Log_Mean_NCDS_Fin_EL} formulates Euler-Lagrange equations.

\begin{dfn}\label{DFN.L2W_Log_Mean_NCDS_Fin_EL_II}
Let $\mu\in\vartheta(\xi)$.

\begin{itemize}
\item[1)] For all $x\in A_{\xi}$ and $u\in B_{\xi}$, set

\begin{align}\label{EQ.DFN.L2W_Log_Mean_NCDS_Fin_EL_II_1}
\Lambda_{\mu}(x,u):=\Lambda_{\mu}^{\phi}(x,u)+\Lambda_{\mu}^{\bpsi}(x,u)\in B_{\xi}   
\end{align}

\begin{reapply}
\end{reapply}

\noindent using

\begin{align*}
\Lambda_{\mu}^{\phi}(x,u) &:= \int_{0}^{\infty}\lc\phi\lc\sharp\mu\rc{}+\alpha 1_{B}\rc^{-1}\phi(x)\lc\phi\lc\sharp\mu\rc{}+\alpha 1_{B}\rc^{-1}u\lc\bpsi\lc\sharp\mu\rc{}+\alpha 1_{B}\rc^{-1}d\alpha, \\
& \\
\Lambda_{\mu}^{\bpsi}(x,u) &:= \int_{0}^{\infty}\lc\phi\lc\sharp\mu\rc{}+\alpha 1_{B}\rc^{-1}u\lc\bpsi\lc\sharp\mu\rc{}+\alpha 1_{B}\rc^{-1}\bpsi(x)\lc\bpsi\lc\sharp\mu\rc{}+\alpha 1_{B}\rc^{-1}d\alpha.
\end{align*}

\begin{reapply}
\end{reapply}

\item[2)] For all $u,v\in B_{\xi}$, set

\begin{align}\label{EQ.DFN.L2W_Log_Mean_NCDS_Fin_EL_II_2}
\Lambda_{\mu}^{*}(u,v):=\Lambda_{\mu}^{\phi,*}(u,v)+\Lambda_{\mu}^{\bpsi,*}(u,v)\in A_{\xi} 
\end{align}

\begin{reapply}
\end{reapply}

\noindent using

\begin{align*}
\Lambda_{\mu}^{\phi,*}(u,v) &:= \phi^{*}\lc\int_{0}^{\infty}\lc\phi\lc\sharp\mu\rc{}+\alpha 1_{B}\rc^{-1}v\lc\bpsi\lc\sharp\mu\rc{}+\alpha 1_{B}\rc^{-1}u^{*}\lc\phi\lc\sharp\mu\rc{}+\alpha 1_{B}\rc^{-1}d\alpha\rc{}, \\
& \\
\Lambda_{\mu}^{\bpsi,*}(u,v) &:= \bpsi^{*}\lc\int_{0}^{\infty}\lc\bpsi\lc\sharp\mu\rc{}+\alpha 1_{B}\rc^{-1}u^{*}\lc\phi\lc\sharp\mu\rc{}+\alpha 1_{B}\rc^{-1}v\lc\bpsi\lc\sharp\mu\rc{}+\alpha 1_{B}\rc^{-1}d\alpha\rc{}.
\end{align*}

\begin{reapply}
\end{reapply}

\end{itemize}
\end{dfn}

\begin{prp}\label{PRP.Differential_Inverse}
Let $V$ be a unital Banach $^{*}$-algebra. If a map $F:[a,b]\longrightarrow\GL(V)$ is Fr\'echet differentiable in an open neighbourhood of $t_{0}\in (a,b)$, then

\begin{align}\label{EQ.PRP.Differential_Inverse_1}
\restr{0.925}{\frac{d}{dt}}{t=t_{0},\|.\|_{V}}F(t)^{-1}=-F(t_{0})^{-1}\cdot \restr{0.925}{\frac{d}{dt}}{t=t_{0},\|.\|_{V}}F(t)\cdot F(t_{0})^{-1}.
\end{align}
\end{prp}
\begin{proof}
Let $F:[a,b]\longrightarrow\GL(V)$ be Fr\'echet differentiable in an open neighbourhood of $t_{0}\in (a,b)$. The Leibniz rule lets us calculate

\begin{align}\label{EQ.PRP.Differential_Inverse_2}
0=\restr{0.925}{\frac{d}{dt}}{t=t_{0},\|.\|_{V}}F(t)F(t)^{-1}=\restr{0.925}{\frac{d}{dt}}{t=t_{0},\|.\|_{V}}F(t)\cdot F(t_{0})^{-1}+F(t_{0})\cdot \restr{0.925}{\frac{d}{dt}}{t=t_{0},\|.\|_{V}}F(t)^{-1}.
\end{align}

\noindent Equation \ref{EQ.PRP.Differential_Inverse_1} follows by solving Equation \ref{EQ.PRP.Differential_Inverse_2} for $\restr{0.925}{\frac{d}{dt}}{t=t_{0},\|.\|_{V}}F(t)^{-1}$.
\end{proof}

\begin{prp}\label{PRP.L2W_Log_Mean_NCDS_Fin_EL_II}
For all $\mu\in\vartheta(\xi)$, $x\in I(\Delta_{\xi})$ and $u,v\in B_{\xi}$, we have

\begin{itemize}
\item[1)] $\lgl\Lambda_{\mu}(x,u),v\rgl_{\omega}=\lgl x,\Lambda_{\mu}^{*}(u,v)\rgl_{\tau}$,

\item[2)] $d_{\mu}\mathcal{D}_{\xi}(x^{\flat})(u)=-\Lambda_{\mu}(x,u)$.
\end{itemize}
\end{prp}
\begin{proof}
For all $\mu\in\vartheta(\xi)$, $x\in I(\Delta_{\xi})$ and $u,v\in B_{\xi}$, we directly verify 

\begin{align}\label{EQ.PRP.L2W_Log_Mean_NCDS_Fin_EL_II_1}
\lgl \Lambda_{\mu}^{\phi}(x,u),v\rgl_{\omega}=\lgl x,\Lambda_{\mu}^{\phi,*}(u,v)\rgl_{\tau},\ \lgl \Lambda_{\mu}^{\bpsi}(x,u),v\rgl_{\omega}=\lgl x,\Lambda_{\mu}^{\bpsi,*}(u,v)\rgl_{\tau}.
\end{align}

\noindent Using Equation \ref{EQ.DFN.L2W_Log_Mean_NCDS_Fin_EL_II_1} and Equation \ref{EQ.DFN.L2W_Log_Mean_NCDS_Fin_EL_II_2}, note Equation \ref{EQ.PRP.L2W_Log_Mean_NCDS_Fin_EL_II_1} implies $1)$ by definition.\par
We show $2)$. Let $\mu\in\vartheta(\xi)$, $x\in I(\Delta_{\xi})$ and $u\in B_{\xi}$. Let $\varepsilon>0$ and $\mu:\lc{}-\varepsilon,\varepsilon\rc\longrightarrow\vartheta(\xi)$ smooth map s.t.~$\mu(0)=\mu$ and $\dot{\mu}(0)=x^{\flat}$. Then $2)$ in Proposition \ref{PRP.L2W_Log_Mean_NCDS_Fin_EL_I} shows

\begin{align}\label{EQ.PRP.L2W_Log_Mean_NCDS_Fin_EL_II_2}
\mathcal{D}_{\sharp\mu(t),\xi}(u)=\int_{0}^{\infty}\lc\phi\lc\sharp\mu(t)\rc{}+\alpha 1_{B}\rc^{-1}u\lc\bpsi\lc\sharp\mu(t)\rc{}+\alpha 1_{B}\rc^{-1}d\alpha
\end{align}

\noindent for all $t\in \lc{}-\varepsilon,\varepsilon\rc$. Using $d_{\mu}\mathcal{D}_{\xi}(x^{\flat})(u)=\restr{0.925}{\frac{d}{dt}}{t=0,\|.\|_{B}}\mathcal{D}_{\sharp\mu(t),\xi}(u)$ and further applying Fr\'echet derivative to the integrand in Equation \ref{EQ.PRP.L2W_Log_Mean_NCDS_Fin_EL_II_2}, the Leibniz rule lets us calculate

\begin{align*}
\restr{0.925}{\frac{d}{dt}}{t=0,\|.\|_{B}}\mathcal{D}_{\sharp\mu(t),\xi}(u) & = \int_{0}^{\infty}\restr{0.925}{\frac{d}{dt}}{t=0,\|.\|_{B}}\lc\phi\lc\sharp\mu(t)\rc{}+\alpha 1_{B}\rc^{-1}u\lc\bpsi\lc\sharp\mu\rc{}+\alpha 1_{B}\rc^{-1}d\alpha \phantom{\Bigg)} \\
& +\int_{0}^{\infty}\lc\phi\lc\sharp\mu\rc{}+\alpha 1_{B}\rc^{-1}u\restr{0.925}{\frac{d}{dt}}{t=0,\|.\|_{B}}\lc\bpsi\lc\sharp\mu(t)\rc{}+\alpha 1_{B}\rc^{-1}d\alpha. \phantom{\Bigg)}
\end{align*}

The above is the integral characterisation of $\restr{0.925}{\frac{d}{dt}}{t=0,\|.\|_{B}}\mathcal{D}_{\sharp\mu(t),\xi}(u)$. Proposition \ref{PRP.Differential_Inverse} further implies

\begin{align}\label{EQ.PRP.L2W_Log_Mean_NCDS_Fin_EL_II_3}
\restr{0.925}{\frac{d}{dt}}{t=0,\|.\|_{B}}\lc\phi\lc\sharp\mu(t)\rc{}+\alpha 1_{B}\rc^{-1}=-\lc\phi\lc\sharp\mu\rc{}+\alpha 1_{B}\rc^{-1}\phi(x)\lc\phi\lc\sharp\mu\rc{}+\alpha 1_{B}\rc^{-1}
\end{align}

\noindent and

\begin{align}\label{EQ.PRP.L2W_Log_Mean_NCDS_Fin_EL_II_4}
\restr{0.925}{\frac{d}{dt}}{t=0,\|.\|_{B}}\lc\bpsi\lc\sharp\mu(t)\rc{}+\alpha 1_{B}\rc^{-1}=-\lc\bpsi\lc\sharp\mu\rc{}+\alpha 1_{B}\rc^{-1}\bpsi(x)\lc\bpsi\lc\sharp\mu\rc{}+\alpha 1_{B}\rc^{-1}
\end{align}

\noindent for all $\alpha>0$. Equation \ref{EQ.PRP.L2W_Log_Mean_NCDS_Fin_EL_II_3} lets us calculate

\begin{align}\label{EQ.PRP.L2W_Log_Mean_NCDS_Fin_EL_II_5}
\int_{0}^{\infty}\restr{0.925}{\frac{d}{dt}}{t=0,\|.\|_{B}}\lc\phi\lc\sharp\mu(t)\rc{}+\alpha 1_{B}\rc^{-1}u\lc\bpsi\lc\sharp\mu\rc{}+\alpha 1_{B}\rc^{-1}d\alpha=-\Lambda_{\mu}^{\phi}(x,u),
\end{align}

\noindent whereas Equation \ref{EQ.PRP.L2W_Log_Mean_NCDS_Fin_EL_II_4} lets us calculate

\begin{align}\label{EQ.PRP.L2W_Log_Mean_NCDS_Fin_EL_II_6}
\int_{0}^{\infty}\lc\phi\lc\sharp\mu\rc{}+\alpha 1_{B}\rc^{-1}u\restr{0.925}{\frac{d}{dt}}{t=0,\|.\|_{B}}\lc\bpsi\lc\sharp\mu(t)\rc{}+\alpha 1_{B}\rc^{-1}d\alpha=-\Lambda_{\mu}^{\bpsi}(x,u).
\end{align}

%NEWPAGE
%NEWPAGE
%NEWPAGE

\pagebreak

%NEWPAGE
%NEWPAGE
%NEWPAGE

Using Equation \ref{EQ.DFN.L2W_Log_Mean_NCDS_Fin_EL_II_1}, applying Equation \ref{EQ.PRP.L2W_Log_Mean_NCDS_Fin_EL_II_5} and Equation \ref{EQ.PRP.L2W_Log_Mean_NCDS_Fin_EL_II_6} to the integral characterisation of $\restr{0.925}{\frac{d}{dt}}{t=0,\|.\|_{B}}\mathcal{D}_{\sharp\mu(t),\xi}(u)$ yields

\begin{align}\label{EQ.PRP.L2W_Log_Mean_NCDS_Fin_EL_II_7}
\restr{0.925}{\frac{d}{dt}}{t=0,\|.\|_{B}}\mathcal{D}_{\sharp\mu(t),\xi}(u)=-\lc\Lambda_{\mu}^{\phi}(x,u)+\Lambda_{\mu}^{\bpsi}(x,u)\rc{}=-\Lambda_{\mu}(x,u).
\end{align}

\noindent Equation \ref{EQ.PRP.L2W_Log_Mean_NCDS_Fin_EL_II_7} shows $2)$ at once.
\end{proof}

\begin{lem}\label{LEM.L2W_Log_Mean_NCDS_Fin_EL}
For all $\mu\in\vartheta(\xi)$ and $x,y,z\in I(\Delta_{\xi})$, we have

\begin{align}\label{EQ.LEM.L2W_Log_Mean_NCDS_Fin_EL_1}
\lgl d_{\mu}\mathfrak{F}^{-1}(x^{\flat})(y),z\rgl_{\tau}=-\lgl x,\Lambda_{\mu}^{*}\big(\Theta\big(\mu,y^{\flat}\big),\Theta\big(\mu,z^{\flat}\big)\big)\rgl_{\tau}.
\end{align}
\end{lem}
\begin{proof}
Let $\mu\in\vartheta(\xi)$ and $x,y,z\in I(\Delta_{\xi})$. We calculate $ d_{\mu}\mathfrak{F}^{-1}(x^{\flat})(y)$. Note

\begin{align}\label{EQ.LEM.L2W_Log_Mean_NCDS_Fin_EL_2}
\mathrlap{\phantom{\mathfrak{F}}_{\mu}}\mathfrak{F}^{-1}=\lc\nabla^{*}\mathcal{M}_{\sharp\mu,\xi}\nabla\rc^{-1},\ \mathcal{M}_{\sharp\mu,\xi}=\mathcal{D}_{\sharp\mu,\xi}^{-1}\in\GL(\BII(B_{\xi})).
\end{align}

\noindent Using the first identity in Equation \ref{EQ.LEM.L2W_Log_Mean_NCDS_Fin_EL_2}, Proposition \ref{PRP.Differential_Inverse} implies

\begin{align}\label{EQ.LEM.L2W_Log_Mean_NCDS_Fin_EL_3}
d_{\mu}\mathfrak{F}^{-1}(x^{\flat})(y)=-\mathrlap{\phantom{\mathfrak{F}}_{\mu}}\mathfrak{F}^{-1}\lc{}d_{\mu}\lc\nabla^{*}\mathcal{M}_{\xi}\nabla\rc{}(x^{\flat})\lc\mathrlap{\phantom{\mathfrak{F}}_{\mu}}\mathfrak{F}^{-1}(y)\rc\rc{}.
\end{align}

\noindent Since $\nabla$ and $\nabla^{*}$ are bounded linear, get $d_{\mu}\lc\nabla^{*}\mathcal{M}_{\xi}\nabla\rc{}(x^{\flat})=\nabla^{*}d_{\mu}\mathcal{M}_{\xi}(x^{\flat})\nabla$. Applying the latter to Equation \ref{EQ.LEM.L2W_Log_Mean_NCDS_Fin_EL_3} yields

\begin{align}\label{EQ.LEM.L2W_Log_Mean_NCDS_Fin_EL_4}
d_{\mu}\mathfrak{F}^{-1}(x^{\flat})(y)=-\mathrlap{\phantom{\mathfrak{F}}_{\mu}}\mathfrak{F}^{-1}\lc\nabla^{*}d_{\mu}\mathcal{M}_{\xi}(x^{\flat})\lc\nabla\mathrlap{\phantom{\mathfrak{F}}_{\mu}}\mathfrak{F}^{-1}(y)\rc\rc{}.
\end{align}

We therefore calculate $d_{\mu}\mathcal{M}_{\xi}(x^{\flat})\lc\nabla\mathrlap{\phantom{\mathfrak{F}}_{\mu}}\mathfrak{F}^{-1}(y)\rc$ in order to calculate $d_{\mu}\mathfrak{F}^{-1}(x^{\flat})(y)$. Using the second identity in Equation \ref{EQ.LEM.L2W_Log_Mean_NCDS_Fin_EL_2}, Proposition \ref{PRP.Differential_Inverse} implies

\begin{align}\label{EQ.LEM.L2W_Log_Mean_NCDS_Fin_EL_5}
d_{\mu}\mathcal{M}_{\xi}(x^{\flat})=-\mathcal{M}_{\sharp\mu,\xi}d_{\mu}\mathcal{D}_{\xi}(x^{\flat})\mathcal{M}_{\sharp\mu,\xi}.
\end{align}

\noindent Applying $2)$ in Proposition \ref{PRP.L2W_Log_Mean_NCDS_Fin_EL_II} for $u=\mathcal{M}_{\sharp\mu,\xi}\lc\nabla\mathrlap{\phantom{\mathfrak{F}}_{\mu}}\mathfrak{F}^{-1}(y)\rc$ to Equation \ref{EQ.LEM.L2W_Log_Mean_NCDS_Fin_EL_5} evaluated on $\nabla\mathrlap{\phantom{\mathfrak{F}}_{\mu}}\mathfrak{F}^{-1}(y)$ yields

\begin{align*}
d_{\mu}\mathcal{M}_{\xi}(x^{\flat})\lc\nabla\mathrlap{\phantom{\mathfrak{F}}_{\mu}}\mathfrak{F}^{-1}(y)\rc{} & = -\mathcal{M}_{\sharp\mu,\xi}\lc{}d_{\mu}\mathcal{D}_{\xi}(x^{\flat})\lc\mathcal{M}_{\sharp\mu,\xi}\lc\mathrlap{\phantom{\mathfrak{F}}_{\mu}}\mathfrak{F}^{-1}(y)\rc\rc\rc{} \phantom{\bigg)} \\
& =\mathcal{M}_{\sharp\mu,\xi}\lc\Lambda_{\mu}\lc{}x,\mathcal{M}_{\sharp\mu,\xi}\lc\nabla\mathrlap{\phantom{\mathfrak{F}}_{\mu}}\mathfrak{F}^{-1}(y)\rc\rc\rc{}. \phantom{\bigg)}
\end{align*}

\noindent Using $\mathcal{M}_{\sharp\mu,\xi}\lc\nabla\mathrlap{\phantom{\mathfrak{F}}_{\mu}}\mathfrak{F}^{-1}(y)\rc{}=\sharp\Theta\big(\mu,y^{\flat}\big)$, we therefore obtain

\begin{align}\label{EQ.LEM.L2W_Log_Mean_NCDS_Fin_EL_6}
d_{\mu}\mathcal{M}_{\xi}(x^{\flat})\lc\nabla\mathrlap{\phantom{\mathfrak{F}}_{\mu}}\mathfrak{F}^{-1}(y)\rc{}=\mathcal{M}_{\sharp\mu,\xi}\lc\Lambda_{\mu}\big(x,\sharp\Theta\big(\mu,y^{\flat}\big)\big)\rc{}.
\end{align}

%NEWPAGE
%NEWPAGE
%NEWPAGE

\pagebreak

%NEWPAGE
%NEWPAGE
%NEWPAGE

Equation \ref{EQ.LEM.L2W_Log_Mean_NCDS_Fin_EL_4} and Equation \ref{EQ.LEM.L2W_Log_Mean_NCDS_Fin_EL_6} show

\begin{align}\label{EQ.LEM.L2W_Log_Mean_NCDS_Fin_EL_7}
d_{\mu}\mathfrak{F}^{-1}(x^{\flat})(y)=-\mathrlap{\phantom{\mathfrak{F}}_{\mu}}\mathfrak{F}^{-1}\lc\nabla^{*}\mathcal{M}_{\sharp\mu,\xi}\lc\Lambda_{\mu}\big(x,\sharp\Theta\big(\mu,y^{\flat}\big)\big)\rc\rc{}.
\end{align}

\noindent We show Equation \ref{EQ.LEM.L2W_Log_Mean_NCDS_Fin_EL_1}. Equation \ref{EQ.LEM.L2W_Log_Mean_NCDS_Fin_EL_7}, together with $1)$ in Proposition \ref{PRP.L2W_Log_Mean_NCDS_Fin_EL_II} applied to the third identity in our below calculation, lets us calculate

\begin{align*}
\lgl d_{\mu}\mathfrak{F}^{-1}(x^{\flat})(y),z\rgl_{\tau} & = -\lgl-\mathrlap{\phantom{\mathfrak{F}}_{\mu}}\mathfrak{F}^{-1}\lc\nabla^{*}\mathcal{M}_{\sharp\mu,\xi}\lc\Lambda_{\mu}\big(x,\sharp\Theta\big(\mu,y^{\flat}\big)\big)\rc\rc{},z\rgl_{\omega} \phantom{\bigg)} \\
& = -\lgl\Lambda_{\mu}\big(x,\sharp\Theta\big(\mu,y^{\flat}\big)\big),\sharp\Theta\big(\mu,z^{\flat}\big)\rgl_{\omega} \phantom{\bigg)} \\
& = -\lgl x,\Lambda_{\mu}^{*}\big(\sharp\Theta\big(\mu,y^{\flat}\big),\sharp\Theta\big(\mu,z^{\flat}\big)\big)\rgl_{\tau}. \phantom{\bigg)}
\end{align*}

\noindent The above calculation shows Equation \ref{EQ.LEM.L2W_Log_Mean_NCDS_Fin_EL_1}.
\end{proof}

\begin{thm}\label{THM.L2W_Log_Mean_NCDS_Fin_EL}
Let $(\phi,\bpsi,\gamma,\nabla)$ be noncommutative differential structure for tracial AF-$C^{*}$-algebras $(A,\tau)$ and $(B,\omega)$ in the finite-dimensional logarithmic mean setting. Let $\xi\in\SII(A)$ be a fixed state. A smooth path $\mu:[0,1]\longrightarrow\vartheta(\xi)$ satisfies the Euler-Lagrange equations of the energy functional induced by $g^{\xi}$ if and only if

\begin{align}\label{EQ.THM.L2W_Log_Mean_NCDS_Fin_EL_1}
\restr{0.925}{\frac{d}{ds}}{s=t,\|.\|_{A^{*}}}\mathfrak{F}_{\mu(s)}^{-1}\lc\dot{\mu}(s)\rc{}=-\frac{1}{2}\Lambda_{\mu(t)}^{*}\lc\sharp\Theta\lc\mu(t),\dot{\mu}(t)\rc{},\sharp\Theta\lc\mu(t),\dot{\mu}(t)\rc\rc{}
\end{align}

\noindent for all $t\in (0,1)$.
\end{thm}
\begin{proof}
We consider first variation of energy \cite{BK.Lan.1995.Riemannian_Manifolds}. Note $T\vartheta(\xi)=\vartheta(\xi)\times I(\Delta_{\xi})^{\flat}$ is trivial by $2)$ in Proposition \ref{PRP.RM_Embedded_Submanifold_II}. It suffices to solve for critical points of the energy functional induced by $g^{\xi}$ on variations of form $\mu\lc{}t,\varepsilon\rc{}=\mu(t)+\varepsilon\eta(t)$ using $\eta\in C_{0}^{\infty}\lc{}[0,1],I(\Delta_{\xi})^{\flat}\rc$ and $\varepsilon\in \lc{}-\delta,\delta\rc$ for $\delta>0$ sufficiently small. The latter is chosen s.t.~$\mu\lc{}t,\varepsilon\rc\in\vartheta(\xi)$ in each case.\par
Let $\mu\lc{}t,\varepsilon\rc{}:=\mu(t)+\varepsilon\eta(t)$ be such a variation. Lemma \ref{LEM.L2W_Log_Mean_NCDS_Fin_EL} shows

\begin{align}\label{EQ.THM.L2W_Log_Mean_NCDS_Fin_EL_2}
\lgl d_{\mu(t)}\mathfrak{F}^{-1}\lc\eta(t)\rc\lc\sharp\dot{\mu}(t)\rc{},\sharp\dot{\mu}(t)\rgl_{\tau}=-\lgl\sharp\eta(t),\Lambda_{\mu(t)}^{*}\lc\sharp\Theta\lc\mu(t),\dot{\mu}(t)\rc{},\sharp\Theta\lc\mu(t),\dot{\mu}(t)\rc\rc\rgl_{\tau}
\end{align}

\noindent for all $t\in [0,1]$. Note $g^{\xi}\cong\mathfrak{F}^{-1}$ via GNS-inner product of $\tau$ restricted to $A_{\xi}$. Using the latter, we calculate

\begin{align*}
& \restr{0.925}{\frac{d}{d\varepsilon}}{\varepsilon=0}\int_{0}^{1}g_{\mu\lc{}t,\varepsilon\rc{}}^{\xi}\lc\dot{\mu}(t)+\varepsilon\dot{\eta}(t),\dot{\mu}(t)+\varepsilon\dot{\eta}(t)\rc{}dt \phantom{\Bigg)} \\
=& \int_{0}^{1}2g_{\mu(t)}^{\xi}\lc\dot{\mu}(t),\dot{\eta}(t)\rc{}+\lgl\restr{0.925}{\frac{d}{d\varepsilon}}{\varepsilon=0,\|.\|_{A^{*}}}\mathfrak{F}^{-1}\lc\mu\lc{}t,\varepsilon\rc\rc\lc\sharp\dot{\mu}(t)\rc{},\sharp\dot{\mu}(t)\rgl_{\tau}dt \phantom{\Bigg)} \\
=& \int_{0}^{1}2g_{\mu(t)}^{\xi}\lc\dot{\mu}(t),\dot{\eta}(t)\rc{}+\lgl d_{\mu(t)}\mathfrak{F}^{-1}\lc\eta(t)\rc\lc\sharp\dot{\mu}(t)\rc{},\sharp\dot{\mu}(t)\rgl_{\tau}dt. \phantom{\Bigg)}
\end{align*}

We thus apply Equation \ref{EQ.THM.L2W_Log_Mean_NCDS_Fin_EL_2}, symmetry of the real inner product and integration by parts in order to calculate

\begin{align*}
& \restr{0.925}{\frac{d}{d\varepsilon}}{\varepsilon=0,\|.\|_{A^{*}}}\int_{0}^{1}g_{\mu\lc{}t,\varepsilon\rc{}}^{\xi}\lc\dot{\mu}(t)+\varepsilon\dot{\eta}(t),\dot{\mu}(t)+\varepsilon\dot{\eta}(t)\rc{}dt \phantom{\Bigg)} \\
=& \int_{0}^{1}2g_{\mu(t)}^{\xi}\lc\dot{\mu}(t),\dot{\eta}(t)\rc{}-\lgl\Lambda_{\mu(t)}^{*}\lc\sharp\Theta\lc\mu(t),\dot{\mu}(t)\rc{},\sharp\Theta\lc\mu(t),\dot{\mu}(t)\rc\rc{},\sharp\eta(t)\rgl_{\tau}dt \phantom{\Bigg)} \\
=& -\lc\int_{0}^{1}2\lgl\restr{0.925}{\frac{d}{ds}}{s=t,\|.\|_{A^{*}}}\mathfrak{F}_{\mu(s)}^{-1}\lc\sharp\dot{\mu}(s)\rc{},\sharp\eta(t)\rgl_{\tau}+\lgl\Lambda_{\mu(t)}^{*}\lc\sharp\Theta\lc\mu(t),\dot{\mu}(t)\rc{},\sharp\Theta\lc\mu(t),\dot{\mu}(t)\rc\rc{},\sharp\eta(t)\rgl_{\tau}dt\rc. \phantom{\Bigg)}
\end{align*}

\noindent We solve for critical points of the energy functional induced by $g^{\xi}$. Using the formula for first variation of energy \lc{}cf.~proof of Theorem IX.4.3 in \cite{BK.Lan.1995.Riemannian_Manifolds}\rc{}, the above calculation hence shows Equation \ref{EQ.THM.L2W_Log_Mean_NCDS_Fin_EL_1} gives Euler-Lagrange equations.
\end{proof}

%%%%%%%%%%%%%
%%% PART %%%%
%%%%%%%%%%%%%

\subsubsection*{Hessians of quantum relative entropy}

Theorem \ref{THM.L2W_Log_Mean_NCDS_Fin_Hessian} gives two differential equations for Hessians of quantum relative entropy used in Lemma \ref{LEM.L2W_EVI_Equivalence}, i.e.~required for our equivalence theorem. Let $(\phi,\bpsi,\gamma,\nabla)$ be noncommutative differential structure for tracial AF-$C^{*}$-algebras $(A,\tau)$ and $(B,\omega)$ in the finite-dimensional logarithmic mean setting. Let $\xi\in\SII(A)$ be a fixed state. Finite-dimensionality ensures finite support.\par
Note $2.2)$ in Theorem \ref{THM.Wstar_Derivation_QG_HSG_Regularity} and $1)$ in Corollary \ref{COR.L2W_Log_Mean_NCDS} imply smoothness of quantum relative entropy restricted to relative interiors. Proposition \ref{PRP.L2W_Log_Mean_NCDS_Fin_Hessian} expresses Hessians of quantum relative entropy in terms of $\Lambda$-operations.

\begin{ntn}
Let $\Hess\Enttau$ denote the Hessian of $\Enttau$ restricted to $\vartheta(\xi)$. We write $\Hess\hspace{-0.1525cm} \phantom{.}_{\mu}\Enttau(\eta):=\Hess\hspace{-0.1525cm} \phantom{.}_{\mu}\Enttau(\eta,\eta)$ upon evaluation at $\mu\in\vartheta(\xi)$ and $(\eta,\eta)\in T_{\mu}\vartheta(\xi)^{2}$.
\end{ntn}

\begin{prp}\label{PRP.L2W_Log_Mean_NCDS_Fin_Hessian}
For all $\mu\in\vartheta(\xi)$ and $\eta\in I(\Delta_{\xi})^{\flat}$, we have 

\begin{align}\label{EQ.PRP.L2W_Log_Mean_NCDS_Fin_Hessian_1}
\Hess\hspace{-0.1525cm} \phantom{.}_{\mu}\Enttau(\eta)=-\frac{1}{2}\lgl\Lambda_{\mu}^{*}\lc\sharp\Theta(\mu,\eta),\sharp\Theta(\mu,\eta)\rc{},\Delta\sharp\mu\rgl_{\tau}+g_{\mu}^{\xi}\big(\eta,\lc\Delta\sharp\eta\rc^{\flat}\big).
\end{align}
\end{prp}
\begin{proof}
Let $\mu\in\vartheta(\xi)$ and $\eta\in I(\Delta_{\xi})^{\flat}$. Note $\mathfrak{F}_{\mu}=\nabla^{*}\mathcal{M}_{\sharp\mu,\xi}\nabla$ commutes with adjoining. Using $\log\sharp\mu\in A_{\xi}$, Proposition \ref{PRP.L2W_Log_Mean_NCDS} lets us calculate

\begin{align}\label{EQ.PRP.L2W_Log_Mean_NCDS_Fin_Hessian_2}
\tau\lc\sharp\eta\log\sharp\mu\rc{}=\lgl\mathrlap{\phantom{\mathfrak{F}}_{\mu}}\mathfrak{F}^{-1}\lc\sharp\eta\rc{},\nabla^{*}\mathcal{M}_{\sharp\mu,\xi}\nabla\log\sharp\mu\rgl_{\tau}=\lgl\mathrlap{\phantom{\mathfrak{F}}_{\mu}}\mathfrak{F}^{-1}\lc\sharp\eta\rc{},\Delta\sharp\mu\rgl_{\tau}=\tau\lc\mathrlap{\phantom{\mathfrak{F}}_{\mu}}\mathfrak{F}^{-1}\lc\sharp\eta\rc\Delta\sharp\mu\rc{}.
\end{align}

\noindent Using $1)$ in Lemma \ref{LEM.L2W_Log_Mean_NCDS}, Equation \ref{EQ.PRP.L2W_Log_Mean_NCDS_Fin_Hessian_2} implies

\begin{align}\label{EQ.PRP.L2W_Log_Mean_NCDS_Fin_Hessian_3}
\frac{d}{dt}\Enttau\lc\mu(t)\rc{}=\tau\lc\sharp\dot{\mu}(t)\log\sharp\mu(t)\rc{}=\tau\lc\mathfrak{F}_{\mu(t)}^{-1}\lc\sharp\dot{\mu}(t)\rc\Delta\sharp\mu(t)\rc{}
\end{align}

\noindent for all smooth paths $\mu:[a,b]\longrightarrow\vartheta(\xi)$.\par
Let $\mu:[0,1]\longrightarrow\vartheta(\xi)$ be a geodesic s.t.~$\mu=\mu(0)$ and $\dot{\mu}(0)=\eta$. Using the chain rule of Riemannian metrics involving covariant derivatives \cite{BK.Lan.1995.Riemannian_Manifolds}, we argue as \cite{ART.Ott.2005.Classical_OT_GradFlow_DisConvex} to get

\begin{align}\label{EQ.PRP.L2W_Log_Mean_NCDS_Fin_Hessian_4}
\Hess\hspace{-0.1525cm} \phantom{.}_{\mu}\Enttau(\eta) = \restr{0.925}{\frac{d^{2}}{dt^{2}}}{t=0}\Enttau\lc\mu(t)\rc{}.
\end{align}

\noindent Equation \ref{EQ.PRP.L2W_Log_Mean_NCDS_Fin_Hessian_3} and Equation \ref{EQ.PRP.L2W_Log_Mean_NCDS_Fin_Hessian_4} let us calculate

\begin{align}\label{EQ.PRP.L2W_Log_Mean_NCDS_Fin_Hessian_5}
\Hess\hspace{-0.1525cm} \phantom{.}_{\mu}\Enttau(\eta)=\restr{0.925}{\frac{d^{2}}{dt^{2}}}{t=0}\tau\lc\sharp\mu(t)\log\sharp\mu(t)\rc{}=\restr{0.925}{\frac{d}{dt}}{t=0}\tau\lc\mathfrak{F}_{\mu(t)}^{-1}\lc\sharp\dot{\mu}(t)\rc\Delta\sharp\mu(t)\rc{}.
\end{align}

We show Equation \ref{EQ.PRP.L2W_Log_Mean_NCDS_Fin_Hessian_3}. All geodesics are critical points of the energy functional induced by $g^{\xi}$ \cite{BK.Lan.1995.Riemannian_Manifolds}. Using Equation \ref{EQ.PRP.L2W_Log_Mean_NCDS_Fin_Hessian_5} for the first and Theorem \ref{THM.L2W_Log_Mean_NCDS_Fin_EL} for the third identity in our below calculation, we therefore calculate

\begin{align*}
\Hess\hspace{-0.1525cm} \phantom{.}_{\mu}\Enttau(\eta) & = \restr{0.925}{\frac{d}{dt}}{t=0}\tau\lc\mathfrak{F}_{\mu(t)}^{-1}\lc\sharp\dot{\mu}(t)\rc\Delta\sharp\mu(t)\rc{} \phantom{\Bigg)} \\
& = \lgl\restr{0.925}{\frac{d}{dt}}{t=0}\mathfrak{F}_{\mu(t)}^{-1}\lc\sharp\dot{\mu}(t)\rc{},\Delta\sharp\mu\rgl_{\tau}+g_{\mu}^{\xi}\big(\eta,\lc\Delta\sharp\eta\rc^{\flat}\big) \phantom{\Bigg)} \\
& = -\frac{1}{2}\lgl\Lambda_{\mu}^{*}\big(\Theta\big(\mu,x^{\flat}\big),\Theta\big(\mu,x^{\flat}\big)\big),\Delta\mu\rgl_{\tau}+g_{\mu}^{\xi}\big(\eta,\lc\Delta\sharp\eta\rc^{\flat}\big). \phantom{\Bigg)}
\end{align*}

\noindent The above calculation shows Equation \ref{EQ.PRP.L2W_Log_Mean_NCDS_Fin_Hessian_3}.
\end{proof}

\begin{thm}\label{THM.L2W_Log_Mean_NCDS_Fin_Hessian}
Let $(\phi,\bpsi,\gamma,\nabla)$ be noncommutative differential structure for tracial AF-$C^{*}$-algebras $(A,\tau)$ and $(B,\omega)$ in the finite-dimensional logarithmic mean setting. Let\linebreak $\xi\in\SII(A)$ be a fixed state. Let $\varphi_{0},\varphi_{1}:U\longrightarrow (0,\infty)$ be smooth maps for $U\subset (0,\infty)\times (0,\infty)$ open.

\begin{itemize}
\item[1)] Assume $U=(0,\infty)\times (0,\infty)$ and $\varphi:=\varphi_{0}=\varphi_{1}$. Let $\mu:[0,1]\longrightarrow\vartheta(\xi)$ be smooth. Using the latter, we define smooth map $\eta:U\longrightarrow\vartheta(\xi)$ by setting

\begin{align}\label{EQ.THM.L2W_Log_Mean_NCDS_Fin_Hessian_1}
\eta(t,s):=h_{\varphi(t,s)}\lc\mu(t)\rc{}
\end{align}

\begin{reapply}
\end{reapply}

\noindent for all $t,s>0$. We have

\begin{align}\label{EQ.THM.L2W_Log_Mean_NCDS_Fin_Hessian_2}
\frac{1}{2}\frac{\partial}{\partial s}g_{\eta}^{\xi}\lc\frac{\partial}{\partial t}\eta,\frac{\partial}{\partial t}\eta\rc{}+\frac{\partial^{2}}{\partial s\partial t}\varphi\cdot \frac{\partial}{\partial t}\Enttau(\eta)=-\frac{\partial}{\partial s}\varphi\cdot \Hess\hspace{-0.1525cm} \phantom{.}_{\eta}\Enttau\lc\frac{\partial}{\partial t}\eta\rc{}
\end{align}

\begin{reapply}
\end{reapply}

\noindent on $(0,\infty)\times (0,\infty)$.

\item[2)] Assume $\frac{\partial}{\partial s}\varphi_{0}=-\frac{\partial}{\partial s}\varphi_{1}$. Let $\mu\in\vartheta(\xi)$ and $x\in I(\Delta_{\xi})$. Using the latter, we define smooth maps $\eta:U\longrightarrow\vartheta(\xi)$ and $X:U\longrightarrow I(\Delta_{\xi})$ by setting 

\begin{align}\label{EQ.THM.L2W_Log_Mean_NCDS_Fin_Hessian_3}
\eta(t,s):=h_{\varphi_{0}(t,s)}(\mu),\ X(t,s):=h_{\varphi_{1}(t,s)}(x)
\end{align}

\begin{reapply}
\end{reapply}

\noindent for all $(t,s)\in U$. We have

\begin{align}\label{EQ.THM.L2W_Log_Mean_NCDS_Fin_Hessian_4}
\frac{1}{2}\frac{\partial}{\partial s}\dblv{}\mathcal{M}_{\eta}^{\frac{1}{2}}\nabla X\dblv_{\omega}^{2}=-\frac{\partial}{\partial s}\varphi_{1}\cdot \Hess\hspace{-0.1525cm} \phantom{.}_{\eta}\Enttau\lc\mathfrak{F}_{\eta}(X)^{\flat}\rc
\end{align}

\begin{reapply}
\end{reapply}

\noindent on $U$.
\end{itemize}
\end{thm}
\begin{proof}
We show $1)$. Assume its setting. Note $\frac{\partial}{\partial s}\sharp\eta(t,s)=-\frac{\partial}{\partial s}\varphi(t,s)\cdot \Delta\sharp\eta(t,s)$ and further $\frac{\partial^{2}}{\partial s\partial t}\sharp\eta(t,s)=-\frac{\partial^{2}}{\partial s\partial t}\varphi(t,s)\cdot \Delta\sharp\eta(t,s)-\frac{\partial}{\partial s}\varphi(t,s)\cdot \Delta\frac{\partial}{\partial t}\sharp\eta(t,s)$. We calculate

\begin{align*}
\frac{1}{2}\frac{\partial}{\partial s}g_{\eta}^{\xi}\lc\frac{\partial}{\partial t}\eta,\frac{\partial}{\partial t}\eta\rc{}(t,s) =& -\frac{\partial}{\partial s}\varphi(t,s)\cdot \frac{1}{2}\bigg\langle d_{\eta(t,s)}\mathfrak{F}^{-1}\lc\lc\Delta\sharp\eta(t,s)\rc^{\flat}\rc\lc\frac{\partial}{\partial t}\sharp\eta(t,s)\rc{},\frac{\partial}{\partial t}\sharp\eta(t,s)\bigg\rangle_{\tau} \phantom{\Bigg)} \\
& +g_{\eta(t,s)}^{\xi}\lc\frac{\partial^{2}}{\partial s\partial t}\eta(t,s),\frac{\partial}{\partial t}\eta(t,s)\rc \phantom{\Bigg)} \\
=& -\frac{\partial}{\partial s}\varphi(t,s)\cdot \frac{1}{2}\bigg\langle d_{\eta(t,s)}\mathfrak{F}^{-1}\lc\lc\Delta\sharp\eta(t,s)\rc^{\flat}\rc\lc\frac{\partial}{\partial t}\sharp\eta(t,s)\rc{},\frac{\partial}{\partial t}\sharp\eta(t,s)\bigg\rangle_{\tau} \phantom{\Bigg)} \\
& -\frac{\partial^{2}}{\partial s\partial t}\varphi(t,s)\cdot g_{\eta(t,s)}^{\xi}\bigg(\lc\Delta\sharp\eta(t,s)\rc^{\flat},\frac{\partial}{\partial t}\eta(t,s)\bigg) \phantom{\Bigg)} \\
& -\frac{\partial}{\partial s}\varphi(t,s)\cdot g_{\eta(t,s)}^{\xi}\bigg(\big(\Delta\frac{\partial}{\partial t}\sharp\eta(t,s)\big)^{\flat},\frac{\partial}{\partial t}\eta(t,s)\bigg). \phantom{\Bigg)}
\end{align*}

\noindent Using $1)$ in Lemma \ref{LEM.L2W_Log_Mean_NCDS} and symmetry of the real inner product, we calculate

\begin{align*}
\frac{\partial}{\partial t}\Enttau\lc\eta(t,s)\rc{} & = \lgl\frac{\partial}{\partial t}\sharp\eta(t,s),\log\sharp\eta(t,s)\rgl_{\tau} \phantom{\Bigg)} \\
& = g_{\eta(t,s)}\lc\frac{\partial}{\partial t}\eta(t,s),\mathfrak{F}_{\eta(t,s)}\lc\log\sharp\eta(t,s)\rc^{\flat}\rc \phantom{\Bigg)} \\
& = g_{\eta(t,s)}^{\xi}\bigg(\lc\Delta\sharp\eta(t,s)\rc^{\flat},\frac{\partial}{\partial t}\eta(t,s)\bigg). \phantom{\Bigg)}
\end{align*}

\noindent We combine the two calculations above. We obtain

\begin{align*}
\frac{1}{2}\frac{\partial}{\partial s}g_{\eta}^{\xi}\lc\frac{\partial}{\partial t}\eta,\frac{\partial}{\partial t}\eta\rc{}(t,s) =& -\frac{\partial}{\partial s}\varphi(t,s)\cdot \frac{1}{2}\bigg\langle d_{\eta(t,s)}\mathfrak{F}^{-1}\lc\lc\Delta\sharp\eta(t,s)\rc^{\flat}\rc\lc\frac{\partial}{\partial t}\sharp\eta(t,s)\rc{},\frac{\partial}{\partial t}\sharp\eta(t,s)\bigg\rangle_{\tau} \phantom{\Bigg)} \\
& -\frac{\partial^{2}}{\partial s\partial t}\varphi(t,s)\cdot \frac{\partial}{\partial t}\Enttau\lc\eta(t,s)\rc \phantom{\Bigg)} \\
& -\frac{\partial}{\partial s}\varphi(t,s)\cdot g_{\eta(t,s)}^{\xi}\bigg(\big(\Delta\frac{\partial}{\partial t}\sharp\eta(t,s)\big)^{\flat},\frac{\partial}{\partial t}\eta(t,s)\bigg). \phantom{\Bigg)}
\end{align*}

We readily see adding $\frac{\partial^{2}}{\partial s\partial t}\varphi(t,s)\cdot \frac{\partial}{\partial t}\Enttau\lc\eta(t,s)\rc$ to both sides of the above identity shows Equation \ref{EQ.THM.L2W_Log_Mean_NCDS_Fin_Hessian_2} if

\begin{align*}
\Hess\hspace{-0.1525cm} \phantom{.}_{\eta}\Enttau\lc\frac{\partial}{\partial t}\eta\rc{}(t.s) =& \frac{1}{2}\bigg\langle d_{\eta(t,s)}\mathfrak{F}^{-1}\lc\lc\Delta\sharp\eta(t,s)\rc^{\flat}\rc\lc\frac{\partial}{\partial t}\sharp\eta(t,s)\rc{},\frac{\partial}{\partial t}\sharp\eta(t,s)\bigg\rangle_{\tau} \phantom{\Bigg)} \\
& +g_{\eta(t,s)}^{\xi}\bigg(\big(\Delta\frac{\partial}{\partial t}\sharp\eta(t,s)\big)^{\flat},\frac{\partial}{\partial t}\eta(t,s)\bigg) \phantom{\Bigg)}
\end{align*}

\noindent for all $t,s>0$. We show the above identity. Let $t,s>0$. Proposition \ref{PRP.L2W_Log_Mean_NCDS_Fin_Hessian} implies

\begin{align*}
\Hess\hspace{-0.1525cm} \phantom{.}_{\eta}\Enttau\lc\frac{\partial}{\partial t}\eta\rc{}(t.s) =& -\frac{1}{2}\bigg\langle\Lambda_{\eta(t,s)}^{*}\bigg(\sharp\Theta\bigg(\eta(t,s),\frac{\partial}{\partial t}\eta(t,s)\bigg),\sharp\Theta\bigg(\eta(t,s),\frac{\partial}{\partial t}\eta(t,s)\bigg)\bigg),\Delta\sharp\eta(t,s)\bigg\rangle_{\tau} \phantom{\Bigg)} \\
& +g_{\eta(t,s)}^{\xi}\lc\frac{\partial}{\partial t}\eta(t,s),\big(\Delta\frac{\partial}{\partial t}\sharp\eta(t,s)\big)^{\flat}\rc{}. \phantom{\Bigg)}
\end{align*}

\noindent Applying Lemma \ref{LEM.L2W_Log_Mean_NCDS_Fin_EL} to the first term and symmetry of the real inner product to the second one above, we obtain the claimed identity. Thus $\Hess\hspace{-0.1525cm} \phantom{.}_{\eta}\Enttau\big(\frac{\partial}{\partial t}\eta\big)$ is of required form, hence $1)$ holds.\par
We show $2)$. Assume its setting. For all $(t,s)\in U$, set $\mathfrak{F}_{\eta,X}(t,s):=\mathfrak{F}_{\eta(t,s)}\lc{}X(t,s)\rc$. Let $(t,s)\in U$. We have

\begin{align}\label{EQ.THM.L2W_Log_Mean_NCDS_Fin_Hessian_5}
\dblv{}\mathcal{M}_{\eta(t,s)}^{\frac{1}{2}}\nabla X(t,s)\dblv_{\omega}^{2}=\lgl\mathfrak{F}_{\eta,X}(t,s),X(t,s)\rgl_{\tau}.
\end{align}

\noindent Using Equation \ref{EQ.THM.L2W_Log_Mean_NCDS_Fin_Hessian_5}, the Leibniz rule lets us calculate

\begin{align}\label{EQ.THM.L2W_Log_Mean_NCDS_Fin_Hessian_6}
\frac{\partial}{\partial s}\dblv{}\mathcal{M}_{\eta(t,s)}^{\frac{1}{2}}\nabla X(t,s)\dblv_{\omega}^{2}=\lgl\frac{\partial}{\partial s}\mathfrak{F}_{\eta,X}(t,s),X(t,s)\rgl_{\tau}+2\lgl\mathfrak{F}_{\eta(t,s)}\lc\frac{\partial}{\partial s}X(t,s)\rc{},X(t,s)\rgl_{\tau}.
\end{align}

\noindent We therefore calculate the two summands on the right-hand side of Equation \ref{EQ.THM.L2W_Log_Mean_NCDS_Fin_Hessian_6} in order. Note $\frac{\partial}{\partial s}\eta(t,s)=-\frac{\partial}{\partial s}\varphi_{0}(t,s)\cdot \Delta\sharp\eta(t,s)$. Applying Proposition \ref{PRP.Differential_Inverse} to $\mathfrak{F}=\lc\mathfrak{F}^{-1}\rc^{-1}$ and further using $\frac{\partial}{\partial s}\varphi_{0}=-\frac{\partial}{\partial s}\varphi_{1}$, we calculate

\begin{align*}
\lgl\frac{\partial}{\partial s}\mathfrak{F}_{\eta,X}(t,s),X(t,s)\rgl_{\tau} & = -\lgl\frac{\partial}{\partial s}\mathfrak{F}_{\eta(t,s)}^{-1}\lc\mathfrak{F}_{\eta,X}(t,s)\rc{},\mathfrak{F}_{\eta,X}(t,s)\rgl_{\tau} \phantom{\Bigg)} \\
& = \frac{\partial}{\partial s}\varphi_{0}(t,s)\cdot \bigg\langle d_{\eta(t,s)}\mathfrak{F}^{-1}\lc\lc\Delta\sharp\eta(t,s)\rc^{\flat}\rc\lc\mathfrak{F}_{\eta,X}(t,s)\rc{},\mathfrak{F}_{\eta,X}(t,s)\bigg\rangle_{\tau} \phantom{\Bigg)} \\
& = -\frac{\partial}{\partial s}\varphi_{1}(t,s)\cdot \bigg\langle d_{\eta(t,s)}\mathfrak{F}^{-1}\lc\lc\Delta\sharp\eta(t,s)\rc^{\flat}\rc\lc\mathfrak{F}_{\eta,X}(t,s)\rc{},\mathfrak{F}_{\eta,X}(t,s)\bigg\rangle_{\tau}. \phantom{\Bigg)}
\end{align*}

%NEWPAGE
%NEWPAGE
%NEWPAGE

\pagebreak

%NEWPAGE
%NEWPAGE
%NEWPAGE

Using $\frac{\partial}{\partial s}X(t,s)=-\frac{\partial}{\partial s}\varphi_{1}(t,s)\cdot \Delta X(t,s)$, we moreover calculate

\begin{align*}
\lgl\mathfrak{F}_{\eta(t,s)}\lc\frac{\partial}{\partial s}X(t,s)\rc{},X(t,s)\rgl_{\tau} & = -\frac{\partial}{\partial s}\varphi_{1}(t,s)\cdot \lgl\mathfrak{F}_{\eta(t,s)}\lc\Delta X(t,s)\rc{},X(t,s)\rgl_{\tau} \phantom{\Bigg)} \\
& = -\frac{\partial}{\partial s}\varphi_{1}(t,s)\cdot \lgl X(t,s),\Delta\mathfrak{F}_{\eta,X}(t,s)\rgl_{\tau} \phantom{\Bigg)} \\
& = -\frac{\partial}{\partial s}\varphi_{1}(t,s)\cdot g_{\eta(t,s)}^{\xi}\lc\lc\mathfrak{F}_{\eta,X}(t,s)\rc^{\flat},\lc\Delta\mathfrak{F}_{\eta,X}(t,s)\rc^{\flat}\rc{}. \phantom{\Bigg)}
\end{align*}

\noindent We combine the two calculations above with Equation \ref{EQ.THM.L2W_Log_Mean_NCDS_Fin_Hessian_6}. We obtain

\begin{align*}
\frac{1}{2}\frac{\partial}{\partial s}\dblv{}\mathcal{M}_{\eta(t,s)}^{\frac{1}{2}}\nabla X(t,s)\dblv_{\omega}^{2} =& -\frac{\partial}{\partial s}\varphi_{1}(t,s)\Bigg(\frac{1}{2}\bigg\langle d_{\eta(t,s)}\mathfrak{F}^{-1}\lc\lc\Delta\sharp\eta(t,s)\rc^{\flat}\rc\lc\mathfrak{F}_{\eta,X}(t,s)\rc{},\mathfrak{F}_{\eta,X}(t,s)\bigg\rangle_{\tau} \phantom{\Bigg)} \\
& +g_{\eta(t,s)}^{\xi}\lc\lc\mathfrak{F}_{\eta,X}(t,s)\rc^{\flat},\lc\Delta\mathfrak{F}_{\eta,X}(t,s)\rc^{\flat}\rc\Bigg). \phantom{\Bigg)}
\end{align*}

\noindent We see the above identity implies Equation \ref{EQ.THM.L2W_Log_Mean_NCDS_Fin_Hessian_4} if

\begin{align*}
\Hess\hspace{-0.1525cm} \phantom{.}_{\eta(t,s)}\Enttau\lc\lc\mathfrak{F}_{\eta,X}(t,s)\rc^{\flat}\rc{} =& \frac{1}{2}\bigg\langle d_{\eta(t,s)}\mathfrak{F}^{-1}\lc\lc\Delta\sharp\eta(t,s)\rc^{\flat}\rc\lc\mathfrak{F}_{\eta,X}(t,s)\rc{},\mathfrak{F}_{\eta,X}(t,s)\bigg\rangle_{\tau} \phantom{\Bigg)} \\
& +g_{\eta(t,s)}^{\xi}\lc\lc\mathfrak{F}_{\eta,X}(t,s)\rc^{\flat},\lc\Delta\mathfrak{F}_{\eta,X}(t,s)\rc^{\flat}\rc \phantom{\Bigg)}
\end{align*}

\noindent for all $(t,s)\in U$. We show the above identity. We argue as for $1)$ using Proposition \ref{PRP.L2W_Log_Mean_NCDS_Fin_Hessian} and Lemma \ref{LEM.L2W_Log_Mean_NCDS_Fin_EL}. We likewise use symmetry of the real inner product. Thus

\begin{align}\label{EQ.THM.L2W_Log_Mean_NCDS_Fin_Hessian_7}
\Hess\hspace{-0.1525cm} \phantom{.}_{\eta}\Enttau\lc\mathfrak{F}_{\eta}(X)^{\flat}\rc{}=\Hess\hspace{-0.1525cm} \phantom{.}_{\eta}\Enttau\lc\mathfrak{F}_{\eta,X}^{\flat}\rc{}
\end{align}

\noindent is of required form, hence $2)$ holds.
\end{proof}

%%%%%%%%%%%%%%%%%%%
%%% SUBSECTION %%%%
%%%%%%%%%%%%%%%%%%%

\subsection{Quantum noise evolution}\label{SSEC.L2W_Log_Mean_QNE}

We view quantum Laplacians as generators of quantum noise evolution in order to have non-spatiality of lower Ricci bounds and associated energy-information trade-offs. If $\EVI_{\lambda}$-gradient flow of quantum relative entropy exist, then our Corollary \ref{COR.L2W_EVI_Equivalence} shows it is heat flow. Its curves of maximal slope \cite{ART.Mur_Sav.2020.Classical_OT_EVI} determine slopes of maximal entropy production, i.e.~erasure of quantum information. A priori, it is nevertheless unclear how the $\EVI_{\lambda}$-gradient flow property selects noise diffusion terms, i.e.~generators of quantum noise evolution, without their selection being an isolated assumption unrelated to the underlying metric geometry. We require finer model assumptions for a selection process to justify viewing quantum Laplacians as above. To this end, we formulate a maximum entropy production principle as the latter may determine erasure of information \cite{ART.Dew.2003.MaxEnt_Information_I}\cite{ART.Dew.2005.MaxEnt_Information_II}\linebreak\cite{ART.Gri_Lin.2007.MaxEnt_Information_Refutation} motivated by fluctuation-dissipation principles \cite{ART.Alh_Bal_Rei.1986.StM_Information_Geometric}\cite{ART.AlRom_Rem.1996.StM_Information_Foundation}\cite{ART.Bon_Bru_Jur_Kui_Pet_Zup.2010.MaxEnt_LNonEq_Equivalence}\cite{ART.Mar_Sel.2006.MaxEnt_Review} in non-equilibrium classical \cite{BK.Bed_Kje.2008.StM_Non_Equilibrium}\cite{BK.Pri.1967.MaxEnt_Foundational} and quantum statistical mechanics \cite{BK.Ste_vLee.2013.Full_Quantum_StM}.\par

%NEWPAGE
%NEWPAGE
%NEWPAGE

\pagebreak

%NEWPAGE
%NEWPAGE
%NEWPAGE

Up to coarse graining, Lemma \ref{LEM.L2W_Log_Mean_NCDS} implies heat flow is gradient flow of quantum relative entropy. Theorem \ref{THM.L2W_Log_Mean_QNE} shows heat flow further satisfies a steepest entropy ascent property \cite{ART.Ber_Con_Mon.2015.MaxEnt_SEA} by considering the steepest descent property of gradient flows in smooth Riemannian manifolds \cite{BK.Lan.1995.Riemannian_Manifolds} and taking limits. Note Corollary \ref{COR.Rel_Ent_AF_Cstar_Trace} shows production of quantum entropy is erasure of quantum information. We seek conditions s.t.~steepest entropy ascent implies quantum noise evolution. If we are able to do so, then Theorem \ref{THM.L2W_Log_Mean_QNE} obtains slopes of maximal entropy production, i.e.~erasure of quantum information, for sufficiently regular subsets of all bounded normal states. Metric slopes as per Equation \ref{EQ.SSEC.L2W_EVI_Equivalence_1} generalise to larger sets of unbounded normal states. We restrict our maximum entropy production principle to selection of noise diffusion terms in the finite-dimensional setting and assume such selection is stable under scaling limits.\par
Accordingly, our maximum entropy production principle selects from candidates for noise diffusion terms in the finite-dimensional setting. Each candidate is determined by a quantum Fokker-Planck equation with vanishing drift term s.t.~the kernel of the given quantum Laplacian is the solution set for zero. Following Remark \ref{REM.Wstar_CP_Markovian_SG}, generators of induced semi\-groups as per Lemma \ref{LEM.Wstar_CP_Markovian_SG} satisfying a quantum Fokker-Planck equation with vanishing drift term are diffusion terms. These describe purely irreversible time-evolution of dissipative quantum systems weakly coupled to a heat bath \cite{BK.Bra.1987.OpAlg_Quantum_StM_I}\cite{BK.Bra.1987.OpAlg_Quantum_StM_II}\cite{BK.Dav.1976.Quantum_Markov_SG}\linebreak\cite{BK.Gar_Zol.2004.Quantum_Noise}\cite{BK.Ohy_Pet.1993.Rel_Ent}\cite{BK.Ste_vLee.2013.Full_Quantum_StM}. Following Landauer's principle \cite{ART.Lan.1961.Information_Physical_I}\cite{ART.Lan.1961.Information_Physical_II} and its extension to quantum information theory \cite{BK.Cam_Def.2019.Quantum_StM_Information}\cite{ART.DiVi_Loss.1998.Quantum_Information_Physical}, we expect they produce quantum entropy at each state. We show this is the case for candidates but with arbitrary energy scales. If we fix these, then we may formulate our selection rule. Note Corollary \ref{COR.Wstar_CP_Markovian_SG} shows the given quantum Laplacian has vanishing drift term, i.e.~is itself a candidate for noise diffusion terms.\par
We consider four model assumptions. The first three assume the finite-dimensional setting, and the latter is stability under scaling limits. We summarise the first three. First, we assume production of quantum entropy, i.e.~erasure of quantum information, is transport of quantum information along information-bearing degrees of freedom. This amounts to assuming the logarithmic mean setting and our above notion of candidate. Secondly, we select noise diffusion terms from all candidates for arbitrary energy scales by maximising production of quantum entropy under constraints on energy spent. Maximisation constraints are given by suitable evaluation of quantum Fisher information at each state. The latter links the information structure of quantum relative entropy to the energy structure of the given quasi-entropy, i.e.~the underlying metric geometry. Thirdly, we use fixed energy scales normalised relative to the given quantum Laplacian. We obtain normalisation from an equivalent but expected least dissipation of energy principle \cite{ART.Bon_Bru_Jur_Kui_Pet_Zup.2010.MaxEnt_LNonEq_Equivalence}. This ensures unique solutions and avoids implausible ones. \par
Under assumptions as above, our maximum entropy production principle then states self-adjoint local unbounded operators are generators of quantum noise evolution if they restrict to unique solutions in each case. Corollary \ref{COR.L2W_Log_Mean_QNE_MaxEnt} implies these are indeed negatives of quantum Laplacians. Following our discussion of the coarse graining process in Subsection \ref{SSEC.QOT_CG}, Theorem \ref{THM.L2W_Log_Mean_QNE} shows quantum Laplacians satisfy, up to sign, a quantum Fokker-Planck equation with vanishing drift term in scaling limit, i.e.~only noise diffusion term. Of course, the sign occurs since negatives of quantum Laplacians generate noncommutative heat semigroups as per Lemma \ref{LEM.Wstar_CP_Markovian_SG}.

%%%%%%%%%%%%%
%%% PART %%%%
%%%%%%%%%%%%%

\subsubsection*{The maximum entropy production principle}

We motivate our formulation in the finite-dimensional setting by fluctuation-dissipation principles \cite{ART.Alh_Bal_Rei.1986.StM_Information_Geometric}\cite{ART.AlRom_Rem.1996.StM_Information_Foundation}\cite{ART.Bon_Bru_Jur_Kui_Pet_Zup.2010.MaxEnt_LNonEq_Equivalence}\cite{ART.Mar_Sel.2006.MaxEnt_Review} in non-equilibrium classical \cite{BK.Bed_Kje.2008.StM_Non_Equilibrium}\cite{BK.Pri.1967.MaxEnt_Foundational} and quantum statistical mechanics \cite{BK.Ste_vLee.2013.Full_Quantum_StM}. The latter exist in form of both minimum and maximum entropy production principles depending on constraints imposed on the given time-evolution \cite{ART.Alh_Bal_Rei.1986.StM_Information_Geometric}\cite{ART.AlRom_Rem.1996.StM_Information_Foundation}\cite{ART.Bon_Bru_Jur_Kui_Pet_Zup.2010.MaxEnt_LNonEq_Equivalence}. The variational approach in \cite{ART.Rei_Zim.2015.MaxEnt_Production} derives $L^{2}$-Wasserstein gradient flows by considering infinitesimal constraints on energy spent. This extends Onsager's least dissipation of energy principle \cite{ART.Ons.1937.LEn_I}\cite{ART.Ons.1937.LEn_II}. In the setting of linear non-equilibrium thermodynamics \cite{ART.Alh_Bal_Rei.1986.StM_Information_Geometric}\cite{ART.AlRom_Rem.1996.StM_Information_Foundation}\cite{BK.Bed_Kje.2008.StM_Non_Equilibrium}\cite{BK.Pri.1967.MaxEnt_Foundational}, Onsager's least dissipation of energy principle is equivalent to a maximum entropy production principle \cite{ART.Bon_Bru_Jur_Kui_Pet_Zup.2010.MaxEnt_LNonEq_Equivalence}. There exist efforts to give a sensible description of the latter exclusively in terms of information theory \cite{ART.Dew.2003.MaxEnt_Information_I}\cite{ART.Dew.2005.MaxEnt_Information_II}. However, such a description is contested \cite{ART.Gri_Lin.2007.MaxEnt_Information_Refutation}. We still arrive at three formal conditions for a suitable maximum entropy production principle. First, it must consider exclusively infinitesimal data for its maximisation constraints on energy spent. Secondly, it must be equivalent to a least dissipation of energy principle for the given thermodynamics by choice of such constraints. Thirdly, these constraints must be described only in terms of quantum information theory \cite{BK.Nie_Chu.2000.Quantum_Computation_Information}. We show all three formal conditions are satisfied by our maximum entropy production principle.\par
We in fact derive it from an equivalent least dissipation of energy principle. As part of our discussion, we make explicit the first three model assumptions. Equation \ref{EQ.DFN.L2W_Log_Mean_QNE_MaxEnt_General_1} gives maximal production of quantum entropy for candidates of noise diffusion terms as per the first model assumptions. This lets us select noise diffusion terms for arbitrary energy scales as per the second model assumption. Unless we fix energy scales, Proposition \ref{PRP.L2W_Log_Mean_QNE_MaxEnergy} implies we do not have unique solutions. Lemma \ref{LEM.L2W_Log_Mean_QNE_MaxEnt_General}, which assumes Equation \ref{EQ.DFN.L2W_Log_Mean_QNE_MaxEnt_General_1}, leads us to normalised energy scales as per the third model assumption and thereby our least dissipation of energy principle s.t.~heat flow serves as fluctuated gradient flow. Equation \ref{EQ.DFN.L2W_Log_Mean_QNE_MaxEnt_3} gives the latter. Example \ref{BSP.L2W_Log_Mean_QNE_QG_Internal} shows our choice kills implausible solutions in the essential case of depolarising channels \cite{BK.Nie_Chu.2000.Quantum_Computation_Information}. Lemma \ref{LEM.L2W_Log_Mean_QNE_MaxEnt} shows Equation \ref{EQ.COR.L2W_Log_Mean_QNE_MaxEnt_1}, i.e.~Equation \ref{EQ.DFN.L2W_Log_Mean_QNE_MaxEnt_General_1} for normalised energy scales, is derived from Equation \ref{EQ.DFN.L2W_Log_Mean_QNE_MaxEnt_3} in Corollary \ref{COR.L2W_Log_Mean_QNE_MaxEnt}. Equation \ref{EQ.COR.L2W_Log_Mean_QNE_MaxEnt_1} selects noise diffusion terms in the finite-dimensional setting as per our maximum entropy production principle.\par
Let $(\phi,\bpsi,\gamma,\nabla)$ be noncommutative differential structure for tracial AF-$C^{*}$-algebras $(A,\tau)$ and $(B,\omega)$ in the finite-dimensional logarithmic mean setting. Proposition \ref{PRP.L2W_Log_Mean_QNE_GradFlow} shows heat flow is gradient flow of quantum relative entropy. Remark \ref{REM.L2W_Log_Mean_QNE_MaxEnergy} explains Proposition \ref{PRP.L2W_Log_Mean_QNE_MaxEnergy} gives maximisation constraints on energy spent for Equation \ref{EQ.DFN.L2W_Log_Mean_QNE_MaxEnt_General_1}.

\begin{ntn}
Let $\xi\in\SII(A)$ be a fixed state. Let $\grad\Enttau$ denote the gradient of $\Enttau$ restricted to $\vartheta(\xi)$. We write $\grad\hspace{-0.1525cm}\phantom{.}_{\mu}\Enttau$ upon evaluation at $\mu\in\vartheta(\xi)$.
\end{ntn}

\begin{prp}\label{PRP.L2W_Log_Mean_QNE_GradFlow}
Let $\xi\in\SII(A)$ be a fixed state. For all $\mu\in\vartheta(\xi)$, we have

\begin{align}\label{EQ.PRP.L2W_Log_Mean_QNE_GradFlow_1}
-\grad\hspace{-0.1525cm}\phantom{.}_{h_{t}(\mu)}\Enttau=-\lc\Delta\sharp h_{t}(\mu)\rc^{\flat}=\frac{d}{dt}h_{t}(\mu)
\end{align}

\noindent for all $t\geq 0$.
\end{prp}
\begin{proof}
Let $\mu\in\vartheta(\xi)$. Since $\frac{d}{dt}h_{t}(\mu)=-\lc\Delta\sharp h_{t}(\mu)\rc^{\flat}$ for all $t\geq 0$ by construction, we know Equation \ref{EQ.PRP.L2W_Log_Mean_QNE_GradFlow_1} follows if

\begin{align}\label{EQ.PRP.L2W_Log_Mean_QNE_GradFlow_2}
\grad\hspace{-0.1525cm}\phantom{.}_{\eta}\Enttau=\lc\Delta\sharp\eta\rc^{\flat}
\end{align}

\noindent for all $\eta\in\vartheta(\xi)$. We show Equation \ref{EQ.PRP.L2W_Log_Mean_QNE_GradFlow_2}. Let $\eta\in\vartheta(\xi)$ and $u\in T_{\eta}\vartheta(\xi)$. Let $\varepsilon>0$ and $\rho:\lb{}-\varepsilon,\varepsilon\rb\longrightarrow\vartheta(\xi)$ smooth s.t.~$\rho(0)=\eta$ and $\dot{\rho}(0)=u$. We directly verify having admissible path $\lc\rho,\Theta\lc\rho,\dot{\rho}\rc\rc\in\Adm^{\lb{}-\varepsilon,\varepsilon\rb{}}$ satisfying the conditions of $2)$ in Lemma \ref{LEM.L2W_Log_Mean_NCDS}. Using the latter and $\mathfrak{G}_{\eta}=\mathcal{M}_{\sharp\eta,\xi}\nabla$ ensured by Equation \ref{EQ.SSEC.QOT_AC_RM_3}, we calculate

\begin{align*}
\restr{0.925}{\frac{d}{dt}}{t=0}\Enttau\lc\rho(t)\rc{} & = \lgl\mathcal{D}_{\sharp\eta,\xi}\sharp\Theta\lc\rho,\dot{\rho}\rc(0),\nabla\sharp\eta\rgl_{\omega} \phantom{\vstretch{0.915}{\Bigg)}} \\
& = \lgl\mathcal{D}_{\sharp\eta,\xi}\mathcal{M}_{\sharp\eta,\xi}\nabla\mathfrak{F}_{\eta}^{-1}\lc\sharp u\rc{},\nabla\sharp\eta\rgl_{\omega} \phantom{\vstretch{0.915}{\Bigg)}} \\
& = \lgl\nabla\mathfrak{F}_{\eta}^{-1}\lc\sharp u\rc{},\nabla\sharp\eta\rgl_{\omega} \phantom{\vstretch{0.915}{\Bigg)}} \\
& = \lgl\mathfrak{F}_{\eta}^{-1}\lc\sharp u\rc{},\Delta\sharp\eta\rgl_{\tau} \phantom{\vstretch{0.915}{\Bigg)}} \\
& = g_{\eta}^{\xi}\big(u,\lc\Delta\sharp\eta\rc^{\flat}\big). \phantom{\vstretch{0.915}{\Bigg)}}
\end{align*}

\noindent The above calculation implies Equation \ref{EQ.PRP.L2W_Log_Mean_QNE_GradFlow_2} and therefore Equation \ref{EQ.PRP.L2W_Log_Mean_QNE_GradFlow_1}.
\end{proof}

\begin{dfn}\label{DFN.L2W_Log_Mean_QNE_MaxEnergy}
Let $\xi\in\SII(A)$ be a fixed state and $\mu\in\vartheta(\xi)$.

\begin{itemize}
\item[1)] We define $\mathfrak{H}_{\xi,\mu}:A_{\xi,h}\longrightarrow\mathbb{R}$ by setting

\begin{align}\label{EQ.DFN.L2W_Log_Mean_QNE_MaxEnergy_1}
\mathfrak{H}_{\xi,\mu}(x):=g_{\mu}^{\xi}\lc\big(\Delta x\big)^{\flat},\lc\Delta\sharp\mu\rc^{\flat}\rc{}
\end{align}

\begin{reapply}
\end{reapply}

\noindent for all $x\in A_{\xi,h}$.

\item[2)] For all $C\geq 0$, set $\mathfrak{S}_{\xi,\mu}(C):=\bigg\{\hspace{0.025cm} x\in A_{\xi,h}\ \big\vert\ g_{\mu}^{\xi}\lc\big(\Delta x\big)^{\flat},\big(\Delta x\big)^{\flat}\rc{}=C\hspace{0.025cm} \bigg\}$. 
\end{itemize}
\end{dfn}

\begin{rem}\label{REM.L2W_Log_Mean_QNE_MaxEnergy}
Let $\xi\in\SII(A)$ be a fixed state and $\mu\in\vartheta(\xi)$. Using definition of gradient flows \cite{BK.Lan.1995.Riemannian_Manifolds} and following $1)$ in Definition \ref{DFN.L2W_Log_Mean_QNE_MaxEnergy}, Proposition \ref{PRP.L2W_Log_Mean_QNE_GradFlow} shows

\begin{align}\label{EQ.REM.L2W_Log_Mean_QNE_MaxEnergy_1}
-\frac{d}{dt}\Enttau\lc{}h_{t}(\mu)\rc{}=g_{h_{t}(\mu)}^{\xi}\lc\lc\Delta\sharp h_{t}(\mu)\rc^{\flat},\lc\Delta\sharp h_{t}(\mu)\rc^{\flat}\rc{}=\mathfrak{H}_{\xi,h_{t}(\mu)}\lc\sharp h_{t}(\mu)\rc{}
\end{align}

\noindent for all $t\geq 0$. Equation \ref{EQ.REM.L2W_Log_Mean_QNE_MaxEnergy_1} for $t=0$ yields $-\restr{0.925}{\frac{d}{dt}}{t=0}\Enttau\lc{}h_{t}(\mu)\rc{}=\mathfrak{H}_{\xi,\mu}\lc\sharp\mu\rc$ at once. We use this throughout our discussion.
\end{rem}

\begin{prp}\label{PRP.L2W_Log_Mean_QNE_MaxEnergy}
Let $\xi\in\SII(A)$ be a fixed state and $\mu\in\vartheta(\xi)\setminus\lset\xi\rset{}$. Let $C>0$. For all $x\in\mathfrak{S}_{\xi,\mu}(C)$, we have $\mathfrak{H}_{\xi,\mu}(x)=\sup_{y\in\mathfrak{S}_{\xi,\mu}(C)}\mathfrak{H}_{\xi,\mu}(y)$ if and only if

\begin{align}\label{EQ.PRP.L2W_Log_Mean_QNE_MaxEnergy_1}
\Delta x=C^{\frac{1}{2}}\cdot\mathfrak{H}_{\xi,\mu}\lc\sharp\mu\rc^{-\frac{1}{2}}\cdot \Delta\sharp\mu.
\end{align}
\end{prp}
\begin{proof}
We consider Riemannian manifold $(\vartheta(\xi),g^{\xi})$ as per $1)$ in Proposition \ref{PRP.RM_II}. We know $T_{\mu}\vartheta(\xi)=I(\Delta_{\xi})^{\flat}$ by $2)$ in Proposition \ref{PRP.RM_Embedded_Submanifold_II}. Pull-back of $g_{\mu}^{\xi}$ along the flat operator yields real Hilbert space $\lc{}I(\Delta_{\xi}),g_{\mu}^{\xi}\rc$.\par
We have orthogonal decomposition $I(\Delta_{\xi})=\langle\Delta\sharp\mu\rangle_{\mathbb{R}}\oplus\langle\Delta\sharp\mu\rangle_{\mathbb{R}}^{\perp}$. For all $x\in\mathfrak{S}_{\xi,\mu}(C)$, get unique $C_{x}\in\mathbb{R}$ and $r_{x}\in\langle\Delta\sharp\mu\rangle_{\mathbb{R}}^{\perp}$ s.t.~

\begin{align}\label{EQ.PRP.L2W_Log_Mean_QNE_MaxEnergy_2}
\Delta x=C_{x}\cdot \Delta\sharp\mu+r_{x}.
\end{align}

\noindent Using $2)$ in Definition \ref{DFN.L2W_Log_Mean_QNE_MaxEnergy}, Equation \ref{EQ.PRP.L2W_Log_Mean_QNE_MaxEnergy_2} shows

\begin{align}\label{EQ.PRP.L2W_Log_Mean_QNE_MaxEnergy_3}
C=C_{x}^{2}\cdot \mathfrak{H}_{\xi,\mu}\lc\sharp\mu\rc{}+g_{\mu}^{\xi}(r_{x},r_{x})
\end{align}

\noindent for all $x\in\mathfrak{S}_{\xi,\mu}(C)$. Since moreover $r_{x}\in\langle\Delta\sharp\mu\rangle_{\mathbb{R}}^{\perp}$, Equation \ref{EQ.PRP.L2W_Log_Mean_QNE_MaxEnergy_2} further shows

\begin{align}\label{EQ.PRP.L2W_Log_Mean_QNE_MaxEnergy_4}
\mathfrak{H}_{\xi,\mu}(x)=C_{x}\cdot \mathfrak{H}_{\xi,\mu}\lc\sharp\mu\rc{}
\end{align}

\noindent in each case. In addition, note Corollary \ref{COR.QOT_Distance_AC_L2} states $\xi\in\vartheta(\xi)$ is the only fixed state in $\vartheta(\xi)$. Yet $\mu\neq\xi$. Thus Proposition \ref{PRP.L2W_Log_Mean_QNE_GradFlow} implies $\mathfrak{H}_{\xi,\mu}\lc\sharp\mu\rc{}=-\restr{0.925}{\frac{d}{dt}}{t=0}\Enttau\lc{}h_{t}(\mu)\rc{}>0$, hence we see Equation \ref{EQ.PRP.L2W_Log_Mean_QNE_MaxEnergy_3} shows

\begin{align}\label{EQ.PRP.L2W_Log_Mean_QNE_MaxEnergy_5}
\absv{1.15}{C_{x}}=\sqrt{C-g_{\mu}^{\xi}(r_{x},r_{x})}\cdot \mathfrak{H}_{\xi,\mu}\lc\sharp\mu\rc^{-\frac{1}{2}}
\end{align}

\noindent for all $x\in\mathfrak{S}_{\xi,\mu}(C)$ by rearranging terms accordingly. Let $x\in\mathfrak{S}_{\xi,\mu}(C)$. Equation \ref{EQ.PRP.L2W_Log_Mean_QNE_MaxEnergy_4} shows we have $\mathfrak{H}_{\xi,\mu}(x)=\sup_{y\in\mathfrak{S}_{\xi,\mu}(C)}\mathfrak{H}_{\xi,\mu}(y)$ if and only if $C_{x}=\sup_{y\in\mathfrak{S}_{\xi,\mu}(C)}C_{y}$ holds. Up to positive constant, note $\sharp\mu\in\mathfrak{S}_{\xi,\mu}(C)$. We assume $C_{x}\geq 0$ without loss of generality since we are concerned with the supremum. Equation \ref{EQ.PRP.L2W_Log_Mean_QNE_MaxEnergy_5} therefore implies we have $\absv{1.15}{C_{x}}=C_{x}=\sup_{y\in\mathfrak{S}_{\xi,\mu}(C)}C_{y}$ if and only if

\begin{align}\label{EQ.PRP.L2W_Log_Mean_QNE_MaxEnergy_6}
g_{\mu}^{\xi}(r_{x},r_{x})=0.
\end{align}

\noindent Equation \ref{EQ.PRP.L2W_Log_Mean_QNE_MaxEnergy_6} states $r_{x}=0$ by positive definiteness of Riemannian metrics. Using the latter, Equation \ref{EQ.PRP.L2W_Log_Mean_QNE_MaxEnergy_2} and Equation \ref{EQ.PRP.L2W_Log_Mean_QNE_MaxEnergy_5} show the claimed equivalence.
\end{proof}

%NEWPAGE
%NEWPAGE
%NEWPAGE

\pagebreak

%NEWPAGE
%NEWPAGE
%NEWPAGE

We give explicit formulation of the first and second model assumption. For this, we must describe candidates for noise diffusion terms in the finite-dimensional setting and use the extension \cite{BK.Cam_Def.2019.Quantum_StM_Information}\cite{ART.DiVi_Loss.1998.Quantum_Information_Physical} of Landauer's principle \cite{ART.Lan.1961.Information_Physical_I}\cite{ART.Lan.1961.Information_Physical_II} to justify strictly positive production of quantum entropy. Equation \ref{EQ.SSEC.L2W_Log_Mean_QNE_1} shows such candidates are diffusion terms. Using $\ker\Delta$ as the solution set for zero in each case, Equation \ref{EQ.SSEC.L2W_Log_Mean_QNE_5} gives upper bounds on production of quantum entropy under constraints on energy spent. The latter ensure finiteness. Definition \ref{DFN.L2W_Log_Mean_QNE_MaxEnt_General} gives maximal production of quantum entropy as per Equation \ref{EQ.DFN.L2W_Log_Mean_QNE_MaxEnt_General_1} for arbitrary energy scales by maximising Equation \ref{EQ.SSEC.L2W_Log_Mean_QNE_5}.\par
We describe our notion of candidate. We use completely Markovian semigroups on $L^{\infty}(A,\tau)=A$ as per Definition \ref{DFN.Wstar_CP_Markovian_SG}. Let $S\in\BII(A)_{h}$ s.t.~$S\neq 0$ and $\ker S=\ker\Delta$. Assume $S$ has completely Markovian induced semigroup $G^{S}:[0,\infty)\longrightarrow\BII(A)$ given by $G_{t}^{S}=e^{tS}$ for all $t\geq 0$ as per $1)$ in Definition \ref{DFN.Wstar_CP_Markovian_SG_II}. We extend to positivity-preserving semigroup $G^{S}:[0,\infty)\longrightarrow\BII(A^{*})$ s.t.~$G_{t}^{S}(\SII(A))\subset\SII(A)$ for all $t\geq 0$ by dualisation. Self-adjointness implies $S$ is a diffusion term as follows. Corollary \ref{COR.Wstar_CP_Markovian_SG}, which uses Lemma \ref{LEM.Wstar_CP_Markovian_SG} in the finite-dimensional setting, shows there exists Lindblad decomposition $\lc{}0,\varphi,C\rc$ of $S$ as per $2)$ in Definition \ref{DFN.Wstar_CP_Markovian_SG_II}. Following Remark \ref{REM.Wstar_CP_Markovian_SG}, we therefore have a quantum Fokker-Planck equation given by

\begin{align}\label{EQ.SSEC.L2W_Log_Mean_QNE_1}
S(x)=\frac{C}{2}\lc{}2\varphi(x)-\big\{\varphi(1_{A}),x\big\}\rc{}
\end{align}

\noindent for all $x\in A$. Equation \ref{EQ.REM.Wstar_CP_Markovian_SG_1} shows Equation \ref{EQ.SSEC.L2W_Log_Mean_QNE_1} has vanishing drift term. We say that $S$ is a candidate for noise diffusion terms. As we show below, this notion of candidate is part of the first model assumption and leads us to the second one.\par
The first model assumption states production of quantum entropy, i.e.~erasure of quantum information, is transport of quantum information along information-bearing degrees of freedom. This description requires choice of quasi-entropy and measure of quantum information. We use $\mathcal{I}^{\log}$ and $\Enttau$ in our formulation here. Remark \ref{REM.L2W_Log_Mean_QNE_MaxEnt_General} explains our choice of the logarithmic mean setting. For all fixed states $\xi\in\SII(A)$, we replace $\mathcal{I}^{\log}$ with $g^{\xi}$ on $\vartheta(\xi)$ as per Remark \ref{REM.RM}. In Subsection \ref{SSEC.QOT_CG}, we explain non-ergodicity restricts information-bearing degrees of freedom by the continuity equation. Thus $\ker S=\ker\Delta$ restricts, hence $G^{S}:[0,\infty)\times\SII(A)\longrightarrow\SII(A)$ induces finite-energy admissible paths as follows. For all fixed states $\xi\in\SII(A)$, note $T\vartheta(\xi)=\vartheta(\xi)\times I(\Delta_{\xi})^{\flat}$ by $2)$ in Proposition \ref{PRP.RM_Embedded_Submanifold_II} and $\im S\cap A_{\xi,h}=\im\Delta\cap A_{\xi,h}=I(\Delta_{\xi})$ since $\ker S=\ker\Delta$. Using the latter, Corollary \ref{COR.QOT_Distance_AC_L2} then implies $\im S\cap A_{\xi,h}=I(\Delta_{\xi})$ is equivalent to the following statement in the finite-dimensional setting. For all fixed states $\xi\in\SII(A)$, we have

\begin{align}\label{EQ.SSEC.L2W_Log_Mean_QNE_2}
G_{t}^{S}\lc\vartheta(\xi)\rc\subset\vartheta(\xi)
\end{align}

\noindent for all $t\geq 0$. We have $\Delta\vert_{\im\Delta}>0$ in $\BII\lc\im\Delta\rc$ by finite-dimensionality. Equation \ref{EQ.SSEC.L2W_Log_Mean_QNE_2} yields finite-energy admissible paths in relative interiors. The first model assumption is use of noncommutative differential structure and notion of candidate as above.\par

%NEWPAGE
%NEWPAGE
%NEWPAGE

\newpage

%NEWPAGE
%NEWPAGE
%NEWPAGE

The second model assumption states we select noise diffusion terms from all candidates for arbitrary energy scales by maximising production of quantum entropy under constraints on energy spent. This requires candidates produce quantum entropy at each state. Following Remark \ref{REM.Wstar_CP_Markovian_SG}, we view diffusion terms as infinitesimal applications of quantum channels \cite{ART.Bih_Lid_Wha.2001.Quantum_Markov_CPM_Reconstruction}\cite{ART.Cub_Eis_Wol.2012.Quantum_Markov_SG_Reconstruction} transmitting change of states of the given quantum system determined by irreversible interaction with its environment \cite{BK.Nie_Chu.2000.Quantum_Computation_Information}\cite{ART.Kra.1971.State_Changes}. The extension \cite{BK.Cam_Def.2019.Quantum_StM_Information}\cite{ART.DiVi_Loss.1998.Quantum_Information_Physical} of Landauer's principle \cite{ART.Lan.1961.Information_Physical_I}\cite{ART.Lan.1961.Information_Physical_II} gives strictly positive lower bounds on production of quantum entropy upon application of quantum channels due to minimal heat dissipation \cite{ART.Ara_Ber_Cil_Dil_Lut_Pet.2012.Information_Physical_Experimental_Verification}\cite{ART.Burz_Gau_Lui_Mae_vandZan.2018.Quantum_Information_Physical_Experimental_Verification}\cite{ART.Sag_Ueda.2008.Quantum_Information_Physical_From_2nd_Law_ThDy}. Under assumptions identical to those for general Lindblad master equations \lc{}cf.~Equation 5.2.29 in \cite{BK.Gar_Zol.2004.Quantum_Noise}\rc{}, Equation 3.8 in \cite{BK.Cam_Def.2019.Quantum_StM_Information} shows erasure of quantum information implies strictly positive production of quantum entropy.\par
We expect $G^{S}:[0,\infty)\times\SII(A)\longrightarrow\SII(A)$ produces quantum entropy at each state since $S$ is infinitesimal application of $\varphi$. For all fixed states $\xi\in\SII(A)$, Equation \ref{EQ.SSEC.L2W_Log_Mean_QNE_2} and differentiation at $t=0$ yield unique $x_{\mu}\in I(\Delta_{\xi})$ s.t.~

\begin{align}\label{EQ.SSEC.L2W_Log_Mean_QNE_3}
S\lc\sharp\mu\rc{}=-\Delta x_{\mu}
\end{align}

\noindent for all $\mu\in\vartheta(\xi)$. Following Example \ref{BSP.Rel_Ent_Cstar_Fin_I}, Corollary \ref{COR.Rel_Ent_AF_Cstar_Trace} shows quantum entropy is negative quantum relative entropy. We give production of quantum entropy, i.e.~erasure of quantum information, at each state. For all fixed states $\xi\in\vartheta(\xi)$, $2)$ in Lemma \ref{LEM.L2W_Log_Mean_NCDS} as in the proof of Proposition \ref{PRP.L2W_Log_Mean_QNE_GradFlow} and Equation \ref{EQ.SSEC.L2W_Log_Mean_QNE_3} let us calculate

\begin{align}\label{EQ.SSEC.L2W_Log_Mean_QNE_4}
-\restr{0.925}{\frac{d}{dt}}{t=0}\Enttau\big(G_{t}^{S}(\mu)\big)=\tau\lc\Delta x_{\mu}\log\sharp\mu\rc{}=g_{\mu}^{\xi}\lc\lc\Delta x_{\mu}\rc^{\flat},\lc\Delta\sharp\mu\rc^{\flat}\rc{}=\mathfrak{H}_{\xi,\mu}\lc{}x_{\mu}\rc{}
\end{align}

\noindent for all $\mu\in\vartheta(\xi)$. Set $C_{\xi,\mu}:=g_{\mu}^{\xi}\lc\lc\Delta x_{\mu}\rc^{\flat},\lc\Delta x_{\mu}\rc^{\flat}\rc$ in each case. Proposition \ref{PRP.L2W_Log_Mean_QNE_MaxEnergy} shows these are energy scales, varying in each tangent space and which determine strictly positive constants in Equation \ref{EQ.PRP.L2W_Log_Mean_QNE_MaxEnergy_1} for the following maximisation problem. For all fixed states $\xi\in\vartheta(\xi)$, we have $x_{\mu}\in\mathfrak{S}_{\xi,\mu}(C_{\xi,\mu})$ and Equation \ref{EQ.SSEC.L2W_Log_Mean_QNE_4} shows

\begin{align}\label{EQ.SSEC.L2W_Log_Mean_QNE_5}
-\restr{0.925}{\frac{d}{dt}}{t=0}\Enttau\big(G_{t}^{S}(\mu)\big)\leq\sup_{y\in\mathfrak{S}_{\xi,\mu}(C_{\xi,\mu})}\hspace{0.025cm} \mathfrak{H}_{\xi,\mu}(y)
\end{align}

\noindent for all $\mu\in\vartheta(\xi)$. Maximising Equation \ref{EQ.SSEC.L2W_Log_Mean_QNE_5} gives rise to Definition \ref{DFN.L2W_Log_Mean_QNE_MaxEnt_General}, in particular to Equation \ref{EQ.DFN.L2W_Log_Mean_QNE_MaxEnt_General_1}. The second model assumption is selection of noise diffusion terms for arbitrary energy scales from all candidates through maximal production of quantum entropy as per Equation \ref{EQ.DFN.L2W_Log_Mean_QNE_MaxEnt_General_1} by maximising Equation \ref{EQ.SSEC.L2W_Log_Mean_QNE_5}.

\begin{dfn}\label{DFN.L2W_Log_Mean_QNE_MaxEnt_General}
Let $S\in\BII(A)_{h}$ s.t.~$S\neq 0$ and $\ker S=\ker\Delta$. Assume $S$ has completely Markovian induced semigroup $G^{S}:[0,\infty)\longrightarrow\BII(A)$. We say that $S$ produces maximal quantum entropy for $\nabla$ if for all fixed states $\xi\in\SII(A)$ and $\mu\in\vartheta(\xi)$, we have $C\geq 0$ s.t.~

\begin{align}\label{EQ.DFN.L2W_Log_Mean_QNE_MaxEnt_General_1}
-\restr{0.925}{\frac{d}{dt}}{t=0}\Enttau\big(G_{t}^{S}(\mu)\big)=\sup_{y\in\mathfrak{S}_{\xi,\mu}(C)}\hspace{0.025cm} \mathfrak{H}_{\xi,\mu}(y).
\end{align}
\end{dfn}

%NEWPAGE
%NEWPAGE
%NEWPAGE

\pagebreak

%NEWPAGE
%NEWPAGE
%NEWPAGE

\begin{rem}\label{REM.L2W_Log_Mean_QNE_MaxEnt_General}
Note $1)$ in Proposition \ref{PRP.Wstar_Derivation_QG_HSG_II} ensures $-\Delta$ is a candidate. Following Remark \ref{REM.L2W_Log_Mean_QNE_MaxEnergy} for $t=0$, Proposition \ref{PRP.L2W_Log_Mean_QNE_GradFlow} and Proposition \ref{PRP.L2W_Log_Mean_QNE_MaxEnergy} show $-\Delta$ produces maximal quantum entropy using energy scale

\begin{align}\label{EQ.L2W_Log_Mean_QNE_MaxEnt_General_1}
C_{\xi,\mu}=\mathfrak{H}_{\xi,\mu}\lc\sharp\mu\rc^{\frac{1}{2}}
\end{align}

\noindent for all fixed states $\xi\in\SII(A)$ and $\mu\in\vartheta(\xi)$. If we do use both the first and second model assumptions, then $-\Delta$ is a noise diffusion term for energy scale as per Equation \ref{EQ.L2W_Log_Mean_QNE_MaxEnt_General_1}. We expect this but require Proposition \ref{PRP.L2W_Log_Mean_QNE_GradFlow} and Proposition \ref{PRP.L2W_Log_Mean_QNE_MaxEnergy}. Moreover, the two propositions are necessary to derive Equation \ref{EQ.SSEC.L2W_Log_Mean_QNE_5} and therefore Equation \ref{EQ.DFN.L2W_Log_Mean_QNE_MaxEnt_General_1}. This in turn requires us to assume the logarithmic mean setting.
\end{rem}

Example \ref{BSP.L2W_Log_Mean_QNE_QG_Internal} shows selection of noise diffusion terms as per the second model assumption must discern multiples of $-\Delta$. Unless we fix energy scales, Proposition \ref{PRP.L2W_Log_Mean_QNE_MaxEnergy} shows we do not. Lemma \ref{LEM.L2W_Log_Mean_QNE_MaxEnt_General} shows candidates producing maximal quantum entropy are determined by energy maps varying $-\Delta$. This leads us to normalised energy scales as per the third model assumption and thereby our least dissipation of energy principle s.t.~heat flow serves as fluctuated gradient flow.

\begin{lem}\label{LEM.L2W_Log_Mean_QNE_MaxEnt_General}
Let $S\in\BII(A)_{h}$ s.t.~$S\neq 0$ and $\ker S=\ker\Delta$. Assume $S$ has completely Markovian induced semigroup $G^{S}:[0,\infty)\longrightarrow\BII(A)$. If $S$ produces maximal quantum entropy for $\nabla$, then we know there exist two unique maps $E_{S}:\partial\SII(A)\times [0,\infty)\longrightarrow [0,\infty)$ and $\lambda_{S}:\partial\SII(A)\times (0,\infty)\longrightarrow (0,\infty)$ satisfying the following.

\begin{itemize}
\item[1)] The map $\restr{0.925}{E_{S}}{\partial\SII(A)}:\partial\SII(A)\times\lset{}0\rset\longrightarrow (0,\infty)$ is norm continuous.

\item[2)] For all $\mu\in\partial\SII(A)$, the map $E_{S}(\mu,\blank):(0,\infty)\longrightarrow (0,\infty)$ is continuously differentiable and the map $\lambda_{S}:(\mu,\blank):(0,\infty)\longrightarrow (0,\infty)$ is continuous.

\item[3)] For all $\mu\in\partial\SII(A)$, we have

\begin{itemize}
\item[3.1)] $E_{S}(\mu,t)=\big\| S\big(G_{t}^{S}\lc\sharp\mu\rc\big)\big\|_{\tau}\big\|\Delta G_{t}^{S}\lc\sharp\mu\rc\big\|_{\tau}^{-1}$ for all $t>0$, \phantom{\big)}

\item[3.2)] $\restr{0.925}{E_{S}}{\partial\SII(A)}(\mu)=E_{S}(\mu,0)=\lim_{t\downarrow 0}\big\| S\big(G_{t}^{S}\lc\sharp\mu\rc\big)\big\|_{\tau}\big\|\Delta G_{t}^{S}\lc\sharp\mu\rc\big\|_{\tau}^{-1}$. \phantom{\big)}
\end{itemize}

\begin{reapply}
\end{reapply}

\item[4)] For all $\mu\in\partial\SII(A)$, we have

\begin{align}\label{EQ.LEM.L2W_Log_Mean_QNE_MaxEnt_General_1}
S\big(G_{t}^{S}\lc\sharp\mu\rc\big)=-E_{S}(\mu,t)\cdot \Delta G_{t}^{S}\lc\sharp\mu\rc{}
\end{align}

\begin{reapply}
\end{reapply}

\noindent for all $t\geq 0$.

\item[5)] For all $\mu\in\partial\SII(A)$, we have

\begin{align}\label{EQ.LEM.L2W_Log_Mean_QNE_MaxEnt_General_2}
\frac{d}{dt}E_{S}(\mu,t)=\lambda_{S}(\mu,t)\cdot E_{S}(\mu,t)
\end{align}

\begin{reapply}
\end{reapply}

\noindent for all $t>0$.
\end{itemize}
\end{lem}

\begin{proof}
Let $\xi\in\SII(A)$ be a fixed state and $\mu\in\vartheta(\xi)\setminus\lset\xi\rset{}$. Note $\Delta x_{\mu}\neq 0$ and $\mathfrak{H}_{\xi,\mu}\lc\sharp\mu\rc{}>0$ since we have $\sharp\mu\notin\ker S=\ker\Delta$. Equation \ref{EQ.SSEC.L2W_Log_Mean_QNE_5} shows 

\begin{align}\label{EQ.LEM.L2W_Log_Mean_QNE_MaxEnt_General_3}
C_{\xi,\mu}:=g_{\mu}^{\xi}\lc\lc\Delta x_{\mu}\rc^{\flat},\lc\Delta x_{\mu}\rc^{\flat}\rc{}>0   
\end{align}

\noindent is the unique constant in Equation \ref{EQ.DFN.L2W_Log_Mean_QNE_MaxEnt_General_1}. Then Equation \ref{EQ.SSEC.L2W_Log_Mean_QNE_4} and Equation \ref{EQ.DFN.L2W_Log_Mean_QNE_MaxEnt_General_1} let us calculate

\begin{align}\label{EQ.LEM.L2W_Log_Mean_QNE_MaxEnt_General_4}
\mathfrak{H}_{\xi,\mu}\lc{}x_{\mu}\rc{}=-\restr{0.925}{\frac{d}{dt}}{t=0}\Enttau\big(G_{t}^{S}(\mu)\big)=\sup_{y\in\mathfrak{S}_{\xi,\mu}(C_{\xi,\mu})}\hspace{0.025cm} \mathfrak{H}_{\xi,\mu}(y).
\end{align}

\noindent Using Proposition \ref{PRP.L2W_Log_Mean_QNE_MaxEnergy} for the second identity in Equation \ref{EQ.LEM.L2W_Log_Mean_QNE_MaxEnt_General_5} below, Equation \ref{EQ.SSEC.L2W_Log_Mean_QNE_3} and Equation \ref{EQ.LEM.L2W_Log_Mean_QNE_MaxEnt_General_4} show 

\begin{align}\label{EQ.LEM.L2W_Log_Mean_QNE_MaxEnt_General_5}
S\lc\sharp\mu\rc{}=-\Delta x_{\mu}=-C_{\xi,\mu}^{\frac{1}{2}}\cdot \mathfrak{H}_{\xi,\mu}\lc\sharp\mu\rc^{-\frac{1}{2}}\cdot \Delta\sharp\mu.
\end{align}

Equation \ref{EQ.LEM.L2W_Log_Mean_QNE_MaxEnt_General_7} uses constants on the right-hand side of Equation \ref{EQ.LEM.L2W_Log_Mean_QNE_MaxEnt_General_5} in order to define the claimed energy map on $\partial\SII(A)\times (0,\infty)$. Equation \ref{EQ.LEM.L2W_Log_Mean_QNE_MaxEnt_General_9} extends to $t=0$ in the second variable. For all fixed states $\xi\in\SII(A)$, $\mu\in\mathcal{C}_{A}(\xi)\setminus\lset\xi\rset{}$ and $t\geq 0$, we calculate $G_{t}^{S}(\mu)\neq\xi$ and therefore

\begin{align}\label{EQ.LEM.L2W_Log_Mean_QNE_MaxEnt_General_6}
\Delta G_{t}^{S}\lc\sharp\mu\rc\neq 0    
\end{align}

\noindent on an orthonormal eigenbasis of $S$. We define $E_{S}:\partial\SII(A)\times (0,\infty)\longrightarrow (0,\infty)$ by setting

\begin{align}\label{EQ.LEM.L2W_Log_Mean_QNE_MaxEnt_General_7}
E_{S}(\mu,t):=C_{\xi,\mu}^{\frac{1}{2}}\cdot \mathfrak{H}_{\xi,\mu}\big(G_{t}^{S}\lc\sharp\mu\rc\big)^{-\frac{1}{2}}
\end{align}

\noindent for all $\mu\in\partial\SII(A)$ and $t>0$. Equation \ref{EQ.LEM.L2W_Log_Mean_QNE_MaxEnt_General_6} and Equation \ref{EQ.LEM.L2W_Log_Mean_QNE_MaxEnt_General_7} show

\begin{align}\label{EQ.LEM.L2W_Log_Mean_QNE_MaxEnt_General_8}
E_{S}(\mu,t)=\dblv{}S\big(G_{t}^{S}\lc\sharp\mu\rc\big)\dblv_{\tau}\cdot \dblv{}\Delta G_{t}^{S}\lc\sharp\mu\rc\dblv_{\tau}^{-1}
\end{align}

\noindent in each case by taking Hilbert space norms and then the inverses in Equation \ref{EQ.LEM.L2W_Log_Mean_QNE_MaxEnt_General_5}. Using boundedness of $S$ and $\Delta$, Equation \ref{EQ.LEM.L2W_Log_Mean_QNE_MaxEnt_General_6} and Equation \ref{EQ.LEM.L2W_Log_Mean_QNE_MaxEnt_General_8} show we extend to $E_{S}:\partial\SII(A)\times [0,\infty)\longrightarrow (0,\infty)$ by setting

\begin{align}\label{EQ.LEM.L2W_Log_Mean_QNE_MaxEnt_General_9}
E_{S}(\mu,0):=\lim_{t\downarrow 0}\hspace{0.025cm}\dblv{}S\big(G_{t}^{S}\lc\sharp\mu\rc\big)\dblv_{\tau}\dblv{}\Delta G_{t}^{S}\lc\sharp\mu\rc\dblv_{\tau}^{-1}
\end{align}

\noindent for all $\mu\in\partial\SII(A)$. With the exception of $5)$, Equation \ref{EQ.LEM.L2W_Log_Mean_QNE_MaxEnt_General_8} and Equation \ref{EQ.LEM.L2W_Log_Mean_QNE_MaxEnt_General_9} show all claims involving $E_{S}$ here.\par
We show $5)$. Equation \ref{EQ.LEM.L2W_Log_Mean_QNE_MaxEnt_General_13} uses Equation \ref{EQ.LEM.L2W_Log_Mean_QNE_MaxEnt_General_12} in order to define the claimed map. Let $\mu\in\relint\SII(A)$. Using boundedness of $S$ and $\Delta$, as well as norm differentiability as per $1)$ and the Leibniz rule, Equation \ref{EQ.LEM.L2W_Log_Mean_QNE_MaxEnt_General_5} and Equation \ref{EQ.LEM.L2W_Log_Mean_QNE_MaxEnt_General_7} let us calculate

\begin{align*}
\restr{0.925}{\frac{d}{dr}}{r=0}S\big(G_{r}^{S}\lc\sharp\mu\rc\big) & = \restr{0.925}{\frac{d}{dr}}{r=0}G_{r}^{S}\lc{}S\lc\sharp\mu\rc\rc \phantom{\bigg)} \\
& = -E_{S}(\mu,0)\cdot S\lc\Delta\sharp\mu\rc \phantom{\bigg)} \\
& = -E_{S}(\mu,0)\cdot \lc\Delta S\lc\sharp\mu\rc{}+\big[S,\Delta\big]\lc\sharp\mu\rc\rc \phantom{\bigg)} \\
& = -E_{S}(\mu,0)\cdot \lc{}-E_{S}(\mu,0)\Delta^{2}\cdot \sharp\mu+\big[S,\Delta\big]\lc\sharp\mu\rc\rc \phantom{\bigg)} \\
& = E_{S}(\mu,0)^{2}\cdot \Delta^{2}\sharp\mu-E_{S}(\mu,0)\cdot \big[S,\Delta\big]\lc\sharp\mu\rc{}, \phantom{\bigg)} \\
& \\
\restr{0.925}{\frac{d}{dr}}{r=0}S\big(G_{r}^{S}\lc\sharp\mu\rc\big) & = -\restr{0.925}{\frac{d}{dr}}{r=0}E_{S}(\mu,r)\cdot \Delta G_{r}^{S}\lc\sharp\mu\rc \phantom{\Bigg)} \\
& = -\restr{0.925}{\frac{d}{dr}}{r=0}E_{S}(\mu,r)\cdot \Delta\sharp\mu-E_{S}(\mu,0)\cdot \Delta\restr{0.925}{\frac{d}{dr}}{r=0}G_{r}^{S}\lc\sharp\mu\rc \phantom{\Bigg)} \\
& = -\restr{0.925}{\frac{d}{dr}}{r=0}E_{S}(\mu,r)\cdot \Delta\sharp\mu-E_{S}(\mu,0)\cdot \Delta S\lc\sharp\mu\rc \phantom{\Bigg)} \\
& = -\restr{0.925}{\frac{d}{dr}}{r=0}E_{S}(\mu,r)\cdot \Delta\sharp\mu+E_{S}(\mu,0)^{2}\Delta^{2}\cdot \sharp\mu. \phantom{\Bigg)}
\end{align*}

\noindent We combine the two calculations above. We obtain

\begin{align}\label{EQ.LEM.L2W_Log_Mean_QNE_MaxEnt_General_10}
E_{S}(\mu,0)\cdot \big[S,\Delta\big]\lc\sharp\mu\rc\phantom{\bigg)}=\restr{0.925}{\frac{d}{dr}}{r=0}E_{S}(\mu,r)\cdot \Delta\sharp\mu.
\end{align}

\noindent For all $t\geq 0$, Equation \ref{EQ.LEM.L2W_Log_Mean_QNE_MaxEnt_General_10} shows $E_{S}(\mu,t)=E_{S}\lc{}G_{t}^{S}(\mu),0\rc$. Using the latter together with the semigroup property of $G^{S}:[0,\infty)\longrightarrow\BII(A)$, Equation \ref{EQ.LEM.L2W_Log_Mean_QNE_MaxEnt_General_10} generalises to

\begin{align}\label{EQ.LEM.L2W_Log_Mean_QNE_MaxEnt_General_11}
E_{S}(\mu,t)\cdot \big[S,\Delta\big]\big(G_{t}^{S}\lc\sharp\mu\rc\big)\phantom{\bigg)}=\restr{0.925}{\frac{d}{dr}}{r=t}E_{S}(\mu,r)\cdot \Delta G_{t}^{S}\lc\sharp\mu\rc{}
\end{align}

\noindent in each case. Equation \ref{EQ.LEM.L2W_Log_Mean_QNE_MaxEnt_General_8} therefore shows we have

\begin{align}\label{EQ.LEM.L2W_Log_Mean_QNE_MaxEnt_General_12}
\big[S,\Delta\big]\big(G_{t}^{S}\lc\sharp\mu\rc\big)\phantom{\bigg)}=E_{S}(\mu,t)^{-1}\cdot \restr{0.925}{\frac{d}{dr}}{r=t}E_{S}(\mu,r)\cdot \Delta G_{t}^{S}\lc\sharp\mu\rc{}
\end{align}

\noindent for all $t>0$ by taking the inverses in Equation \ref{EQ.LEM.L2W_Log_Mean_QNE_MaxEnt_General_11}.\par

%NEWPAGE
%NEWPAGE
%NEWPAGE

\pagebreak

%NEWPAGE
%NEWPAGE
%NEWPAGE

Equation \ref{EQ.LEM.L2W_Log_Mean_QNE_MaxEnt_General_12} shows we define $\lambda_{S}:\partial\SII(A)\times (0,\infty)\longrightarrow (0,\infty)$ by setting

\begin{align}\label{EQ.LEM.L2W_Log_Mean_QNE_MaxEnt_General_13}
\lambda_{S}(\mu,t):=E_{S}(\mu,t)^{-1}\cdot \restr{0.925}{\frac{d}{dr}}{r=t}E_{S}(\mu,r)
\end{align}

\noindent for all $\mu\in\partial\SII(A)$ and $t>0$. Equation \ref{EQ.LEM.L2W_Log_Mean_QNE_MaxEnt_General_6} and Equation \ref{EQ.LEM.L2W_Log_Mean_QNE_MaxEnt_General_12} show continuity in the second variable. Altogether, get $1)$ to $4)$. Equation \ref{EQ.LEM.L2W_Log_Mean_QNE_MaxEnt_General_12} and Equation \ref{EQ.LEM.L2W_Log_Mean_QNE_MaxEnt_General_13} yield

\begin{align}\label{EQ.LEM.L2W_Log_Mean_QNE_MaxEnt_General_14}
\lc\lambda_{S}(\mu,t)\cdot E_{S}(\mu,t)-\restr{0.925}{\frac{d}{dr}}{r=t}E_{S}(\mu,r)\rc\cdot \Delta G_{t}^{S}\lc\sharp\mu\rc{}=0
\end{align}

\noindent in each case. Equation \ref{EQ.LEM.L2W_Log_Mean_QNE_MaxEnt_General_6} and Equation \ref{EQ.LEM.L2W_Log_Mean_QNE_MaxEnt_General_14} imply Equation \ref{EQ.LEM.L2W_Log_Mean_QNE_MaxEnt_General_2}. Get $5)$.
\end{proof}

We give explicit formulation of the third model assumption. For this, we use our least dissipation of energy principle. Lemma \ref{LEM.L2W_Log_Mean_QNE_MaxEnt_General} lets us construct infinitesimal energy dis\-sipation maps as per Equation \ref{EQ.SSEC.L2W_Log_Mean_QNE_10}, resp.~its reformulation as Equation \ref{EQ.DFN.L2W_Log_Mean_QNE_MaxEnt_2}. We use Equation \ref{EQ.DFN.L2W_Log_Mean_QNE_MaxEnt_2} as measure of energy dissipation when evolving induced semigroups to heat flow through dissipating fluctuations of its integral curves. Definition \ref{DFN.L2W_Log_Mean_QNE_MaxEnt} gives least dissipation of energy as per Equation \ref{EQ.DFN.L2W_Log_Mean_QNE_MaxEnt_3} s.t.~heat flow serves as fluctuated gradient flow by minimising Equation \ref{EQ.DFN.L2W_Log_Mean_QNE_MaxEnt_2}. Accordingly, $3)$ in Definition \ref{DFN.L2W_Log_Mean_QNE_MaxEnt} gives candidates for noise diffusion terms with normal energy scale, i.e.~candidates satisfying Equation \ref{EQ.SSEC.L2W_Log_Mean_QNE_11}. The latter equation normalises energy scales relative to $-\Delta$.\par
We derive Equation \ref{EQ.SSEC.L2W_Log_Mean_QNE_10} and Equation \ref{EQ.SSEC.L2W_Log_Mean_QNE_11}. Let $S\in\BII(A)_{h}$ as per Lemma \ref{LEM.L2W_Log_Mean_QNE_MaxEnt_General}. For all $\mu\in\partial\SII(A)$, Equation \ref{EQ.LEM.L2W_Log_Mean_QNE_MaxEnt_General_2} readily shows $E_{S}(\mu,\blank):[0,\infty)\longrightarrow [0,\infty)$ satisfies a homogeneous linear differential equation with $3.2)$ in Lemma \ref{LEM.L2W_Log_Mean_QNE_MaxEnt_General} as its initial value at $t=0$ using standard arguments for extension \cite{BK.Eng_Nag.2000.Semigroups}\cite{BK.Koe.1993.Analysis_II}\cite{BK.Koe.2004.Analysis_I}. We therefore obtain

\begin{align}\label{EQ.SSEC.L2W_Log_Mean_QNE_6}
E_{S}(\mu,t)=\exp\lc\int_{0}^{t}\lambda_{S}(\mu,r)dr\rc\cdot E_{S}(\mu,0)
\end{align}

\noindent for all $\mu\in\partial\SII(A)$ and $t\geq 0$. Lemma \ref{LEM.L2W_Log_Mean_QNE_MaxEnt_General} ensures

\begin{align}\label{EQ.SSEC.L2W_Log_Mean_QNE_7}
\exp\lc\int_{0}^{t}\lambda_{S}(\mu,r)dr\rc\geq 1
\end{align}

\noindent since $\int_{0}^{t}\lambda_{S}(\mu,r)dr\geq 0$ in each case. Note $2.2)$ in Theorem \ref{THM.Wstar_Derivation_QG_HSG_Regularity} and Corollary \ref{COR.L2W_Log_Mean_NCDS} show $h:[0,\infty)\times\partial\SII(A)\longrightarrow\SII(A)$ is a norm continuous injective map s.t.~fixed states are the only elements not in its image. Moreover, Corollary \ref{COR.QOT_Distance_AC_L2} ensures all fixed states are limits in time of initial states in $\partial\SII(A)$. Thus $\partial\SII(A)\times [0,\infty)$ is a complete product space description of heat flow, hence we adopt it to measure infinitesimal energy dissipation when evolving the Hamiltonian of a given quantum system with initial state $\mu\in\partial\SII(A)$ from $S$ to $-\Delta$ at time $t\geq 0$. We formally view such evolutions as arising from dissipating small time-varying out-of-equilibrium perturbations of $-\Delta$, i.e.~fluctuations of integral curves $t\mapsto h_{t}(\mu)$ describing evolution of temperature over time \cite{BK.Bed_Kje.2008.StM_Non_Equilibrium}\cite{BK.Pri.1967.MaxEnt_Foundational}\cite{BK.Ste_vLee.2013.Full_Quantum_StM}.\par
We construct a suitable pointwise direct sum norm. Equation \ref{EQ.SSEC.L2W_Log_Mean_QNE_6} itself leads us to consider an energy gradient of $S$ given at $(\mu,t)\in\partial\SII(A)\times [0,\infty)$ by

\begin{align}\label{EQ.SSEC.L2W_Log_Mean_QNE_8}
\exp\lc\int_{0}^{t}\lambda_{S}(\mu,r)dr\rc\cdot \lc\inf_{\mu\in\partial\SII(A)}\hspace{0.025cm} E_{S}(\mu,0)\ -\sup_{\mu\in\partial\SII(A)}\hspace{0.025cm} E_{S}(\mu,0)\rc{}
\end{align}

\noindent for all $\mu\in\partial\SII(A)$ and $t\geq 0$. Equation \ref{EQ.SSEC.L2W_Log_Mean_QNE_8} is composed into two factors. The right-hand factor is the energy gradient of $S$ at $t=0$, or initial energy gradient of $S$ as per $2.1)$ in Definition \ref{DFN.L2W_Log_Mean_QNE_MaxEnt}. The left-hand factor is an exponential fluctuation term. If the initial energy gradient of $S$ is zero, then Equation \ref{EQ.SSEC.L2W_Log_Mean_QNE_6} and Equation \ref{EQ.SSEC.L2W_Log_Mean_QNE_7} imply variation of $G^{S}:[0,\infty)\times\SII(A)\longrightarrow\SII(A)$ away from heat flow is determined by the exponential fluctuation term up to homogeneous initial energy

\begin{align}\label{EQ.SSEC.L2W_Log_Mean_QNE_9}
e_{S}:= \inf_{\mu\in\partial\SII(A)}\hspace{0.025cm} E_{S}(\mu,0)\ =\sup_{\mu\in\partial\SII(A)}\hspace{0.025cm} E_{S}(\mu,0)
\end{align}

\noindent relative to $-\Delta$. If the exponential fluctuation term equals one, then Equation \ref{EQ.SSEC.L2W_Log_Mean_QNE_6} shows such variation is instead determined by initial states. We consequently measure infinitesimal energy dissipation when evolving $S$ to $-\Delta$ at initial state $\mu\in\partial\SII(A)$ and time $t\geq 0$ using the pointwise direct sum norm

\begin{align}\label{EQ.SSEC.L2W_Log_Mean_QNE_10}
\sqrt{\bbbabsv{1}{\exp\lc\int_{0}^{t}\lambda_{S}(\mu,r)dr\rc{}-1}^{2}+\bbbabsv{1}{\inf_{\mu\in\partial\SII(A)}\hspace{0.025cm} E_{S}(\mu,0)\ -\sup_{\mu\in\partial\SII(A)}\hspace{0.025cm} E_{S}(\mu,0)}^{2}}.
\end{align}

Equation \ref{EQ.SSEC.L2W_Log_Mean_QNE_7} shows Equation \ref{EQ.SSEC.L2W_Log_Mean_QNE_10} has zero as its minimum. Unless we restrict values of homogeneous initial energy as per Equation \ref{EQ.SSEC.L2W_Log_Mean_QNE_9}, Equation \ref{EQ.SSEC.L2W_Log_Mean_QNE_6} implies minimisers are given by $-C\Delta$ for all $C>0$. We expect this from Proposition \ref{PRP.L2W_Log_Mean_QNE_MaxEnergy} but instead due to energy scales varying away from Equation \ref{EQ.L2W_Log_Mean_QNE_MaxEnt_General_1}, i.e.~the energy scale of $-\Delta$, rather than from $e_{-\Delta}=1$. Note $3)$ in Lemma \ref{LEM.L2W_Log_Mean_QNE_MaxEnt_General} shows the latter. We therefore normalise energy scales relative to $-\Delta$ by letting

\begin{align}\label{EQ.SSEC.L2W_Log_Mean_QNE_11}
\inf_{\mu\in\partial\SII(A)}\hspace{0.025cm} E_{S}(\mu,0)\leq 1\leq\sup_{\mu\in\partial\SII(A)}\hspace{0.025cm} E_{S}(\mu,0).
\end{align}

\noindent Equation \ref{EQ.SSEC.L2W_Log_Mean_QNE_11} shows $-\Delta$ is the unique minimiser of Equation \ref{EQ.SSEC.L2W_Log_Mean_QNE_10}, i.e.~we have zero variation if and only if $E_{S}(\mu,t)=E_{-\Delta}(\mu,t)=1$ for all $\mu\in\partial\SII(A)$ and $t\geq 0$. The third model assumption is use of fixed energy scales normalised as per Equation \ref{EQ.SSEC.L2W_Log_Mean_QNE_11}.\par

%NEWPAGE
%NEWPAGE
%NEWPAGE

\pagebreak

%NEWPAGE
%NEWPAGE
%NEWPAGE

\begin{dfn}\label{DFN.L2W_Log_Mean_QNE_MaxEnt}
Let $S\in\BII(A)_{h}$ s.t.~$S\neq 0$ and $\ker S=\ker\Delta$. Assume $S$ has completely Markovian induced semigroup $G^{S}:[0,\infty)\longrightarrow\BII(A)$ and produces maximal quantum entropy for $\nabla$.

\begin{itemize}
\item[1)]  We call $E_{S}:\partial\SII(A)\times [0,\infty)\longrightarrow (0,\infty)$ the energy map of $S$. We further say that $\lambda_{S}:\partial\SII(A)\times (0,\infty)\longrightarrow (0,\infty)$ is its fluctuation.

\item[2)] Set $E_{S}^{\min}:=\inf_{\mu\in\partial\SII(A)}E_{S}(\mu,0)$ and $E_{S}^{\max}:=\sup_{\mu\in\partial\SII(A)}E_{S}(\mu,0)$.

\begin{itemize}
\item[2.1)] We define the initial energy gradient $\grad_{S}:=E_{S}^{\min}-E_{S}^{\max}$ of $S$. We define its variance $\var_{S}:\partial\SII(A)\times [0,\infty)\longrightarrow [0,\infty)$ by setting

\begin{align}\label{EQ.DFN.L2W_Log_Mean_QNE_MaxEnt_1}
\textrm{var}_{S}(\mu,t):=\exp\lc\int_{0}^{t}\lambda_{S}(\mu,r)dr\rc{}-1
\end{align}

\begin{reapply}
\end{reapply}

\noindent for all $\mu\in\partial\SII(A)$ and $t\geq 0$.

\item[2.2)] We define infinitesimal energy dissipation map $E_{S}^{\textrm{dis}}:\partial\SII(A)\times [0,\infty)\longrightarrow [0,\infty)$ of $S$ by setting

\begin{align}\label{EQ.DFN.L2W_Log_Mean_QNE_MaxEnt_2}
E_{S}^{\textrm{dis}}(\mu,t):=\sqrt{\babsv{1}{\textrm{var}_{S}(\mu,t)}^{2}+\babsv{1}{\textrm{grad}_{S}}^{2}}
\end{align}

\begin{reapply}
\end{reapply}

\noindent for all $\mu\in\partial\SII(A)$ and $t\geq 0$.
\end{itemize}

\begin{reapply}
\end{reapply}

\item[3)] We say that $S$ is a candidate for generating quantum noise evolution for $\nabla$ with normal energy scale if $E_{S}^{\min}\leq 1\leq E_{S}^{\max}$. We further say that $S$ is the generator of quantum noise evolution for $\nabla$ if

\begin{align}\label{EQ.DFN.L2W_Log_Mean_QNE_MaxEnt_3}
E_{S}^{\textrm{dis}}(\mu,t)=0
\end{align}

\begin{reapply}
\end{reapply}

\noindent for all $\mu\in\partial\SII(A)$ and $t\geq 0$.
\end{itemize}
\end{dfn}

Lemma \ref{LEM.L2W_Log_Mean_QNE_MaxEnt} gives equivalent conditions for minimising Equation \ref{EQ.DFN.L2W_Log_Mean_QNE_MaxEnt_2}. Note $4)$ in the lemma shows $-\Delta$ is the unique minimiser. Moreover, Corollary \ref{COR.L2W_Log_Mean_QNE_MaxEnt} gives maximal production of quantum entropy as per Equation \ref{EQ.COR.L2W_Log_Mean_QNE_MaxEnt_1} for normalised energy scales by maximising Equation \ref{EQ.SSEC.L2W_Log_Mean_QNE_5}. This gives our maximum entropy production principle in the finite-dimensional setting. Lemma \ref{LEM.L2W_Log_Mean_QNE_MaxEnt} ensures we do select $-\Delta$ as claimed. Max\-imisation constraints on energy spent in Equation \ref{EQ.COR.L2W_Log_Mean_QNE_MaxEnt_1} are indeed given by suitable evaluation of quantum Fisher information as per Definition \ref{DFN.QF_AF} at each state.\par

%NEWPAGE
%NEWPAGE
%NEWPAGE

\pagebreak

%NEWPAGE
%NEWPAGE
%NEWPAGE

\begin{lem}\label{LEM.L2W_Log_Mean_QNE_MaxEnt}
For all $S\in\BII(A)_{h}$ which are candidates for generating quantum noise evolution for $\nabla$, the following are equivalent:

\begin{itemize}
\item[1)] $S$ is the generator of quantum noise evolution for $\nabla$,

\item[2)] $\grad_{S}=0$ and $\var_{S}(\mu,t)=0$ for all $\mu\in\partial\SII(A)$ and $t\geq 0$,

\item[3)] $E_{S}(\mu,t)=1$ for all $\mu\in\partial\SII(A)$ and $t\geq 0$,

\item[4)] $S=-\Delta$.
\end{itemize}
\end{lem}
\begin{proof}
Let $\mathbf{S}$ be the set of all candidates for generating quantum noise evolution for $\nabla$. For all $S\in\mathbf{S}$, Equation \ref{EQ.SSEC.L2W_Log_Mean_QNE_6} and Equation \ref{EQ.DFN.L2W_Log_Mean_QNE_MaxEnt_3} show $S$ is the generator of quantum noise evolution for $\nabla$ if and only if

\begin{align}\label{EQ.LEM.L2W_Log_Mean_QNE_MaxEnt_1}
E_{S}^{\textrm{dis}}(\mu,t)=\inf_{S'\in\mathbf{S}}\hspace{0.025cm} E_{S'}^{\textrm{dis}}(\mu,t)=0=E_{-\Delta}^{\textrm{dis}}(\mu,t)
\end{align}

\noindent for all $\mu\in\partial\SII(A)$ and $t\geq 0$. Following our discussion of Equation \ref{EQ.SSEC.L2W_Log_Mean_QNE_10}, we know $-\Delta$ is the unique minimiser in our case. Using the latter, get $1)$ to $4)$.
\end{proof}

\begin{cor}\label{COR.L2W_Log_Mean_QNE_MaxEnt}
Let $S\in\BII(A)_{h}$ be a candidate for generating quantum noise evolution for $\nabla$. Then $S$ is the generator of quantum noise evolution for $\nabla$ if and only if for all fixed states $\xi\in\SII(A)$, we have

\begin{align}\label{EQ.COR.L2W_Log_Mean_QNE_MaxEnt_1}
-\restr{0.925}{\frac{d}{dt}}{t=0}\Enttau\big(G_{t}^{S}(\mu)\big)=\sup_{y\in\mathfrak{S}_{\xi,\mu}\hspace{0.025cm} (\mathfrak{H}_{\xi,\mu}(\sharp\mu))}\mathfrak{H}_{\xi,\mu}(y)=\sup_{y\in\mathfrak{S}_{\xi,\mu}(\mathcal{I}^{\log}(\mu,\mu,(\nabla\sharp\mu)^{\flat}))}\hspace{0.025cm} \mathfrak{H}_{\xi,\mu}(y)
\end{align}

\noindent for all $\mu\in\vartheta(\xi)$.
\end{cor}
\begin{proof}
Let $\xi\in\SII(A)$ be a fixed state and $\mu\in\vartheta(\xi)$. Proposition \ref{PRP.L2W_Log_Mean_NCDS} shows $\sharp\Theta\big(\mu,\lc\Delta\sharp\mu\rc^{\flat}\big)=\mathfrak{G}_{\mu}\mathrlap{\phantom{\mathfrak{F}}_{\mu}}\mathfrak{F}^{-1}\lc\mathfrak{F}_{\mu}\lc\log\sharp\mu\rc\rc{}=\nabla\sharp\mu$ by twice application. Using the latter, $2)$ in Proposition \ref{PRP.RM_II} lets us calculate

\begin{align}\label{EQ.COR.L2W_Log_Mean_QNE_MaxEnt_2}
\mathfrak{H}_{\xi,\mu}\lc\sharp\mu\rc{}=g_{\mu}^{\xi}\lc\lc\Delta\sharp\mu\rc^{\flat},\lc\Delta\sharp\mu\rc^{\flat}\rc{}=\mathcal{I}^{\log}\lc\mu,\mu,\Theta\big(\mu,\lc\Delta\sharp\mu\rc^{\flat}\big)\rc{}=\mathcal{I}^{\log}\lc\mu,\mu,\lc\nabla\sharp\mu\rc^{\flat}\rc{}.
\end{align}

\noindent Equation \ref{EQ.COR.L2W_Log_Mean_QNE_MaxEnt_2} shows the second identity in Equation \ref{EQ.COR.L2W_Log_Mean_QNE_MaxEnt_1}. Lemma \ref{LEM.L2W_Log_Mean_QNE_MaxEnt} shows $S$ is the generator of quantum noise evolution for $\nabla$ if and only if $S=-\Delta$. Proposition \ref{PRP.L2W_Log_Mean_QNE_MaxEnergy} thus implies the first identity in Equation \ref{EQ.COR.L2W_Log_Mean_QNE_MaxEnt_1}.
\end{proof}

%%%%%%%%%%%%%
%%% PART %%%%
%%%%%%%%%%%%%

\subsubsection*{Generators of quantum noise evolution}

Definition \ref{DFN.L2W_Log_Mean_QNE} gives our maximum entropy production principle. The fourth model assumption is locality therein. Following our discussion of the coarse graining process in Subsection \ref{SSEC.QOT_CG}, we justify locality as a natural complement to the first model assumption since Theorem \ref{THM.QOT_Minimiser_Approximation} lets us describe quantum optimal transport itself as scaling limit w.r.t.~the coarse graining process.\par

%NEWPAGE
%NEWPAGE
%NEWPAGE

\pagebreak

%NEWPAGE
%NEWPAGE
%NEWPAGE

We therefore view quantum Laplacians as generators of quantum noise evolution as per $1)$ in Theorem \ref{THM.L2W_Log_Mean_QNE}. Fittingly, $2)$ in Theorem \ref{THM.L2W_Log_Mean_QNE} shows quantum Laplacians satisfy, up to sign, a quantum Fokker-Planck equation with vanishing drift term in scaling limit, i.e.~only noise diffusion term. Thus $3)$ in Theorem \ref{THM.L2W_Log_Mean_QNE} shows heat flow satisfies a steepest entropy ascent property \cite{ART.Ber_Con_Mon.2015.MaxEnt_SEA} by considering the steepest descent property of gradient flows in smooth Riemannian manifolds \cite{BK.Lan.1995.Riemannian_Manifolds} as per Equation \ref{EQ.REM.L2W_Log_Mean_QNE_MaxEnergy_1} and taking limits. We thereby obtain slopes of maximal entropy production, i.e.~erasure of quantum information, as per Equation \ref{EQ.THM.L2W_Log_Mean_QNE_2} for the given subsets of all bounded normal states. If heat flow is $\EVI_{\lambda}$-gradient flow of quantum relative entropy, then Equation \ref{EQ.THM.L2W_Log_Mean_QNE_2} generalises to metric slopes as per Equation \ref{EQ.SSEC.L2W_EVI_Equivalence_1} for all normal states with finitely supported fixed part and finite quantum relative entropy. Note Remark \ref{REM.L2W_Log_Mean_QNE}.\par
Let $(\phi,\bpsi,\gamma,\nabla)$ be noncommutative differential structure for tracial AF-$C^{*}$-algebras $(A,\tau)$ and $(B,\omega)$ in the logarithmic mean setting.
    
\begin{dfn}\label{DFN.L2W_Log_Mean_QNE}
Let $S\in\UBII\lc{}L^{2}(A,\tau)\rc_{h}$ be local. We say that $S$ is the generator of quantum noise evolution for $\nabla$ if for all $j\in\mathbb{N}$, $S_{j}\in\BII(A_{j})_{h}$ is the generator of quantum noise evolution for $\nabla_{\hspace{-0.055cm} j}$.
\end{dfn}

\begin{thm}\label{THM.L2W_Log_Mean_QNE}
Let $(\phi,\bpsi,\gamma,\nabla)$ be noncommutative differential structure for tracial AF-$C^{*}$-algebras $(A,\tau)$ and $(B,\omega)$ in the logarithmic mean setting.

\begin{itemize}
\item[1)] $-\Delta$ is the generator of quantum noise evolution for $\nabla$.

\item[2)] For all $j\in\mathbb{N}$, let $\lc{}0,\varphi_{j},C_{j}\rc$ be a Lindblad decomposition of $-\Delta_{j}$. We have

\begin{align}\label{EQ.THM.L2W_Log_Mean_QNE_1}
-\Delta u=\|.\|_{\tau}\textrm{-}\lim_{j\in\mathbb{N}}\hspace{0.025cm} -\Delta_{j}u_{j}=\|.\|_{\tau}\textrm{-}\lim_{j\in\mathbb{N}}\hspace{0.025cm} \frac{C_{j}}{2}\lc{}2\varphi_{j}(u_{j})-\big\{\varphi_{j}(1_{A_{j}}),u_{j}\big\}\rc{}
\end{align}

\begin{reapply}
\end{reapply}

\noindent for all $u\in\dom\Delta$.

\item[3)] Let $\xi\in\SII(A)$ be a finitely supported fixed state. Let $p\in L^{1,\infty}(A,\tau)$ be a projection s.t.~we have $\xi\in\CI[p]$. For all $\mu\in\Fix_{A}^{\NI}(\xi)\cap\mathcal{S}_{-1}^{\NI,\infty}(A_{\xi})\cap\lc\dom\Delta\rc^{\flat}$, there exists maximal $\varepsilon\in (0,\infty]$ s.t.~

\begin{align}\label{EQ.THM.L2W_Log_Mean_QNE_2}
-\frac{d}{dt}\Enttau\lc{}h_{t}(\mu)\rc{}=\tau\lc\Delta\sharp h_{t}(\mu)\log\sharp h_{t}(\mu)\rc{}=\mathcal{I}^{\log}\lc{}h_{t}(\mu),h_{t}(\mu),\lc\nabla\sharp h_{t}(\mu)\rc^{\flat}\rc{}
\end{align}

\begin{reapply}
\end{reapply}

\noindent for all $t\in [0,\varepsilon)$.
\end{itemize}
\end{thm}
\begin{proof}
We show $1)$ and $2)$. Note $4.3)$ in Proposition \ref{PRP.Wstar_Derivation_QG_I} shows $A_{0}$ is core of $\Delta$. For all $j\in\mathbb{N}$, $1)$ in Proposition \ref{PRP.Wstar_Derivation_QG_II} shows $\Delta_{j}=\comAj\Delta\in\BII(A_{j})_{h}$. Thus $-\Delta$ is local, hence Lemma \ref{LEM.L2W_Log_Mean_QNE_MaxEnt} shows it is the generator of quantum noise evolution for $\nabla$. If we use $\|.\|_{\tau}$-limits as per $4.3)$ in Proposition \ref{PRP.Wstar_Derivation_QG_I}, then Equation \ref{EQ.THM.L2W_Log_Mean_QNE_1} is given by considering Equation \ref{EQ.SSEC.L2W_Log_Mean_QNE_1} for each $-\Delta_{j}$ and letting $j\uparrow\infty$. Altogether, get $1)$ and $2)$.\par
We show $3)$. Assume its setting. Let $\mu\in\Fix_{A}^{\NI}(\xi)\cap\mathcal{S}_{-1}^{\NI,\infty}(A_{\xi})\cap\lc\dom\Delta\rc^{\flat}$. Following $1)$ in Proposition \ref{PRP.Wstar_Derivation_QG_HSG_II} for $p=2$, the Hille-Yosida theorem applies to heat flow \lc{}cf.~p.79 in \cite{BK.Eng_Nag.2000.Semigroups}\rc{}. Using the latter, $3)$ in Proposition \ref{PRP.Wstar_Derivation_QG_HSG_Fixed_Part_I} and $2.2)$ in Theorem \ref{THM.Wstar_Derivation_QG_HSG_Regularity}, we show

\begin{align}\label{EQ.THM.L2W_Log_Mean_QNE_3}
h_{t}(\mu)\in\textrm{Fix}_{A}^{\NI}(A)\cap\mathcal{S}^{\NI,\infty}(A_{\xi})\cap\lc\dom\Delta\rc^{\flat}
\end{align}

\noindent for all $t\geq 0$. Note $\GL\lc{}L^{\infty}(A_{\xi},\tau)\rc\subset L^{\infty}(A_{\xi},\tau)$ open in norm topology. Using the latter and strong continuity as per $1)$ in Proposition \ref{PRP.Wstar_Derivation_QG_HSG_II}, we obtain $\varepsilon>0$ s.t.~

\begin{align}\label{EQ.THM.L2W_Log_Mean_QNE_4}
h_{t}(\mu)\in\mathcal{S}_{-1}^{\NI,\infty}(A_{\xi}) 
\end{align}

\noindent for all $t\in [0,\varepsilon]$. Equation \ref{EQ.THM.L2W_Log_Mean_QNE_3} and Equation \ref{EQ.THM.L2W_Log_Mean_QNE_4} show

\begin{align}\label{EQ.THM.L2W_Log_Mean_QNE_5}
h_{t}(\mu)\in\textrm{Fix}_{A}^{\NI}(A)\cap\mathcal{S}_{-1}^{\NI,\infty}(A_{\xi})\cap\lc\dom\Delta\rc^{\flat}
\end{align}

\noindent for all $t\in [0,\varepsilon]$. Note $\xi\in\CI[p]$. We have $\Fix_{A}^{\NI}(\xi)\subset\CI[p]$ by $1)$ in Theorem \ref{THM.QOT_Distance_AC_FS}. Using the latter and Corollary \ref{COR.Rel_Ent_AF_Cstar_Trace}, Equation \ref{EQ.THM.L2W_Log_Mean_QNE_3} implies

\begin{align}\label{EQ.THM.L2W_Log_Mean_QNE_6}
\Ent\lc{}h_{t}(\mu),\tau\rc{}=\tau\lc\sharp h_{t}(\mu)\log\sharp h_{t}(\mu)\rc{}=\tau\lc\comp\sharp h_{t}(\mu)\log\comp\sharp h_{t}(\mu)\rc{}
\end{align}

\noindent for all $t\in [0,\varepsilon]$. Since $\xi\in\dom\Enttau$, note $\xi\in\mathcal{S}^{\NI}(A)\cap\CI[p]$ by Lemma \ref{LEM.Rel_Ent_AF_Cstar_Trace_II}. We have $\supp\xi\leq p$ by Lemma \ref{LEM.AF_Support_Projection_Majorant_Uniform}. Equation \ref{EQ.THM.L2W_Log_Mean_QNE_5} shows we may replace $p$ with $\supp\xi$ in Equation \ref{EQ.THM.L2W_Log_Mean_QNE_6}. Using the latter and $4.2)$ in Corollary \ref{COR.Wstar_Derivation_QG_HSG_Regularity}, Equation \ref{EQ.THM.L2W_Log_Mean_QNE_5} implies we have Fr\'echet differentiable map $t\mapsto\sharp h_{t}(\mu)\in L^{\infty}(A_{\xi},\tau)_{>0}\cap L^{\infty}(A_{\xi},\tau)_{\nabla}$ defined on $[0,\varepsilon)$.\par
We thereby extend calculations in Lemma \ref{LEM.L2W_Log_Mean_NCDS} and Corollary \ref{COR.L2W_Log_Mean_NCDS}, in particular those involving operator differentiable functions \cite{ART.Ped.2000.OpAlg_Diff_Functions}, to our setting. Lemma \ref{LEM.AF_Support_Projection_Majorant_Uniform} shows $\xi$ has integrable support. Using Proposition \ref{PRP.L2W_Log_Mean_NCDS}, we calculate

\begin{align*}
-\frac{d}{dt}\Enttau\lc{}h_{t}(\mu)\rc{} & = \tau\lc\Delta\sharp h_{t}(\mu)\log\sharp h_{t}(\mu)\rc{}+\tau\lc\sharp h_{t}(\mu)d\log_{\sharp h_{t}(\mu)}\lc\Delta\sharp h_{t}(\mu)\rc\rc \phantom{\Bigg)} \\
& = \tau\lc\Delta\sharp h_{t}(\mu)\log\sharp h_{t}(\mu)\rc \phantom{\Bigg)} \\
& = \lgl\mathcal{D}_{\sharp h_{t}(\mu),\xi}\nabla\sharp h_{t}(\mu),\nabla\sharp h_{t}(\mu)\rgl_{\omega} \phantom{\Bigg)} \\
& = \mathcal{I}^{\log}\lc{}h_{t}(\mu),h_{t}(\mu),\lc\nabla\sharp h_{t}(\mu)\rc^{\flat}\rc \phantom{\Bigg)}
\end{align*}

\noindent for all $[0,\varepsilon)$. Note Remark \ref{REM.L2W_Log_Mean_NCDS_I}. The above calculation shows Equation \ref{EQ.THM.L2W_Log_Mean_QNE_2}. Since $\varepsilon>0$ by construction, there exists maximal choice of $\varepsilon\in (0,\infty]$ as claimed.
\end{proof}

\begin{rem}\label{REM.L2W_Log_Mean_QNE}
Note $3)$ in Theorem \ref{THM.L2W_Log_Mean_QNE} generalises Corollary \ref{COR.L2W_Log_Mean_NCDS} by the semigroup property. Using standard arguments for interchanging derivatives and limits \cite{BK.Eva.2010.Partial_Differential_Equations}\cite{BK.Koe.1993.Analysis_II}\linebreak\cite{BK.Koe.2004.Analysis_I}, we see $3.2)$ in Theorem \ref{THM.QOT_Distance_AC_FS} and $3)$ in Theorem \ref{THM.L2W_Log_Mean_QNE} let us calculate

\begin{align*}
-\frac{d}{dt}\Enttau\lc{}h_{t}(\mu)\rc{} & = \lim_{j\in\mathbb{N}}\hspace{0.025cm} -\frac{d}{dt}\Enttau\lc{}h_{t}\lc\bar{\mu}_{j}\rc\rc \phantom{\Bigg)} \\
& =\lim_{j\in\mathbb{N}}\hspace{0.025cm} \mathcal{I}^{\log}\lc{}h_{t}\lc\bar{\mu}_{j}\rc{},h_{t}\lc\bar{\mu}_{j}\rc{},\lc\nabla\sharp h_{t}\lc\bar{\mu}_{j}\rc\rc^{\flat}\rc \phantom{\Bigg)} \\
& =\mathcal{I}^{\log}\lc{}h_{t}(\mu),h_{t}(\mu),\lc\nabla\sharp h_{t}(\mu)\rc^{\flat}\rc \phantom{\Bigg)}
\end{align*}

\noindent for all $\mu\in\mathcal{S}^{\NI,2}(A)\cap \lc\dom\Delta\rc^{\flat}$ and $t>0$ if uniform convergence in the second identity is given in each case and to finite terms. We might thereby extend Equation \ref{EQ.THM.L2W_Log_Mean_QNE_2} to a maximal definition domain by means of coarse graining. Here, we do not consider such assumptions on uniform convergence since we do not know of any examples.
\end{rem}

Example \ref{BSP.L2W_Log_Mean_QNE_QG_Internal} gives the depolarising channel as canonical choice of quantum noise operator \lc{}cf.~pp.378-379 in \cite{BK.Nie_Chu.2000.Quantum_Computation_Information}\rc{}. We see internal quantum gradients induce quantum Laplacians which are, up to sign, infinitesimal applications of depolarising channels. This shows quantum Fokker-Planck equations with vanishing drift term in scaling limit as per Equation \ref{EQ.THM.L2W_Log_Mean_QNE_1} may have closed form description.

\begin{bsp}\label{BSP.L2W_Log_Mean_QNE_QG_Internal}
Assume $(A,\tau)$ is a strongly unital tracial AF-$C^{*}$-algebra s.t.~$\tau<\infty$, as\linebreak well as $(B,\omega)=(A\otimes A,\tau\otimes\tau)$ equipped with the internal AF-$A$-bimodule structure on $A\otimes A$ as per $1)$ in Definition \ref{DFN.Wstar_Derivation_QG_Internal}. Let $\lambda\in [0,1]$. We consider the $\lambda$-internal quantum gradient $\nabla^{\lambda}:A_{0}\longrightarrow L^{2}(A\otimes A,\tau\otimes\tau)$ on $A$ as per $2)$ in Definition \ref{DFN.Wstar_Derivation_QG_Internal}. We therefore have quantum Laplacian $\Delta^{\lambda}=\lambda\pi_{\ker\tau}^{A}\in\BII\lc{}L^{2}(A,\tau)\rc_{h}$ by $4)$ in Proposition \ref{PRP.Wstar_Derivation_QG_Internal}.\par
We obtain $-\Delta^{\lambda}\neq 0$, $-\Delta^{\lambda}\lc{}L^{\infty}(A,\tau)\rc\subset L^{\infty}(A,\tau)$ and $-\Delta^{\lambda}1_{A}=0$. We have completely Markovian semigroup $h:[0,\infty)\longrightarrow\BII\lc{}L^{\infty}(A,\tau)\rc$ by $1)$ in Proposition \ref{PRP.Wstar_Derivation_QG_HSG_II}. Using the latter, Lemma \ref{LEM.Wstar_CP_Markovian_SG} shows $-\Delta^{\lambda}$ has Lindblad decomposition. We show $-\Delta^{\lambda}$ satisfies a quantum Fokker-Planck equation with vanishing drift term. We define the depolarising channel $\varphi^{\lambda}:L^{\infty}(A,\tau)\longrightarrow L^{\infty}(A,\tau)$ with depolarisation probability $\lambda$ by setting

\begin{align}\label{EQ.L2W_Log_Mean_QNE_QG_Internal_1}
\varphi^{\lambda}(x):=\lc{}1-\lambda\rc{}x+\lambda\vstretch{1.375}{\big(}I-\pi_{\ker\tau}^{A}\vstretch{1.375}{\big)}(x)=\vstretch{1.375}{\big(}I-\lambda\pi_{\ker\tau}^{A}\vstretch{1.375}{\big)}(x)=\lc{}I-\Delta^{\lambda}\rc{}(x)
\end{align}

\noindent for all $x\in L^{\infty}(A,\tau)$ \lc{}cf.~pp.378-379 in \cite{BK.Nie_Chu.2000.Quantum_Computation_Information}\rc{}. Moreover, we directly verify all maps of form $x\mapsto C\tau(x)1_{A}$ defined on $L^{\infty}(A,\tau)$ for $C>0$ are completely positive. Yet

\begin{align}\label{EQ.L2W_Log_Mean_QNE_QG_Internal_2}
\vstretch{1.375}{\big(}I-\pi_{\ker\tau}^{A}\vstretch{1.375}{\big)}(x)=\frac{\tau(x)}{\tau(1_{A})}1_{A}
\end{align}

\noindent for all $x\in L^{\infty}(A,\tau)$.\par
Equation \ref{EQ.L2W_Log_Mean_QNE_QG_Internal_2} shows $\varphi^{\lambda}:A\longrightarrow L^{\infty}(A,\tau)$ is a completely positive trace-preserving unital map. Following Remark \ref{REM.Wstar_CP_Markovian_SG}, Equation \ref{EQ.L2W_Log_Mean_QNE_QG_Internal_1} shows $\varphi^{\lambda}$ is the quantum channel transmitting change of states given by complete mixing with uniform probability $\lambda$ for all states. Using $\varphi^{\lambda}=I-\Delta^{\lambda}$, we calculate

\begin{align}\label{EQ.L2W_Log_Mean_QNE_QG_Internal_3}
-\Delta^{\lambda}x=-\lc{}I-\varphi^{\lambda}\rc{}(x)=\frac{1}{2}\lc{}2\varphi^{\lambda}(x)-\big\{\varphi^{\lambda}(1_{A}),x\big\}\rc{}
\end{align}

\noindent for all $x\in L^{\infty}(A,\tau)$. Equation \ref{EQ.L2W_Log_Mean_QNE_QG_Internal_3} yields Lindblad decomposition $\lc{}0,\varphi^{\lambda},1\rc$ of $-\Delta^{\lambda}$ and closed form of Equation \ref{EQ.THM.L2W_Log_Mean_QNE_1}. Following Remark \ref{REM.Wstar_CP_Markovian_SG}, Equation \ref{EQ.L2W_Log_Mean_QNE_QG_Internal_3} is a quantum Fokker-Planck equation with vanishing drift term. If we do not use fixed energy scales normalised as per Equation \ref{EQ.SSEC.L2W_Log_Mean_QNE_11}, then Equation \ref{EQ.L2W_Log_Mean_QNE_QG_Internal_3} does not suffice to determine unique quantum noise evolution even as the depolarising probability itself is fixed.
\end{bsp}

%%%%%%%%%%%%%%%%
%%% SECTION %%%%
%%%%%%%%%%%%%%%%

\section{EVI$_{\lambda}$-gradient flow of quantum relative entropy}\label{SEC.L2W_EVI}

We emulate the classical case in the infinitesimally Hilbertian setting \cite{ART.Erb_Kuw_Stu.2015.Classical_OT_Equivalence}. Following work of Jordan, Kinderlehrer and Otto for Fokker-Planck equations \cite{ART.Jor_Kin_Ott.1998.Fokker_Planck}, resp.~Otto for porous medium equations \cite{ART.Ott.2001.Classical_OT_Porous_Medium}\cite{ART.Ott.2005.Classical_OT_GradFlow_DisConvex}, Ambrosio, Gigli and Savar\'e give $\EVI_{\lambda}$-gradient flows of proper l.s.c.~functionals defined on metric spaces \cite{BK.Amb_Gig_Sav.2008.Classical_OT_GradFlow} to study evolution partial differential equations using gradient flows absent differential structures \cite{ART.Dan_Sav.2008.Classical_OT_GradFlow_DisConvex}\cite{ART.Mur_Sav.2020.Classical_OT_EVI}. Note $\EVI_{\lambda}$-gradient flows generalise gradient flows in smooth Riemannian manifolds driven by smooth functionals with Hessians bounded from below \cite{BK.Amb_Gig_Sav.2008.Classical_OT_GradFlow}\cite{ART.Erb.2010.Weak_Riemannian_structure}\cite{ART.Mur_Sav.2020.Classical_OT_EVI}. We therefore apply results in variational analysis for metric geometry using minimising geodesics \cite{ART.Dan_Sav.2008.Classical_OT_GradFlow_DisConvex}\cite{ART.Mur_Sav.2020.Classical_OT_EVI} to study quantum relative entropy in the logarithmic mean setting.\par
Analogous $L^{2}$-Wasserstein distances in the classical case \cite{ART.Dol_Naz_Sav.2009.Generalised_OT} are those determined by weak upper gradients \cite{ART.Amb_Mar_Sav.2014.Equivalence_Gradients}\cite{ART.Chee.1999.Relaxed_Gradients} inducing Dirichlet forms \cite{BK.Fuk_Osh_Tak.2011.Dirichlet_Markov}. If $\EVI_{\lambda}$-gradient flow of relative entropy exists in this case, then it is heat flow \cite{ART.Amb_Gig_Sav.2014.Classical_OT_Ricci_Bounds_I}\cite{ART.Amb_Gig_Sav.2014.Classical_OT_Ricci_Bounds_II}. Existence is equivalent to $\lambda$-convexity of relative entropy \cite{ART.Amb_Gig_Sav.2014.Classical_OT_Ricci_Bounds_I}\cite{ART.Amb_Gig_Sav.2014.Classical_OT_Ricci_Bounds_II} and Bakry-\'Emery conditions \cite{COL.Bak_Em.1985.Hypercontractivity_Condition}\cite{ART.Bak_Led.2006.Hypercontractivity_Condition} linking heat flow to a weak Riemannian structure \cite{BK.Amb_Gig_Sav.2008.Classical_OT_GradFlow}\cite{ART.Erb.2010.Weak_Riemannian_structure} for the given classical $L^{2}$-Wasserstein dis\-tance \cite{ART.Amb_Gig_Sav.2015.Classical_OT_Ricci_Bounds_III}\cite{ART.Amb_Mon_Sav.2019.Classical_OT_Nonlinear_Diffusion}\cite{ART.Erb_Kuw_Stu.2015.Classical_OT_Equivalence}. Sturm \cite{ART.Stu.2006.Classical_OT_I}\cite{ART.Stu.2006.Classical_OT_II}, as well as Lott and Villani \cite{ART.Lot_Vil.2009.Classical_OT_Ricci_Bounds}, each established $\lambda$-convexity of relative entropy \cite{ART.CorEra_McCan_Sch.2001.Displacement_Convexity_Riemannian}\cite{ART.McCan.1997.Displacement_Convexity_Local} as synthetic lower Ricci bounds \cite{ART.Ren_Stu.2005.Smooth_OT_Equivalences}. They imply following chain of functional inequalities \cite{ART.Lot_Vil.2009.Classical_OT_Ricci_Bounds}\cite{ART.Ott_Vil.2000.Classical_OT_LogSobolev_Talagrand} probing the underlying metric geometry. As for Riemannian manifolds \cite{ART.Fig_Vil.2007.Equivalence_Convexity}\cite{ART.Ren_Stu.2005.Smooth_OT_Equivalences}, there exists a $\HWI_{\lambda}$-interpolation inequality and Talagrand inequality $\TW_{\lambda}$ for $\lambda\geq 0$, and a modified logarithmic Sobolev inequality $\MLSI_{\lambda}$ for $\lambda>0$. If we do have lower Ricci bound $\lambda>0$, then $\lambda$-convexity of relative entropy implies $\HWI_{\lambda}$, in turn implying $\MLSI_{\lambda}$, finally implying $\TW_{\lambda}$ \cite{ART.Lot_Vil.2009.Classical_OT_Ricci_Bounds}. If we want lower Ricci bounds and functional inequalities for quantum $L^{2}$-Wasserstein distances in direct analogy to the classical case, then we initially require equivalent characterisations for $\EVI_{\lambda}$-gradient flow of quantum relative entropy in the logarithmic mean setting. Since we cover all fundamental example classes in Subsection \ref{SSEC.QOT_DT_BSP}, we also face complications arising from non-ergodicity commonly avoided in the classical case by assumption. We cannot expect ergodicity in the AF-$C^{*}$-setting because dynamic quantum gradients generalise the ubiquitous case of inner derivations \cite{ART.Kad.1966.OpAlg_Derivations}.\par
In the ergodic finite-dimensional logarithmic mean setting, Carlen and Maas extend equivalent characterisations and functional inequalities \cite{ART.Car_Maa.2014.Quantum_OT_I}\cite{ART.Car_Maa.2017.Quantum_OT_II}\cite{ART.Car_Maa.2020.Quantum_OT_III}. Equivalence in \cite{ART.Car_Maa.2020.Quantum_OT_III} uses arguments fully given by Erbar and Maas in \cite{ART.Erb_Maa.2012.Discrete_OT_Ricci_Bounds} alone. We use \cite{ART.Car_Maa.2020.Quantum_OT_III} as foundation and apply the coarse graining process to reduce to the finite-dimensional Riemannian setting s.t.~ergodicity is recovered. We extend results upon replacing some essential arguments used in \cite{ART.Car_Maa.2020.Quantum_OT_III} and \cite{ART.Erb_Maa.2012.Discrete_OT_Ricci_Bounds}. Ours and independent work of Wirth \cite{PRE.Wir.2018.NC_OT} together with Zhang \cite{ART.Wir_Zha.2021.Quantum_OT_Complete_Gradient_Estimates} are the first infinite-dimensional extensions of the results in \cite{ART.Car_Maa.2014.Quantum_OT_I}\cite{ART.Car_Maa.2017.Quantum_OT_II}\cite{ART.Car_Maa.2020.Quantum_OT_III}. Wirth \cite{PRE.Wir.2018.NC_OT} gives noncommutative optimal transport distances determined by suitable symmetric $C^{*}$-derivations inducing $C^{*}$-Dirichlet forms on noncommutative $L^{2}$-spaces of tracial $W^{*}$-algebras \cite{ART.Cip.1997.NC_Dirichlet_Markov}\cite{ART.Cip_Sav.2003.NC_Dirichlet_Grad}. Assuming tracial state and ergodicity, Wirth shows a, possibly infinite-dimensional, Bakry-\'Emery condition \cite{PRE.Wir.2018.NC_OT} as per \cite{ART.Car_Maa.2020.Quantum_OT_III} implies heat flow is $\EVI_{\lambda}$-gradient flow of relative entropy for $W^{*}$-algebras \cite{BK.Ohy_Pet.1993.Rel_Ent}. However, \cite{PRE.Wir.2018.NC_OT} does not show its equivalence. Assuming tracial state, Wirth and Zhang give sufficient conditions for satisfying Bakry-\'Emery conditions \cite{ART.Wir_Zha.2021.Quantum_OT_Complete_Gradient_Estimates} as per \cite{ART.Car_Maa.2020.Quantum_OT_III} and obtain functional inequalities $\HWI_{\lambda}$, $\MLSI_{\lambda}$ \cite{ART.Wir_Zha.2021.Quantum_OT_Complete_Gradient_Estimates} and $\TW_{\lambda}$ \cite{PRE.Wir.2018.NC_OT} as per \cite{ART.Car_Maa.2020.Quantum_OT_III} using relative entropy for $W^{*}$-algebras conditioned to fixed-point subalgebras. Such a priori conditioning handles non-ergodicity but does not emerge from an underlying metric geometry. As part of the overall introduction, we show their results are insufficient for our purposes.\par
We contribute the following. In our logarithmic mean setting, which does assume the AF-$C^{*}$-setting, yet neither ergodicity nor finite trace, we extend results in \cite{ART.Car_Maa.2014.Quantum_OT_I}\cite{ART.Car_Maa.2017.Quantum_OT_II}\cite{ART.Car_Maa.2020.Quantum_OT_III} and \cite{ART.Erb_Maa.2012.Discrete_OT_Ricci_Bounds} to the general case and view lower Ricci bounds as measurement convexity of quantum information. Non-ergodicity and non-finite trace ensure fundamental example classes in Subsection \ref{SSEC.QOT_DT_BSP} are covered. We extend results in four parts by means of the coarse graining process. This lets us modify arguments in \cite{ART.Car_Maa.2020.Quantum_OT_III} and \cite{ART.Erb_Maa.2012.Discrete_OT_Ricci_Bounds} for the known ergodic finite-dimensional case. First, we show claimed equivalence of $\EVI_{\lambda}$-gradient flow, $\lambda$-convexity, Bakry-\'Emery and Hessian lower bound conditions. Secondly, we then define lower Ricci bounds of quantum gradients. Thirdly, we give sufficient conditions for lower Ricci bounds of direct sum quantum gradients. Fourthly, we derive functional inequalities in the AF-$C^{*}$-setting. This requires quantum Fisher information. Apart from extension and following our view of quantum Laplacians as generators of quantum noise evolution in Subsection \ref{SSEC.L2W_Log_Mean_QNE}, lower Ricci bounds are given by $\lambda$-convexity of quantum information along minimising geodesics measured by quantum relative entropy. If we have noncommutative analogues of displacement interpolations \cite{ART.CorEra_McCan_Sch.2001.Displacement_Convexity_Riemannian}\cite{ART.McCan.1997.Displacement_Convexity_Local}, then such measurement convexity in the Schr\"odinger picture is convexity under measurement of observables in the Heisenberg picture. Unfortunately, existence results are unknown to us. We instead show strictly positive lower Ricci bounds determine energy-information trade-offs pa\-ra\-metrised by lower bounds on quantum noise. Lower resolution implies lower energy paths. We avoid spatial interpretations of the classical case \cite{ART.Dol_Naz_Sav.2009.Generalised_OT}\cite{ART.Lot_Vil.2009.Classical_OT_Ricci_Bounds}.

\medskip

\noindent\textbf{Structure.} In Subsection \ref{SSEC.L2W_EVI_Equivalence}, we discuss $\EVI_{\lambda}$-gradient flows in metric spaces and $\lambda$-convexity of proper l.s.c.~functionals. We consider heat flow as $\EVI_{\lambda}$-gradient flow of quantum relative entropy and show our equivalence theorem. In Subsection \ref{SSEC.L2W_EVI_Ric}, we discuss lower Ricci bounds, energy-information trade-offs parametrised by lower bounds on quantum noise and derive functional inequalities.\par

%NEWPAGE
%NEWPAGE
%NEWPAGE

\newpage

%NEWPAGE
%NEWPAGE
%NEWPAGE

%%%%%%%%%%%%%%%%%%%
%%% SUBSECTION %%%%
%%%%%%%%%%%%%%%%%%%

\subsection{The equivalence theorem}\label{SSEC.L2W_EVI_Equivalence}

Following our discussion of the coarse graining process in Subsection \ref{SSEC.QOT_CG}, we define the $\EVI_{\lambda}$-gradient flow, $\lambda$-convexity and Bakry-\'Emery conditions in global and local form. We furthermore consider a Hessian lower bound condition. Such conditions are, in their global form, properties on all finitely supported accessibility components having non-trivial intersection with the domain of quantum relative entropy. Non-ergodicity requires us to consider accessibility components. Compatibility with compression and finite-dimensional approximation requires finitely supported ones. Theorem \ref{THM.L2W_EVI_Equivalence} shows equivalence of all conditions by means of the coarse graining process.

%%%%%%%%%%%%%
%%% PART %%%%
%%%%%%%%%%%%%

\subsubsection*{$\EVI_{\lambda}$-gradient flows and $\lambda$-convexity}

Metric-functional systems provide the general setting \cite{BK.Amb_Gig_Sav.2008.Classical_OT_GradFlow}\cite{ART.Mur_Sav.2020.Classical_OT_EVI}. Definition \ref{DFN.Metric_Functional_EVI_CNV} gives $\EVI_{\lambda}$-gradient flows and $\lambda$-convexity as per \cite{BK.Amb_Gig_Sav.2008.Classical_OT_GradFlow}\cite{ART.Mur_Sav.2020.Classical_OT_EVI}. We use continuous semigroups on metric spaces \cite{BK.Amb_Gig_Sav.2008.Classical_OT_GradFlow} generalising those on Banach spaces \cite{BK.Eng_Nag.2000.Semigroups}. Note $2)$ in Definition \ref{DFN.Metric_Functional_EVI_CNV} is called strong geodesic $\lambda$-convexity if it is to be distinguished from weaker formulations. We only use the former and therefore call it $\lambda$-convexity throughout our discussion. We use minimising geodesics defined on the unit interval \cite{BK.Amb_Gig_Sav.2008.Classical_OT_GradFlow}\cite{BK.Bur_Bur_Iv.2001.Metric_Geometry}. Proposition \ref{PRP.Metric_Functional_EVI_CNV} collects properties of $\EVI_{\lambda}$-gradient flows.\par
We review gradient flows in metric spaces determined by evolution variational inequalities, or $\EVI_{\lambda}$-gradient flows. Let $\lc{}X,d\rc$ be a complete geodesic length-metric space and $F:X\longrightarrow (-\infty,\infty]$ a proper functional l.s.c.~in $d$-topology. Let $Y\subset\overline{\dom F}$ s.t.~for all $\mu^{0},\mu^{1}\in Y\cap\dom F$, there exists minimising geodesic $\mu:[0,1]\longrightarrow Y\cap\dom F$ from $\mu^{0}$ to $\mu^{1}$. Let $\lambda\in\mathbb{R}$. If $S:[0,\infty)\times Y\longrightarrow Y$ is $\EVI_{\lambda}$-gradient flow as per $1)$ in Definition \ref{DFN.Metric_Functional_EVI_CNV}, then it is $\lambda$-contracting as per $1)$ in Proposition \ref{PRP.Metric_Functional_EVI_CNV} and its curves are of maximal slope by Theorem 3.5 in \cite{ART.Mur_Sav.2020.Classical_OT_EVI}, i.e.~each $t\mapsto S_{t}(\mu)$ is absolutely continuous and satisfies

\begin{align}\label{EQ.SSEC.L2W_EVI_Equivalence_1}
\frac{d^{+}}{dt}F\lc{}S_{t}(\mu)\rc{}=-\absv{1.15}{\partial F}^{2}\lc{}S_{t}(\mu)\rc{}
\end{align}

\noindent for a.e.~$t\geq 0$. We use metric slope $\mu\mapsto\absv{1.15}{\partial F}(\mu)$ of $F$ \cite{BK.Amb_Gig_Sav.2008.Classical_OT_GradFlow}\cite{ART.Mur_Sav.2020.Classical_OT_EVI}. Equation \ref{EQ.SSEC.L2W_EVI_Equivalence_1} recovers the steepest descent property of gradient flows in smooth Riemannian manifolds \cite{BK.Lan.1995.Riemannian_Manifolds}. Note existence of $\EVI_{\lambda}$-gradient flows implies $\lambda$-convexity of $F$ as per $2)$ in Definition \ref{DFN.Metric_Functional_EVI_CNV} by $4)$ in Theorem 3.10 in \cite{ART.Mur_Sav.2020.Classical_OT_EVI}. The chain rule of Riemannian metrics involving covariant derivatives \cite{BK.Lan.1995.Riemannian_Manifolds} implies $\lambda$-convexity generalises lower bounds for Hessians of smooth functionals \cite{ART.Ott.2005.Classical_OT_GradFlow_DisConvex}. Theorem 4.2 in \cite{ART.Mur_Sav.2020.Classical_OT_EVI} conversely shows $\lambda$-convexity of $F$ implies $S$ is the unique $\EVI_{\lambda}$-gradient flow given by the generalised minimising movements scheme \cite{COL.DeG.1993.Min_Mov}. Altogether, $\EVI_{\lambda}$-gradient flows generalise gradient flows in smooth Riemannian manifolds driven by smooth functionals with Hessians bounded from below.\par
If $\EVI_{\lambda}$-gradient flow of quantum relative entropy exist, then Corollary \ref{COR.L2W_EVI_Equivalence} shows it is heat flow. We further generalise slopes of maximal entropy production, i.e.~erasure of quantum information, as per Equation \ref{EQ.THM.L2W_Log_Mean_QNE_2} to Equation \ref{EQ.SSEC.L2W_EVI_Equivalence_1} for all normal states with finitely supported fixed part and finite quantum relative entropy as claimed in the introduction of Section \ref{SEC.L2W_Log_Mean}. We avoid the extension problems in Remark \ref{REM.L2W_Log_Mean_QNE}.

\begin{dfn}\label{DFN.Metric_Functional_EVI_CNV}
Let $\lc{}X,d\rc$ be a metric space s.t.~$d<\infty$ and $F:X\longrightarrow (-\infty,\infty]$ a proper functional l.s.c.~in $d$-topology. We call $\lc{}X,d,F\rc$ a metric-functional system. Let $Y\subset\overline{\dom F}$ and $\lambda\in\mathbb{R}$.

\begin{itemize}
\item[1)] We say that a continuous semigroup $S:[0,\infty)\times Y\longrightarrow Y$ is $\EVI_{\lambda}$-gradient flow of $F$ in $Y$ if for all $\mu\in Y$ and $\eta\in\dom F$, we have 

\begin{align}\label{EQ.DFN.Metric_Functional_EVI}
\frac{1}{2}\frac{d^{+}}{dt}d\lc{}S_{t}(\mu),\eta\rc^{2}+\frac{\lambda}{2}d\lc{}S_{t}(\mu),\eta\rc^{2}\leq F(\eta)-F\lc{}S_{t}(\mu)\rc{} \tag{$\EVI_{\lambda}$}
\end{align}

\begin{reapply}
\end{reapply}

\noindent for all $t\geq 0$.

\item[2)] Assume $\lc{}X,d\rc$ is a complete geodesic length-metric space. We call $Y\cap\dom F\subset X$ a geodesic subspace if for all $\mu^{0},\mu^{1}\in Y\cap\dom F$, there exists a minimising geodesic $\mu:[0,1]\longrightarrow X$ from $\mu^{0}$ to $\mu^{1}$ s.t.~we have $\mu(t)\in Y\cap\dom F$ for all $t\in [0,1]$. Assume $Y\cap\dom F\subset X$ is a geodesic subspace. We say that $F$ is $\lambda$-convex in $Y$ if for all minimising geodesics $\mu:[0,1]\longrightarrow Y\cap\dom F$, we have

\begin{align}\label{EQ.DFN.Metric_Functional_CNV}
F\lc\mu(t)\rc\leq \lc{}1-t\rc{}F\lc\mu(0)\rc{}+tF\lc\mu(1)\rc{}-\frac{\lambda}{2}t\lc{}1-t\rc{}d\lc\mu(0),\mu(1)\rc^{2} \tag{$\CNV_{\lambda}$}
\end{align}

\begin{reapply}
\end{reapply}

\noindent for all $t\in [0,1]$.
\end{itemize}
\end{dfn}

\begin{rem}\label{REM.Metric_Functional_EVI_CNV}
We have following integral characterisation of $\EVI_{\lambda}$-gradient flows. Let $\lc{}X,d,F\rc$ be a metric-functional system, $Y\subset\overline{\dom F}$ and $\lambda\in\mathbb{R}$. Theorem 3.3 in \cite{ART.Mur_Sav.2020.Classical_OT_EVI} shows a continuous semigroup $S:[0,\infty)\times Y\longrightarrow Y$ is $\EVI_{\lambda}$-gradient flow of $F$ in $Y$ if and only if for all $\mu\in Y$ and $\eta\in\dom F$, the map $t\mapsto F\lc{}S_{t}(\mu)\rc$ is strictly decreasing and we have

\begin{align}\label{EQ.REM.Metric_Functional_EVI_CNV_1}
\frac{e^{\lambda\lc{}t-s\rc{}}}{2}d\lc{}S_{t}(\mu),\eta\rc^{2}-\frac{1}{2}d\lc{}S_{s}(\mu),\eta\rc^{2}\leq\int_{0}^{t-s}e^{\lambda r}dr\cdot \bigg(F(\eta)-F\lc{}S_{t}(\mu)\rc\bigg) \tag{$\EVI_{\lambda}^{\int}$}
\end{align}

\noindent for all $0<s<t<\infty$.
\end{rem}

\begin{prp}\label{PRP.Metric_Functional_EVI_CNV}
Let $\lc{}X,d,F\rc$ be a metric-functional system, $Y\subset\overline{\dom F}$ and $\lambda\in\mathbb{R}$.  Let $S:[0,\infty)\times Y\longrightarrow Y$ be an $\EVI_{\lambda}$-gradient flow of $F$ in $Y$.

\begin{itemize}
\item[1)] For all $\mu,\eta\in Y$, we have 

\begin{align}\label{EQ.PRP.Metric_Functional_EVI_CNV_1}
d\lc{}S_{t}(\mu),S_{t}(\eta)\rc\leq e^{-\lambda t}d(\mu,\eta)
\end{align}

\begin{reapply}
\end{reapply}

\noindent for all $t\geq 0$.

\item[2)] Assume $F:X\longrightarrow (-\infty,\infty)$ has complete sublevels in $d$-topology. If $\lambda>0$, then $F$ has a unique minimum $\mu_{\min}\in Y\cap\dom F$.

\item[3)] Assume $\lc{}X,d\rc$ is a complete length-metric space. If $Y\cap\dom F\subset X$ is a geodesic subspace, then $F$ is $\lambda$-convex in $Y$.
\end{itemize}
\end{prp}

%NEWPAGE
%NEWPAGE
%NEWPAGE

\pagebreak

%NEWPAGE
%NEWPAGE
%NEWPAGE

\begin{proof}
The statement on $\lambda$-contraction as per Theorem 3.5 in \cite{ART.Mur_Sav.2020.Classical_OT_EVI} for $s=0$ shows $1)$ at once. Moreover, the statements on asymptotic behaviour as $t\rightarrow\infty$ as per Theorem 3.5 in \cite{ART.Mur_Sav.2020.Classical_OT_EVI} for $\lambda>0$ imply $2)$ by rearranging terms. Finally, note $4)$ in Theorem 3.10 in \cite{ART.Mur_Sav.2020.Classical_OT_EVI} implies $3)$ if $Y\cap\dom F\subset X$ is a geodesic subspace since the latter ensures existence of minimising geodesics.
\end{proof}

%%%%%%%%%%%%%
%%% PART %%%%
%%%%%%%%%%%%%

\subsubsection*{Equivalence in the logarithmic mean setting}

Following our discussion of the coarse graining process in Subsection \ref{SSEC.QOT_CG}, Definition \ref{DFN.L2W_EVI_Equivalence} gives the $\EVI_{\lambda}$-gradient flow, $\lambda$-convexity and Bakry-\'Emery conditions in global and local form. We furthermore consider a Hessian lower bound condition. In the finite-dimensional logarithmic mean setting, Lemma \ref{LEM.L2W_EVI_Equivalence} shows all conditions are equivalent. We are motivated in our proof by analogous arguments in \cite{ART.Car_Maa.2020.Quantum_OT_III} and \cite{ART.Erb_Maa.2012.Discrete_OT_Ricci_Bounds}. However, Theorem \ref{THM.L2W_Log_Mean_NCDS_Fin_Hessian} replaces essential steps therein letting us argue using Riemannian metrics on relative interiors induced by the given quasi-entropy. Theorem \ref{THM.L2W_EVI_Equivalence} uses Lemma \ref{LEM.L2W_EVI_Equivalence} to show equivalence of all conditions by means of the coarse graining process. Corollary \ref{COR.L2W_EVI_Equivalence} shows restriction to finitely supported accessibility components suffices to determine $\EVI_{\lambda}$-gradient flows.\par
Let $(\phi,\bpsi,\gamma,\nabla)$ be noncommutative differential structure for tracial AF-$C^{*}$-algebras $(A,\tau)$ and $(B,\omega)$ in the logarithmic mean setting. Notation \ref{NTN.L2W_EVI_Equivalence} summarises use of $2)$ in Theorem \ref{THM.QOT_Distance} and Lemma \ref{LEM.Rel_Ent_AF_Cstar_Trace_I}. Proposition \ref{PRP.L2W_EVI_Equivalence} gives metric-functional systems equipped with restricted $h:[0,\infty)\times\SII(A)\longrightarrow\SII(A)$ as continuous semigroup. We use these in Definition \ref{DFN.L2W_EVI_Equivalence}. 

\begin{ntn}\label{NTN.L2W_EVI_Equivalence}
Let $\xi\in\SII(A)$ be a finitely supported fixed state. Let $\mathcal{C}\subset (\SII(A),\mathcal{W}_{\nabla}^{\log})$ be finitely supported with fixed part $\xi$. For all $j\in\mathbb{N}$ s.t.~$\xi_{j}\neq 0$, we have

\begin{align}\label{EQ.NTN.L2W_EVI_Equivalence_1}
\restr{0.925}{\mathcal{W}_{\nabla}^{\log}}{\mathcal{C}_{A_{j}}\lc\bar{\xi}_{j}\rc\times\mathcal{C}_{A_{j}}\lc\bar{\xi}_{j}\rc{}}=\mathcal{W}_{\nabla_{\hspace{-0.055cm} j}}^{\log},\ h\vert_{\mathcal{C}_{A_{j}}\lc\bar{\xi}_{j}\rc{}}=h^{j},\ \restr{0.925}{\Enttau}{\mathcal{C}_{A_{j}}\lc\bar{\xi}_{j}\rc{}}=\Enttauj.
\end{align}

\noindent We suppress $j\in\mathbb{N}$ upon restriction as per Equation \ref{EQ.NTN.L2W_EVI_Equivalence_1}.
\end{ntn}

\begin{prp}\label{PRP.L2W_EVI_Equivalence}
Let $\xi\in\SII(A)$ be a finitely supported fixed state. Let $\mathcal{C}\subset (\SII(A),\mathcal{W}_{\nabla}^{\log})$ be finitely supported with fixed part $\xi$ s.t.~$\mathcal{C}\cap\dom\Enttau\neq\emptyset$. We have

\begin{itemize}
\item[1)] the metric-functional system $\big(\mathcal{C}\cap\dom\Enttau,\mathcal{W}_{\nabla}^{\log},\Enttau\big)$ equipped with continuous semigroup $h:[0,\infty)\times\mathcal{C}\cap\dom\Enttau\longrightarrow\mathcal{C}\cap\dom\Enttau$, \phantom{\big)}

\item[2)] the metric-functional system $\big(\mathcal{C}_{A_{j}}\lc\bar{\xi}_{j}\rc{},\mathcal{W}_{\nabla}^{\log},\Enttau\big)$ equipped with continuous semigroup $h:[0,\infty)\times\mathcal{C}_{A_{j}}\lc\bar{\xi}_{j}\rc\longrightarrow\mathcal{C}_{A_{j}}\lc\bar{\xi}_{j}\rc$ for a.e.~$j\in\mathbb{N}$, \phantom{\big)}

\item[3)] complete sublevels of $\Enttau:\mathcal{C}\cap\dom\Enttau\longrightarrow (-\infty,\infty)$ in $\mathcal{W}_{\nabla}^{\log}$-topology. \phantom{\big)}
\end{itemize}
\end{prp}
\begin{proof}
Using $\mathcal{C}\cap\dom\Enttau\neq\emptyset$, $3)$ in Corollary \ref{COR.QOT_Distance_AC_II} and $4)$ in Theorem \ref{THM.QOT_Distance_AC_FS} show we have metric-functional system $\lc\mathcal{C}\cap\dom\Enttau,\mathcal{W}_{\nabla}^{\log},\Enttau\rc$. Then $3)$ in Theorem \ref{THM.L2W_Log_Mean_NCDS} implies we obtain continuous semigroup $h:[0,\infty)\times\mathcal{C}\cap\dom\Enttau\longrightarrow\mathcal{C}\cap\dom\Enttau$ by considering $\restr{0.925}{h_{t}}{\mathcal{C}\cap\dom\Enttau}$ for all $t\geq 0$. Get $1)$. We see $2)$ follows since $\mathcal{C}_{A_{j}}\lc\bar{\xi}_{j}\rc\subset\dom\Enttau$ by Corollary \ref{COR.Rel_Ent_AF_Cstar_Trace} if $\xi_{j}\neq 0$. Using $3)$ in Corollary \ref{COR.QOT_Distance_AC_II}, l.s.c.~of $\Enttau$ implies $3)$.
\end{proof}

The conditions in Definition \ref{DFN.L2W_EVI_Equivalence} are subdivided in order as follows. First, three global conditions. Secondly, three local conditions with each one being the analogue of the respective global condition sharing its numbering. Thirdly, our Hessian lower bound condition. We ensure all such conditions are well-defined. For this, we collect results in our case concerning minimising geodesics, the coarse graining process and $\EVI_{\lambda}$-gradient flows of quantum relative entropy. We use Notation \ref{NTN.L2W_EVI_Equivalence}.\par
We collect results. Let $\xi\in\SII(A)$ be a finitely supported fixed state. Lemma \ref{LEM.AF_Support_Projection_Majorant_Uniform} shows $\xi$ has integrable support. Let $\mathcal{C}\subset (\SII(A),\mathcal{W}_{\nabla}^{\log})$ be finitely supported with fixed part $\xi$ s.t.~$\mathcal{C}\cap\dom\Enttau\neq\emptyset$. We have metric-functional systems equipped with heat flow as continuous semigroups as per $1)$ and $2)$ in Proposition \ref{PRP.L2W_EVI_Equivalence}. They coincide if $A$ and $B$ are finite-dimensional. Diagram \ref{EQ.SSEC.QOT_QIT_Encoding_16} for $K=\dom\Enttau$ shows we restrict, up to rescaling as per $1)$ in Definition \ref{DFN.AF_Cstar_Trace_Dualisation_Paths}, to

\begin{align}\label{EQ.SSEC.L2W_EVI_Equivalence_2}
\resj\lc\mathcal{C}\cap\dom\Enttau\rc{}=\mathcal{C}_{A_{j}}\lc\bar{\xi}_{j}\rc{},\ \resj\circ\hspace{0.0275cm} h=h\vert_{A_{j}^{*}}=h^{j}
\end{align}

\noindent for a.e.~$j\in\mathbb{N}$. Remark \ref{REM.AF_Cstar_Trace_Dualisation_Admissible_Paths} ensures we assume such rescaling here without loss of generality. Following $1)$ in Definition \ref{DFN.Metric_Functional_EVI_CNV}, $h:[0,\infty)\times\mathcal{C}\cap\dom\Enttau\longrightarrow\mathcal{C}\cap\dom\Enttau$ is $\EVI_{\lambda}$-gradient flow of $\Enttau$ in $\mathcal{C}\cap\dom\Enttau$ if for all $\mu,\eta\in\mathcal{C}\cap\dom\Enttau$, we have

\begin{align}\label{EQ.SSEC.L2W_EVI_Equivalence_3}
\frac{1}{2}\frac{d^{+}}{dt}\mathcal{W}_{\nabla}^{\log}\lc{}h_{t}(\mu),\eta\rc^{2}+\frac{\lambda}{2}\mathcal{W}_{\nabla}^{\log}\lc{}h_{t}(\mu),\eta\rc^{2}\leq\Ent(\eta,\tau)-\Ent\lc{}h_{t}(\mu),\tau\rc{}
\end{align}

\noindent for all $t\geq 0$. Remark \ref{REM.Metric_Functional_EVI_CNV} gives the following equivalent integral characterisation. For all $\mu,\eta\in\mathcal{C}\cap\dom\Enttau$, we have

\begin{align}\label{EQ.SSEC.L2W_EVI_Equivalence_4}
\frac{e^{\lambda\lc{}t-s\rc{}}}{2}\mathcal{W}_{\nabla}^{\log}\lc{}h_{t}(\mu),\eta\rc^{2}-\frac{1}{2}\mathcal{W}_{\nabla}^{\log}\lc{}h_{s}(\mu),\eta\rc^{2}\leq\int_{0}^{t-s}e^{\lambda r}dr\cdot \bigg(\hspace{-0.028975cm} \Ent(\eta,\tau)-\Ent\lc{}h_{t}(\mu),\tau\rc\bigg)
\end{align}

\noindent for all $0<s<t<\infty$. Note $4.2)$ in Theorem \ref{THM.QOT_Distance} ensures minimising geodesics and distance minimisers are identical. We assume $2.1)$ in Definition \ref{DFN.L2W_EVI_Equivalence}. Following $2)$ in Definition \ref{DFN.Metric_Functional_EVI_CNV}, $\Enttau$ is $\lambda$-convex if for all $\mu^{0},\mu^{1}\in\mathcal{C}\cap\dom\Enttau$ and $(\mu,w)\in\Geo\lc\mu^{0},\mu^{1}\rc$ s.t.~$\mu(t)\in\dom\Enttau$ for all $t\geq 0$, we have

\begin{align}\label{EQ.SSEC.L2W_EVI_Equivalence_5}
\Ent\lc\mu(t),\tau\rc\leq \lc{}1-t\rc\Ent\lc\mu^{0},\tau\rc{}+t\Ent\lc\mu^{1},\tau\rc{}-\frac{\lambda}{2}t\lc{}1-t\rc\mathcal{W}_{\nabla}^{\log}\lc\mu^{0},\mu^{1}\rc^{2}
\end{align}

\noindent for all $t\in [0,1]$. Following Definition \ref{DFN.Metric_Functional_EVI_CNV} and Remark \ref{REM.Metric_Functional_EVI_CNV}, let $\EVI_{\lambda}$, $\EVI_{\lambda}^{\int}$, resp.~$\CNV_{\lambda}$ reference the above equations accordingly. Note $\textrm{G}.3\rc$ in Definition \ref{DFN.L2W_EVI_Equivalence}, i.e.~$\BE_{\lambda}$, uses both Notation \ref{NTN.Wstar_Derivation_QG_HSG_Regularity} and $1.1)$ in Corollary \ref{COR.Wstar_Derivation_QG_HSG_Regularity}. Equation \ref{EQ.SSEC.L2W_EVI_Equivalence_2} shows we restrict in each step of the coarse graining process by replacing $\mathcal{C}\cap\dom\Enttau$ with $\mathcal{C}_{A_{j}}\lc\bar{\xi}_{j}\rc$. We thereby obtain local forms from global ones. For $\HI\rc$ in Definition \ref{DFN.L2W_EVI_Equivalence}, i.e.~$\HI_{\lambda}$, there exists no local form. Referenced equations do not use subscripts upon restriction.\par

%NEWPAGE
%NEWPAGE
%NEWPAGE

\pagebreak

%NEWPAGE
%NEWPAGE
%NEWPAGE

\begin{dfn}\label{DFN.L2W_EVI_Equivalence}
Let $\lambda\in\mathbb{R}$.

\begin{itemize}
\item[G.1)] We say that $\Enttau$ satisfies $\EVI_{\lambda}$ if for all finitely supported $\mathcal{C}\subset \big(\hspace{-0.03875cm} \SII(A),\mathcal{W}_{\nabla}^{\log}\big)$ s.t.~$\mathcal{C}\cap\dom\Enttau\neq\emptyset$, $h:[0,\infty)\times\mathcal{C}\cap\dom\Enttau\longrightarrow\mathcal{C}\cap\dom\Enttau$ is $\EVI_{\lambda}$-gradient flow of $\Enttau$ in $\mathcal{C}\cap\dom\Enttau$.

\item[G.2)] We say that $\Enttau$ satisfies $\lambda$-convexity if for all finitely supported $\mathcal{C}\subset \big(\hspace{-0.03875cm} \SII(A),\mathcal{W}_{\nabla}^{\log}\big)$ s.t.~$\mathcal{C}\cap\dom\Enttau\neq\emptyset$, we have

\begin{itemize}
\item[2.1)] $\big(\mathcal{C}\cap\dom\Enttau,\mathcal{W}_{\nabla}^{\log}\big)$ is a complete geodesic length-metric space, \phantom{\big)}

\item[2.2)] $\Enttau$ is $\lambda$-convex in $\mathcal{C}\cap\dom\Enttau$. \phantom{\big)}
\end{itemize}

\begin{reapply}
\end{reapply}

\item[G.3)] We say that $h$ satisfies $\BE_{\lambda}$ if for all finitely supported fixed states $\xi\in\SII(A)$ and $\mathcal{C}\subset \big(\hspace{-0.03875cm} \SII(A),\mathcal{W}_{\nabla}^{\log}\big)$ with fixed part $\xi$, we have

\begin{align}\label{EQ.DFN.L2W_EVI_Equivalence_1}
\dblv{}\mathcal{M}_{\sharp\mu}^{\frac{1}{2}}\nabla h_{t}(u)\dblv_{\omega}^{2}\leq e^{-2\lambda t}\dblv{}\mathcal{M}_{h_{t}(\sharp\mu)}^{\frac{1}{2}}\nabla u\dblv_{\omega}^{2} \tag{$\BE_{\lambda}$}
\end{align}

\begin{reapply}
\end{reapply}

\noindent for all $\mu\in\mathcal{C}\cap L^{2,\infty}(A_{\xi},\tau)^{\flat}$, $u\in\dom\nabla_{\hspace{-0.055cm} \xi}$ and $t\geq 0$.

\item[L.1)] We say that $\Enttau$ satisfies $\EVI_{\lambda}$ locally if for all finitely supported fixed states $\xi\in\SII(A)$, $h:[0,\infty)\times\mathcal{C}_{A_{j}}\lc\bar{\xi}_{j}\rc\longrightarrow\mathcal{C}_{A_{j}}\lc\bar{\xi}_{j}\rc$ is $\EVI_{\lambda}$-gradient flow of $\Enttau$ in $\mathcal{C}_{A_{j}}\lc\bar{\xi}_{j}\rc$ for a.e.~$j\in\mathbb{N}$.

\item[L.2)] We say that $\Enttau$ satisfies $\lambda$-convexity locally if for all finitely supported fixed states $\xi\in\SII(A)$, $\Enttau$ is $\lambda$-convex in $\mathcal{C}_{A_{j}}\lc\bar{\xi}_{j}\rc$ for a.e.~$j\in\mathbb{N}$.

\item[L.3)] We say that $h$ satisfies $\BE_{\lambda}$ locally if for all finitely supported fixed states $\xi\in\SII(A)$ and a.e.~$j\in\mathbb{N}$ in each case, we have

\begin{align}\label{EQ.DFN.L2W_EVI_Equivalence_2}
\dblv{}\mathcal{M}_{\sharp\mu}^{\frac{1}{2}}\nabla h_{t}(u)\dblv_{\omega}^{2}\leq e^{-2\lambda t}\dblv{}\mathcal{M}_{h_{t}(\sharp\mu)}^{\frac{1}{2}}\nabla u\dblv_{\omega}^{2} \tag{$\BE_{\lambda}^{\textrm{loc}}$}
\end{align}

\begin{reapply}
\end{reapply}

\noindent for all $\mu\in\mathcal{C}_{A_{j}}\lc\bar{\xi}_{j}\rc$, $u\in A_{j,\bar{\xi}_{j}}$ and $t\geq 0$. 

\item[H)] We say that $\Hess\Enttau$ has lower bound $\lambda$ if for all for all finitely supported fixed states $\xi\in\SII(A)$ and a.e.~$j\in\mathbb{N}$ in each case, we have

\begin{align}\label{EQ.DFN.L2W_EVI_Equivalence_3}
\Hess\hspace{-0.1525cm} \phantom{.}_{\mu}\Enttau(\eta)\geq\lambda g_{\mu}^{\bar{\xi}_{j}}(\eta,\eta) \tag{$\HI_{\lambda}$}
\end{align}

\begin{reapply}
\end{reapply}

\noindent for all $\mu\in\vartheta\lc\bar{\xi}_{j}\rc$ and $\eta\in I\big(\Delta_{\bar{\xi}_{j}}\big)^{\flat}$. We further call $\lambda$ a lower bound of the Hessian of quantum relative entropy and write $\Hess\Enttau\geq\lambda$.
\end{itemize}
\end{dfn}

Following results in Section \ref{SEC.QOT_DT}, Section \ref{SEC.QOT_AC} and Section \ref{SEC.L2W_Rel_Ent}, our discussion here and in Subsection \ref{SSEC.L2W_EVI_Ric} additionally uses below equations arising from applying the coarse graining process to quantum objects in the AF-$C^{*}$-setting. As such, they use compatibility with compression and finite-dimensional approximation. Let $\xi\in\SII(A)$ and $\mathcal{C}\subset (\SII(A),\mathcal{W}_{\nabla}^{\log})$ as above. We use $2.2)$ in Proposition \ref{PRP.Wstar_Derivation_QG_HSG_II}. The latter immediately reduces to Equation \ref{EQ.REM.Wstar_Derivation_QG_HSG_L2_1} in the square integrable case. For all $\mu^{0},\mu^{1}\in\SII(A)$, note $3)$ in Theorem \ref{THM.QOT_Distance} shows we have

\begin{align}\label{EQ.SSEC.L2W_EVI_Equivalence_6}
\mathcal{W}_{\nabla}^{\log}\lc{}h_{t}\lc\mu^{0}\rc{},h_{s}\lc\mu^{1}\rc\rc{}=\lim_{j\in\mathbb{N}}\hspace{0.025cm} \mathcal{W}_{\nabla}^{\log}\big(h_{t}\big(\bar{\mu}_{j}^{0}\big),h_{s}\big(\bar{\mu}_{j}^{1}\big)\big)
\end{align}

\noindent for all $t,s\geq 0$. For all $u\in L^{2}(A_{\xi},\tau)$, $2)$ and $4.1)$ in Corollary \ref{COR.Wstar_Derivation_QG_HSG_Regularity} together show we have $u\in\dom\nabla_{\hspace{-0.055cm} \xi}$ if and only if limits

\begin{align}\label{EQ.SSEC.L2W_EVI_Equivalence_7}
h_{t}(u)=\|.\|_{\nabla}\textrm{-}\lim_{j\in\mathbb{N}}\hspace{0.025cm} \pi_{\supp\xi_{j}}\lc{}h_{t}(u_{j})\rc{}=\|.\|_{\nabla}\textrm{-}\lim_{j\in\mathbb{N}}\hspace{0.025cm} \supp\xi_{j}h_{t}(u_{j})\supp\xi_{j}
\end{align}

\noindent exists for all $t\geq 0$. For all $\mu\in\mathcal{C}$, we see $3.2)$ in Theorem \ref{THM.QOT_Distance_AC_FS} shows we have

\begin{align}\label{EQ.SSEC.L2W_EVI_Equivalence_8}
\Ent\lc{}h_{t}(\mu),\tau\rc{}=\lim_{j\in\mathbb{N}}\hspace{0.025cm} \Ent\lc{}h_{t}\lc\mu_{j}\rc{},\tau\rc{}=\lim_{j\in\mathbb{N}}\hspace{0.025cm} \Ent\lc{}h_{t}\lc\bar{\mu}_{j}\rc{},\tau\rc{}
\end{align}

\noindent for all $t\in [0,\infty]$. Compare Equation \ref{EQ.SSEC.L2W_EVI_Equivalence_8} to Equation \ref{EQ.SSEC.L2W_Rel_Ent_AF_3}.
    
\begin{lem}\label{LEM.L2W_EVI_Equivalence}
Assume $A$ and $B$ are finite-dimensional. Let $\xi\in\SII(A)$ be a fixed state. For all $\lambda\in\mathbb{R}$, the following are equivalent:

\begin{itemize}
\item[1)] $h:[0,\infty)\times\mathcal{C}_{A}(\xi)\longrightarrow\mathcal{C}_{A}(\xi)$ is $\EVI_{\lambda}$-gradient flow of $\Enttau$ in $\mathcal{C}_{A}(\xi)$,

\item[2)] $\Enttau$ is $\lambda$-convex in $\mathcal{C}_{A}(\xi)$,

\item[3)] $\Enttau$ satisfies $\BE_{\lambda}$ for all $\mu\in\mathcal{C}_{A}(\xi)$, $u\in A_{\xi}$ and $t\geq 0$,

\item[4)] $\Enttau$ satisfies $\HI_{\lambda}$ for all $\mu\in\vartheta(\xi)$ and $\eta\in I(\Delta_{\xi})^{\flat}$.
\end{itemize}
\end{lem}
\begin{proof}
Let $\lambda\in\mathbb{R}$. We show $4)$ implies $1)$, then $1)$ implies $2)$, and finally $2)$ implies $4)$. We further show equivalence of $3)$ and $4)$. We thereby show our claim. Theorem \ref{THM.L2W_Log_Mean_NCDS_Fin_Hessian} lets us apply Theorem 2.2 in \cite{ART.Dan_Sav.2008.Classical_OT_GradFlow_DisConvex} to show $4)$ implies $1)$. We are motivated by analogous arguments in the proof of Theorem 4.5 in \cite{ART.Erb_Maa.2012.Discrete_OT_Ricci_Bounds}. However, Theorem \ref{THM.L2W_Log_Mean_NCDS_Fin_Hessian} replaces the essential steps in \cite{ART.Erb_Maa.2012.Discrete_OT_Ricci_Bounds} necessary to apply Theorem 2.2 in \cite{ART.Dan_Sav.2008.Classical_OT_GradFlow_DisConvex} here. Finally, further note Theorem \ref{THM.L2W_Log_Mean_NCDS_Fin_Hessian} lets us apply a standard semigroup interpolation argument as in the proof of Theorem 10.4 in \cite{ART.Car_Maa.2020.Quantum_OT_III} to show $3)$ implies $4)$.\par

%NEWPAGE
%NEWPAGE
%NEWPAGE

\pagebreak

%NEWPAGE
%NEWPAGE
%NEWPAGE

We show $4)$ implies $1)$. Using Corollary \ref{COR.QOT_Distance_AC_L2}, we know Theorem 3.3 in \cite{ART.Dan_Sav.2008.Classical_OT_GradFlow_DisConvex} implies $1)$ at once if $h:[0,\infty)\times\vartheta(\xi)\longrightarrow\vartheta(\xi)$ is $\EVI_{\lambda}$-gradient flow of $\Enttau$ in $\vartheta(\xi)$. For all smooth $\mu:[0,1]\longrightarrow\vartheta(\xi)$, set $\eta(t,s):=h_{ts}\lc\mu(t)\rc$ for all $t,s\geq 0$. Using the latter, Theorem 2.2 in \cite{ART.Dan_Sav.2008.Classical_OT_GradFlow_DisConvex} further shows $h:[0,\infty)\times\vartheta(\xi)\longrightarrow\vartheta(\xi)$ is $\EVI_{\lambda}$-gradient flow of $\Enttau$ in $\vartheta(\xi)$ if for all smooth $\mu:[0,1]\longrightarrow\vartheta(\xi)$, we have

\begin{align}\label{EQ.LEM.L2W_EVI_Equivalence_1}
\frac{1}{2}\frac{\partial}{\partial s}g_{\eta(t,s)}^{\xi}\lc\frac{\partial}{\partial t}\eta(t,s),\frac{\partial}{\partial t}\eta(t,s)\rc{}+\frac{\partial}{\partial t}\Enttau\lc\eta(t,s)\rc{} \leq -t\lambda g_{\eta(t,s)}^{\xi}\lc\frac{\partial}{\partial t}\eta(t,s),\frac{\partial}{\partial t}\eta(t,s)\rc{}
\end{align}

\noindent for all $t,s\geq 0$. Assume $4)$. Set $\varphi(t,s):=ts$ for all $t,s\geq 0$. Thus $\eta(t,s)=h_{\varphi(t,s)}\lc\mu(t)\rc$ in each case, hence $1)$ in Theorem \ref{THM.L2W_Log_Mean_NCDS_Fin_Hessian} yields

\begin{align}\label{EQ.LEM.L2W_EVI_Equivalence_2}
\frac{1}{2}\frac{\partial}{\partial s}g_{\eta(t,s)}^{\xi}\lc\frac{\partial}{\partial t}\eta(t,s),\frac{\partial}{\partial t}\eta(t,s)\rc{}+\frac{\partial}{\partial t}\Enttau\lc\eta(t,s)\rc{} & = -t\Hess\hspace{-0.1525cm} \phantom{.}_{\eta(t,s)}\Enttau\lc\frac{\partial}{\partial t}\eta(t,s)\rc{}
\end{align}

\noindent for all $t,s>0$. We extend to $t,s\geq 0$ by continuity. We apply H$_{\lambda}$ to the right-hand side of Equation \ref{EQ.LEM.L2W_EVI_Equivalence_2} and obtain Equation \ref{EQ.LEM.L2W_EVI_Equivalence_1}. Altogether, $4)$ implies $1)$.\par
Note $3)$ in Proposition \ref{PRP.Metric_Functional_EVI_CNV} shows $1)$ implies $2)$ since we know $\SII(A)\subset\dom\Enttau$ by finite-dimensionality. We show $2)$ implies $4)$. Assume $2)$. Let $\mu:[0,1]\longrightarrow\vartheta(\xi)$ be a minimising geodesic. Set $\mu:=\mu(0)$ and $\eta:=\dot{\mu}(0)$. Equation \ref{EQ.PRP.L2W_Log_Mean_NCDS_Fin_Hessian_4} states

\begin{align}\label{EQ.LEM.L2W_EVI_Equivalence_3}
\Hess\hspace{-0.1525cm} \phantom{.}_{\mu}\Enttau(\eta)=\restr{0.925}{\frac{d^{2}}{dr^{2}}}{r=0}\Enttau\lc\mu(r)\rc{}. 
\end{align}

We write out both differential quotients on the right-hand side of Equation \ref{EQ.LEM.L2W_EVI_Equivalence_3}. The latter equation therefore shows $\Hess\hspace{-0.1525cm} \phantom{.}_{\mu}\Enttau(\eta)$ equals

\begin{align}\label{EQ.LEM.L2W_EVI_Equivalence_4}
\lim_{t\downarrow 0}\hspace{0.025cm} \lim_{s\downarrow 0}\hspace{0.025cm} t^{-1}s^{-1}\bigg(\hspace{-0.028975cm} \Ent\lc\mu\lc{}t+s\rc{},\tau\rc{}-\Ent\lc\mu(t),\tau\rc{}-\Ent\lc\mu(s),\tau\rc{}+\Ent(\mu,\tau)\bigg).
\end{align}

\noindent Equation \ref{EQ.LEM.L2W_EVI_Equivalence_9} below lets us estimate parenthesis terms in Equation \ref{EQ.LEM.L2W_EVI_Equivalence_4}. Using the latter, we directly calculate lower Hessian bounds. For all $t,s\in (0,1)$ s.t.~$t+s<1$, set $\rho(r):=\mu\lc\lc{}t+s\rc{}r\rc$ and $\nu(r):=\mu\lc\lc{}t+s\rc\lc{}1-r\rc\rc$ for all $r\in[0,1]$. Segments of minimising geodesics reparametrised to constant speed on the unit interval as per Remark \ref{REM.Energy_Functional_Reparametrisation} are minimising geodesics. We obtain $\rho\in\Geo\lc\mu,\mu\lc{}t+s\rc\rc$ and $\nu\in\Geo\lc\mu\lc{}t+s\rc{},\mu\rc$ in each case, where we suppress canonical vector fields along minimising geodesics as per $1)$ in Proposition \ref{PRP.RM_III} in our notation here. We estimate by applying $\CNV_{\lambda}$ to the latter. We require additional considerations.\par

%NEWPAGE
%NEWPAGE
%NEWPAGE

\pagebreak

%NEWPAGE
%NEWPAGE
%NEWPAGE

Set $h(t,s):=t\lc{}t+s\rc^{-1}$ for all $t,s>0$. Using $h(t,s)=1-h(s,t)$, get $\mu(t)=\rho\lc{}h(t,s)\rc$ and $\mu(s)=\nu\lc{}1-h(s,t)\rc{}=\nu\lc{}h(t,s)\rc$ in each case. Let $t,s\in (0,1)$ s.t.~$t+s<1$. Minimising geodesics have $t$-a.e.~constant speed by $1)$ in Proposition \ref{PRP.QOT_Distance_Geodesics}. Since we have $\lc{}t+s\rc\eta=\dot{\rho}(0)$ by construction, symmetry of distances and constant speed of minimising geodesics let us calculate

\begin{align}\label{EQ.LEM.L2W_EVI_Equivalence_5}
\mathcal{W}_{\nabla}^{\log}\lc\mu\lc{}t+s\rc{},\mu\rc^{2}=\mathcal{W}_{\nabla}^{\log}\lc\mu,\mu\lc{}t+s\rc\rc^{2}=\lc{}t+s\rc^{2}\cdot g_{\mu}^{\xi}(\eta,\eta).
\end{align}

For all $t,s>0$ s.t.~$t+s<1$, $\CNV_{\lambda}$ and Equation \ref{EQ.LEM.L2W_EVI_Equivalence_5} let us calculate the following two estimates. First, $\CNV_{\lambda}$ to $\mu(t)=\rho\lc{}h(t,s)\rc$ in order to estimate

\begin{align}\label{EQ.LEM.L2W_EVI_Equivalence_6}
\Ent\lc\mu(t),\tau\rc\leq \big(1-h(t,s)\big)\cdot \Ent(\mu,\tau)+h(t,s)\cdot \Ent\lc\mu\lc{}t+s\rc{},\tau\rc{}-\frac{\lambda}{2}ts\cdot g_{\mu}^{\xi}(\eta,\eta).
\end{align}

\noindent Secondly, we apply $\CNV_{\lambda}$ to $\mu(s)=\nu\lc{}h(t,s)\rc$ in order to estimate

\begin{align}\label{EQ.LEM.L2W_EVI_Equivalence_7}
\Ent\lc\mu(s),\tau\rc\leq \big(1-h(t,s)\big)\cdot \Ent\lc\mu\lc{}t+s\rc{},\tau\rc{}+h(t,s)\cdot \Ent(\mu,\tau)-\frac{\lambda}{2}ts\cdot g_{\mu}^{\xi}(\eta,\eta).
\end{align}

\noindent We moreover add Equation \ref{EQ.LEM.L2W_EVI_Equivalence_6} and Equation \ref{EQ.LEM.L2W_EVI_Equivalence_7} to obtain

\begin{align}\label{EQ.LEM.L2W_EVI_Equivalence_8}
\Ent\lc\mu(t),\tau\rc{}+\Ent\lc\mu(s),\tau\rc\leq\Ent(\mu,\tau)+\Ent\lc\mu\lc{}t+s\rc{},\tau\rc{}-\lambda ts\cdot g_{\mu}^{\xi}(\eta,\eta)
\end{align}

\noindent in each case.\par
For all $t,s\in (0,1)$ s.t.~$t+s<1$, Equation \ref{EQ.LEM.L2W_EVI_Equivalence_8} implies

\begin{align}\label{EQ.LEM.L2W_EVI_Equivalence_9}
\lambda ts\cdot g_{\mu}^{\xi}(\eta,\eta)\leq\Ent\lc\mu\lc{}t+s\rc{},\tau\rc{}-\Ent\lc\mu(t),\tau\rc{}-\Ent\lc\mu(s),\tau\rc{}+\Ent(\mu,\tau)
\end{align}

\noindent by rearranging terms accordingly. Assuming $t+s<1$ in Equation \ref{EQ.LEM.L2W_EVI_Equivalence_4}, which we may do since we consider a double limit, Equation \ref{EQ.LEM.L2W_EVI_Equivalence_9} lets us estimate parenthesis terms in Equation \ref{EQ.LEM.L2W_EVI_Equivalence_4}. We therefore calculate

\begin{align*}
\Hess\hspace{-0.1525cm} \phantom{.}_{\mu}\Enttau(\eta) & = \restr{0.925}{\frac{d^{2}}{dr^{2}}}{r=0}\Enttau\lc\mu(r)\rc \phantom{\Bigg)} \\
& =\lim_{t\downarrow 0}\hspace{0.025cm} \lim_{s\downarrow 0}\hspace{0.025cm} t^{-1}s^{-1}\bigg(\hspace{-0.028975cm} \Ent\lc\mu\lc{}t+s\rc{},\tau\rc{}-\Ent\lc\mu(t),\tau\rc{}-\Ent\lc\mu(s),\tau\rc{}+\Ent(\mu,\tau)\bigg) \phantom{\Bigg)} \\
& \geq\lambda g_{\mu}^{\xi}(\eta,\eta). \phantom{\Bigg)}
\end{align*}

\noindent The above calculation shows $2)$ implies $4)$. Corollary \ref{COR.RM} ensures we have sufficient minimising geodesics in $\vartheta(\xi)$. Altogether, we obtain a chain of implications as claimed.\par

%NEWPAGE
%NEWPAGE
%NEWPAGE

\pagebreak

%NEWPAGE
%NEWPAGE
%NEWPAGE

We show equivalence of $3)$ and $4)$. Assume $3)$. Let $\mu\in\vartheta(\xi)$ and $x\in I(\Delta_{\xi})$. Set

\begin{align}\label{EQ.LEM.L2W_EVI_Equivalence_10}
l(t):=e^{2\lambda t}\dblv{}\mathcal{M}_{\sharp\mu}^{\frac{1}{2}}\nabla h_{t}(x)\dblv_{\omega}^{2},\ r(t):=\dblv{}\mathcal{M}_{h_{t}(\sharp\mu)}^{\frac{1}{2}}\nabla x\dblv_{\omega}^{2}
\end{align}

\noindent for all $t\geq 0$. Applying $\BE_{\lambda}$ to Equation \ref{EQ.LEM.L2W_EVI_Equivalence_10} shows $r(t)\geq l(t)$ for all $t\geq 0$ and therefore $r'(0)\geq l'(0)$ as well. We directly verify

\begin{align}\label{EQ.LEM.L2W_EVI_Equivalence_11}
l'(0)=-2g_{\mu}^{\xi}\lc\mathfrak{F}_{\mu}\lc\Delta x\rc^{\flat},\mathfrak{F}_{\mu}(x)^{\flat}\rc{}+2\lambda g_{\mu}^{\xi}\lc\mathfrak{F}_{\mu}(x)^{\flat},\mathfrak{F}_{\mu}(x)^{\flat}\rc{}.
\end{align}

Applying Proposition \ref{PRP.Differential_Inverse} to $\mathfrak{F}=\lc\mathfrak{F}^{-1}\rc^{-1}$ and further using Lemma \ref{LEM.L2W_Log_Mean_NCDS_Fin_EL}, we thus argue as in the proof of $2)$ in Theorem \ref{THM.L2W_Log_Mean_NCDS_Fin_Hessian} in order to calculate

\begin{align}\label{EQ.LEM.L2W_EVI_Equivalence_12}
r'(0)=-\bigg\langle\Lambda_{\mu}^{*}\lc\sharp\Theta\big(\mu,\mathfrak{F}_{\mu}(x)^{\flat}\big),\sharp\Theta\big(\mu,\mathfrak{F}_{\mu}(x)^{\flat}\big)\rc{},\Delta\mu\bigg\rangle_{\tau}.
\end{align}

\noindent Proposition \ref{PRP.L2W_Log_Mean_NCDS_Fin_Hessian} shows

\begin{align*}
\Hess\hspace{-0.1525cm} \phantom{.}_{\mu}\Enttau\lc\mathfrak{F}_{\mu}(x)^{\flat}\rc{} =& -\bigg\langle\frac{1}{2}\Lambda_{\mu}^{*}\lc\sharp\Theta\big(\mu,\mathfrak{F}_{\mu}(x)^{\flat}\big),\sharp\Theta\big(\mu,\mathfrak{F}_{\mu}(x)^{\flat}\big)\rc{},\Delta\mu\bigg\rangle_{\tau} \phantom{\Bigg)} \\
& +g_{\mu}^{\xi}\lc\mathfrak{F}_{\mu}\lc\Delta x\rc^{\flat},\mathfrak{F}_{\mu}(x)^{\flat}\rc{}. \phantom{\Bigg)}
\end{align*}

\noindent Using $r'(0)\geq l'(0)$ and the above identity, Equation \ref{EQ.LEM.L2W_EVI_Equivalence_11} and Equation \ref{EQ.LEM.L2W_EVI_Equivalence_12} imply

\begin{align}\label{EQ.LEM.L2W_EVI_Equivalence_13}
\Hess\hspace{-0.1525cm} \phantom{.}_{\mu}\Enttau\lc\mathfrak{F}_{\mu}(x)^{\flat}\rc\geq\lambda g_{\mu}^{\xi}\lc\mathfrak{F}_{\mu}(x)^{\flat},\mathfrak{F}_{\mu}(x)^{\flat}\rc{}
\end{align}

\noindent by rearranging terms accordingly. Note $\mathfrak{F}_{\mu}$ in Equation \ref{EQ.LEM.L2W_EVI_Equivalence_13} is of no consequence by $1)$ in Proposition \ref{PRP.RM_I}. Thus Equation \ref{EQ.LEM.L2W_EVI_Equivalence_13} shows $\Enttau$ satisfies $\HI_{\lambda}$. Get $4)$.\par
Assume $4)$. It suffices to consider $\mu\in\vartheta(\xi)$ by Corollary \ref{COR.QOT_Distance_AC_L2}, as well as $x\in I(\Delta_{\xi})$ by $1)$ in Corollary \ref{COR.AF_NCD_FC_III} and symmetry of $\nabla$. Let $U:=\lset (t,s)\in (0,\infty)\times (0,\infty)\ \vset\ t>s\rset$. Set $\varphi_{0}(t,s):=s$ and $\varphi_{1}(t,s):=t-s$, as well as

\begin{align}\label{EQ.LEM.L2W_EVI_Equivalence_14}
\eta(t,s):=h_{\varphi_{0}(t,s)}(\mu)=h_{s}(\mu),\ X(t,s):=h_{\varphi_{1}(t,s)}(x)=h_{t-s}(x)
\end{align}

\noindent for all $(t,s)\in U$. Thus $\frac{\partial}{\partial s}\varphi_{0}=-\frac{\partial}{\partial s}\varphi_{1}$, hence $2)$ in Theorem \ref{THM.L2W_Log_Mean_NCDS_Fin_Hessian} yields

\begin{align}\label{EQ.LEM.L2W_EVI_Equivalence_15}
\frac{1}{2}\frac{\partial}{\partial s}\dblv{}\mathcal{M}_{h_{s}(\sharp\mu)}^{\frac{1}{2}}\nabla h_{t-s}(u)\dblv_{\omega}^{2}=\Hess\hspace{-0.1525cm} \phantom{.}_{h_{s}(\mu)}\Enttau\lc\mathfrak{F}_{h_{s}(\mu)}\big(h_{t-s}(x)\big)^{\flat}\rc{}
\end{align}

\noindent for all $(t,s)\in U$. If $t>0$, then we extend to all $s\in [0,t]$ by continuity.\par

%NEWPAGE
%NEWPAGE
%NEWPAGE

\pagebreak

%NEWPAGE
%NEWPAGE
%NEWPAGE

For all $t>0$, set 

\begin{align}\label{EQ.LEM.L2W_EVI_Equivalence_16}
l(s):=e^{-2\lambda s}\cdot \dblv{}\mathcal{M}_{h_{s}(\sharp\mu)}^{\frac{1}{2}}\nabla h_{t-s}(x)\dblv_{\omega}^{2}
\end{align}

\noindent for all $s\in [0,t]$. Applying Equation \ref{EQ.LEM.L2W_EVI_Equivalence_15} to derivatives of terms in Equation \ref{EQ.LEM.L2W_EVI_Equivalence_16} lets us calculate

\begin{align*}
l'(s) & = 2e^{-2\lambda s}\cdot \lc\frac{1}{2}\frac{\partial}{\partial s}\dblv{}\mathcal{M}_{h_{s}(\sharp\mu)}^{\frac{1}{2}}\nabla h_{t-s}(x)\dblv_{\omega}^{2}-\lambda \dblv{}\mathcal{M}_{h_{s}(\sharp\mu)}^{\frac{1}{2}}\nabla h_{t-s}(x)\dblv_{\omega}^{2}\rc \phantom{\Bigg)} \\
& = 2e^{-2\lambda s}\cdot \lc\Hess\hspace{-0.1525cm} \phantom{.}_{h_{s}(\mu)}\Enttau\lc\mathfrak{F}_{h_{s}(\sharp\mu)}\big(h_{t-s}(x)\big)^{\flat}\rc{}-\lambda \dblv{}\mathcal{M}_{h_{s}(\sharp\mu)}^{\frac{1}{2}}\nabla h_{t-s}(x)\dblv_{\omega}^{2}\rc{} \phantom{\Bigg)}
\end{align*}

\noindent in each case. Further note $\mathfrak{F}_{h_{s}(\mu)}=\nabla^{*}\mathcal{M}_{h_{s}(\mu)}\nabla$ on $I(\Delta_{\xi})$. We obtain

\begin{align}\label{EQ.LEM.L2W_EVI_Equivalence_17}
\dblv{}\mathcal{M}_{h_{s}(\sharp\mu)}^{\frac{1}{2}}\nabla h_{t-s}(x)\dblv_{\omega}^{2}=g_{h_{s}(\mu)}^{\xi}\lc\mathfrak{F}_{h_{s}(\mu)}\big(h_{t-s}(x)\big)^{\flat},\mathfrak{F}_{h_{s}(\mu)}\big(h_{t-s}(x)\big)^{\flat}\rc{}
\end{align}

\noindent for all $s\in [0,t]$. Then applying Equation \ref{EQ.LEM.L2W_EVI_Equivalence_17} to its preceding calculation yields

\begin{align*}
l'(s)=2e^{-2\lambda s}\cdot \Bigg( & \Hess\hspace{-0.1525cm} \phantom{.}_{h_{s}(\mu)}\Enttau\lc\mathfrak{F}_{h_{s}(\mu)}h_{t-s}(x)\rc \phantom{\Bigg)} \\
&-\lambda g_{h_{s}(\mu)}^{\xi}\lc\mathfrak{F}_{h_{s}(\mu)}\big(h_{t-s}(x)\big)^{\flat},\mathfrak{F}_{h_{s}(\mu)}\big(h_{t-s}(x)\big)^{\flat}\rc\Bigg). \phantom{\Bigg)}
\end{align*}

If $t>0$, then the above calculation shows $\HI_{\lambda}$ implies $l'(s)\geq 0$ for all $s\in [0,t]$. For all $t\geq 0$, we therefore have $l(t)\geq l(0)$. Using the latter, Equation \ref{EQ.LEM.L2W_EVI_Equivalence_16} implies

\begin{align}\label{EQ.LEM.L2W_EVI_Equivalence_18}
\dblv{}\mathcal{M}_{\sharp\mu}^{\frac{1}{2}}\nabla h_{t}(x)\dblv_{\omega}^{2}\leq e^{-2\lambda t}\dblv{}\mathcal{M}_{h_{t}(\sharp\mu)}^{\frac{1}{2}}\nabla x\dblv_{\omega}^{2}
\end{align}

\noindent for all $t\geq 0$. Equation \ref{EQ.LEM.L2W_EVI_Equivalence_18} shows $\Enttau$ satisfies $\BE_{\lambda}$ at once. Get $3)$. Altogether, get equivalence of $3)$ and $4)$.
\end{proof}

\begin{thm}\label{THM.L2W_EVI_Equivalence}
Let $(\phi,\bpsi,\gamma,\nabla)$ be noncommutative differential structure for tracial AF-$C^{*}$-algebras $(A,\tau)$ and $(B,\omega)$ in the logarithmic mean setting. For all $\lambda\in\mathbb{R}$, the conditions in Definition \ref{DFN.L2W_EVI_Equivalence} are equivalent.
\end{thm}
\begin{proof}
Let $\lambda\in\mathbb{R}$. Note Equation \ref{EQ.SSEC.L2W_EVI_Equivalence_2} at once shows Lemma \ref{LEM.L2W_EVI_Equivalence} implies equivalence of $\textrm{L}.1\rc$, $\textrm{L}.2\rc$, $\textrm{L}.3\rc$ and $\HI\rc$. It suffices to show equivalence of $\textrm{G}.1\rc$ and $\textrm{L}.1\rc$, of $\textrm{G}.2\rc$ and $\textrm{L}.2\rc$, as well as of $\textrm{G}.3\rc$ and $\textrm{L}.3\rc$ each. We do so by passing from global to local properties and vice versa by means of the coarse graining process. We consider the following fixed but arbitrary. Let $\xi\in\SII(A)$ be a finitely supported fixed state. Let $\mathcal{C}\subset (\SII(A),\mathcal{W}_{\nabla}^{\log})$ be finitely supported with fixed part $\xi$ s.t.~$\mathcal{C}\cap\dom\Enttau\neq\emptyset$. We test all statements on the latter without loss of generality.\par
We show equivalence of $\textrm{G}.1\rc$ and $\textrm{L}.1\rc$. Assume $\textrm{G}.1\rc$. For a.e.~$j\in\mathbb{N}$, note $\mathcal{C}_{A}\lc\bar{\xi}_{j}\rc$ is finitely supported s.t.~$\bar{\xi}_{j}\in\mathcal{C}_{A}\lc\bar{\xi}_{j}\rc\cap\dom\Enttau\neq\emptyset$. If the latter is satisfied, then $\textrm{G}.1\rc$ implies $h:[0,\infty)\times\mathcal{C}_{A}\lc\bar{\xi}_{j}\rc\cap\dom\Enttau\longrightarrow\mathcal{C}_{A}\lc\bar{\xi}_{j}\rc\cap\dom\Enttau$ is $\EVI_{\lambda}$-gradient flow of $\Enttau$ in $\mathcal{C}_{A}\lc\bar{\xi}_{j}\rc$. Moreover, $2)$ in Theorem \ref{THM.QOT_Distance} yields isometric inclusion

\begin{align}\label{EQ.THM.L2W_EVI_Equivalence_1}
\big(\mathcal{C}_{A_{j}}\lc\bar{\xi}_{j}\rc{},\mathcal{W}_{\nabla}^{\log}\big)\subset\big(\mathcal{C}_{A}\lc\bar{\xi}_{j}\rc\cap\dom\Enttau,\mathcal{W}_{\nabla}^{\log}\big)
\end{align}

\noindent in each case. For a.e.~$j\in\mathbb{N}$, Equation \ref{EQ.THM.L2W_EVI_Equivalence_1} reduces $\EVI_{\lambda}$ as per Equation \ref{EQ.SSEC.L2W_EVI_Equivalence_3} from $\mathcal{C}_{A}\lc\bar{\xi}_{j}\rc$ to $\mathcal{C}_{A_{j}}\lc\bar{\xi}_{j}\rc$, i.e.~we see $h:[0,\infty)\times\mathcal{C}_{A_{j}}\lc\bar{\xi}_{j}\rc\longrightarrow\mathcal{C}_{A_{j}}\lc\bar{\xi}_{j}\rc$ is $\EVI_{\lambda}$-gradient flow of $\Enttau$ in $\mathcal{C}_{A_{j}}\lc\bar{\xi}_{j}\rc$ in each case. Get $\textrm{L}.1\rc$.\par
Assume $\textrm{L}.1\rc$. Let $\mu,\eta\in\mathcal{C}\cap\dom\Enttau$. For a.e.~$j\in\mathbb{N}$, note $\textrm{L}.1\rc$ shows $\EVI_{\lambda}^{\int}$ as per Equation \ref{EQ.SSEC.L2W_EVI_Equivalence_4} for $\bar{\mu}_{j},\bar{\eta}_{j}\in\mathcal{C}_{A_{j}}\lc\bar{\xi}_{j}\rc$. Using the latter, Equation \ref{EQ.SSEC.L2W_EVI_Equivalence_6} and Equation \ref{EQ.SSEC.L2W_EVI_Equivalence_8} let us estimate

\begin{align*}
& \frac{e^{\lambda\lc{}t-s\rc{}}}{2}\mathcal{W}_{\nabla}^{\log}\lc{}h_{t}(\mu),\eta\rc^{2}-\frac{1}{2}\mathcal{W}_{\nabla}^{\log}\lc{}h_{s}(\mu),\eta\rc^{2} \phantom{\vstretch{1.15}{\Bigg)}} \\
=& \lim_{j\in\mathbb{N}}\hspace{0.025cm} \frac{e^{\lambda\lc{}t-s\rc{}}}{2}\mathcal{W}_{\nabla}^{\log}\lc{}h_{t}\lc\bar{\mu}_{j}\rc{},\bar{\eta}_{j}\rc^{2}-\frac{1}{2}\mathcal{W}_{\nabla}^{\log}\lc{}h_{s}\lc\bar{\mu}_{j}\rc{},\bar{\eta}_{j}\rc^{2} \phantom{\vstretch{1.15}{\Bigg)}} \\
\leq& \lim_{j\in\mathbb{N}}\hspace{0.025cm} \int_{0}^{t-s}e^{\lambda r}dr\cdot \bigg(\hspace{-0.028975cm} \Ent\lc\bar{\eta}_{j},\tau\rc{}-\Ent\lc{}h_{t}\lc\bar{\mu}_{j}\rc{},\tau\rc\bigg) \phantom{\vstretch{1.15}{\Bigg)}} \\
=& \int_{0}^{t-s}e^{\lambda r}dr\cdot \bigg(\hspace{-0.028975cm} \Ent(\eta,\tau)-\Ent\lc{}h_{t}(\mu),\tau\rc\bigg) \phantom{\vstretch{1.15}{\Bigg)}}
\end{align*}

\noindent for all $0<s<t<\infty$. The above calculation readily lifts $\EVI_{\lambda}^{\int}$ as per Equation \ref{EQ.SSEC.L2W_EVI_Equivalence_4} from $\lset\mathcal{C}_{A_{j}}\lc\bar{\xi}_{j}\rc\rset_{j\in\mathbb{N}}$ to $\mathcal{C}\cap\dom\Enttau$, i.e.~we see $h:[0,\infty)\times\mathcal{C}\cap\dom\Enttau\longrightarrow\mathcal{C}\cap\dom\Enttau$ is $\EVI_{\lambda}$-gradient flow of $\Enttau$ in $\mathcal{C}\cap\dom\Enttau$ in each case. Get $\textrm{G}.1\rc$. Altogether, get equivalence of $\textrm{G}.1\rc$ and $\textrm{L}.1\rc$.\par
We show equivalence of $\textrm{G}.2\rc$ and $\textrm{L}.2\rc$. Assume $\textrm{G}.2\rc$. We then reduce from global to local property as above. For a.e.~$j\in\mathbb{N}$, we see $2.1)$ in Proposition \ref{PRP.QOT_Distance_Geodesics} shows

\begin{align}\label{EQ.THM.L2W_EVI_Equivalence_2}
\textrm{Geo}_{j}\big(\bar{\mu}_{j}^{0},\bar{\mu}_{j}^{1}\big)\subset\Geo\big(\bar{\mu}_{j}^{0},\bar{\mu}_{j}^{1}\big)
\end{align}

\noindent for all $\mu^{0},\mu^{1}\in\mathcal{C}_{A_{j}}\lc\bar{\xi}_{j}\rc$. For a.e.~$j\in\mathbb{N}$, Equation \ref{EQ.THM.L2W_EVI_Equivalence_1} and Equation \ref{EQ.THM.L2W_EVI_Equivalence_2} reduce $\CNV_{\lambda}$ as per Equation \ref{EQ.SSEC.L2W_EVI_Equivalence_5} from $\mathcal{C}_{A}\lc\bar{\xi}_{j}\rc$ to $\mathcal{C}_{A_{j}}\lc\bar{\xi}_{j}\rc$, i.e.~we see $\Enttau$ is $\lambda$-convex in $\mathcal{C}_{A_{j}}\lc\bar{\xi}_{j}\rc$ in each case. Get $\textrm{L}.2\rc$. Assume $\textrm{L}.2\rc$. We show $\textrm{G}.2\rc$ by using equivalence of $\textrm{G}.1\rc$ and $\textrm{L}.1\rc$ to apply $3)$ in Proposition \ref{PRP.Metric_Functional_EVI_CNV}. We show $2.1)$ in Definition \ref{DFN.L2W_EVI_Equivalence}. Let $\mu^{0},\mu^{1}\in\mathcal{C}\cap\dom\Enttau$. Since they are at finite distance, Theorem \ref{THM.QOT_Minimiser_Approximation} shows there exists $(\mu,w)\in\Geo\lc\mu^{0},\mu^{1}\rc$ approximated in finite dimensions by a sequence $\lc\mu^{j},w^{j}\rc_{j\geq m}\subset\Geo_{0}$. For a.e.~$j\in\mathbb{N}$, $\textrm{L}.2\rc$ shows $\CNV_{\lambda}$ as per Equation \ref{EQ.SSEC.L2W_EVI_Equivalence_5} for the minimising geodesic $\mu^{j}:[0,1]\longrightarrow\mathcal{C}_{A_{j}}\lc\bar{\xi}_{j}\rc$.\par

%NEWPAGE
%NEWPAGE
%NEWPAGE

\pagebreak

%NEWPAGE
%NEWPAGE
%NEWPAGE

Upon passing to a subsequence converging to $(\mu,w)$ in $\Admnullone$, we consider $\CNV_{\lambda}$ in each case and take limits in $j\in\mathbb{N}$ on both sides. Equation \ref{EQ.SSEC.L2W_EVI_Equivalence_6} and Equation \ref{EQ.SSEC.L2W_EVI_Equivalence_8} show they exist. We therefore have $C>0$ s.t.~

\begin{align}\label{EQ.THM.L2W_EVI_Equivalence_3}
\Ent\lc\mu^{j}(t),\tau\rc\leq C\cdot \max\left\{\Ent\big(\bar{\mu}_{j}^{0},\tau\big),\Ent\big(\bar{\mu}_{j}^{1},\tau\big)\right\}
\end{align}

\noindent for a.e.~$j\in\mathbb{N}$. Equation \ref{EQ.THM.L2W_EVI_Equivalence_3} shows Corollary \ref{COR.QOT_Distance_AC_FS} applies. The latter in turn shows $\mu(t)\in\mathcal{C}\cap\dom\Enttau$ for all $t\in [0,1]$. Ergo $2.1)$ as claimed. If $\textrm{G}.1\rc$ holds, then $\textrm{G}.2\rc$ follows by $3)$ in Proposition \ref{PRP.Metric_Functional_EVI_CNV}. Lemma \ref{LEM.L2W_EVI_Equivalence} shows $\textrm{L}.2\rc$ implies $\textrm{L}.1\rc$. The latter is equivalent to $\textrm{G}.1\rc$. Get $\textrm{G}.2\rc$. Altogether, get equivalence of $\textrm{G}.2\rc$ and $\textrm{L}.2\rc$.\par
We show equivalence of $\textrm{G}.3\rc$ and $\textrm{L}.3\rc$. Assume $\textrm{G}.3\rc$. We reduce from global to local property as above. For a.e.~$j\in\mathbb{N}$, Equation \ref{EQ.THM.L2W_EVI_Equivalence_1} reduces $\BE_{\lambda}$ from $\mathcal{C}_{A}\lc\bar{\xi}_{j}\rc$ to $\mathcal{C}_{A_{j}}\lc\bar{\xi}_{j}\rc$. Get $\textrm{L}.3\rc$. Assume $\textrm{L}.3\rc$. Using $3)$ in Proposition \ref{PRP.AF_Cstar_Trace_Dualisation_II}, $2.2)$ in Proposition \ref{PRP.Wstar_Derivation_QG_HSG_II}, which reduces to Equation \ref{EQ.REM.Wstar_Derivation_QG_HSG_L2_1} here, and Equation \ref{EQ.SSEC.L2W_EVI_Equivalence_7} show

\begin{align}\label{EQ.THM.L2W_EVI_Equivalence_4}
h_{t}\lc\sharp\mu\rc{}=\s\textrm{-}\lim_{j\in\mathbb{N}}\hspace{0.025cm} h_{t}\lc\sharp\bar{\mu}_{j}\rc{},\ h_{t}(u)=\|.\|_{\nabla}\textrm{-}\lim_{j\in\mathbb{N}}\hspace{0.025cm} \pi_{\supp\xi_{j}}\lc{}h_{t}(u_{j})\rc{}
\end{align}

\noindent for all $\mu\in\mathcal{C}\cap L^{2,\infty}(A_{\xi},\tau)^{\flat}$, $u\in\dom\nabla_{\hspace{-0.055cm} \xi}$ and $t\geq 0$. Using Lemma \ref{LEM.FC_SR}, for which we ensure necessary and suitable uniform boundedness by $2.1)$ in Proposition \ref{PRP.AF_Cstar_Trace_Dualisation_II}, the left-hand side of Equation \ref{EQ.THM.L2W_EVI_Equivalence_4} implies

\begin{align}\label{EQ.THM.L2W_EVI_Equivalence_5}
\mathcal{M}_{h_{t}(\sharp\mu)}^{\frac{1}{2}}=\s\textrm{-}\lim_{j\in\mathbb{N}}\hspace{0.025cm} \mathcal{M}_{h_{t}(\sharp\bar{\mu}_{j})}^{\frac{1}{2}}
\end{align}

\noindent for all $\mu\in\mathcal{C}\cap L^{2,\infty}(A_{\xi},\tau)^{\flat}$ and $t\geq 0$ \lc{}cf.~Remark \ref{REM.SR_Equivalence} and Remark \ref{REM.FC_SR}\rc{}.\par
Finally, we estimate. For a.e.~$j\in\mathbb{N}$, note $\textrm{L}.3\rc$ shows $\BE_{\lambda}$ for all $\mu\in\mathcal{C}_{A_{j}}\lc\bar{\xi}_{j}\rc$, $u_{j}\in A_{j,\bar{\xi}_{j}}$ and $t\geq 0$. Using the latter, the right-hand side of Equation \ref{EQ.THM.L2W_EVI_Equivalence_4} and Equation \ref{EQ.THM.L2W_EVI_Equivalence_5} let us estimate

\begin{align*}
\dblv{}\mathcal{M}_{\sharp\mu}^{\frac{1}{2}}\nabla h_{t}(u)\dblv_{\omega}^{2} & = \lim_{j\in\mathbb{N}}\hspace{0.025cm} \dblv{}\mathcal{M}_{\sharp\bar{\mu}_{j}}^{\frac{1}{2}}\nabla h_{t}(u_{j})\dblv_{\omega}^{2} \phantom{\Bigg)} \\
& \leq\lim_{j\in\mathbb{N}}\hspace{0.025cm} e^{-2\lambda t}\dblv{}\mathcal{M}_{h_{t}(\sharp\bar{\mu}_{j})}^{\frac{1}{2}}\nabla u_{j}\dblv_{\omega}^{2} \phantom{\Bigg)} \\
& = e^{-2\lambda t}\dblv{}\mathcal{M}_{h_{t}(\sharp\mu)}^{\frac{1}{2}}\nabla u\dblv_{\omega}^{2} \phantom{\Bigg)} 
\end{align*}

\noindent for all $\mu\in\mathcal{C}\cap L^{2,\infty}(A_{\xi},\tau)^{\flat}$, $u\in\dom\nabla_{\hspace{-0.055cm} \xi}$ and $t\geq 0$. The above calculation lifts $\BE_{\lambda}$ from $\lset\mathcal{C}_{A_{j}}\lc\bar{\xi}_{j}\rc\rset_{j\in\mathbb{N}}$ to $\mathcal{C}\cap\dom\Enttau$. Get $\textrm{G}.3\rc$. Altogether, get equivalence of $\textrm{G}.3\rc$ and $\textrm{L}.3\rc$.
\end{proof}

%NEWPAGE
%NEWPAGE
%NEWPAGE

\pagebreak

%NEWPAGE
%NEWPAGE
%NEWPAGE

\begin{cor}\label{COR.L2W_EVI_Equivalence}
Let $\Enttau$ satisfy $\EVI_{\lambda}$ for $\lambda\in\mathbb{R}$. Let $S:[0,\infty)\times\SII(A)\longrightarrow\SII(A)$ be a continuous semigroup s.t.~$S_{t}:\SII(A)\longrightarrow\SII(A)$ is $w^{*}$-continuous for all $t\geq 0$. If we know $S:[0,\infty)\times\mathcal{C}\cap\dom\Enttau\longrightarrow\mathcal{C}\cap\dom\Enttau$ is $\EVI_{\lambda}$-gradient flow of $\Enttau$ in $\mathcal{C}\cap\dom\Enttau$ for all finitely supported $\mathcal{C}\subset (\SII(A),\mathcal{W}_{\nabla}^{\log})$ s.t.~$\mathcal{C}\cap\dom\Enttau\neq\emptyset$, then $S=h$.
\end{cor}
\begin{proof}
Let $\lambda\in\mathbb{R}$ as per hypothesis. For all finitely supported fixed states $\xi\in\SII(A)$, we know $S:[0,\infty)\times\mathcal{C}_{A}\lc\bar{\xi}_{j}\rc\cap\dom\Enttau\longrightarrow\mathcal{C}_{A}\lc\bar{\xi}_{j}\rc\cap\dom\Enttau$ is $\EVI_{\lambda}$-gradient flow of $\Enttau$ in $\mathcal{C}_{A}\lc\bar{\xi}_{j}\rc\cap\dom\Enttau\neq\emptyset$ for a.e.~$j\in\mathbb{N}$. Uniqueness of $\EVI_{\lambda}$-gradient flows \cite{ART.Mur_Sav.2020.Classical_OT_EVI} implies $S_{t}(\mu)=h_{t}(\mu)$ for all $\mu\in\mathcal{C}_{A}\lc\bar{\xi}_{j}\rc$ and $t\geq 0$ in each case. Diagram \ref{EQ.SSEC.QOT_QIT_Encoding_16} for $K=\dom\Enttau$ furthermore shows 

\begin{align}\label{EQ.COR.L2W_EVI_Equivalence_1}
\SII(A)=\overline{\bigcup_{\xi\in\SII(A)}\bigcup_{j\in\mathbb{N}}\mathcal{C}_{A}\lc\bar{\xi}_{j}\rc{}}
\end{align}

\noindent in $w^{*}$-topology. However, each non-vanishing $\bar{\xi}_{j}\in\SII(A)$ is a finitely supported fixed state itself. Equation \ref{EQ.SSEC.L2W_EVI_Equivalence_2} therefore implies we may reduce to finitely supported $\xi\in\SII(A)$ in the first product on the right-hand side of Equation \ref{EQ.COR.L2W_EVI_Equivalence_1}. The latter therefore implies $S_{t}=h_{t}$ for all $t\geq 0$ by $w^{*}$-continuity.
\end{proof}

%%%%%%%%%%%%%%%%%%%
%%% SUBSECTION %%%%
%%%%%%%%%%%%%%%%%%%

\subsection{Lower Ricci bounds}\label{SSEC.L2W_EVI_Ric}
        
We define lower Ricci bounds of quantum gradients using conditions in Definition \ref{DFN.L2W_EVI_Equivalence}. Theorem \ref{THM.L2W_EVI_Equivalence} ensures all such conditions are indeed equivalent. Lower Ricci bounds are given by $\lambda$-convexity of quantum information along minimising geodesics measured by quantum relative entropy. Their non-spatiality is further visible beyond the given description in terms of quantum information theory \cite{BK.Nie_Chu.2000.Quantum_Computation_Information} as follows. Assuming strictly positive lower Ricci bounds and finitely supported fixed part, Theorem \ref{THM.QOT_Distance_AC_Rel_Ent} classifies accessibility components of normal states with finite quantum relative entropy using fixed parts. Using the latter, we show strictly positive lower Ricci bounds determine energy-information trade-offs pa\-ra\-metrised by lower bounds on quantum noise.\par
Moreover, we extend remaining results in \cite{ART.Car_Maa.2014.Quantum_OT_I}\cite{ART.Car_Maa.2017.Quantum_OT_II}\cite{ART.Car_Maa.2020.Quantum_OT_III} as claimed. Theorem \ref{THM.L2W_Ric} gives sufficient conditions for lower Ricci bounds of direct sum quantum gradients. Apart from generalised discrete derivatives over finite sets, Theorem \ref{THM.L2W_Ric} applies to all fundamental example classes in Subsection \ref{SSEC.QOT_DT_BSP}. Theorem \ref{THM.L2W_Ric_FI} derives functional inequalities and their chain of implications. Note all terms correcting for non-ergodicity are given by quantum relative entropy evaluated on finitely supported fixed parts since conditioning is determined by the underlying metric geometry as restriction to finitely supported accessibility components.

%%%%%%%%%%%%%
%%% PART %%%%
%%%%%%%%%%%%%

\subsubsection*{Definition and energy-information trade-offs from quantum noise}

We use quantum relative entropy as measure of quantum information. Assume the logarithmic mean setting. Note our discussion concerning quantum optimal transport as transport of quantum information in Subsection \ref{SSEC.QOT_CG}.\par
Theorem \ref{THM.L2W_EVI_Equivalence} ensures we may use any condition in Definition \ref{DFN.L2W_EVI_Equivalence} as equivalent characterisation. Definition \ref{DFN.L2W_EVI_Ric} gives lower Ricci bounds of quantum gradients. We view them as measurement convexity of quantum information. Specifically, note $\CNV_{\lambda}$ as per Equation \ref{EQ.SSEC.L2W_EVI_Equivalence_5} shows lower Ricci bounds are given by $\lambda$-convexity of quantum information along minimising geodesics measured by quantum relative entropy. In light of our discussion in Subsection \ref{SSEC.QOT_CG}, this is a non-spatial description of $\lambda$-convexity but not one we have related to computation. If we do have noncommutative analogues of displacement interpolations \cite{ART.CorEra_McCan_Sch.2001.Displacement_Convexity_Riemannian}\cite{ART.McCan.1997.Displacement_Convexity_Local}, then precomposition with quantum channels as per Remark \ref{REM.L2W_EVI_Ric} transforms such measurement convexity in the Schr\"odinger picture into convexity under measurement of observables in the Heisenberg picture. We may view such channels as computations of a quantum computer \cite{ART.Ash_Geo_Nor.2014.Quantum_Simulation}\cite{BK.Nie_Chu.2000.Quantum_Computation_Information} to get a computational interpretation of lower Ricci bounds. Unfortunately, existence results are unknown to us. We instead show strictly positive lower Ricci bounds determine energy-information trade-offs pa\-ra\-metrised by lower bounds on quantum noise. Lower resolution implies lower energy paths. We avoid spatial interpretations of the classical case \cite{ART.Dol_Naz_Sav.2009.Generalised_OT}\cite{ART.Lot_Vil.2009.Classical_OT_Ricci_Bounds}.\par
Let $(\phi,\bpsi,\gamma,\nabla)$ be noncommutative differential structure for tracial AF-$C^{*}$-algebras $(A,\tau)$ and $(B,\omega)$ in the logarithmic mean setting.

\begin{dfn}\label{DFN.L2W_EVI_Ric}
We say that $\lambda\in\mathbb{R}$ is a lower Ricci bound on $\SII(A)$ given $(\phi,\bpsi,\gamma,\nabla)$ if any condition in Definition \ref{DFN.L2W_EVI_Equivalence} is satisfied for $\lambda$. We further write $\Ric\nabla\geq\lambda$ and say that $\lambda$ is a lower Ricci bound of $\nabla$.
\end{dfn}

\begin{rem}\label{REM.L2W_EVI_Ric}
We know lower Ricci bounds \cite{ART.Lot_Vil.2009.Classical_OT_Ricci_Bounds}\cite{ART.Stu.2006.Classical_OT_I}\cite{ART.Stu.2006.Classical_OT_II} for optimal transport on continuous geometries \cite{BK.Amb_Gig_Sav.2008.Classical_OT_GradFlow}\cite{ART.Dol_Naz_Sav.2009.Generalised_OT}\cite{BK.Vil.2009.OT} are displacement convexity of relative entropy in the sense of McCann \cite{ART.CorEra_McCan_Sch.2001.Displacement_Convexity_Riemannian}\cite{ART.McCan.1997.Displacement_Convexity_Local}. Let $\lc{}X,g\rc$ be a complete connected smooth Riemannian manifold and $d\vol$ the Riemannian density on $X$ \lc{}cf.~pp.299-306 in \cite{BK.Lan.1995.Riemannian_Manifolds}\rc{}. Get metric measure space $\lc{}X,d^{g},d\vol\rc$ with $d^{g}$ given by $g$ and exponential map $\exp:TX\longrightarrow X$ on $TX$ by the Hopf-Rinow theorem \lc{}cf.~pp.216-224 in \cite{BK.Lan.1995.Riemannian_Manifolds}\rc{}. If $\mu:[0,1]\longrightarrow\mathcal{S}^{\NI}\lc{}C_{0}(X)\rc$ is a minimising geodesic for the classical $L^{2}$-Wasserstein distance \cite{ART.Dol_Naz_Sav.2009.Generalised_OT}, then Theorem 3.2 and Corollary 5.2 in \cite{ART.CorEra_McCan_Sch.2001.Displacement_Convexity_Riemannian} imply there exists a $d\vol$-a.e.~differentiable map $u:X\longrightarrow\mathbb{R}$ and homotopy $F:[0,1]\times X\longrightarrow X$ defined by

\begin{align}\label{EQ.REM.L2W_EVI_Ric_1}
F(t)(x):=\textrm{exp}_{x}\lc{}-t\cdot \textrm{grad}_{x} u\rc{}
\end{align}

\noindent for all $x\in X$ and $t\in [0,1]$ s.t.~its dualisation $F^{*}:[0,1]\times C_{0}(X)\longrightarrow C_{0}(X)$ in the second variable satisfies

\begin{align}\label{EQ.REM.L2W_EVI_Ric_2}
\mu(t)(h)=\int_{X}h(x)d\mu(t)=\int_{X}h(x)dF(t)_{\sharp}\lc\mu(0)\rc{}=\int_{X}h\lc{}F(t)(x)\rc{}d\mu=\mu\lc{}F(t)^{*}(h)\rc{}
\end{align}

\noindent for all $h\in C_{0}(X)$ and $t\in [0,1]$. Homotopies as per Equation \ref{EQ.REM.L2W_EVI_Ric_1} extend the pointwise case in \cite{ART.McCan.1999.Polar_Factorisation_Riemannian} and are called displacement interpolations generalising terminology in the Euclidian case \cite{ART.McCan.1997.Displacement_Convexity_Local}. Functionals satisfying strong convexity, resp.~a weaker form as per $2)$ in Definition \ref{DFN.Metric_Functional_EVI_CNV}, along interpolation lines determined by Equation \ref{EQ.REM.L2W_EVI_Ric_1} are called displacement convex. Equation \ref{EQ.REM.L2W_EVI_Ric_2} is a push-forward measure representation transforming the Eulerian picture into the Lagrangian one \lc{}cf.~pp.224-225 in \cite{ART.CorEra_McCan_Sch.2001.Displacement_Convexity_Riemannian}\rc{}.\par

%NEWPAGE
%NEWPAGE
%NEWPAGE

\pagebreak

%NEWPAGE
%NEWPAGE
%NEWPAGE

Noncommutative analogues of Equation \ref{EQ.REM.L2W_EVI_Ric_1} are given by deforming the identity operator using quantum channels. Indeed, precomposition with any continuous function is unital and positive, ergo completely positive by commutativity \lc{}cf.~Corollary IV.3.5 in \cite{BK.Tak.1979.OpAlg_I}\rc{}. Following Remark \ref{REM.Wstar_CP_Markovian_SG}, we see analogues of homotopies as per Equation \ref{EQ.REM.L2W_EVI_Ric_1} in the AF-$C^{*}$-setting are given by $\varphi:[0,1]\times \BII(A)\longrightarrow\BII(A)$ s.t.~$\varphi(t)\in\BII(A)$ is a quantum channel for all $t\geq 0$ and $\varphi(0)=I$. If $\mu:[0,1]\longrightarrow\mathcal{S}^{\NI}(A)$ is a minimising geodesic, then we want such deformation $\varphi:[0,1]\times\BII(A)\longrightarrow\BII(A)$ of the identity operator s.t.~

\begin{align}\label{EQ.REM.L2W_EVI_Ric_3}
\mu(t)(x)=\varphi(t)^{*}(\mu)(x)=\mu\lc\varphi(t)(x)\rc{}
\end{align}

\noindent for all $x\in A$ and $t\in [0,1]$. Passing from points $x\in X$ to observables formally replaces the Lagrangian with the Heisenberg picture as we replace vectors of real numbers with bounded operators \lc{}cf.~pp.xix-xx in \cite{BK.Tak.2003.OpAlg_II}\rc{}. Since each $\varphi(t)$ in Equation \ref{EQ.REM.L2W_EVI_Ric_3} moreover describes a state change due to measurement \cite{BK.Nie_Chu.2000.Quantum_Computation_Information}\cite{ART.Dav_Lew.1970.Wstar_Quantum_Probability}\cite{ART.Kra.1971.State_Changes}\cite{BK.Ohy_Pet.1993.Rel_Ent}, i.e.~each transmits a corresponding change of information encoded in states of the given quantum system \cite{BK.Nie_Chu.2000.Quantum_Computation_Information} providing physical realisation of a quantum computer \cite{ART.Ash_Geo_Nor.2014.Quantum_Simulation}\cite{ART.Bur_Lad_Nic.2023.QC_Spin_Overview}, Equation \ref{EQ.REM.L2W_EVI_Ric_3} shows measurement convexity in the Schr\"odinger picture as per Definition \ref{DFN.L2W_EVI_Ric} is convexity under measurement of observables in the Heisenberg picture.
\end{rem}

We show conditioning in Definition \ref{DFN.L2W_Ric_FI} is determined by the underlying metric geometry as restriction to finitely supported accessibility components. Assuming strictly positive lower Ricci bounds and finitely supported fixed part, Theorem \ref{THM.QOT_Distance_AC_Rel_Ent} classifies accessibility components of normal states with finite quantum relative entropy using fixed parts. Strictly lower Ricci bounds avoid assumptions on spectral gaps required by Theorem \ref{THM.QOT_Distance_AC_L2}. We use Corollary \ref{COR.QOT_Distance_AC_Rel_Ent} to formulate energy-information trade-offs.

\begin{thm}\label{THM.QOT_Distance_AC_Rel_Ent}
Let $(\phi,\bpsi,\gamma,\nabla)$ be noncommutative differential structure for tracial AF-$C^{*}$-algebras $(A,\tau)$ and $(B,\omega)$ in the logarithmic mean setting. Assume $\Ric\nabla\geq\lambda>0$. If $\xi\in\SII(A)$ is a finitely supported fixed state, then

\begin{itemize}
\item[1)] $\mathcal{C}_{A}^{\Ent}(\xi):=\mathcal{C}_{A}(\xi)\cap\dom\Enttau=\Fix_{A}(\xi)\cap\dom\Enttau\neq\emptyset$,

\item[2)] $\mathcal{W}_{\nabla\vert\mathcal{C}_{A}^{\Ent}(\xi)\times\mathcal{C}_{A}^{\Ent}(\xi)}^{\log}$ is finite and $\mathcal{C}_{A}^{\Ent}(\xi)\subset\mathcal{C}_{A}(\xi)$ is a geodesic subspace.
\end{itemize}
\end{thm}
\begin{proof}
Let $\xi\in\SII(A)$ be a finitely supported fixed state. Let $\mathcal{C}\subset (\SII(A),\mathcal{W}_{\nabla}^{\log})$ be finitely supported with fixed part $\xi$ s.t.~$\mathcal{C}\cap\dom\Enttau\neq\emptyset$. Note $\Enttau:\mathcal{C}\cap\dom\Enttau\longrightarrow (-\infty,\infty)$ has complete sublevels in $\mathcal{W}_{\nabla}^{\log}$-topology by $3)$ in Proposition \ref{PRP.L2W_EVI_Equivalence}. We see $\Enttau$ has a unique minimum $\mu_{\min}\in\mathcal{C}\cap\dom\Enttau$ by $2)$ in Proposition \ref{PRP.Metric_Functional_EVI_CNV}. Theorem \ref{THM.L2W_Log_Mean_NCDS} yields $\mu_{\min}=\xi$ by minimality. Ergo $\mathcal{C}=\mathcal{C}_{A}(\xi)$ by uniqueness of fixed states. Using the latter in each case, we have $1)$ by decomposing $\Fix_{A}(\xi)$ as per Equation \ref{EQ.SSEC.QOT_QIT_Encoding_13}. Theorem \ref{THM.L2W_EVI_Equivalence} shows $2)$ by $2.1)$ in Definition \ref{DFN.L2W_EVI_Equivalence}.
\end{proof}

%NEWPAGE
%NEWPAGE
%NEWPAGE

\pagebreak

%NEWPAGE
%NEWPAGE
%NEWPAGE

\begin{cor}\label{COR.QOT_Distance_AC_Rel_Ent}
Assume $\Ric\nabla\geq\lambda>0$. If $\xi\in\SII(A)$ is a finitely supported fixed state and $\mathcal{C}\subset (\SII(A),\mathcal{W}_{\nabla}^{\log})$ is finitely supported with fixed part $\xi$, then either $\mathcal{C}=\mathcal{C}_{A}(\xi)$ or $\mathcal{C}\cap\dom\Enttau=\emptyset$.
\end{cor}
\begin{proof}
If $\mathcal{C}\cap\dom\Enttau\neq\emptyset$, then our proof of $1)$ in Theorem \ref{THM.QOT_Distance_AC_Rel_Ent} shows $\mathcal{C}=\mathcal{C}_{A}(\xi)$. If $\mathcal{C}\cap\dom\Enttau=\emptyset$, then $\xi\notin\mathcal{C}$ since $\xi\in\dom\Enttau$. As such, $\mathcal{C}\neq\mathcal{C}_{A}(\xi)$ in this case.
\end{proof}

We use strictly positive lower Ricci bounds in order to determine energy-information trade-offs parametrised by lower bounds on quantum noise. Lower resolution, i.e.~higher lower bounds on quantum noise, implies lower energy paths. We give one trade-off for each finitely supported accessibility component having non-trivial intersection with the domain of quantum relative entropy. Assume $\Ric\nabla\geq\lambda>0$. Let $\xi\in\SII(A)$ be a finitely supported fixed state. Let $\mathcal{C}\subset (\SII(A),\mathcal{W}_{\nabla}^{\log})$ be finitely supported with fixed part $\xi$ s.t.~$\mathcal{C}\cap\dom\Enttau\neq\emptyset$. Corollary \ref{COR.QOT_Distance_AC_Rel_Ent} shows $\mathcal{C}=\mathcal{C}_{A}(\xi)$.\par
Following our maximum entropy production principle in Subsection \ref{SSEC.L2W_Log_Mean_QNE}, we view quantum Laplacians as generators of quantum noise evolution. Thus applying heat flow to a state for $t>0$ introduces quantum noise. We use resolutions to define lower bounds on quantum noise. We define minimal and maximal resolution on $\mathcal{C}_{A}^{\Ent}(\xi)$ by setting

\begin{align}\label{EQ.SSEC.L2W_EVI_Ric_1}
-\infty<\rho_{A}^{\min}(\xi):=\inf_{\mu\in\mathcal{C}_{A}^{\Ent}(\xi)}\hspace{0.025cm} \Ent(\mu,\tau)<\rho_{A}^{\max}(\xi):=\sup_{\mu\in\mathcal{C}_{A}^{\Ent}(\xi)}\hspace{0.025cm} \Ent(\mu,\tau)\leq\infty.
\end{align}

\noindent Get $\rho_{A}^{\min}(\xi)=\Ent(\xi,\tau)$ by $2)$ in Theorem \ref{THM.L2W_Log_Mean_NCDS}. We say that $\rho\in \lc\rho_{A}^{\min}(\xi),\rho_{A}^{\max}(\xi)\rc$ is a resolution. For all $\rho\in \lc\rho_{A}^{\min}(\xi),\rho_{A}^{\max}(\xi)\rc$, we define the resolution surface and resolution sublevel of $\rho$ by setting

\begin{align}\label{EQ.SSEC.L2W_EVI_Ric_2}
\mathrm{R}_{A}^{\Ent}(\xi,\rho):=\lc\restr{0.925}{\Enttau}{\mathcal{C}_{A}(\xi)}\rc^{-1}(\rho),\ \mathrm{S}_{A}^{\Ent}(\xi,\rho):=\bigcup_{\rho'\leq\rho}\lc\restr{0.925}{\Enttau}{\mathcal{C}_{A}(\xi)}\rc^{-1}(\rho').
\end{align}

\noindent Each $\mathrm{R}_{A}^{\Ent}(\xi,\rho)$ is determined by all states for which $2)$ in Theorem \ref{THM.L2W_Log_Mean_NCDS} prohibits gain in quantum information above $\rho$ by reducing quantum noise. We thereby use resolutions to define lower bounds on quantum noise. Of course, each $\mathrm{S}_{A}^{\Ent}(\xi,\rho)$ is a sublevel of $\Enttau:\mathcal{C}_{A}(\xi)\longrightarrow (-\infty,\infty]$. For all $\rho\in \lc\rho_{A}^{\min}(\xi),\rho_{A}^{\max}(\xi)\rc$, $3)$ in Proposition \ref{PRP.L2W_EVI_Equivalence} and $\CNV_{\lambda}$ as per Equation \ref{EQ.SSEC.L2W_EVI_Equivalence_5} show $\mathrm{S}_{A}^{\Ent}(\xi,\rho)\subset\mathcal{C}_{A}^{\Ent}(\xi)$ is a geodesic subspace and therefore a complete geodesic length-metric space.\par
Let $\rho\in \lc\rho_{A}^{\min}(\xi),\rho_{A}^{\max}(\xi)\rc$. We obtain metric-functional system $\lc\mathrm{S}_{A}^{\Ent}(\xi,\rho),\mathcal{W}_{\nabla}^{\log},\Enttau\rc$ equipped with continuous semigroup $h:[0,\infty)\times\mathrm{S}_{A}^{\Ent}(\xi,\rho)\longrightarrow\mathrm{S}_{A}^{\Ent}(\xi,\rho)$. We define the maximal lower Ricci bound of $\nabla$ given $\rho$ by setting

\begin{align}\label{EQ.SSEC.L2W_EVI_Ric_3}
\lambda_{A}^{\max}(\xi,\rho):=\sup_{\lambda'\geq\lambda}\hspace{0.025cm} \lambda',
\end{align}

\noindent where the supremum on the right-hand side of Equation \ref{EQ.SSEC.L2W_EVI_Ric_3} is taken over all $\lambda'\geq\lambda$ s.t.~$h:[0,\infty)\times\mathrm{S}_{A}^{\Ent}(\xi,\rho)\longrightarrow\mathrm{S}_{A}^{\Ent}(\xi,\rho)$ is $\EVI_{\lambda'}$-gradient flow of $\Enttau$ in $\mathrm{S}_{A}^{\Ent}(\xi,\rho)$.\par
For all $\mu,\eta\in\mathrm{S}_{A}^{\Ent}(\xi,\rho)$, $1)$ in Proposition \ref{PRP.Metric_Functional_EVI_CNV} shows

\begin{align}\label{EQ.SSEC.L2W_EVI_Ric_4}
\mathcal{W}_{\nabla}^{\log}\lc{}h_{t}(\mu),h_{t}(\eta)\rc\leq e^{-t\lambda_{A}^{\max}(\xi,\rho)}\mathcal{W}_{\nabla}^{\log}(\mu,\eta)
\end{align}

\noindent for all $t\geq 0$. Moreover, $3)$ in Proposition \ref{PRP.Metric_Functional_EVI_CNV} shows $\Enttau$ is $\lambda_{A}^{\max}(\xi,\rho)$-convex in $\mathcal{C}_{A}^{\Ent}(\xi)$. Note $\EVI_{\lambda}$ as per Equation \ref{EQ.SSEC.L2W_EVI_Equivalence_3} shows $\lambda_{A}^{\max}(\xi,\rho)\geq\lambda>0$. Equation \ref{EQ.SSEC.L2W_EVI_Ric_4} further shows introducing quantum noise relative to $\rho$, i.e.~$t>0$, implies lower energy paths.\par
We obtain monotonically decreasing map $\lambda_{A}^{\max}\lc\xi,\blank\rc{}:\lc\rho_{A}^{\min}(\xi),\rho_{A}^{\max}(\xi)\rc\longrightarrow [\lambda,\infty)$. As such, Equation \ref{EQ.SSEC.L2W_EVI_Ric_4} shows a decrease in resolution, i.e.~from $\rho$ to $\rho'<\rho$, implies lower energy paths if $\lambda_{A}^{\max}\lc\xi,\rho'\rc{}>\lambda_{A}^{\max}(\xi,\rho)$. For all $\rho\in \lc\rho_{A}^{\min}(\xi),\rho_{A}^{\max}(\xi)\rc$, $\mu^{0},\mu^{1}\in\mathrm{S}_{A}^{\Ent}(\xi,\rho)$ and $(\mu,w)\in\Geo\lc\mu^{0},\mu^{1}\rc$ s.t.~$\mu(t)\in\dom\Enttau$ for all $t\geq 0$, we have

\begin{align}\label{EQ.SSEC.L2W_EVI_Ric_5}
\Ent\lc\mu(t),\tau\rc\leq\rho-\frac{\lambda_{A}^{\max}(\xi,\rho)}{2}t\lc{}1-t\rc\cdot \mathcal{W}_{\nabla}^{\log}\lc\mu^{0},\mu^{1}\rc^{2}
\end{align}

\noindent for all $t\in [0,1]$. Equation \ref{EQ.SSEC.L2W_EVI_Ric_5} shows we obtain lower energy paths since energy costs of introducing and reducing quantum noise along minimising geodesics are lowered if resolutions are lowered.\par
Equation \ref{EQ.SSEC.L2W_EVI_Ric_8} gives the energy-information trade-off for $\mathcal{C}_{A}^{\Ent}(\xi)$ parametrised by lower bounds on quantum noise, i.e.~by resolutions. We define strictly monotonically increasing map $\textrm{diam}_{A}^{\xi}:\lc\rho_{A}^{\min}(\xi),\rho_{A}^{\max}(\xi)\rc\longrightarrow (0,\infty)$ by setting

\begin{align}\label{EQ.SSEC.L2W_EVI_Ric_6}
\textrm{diam}_{A}^{\xi}(\rho):=\sqrt{\frac{8}{\lambda_{A}^{\max}(\xi,\rho)}\lc\rho-\rho_{\min}^{A}(\xi)\rc{}}
\end{align}

\noindent for all $\rho\in \lc\rho_{A}^{\min}(\xi),\rho_{A}^{\max}(\xi)\rc$. For all $\lc\rho_{A}^{\min}(\xi),\rho_{A}^{\max}(\xi)\rc$, Equation 3.18a in the statements on asymptotic behaviour as $t\rightarrow\infty$ as per Theorem 3.5 in \cite{ART.Mur_Sav.2020.Classical_OT_EVI} for $\lambda>0$ shows

\begin{align}\label{EQ.SSEC.L2W_EVI_Ric_7}
\mathcal{W}_{\nabla}^{\log}(\mu,\xi)\leq\sqrt{\frac{2}{\lambda_{A}^{\max}(\xi,\rho)}\lc\rho-\rho_{\min}^{A}(\xi)\rc{}}
\end{align}

\noindent for all $\mu\in\mathrm{S}_{A}^{\Ent}(\xi,\rho)$. Equation \ref{EQ.SSEC.L2W_EVI_Ric_7} is the Talagrand inequality $\TW_{\lambda}$ for $\lambda\geq 0$ as per $3)$ in Definition \ref{DFN.L2W_Ric_FI}. Using triangle inequality, Equation \ref{EQ.SSEC.L2W_EVI_Ric_6} and Equation \ref{EQ.SSEC.L2W_EVI_Ric_7} let us calculate

\begin{align}\label{EQ.SSEC.L2W_EVI_Ric_8}
\textrm{diam}\ \mathrm{S}_{A}^{\Ent}(\xi,\rho)\leq\textrm{diam}_{A}^{\xi}(\rho)
\end{align}

\noindent for all $\rho\in \lc\rho_{A}^{\min}(\xi),\rho_{A}^{\max}(\xi)\rc$. Equation \ref{EQ.SSEC.L2W_EVI_Ric_8} gives, on $\mathcal{C}_{A}^{\Ent}(\xi)$, a global description of our above discussion. Lower resolution implies lower energy paths since energy costs of introducing and reducing quantum noise along minimising geodesics are lowered if resolutions are lowered. Equation \ref{EQ.SSEC.L2W_EVI_Ric_8} formulates an energy-information trade-off since lower energy paths are obtained by introducing quantum noise.

%%%%%%%%%%%%%
%%% PART %%%%
%%%%%%%%%%%%%

\subsubsection*{Sufficient conditions}

Theorem \ref{THM.L2W_Ric} gives sufficient conditions for lower Ricci bounds of direct sum quantum gradients. We adapt the proof of Theorem 10.9 in \cite{ART.Car_Maa.2020.Quantum_OT_III} to the AF-$C^{*}$-setting by means of the coarse graining process. Corollary \ref{COR.L2W_Ric}, which uses Lemma \ref{LEM.L2W_Ric}, is essential for this. Lemma \ref{LEM.L2W_Ric} provides detailed proof of a necessary extension of Theorem 5 in \cite{ART.Hia_Pet.2012.Quasi_Entropy_I} to all finite-dimensional $C^{*}$-algebras. Example \ref{BSP.L2W_Ric_Wstar_Derivation_QG_Dynamic_System} and Example \ref{BSP.L2W_Ric_Wstar_Derivation_QG_Intertwining_Clifford} derive non-negative, resp.~strictly positive lower Ricci bounds.\par
We consider the following direct sum noncommutative differential structures. Let $m\in\mathbb{N}$. Let $(A,\tau)$ be a tracial AF-$C^{*}$-algebra and $(\phi,\bpsi,\gamma)$ an AF-$A$-bimodule structure on $A$. For all $n\in\lset{}1,\ldots,m\rset$, let $\partial_{n}:A_{0}\longrightarrow L^{2}(A,\tau)$ be a quantum gradient. We view each as noncommutative directional derivative. Proposition \ref{PRP.Wstar_Derivation_QG_DS_I} yields their direct sum quantum gradient $\nabla^{\oplus}=\oplus_{n=1}^{m}\partial_{n}:A_{0}\longrightarrow L^{2}\lc\oplus_{n=1}^{m}A,\oplus_{n=1}^{m}\tau\rc$. Set

\begin{align}\label{EQ.SSEC.L2W_EVI_Ric_9}
\lc\phi^{m},\bpsi^{m},\gamma^{m},\nabla^{\oplus}\rc{}:=\lc\oplus_{n=1}^{m}\phi,\oplus_{n=1}^{m}\bpsi,\oplus_{n=1}^{m}\gamma,\oplus_{n=1}^{m}\partial_{n}\rc{}    
\end{align}

\noindent for tracial AF-$C^{*}$-algebras $(A,\tau)$ and $(B,\omega):=\lc\oplus_{n=1}^{m}A,\oplus_{n=1}^{m}\tau\rc$ in the logarithmic mean setting. We use Notation \ref{NTN.DS}. For details on direct sum quantum gradients, we refer to Subsection \ref{SSEC.NCDS_NCG_QG}.

\begin{ntn}\label{NTN.L2W_Ric}
We write $\mathcal{I}_{A,A}^{\log}$ for the quasi-entropy of the canonical AF-$A$-bimodule structure on $A$ in the logarithmic mean setting. Compare to Notation \ref{NTN.QE_AF}. For all $n\in\mathbb{N}$ and tracial AF-$C^{*}$-algebra $\lc{}M_{n}(\mathbb{C}),\tr_{n}\rc$ using non-normalised canonical trace, we further write $\mathcal{I}_{n,\tr}^{\log}$ for the quasi-entropy of the canonical AF-$M_{n}(\mathbb{C})$-bimodule structure on $M_{n}(\mathbb{C})$ in the logarithmic mean setting
\end{ntn}

\begin{lem}\label{LEM.L2W_Ric}
Assume $A$ is finite-dimensional. If $\varphi:A\longrightarrow A$ is a completely positive trace-preserving map, then we have

\begin{align}\label{EQ.LEM.L2W_Ric_1}
\mathcal{I}_{A,A}^{\log}\lc\varphi\lc\sharp\mu\rc^{\flat},\varphi\lc\sharp\eta\rc^{\flat},\varphi\lc\sharp w\rc^{\flat}\rc\leq\mathcal{I}_{A,A}^{\log}(\mu,\eta,w)
\end{align}

\noindent for all $\mu,\eta\in A_{+}^{*}$ and $w\in A^{*}$.
\end{lem}
\begin{proof}
Let $n,q\in\mathbb{N}$. We consider $\lc{}M_{n}(\mathbb{C}),\tr_{n}\rc$ and $\lc{}M_{q}(\mathbb{C}),\tr_{q}\rc$ both as finite-dimensional tracial AF-$C^{*}$-algebras, as well as Hilbert spaces using GNS-inner product of their respective non-normalised canonical traces. Let $\beta:M_{n}(\mathbb{C})\longrightarrow M_{q}(\mathbb{C})$ be completely positive trace-preserving. Theorem 5 in \cite{ART.Hia_Pet.2012.Quasi_Entropy_I} shows we have 

\begin{align}\label{EQ.LEM.L2W_Ric_2}
\beta^{*}\circ\mathcal{D}_{\beta(X),\beta(Y)}\circ\beta\leq\mathcal{D}_{X,Y}
\end{align}

\noindent in $\BII\lc{}M_{n}(\mathbb{C})\rc$ for all $X,Y>0$ in $M_{n}(\mathbb{C})$. Equation \ref{EQ.LEM.L2W_Ric_2} shows

\begin{align}\label{EQ.LEM.L2W_Ric_3}
\mathcal{I}_{q,\tr}^{\log}\lc\beta(X)^{\flat},\beta\lc{}Y\rc^{\flat},\beta(U)^{\flat}\rc\leq\mathcal{I}_{n,\tr}^{\log}\lc{}X^{\flat},Y^{\flat},U^{\flat}\rc{}
\end{align}

\noindent for all $X,Y>0$ in $M_{n}(\mathbb{C})$ and $U\in M_{q}(\mathbb{C})$. We suppress sharp operators in all equations here. We show our claim by reducing Equation \ref{EQ.LEM.L2W_Ric_1} to Equation \ref{EQ.LEM.L2W_Ric_3}.\par

%NEWPAGE
%NEWPAGE
%NEWPAGE

\pagebreak

%NEWPAGE
%NEWPAGE
%NEWPAGE

Note $\mathcal{I}_{A,A}^{\log}$ is jointly convex and l.s.c.~in $w^{*}$-topology by $1)$ in Theorem \ref{THM.QE_AF}. We scale with strictly positive constants as in the proof of Proposition \ref{PRP.Energy_Functional_Restriction} by construction of quasi-entropies. Let $\varphi:A\longrightarrow A$ be completely positive trace-preserving. Since we know $\varphi$ is $w^{*}$-continuous by finite-dimensionality, l.s.c.~in $w^{*}$-topology implies Equation \ref{EQ.LEM.L2W_Ric_1} if it holds for all $\mu,\eta\in\SII(A)$ s.t.~$\sharp\mu,\sharp\eta>0$ in $A$. Let

\begin{align}\label{EQ.LEM.L2W_Ric_4}
(A,\tau)\overset{r_{A}}{\cong} (A',\tau'):=\lc\oplus_{l=1}^{m}M_{n_{l}}(\mathbb{C}),\oplus_{l=1}^{m}C_{l}\textrm{tr}_{n_{l}}\rc{}.
\end{align}

\noindent Equation \ref{EQ.LEM.L2W_Ric_4} uses Notation \ref{NTN.AF_Cstar_Fin_Isometry}. We know such $r_{A}$ is completely positive since it is a $^{*}$-homomorphism \lc{}cf.~Example \ref{BSP.Wstar_CP_II}\rc{}. It is furthermore trace-preserving by $2)$ in Proposition \ref{PRP.AF_Cstar_Trace_II}. We see $\varphi':=r_{A}\circ\varphi\circ r_{A}^{-1}$ is completely positive trace-preserving.\par
Proposition \ref{PRP.AF_Cstar_Trace_II} and $2)$ in Proposition \ref{PRP.NCD_Operator_Compressed_PMO_Fin} imply

\begin{align}\label{EQ.LEM.L2W_Ric_5}
\mathcal{I}_{A,A}^{\log}(x,y,u)=\mathcal{I}_{A',A'}^{\log}\lc{}r_{A}(x)^{\flat},r_{A}(y)^{\flat},r_{A}(u)^{\flat}\rc{}
\end{align}

\noindent for all $x,y>0$ in $A$ and $u\in A$. Equation \ref{EQ.LEM.L2W_Ric_5} implies Equation \ref{EQ.LEM.L2W_Ric_1} if and only if

\begin{align}\label{EQ.LEM.L2W_Ric_6}
\mathcal{I}_{A',A'}^{\log}\lc\varphi'(X)^{\flat},\varphi'\lc{}Y\rc^{\flat},\varphi'(U)^{\flat}\rc\leq\mathcal{I}_{A',A'}^{\log}\lc{}X^{\flat},Y^{\flat},U^{\flat}\rc{}
\end{align}

\noindent for all $X,Y>0$ in $A'$ and $U\in A'$. We reduce Equation \ref{EQ.LEM.L2W_Ric_6} to Equation \ref{EQ.LEM.L2W_Ric_3}.\par
We assume $(A,\tau)=(A',\tau')$ without loss of generality. Thus $r_{A}=\id_{A}$, hence $\varphi=\varphi'$. We require several identities and completely positive trace-preserving maps in order to apply Equation \ref{EQ.LEM.L2W_Ric_3}. For all $l\in\lset{}1,\ldots,m\rset$, set $X_{l}:=\pi_{l}(X)$ for all $X\in A$. The latter uses Notation \ref{NTN.DS}. Proposition \ref{PRP.AF_Cstar_Trace_II} and $2)$ in Proposition \ref{PRP.NCD_Operator_Compressed_PMO_Fin} imply

\begin{align}\label{EQ.LEM.L2W_Ric_7}
\mathcal{I}_{A,A}^{\log}\lc{}X^{\flat},Y^{\flat},U^{\flat}\rc{}=\sum_{l=1}^{m}C_{l}\mathcal{I}_{n_{l},\tr}^{\log}\lc{}X_{l}^{\flat},Y_{l}^{\flat},U_{l}^{\flat}\rc{}
\end{align}

\noindent for all $X,Y>0$ in $A$ and $U\in A$. Set $q:=\sum_{l=1}^{m}n_{l}$. We consider the diagonal $A\subset M_{q}(\mathbb{C})$. If we moreover consider $C_{l}=1$ for all $l\in\lset{}1,\ldots,m\rset$, then Equation \ref{EQ.LEM.L2W_Ric_7} yields

\begin{align}\label{EQ.LEM.L2W_Ric_8}
\mathcal{I}_{q,\tr}^{\log}\lc{}X^{\flat},Y^{\flat},U^{\flat}\rc{}=\sum_{l=1}^{m}\mathcal{I}_{n_{l},\tr}^{\log}\lc{}X_{l}^{\flat},Y_{l}^{\flat},U_{l}^{\flat}\rc{}
\end{align}

\noindent for all $X,Y>0$ in $A$, ergo $M_{q}(\mathbb{C})$, and $U\in A$. Set $M_{C}(X):=\sum_{l=1}^{m}C_{l}X_{l}$ for all $X\in A$. The direct sum construction implies $M_{C}(X)>0$ in $M_{q}(\mathbb{C})$ for all $X>0$ in $A$ as $C_{l}>0$ in each case by assumption.\par

%NEWPAGE
%NEWPAGE
%NEWPAGE

\pagebreak

%NEWPAGE
%NEWPAGE
%NEWPAGE

By scaling with strictly positive constants, Equation \ref{EQ.LEM.L2W_Ric_7} and Equation \ref{EQ.LEM.L2W_Ric_8} let us calculate

\begin{align*}
\mathcal{I}_{A,A}^{\log}\lc{}X^{\flat},Y^{\flat},U^{\flat}\rc{} & = \sum_{l=1}^{m}C_{l}\mathcal{I}_{n_{l},\tr}^{\log}\lc{}X_{l}^{\flat},Y_{l}^{\flat},U_{l}^{\flat}\rc \phantom{\bigg)} \\
& = \sum_{l=1}^{m}\mathcal{I}_{n_{l},\tr}^{\log}\lc{}C_{l}X_{l}^{\flat},C_{l}Y_{l}^{\flat},C_{l}U_{l}^{\flat}\rc \phantom{\bigg)} \\
& =\mathcal{I}_{q,\tr}^{\log}\lc{}M_{C}(X)^{\flat},M_{C}\lc{}Y\rc^{\flat},M_{C}(U)^{\flat}\rc \phantom{\bigg)}
\end{align*}

\noindent in each case. Precomposing with $\varphi$ in the above calculation shows

\begin{align}\label{EQ.LEM.L2W_Ric_9}
\mathcal{I}_{A,A}^{\log}\lc\varphi(X)^{\flat},\varphi\lc{}Y\rc^{\flat},\varphi(U)^{\flat}\rc{}=\mathcal{I}_{q,\tr}^{\log}\lc{}M_{C}\lc\varphi(X)\rc^{\flat},M_{C}\lc\varphi(Y)\rc^{\flat},M_{C}\lc\varphi(U)\rc^{\flat}\rc{} 
\end{align}

\noindent for all $X,Y>0$ in $A$ and $U\in A$. Altogether, we have the required identities.\par
For all $l\in\lset{}1,\ldots,m\rset$, we define $\varphi_{l}:M_{n_{l}}(\mathbb{C})\longrightarrow M_{q}(\mathbb{C})$ by setting

\begin{align}\label{EQ.LEM.L2W_Ric_10}
\varphi_{l}(X):=C_{l}^{-1}M_{C}\lc\varphi(X)\rc{}
\end{align}

\noindent for all $X\in M_{n_{l}}(\mathbb{C})$. We know the diagonal $A\subset M_{q}(\mathbb{C})$ is completely positive because it is a $^{*}$-homomorphism \lc{}cf.~Example \ref{BSP.Wstar_CP_II}\rc{}. Since $\varphi$ is as well, Equation \ref{EQ.LEM.L2W_Ric_10} readily shows each $\varphi_{l}$ is completely positive. For all $l\in\lset{}1,\ldots,m\rset$, trace-preservation of $\varphi$ implies $\tr_{q}\lc{}M_{C}\varphi(X)\rc{}=\sum_{l=1}^{m}C_{l}\tr_{n_{l}}\lc{}X_{l}\rc{}=\tau\lc\varphi(X)\rc{}=C_{l}\tr_{n_{l}}(X)$ and therefore

\begin{align}\label{EQ.LEM.L2W_Ric_11}
\textrm{tr}_{q}\lc\varphi_{l}(X)\rc{}=C_{l}^{-1}\textrm{tr}_{q}\lc{}M_{C}\varphi(X)\rc{}=C_{l}^{-1}\tau\lc\varphi(X)\rc{}=\textrm{tr}_{n_{l}}\lc{}X_{l}\rc{}
\end{align}

\noindent for all $X\in M_{n_{l}}(\mathbb{C})$. Equation \ref{EQ.LEM.L2W_Ric_11} shows each $\varphi_{l}$ is trace-preserving. The latter holds for non-normalised canonical traces on full matrix algebras. Altogether, we have completely positive trace-preserving map $\varphi_{l}:M_{n_{l}}(\mathbb{C})\longrightarrow M_{q}(\mathbb{C})$ for all $l\in\lset{}1,\ldots,m\rset$.\par
We consider our final reduction and apply Equation \ref{EQ.LEM.L2W_Ric_3}. Let $X,Y>0$ in $A$, $U\in A$ and $\lset\lambda_{l}\rset_{l=1}^{m}\subset (0,1]$ s.t.~$\sum_{l=1}^{m}\lambda_{l}=1$. Then joint convexity and scaling with strictly positive constants followed by Equation \ref{EQ.LEM.L2W_Ric_9} lets us calculate

\begin{align*}
\mathcal{I}_{A,A}^{\log}\lc\varphi(X)^{\flat},\varphi\lc{}Y\rc^{\flat},\varphi(U)^{\flat}\rc{} & = \mathcal{I}_{A,A}^{\log}\lc\hspace{0.025cm}\sum_{l=1}^{m}\lambda_{l}\varphi\lc\lambda_{l}^{-1}X_{l}\rc^{\flat},\sum_{l=1}^{m}\lambda_{l}\varphi\lc\lambda_{l}^{-1}Y_{l}\rc^{\flat},\sum_{l=1}^{m}\lambda_{l}\varphi\lc\lambda_{l}^{-1}U_{l}\rc^{\flat}\rc \phantom{\bigg)} \\
& \leq\sum_{l=1}^{m}\lambda_{l}\mathcal{I}_{A,A}^{\log}\lc\varphi\lc\lambda_{l}^{-1}X_{l}\rc^{\flat},\varphi\lc\lambda_{l}^{-1}Y_{l}\rc^{\flat},\varphi\lc\lambda_{l}^{-1}U_{l}\rc^{\flat}\rc \phantom{\bigg)} \\
& = \sum_{l=1}^{m}\mathcal{I}_{A,A}^{\log}\lc\varphi\lc{}X_{l}\rc^{\flat},\varphi\lc{}Y_{l}\rc^{\flat},\varphi\lc{}U_{l}\rc^{\flat}\rc \phantom{\bigg)} \\
& = \sum_{l=1}^{m}\mathcal{I}_{q,\tr}^{\log}\lc{}M_{C}\lc\varphi\lc{}X_{l}\rc\rc^{\flat},M_{C}\lc\varphi\lc{}Y_{l}\rc\rc^{\flat},M_{C}\lc\varphi\lc{}U_{l}\rc\rc^{\flat}\rc{}. \phantom{\bigg)}
\end{align*}

For all $l\in\lset{}1,\ldots,m\rset$, scaling with strictly positive constants implies

\begin{align}\label{EQ.LEM.L2W_Ric_12}
\mathcal{I}_{q,\tr}^{\log}\lc{}M_{C}\lc\varphi\lc{}X_{l}\rc\rc^{\flat},M_{C}\lc\varphi\lc{}Y_{l}\rc\rc^{\flat},M_{C}\lc\varphi\lc{}U_{l}\rc\rc^{\flat}\rc{}=\mathcal{I}_{q,\tr}^{\log}\lc\varphi_{l}\lc{}X_{l}\rc^{\flat},\varphi_{l}\lc{}Y_{l}\rc^{\flat},\varphi_{l}\lc{}U_{l}\rc^{\flat}\rc{}.
\end{align}

\noindent Taken together, the above calculation and Equation \ref{EQ.LEM.L2W_Ric_12} show

\begin{align}\label{EQ.LEM.L2W_Ric_13}
\mathcal{I}_{A,A}^{\log}\lc\varphi(X)^{\flat},\varphi\lc{}Y\rc^{\flat},\varphi(U)^{\flat}\rc\leq\sum_{l=1}^{m}C_{l}\mathcal{I}_{q,\tr}^{\log}\lc\varphi_{l}\lc{}X_{l}\rc^{\flat},\varphi_{l}\lc{}Y_{l}\rc^{\flat},\varphi_{l}\lc{}U_{l}\rc^{\flat}\rc{}.
\end{align}

\noindent Equation \ref{EQ.LEM.L2W_Ric_3} applies to each summand on the right-hand side of Equation \ref{EQ.LEM.L2W_Ric_13} since each $\varphi_{l}:M_{n_{l}}(\mathbb{C})\longrightarrow M_{q}(\mathbb{C})$ is completely positive trace-preserving. Using Equation \ref{EQ.LEM.L2W_Ric_3} accordingly, applying Equation \ref{EQ.LEM.L2W_Ric_13} and Equation \ref{EQ.LEM.L2W_Ric_7} in order lets us calculate

\begin{align*}
\mathcal{I}_{A,A}^{\log}\lc\varphi(X)^{\flat},\varphi\lc{}Y\rc^{\flat},\varphi(U)^{\flat}\rc{} & \leq \sum_{l=1}^{m}C_{l}\mathcal{I}_{q,\tr}^{\log}\lc\varphi_{l}\lc{}X_{l}\rc^{\flat},\varphi_{l}\lc{}Y_{l}\rc^{\flat},\varphi_{l}\lc{}U_{l}\rc^{\flat}\rc \phantom{\bigg)} \\
& \leq \sum_{l=1}^{m}C_{l}\mathcal{I}_{q,\tr}^{\log}\lc{}X_{l}^{\flat},Y_{l}^{\flat},U_{l}^{\flat}\rc \phantom{\bigg)} \\
& =\mathcal{I}_{A,A}^{\log}\lc{}X^{\flat},Y^{\flat},U^{\flat}\rc{}. \phantom{\bigg)}
\end{align*}

\noindent This yields Equation \ref{EQ.LEM.L2W_Ric_6}. The general case follows as discussed above.
\end{proof}

\begin{cor}\label{COR.L2W_Ric}
Assume $A$ is finite-dimensional. Let $\lambda\in\mathbb{R}$ and set $h_{t}^{\dagger}:=\oplus_{n=1}^{m}e^{-\lambda t}h_{t}$ in $\BII(B)$ for all $t\geq 0$. If $\lb\phi,\Delta_{n}\rb{}=\lb\bpsi,\Delta_{n}\rb{}=0$ for all $n\in\lset{}1,\ldots,m\rset$, then we have

\begin{align}\label{EQ.COR.L2W_Ric_1}
\mathcal{I}^{\log}\lc{}h_{t}(\mu),h_{t}(\eta),h_{t}^{\dagger}\lc\sharp w\rc^{\flat}\rc\leq e^{-2\lambda t}\mathcal{I}^{\log}(\mu,\eta,w)
\end{align}

\noindent for all $\mu,\eta\in A_{+}^{*}$, $w\in B^{*}$ and $t\geq 0$.
\end{cor}
\begin{proof}
We suppress sharp operators in all equations here. We show our claim by reducing Equation \ref{EQ.COR.L2W_Ric_1} to Lemma \ref{LEM.L2W_Ric}. Let $x,y\in A_{+}$, $u\in B$ and $t\geq 0$. Since $\Delta^{\oplus}=\sum_{n=1}^{m}\Delta_{n}$ by $4)$ in Proposition \ref{PRP.Wstar_Derivation_QG_DS_I}, we see $\lb\phi,\Delta_{n}\rb{}=\lb\bpsi,\Delta_{n}\rb{}=0$ for all $n\in\lset{}1,\ldots,m\rset$ implies 

\begin{align}\label{EQ.COR.L2W_Ric_2}
\lb\phi,h_{t}\rb{}=\lb\bpsi,h_{t}\rb{}=0    
\end{align}

\noindent for all $t\geq 0$. Equation \ref{EQ.COR.L2W_Ric_2} in turn shows

\begin{align}\label{EQ.COR.L2W_Ric_3}
\mathcal{I}^{\log}\lc{}h_{t}(x)^{\flat},h_{t}(y)^{\flat},h_{t}^{\dagger}(u)^{\flat}\rc{}=\mathcal{I}_{A,B}^{\log}\lc{}h_{t}\lc\phi(x)\rc^{\flat},h_{t}\lc\bpsi(y)\rc^{\flat},h_{t}^{\dagger}\lc\phi(u)\rc^{\flat}\rc{}
\end{align}

\noindent by construction of quasi-entropies.\par

%NEWPAGE
%NEWPAGE
%NEWPAGE

\pagebreak

%NEWPAGE
%NEWPAGE
%NEWPAGE

Moreover, Proposition \ref{PRP.Wstar_Derivation_QG_DS_II} shows

\begin{align}\label{EQ.COR.L2W_Ric_4}
\mathcal{I}_{A,B}^{\log}\lc{}h_{t}\lc\phi(x)\rc^{\flat},h_{t}\lc\bpsi(y)\rc^{\flat},h_{t}^{\dagger}\lc\phi(u)\rc^{\flat}\rc{}=\sum_{n=1}^{m}\mathcal{I}_{A,A}^{\log}\lc{}h_{t}\lc\phi(x)\rc^{\flat},h_{t}\lc\bpsi(y)\rc^{\flat},\pi_{n}\big(h_{t}^{\dagger}(u)\big)^{\flat}\rc{}.
\end{align}

\noindent We combine Equation \ref{EQ.COR.L2W_Ric_3} and Equation \ref{EQ.COR.L2W_Ric_4}. We obtain

\begin{align}\label{EQ.COR.L2W_Ric_5}
\mathcal{I}^{\log}\lc{}h_{t}(x)^{\flat},h_{t}(y)^{\flat},h_{t}^{\dagger}(u)^{\flat}\rc{}=\sum_{n=1}^{m}\mathcal{I}_{A,A}^{\log}\lc{}h_{t}\lc\phi(x)\rc^{\flat},h_{t}\lc\bpsi(y)\rc^{\flat},\pi_{n}\big(h_{t}^{\dagger}(u)\big)^{\flat}\rc{}.
\end{align}

\noindent Note $1)$ in Proposition \ref{PRP.Wstar_Derivation_QG_HSG_II} shows $h_{t}:A\longrightarrow A$ is completely positive trace-preserving. Applying Equation \ref{EQ.COR.L2W_Ric_5}, Lemma \ref{LEM.L2W_Ric} and finally Proposition \ref{PRP.Wstar_Derivation_QG_DS_II} in order lets us calculate

\begin{align*}
\mathcal{I}^{\log}\lc{}h_{t}(x)^{\flat},h_{t}(y)^{\flat},h_{t}^{\dagger}(u)^{\flat}\rc{} & = \sum_{n=1}^{m}\mathcal{I}_{A,A}^{\log}\lc{}h_{t}\lc\phi(x)\rc^{\flat},h_{t}\lc\bpsi(y)\rc^{\flat},\pi_{n}\big(h_{t}^{\dagger}(u)\big)^{\flat}\rc \phantom{\Bigg)} \\
& = e^{-2\lambda t}\cdot \sum_{n=1}^{m}\mathcal{I}_{A,A}^{f,\theta}\lc{}h_{t}\lc\phi(x)\rc^{\flat},h_{t}\lc\bpsi(y)\rc^{\flat},h_{t}\big(\pi_{n}(u)\big)^{\flat}\rc \phantom{\Bigg)} \\
& \leq e^{-2\lambda t}\cdot \sum_{n=1}^{m}\mathcal{I}^{\log}\lc{}x^{\flat},y^{\flat},\pi_{n}(u)^{\flat}\rc{} \phantom{\Bigg)} \\
& = e^{-2\lambda t}\cdot \mathcal{I}^{\log}\lc{}x^{\flat},y^{\flat},u^{\flat}\rc{}. \phantom{\Bigg)}
\end{align*}

\noindent The above calculation shows Equation \ref{EQ.COR.L2W_Ric_1}.
\end{proof}

\begin{dfn}\label{DFN.L2W_Ric}
We call $\lc\phi^{m},\bpsi^{m},\gamma^{m},\nabla^{\oplus}\rc$ as per Equation \ref{EQ.SSEC.L2W_EVI_Ric_9} their direct sum noncommutative differential structure. Let $\lambda\in\mathbb{R}$. If

\begin{itemize}
\item[1)] $\lb\phi,\Delta_{n}\rb{}=\lb\bpsi,\Delta_{n}\rb{}=0$,

\item[2)] $\partial_{n}\Delta^{\oplus}=\big(\Delta^{\oplus}+\lambda\cdot I\big)\partial_{n}$,
\end{itemize}

\noindent on $A_{0}$ for all $n\in\lset{}1,\ldots,m\rset$, then we say that $\nabla^{\oplus}$ is $\lambda$-intertwining.
\end{dfn}

\begin{thm}\label{THM.L2W_Ric}
Let $m\in\mathbb{N}$. Let $(A,\tau)$ be a tracial AF-$C^{*}$-algebra and $(\phi,\bpsi,\gamma)$ an AF-$A$-bimodule structure on $A$. For all $n\in\lset{}1,\ldots,m\rset$, let $\partial_{n}:A_{0}\longrightarrow L^{2}(A,\tau)$ be a quantum gradient. We consider their direct sum noncommutative differential structure. If $\nabla^{\oplus}$ is $\lambda$-intertwining, then $\Ric\nabla^{\oplus}\geq\lambda$.
\end{thm}
\begin{proof}
We adapt the proof of Theorem 10.9 in \cite{ART.Car_Maa.2020.Quantum_OT_III} to the AF-$C^{*}$-setting by means of the coarse graining process. We reduce to the finite-dimensional setting. This is necessary to apply Corollary \ref{COR.L2W_Ric}. Theorem \ref{THM.L2W_EVI_Equivalence} ensures $\HI\rc$ in Definition \ref{DFN.L2W_EVI_Equivalence} is a condition for lower Ricci bounds. For a.e.~$j\in\mathbb{N}$, note $\HI\rc$ and Definition \ref{DFN.L2W_Ric} restrict to the induced noncommutative differential structure $\lc\phi_{j}^{m},\bpsi_{j}^{m},\gamma_{j}^{m},\oplus_{n=1}^{m}\partial_{n,j}\rc$ without changing $\lambda$.\par
Assume $A$ is finite-dimensional. Then $B$ is finite-dimensional. It suffices to show $\HI_{\lambda}$ for all fixed states $\xi\in\SII(A)$, $\mu\in\vartheta(\xi)$ and $\eta\in I\lc\Delta_{\xi}^{\oplus}\rc^{\flat}$. Using Corollary \ref{COR.QOT_Distance_AC_L2}, we readily see Theorem 3.3 in \cite{ART.Dan_Sav.2008.Classical_OT_GradFlow_DisConvex} and Lemma \ref{LEM.L2W_EVI_Equivalence} show the latter if $h:[0,\infty)\times\vartheta(\xi)\longrightarrow\vartheta(\xi)$ is $\EVI_{\lambda}$-gradient flow of $\Enttau$ in $\vartheta(\xi)$ for all fixed states $\xi\in\SII(A)$. We further reduce as follows. For all fixed states $\xi\in\SII(A)$, we claim

\begin{align}\label{EQ.THM.L2W_Ric_1}
\frac{1}{2}\restr{0.925}{\frac{d^{+}}{ds}}{s=0}\mathcal{W}_{\nabla^{\oplus}}^{\log}\lc\mu,h_{s}(\eta)\rc^{2}+\frac{\lambda}{2}\mathcal{W}_{\nabla^{\oplus}}^{\log}(\mu,\eta)^{2}\leq\Ent(\mu,\tau)-\Ent(\eta,\tau)
\end{align}

\noindent for all $\mu,\eta\in\vartheta(\xi)$. If Equation \ref{EQ.THM.L2W_Ric_1} holds, then symmetry of distances, the semigroup property and Equation \ref{EQ.THM.L2W_Ric_1} itself let us calculate

\begin{align*}
\frac{1}{2}\frac{d^{+}}{dt}\mathcal{W}_{\nabla^{\oplus}}^{\log}\lc{}h_{t}(\mu),\eta\rc^{2} & = \frac{1}{2}\restr{0.925}{\frac{d^{+}}{ds}}{s=0}\mathcal{W}_{\nabla^{\oplus}}^{\log}\lc\eta,h_{s}\lc{}h_{t}(\mu)\rc\rc^{2} \phantom{\bigg)} \\
& \leq\Ent(\eta,\tau)-\Ent\lc{}h_{t}(\mu),\tau\rc{}-\frac{\lambda}{2}\mathcal{W}_{\nabla^{\oplus}}^{\log}\lc{}h_{t}(\mu),\eta\rc{} \phantom{\bigg)}
\end{align*}

\noindent for all $t\geq 0$ in each case. The above calculation in turn shows $h:[0,\infty)\times\vartheta(\xi)\longrightarrow\vartheta(\xi)$ is $\EVI_{\lambda}$-gradient flow of $\Enttau$ in $\vartheta(\xi)$ for all fixed states $\xi\in\SII(A)$. In summary, it suffices to show Equation \ref{EQ.THM.L2W_Ric_1}.\par 
We require several identities in order to show Equation \ref{EQ.THM.L2W_Ric_1}. Set $h_{t}^{\dagger}:=\oplus_{n=1}^{m}e^{-\lambda t}h_{t}$ in $\BII(B)$ for all $t\geq 0$. The latter extends to $B^{*}=\oplus_{n=1}^{m}A^{*}$ by dualisation in each summand as per construction of noncommutative heat semigroups. Note $1)$ in Definition \ref{DFN.L2W_Ric} ensures Corollary \ref{COR.L2W_Ric} applies. Using the latter, we have

\begin{align}\label{EQ.THM.L2W_Ric_2}
\mathcal{I}^{\log}\lc{}h_{t}(\mu),h_{t}(\eta),h_{t}^{\dagger}(w)\rc\leq e^{-2\lambda t}\mathcal{I}^{\log}(\mu,\eta,w)
\end{align}

\noindent for all $\mu,\eta\in\SII(A)$, $w\in B^{*}$ and $t\geq 0$. We dualise $2)$ in Definition \ref{DFN.L2W_Ric} by taking adjoints. Using the latter, $3)$ in Proposition \ref{PRP.Wstar_Derivation_QG_DS_I} implies

\begin{align}\label{EQ.THM.L2W_Ric_3}
h_{t}\nabla^{\oplus,*}=\nabla^{\oplus,*}h_{t}^{\dagger}
\end{align}

\noindent for all $t\geq 0$. Altogether, we have the required identities.\par
We show Equation \ref{EQ.THM.L2W_Ric_1}. Let $\xi\in\SII(A)$ be a fixed state. Let $\mu^{0},\mu^{1}\in\vartheta(\xi)$. Heat flow varies minimising geodesics as follows. Let $\mu:[0,1]\longrightarrow\vartheta(\xi)$ be a minimising geodesic from $\mu^{0}$ to $\mu^{1}$. Proposition \ref{PRP.RM_III} shows the canonical vector field along $\mu$ is given by $w_{t}:=\Theta\lc\mu(t),\dot{\mu}(t)\rc$ for all $t\geq 0$. We have $(\mu,w)\in\Admnullone\lc\mu^{0},\mu^{1}\rc$. Minimising geodesics have $t$-a.e.~constant speed by $1)$ in Proposition \ref{PRP.QOT_Distance_Geodesics}. The latter lets us calculate

\begin{align}\label{EQ.THM.L2W_Ric_4}
E^{\log}(\mu,w)=\mathcal{I}^{\log}\lc\mu(t),\mu(t),w(t)\rc{}
\end{align}

\noindent for all $t\in [0,1]$.\par
For all $s\in [0,1]$, set

\begin{align}\label{EQ.THM.L2W_Ric_5}
\mu_{s}(t):=h_{ts}\lc\mu(t)\rc{},\ w_{s}(t):=h_{ts}^{\dagger}\lc{}w(t)\rc{}-s\lc\nabla^{\oplus}\sharp\mu_{s}(t)\rc^{\flat}
\end{align}

\noindent for all $t\in [0,1]$. For all $s\in [0,1]$, Equation \ref{EQ.THM.L2W_Ric_3} and Equation \ref{EQ.THM.L2W_Ric_5} let us calculate

%Formerly \label{EQ.THM.L2W_Ric_6}
\begin{align*}
\dfrac{d}{dt}\mu_{s}(t) & = h_{ts}\lc\nabla^{\oplus,*}\sharp w(t)\rc^{\flat}-s\lc\Delta^{\oplus}\sharp\mu_{s}(t)\rc^{\flat} \phantom{\bigg)} \\ 
& = \lc\nabla^{\oplus,*}\lc\sharp h_{ts}^{\dagger}\lc{}w(t)\rc{}-s\nabla^{\oplus}\sharp\mu_{s}(t)\rc\rc^{\flat}=\lc\nabla^{\oplus,*}\sharp w_{s}(t)\rc^{\flat} \phantom{\bigg)}
\end{align*}

\noindent for all $t\in (0,1)$. The above calculation shows $\lc\mu_{s},w_{s}\rc\in\Admnullone\lc\mu^{0},h_{s}\lc\mu^{1}\rc\rc$ for all $s\in [0,1]$.\par
We estimate $E^{\log}\lc\mu^{s},w^{s}\rc$ in each case. Let $s\in (0,1]$. Set

\begin{align}\label{EQ.THM.L2W_Ric_7}
F_{s}(t):=-2s\lgl\mathcal{D}_{\sharp\mu_{s}(t),\xi}\sharp h_{ts}^{\dagger}\lc{}w(t)\rc{},\nabla^{\oplus}\sharp\mu_{s}(t)\rgl_{\omega}+s^{2}\dblv{}\mathcal{D}_{\sharp\mu_{s}(t),\xi}^{\frac{1}{2}}\nabla^{\oplus}\sharp\mu_{s}(t)\dblv_{\omega}^{2}
\end{align}

\noindent for all $t\in (0,1]$. Equation \ref{EQ.THM.L2W_Ric_2}, Equation \ref{EQ.THM.L2W_Ric_4} and Equation \ref{EQ.THM.L2W_Ric_5} let us calculate

\begin{align}\label{EQ.THM.L2W_Ric_8}
E^{\log}\lc\mu_{s},w_{s}\rc\leq\int_{0}^{1}e^{-2\lambda ts}dt\cdot E^{\log}(\mu,w)+\int_{0}^{1}F_{s}(t)dt.
\end{align}

\noindent We therefore define the integrand $F_{s}$ precisely as per Equation \ref{EQ.THM.L2W_Ric_7} in order to have Equation \ref{EQ.THM.L2W_Ric_8}. We directly verify

\begin{align}\label{EQ.THM.L2W_Ric_9}
\int_{0}^{1}e^{-2\lambda ts}dt=\frac{1-e^{-2\lambda s}}{2\lambda s}.    
\end{align}

\noindent Taken together, Equation \ref{EQ.THM.L2W_Ric_8} and Equation \ref{EQ.THM.L2W_Ric_9} show

\begin{align}\label{EQ.THM.L2W_Ric_10}
E^{\log}\lc\mu_{s},w_{s}\rc\leq\frac{1-e^{-2\lambda s}}{2\lambda s}\cdot E^{\log}(\mu,w)+\int_{0}^{1}F_{s}(t)dt.
\end{align}

Equation \ref{EQ.THM.L2W_Ric_10} clearly shows we must estimate the integrand $F_{s}$. Using $\sharp h_{ts}^{\dagger}\lc{}w(t)\rc{}=\sharp w_{s}(t)+s\nabla^{\oplus}\sharp\mu_{s}(t)$ in each case, $2)$ in Lemma \ref{LEM.L2W_Log_Mean_NCDS} lets us calculate

\begin{align*}
\lgl\mathcal{D}_{\sharp\mu_{s}(t),\xi}\sharp h_{ts}^{\dagger}\lc{}w(t)\rc{},\nabla^{\oplus}\sharp\mu_{s}(t)\rgl_{\omega} & = \lgl\mathcal{D}_{\sharp\mu_{s}(t),\xi}\sharp w_{s}(t),\nabla^{\oplus}\sharp\mu_{s}(t)\rgl_{\omega}+s\dblv{}\mathcal{D}_{\sharp\mu_{s}(t),\xi}^{\frac{1}{2}}\nabla^{\oplus}\sharp\mu_{s}(t)\dblv_{\omega}^{2} \phantom{\bigg)} \\
& =\frac{d}{dt}\Enttau\lc\mu_{s}(t)\rc{}+s\dblv{}\mathcal{D}_{\sharp\mu_{s}(t),\xi}^{\frac{1}{2}}\nabla^{\oplus}\sharp\mu_{s}(t)\dblv_{\omega}^{2} \phantom{\bigg)}
\end{align*}

\noindent for all $t\in (0,1)$.\par

%NEWPAGE
%NEWPAGE
%NEWPAGE

\pagebreak

%NEWPAGE
%NEWPAGE
%NEWPAGE

The above calculation shows

\begin{align}\label{EQ.THM.L2W_Ric_11}
-2s\frac{d}{dt}\Enttau\lc\mu_{s}(t)\rc{}=-2\lgl\mathcal{D}_{\sharp\mu_{s}(t),\xi}\sharp h_{ts}^{\dagger}\lc{}w(t)\rc{},\nabla^{\oplus}\sharp\mu_{s}(t)\rgl_{\omega}+2s^{2}\dblv{}\mathcal{D}_{\sharp\mu_{s}(t),\xi}^{\frac{1}{2}}\nabla^{\oplus}\sharp\mu_{s}(t)\dblv_{\omega}^{2}\phantom{\bigg)}
\end{align}

\noindent in each case by rearranging terms accordingly. Finally, we readily see Equation \ref{EQ.THM.L2W_Ric_7} and Equation \ref{EQ.THM.L2W_Ric_11} let us calculate

\begin{align}\label{EQ.THM.L2W_Ric_12}
F_{s}(t)=-2s\frac{d}{dt}\Enttau\lc\mu_{s}(t)\rc{}-s^{2}\dblv{}\mathcal{D}_{\sharp\mu_{s}(t),\xi}^{\frac{1}{2}}\nabla^{\oplus}\sharp\mu_{s}(t)\dblv_{\omega}^{2}\leq -2s\frac{d}{dt}\Enttau\lc\mu_{s}(t)\rc{}
\end{align}

\noindent for all $t\in (0,1)$. We combine Equation \ref{EQ.THM.L2W_Ric_10} and Equation \ref{EQ.THM.L2W_Ric_12}. We obtain

\begin{align}\label{EQ.THM.L2W_Ric_13}
E^{\log}\lc\mu_{s},w_{s}\rc\leq\frac{1-e^{-2\lambda s}}{2\lambda s}E^{\log}(\mu,w)+s\cdot \bigg(\hspace{-0.028975cm} \Ent\lc\mu^{0},\tau\rc{}-\Ent\lc\mu^{1},\tau\rc\bigg).
\end{align}

Equation \ref{EQ.THM.L2W_Ric_13} yields our required estimate of $E^{\log}\lc\mu^{s},w^{s}\rc$ for all $s\in (0,1]$. We engage in our final estimate. Equation \ref{EQ.THM.L2W_Ric_13} implies

\begin{align}\label{EQ.THM.L2W_Ric_14}
\frac{1}{2}\mathcal{W}_{\nabla^{\oplus}}^{\log}\lc\mu^{0},h_{s}\lc\mu^{1}\rc\rc^{2}\leq\frac{1-e^{-2\lambda s}}{4\lambda s}\cdot E^{\log}(\mu,w)+s\cdot \bigg(\hspace{-0.028975cm} \Ent\lc\mu^{0},\tau\rc{}-\Ent\lc\mu^{1},\tau\rc\bigg)
\end{align}

\noindent for all $s\in (0,1]$. We use the energy $E^{\log}(\mu,w)=\mathcal{W}_{\nabla^{\oplus}}^{\log}\lc\mu^{0},\mu^{1}\rc^{2}$ of the minimising geodesic $\mu:[0,1]\longrightarrow\vartheta(\xi)$ from $\mu^{0}$ to $\mu^{1}$. Corollary \ref{COR.RM} ensures we have sufficient minimising geodesics in $\vartheta(\xi)$. Equation \ref{EQ.THM.L2W_Ric_14} therefore equals

\begin{align}\label{EQ.THM.L2W_Ric_15}
\frac{1}{2}\mathcal{W}_{\nabla^{\oplus}}^{\log}\lc\mu^{0},h_{s}\lc\mu^{1}\rc\rc^{2}\leq\frac{1-e^{-2\lambda s}}{4\lambda s}\cdot \mathcal{W}_{\nabla^{\oplus}}^{\log}\lc\mu^{0},\mu^{1}\rc^{2}+s\cdot \bigg(\hspace{-0.028975cm} \Ent\lc\mu^{0},\tau\rc{}-\Ent\lc\mu^{1},\tau\rc\bigg)
\end{align}

\noindent for all $s\in (0,1]$. We see multiplying with $s^{-1}$ on and subtracting $\frac{1}{2s}\mathcal{W}_{\nabla^{\oplus}}^{\log}\lc\mu^{0},\mu^{1}\rc^{2}$ from both sides of Equation \ref{EQ.THM.L2W_Ric_15} yields

\begin{align*}
& \ \frac{1}{2}s^{-1}\lc\mathcal{W}_{\nabla^{\oplus}}^{\log}\lc\mu^{0},h_{s}\lc\mu^{1}\rc\rc^{2}-\mathcal{W}_{\nabla^{\oplus}}^{\log}\lc\mu^{0},\mu^{1}\rc^{2}\rc \phantom{\Bigg)} \\
\leq& \ \frac{1-e^{-2\lambda s}-2\lambda s}{4\lambda s^{2}}\cdot \mathcal{W}_{\nabla^{\oplus}}^{\log}\lc\mu^{0},\mu^{1}\rc^{2}+\Ent\lc\mu^{0},\tau\rc{}-\Ent\lc{}h_{s}\lc\mu^{1}\rc{},\tau\rc{} \phantom{\Bigg)}
\end{align*}

\noindent for all $s\in (0,1]$. We directly verify

\begin{align}\label{EQ.THM.L2W_Ric_16}
\lim_{s\downarrow 0}\hspace{0.025cm} \frac{1-e^{-2\lambda s}-2\lambda s}{4\lambda s^{2}}=-\frac{\lambda}{2}.
\end{align}
 
\noindent Note Equation \ref{EQ.THM.L2W_Ric_16} shows letting $s\downarrow 0$ in the final estimate yields Equation \ref{EQ.THM.L2W_Ric_1}. The general case follows as discussed above.
\end{proof}

Example \ref{BSP.L2W_Ric_Wstar_Derivation_QG_Dynamic_System} and Example \ref{BSP.L2W_Ric_Wstar_Derivation_QG_Intertwining_Clifford} derive non-negative, resp.~strictly positive lower Ricci bounds. Whereas Example \ref{BSP.L2W_Ric_Wstar_Derivation_QG_Dynamic_System} covers Example \ref{BSP.QOT_Type_I}, Example \ref{BSP.QOT_Type_II_1} and Example \ref{BSP.QOT_Type_II_Infty} in Subsection \ref{SSEC.QOT_DT_BSP}, Example \ref{BSP.L2W_Ric_Wstar_Derivation_QG_Intertwining_Clifford} covers Example \ref{BSP.QOT_Type_II_Twisted} therein.

\begin{bsp}\label{BSP.L2W_Ric_Wstar_Derivation_QG_Dynamic_System}
Assume the following setting. Let $(A,\tau)$ be a tracial AF-$C^{*}$-algebra and $\lc{}A,\mathbb{R},\alpha\rc$ a $\tau$-preserving local $C^{*}$-dynamical system. We equip $A$ with its canonical AF-$A$-bimodule structure. We use $m=1$. Corollary \ref{COR.Wstar_Derivation_QG_Dynamic_System} yields non-twisted dynamic quantum gradient $\nabla^{\DII_{\alpha},\id_{A}}$ and shows

\begin{align}\label{EQ.BSP.L2W_Ric_Wstar_Derivation_QG_Dynamic_System_Flat_1}
\Delta^{\mathcal{D}_{\alpha}}x=-\lc\nabla^{\mathcal{D}_{\alpha}}\rc^{2}(x)
\end{align}

\noindent for all $x\in A_{0}$. Equation \ref{EQ.BSP.L2W_Ric_Wstar_Derivation_QG_Dynamic_System_Flat_1} shows $\nabla^{\DII_{\alpha},\id_{A}}$ is $\lambda$-intertwining for $\lambda=0$. Theorem \ref{THM.L2W_Ric} implies $\Ric\nabla^{\DII_{\alpha},\id_{A}}\geq 0$ as claimed.
\end{bsp}

\begin{bsp}\label{BSP.L2W_Ric_Wstar_Derivation_QG_Intertwining_Clifford}
Assume the following setting. Let $(A,\tau)$ be a tracial AF-$C^{*}$-algebra and $\phi:A\longrightarrow A$ a self-adjoint involutive local $^{*}$-homomorphism. Let $m\in\mathbb{N}$ and further $\lset{}d_{n}\rset_{n=1}^{m}\subset L^{\infty}(A,\tau)_{h}$ be a $\phi$-intertwining set of Clifford generators for $C>0$ as per $1)$ in Definition \ref{DFN.Wstar_Derivation_QG_Intertwining_Clifford}. For all $n\in\lset{}1,\ldots,m\rset$, Corollary \ref{COR.Wstar_Derivation_QG_Intertwining_I} yields twisted dynamic quantum gradient $\partial_{n}=\nabla^{-iL_{d_{n}},\phi}$ and its Laplacian $\Delta_{n}=\mathrlap{\phantom{\partial}^{*}}\partial_{n}\partial_{n}$ as per $2)$ in Definition \ref{DFN.Wstar_Derivation_QG_Intertwining_Clifford}.\par
Note Equation \ref{EQ.LEM.Wstar_Derivation_QG_Intertwining_Clifford_Identities_5} lets us calculate $1)$ in Definition \ref{DFN.L2W_Ric}. Since $\Delta^{\oplus}=\sum_{n=1}^{m}\Delta_{n}$ by $4)$ in Proposition \ref{PRP.Wstar_Derivation_QG_DS_I}, Lemma \ref{LEM.Wstar_Derivation_QG_Intertwining_Clifford} implies

\begin{align}\label{EQ.BSP.L2W_Ric_Wstar_Derivation_QG_Intertwining_Clifford_1}
\partial_{n}\Delta^{\oplus}=\big(\Delta^{\oplus}+4C\cdot I\big)\partial_{n}
\end{align}

\noindent for all $n\in\lset{}1,\ldots,m\rset$. Equation \ref{EQ.BSP.L2W_Ric_Wstar_Derivation_QG_Intertwining_Clifford_1} shows $2)$ in Definition \ref{DFN.L2W_Ric}. Altogether, we see $\nabla$ is $\lambda$-intertwining for $\lambda=4C$. Theorem \ref{THM.L2W_Ric} implies $\Ric\nabla\geq 4C>0$ as claimed.
\end{bsp}

%%%%%%%%%%%%%
%%% PART %%%%
%%%%%%%%%%%%%

\subsubsection*{Functional inequalities}

Assuming lower Ricci bounds, Theorem \ref{THM.L2W_Ric_FI} derives functional inequalities $\HWI_{\lambda}$, $\MLSI_{\lambda}$ and $\TW_{\lambda}$ as per Definition \ref{DFN.L2W_Ric_FI}. Non-ergodicity requires relative entropy of finitely supported fixed states in their formulation. We introduce quantum Fisher information in the AF-$C^{*}$-setting. Its r\^ole mirrors the classical case \cite{ART.Lot_Vil.2009.Classical_OT_Ricci_Bounds}\cite{ART.Ott_Vil.2000.Classical_OT_LogSobolev_Talagrand}. We adapt the proofs of Theorem 11.3, Theorem 11.4 and Theorem 11.5 in \cite{ART.Car_Maa.2020.Quantum_OT_III} to the AF-$C^{*}$-setting by means of the coarse graining process. Lemma \ref{LEM.L2W_Ric_FI} provides sufficient control of metric derivatives using quantum Fisher information.\par
Let $(\phi,\bpsi,\gamma,\nabla)$ be noncommutative differential structure for tracial AF-$C^{*}$-algebras $(A,\tau)$ and $(B,\omega)$ in the logarithmic mean setting. Definition \ref{DFN.QF_AF} gives quantum Fisher information. Indeed, note $3)$ in Theorem \ref{THM.L2W_Log_Mean_QNE} and $3)$ in Proposition \ref{PRP.QF_AF} imply it is a noncommutative analogue for parametrisations $\lset{}h_{t}(\mu)\rset_{t\geq 0}$ given $\mu\in\mathcal{S}^{\NI}(A)$.

\begin{dfn}\label{DFN.QF_AF}
We define quantum Fisher information $\Ilog:A_{+}^{*}\longrightarrow [0,\infty]$ by setting

\begin{align}\label{EQ.DFN.QF_AF_1}
\Ilog(\mu):=\sup_{j\in\mathbb{N}}\hspace{0.025cm} \mathcal{I}_{j}^{\log}\lc\mu_{j},\mu_{j},\lc\nabla\sharp\mu_{j}\rc^{\flat}\rc{}
\end{align}

\noindent for all $\mu\in A_{+}^{*}$.
\end{dfn}

\begin{prp}\label{PRP.QF_AF}\hspace{1cm}
\begin{itemize}
\item[1)] $\Ilog$ is convex and l.s.c.~in $w^{*}$-topology.

\item[2)] For all $\mu\in A_{+}^{*}$, we have

\begin{itemize}
\item[2.1)] $\Ilog\lc\bar{\mu}_{j}\rc{}=\mathcal{I}_{j}^{\log}\lc\bar{\mu}_{j},\bar{\mu}_{j},\lc\nabla\sharp\bar{\mu}_{j}\rc^{\flat}\rc$ for all $j\in\mathbb{N}$, \phantom{\bigg)}

\item[2.2)] $\Ilog(\mu)=\lim_{j\in\mathbb{N}}\Ilog\lc\bar{\mu}_{j}\rc$. \phantom{\bigg)}
\end{itemize}

\begin{reapply}
\end{reapply}

\item[3)] For all finitely supported fixed states $\xi\in\SII(A)$, we have

\begin{align}\label{EQ.PRP.QF_AF_1}
\Ilog(\mu)=-\restr{0.925}{\frac{d}{dt}}{t=0}\Enttau\lc{}h_{t}(\mu)\rc{}
\end{align}

\begin{reapply}
\end{reapply}

\noindent for all $\mu\in\Fix_{A}^{\NI}(\xi)\cap\mathcal{S}_{-1}^{\NI,\infty}(A_{\xi})\cap\lc\dom\Delta\rc^{\flat}$.
\end{itemize} 
\end{prp}
\begin{proof}
We have $1)$ and $2.1)$ by $1)$, resp.~$2)$ in Theorem \ref{THM.QE_AF}. Using $2.1)$, Equation \ref{EQ.DFN.QF_AF_1} shows $3)$ in Theorem \ref{THM.QE_AF} implies $2.2)$. We show $3)$. Let $\xi\in\SII(A)$ be a finitely supported fixed state. Using $2.1)$ and $4.1)$ in Proposition \ref{PRP.Wstar_Derivation_QG_I}, note $2)$ lets us calculate

\begin{align}\label{EQ.PRP.QF_AF_2}
\Ilog(\mu)=\lim_{j\in\mathbb{N}}\hspace{0.025cm} \Ilog\lc\bar{\mu}_{j}\rc{}=\mathcal{I}^{\log}\lc\mu,\mu,\lc\nabla\sharp\mu\rc^{\flat}\rc{}
\end{align}

\noindent for all $\mu\in\Fix_{A}^{\NI}(\xi)\cap\mathcal{S}_{-1}^{\NI,\infty}(A_{\xi})\cap\lc\dom\Delta\rc^{\flat}$. The second identity in Equation \ref{EQ.PRP.QF_AF_2} uses $\sharp\mu\in\dom\Delta\subset\dom\nabla$ and therefore $\nabla\sharp\mu=\|.\|_{\omega}$-$\lim_{j\in\mathbb{N}}\nabla\sharp\bar{\mu}_{j}$ in each case. Equation \ref{EQ.PRP.QF_AF_2} shows $3)$ in Theorem \ref{THM.L2W_Log_Mean_QNE} implies $3)$ by differentiation at $t=0$.
\end{proof}

\begin{dfn}\label{DFN.L2W_Ric_FI}
Let $\lambda\in\mathbb{R}$

\begin{itemize}
\item[1)] We say that $\Enttau$ satisfies $\HWI_{\lambda}$ if for all finitely supported fixed states $\xi\in\SII(A)$ and $\mathcal{C}\subset \big(\hspace{-0.03875cm} \SII(A),\mathcal{W}_{\nabla}^{\log}\big)$ with fixed part $\xi$ s.t.~$\mathcal{C}\cap\dom\Enttau\neq\emptyset$, we have

\begin{align}\label{EQ.DFN.L2W_Ric_FI_1}
\Ent(\mu,\tau)\leq\mathcal{W}_{\nabla}^{\log}(\mu,\xi)\sqrt{\Ilog(\mu)}-\frac{\lambda}{2}\mathcal{W}_{\nabla}^{\log}(\mu,\xi)^{2}+\Ent(\xi,\tau) \tag{$\HWI_{\lambda}$}
\end{align}

\begin{reapply}
\end{reapply}

\noindent for all $\mu\in\mathcal{C}$.

\item[2)] Assume $\lambda>0$. We say that $\Enttau$ satisfies $\MLSI_{\lambda}$ if for all finitely supported fixed states $\xi\in\SII(A)$ and $\mathcal{C}\subset \big(\hspace{-0.03875cm} \SII(A),\mathcal{W}_{\nabla}^{\log}\big)$ with fixed part $\xi$ s.t.~$\mathcal{C}\cap\dom\Enttau\neq\emptyset$, we have

\begin{align}\label{EQ.DFN.L2W_Ric_FI_2}
\Ent(\mu,\tau)\leq\frac{1}{2\lambda}\Ilog(\mu)+\Ent(\xi,\tau) \tag{$\MLSI_{\lambda}$}
\end{align}

\begin{reapply}
\end{reapply}

\noindent for all $\mu\in\mathcal{C}$.

\item[3)] Assume $\lambda>0$. We say that $\Enttau$ satisfies $\TW_{\lambda}$ if for all finitely supported fixed states $\xi\in\SII(A)$ and $\mathcal{C}\subset \big(\hspace{-0.03875cm} \SII(A),\mathcal{W}_{\nabla}^{\log}\big)$ with fixed part $\xi$ s.t.~$\mathcal{C}\cap\dom\Enttau\neq\emptyset$, we have

\begin{align}\label{EQ.DFN.L2W_Ric_FI_3}
\mathcal{W}_{\nabla}^{\log}(\mu,\xi)\leq\sqrt{\frac{2}{\lambda}\lc\Ent(\mu,\tau)-\Ent(\xi,\tau)\rc{}} \tag{$\TW_{\lambda}$}
\end{align}

\begin{reapply}
\end{reapply}

\noindent for all $\mu\in\mathcal{C}$.
\end{itemize}
\end{dfn}

\begin{lem}\label{LEM.L2W_Ric_FI}
For all $\mu,\eta\in\SII(A)$, we have

\begin{align}\label{EQ.LEM.L2W_Ric_FI_1}
\limsup_{j\in\mathbb{N}}\hspace{0.025cm} \frac{d^{+}}{dt}\mathcal{W}_{\nabla}^{\log}\lc{}h_{t}\lc\bar{\mu}_{j}\rc{},\bar{\eta}_{j}\rc\leq\sqrt{\Ilog\lc{}h_{t}(\mu)\rc{}}
\end{align}

\noindent for all $t\geq 0$.
\end{lem}
\begin{proof}
We adapt the proof of Proposition 11.2 in \cite{ART.Car_Maa.2020.Quantum_OT_III} to the AF-$C^{*}$-setting by means of the coarse graining process. We reduce to the finite-dimensional setting. Note $2.2)$ in Proposition \ref{PRP.Wstar_Derivation_QG_HSG_II} reduces to Equation \ref{EQ.REM.Wstar_Derivation_QG_HSG_L2_1} in the square integrable case. For all $\mu\in\SII(A)$, $2.2)$ in Proposition \ref{PRP.QF_AF} therefore shows

\begin{align}\label{EQ.LEM.L2W_Ric_FI_2}
\Ilog\lc{}h_{t}(\mu)\rc{}=\lim_{j\in\mathbb{N}}\hspace{0.025cm} \Ilog\lc{}h_{t}\lc\bar{\mu}_{j}\rc\rc{}=\limsup_{j\in\mathbb{N}}\hspace{0.025cm} \Ilog\lc{}h_{t}\lc\bar{\mu}_{j}\rc\rc{}
\end{align}

\noindent for all $t\geq 0$. Equation \ref{EQ.LEM.L2W_Ric_FI_2} implies Equation \ref{EQ.LEM.L2W_Ric_FI_1} if for all $\mu,\eta\in\SII(A)$, we have

\begin{align}\label{EQ.LEM.L2W_Ric_FI_3}
\frac{d^{+}}{dt}\mathcal{W}_{\nabla}^{\log}\lc{}h_{t}\lc\bar{\mu}_{j}\rc{},\bar{\eta}_{j}\rc\leq\sqrt{\Ilog\lc{}h_{t}\lc\bar{\mu}_{j}\rc\rc{}}
\end{align}

\noindent for all $t\geq 0$ and a.e.~$j\in\mathbb{N}$. Taken together, Equation \ref{EQ.LEM.L2W_Ric_FI_2} and Equation \ref{EQ.LEM.L2W_Ric_FI_3} reduce our claim to the finite-dimensional setting.\par
Assume $A$ and $B$ are finite-dimensional. We show Equation \ref{EQ.LEM.L2W_Ric_FI_1}. Let $\mu,\eta\in\SII(A)$. Using the semigroup property, $1)$ in Corollary \ref{COR.L2W_Log_Mean_NCDS} and $3)$ in Proposition \ref{PRP.QF_AF} let us calculate

\begin{align}\label{EQ.LEM.L2W_Ric_FI_4}
\Ilog\lc{}h_{t}(\mu)\rc{}=-\frac{d}{dt}\Enttau\lc{}h_{t}(\mu)\rc{}=\tau\lc\Delta h_{t}\lc\sharp\mu\rc\log h_{t}\lc\sharp\mu\rc\rc{}
\end{align}

\noindent for all $t>0$. We extend to $t\geq 0$ by continuity. Equation \ref{EQ.LEM.L2W_Ric_FI_4} shows $t\mapsto\sqrt{\Ilog\lc{}h_{t}(\mu)\rc{}}$ is continuous on $[0,\infty)$. Using triangle inequality, we calculate

\begin{align*}
\frac{d^{+}}{dt}\mathcal{W}_{\nabla}^{\log}\lc{}h_{t}(\mu),\eta\rc{} & = \limsup_{s\downarrow 0}\hspace{0.025cm} s^{-1}\lc\mathcal{W}_{\nabla}^{\log}\lc{}h_{t+s}(\mu),\eta\rc{}-\mathcal{W}_{\nabla}^{\log}\lc{}h_{t}(\mu),\eta\rc\rc \phantom{\Bigg)} \\
& \leq \limsup_{s\downarrow 0}\hspace{0.025cm} s^{-1}\mathcal{W}_{\nabla}^{\log}\lc{}h_{t}(\mu),h_{t+s}(\mu)\rc \phantom{\Bigg)}
\end{align*}

\noindent for all $t\geq 0$.\par

%NEWPAGE
%NEWPAGE
%NEWPAGE

\pagebreak

%NEWPAGE
%NEWPAGE
%NEWPAGE

For all $s>0$, we claim

\begin{align}\label{EQ.LEM.L2W_Ric_FI_5}
s^{-1}\mathcal{W}_{\nabla}^{\log}\lc{}h_{t}(\mu),h_{t+s}(\mu)\rc\leq s^{-1}\cdot \int_{t}^{t+s}\sqrt{\Ilog\lc{}h_{r}(\mu)\rc{}}dr
\end{align}

\noindent for all $t\geq 0$. If Equation \ref{EQ.LEM.L2W_Ric_FI_5} holds, then continuity of $t\mapsto\sqrt{\Ilog\lc{}h_{t}(\mu)\rc{}}$ on $[0,\infty)$ and Equation \ref{EQ.LEM.L2W_Ric_FI_5} itself let us calculate

\begin{align}\label{EQ.LEM.L2W_Ric_FI_6}
\frac{d^{+}}{dt}\mathcal{W}_{\nabla}^{\log}\lc{}h_{t}(\mu),\eta\rc\leq\limsup_{s\downarrow 0}\hspace{0.025cm} s^{-1}\int_{t}^{t+s}\sqrt{\Ilog\lc{}h_{t}(\mu)\rc{}}dr=\sqrt{\Ilog\lc{}h_{t}(\mu)\rc{}}
\end{align}

\noindent for all $t\geq 0$. Equation \ref{EQ.LEM.L2W_Ric_FI_6} shows Equation \ref{EQ.LEM.L2W_Ric_FI_1}. We therefore show Equation \ref{EQ.LEM.L2W_Ric_FI_5}. Let $t\geq 0$. For all $s>0$, set $\mu(r):=h_{r}(\mu)$ and $w(r):=-\lc\nabla\sharp\mu(r)\rc^{\flat}$ for all $r\in \lb{}t,t+s\rb$. We show $(\mu,w)\in\Adm^{\lb{}t,t+s\rb{}}\lc{}h_{t}(\mu),h_{t+s}(\mu)\rc$ in the proof of Corollary \ref{COR.L2W_Log_Mean_NCDS}. Let $L^{\log}$ denote the length functional in our case. Using scale invariance of length functionals as per Proposition \ref{PRP.Length_Functional_Reparametrisation}, we directly verify

\begin{align}\label{EQ.LEM.L2W_Ric_FI_7}
L^{\log}(\mu,w)=\int_{t}^{t+s}\sqrt{\Ilog\lc{}h_{t}(\mu)\rc{}}dr.
\end{align}

\noindent Equation \ref{EQ.LEM.L2W_Ric_FI_7} shows Equation \ref{EQ.LEM.L2W_Ric_FI_5}. The general case follows as discussed above.
\end{proof}

\begin{thm}\label{THM.L2W_Ric_FI}
Let $(\phi,\bpsi,\gamma,\nabla)$ be noncommutative differential structure for tracial AF-$C^{*}$-algebras $(A,\tau)$ and $(B,\omega)$ in the logarithmic mean setting.

\begin{itemize}
\item[1)] If $\Ric\nabla\geq\lambda$, then $\Enttau$ satisfies $\HWI_{\lambda}$.

\item[2)] If $\Enttau$ satisfies $\HWI_{\lambda}$ for $\lambda>0$, then $\Enttau$ satisfies $\MLSI_{\lambda}$.

\item[3)] If $\Enttau$ satisfies $\MLSI_{\lambda}$, then $\Enttau$ satisfies $\TW_{\lambda}$.
\end{itemize}
\end{thm}
\begin{proof}
We adapt the proofs of Theorem 11.3, Theorem 11.4 and Theorem 11.5 in \cite{ART.Car_Maa.2020.Quantum_OT_III} to the AF-$C^{*}$-setting by means of the coarse graining process. Non-ergodicity requires us to consider relative entropy of finitely supported fixed states.\par
We reduce to the finite-dimensional setting. Let $\xi\in\SII(A)$ be a finitely supported fixed state. Let $\mathcal{C}\subset (\SII(A),\mathcal{W}_{\nabla}^{\log})$ be finitely supported with fixed part $\xi$ s.t.~we have $\mathcal{C}\cap\dom\Enttau\neq\emptyset$. For all $j\in\mathbb{N}$ s.t.~$\xi_{j}\neq 0$, Equation \ref{EQ.SSEC.L2W_EVI_Equivalence_2} and Equation \ref{EQ.THM.L2W_EVI_Equivalence_1} together with $2.1)$ in Proposition \ref{PRP.QF_AF} show $\HWI_{\lambda}$ restricts to

\begin{align}\label{EQ.THM.L2W_Ric_FI_1}
\Ent\lc\bar{\mu}_{j},\tau\rc\leq\mathcal{W}_{\nabla}^{\log}\lc\bar{\mu}_{j},\bar{\xi}_{j}\rc\sqrt{\Ilog\lc\bar{\mu}_{j}\rc{}}-\frac{\lambda}{2}\mathcal{W}_{\nabla}^{\log}\lc\bar{\mu}_{j},\bar{\xi}_{j}\rc^{2}+\Ent\lc\bar{\xi}_{j},\tau\rc{} \tag{$\EVI_{\lambda}^{j}$}
\end{align}

%NEWPAGE
%NEWPAGE
%NEWPAGE

\pagebreak

%NEWPAGE
%NEWPAGE
%NEWPAGE

\noindent on $\mathcal{C}_{A_{j}}\lc\bar{\xi}_{j}\rc$ for all $\lambda\in\mathbb{R}$, resp.~$\MLSI_{\lambda}$ and $\TW_{\lambda}$ restrict to

\begin{align}\label{EQ.THM.L2W_Ric_FI_2}
\Ent\lc\bar{\mu}_{j},\tau\rc{} & \leq \frac{1}{2\lambda}\Ilog\lc\bar{\mu}_{j}\rc{}+\Ent\lc\bar{\xi}_{j},\tau\rc{} \tag{$\MLSI_{\lambda}^{j}$}
\end{align}

\noindent and

\begin{align}\label{EQ.THM.L2W_Ric_FI_3}
\mathcal{W}_{\nabla}^{\log}\lc\bar{\mu}_{j},\bar{\xi}_{j}\rc{} & \leq \sqrt{\frac{2}{\lambda}\bigg(\hspace{-0.028975cm} \Ent\lc\bar{\mu}_{j},\tau\rc{}-\Ent\lc\bar{\xi}_{j},\tau\rc\bigg)} \tag{$\TW_{\lambda}^{j}$}
\end{align}

\noindent on $\mathcal{C}_{A_{j}}\lc\bar{\xi}_{j}\rc$ for all $\lambda>0$. If conversely $\HWI_{\lambda}^{j}$, $\MLSI_{\lambda}^{j}$, resp.~$\TW_{\lambda}^{j}$ holds for a.e.~$j\in\mathbb{N}$, then note Equation \ref{EQ.SSEC.L2W_EVI_Equivalence_6} and Equation \ref{EQ.SSEC.L2W_EVI_Equivalence_8} together with $2.2)$ in Proposition \ref{PRP.QF_AF} show letting $j\uparrow\infty$ on both sides of the given inequality recovers $\HWI_{\lambda}$, $\MLSI_{\lambda}$, resp.~$\TW_{\lambda}$ on $\mathcal{C}\cap\dom\Enttau$. We therefore reduce to the finite-dimensional setting.\par
Assume $A$ and $B$ are finite-dimensional. Let $\mu\in\mathcal{C}$. We show $1)$. Assume $\Ric\nabla\geq\lambda$. If $\Ilog(\mu)=\infty$, then there is nothing to show. Assume $\Ilog(\mu)<\infty$. Theorem \ref{THM.L2W_EVI_Equivalence} shows $\EVI_{\lambda}$ as per Equation \ref{EQ.SSEC.L2W_EVI_Equivalence_3} applies. Corollary \ref{COR.QOT_Distance_AC_Rel_Ent} shows $\mu,\xi\in\mathcal{C}_{A}(\xi)$. We obtain

\begin{align}\label{EQ.THM.L2W_Ric_FI_4}
\Ent(\mu,\tau)\leq-\frac{1}{2}\restr{0.925}{\frac{d^{+}}{dt}}{t=0}\mathcal{W}_{\nabla}^{\log}\lc{}h_{t}(\mu),\xi\rc^{2}-\frac{\lambda}{2}\mathcal{W}_{\nabla}^{\log}(\mu,\xi)^{2}+\Ent(\xi,\tau)
\end{align}

\noindent for $t=0$ by rearranging terms accordingly. Equation \ref{EQ.THM.L2W_Ric_FI_4} shows $\Enttau$ satisfies $\HWI_{\lambda}$ if

\begin{align}\label{EQ.THM.L2W_Ric_FI_5}
-\frac{1}{2}\restr{0.925}{\frac{d^{+}}{dt}}{t=0}\mathcal{W}_{\nabla}^{\log}\lc{}h_{t}(\mu),\xi\rc^{2}\leq\mathcal{W}_{\nabla}^{\log}(\mu,\xi)\sqrt{\Ilog(\mu)}.
\end{align}

\noindent We show Equation \ref{EQ.THM.L2W_Ric_FI_5}. Note $2)$ in Corollary \ref{COR.QOT_Distance_AC_L2} shows $\mathcal{W}_{\nabla\vert\mathcal{C}_{A}(\xi)\times\mathcal{C}_{A}(\xi)}^{\log}$ is finite and $\|.\|_{A}$-continuous. Using the latter, we have

\begin{align}\label{EQ.THM.L2W_Ric_FI_6}
\limsup_{t\downarrow 0}\hspace{0.025cm} \mathcal{W}_{\nabla}^{\log}\lc{}h_{t}(\mu),\mu\rc{}=0,\ \limsup_{t\downarrow 0}\hspace{0.025cm} \mathcal{W}_{\nabla}^{\log}\lc{}h_{t}(\mu),\xi\rc{}=\mathcal{W}_{\nabla}^{\log}(\mu,\xi).
\end{align}

\noindent Using triangle inequality, symmetry of distances and Equation \ref{EQ.THM.L2W_Ric_FI_6} let us calculate

\begingroup
\allowdisplaybreaks
\begin{align*}
& \ -\frac{1}{2}\restr{0.925}{\frac{d^{+}}{dt}}{t=0}\mathcal{W}_{\nabla}^{\log}\lc{}h_{t}(\mu),\xi\rc^{2} \phantom{\Bigg)} \\
=& \ \limsup_{t\downarrow 0}\hspace{0.025cm} \frac{1}{2}t^{-1}\lc\mathcal{W}_{\nabla}^{\log}(\mu,\xi)^{2}-\mathcal{W}_{\nabla}^{\log}\lc{}h_{t}(\mu),\xi\rc^{2}\rc \phantom{\Bigg)} \\
\leq& \ \limsup_{t\downarrow 0}\hspace{0.025cm} \frac{1}{2}t^{-1}\lc\lc\mathcal{W}_{\nabla}^{\log}\lc{}h_{t}(\mu),\mu\rc{}+\mathcal{W}_{\nabla}^{\log}\lc{}h_{t}(\mu),\xi\rc\rc^{2}-\mathcal{W}_{\nabla}^{\log}\lc{}h_{t}(\mu),\xi\rc^{2}\rc \phantom{\Bigg)} \\
=& \ \limsup_{t\downarrow 0}\hspace{0.025cm} \frac{1}{2}t^{-1}\lc\mathcal{W}_{\nabla}^{\log}\lc{}h_{t}(\mu),\mu\rc^{2}+2\mathcal{W}_{\nabla}^{\log}\lc{}h_{t}(\mu),\mu\rc\mathcal{W}_{\nabla}^{\log}\lc{}h_{t}(\mu),\xi\rc\rc \phantom{\Bigg)} \\
=& \ \limsup_{t\downarrow 0}\hspace{0.025cm} \frac{1}{2}t^{-1}\mathcal{W}_{\nabla}^{\log}\lc{}h_{t}(\mu),\mu\rc^{2} + \lc\limsup_{t\downarrow 0}\hspace{0.025cm} t^{-1}\mathcal{W}_{\nabla}^{\log}\lc{}h_{t}(\mu),\mu\rc\rc\cdot \mathcal{W}_{\nabla}^{\log}(\mu,\xi) \phantom{\Bigg)} \\
=& \ 0 + \lc\limsup_{t\downarrow 0}\hspace{0.025cm} t^{-1}\mathcal{W}_{\nabla}^{\log}\lc{}h_{t}(\mu),\mu\rc\rc\cdot \mathcal{W}_{\nabla}^{\log}(\mu,\xi) \phantom{\Bigg)} \\
=& \ \mathcal{W}_{\nabla}^{\log}(\mu,\xi)\cdot \lc\limsup_{t\downarrow 0}\hspace{0.025cm} t^{-1}\lc\mathcal{W}_{\nabla}^{\log}\lc{}h_{t}(\mu),\mu\rc{}-\mathcal{W}_{\nabla}^{\log}(\mu,\mu)\rc\rc{} \phantom{\Bigg)} \\
=& \ \mathcal{W}_{\nabla}^{\log}(\mu,\xi)\cdot \restr{0.925}{\frac{d^{+}}{dt}}{t=0}\mathcal{W}_{\nabla}^{\log}\lc{}h_{t}(\mu),\mu\rc{}. \phantom{\Bigg)}
\end{align*}
\endgroup

\noindent Applying Lemma \ref{LEM.L2W_Ric_FI} to the final term in the above calculation yields Equation \ref{EQ.THM.L2W_Ric_FI_5} as required. As such, we know $\Enttau$ satisfies $\HWI_{\lambda}$. Get $1)$.\par
We show $2)$. Assume $\Enttau$ satisfies $\HWI_{\lambda}$ for $\lambda>0$. Note Young's inequality implies $xy\leq Cx^{2}+(4C)^{-1}y^{2}$ for all $x,y\in\mathbb{R}$ and $C>0$ \cite{ART.Erb_Maa.2012.Discrete_OT_Ricci_Bounds}. Using $C=2^{-1}\lambda$, we obtain

\begin{align}\label{EQ.THM.L2W_Ric_FI_7}
\mathcal{W}_{\nabla}^{\log}(\mu,\xi)\sqrt{\Ilog(\mu)}-\frac{\lambda}{2}\mathcal{W}_{\nabla}^{\log}(\mu,\xi)^{2}\leq\frac{1}{2\lambda}\Ilog(\mu)
\end{align}

\noindent by rearranging terms accordingly. $\HWI_{\lambda}$ and Equation \ref{EQ.THM.L2W_Ric_FI_7} let us calculate

\begin{align}\label{EQ.THM.L2W_Ric_FI_8}
\Ent(\mu,\tau)\leq\mathcal{W}_{\nabla}^{\log}(\mu,\xi)\sqrt{\Ilog(\mu)}-\frac{\lambda}{2}\mathcal{W}_{\nabla}^{\log}(\mu,\xi)^{2}+\Ent(\xi,\tau)\leq\frac{1}{2\lambda}\Ilog(\mu)+\Ent(\xi,\tau).
\end{align}

\noindent Equation \ref{EQ.THM.L2W_Ric_FI_8} shows $\Enttau$ satisfies $\MLSI_{\lambda}$. Get $2)$.\par
We show $3)$. Assume $\Enttau$ satisfies $\MLSI_{\lambda}$. We know $\lambda>0$ by hypothesis. Set

\begin{align}\label{EQ.THM.L2W_Ric_FI_9}
F(t):=\mathcal{W}_{\nabla}^{\log}\lc\mu,h_{t}(\mu)\rc{}+\sqrt{\frac{2}{\lambda}\bigg(\hspace{-0.028975cm} \Ent\lc{}h_{t}(\mu),\tau\rc{}-\Ent(\xi,\tau)\bigg)}
\end{align}

\noindent for all $t\geq 0$. Using $1)$ in Theorem \ref{THM.Wstar_Derivation_QG_HSG_Regularity} and $3)$ in Proposition \ref{PRP.QF_AF}, we directly verify Equation \ref{EQ.THM.L2W_Ric_FI_9} defines continuous map $F:(0,\infty)\longrightarrow\mathbb{R}$ s.t.~$\frac{d^{+}}{dt}F$ exists for all $t\geq 0$. Norm continuity and Theorem \ref{THM.L2W_Log_Mean_NCDS} imply

\begin{align}\label{EQ.THM.L2W_Ric_FI_10}
F(0):=\lim_{t\downarrow 0}\hspace{0.025cm} F(t)=\sqrt{\frac{2}{\lambda}\lc\Ent(\mu,\tau)-\Ent(\xi,\tau)\rc{}},\ F\lc\infty\rc{}:=\lim_{t\uparrow\infty}\hspace{0.025cm} F(t)=\mathcal{W}_{\nabla}^{\log}(\mu,\xi).
\end{align}

\noindent Integrating over $[0,\infty)$, Equation \ref{EQ.THM.L2W_Ric_FI_10} implies $\Enttau$ satisfies $\TW_{\lambda}$ if $\frac{d^{+}}{dt}F(t)\leq 0$ for all $t>0$. We show the latter.\par

%NEWPAGE
%NEWPAGE
%NEWPAGE

\pagebreak

%NEWPAGE
%NEWPAGE
%NEWPAGE

Using the semigroup property, $3)$ in Proposition \ref{PRP.QF_AF} and $\MLSI_{\lambda}$ let us calculate

\begin{align*}
\frac{d}{dt}\sqrt{\frac{2}{\lambda}\cdot \bigg(\hspace{-0.028975cm} \Ent\lc{}h_{t}(\mu),\tau\rc{}-\Ent(\xi,\tau)\bigg)}& =-\frac{\Ilog\lc{}h_{t}(\mu)\rc{}}{\sqrt{2\lambda\cdot \bigg(\hspace{-0.028975cm} \Ent\lc{}h_{t}(\mu),\tau\rc{}-\Ent(\xi,\tau)\bigg)}} \phantom{\bigg)} \\
& \\
& \leq-\sqrt{\Ilog\lc{}h_{t}(\mu)\rc{}} \phantom{\bigg)}
\end{align*}

\noindent in each case. Note we use $\MLSI_{\lambda}$ in the denominator. Applying Lemma \ref{LEM.L2W_Ric_FI} and the above calculation to Equation \ref{EQ.THM.L2W_Ric_FI_9} shows

\begin{align}\label{EQ.THM.L2W_Ric_FI_11}
\frac{d^{+}}{dt}F(t)\leq\sqrt{\Ilog\lc{}h_{t}(\mu)\rc{}}-\frac{d}{dt}\sqrt{\frac{2}{\lambda}\bigg(\hspace{-0.028975cm} \Ent\lc{}h_{t}(\mu),\tau\rc{}-\Ent(\xi,\tau)\bigg)}\leq 0
\end{align}

\noindent for all $t>0$. Equation \ref{EQ.THM.L2W_Ric_FI_11} shows $\frac{d^{+}}{dt}F(t)\leq 0$ for all $t>0$ as required. As such, we know $\Enttau$ satisfies $\TW_{\lambda}$. Get $3)$.
\end{proof}

\begin{cor}\label{COR.L2W_Ric_FI}
If $\Ric\nabla\geq\lambda>0$, then $\Enttau$ satisfies $\HWI_{\lambda}$, $\MLSI_{\lambda}$ and $\TW_{\lambda}$.
\end{cor}
\begin{proof}
Apply Theorem \ref{THM.L2W_Ric_FI}.
\end{proof}

%%%%%%%%%%%%%%%%%%
%%%%%%%%%%%%%%%%%%
%%%% APPENDIX %%%%
%%%%%%%%%%%%%%%%%%
%%%%%%%%%%%%%%%%%%

\begin{appendices}

%%%%%%%%%%%%%%%%
%%%% CHAPTER %%%
%%%%%%%%%%%%%%%%

\chapter{Operator Theory}\label{APP.A}

\vspace{-0.15cm}
We review operator theory. In Section \ref{SEC.A_Fnd}, we cover fundamental results for unbounded operators, $C^{*}$-~and $W^{*}$-algebras, as well as functional calculus used in our discussion. In Section \ref{SEC.A_Maps}, we discuss strong resolvent convergence, resolvent-preserving maps of unbounded operators, and introduce compression maps.

%%%%%%%%%%%%%%%%
%%% SECTION %%%%
%%%%%%%%%%%%%%%%

\vspace{-0.01675cm}
\section{Fundamental operator theory}\label{SEC.A_Fnd}

\vspace{-0.01675cm}
In Subsection \ref{SSEC.A_Fnd_Unbd}, we review partial orders generated by positive elements, as well as spaces of bounded and unbounded operators on Hilbert spaces. We further discuss twisting maps on spaces of unbounded operators induced by Hilbert space isometries. In Subsection \ref{SSEC.A_Fnd_CWstar}, $C^{*}$-~and $W^{*}$-algebras are covered. We give direct sums and tensor products. We discuss normal, completely positive and completely Markovian maps.\par
In Subsection \ref{SSEC.A_Fnd_FC}, we review functional calculus. Spectral measures of self-adjoint unbounded operators are given by the well-established bounded measurable functional calculus for $W^{*}$-algebras. Joint spectral measures are given by tensoring such spectral measures of strongly commuting self-adjoint unbounded operators. Functional calculus is integration w.r.t.~spectral measures. Joint functional calculus is integration w.r.t.~joint spectral measures. We introduce two related standard operations for further use in our discussion. In Lemma \ref{LEM.JFC_Preservation}, we establish pull-back along tensor products of normal unital $^{*}$-homomorphisms. In Subsection \ref{SSEC.A_Maps_Compression}, we study compression.

%%%%%%%%%%%%%%%%%%%
%%% SUBSECTION %%%%
%%%%%%%%%%%%%%%%%%%

\vspace{-0.01675cm}
\subsection{Unbounded operators}\label{SSEC.A_Fnd_Unbd}

\vspace{-0.01675cm}
Standard references for unbounded operators are \cite{BK.Ped.1989.Analysis_Now}, \cite{BK.Sch.2012.Unbounded_Operators} and \cite{BK.Tak.1979.OpAlg_I}.

%%%%%%%%%%%%%
%%% PART %%%%
%%%%%%%%%%%%%

\vspace{-0.01675cm}
\subsubsection*{Partial orders generated by positive elements}

We use $\mathbb{K}\in\lset\mathbb{C},\mathbb{R}\rset$ as field.

\vspace{-0.01675cm}
\begin{dfn}\label{DFN.PO_I}
Let $V$ be a complex vector space. A convex cone $C\subset V$ is proper if $0\in C$ and $C\cap -C=\lset{}0\rset$. Let $\gamma:V\longrightarrow V$ be anti-linear involution. Its set of hermitian elements is $V_{h}:=\lset{}v\in V\ \vset\ \gamma(v)=v\rset$.

\vspace{-0.01675cm}
\begin{itemize}
\item[1)] For all $v\in V$, set $\RE(v):=\frac{1}{2}\lc{}v+\gamma(v)\rc$ and $\IM(v):=\frac{1}{2i}\lc{}v-\gamma(v)\rc$.

\item[2)] If $V_{h}$ has partial order, then we call it generated by its set $V_{+}:=\lset{}v\in V_{h}\ \vset\ v\geq 0\rset$ of positive elements if $V_{+}$ is a proper cone generating the partial order.
\end{itemize}
\end{dfn}

%NEWPAGE
%NEWPAGE
%NEWPAGE

\pagebreak

%NEWPAGE
%NEWPAGE
%NEWPAGE

\begin{ntn}\label{NTN.PO}
We use vector spaces $V_{s}$ with subscript $s\in S$ in an index set. If $V_{s}$ has partial order generated by positive elements, then we write $V_{s,h}$ and $V_{s,+}$ to denote its set of hermitian, resp.~positive elements.
\end{ntn}

\begin{rem}
If $V_{h}$ has partial order generated by its set $V_{+}$ of positive elements, then $V_{h}=\langle V_{+}\rangle_{\mathbb{R}}\oplus\langle V_{+}\rangle_{\mathbb{R}}$ using direct sum of real vector spaces. Since further $V=\langle V_{h}\rangle_{\mathbb{R}}\oplus\langle V_{h}\rangle_{\mathbb{R}}$ using decomposition as per Equation \ref{EQ.PRP.PO_I_1}, we say that $V$ has partial order generated by its set $V_{+}$ of positive elements in this case.
\end{rem}

\begin{prp}\label{PRP.PO_I}
Let $V$ be a complex vector space. We consider anti-linear involution $\gamma:V\longrightarrow V$. For all $v\in V$, we have $\RE(v),\IM(v)\in V_{h}$ and

\begin{align}\label{EQ.PRP.PO_I_1}
v=\RE(v)+i\IM(v),\ \gamma(v)=\RE(v)-i\IM(v).
\end{align}
\end{prp}
\begin{proof}
Apply anti-linearity of $\gamma$.
\end{proof}

\begin{dfn}\label{DFN.PO_II}
Let $V$ and $W$ be complex vector spaces. We consider anti-linear\linebreak involutions $\gamma^{V}:V\longrightarrow V$ and $\gamma^{W}:W\longrightarrow W$. Let $\phi:V\longrightarrow W$ be a linear map.

\begin{itemize}
\item[1)] We call $\phi$ order-preserving if

\begin{itemize}
\item[1.1)] $\phi\lc{}V_{h}\rc\subset W_{h}$,

\item[1.2)] $v_{1}\leq v_{2}$ in $V_{h}$ implies $\phi\lc{}v_{1}\rc\leq\phi\lc{}v_{2}\rc$ in $W_{h}$.
\end{itemize}

\begin{reapply}
\end{reapply}

\item[2)] Assume $V_{h}$ and $W_{h}$ have partial orders generated by positive elements. We call $\phi$ positivity-preserving if $\phi\lc{}V_{+}\rc\subset W_{+}$.
\end{itemize}
\end{dfn}

\begin{prp}\label{PRP.PO_II}
Let $V$ and $W$ be complex vector spaces. We consider anti-linear\linebreak involutions $\gamma^{V}:V\longrightarrow V$ and $\gamma^{W}:W\longrightarrow W$. Let $\phi:V\longrightarrow W$ be a linear map.

\begin{itemize}
\item[1)] $\phi$ is order-preserving if and only if $\phi\circ\gamma^{V}=\gamma^{W}\circ\phi$.

\item[2)] If $V_{h}$ and $W_{h}$ have partial orders generated by positive elements, then $\phi$ is order-preserving if and only if $\phi$ is positivity-preserving.
\end{itemize}
\end{prp}
\begin{proof}
We have $\phi\lc{}V_{h}\rc\subset W_{h}$ if and only if $\phi\lc\RE(v)\rc{}=\RE\lc\phi(v)\rc$ and $\phi\lc\IM(v)\rc{}=\IM\lc\phi(v)\rc$ for all $v\in V$. This implies $1)$. Get $2)$ since $V_{h}=\langle V_{+}\rangle_{\mathbb{R}}\oplus\langle V_{+}\rangle_{\mathbb{R}}$ and $W_{h}=\langle W_{+}\rangle_{\mathbb{R}}\oplus\langle W_{+}\rangle_{\mathbb{R}}$ generate the respective partial orders.
\end{proof}

%%%%%%%%%%%%%
%%% PART %%%%
%%%%%%%%%%%%%

\subsubsection*{Bounded and unbounded operators}

For spaces of bounded operators, we fix notation. This includes operator topologies used throughout our discussion. For spaces of unbounded operators, we fix notation, set partial order in Definition \ref{DFN.Unbd_PO}, and give twisting maps of Hilbert space isometries in Definition \ref{DFN.Unbd_Twist}. We collect properties of such twisting maps in Proposition \ref{PRP.Unbd_Twist}.\par

%NEWPAGE
%NEWPAGE
%NEWPAGE

\pagebreak

%NEWPAGE
%NEWPAGE
%NEWPAGE

\begin{dfn}
Let $\lc{}V,\|.\|_{V}\rc$ and $\lc{}W,\|.\|_{W}\rc$ be Banach spaces.

\begin{itemize}
\item[1)] Let $\BII\lc{}V,W\rc$ be the set of all $\lc\|.\|_{V},\|.\|_{W}\rc$-bounded operators and let $\|.\|_{\BII\lc{}V,W\rc{}}$ be its operator norm.

\item[2)] Set $\BII(V):=\BII\lc{}V,V\rc$ and $I_{V}:=\id_{\BII(V)}$. We call $V^{*}:=\BII\lc{}V,\mathbb{C}\rc$ Banach dual of $V$.
\end{itemize}
\end{dfn}

\begin{ntn}
Unless stated otherwise, we suppress Banach space norms.
\end{ntn}

Operator norms determine uniform operator topology, also called norm topology. Let $H$ be a Hilbert space. We equip $\BII(H)$ with several other operator topologies aside from the uniform one: the $\sigma$-strong and $\sigma$-weak, as well as the strong and weak operator topology. For details on these operator topologies, we refer to Chapter II.2 in \cite{BK.Tak.1979.OpAlg_I}.

\begin{ntn}
For a normed vector space $\lc{}V,\|.\|_{V}\rc$, let $v=\|.\|_{V}$-$\lim_{k\in K}v_{k}$ denote norm convergence of nets in $V$ and $u=w^{*}$-$\lim_{k\in K}u_{k}$ denote $w^{*}$-convergence of nets in $V^{*}$. For a Hilbert space $\lc{}H,\|.\|_{H}\rc$, let $x=\s$-$\lim_{k\in K}x_{k}$ denote strong and $x=\w$-$\lim_{k\in K}x_{k}$ denote weak convergence of nets in $\BII(H)$.
\end{ntn}

\begin{rem}\label{REM.Wstar_Con}
The $\sigma$-strong and strong topologies are equivalent on norm bounded sets \lc{}cf.~Lemma II.2.5 in \cite{BK.Tak.1979.OpAlg_I}\rc{}. Equally, the $\sigma$-weak and weak topologies are.
\end{rem}

For details on elementary unbounded operator theory, we refer to \cite{BK.Ped.1989.Analysis_Now}.

\begin{dfn}\label{DFN.Unbd_PO}
Let $H$ be a Hilbert space, $\UBII(H)$ the set of all unbounded operators on $H$, and $\UBII(H)_{h}$ the set of all self-adjoint unbounded operators on $H$.

\begin{itemize}
\item[1)] For all $T,S\in\UBII(H)_{h}$, set $T\geq S$ if and only if

\begin{itemize}
\item[1.1)] $\dom T\subset\dom S$, \phantom{\big)}

\item[1.2)] $\lgl T(u),u\rgl_{H}\geq\lgl S(u),u\rgl_{H}$ for all $u\in\dom T$. \phantom{\big)}
\end{itemize}

\begin{reapply}
\end{reapply}

\item[2)] We call $T\in\UBII(H)_{h}$ positive if $\lgl T(u),u\rgl_{H}\geq 0$ for all $u\in\dom T$. Let $\UBII(H)_{+}$ be the set of all positive unbounded operators on $H$.
\end{itemize}
\end{dfn}

\begin{rem}\label{REM.Unbd_PO}
We equip $\UBII(H)$ with canonical addition and scalar multiplication \lc{}cf.~Chapter 5 in \cite{BK.Ped.1989.Analysis_Now}\rc{}. We obtain complex unital semi-module $\UBII(H)$ satisfying all vector space axioms except additive inverses. Linear maps, inclusions and proper cones are defined as for complex vector spaces. Functional calculus shows $\UBII(H)_{+}\subset\UBII(H)_{h}$ is a proper cone generating partial order defined as per $1)$ in Definition \ref{DFN.Unbd_PO}.
\end{rem}

\begin{dfn}\label{DFN.Unbd_Twist}
Let $H_{0}$ and $H_{1}$ be Hilbert spaces. Let $\phi:H_{0}\longrightarrow H_{1}$ be a linear or anti-linear isometric isomorphism. For all $T\in\UBII\lc{}H_{0}\rc$, we define $\phi^{\dagger}(T)\in\UBII\lc{}H_{1}\rc$ as

\begin{itemize}
\item[1)] $\dom\phi^{\dagger}(T):=\big\{\hspace{0.025cm} u\in H_{1}\ \vset\ \phi^{-1}(u)\in\dom T\hspace{0.025cm} \big\}$, \phantom{\big)}

\item[2)] $\phi^{\dagger}(T)(u):=\phi\lc{}T\lc\phi^{-1}(u)\rc\rc$ for all $u\in\dom\phi^{\dagger}(T)$. \phantom{\big)}
\end{itemize}

\noindent This defines map $\phi^{\dagger}:\UBII\lc{}H_{0}\rc\longrightarrow\UBII\lc{}H_{1}\rc$ by $T\mapsto\phi^{\dagger}(T)$. Using $\phi^{-1}:H_{1}\longrightarrow H_{0}$ defines map $\phi^{-\dagger}:\UBII\lc{}H_{1}\rc\longrightarrow\UBII\lc{}H_{0}\rc$ by $T\mapsto\phi^{-\dagger}(T):=\lc\phi^{-1}\rc^{\dagger}(T)$.
\end{dfn}

%NEWPAGE
%NEWPAGE
%NEWPAGE

\pagebreak

%NEWPAGE
%NEWPAGE
%NEWPAGE

Proposition \ref{PRP.Unbd_Twist} shows twisting preserves standard operations for densely defined closable unbounded operators if they are defined in the domain.

\begin{prp}\label{PRP.Unbd_Twist}
Let $\phi:H_{0}\longrightarrow H_{1}$ be a linear or anti-linear isometric isomorphism of Hilbert spaces.

\begin{itemize}
\item[1)] We have bijective linear maps $\phi^{\dagger}$ and $\phi^{-\dagger}=\lc\phi^{-1}\rc^{\dagger}=\lc\phi^{\dagger}\rc^{-1}$.

\item[2)] If $T\in\UBII\lc{}H_{0}\rc$ is densely defined closable, then $\phi^{\dagger}(T)\in\UBII\lc{}H_{1}\rc$ is.

\item[3)] Let $T,S\in\UBII\lc{}H_{0}\rc$ be densely defined closable s.t.~$T+S$ and $TS$ are densely defined closable. For all $\lambda_{0},\lambda_{1}\in\mathbb{C}$, we have

\begin{itemize}
\item[3.1)] $\phi^{\dagger}\lc{}T^{*}\rc{}=\phi^{\dagger}(T)^{*}$, \phantom{\vstretch{0.75}{\bigg)}}

\item[3.2)] $\phi^{\dagger}\lc\overline{\lambda_{0}T+\lambda_{1}S}\rc{}=\overline{\lambda_{0}\phi^{\dagger}(T)+\lambda_{1}\phi^{\dagger}(S)}$, \phantom{\vstretch{0.75}{\bigg)}}

\item[3.3)] $\phi^{\dagger}\lc\overline{TS}\rc{}=\overline{\phi^{\dagger}(T)\phi^{\dagger}(S)}$. \phantom{\vstretch{0.75}{\bigg)}}
\end{itemize}

\begin{reapply}
\end{reapply}

\end{itemize}
\end{prp}
\begin{proof}
Since $\phi$ is anti-linear if and only if $\phi^{-1}$ is, we assume $\phi$ is linear without loss of generality. Get $1)$ by construction. Let $T\in\UBII\lc{}H_{0}\rc$. Written as graph, $\phi^{\dagger}$ maps $T$ to

\begin{align}\label{EQ.PRP.Unbd_Twist_1}
\phi^{\dagger}(T)=\lset\lc\phi(u),\phi\lc{}T(u)\rc\rc\in\phi\lc\dom T\rc\times H_{1}\ \vset\ u\in\dom T\rset{}.
\end{align}

\noindent Equation \ref{EQ.PRP.Unbd_Twist_1} shows $2)$ and $3)$ because $\phi$ is isometric isomorphism of Hilbert spaces.
\end{proof}

%%%%%%%%%%%%%%%%%%%
%%% SUBSECTION %%%%
%%%%%%%%%%%%%%%%%%%

\subsection[$C^{*}$-~and $W^{*}$-algebras]{$\mathbf{C}^{*}$-~and $\mathbf{W}^{*}$-algebras}\label{SSEC.A_Fnd_CWstar}

Standard references for the theory of $C^{*}$-~and $W^{*}$-algebras are \cite{BK.Bla.2006.OpAlg} and \cite{BK.Tak.1979.OpAlg_I}\cite{BK.Tak.2003.OpAlg_II}\cite{BK.Tak.2003.OpAlg_III}. We use \cite{BK.Kad_Rin.1997.OpAlg_I}\cite{BK.Kad_Rin.1997.OpAlg_II} and \cite{BK.Ped.2018.Cstar_Algebras} as supplement. Moreover, \cite{BK.Bro_Oza.2008.Cstar_AF} focuses on the approximately finite-dimensional, or AF-$C^{*}$-setting, and \cite{BK.Dav.1996.Cstar_By_Example} is a source of examples.

%%%%%%%%%%%%%
%%% PART %%%%
%%%%%%%%%%%%%

\subsubsection*{$\mathbf{C}^{*}$-algebras}

The $C^{*}$-identity, i.e.~Equation \ref{EQ.DFN.Cstar_3}, defines $C^{*}$-algebras. It imposes a rigid structure on such Banach $^{*}$-algebras. All $^{*}$-homomorphisms of $C^{*}$-algebras are bounded of norm at most one, and isometries if injective. Thus all $^{*}$-isomorphisms of $C^{*}$-algebras are isometries, hence a $^{*}$-algebra has at most one $C^{*}$-norm.

\begin{dfn}\label{DFN.Cstar}
Let $\lc{}A,\|.\|_{A}\rc$ be a Banach $^{*}$-algebra. It is unital if it has unit $1_{A}\in A$.

\begin{itemize}
\item[1)] The hermitian, resp.~positive elements in $A$ are

\begin{align}\label{EQ.DFN.Cstar_1}
A_{h}:=\lset{}x\in A\ \vset\ x=x^{*}\rset{},\ A_{+}:=\lset{}x\in A_{h}\ \vset\ \exists y\in A:\ x=y^{*}y\rset{}.
\end{align}

\begin{reapply}
\end{reapply}

\item[2)] The hermitian, resp.~positive bounded functionals on $A$ are

\begin{align}\label{EQ.DFN.Cstar_2}
A_{h}^{*}:=\lset\mu\in A^{*}\ \vset\ \forall x\in A:\ \mu(x^{*}x)\in\mathbb{R}\rset{},\ A_{+}^{*}:=\lset\mu\in A_{h}^{*}\ \vset\ \forall x\in A:\ \mu(x^{*}x)\geq 0\rset{}.
\end{align}

\begin{reapply}
\end{reapply}

%NEWPAGE
%NEWPAGE
%NEWPAGE

\pagebreak

%NEWPAGE
%NEWPAGE
%NEWPAGE

\item[3)] $A$ is a $C^{*}$-algebra if all $x\in A$ satisfy the $C^{*}$-identity 

\begin{align}\label{EQ.DFN.Cstar_3}
\|x^{*}x\|_{A}=\|x\|_{A}^{2}.
\end{align}

\begin{reapply}
\end{reapply}

\end{itemize}
\end{dfn}

\begin{ntn}
Let $A$ be a $C^{*}$-algebra. Rather than hermitian, we say that $x\in A_{h}$ is self-adjoint and call $\mu\in A_{h}^{*}$ real.
\end{ntn}

\begin{bsp}\label{BSP.Cstar_Quantum}
For all Hilbert spaces $H$, its space $\lc\BII(H),\|.\|_{\BII(H)}\rc$ of bounded and its space $\lc\KII(H),\|.\|_{\BII(H)}\rc$ of compact operators are $C^{*}$-algebras.
\end{bsp}

\begin{bsp}\label{BSP.Cstar_Commutative}
Let $X$ be a locally compact Hausdorff space. Let $C_{0}(X)$ be the set of all continuous $g:X\longrightarrow\mathbb{C}$ vanishing at infinity. Pointwise operations equip it with Banach $^{*}$-algebra structure. For all $g\in C_{0}(X)$, set $\|g\|_{\infty}:=\sup_{x\in X}\absv{1.15}{g(x)}$. We see $C_{0}(X)$ equipped with $\|.\|_{\infty}$ is a $C^{*}$-algebra. Up to natural isomorphisms \cite{BK.MacLane.1998.Category_Theory}, all commutative $C^{*}$-algebras are of such form by Gelfand duality \lc{}cf.~Theorem I.3.11 in \cite{BK.Tak.1979.OpAlg_I}\rc{}. If $X$ is compact, then $C_{0}(X)=C(X)$. Note $C_{b}(X)$ equipped with $\|.\|_{\infty}$ is a $C^{*}$-algebra.
\end{bsp}

We use standard definitions for $^{*}$-algebras. Homomorphisms of $^{*}$-algebras are called $^{*}$-homomorphisms. For $C^{*}$-algebras, Proposition \ref{PRP.Cstar_Hom} shows boundedness follows by the $C^{*}$-identity if $^{*}$-algebra structures are preserved. This leads to Definition \ref{DFN.Cstar_Hom}.

\begin{dfn}\label{DFN.Cstar_Hom}
Let $A$ and $B$ be $C^{*}$-algebras.

\begin{itemize}
\item[1)]  A $^{*}$-homomorphism $\phi:A\longrightarrow B$ of $C^{*}$-algebras is a $^{*}$-homomorphism. If $A$ and $B$ are unital, then $\phi:A\longrightarrow B$ is unital if $\phi(1_{A})=1_{B}$.

\item[2)] If $A\subset B$, then $A$ is a $C^{*}$-subalgebra of $B$ if $A\subset B$ is a $^{*}$-homomorphism. If $A$ and $B$ are furthermore unital, then $A$ is a unital $C^{*}$-subalgebra of $B$ if $A\subset B$ is a unital $^{*}$-homomorphism.
\end{itemize}
\end{dfn}

\begin{bsp}
Let $A$ be a $C^{*}$-algebra and $H$ a Hilbert space. We call $\pi:A\longrightarrow\BII(H)$ a $^{*}$-representation of $A$ over $H$ if it is a $^{*}$-homomorphism. It is faithful if injective. It is unital if $A$ is unital and $\pi(1_{A})=I_{H}$, i.e.~if it is a unital $^{*}$-homomorphism.
\end{bsp}

\begin{prp}\label{PRP.Cstar_Hom}
Let $\phi:A\longrightarrow B$ be a $^{*}$-homomorphism of $C^{*}$-algebras.

\begin{itemize}
\item[1)] $\phi\in\BII\lc{}A,B\rc$ and $\|\phi\|_{\BII\lc{}A,B\rc{}}\leq 1$.

\item[2)] If $\phi$ is injective, then it is an isometry.

\item[3)] If $\phi$ is a $^{*}$-isomorphism, then $\phi^{-1}$ is a $^{*}$-isomorphism.
\end{itemize}
\end{prp}
\begin{proof}
Proposition I.5.2 and Proposition I.5.3 in \cite{BK.Tak.1979.OpAlg_I} show $1)$, resp.~$2)$ at once. Using $2)$, we directly verify $3)$.
\end{proof}

\begin{prp}\label{PRP.Cstar_Rep}
Let $A$ be a $C^{*}$-algebra. There exists Hilbert space $H$ and faithful $^{*}$-representation $\pi:A\longrightarrow\BII(H)$. If $A$ is unital, then we may ask $\pi$ to be unital.
\end{prp}
\begin{proof}
Apply Theorem I.9.18 in \cite{BK.Tak.1979.OpAlg_I}.
\end{proof}

%NEWPAGE
%NEWPAGE
%NEWPAGE

\pagebreak

%NEWPAGE
%NEWPAGE
%NEWPAGE

Faithful $^{*}$-representations of $C^{*}$-algebras are direct sums of cyclic $^{*}$-representations. The latter arise as GNS-constructions, which are standard constructions associated to positive functionals on $C^{*}$-algebras \lc{}cf.~Theorem I.9.14 in \cite{BK.Tak.1979.OpAlg_I}\rc{}. If we further demand normality given $W^{*}$-algebras, i.e.~$\sigma$-weak closed $C^{*}$-algebras, then we obtain semi-cyclic $^{*}$-representations \lc{}cf.~Definition VII.1.5 in \cite{BK.Tak.2003.OpAlg_II}\rc{}. Relevant to us are canonical left-~and right-actions associated to f.s.n.~traces constructed in Subsection \ref{SSEC.B_SMO_Wstar_Trace}.

\begin{prp}\label{PRP.Cstar_PO}
Let $A$ be a $C^{*}$-algebra.

\begin{itemize}
\item[1)] The partial order generated on $A_{h}$ by the proper cone $A_{+}$ is given by

\begin{align}\label{EQ.PRP.Cstar_PO_1}
x\geq y\Leftrightarrow x-y\in A_{+}
\end{align}

\begin{reapply}
\end{reapply}

\noindent for all $x,y\in A_{h}$. Using algebra involution as anti-linear involution on $A$, the set $A_{+}$ of positive elements generates the partial order.

\item[2)] The partial order generated on $A_{h}^{*}$ by the proper cone $A_{+}^{*}$ is given by

\begin{align}\label{EQ.PRP.Cstar_PO_2}
\mu\geq\eta\Leftrightarrow\mu-\eta\in A_{+}
\end{align}

\begin{reapply}
\end{reapply}

\noindent for all $\mu,\eta\in A_{h}^{*}$. Using pointwise conjugation as anti-linear involution on $A^{*}$, the set $A_{+}$ of positive elements generates the partial order.
\end{itemize}
\end{prp}
\begin{proof}
Apply Theorem I.6.1 and Proposition III.2.1 in \cite{BK.Tak.1979.OpAlg_I}.
\end{proof}

\begin{dfn}\label{DFN.Cstar_PO}
Let $A$ be a $C^{*}$-algebra. We equip $A_{h}$ with the partial order defined by Equation \ref{EQ.PRP.Cstar_PO_1}, resp.~$A_{h}^{*}$ with the partial order defined by Equation \ref{EQ.PRP.Cstar_PO_2}.
\end{dfn}

\begin{rem}
If $H$ is a Hilbert space, then partial order on $\BII(H)_{h}$ given by $1)$ in Definition \ref{DFN.Unbd_PO} is the one fixed here by Definition \ref{DFN.Cstar_PO}. Note Example \ref{BSP.Wstar_CP_II} shows all $^{*}$-homomorphisms are positivity-preserving. Altogether, we know partial orders on $C^{*}$-algebras reduce to Definition \ref{DFN.Unbd_PO} by Proposition \ref{PRP.PO_II} and Proposition \ref{PRP.Cstar_Rep}.
\end{rem}

We use three standard constructions for $C^{*}$-algebras: generation, direct sums and tensor products. Definition \ref{DFN.Cstar_Generated} gives generated $C^{*}$-algebras. Let $A$ be a $C^{*}$-algebra and $S\subset A$. Let $\textrm{Poly}(S)$ be the set of all finite polynomials with elements in $S$ or $S^{*}$. We know $\textrm{Poly}(S)^{*}=\textrm{Poly}(S)\subset A$ by construction. For all $n\in\mathbb{N}$, set

\begin{align}\label{EQ.SSEC.A_Fnd_CWstar_1}
\textrm{Poly}(S)^{n}:=\lset{}x\in A\ \vset\ \exists \{\hspace{0.0125cm} y_{k}\hspace{0.0025cm}\}_{k=1}^{n}\subset\textrm{Poly}(S):\ x=\prod_{k=1}^{n}y_{k}\rset{}.
\end{align}

\noindent Note Equation \ref{EQ.SSEC.A_Fnd_CWstar_1} implies the complex linear span $C_{0}^{*}(S):=\langle\hspace{0.0375cm}\bigcup_{n\in\mathbb{N}}\textrm{Poly}(S)^{n}\rangle_{\mathbb{C}}\subset A$ is in fact a $^{*}$-subalgebra. The $C^{*}$-identity is therefore inherited from $A$.

\begin{dfn}\label{DFN.Cstar_Generated}
Let $A$ be a $C^{*}$-algebra. For all $S\subset A$, we call $C^{*}(S):=\overline{C_{0}^{*}(S)}^{\|.\|_{A}}$ the $C^{*}$-algebra generated by $S$. If $\lset{}S_{k}\rset_{k=1}^{n}\subset\PII(A)$, then set $C^{*}\lc{}S_{1},\ldots,S_{n}\rc{}:=C^{*}\lc\bigcup_{k\in K}S_{k}\rc$.
\end{dfn}

%NEWPAGE
%NEWPAGE
%NEWPAGE

\pagebreak

%NEWPAGE
%NEWPAGE
%NEWPAGE

Definition \ref{DFN.Cstar_DS} gives direct sum $C^{*}$-algebras. Let $m\in\mathbb{N}$. For all $n\in\lset{}1,\ldots,m\rset$, let $A_{n}$ be a $C^{*}$-algebra. Let $\oplus_{n=1}^{m}A_{n}$ be the direct sum of Banach spaces. Thus

\begin{align}\label{EQ.SSEC.A_Fnd_CWstar_2}
\|x\|_{\oplus_{n=1}^{m}A_{n}}=\max_{1\leq n\leq m}\hspace{0.025cm} \|x_{n}\|_{A_{n}}
\end{align}

\noindent for all $x=\lc{}x_{1},\ldots,x_{n}\rc\in\oplus_{n=1}^{m}A_{n}$. Multiplication and adjoining is defined on summands. Equation \ref{EQ.SSEC.A_Fnd_CWstar_2} ensures the $C^{*}$-identity.

\begin{dfn}\label{DFN.Cstar_DS}
Let $m\in\mathbb{N}$. For all $n\in\lset{}1,\ldots,m\rset$, let $A_{n}$ be a $C^{*}$-algebra. We call $\oplus_{n=1}^{m}A_{n}$ the direct sum $C^{*}$-algebra of $\lset{}A_{n}\rset_{n=1}^{m}$.
\end{dfn}

Definition \ref{DFN.Cstar_TP} gives tensor product $C^{*}$-algebras. Note we assume nuclearity of at least one factor \lc{}cf.~Definition XV.1.4 in \cite{BK.Tak.2003.OpAlg_III}\rc{}. This ensures unique cross norms up to $^{*}$-isomorphism. For details on $C^{*}$-tensor products, we refer to Section IV.4 in \cite{BK.Tak.1979.OpAlg_I} and Chapter 11 in \cite{BK.Kad_Rin.1997.OpAlg_II}. The latter discusses infinite tensor products.\par
Let $A$ be a $C^{*}$-algebra and $B$ a nuclear $C^{*}$-algebra. We construct minimal $C^{*}$-tensor product $A\otimes B:=A\otimes_{\min}B$ via norm closure of algebraic tensor product $A\odot B$ under the unique norm satisfying the cross norm identity

\begin{align}\label{EQ.SSEC.A_Fnd_CWstar_3}
\|x\otimes y\|_{A\otimes B}=\|x\|_{A}\| y\|_{B}
\end{align}

\noindent for all $x\in A$ and $y\in B$. Multiplication and adjoining are defined on factors and therefore elementary tensors. Equation \ref{EQ.SSEC.A_Fnd_CWstar_3} ensures the $C^{*}$-identity.

\begin{dfn}\label{DFN.Cstar_TP}
Let $A$ be a $C^{*}$-algebra $A$ and $B$ a nuclear $C^{*}$-algebra. We call $A\otimes B$ the $C^{*}$-tensor product of $A$ and $B$.
\end{dfn}

\begin{rem}\label{REM.Cstar_TP}
We are able to tensor suitable bounded linear maps via the algebraic tensor product, in particular bounded linear functionals and $^{*}$-homomorphisms. Both sets of linear maps are stable under $C^{*}$-tensoring.
\end{rem}

%%%%%%%%%%%%%
%%% PART %%%%
%%%%%%%%%%%%%

\subsubsection*{$\mathbf{W}^{*}$-algebras}

Upon faithful $^{*}$-representation, closures of $C^{*}$-algebras in $\sigma$-weak operator topology are unital $C^{*}$-algebras. This defines $W^{*}$-algebras concretely but it is their $^{*}$-algebra structures which determine $\sigma$-weak operator topology. Definition \ref{DFN.Wstar} gives an equivalent abstract definition as $C^{*}$-algebras which are Banach duals. Their pre-duals are unique up to isometric isomorphism, including noncommutative $L^{1}$-spaces of tracial $W^{*}$-algebras as per Definition \ref{DFN.NC_Int_Lp}. Upon faithful normal $^{*}$-representation as per Proposition \ref{PRP.Wstar_Equivalence}, the induced $w^{*}$-topology is $\sigma$-weak operator topology.

\begin{rem}\label{REM.Wstar_Normal_I}
Proposition \ref{PRP.Wstar_Normal} for weakly continuous faithful $^{*}$-representations as per Proposition \ref{PRP.Wstar_Equivalence} shows normality of $^{*}$-representations is continuity w.r.t.~$w^{*}$-~and $\sigma$-weak operator topology. Unitality is not necessary.
\end{rem}

\begin{dfn}\label{DFN.Wstar}
Let $M$ be a $C^{*}$-algebra. We say that $M$ is a $W^{*}$-algebra if there exists a Banach space $M_{*}$ s.t.~$M=\lc{}M_{*}\rc^{*}$. In this case, we call $M_{*}$ the pre-dual of $M$.
\end{dfn}

\begin{rem}
If $M$ is a $W^{*}$-algebra, then $M_{*}$ is unique up to isometric isomorphism of Banach spaces \lc{}cf.~Corollary III.3.9 in \cite{BK.Tak.1979.OpAlg_I}\rc{}.
\end{rem}

\begin{bsp}\label{BSP.Wstar}
In Subsection \ref{SSEC.B_SMO_Wstar_Trace}, we cover tracial $W^{*}$-algebras. Their pre-duals are noncommutative $L^{1}$-spaces. Fundamental example is $\BII(H)=S^{1}(H)^{*}$ for a Hilbert space $H$ and $S^{1}(H)$ its trace-class operators. The $\sigma$-weak operator topology is defined as the $w^{*}$-topology on $\BII(H)=S^{1}(H)^{*}$ in this case. This mirrors the commutative case of $X$ a locally compact Hausdorff space and $\mathcal{N}$ a $\sigma$-ideal of null sets of the Borel $\sigma$-algebra $\mathfrak{B}(X)$. We define $W^{*}$-algebra $L^{\infty}\lc{}X,\mathcal{N}\rc$ using $\|.\|_{\infty}$ modulo $\mathcal{N}$, i.e.~essential supremum. If $\mathcal{N}=\mathcal{N}_{\mu}$ for $\mu\in C_{c}(X)^{*}$ as per Riesz–Markov–Kakutani theorem \lc{}cf.~Theorem 6.3.4 in \cite{BK.Ped.1989.Analysis_Now}\rc{}, then we know $L^{\infty}\lc{}X,\mu\rc{}:=L^{\infty}\lc{}X,\mathcal{N}_{\mu}\rc{}=L^{1}\lc{}X,\mu\rc^{*}$ depending only on $\mathcal{N}_{\mu}$.
\end{bsp}

\begin{prp}\label{PRP.Wstar_Equivalence}
Let $M$ be a $C^{*}$-algebra. $M$ is a $W^{*}$-algebra if and only if $M$ is unital and there exists a faithful unital $^{*}$-representation $\pi:M\longrightarrow\BII(H)$ satisfying one of the following:

\begin{itemize}
\item[1)] $\pi(M)=\pi(M)''$,

\item[2)] $\pi(M)$ is ($\sigma$-)strongly closed,

\item[3)] $\pi(M)$ is ($\sigma$-)weakly closed.
\end{itemize}

\noindent If $M$ is a $W^{*}$-algebra, then there exists a faithful unital $^{*}$-representation $\pi:M\longrightarrow\BII(H)$ s.t.~the $w^{*}$-topology on $M=\lc{}M_{*}\rc^{*}$ is the $\sigma$-weak operator topology on $\pi(M)\subset\BII(H)$.
\end{prp}
\begin{proof}
For all Hilbert spaces $H$ and $S\subset\BII(H)$, let $S'\subset\BII(H)$ be the commutant of $S$. Theorem II.3.9 and Theorem III.3.5 in \cite{BK.Tak.1979.OpAlg_I} show all claims, with exception of the weak topologies on $M$ and $\pi(M)$ coinciding. Theorem 7.4.2 in \cite{BK.Kad_Rin.1997.OpAlg_II} shows the latter.
\end{proof}

Proposition \ref{PRP.Wstar_Normal} states $\sigma$-weak continuity is normality as per Definition \ref{DFN.Wstar_Normal} for completely positive maps as per Definition \ref{DFN.Wstar_CP}. Note Example \ref{BSP.Wstar_CP_II} shows all $^{*}$-homomorphisms are completely positive. Thus we see all $\sigma$-weakly $^{*}$-homomorphisms are normal, hence all those faithful unital $^{*}$-representations weakly continuous as per Proposition \ref{PRP.Wstar_Equivalence} are also normal, i.e.~Remark \ref{REM.Wstar_Normal_I}. Altogether, we know normality as per $1)$ in Definition \ref{DFN.Wstar_Hom} is a special case of Definition \ref{DFN.Wstar_Normal}. If we consider any $W^{*}$-subalgebras as per $2)$ in Definition \ref{DFN.Wstar_Hom}, then we do not assume unitality unless stated otherwise. For details on the choice of unit, we refer to Subsection \ref{SSEC.B_JFC_Compression}.

\begin{dfn}\label{DFN.Wstar_Hom}
Let $M$ and $N$ be $W^{*}$-algebras.

\begin{itemize}
\item[1)]  A normal $^{*}$-homomorphism $\phi:M\longrightarrow N$ of $W^{*}$-algebras is $\sigma$-weakly continuous $^{*}$-homomorphism.

\item[2)] If $N\subset M$, then $N$ is $W^{*}$-subalgebra of $M$ if $N\subset M$ is normal $^{*}$-homomorphism. If it is also unital, then $N$ is a unital $W^{*}$-subalgebra of $M$.
\end{itemize}
\end{dfn}

Standard constructions for $C^{*}$-algebras specialise to $W^{*}$-algebras. The direct sum construction is unchanged. Definition \ref{DFN.Wstar_Generated} gives generated $W^{*}$-algebras by $\sigma$-weak closure of generated $C^{*}$-algebras. Definition \ref{DFN.Wstar_TP} gives tensor product $W^{*}$-algebras.\par

%NEWPAGE
%NEWPAGE
%NEWPAGE

\pagebreak

%NEWPAGE
%NEWPAGE
%NEWPAGE

\begin{dfn}\label{DFN.Wstar_Generated}
Let $M$ be a $W^{*}$-algebra. For all $S\subset M$, we call $W^{*}(S):=\overline{C_{0}^{*}(S)}^{\w}$ the $W^{*}$-algebra generated by $S$. If $\lset{}S_{k}\rset_{k=1}^{n}\subset\PII(M)$, then set $W^{*}\lc{}S_{1},\ldots,S_{n}\rc{}:=W^{*}\lc\bigcup_{k\in K}S_{k}\rc$.
\end{dfn}

\begin{prp}\label{PRP.Wstar_Generated}
For all unital $C^{*}$-algebras $A$, we have $A=C^{*}\lc\UII(A)\rc$ for the set $\UII(A):=\lset{}x\in A\ \vset\ x^{*}=x^{-1}\rset$ of unitary operators in $A$. For all $W^{*}$-algebras $M$, we have $M=W^{*}\lc{}P(M)\rc$ for the set $P(M):=\lset{}p\in M_{h}\ \vset\ p^{2}=p\rset$ of projections in $M$.
\end{prp}
\begin{proof}
Let $A$ be a unital $C^{*}$-algebra. For all $x\in A_{h}$, $C\lc\specA x\rc{}=C^{*}\lc\UII\lc{}C\lc\specA x\rc\rc\rc$ by Stone-Weierstrass \cite{BK.Ped.1989.Analysis_Now}. Since $\UII\lc{}C\lc\specA x\rc\rc\subset\UII(A)$ in each case, decomposing into real and imaginary parts shows $A=C^{*}\lc\UII(A)\rc$. Let $M$ be a $W^{*}$-algebra. We readily see $M=C^{*}\lc\UII(M)\rc$ by unitality. Theorem 5.2.5 in \cite{BK.Kad_Rin.1997.OpAlg_I} implies $\UII(M)\subset C^{*}\lc{}P(M)\rc$. Thus we combine both to $M\subset C^{*}\lc{}P(M)\rc$, hence $M=W^{*}\lc{}P(M)\rc$ as claimed.
\end{proof}

For our discussion, it commonly suffices to have bounded linear maps preserving strong or weak convergence of uniformly bounded nets. Proposition \ref{PRP.Wstar_Normal} shows such bounded convergence is equivalent to normality if we assume complete positivity.

\begin{prp}\label{PRP.Wstar_BdCon}
Let $M$ be a $W^{*}$-algebra, $S\subset M$ a $^{*}$-subalgebra and $\overline{S}$ its strong closure. For all $x\in\overline{S}$, there exists net $\{x_{k}\}_{k\in K}\subset S$ s.t.~

\begin{align}\label{EQ.PRP.Wstar_BdCon_1}
x=\s\textrm{-}\lim_{k\in K}\hspace{0.025cm} x_{k},\ \sup_{k\in K}\hspace{0.025cm} \|x_{k}\|_{M}\leq \|x\|_{M}.
\end{align}
\end{prp}
\begin{proof}
If $x=0$, then $x\in S$. If $x\neq 0$, then the Kaplansky density theorem yields a net as claimed up to rescaling by a positive constant \lc{}cf.~Theorem 5.3.5 in \cite{BK.Kad_Rin.1997.OpAlg_I}\rc{}.
\end{proof}

\begin{dfn}\label{DFN.Wstar_BdCon_I}
Let $M$ be a $W^{*}$-algebra. We call a net $\{x_{k}\}_{k\in K}\subset M$ bounded strongly convergent if it is bounded and converges strongly, resp.~bounded weakly convergent if it is bounded and converges weakly.
\end{dfn}

\begin{ntn}
Let $x=\bds$-$\lim_{k\in K}x_{k}$ denote bounded strong and $x=\bdw$-$\lim_{k\in K}x_{k}$ bounded weak convergence of nets.
\end{ntn}

\begin{rem}\label{REM.Wstar_BdCon_I}
The uniform boundedness principle shows bounded strong and strong convergence coincide on sequences \lc{}cf.~Theorem 2.2.9 in \cite{BK.Ped.1989.Analysis_Now}\rc{}. Equally, bounded weak and weak convergence coincide on sequences.
\end{rem}

\begin{dfn}\label{DFN.Wstar_BdCon_II}
Let $\phi:M\longrightarrow N$ be a bounded linear map of $W^{*}$-algebras.

\begin{itemize}
\item[1)] We call $\phi$ bounded strongly continuous if for all nets $\{x_{k}\}_{k\in K}\subset M$, $x=\bds$-$\lim_{k\in K}x_{k}$ implies $\phi(x)=\bds$-$\lim_{k\in K}\phi(x_{k})$,

\item[2)] We call $\phi$ bounded weakly continuous if for all nets $\{x_{k}\}_{k\in K}\subset M$, $x=\bdw$-$\lim_{k\in K}x_{k}$ implies $\phi(x)=\bdw$-$\lim_{k\in K}\phi(x_{k})$.
\end{itemize} 
\end{dfn}

\begin{rem}\label{REM.Wstar_BdCon_II}
Definition \ref{DFN.Wstar_BdCon_II} extends to bounded multi-linear maps. We commonly use multiplication in $W^{*}$-algebras is bounded strongly continuous and therefore further sequentially strongly continuous by Remark \ref{REM.Wstar_BdCon_I}.
\end{rem}

%%%%%%%%%%%%%
%%% PART %%%%
%%%%%%%%%%%%%

\subsubsection*{Normal, completely positive and completely Markovian maps}

We consider properties of bounded linear maps of $C^{*}$-~and $W^{*}$-algebras. Completely positive normal bounded linear maps of $W^{*}$-algebras are continuous in all operator topologies we use and stable under tensoring. Examples are positive bounded normal functionals on and normal $^{*}$-homomorphisms of $W^{*}$-algebras, as well as compression maps. The notions of completely positive map and completely Markovian map are used to define completely Markovian semigroups \cite{ART.Dav.1979.Quantum_Markov_SG_Generators}\cite{ART.Dav_Lin.1992.Noncommutative_Markov_SG_I}\cite{ART.Dav_Lin.1993.Noncommutative_Markov_SG_II} describing irreversible time-evolution of dissipative quantum systems weakly coupled to a heat bath \cite{BK.Bra.1987.OpAlg_Quantum_StM_I}\cite{BK.Bra.1987.OpAlg_Quantum_StM_II}\cite{BK.Dav.1976.Quantum_Markov_SG}\cite{BK.Gar_Zol.2004.Quantum_Noise}\cite{BK.Ohy_Pet.1993.Rel_Ent}\cite{BK.Ste_vLee.2013.Full_Quantum_StM}. For details on the latter, we refer to Subsection \ref{SSEC.QOT_AC_HSG}.\par
Normality is preservation of suprema under a given map. Unique suprema exist for $W^{*}$-algebras. For all bounded increasing nets $\{x_{k}\}_{k\in K}\subset M_{h}$ in a given $W^{*}$-algebra $M$, we have unique supremum $\sup_{k\in K}x_{k}\in M_{h}$ in partial order. We furthermore have $\sup_{k\in K}x_{k}=\s$-$\lim_{k\in K}x_{k}$ in $M$. Lemma 5.1.4 in \cite{BK.Kad_Rin.1997.OpAlg_I} shows both statements.

\begin{dfn}\label{DFN.Wstar_Normal}
Let $\phi:M\longrightarrow N$ be a positivity-preserving bounded linear map of $W^{*}$-algebras. We call $\phi$ normal if for all bounded increasing nets $\{x_{k}\}_{k\in K}\subset M_{h}$, get

\begin{align}\label{EQ.DFN.Wstar_Normal_1}
\phi\vstretch{1.05}{\Bigg(}\hspace{0.0075cm} \sup_{k\in K}\hspace{0.025cm} x_{k}\vstretch{1.05}{\Bigg)}=\sup_{k\in K}\hspace{0.025cm} \phi(x_{k}).
\end{align}
\end{dfn}

In the general noncommutative setting, positivity-preservation is not stable under tensoring. The latter requires complete positivity. For all completely positive maps of $W^{*}$-algebras, Proposition \ref{PRP.Wstar_Normal} shows normality is equivalent to $\sigma$-weak continuity. Example \ref{BSP.Wstar_CP_II} therefore leads to Definition \ref{DFN.Wstar_Hom}. Full matrix algebras are nuclear $C^{*}$-algebras. For all $n\in\mathbb{N}$, let $I_{n}\in M_{n}(\mathbb{C})$ be the identity. For all bounded linear maps $\phi:A\longrightarrow B$ of $C^{*}$-algebras, the bounded linear maps $\phi\otimes\id_{M_{n}(\mathbb{C})}:A\otimes M_{n}(\mathbb{C})\longrightarrow B\otimes M_{n}(\mathbb{C})$ of $C^{*}$-algebras obtained for all $n\in\mathbb{N}$ are determined on algebraic tensor products.

\begin{dfn}\label{DFN.Wstar_CP}
We call a bounded linear map $\phi:A\longrightarrow B$ of $C^{*}$-algebras completely positive if $\phi\otimes\id_{M_{n}(\mathbb{C})}:A\otimes M_{n}(\mathbb{C})\longrightarrow B\otimes M_{n}(\mathbb{C})$ is positivity-preserving for all $n\in\mathbb{N}$.
\end{dfn}

\begin{bsp}\label{BSP.Wstar_CP_I}
All positivity-preserving bounded linear functionals $\mu:A\longrightarrow\mathbb{C}$ of $C^{*}$-algebras are completely positive \lc{}cf.~Corollary IV.3.5 in \cite{BK.Tak.1979.OpAlg_I}\rc{}.
\end{bsp}

\begin{bsp}\label{BSP.Wstar_CP_II}
If $\phi:A\longrightarrow B$ is a $^{*}$-homomorphisms of $C^{*}$-algebras, then $\phi(x^{*}x)=\phi(x)^{*}\phi(x)$ for all $x\in A$ ensures $\phi$ is positivity-preserving. Since each $\phi\otimes\id_{M_{n}(\mathbb{C})}$ itself is a $^{*}$-homomorphism if $\phi$ is, $^{*}$-homomorphisms are completely positive. Proposition \ref{PRP.Wstar_Normal} shows $\sigma$-weak continuity is normality, i.e.~Equation \ref{EQ.DFN.Wstar_Normal_1}, for $^{*}$-homomorphisms.
\end{bsp}

\begin{bsp}\label{BSP.Wstar_CP_III}
Let $M$ be a $W^{*}$-algebra and $p\in M$ a projection. We obtain $W^{*}$-algebra $M[p]:=pMp$ and define positivity-preserving compression map $\comp:M\longrightarrow M[p]$ by setting $\comp x:=pxp$ for all $x\in M$. For all $n\in\mathbb{N}$, $p\otimes I_{n}\in M\otimes M_{n}(\mathbb{C})$ is a projection and $\comp\otimes\id_{M_{n}(\mathbb{C})}=\com_{p\otimes I_{n}}$ upon repeat construction. Thus $\comp$ is completely positive. We define compression maps in Definition \ref{DFN.Compression_Abstract_Bd}. Note $2)$ in Proposition \ref{PRP.Wstar_Trace_NCE_II} shows $\comp$ is the unique noncommutative conditional expectation from $M$ to $M[p]$.
\end{bsp}

\begin{prp}\label{PRP.Wstar_Normal}
For all completely positive maps $\phi:M\longrightarrow N$ of $W^{*}$-algebras, the following are equivalent:

\begin{itemize}
\item[1)] $\phi$ is normal,

\item[2)] $\phi$ is $\sigma$-weakly continuous,

\item[3)] $\phi$ is $\sigma$-strongly continuous,

\item[4)] $\phi$ is bounded weakly continuous,

\item[5)] $\phi$ is bounded strongly continuous.
\end{itemize}
\end{prp}
\begin{proof}
Proposition III.2.2.2 in \cite{BK.Bla.2006.OpAlg} shows $1)$ to $3)$. As $\phi$ is bounded, the unit ball in $M$ is mapped to a bounded ball in $N$. Note $\sigma$-strong and strong, as well as $\sigma$-weak and weak topologies are equivalent on norm bounded sets of $W^{*}$-algebras \lc{}cf.~Lemma II.2.5 in \cite{BK.Tak.1979.OpAlg_I}\rc{}. For all bounded increasing nets $\{x_{k}\}_{k\in K}\subset M_{h}$, $\sup_{k\in K}x_{k}$ is the $\sigma$-strong and therefore $\sigma$-weak limit of $\{x_{k}\}_{k\in K}$. Equivalence of $1)$ and $4)$, as well as $1)$ and $5)$, thus hold by equivalence of the operator topologies on norm bounded sets.
\end{proof}

\begin{rem}\label{REM.Wstar_Normal_II}
By Remark \ref{REM.Wstar_BdCon_I} and Proposition \ref{PRP.Wstar_Normal}, completely positive normal bounded linear maps of $W^{*}$-algebras are sequentially strongly and sequentially weakly continuous. We use this throughout our discussion.
\end{rem}

We refer to Section IV.4 in \cite{BK.Tak.1979.OpAlg_I} for details on $W^{*}$-tensor products. We do not assume nuclearity for tensor products of $W^{*}$-algebras as their construction uses unique minimal $C^{*}$-tensor products. Let $M$ and $N$ be $W^{*}$-algebras. Their minimal $C^{*}$-tensor product is $M\otimes_{\min} N$. Let $M_{*}$ and $N_{*}$ denote their respective pre-dual. Get $M_{*}\odot N_{*}\subset \lc{}M\otimes_{\min} N\rc^{*}$ for the algebraic tensor product of pre-duals by letting 

\begin{align}\label{EQ.SSEC.A_Fnd_CWstar_4}
\lc\mu\otimes\eta\rc\lc{}x\otimes y\rc{}:=\mu(x)\eta(y)
\end{align}

\noindent for all $\mu\otimes\eta\in M_{*}\otimes N_{*}$ and $x\otimes y\in M\otimes_{\min} N$.

\begin{dfn}\label{DFN.Wstar_TP}
Let $M$ and $N$ be $W^{*}$-algebras. Set 

\begin{align}\label{EQ.DFN.Wstar_TP_1}
M_{*}\otimes N_{*}:=\overline{M_{*}\odot N_{*}}\subset\lc{}M\otimes_{\min} N\rc^{*}
\end{align}

\noindent using norm closure. We call $M\otimes N:=\lc{}M_{*}\otimes N_{*}\rc^{*}$ the $W^{*}$-tensor product of $M$ and $N$.
\end{dfn}

\begin{lem}\label{LEM.Wstar_TP}
Let $\phi:M_{0}\longrightarrow M_{1}$ and $\bpsi:N_{0}\longrightarrow N_{1}$ be completely positive normal maps of $W^{*}$-algebras. We define completely positive normal map $\phi\otimes\bpsi:M_{0}\otimes N_{0}\longrightarrow M_{1}\otimes N_{1}$ by setting $\lc\phi\otimes\bpsi\rc\lc{}x\otimes y\rc{}:=\phi(x)\otimes\bpsi(y)$ for all $x\in M_{0}$ and $y\in N_{0}$.
\end{lem}
\begin{proof}
By Proposition \ref{PRP.Wstar_Normal}, this is Proposition IV.5.13 in \cite{BK.Tak.1979.OpAlg_I}.
\end{proof}

\begin{cor}\label{COR.Wstar_TP}
Let $\phi:M_{0}\longrightarrow M_{1}$ and $\bpsi:N_{0}\longrightarrow N_{1}$ be normal $^{*}$-homomorphisms of $W^{*}$-algebras. We define normal $^{*}$-homomorphism $\phi\otimes\bpsi:M_{0}\otimes N_{0}\longrightarrow M_{1}\otimes N_{1}$ by setting $\lc\phi\otimes\bpsi\rc\lc{}x\otimes y\rc{}:=\phi(x)\otimes\bpsi(y)$ for all $x\in M_{0}$ and $y\in N_{0}$. If $\phi$ and $\bpsi$ are unital, then $\phi\otimes\bpsi$ is.
\end{cor}
\begin{proof}
Example \ref{BSP.Wstar_CP_II} and Lemma \ref{LEM.Wstar_TP} yield completely positive normal $\phi\otimes\bpsi$. By Proposition \ref{PRP.PO_II} and Proposition \ref{PRP.Wstar_Normal}, $\phi\otimes\bpsi$ intertwines adjoining and is $\sigma$-strongly continuous. This is equivalent to bounded strong convergence for bounded nets. Let $S$ be the linear span of all elementary tensors. By strong density, Proposition \ref{PRP.Wstar_BdCon} shows $M_{0}\otimes N_{0}$ is the bounded strong closure of $S$. As multiplication is bounded strongly continuous and $\phi\otimes\bpsi$ is bounded, we directly verify our claim on elementary tensors.
\end{proof}

\begin{dfn}\label{DFN.Wstar_CP_Markovian}
We call a completely positive map $\phi:A\longrightarrow A$ of unital $C^{*}$-algebras Markovian if $\phi(x)\leq \|x\|_{A}1_{A}$ for all $x\in A_{+}$. We call such maps completely Markovian if $\phi\otimes\id_{M_{n}(\mathbb{C})}:A\otimes M_{n}(\mathbb{C})\longrightarrow A\otimes M_{n}(\mathbb{C})$ is Markovian for all $n\in\mathbb{N}$.
\end{dfn}

%%%%%%%%%%%%%%%%%%%
%%% SUBSECTION %%%%
%%%%%%%%%%%%%%%%%%%

\subsection{Functional calculus}\label{SSEC.A_Fnd_FC}

Standard references for continuous and bounded measurable functional calculus for\linebreak $C^{*}$-, resp.~$W^{*}$-algebras are  \cite{BK.Kad_Rin.1997.OpAlg_I} and \cite{BK.Tak.1979.OpAlg_I}. Standard references for spectral integration and functional calculus of self-adjoint unbounded operators are \cite{BK.Ped.1989.Analysis_Now} and \cite{BK.Sch.2012.Unbounded_Operators}.

%%%%%%%%%%%%%
%%% PART %%%%
%%%%%%%%%%%%%

\subsubsection*{Integration of spectral measures}

Let $H$ be a Hilbert space. Spectral measures of self-adjoint unbounded operators on $H$ are projection-valued measures taking values in $\BII(H)$. Image lattices of projections are noncommutative Borel $\sigma$-algebras.

\begin{ntn}
For all $n\in\mathbb{N}$ and Borel measurable $X\subset\mathbb{C}^{n}$, let $\mathfrak{B}(X)$ denote the Borel $\sigma$-algebra of $X$. Let $\chi_{Z}$ denote the characteristic function of a set $Z\subset\mathbb{C}^{n}$.
\end{ntn}

\begin{dfn}\label{DFN.SpecInt_Measure}
Let $X\in\mathfrak{B}\lc\mathbb{C}^{n}\rc$. A map $E:\mathfrak{B}(X)\longrightarrow\BII(H)$ is a spectral measure on $X$ with values in $\BII(H)$ if

\begin{itemize}
\item[1)] $E(X)=I$ and $E(Z)$ is a projection for all $Z\in\mathfrak{B}(X)$,

\item[2)] $Z\mapsto E^{u}(Z):=\lgl E(Z)(u),u\rgl_{H}$ is a measure on $X$ for all $u\in H$.
\end{itemize}

\noindent Let $E$ be a spectral measure on $X$. Its support $\supp E$ is the set of all $x\in X$ s.t.~$E\lc{}N_{x}\rc\neq 0$ for all open neighbourhoods $N_{x}$ of $x$. Its null ideal is $\mathcal{N}\lc{}E\rc{}:=\lset{}Z\in\mathfrak{B}(X)\ \vset\ E(Z)=0\rset$.
\end{dfn}

Spectral measures $E:\mathfrak{B}(\mathbb{R})\longrightarrow\BII(H)$ map bijectively to resolutions of the identity $\lset{}E\lc\lc{}-\infty,\lambda\rb\rc\rset_{\lambda\in\mathbb{R}}$ \lc{}cf.~Theorem 4.6 in \cite{BK.Sch.2012.Unbounded_Operators}\rc{}. A spectral measure $E:\mathfrak{B}(\mathbb{R})\longrightarrow\BII(H)$ is thus determined by its resolution of the identity $\lset{}E\lc\lc{}-\infty,\lambda\rb\rc\rset_{\lambda\in\mathbb{R}}$.

\begin{prp}
For all spectral measures $E$ on $X\in\mathfrak{B}\lc\mathbb{C}^{n}\rc$, we have

\begin{itemize}
\item[1)] $E^{u}$ is a finite measure for all $u\in H$,

\item[2)] $\supp E$ is minimal among closed $Z\in\mathfrak{B}(X)$ s.t.~$E(Z)=I_{H}$.
\end{itemize}
\end{prp}
\begin{proof}
By definition of spectral measures.
\end{proof}

%NEWPAGE
%NEWPAGE
%NEWPAGE

\pagebreak

%NEWPAGE
%NEWPAGE
%NEWPAGE

The null ideal $\mathcal{N}\lc{}E\rc$ of a given spectral measure $E$ yields notions of $E$-a.e.~defined and $E$-a.e.~finite map. The set of all $E$-a.e.~finite maps $g:X\longrightarrow\mathbb{C}$ is the domain of spectral integration w.r.t.~$E$.

\begin{dfn}
Let $E$ be spectral measure on $X\in\mathfrak{B}\lc\mathbb{C}^{n}\rc$. Let $\SII\lc{}E\rc$ denote the set of all $E$-a.e.~defined Borel measurable $g:X\longrightarrow\mathbb{C}$ s.t.~$\absv{1}{g}$ is $E$-a.e.~finite. We say that $\lset{}Z_{k}\rset_{k\in\mathbb{N}}\subset\mathfrak{B}(X)$ is a bounding sequence for $\mathcal{G}\subset\SII\lc{}E\rc$ if

\begin{itemize}
\item[1)] $Z_{k}\subset Z_{k+1}$ for all $k\in\mathbb{N}$ and $E\lc\bigcup_{k\in\mathbb{N}}Z_{k}\rc{}=I$,

\item[2)] $\absv{1.15}{g\vert_{Z_{k}}}$ is bounded for all $k\in\mathbb{N}$.
\end{itemize}
\end{dfn}

\begin{rem}
For all spectral measures $E$ on $X\in\mathfrak{B}\lc\mathbb{C}^{n}\rc$ and finite $\mathcal{G}\subset\SII\lc{}E\rc$, there exists a bounding sequence \lc{}cf.~Subsection 4.3.2 in \cite{BK.Sch.2012.Unbounded_Operators}\rc{}.
\end{rem}

Let $E$ be a spectral measure on $X\in\mathfrak{B}\lc\mathbb{C}^{n}\rc$. We define spectral integration as per Lemma 4.11 and Theorem 4.13 in \cite{BK.Sch.2012.Unbounded_Operators}. Theorem 4.16 and Subsection 4.3.3 in \cite{BK.Sch.2012.Unbounded_Operators} show fundamental properties. For all simple functions $g=\sum_{l=1}^{n}c_{l}\chi_{Z_{l}}$ on $X$, the spectral integral of $g$ w.r.t.~$E$ is defined by

\begin{align}\label{EQ.SSEC.A_Fnd_FC_1}
\mathrm{I}_{E}(g):=\sum_{l=1}^{n}c_{l}E\lc{}Z_{l}\rc{}.
\end{align}

\noindent Lemma 4.11 in \cite{BK.Sch.2012.Unbounded_Operators} states $\dblv{}\mathrm{I}_{E}(g)\dblv_{\BII(H)}\leq\sup_{x\in X}\absv{1.15}{g(x)}$ in each case. Density of simple functions in uniform norm extends spectral integration w.r.t.~$E$ to all bounded Borel functions on $X$. For all bounded Borel functions $g:X\longrightarrow\mathbb{C}$ and simple functions $\{g_{n}\}_{n\in\mathbb{N}}$ on $X$ $\|.\|_{\infty}$-converging to $g$, get $\mathrm{I}_{E}(g)=\|.\|_{\BII(H)}$-$\lim_{n\in\mathbb{N}}\mathrm{I}_{E}\lc{}g_{n}\rc$.\par
Let $g\in\SII\lc{}E\rc$. The domain of $\mathrm{I}_{E}$ is defined by

\begin{align}\label{EQ.SSEC.A_Fnd_FC_2}
\dom \mathrm{I}_{E}(g):=\lset{}u\in H\ \vset\ \int_{X}\absv{1.15}{g(x)}^{2}dE^{u}<\infty\rset{}.
\end{align} 

\noindent For all $u\in H$, we have $u\in\dom\mathrm{I}_{E}(g)$ if and only if

\begin{align}\label{EQ.SSEC.A_Fnd_FC_3}
\mathrm{I}_{E}(g)(u):=\|.\|_{H}\textrm{-}\lim_{k\in\mathbb{N}}\hspace{0.025cm} \mathrm{I}_{E}\lc{}g\chi_{Z_{k}}\rc{}(u)
\end{align}

\noindent exists for a bounding sequence $\lset{}Z_{k}\rset_{k\in\mathbb{N}}$ of $g$. If $u\in\dom\mathrm{I}_{E}(g)$, then $\mathrm{I}_{E}(g)(u)$ exists and is independent of choice of bounding sequence of $g$ by Theorem 4.13 in \cite{BK.Sch.2012.Unbounded_Operators}. 

\begin{dfn}
Let $E$ be a spectral measure on $X\in\mathfrak{B}\lc\mathbb{C}^{n}\rc$. For all $g\in\SII\lc{}E\rc$, we call $\int gdE:=\mathrm{I}_{E}(g)$ the spectral integral of $g$ w.r.t.~$E$.
\end{dfn}

\begin{rem}\label{REM.SpecInt_Identities}
Let $g\in\SII\lc{}E\rc$. Its domain as per Equation \ref{EQ.SSEC.A_Fnd_FC_2} and the identity

\begin{align}\label{EQ.REM.SpecInt_Identities_1}
\lgl\mathrm{I}_{E}(g)(u),u\rgl_{H}=\int_{X}g(x)dE^{u}
\end{align}

\noindent for all $u\in\dom\mathrm{I}_{E}(g)$, i.e.~$\dblv{}\mathrm{I}_{E}(g)(u)\dblv_{H}^{2}=\int_{X}\absv{1.15}{g(x)}^{2}dE^{u}<\infty$, determine $\mathrm{I}_{E}(g)$.
\end{rem}

%NEWPAGE
%NEWPAGE
%NEWPAGE

\pagebreak

%NEWPAGE
%NEWPAGE
%NEWPAGE

\begin{prp}\label{PRP.SpecInt_Operator}
Let $E$ be a spectral measure on $X\in\mathfrak{B}\lc\mathbb{C}^{n}\rc$. For all $g\in\SII\lc{}E\rc$, $\mathrm{I}_{E}(g)$ is a closed normal operator s.t.~$\mathrm{I}_{E}(g)^{*}=\mathrm{I}_{E}\lc\overline{g}\rc$ and $\mathrm{I}_{E}(g)^{*}\mathrm{I}_{E}(g)=\mathrm{I}_{E}(g)\mathrm{I}_{E}(g)^{*}=\mathrm{I}_{E}\lc{}g\overline{g}\rc$.
\end{prp}
\begin{proof}
Apply Theorem 4.16 in \cite{BK.Sch.2012.Unbounded_Operators}.
\end{proof}

\begin{rem}\label{REM.SpecInt_Operator}
Proposition \ref{PRP.SpecInt_Operator} defines invertible map $E\mapsto\mathrm{I}_{E}\lc\id_{\mathbb{R}}\rc$ from all spectral measures $E:\mathfrak{B}(\mathbb{R})\longrightarrow\BII(H)$ to $\UBII(H)_{h}$. Note invertibility is the spectral theorem for self-adjoint unbounded operators \lc{}cf.~Theorem 5.7 in \cite{BK.Sch.2012.Unbounded_Operators}\rc{}.  
\end{rem}

%%%%%%%%%%%%%
%%% PART %%%%
%%%%%%%%%%%%%

\subsubsection*{Bounded measurable functional calculus}

Functional calculus of self-adjoint unbounded operators is based on the use of spectral measures. We construct these using bounded measurable functional calculus for $W^{*}$-algebras. The latter in turn extends continuous functional calculus for unital $C^{*}$-algebras.\par
The choice of unit matters. If a Banach $^{*}$-algebra is unital, then the unit is unique. If however $N\subset M$ is a $W^{*}$-subalgebra s.t.~$1_{N}\neq 1_{M}$, then all normal elements in $N$ have two a priori distinct bounded measurable functional calculi. Equation \ref{EQ.LEM.Wstar_FC_1} shows how they may differ. If they differ, then they differ only at zero but generate distinct spectral measures. This impacts spectral integration, in particular taking inverses.

\begin{dfn}\label{DFN.Cstar_Unitalisation}
Let $B$ be a unital $C^{*}$-algebra. For all $C^{*}$-subalgebras $A\subset B$, we call $A[1_{B}]=C^{*}\lc{}A,1_{B}\rc$ the unitalisation of $A$ in $B$.
\end{dfn}

\begin{prp}\label{PRP.Cstar_Unitalisation}
Let $A$ and $B$ be unital $C^{*}$-algebras and $A\subset B$ a $C^{*}$-subalgebra. If $A\subset B$ is not a unital $C^{*}$-subalgebra, then $A[1_{B}]=A\oplus\langle 1_{B}-1_{A}\rangle_{\mathbb{C}}$.
\end{prp}
\begin{proof}
Get $1_{B}-1_{A}\in A[1_{B}]$ and $\lc{}1_{B}-1_{A}\rc{}A=A\lc{}1_{B}-1_{A}\rc{}=0$. Thus $A\oplus\langle 1_{B}-1_{A}\rangle_{\mathbb{C}}$.
\end{proof}

\begin{dfn}\label{DFN.Cstar_FC}
Let $A$ be a $C^{*}$-algebra.

\begin{itemize}
\item[1)] We call $x\in A$ normal if $x^{*}x=xx^{*}$.

\item[2)] Let $A$ be unital. Set $\GL(A):=\lset{}x\in A\ \vset\ x^{-1}\in A\rset$. For all normal $x\in A$, its spectrum in $A$ is $\specA x:=\lset\lambda\in\mathbb{C}\ \vset\ x-\lambda 1_{A}\notin\GL(A)\rset$.
\end{itemize}
\end{dfn}

Lemma \ref{LEM.Cstar_FC} states continuous functional calculus for unital $C^{*}$-algebras. For all normal $x\in A$ in a unital $C^{*}$-algebra $A$, Example \ref{BSP.Cstar_Commutative} explains how $C\lc\specA x\rc$ is a $C^{*}$-algebra using uniform norm.

\begin{lem}\label{LEM.Cstar_FC}
Let $A$ be a unital $C^{*}$-algebra. If $x\in A$ is normal, then

\begin{itemize}
\item[1)] $\specA x\subset\mathbb{C}$ is non-empty and compact,

\item[2)] there exists unital $^{*}$-isomorphism $\Gamma_{x,A}:C\lc\specA x\rc\longrightarrow C^{*}\lc{}x,x^{*},1_{A}\rc$,

\item[3)] $\Gamma_{x,A}$ is determined by unitality and $\Gamma_{x,A}\lc\id_{\specA x}\rc{}=x$.
\end{itemize}
\end{lem}
\begin{proof}
Get $1)$ by Proposition I.4.2, resp.~$2)$ and $3)$ by Proposition I.4.6 in \cite{BK.Tak.1979.OpAlg_I}.
\end{proof}

\begin{rem}\label{REM.Cstar_FC}
Let $x\in A$ be normal. We call $\Gamma_{x,A}$ the continuous functional calculus of $x$ in $A$. For all $x\in C\lc\specA x\rc$, set $g(x):=\Gamma_{x,A}(g)$. We adopt analogues convention for all functional calculus. If $\specA x\subset X\subset\mathbb{C}$ for locally compact Hausdorff $X$, then $g(x)=h(x)$ for all $g,h\in C_{0}(X)$ s.t.~$g\vert_{\specA x}=h\vert_{\specA x}$.
\end{rem}

Corollary \ref{COR.Cstar_FC_I} shows continuous functional calculus extends uniquely to normal elements in non-unital $C^{*}$-algebras if we restrict to functions vanishing at zero. Note it further shows choice of unit only involves values at zero.

\begin{cor}\label{COR.Cstar_FC_I}
Let $B$ be a unital $C^{*}$-algebra and $A\subset B$ a $C^{*}$-subalgebra. If $x\in A$ is normal and $\pi:A\longrightarrow\BII(H)$ a faithful $^{*}$-representation, then

\begin{itemize}
\item[1)] $\specB x\setminus\lset{}0\rset=\specBH\pi(x)\setminus\lset{}0\rset$,

\item[2)] $\Gamma_{x,B}(g)\in A$ and $\pi\lc\Gamma_{x,B}(g)\rc{}=\Gamma_{\pi(x),\BII(H)}(g)$ for all $g\in C_{0}\lc\specBH\pi(x)\setminus\lset{}0\rset\rc$.
\end{itemize}
\end{cor}
\begin{proof}
Get $1)$ by Proposition \ref{PRP.Cstar_Unitalisation}. Instead of $\specBH\pi(x)\setminus\lset{}0\rset$, we consider compact $K\subset\mathbb{C}$ s.t.~$\lset{}0\rset\cup\specB x\cup\specBH\pi(x)\subset K$ as per Remark \ref{REM.Cstar_FC}. If $g$ is a polynomial on $K$ vanishing at zero, then it is expressed without the constant function. Thus $\Gamma_{x,B}(g)\in A$ and $\pi\lc\Gamma_{x,B}(g)\rc{}=\Gamma_{\pi(x),\BII(H)}(g)$ by Lemma \ref{LEM.Cstar_FC}. If $g\in C(K)$ vanishes at zero, then we approximate $g$ uniformly in norm by polynomial on $K$ vanishing at zero. We conclude by boundedness of $^{*}$-homomorphisms.
\end{proof}

\begin{cor}\label{COR.Cstar_FC_II}
Let $A$ be a $C^{*}$-algebra. For all $x\in A$, we have

\begin{itemize}
\item[1)] $x\in A_{h}$ if and only if $\specA x\subset\mathbb{R}$,

\item[2)] $x\in A_{+}$ if and only if $\specA x\subset [0,\infty)$,

\item[3)] $x=x_{+}-x_{-}$ for $x_{+}:=\max\{x,0\},x_{-}:=-\min\{x,0\}\in A_{+}$ if $x\in A_{h}$. 
\end{itemize}
\end{cor}
\begin{proof}
By Corollary \ref{COR.Cstar_FC_I}, we assume $A$ is unital without loss of generality. Thus $1)$ and $2)$ are Proposition I.4.3 and Theorem I.6.1 in \cite{BK.Tak.1979.OpAlg_I}. Writing $g(x):=\Gamma_{x,A}(g)$ in each case, we see $3)$ is decomposition in Proposition \ref{PRP.Cstar_PO} to have proper cone.
\end{proof}

Lemma \ref{LEM.Wstar_FC} extends to bounded measurable functional calculus. Corollary \ref{COR.FC_Preservation} shows bounded measurable calculus of self-adjoint elements is preserved under normal unital $^{*}$-homomorphisms. In the proof of Lemma \ref{LEM.Wstar_FC}, abstract spectral measures yield bounded measurable functional calculus. Note functional calculus of self-adjoint unbounded operators instead uses concrete ones as it assumes faithful normal unital $^{*}$-representations as per Remark \ref{REM.FC_Bd} in general. In Subsection \ref{SSEC.B_SMO_CLRA}, we unify these approaches for spaces of measurable operators.

\begin{prp}\label{PRP.Wstar_Unitalisation}
If $N\subset M$ is a unital $W^{*}$-subalgebra, then $N[1_{M}]=N$. If $N\subset M$ is a non-unital $W^{*}$-subalgebra, then $N[1_{M}]=N\oplus\langle 1_{M}-1_{N}\rangle_{\mathbb{C}}$.
\end{prp}
\begin{proof}
Proposition \ref{PRP.Cstar_Unitalisation}. Note $C^{*}$-direct sums of $W^{*}$-algebras are $W^{*}$-algebras.
\end{proof}

\begin{lem}\label{LEM.Wstar_FC}
Let $M$ be a $W^{*}$-algebra. If $x\in M$ is normal, then there exists unique $\sigma$-ideal $\mathcal{N}_{x,M}$ of null sets of the Borel $\sigma$-algebra $\mathfrak{B}\lc\specM x\rc$ s.t.~

\begin{itemize}
\item[1)] $\lc{}L^{\infty}\lc\specM x,\mathcal{N}_{x,M}\rc{},\|.\|_{\infty}\rc$ is a $W^{*}$-algebra s.t.~$C\lc\specM x\rc$ is $\sigma$-weakly dense,

\item[2)] $\Gamma_{x,M}$ extends to a normal unital $^{*}$-isomorphism

\begin{align}\label{EQ.LEM.Wstar_FC_1}
\Gamma_{x,M}:L^{\infty}\lc\specM x,\mathcal{N}_{x,M}\rc\longrightarrow W_{M}^{*}(x):=W^{*}(x,x^{*},1_{M}),
\end{align}

\begin{reapply}
\end{reapply}

\item[3)] $\Gamma_{x,M}$ is determined by unitality and $\Gamma_{x,M}\lc\id_{\specM x}\rc{}=x$.
\end{itemize}
\end{lem}
\begin{proof}
Let $x\in M_{h}$. For details and the normal case, we refer to Section 5.2 in \cite{BK.Kad_Rin.1997.OpAlg_I}. Let $\pi:M\longrightarrow\BII(H)$ be faithful normal unital $^{*}$-representation. Following Theorem 5.2.2 in \cite{BK.Kad_Rin.1997.OpAlg_I}, get unique resolution of the identity in $\BII(H)$ associated to $x$. It determines unique spectral measure $E_{x,M}:\mathfrak{B}(\mathbb{R})\longrightarrow\BII(H)$. Pulled-back along $\pi$, uniqueness implies $E_{x,M}$ is independent of our choice of faithful normal $^{*}$-representation.\par
Let $\mathcal{N}_{x,M}:=\lset{}Z\in\mathfrak{B}(\mathbb{R})\ \vset\ E_{x,M}(Z)=0\rset$. Intersecting with $\specM x$ shows $\mathcal{N}_{x,M}$ is a $\sigma$-ideal of null sets of the Borel $\sigma$-algebra $\mathfrak{B}\lc\specM x\rc$. Following the construction in Example \ref{BSP.Wstar}, get $W^{*}$-algebra $L^{\infty}\lc\specM x,\mathcal{N}_{x,M}\rc$ s.t.~$C\lc\specM x\rc\subset L^{\infty}\lc\specM x,\mathcal{N}_{x,M}\rc$ is $\sigma$-weakly dense. This shows $1)$. For $2)$, see \cite{BK.Kad_Rin.1997.OpAlg_I}. Get $3)$ by Lemma \ref{LEM.Cstar_FC}.
\end{proof}

\begin{dfn}\label{DFN.Wstar_FC}
Let $M$ be a $W^{*}$-algebra. For all normal $x\in M$, we call

\begin{itemize}
\item[1)] $\Gamma_{x,M}$ as in Equation \ref{EQ.LEM.Wstar_FC_1} the bounded measurable functional calculus of $x$ in $M$,

\item[2)] $W_{M}^{*}(x)$ as in Equation \ref{EQ.LEM.Wstar_FC_1} the $W^{*}$-algebra generated by $x$ in $M$.
\end{itemize}
\end{dfn}

\begin{ntn}\label{NTN.Wstar_FC}
Unless stated otherwise, we suppress $W^{*}$-algebras in subscripts of spectral measures, spectra, bounded measurable functional calculus and generated\linebreak $W^{*}$-algebras. We extend to measurable operators in Notation \ref{NTN.Wstar_CLRA_FC}.
\end{ntn}

%%%%%%%%%%%%%
%%% PART %%%%
%%%%%%%%%%%%%

\subsubsection*{Functional calculus of self-adjoint unbounded operators}

Let $H$ be Hilbert space. For all normal $T\in\BII(H)$, the map $Z\mapsto E_{T}(Z):=\chi_{Z}(T)$ defined on $\mathfrak{B}(\mathbb{C})$ is spectral measure on $\mathbb{C}$ with values in $\BII(H)$. We extend to self-adjoint unbounded operators.\par
Let $T\in\UBII(H)_{h}$. We call $B(T):=T\lc{}1+T^{2}\rc^{-\frac{1}{2}}\in\BII(H)$ its bounded transform \cite{BK.Sch.2012.Unbounded_Operators}. We have $\spec B_{T}\subset [-1,1]$. For all $t\in [-1,1]$, set 

\begin{align}\label{EQ.SSEC.A_Fnd_FC_4}
\varphi(t):=t\lc{}1-t^{2}\rc^{-\frac{1}{2}}.
\end{align}

\noindent Note $\varphi$ is $E_{B(T)}$-a.e.~finite measurable and invertible on $[-1,1]$. Formally, $B(T)=\varphi^{-1}(T)$ by change of variable $t\mapsto T$ in Equation \ref{EQ.SSEC.A_Fnd_FC_4}. For all $Z\in\mathfrak{B}(\mathbb{R})$, set

\begin{align}\label{EQ.SSEC.A_Fnd_FC_5}
E_{T}(Z):=E_{B(T)}\lc\varphi^{-1}(Z)\rc{}.
\end{align}

\noindent Equation \ref{EQ.SSEC.A_Fnd_FC_5} defines spectral measure $E_{T}:\mathfrak{B}(\mathbb{R})\longrightarrow\BII(H)$ \cite{BK.Sch.2012.Unbounded_Operators}.

\begin{dfn}\label{DFN.FC_Unbd}
Let $T\in\UBII(H)_{h}$.

\begin{itemize}
\item[1)] We call $E_{T}$ the spectral measure of $T$.

\item[2)] For all $g\in\SII\lc{}E_{T}\rc$, set

\begin{align}\label{EQ.FC_Unbd_1}
\Gamma_{T}(g):=g(T):=\mathrm{I}_{E_{T}}(g)=\int gdE_{T}.
\end{align}

\begin{reapply}
\end{reapply}

\item[3)] We call $\Gamma_{T}:\SII\lc{}E_{T}\rc\longrightarrow\UBII(H)$ the functional calculus of $T$.
\end{itemize}
\end{dfn}

\begin{rem}\label{REM.FC_BT_IS_CT}
Note $B(T)$ is denoted by $Z_{T}$ in \cite{BK.Sch.2012.Unbounded_Operators}. Instead of $B(T)$, \cite{BK.Ped.1989.Analysis_Now} uses the Cayley transform $C(T):=\frac{T-i}{T+i}\in\BII(H)$. The induced spectral measure is $E_{T}$. Thus $B(T)$ and $C(T)$ define identical spectral measure, hence functional calculus.
\end{rem}

Theorem 5.9 and Proposition 5.10 in \cite{BK.Sch.2012.Unbounded_Operators} collect elementary properties of functional calculus. The spectral theorem for self-adjoint unbounded operators further shows each $E_{T}:\mathfrak{B}(\mathbb{R})\longrightarrow\BII(H)$ is the unique spectral measure s.t.~$T=\mathrm{I}_{E_{T}}\lc\id_{\mathbb{R}}\rc{}=\int tdE_{T}$.\par
Functional calculus restricts to bounded measurable functional calculus. This uses spectra of densely defined operators. For self-adjoint unbounded operators, spectra are the support of spectral measures. Definition \ref{DFN.Cstar_FC} is subsumed if we are given faithful normal unital $^{*}$-representation. Unitality is necessary.

\begin{dfn}
Let $T$ be a densely defined closable operator on $H$. Its resolvent set is $\rsl T:=\lset\lambda\in\mathbb{C}\ \vset\ \lc{}T-\lambda I\rc^{-1}\in\BII(H)\rset$ and its spectrum is $\spec T:=\mathbb{C}\setminus\rsl T$.
\end{dfn}

\begin{rem}
For all faithful normal unital $^{*}$-representation $\pi:M\longrightarrow\BII(H)$ of a $W^{*}$-algebra $M$, get $\specM x=\spec\pi(x)$ for all normal $x\in M$. 
\end{rem}

If $T\in\UBII(H)_{h}$, then $\spec T\subset\mathbb{R}$ and $\pm i\in\rsl T$. For all $g\in C\lc\spec T\rc$, get $\spec g(T)=\overline{g\lc\spec T\rc{}}\subset\mathbb{R}$ by the spectral mapping theorem \lc{}cf.~Proposition 5.25 in \cite{BK.Sch.2012.Unbounded_Operators}\rc{}. If moreover $g,g^{-1}\in C\lc\spec T\rc$, then $\spec g(T)=g\lc\spec T\rc$.

\begin{prp}
If $T\in\UBII(H)_{h}$, then $\supp E_{T}=\spec T\subset\mathbb{R}$.
\end{prp}
\begin{proof}
Proposition 5.10 in \cite{BK.Sch.2012.Unbounded_Operators}.
\end{proof}

\begin{dfn}\label{DFN.Resolvent}
Let $T\in\UBII(H)_{h}$.

\begin{itemize}
\item[1)] Let $a\in\mathbb{C}$. For all $z\in\mathbb{C}\setminus\lset{}a\rset$, set 

\begin{align}\label{EQ.DFN.Resolvent_1}
R_{a}(z):=\big(z-a\big)^{-1}.    
\end{align}

\noindent If $a\in\rsl T$, then $R_{a}(T)\in\BII(H)$ is the resolvent of $T$ in $a$.

\item[2)] Set $L^{\infty}\lc\spec T,dE_{T}\rc{}:=L^{\infty}\lc\spec T,\mathcal{N}\lc{}E_{T}\rc\rc$.
\end{itemize} 
\end{dfn}

\begin{ntn}\label{NTN.Resolvents}
For all $T\in\UBII(H)_{h}$, let $R_{\pm i}(T)$ denote both $R_{i}(T)$ or $R_{-i}(T)$. Note $\pm i\in\mathbb{C}\setminus\mathbb{R}$ lies in the resolvent set of all self-adjoint unbounded operators.
\end{ntn}

%NEWPAGE
%NEWPAGE
%NEWPAGE

\pagebreak

%NEWPAGE
%NEWPAGE
%NEWPAGE

Let $T\in\UBII(H)_{h}$ and $g\in\SII\lc{}E_{T}\rc$. Bounding sequences let us write $g(T)$ as pointwise $\|.\|_{H}$-limit. For all $Z\in\mathfrak{B}(\mathbb{R})$, note Equation \ref{EQ.SSEC.A_Fnd_FC_1} ensures $g(T)E_{T}(Z)=\lc{}g\chi_{Z}\rc{}(T)$. For all $u\in H$, we have $u\in\dom g(T)$ if and only if

\begin{align}\label{EQ.SSEC.A_Fnd_FC_6}
g(T)(u)=\|.\|_{H}\textrm{-}\lim_{k\in\mathbb{N}}\hspace{0.025cm} g(T)E_{T}\lc{}Z_{k}\rc{}(u)
\end{align}

\noindent exists for a bounding sequence $\lset{}Z_{k}\rset_{k\in\mathbb{N}}$ of $g$. In fact, Equation \ref{EQ.SSEC.A_Fnd_FC_6} is Equation \ref{EQ.SSEC.A_Fnd_FC_3} for spectral integration w.r.t.~$E_{T}$. If $u\in\dom g(T)$, then $g(T)(u)$ exists and is independent of choice of bounding sequence of $g$ by Theorem 4.13 in \cite{BK.Sch.2012.Unbounded_Operators}. 

\begin{prp}\label{PRP.FC_BT_IS_CT}
For all $T\in\UBII(H)_{h}$, $W^{*}\lc{}B(T)\rc{}=W^{*}\lc{}C(T)\rc{}=W^{*}\lc{}R_{\pm i}(T)\rc$.
\end{prp}
\begin{proof}
Following Remark \ref{REM.FC_BT_IS_CT}, we know $W^{*}\lc{}B(T)\rc{}=W^{*}\lc{}C(T)\rc$. Since we further have $R_{\pm i}\in L^{\infty}\lc\spec T,dE_{T}\rc$, get $R_{\pm i}(T)\in W^{*}\lc{}C(T)\rc$ by Theorem 5.3.8 in \cite{BK.Ped.1989.Analysis_Now}. We directly verify $C(T)=R_{i}(T)R_{-i}(T)$ and get $W^{*}\lc{}C(T)\rc{}=W^{*}\lc{}R_{i}(T),R_{-i}(T)\rc{}=:W^{*}\lc{}R_{\pm i}(T)\rc$.
\end{proof}

\begin{dfn}\label{DFN.FC_Generated}
For all $T\in\UBII(H)_{h}$, we call $W^{*}(T):=W^{*}\lc{}B(T)\rc$ the $W^{*}$-algebra generated by $T$.
\end{dfn}

\begin{rem}\label{REM.FC_Generated}
If $T\in\BII(H)_{h}$, then $W^{*}(T)=W_{\BII(H)}^{*}(T)=W^{*}\lc{}T,I_{H}\rc$.
\end{rem}

If $T\in\UBII(H)_{h}$, then $\Gamma_{T}$ restricts to $L^{\infty}\lc\spec T,dE_{T}\rc$. Proposition \ref{PRP.FC_Bd} uses the latter to formulate bounded measurable functional calculus.

\begin{prp}\label{PRP.FC_Bd}
Let $T\in\UBII(H)_{h}$.

\begin{itemize}
\item[1)] We have normal unital $^{*}$-isomorphism $\Gamma_{T}:L^{\infty}\lc\spec T,dE_{T}\rc\longrightarrow W^{*}(T)$.

\item[2)] If $M\subset\BII(H)$ is a $W^{*}$-subalgebra s.t.~$E_{T}(Z)\in M$ for all $Z\in\mathfrak{B}(\mathbb{R})$, then $W^{*}(T)\subset M$.
\end{itemize}
\end{prp}
\begin{proof}
Since $W^{*}(T)=W^{*}\lc{}C(T)\rc$ by Proposition \ref{PRP.FC_BT_IS_CT}, we use the functional calculus in \cite{BK.Ped.1989.Analysis_Now}. $W^{*}(T)=W^{*}(T)''$ by Proposition \ref{PRP.Wstar_Equivalence}. Since $T$ is self-adjoint, Lemma 5.2.8 and Theorem 5.3.8 in \cite{BK.Ped.1989.Analysis_Now} therefore show $1)$. In the setting of $2)$, $P\lc{}W^{*}(T)\rc\subset P(M)$ and Proposition \ref{PRP.Wstar_Generated} imply $W^{*}(T)=W^{*}\lc{}P\lc{}W^{*}(T)\rc\subset W^{*}\lc{}P(M)\rc{}=M$.
\end{proof}

\begin{rem}\label{REM.FC_Bd}
If $\pi:M\longrightarrow\BII(H)$ is a faithful normal unital $^{*}$-representation of a $W^{*}$-algebra $M$, then $\pi\circ\Gamma_{x,M}=\Gamma_{\pi(x)}$ for all $x\in M_{h}$. Unitality is necessary.
\end{rem}

Bounded measurable functional calculus lets us test for injectivity by considering the mass of $\lset{}0\rset$ under $E_{T}$ as per Remark \ref{REM.FC_Injectivity}.

\begin{rem}\label{REM.FC_Injectivity}
Following Remark \ref{REM.SpecInt_Identities}, note $E_{T}\lc\lset{}0\rset\rc{}=\chi_{\lset{}0\rset}(T)=\delta_{0}(T)$ is the Hilbert space projection onto $\ker T$ since $u\in\ker T$ if and only if $\supp E_{T}^{u}=\lset{}0\rset$ for all $u\in H$.
\end{rem}

\begin{prp}\label{PRP.FC_Injectivity}
If $T\in\UBII(H)_{h}$, then $T$ is injective if and only if $E_{T}\lc\lset{}0\rset\rc{}=0$. 
\end{prp}
\begin{proof}
If $T$ is injective, then get $E_{T}\lc\lset{}0\rset\rc{}=0$ as per Remark \ref{REM.FC_Injectivity}. If $E_{T}\lc\lset{}0\rset\rc{}=0$, then $t\mapsto t^{-1}$ is $E_{T}$-a.e.~finite. Thus $T^{-1}$ is densely defined closed by functional calculus, hence $T$ is injective if $E_{T}\lc\lset{}0\rset\rc{}=0$.
\end{proof}

%NEWPAGE
%NEWPAGE
%NEWPAGE

\pagebreak

%NEWPAGE
%NEWPAGE
%NEWPAGE

We use Lemma \ref{LEM.FC_Unitary_Com} to directly verify affiliation with $W^{*}$-algebras. Lemma \ref{LEM.FC_Preservation_I} and Lemma \ref{LEM.FC_Preservation_II} provide necessary and sufficient conditions for preserving bounded measurable functional calculus.

\begin{lem}\label{LEM.FC_Unitary_Com}
If $T\in\UBII(H)_{h}$ and $U\in\UII\lc\BII(H)\rc$, then we have $TU=UT$ if and only if $\lb{}E_{T}(Z),U\rb{}=0$ for all $Z\in\mathfrak{B}(\mathbb{R})$.
\end{lem}
\begin{proof}
Assume $TU=UT$. Proposition \ref{PRP.FC_BT_IS_CT} shows $\lb{}R_{\pm i}(T),U\rb{}=0$ yields $\lb{}E_{T}(Z),U\rb{}=0$ in each case. Note $T=U^{*}TU$ and $U\dom T\subset\dom T$ imply $R_{\pm i}(T)=U^{*}R_{\pm i}(T)U$. Thus $TU=UT$ implies $\lb{}E_{T}(Z),U\rb{}=0$ for all $Z\in\mathfrak{B}(\mathbb{R})$.\par
Assume $\lb{}E_{T}(Z),U\rb{}=0$, i.e.~$E_{T}(Z)=UE_{T}U^{*}$ for all $Z\in\mathfrak{B}(\mathbb{R})$. Thus $E_{T}^{v}=E_{T}^{U^{*}v}$ for all $v\in H$, hence $v\in\dom T$ if and only if $U^{*}v\in\dom T$. Get $\dom T=\dom TU^{*}$. We also know $\lb{}g(T),U\rb{}=0$ for all $g(T)\in W^{*}(T)$ since $W^{*}(T)$ is generated by all $E_{T}(Z)$ for all $Z\in\mathfrak{B}(\mathbb{R})$ by Proposition \ref{PRP.Wstar_Generated} and Proposition \ref{PRP.FC_Bd}. The spectral theorem and Equation \ref{EQ.SSEC.A_Fnd_FC_6} imply $w\in\dom T$ if and only if there exists bounding sequence $\lset{}Z_{k}\rset_{k\in\mathbb{N}}$ of $\id_{\mathbb{R}}$ s.t.~

\begin{align}\label{EQ.LEM.FC_Unitary_Com_1}
T(w)=\int tdE_{T}^{w}=\|.\|_{H}\textrm{-}\lim_{n\in\mathbb{N}}\hspace{0.025cm} g_{n}(T)(w).
\end{align}

\noindent If $w\in\dom T$, then the limit in Equation \ref{LEM.FC_Unitary_Com} exists and is independent of choice of bounding sequence of $\id_{\mathbb{R}}$. For all $v\in\dom T=\dom TU^{*}$, we see Equation \ref{EQ.LEM.FC_Unitary_Com_1} implies $T(v)=U\lc\|.\|_{H}$-$\lim_{n\in\mathbb{N}}g_{n}(T)U^{*}(v)\rc{}=UTU^{*}(v)$. Thus $T=UTU^{*}$, hence $TU=UT$.
\end{proof}

\begin{dfn}\label{DFN.FC_Preservation}
Let $H_{0}$ and $H_{1}$ be Hilbert spaces. Let $M\subset\BII\lc{}H_{0}\rc$ be $W^{*}$-algebra and $\phi:M\longrightarrow\BII\lc{}H_{1}\rc$ normal unital $^{*}$-homomorphism. If $T\in\UBII\lc{}H_{0}\rc_{h}$ s.t.~$\im E_{T}\subset M$, then\linebreak we define the push-forward spectral measure $\phi\lc{}E_{T}\rc$ of $T$ under $\phi$ by setting

\begin{align}\label{EQ.DFN.FC_Preservation_1}
\phi\lc{}E_{T}\rc(Z):=\phi\lc{}E_{T}(Z)\rc{}
\end{align}

\noindent for all $Z\in\mathfrak{B}(\mathbb{R})$.
\end{dfn}

Equation \ref{EQ.DFN.FC_Preservation_1} defines spectral measure $\phi\lc{}E_{T}\rc{}:\mathfrak{B}(\mathbb{R})\longrightarrow\BII\lc{}H_{1}\rc$ if we are in the setting of Definition \ref{DFN.FC_Preservation}. Lemma \ref{LEM.FC_Preservation_II} shows push-forward spectral measures link bounded measurable functional calculus across Hilbert spaces.

\begin{lem}\label{LEM.FC_Preservation_I}
Let $H_{0}$ and $H_{1}$ be Hilbert spaces. Let $M\subset\BII\lc{}H_{0}\rc$ be $W^{*}$-algebra and $\phi:M\longrightarrow\BII\lc{}H_{1}\rc$ normal unital $^{*}$-homomorphism. If $T_{0}\in\UBII\lc{}H_{0}\rc_{h}$ and $T_{1}\in\UBII\lc{}H_{1}\rc_{h}$ s.t.~$\im E_{T}\subset M$ and $\phi\lc{}g\lc{}T_{0}\rc\rc{}=g\lc{}T_{1}\rc$ for all $g\in C_{c}(\mathbb{R})$, then $\phi\lc{}E_{T_{0}}\rc{}=E_{T_{1}}$.
\end{lem}
\begin{proof}
Let $\lambda\in\mathbb{R}$, and $\lset{}g_{n}^{\lambda}\rset_{n\in\mathbb{N}}\subset C_{b}(\mathbb{R})$ s.t.~$\sup_{n\in\mathbb{N}}\dblv{}g_{n}^{\lambda}\dblv{}<\infty$ and $\chi_{\lc{}-\infty,\lambda\rb{}}(t)=\lim_{n\in\mathbb{N}}g_{n}^{\lambda}(t)$ for all $t\in\mathbb{R}$. For all self-adjoint $S$ on arbitrary Hilbert space $H$ and $u\in H$, $E_{S}^{u}$ is finite and we have

\begin{align}\label{EQ.LEM.FC_Preservation_I_1}
\dblv{}\lc{}E_{S}\lc\lc{}-\infty,\lambda\rb\rc{}-g_{n}^{\lambda}(S)\rc{}(u)\dblv_{H}^{2}=\int_{\mathbb{R}}\lc\chi_{\lc{}-\infty,\lambda\rb{}}(t)-g_{n}^{\lambda}(t)\rc^{2}dE_{S}^{u}
\end{align}

\noindent by functional calculus. Thus $E_{S}\lc\lc{}-\infty,\lambda\rb\rc{}=\s$-$\lim_{n\in\mathbb{N}}g_{n}^{\lambda}(S)$ by dominated convergence.\par

%NEWPAGE
%NEWPAGE
%NEWPAGE

\pagebreak

%NEWPAGE
%NEWPAGE
%NEWPAGE

Let $\{\varphi_{n}\}_{n\in\mathbb{N}}\subset C_{c}(\mathbb{R})$ s.t.~$0\leq\varphi_{n}\leq\varphi_{n+1}\leq 1$ for all $n\in\mathbb{N}$ and $\lim_{n\in\mathbb{N}}\varphi_{n}(t)=1$ for all $t\in\mathbb{R}$. Arguing as for Equation \ref{EQ.LEM.FC_Preservation_I_1} shows $\s$-$\lim_{n\in\mathbb{N}}g_{n}^{\lambda}(S)=\s$-$\lim_{n\in\mathbb{N}}\lc{}g_{n}^{\lambda}\varphi_{n}\rc{}(S)$. We therefore assume $\{g_{n}\}_{n\in\mathbb{N}}\subset C_{c}(\mathbb{R})$ in Equation \ref{EQ.LEM.FC_Preservation_I_1} without loss of generality.\par
If $\phi\lc{}E_{T_{0}}\lc\lc{}-\infty,\lambda\rb\rc\rc{}=E_{T_{1}}\lc\lc{}-\infty,\lambda\rb\rc$ for all $\lambda\in\mathbb{R}$, then $\phi\lc{}E_{T_{0}}\rc{}=E_{T_{1}}$ as resolutions of the identity are unique. We show the former by approximating in strong operator topology as above. For fixed but arbitrary $\lset{}g_{n}^{\lambda}\rset_{n\in\mathbb{N}}\subset C_{c}(\mathbb{R})$ and for all $n\in\mathbb{N}$, note Equation \ref{EQ.LEM.FC_Preservation_I_1} holds uniformly for all self-adjoint unbounded operators. Sequential strong continuity of $\phi$ therefore implies

\begin{align*}
\phi\lc{}E_{T_{0}}\lc\lc{}-\infty,\lambda\rb\rc\rc{} & = \s\textrm{-}\lim_{n\in\mathbb{N}} \hspace{0.025cm} \phi\lc{}g_{n}\lc{}T_{0}\rc\rc \phantom{\bigg)} \\
& = \s\textrm{-}\lim_{n\in\mathbb{N}}\hspace{0.025cm} g_{n}\lc{}T_{1}\rc \phantom{\bigg)} \\
& = E_{T_{1}}\lc\lc{}-\infty,\lambda\rb\rc \phantom{\bigg)}
\end{align*}

\noindent for all $\lambda\in\mathbb{R}$. The above calculation uses Remark \ref{REM.Wstar_Con}.
\end{proof}

\begin{lem}\label{LEM.FC_Preservation_II}
Let $H_{0}$ and $H_{1}$ be Hilbert spaces. Let $T_{0}\in\UBII\lc{}H_{0}\rc_{h}$ and $T_{1}\in\UBII\lc{}H_{1}\rc_{h}$. If $\phi:W^{*}\lc{}T_{0}\rc\longrightarrow W^{*}\lc{}T_{1}\rc$ is a normal unital $^{*}$-homomorphism s.t.~$\phi\lc{}E_{T_{0}}\rc{}=E_{T_{1}}$, then 

\begin{itemize}
\item[1)] $\spec T_{1}\subset\spec T_{0}$ and $\mathcal{N}\lc{}E_{T_{0}}\rc\subset\mathcal{N}\lc{}E_{T_{1}}\rc$,

\item[2)] $\phi\lc{}g\lc{}T_{0}\rc\rc{}=g\lc{}T_{1}\rc$ for all $g\in L^{\infty}\lc\spec T_{0},dE_{T_{0}}\rc$,

\item[3)] we have commutative diagram of normal unital surjective $^{*}$-homomorphisms

\begin{equation}\label{EQ.LEM.FC_Preservation_II_1}
\begin{tikzcd}
L^{\infty}\lc\spec T_{0},dE_{T_{0}}\rc\arrow[rr,"\Gamma_{T_{0}}"]\arrow[dd,"\res"] & & W^{*}\lc{}T_{0}\rc\arrow[dd,"\phi"] \\
& & \\
L^{\infty}\lc\spec T_{1},dE_{T_{1}}\rc\arrow[rr,"\Gamma_{T_{1}}"] & & W^{*}\lc{}T_{1}\rc{}
\end{tikzcd}
\end{equation}

\begin{reapply}
\end{reapply}

\noindent with $\res$ the restriction map given by $\spec T_{1}\subset\spec T_{0}$.
\end{itemize} 
\end{lem}
\begin{proof}
We directly verify $\mathcal{N}\lc{}E_{T_{0}}\rc\subset\mathcal{N}\lc{}E_{T_{1}}\rc$. Since we have $\lambda\in\spec T_{1}=\supp E_{T_{1}}$ if and only if $\phi\lc{}E_{T_{0}}\lc{}N_{\lambda}\rc\rc{}=E_{T_{1}}\lc{}N_{\lambda}\rc\neq 0$ for all open neighbourhoods $N_{\lambda}$ of $\lambda$, get $1)$ at once. If $g\in\SII\lc{}E_{T_{0}}\rc$ is $E_{T_{0}}$-a.e.~bounded, then $g\in\SII\lc{}E_{T_{1}}\rc$ is $E_{T_{1}}$-a.e.~bounded.\par
For all $n\in\mathbb{N}$, let $\lset{}Z_{k,m}\rset_{k,m\in\mathbb{N}}\subset\mathfrak{B}(\mathbb{R})$ s.t.~following pointwise $E_{T_{0}}$-a.e.~approximation of $\id_{\mathbb{R}}\chi_{[-n,n]}$ on $\spec T_{0}$ holds. For $E_{T_{0}}$-a.e.~$t\in\spec T$, get $\lset{}a_{k,m}\rset_{k,m\in\mathbb{N}}\subset\mathbb{R}$ and

\begin{align}\label{EQ.LEM.FC_Preservation_II_2}
t\chi_{[-n,n]}(t)=\lim_{m\in\mathbb{N}}\hspace{0.025cm} \sum_{k=1}^{m}a_{k,m}\chi_{Z_{k,m}}(t).
\end{align}

\noindent Using $1)$, Equation \ref{EQ.LEM.FC_Preservation_II_2} further yields $E_{T_{1}}$-a.e.~approximation of $\id_{\mathbb{R}}\chi_{[-n,n]}$ on $\spec T_{1}$. The approximations we use here are uniformly bounded, hence yield bounded strong limits upon evaluation using $T_{0}$, resp.~$T_{1}$.\par

%NEWPAGE
%NEWPAGE
%NEWPAGE

\pagebreak

%NEWPAGE
%NEWPAGE
%NEWPAGE

Normality and $\phi\lc{}E_{T_{0}}\rc{}=E_{T_{1}}$ yield $\phi\lc{}T_{0}E_{T_{0}}\lc{}[-n,n]\rc\rc{}=T_{1}E_{T_{1}}\lc{}[-n,n]\rc$ for all $n\in\mathbb{N}$. For all $g\in L^{\infty}\lc\spec T_{0},dE_{T_{0}}\rc$, we see $1)$ implies $g\in L^{\infty}\lc\spec T_{1},dE_{T_{1}}\rc$ upon restriction. Strong convergence of sequences further implies

\begin{align}\label{EQ.LEM.FC_Preservation_II_3}
g\lc{}T_{0}\rc{}=\s\textrm{-}\lim_{n\in\mathbb{N}}\hspace{0.025cm} g\lc{}T_{0}E_{T_{0}}\lc{}[-n,n]\rc\rc{},\ g\lc{}T_{1}\rc{}=\s\textrm{-}\lim_{n\in\mathbb{N}}\hspace{0.025cm} g\lc{}T_{1}E_{T_{1}}\lc{}[-n,n]\rc\rc{}.
\end{align}

\noindent If $\phi\lc{}g\lc{}T_{0}E_{T_{0}}\lc{}[-n,n]\rc\rc{}=g\lc{}T_{1}E_{T_{1}}\lc{}[-n,n]\rc\rc$ for all $g\lc{}T_{0}\rc\in W^{*}\lc{}T_{0}\rc$ and $n\in\mathbb{N}$, then $2)$ holds by Equation \ref{EQ.LEM.FC_Preservation_II_3}. Ergo $2)$, and thereby $3)$, reduces to the bounded case.\par
Assume $T_{0}$ and $T_{1}$ are bounded. Thus $\phi\lc{}T_{0}\rc{}=T_{1}$, hence $g\lc\phi\lc{}T_{0}\rc\rc{}=g\lc{}T_{1}\rc$ for all $g\in C\lc\spec T_{k}\rc$. For all $k\in\lset{}0,1\rset$, Proposition \ref{PRP.FC_BT_IS_CT} shows $R_{\pm i}\in C\lc\spec T_{k}\rc$ and $W^{*}\lc{}T_{k}\rc{}=W^{*}\lc{}R_{\pm i}\lc{}T_{k}\rc\rc$. Normality implies $g\lc\phi\lc{}T_{0}\rc\rc{}=g\lc{}T_{1}\rc$ for all $g\in L^{\infty}\lc\spec T_{0},dE_{T_{0}}\rc$. Get $2)$. Using the latter, we directly verify $3)$. The general case follows as discussed above.
\end{proof}

\begin{cor}\label{COR.FC_Preservation}
Let $H_{0}$ and $H_{1}$ be Hilbert spaces. Let $M\subset\BII\lc{}H_{0}\rc$ and $N\subset\BII\lc{}H_{1}\rc$ be $W^{*}$-algebras. We consider $x\in M_{h}$ and $y\in N_{h}$. If $\phi:W_{M}^{*}(x)\longrightarrow W_{N}^{*}(y)$ is a normal unital $^{*}$-homomorphism s.t.~$\phi(x)=y$, then $\phi\lc{}E_{x,M}\rc{}=E_{y,N}$ and Lemma \ref{LEM.FC_Preservation_II} applies to $\phi$.
\end{cor}
\begin{proof}
Note $\supp E_{x,M}=\specM x$ and $\supp E_{y,N}=\specN y$. Let $\specM x,\specN y\subset K$ for compact $K\subset\mathbb{R}$. Lemma \ref{LEM.FC_Preservation_I} shows $\phi\lc{}g(x)\rc{}=g(y)$ for all $g\in C(K)$ suffices. We reduce to polynomials by approximating in norms. The $^{*}$-homomorphism property concludes.
\end{proof}

%%%%%%%%%%%%%
%%% PART %%%%
%%%%%%%%%%%%%

\subsubsection*{Joint functional calculus of strongly commuting self-adjoint unbounded operators}

Let $H$ be a Hilbert space. Let $T,S\in\UBII(H)_{h}$. If $\lb{}E_{T}\lc{}Z_{0}\rc{},E_{S}\lc{}Z_{1}\rc\rb{}=0$ for all $Z_{0},Z_{1}\in\mathfrak{B}(\mathbb{R})$, then we determine joint spectral measure by setting

\begin{align}\label{EQ.SSEC.A_Fnd_FC_7}
E_{T,S}\lc{}Z_{0}\times Z_{1}\rc{}:=E_{T}\lc{}Z_{0}\rc{}E_{S}\lc{}Z_{1}\rc{}
\end{align}

\noindent for all $Z_{0},Z_{1}\in\mathfrak{B}(\mathbb{R})$. Equation \ref{EQ.SSEC.A_Fnd_FC_7} defines spectral measure $E_{T,S}:\mathfrak{B}\lc\mathbb{R}\times\mathbb{R}\rc\longrightarrow\BII(H)$ by Theorem 4.10 in \cite{BK.Sch.2012.Unbounded_Operators}.

\begin{dfn}\label{DFN.JFC_Unbd}
Let $T,S\in\UBII(H)_{h}$. We say that $T$ and $S$ commute strongly if $\lb{}E_{T}\lc{}Z_{0}\rc{},E_{S}\lc{}Z_{1}\rc\rb{}=0$ for all $Z_{0},Z_{1}\in\mathfrak{B}(\mathbb{R})$. Assume $T$ and $S$ commute strongly.

\begin{itemize}
\item[1)] We call $E_{T,S}$ the joint spectral measure of $T$ and $S$.

\item[2)] For all $g\in\SII\lc{}E_{T,S}\rc$, set

\begin{align}\label{EQ.JFC_Unbd_1}
\Gamma_{T,S}(g):=g\lc{}T,S\rc{}:=\mathrm{I}_{E_{T,S}}(g)=\int gdE_{T,S}.
\end{align}

\begin{reapply}
\end{reapply}

\item[3)] We call $\Gamma_{T,S}:\SII\lc{}E_{T,S}\rc\longrightarrow\UBII(H)$ the joint functional calculus of $T$ and $S$.
\end{itemize}
\end{dfn}

\begin{rem}\label{REM.JFC_Strong_Com}
The commutator $[\hspace{-0.025cm}\blank\hspace{0.038725cm} ,\hspace{-0.055cm} \blank]:\BII(H)\times\BII(H)\longrightarrow\BII(H)$ in $\BII(H)$ is given by $[T,S]:=TS-ST$ for all $T,S\in\BII(H)$. It is separately continuous in strong operator topology. If $M,N\subset\BII(H)$ are $W^{*}$-algebras with strongly dense subsets $M_{0}\subset M$ and $N_{0}\subset N$ s.t.~$\lb{}M_{0},N_{0}\rb{}=0$, then $\lb{}M,N\rb{}=0$ by separate strong continuity.
\end{rem}

\begin{prp}\label{PRP.JFC_Strong_Com}
For all $T,S\in\UBII(H)_{h}$, the following are equivalent:

\begin{itemize}
\item[1)] $T$ and $S$ commute strongly,

\item[2)] $\lb{}R_{a}(T),R_{b}(S)\rb{}=0$ for $a\in\rsl T$ and $b\in\rsl S$,

\item[3)] $\lb{}g(T),h(S)\rb{}=0$ for all $g\in L^{\infty}\lc\spec T,dE_{T}\rc$ and $h\in L^{\infty}\lc\spec S,dE_{S}\rc$,

\item[4)] $\lb{}B(T),B(S)\rb{}=0$,

\item[5)] $\lb{}C(T),C(S)\rb{}=0$.
\end{itemize}
\end{prp}
\begin{proof}
Equivalence of $1)$ and $2)$ is Proposition 5.27 in \cite{BK.Sch.2012.Unbounded_Operators}. Continuity of commutators as per Remark \ref{REM.JFC_Strong_Com} ensures Proposition \ref{PRP.FC_BT_IS_CT} shows equivalence of $2)$ to $5)$.
\end{proof}

\begin{prp}
If $T,S\in\UBII(H)_{h}$ commute strongly, then

\begin{itemize}
\item[1)] $E_{T,S}$ is the unique spectral measure s.t.~$T=\int tdE_{T,S}$ and $S=\int sdE_{T,S}$,

\item[2)] $\supp E_{T,S}\subset\spec T\times\spec S$.
\end{itemize}
\end{prp}
\begin{proof}
Get $1)$ by Lemma 5.22 in \cite{BK.Sch.2012.Unbounded_Operators}. Thus $E_{T,S}$ is joint spectral measure given in the proof of Theorem 5.23 in \cite{BK.Sch.2012.Unbounded_Operators}, hence $2)$ follows by Proposition 5.24 in \cite{BK.Sch.2012.Unbounded_Operators}.
\end{proof}

\begin{rem}
Note $\supp E_{T,S}\neq\spec T\times\spec S$ in general as $\BII(H)$ has zero divisors. Inequality therefore occurs if $E_{S}\lc{}N_{s}\rc{}H\subset \lc{}E_{T}\lc{}N_{t}\rc{}H\rc^{\perp}$ for a product open neighbourhood $N_{t}\times N_{s}$ of $(t,s)\in\spec T\times\spec S$.
\end{rem}

Let $M,N\subset\BII(H)$ be commutative $W^{*}$-subalgebras s.t.~$W^{*}\lc{}M,N\rc\subset\BII(H)$ is also commutative $W^{*}$-subalgebra. We determine normal unital injective $^{*}$-homomorphism $M\otimes N\longrightarrow\BII(H)$ by mapping

\begin{align}\label{EQ.SSEC.A_Fnd_FC_8}
x\otimes y\mapsto xy=yx
\end{align}

\noindent for all $x\in M$ and $y\in N$. Get $W^{*}$-subalgebra $M\otimes N\subset\BII(H)$. Proposition \ref{PRP.JFC_Bd} thereby extends Proposition \ref{PRP.FC_Bd} using jointly generated $W^{*}$-algebras. 

\begin{dfn}
Let $T,S\in\UBII(H)_{h}$ commute strongly.

\begin{itemize}
\item[1)] The joint spectrum of $T$ and $S$ is $\spec T\times S:=\supp E_{T,S}$.

\item[2)] Set $L^{\infty}\lc\spec T\times S,dE_{T,S}\rc{}:=L^{\infty}\lc\spec T\times S,\mathcal{N}\lc{}E_{T,S}\rc\rc$.

\item[3)] We call $W^{*}\lc{}T,S\rc{}:=W^{*}(T)\otimes W^{*}(S)$ the $W^{*}$-algebra generated by $T$ and $S$.
\end{itemize} 
\end{dfn}

%NEWPAGE
%NEWPAGE
%NEWPAGE

\pagebreak

%NEWPAGE
%NEWPAGE
%NEWPAGE

If $T,S\in\UBII(H)_{h}$ commute strongly, then $\Gamma_{T,S}$ restricts to $L^{\infty}\lc\spec T\times S,dE_{T,S}\rc$ as in the case of one self-adjoint unbounded operator. Proposition \ref{PRP.JFC_Bd} uses the latter to formulate bounded measurable joint functional calculus.

\begin{prp}\label{PRP.JFC_Bd}
Let $T,S\in\UBII(H)_{h}$ commute strongly.

\begin{itemize}
\item[1)] We have normal unital $^{*}$-isomorphism $\Gamma_{T,S}:L^{\infty}\lc\spec T\times S,dE_{T,S}\rc\longrightarrow W^{*}\lc{}T,S\rc$.

\item[2)] If $M\subset\BII(H)$ is a $W^{*}$-subalgebra s.t.~$E_{T,S}\lc{}Z_{0}\times Z_{1}\rc\in M$ for all $Z_{0},Z_{1}\in\mathfrak{B}(\mathbb{R})$, then $W^{*}\lc{}T,S\rc\subset M$.
\end{itemize}
\end{prp}
\begin{proof}
Note $L^{\infty}\lc\spec T\times S,dE_{T,S}\rc{}=L^{\infty}\lc\spec T\times\spec S,dE_{T,S}\rc$ by construction of joint spectral measures, and $L^{\infty}\lc\spec T\times\spec S,dE_{T,S}\rc\cong L^{\infty}\lc\spec T,dE_{T}\rc\otimes L^{\infty}\lc\spec S,dE_{S}\rc$ naturally. All claims reduce to elementary tensors. Apply Proposition \ref{PRP.FC_Bd}.
\end{proof}

\begin{lem}\label{LEM.JFC_Preservation}
Let $H_{0}$ and $H_{1}$ be Hilbert spaces. Let $T_{0},S_{0}\in\UBII\lc{}H_{0}\rc_{h}$, as well as $T_{1},S_{1}\in\UBII\lc{}H_{1}\rc_{h}$ commute strongly. If $\phi:W^{*}\lc{}T_{0}\rc\longrightarrow W^{*}\lc{}T_{1}\rc$ and $\bpsi:W^{*}\lc{}S_{0}\rc\longrightarrow W^{*}\lc{}S_{1}\rc$ are normal unital $^{*}$-homomorphisms  s.t.~$\phi\lc{}E_{T_{0}}\rc{}=E_{T_{1}}$ and $\bpsi\lc{}E_{S_{0}}\rc{}=E_{S_{1}}$, then 

\begin{itemize}
\item[1)] $\spec T_{1}\times S_{1}\subset\spec T_{0}\times S_{0}$ and $\mathcal{N}\lc{}E_{T_{0},S_{0}}\rc\subset\mathcal{N}\lc{}E_{T_{1},S_{1}}\rc$,

\item[2)] $\lc\phi\otimes\bpsi\rc\lc{}g\lc{}T_{0},S_{0}\rc\rc{}=g\lc{}T_{1},S_{1}\rc$ for all $g\in L^{\infty}\lc\spec T_{0}\times S_{0},dE_{T_{0},S_{0}}\rc$,

\item[3)] we have commutative diagram of normal unital surjective $^{*}$-homomorphisms

\begin{equation}\label{EQ.LEM.JFC_Preservation_1}
\begin{tikzcd}
L^{\infty}\lc\spec T_{0}\times S_{0},dE_{T_{0},S_{0}}\rc\arrow[rr,"\Gamma_{T_{0},S_{0}}"]\arrow[dd,"\res"] & & W^{*}\lc{}T_{0},S_{0}\rc\arrow[dd,"\phi\otimes\bpsi"] \\
& & \\
L^{\infty}\lc\spec T_{1}\times S_{1},dE_{T_{1},S_{1}}\rc\arrow[rr,"\Gamma_{T_{1},S_{1}}"] & & W^{*}\lc{}T_{1},S_{1}\rc{}
\end{tikzcd}
\end{equation}

\begin{reapply}
\end{reapply}

\noindent with $\res$ the restriction map given by $\spec T_{1}\times S_{1}\subset\spec T_{0}\times S_{0}$.
\end{itemize}
\end{lem}
\begin{proof}
We apply Lemma \ref{LEM.FC_Preservation_II} and Corollary \ref{COR.Wstar_TP} to $\phi$ and $\bpsi$. This constructs normal unital surjective $^{*}$-homomorphism $\phi\otimes\bpsi:W^{*}\lc{}T_{0},S_{0}\rc\longrightarrow W^{*}\lc{}T_{1},S_{1}\rc$. Note Equation \ref{EQ.SSEC.A_Fnd_CWstar_4} and Equation \ref{EQ.SSEC.A_Fnd_FC_8} show $\phi\otimes\bpsi$ is determined by $\lc\phi\otimes\bpsi\rc\lc{}g\lc{}T_{0}\rc{}h\lc{}S_{0}\rc\rc{}=g\lc{}T_{1}\rc{}h\lc{}S_{1}\rc$ for all $g\in L^{\infty}\lc\spec T_{0},dE_{T_{0}}\rc$ and $h\in L^{\infty}\lc\spec S_{0},dE_{S_{0}}\rc$. For all $Z,Z'\in\mathfrak{B}(\mathbb{R})$, construction of joint spectral measures shows

\begin{align*}
\lc\phi\otimes\bpsi\rc\lc{}E_{T_{0},S_{0}}(Z\times Z')\rc{} & = \lc\phi\otimes\bpsi\rc\lc{}E_{T_{0}}(Z)E_{S_{0}}(Z')\rc \phantom{\bigg)} \\
& = E_{T_{1}}(Z)E_{S_{1}}(Z')=E_{T_{1},S_{1}}(Z\times Z'). \phantom{\bigg)}
\end{align*}

\noindent Arguing as in the proof of Lemma \ref{LEM.FC_Preservation_II}, the above calculation implies $1)$. We see the restriction map $\res$ is well-defined. Using Proposition \ref{PRP.Wstar_Normal}, we directly verify $2)$ and $3)$ on elementary tensors. Using $\sigma$-strong continuity, we conclude by strong density.
\end{proof}

%%%%%%%%%%%%%%%%
%%% SECTION %%%%
%%%%%%%%%%%%%%%%

\section{Maps of unbounded operators}\label{SEC.A_Maps}

In Subsection \ref{SSEC.A_Maps_SR}, we discuss strong resolvent convergence and resolvent-preserving maps of unbounded operators. Strong resolvent convergence provides suitable notion of continuity. Given uniform neighbourhood of supports, evaluation of functional calculus on fixed bounded continuous functions is strong resolvent continuous. We extend to joint functional calculus. Resolvent-preserving maps are strong resolvent continuous and preserve functional calculus. Examples are twisting and compression maps.\par
In Subsection \ref{SSEC.A_Maps_Compression}, we introduce abstract and concrete compression maps. They are given by left-~and right-multiplication with projections. In the abstract case, we compress $W^{*}$-algebras. In the concrete case, we thus compress self-adjoint unbounded operators on a Hilbert space by reducing subspaces. We extend abstract compression maps to spaces of measurable operators in Subsection \ref{SSEC.B_JFC_L2Red}.

%%%%%%%%%%%%%%%%%%%
%%% SUBSECTION %%%%
%%%%%%%%%%%%%%%%%%%

\subsection{Strong resolvent continuity and resolvent-preservation}\label{SSEC.A_Maps_SR}

We define strong resolvent convergence, prove strong resolvent continuity of functional calculus in Lemma \ref{LEM.FC_SR} and review sufficient conditions. We then give two standard approximations and discuss resolvent-preserving maps. Standard reference for strong resolvent convergence is \cite{BK.deOli.2009.OpAlg_Quantum_Dynamics}.

%%%%%%%%%%%%%
%%% PART %%%%
%%%%%%%%%%%%%

\subsubsection*{Strong resolvent convergence of self-adjoint unbounded operators}

Note Definition \ref{DFN.SR} gives strong resolvent convergence on Hilbert spaces. We use the latter to extend $2)$ in Lemma \ref{LEM.JFC_Preservation} to suitable unbounded functions in Corollary \ref{COR.JFC_Preservation}.\par
In Subsection \ref{SSEC.NCDS_NCD_Operators}, Definition \ref{DFN.SR_Subspace} generalises to strong resolvent convergence on Hilbert subspaces for use in the Kato-Robinson theorem \lc{}cf.~Theorem 10.4.2 in \cite{BK.deOli.2009.OpAlg_Quantum_Dynamics}\rc{}. In the appendix, we only use strong resolvent convergence on Hilbert spaces. 

\begin{dfn}\label{DFN.SR}
Let $H$ be a Hilbert space. We call $\lset{}T_{n}\rset_{n\in\mathbb{N}}\subset\UBII(H)_{h}$ strong resolvent convergent to $T\in\UBII(H)_{h}$ on $H$ if $R_{i}(T)=\s$-$\lim_{n\in\mathbb{N}}R_{i}(T_{n})$.
\end{dfn}

\begin{ntn}\label{NTN.SR}
Let $T=\sr$-$\lim_{n\in\mathbb{N}}T_{n}$ on $H$ denote strong resolvent convergence. We drop \textit{on H} if $H$ is clear from context. We extend to strong resolvent convergence on Hilbert subspaces in Notation \ref{NTN.SR_Subspace}.
\end{ntn}

\begin{rem}\label{REM.SR_Equivalence}
We equivalently use $R_{-i}$ in Definition \ref{DFN.SR} \lc{}cf.~Lemma 10.1.5 in \cite{BK.deOli.2009.OpAlg_Quantum_Dynamics}\rc{}. Moreover, note uniform boundedness and strong resolvent convergence together are equivalent to strong convergence \lc{}cf.~Proposition 10.1.13 in \cite{BK.deOli.2009.OpAlg_Quantum_Dynamics}\rc{}.
\end{rem}

Note Lemma \ref{LEM.FC_SR} is based on the case of one self-adjoint unbounded operator as per Remark \ref{REM.FC_SR}. We recover this one-variable case using the identity as second one.

\begin{rem}\label{REM.FC_SR}
Proposition 10.1.9 in \cite{BK.deOli.2009.OpAlg_Quantum_Dynamics} shows we have $T=\sr$-$\lim_{n\in\mathbb{N}}T_{n}$ if and only if $g(T)=\s$-$\lim_{n\in\mathbb{N}}g(T_{n})$ for all $g\in C_{b}(\mathbb{R})$. Lemma \ref{LEM.FC_SR} yields the first direction given two strongly commuting self-adjoint unbounded operators. We recover the one-variable case by setting $S=S_{n}=I$ for all $n\in\mathbb{N}$ and $g=g\cdot 1\in C_{b}(\mathbb{R})$ in Lemma \ref{LEM.FC_SR}.
\end{rem}

\begin{lem}\label{LEM.FC_SR}
Let $T=\sr$-$\lim_{n\in\mathbb{N}}T_{n}$ and $S=\sr$-$\lim_{n\in\mathbb{N}}S_{n}$ on $H$. Let $T$ and $S$ commute strongly. For all $n\in\mathbb{N}$, let $T_{n}$ and $S_{n}$ commute strongly. Set

\begin{align}\label{EQ.LEM.FC_SR_1}
X_{T,S}:=\overline{\bigcup_{n\in\mathbb{N}}\spec T_{n}\times\spec S_{n}}\subset\mathbb{R}\times\mathbb{R}.
\end{align}

\noindent If $g\in C_{b}\lc{}X_{T,S}\rc$, then $g\lc{}T,S\rc{}=\s$-$\lim_{n\in\mathbb{N}}g\lc{}T_{n},S_{n}\rc$.
\end{lem}

\begin{proof}
As $X_{T,S}$ is closed by hypothesis and contains all spectral measure supports in use \lc{}cf.~Corollary 10.2.2 in \cite{BK.deOli.2009.OpAlg_Quantum_Dynamics}\rc{}, we assume $g\in C_{b}\lc\mathbb{R}\times\mathbb{R}\rc$ without loss of generality. For all $g_{0},g_{1}\in C_{0}(\mathbb{R})$, sequential strong continuity of multiplication yields

\begin{align*}
\big(g_{0}\otimes g_{1}\big)\lc{}T,S\rc{} & = g_{0}(T)g_{1}(S) \phantom{\bigg)} \\
& = \s\textrm{-}\lim_{n\in\mathbb{N}}\hspace{0.025cm} g_{0}(T_{n})g_{1}(S_{n}) \phantom{\bigg)} \\
& = \s\textrm{-}\lim_{n\in\mathbb{N}}\hspace{0.025cm} \big(g_{0}\otimes g_{1}\big)\lc{}T_{n},S_{n}\rc \phantom{\bigg)}
\end{align*}

\noindent using the one-variable case as per Remark \ref{REM.FC_SR}. Thus approximating uniformly in norm shows our claim for all $g\in C_{0}\lc\mathbb{R}\times\mathbb{R}\rc$. If $g\in C_{b}\lc\mathbb{R}\times\mathbb{R}\rc$, then fix a monotone sequence of mollifying functions and argue as in the proof of Proposition 10.1.9 in \cite{BK.deOli.2009.OpAlg_Quantum_Dynamics}.
\end{proof}

\begin{cor}\label{COR.JFC_Preservation}
Assume the setting of Lemma \ref{LEM.JFC_Preservation}. For all real $g\in\SII\lc{}E_{T_{0},S_{0}}\rc$ s.t~

\begin{itemize}
\item[1)] $(t,s)\mapsto g_{\varepsilon}(t,s):=g\lc{}t+\varepsilon,\s+\varepsilon\rc$ lies in $C_{b}\lc\spec T_{0}\times S_{0}\rc$ for all $\varepsilon>0$,

\item[2)] $g\lc{}T_{1},S_{1}\rc{}=\sr$-$\lim_{\varepsilon\downarrow 0}g_{\varepsilon}\lc{}T_{1},S_{1}\rc$ on $H_{1}$,
\end{itemize}

\noindent we have $g\in\SII\lc{}E_{T_{1},S_{1}}\rc$ with $g\lc{}T_{1},S_{1}\rc{}=\sr$-$\lim_{\varepsilon\downarrow 0}\hspace{0.025cm}\lc\phi\otimes\bpsi\rc\lc{}g_{\varepsilon}\lc{}T_{0},S_{0}\rc\rc$ on $H_{1}$.
\end{cor}
\begin{proof}
We know $g\in\SII\lc{}E_{T_{1},S_{1}}\rc$ by $1)$ in Lemma \ref{LEM.JFC_Preservation}. For all $\varepsilon>0$, we apply $2)$ in Lemma \ref{LEM.JFC_Preservation} to $g_{\varepsilon}\in C_{b}\lc\spec T_{0}\times S_{0}\rc$. Thus $\lc\phi\otimes\bpsi\rc\lc{}g_{\varepsilon}\lc{}T_{0},S_{0}\rc\rc{}=g_{\varepsilon}\lc{}T_{1},S_{1}\rc$, hence we conclude by $2)$ and $\varepsilon\downarrow 0$.
\end{proof}

\begin{rem}\label{REM.JFC_Preservation}
In the sense of Corollary \ref{COR.JFC_Preservation}, Lemma \ref{LEM.JFC_Preservation} gives conditions to pull back unbounded joint functional calculus. Lemma \ref{LEM.JFC_Preservation} and Lemma \ref{LEM.FC_SR} further show Corollary \ref{COR.JFC_Preservation} applies to all $g\in C_{b}(X)$ for compact $X\subset\mathbb{R}\times\mathbb{R}$ with $\delta>0$ s.t.~

\begin{align}\label{EQ.REM.JFC_Preservation_1}
\bigcup_{0<\varepsilon<\delta}\spec T_{1}+\varepsilon I\times\spec S_{1}+\varepsilon I\subset X.
\end{align}

\noindent Note $X_{T,S}\subset X$ as per Equation \ref{EQ.LEM.FC_SR_1} in this case.
\end{rem}

\begin{prp}\label{PRP.SR}
We have $T=\sr$-$\lim_{n\in\mathbb{N}}T_{n}$ on $H$ if there exists

\begin{itemize}
\item[1)] $a\in\rsl T$ s.t.~$a\in\bigcap_{n\in\mathbb{N}}\rsl T_{n}$ and $R_{a}(T)=\s$-$\lim_{n\in\mathbb{N}}R_{a}(T_{n})$,

\item[2)] or core $\HII$ of $T$ s.t.~$\HII\subset\bigcap_{n\in\mathbb{N}}\dom T_{n}$ and $T(u)=\|.\|_{H}$-$\lim_{n\in\mathbb{N}}T_{n}(u)$ for all $u\in\HII$.
\end{itemize}
\end{prp}
\begin{proof}
Get $1)$ by Proposition 10.1.23, resp.~$2)$ by Proposition 10.1.18 in \cite{BK.deOli.2009.OpAlg_Quantum_Dynamics}.
\end{proof}

%%%%%%%%%%%%%
%%% PART %%%%
%%%%%%%%%%%%%

\subsubsection*{Two standard approximations}

Using cut-off sequences, functional calculus lets us approximate self-adjoint and positive unbounded operators on Hilbert spaces in strong resolvent convergence. Let $H$ be a Hilbert space.

\begin{lem}\label{LEM.SR_Approximation}
For all $T\in\UBII(H)_{+}$, we have

\begin{itemize}
\item[1)] $T=\sr$-$\lim_{n\in\mathbb{N}}\min\hspace{-0.033875cm} \lset{}T,n\rset$,

\item[2)] $\lambda\notin\spec T\ \textrm{if and only if}\ \lambda\notin\spec\min\hspace{-0.033875cm} \lset{}T,n\rset\ \textrm{for a.e.}\ n\in\mathbb{N}$.
\end{itemize}
\end{lem}
\begin{proof}
For all $n\in\mathbb{N}$, set $T_{n}:=\min\hspace{-0.033875cm} \lset{}T,n\rset$ and note $T_{n+1}\geq T_{n}$. For all $u\in H$, we have

\begin{align*}
\sup_{n\in\mathbb{N}}\hspace{0.025cm} \lgl T_{n}(u),u\rgl_{H}=\sup_{n\in\mathbb{N}}\hspace{0.025cm} \int_{\spec T}\min\{t,n\}dE_{T}^{u}=\begin{cases}\lgl T(u),u\rgl_{H} & \If\ u\in\dom\sqrt{T}, \\
\infty & \Else.
\end{cases}
\end{align*}

\noindent Thus $T_{n}\uparrow T$ monotonically in the sense of closed positive unbounded quadratic forms on $H$, hence get $1)$ by the Kato-Robinson theorem \lc{}cf.~Theorem 10.4.2 in \cite{BK.deOli.2009.OpAlg_Quantum_Dynamics}\rc{}.\par
We show $2)$. We know $\spec T=\bigcup_{n\in\mathbb{N}}\lset\lambda\leq n\ \vert\ \lambda\in\spec T\rset$. For all $n\in\mathbb{N}$, the spectral mapping theorem \lc{}cf.~Proposition 5.25 in \cite{BK.Sch.2012.Unbounded_Operators}\rc{} implies

\begin{align*}
\spec T_{n}=
\begin{cases} 
\spec T & \If\ \| T\|_{\BII(H)}\leq n, \\
\big\{\hspace{0.025cm} \lambda\leq n\ \vert\ \lambda\in\spec T\hspace{0.025cm} \big\}\cup\{n\} & \Else.
\end{cases}
\end{align*}

\noindent Let $\lambda\geq 0$. If $\lambda\notin\spec T$, then $\lambda\notin\lset\lambda\leq n\ \vert\ \lambda\in\spec T\rset{}=\spec T_{n}\setminus\{n\}$ for all $n\in\mathbb{N}$. If further $\lambda\in\spec T_{n_{0}}$ for $n_{0}\in\mathbb{N}$, then $\lambda=n_{0}$ and $n_{0}\notin\spec T$. We see $\lambda\notin\spec T$ implies $\lambda\notin\spec T_{n}$ for a.e.~$n\in\mathbb{N}$. We know $\lset\lambda\leq n\ \vert\ \lambda\in\spec T\rset\subset\PII\lc\spec T\rc$ is a monotonically increasing sequence. Thus $\lambda\notin\spec T_{n}$ for a.e.~$n\in\mathbb{N}$ implies $\lambda\notin\spec T$, hence $2)$ follows.
\end{proof}

\begin{cor}\label{COR.SR_Approximation}
For all $T\in\UBII(H)_{h}$, $T=\sr$-$\lim_{n\in\mathbb{N}}E_{T}\lc{}[-n,n]\rc{}T$.
\end{cor}
\begin{proof}
Set $T_{+}:=E_{T}\lc{}[0,\infty)\rc{}T$ and $T_{-}:=-E_{T}\lc\lc{}-\infty,0\rb\rc{}T$. Get $T=T_{+}-T_{-}$ by functional calculus. Using Proposition \ref{PRP.JFC_Strong_Com}, we know $T_{+},T_{-}\in\UBII(H)_{+}$ commute strongly since $R_{i}\lc{}T_{+}\rc{},R_{i}\lc{}T_{-}\rc\in W^{*}(T)$ commute. For all $n\in\mathbb{N}$, functional calculus implies
 
\begin{align}\label{EQ.COR.SR_Approximation_1}
E_{T}\lc{}[-n,n]\rc{}T=E_{T_{+}}\lc[0,n]\rc{}T_{+}-E_{T_{-}}\lc[0,n]\rc{}T_{-}.
\end{align}

\noindent Summands in Equation \ref{EQ.COR.SR_Approximation_1} commute strongly. Note $(t,s)\mapsto g(t,s):=R_{i}\lc{}t-s\rc$ lies in $C_{b}\lc\mathbb{R}\times\mathbb{R}\rc$. If $S=\sr$-$\lim_{n\in\mathbb{N}}E_{S}\lc[0,n]\rc{}S$ for all $S\geq 0$ on $H$, then Lemma \ref{LEM.FC_SR} shows

\begin{align*}
R_{i}(T) = R_{i}\lc{}T_{+}-T_{-}\rc{} & = g\lc{}T_{+},T_{-}\rc \phantom{\bigg)} \\
& = \s\textrm{-}\lim_{n\in\mathbb{N}}\hspace{0.025cm} g\lc{}E_{T_{+}}\lc[0,n]\rc{}T_{+},E_{T_{-}}\lc[0,n]\rc{}T_{-}\rc \phantom{\bigg)} \\
& = \s\textrm{-}\lim_{n\in\mathbb{N}}\hspace{0.025cm} R_{i}\lc{}E_{T_{+}}\lc[0,n]\rc{}T_{+}-E_{T_{-}}\lc[0,n]\rc{}T_{-}\rc \phantom{\bigg)} \\
& = \s\textrm{-}\lim_{n\in\mathbb{N}}\hspace{0.025cm} R_{i}\lc{}E_{T}\lc{}[-n,n]\rc{}T\rc{}. \phantom{\bigg)}
\end{align*}
 
If $S=\sr$-$\lim_{n\in\mathbb{N}}E_{S}\lc[0,n]\rc{}S$ for all $S\geq 0$ on $H$, then the above calculation shows our claim follows. Let $S\in\UBII(H)_{+}$. For all $n\in\mathbb{N}$, functional calculus implies

\begin{align}\label{EQ.COR.SR_Approximation_2}
E_{S}\lc[0,n]\rc{}S=\min\hspace{-0.04375cm} \vstretch{1.125}{\{}\hspace{0.0075cm} S,n\hspace{0.0075cm} \vstretch{1.125}{\}}-n\cdot \big(I-E_{S}\lc[0,n]\rc\big).
\end{align}

\noindent Summands in Equation \ref{EQ.COR.SR_Approximation_2} commute strongly. Note Lemma \ref{LEM.SR_Approximation} ensures we have $S=\sr$-$\lim_{n\in\mathbb{N}}\min\hspace{-0.033875cm} \lset{}S,n\rset$. We moreover have pointwise convergence $\lim_{n\in\mathbb{N}}n\lc{}1-\chi_{[0,n]}\rc{}=0$ on $[0,\infty)$, i.e.~uniformly bounded pointwise limit $\lim_{n\in\mathbb{N}}R_{i}\lc{}n\lc{}1-\chi_{[0,n]}\rc\rc{}=R_{i}(0)$ in $C_{0}\lc{}[0,\infty)\rc$. Thus $\sr$-$\lim_{n\in\mathbb{N}}n\lc{}I-E_{S}\lc[0,n]\rc\rc{}=0$. Using Lemma \ref{LEM.FC_SR} as above for each summand on the right-hand side of Equation \ref{EQ.COR.SR_Approximation_2} a separate variable, we obtain our claim.
\end{proof}

%%%%%%%%%%%%%
%%% PART %%%%
%%%%%%%%%%%%%

\subsubsection*{Resolvent-preserving maps}

Lemma \ref{LEM.RP} shows resolvent-preserving maps are strong resolvent continuous and preserve functional calculus. Both twisting and compression maps are resolvent-preserving. Let $H_{0}$ and $H_{1}$ be Hilbert spaces.

\begin{dfn}\label{DFN.RP}
Let $\phi:\UBII\lc{}H_{0}\rc\longrightarrow\UBII\lc{}H_{1}\rc$ be a linear map s.t.~$\phi\lc\BII\lc{}H_{0}\rc\rc\subset\BII\lc{}H_{1}\rc$ and $\DII\subset\UBII\lc{}H_{0}\rc_{h}$. We say that $\phi$ is resolvent-preserving using $\DII(\phi):=\DII$ if

\begin{itemize}
\item[1)] $\phi:\BII\lc{}H_{0}\rc\longrightarrow\BII\lc{}H_{1}\rc$ is bounded and normal,

\item[2)] $\phi\big(R_{\pm i}(T)\big)=R_{\pm i}\lc\phi(T)\rc$ for all $T\in\DII(\phi)$.
\end{itemize}
\end{dfn}

\begin{lem}\label{LEM.RP}
Let $\phi:\UBII\lc{}H_{0}\rc\longrightarrow\UBII\lc{}H_{1}\rc$ be a resolvent-preserving map.

\begin{itemize}
\item[1)] Let $T\in\DII(\phi)$. If $T=\sr$-$\lim_{n\in\mathbb{N}}T_{n}$ on $H_{0}$ and $\lset{}T_{n}\rset_{n\in\mathbb{N}}\subset\DII(\phi)$, then

\begin{align}\label{EQ.LEM.RP_1}
\phi(T)=\sr\textrm{-}\lim_{n\in\mathbb{N}}\hspace{0.025cm} \phi(T_{n})\ \textrm{on}\ H_{1}.    
\end{align}

\begin{reapply}
\end{reapply}

\item[2)] For all $T\in\DII(\phi)$, we have

\begin{itemize}
\item[2.1)] $\phi:W^{*}(T)\longrightarrow W^{*}\lc\phi(T)\rc$ is a normal unital $^{*}$-isomorphism,

\item[2.2)] $\phi\lc{}E_{T}\rc{}=E_{\phi(T)}$ and Lemma \ref{LEM.FC_Preservation_II} applies to $\phi:W^{*}(T)\longrightarrow W^{*}\lc\phi(T)\rc$.
\end{itemize}

\begin{reapply}
\end{reapply}

\end{itemize}
\end{lem}
\begin{proof}
Let $T\in\DII(\phi)$. If $T=\sr$-$\lim_{n\in\mathbb{N}}T_{n}$ on $H_{0}$ and $\lset{}T_{n}\rset_{n\in\mathbb{N}}\subset\DII(\phi)$, then we calculate

\begin{align*}
R_{i}\lc\phi(T)\rc{} = \phi\big(R_{i}(T)\big) & = \phi\big(\s\textrm{-}\lim_{n\in\mathbb{N}}\hspace{0.025cm} R_{i}(T_{n})\big) \phantom{\bigg)} \\
& = \s\textrm{-}\lim_{n\in\mathbb{N}}\hspace{0.025cm} \phi\big(R_{i}(T_{n})\big) = \s\textrm{-}\lim_{n\in\mathbb{N}}\hspace{0.025cm} R_{i}\lc\phi(T_{n})\rc{}. \phantom{\bigg)}
\end{align*}

\noindent This shows $1)$. We show $2)$. Note $\phi:C^{*}\lc{}R_{\pm i}(T)\rc\longrightarrow C^{*}\lc{}R_{\pm i}\lc\phi(T)\rc\rc$ maps $C^{*}$-generators onto by hypothesis. Ergo $\phi\vert_{C^{*}\lc{}R_{\pm i}(T)\rc{}}$ is unital $^{*}$-isomorphism s.t.~$\phi\lc{}g(T)\rc{}=g\lc\phi(T)\rc$ for all $g\in C_{c}(\mathbb{R})$. We know $\phi\vert_{\BII\lc{}H_{0}\rc{}}$ is normal. Therefore, $\sigma$-weak closure of $\phi\vert_{C^{*}\lc{}R_{\pm i}(T)\rc{}}$ exists and is normal unital $^{*}$-isomorphism $\phi:W^{*}(T)\longrightarrow W^{*}\lc\phi(T)\rc$. Thus Lemma \ref{LEM.FC_Preservation_I}, hence Lemma \ref{LEM.FC_Preservation_II} applies as claimed. Get $2)$.
\end{proof}

\begin{cor}\label{COR.Unbd_Twist_RP}
Let $\phi:H_{0}\longrightarrow H_{1}$ be a linear or anti-linear isometric isomorphism.

\begin{itemize}
\item[1)] $\phi^{\dagger}$ is resolvent-preserving using $\DII\lc\phi^{\dagger}\rc{}=\UBII\lc{}H_{0}\rc_{h}$.

\item[2)] If $T_{0}\in\UBII\lc{}H_{0}\rc_{h}$ and $T_{1}\in\UBII\lc{}H_{1}\rc_{h}$ s.t.~$\phi^{\dagger}\lc{}E_{T_{0}}\rc{}=E_{T_{1}}$, then $\phi^{\dagger}\lc{}T_{0}\rc{}=T_{1}$.
\end{itemize}
\end{cor}
\begin{proof}
Using Proposition \ref{PRP.Unbd_Twist}, we directly verify $1)$. Thus $E_{\phi^{\dagger}\lc{}T_{0}\rc{}}=\phi^{\dagger}\lc{}E_{T_{0}}\rc{}=E_{T_{1}}$ by $2)$ in Lemma \ref{LEM.RP} and hypothesis, hence $2)$ follows by the spectral theorem.
\end{proof}

%%%%%%%%%%%%%%%%%%%
%%% SUBSECTION %%%%
%%%%%%%%%%%%%%%%%%%

\subsection{Compression maps, reducing subspaces and spectral gaps}\label{SSEC.A_Maps_Compression}

We introduce abstract and concrete compression maps. Reducing subspaces are used to define subsets for which compression maps are resolvent-preserving. We then apply compression to get useful standard results concerning spectral gaps. Standard reference for reducing subspaces is \cite{BK.deOli.2009.OpAlg_Quantum_Dynamics}.

%%%%%%%%%%%%%
%%% PART %%%%
%%%%%%%%%%%%%

\subsubsection*{Compression maps}

Definition \ref{DFN.Compression_Abstract_Bd} gives abstract compression maps as per Remark \ref{REM.Abstract_Concrete}, and Definition \ref{DFN.Compression_Concrete} gives concrete ones. Following our discussion in Subsection \ref{SSEC.B_JFC_L2Red}, Definition \ref{DFN.Compression_Abstract} extends Definition \ref{DFN.Compression_Abstract_Bd} to spaces of measurable operators. We unify in Corollary \ref{COR.Wstar_L2Red_III} for spaces of measurable operators.

\begin{rem}\label{REM.Abstract_Concrete}
Following \cite{BK.Tak.1979.OpAlg_I}\cite{BK.Tak.2003.OpAlg_II}\cite{BK.Tak.2003.OpAlg_III}, \textit{abstract} signifies that an object or property is independent of representation whereas \textit{concrete} assumes representation.
\end{rem}

Let $M$ be a $W^{*}$-algebra and $p\in M$ a projection. If $A\subset M$ is a $C^{*}$-subalgebra, then $pC^{*}\lc{}A,p\rc{}p\subset M$ is one. If $N\subset M$ is a $W^{*}$-subalgebra, then $pC^{*}\lc{}N,p\rc{}p\subset M$ is one.

\begin{dfn}\label{DFN.Compression_Abstract_Bd}
Let $M$ be a $W^{*}$-algebra. We consider $C^{*}$-subalgebra $A\subset M$. For all projections $p\in M$, we define 

\begin{itemize}
\item[1)] orthogonal projection $p^{\perp}:=1_{M}-p\in M$,

\item[2)] compressed $C^{*}$-subalgebra $A[p]:=pC^{*}\lc{}A,p\rc{}p\subset M$,

\item[3)] the compression map $\comp:A[1_{M}]\longrightarrow A[p]$ by setting

\begin{align}\label{EQ.DFN.Compression_Abstract_Bd_1}
\comp x:=pxp
\end{align}

\begin{reapply}
\end{reapply}

\noindent for all $x\in A[1_{M}]$.
\end{itemize}
\end{dfn}

\begin{rem}\label{REM.Compression_Abstract_Bd}
If $p\in A$, then $A[p]=pAp$. If $p=1_{M}$, then we recover unitalisation.
\end{rem}

\begin{prp}\label{PRP.Compression_Abstract_Bd}
Let $M$ be a $W^{*}$-algebra and $N\subset M$ a $W^{*}$-subalgebra. If $p\in M$ is a projection, then $\comp:N[1_{M}]\longrightarrow N[p]$ is a completely positive, normal, unital and surjective bounded linear map.
\end{prp}
\begin{proof}
Complete positivity is given in Example \ref{BSP.Wstar_CP_III}. Bounded weak continuity and Proposition \ref{PRP.Wstar_Normal} imply normality. All remaining claims follow by construction.
\end{proof}

%NEWPAGE
%NEWPAGE
%NEWPAGE

\pagebreak

%NEWPAGE
%NEWPAGE
%NEWPAGE

Let $N\subset M$ be a $W^{*}$-subalgebra. Proposition \ref{PRP.Wstar_Unitalisation} states $N[1_{M}]=N\oplus\langle 1_{M}^{\perp}\rangle_{\mathbb{C}}$. We directly verify $N1_{N}^{\perp}=1_{N}^{\perp}N=0$. We have $N[1_{M}][1_{N}]=N$ and commutative diagram

\smallskip

\begin{equation}\label{EQ.SSEC.A_Maps_Compression_1}
\begin{tikzcd}
N\arrow[rr,hook]\arrow[rrrr, bend right=43, "\id_{N}"] & & N\oplus\langle 1_{M}^{\perp}\rangle_{\mathbb{C}}\arrow[rr,"\comunit"] & & N
\end{tikzcd}   
\end{equation}

\medskip

\noindent of normal $^{*}$-homomorphisms. We extend Diagram \ref{EQ.SSEC.A_Maps_Compression_1} in Subsection \ref{SSEC.B_JFC_L2Red}. Following this, Corollary \ref{COR.Wstar_Compression_Preservation_II} shows choice of unit only involves values at zero.\par
Let $H$ be a Hilbert space. If $V\subset H$ is a Hilbert subspace and $\pi_{V}:H\longrightarrow V$ its Hilbert space projection, we use positivity-preserving canonical inclusion $\UBII(V)\subset\UBII(H)$ by setting

\begin{align}\label{EQ.SSEC.A_Maps_Compression_2}
T=\pi_{V}T\pi_{V}
\end{align}

\noindent for all $T\in\UBII(V)$. For details on inclusions and partial order for spaces of unbounded operators, we refer to Subsection \ref{SSEC.A_Fnd_Unbd}, in particular Remark \ref{REM.Unbd_PO}.

\begin{dfn}\label{DFN.Compression_Concrete}
Let $H$ be a Hilbert space. For all Hilbert subspaces $V\subset H$, i.e.~for $\|.\|_{H}$-closed ones, let $\pi_{V}:H\longrightarrow V$ denote its Hilbert space projection and we define

\begin{itemize}
\item[1)] orthogonal projection $\pi_{V}^{\perp}:=I_{H}-\pi_{V}\in\BII(H)$,

\item[2)] inclusion $\UBII(V)\subset\UBII(H)$ as per Equation \ref{EQ.SSEC.A_Maps_Compression_2},

\item[3)] the compression map $\comV:\UBII(H)\longrightarrow\UBII(V)$ by setting

\begin{align}\label{EQ.DFN.Compression_Concrete_1}
\comV T:=\pi_{V}T\pi_{V}  
\end{align}

\begin{reapply}
\end{reapply}

\noindent for all $T\in\UBII(H)$.
\end{itemize}
\end{dfn}

\begin{prp}\label{PRP.Compression_Concrete}
Let $H$ be a Hilbert space. If $V\subset H$ is a Hilbert subspace, then $\BII(H)[\pi_{V}]=\BII(V)$ and $\comV:\BII(H)\longrightarrow \BII(V)$ is a completely positive, normal, unital and surjective bounded linear map.
\end{prp}
\begin{proof}
Apply Proposition \ref{PRP.Compression_Abstract_Bd} for $M=N=\BII(H)$ and $p=\pi_{V}$.
\end{proof}

%%%%%%%%%%%%%
%%% PART %%%%
%%%%%%%%%%%%%

\subsubsection*{Reducing subspaces}

Proposition \ref{PRP.Compression_Concrete} shows compression maps satisfy $1)$ in Definition \ref{DFN.RP}. Reducing subspaces, resp.~reducible operators, yield the definition domains for concrete compression maps. Note Equation \ref{EQ.DFN.Reducible_1} below reduces to the obvious commutation relation in the bounded case.\par

%NEWPAGE
%NEWPAGE
%NEWPAGE

\pagebreak

%NEWPAGE
%NEWPAGE
%NEWPAGE

Let $H$ be a Hilbert space.

\begin{dfn}\label{DFN.Reducible}
Let $V\subset H$ be a Hilbert subspace.

\begin{itemize}
\item[1)] We say that $T\in\UBII(H)_{h}$ is $V$-reducible and call $V$ a reducing subspace of $T$ if

\begin{align}\label{EQ.DFN.Reducible_1}
\pi_{V}T\subset T\pi_{V}.    
\end{align}

\begin{reapply}
\end{reapply}

\item[2)] Let $\UBII_{V}(H)$ be the set of all $V$-reducible $T\in\UBII(H)_{h}$. For all $T\in\UBII(H)$, set

\begin{align}\label{EQ.DFN.Reducible_2}
\restr{0.925}{T}{V}:=\comV T.    
\end{align}

\begin{reapply}
\end{reapply}

\end{itemize}
\end{dfn}

\begin{rem}
Note $\UBII(V)\subset\UBII_{V}(H)$. For all $T\in\UBII(V)$, get $\restr{0.925}{T}{V}=\comV T=T$.
\end{rem}

\begin{ntn}\label{NTN.Reducible}
Let $V\subset H$ be a Hilbert subspace. For all $T\in\UBII(H)$, we write $\restr{0.925}{T}{V}$ if we consider $\comV T$ as operator on $V$.
\end{ntn}

Let $V\subset H$ be a Hilbert subspace. Lemma 9.8.4 in \cite{BK.deOli.2009.OpAlg_Quantum_Dynamics} shows we have $T\in\UBII_{V}(H)$ if and only if $\pi_{V}\lc\dom T\rc\subset\dom T$ and $T\pi_{V}\lc\dom T\rc\subset V$. Since $\UBII_{V}(H)=\UBII_{V^{\perp}}(H)$, we may replace $V$ with $V^{\perp}$ in all statements concerning reducing subspaces.

\begin{bsp}\label{BSP.Reducible}
If $T\in\UBII(H)_{h}$, then $T$ is reduced by $\overline{\im T}:=\overline{\im T}^{\|.\|_{H}}$ and $\ker T$.
\end{bsp}

\begin{prp}\label{PRP.Reducible}
Let $V\subset H$ be a Hilbert subspace.

\begin{itemize}
\item[1)] For all $T\in\UBII_{V}(H)$, we have

\begin{itemize}
\item[1.1)] $\restr{0.925}{T}{V}\in\UBII\lc{}V\rc_{h}$ and $T\pi_{V}=\comV T$,

\item[1.2)] $\restr{0.925}{T}{V}\in\UBII\lc{}V\rc_{+}$ if $T\in\UBII(H)_{+}$,

\item[1.3)] $T=\comV T+\comVperp T$,
\end{itemize}

\begin{reapply}
\end{reapply}

\item[2)] For all Hilbert subspaces $W\subset V$, we have 

\begin{itemize}
\item[2.2)] $\comW=\comW\circ\comV$,

\item[2.3)] $\UBII_{W}(H)\subset\UBII_{V}(H)$.
\end{itemize}
\end{itemize}
\end{prp}
\begin{proof}
Theorem 9.8.3 in \cite{BK.deOli.2009.OpAlg_Quantum_Dynamics} implies $1)$ at once. We directly verify $2)$. 
\end{proof}

\begin{prp}\label{PRP.Compression_Concrete_RP}
If $V\subset H$ is a Hilbert subspace, then $\comV$ is resolvent-preserving using $\DII\lc\comV\rc{}=\UBII_{V}(H)$.
\end{prp}
\begin{proof}
We directly verify $\comV:\UBII(H)\longrightarrow\UBII(V)$ is linear. Proposition \ref{PRP.Compression_Concrete} and $1.1)$ in Proposition \ref{PRP.Reducible} immediately imply all conditions except $2)$ in Definition \ref{DFN.RP} are satisfied. We show the latter.\par

%NEWPAGE
%NEWPAGE
%NEWPAGE

\pagebreak

%NEWPAGE
%NEWPAGE
%NEWPAGE

Let $T\in\UBII_{V}(H)$. Since $\restr{0.925}{T}{V}\in\UBII\lc{}V\rc_{h}$ and thereby $\pm i\in\rsl(\restr{0.925}{T}{V})$, get

\begin{align}\label{EQ.PRP.Compression_Concrete_RP_1}
\im \restr{0.925}{T}{V}\mp iI_{V}=\dom R_{\pm i}(\restr{0.925}{T}{V})=V.
\end{align}

\noindent For all $v\in V$, let $u_{v}\in\im \restr{0.925}{T}{V}\mp iI_{V}\subset V$ s.t.~$v=\lc\restr{0.925}{T}{V}\mp iI_{V}\rc\lc{}u_{v}\rc$. We calculate

\begin{align}\label{EQ.PRP.Compression_Concrete_RP_2}
\comV R_{\pm i}(T)(v)=\pi_{V}\bigg(R_{\pm i}(T)\big(\pi_{V}\big(T\lc{}u_{v}\rc\mp iu_{v}\big)\big)\bigg)=\pi_{V}\bigg(R_{\pm i}(T)\big(T\lc{}u_{v}\rc\mp iu_{v}\big)\bigg)=u_{v}.
\end{align}

\noindent Injectivity of $\restr{0.925}{T}{V}\mp iI_{V}$ ensures Equation \ref{EQ.PRP.Compression_Concrete_RP_2} implies

\begin{align}\label{EQ.PRP.Compression_Concrete_RP_3}
\comV R_{\pm i}(T)=R_{\pm i}(\restr{0.925}{T}{V}).
\end{align}

\noindent Under canonical inclusion $\UBII(V)\subset\UBII(H)$ mapping $S\mapsto\pi_{V}S\pi_{V}$, note $\restr{0.925}{T}{V}=\comV T$ by definition. Equation \ref{EQ.PRP.Compression_Concrete_RP_3} therefore shows

\begin{align}\label{EQ.PRP.Compression_Concrete_RP_4}
\comV R_{\pm i}(T)=\comV R_{\pm i}\lc\comV T\rc{}=\pi_{V}R_{\pm i}\lc\comV T\rc\pi_{V}.
\end{align}

\noindent This is $2)$ in Definition \ref{DFN.RP}.
\end{proof}

\begin{lem}\label{LEM.Compression_Preservation_I}
If $T\in\UBII_{V}(H)$, then $\restr{0.925}{T}{V}\in\UBII\lc{}V\rc_{h}$ and we have

\begin{itemize}
\item[1)] $\spec \restr{0.925}{T}{V}\subset\spec T$ and $\mathcal{N}\lc{}E_{\restr{0.925}{T}{V}}\rc\subset\mathcal{N}\lc{}E_{T}\rc$,

\item[2)] normal unital surjective $^{*}$-homomorphism $\comV:W^{*}(T)\longrightarrow W^{*}(\restr{0.925}{T}{V})$ s.t.~

\begin{align}\label{EQ.LEM.Compression_Preservation_I_1}
\comV g(T)=\restr{0.925}{g(T)}{V}=g(\restr{0.925}{T}{V})=\comV g\lc\comV T\rc{}
\end{align}

\begin{reapply}
\end{reapply}

\noindent for all $g\in L^{\infty}\lc\spec T,dE_{T}\rc$,

\item[3)] commutative diagram of normal unital surjective $^{*}$-homomorphisms

\begin{equation}\label{EQ.LEM.Compression_Preservation_I_2}
\begin{tikzcd}
L^{\infty}\lc\spec T,dE_{T}\rc\arrow[rr,"\Gamma_{T}"]\arrow[dd,"\res"] & & W^{*}\lc{}T_{0}\rc\arrow[dd,"\comV"] \\
& & \\
L^{\infty}\lc\spec \restr{0.925}{T}{V},dE_{\restr{0.925}{T}{V}}\rc\arrow[rr,"\Gamma_{\restr{0.925}{T}{V}}"] & & W^{*}(\restr{0.925}{T}{V})
\end{tikzcd}
\end{equation}

\begin{reapply}
\end{reapply}

\noindent with $\res$ the restriction map given by $\spec \restr{0.925}{T}{V}\subset\spec T$.
\end{itemize}
\end{lem}
\begin{proof}
Proposition \ref{PRP.Compression_Concrete_RP} shows Lemma \ref{LEM.RP} applies. Thus Lemma \ref{LEM.FC_Preservation_II} applies to $\phi=\comV$ since $\DII\lc\comV\rc{}=\UBII_{V}(H)$ by hypothesis, hence our claims follow.
\end{proof}

\begin{cor}\label{COR.Compression_Preservation_I}
Let $T\in\UBII_{V}(H)$.

\begin{itemize}
\item[1)] $W^{*}(T)\subset\UBII_{V}(H)$.

\item[2)] For all $g\in L^{\infty}\lc\spec T,dE_{T}\rc$, $g(T)=g(\restr{0.925}{T}{V})\oplus g(\restr{0.925}{T}{V^{\perp}})\in\BII(V)\oplus\BII(V^{\perp})\subset\BII(H)$.

\item[3)] If $T=\sr$-$\lim_{n\in\mathbb{N}}T_{n}$ on $H$ and $\lset{}T_{n}\rset_{n\in\mathbb{N}}\subset\UBII_{V}(H)$, then $\restr{0.925}{T}{V}=\sr$-$\lim_{n\in\mathbb{N}}\restr{0.925}{T_{n}}{V}$ on $V$.
\end{itemize}
\end{cor}
\begin{proof}
Get $1)$ and $2)$ by Proposition \ref{PRP.Reducible} and Lemma \ref{LEM.Compression_Preservation_I}. Proposition \ref{PRP.Compression_Concrete_RP} shows $3)$ is $1)$ in Lemma \ref{LEM.RP} applied to $\phi=\comV$ using $\DII\lc\comV\rc{}=\UBII_{V}(H)$.
\end{proof}

\begin{cor}\label{COR.Compression_Preservation_II}
Let $T\in\UBII(H)_{h}$. We have $T\in\UBII_{V}(H)$ if and only if $\lb{}E_{T}(Z),\pi_{V}\rb{}=0$ for all $Z\in\mathfrak{B}(\mathbb{R})$. If $T\in\UBII_{V}(H)$, then $\lb{}g(T),\pi_{V}\rb{}=0$ for all $g\in L^{\infty}\lc\spec T,dE_{T}\rc$.
\end{cor}
\begin{proof}
If $T\in\UBII_{V}(H)$, then $\lb{}E_{T}(Z),\pi_{V}\rb$ for all $Z\in\mathfrak{B}(\mathbb{R})$ by $2)$ in Corollary \ref{COR.Compression_Preservation_I}. The converse is Lemma 9.8.6 in \cite{BK.deOli.2009.OpAlg_Quantum_Dynamics}. Apply $2)$ in Corollary \ref{COR.Compression_Preservation_I} for our final claim.
\end{proof}

\begin{lem}\label{LEM.Compression_Preservation_II}
If $T,S\in\UBII_{V}(H)$ commute strongly, then $\restr{0.925}{T}{V},\restr{0.925}{S}{V}\in\UBII\lc{}V\rc_{h}$ commute strongly and we have

\begin{itemize}
\item[1)] $\spec \restr{0.925}{T}{V}\times \restr{0.925}{S}{V}\subset\spec T\times S$ and $\mathcal{N}\lc{}E_{T,S}\rc\subset\mathcal{N}\lc{}E_{\restr{0.925}{T}{V},\restr{0.925}{S}{V}}\rc$,

\item[2)] normal unital surjective $^{*}$-homomorphism $\comV:W^{*}\lc{}T,S\rc\longrightarrow W^{*}\lc\restr{0.925}{T}{V},\restr{0.925}{S}{V}\rc$ s.t.~

\begin{align}\label{EQ.LEM.Compression_Preservation_II_1}
\comV g\lc{}T,S\rc{}=\restr{0.925}{g\lc{}T,S\rc{}}{V}=g\lc\restr{0.925}{T}{V},\restr{0.925}{S}{V}\rc{}=\comV g\lc\comV T,\comV S\rc{}  
\end{align}

\begin{reapply}
\end{reapply}

\noindent for all $g\in L^{\infty}\lc\spec T\times S,dE_{T,S}\rc$,

\item[3)] commutative diagram of normal unital surjective $^{*}$-homomorphisms

\begin{equation}\label{EQ.LEM.Compression_Preservation_II_2}
\begin{tikzcd}
L^{\infty}\lc\spec T\times S,dE_{T,S}\rc\arrow[rr,"\Gamma_{T,S}"]\arrow[dd,"\res"] & & W^{*}\lc{}T,S\rc\arrow[dd,"\comV"] \\
& & \\
L^{\infty}\lc\spec \restr{0.925}{T}{V}\times \restr{0.925}{S}{V},dE_{\restr{0.925}{T}{V},\restr{0.925}{S}{V}}\rc\arrow[rr,"\Gamma_{\restr{0.925}{T}{V},\restr{0.925}{S}{V}}"] & & W^{*}\lc\restr{0.925}{T}{V},\restr{0.925}{S}{V}\rc{}
\end{tikzcd}
\end{equation}

\begin{reapply}
\end{reapply}

\noindent with $\res$ the restriction map given by $\spec T_{1}\times S_{1}\subset\spec T_{0}\times S_{0}$.
\end{itemize}
\end{lem}
\begin{proof}
Let $T,S\in\UBII_{V}(H)$ commute strongly. Apply $2)$ in Corollary \ref{COR.Compression_Preservation_I}, which uses $C^{*}$-algebra direct sum, to show $\restr{0.925}{T}{V},\restr{0.925}{S}{V}\in\UBII\lc{}V\rc_{h}$ commute strongly. Lemma \ref{LEM.Compression_Preservation_I} shows our claims follow from Lemma \ref{LEM.JFC_Preservation} if

\begin{align}\label{EQ.LEM.Compression_Preservation_II_3}
\comV=\comV\otimes\comV    
\end{align}

\noindent on $W^{*}\lc{}T,S\rc{}=W^{*}(T)\otimes W^{*}(S)$ as per Equation \ref{EQ.SSEC.A_Fnd_FC_8}. Since Proposition \ref{PRP.Compression_Concrete} ensures $\comV$ has normal extension from $W^{*}(T)\odot W^{*}(S)$, we directly verify Equation \ref{EQ.LEM.Compression_Preservation_II_3} on elementary tensors using $2)$ in Corollary \ref{COR.Compression_Preservation_I}.
\end{proof}

%%%%%%%%%%%%%
%%% PART %%%%
%%%%%%%%%%%%%

\subsubsection*{Spectral gaps}

Lemma \ref{LEM.Spectral_Gap_USC} states spectral gaps are upper semi-continuous in strong resolvent convergence. Corollary \ref{COR.Spectral_Gap_USC} shows spectral gaps of local positive unbounded operators are limits of spectral gaps of compressions.\par
Let $H$ be a Hilbert space.

\begin{prp}\label{PRP.Unbd_PO_Inversion}
Let $T\geq S\geq 0$ in $\UBII(H)$. If $S$ is injective, then $T$ is injective and $S^{-1}\geq T^{-1}\geq 0$ in $\UBII(H)$.
\end{prp}
\begin{proof}
Let $L\in\UBII(H)_{+}$ be injective. For all $\varepsilon_{1}\geq\varepsilon_{0}>0$ in $\mathbb{R}$, functional calculus yields 

\begin{align}\label{EQ.PRP.Unbd_PO_Inversion_1}
0<R_{-\varepsilon_{1}}(L)\leq R_{-\varepsilon_{0}}(L)\leq L^{-1}.
\end{align}

\noindent Note Equation \ref{EQ.PRP.Unbd_PO_Inversion_1} gives monotonically increasing sequence $\lset{}R_{-n^{-1}}(L)\rset_{n\in\mathbb{N}}\subset\BII(H)$ of uniformly positive and bounded operators.\par
The Kato-Robinson theorem \lc{}cf.~Theorem 10.4.2 in \cite{BK.deOli.2009.OpAlg_Quantum_Dynamics}\rc{} shows

\begin{align}\label{EQ.PRP.Unbd_PO_Inversion_2}
L^{-1}=\sr\textrm{-}\lim_{n\in\mathbb{N}}\hspace{0.025cm} R_{-n^{-1}}(L),
\end{align}

\noindent and we obtain unique closed positive unbounded quadratic form

\begin{align}\label{EQ.PRP.Unbd_PO_Inversion_3}
u\mapsto\dblv{}\sqrt{L^{-1}}(u)\dblv_{H}^{2}=\sup_{n\in\mathbb{N}}\hspace{0.025cm} \lgl R_{-n^{-1}}(L)(u),u\rgl_{H}\in [0,\infty]
\end{align}

\noindent on $H$ represented by $L^{-1}$.\par
Let $S$ be injective. Then $T$ is injective by partial order. Applying Equation \ref{EQ.PRP.Unbd_PO_Inversion_2} and Equation \ref{EQ.PRP.Unbd_PO_Inversion_3} to $T$, resp.~$S$, we calculate

\begin{align*}
\dblv{}\sqrt{T^{-1}}(u)\dblv_{H}^{2} & = \sup_{n\in\mathbb{N}}\hspace{0.025cm} \lgl R_{-n^{-1}}(T)(u),u\rgl_{H} \phantom{\bigg)} \\
& \leq\ \sup_{n\in\mathbb{N}}\hspace{0.025cm} \lgl R_{-n^{-1}}(S)(u),u\rgl_{H} \phantom{\bigg)} \\
& = \dblv{}\sqrt{S^{-1}}(u)\dblv_{H}^{2} \phantom{\bigg)}
\end{align*}

\noindent for all $u\in H$. The above calculation implies Theorem 9.3.7 in \cite{BK.deOli.2009.OpAlg_Quantum_Dynamics} yields $S^{-1}\geq T^{-1}$.
\end{proof}

For all $T\in\UBII(H)_{+}$, we know $\spec T\subset [0,\infty)$ by definition of partial order.

\begin{dfn}\label{DFN.Spectral_Gap}
Let $T\in\UBII(H)_{+}$.

\begin{itemize}
\item[1)] The spectral gap of $T$ is $\sigma(T):=\inf\hspace{0.025cm} \big\{\hspace{0.025cm} \lambda>0\ \vset\ \lambda\in\spec T\hspace{0.025cm} \big\}$.

\item[2)] We say that $T$ has spectral gap if $\sigma(T)>0$.
\end{itemize}
\end{dfn}

\begin{prp}\label{PRP.Spectral_Gap}
If $T\in\UBII_{V}(H)$, then $\sigma(T)\leq\sigma(\restr{0.925}{T}{V})$.
\end{prp}
\begin{proof}
Apply $1)$ in Lemma \ref{LEM.Compression_Preservation_I}.
\end{proof}

%NEWPAGE
%NEWPAGE
%NEWPAGE

\pagebreak

%NEWPAGE
%NEWPAGE
%NEWPAGE

\begin{lem}\label{LEM.Spectral_Gap_Maximal}
For all $T\in\UBII(H)_{+}$, we have

\begin{align}\label{EQ.LEM.Spectral_Gap_Maximal_1}
\sigma(T)=\sup\hspace{0.025cm} \lset\lambda\geq 0\ \vset\ \restr{0.925}{T}{\overline{\im T}}\geq\lambda\pi_{\overline{\im T}}\rset{}.
\end{align}
\end{lem}
\begin{proof}
We write $\overline{\im T}=\overline{\im T}^{\|.\|_{H}}$ as per Example \ref{BSP.Reducible}. Note $\pi_{\overline{\im T}}=E_{T}\lc{}(0,\infty)\rc$ as $\restr{0.925}{T}{\overline{\im T}}$ is injective by Proposition \ref{PRP.FC_Injectivity}. For all $Z\in\mathfrak{B}(\mathbb{R})$, Lemma \ref{LEM.Compression_Preservation_I} implies

\begin{align}\label{EQ.LEM.Spectral_Gap_Maximal_2}
E_{\restr{0.925}{T}{\overline{\im T}}}(Z)=E_{T}\lc{}(0,\infty)\rc\cdot E_{T}(Z)\cdot E_{T}\lc{}(0,\infty)\rc{}=E_{T}\lc{}Z\cap (0,\infty)\rc{}.
\end{align}

\noindent Positivity ensures $\supp E_{T}=\supp T\subset [0,\infty)$ and $\supp E_{\restr{0.925}{T}{\overline{\im T}}}=\spec \restr{0.925}{T}{\overline{\im T}}\subset [0,\infty)$. Then Equation \ref{EQ.LEM.Spectral_Gap_Maximal_2} implies we have $\spec \restr{0.925}{T}{\overline{\im T}}=\spec T$ if and only if $\sigma(T)=0$, as well as $\spec \restr{0.925}{T}{\overline{\im T}}=\spec T\setminus\lset{}0\rset$ if and only if $\sigma(T)>0$. Set

\begin{align}\label{EQ.LEM.Spectral_Gap_Maximal_3}
\zeta(T):=\sup\hspace{0.025cm} \lset\lambda\geq 0\ \vset\ \restr{0.925}{T}{\overline{\im T}}\geq\lambda\pi_{\overline{\im T}}\rset{}.   
\end{align}

Assume $\sigma(T)=0$. Thus $0\in\spec T=\spec \restr{0.925}{T}{\overline{\im T}}$ by $\spec T$ closed. If $\zeta(T)>0$, then there exists $\lambda>0$ s.t.~

\begin{align}\label{EQ.LEM.Spectral_Gap_Maximal_4}
\restr{0.925}{T}{\overline{\im T}}\geq\lambda\pi_{\overline{\im T}}>0
\end{align}

\noindent in $\UBII\lc\overline{\im T}\rc$. Get $0\notin\spec\restr{0.925}{T}{\overline{\im T}}$ by Proposition \ref{PRP.Unbd_PO_Inversion} and Equation \ref{EQ.LEM.Spectral_Gap_Maximal_4}. We obtain $0=\sigma(T)=\zeta(T)$ as claimed.\par
Assume $\sigma(T)>0$. Since their spectra are closed, positive unbounded operators are injective with closed image if and only if they are bounded from below. Thus $\im T$ is closed as $\restr{0.925}{T}{\overline{\im T}}$ is positive and injective. Closedness further shows $\sigma(T)\in\spec T$.\par
Get $\rsl\restr{0.925}{T}{\im T}=\rsl T\cup\lset{}0\rset$ by $\spec \restr{0.925}{T}{\im T}=\spec T\setminus\lset{}0\rset$. Thus $[0,\sigma(T))\subset\rsl \restr{0.925}{T}{\im T}$, hence $\supp E_{\restr{0.925}{T}{\im T}}=\spec \restr{0.925}{T}{\im T}$ shows we have $\id_{\mathbb{R}}-\lambda\geq 0$ $E_{\restr{0.925}{T}{\im T}}$-a.e.~for all $\lambda\in [0,\sigma(T))$. We see $\restr{0.925}{T}{\im T}\geq\lambda\pi_{\im T}$ for all $\lambda\in [0,\sigma(T))$ by functional calculus and therefore $\sigma(T)\leq\zeta(T)$ by continuity. We show the converse. For all $\lambda\in \lb{}0,\zeta(T)\rc$, Equation \ref{EQ.LEM.Spectral_Gap_Maximal_3} implies

\begin{align}\label{EQ.LEM.Spectral_Gap_Maximal_5}
\restr{0.925}{T}{\im T}\geq\lambda\pi_{\im T}.
\end{align}

\noindent We claim $\lb{}0,\zeta(T)\rc\subset\rsl T$. If this holds, then $\sigma(T)\geq\zeta(T)$. Let $\lambda\in \lb{}0,\zeta(T)\rc$. Since we have $0<\sigma(T)\leq\zeta(T)$, there exists $\delta>0$ s.t.~$\lambda+\delta<\zeta(T)$. Equation \ref{EQ.LEM.Spectral_Gap_Maximal_5} shows

\begin{align}\label{EQ.LEM.Spectral_Gap_Maximal_6}
\restr{0.925}{T}{\im T}\geq(\lambda+\delta)\cdot \pi_{\im T}.
\end{align}

\noindent Subtracting $\lambda\pi_{\im T}$ in Equation \ref{EQ.LEM.Spectral_Gap_Maximal_6} shows $\restr{0.925}{T}{\im T}-\lambda\pi_{\im T}\geq\delta\pi_{\im T}$ for $\delta>0$ and $\pi_{\im T}=I_{\im T}$. Thus $\restr{0.925}{T}{\im T}-\lambda\pi_{\im T}>0$ in $\UBII\lc\im T\rc$, hence $R_{\lambda}\lc\restr{0.925}{T}{\im T}\rc\in\BII\lc\im T\rc$ as well. Using $\rsl \restr{0.925}{T}{\im T}=\rsl T\cup\lset{}0\rset$, we see $\lambda\in\rsl T$ since $\lambda>0$.
\end{proof}

\begin{cor}\label{COR.Spectral_Gap_Maximal}
For all $T\in\UBII(H)_{+}$, we either have

\begin{itemize}
\item[1)] $\sigma(T)=0$ and $\overline{\im T}\neq\im T$,

\item[2)] or $\sigma(T)>0$ and $\overline{\im T}=\im T$.
\end{itemize}
\end{cor}
\begin{proof}
Since $\spec \restr{0.925}{T}{\overline{\im T}}$ is closed, the injective positive unbounded operator $\restr{0.925}{T}{\overline{\im T}}$ has closed image if and only if it is bounded from below. Apply Lemma \ref{LEM.Spectral_Gap_Maximal}.
\end{proof}

\begin{lem}\label{LEM.Spectral_Gap_USC}
Let $T=\sr$-$\lim_{n\in\mathbb{N}}T_{n}$ on $H$. If $\lset{}T_{n}\rset_{n\in\mathbb{N}}\subset\UBII(H)_{+}$, then $T\in\UBII(H)_{+}$ and we have

\begin{align}\label{EQ.LEM.Spectral_Gap_USC_1}
\limsup_{n\in\mathbb{N}}\hspace{0.025cm} \sigma(T_{n})\leq\sigma(T).
\end{align}
\end{lem}
\begin{proof}
Corollary 10.2.2 in \cite{BK.deOli.2009.OpAlg_Quantum_Dynamics} implies $T\in\UBII(H)_{+}$. If $\limsup_{n\in\mathbb{N}}\sigma(T_{n})=0$, then our claim follows. We assume $\limsup_{n\in\mathbb{N}}\sigma(T_{n})>0$ without loss of generality. Let $\lset\sigma\lc{}T_{n_{k}}\rc\rset_{k\in\mathbb{N}}$ be a converging subsequence s.t.~$\lambda:=\lim_{k\in\mathbb{N}}\sigma\lc{}T_{n_{k}}\rc{}>0$. For all $\varepsilon\in (0,\lambda)$, let $k_{\varepsilon}\in\mathbb{N}$ s.t.~$\lset\sigma\lc{}T_{n_{k}}\rc\rset_{k\geq k_{\varepsilon}}\subset \lc\lambda-\varepsilon,\lambda+\varepsilon\rc$. Get $\lc{}0,\lambda-\varepsilon\rc\subset\bigcap_{k\geq k_{\varepsilon}}\rsl T_{n_{k}}$. Theorem 10.2.1 in \cite{BK.deOli.2009.OpAlg_Quantum_Dynamics} applies to this inclusion as $T=\sr$-$\lim_{k\in\mathbb{N}}T_{n_{k}}$, implying $\lc{}0,\lambda-\varepsilon\rc\subset\rsl T$ for all $\varepsilon\in (0,\lambda)$. Letting $\varepsilon\downarrow 0$ shows $(0,\lambda)\subset\rsl T$. Altogether, we estimate $\lambda\leq\sigma(T)$ for all non-zero accumulation points $\lambda$ of $\lset\sigma(T_{n})\rset_{n\in\mathbb{N}}\subset [0,\infty)$. This is Equation \ref{EQ.LEM.Spectral_Gap_USC_1}.
\end{proof}

\begin{cor}\label{COR.Spectral_Gap_USC}
Let $H_{1}\subset H_{2}\subset\ldots\subset H$ be Hilbert subspaces s.t.~$H=\overline{\bigcup_{n\in\mathbb{N}}H_{n}}^{\|.\|_{H}}$. If $T\in\UBII(H)_{+}$ is $H_{n}$-reducible for all $n\in\mathbb{N}$, then $T=\sr$-$\lim_{n\in\mathbb{N}}\com_{H_{n}}T$ and we have

\begin{itemize}
\item[1)] $\com_{H_{n}} T\in\BII(H)_{+}$, $\restr{0.925}{T}{H_{n}}\in\BII\lc{}H_{n}\rc_{+}$ and $\sigma\lc\com_{H_{n}}T\rc{}=\sigma\lc\restr{0.925}{T}{H_{n}}\rc$ for all $n\in\mathbb{N}$,

\item[2)] monotonically decreasing sequence $\big\{\sigma\lc\restr{0.925}{T}{H_{n}}\rc\big\}_{n\in\mathbb{N}}\subset [0,\infty)$,

\item[3)] $\sigma(T)=\lim_{n\in\mathbb{N}}\sigma\lc\restr{0.925}{T}{H_{n}}\rc$.
\end{itemize}
\end{cor}
\begin{proof}
Let $T\in\UBII(H)_{+}$ be $H_{n}$-reducible for all $n\in\mathbb{N}$. Using $2)$ in Corollary \ref{COR.Compression_Preservation_I} and $I_{H}=\s$-$\lim_{n\in\mathbb{N}}\pi_{H_{n}}$, get $T=\sr$-$\lim_{n\in\mathbb{N}}\com_{H_{n}}T$. Moreover, we see $T\in\UBII(H)_{+}$ and $1.1)$ in Proposition \ref{PRP.Reducible} show $\com_{H_{n}}T\in\BII(H)_{+}$ and $\restr{0.925}{T}{H_{n}}\in\BII\lc{}H_{n}\rc_{+}$ as per Notation \ref{NTN.Reducible} for all $n\in\mathbb{N}$. Proposition \ref{PRP.Spectral_Gap} and Lemma \ref{LEM.Spectral_Gap_USC} thus imply our claims if

\begin{align}\label{EQ.COR.Spectral_Gap_USC_1}
\sigma\lc\textrm{com}_{H_{n}}\hspace{0.025cm} T\rc{}=\sigma\lc\restr{0.925}{T}{H_{n}}\rc{}
\end{align}

\noindent for all $n\in\mathbb{N}$. Let $n\in\mathbb{N}$. For all $\lambda\in\mathbb{R}$, we decompose

\begin{align}\label{EQ.COR.Spectral_Gap_USC_2}
\textrm{com}_{H_{n}}\hspace{0.025cm} T-\lambda I_{H}=\lc\restr{0.925}{T}{H_{n}}-\lambda I_{H_{n}}\rc\oplus -\lambda I_{H_{n}^{\perp}}
\end{align}

\noindent w.r.t.~$\BII\lc{}H_{n}\rc\oplus\BII\lc{}H_{n}^{\perp}\rc$. If $H_{n}=H$, then there is nothing to show. We assume $H_{n}\neq 0$ without loss of generality. Using decomposition as per Equation \ref{EQ.COR.Spectral_Gap_USC_2}, we directly verify Equation \ref{EQ.COR.Spectral_Gap_USC_1} by definition of spectra.
\end{proof}

%%%%%%%%%%%%%%%%
%%%% CHAPTER %%%
%%%%%%%%%%%%%%%%

\chapter{Noncommutative Measure and Integration Theory}\label{APP.B}

Theorem \ref{THM.JFC_Compression} states sufficient conditions for compressing joint functional calculus pulled-back to joint functional calculus of self-adjoint measurable operators. The latter are noncommutative measurable functions. Tracial $W^{*}$-algebras define such spaces of measurable operators. For all $p\in [1,\infty]$, we define noncommutative $L^{p}$-spaces of measurable operators equipped with $L^{p}$-norm \cite{BK.Joh_Lin.2003.Wstar_Lp_II}\cite{ART.Nel.1974.Wstar_Integration}. They fulfil H\"older inequalities. We have a modified standard pairing encoding duality \cite{BK.Tak.2003.OpAlg_II}.\par
In Section \ref{SEC.B_SMO}, we discuss tracial $W^{*}$-algebras, spaces of measurable operators and noncommutative integration theory. We study canonical left-~and right-actions of spaces of measurable operators. In Section \ref{SEC.B_JFC}, we prove Theorem \ref{THM.JFC_Compression} using compression maps given by change of canonical left-~and right-actions. We formulate compressed pulled-backed joint functional calculus of self-adjoint measurable operators.

%%%%%%%%%%%%%%%%
%%% SECTION %%%%
%%%%%%%%%%%%%%%%

\section{Spaces of measurable operators}\label{SEC.B_SMO}

In Subsection \ref{SSEC.B_SMO_Wstar_Trace}, we discuss tracial $C^{*}$-~and $W^{*}$-algebras. The GNS-construction for traces defines canonical left-actions. Each is a faithful normal unital $^{*}$-representation over noncommutative $L^{2}$-space, i.e.~Hilbert space given by GNS-construction. Tracial $C^{*}$-algebras are a preliminary step useful for the AF-$C^{*}$-setting. Canonical right-actions are canonical left-actions of opposite tracial $W^{*}$-algebras.\par
In Subsection \ref{SSEC.B_SMO_NCI}, we discuss spaces of measurable operators and noncommutative integration theory. Spaces of measurable operators are uniformly completed $^{*}$-algebras in measure topologies of tracial $W^{*}$-algebras. Traces extend. For all $p\in [1,\infty]$, we define noncommutative $L^{p}$-spaces via $L^{p}$-norms using traces of measurable operators. H\"older inequalities apply and we have a modified standard pairing.\par
In Subsection \ref{SSEC.B_SMO_CLRA}, we further extend canonical left-~and right-actions to spaces of measurable operators using $^{*}$-algebra multiplication. We account for noncommutative $L^{2}$-spaces different from Hilbert spaces given by GNS-construction. Whereas canonical left-actions represent $^{*}$-algebras of measurable operators, canonical right-actions are twisted canonical left-actions defined on opposite $^{*}$-algebras. Using canonical left-~and right-actions, we define spectral and joint spectral measures of self-adjoint measurable operators. This lets us formulate their bounded measurable joint functional calculus.\par

%NEWPAGE
%NEWPAGE
%NEWPAGE

\newpage

%NEWPAGE
%NEWPAGE
%NEWPAGE

%%%%%%%%%%%%%%%%%%%
%%% SUBSECTION %%%%
%%%%%%%%%%%%%%%%%%%

\subsection[Tracial $C^{*}$-~and $W^{*}$-algebras]{Tracial $\mathbf{C}^{*}$-~and $\mathbf{W}^{*}$-algebras}\label{SSEC.B_SMO_Wstar_Trace}

Tracial $W^{*}$-algebras have f.s.n.~traces. Applying GNS-construction, each is represented over noncommutative $L^{2}$-space via canonical left-actions. Canonical right-actions arise using opposite tracial $W^{*}$-algebras. Remark \ref{REM.Wstar_CLRA} explains there is no twisting in the bounded case. Standard references for tracial $C^{*}$-~and $W^{*}$-algebras are \cite{BK.Dix.1977.Cstar_Algebras} and \cite{BK.Tak.1979.OpAlg_I}\linebreak\cite{BK.Tak.2003.OpAlg_II}. Note \cite{BK.Tak.2003.OpAlg_II} discusses general weights as an extension of the tracial case.

%%%%%%%%%%%%%
%%% PART %%%%
%%%%%%%%%%%%%

\subsubsection*{Tracial $\mathbf{W^{*}}$-algebras and canonical left-actions}

In Subsection \ref{SSEC.A_Fnd_CWstar}, we cover $C^{*}$-~and $W^{*}$-algebras. Definition \ref{DFN.Cstar_PO} fixes partial orders.

\begin{dfn}\label{DFN.Wstar_Trace}
Let $A$ be a $C^{*}$-algebra. Set $\infty\cdot 0=0\cdot \infty=0$ as convention.

\begin{itemize}
\item[1)] A map $\tau:A_{+}\longrightarrow [0,\infty]$ is a trace on $A$ if

\begin{itemize}
\item[1.1)] $\tau\lc{}x+y\rc{}=\tau(x)+\tau(y)$ for all $x,y\in A_{+}$, \phantom{\big)} \hfill (Linearity)

\item[1.2)] $\tau\lc\lambda x\rc{}=\lambda\tau(x)$ for all $x\in A_{+}$ and $\lambda\geq 0$, \phantom{\big)} \hfill (Homogeneity)

\item[1.3)] $\tau(x^{*}x)=\tau\lc{}xx^{*}\rc$ for all $x\in A$. \phantom{\big)} \hfill (Traciality)
\end{itemize}

\begin{reapply}
\end{reapply}

\item[2)] Let $\tau$ be a trace on $A$. We say that $\tau$ is

\begin{itemize}
\item[2.1)] l.s.c.~if it is l.s.c.~in $\|.\|_{A}$, \phantom{\big)}

\item[2.2)] faithful if $\tau(x)=0$ implies $x=0$ for all $x\in A_{+}$, \phantom{\big)}

\item[2.3)] semi-finite if $\tau(x)=\sup\hspace{0.025cm} \big\{\hspace{0.025cm} \tau(y)\ \vset\ y\in A_{+}:\ y\leq x,\ \tau(y)<\infty\hspace{0.025cm} \big\}$ for all $x\in A_{+}$. \phantom{\big)}
\end{itemize}

\begin{reapply}
\end{reapply}

\item[3)] Let $\tau$ be a faithful trace on $A$. Set $\mathfrak{n}_{\tau}:=\big\{\hspace{0.025cm} x\in A\ \vset\ \tau(x^{*}x)<\infty\hspace{0.025cm} \big\}$. We call

\begin{align}\label{EQ.DFN.Wstar_Trace_1}
\mathfrak{m}_{\tau}:=\lset{} x\in A\ \vset\ \exists\{\hspace{0.0125cm} y_{k}\hspace{0.0025cm}\}_{k=1}^{n},\{\hspace{0.025cm}z_{k}\hspace{0.025cm}\}_{k=1}^{n}\subset\mathfrak{n}_{\tau}:\ x=\sum_{k=1}^{n}y_{k}^{*}z_{k}\rset{}
\end{align}

\begin{reapply}
\end{reapply}

\noindent the definition domain of $\tau$.

\item[4)] We call $(A,\tau)$ a tracial $C^{*}$-algebra if $\tau$ is a l.s.c.~faithful semi-finite trace on $A$.

\item[5)] Let $(A,\tau)$ be a tracial $C^{*}$-algebra and $\phi:\mathfrak{m}_{\tau}\longrightarrow A$. We say that $\phi$ is a dilation if $0\leq\tau(\phi(x))\leq\tau(x)$ for all $x\in\mathfrak{m}_{\tau}\cap A_{+}$. We call $\phi$ trace-, or $\tau$-preserving if $\tau(\phi(x))=\tau(x)$ for all $x\in\mathfrak{m}_{\tau}$. 
\end{itemize}
\end{dfn}

Let $A$ be a $C^{*}$-algebra and $\tau$ a faithful trace on $A$. Note $\mathfrak{n}_{\tau},\mathfrak{m}_{\tau}=\mathfrak{n}_{\tau}^{2}=\langle\mathfrak{m}_{\tau}\cap A_{+}\rangle_{\mathbb{C}}\subset A$ are self-adjoint two-sided ideals \lc{}cf.~Lemma 4.5.1 and Proposition 6.1.2 in \cite{BK.Dix.1977.Cstar_Algebras}\rc{}. There exists unique linear extension of $\tau$ to $\mathfrak{m}_{\tau}$ since $\tau\lc\mathfrak{m}_{\tau}\cap A_{+}\rc{}<\infty$ \lc{}cf.~Proposition 6.1.2 in \cite{BK.Dix.1977.Cstar_Algebras}\rc{}. We denote extension by $\tau$. For all $x,y\in\mathfrak{m}_{\tau}$, $\absv{1.15}{\tau(x)}<\infty$ and $\tau\lc{}xy\rc{}=\tau\lc{}yx\rc$. The notion of $\tau$-preserving map as per $5)$ in Definition \ref{DFN.Wstar_Trace} is well-defined.

\begin{rem}\label{REM.Wstar_Trace_PO}
For all $x,y\in\mathfrak{m}_{\tau}$ self-adjoint, $x\geq y$ implies $\tau(x)\geq\tau(y)$. In this sense, $\tau$ is positivity-preserving. This corresponds to Definition \ref{DFN.Cstar_PO}.
\end{rem}

%NEWPAGE
%NEWPAGE
%NEWPAGE

\pagebreak

%NEWPAGE
%NEWPAGE
%NEWPAGE

Let $(A,\tau)$ be a tracial $C^{*}$-algebra. The GNS-inner product of $\tau$ on $\mathfrak{n}_{\tau}$ is given by

\begin{align}\label{EQ.SSEC.B_SMO_Wstar_Trace_1}
\lgl x,y\rgl_{\tau}:=\tau(x^{*}y)    
\end{align}

\noindent for all $x,y\in\mathfrak{n}_{\tau}$. For all $x\in\mathfrak{n}_{\tau}$, faithfulness shows $\|x\|_{\tau}=0$ if and only if $x=0$. Its Hilbert space completion is noncommutative $L^{2}$-space $\HII(A,\tau)$. For all $x\in A$, set $\mathcal{L}_{x}(y):=xy\in\mathfrak{n}_{\tau}$ for all $y\in\mathfrak{n}_{\tau}$ and extend to $\mathcal{L}_{x}\in\BII\lc\HII(A,\tau)\rc$. This is the GNS-construction for $\tau$. Thus $\mathcal{L}$ is a semi-cyclic $^{*}$-representation \lc{}cf.~Theorem I.9.14 in \cite{BK.Tak.1979.OpAlg_I} and Definition VII.1.5 in \cite{BK.Tak.2003.OpAlg_II}\rc{}, hence a faithful $^{*}$-representation of $A$ over $\HII(A,\tau)$. It is non-degenerate by l.s.c.~\lc{}cf.~Lemma VII.4.1 in \cite{BK.Tak.2003.OpAlg_II}\rc{}. We see unitality of $A$ implies that of $\mathcal{L}$.

\begin{dfn}\label{DFN.Wstar_Trace_CLA}
For all tracial $C^{*}$-algebras $(A,\tau)$, we call $\HII(A,\tau):=\overline{\mathfrak{n}}_{\tau}^{\|.\|_{\tau}}$ the concrete noncommutative $L^{2}$-space and $\mathcal{L}$ the canonical left-action of $A$ on $\HII(A,\tau)$.
\end{dfn}

\begin{rem}
Canonical right-actions are given in Definition \ref{DFN.Wstar_Trace_CRA}. For this, we use the opposite $^{*}$-algebra construction given in Definition \ref{DFN.Oppalg}. Note Definition \ref{DFN.Wstar_CLRA} subsumes canonical left-~and right-actions in this subsection.
\end{rem}

Traces on $W^{*}$-algebras must have canonical normal left-action in order to preserve bounded measurable functional calculus. Faithful, semi-finite and normal traces, or f.s.n.~traces on $W^{*}$-algebras have canonical normal left-action. Tracial $W^{*}$-algebras are all $W^{*}$-algebras equipped with an f.s.n.~trace.\par
Proposition \ref{PRP.Wstar_Equivalence}, fundamentally a useful reformulation of the double commutant theorem \cite{BK.Kad_Rin.1997.OpAlg_II}\cite{BK.Tak.1979.OpAlg_I}, states double commutants of concrete $C^{*}$-algebra are, up to normal faithful unital $^{*}$-representations, all $W^{*}$-algebras. Proposition \ref{PRP.Wstar_Trace_Ext_I} implies each tracial $C^{*}$-algebra induces unique f.s.n.~extension of their trace to the double commutant of their image $C^{*}$-algebra. Finally, Proposition \ref{PRP.Wstar_Trace_Ext_II} ensures each tracial $W^{*}$-algebra is a tracial $C^{*}$-algebra with image $C^{*}$-algebra being its own double commutant. We thereby reduce from tracial $C^{*}$-~to tracial $W^{*}$-algebras.

\begin{dfn}\label{DFN.Wstar_Trace_FSN}
Let $M$ be a $W^{*}$-algebra.

\begin{itemize}
\item[1)] A trace $\tau$ on $M$ is normal if for all bounded increasing nets $\{x_{k}\}_{k\in K}\subset M_{+}$, get

\begin{align}\label{EQ.DFN.Wstar_Trace_FSN_1}
\tau\vstretch{1.05}{\Bigg(}\hspace{0.0075cm} \sup_{k\in K}\hspace{0.025cm} x_{k}\vstretch{1.05}{\Bigg)}=\sup_{k\in K}\hspace{0.025cm} \tau(x_{k}).
\end{align}

\begin{reapply}
\end{reapply}

\item[2)] A trace $\tau$ on $M$ is f.s.n.~if it is faithful, semi-finite and normal.

\item[3)] We call $(M,\tau)$ a tracial $W^{*}$-algebra if $\tau$ is a f.s.n.~trace on $M$.
\end{itemize}
\end{dfn}

\begin{rem}
Equation \ref{EQ.DFN.Wstar_Trace_FSN_1} corresponds to Equation \ref{EQ.DFN.Wstar_Normal_1}, i.e.~normality for bounded linear maps of $W^{*}$-algebras. The two notions coincide assuming boundedness.
\end{rem}

%NEWPAGE
%NEWPAGE
%NEWPAGE

\pagebreak

%NEWPAGE
%NEWPAGE
%NEWPAGE

\begin{prp}\label{PRP.Wstar_Trace_Ext_I}
Let $(A,\tau)$ be a tracial $C^{*}$-algebra.

\begin{itemize}
\item[1)] There exists unique f.s.n.~trace $\tau_{\infty}$ on $\LII(A)''$ extending $\tau$ from $A_{+}$ to $\LII(A)_{+}''$.

\item[2)] $\lc\LII(A)'',\tau_{\infty}\rc$ is a tracial $W^{*}$-algebra.
\end{itemize}
\end{prp}
\begin{proof}
Get $\tau_{\infty}$ by applying Lemma 6.1.5 in \cite{BK.Dix.1977.Cstar_Algebras} to $\mathcal{L}$ \lc{}cf.~A.60 in \cite{BK.Dix.1977.Cstar_Algebras}\rc{}. We have $\tau_{\infty}=\tau$ on $A_{+}\subset \LII(A)_{+}''$ by Proposition 6.6.5 in \cite{BK.Dix.1977.Cstar_Algebras}. Thus $\lc\LII(A)'',\tau_{\infty}\rc$ is tracial $W^{*}$-algebra.
\end{proof}

\begin{ntn}\label{NTN.Wstar_Trace}
Let $(A,\tau)$ be a tracial $C^{*}$-algebra. We write $\tau=\tau_{\infty}$ on $\LII(A)''$.
\end{ntn}

\begin{prp}\label{PRP.Wstar_Trace_Ext_II}
Let $(M,\tau)$ be a tracial $W^{*}$-algebra.

\begin{itemize}
\item[1)] $(M,\tau)$ is a tracial $C^{*}$-algebra and $\mathcal{L}$ is faithful normal unital $^{*}$-representation s.t.~$w^{*}$-topology on $M$ maps to $\sigma$-weak topology on $\mathcal{L}(M)$,

\item[2)] $\lc\LII(M)'',\tau\rc{}=\lc\mathcal{L}(M),\tau\rc$.
\end{itemize}
\end{prp}
\begin{proof}
Normality of $\tau$ shows l.s.c.~in $\sigma$-weak topology by Theorem VII.1.11 in \cite{BK.Tak.2003.OpAlg_II}. Thus $\tau$ is l.s.c. in norm, hence $(M,\tau)$ is tracial $C^{*}$-algebra and we know $\mathcal{L}$ is faithful unital $^{*}$-representation. Equation \ref{EQ.DFN.Wstar_Trace_FSN_1} shows normality of $\mathcal{L}$. Its construction and normality then show $\mathcal{L}$ maps $w^{*}$-topology on $M$ to $\sigma$-weak topology on $\mathcal{L}(M)$. Proposition \ref{PRP.Wstar_Equivalence} implies $\mathcal{L}(M)=\mathcal{L}(M)''$ at once.
\end{proof}

\begin{prp}\label{PRP.Wstar_Trace_Ext_III}
If $(A,\tau)$ is a tracial $C^{*}$-algebra, then $\HII\lc\LII(A)'',\tau\rc{}=\HII\lc\LII(A),\tau\rc$.
\end{prp}
\begin{proof}
Apply Proposition \ref{PRP.Wstar_Trace_Ext_I} and Proposition \ref{PRP.Wstar_Trace_Ext_II}.
\end{proof}

Finite faithful traces on unital $C^{*}$-algebras are well-behaved.

\begin{dfn}
Let $A$ be a $C^{*}$-algebra and $\tau$ a trace on $A$. We call $\tau$ finite if $\tau(x)<\infty$ for all $x\in A_{+}$ and further write $\tau<\infty$.
\end{dfn}

\begin{prp}\label{PRP.Wstar_Trace_Fin_I}
Let $A$ be a unital $C^{*}$-algebra and $\tau$ a faithful trace on $A$.

\begin{itemize}
\item[1)] $\tau<\infty$ if and only if $\tau(1_{A})<\infty$.

\item[2)] If $\tau<\infty$, then $\tau$ is semi-finite.

\item[3)] If $\tau<\infty$, then $(A,\tau)$ is a tracial $C^{*}$-algebra, $\tau\in A_{+}^{*}$ and $\ker\tau^{\perp}=\langle 1_{A}\rangle_{\mathbb{C}}$.
\end{itemize}
\end{prp}
\begin{proof}
If $\tau<\infty$, then $\tau(1_{A})<\infty$. Assume $\tau(1_{A})<\infty$. For all $x\in A_{+}$, get $x\leq \|x\|_{A}1_{A}$ by functional calculus and therefore $\absv{1.15}{\tau(x)}\leq \|x\|_{A}\tau(1_{A})<\infty$ by positivity-preservation on $\mathfrak{m}_{\tau}$ as per Remark \ref{REM.Wstar_Trace_PO}. Get $1)$. Assume $\tau$ is finite. Thus $A=\mathfrak{m}_{\tau}$, hence $\tau\in A_{+}^{*}$. We see $\tau$ is semi-finite and l.s.c.~in norm. In particular, $(A,\tau)$ is a tracial $C^{*}$-algebra and we have $1_{A}\in \HII(A,\tau)$. Since $\dim_{\mathbb{C}}\ker\tau^{\perp}=1$ by $\tau\in A^{*}$ and $1_{A}\in\ker\tau^{\perp}$ by faithfulness, get $\ker\tau^{\perp}=\langle 1_{A}\rangle_{\mathbb{C}}\subset \HII(A,\tau)$. Altogether, get $2)$ and $3)$.
\end{proof}

\begin{prp}\label{PRP.Wstar_Trace_Fin_II}
Let $M$ be a $W^{*}$-algebra and $\tau$ a faithful normal trace on $M$. If $\tau<\infty$, then $\tau$ is f.s.n.~trace on $M$.
\end{prp}
\begin{proof}
Apply $2)$ in Proposition \ref{PRP.Wstar_Trace_Fin_I}.
\end{proof}

%%%%%%%%%%%%%
%%% PART %%%%
%%%%%%%%%%%%%

\subsubsection*{Opposite tracial $W^{*}$-algebras and canonical right-actions}

Proposition \ref{PRP.Wstar_Trace_Ext_I} and Proposition \ref{PRP.Wstar_Trace_Ext_III} show canonical right-actions for tracial $C^{*}$-algebras reduce to tracial $W^{*}$-algebras. Let $(M,\tau)$ be a tracial $W^{*}$-algebra. For all $x\in M$, set $\mathcal{R}_{x}(y):=yx\in\mathfrak{n}_{\tau}$ for all $y\in\mathfrak{n}_{\tau}$ and extend to $\mathcal{R}_{x}\in\BII\lc\HII(M,\tau)\rc$.
 
\begin{dfn}\label{DFN.Wstar_Trace_CRA}
Let $(M,\tau)$ be a tracial $W^{*}$-algebra. Following Definition \ref{DFN.Wstar_Trace_CLA}, we call $\mathcal{R}$ the canonical right-action of $M$ on $\HII(M,\tau)$.
\end{dfn}

Definition \ref{DFN.Oppalg} gives an opposite $^{*}$-algebra construction. Proposition \ref{PRP.Wstar_Trace_CRA_I} shows $(M,\tau)$ yields opposite tracial $W^{*}$-algebra $\lc{}M^{\op},\tau\rc$ s.t.~$\HII\lc{}M^{\op},\tau\rc{}=\HII(M,\tau)$. Using the latter, Proposition \ref{PRP.Wstar_Trace_CRA_II} shows $\mathcal{R}$ is canonical left-action $\mathcal{L}^{\op}$ of $M^{\op}$ on $\HII(M,\tau)$ and Proposition \ref{PRP.Wstar_Trace_CLRA_Bd} implies our discussion concerning canonical left-actions translates to canonical right-actions as per Diagram \ref{EQ.PRP.Wstar_Trace_CLRA_Bd_1}.

\begin{dfn}\label{DFN.Oppalg}
Let $\mathcal{A}$ be a $^{*}$-algebra and $\Adj:\mathcal{A}\longrightarrow\mathcal{A}$ its algebra involution. Its opposite $^{*}$-algebra $\mathcal{A}^{\op}$ has $\mathcal{A}$ as complex vector space and is equipped with

\begin{itemize}
\item[1)] opposite algebra action given by $x\cdot^{\op}y:=yx$ for all $x,y\in\mathcal{A}$,

\item[2)] $\Adj:\mathcal{A}^{\op}\longrightarrow\mathcal{A}^{\op}$ as algebra involution.
\end{itemize}
\end{dfn}

\begin{rem}
If $\mathcal{A}$ is a topological vector space, then $\mathcal{A}^{\op}$ is one using the identical topology. For all $W^{*}$-algebras, we use identical norm and $w^{*}$-topology on opposites.
\end{rem}

\begin{prp}\label{PRP.Wstar_Trace_CRA_I}
For all tracial $W^{*}$-algebras $(M,\tau)$, we have

\begin{itemize}
\item[1)] $\tau$ is f.s.n.~trace on $M^{\op}$ and $\lc{}M^{\op},\tau\rc$ is a tracial $W^{*}$-algebra,

\item[2)] $\lc\HII\lc{}M^{\op},\tau\rc{},\|.\|_{\tau,\op}\rc{}=\lc\HII(M,\tau),\|.\|_{\tau}\rc$.
\end{itemize}
\end{prp}
\begin{proof}
Note $M^{\op}$ is a $C^{*}$-algebra with norm and algebra involution of $M$. This implies $M^{\op}=M=\lc{}M_{*}\rc^{*}$ as Banach spaces. Thus $M$ is a $W^{*}$-algebra s.t.~$M_{+}^{\op}=M_{+}$, hence $\tau$ is f.s.n.~trace on $M^{\op}$. We obtain $1)$. Traciality moreover ensures $\mathfrak{n}_{\tau}$ defined by $\tau$ on $M$ and $M^{\op}$ are identical. Get $2)$ by construction.
\end{proof}

\begin{ntn}
We write $\mathcal{L}^{\op}$ for the canonical left-action $\mathcal{L}^{\op}$ of $M^{\op}$ on $\HII(M,\tau)$. We write $\mathfrak{n}_{\tau,\op}$ as per $3)$ in Definition \ref{DFN.Wstar_Trace} for f.s.n.~trace $\tau$ on $M^{\op}$.
\end{ntn}

Proposition \ref{PRP.Wstar_Trace_CRA_II} shows $\mathcal{R}=\mathcal{L}^{\op}$ on $M^{\op}$. Note $\mathcal{R}\neq\mathcal{L}^{\op}$ in general as extensions of $M$ and $M^{\op}$ are different spaces of measurable operators. We show $\mathcal{R}\cong\mathcal{L}^{\op}$ naturally extends the bounded case. For details on the latter, we refer to Subsection \ref{SSEC.B_SMO_NCI}.

\begin{prp}\label{PRP.Wstar_Trace_CRA_II}
Let $(M,\tau)$ be a tracial $W^{*}$-algebra.

\begin{itemize}
\item[1)] $\mathcal{R}=\mathcal{L}^{\op}$ is faithful normal unital $^{*}$-representation s.t.~$w^{*}$-topology on $M^{\op}$ maps to $\sigma$-weak topology on $\mathcal{R}(M)$.

\item[2)] $\lc\RII(M^{\op})'',\tau\rc{}=\lc\LII(M)^{\op},\tau\rc$.
\end{itemize} 
\end{prp}
\begin{proof}
For $\tau$ on $M$, resp.~$M^{\op}$ traciality ensures $\mathfrak{n}_{\tau}=\mathfrak{n}_{\tau,\op}$. For all $x\in M$, we calculate $\mathcal{R}_{x}(y)=xy=y\cdot^{\op}x=\mathcal{L}_{x}^{\op}$ for all $y\in\mathfrak{n}_{\tau}$. Get $1)$ and $2)$ by Proposition \ref{PRP.Wstar_Trace_Ext_II}.
\end{proof}

%NEWPAGE
%NEWPAGE
%NEWPAGE

\pagebreak

%NEWPAGE
%NEWPAGE
%NEWPAGE

We have anti-linear isometric involution $\Adj:\HII(M,\tau)\longrightarrow \HII(M,\tau)$ by closing $\restr{0.925}{\Adj}{\mathfrak{n}_{\tau}}$ w.r.t~$\|.\|_{\tau}$. Get $\Adj^{\dagger}:\BII\lc\HII(M,\tau)\rc\longrightarrow\BII\lc\HII(M,\tau)\rc$ as per Definition \ref{DFN.Unbd_Twist}.

\begin{dfn}\label{DFN.Wstar_Trace_L2_Adj}
Let $(M,\tau)$ be a tracial $W^{*}$-algebra. $\Adj:\HII(M,\tau)\longrightarrow \HII(M,\tau)$ is called adjoining on $\HII(M,\tau)$.
\end{dfn}

\begin{prp}\label{PRP.Wstar_Trace_CLRA_Bd}
Let $(M,\tau)$ be a tracial $W^{*}$-algebra. We have commutative diagram

\smallskip

\begin{equation}\label{EQ.PRP.Wstar_Trace_CLRA_Bd_1}
\begin{tikzcd}
M\arrow[rr,"\mathcal{L}"]\arrow[dd,"\Adj"] & & \BII\lc\HII(M,\tau)\rc\arrow[dd,"\Adj^{\dagger}"] \\
& & \\
M^{\op}\arrow[rr,"\mathcal{R}"] & & \BII\lc\HII(M,\tau)\rc{}
\end{tikzcd}
\end{equation}

\medskip

\noindent s.t.~horizontal maps are normal unital injective $^{*}$-homomorphisms and vertical ones are isometric involutions of Banach spaces.
\end{prp}
\begin{proof}
We directly verify Diagram \ref{EQ.PRP.Wstar_Trace_CLRA_Bd_1} and all claims.
\end{proof}

%%%%%%%%%%%%%%%%%%%
%%% SUBSECTION %%%%
%%%%%%%%%%%%%%%%%%%

\subsection[Noncommutative integration for tracial $W^{*}$-algebras]{Noncommutative integration for tracial $\mathbf{W}^{*}$-algebras}\label{SSEC.B_SMO_NCI}

We discuss spaces of measurable operators and noncommutative integration theory. Traces extend. For all $p\in [1,\infty]$, we define noncommutative $L^{p}$-spaces of measurable operators equipped with $L^{p}$-norm \cite{BK.Joh_Lin.2003.Wstar_Lp_II}\cite{ART.Nel.1974.Wstar_Integration}. They fulfil H\"older inequalities. We have a modified standard pairing encoding duality \cite{BK.Tak.2003.OpAlg_II}. In particular, tracial $W^{*}$-algebras are noncommutative $L^{\infty}$-spaces and have noncommutative $L^{1}$-spaces as pre-duals. We see their f.s.n.~traces are, possibly unbounded \cite{BK.Pap.2002.Measures}\cite{BK.Ped.1989.Analysis_Now}, noncommutative Radon measures. Standard references for their spaces of measurable operators and resulting notion of noncommutative integration are pp.1461-1470 in \cite{BK.Joh_Lin.2003.Wstar_Lp_II}, \cite{ART.Nel.1974.Wstar_Integration} and \cite{BK.Tak.1979.OpAlg_I}\cite{BK.Tak.2003.OpAlg_II}.

%%%%%%%%%%%%%
%%% PART %%%%
%%%%%%%%%%%%%

\subsubsection*{Spaces of measurable operators}

Let $(M,\tau)$ be a tracial $W^{*}$-algebra. Its space $L^{0}(M,\tau)$ of measurable operators is uniform completion in measure topology and serves as setting for noncommutative integration theory. For $p=\infty$, get $M\subset L^{0}(M,\tau)$. For all $p\in [1,\infty]$, get $L^{p}(M,\tau)\subset L^{0}(M,\tau)$ as per Definition \ref{DFN.NC_Int_Lp}. Uniform completion extends the $^{*}$-algebra structure and trace from $M$ to $L^{0}(M,\tau)$ as per Remark \ref{REM.Wstar_SMO_I_Extension}.\par
We thereby extend canonical left-action $\mathcal{L}:M\longrightarrow\BII\lc\HII(M,\tau)\rc$ to an unbounded faithful unital $^{*}$-representation $\mathcal{L}:L^{0}(M,\tau)\longrightarrow\UBII\lc\HII(M,\tau)\rc$. Remark \ref{REM.Wstar_SMO_CLA} explains $\mathcal{L}$ does not equal canonical left-action of measurable operators in general.

\begin{rem}\label{REM.Wstar_SMO_CLA}
If we twist $\mathcal{L}$ as per Definition \ref{DFN.Unbd_Twist} using the natural isometric isomorphism $\HII(M,\tau)\cong L^{2}(M,\tau)$ implied by Proposition \ref{PRP.Wstar_NCI_I}, then we obtain canonical left-action $L$ using left-multiplication in $L^{0}(M,\tau)$ as per Definition \ref{DFN.Wstar_CLRA_Unbd_Representation} and based on Definition \ref{DFN.Wstar_CLRA}. This subsumes canonical left-action in the bounded case.
\end{rem}

Note $P(M)$ is the set of all projections in $M$. The measure topology of $(M,\tau)$ is defined by the following fundamental system of neighbourhoods of zero. For all $\varepsilon,\delta>0$, set

\begin{align}\label{EQ.SSEC.B_SMO_NCI_1}
N\lc\varepsilon,\delta\rc{}:=\lset{}x\in M\ \vset\ \exists p\in P(M):\ \|xp\|_{M}<\varepsilon,\ \tau\lc{}p^{\perp}\rc{}<\delta\rset{}.
\end{align}

\noindent The fundamental system of entourages given by $U\lc\varepsilon,\delta\rc{}:=\lset (x,y)\in M\times M\ \vset\ x-y\in N\lc\varepsilon,\delta\rc{} \rset$ for all $\varepsilon,\delta>0$ defines uniform structure of measure topology on $M$. Convergence in measure topology is called convergence in measure.

\begin{dfn}\label{DFN.Wstar_SMO_I}
Let $L^{0}(M,\tau)$ be the uniform closure of $M$ in measure topology. We call it the space of measurable operators for $(M,\tau)$, or $\tau$-measurable operator algebra.
\end{dfn}

\begin{rem}\label{REM.Wstar_SMO_I_Extension}
Theorem IX.2.2 in \cite{BK.Tak.2003.OpAlg_II} shows the $^{*}$-algebra structure of $M$ extends to $L^{0}(M,\tau)$. Lemma IX.2.3 in \cite{BK.Tak.2003.OpAlg_II} shows $L^{0}(M,\tau)$ is Hausdorff and $M\subset L^{0}(M,\tau)$.    
\end{rem}

We additionally have measure topology on $\HII(M,\tau)$, as well as subsequent notion of convergence in measure. The measure topology of $\HII(M,\tau)$ is defined by the following fundamental system of neighbourhoods of zero. For all $\varepsilon,\delta>0$, set

\begin{align}\label{EQ.SSEC.B_SMO_NCI_2}
O\lc\varepsilon,\delta\rc{}:=\lset{}u\in \HII(M,\tau)\ \vset\ \exists p\in P(M):\ \|p(u)\|_{\tau}<\varepsilon,\ \tau\lc{}p^{\perp}\rc{}<\delta\rset{}.
\end{align}

\noindent Uniform structure of measure topology on $\HII(M,\tau)$ follows as for $L^{0}(N,\tau)$. Convergence in measure topology is called convergence in measure. $\HII(M,\tau)$ is not complete.\par
Let $x\in L^{0}(M,\tau)$. We construct densely defined closed operator $\mathcal{L}_{x}$ on $\HII(M,\tau)$. Let $\dom\mathcal{L}_{x}$ be the set of all $u\in\HII(M,\tau)$ s.t.~there exists a net $\{x_{k}\}_{k\in K}\subset M$ converging to $x$ in measure and for which $\lset{}x_{k}u\rset_{k\in K}\subset\HII(M,\tau)$ converges in measure to an element in $\HII(M,\tau)$. For all $u\in\dom\mathcal{L}_{x}$, set

\begin{align}\label{EQ.SSEC.B_SMO_NCI_3}
\mathcal{L}_{x}(u):=\lim_{k\in K}\hspace{0.025cm} \mathcal{L}_{x_{k}}(u)\in \HII(M,\tau)
\end{align}

\noindent using limit in measure topology on $\HII(M,\tau)$. Equation \ref{EQ.SSEC.B_SMO_NCI_3} defines $\mathcal{L}_{x}(u)$ independent of converging net $\{x_{k}\}_{k\in K}\subset M$. Theorem IX.2.5 in \cite{BK.Tak.2003.OpAlg_II} shows $\mathcal{L}_{x}$ is a densely defined closed operator on $\HII(M,\tau)$. Moreover, Proposition \ref{PRP.Wstar_SMO_III} implies those operators as per Equation \ref{EQ.SSEC.B_SMO_NCI_3} for all $x\in L^{0}(M,\tau)$ define unbounded faithful unital $^{*}$-representation

\begin{align}\label{EQ.SSEC.B_SMO_NCI_4}
\mathcal{L}:L^{0}(M,\tau)\longrightarrow\UBII\lc\HII(M,\tau)\rc{}.
\end{align}

\noindent Restricting to $M\subset L^{0}(M,\tau)$, we recover canonical left-action of $M$ on $\HII(M,\tau)$ as per Definition \ref{DFN.Wstar_Trace_CLA}. We understand $^{*}$-algebra structure using $\mathcal{L}$. It has image the set of all $\tau$-measurable operators on $\HII(M,\tau)$. Their definition requires the notion of $M$-affiliated operator. The commutant $\LII(M)'\subset\BII\lc\HII(M,\tau)\rc$ is a $W^{*}$-algebra.

\begin{dfn}
A densely defined closed unbounded operator $T$ on $\HII(M,\tau)$ is called $M$-affiliated if $TU=UT$ for all unitaries $U\in\UII\lc\LII(M)'\rc\subset\BII\lc\HII(M,\tau)\rc$.
\end{dfn}

%NEWPAGE
%NEWPAGE
%NEWPAGE

\pagebreak

%NEWPAGE
%NEWPAGE
%NEWPAGE

\begin{prp}\label{PRP.Wstar_SMO_I}
If $T\in\UBII(\HII(M,\tau))_{h}$, then we know $T$ is $M$-affiliated if and only if $W^{*}(T)\subset\LII(M)$.
\end{prp}
\begin{proof}
Note $W^{*}(T)=W^{*}\lc{}T,\mathcal{L}_{1_{M}}\rc$ since $\mathcal{L}$ is unital. Proposition \ref{PRP.Wstar_Generated} and $1)$ in Proposition \ref{PRP.FC_Bd} imply spectral projections in $T$ generate $W^{*}(T)$. Apply Lemma \ref{LEM.FC_Unitary_Com}.
\end{proof}

\begin{rem}\label{REM.Wstar_SMO_I}
For all densely defined closed operators $T$ on a Hilbert space $H$, get $T^{*}T$ self-adjoint and set absolute value $\absv{1.15}{T}:=\sqrt{T^{*}T}$ \lc{}cf.~Theorem 5.1.9 in \cite{BK.Ped.1989.Analysis_Now}\rc{}. If $T$ is $M$-affiliated, then $E_{\absv{1.15}{T}}(Z)=\chi_{Z}\lc\absv{1.15}{T}\rc\in\LII(M)$ for all $Z\in\mathfrak{B}(\mathbb{R})$ by Proposition \ref{PRP.Wstar_SMO_I}.
\end{rem}

Following Notation \ref{NTN.Wstar_Trace}, let $\tau$ further denote the push-forward of $\tau:M_{+}\longrightarrow [0,\infty]$ along $\mathcal{L}$ to $\LII(M)$. The f.s.n.~trace $\tau:\LII(M)\longrightarrow [0,\infty]$ has definition domain $\mathcal{L}\lc\mathfrak{m}_{\tau}\rc$.

\begin{dfn}\label{DFN.Wstar_SMO_II}
We call an $M$-affiliated operator $T$ on $\HII(M,\tau)$ $\tau$-measurable, or just measurable if there exists $\lambda>0$ s.t.~

\begin{align}\label{EQ.DFN.Wstar_SMO_I_1}
\tau\lc{}E_{\absv{1.15}{T}}\lc{}[\lambda,\infty)\rc\rc{}<\infty.    
\end{align}
\end{dfn}

\begin{rem}
Corollary IX.2.9 in \cite{BK.Tak.2003.OpAlg_II} ensures Definition \ref{DFN.Wstar_SMO_II} is $\tau$-measurability as used in \cite{BK.Tak.2003.OpAlg_II}. Proposition \ref{PRP.Wstar_SMO_II} further breaks down $\tau$-measurability for self-adjoint unbounded operators on $\HII(M,\tau)$.
\end{rem}

\begin{prp}\label{PRP.Wstar_SMO_II}
If $T\in\UBII(\HII(M,\tau))_{h}$, then $T$ is $\tau$-measurable if and only if

\begin{itemize}
\item[1)] $E_{T}(Z)\in\LII(M)$ for all $Z\in\mathfrak{B}(\mathbb{R})$,

\item[2)] there exists $\lambda>0$ s.t.~$E_{T}\lc{}(-\infty,-\lambda]\rc{},E_{T}\lc{}[\lambda,\infty)\rc\in\mathcal{L}\lc\mathfrak{m}_{\tau}\rc$.
\end{itemize}
\end{prp}
\begin{proof}
Proposition \ref{PRP.Wstar_SMO_I} at once implies $1)$ is equivalent to $T$ being $M$-affiliated. For all $\lambda>0$, get $\chi_{[\lambda,\infty)}(\absv{1}{t})=\chi_{(-\infty,-\lambda]}(t)+\chi_{[\lambda,\infty)}(t)$ for all $t\in\mathbb{R}$. Equation \ref{EQ.DFN.Wstar_SMO_I_1} therefore shows $2)$ is equivalent to $\tau$-measurability for all self-adjoint $M$-affiliated operators.
\end{proof}

Proposition \ref{PRP.Wstar_SMO_III} collects properties of $\mathcal{L}$. In particular, $3)$ states the maximality property. Using maximality and Remark \ref{REM.Wstar_SMO_II}, we readily see extending the $^{*}$-algebra structure of $M$ to $L^{0}(M,\tau)$ yields a $^{*}$-algebra. Note closure is necessary for this.

\begin{prp}\label{PRP.Wstar_SMO_III}
We know each $\mathcal{L}_{x}$ is a $\tau$-measurable operator on $\HII(M,\tau)$ for all $x\in L^{0}(M,\tau)$ and furthermore have the following.

\begin{itemize}
\item[1)] For all $x,y\in L^{0}(M,\tau)$ and $\lambda\in\mathbb{C}$, we have

\begin{itemize}
\item[1.1)] $\mathcal{L}_{\lambda_{1}x+\lambda_{2}y}=\overline{\lambda_{1}\mathcal{L}_{x}+\lambda_{2}\mathcal{L}_{y}}$,

\item[1.2)] $\mathcal{L}_{xy}=\overline{\mathcal{L}_{x}\mathcal{L}_{y}}$,

\item[1.3)] $\mathcal{L}_{x^{*}}=\mathcal{L}_{x}^{*}$.
\end{itemize}

\begin{reapply}
\end{reapply}

\item[2)] If $T$ is $\tau$-measurable, then there exists unique $x\in L^{0}(M,\tau)$ s.t.~$T=\mathcal{L}_{x}$.

\item[3)] If $x,y\in L^{0}(M,\tau)$ s.t.~$\mathcal{L}_{y}\subset\mathcal{L}_{x}$, then $\mathcal{L}_{x}=\mathcal{L}_{y}$.
\end{itemize}
\end{prp}
\begin{proof}
Apply Theorem IX.2.5 in \cite{BK.Tak.2003.OpAlg_II}.
\end{proof}

%NEWPAGE
%NEWPAGE
%NEWPAGE

\pagebreak

%NEWPAGE
%NEWPAGE
%NEWPAGE

\begin{rem}\label{REM.Wstar_SMO_II}
For all densely defined closable operators $T$ on a Hilbert space $H$, get $T^{*}$ densely defined closed and $T^{*}=\lc{}T^{*}\rc^{**}=\lc\overline{T}\rc^{*}$ by $\overline{T}=T^{**}$ \lc{}cf.~Theorem 5.15 in \cite{BK.Ped.1989.Analysis_Now}\rc{}.
\end{rem}

Partial order on $\UBII\lc\HII(M,\tau)\rc_{h}$ is fixed by Definition \ref{DFN.Unbd_PO}. We pull back partial order to $L^{0}(M,\tau)_{h}$ along $\mathcal{L}$. The set $L^{0}(M,\tau)_{h}$ of hermitian elements in Definition \ref{DFN.Wstar_SMO_III} below is given using algebra involution.

\begin{dfn}\label{DFN.Wstar_SMO_III}
The hermitian, resp.~positive elements in $L^{0}(M,\tau)$ are

\begin{align}\label{EQ.DFN.Wstar_SMO_III_1}
L^{0}(M,\tau)_{h}:=\lset{}x\in L^{0}(M,\tau)\ \vset\ \mathcal{L}_{x}=\mathcal{L}_{x}^{*}\rset{},\ L^{0}(M,\tau)_{+}:=\lset{}x\in L^{0}(M,\tau)_{h}\ \vset\ \mathcal{L}_{x}\geq 0\rset{}.
\end{align}
\end{dfn}

\begin{ntn}
Rather than hermitian, we say that $x\in L^{0}(M,\tau)_{h}$ is self-adjoint.
\end{ntn}

\begin{rem}
Corollary IX.2.10 in \cite{BK.Tak.2003.OpAlg_II} shows $L^{0}(M,\tau)_{+}$ is positive cone generating the partial order on $L^{0}(M,\tau)_{h}$. Proposition \ref{PRP.Wstar_NCI_V} implies the set $L^{0}(M,\tau)_{+}$ of positive elements generates the partial order on $L^{0}(M,\tau)$ as per Definition \ref{DFN.PO_I}.
\end{rem}

If an application of functional calculus preserves $\tau$-measurability, then we obtain a unique element in $L^{0}(M,\tau)$ by $2)$ in Proposition \ref{PRP.Wstar_SMO_III}. Taking absolute values preserves $\tau$-measurability. This lets us define generalised singular numbers by Equation \ref{EQ.SSEC.B_SMO_NCI_5}.

\begin{dfn}\label{DFN.Wstar_SMO_IV}
Let $x\in L^{0}(M,\tau)_{h}$. If $g\in\SII\lc{}E_{\mathcal{L}_{x}}\rc$ s.t.~$g\lc\mathcal{L}_{x}\rc$ is $\tau$-measurable, then let $g(x)\in L^{0}(M,\tau)$ be the unique element s.t.~$\mathcal{L}_{g(x)}=g\lc\mathcal{L}_{x}\rc$.
\end{dfn}

\begin{prp}\label{PRP.Wstar_SMO_IV}
Let $x\in L^{0}(M,\tau)$.

\begin{itemize}
\item[1)] If $x\in L^{0}(M,\tau)_{h}$ and $g\in L^{\infty}\lc\spec\mathcal{L}_{x},dE_{\mathcal{L}_{x}}\rc$, then $g\lc\mathcal{L}_{x}\rc$ is $\tau$-measurable.

\item[2)] If $x\in L^{0}(M,\tau)_{+}$ and $p\in [1,\infty)$, then $\mathcal{L}_{x}^{p}$ is $\tau$-measurable.

\item[3)] $\absv{1.15}{\mathcal{L}_{x}}$ is $\tau$-measurable and $\absv{1.15}{\mathcal{L}_{x}}=\mathcal{L}_{\sqrt{x^{*}x}}$.
\end{itemize}
\end{prp}
\begin{proof}
If $x\in L^{0}(M,\tau)_{h}$ and $g\in L^{\infty}\lc\spec\mathcal{L}_{x},dE_{\mathcal{L}_{x}}\rc$, then $g\lc\mathcal{L}_{x}\rc\in\LII(M)$ is $\tau$-measurable by Proposition \ref{PRP.Wstar_SMO_III}. The latter further implies $3)$ if $2)$ holds. For this, merely apply $2)$ using $p=2$. Get $2)$ since Equation \ref{EQ.DFN.Wstar_SMO_I_1} demands fix but arbitrarily large $\lambda\in (0,\infty)$ while $\lambda^{-p}\uparrow\infty$ as $\lambda\uparrow\infty$ for all $p\in [1,\infty)$.
\end{proof}

\begin{dfn}
For all $x\in L^{0}(M,\tau)$, set $\absv{0.925}{x}:=\sqrt{x^{*}x}$.
\end{dfn}

We extend $\tau$ to $L^{0}(M,\tau)_{+}$ \lc{}cf.~pp.1461-1470 in \cite{BK.Joh_Lin.2003.Wstar_Lp_II}\rc{}. The extension is linear. For all $x\in L^{0}(M,\tau)$, we have $E_{\mathcal{L}_{\absv{0.925}{x}}}\lc{}[\lambda,\infty)\rc\in\LII(M)$ as per Remark \ref{REM.Wstar_SMO_I}. For all $x\in L^{0}(M,\tau)$, we define the generalised singular number $\mu(x):(0,\infty)\longrightarrow [0,\infty)$ of $x$ by setting

\begin{align}\label{EQ.SSEC.B_SMO_NCI_5}
\mu_{t}(x):=\inf\hspace{0.025cm} \lset\lambda>0\ \vset\ \tau\lc{}E_{\mathcal{L}_{\absv{0.925}{x}}}\lc{}[\lambda,\infty)\rc\rc\leq t\rset{}
\end{align}

\noindent for all $t>0$.

\begin{dfn}\label{DFN.Wstar_SMO_Trace}
For all $x\in L^{0}(M,\tau)_{+}$, the trace of $x$ is defined by

\begin{align}\label{EQ.DFN.Wstar_SMO_Trace_1}
\tau(x):=\int_{0}^{\infty}\mu_{t}(x)dt.
\end{align}
\end{dfn}

\begin{rem}\label{REM.Wstar_SMO_Trace}
For all $x\in L^{0}(M,\tau)$ and $t>0$, note Equation \ref{EQ.SSEC.B_SMO_NCI_5} immediately shows we have $\mu_{t}(x)=\mu_{t}(x^{*})=\mu_{t}\lc\absv{0.925}{x}\rc$ by definition.
\end{rem}

%%%%%%%%%%%%%
%%% PART %%%%
%%%%%%%%%%%%%

\subsubsection*{Noncommutative $L^{p}$-spaces and integration}

Extension of integration theory to the noncommutative setting is fundamental to our discussion. Proposition \ref{PRP.Wstar_Trace_Ext_I} and Proposition \ref{PRP.Wstar_Trace_Ext_III} reduce the case of tracial $C^{*}$-~to tracial $W^{*}$-algebras.\par
Let $(M,\tau)$ be a tracial $W^{*}$-algebra. For all $p\in [1,\infty)$, $2)$ in Proposition \ref{PRP.Wstar_SMO_IV} and Equation \ref{EQ.DFN.Wstar_SMO_Trace_1} let us define noncommutative $L^{p}$-spaces. For $p=\infty$, we use $M$.

\begin{dfn}\label{DFN.NC_Int_Lp}
Let $p\in [1,\infty]$.

\begin{itemize}
\item[1)] Assume $p<\infty$. The noncommutative $L^{p}$-space of $(M,\tau)$ is

\begin{align}\label{EQ.DFN.NC_Int_Lp_1}
L^{p}(M,\tau):=\lset{}x\in L^{0}(M,\tau)\ \vset\ \tau\lc\absv{0.925}{x}^{p}\rc^{\frac{1}{p}}<\infty\rset{}.
\end{align}

\begin{reapply}
\end{reapply}

\noindent For all $x\in L^{p}(M,\tau)$, set $\|x\|_{p}:=\tau\lc\absv{0.925}{x}^{p}\rc^{\frac{1}{p}}$. We further call $\|.\|_{p}$ the noncommutative $L^{p}$-norm. The self-adjoint, resp.~positive elements in $L^{p}(M,\tau)$ are

\begin{align}\label{EQ.DFN.NC_Int_Lp_2}
L^{p}(M,\tau)_{h}:=L^{p}(M,\tau)\cap L^{0}(M,\tau)_{h},\ L^{0}(M,\tau)_{+}:=L^{p}(M,\tau)\cap L^{0}(M,\tau)_{+}.
\end{align}

\begin{reapply}
\end{reapply}

\item[2)] The noncommutative $L^{\infty}$-space of $(M,\tau)$ is $L^{\infty}(M,\tau):=M$.
\end{itemize}
\end{dfn}

\begin{prp}\label{PRP.Wstar_NCI_I}
For all $p\in [1,\infty]$, we have

\begin{itemize}
\item[1)] $\lc{}L^{p}(M,\tau),\|.\|_{p}\rc$ is a Banach space s.t.~$M\cap L^{p}(M,\tau)\subset L^{p}(M,\tau)$ is $\|.\|_{p}$-dense, \phantom{\big)}

\item[2)] $\Adj:L^{p}(M,\tau)\longrightarrow L^{p}(M,\tau)$ is anti-linear isometric involution, \phantom{\big)}

\item[3)] $\tau\in L^{1}(M,\tau)^{*}$ s.t.~$\tau(x)\geq 0$ for all $x\in L^{1}(M,\tau)_{+}$. \phantom{\big)}
\end{itemize} 
\end{prp}
\begin{proof}
All claims are given by \lc{}i\rc{} and \lc{}ii\rc{} in Theorem IX.2.13 in \cite{BK.Tak.2003.OpAlg_II}. Of course, its proof shows $\tau$ has linear extension to $L^{1}(M,\tau)$. We therefore have $\tau\in L^{1}(M,\tau)^{*}$ as claimed.
\end{proof}

As $\mathfrak{n}_{\tau}=M\cap L^{2}(M,\tau)\subset L^{2}(M,\tau)$ is $\|.\|_{2}$-dense by $1)$ in Proposition \ref{PRP.Wstar_NCI_I}, the identity $\id:\lc\mathfrak{n}_{\tau},\|.\|_{\tau}\rc\longrightarrow \lc\mathfrak{n}_{\tau},\|.\|_{2}\rc$ closes to an isometric isomorphism $\id_{\tau}:\HII(M,\tau)\longrightarrow L^{2}(M,\tau)$. Equivalence classes w.r.t.~$\|.\|_{\tau}$ are mapped to equivalence classes in uniform closure which are represented by square integrable $\tau$-measurable operators.

\begin{dfn}
We call $\id_{\tau}:\HII(M,\tau)\longrightarrow L^{2}(M,\tau)$ identity in measure topology.
\end{dfn}

\begin{ntn}
Let $\id_{\tau,\op}:\lc\HII(M,\tau),\|.\|_{\tau}\rc\longrightarrow L^{2}\lc{}M^{\op},\tau\rc$ denote identity in measure topology using $\lc{}M^{\op},\tau\rc$ instead. We may consider it by $2)$ in Proposition \ref{PRP.Wstar_Trace_CRA_I}.
\end{ntn}

%NEWPAGE
%NEWPAGE
%NEWPAGE

\pagebreak

%NEWPAGE
%NEWPAGE
%NEWPAGE

We have $\id_{\tau}^{-\dagger}=\lc\id_{\tau}^{-1}\rc^{\dagger}$ as per Definition \ref{DFN.Unbd_Twist}.

\begin{prp}\label{PRP.Wstar_NCI_II}
The maps $\id_{\tau}$ and $\id_{\tau}^{-1}$ are continuous w.r.t.~measure topology on $L^{0}(M,\tau)$ and $\HII(M,\tau)$. For all $x\in L^{0}(M,\tau)$, we have

\begin{itemize}
\item[1)] $\dom\id_{\tau}^{-\dagger}\lc\mathcal{L}_{x}\rc{}=\big\{\hspace{0.025cm} u\in L^{2}(M,\tau)\ \vset\ xu\in L^{2}(M,\tau)\hspace{0.025cm} \big\}$, \phantom{\big)}

\item[2)] $\id_{\tau}^{-\dagger}\lc\mathcal{L}_{x}\rc{}(u)=xu$ in $L^{0}(M,\tau)$ for all $u\in\dom\id_{\tau}^{-\dagger}\lc\mathcal{L}_{x}\rc$. \phantom{\big)}
\end{itemize}
\end{prp}
\begin{proof}
We directly verify continuity of $\id_{\tau}$ and $\id_{\tau}^{-1}$ in measure topologies on uniform structures given by Equation \ref{EQ.SSEC.B_SMO_NCI_1} and Equation \ref{EQ.SSEC.B_SMO_NCI_2}. For all $x\in M$, our claims follow since they reduce to canonical left-action of $M$ on $\HII(M,\tau)$. Construction of $\mathcal{L}$ therefore implies the general case since $\id_{\tau}$ and $\id_{\tau}^{-1}$ are continuous in measure topologies.
\end{proof}

\begin{prp}\label{PRP.Wstar_NCI_III}
We have $x\in L^{1}(M,\tau)_{+}$ if and only if $\sqrt{x}\in L^{2}(M,\tau)_{+}$.
\end{prp}
\begin{proof}
By definition of noncommutative $L^{1}$-, resp.~$L^{2}$-spaces.
\end{proof}

\begin{prp}\label{PRP.Wstar_NCI_IV}
For all $x\in L^{0}(M,\tau)$ and $p\in [1,\infty]$, we have

\begin{itemize}
\item[1)] $x=\RE(x)+i\IM(x)$ and $\RE(x):=\frac{x+x^{*}}{2},\IM(x):=-i\frac{x-x^{*}}{2}\in L^{0}(M,\tau)_{h}$,

\item[2)] $x=x_{+}-x_{-}$ for $x_{+}:=\max\{x,0\},x_{-}:=-\min\{x,0\}\in L^{0}(M,\tau)_{+}$ if $x\in L^{0}(M,\tau)_{h}$,

\item[3)] $x\in L^{p}(M,\tau)$ if and only if $\RE(x)_{+},\RE(x)_{-},\IM(x)_{+},\IM(x)_{-}\in L^{p}(M,\tau)$.
\end{itemize}
\end{prp}
\begin{proof}
Get $1)$ by $1)$ in Proposition \ref{PRP.Wstar_SMO_III}. Get $2)$ by Proposition \ref{PRP.Wstar_SMO_II} together with Proposition \ref{PRP.Wstar_SMO_III}. We see $3)$ follows from $2)$ since $\absv{0.925}{x}^{p}=\lc{}x_{+}\rc^{p}+\lc{}x_{-}\rc^{p}$ in each case.
\end{proof}

\begin{rem}
$\RE$ and $\IM$ are $\mathbb{R}$-linear maps on $L^{0}(M,\tau)$. For all $x\in L^{0}(M,\tau)$, we have $x^{*}=\RE(x)-i\IM(x)$ by anti-linearity of taking adjoints.
\end{rem}

\begin{prp}\label{PRP.Wstar_NCI_V}
$L^{0}(M,\tau)_{+}$ generates the partial order on $L^{0}(M,\tau)$.
\end{prp}
\begin{proof}
We use $L^{0}(M,\tau)_{h}$ as hermitian elements. Definition \ref{DFN.Unbd_PO} fixes partial order. Corollary IX.2.10 in \cite{BK.Tak.2003.OpAlg_II} and $2)$ in Proposition \ref{PRP.Wstar_NCI_IV} show $L^{0}(M,\tau)_{+}$ is a proper cone generating the partial order on $L^{0}(M,\tau)_{h}$.
\end{proof}

%%%%%%%%%%%%%
%%% PART %%%%
%%%%%%%%%%%%%

\subsubsection*{The modified standard pairing}

Let $(M,\tau)$ be a tracial $W^{*}$-algebra. We know $\tau\in L^{1}(M,\tau)_{+}^{*}$ by $3)$ in Proposition \ref{PRP.Wstar_NCI_I}. Equation \ref{EQ.SSEC.B_SMO_NCI_6} are H\"older inequalities. These in turn yield a modified standard pairing defined by Equation \ref{EQ.SSEC.B_SMO_NCI_7}.\par
Let $p,q\in [1,\infty]$ s.t.~$1=p^{-1}+q^{-1}$. For all $x\in L^{p}(M,\tau)$ and $y\in L^{q}(M,\tau)$, we apply \lc{}iv\rc{} in Theorem IX.2.13 in \cite{BK.Tak.2003.OpAlg_II} to get $xy\in L^{1}(M,\tau)$ and

\begin{align}\label{EQ.SSEC.B_SMO_NCI_6}
\absv{1.15}{\tau\lc{}xy\rc{}}\leq \|x\|_{p}\| y\|_{q}.
\end{align}

\noindent For $p=\infty$, we use $\|.\|_{M}$. By \lc{}iv\rc{} in Theorem IX.2.13 in \cite{BK.Tak.2003.OpAlg_II}, note Equation \ref{EQ.SSEC.B_SMO_NCI_6} defines bounded non-degenerate pairing $S:L^{p}(M,\tau)\times L^{q}(M,\tau)\longrightarrow\mathbb{C}$ by setting $S(x,y):=\tau\lc{}xy\rc$ for all $x\in L^{p}(M,\tau)$ and $y\in L^{q}(M,\tau)$. We call $S$ the standard pairing.\par

%NEWPAGE
%NEWPAGE
%NEWPAGE

\pagebreak

%NEWPAGE
%NEWPAGE
%NEWPAGE

In order to recover the GNS-inner product of $\tau$ for $p=q=2$ as per Equation \ref{EQ.SSEC.B_SMO_Wstar_Trace_1}, we modify the fist variable by taking adjoints. We therefore define the modified standard pairing by setting

\begin{align}\label{EQ.SSEC.B_SMO_NCI_7}
x^{\flat}(y):=\tau(x^{*}y)
\end{align}

\noindent for all $x\in L^{p}(M,\tau)$ and $y\in L^{q}(M,\tau)$. We have $\tau\lc{}xy\rc{}=\tau\lc{}yx\rc$ and $\overline{\tau\lc{}xy\rc{}}=\tau\lc{}x^{*}y^{*}\rc$ in each case. For $p=q=2$, get $\tau\lc{}xy\rc{}=\lgl x^{*},y\rgl_{\tau}$ for all $x,y\in L^{2}(M,\tau)$. The modified standard pairing is bounded and non-degenerate. The standard and modified standard pairing are identical upon restriction to self-adjoint elements in the first variable.

\begin{dfn}\label{DFN.Wstar_NCI_MSP}
For all $p,q\in [1,\infty]$ s.t.~$1=p^{-1}+q^{-1}$, the modified standard pairing on $L^{p}(M,\tau)\times L^{q}(M,\tau)$ is defined by Equation \ref{EQ.SSEC.B_SMO_NCI_7}.
\end{dfn}

\begin{prp}\label{PRP.Wstar_NCI_VI}
Let $M_{*}$ be the pre-dual of $M$.

\begin{itemize}
\item[1)] $\flat:L^{1}(M,\tau)\longrightarrow M^{*}$ is an anti-linear isometry onto $M_{*}\subset M^{*}$.

\item[2)] $L^{1}(M,\tau)^{\flat}$ is the set of all normal bounded functionals on $M$.

\item[3)] For all $x\in L^{1}(M,\tau)$, we have

\begin{itemize}
\item[3.1)] $x$ is self-adjoint if and only if $x^{\flat}$ is real,

\item[3.2)] $x$ is positive if and only if $x^{\flat}$ is positive.
\end{itemize}

\begin{reapply}
\end{reapply}

\end{itemize}
\end{prp}
\begin{proof}
Get $1)$ by \lc{}iv\rc{} in Theorem IX.2.13 in \cite{BK.Tak.2003.OpAlg_II}. Using $1)$, get $2)$ by Corollary III.3.11 in \cite{BK.Tak.1979.OpAlg_I}. We directly verify $3)$ using Proposition \ref{PRP.Wstar_NCI_III} and Proposition \ref{PRP.Wstar_NCI_IV}.
\end{proof}

\begin{rem}\label{REM.Wstar_NCI_MSP}
Following Proposition \ref{PRP.Wstar_NCI_VI}, set $M_{*}:=L^{1}(M,\tau)$. We readily see $3)$ in the proposition shows the partial order induced by $L^{1}(M,\tau)\subset L^{0}(M,\tau)$ equals the dual space partial order induced by $M_{*}\subset M^{*}$ as per $2)$ in Proposition \ref{PRP.Cstar_PO}.
\end{rem}

\begin{dfn}\label{DFN.Wstar_NCI_State_Space}
We define the state space $\SII(M):=\lset\mu\in M_{+}^{*}\ \vset\ \|\mu\|_{M}=1\rset$ and the normal state space $\mathcal{S}^{\NI}(M):=\SII(M)\cap L^{1}(M,\tau)^{\flat}$ of $M$.
\end{dfn}

\begin{prp}\label{PRP.Wstar_NCI_VIII}
Let $A\subset M$ be a strongly dense $C^{*}$-subalgebra. If $\AII\subset A\cap L^{2}(M,\tau)$ is $\|.\|_{A}$-dense in $A$ and $\|.\|_{2}$-dense in $L^{2}(M,\tau)$, then

\begin{itemize}
\item[1)] $\AII\subset M$ strongly dense,

\item[2)] finite convex combinations of $\AII^{*}\cdot \AII\subset L^{1}(M,\tau)$ are $\|.\|_{1}$-dense in $L^{1}(M,\tau)$.
\end{itemize}
\end{prp}
\begin{proof}
Get $1)$ as $A\subset M$ is strongly dense and $\AII\subset A$ is $\|.\|_{A}$-dense. Mazur's lemma \cite{BK.Tak.1979.OpAlg_I} implies $w^{*}$-density of $\AII^{*}\cdot \AII\subset L^{1}(M,\tau)_{+}$ suffices for $\|.\|_{1}$-density. Let $x\in L^{1}(M,\tau)_{+}$ and $\{\hspace{0.0125cm} y_{n}\hspace{0.0025cm}\}_{n\in\mathbb{N}}\subset\AII$ s.t.~$\sqrt{x}=\|.\|_{2}$-$\lim_{n\in\mathbb{N}}y_{n}$. For all $n\in\mathbb{N}$, get $y_{n}^{*}y_{n}\in L^{1}(M,\tau)_{+}$. For all $z\in M$, we see $\tau\lc{}xz\rc{}=\lgl\sqrt{x},\sqrt{x}z\rgl_{2}=\lim_{n\in\mathbb{N}}\lgl y_{n},y_{n}z\rgl_{2}=\lim_{n\in\mathbb{N}}\tau\lc{}y_{n}^{*}y_{n}z\rc$. This is $w^{*}$-density.
\end{proof}

%%%%%%%%%%%%%%%%%%%
%%% SUBSECTION %%%%
%%%%%%%%%%%%%%%%%%%

\subsection{Canonical left-~and right-actions of measurable operators}\label{SSEC.B_SMO_CLRA}

We extend canonical left-~and right-actions to spaces of measurable operators. We keep natural identifications as per Remark \ref{REM.Wstar_SMO_CLA} and Remark \ref{REM.Wstar_CLRA} explicit to ensure their consistent use. This subsumes the bounded case. Using canonical left-~and right-actions accordingly, we define spectral and joint spectral measures of self-adjoint measurable operators. This lets us formulate their bounded measurable joint functional calculus.

%%%%%%%%%%%%%
%%% PART %%%%
%%%%%%%%%%%%%

\subsubsection*{Definition using $^{*}$-algebra multiplication}

Let $(M,\tau)$ be a tracial $W^{*}$-algebra. Following Proposition \ref{PRP.Wstar_NCI_II}, note Definition \ref{DFN.Wstar_CLRA} gives canonical left-~and canonical right-action of $L^{0}(M,\tau)$ on $L^{2}(M,\tau)$ using multiplication in $L^{0}(M,\tau)$.

\begin{dfn}\label{DFN.Wstar_CLRA}
Let $x\in L^{0}(M,\tau)$. Set 

\begin{align}\label{EQ.DFN.Wstar_CLRA_1_2}
\dom L_{x,M} &:= \lset{}u\in L^{2}(M,\tau)\ \vset\ xu\in L^{2}(M,\tau)\rset{}, \\
\bigskip
\bigskip
\dom R_{x,M} &:= \lset{}u\in L^{2}(M,\tau)\ \vset\ ux\in L^{2}(M,\tau)\rset{}.
\end{align}

\noindent We define canonical left-action $L_{x,M}$ of $x$ on $M$, resp.~canonical right-action $R_{x,M}$ of $x$ on $M$ by setting

\begin{align}\label{EQ.DFN.Wstar_CLRA_3}
L_{x,M}(u):=xu,\ R_{x,M}(u):=ux
\end{align}

\noindent for all $u\in\dom L_{x,M}$, resp.~for all $u\in\dom R_{x,M}$.
\end{dfn}

We equip the opposite algebra $L^{0}(M,\tau)^{\op}$ of $L^{0}(M,\tau)$ as per Definition \ref{DFN.Oppalg} with the measure topology of $L^{0}(M,\tau)$. Using Corollary \ref{COR.Wstar_CLRA_III}, we readily see Definition \ref{DFN.Wstar_CLRA} determines, by Equation \ref{EQ.DFN.Wstar_CLRA_3}, two unbounded faithful unital $^{*}$-representations

\begin{align}\label{EQ.SSEC.B_SMO_CLRA_1}
L_{M}:L^{0}(M,\tau)\longrightarrow\UBII\lc{}L^{2}(M,\tau)\rc{},\ R_{M}:L^{0}(M,\tau)^{\op}\longrightarrow\UBII\lc{}L^{2}(M,\tau)\rc{}.
\end{align}

\begin{dfn}\label{DFN.Wstar_CLRA_Unbd_Representation}
We call $L_{M}$ and $R_{M}$ in Equation \ref{EQ.SSEC.B_SMO_CLRA_1} canonical left-~, resp.~canonical right-action of $L^{0}(M,\tau)$ on $L^{2}(M,\tau)$.
\end{dfn}

\begin{ntn}\label{NTN.Wstar_CLRA}
Unless stated otherwise, we suppress $W^{*}$-algebras in subscripts of canonical left-~and right-actions. We require subscripts in Section \ref{SEC.B_JFC}. We further write $L^{\op}$, suppressing subscripts, for the canonical left-action of $L^{0}\lc{}M^{\op},\tau\rc$ on $L^{2}(M,\tau)$.
\end{ntn}

Proposition \ref{PRP.Wstar_CLRA_I} states $L$ is $\mathcal{L}$ up to twisting with $\id_{\tau}^{-1}$. We see $L$ subsumes our discussion for $\mathcal{L}$ in Subsection \ref{SSEC.B_SMO_NCI}, in particular the bounded case. Concerning results for $R$, get $R\cong L^{\op}$ naturally but $R\neq L^{\op}$ in general. Following Remark \ref{REM.Wstar_CLRA}, we know results for $L^{\op}$ apply to $R$ if and only if they are preserved under $R\cong L^{\op}$.

\begin{prp}\label{PRP.Wstar_CLRA_I}
For all $x\in L^{0}(M,\tau)$, we have $L_{x}=\id_{\tau}^{-\dagger}\lc\mathcal{L}_{x}\rc$.
\end{prp}
\begin{proof}
Apply Proposition \ref{PRP.Wstar_NCI_II}.
\end{proof}

%NEWPAGE
%NEWPAGE
%NEWPAGE

\pagebreak

%NEWPAGE
%NEWPAGE
%NEWPAGE

Lemma \ref{LEM.Wstar_CLRA_II} shows we obtain $R\cong L^{\op}$ by twisting with natural $^{*}$-isomorphism $L^{0}(M,\tau)^{\op}\cong L^{0}\lc{}M^{\op},\tau\rc$ extending $\id_{M^{\op}}$. This natural $^{*}$-isomorphism is called opposite algebra map and constructed in Lemma \ref{LEM.Wstar_CLRA_I}.

\begin{lem}\label{LEM.Wstar_CLRA_I}
There exists unique $^{*}$-isomorphism $\op:L^{0}(M,\tau)^{\op}\longrightarrow L^{0}\lc{}M^{\op},\tau\rc$ s.t.~

\begin{itemize}
\item[1)] $\op$ is a homeomorphism w.r.t.~measure topologies on $L^{0}(M,\tau)^{\op}$ and $L^{0}\lc{}M^{\op},\tau\rc$,

\item[2)] $\op^{-1}:L^{0}\lc{}M^{\op},\tau\rc\longrightarrow L^{0}(M,\tau)^{\op}$ is a $^{*}$-isomorphism,

\item[3)] we have commutative diagram

\begin{equation}\label{EQ.LEM.Wstar_CLRA_I_1}
\begin{tikzcd}
M^{\op}\arrow[rr,hook]\arrow[dd,"\id_{M}"] & & L^{0}(M,\tau)^{\op}\arrow[dd,"\op"] \\
& & \\
M^{\op}\arrow[rr,hook] & & L^{0}\lc{}M^{\op},\tau\rc{}
\end{tikzcd}
\end{equation}

\begin{reapply}
\end{reapply}

\noindent of injective horizontal and bijective vertical maps.
\end{itemize}
\end{lem}
\begin{proof}
If $\{x_{k}\}_{k\in N}\subset M$ is a net, then Equation \ref{EQ.SSEC.B_SMO_NCI_1} and continuity of $\Adj$ in measure topology shows $\{x_{k}\}_{k\in K}\subset M$ is Cauchy in measure if and only if $\{x_{k}\}_{k\in K}\subset M^{\op}$ is. For all $x=\vstretch{0.8875}{\big[}\{x_{k}\}_{k\in N}\vstretch{0.8875}{\big]}\in L^{0}(M,\tau)$, let $x^{\op}:=\vstretch{0.8875}{\big[}\{x_{k}\}_{k\in N}\vstretch{0.8875}{\big]}\in L^{0}\lc{}M^{\op},\tau\rc$ and set

\begin{align}\label{EQ.LEM.Wstar_CLRA_I_2}
\op(x):=x^{\op}.
\end{align}

\noindent By construction, $\op:L^{0}(M,\tau)^{\op}\longrightarrow L^{0}\lc{}M^{\op},\tau\rc$ satisfies Diagram \ref{EQ.LEM.Wstar_CLRA_I_1} and is continuous in measure topology. Equation \ref{EQ.LEM.Wstar_CLRA_I_2} in turn is fully determined by Diagram \ref{EQ.LEM.Wstar_CLRA_I_1} and continuity in measure topology. We see $\op$ is unique. It is a homeomorphism since $\op^{-1}$ is determined by mapping Cauchy nets in $M^{\op}$ to Cauchy nets in $M$. We are left to show $\op$, ergo $\op^{-1}$, is a $^{*}$-homomorphism. The $^{*}$-algebras $M$ and $M^{\op}$ extend suitably.
\end{proof}

\begin{dfn}
We call $\op:L^{0}(M,\tau)^{\op}\longrightarrow L^{0}\lc{}M^{\op},\tau\rc$ defined by Equation \ref{EQ.LEM.Wstar_CLRA_I_2} the opposite algebra map.
\end{dfn}

\begin{cor}\label{COR.Wstar_CLRA_I}
We have commutative diagram

\smallskip

\begin{equation}\label{EQ.COR.Wstar_CLRA_I_1}
\begin{tikzcd}
\lc\HII(M,\tau),\|.\|_{\tau}\rc\arrow[rr,"\id_{\tau}"]\arrow[dd,"\id_{\HII(M,\tau)}"] & & \lc{}L^{2}(M,\tau),\|.\|_{2}\rc\arrow[dd,"^{\op}"] \\
& & \\
\lc\HII(M,\tau),\|.\|_{\tau}\rc\arrow[rr,"\id_{\tau,\op}"] & & \lc{}L^{2}\lc{}M^{\op},\tau\rc{},\|.\|_{2}\rc{}
\end{tikzcd}
\end{equation}

\medskip

\noindent of isometric isomorphisms of Hilbert spaces.
\end{cor}
\begin{proof}
Apply the construction of $\op$ in the proof of Lemma \ref{LEM.Wstar_CLRA_I}.
\end{proof}

%NEWPAGE
%NEWPAGE
%NEWPAGE

\pagebreak

%NEWPAGE
%NEWPAGE
%NEWPAGE

Note $2)$ in Proposition \ref{PRP.Wstar_NCI_I} shows $\Adj:L^{2}(M,\tau)\longrightarrow L^{2}(M,\tau)$ defines $\Adj^{\dagger}$ as per Definition \ref{DFN.Unbd_Twist}. Corollary \ref{COR.Wstar_CLRA_I} shows $\op:L^{2}(M,\tau)\longrightarrow L^{2}\lc{}M^{\op},\tau\rc$ defines $\op^{\dagger}$. If we restrict to the bounded case, then we have commutative diagram

\smallskip

\begin{equation}\label{EQ.SSEC.B_SMO_CLRA_2}
\begin{tikzcd}
M\arrow[rr,"L"]\arrow[dd,"\Adj"] & & \BII\lc{}L^{2}(M,\tau)\rc\arrow[dd,"\Adj^{\dagger}"] \\
& & \\
M^{\op}\arrow[rr,"R"]\arrow[dd,"\id_{M}"] & & \BII\lc{}L^{2}(M,\tau)\rc\arrow[dd,"\op^{\dagger}"] \\
& & \\
M^{\op}\arrow[rr,"L^{\op}"] & & \BII\lc{}L^{2}\lc{}M^{\op},\tau\rc\rc{}
\end{tikzcd}
\end{equation}

\medskip

\noindent by Diagram \ref{EQ.PRP.Wstar_Trace_CLRA_Bd_1} and Diagram \ref{EQ.LEM.Wstar_CLRA_I_1}. We thus recover $R=L^{\op}$, given as $\mathcal{R}=\mathcal{L}^{\op}$ in $1)$ in Proposition \ref{PRP.Wstar_Trace_CRA_II}, if we collapse the lower part of Diagram \ref{EQ.SSEC.B_SMO_CLRA_2} by pull-back along Diagram \ref{EQ.COR.Wstar_CLRA_I_1}. We further account for twisting of $\mathcal{L}^{\op}$ with $\id_{\tau,\op}$.

\begin{lem}\label{LEM.Wstar_CLRA_II}
We have commutative diagram

\smallskip

\begin{equation}\label{EQ.LEM.Wstar_CLRA_II_1}
\begin{tikzcd}
L^{0}(M,\tau)\arrow[rr,"L"]\arrow[dd,"\Adj"] & & \UBII\lc{}L^{2}(M,\tau)\rc\arrow[dd,"\Adj^{\dagger}"] \\
& & \\
L^{0}(M,\tau)^{\op}\arrow[rr,"R"]\arrow[dd,"\op"] & & \UBII\lc{}L^{2}(M,\tau)\rc\arrow[dd,"\op^{\dagger}"] \\
& & \\
L^{0}\lc{}M^{\op},\tau\rc\arrow[rr,"L^{\op}"] & & \UBII\lc{}L^{2}\lc{}M^{\op},\tau\rc\rc{}
\end{tikzcd}
\end{equation}

\medskip

\noindent of injective horizontal and bijective vertical maps.
\end{lem}
\begin{proof}
For all $x\in L^{0}(M,\tau)$, we directly verify $\dom R_{x}=\dom\Adj^{\dagger}\lc{}L_{x^{*}}\rc$ and $R=\Adj^{\dagger}\circ\hspace{0.075cm} L$. This is the upper diagram. Lemma \ref{LEM.Wstar_CLRA_I} and $1)$ in Proposition \ref{PRP.Unbd_Twist} ensure vertical maps are bijective. In particular, taking adjoints is. Thus $L$, $R=\Adj^{\dagger}\circ\hspace{0.075cm} L$ and $L^{\op}$ are injective, hence we are left to show the lower diagram.\par
Let $x\in L^{0}(M,\tau)$. Note we have $\dom R_{x}=\lset{}u\in L^{2}(M,\tau)\ \vset\ ux\in L^{2}(M,\tau)\rset$ and $\dom L_{x^{\op}}^{\op}=\lset{}v\in L^{2}\lc{}M^{\op},\tau\rc\ \vset\ x^{\op}v\in L^{2}\lc{}M^{\op},\tau\rc\rset$ by definition. For all $u\in L^{2}(M,\tau)$, we calculate

\begin{align}\label{EQ.LEM.Wstar_CLRA_II_2}
R_{x}(u) = x\cdot^{\op}u & = \textrm{op}^{-1}\lc{}x^{\op}\rc\cdot^{\op}\textrm{op}^{-1}\lc{}u^{\op}\rc{}=\textrm{op}^{-1}\lc{}L_{x^{\op}}^{\op}\lc\op(u)\rc\rc{}.
\end{align}

\noindent Equation \ref{EQ.LEM.Wstar_CLRA_II_2} implies $\op\lc\dom R_{x}\rc{}=\dom L_{x^{\op}}^{\op}$. Thus $R_{x}=\op^{-\dagger}\lc{}L_{x^{\op}}^{\op}\rc$, hence $\op^{\dagger}(R_{x}){}=L_{x^{\op}}^{\op}$ upon applying the given dagger map. This is the lower diagram.
\end{proof}

\begin{cor}\label{COR.Wstar_CLRA_II}
For all $x\in L^{0}(M,\tau)$, we have

\begin{itemize}
\item[1)] $\Adj^{\dagger} L_{x}=R_{x^{*}}$ 

\item[2)] $R_{x}=\op^{-\dagger}\lc{}L_{x^{\op}}^{\op}\rc$.
\end{itemize}
\end{cor}
\begin{proof}
This reformulates the upper, resp.~lower part of Diagram \ref{EQ.LEM.Wstar_CLRA_II_1}.
\end{proof}

\begin{cor}\label{COR.Wstar_CLRA_III}
For all $x\in L^{0}(M,\tau)$, $L_{x}$ and $R_{x}$ are densely defined closed operators on $L^{2}(M,\tau)$. For all $x,y\in L^{0}(M,\tau)$ and $\lambda\in\mathbb{C}$, we have

\begin{itemize}
\item[1)] $L_{\lambda_{1}x+\lambda_{2}y}=\overline{\lambda_{1}L_{x}+\lambda_{2}L_{y}}$ and $R_{\lambda_{1}x+\lambda_{2}y}=\overline{\lambda_{1}R_{x}+\lambda_{2}R_{y}}$,

\item[2)] $L_{xy}=\overline{L_{x}L_{y}}$ and $R_{xy}=\overline{R_{y}R_{x}}$,

\item[3)] $L_{x^{*}}=L_{x}^{*}$ and $R_{x^{*}}=R_{x}^{*}$.
\end{itemize}
\end{cor}
\begin{proof}
Proposition \ref{PRP.Wstar_SMO_III} shows analogous claims for $\mathcal{L}$, and Proposition \ref{PRP.Unbd_Twist} implies they are preserved under twisting with $\id_{\tau}$. As such, Proposition \ref{PRP.Wstar_CLRA_I} shows all claims for $L$ at once. Proposition \ref{PRP.Unbd_Twist} implies they are preserved under twisting with $\op$. We therefore obtain all claims for $R$ by reducing to $L^{\op}$ using $2)$ in Corollary \ref{COR.Wstar_CLRA_II}.
\end{proof}

\begin{rem}\label{REM.Wstar_CLRA}
All canonical left-~and right-actions are multiplication by measurable operators. Corollary \ref{COR.Wstar_CLRA_III} shows they are unbounded faithful unital $^{*}$-representations extending the bounded case. This requires Proposition \ref{PRP.Wstar_CLRA_I}, i.e.~Proposition \ref{PRP.Wstar_NCI_II}. We twist with $\id_{\tau}$ in case of $L$, as well as with $\id_{\tau,\op}$ in case of $L^{\op}$. These identities in measure topology induce distinct measure topologies on $\HII(M,\tau)=\HII\lc{}M^{\op},\tau\rc$.\par
We obtain $L^{2}(M,\tau)$ and $L^{2}\lc{}M^{\op},\tau\rc$ accordingly. Thus $R\neq L^{\op}$ by Lemma \ref{LEM.Wstar_CLRA_II}, even as $R\cong L^{\op}$ is $R=L^{\op}$ upon pull-back along Diagram \ref{EQ.COR.Wstar_CLRA_I_1}. Note $R$ is not defined on an algebra of measurable operators, but on an opposite algebra of one. Since $R\cong L^{\op}$ up to twisting with opposite algebra maps, results for canonical left-actions apply if and only if they are compatible with such twisting. We use this in Corollary \ref{COR.Wstar_CLRA_III}, as well as in Section \ref{SEC.B_JFC}. Altogether, we have consistent use of canonical left-~and right-actions for joint functional calculus of self-adjoint measurable operators.
\end{rem}

\begin{lem}\label{LEM.Wstar_CLRA_III}
For all $x\in L^{0}(M,\tau)_{h}$, the following are equivalent:

\begin{itemize}
\item[1)] $u\in\dom L_{x}$,

\item[2)] $\RE(u)_{+},\RE(u)_{-},\IM(u)_{+},\IM(u)_{-}\in\dom L_{x}$.
\end{itemize} 
\end{lem}
\begin{proof}
Let $x\in L^{0}(M,\tau)_{h}$. For all $n\in\mathbb{N}$, set $x_{n}:=\chi_{[-n,n]}(x)x$. We know $\{x_{n}\}_{n\in\mathbb{N}}\in M_{h}$ by $1)$ in Proposition \ref{PRP.Wstar_SMO_IV}. Moreover, $\absv{0.925}{x_{n}}\leq \absv{0.925}{x_{n+1}}\leq \absv{0.925}{x}$ for all $n\in\mathbb{N}$. We know $u\in\dom L_{x}$ if and only if $\|xu\|_{2}=\int_{\spec L_{x}}\lambda^{2}dE_{L_{x}}^{u}<\infty$. Fatou's lemma implies 

\begin{align}\label{EQ.LEM.Wstar_CLRA_III_1}
\int_{\spec L_{x}}\lambda^{2}dE_{L_{x}}^{u}\leq\liminf_{n\in\mathbb{N}}\hspace{0.025cm} \int_{\spec L_{x}}\lc\lambda\cdot \chi_{[-n,n]}(\lambda)\rc^{2}dE_{L_{x}}^{u}=\|x_{n}u\|_{2}^{2}.
\end{align}

Since $\absv{0.925}{x_{n}}\leq \absv{0.925}{x}$ for all $n\in\mathbb{N}$, Equation \ref{EQ.LEM.Wstar_CLRA_III_1} implies

\begin{align}\label{EQ.LEM.Wstar_CLRA_III_2}
\|xu\|_{2}=\sup_{n\in\mathbb{N}}\hspace{0.025cm} \|x_{n}u\|_{2}=\lim_{n\in\mathbb{N}}\hspace{0.025cm} \|x_{n}u\|_{2}\in [0,\infty]
\end{align}

\noindent for all $u\in L^{2}(M,\tau)$. We use decomposition as per Proposition \ref{PRP.Wstar_NCI_IV} and apply limits in $n\in\mathbb{N}$ as per Equation \ref{EQ.LEM.Wstar_CLRA_III_2} to show equivalence as claimed.\par
Let $u\in L^{2}(M,\tau)$. Proposition \ref{PRP.Wstar_NCI_IV} implies 

\begin{align}\label{EQ.LEM.Wstar_CLRA_III_3}
\| zu\|_{2}^{2}=\dblv{}z\RE(u)\dblv_{2}^{2}+\dblv{}z\IM(u)\dblv_{2}^{2} 
\end{align}

\noindent for all $z\in M_{h}$. Note mixed terms $i2\RE\lgl z\RE(u),z\IM(u)\rgl_{2}$ do not appear in Equation \ref{EQ.LEM.Wstar_CLRA_III_3} as $\| zu\|_{2}\in\mathbb{R}$ ensures they vanish in each case. Since all positive and negative parts involved have disjoint support, multiplying out terms yields

\begin{align}\label{EQ.LEM.Wstar_CLRA_III_4}
\| zu\|_{2}^{2}=\dblv{}z\RE(u)_{+}\dblv_{2}^{2}+\dblv{}z\RE(u)_{-}\dblv_{2}^{2}+\dblv{}z\IM(u)_{+}\dblv_{2}^{2}+\dblv{}z\IM(u)_{-}\dblv_{2}^{2}
\end{align}

\noindent for all $z\in M_{h}$. In particular, Equation \ref{EQ.LEM.Wstar_CLRA_III_4} is satisfied using $z=x_{n}$ for all $n\in\mathbb{N}$. Thus applying the limit in $n\in\mathbb{N}$ as per Equation \ref{EQ.LEM.Wstar_CLRA_III_2} for given $u$ extends Equation \ref{EQ.LEM.Wstar_CLRA_III_4} to $z=x$ s.t.~the resulting limit is finite if and only if $u$ satisfies $1)$ and $2)$.
\end{proof}

\begin{cor}\label{COR.Wstar_CLRA_IV}
Let $x\in L^{1}(M,\tau)_{h}$.

\begin{itemize}
\item[1)] For all $n\in\mathbb{N}$, set $x_{n}:=\chi_{[-n,n]}(x)x$. Then $\{x_{n}\}_{n\in\mathbb{N}}\subset L^{1}(M,\tau)_{h}$ and $x=\|.\|_{1}$-$\lim_{n\in\mathbb{N}}x_{n}$.

\item[2)] Assume $x\in L^{1}(M,\tau)_{+}$. For all $n\in\mathbb{N}$, let $x_{n}:=\min\{x,n\}$. Then $\{x_{n}\}_{n\in\mathbb{N}}\subset L^{1}(M,\tau)_{+}$ and $x=\|.\|_{1}$-$\lim_{n\in\mathbb{N}}x_{n}$. We have $u\in\dom L_{x}$ if and only if $\sup_{n\in\mathbb{N}}\|x_{n}u\|_{2}<\infty$ or $\sup_{n\in\mathbb{N}}\|ux_{n}\|_{2}<\infty$.
\end{itemize}
\end{cor}
\begin{proof}
Arguing as for Equation \ref{EQ.LEM.Wstar_CLRA_III_2} in the proof of Lemma \ref{LEM.Wstar_CLRA_III}, we have $1)$ and our first claim in $2)$. Let $x\in L^{1}(M,\tau)_{+}$. For all $u\in L^{2}(M,\tau)$, $\|xu\|_{2}^{2}=\sup_{n\in\mathbb{N}}\|x_{n}u\|_{2}^{2}\in [0,\infty]$ by monotone convergence. Thus $u\in\dom L_{x}$ if and only if $\sup_{n\in\mathbb{N}}\|x_{n}u\|_{\tau}^{2}<\infty$, hence if and only if $\sup_{n\in\mathbb{N}}\|ux_{n}\|_{\tau}^{2}<\infty$ by $2)$ in Proposition \ref{PRP.Wstar_NCI_I}.
\end{proof}

\begin{cor}\label{COR.Wstar_CLRA_V}
For all $x\in L^{2}(M,\tau)_{h}$, $M\cap L^{2}(M,\tau)$ is core of $L_{x}$ and $R_{x}$.
\end{cor}
\begin{proof}
Since $x\in L^{0}(M,\tau)_{h}$, note $1)$ in Corollary \ref{COR.Wstar_CLRA_II} ensures it suffices to show our claim for $L_{x}$. Lemma \ref{LEM.Wstar_CLRA_III} lets us reduce further to showing $\dom L_{x}\cap L^{2}(M,\tau)_{+}$ lies in the closure of $M\cap L^{2}(A,\tau)$ w.r.t.~the graph norm of $L_{x}$.\par
Let $u\in\dom L_{x}\cap L^{2}(M,\tau)_{+}$ and set $u_{n}:=\min\{u,n\}\in L^{2}(M,\tau)$ for all $n\in\mathbb{N}$. For all $\lambda\geq 0$ and $n\in\mathbb{N}$, get $\min\{\lambda,n\}^{2}=\min\hspace{-0.033875cm} \lset{}\lambda^{2},n^{2}\rset\leq \lambda\cdot \min\{\lambda,n\}$. Multiplying out terms of the inner product lets us estimate

\begin{align}\label{EQ.COR.Wstar_CLRA_V_1}
\dblv{}u-u_{n}\dblv_{2}^{2}=\dblv{}u^{2}\dblv_{1}+\dblv{}\min\hspace{-0.046775cm} \big\{u^{2},n^{2}\big\}\dblv_{1}-2\tau\lc{}uu_{n}\rc\leq \dblv{}u^{2}\dblv_{1}-\dblv{}\min\hspace{-0.046775cm} \big\{u^{2},n^{2}\big\}\dblv_{1}
\end{align}

\noindent for all $n\in\mathbb{N}$. Equation \ref{EQ.COR.Wstar_CLRA_V_1} and $2)$ in Corollary \ref{COR.Wstar_CLRA_IV} imply $\lim_{n\in\mathbb{N}}\dblv{}u-u_{n}\dblv_{2}^{2}=0$.\par

%NEWPAGE
%NEWPAGE
%NEWPAGE

\pagebreak

%NEWPAGE
%NEWPAGE
%NEWPAGE

Note $x\in\dom R_{u}$ since $u\in\dom L_{x}$. For all $n\in\mathbb{N}$, get

\begin{align}\label{EQ.COR.Wstar_CLRA_V_2}
\dblv{}x\lc{}u-u_{n}\rc\dblv_{2}^{2}=\int_{\spec R_{u}}\big(\lambda-\min\{\lambda,n\}\big)^{2}dE_{R_{u}}^{x}<\infty.
\end{align}

\noindent Since $\lc\lambda-\min\{\lambda,n\}\rc^{2}\leq\lambda^{2}$ on $[0,\infty)$ for all $n\in\mathbb{N}$ by definition, applying Fatou's lemma to Equation \ref{EQ.COR.Wstar_CLRA_V_2} shows $\lim_{n\in\mathbb{N}}\dblv{}x\lc{}u-u_{n}\rc\dblv_{2}=0$.
\end{proof}

%%%%%%%%%%%%%
%%% PART %%%%
%%%%%%%%%%%%%

\subsubsection*{Spectral measures of self-adjoint measurable operators}

Using inverses of canonical left-~and right-actions, we extend Subsection \ref{SSEC.A_Fnd_FC} to self-adjoint measurable operators. This yields abstract notion of spectral and joint spectral measure, as well as bounded measurable functional and joint functional calculus of self-adjoint measurable operators. This subsumes Definition \ref{DFN.Wstar_FC}. Notation \ref{NTN.Wstar_CLRA_FC} fixes conventions.\par
Let $(M,\tau)$ be a tracial $W^{*}$-algebra.

\begin{dfn}\label{DFN.Wstar_CLRA_FC_I}
Let $x\in L^{0}(M,\tau)_{h}$.

\begin{itemize}
\item[1)] For all $Z\in\mathfrak{B}(\mathbb{R})$, set 

\begin{align}\label{EQ.DFN.Wstar_CLRA_FC_I_I}
E_{x,M}(Z):=L_{M}^{-1}\lc{}E_{L_{x,M}}(Z)\rc{}.    
\end{align}

\begin{reapply}
\end{reapply}

\noindent We call $E_{x,M}$ the spectral measure of $x$ in $M$.

\item[2)] The spectrum of $x$ in $M$ is $\specM x:=\spec L_{x,M}$. We call

\begin{align}\label{EQ.DFN.Wstar_CLRA_FC_I_II}
W_{M}^{*}(x):=L_{M}^{-1}\lc{}W^{*}\lc{}L_{x,M}\rc\rc{}   
\end{align}

\begin{reapply}
\end{reapply}

\noindent the $W^{*}$-algebra generated by $x$ in $M$.
\end{itemize}
\end{dfn}

\begin{prp}\label{PRP.Wstar_CLRA_FC}
If $x\in L^{0}(M,\tau)_{h}$, then

\begin{itemize}
\item[1)] $L_{E_{x,M}(Z),M}=E_{L_{x,M}}(Z)$ and $R_{E_{x,M}(Z),M}=E_{R_{x,M}}(Z)$ for all $Z\in\mathfrak{B}(\mathbb{R})$, 

\item[2)] $\specM x=\spec L_{x,M}=\spec R_{x,M}$,

\item[3)] $W_{M}^{*}(x)=L_{M}^{-1}\lc{}W_{M}^{*}\lc{}L_{x,M}\rc\rc{}=R_{M}^{-1}\lc{}W_{M}^{*}\lc{}R_{x,M}\rc\rc$.
\end{itemize}
\end{prp}
\begin{proof}
Let $x\in L^{0}(M,\tau)_{h}$. For all $Z\in\mathfrak{B}(\mathbb{R})$, we know $E_{x,M}(Z)\in M$. Thus our claim in $1)$ concerning $L_{M}$ holds. For $R_{M}$, we instead use $R_{x,M}=\Adj^{\dagger}L_{x,M}$ by $1)$ in Corollary \ref{COR.Wstar_CLRA_II} and reduce to $L_{M}$. We directly verify it suffices to show $\Adj^{\dagger}\lc{}E_{L_{x,M}}(Z)\rc{}=E_{\Adj^{\dagger}L_{x,M}}(Z)$ for all $Z\in\mathfrak{B}(\mathbb{R})$ to obtain our claim in $1)$ concerning $R_{M}$.\par
Since $\Adj^{\dagger}\lc{}R_{\pm i}(T)\rc{}=R_{\pm i}\lc\Adj^{\dagger}(T)\rc$, Lemma \ref{LEM.FC_Preservation_I} shows Lemma \ref{LEM.FC_Preservation_II} applies to $T=L_{x,M}$, $S=\Adj^{\dagger}(T)$ and $\phi=\Adj^{\dagger}$. Thus the required identity, hence $1)$ holds. Get $2)$ since the spectrum of a self-adjoint unbounded operator is the support of its spectral measure. Get $3)$ because all $W^{*}$-algebras involved are commutative.
\end{proof}

If $x\in L^{0}(M,\tau)_{h}$, then $\specM x$ is a locally compact Hausdorff space and with $\sigma$-ideal $\mathcal{N}\lc{}E_{x,M}\rc\subset\mathfrak{B}\lc\specM x\rc$ of null sets as per $1)$ in Definition \ref{DFN.Wstar_CLRA_FC_II}.

\begin{dfn}\label{DFN.Wstar_CLRA_FC_II}
Let $x\in L^{0}(M,\tau)_{h}$. Set

\begin{itemize}
\item[1)] $\mathcal{N}\lc{}E_{x,M}\rc{}:=\big\{\hspace{0.025cm} Z\in\mathfrak{B}(\mathbb{R})\ \vset\ E_{x,M}(Z)=0\hspace{0.025cm} \big\}$,

\item[2)] $L^{\infty}\lc\specM x,dE_{x,M}\rc{}:=L^{\infty}\lc\specM x,\mathcal{N}\lc{}E_{x,M}\rc\rc$.
\end{itemize}
\end{dfn}

\begin{lem}\label{LEM.Wstar_CLRA_FC}
If $x\in L^{0}(M,\tau)_{h}$, then

\begin{itemize}
\item[1)] $\lc{}L^{\infty}\lc\specM x,dE_{x,M}\rc{},\|.\|_{\infty}\rc$ is a $W^{*}$-algebra s.t.~$C_{0}\lc\specM x\rc$ is $\sigma$-weakly dense,

\item[2)] there exists normal unital $^{*}$-isomorphism

\begin{align}\label{EQ.LEM.Wstar_CLRA_FC_1}
\Gamma_{x,M}:L^{\infty}\lc\specM x,dE_{x,M}\rc\longrightarrow W_{M}^{*}(x)
\end{align}

\begin{reapply}
\end{reapply}

\noindent s.t.~$\Gamma_{L_{x,M}}=L_{M}\circ\Gamma_{x,M}$ and $\Gamma_{R_{x,M}}=R_{M}\circ\Gamma_{x,M}$,

\item[3)] $\Gamma_{x,M}$ is determined by unitality and

\begin{align}\label{EQ.LEM.Wstar_CLRA_FC_2}
\Gamma_{x,M}\big(\chi_{Z}\big)=E_{x,M}(Z)   
\end{align}

\begin{reapply}
\end{reapply}

\noindent for all $Z\in\mathfrak{B}(\mathbb{R})$.
\end{itemize}
\end{lem}
\begin{proof}
Note $L^{\infty}\lc\specM x,dE_{x,M}\rc{}=L^{\infty}\lc\spec L_{x,M},dE_{L_{x,M}}\rc$. Get $1)$. Proposition \ref{PRP.Wstar_CLRA_FC} implies $\Gamma_{L_{x,M}}=L_{M}\circ\Gamma_{x,M}$ and $\Gamma_{R_{x,M}}=R_{M}\circ\Gamma_{x,M}$. Proposition \ref{PRP.Wstar_Generated} and Proposition \ref{PRP.FC_Bd} thus show reducing to $\Gamma_{L_{x,M}}=L_{M}\circ\Gamma_{x,M}$ yields $2)$ and $3)$ in full.
\end{proof}

\begin{dfn}\label{DFN.Wstar_CLRA_FC_III}
Let $x\in L^{0}(M,\tau)_{h}$. We call $\Gamma_{x,M}$ the bounded measurable functional calculus of $x$ in $M$. For all $g\in L^{\infty}\lc\specM x,dE_{x,M}\rc$, set 

\begin{align}\label{EQ.DFN.Wstar_CLRA_FC_III_1}
g(x):=\Gamma_{x,M}(g).   
\end{align}
\end{dfn}

\begin{rem}
Let $x\in L^{0}(M,\tau)_{h}$. For all $g\in L^{\infty}\lc\specM x,dE_{x}\rc$, get $g(x)\in M\subset L^{0}(M,\tau)$ consistent with Definition \ref{DFN.Wstar_SMO_IV}. If $x\in M_{h}$, then we recover Definition \ref{DFN.Wstar_FC} since $3)$ in Lemma \ref{LEM.Wstar_CLRA_FC} reduces to $3)$ in Lemma \ref{LEM.Wstar_FC}.
\end{rem}

Let $x,y\in L^{0}(M,\tau)_{h}$. Using Proposition \ref{PRP.JFC_Strong_Com}, we know $2)$ in Lemma \ref{LEM.Wstar_CLRA_FC} at once implies $L_{x,M},R_{y,M}\in\UBII\lc{}L^{2}(M,\tau)\rc_{h}$ commute strongly. Equation \ref{EQ.SSEC.A_Fnd_FC_8} shows

\begin{align}\label{EQ.SSEC.B_SMO_CLRA_3}
W^{*}\lc{}L_{x,M}\rc\otimes W^{*}\lc{}R_{y,M}\rc{}=W^{*}\lc{}L_{x,M},R_{y,M}\rc\subset\BII\lc{}L^{2}(M,\tau)\rc{}.
\end{align}

\noindent Note Equation \ref{EQ.SSEC.B_SMO_CLRA_3} ensures Corollary \ref{COR.Wstar_TP} lets us tensor $L_{M}:W_{M}^{*}(x)\longrightarrow W^{*}\lc{}L_{x,M}\rc$ and $R_{M}:W_{M}^{*}(y)\longrightarrow W^{*}\lc{}R_{y,M}\rc$ to a normal unital $^{*}$-isomorphism 

\begin{align}\label{EQ.SSEC.B_SMO_CLRA_4}
L_{M}\otimes R_{M}:W_{M}^{*}(x)\otimes W_{M}^{*}(y)\longrightarrow W^{*}\lc{}L_{x,M},R_{y,M}\rc{}.
\end{align}

\begin{dfn}\label{DFN.Wstar_CLRA_FC_IV}
Let $x,y\in L^{0}(M,\tau)_{h}$.

\begin{itemize}
\item[1)] For all $Z\in\mathfrak{B}\lc\mathbb{R}\times\mathbb{R}\rc$, set 

\begin{align}\label{EQ.DFN.Wstar_CLRA_FC_IV_I}
E_{x,y,M}(Z):=\big(L_{M}\otimes R_{M}\big)^{-1}\lc{}E_{L_{x,M},R_{y,M}}(Z)\rc{}.
\end{align}

\begin{reapply}
\end{reapply}

\noindent We call $E_{x,y,M}$ the joint spectral measure of $x\otimes y$ in $M\otimes M^{\op}$.

\item[2)] The joint spectrum of $x\otimes y$ in $M\otimes M^{\op}$ is $\specM x\times y:=\spec L_{x,M}\times R_{y,M}$. We call

\begin{align}\label{EQ.DFN.Wstar_CLRA_FC_IV_II}
W_{M}^{*}(x,y):=W_{M}^{*}(x)\otimes W_{M}^{*}(y)=\big(L_{M}\otimes R_{M}\big)^{-1}\lc{}W^{*}\lc{}L_{x,M},R_{y,M}\rc\rc{}   
\end{align}

\begin{reapply}
\end{reapply}

\noindent the $W^{*}$-algebra generated by $x\otimes y$ in $M\otimes M^{\op}$.
\end{itemize}
\end{dfn}

If $x,y\in L^{0}(M,\tau)_{h}$, then $\specM x\times y$ is a locally compact Hausdorff space and with $\sigma$-ideal $\mathcal{N}\lc{}E_{x,y,M}\rc\subset\mathfrak{B}\lc\specM x\times y\rc$ of null sets as per $1)$ in Definition \ref{DFN.Wstar_CLRA_FC_V}.

\begin{dfn}\label{DFN.Wstar_CLRA_FC_V}
Let $x,y\in L^{0}(M,\tau)_{h}$. Set

\begin{itemize}
\item[1)] $\mathcal{N}\lc{}E_{x,y,M}\rc{}:=\big\{\hspace{0.025cm} Z\in\mathfrak{B}(\mathbb{R})\ \vset\ E_{x,y,M}(Z)=0\hspace{0.025cm} \big\}$,

\item[2)] $L^{\infty}\lc\specM x\times y,dE_{x,y,M}\rc{}:=L^{\infty}\lc\specM x\times y,\mathcal{N}\lc{}E_{x,y,M}\rc\rc$.
\end{itemize}
\end{dfn}

\begin{lem}\label{LEM.Wstar_CLRA_JFC}
If $x,y\in L^{0}(M,\tau)_{h}$, then

\begin{itemize}
\item[1)] $\lc{}L^{\infty}\lc\specM x\times y,dE_{x,y,M}\rc{},\|.\|_{\infty}\rc$ is a $W^{*}$-algebra s.t.~$C_{0}\lc\specM x\times y\rc$ is $\sigma$-weakly dense,

\item[2)] there exists normal unital $^{*}$-isomorphism

\begin{align}\label{EQ.LEM.Wstar_CLRA_JFC_1}
\Gamma_{x,y,M}:L^{\infty}\lc\specM x\times y,dE_{x,y,M}\rc\longrightarrow W_{M}^{*}(x,y)
\end{align}

\begin{reapply}
\end{reapply}

\noindent s.t.~$\Gamma_{L_{x,M},R_{y,M}}=\big(L_{M}\otimes R_{M}\big)\circ\Gamma_{x,M}$,

\item[3)] $\Gamma_{x,M}$ is determined by unitality and

\begin{align}\label{EQ.LEM.Wstar_CLRA_JFC_2}
\Gamma_{x,y,M}\lc\chi_{Z_{0}}\otimes\chi_{Z_{1}}\rc{}=E_{x,M}\lc{}Z_{0}\rc{}E_{y,M}\lc{}Z_{1}\rc{}
\end{align}

\begin{reapply}
\end{reapply}

\noindent for all $Z_{0},Z_{1}\in\mathfrak{B}(\mathbb{R})$.
\end{itemize}
\end{lem}
\begin{proof}
We know $1)$ by definition. For $2)$, note it reduces to factors by construction of joint spectral measures and apply Lemma \ref{LEM.Wstar_CLRA_FC}. Likewise get $3)$ by Proposition \ref{PRP.JFC_Bd}.
\end{proof}

\begin{dfn}\label{DFN.Wstar_CLRA_FC_VI}
Let $x,y\in L^{0}(M,\tau)_{h}$. We call $\Gamma_{x,y,M}$ the bounded measurable joint functional calculus of $x\otimes y$ in $M\otimes M^{\op}$. For all $g\in L^{\infty}\lc\specM x\times y,dE_{x,y,M}\rc$, set 

\begin{align}\label{EQ.DFN.Wstar_CLRA_FC_VI_1}
g(x,y):=\Gamma_{x,y,M}(g).    
\end{align}
\end{dfn}

\begin{ntn}\label{NTN.Wstar_CLRA_FC}
Unless stated otherwise, we suppress $W^{*}$-algebras in subscripts of spectral measures, spectra, bounded measurable functional calculus and generated\linebreak $W^{*}$-algebras. In general, we use subscripts to keep track of $W^{*}$-(sub-)algebras apart from the algebra of bounded operators on a Hilbert space.
\end{ntn}

\begin{lem}\label{LEM.Wstar_CLRA_JFC_SR}
Let $x,y\in L^{0}(M,\tau)_{+}$. If $g\in C_{b}\lc{}[0,\infty)\times [0,\infty)\rc$, then

\begin{align}\label{EQ.LEM.Wstar_CLRA_JFC_SR_1}
\Gamma_{x,y,M}(g)=\s\textrm{-}\lim_{\varepsilon\downarrow 0}\hspace{0.025cm} \Gamma_{x+\varepsilon 1_{M},y+\varepsilon 1_{M},M}(g).
\end{align}
\end{lem}
\begin{proof}
Note $2)$ in Lemma \ref{LEM.Wstar_CLRA_JFC} implies Equation \ref{EQ.LEM.Wstar_CLRA_JFC_SR_1} is equivalent to

\begin{align}\label{EQ.LEM.Wstar_CLRA_JFC_SR_2}
\Gamma_{L_{x},R_{y}}(g)=\s\textrm{-}\lim_{\varepsilon\downarrow 0}\hspace{0.025cm} \Gamma_{L_{x}+\varepsilon I,R_{y}+\varepsilon I}(g).
\end{align}

\noindent Let $\lset\varepsilon_{n}\rset_{n\in\mathbb{N}}\subset (0,\infty)$ be a descending sequence converging to zero. Proposition 10.1.8 in \cite{BK.deOli.2009.OpAlg_Quantum_Dynamics} implies $L_{x}=\sr$-$\lim_{n\in\mathbb{N}}L_{x}+\varepsilon_{n}I$ and $R_{y}=\sr$-$\lim_{n\in\mathbb{N}}R_{y}+\varepsilon_{n}I$. All unbounded operators used here are positive, and each limit is clearly independent of the given descending sequence. Using standard arguments, we see Lemma \ref{LEM.FC_SR} implies Equation \ref{EQ.LEM.Wstar_CLRA_JFC_SR_2}.
\end{proof}

%%%%%%%%%%%%%%%%
%%% SECTION %%%%
%%%%%%%%%%%%%%%%

\section{Compressed pull-back of joint functional calculus}\label{SEC.B_JFC}

In Subsection \ref{SSEC.B_JFC_L2Red}, we discuss semi-finite $W^{*}$-subalgebras of tracial $W^{*}$-algebras and associated $L^{2}$-reducible measurable operators. Assuming such semi-finiteness upon inclusion, f.s.n.~traces restrict to f.s.n.~traces. Theorem \ref{THM.Wstar_L2Red} gives structure-preserving canonical inclusion for spaces of measurable operators. These let us extend abstract compression maps from $W^{*}$-algebras to spaces of measurable operators.\par
In Subsection \ref{SSEC.B_JFC_Compression}, we formulate compressed pulled-back joint functional calculus of self-adjoint measurable operators. Theorem \ref{THM.JFC_Compression} states sufficient conditions. For its proof, we express change of canonical left-~and right-actions as abstract compression maps. We use Theorem \ref{THM.JFC_Compression} to define compressed pulled-back bounded measurable joint functional calculus of self-adjoint measurable operators and extend to suitable unbounded functions following its Corollary \ref{COR.JFC_Compression_I}.

%%%%%%%%%%%%%%%%%%%
%%% SUBSECTION %%%%
%%%%%%%%%%%%%%%%%%%

\subsection[$L^{2}$-reducible measurable operators]{$\mathbf{L}^{2}$-reducible measurable operators}\label{SSEC.B_JFC_L2Red}

Semi-finite $W^{*}$-subalgebras are tracial $W^{*}$-algebras. We construct structure-preserving canonical inclusions of the resulting spaces of measurable operators in Theorem \ref{THM.Wstar_L2Red} by mapping to $L^{2}$-reducible measurable operators.\par
Inclusion of pre-duals ensure semi-finite $W^{*}$-algebras have unique noncommutative conditional expectations by dualisation \cite{BK.Tak.1979.OpAlg_I}. Abstract compression maps are one of two example classes. While we do not use them in the appendix, we do use noncommutative conditional expectations to show monotonicity of quasi-entropies in Subsection \ref{SSEC.NCDS_NCD_QE}.

%%%%%%%%%%%%%
%%% PART %%%%
%%%%%%%%%%%%%

\subsubsection*{Semi-finite $\mathbf{W^{*}}$-subalgebras}

Let $(M,\tau)$ be a tracial $W^{*}$-algebra. If $N\subset M$ is a $W^{*}$-subalgebra, then $\tau\vert_{N_{+}}$ is faithful normal trace on $N$ since $N_{+}\subset M_{+}$.

\begin{dfn}\label{DFN.Wstar_Trace_SF_Subalg}
If $N\subset M$ is a $W^{*}$-subalgebra s.t.~$\tau\vert_{N_{+}}$ is semi-finite, then we call $N$ a semi-finite $W^{*}$-subalgebra. We write $N\subset (M,\tau)$ in this case.
\end{dfn}

\begin{ntn}
Let $N\subset (M,\tau)$. We write $\tau=\tau\vert_{N_{+}}$ on $N$.
\end{ntn}

\begin{prp}\label{PRP.Wstar_Trace_SF_Subalg}
Let $N\subset M$ be a $W^{*}$-subalgebra.

\begin{itemize}
\item[1)] If $N\subset (M,\tau)$, then $\tau$ is f.s.n.~trace on $N$ and $(N,\tau)$ is a tracial $W^{*}$-algebra.

\item[2)] $N\subset (M,\tau)$ if and only if $N[1_{M}]\subset (M,\tau)$.

\item[3)] $N\subset (M,\tau)$ if and only if $N^{\op}\subset \lc{}M^{\op},\tau\rc$.
\end{itemize}
\end{prp}
\begin{proof}
We have $1)$ by definition. If $N\subset (M,\tau)$, then we know $N[1_{M}]=N\oplus\langle 1_{N}^{\perp}\rangle_{\mathbb{C}}\subset (M,\tau)$ by Proposition \ref{PRP.Wstar_Unitalisation}. If $\lc{}N[1_{M}],\tau\rc\subset (M,\tau)$, then $N\subset \lc{}N[1_{M}],\tau\rc$ shows $N\subset (M,\tau)$ at once. Get $2)$. We obtain $3)$ since partial orders on $N$ and $N^{\op}$ are identical.
\end{proof}

Let $N\subset (M,\tau)$. By construction, $\HII(N,\tau)\subset\HII(M,\tau)$. We define isometric inclusion $\inc_{2}:L^{2}(N,\tau)\longrightarrow L^{2}(M,\tau)$ of Hilbert spaces as the unique bounded linear map s.t.~we have commutative diagram

\smallskip

\begin{equation}\label{EQ.SSEC.B_JFC_L2Red_1}
\begin{tikzcd}
L^{2}(N,\tau)\arrow[rr,"\inc_{2}"] & & L^{2}(M,\tau) \\
& & \\
\HII(N,\tau)\arrow[rr,hook]\arrow[uu,swap,"\id_{\tau}"] & & \HII(M,\tau)\arrow[uu,swap,"\id_{\tau}"]
\end{tikzcd}
\end{equation}

\medskip

\noindent of Hilbert space isometries.

\begin{dfn}\label{DFN.L2_Closure_Temp}
For all $N\subset (M,\tau)$, set

\begin{align}\label{EQ.DFN.L2_Closure_Temp_1}
\mathbf{L}^{2}(N,\tau):=\overline{\textrm{inc}_{2}\lc{}L^{2}(N,\tau)\rc{}}^{\|.\|_{2}}=\textrm{inc}_{2}\lc{}L^{2}(N,\tau)\rc{}
\end{align}

\noindent for $\inc_{2}$ as per Diagram \ref{EQ.SSEC.B_JFC_L2Red_1}.
\end{dfn}

\begin{rem}
Note $\inc_{2}=\id_{L^{2}(N,\tau)}$ upon identifying as per Remark \ref{REM.Wstar_L2Red} following Theorem \ref{THM.Wstar_L2Red}, i.e.~a natural identification by extending Diagram \ref{EQ.PRP.Wstar_Trace_Compression_1} to $L^{0}(M,\tau)$.
\end{rem}

The isometric isomorphism $\inc_{2}:L^{2}(N,\tau)\longrightarrow\mathbf{L}^{2}(N,\tau)$ of Hilbert spaces defines $\inc_{2}^{\dagger}$ as per Definition \ref{DFN.Unbd_Twist}. We introduce compression maps in Subsection \ref{SSEC.A_Maps_Compression}.

\begin{prp}\label{PRP.Wstar_Trace_Compression}
Let $N\subset (M,\tau)$. We have commutative diagram

\smallskip

\begin{equation}\label{EQ.PRP.Wstar_Trace_Compression_1}
\begin{tikzcd}
N\arrow[rr,hook]\arrow[dd,"\id_{N}"] & & N[1_{M}]\arrow[rr,"L_{M}"]\arrow[dd,"\comunit"] & & \BII\lc{}L^{2}(M,\tau)\rc\arrow[dd,"\inc_{2}^{-\dagger}\circ\hspace{0.0675cm} \com_{\mathbf{L}^{2}(N,\tau)}"] \\
& & & & \\
N\arrow[rr,"\id_{N}"] & & N\arrow[rr,"L_{N}"] & & \BII\lc{}L^{2}(N,\tau)\rc{}
\end{tikzcd}
\end{equation}

\medskip

\noindent s.t.~horizontal maps are normal unital injective $^{*}$-homomorphisms and vertical ones are positivity-preserving surjections of Banach spaces.
\end{prp}
\begin{proof}
Diagram \ref{EQ.SSEC.A_Maps_Compression_1} is the left diagram in Diagram \ref{EQ.PRP.Wstar_Trace_Compression_1}. Get unital $W^{*}$-subalgebra $L_{M}(N)''=L_{M}\lc{}N[1_{M}]\rc\subset\BII\lc{}L^{2}(M,\tau)\rc$. Proposition \ref{PRP.Wstar_Equivalence} and Proposition \ref{PRP.Wstar_Trace_Ext_II} show

\begin{align}\label{EQ.PRP.Wstar_Trace_Compression_2}
N[1_{M}]=\lset{}x\in M\ \vset\ L_{x,M}\ \textrm{is}\ L_{M}(N)''\textrm{-affiliated}\rset{}.
\end{align}

\noindent For all $x\in N[1_{M}]_{h}$, Proposition \ref{PRP.Wstar_SMO_I} shows the affiliation property in Equation \ref{EQ.PRP.Wstar_Trace_Compression_2} lets us apply Corollary \ref{COR.Compression_Preservation_II} to get

\begin{align}\label{EQ.PRP.Wstar_Trace_Compression_3}
\lb{}L_{x,M},\pi_{\mathbf{L}^{2}\lc{}N[1_{M}],\tau\rc{}}\rb{}=0.
\end{align}

\noindent Equation \ref{EQ.PRP.Wstar_Trace_Compression_3} holds for all $x\in N[1_{M}]$ by decomposing into real and imaginary parts.\par
Note $N[1_{M}]=N\oplus\langle 1_{N}^{\perp}\rangle_{\mathbb{C}}$ using direct sum of $C^{*}$-algebras as per Proposition \ref{PRP.Wstar_Unitalisation} since $1_{N}^{\perp}=1_{M}-1_{N}$ by definition. Using $N1_{N}^{\perp}=1_{N}^{\perp}N=0$, Equation \ref{EQ.PRP.Wstar_Trace_Compression_2} shows

\begin{align}\label{EQ.PRP.Wstar_Trace_Compression_4}
N=\lset{}x\in M\ \vset\ L_{x,M}\ \textrm{is}\ L_{M}(N)''\textrm{-affiliated},\ x=1_{N}x\rset{}.
\end{align}

\noindent Using $\mathbf{L}^{2}(N,\tau)\subset\mathbf{L}^{2}\lc{}N[1_{M}],\tau\rc$, we directly verify

\begin{align}\label{EQ.PRP.Wstar_Trace_Compression_5}
\pi_{\mathbf{L}^{2}(N,\tau)}=L_{1_{N},M}\pi_{\mathbf{L}^{2}\lc{}N[1_{M}],\tau\rc{}}=\pi_{\mathbf{L}^{2}\lc{}N[1_{M}],\tau\rc{}}L_{1_{N},M}
\end{align}

\noindent by testing on the inner product. For all $x\in N[1_{M}]_{h}$, we apply $\lb{}x,1_{N}\rb{}=0$, Equation \ref{EQ.PRP.Wstar_Trace_Compression_3} and Equation \ref{EQ.PRP.Wstar_Trace_Compression_5} to calculate

\begin{align}\label{EQ.PRP.Wstar_Trace_Compression_6}
\lb{}L_{x,M},\pi_{\mathbf{L}^{2}(N,\tau)}\rb{}=0. 
\end{align}

\noindent Equation \ref{EQ.PRP.Wstar_Trace_Compression_6} holds for all $x\in N[1_{M}]$ by decomposing into real and imaginary parts.\par
We show the right diagram in Diagram \ref{EQ.PRP.Wstar_Trace_Compression_1}. We directly verify $\comunit (1_{N}^{\perp})=0$ and $\com_{\mathbf{L}^{2}(N,\tau)}\lc{}L_{1_{N}^{\perp},M}\rc{}=0$. We are left to consider $x\in N$. Let $x\in N$ and $u\in L^{2}(N,\tau)$. Thus

\begin{align}\label{EQ.PRP.Wstar_Trace_Compression_7}
L_{x,M}\lc\textrm{inc}_{2}(u)\rc{}=\textrm{inc}_{2}\lc{}L_{x,N}(u)\rc{}
\end{align}

\noindent by $\|.\|_{2}$-density. Equation \ref{EQ.PRP.Wstar_Trace_Compression_6} and Equation \ref{EQ.PRP.Wstar_Trace_Compression_7} let us calculate

\begin{align*}
\textrm{com}_{\mathbf{L}^{2}(N,\tau)}\hspace{0.05cm} L_{x,M}\lc\textrm{inc}_{2}(u)\rc{} & = \pi_{\mathbf{L}^{2}(N,\tau)}\lc{}L_{x,M}\lc\textrm{inc}_{2}(u)\rc\rc \phantom{\bigg)} \\
& = \pi_{\mathbf{L}^{2}(N,\tau)}\lc\textrm{inc}_{2}\lc{}L_{x,N}(u)\rc\rc \phantom{\bigg)} \\
& = \textrm{inc}_{2}\lc{}L_{x,N}(u)\rc{}. \phantom{\bigg)}
\end{align*}

\noindent Apply $\inc_{2}^{-1}$ to get the right diagram in Diagram \ref{EQ.PRP.Wstar_Trace_Compression_1}. Altogether, get Diagram \ref{EQ.PRP.Wstar_Trace_Compression_1}. We are left to show positivity-preservation. This follows from Proposition \ref{PRP.Compression_Abstract_Bd}, as well as Proposition \ref{PRP.Compression_Concrete} based on the former.
\end{proof}

Arguing as in the proof of Proposition V.2.36 in \cite{BK.Tak.1979.OpAlg_I}, we construct noncommutative conditional expectations of semi-finite $W^{*}$-algebras. We may use Theorem \ref{THM.Wstar_L2Red} below for this since noncommutative conditional expectations are not used in its proof.\par
Let $N\subset (M,\tau)$. We identify as per Remark \ref{REM.Wstar_L2Red} following Theorem \ref{THM.Wstar_L2Red}. Thus $N_{*}=L^{1}(N,\tau)\subset L^{1}(M,\tau)=M_{*}$ and $L^{2}(N,\tau)\subset L^{2}(M,\tau)$ by $L^{0}(N,\tau)\subset L^{0}(M,\tau)$. Dualising this inclusion map $\iota:N_{*}\longrightarrow M_{*}$ yields unique noncommutative conditional expectation from $M$ to $N$. Definition \ref{DFN.Wstar_Trace_NCE} gives its defining properties.

\begin{dfn}\label{DFN.Wstar_Trace_NCE}
Let $N\subset (M,\tau)$. We say that a normal unital map $P:M\longrightarrow N$ is a noncommutative conditional expectation from $M$ to $N$ if

\begin{itemize}
\item[1)] $P(x)=x$ for all $x\in N$, \hfill (Projection)

\item[2)] $P(x)=0$ implies $x=0$ for all $x\in M_{+}$, \hfill (Faithfulness)

\item[3)] $P(x)(y)=x(y)$ for all $x\in M$ and $y\in N_{*}$. \hfill (Trace identity)
\end{itemize}
\end{dfn}

\begin{rem}\label{REM.Wstar_Trace_NCE}
Following Remark \ref{REM.Wstar_NCI_MSP}, we use the modified standard pairing as per Definition \ref{DFN.Wstar_NCI_MSP} to have noncommutative $L^{1}$-spaces as pre-duals. The trace identity is equivalent to the following. For all $x\in M$ and $y\in L^{1}(N,\tau)$, we have

\begin{align}\label{EQ.REM.Wstar_Trace_NCE_1}
\tau\lc{}P(x)^{*}y\rc{}=\tau\lc{}P(x^{*})y\rc{}=\tau(x^{*}y).
\end{align}

\noindent Equation \ref{EQ.REM.Wstar_Trace_NCE_1} shows $P$ is unique if it exists. If $\tau<\infty$, then $M\subset L^{2}(M,\tau)\subset L^{1}(M,\tau)$ by H\"older and we have analogous chain of subspaces for $N$. We therefore see $\tau<\infty$ ensures $P$ extends to the Hilbert space projection $\pi_{N}^{M}:L^{2}(M,\tau)\longrightarrow L^{2}(N,\tau)$.
\end{rem}

\begin{dfn}\label{DFN.Wstar_Trace_NCE_MSP}
Let $N\subset (M,\tau)$ and let $\iota:N_{*}\longrightarrow M_{*}$ denote the canonical inclusion given by the modified standard pairing. We call $\pi_{N}^{M}:=\iota^{*}:M\longrightarrow N$ the noncommutative conditional expectation from $M$ to $N$.
\end{dfn}

\begin{prp}\label{PRP.Wstar_Trace_NCE_I}
If $N\subset (M,\tau)$, then $\pi_{N}^{M}:M\longrightarrow N$ is noncommutative conditional expectation from $M$ to $N$. If $N\subset M$ is furthermore a unital $W^{*}$-subalgebra, then $\pi_{N}^{M}$ is trace-preserving.
\end{prp}
\begin{proof}
We may argue here as in the proof of Proposition V.2.36 in \cite{BK.Tak.1979.OpAlg_I} to show $\pi_{N}^{M}$ is a noncommutative conditional expectation. This assumes unitality. However, the latter is only used to show $\tau\circ\pi_{N}^{M}=\tau$ on $\mathfrak{m}_{\tau}$. Equation \ref{EQ.REM.Wstar_Trace_NCE_1} implies uniqueness.
\end{proof}

We give two classes of noncommutative conditional expectations used throughout our discussion in Proposition \ref{PRP.Wstar_Trace_NCE_II}. First, we decompose Hilbert space projections. We use such decomposition in order to reduce non-unital to unital cases if the given trace is finite. Secondly, we compress with projections using abstract compression maps as per Definition \ref{DFN.Compression_Abstract_Bd} for compressed $W^{*}$-subalgebras as per Example \ref{BSP.Wstar_CP_III}.

\begin{rem}
Assume $\tau<\infty$. If $N\subset M$ is a $W^{*}$-subalgebra, then we know $N\subset (M,\tau)$ by Proposition \ref{PRP.Wstar_Trace_Fin_II}. The latter shows semi-finiteness is satisfied for finite faithful normal traces. We use this for Definition \ref{DFN.Wstar_Trace_NCE_Kappa} and $1)$ in Proposition \ref{PRP.Wstar_Trace_NCE_II}.
\end{rem}

\begin{dfn}\label{DFN.Wstar_Trace_NCE_Kappa}
Assume $\tau<\infty$. Let $N\subset M$ be a $W^{*}$-subalgebra. For all $x\in M$, set

\begin{align*}
\kappa_{N}^{M}(x):=
\begin{cases}\tau(1_{N}^{\perp})^{-1}\cdot \tau\vstretch{0.9375}{\bigg(}\pi_{\langle 1_{N}^{\perp}\rangle_{\mathbb{C}}}^{M}(x)\vstretch{0.9375}{\bigg)} & \If\ 1_{M}\neq 1_{N}, \\
0 & \Else.
\end{cases}
\end{align*}
\end{dfn}

\begin{prp}\label{PRP.Wstar_Trace_NCE_II}
Let $N\subset M$ be a $W^{*}$-subalgebra.

\begin{itemize}
\item[1)] If $\tau<\infty$, then $N\subset (M,\tau)$ and $\pi_{N}^{M}=\pi_{N[1_{M}]}^{M}-\kappa_{N}^{M}1_{N}^{\perp}$.

\item[2)] If $p\in M$ is a projection, then $M[p]\subset (M,\tau)$ and $\pi_{M[p]}^{M}=\comp$.
\end{itemize} 
\end{prp}
\begin{proof}
Assume $\tau<\infty$. Proposition \ref{PRP.Wstar_Trace_Fin_II} shows $N\subset (M,\tau)$. By our construction of noncommutative $L^{2}$-spaces, we have orthogonal decomposition

\begin{align}\label{EQ.PRP.Wstar_Trace_NCE_II_1}
L^{2}\lc{}N[1_{M}],\tau\rc{}=L^{2}(N,\tau)\oplus\langle 1_{N}^{\perp}\rangle_{\mathbb{C}}\subset L^{2}(M,\tau)    
\end{align}

\noindent since $N[1_{M}]=N\oplus\langle 1_{N}^{\perp}\rangle_{\mathbb{C}}$ by Proposition \ref{PRP.Wstar_Unitalisation}. Extending to Hilbert space projections as per Remark \ref{REM.Wstar_Trace_NCE}, Equation \ref{EQ.PRP.Wstar_Trace_NCE_II_1} shows

\begin{align}\label{EQ.PRP.Wstar_Trace_NCE_II_2}
\pi_{N[1_{M}]}^{M}=\pi_{N}^{M}\oplus\pi_{\langle 1_{N}^{\perp}\rangle_{\mathbb{C}}}^{M}
\end{align}

\noindent w.r.t.~$\BII\lc{}L^{2}(N,\tau)\rc\oplus\BII\lc\langle 1_{N}^{\perp}\rangle_{\mathbb{C}}\rc$. Equation \ref{EQ.PRP.Wstar_Trace_NCE_II_2} implies $1)$ at once.\par

%NEWPAGE
%NEWPAGE
%NEWPAGE

\pagebreak

%NEWPAGE
%NEWPAGE
%NEWPAGE

We show $2)$. Let $p\in M$ be a projection. Let $x\in N_{+}$ be non-zero. Semi-finiteness of $\tau$ yields $y\in M_{+}$ s.t.~$y\leq x$ and $\tau(y)<\infty$. Note $pyp\in M[p]_{+}$. Get $pyp\leq x$ in $M[p]$ since $x=pxp$. In addition, traciality implies

\begin{align}\label{EQ.PRP.Wstar_Trace_NCE_II_3}
0\leq\tau(pyp)+\tau\big(\lc{}1_{M}-p\rc{}y\lc{}1_{M}-p\rc\big)=\tau(y)<\infty.    
\end{align}

\noindent Equation \ref{EQ.PRP.Wstar_Trace_NCE_II_3} implies $\tau(pyp)<\infty$. We obtain $M[p]\subset (M,\tau)$. Equation \ref{EQ.REM.Wstar_Trace_NCE_1} determines noncommutative conditional expectations. Moreover, $M[p]\cap L^{1}\lc{}M[p],\tau\rc\subset L^{1}\lc{}M[p],\tau\rc$ is $\|.\|_{1}$-dense by construction as per Definition \ref{DFN.NC_Int_Lp}. It suffices to show 

\begin{align}\label{EQ.PRP.Wstar_Trace_NCE_II_4}
\tau\lc\pi_{M[p]}^{M}(x)^{*}y\rc{}=\tau\lc\lc\comp x\rc^{*}y\rc{}    
\end{align}

\noindent for all $x\in M$ and $y\in M[p]\cap L^{1}\lc{}M[p],\tau\rc$. Applying Equation \ref{EQ.REM.Wstar_Trace_NCE_1} and using $y=pyp$ in each case, we directly verify Equation \ref{EQ.PRP.Wstar_Trace_NCE_II_4}. Get $2)$.
\end{proof}

In Subsection \ref{SSEC.NCDS_AF_BIM}, we write noncommutative conditional expectations in the unital finite-dimensional case as as averages of unitary conjugations. We ultimately obtain the general non-unital finite-dimensional one by $1)$ in Proposition \ref{PRP.Wstar_Trace_NCE_II}.

%%%%%%%%%%%%%
%%% PART %%%%
%%%%%%%%%%%%%

\subsubsection*{$\mathbf{L}^{2}$-reducible measurable operators}

Let $(M,\tau)$ be a tracial $W^{*}$-algebra. For all $N\subset (M,\tau)$, Theorem \ref{THM.Wstar_L2Red} yields canonical inclusion $L^{0}(N,\tau)\subset L^{0}(M,\tau)$ preserving noncommutative $L^{p}$-norms. Equation \ref{EQ.PRP.Wstar_Trace_Compression_4} leads to Definition \ref{DFN.Wstar_L2Red_I}.

\begin{dfn}\label{DFN.Wstar_L2Red_I}
For all $N\subset (M,\tau)$, we call

\begin{align}\label{EQ.DFN.Wstar_L2Red_I_1}
\mathbf{L}^{0}(N,\tau):=\lset{}x\in L^{0}(M,\tau)\ \vset\ L_{x,M}\ \textrm{is}\ L_{M}(N)''\textrm{-affiliated},\ x=1_{N}x\rset{}
\end{align}

\noindent the space of $L^{2}(N,\tau)$-reducible measurable operators in $L^{0}(M,\tau)$.
\end{dfn}

\begin{prp}\label{PRP.Wstar_L2Red_I}
Let $N\subset (M,\tau)$.

\begin{itemize}
\item[1)] $\mathbf{L}^{0}(N,\tau)\subset\mathbf{L}^{0}\lc{}N[1_{M}],\tau\rc\subset L^{0}(M,\tau)$ are $^{*}$-subalgebras.

\item[2)] $\mathbf{L}^{0}(N,\tau)=\overline{N}$ for uniform closure in measure topology of $(M,\tau)$.
\end{itemize}
\end{prp}
\begin{proof}
Note $L_{M}(N)''=L_{M}\lc{}N[1_{M}]\rc$. The construction of spaces of measurable operators reviewed in Subsection \ref{SSEC.B_SMO_NCI} taken from \cite{BK.Tak.2003.OpAlg_II} is in fact independent of choice of normal faithful unital $^{*}$-representation. Using $L_{M}$ and f.s.n.~trace $\tau:L_{M}\lc{}N[1_{M}]\rc\longrightarrow [0,\infty]$, we see $L_{M}$ maps $\mathbf{L}^{0}\lc{}N[1_{M}],\tau\rc$ onto $L^{0}\lc{}L_{M}\lc{}N[1_{M}]\rc{},\tau\rc$. This implies $1)$ and $2)$ for $N[1_{M}]$ since uniform structure is determined by the measure topology on $M$, resp.~$\LII(M)$.\par
By definition, $\mathbf{L}^{0}(N,\tau)\subset L^{0}\lc{}N[1_{M}],\tau\rc$ is a $^{*}$-subalgebra. Get $1)$. Equation \ref{EQ.PRP.Wstar_Trace_Compression_4} shows $N\subset\mathbf{L}^{0}(N,\tau)$. We have $\mathbf{L}^{0}(N,\tau)\subset\overline{N}$ by $2)$ for $N[1_{M}]$ and continuity of multiplication on bounded subsets of $L^{0}(M,\tau)$ \lc{}cf.~Theorem IX.2.2 in \cite{BK.Tak.2003.OpAlg_II} and \cite{ART.Nel.1974.Wstar_Integration}\rc{}. We therefore get $2)$ by taking uniform closure.
\end{proof}

Let $N\subset (M,\tau)$. Let $x\in\mathbf{L}^{0}(N,\tau)$ be self-adjoint. For all $Z\in\mathfrak{B}(\mathbb{R})$,  Proposition \ref{PRP.Wstar_SMO_I} ensures the affiliation property in Equation \ref{EQ.DFN.Wstar_L2Red_I_1} implies $E_{L_{x,M}}(Z)\in L_{M}\lc{}N[1_{M}]\rc$. For all $Z\in\mathfrak{B}(\mathbb{R})$, get $E_{x,M}(Z)\in N[1_{M}]$ by $1)$ in Proposition \ref{PRP.Wstar_CLRA_FC} and we have decomposition

\begin{align}\label{EQ.SSEC.B_JFC_L2Red_2}
E_{x,M}(Z)=1_{N}E_{x,M}(Z)1_{N}\oplus 1_{N}^{\perp}E_{x,M}(Z)1_{N}^{\perp}=\comunit E_{x,M}(Z)\oplus\nu_{x,N}(Z)1_{N}^{\perp}
\end{align}

\noindent w.r.t.~$N[1_{M}]=N\oplus\langle 1_{N}^{\perp}\rangle_{\mathbb{C}}$. Note $\nu_{x,N}(Z)\in\lset{}0,1\rset$ in each case. Equation \ref{EQ.SSEC.B_JFC_L2Red_2} in turn yields two compressed spectral measures. For all $Z\in\mathfrak{B}(\mathbb{R})$, set

\begin{align}\label{EQ.SSEC.B_JFC_L2Red_3}
E_{x,N}(Z):=\comunit E_{x,M}(Z).
\end{align}

\noindent The map $Z\mapsto\nu_{x,N}(Z)\in\lset{}0,1\rset$ defined on $\mathfrak{B}(\mathbb{R})$ is determined by $Z\mapsto 1_{N}^{\perp}E_{x,M}(Z)1_{N}^{\perp}$. If $N\subset M$ is a unital $W^{*}$-subalgebra, then set $\nu_{x}:=0$. If not, then $E_{x,M}$ spectral measure of $x$ in $M$ and $\nu_{x,N}(Z)\in\lset{}0,1\rset$ in each case implies there exists unique $\nu_{x}\in\mathbb{R}$ s.t.~

\begin{align}\label{EQ.SSEC.B_JFC_L2Red_4}
\nu_{x,N}(Z)=\chi_{Z}(\nu_{x})
\end{align}

\noindent for all $Z\in\mathfrak{B}(\mathbb{R})$. Equation \ref{EQ.SSEC.B_JFC_L2Red_4} shows $\nu_{x}$ determines $\nu_{x,N}$.

\begin{dfn}\label{DFN.Wstar_L2Red_II}
Let $N\subset (M,\tau)$. For all self-adjoint $x\in\mathbf{L}^{0}(N,\tau)$, we define

\begin{itemize}
\item[1)] the map $Z\mapsto E_{x,N}(Z)$ on $\mathfrak{B}(\mathbb{R})$ as per Equation \ref{EQ.SSEC.B_JFC_L2Red_3},

\item[2)] $\nu_{x}=0$ if $N\subset M$ is a unital $W^{*}$-subalgebra, and $\nu_{x}\in\mathbb{R}$ as per Equation \ref{EQ.SSEC.B_JFC_L2Red_4} if not.
\end{itemize}
\end{dfn}

\begin{rem}
Upon identifying $\mathbf{L}^{0}(N,\tau)=L^{0}(N,\tau)$ as per Remark \ref{REM.Wstar_L2Red}, we readily see $E_{x,N}$ as per $1)$ in Definition \ref{DFN.Wstar_L2Red_II} is in fact the spectral measure of $x$ in $N$ as per $1)$ in Definition \ref{DFN.Wstar_CLRA_FC_I} for all $x\in L^{0}(N,\tau)$.
\end{rem}

\begin{prp}\label{PRP.Wstar_L2Red_II}
Let $N\subset (M,\tau)$. If $x\in\mathbf{L}^{0}\lc{}N[1_{M}],\tau\rc$ is self-adjoint, then

\begin{itemize}
\item[1)] we define spectral measure $L_{N}\lc{}E_{x,N}\rc$ on $\mathbb{R}$ with values in $\BII\lc{}L^{2}(N,\tau)\rc$ by setting

\begin{align}\label{EQ.PRP.Wstar_L2Red_II_1}
L_{N}\lc{}E_{x,N}\rc(Z):=L_{N}\lc{}E_{x,N}(Z)\rc{}    
\end{align}

\begin{reapply}
\end{reapply}

\noindent for all $Z\in\mathfrak{B}(\mathbb{R})$, 

\item[2)] $L_{x,M}$ is $\mathbf{L}^{2}\lc{}N[1_{M}],\tau\rc$-, $\mathbf{L}^{2}(N,\tau)$-~and $\mathbf{L}^{2}(\langle 1_{N}^{\perp}\rangle_{\mathbb{C}},\tau)$-reducible,

\item[3)] $\com_{\mathbf{L}^{2}\lc{}N[1_{M}],\tau\rc{}}L_{x,M}=\com_{\mathbf{L}^{2}(N,\tau)}L_{x,M}+\com_{\mathbf{L}^{2}(\langle 1_{N}^{\perp}\rangle_{\mathbb{C}},\tau)}L_{x,M}$.
\end{itemize}
\end{prp}
\begin{proof}
Let $x\in\mathbf{L}^{0}\lc{}N[1_{M}],\tau\rc$ be self-adjoint. Get $1)$ by Proposition \ref{PRP.Wstar_Trace_Compression}. We show $2)$. For all $Z\in\mathfrak{B}(\mathbb{R})$, we use $1)$ in Proposition \ref{PRP.Wstar_CLRA_FC} and Equation \ref{EQ.PRP.Wstar_Trace_Compression_3} to calculate

\begin{align}\label{EQ.PRP.Wstar_L2Red_II_2}
\lb{}E_{L_{x,M}}(Z),\pi_{\mathbf{L}^{2}\lc{}N[1_{M}],\tau\rc{}}\rb{}=\lb{}L_{E_{x,M}(Z),M},\pi_{\mathbf{L}^{2}\lc{}N[1_{M}],\tau\rc{}}\rb{}=0.
\end{align}

\noindent Equation \ref{EQ.PRP.Wstar_L2Red_II_2} shows $L_{x,M}$ is $\mathbf{L}^{2}\lc{}N[1_{M}],\tau\rc$-reducible by Corollary \ref{COR.Compression_Preservation_II}. If we instead use Equation \ref{EQ.PRP.Wstar_Trace_Compression_4}, then we calculate

\begin{align}\label{EQ.PRP.Wstar_L2Red_II_3}
\lb{}E_{L_{x,M}}(Z),\pi_{\mathbf{L}^{2}(N,\tau)}\rb{}=\lb{}L_{E_{x,M}(Z),M},\pi_{\mathbf{L}^{2}(N,\tau)}\rb{}=0
\end{align}

\noindent in each case. Equation \ref{EQ.PRP.Wstar_L2Red_II_3} implies $L_{x,M}$ is $\mathbf{L}^{2}(N,\tau)$-reducible by Corollary \ref{COR.Compression_Preservation_II}. If $\tau(1_{N}^{\perp})=\infty$, then $\mathbf{L}^{2}(\langle 1_{N}^{\perp}\rangle_{\mathbb{C}},\tau)=0$ by construction. If not, then $\mathbf{L}^{2}(\langle 1_{N}^{\perp}\rangle_{\mathbb{C}},\tau)=\langle 1_{N}^{\perp}\rangle_{\mathbb{C}}$. For all $Z\in\mathfrak{B}(\mathbb{R})$, we obtain

\begin{align}\label{EQ.PRP.Wstar_L2Red_II_4}
E_{L_{x,M}}(Z)\pi_{\mathbf{L}^{2}(\langle 1_{N}^{\perp}\rangle_{\mathbb{C}},\tau)}=\pi_{\mathbf{L}^{2}(\langle 1_{N}^{\perp}\rangle_{\mathbb{C}},\tau)}E_{L_{x,M}}(Z)=\nu_{x,N}(Z)\cdot \pi_{\mathbf{L}^{2}(\langle 1_{N}^{\perp}\rangle_{\mathbb{C}},\tau)}.
\end{align}

\noindent Equation \ref{EQ.PRP.Wstar_L2Red_II_4} shows $L_{x,M}$ is $\mathbf{L}^{2}(\langle 1_{N}^{\perp}\rangle_{\mathbb{C}},\tau)$-reducible by Corollary \ref{COR.Compression_Preservation_II}. Get $2)$.\par
We show $3)$. Using $2)$, $1.3)$ in Proposition \ref{PRP.Reducible} shows

\begin{align}\label{EQ.PRP.Wstar_L2Red_II_5}
L_{x,M}=\textrm{com}_{\mathbf{L}^{2}(N,\tau)}\hspace{0.05cm} L_{x,M}+\textrm{com}_{\mathbf{L}^{2}(N,\tau)^{\perp}}\hspace{0.05cm} L_{x,M}.
\end{align}

\noindent Applying $\com_{\mathbf{L}^{2}\lc{}N[1_{M}],\tau\rc{}}$ to Equation \ref{EQ.PRP.Wstar_L2Red_II_5} yields

\begin{align}\label{EQ.PRP.Wstar_L2Red_II_6}
\textrm{com}_{\mathbf{L}^{2}\lc{}N[1_{M}],\tau\rc{}}\hspace{0.05cm} L_{x,M}=\textrm{com}_{\mathbf{L}^{2}(N,\tau)}\hspace{0.05cm} L_{x,M}+\textrm{com}_{\mathbf{L}^{2}\lc{}N[1_{M}],\tau\rc{}}\hspace{0.05cm} \lc\textrm{com}_{\mathbf{L}^{2}(N,\tau)^{\perp}}\hspace{0.05cm} L_{x,M}\rc{}
\end{align}

\noindent since $\mathbf{L}^{2}(N,\tau)\subset\mathbf{L}^{2}\lc{}N[1_{M}],\tau\rc$. We directly verify

\begin{align}\label{EQ.PRP.Wstar_L2Red_II_7}
\pi_{\mathbf{L}^{2}(\langle 1_{N}^{\perp}\rangle_{\mathbb{C}},\tau)}=\pi_{\mathbf{L}^{2}(N,\tau)^{\perp}}\pi_{\mathbf{L}^{2}\lc{}N[1_{M}],\tau\rc{}}=\pi_{\mathbf{L}^{2}\lc{}N[1_{M}],\tau\rc{}}\pi_{\mathbf{L}^{2}(N,\tau)^{\perp}}
\end{align}

\noindent by testing on the inner product. Equation \ref{EQ.PRP.Wstar_L2Red_II_7} implies

\begin{align}\label{EQ.PRP.Wstar_L2Red_II_8}
\textrm{com}_{\mathbf{L}^{2}\lc{}N[1_{M}],\tau\rc{}}\hspace{0.05cm} \lc\textrm{com}_{\mathbf{L}^{2}(N,\tau)^{\perp}}\hspace{0.05cm} L_{x,M}\rc{}=\textrm{com}_{\mathbf{L}^{2}(\langle 1_{N}^{\perp}\rangle_{\mathbb{C}},\tau)}\hspace{0.05cm} L_{x,M}.
\end{align}

\noindent Applying Equation \ref{EQ.PRP.Wstar_L2Red_II_8} to the right-hand side of Equation \ref{EQ.PRP.Wstar_L2Red_II_6} shows $4)$.
\end{proof}

%NEWPAGE
%NEWPAGE
%NEWPAGE

\pagebreak

%NEWPAGE
%NEWPAGE
%NEWPAGE

\begin{lem}\label{LEM.Wstar_L2Red_I}
Let $N\subset (M,\tau)$. If $x\in\mathbf{L}^{0}\lc{}N[1_{M}],\tau\rc$ is self-adjoint, then $\int\lambda dL_{N}\lc{}E_{x,N}\rc$ is a $\tau$-measurable self-adjoint unbounded operator on $L^{2}(N,\tau)$.
\end{lem}
\begin{proof}
Let $x\in\mathbf{L}^{0}\lc{}N[1_{M}],\tau\rc$ be self-adjoint. Set $T_{x}:=\int\lambda dL_{N}\lc{}E_{x,N}\rc$. Proposition \ref{PRP.Wstar_SMO_I} shows the affiliation property in Equation \ref{EQ.DFN.Wstar_L2Red_I_1} ensures $T_{x}$ is $N$-affiliated. We are left to show $\tau$-measurability as claimed. We use Notation \ref{NTN.Wstar_Trace}.\par
Let $Z\in\mathfrak{B}(\mathbb{R})$. Proposition \ref{PRP.Wstar_Trace_Compression} shows

\begin{align}\label{EQ.LEM.Wstar_L2Red_I_1}
L_{N}\lc{}E_{x,N}(Z)\rc{}=\textrm{inc}_{2}^{-\dagger}\lc\textrm{com}_{\mathbf{L}^{2}(N,\tau)}\hspace{0.05cm} L_{M}\lc{}E_{x,M}(Z)\rc\rc{}.
\end{align}

\noindent Note $2)$ in Proposition \ref{PRP.Wstar_L2Red_II} implies $L_{x,M}$ is $\mathbf{L}^{2}(N,\tau)$-reducible. Lemma \ref{LEM.Compression_Preservation_I} shows

\begin{align}\label{EQ.LEM.Wstar_L2Red_I_2}
\textrm{com}_{\mathbf{L}^{2}(N,\tau)}\hspace{0.05cm} L_{M}\lc{}E_{x,M}(Z)\rc{}=\textrm{com}_{\mathbf{L}^{2}(N,\tau)}\hspace{0.05cm} L_{E_{x,M}(Z),M}=E_{\textrm{com}_{\mathbf{L}^{2}(N,\tau)}\hspace{0.05cm} L_{x,M}}(Z).
\end{align}

\noindent We combine Equation \ref{EQ.LEM.Wstar_L2Red_I_1} and Equation \ref{EQ.LEM.Wstar_L2Red_I_2} to

\begin{align}\label{EQ.LEM.Wstar_L2Red_I_3}
L_{N}\lc{}E_{x,N}(Z)\rc{}=\textrm{inc}_{2}^{-\dagger}\lc{}E_{\textrm{com}_{\mathbf{L}^{2}(N,\tau)}\hspace{0.05cm} L_{x,M}}(Z)\rc{}.
\end{align}

Upon inversion of $\inc_{2}^{\dagger}$, we see Equation \ref{EQ.LEM.Wstar_L2Red_I_3} ensures $2)$ in Corollary \ref{COR.Unbd_Twist_RP} applies here. Applying said corollary accordingly, get

\begin{align}\label{EQ.LEM.Wstar_L2Red_I_4}
T_{x}=\textrm{inc}_{2}^{-\dagger}\lc\textrm{com}_{\mathbf{L}^{2}(N,\tau)}\hspace{0.05cm} L_{x,M}\rc{}.
\end{align}

\noindent Equation \ref{EQ.LEM.Wstar_L2Red_I_4} implies $T_{x}^{2}=T_{x^{2}}$ and therefore

\begin{align}\label{EQ.LEM.Wstar_L2Red_I_5}
\absv{1.15}{T_{x}}=T_{\absv{0.925}{x}}.
\end{align}

\noindent Equation \ref{EQ.SSEC.B_JFC_L2Red_2} and Equation \ref{EQ.LEM.Wstar_L2Red_I_5} let us calculate

\begin{align}\label{EQ.LEM.Wstar_L2Red_I_6}
\tau\lc{}E_{\absv{0.925}{x},M}(Z)\rc{}=\tau\lc{}L_{M}\lc{}E_{\absv{0.925}{x},M}(Z)\rc\rc{}=\tau\lc{}E_{\absv{1.15}{T_{x}},M}(Z)\rc{}+\nu_{\absv{0.925}{x},N}(Z)\cdot \tau(1_{N}^{\perp})
\end{align}

\noindent for all $Z\in\mathfrak{B}(\mathbb{R})$. Equation \ref{EQ.LEM.Wstar_L2Red_I_6} shows $\tau$-measurability of $L_{x,M}$ implies $\tau$-measurability of $T_{x}$. This is our claim by construction.
\end{proof}

\begin{dfn}\label{DFN.Wstar_L2Red_Compression}
Let $N\subset (M,\tau)$. For all $x\in\mathbf{L}^{0}\lc{}N[1_{M}],\tau\rc$, set

\begin{align}\label{EQ.DFN.Wstar_L2Red_Compression_1}
\comN x:=L_{N}^{-1}\lc\int\lambda dL_{N}\lc{}E_{\RE(x),N}\rc\rc{}+iL_{N}^{-1}\lc\int\lambda dL_{N}\lc{}E_{\IM(x),N}\rc\rc{}.
\end{align}
\end{dfn}

%NEWPAGE
%NEWPAGE
%NEWPAGE

\pagebreak

%NEWPAGE
%NEWPAGE
%NEWPAGE

\begin{lem}\label{LEM.Wstar_L2Red_II}
Let $N\subset (M,\tau)$. If $x\in\mathbf{L}^{0}\lc{}N[1_{M}],\tau\rc$ is self-adjoint, then

\begin{itemize}
\item[1)] $\com_{\mathbf{L}^{2}\lc{}N[1_{M}],\tau\rc{}}L_{x,M}=\com_{\mathbf{L}^{2}(N,\tau)}\inc_{2}^{\dagger}\lc{}L_{\comN x,N}\rc{}+\nu_{x}\cdot \pi_{\mathbf{L}^{2}(\langle 1_{N}^{\perp}\rangle_{\mathbb{C}},\tau)}$.

\item[2)] $\com_{\mathbf{L}^{2}(N,\tau)}L_{x,M}=\com_{\mathbf{L}^{2}(N,\tau)}\inc_{2}^{\dagger}\lc{}L_{\comN x,N}\rc$,

\item[3)] $\com_{\mathbf{L}^{2}(\langle 1_{N}^{\perp}\rangle_{\mathbb{C}},\tau)}L_{x,M}=\nu_{x}\cdot \pi_{\mathbf{L}^{2}(\langle 1_{N}^{\perp}\rangle_{\mathbb{C}},\tau)}$.
\end{itemize}
\end{lem}
\begin{proof}
If we have $2)$ and $3)$, then $3)$ in Proposition \ref{PRP.Wstar_L2Red_II} implies $1)$. Get $2)$ by applying $\inc_{2}^{\dagger}$ to Equation \ref{EQ.LEM.Wstar_L2Red_I_4}. Using Lemma \ref{LEM.Compression_Preservation_I}, Equation \ref{EQ.PRP.Wstar_L2Red_II_4} shows the spectral theorem implies $3)$ since the given spectral measures coincide.
\end{proof}

\begin{cor}\label{COR.Wstar_L2Red_I}
Let $N\subset (M,\tau)$. For all $x\in\mathbf{L}^{0}\lc{}N[1_{M}],\tau\rc$, we have

\begin{itemize}
\item[1)] $1_{N}x1_{N}\in\mathbf{L}^{0}(N,\tau)$,

\item[2)] $\comN x=\comN 1_{N}x1_{N}$.
\end{itemize}
\end{cor}
\begin{proof}
We know $1)$ since the affiliation property in Equation \ref{EQ.DFN.Wstar_L2Red_I_1} is identical. Note $1)$ in Proposition \ref{PRP.Wstar_L2Red_I} shows all claims reduce to self-adjoint elements. Let $x\in\mathbf{L}^{0}\lc{}N[1_{M}],\tau\rc$ be self-adjoint. Using Corollary \ref{COR.Wstar_CLRA_III}, Equation \ref{EQ.PRP.Wstar_Trace_Compression_5} lets us calculate

\begin{align*}
\textrm{com}_{\mathbf{L}^{2}(N,\tau)}\hspace{0.05cm} L_{x,M} & = \textrm{com}_{\mathbf{L}^{2}\lc{}N[1_{M}],\tau\rc{}}\hspace{0.05cm} \overline{L_{1_{N},M}L_{x,M}L_{1_{N},M}} \phantom{\bigg)} \\
& = \textrm{com}_{\mathbf{L}^{2}\lc{}N[1_{M}],\tau\rc{}}\hspace{0.05cm} L_{1_{N},M}\cdot \overline{L_{x,M}L_{1_{N},M}} \phantom{\bigg)} \\
& = \textrm{com}_{\mathbf{L}^{2}\lc{}N[1_{M}],\tau\rc{}}\hspace{0.05cm} L_{1_{N},M}\cdot \overline{L_{x,M}}\cdot L_{1_{N},M} \phantom{\bigg)} \\
& = \textrm{com}_{\mathbf{L}^{2}\lc{}N[1_{M}],\tau\rc{}}\hspace{0.05cm} L_{1_{N},M}L_{x,M}L_{1_{N},M} \phantom{\bigg)} \\
& = \textrm{com}_{\mathbf{L}^{2}\lc{}N[1_{M}],\tau\rc{}}\hspace{0.05cm} L_{1_{N}x1_{N},M} \phantom{\bigg)}
\end{align*}

\noindent by boundedness of left-~and right-multiplication with $1_{N}$. Equation \ref{EQ.LEM.Wstar_L2Red_I_4} shows applying $\com_{\mathbf{L}^{2}(N,\tau)}$ to both sides of the above calculation yields $2)$.
\end{proof}

Upon identifying $\mathbf{L}^{0}(N,\tau)=L^{0}(N,\tau)$ as per Remark \ref{REM.Wstar_L2Red}, note Lemma \ref{LEM.Wstar_L2Red_III} lets us extend $N[1_{M}]=N\oplus\langle 1_{N}^{\perp}\rangle_{\mathbb{C}}$ to $L^{0}\lc{}N[1_{M}],\tau\rc{}=L^{0}(N,\tau)\oplus\langle 1_{N}^{\perp}\rangle_{\mathbb{C}}$ as per Equation \ref{EQ.SSEC.B_JFC_Compression_1} using direct sum of $^{*}$-algebras s.t.~integrability is preserved. Theorem \ref{THM.Wstar_L2Red} and its Corollary \ref{COR.Wstar_L2Red_III} ensure this extends to $L^{p}$-norms for all $p\in [1,\infty]$. We may therefore forget all a priori complications underlying Definition \ref{DFN.Wstar_L2Red_Compression}, treating $\comN=\comunit$ and $\comunitperp$ as in the bounded case. We make this explicit in Diagram \ref{EQ.SSEC.B_JFC_Compression_3}.\par

%NEWPAGE
%NEWPAGE
%NEWPAGE

\pagebreak

%NEWPAGE
%NEWPAGE
%NEWPAGE

\begin{dfn}
For all $x\in\mathbf{L}^{0}\lc{}N[1_{M}],\tau\rc$, set $\nu_{x}:=\nu_{\RE(x)}+i\nu_{\IM(x)}$.
\end{dfn}

\begin{lem}\label{LEM.Wstar_L2Red_III}
Let $N\subset (M,\tau)$. For all $x\in\mathbf{L}^{0}\lc{}N[1_{M}],\tau\rc$, we have

\begin{itemize}
\item[1)] $x=1_{N}x1_{N}+\nu_{x}1_{N}^{\perp}$,

\item[2)] $x\in\mathbf{L}^{0}(N,\tau)$ if and only if $\nu_{\RE(x)}=\nu_{\IM(x)}=0$,

\item[3)] $\tau\lc\absv{0.925}{x}\rc{}=\tau\lc\absv{0.925}{\comN x}\rc\in [0,\infty]$ if $x\in\mathbf{L}^{0}(N,\tau)$.
\end{itemize}
\end{lem}
\begin{proof}
If $N\subset M$ is a unital $W^{*}$-subalgebra, then we reduce to the non-unital case for $\nu_{x}=0$. We assume $N\subset M$ is a non-unital $W^{*}$-subalgebra without loss of generality. Let $x\in\mathbf{L}^{0}\lc{}N[1_{M}],\tau\rc$. We require $1)$ to show $2)$ and $3)$.\par
We show $1)$. As $\mathbf{L}^{0}\lc{}N[1_{M}],\tau\rc$ is a $^{*}$-subalgebra by $1)$ in Proposition \ref{PRP.Wstar_L2Red_I}, we assume $x$ is self-adjoint without loss of generality. We show Equation \ref{EQ.LEM.Wstar_L2Red_III_7} to get decomposition as per $1)$ by the spectral theorem. Using $2)$ in Lemma \ref{LEM.Wstar_L2Red_II} for the first and third identity, as well as $2)$ in Corollary \ref{COR.Wstar_L2Red_I} for the second one, we calculate

\begin{align*}
\textrm{com}_{\mathbf{L}^{2}(N,\tau)}\hspace{0.05cm} L_{x,M} & = \textrm{com}_{\mathbf{L}^{2}(N,\tau)}\hspace{0.05cm} \textrm{inc}_{2}^{\dagger}\lc{}L_{\comN x,N}\rc \phantom{\Bigg)} \\
& = \textrm{com}_{\mathbf{L}^{2}(N,\tau)}\hspace{0.05cm} \textrm{inc}_{2}^{\dagger}\lc{}L_{\comN 1_{N}x1_{N},N}\rc \phantom{\Bigg)} \\
& = \textrm{com}_{\mathbf{L}^{2}(N,\tau)}\hspace{0.05cm} L_{1_{N}x1_{N},M}. \phantom{\Bigg)}
\end{align*}

We show Equation \ref{EQ.LEM.Wstar_L2Red_III_7}. Let $Z\in\mathfrak{B}(\mathbb{R})$. Using the above calculation, Equation \ref{EQ.LEM.Wstar_L2Red_I_3} immediately implies

\begin{align}\label{EQ.LEM.Wstar_L2Red_III_1}
L_{N}\lc{}E_{x,N}(Z)\rc{}=L_{N}\lc{}E_{1_{N}x1_{N},N}(Z)\rc{}.
\end{align}

\noindent Note Equation \ref{EQ.SSEC.B_JFC_L2Red_3} ensures $E_{x,N}(Z),E_{1_{N}x1_{N},N}(Z)\in N$ by definition. Moreover, we know $L_{N}:N\longrightarrow\BII\lc{}L^{2}(N,\tau)\rc$ is faithful by $1)$ in Proposition \ref{PRP.Wstar_Trace_SF_Subalg}. Applying $L_{N}^{-1}$ to both sides of Equation \ref{EQ.LEM.Wstar_L2Red_III_1} yields

\begin{align}\label{EQ.LEM.Wstar_L2Red_III_2}
E_{x,N}(Z)=E_{1_{N}x1_{N},N}(Z).
\end{align}

\noindent Equation \ref{EQ.SSEC.B_JFC_L2Red_2} and Equation \ref{EQ.LEM.Wstar_L2Red_III_2} show we have decomposition

\begin{align}\label{EQ.LEM.Wstar_L2Red_III_3}
E_{x,M}(Z)=E_{1_{N}x1_{N},N}(Z)\oplus\nu_{x,N}1_{N}^{\perp}(Z)
\end{align}

\noindent w.r.t.~$N[1_{M}]=N\oplus\langle 1_{N}^{\perp}\rangle_{\mathbb{C}}$.\par

%NEWPAGE
%NEWPAGE
%NEWPAGE

\pagebreak

%NEWPAGE
%NEWPAGE
%NEWPAGE

Using $N1_{N}^{\perp}=1_{N}^{\perp}N=0$, we directly verify

\begin{align}\label{EQ.LEM.Wstar_L2Red_III_4}
R_{\pm i}\lc{}1_{N}x1_{N}+\nu_{x}1_{N}^{\perp}\rc{}=1_{N}\lc{}1_{N}x1_{N}\mp i1_{M}\rc^{-1}1_{N}+1_{N}^{\perp}\lc\nu_{x}1_{N}^{\perp}\mp i1_{M}\rc^{-1}1_{N}^{\perp}
\end{align}

\noindent in $L^{0}(M,\tau)$. Using $3)$ in Proposition \ref{PRP.Wstar_CLRA_FC} and bounded measurable functional calculus as per Definition \ref{DFN.Wstar_CLRA_FC_III} inside compression terms, Equation \ref{EQ.LEM.Wstar_L2Red_III_4} implies

\begin{align}\label{EQ.LEM.Wstar_L2Red_III_5}
E_{1_{N}x1_{N}+\nu_{x}1_{N}^{\perp},M}(Z)=E_{1_{N}x1_{N},N}(Z)\oplus 1_{N}^{\perp}E_{\nu_{x}1_{N}^{\perp},M}(Z)1_{N}^{\perp}
\end{align}

\noindent w.r.t.~$N[1_{M}]=N\oplus\langle 1_{N}^{\perp}\rangle_{\mathbb{C}}$. Note Equation \ref{EQ.SSEC.B_JFC_L2Red_3} ensures $E_{1_{N}x1_{N},N}(Z)=1_{N}E_{1_{N}x1_{N},M}(Z)1_{N}$ by definition. Lemma \ref{LEM.Wstar_CLRA_FC} shows elements in $W_{M}^{*}\lc\nu_{x}1_{N}\rc$ as per $3)$ in Proposition \ref{PRP.Wstar_CLRA_FC} are strong limits of finite polynomials with elements in $\lset\nu_{x}1_{N}^{\perp},1_{M}\rset$. Using the latter, get $\nu_{x,N}1_{N}^{\perp}(Z)+1_{N}=E_{\nu_{x}1_{N}^{\perp},M}(Z)$ since $E_{\nu_{x}1_{N}^{\perp},M}(\mathbb{R})=1_{M}$. This implies

\begin{align}\label{EQ.LEM.Wstar_L2Red_III_6}
\nu_{x,N}1_{N}^{\perp}(Z)=1_{N}^{\perp}E_{\nu_{x}1_{N}^{\perp},M}1_{N}^{\perp}(Z).
\end{align}

Equation \ref{EQ.LEM.Wstar_L2Red_III_6} shows the right-hand sides of Equation \ref{EQ.LEM.Wstar_L2Red_III_5} and Equation \ref{EQ.LEM.Wstar_L2Red_III_3} are identical in each case. For all $Z\in\mathfrak{B}(\mathbb{R})$, we therefore have

\begin{align}\label{EQ.LEM.Wstar_L2Red_III_7}
E_{x,M}(Z)=E_{1_{N}x1_{N}+\nu_{x}1_{N}^{\perp},M}(Z).
\end{align}

\noindent Using $2)$ in Lemma \ref{LEM.Wstar_CLRA_FC}, Equation \ref{EQ.LEM.Wstar_L2Red_III_7} shows the spectral theorem implies $1)$ since the given spectral measures coincide. Note $1)$ shows $2)$ at once.\par
We show $3)$. Equation \ref{EQ.LEM.Wstar_L2Red_I_5} shows $\absv{0.925}{\comN x}=\comN \absv{0.925}{x}$. We directly verify $\absv{1.15}{1_{N}x1_{N}}=1_{N}\absv{0.925}{x}1_{N}$. Thus $\absv{0.925}{x}\in\mathbf{L}^{0}(N,\tau)$ if $x\in \mathbf{L}^{0}(N,\tau)$, hence we assume $x\in\mathbf{L}^{0}(N,\tau)$ is positive without loss of generality. Then $2)$ implies $\nu_{x}=0$. Note the infimum in Equation \ref{EQ.SSEC.B_SMO_NCI_5} runs over all $\lambda>0$. Equation \ref{EQ.SSEC.B_JFC_L2Red_2} and $\nu_{x}=0$ therefore imply the generalised singular number of $x$ is given by

\begin{align*}
\mu_{t}(x)=
\begin{cases}
\mu_{t-\tau(1_{N}^{\perp})}\lc\comN x\rc{} & \If\ t\geq \tau(1_{N}^{\perp}), \\
0 & \Else.
\end{cases}
\end{align*}

\noindent We use the explicit expression above to calculate

\begin{align}\label{EQ.LEM.Wstar_L2Red_III_8}
\tau(x)=\int_{0}^{\infty}\mu_{t}(x)dt=\int_{\tau(1_{N}^{\perp})}^{\infty}\mu_{t-\tau(1_{N}^{\perp})}\lc\comN x\rc{}dt=\int_{0}^{\infty}\mu_{t}\lc\comN x\rc{}dt=\tau\lc\comN x\rc{}.
\end{align}

\noindent Following Remark \ref{REM.Wstar_SMO_Trace}, Equation \ref{EQ.LEM.Wstar_L2Red_III_8} shows $\tau\lc\absv{0.925}{x}\rc{}=\tau\lc\absv{0.925}{\comN x}\rc$. Get $3)$.
\end{proof}

%NEWPAGE
%NEWPAGE
%NEWPAGE

\pagebreak

%NEWPAGE
%NEWPAGE
%NEWPAGE

\begin{cor}\label{COR.Wstar_L2Red_II}
Let $N\subset (M,\tau)$. For all $x\in\mathbf{L}^{0}(N,\tau)$, we have

\begin{itemize}
\item[1)] $x\in L^{1}(M,\tau)$ if and only if $\comN x\in L^{1}(N,\tau)$,

\item[2)] $\|x\|_{1}=\dblv{}\comN x\dblv_{1}$ and $\tau(x)=\tau\lc\comN x\rc$ if $x\in L^{1}(M,\tau)$.
\end{itemize}
\end{cor}
\begin{proof}
Note $3)$ in Lemma \ref{LEM.Wstar_L2Red_III} shows $1)$. If we furthermore extend Equation \ref{EQ.LEM.Wstar_L2Red_III_8} to all $x\in\mathbf{L}^{0}(N,\tau)\cap L^{1}(M,\tau)$, then $2)$ follows. Equation \ref{EQ.LEM.Wstar_L2Red_I_4} shows $\comN:\mathbf{L}^{0}(N,\tau)\longrightarrow L^{0}(N,\tau)$ is linear and positivity-preserving by $1)$ in Corollary \ref{COR.Unbd_Twist_RP} and Proposition \ref{PRP.Compression_Concrete_RP}. For all $\lambda\in\mathbb{R}$, $\max\{\lambda,0\}=\frac{1}{2}\lc\lambda+\absv{1}{\lambda}\rc$ and $\min\{\lambda,0\}=\frac{1}{2}\lc\lambda-\absv{1}{\lambda}\rc$. We use decomposition as per Proposition \ref{PRP.Wstar_NCI_IV} and thereby see linearity, positivity-preservation and Equation \ref{EQ.LEM.Wstar_L2Red_I_5} extend Equation \ref{EQ.LEM.Wstar_L2Red_III_8} to all $x\in\mathbf{L}^{0}(N,\tau)\cap L^{1}(M,\tau)$.
\end{proof}

\begin{lem}\label{LEM.Wstar_L2Red_IV}
If $N\subset (M,\tau)$, then

\begin{itemize}
\item[1)] $\comN:\mathbf{L}^{0}\lc{}N[1_{M}],\tau\rc\longrightarrow L^{0}(N,\tau)$ is a surjective $^{*}$-homomorphism,

\item[2)] $\comN:\mathbf{L}^{0}(N,\tau)\longrightarrow L^{0}(N,\tau)$ is a $^{*}$-isomorphism,

\item[3)] we have commutative diagram

\begin{equation}\label{EQ.LEM.Wstar_L2Red_IV_1}
\begin{tikzcd}
\mathbf{L}^{0}(N,\tau)\arrow[rr,hook]\arrow[dd,"\comN"] & & \mathbf{L}^{0}\lc{}N[1_{M}],\tau\rc\arrow[rr,"L_{M}"]\arrow[dd,"\comN"] & & \UBII\lc{}L^{2}(M,\tau)\rc\arrow[dd,"\inc_{2}^{-\dagger}\circ\hspace{0.0675cm} \com_{\mathbf{L}^{2}(N,\tau)}"] \\
& & & & \\
L^{0}(N,\tau)\arrow[rr,"\id_{L^{0}(N,\tau)}"] & & L^{0}(N,\tau)\arrow[rr,"L_{N}"] & & \UBII\lc{}L^{2}(N,\tau)\rc{}
\end{tikzcd}
\end{equation}

\begin{reapply}
\end{reapply}

\noindent s.t.~horizontal maps are normal unital injective $^{*}$-homomorphisms and vertical ones are positivity-preserving linear surjections,

\item[3)] $\mathbf{L}^{2}(N,\tau)=\comN^{-1}\lc{}L^{2}(N,\tau)\rc$ and $\comN^{-1}:L^{2}(N,\tau)\longrightarrow\mathbf{L}^{2}(N,\tau)$ is an isometric isomorphism of Hilbert spaces restricting to the identity on $N$.
\end{itemize}
\end{lem}
\begin{proof}
Equation \ref{EQ.LEM.Wstar_L2Red_I_4} shows Diagram \ref{EQ.LEM.Wstar_L2Red_IV_1} for self-adjoint elements. Said equation also shows $\comN:\mathbf{L}^{0}\lc{}N[1_{M}],\tau\rc\longrightarrow L^{0}(N,\tau)$ is linear. Twisting and concrete compression maps are linear and positivity-, hence order-preserving by $1)$ in Corollary \ref{COR.Unbd_Twist_RP} and Proposition \ref{PRP.Compression_Concrete_RP}. Get Diagram \ref{EQ.LEM.Wstar_L2Red_IV_1} from the case of self-adjoint elements.\par
Thus $\comN:\mathbf{L}^{0}\lc{}N[1_{M}],\tau\rc\longrightarrow L^{0}(N,\tau)$ is a positivity-, hence order-preserving linear map. In particular, $\comN$ commutes with algebra involution. If we have

\begin{align}\label{EQ.LEM.Wstar_L2Red_IV_2}
\textrm{com}_{\mathbf{L}^{2}(N,\tau)}\hspace{0.05cm} L_{xy,M}=\overline{\textrm{com}_{\mathbf{L}^{2}(N,\tau)}\hspace{0.05cm} L_{x,M}\cdot \textrm{com}_{\mathbf{L}^{2}(N,\tau)}\hspace{0.05cm} L_{y,M}}
\end{align}

\noindent for all $x,y\in\mathbf{L}^{0}\lc{}N[1_{M}],\tau\rc$, then we see $3.3)$ in Proposition \ref{PRP.Unbd_Twist} and Diagram \ref{EQ.LEM.Wstar_L2Red_IV_2} imply $\comN$ is a $^{*}$-homomorphism.\par
For all $y\in\mathbf{L}^{0}\lc{}N[1_{M}],\tau\rc$, we know $\mathbf{L}^{2}(N,\tau)$-reducibility by $2)$ in Proposition \ref{PRP.Wstar_L2Red_II} and use $\pi_{\mathbf{L}^{2}(N,\tau)}L_{y,M}\subset L_{y,M}\pi_{\mathbf{L}^{2}(N,\tau)}$ as per Equation \ref{EQ.DFN.Reducible_1} to get

\begin{align}\label{EQ.LEM.Wstar_L2Red_IV_3}
\textrm{com}_{\mathbf{L}^{2}(N,\tau)}\hspace{0.05cm} L_{y,M}=\pi_{\mathbf{L}^{2}(N,\tau)}\cdot L_{y,M}\cdot \pi_{\mathbf{L}^{2}(N,\tau)}=L_{y,M}\cdot \pi_{\mathbf{L}^{2}(N,\tau)}.
\end{align}

\noindent Note left-~and right-multiplication with $\pi_{\mathbf{L}^{2}(N,\tau)}$ commute with taking closure. Using Corollary \ref{COR.Wstar_CLRA_III} and Equation \ref{EQ.LEM.Wstar_L2Red_IV_3}, we directly verify Equation \ref{EQ.LEM.Wstar_L2Red_IV_2}. Get $1)$ and $3)$.\par
We show $2)$. We must show injectivity. It suffices to consider self-adjoint elements. By Lemma \ref{LEM.Wstar_L2Red_II} and $2)$ in Lemma \ref{LEM.Wstar_L2Red_III}, we assume $N\subset M$ is a unital $W^{*}$-subalgebra without loss of generality. For all $x\in\mathbf{L}^{0}(N,\tau)$, get $E_{x,M}=E_{x,N}$. The spectral theorem then shows $L_{\comN x,N}$ determines $L_{x,M}$ uniquely in each case. Injectivity as required for $2)$ holds. We show $4)$. For all $x\in\mathbf{L}^{0}(N,\tau)$, $\comN \absv{0.925}{x}^{2}=\absv{0.925}{\comN x}^{2}$ because $\comN$ is a positivity-preserving $^{*}$-homomorphism. Get $4)$ by $2)$ and Corollary \ref{COR.Wstar_L2Red_II}.
\end{proof}

\begin{rem}
Note $\inc_{2}=\comN^{-1}$ on $L^{2}(N,\tau)$ since both are isometries restricting to the identical map on a dense subset. 
\end{rem}

We have $\mathbf{L}^{2}(N,\tau)=\comN^{-1}\lc{}L^{2}(N,\tau)\rc$ and identify along $\inc_{2}$. This is subsumed by the general convention fixed in Remark \ref{REM.Wstar_L2Red}.

\begin{thm}\label{THM.Wstar_L2Red}
Let $(M,\tau)$ be a tracial $W^{*}$-algebra. If $N\subset (M,\tau)$, then

\begin{itemize}
\item[1)] $\comN^{-1}:L^{0}(N,\tau)\longrightarrow\mathbf{L}^{0}(N,\tau)$ is a $^{*}$-isomorphism,

\item[2)] we have commutative diagram

\begin{equation}\label{EQ.THM.Wstar_L2Red_1}
\begin{tikzcd}
\mathbf{L}^{0}(N,\tau)\arrow[rr,hook]\arrow[dd,"\comN"] & & \mathbf{L}^{0}\lc{}N[1_{M}],\tau\rc\arrow[rr,"L_{M}"]\arrow[dd,"\comN"] & & \UBII\lc{}L^{2}(M,\tau)\rc\arrow[dd,"\com_{L^{2}(N,\tau)}"] \\
& & & & \\
L^{0}(N,\tau)\arrow[rr,"\id_{L^{0}(N,\tau)}"] & & L^{0}(N,\tau)\arrow[rr,"L_{N}"] & & \UBII\lc{}L^{2}(N,\tau)\rc{}
\end{tikzcd}
\end{equation}

\begin{reapply}
\end{reapply}

\noindent s.t.~horizontal maps are normal unital injective $^{*}$-homomorphisms and vertical ones are positivity-preserving linear surjections,

\item[3)] for all $p\in [1,\infty]$, $\mathbf{L}^{p}(N,\tau)=\comN^{-1}\lc{}L^{p}(N,\tau)\rc$ and $\comN^{-1}:L^{p}(N,\tau)\longrightarrow\mathbf{L}^{p}(N,\tau)$ is an isometric isomorphism of Banach spaces restricting to the identity on $N$.
\end{itemize}
\end{thm}
\begin{proof}
We identify along $\inc_{2}$. Lemma \ref{LEM.Wstar_L2Red_IV} implies $1)$ and $2)$ at once. We show $3)$. For this, we instead require the following. Let $x\in\mathbf{L}^{0}\lc{}N[1_{M}],\tau\rc$. If $\alpha\in\mathbb{Q}$, then we know

\begin{align}\label{EQ.THM.Wstar_L2Red_2}
\comN \absv{0.925}{x}^{\alpha}=\absv{0.925}{\comN x}^{\alpha}    
\end{align}

\noindent because $\comN$ is a positivity-preserving $^{*}$-homomorphism by $1)$ in Lemma \ref{LEM.Wstar_L2Red_IV}. We show Equation \ref{EQ.THM.Wstar_L2Red_2} holds for all $\alpha\geq 0$.\par
Let $\beta\geq 0$. For all $\lambda\geq 0$, set $g_{\beta}(\lambda):=\lambda^{\beta}$. Get $R_{i}\lc{}g_{\beta}\rc\in C_{0}\lc{}[0,\infty)\rc$ and $R_{i}\lc\absv{0.925}{x}^{\beta}\rc\in\mathbf{L}^{0}(N,\tau)$. Since $\comN$ is a positivity-preserving $^{*}$-homomorphism, get

\begin{align}\label{EQ.THM.Wstar_L2Red_3}
\comN R_{i}\lc\absv{0.925}{x}^{\beta}\rc{}=R_{i}\lc\comN \absv{0.925}{x}^{\beta}\rc{}.
\end{align}
    
\noindent Let $\alpha\geq 0$ and $\lset\alpha_{n}\rset_{n\in\mathbb{N}}\subset\mathbb{Q}\cap (0,\infty)$ s.t.~$\alpha=\lim_{n\in\mathbb{N}}\alpha_{n}$. We thereby have uniformly bounded pointwise limit $R_{i}\lc{}g_{\alpha}\rc{}=\lim_{n\in\mathbb{N}}R_{i}\lc{}g_{\alpha_{n}}\rc$ in $C_{0}\lc{}[0,\infty)\rc$ and calculate

\begin{align}\label{EQ.THM.Wstar_L2Red_4}
R_{i}\lc\absv{0.925}{x}^{\alpha}\rc{}=\s\textrm{-}\lim_{n\in\mathbb{N}}\hspace{0.025cm} R_{i}\lc\absv{0.925}{x}^{\alpha_{n}}\rc{},\ R_{i}\lc\absv{0.925}{\comN x}^{\alpha}\rc{}=\s\textrm{-}\lim_{n\in\mathbb{N}}\hspace{0.025cm} R_{i}\lc\absv{0.925}{\comN x}^{\alpha_{n}}\rc{}.
\end{align}

\noindent Note $2)$ in Corollary \ref{COR.Wstar_L2Red_I} shows $\comN=\comunit$ on $N[1_{M}]$. Thus $\comN$ restricted to $N[1_{M}]$ is a completely positive normal bounded linear map by Proposition \ref{PRP.Compression_Abstract_Bd}, hence bounded strongly continuous by Proposition \ref{PRP.Wstar_Normal}. Using bounded strong continuity in each case, Equation \ref{EQ.THM.Wstar_L2Red_2}, Equation \ref{EQ.THM.Wstar_L2Red_3} and Equation \ref{EQ.THM.Wstar_L2Red_4} let us calculate

\begin{align*}
\comN R_{i}\lc\absv{0.925}{x}^{\alpha}\rc{} & = \s\textrm{-}\lim_{n\in\mathbb{N}}\hspace{0.025cm} \comN R_{i}\lc\absv{0.925}{x}^{\alpha_{n}}\rc \phantom{\bigg)} \\
& = \s\textrm{-}\lim_{n\in\mathbb{N}}\hspace{0.025cm} R_{i}\lc\comN \absv{0.925}{x}^{\alpha_{n}}\rc \phantom{\bigg)} \\
& = \s\textrm{-}\lim_{n\in\mathbb{N}}\hspace{0.025cm} R_{i}\lc\absv{0.925}{\comN x}^{\alpha_{n}}\rc \phantom{\bigg)} \\
& = R_{i}\lc\absv{0.925}{\comN x}^{\alpha}\rc{}. \phantom{\bigg)}
\end{align*}

The above calculation shows

\begin{align}\label{EQ.THM.Wstar_L2Red_5}
\comN R_{i}\lc\absv{0.925}{x}^{\alpha}\rc{}=R_{i}\lc\absv{0.925}{\comN x}^{\alpha}\rc{}.
\end{align}

\noindent Since $\comN 1_{M}=\comunit 1_{M}=1_{N}$, get $\comN\ \lc\absv{0.925}{x}^{\alpha}-i1_{M}\rc{}=\comN \absv{0.925}{x}^{\alpha}-i1_{N}$. Using the latter as first and Equation \ref{EQ.THM.Wstar_L2Red_5} for the second identity below, we calculate 

\begin{align}\label{EQ.THM.Wstar_L2Red_6}
\comN \absv{0.925}{x}^{\alpha}-i1_{N}=\lc\comN R_{i}\lc\absv{0.925}{x}^{\alpha}\rc\rc^{-1}=\absv{0.925}{\comN x}^{\alpha}-i1_{N}.
\end{align}

\noindent Equation \ref{EQ.THM.Wstar_L2Red_6} implies Equation \ref{EQ.THM.Wstar_L2Red_2} for all $\alpha\geq 0$.\par
Let $p\in [1,\infty)$. For all $x\in\mathbf{L}^{0}(N,\tau)$, Equation \ref{EQ.THM.Wstar_L2Red_2} shows $\comN \absv{0.925}{x}^{p}=\absv{0.925}{\comN x}^{p}$. If $p<\infty$, then we know the latter identity implies $3)$ by $1)$ and Corollary \ref{COR.Wstar_L2Red_II}. The case of $p=\infty$ is clear. Get $3)$.
\end{proof}

\begin{rem}\label{REM.Wstar_L2Red}
Let $(M,\tau)$ be a tracial $W^{*}$-algebra. If $N\subset (M,\tau)$, then Theorem \ref{THM.Wstar_L2Red} ensures we identify along $\comN$ without loss of generality. Note identification preserves $^{*}$-algebra structure, positivity and noncommutative $L^{p}$-norms. Corollary \ref{COR.Wstar_L2Red_II} ensures identification further preserves trace.
\end{rem}

%%%%%%%%%%%%%
%%% PART %%%%
%%%%%%%%%%%%%

\subsubsection*{Compression maps on spaces of measurable operators}

If $(M,\tau)$ is a tracial $W^{*}$-algebra and $p\in M$ is a projection, then $M[p]\subset (M,\tau)$ by $2)$ in Proposition \ref{PRP.Wstar_Trace_NCE_II} and $L^{0}\lc{}M[p],\tau\rc\subset L^{0}(M,\tau)$ by Theorem \ref{THM.Wstar_L2Red}.

\begin{prp}\label{PRP.Wstar_L2Red_III}
Let $(M,\tau)$ be a tracial $W^{*}$-algebra. For all projections $p\in M$ and $q\in [1,\infty]$, we have $L^{0}\lc{}M[p],\tau\rc{}=pL^{0}(M,\tau)p$ and $L^{q}\lc{}M[p],\tau\rc{}=pL^{q}(M,\tau)p$.
\end{prp}
\begin{proof}
Multiplication in $L^{0}(M,\tau)$ is continuous in measure topology on bounded subsets \lc{}cf.~Theorem IX.2.2 in \cite{BK.Tak.2003.OpAlg_II} and \cite{ART.Nel.1974.Wstar_Integration}\rc{}. Using the latter, $2)$ in Proposition \ref{PRP.Wstar_L2Red_I} and $M[p]=pMp$, we have $L^{0}\lc{}M[p],\tau\rc{}=\overline{M[p]}=pL^{0}(M,\tau)p$ by approximating in measure topology. For all $q\in [1,\infty]$, get $L^{q}\lc{}M[p],\tau\rc{}=pL^{q}(M,\tau)p$ by $3)$ in Theorem \ref{THM.Wstar_L2Red}.
\end{proof}

\begin{dfn}\label{DFN.Compression_Abstract}
Let $(M,\tau)$ be a tracial $W^{*}$-algebra. For all projections $p\in M$, we define the compression map $\comp:L^{0}(M,\tau)\longrightarrow L^{0}\lc{}M[p],\tau\rc$ by setting

\begin{align}\label{EQ.DFN.Compression_Abstract_1}
\comp x:=pxp 
\end{align}

\noindent for all $x\in L^{0}(M,\tau)$.
\end{dfn}

\begin{cor}\label{COR.Wstar_L2Red_III}
If $N\subset (M,\tau)$, then

\begin{itemize}
\item[1)] $\comN=\comunit$ on $L^{0}\lc{}N[1_{M}],\tau\rc$,

\item[2)] we have commutative diagrams

\begin{equation}\label{EQ.COR.Wstar_L2Red_III_1}
\begin{tikzcd}
L^{0}(N,\tau)\arrow[rr,hook]\arrow[dd,"\id_{L^{0}(N,\tau)}"] & & L^{0}\lc{}N[1_{M}],\tau\rc\arrow[rr,"L_{M}"]\arrow[dd,"\comunit"] & & \UBII\lc{}L^{2}(M,\tau)\rc\arrow[dd,"\com_{L^{2}(N,\tau)}"] \\
& & & & \\
L^{0}(N,\tau)\arrow[rr,"\id_{L^{0}(N,\tau)}"] & & L^{0}(N,\tau)\arrow[rr,"L_{N}"] & & \UBII\lc{}L^{2}(N,\tau)\rc{}
\end{tikzcd}
\end{equation}

\begin{reapply}
\end{reapply}

\begin{equation}\label{EQ.COR.Wstar_L2Red_III_2}
\begin{tikzcd}
L^{0}(N,\tau)^{\op}\arrow[rr,hook]\arrow[dd,"\id_{L^{0}(N,\tau)}"] & & L^{0}\lc{}N[1_{M}],\tau\rc^{\op}\arrow[rr,"R_{M}"]\arrow[dd,"\comunit"] & & \UBII\lc{}L^{2}(M,\tau)\rc\arrow[dd,"\com_{L^{2}(N,\tau)}"] \\
& & & & \\
L^{0}(N,\tau)^{\op}\arrow[rr,"\id_{L^{0}(N,\tau)}"] & & L^{0}(N,\tau)^{\op}\arrow[rr,"R_{N}"] & & \UBII\lc{}L^{2}(N,\tau)\rc{}
\end{tikzcd}
\end{equation}

\begin{reapply}
\end{reapply}

\noindent s.t.~horizontal maps are normal unital injective $^{*}$-homomorphisms and vertical ones are positivity-preserving linear surjections.
\end{itemize}
\end{cor}
\begin{proof}
Get $1)$ by Corollary \ref{COR.Wstar_L2Red_I}. Thus Diagram \ref{EQ.COR.Wstar_L2Red_III_1} is $2)$ in Theorem \ref{THM.Wstar_L2Red}, hence Diagram \ref{EQ.COR.Wstar_L2Red_III_2} follows from Diagram \ref{EQ.COR.Wstar_L2Red_III_1} by $2)$ in Corollary \ref{COR.Wstar_CLRA_II}. 
\end{proof}

%%%%%%%%%%%%%%%%%%%
%%% SUBSECTION %%%%
%%%%%%%%%%%%%%%%%%%

\subsection{Compressed pulled-back joint functional calculus}\label{SSEC.B_JFC_Compression}

We prove Theorem \ref{THM.JFC_Compression} and give compressed pulled-back joint functional calculus of self-adjoint measurable operators in Definition \ref{DFN.JFC_Compression_IV}. In Subsection \ref{SSEC.NCDS_AF_FC}, we apply Theorem \ref{THM.JFC_Compression} and its corollaries.

%%%%%%%%%%%%%
%%% PART %%%%
%%%%%%%%%%%%%

\subsubsection*{Change of canonical left-~and right-actions}

We express change of canonical left-~and right-actions as abstract compression maps. Let $(M,\tau)$ be a tracial $W^{*}$-algebra and $N\subset (M,\tau)$. We identify as per Remark \ref{REM.Wstar_L2Red}. For all $x\in L^{0}\lc{}N[1_{M}],\tau\rc$, note $1)$ in Lemma \ref{LEM.Wstar_L2Red_III} shows we have decomposition

\begin{align}\label{EQ.SSEC.B_JFC_Compression_1}
x=\comunit x \oplus \nu_{x}1_{N}^{\perp}
\end{align}

\noindent extending $N[1_{M}]=N\oplus\langle 1_{N}^{\perp}\rangle_{\mathbb{C}}$ to $L^{0}\lc{}N[1_{M}],\tau\rc{}=L^{0}(N,\tau)\oplus\langle 1_{N}^{\perp}\rangle_{\mathbb{C}}$ using the direct sum of $^{*}$-algebras induced by canonical inclusion in $L^{0}(M,\tau)$. Mapping as per Equation \ref{EQ.SSEC.B_JFC_Compression_1} using left diagram in Diagram \ref{EQ.COR.Wstar_L2Red_III_1}, we indeed have $L^{0}\lc{}N[1_{M}],\tau\rc{}=L^{0}(N,\tau)\oplus\langle 1_{N}^{\perp}\rangle_{\mathbb{C}}$ and commutative diagram

\smallskip

\begin{equation}\label{EQ.SSEC.B_JFC_Compression_3}
\begin{tikzcd}
L^{0}(N,\tau)\arrow[rr,hook]\arrow[rrrr, bend right=33, "\id_{L^{0}(N,\tau)}"] & & L^{0}(N,\tau)\oplus\langle 1_{N}^{\perp}\rangle_{\mathbb{C}}\arrow[rr,"\comunit"] & & L^{0}(N,\tau)
\end{tikzcd}   
\end{equation}

\medskip

\noindent of $^{*}$-homomorphisms. Diagram \ref{EQ.SSEC.B_JFC_Compression_3} extends Diagram \ref{EQ.SSEC.A_Maps_Compression_1}.

\begin{lem}\label{LEM.Wstar_Compression_Preservation}
If $x\in L^{0}\lc{}N[1_{M}],\tau\rc_{h}$, then we have

\begin{itemize}
\item[1)] $\specM x=\specN\comunit x\cup\{\nu_{x}\}$ and $\mathcal{N}\lc{}E_{x,M}\rc{}=\mathcal{N}\big(E_{\comunit x,N}\big)\cap\mathcal{N}(\nu_{x})$,

\item[2)] normal unital surjective $^{*}$-homomorphism $\comunit :W_{M}^{*}(x)\longrightarrow W_{N}^{*}\lc\comunit x\rc$ s.t.~

\begin{align}\label{EQ.LEM.Wstar_Compression_Preservation_1}
\comunit\Gamma_{x,M}(g)=\Gamma_{\comunit x,N}(g)   
\end{align}

\begin{reapply}
\end{reapply}

\noindent for all $g\in L^{\infty}\lc\specM x,dE_{x,M}\rc$,

\item[3)] commutative diagram of normal unital surjective $^{*}$-homomorphisms

\begin{equation}\label{EQ.LEM.Wstar_Compression_Preservation_2}
\begin{tikzcd}
L^{\infty}\lc\specM x,dE_{x,M}\rc\arrow[rr,"\Gamma_{x,M}"]\arrow[dd,"\res"] & & W_{M}^{*}(x)\arrow[dd,"\comunit"] \\
& & \\
L^{\infty}\lc\specN\comunit x,dE_{\comunit x,N}\rc\arrow[rr,"\Gamma_{\comunit x,N}"] & & W_{N}^{*}\lc\comunit x\rc{}
\end{tikzcd}
\end{equation}

\begin{reapply}
\end{reapply}

\noindent with $\res$ the restriction map given by $\specN\comunit x\subset\specM x$.
\end{itemize}
\end{lem}

%NEWPAGE
%NEWPAGE
%NEWPAGE

\pagebreak

%NEWPAGE
%NEWPAGE
%NEWPAGE

\begin{proof}
Let $x\in L^{0}\lc{}N[1_{M}],\tau\rc_{h}$. Decomposing $L^{0}\lc{}N[1_{M}],\tau\rc{}=L^{0}(N,\tau)\oplus\langle 1_{N}^{\perp}\rangle_{\mathbb{C}}$, we directly verify $1)$. We show $2)$ and $3)$ using Lemma \ref{LEM.Wstar_CLRA_FC}. Note $1)$ in Corollary \ref{COR.Wstar_L2Red_III} shows $\comN=\comunit$ on $L^{0}\lc{}N[1_{M}],\tau\rc$. Since we identify as per Remark \ref{REM.Wstar_L2Red}, we know $2)$ in Lemma \ref{LEM.Wstar_L2Red_II} therefore implies

\begin{align}\label{EQ.LEM.Wstar_Compression_Preservation_3}
\restr{0.875}{L_{x,M}}{L^{2}(N,\tau)}=\textrm{com}_{L^{2}(N,\tau)}\hspace{0.05cm} L_{x,M}=L_{\comunit x,N}.
\end{align}

\noindent Up to representation under canonical left-actions, Equation \ref{EQ.LEM.Wstar_Compression_Preservation_3} shows $2)$ and $3)$ in Lemma \ref{LEM.Wstar_CLRA_FC} are $2)$ and $3)$ as claimed. We therefore invert canonical left-actions and conclude by $2)$ in Lemma \ref{LEM.Wstar_CLRA_FC}.
\end{proof}

\begin{rem}
Theorem \ref{THM.Wstar_L2Red} and Corollary \ref{COR.Wstar_L2Red_III} show Lemma \ref{LEM.Wstar_Compression_Preservation} subsumes Corollary \ref{COR.Cstar_FC_I} and Lemma \ref{LEM.Compression_Preservation_I} for canonical left-~and right-actions of $L^{2}$-reducible measurable operators. Choice of unit only involves values at zero.
\end{rem}

\begin{cor}\label{COR.Wstar_Compression_Preservation_I}
If $x\in L^{0}(N,\tau)_{h}$, then we have

\begin{itemize}
\item[1)] $\specM x=\specN x\cup\lset{}0\rset$ and $\mathcal{N}\lc{}E_{x,M}\rc\subset\mathcal{N}\lc{}E_{x,N}\rc$,

\item[2)] $\restr{0.925}{g\lc{}L_{x,M}\rc{}}{L^{2}(N,\tau)}=g\lc{}L_{x,N}\rc$, $\restr{0.925}{g\lc{}R_{x,M}\rc{}}{L^{2}(N,\tau)}=g\lc{}R_{x,N}\rc$ for all $g\in L^{\infty}\lc\specM x,dE_{x,M}\rc$,

\item[3)] commutative diagrams of normal unital surjective $^{*}$-homomorphisms

\begin{equation}\label{EQ.COR.Wstar_Compression_Preservation_I_1}
\begin{tikzcd}
L^{\infty}\lc\specM x,dE_{x,M}\rc\arrow[rr,"\Gamma_{x,M}"]\arrow[dd,"\res"] & & W_{M}^{*}(x)\arrow[r,hook]\arrow[dd,"\comunit"] & M\arrow[r,"L_{M}"] & \BII\lc{}L^{2}(M,\tau)\rc\arrow[dd,"\com_{L^{2}(N,\tau)}"] \\
& & & & \\
L^{\infty}\lc\specN x,dE_{x,N}\rc\arrow[rr,"\Gamma_{x,N}"] & & W_{N}^{*}(x)\arrow[r,hook] & N\arrow[r,"L_{N}"] & \BII\lc{}L^{2}(N,\tau)\rc{}
\end{tikzcd}
\end{equation}

\begin{reapply}
\end{reapply}

\begin{equation}\label{EQ.COR.Wstar_Compression_Preservation_I_2}
\begin{tikzcd}
\hspace{-0.1125cm} L^{\infty}\lc\specM x,dE_{x,M}\rc\arrow[rr,"\Gamma_{x,M}"]\arrow[dd,"\res"] & & W_{M}^{*}(x)\arrow[r,hook]\arrow[dd,"\comunit"] & M^{\op}\arrow[r,"R_{M}"] & \BII\lc{}L^{2}(M,\tau)\rc\arrow[dd,"\com_{L^{2}(N,\tau)}"] \\
& & & & \\
\hspace{-0.1125cm} L^{\infty}\lc\specN x,dE_{x,N}\rc\arrow[rr,"\Gamma_{x,N}"] & & W_{N}^{*}(x)\arrow[r,hook] & N^{\op}\arrow[r,"R_{N}"] & \BII\lc{}L^{2}(N,\tau)\rc{}
\end{tikzcd}
\end{equation}

\begin{reapply}
\end{reapply}

\noindent with $\res$ the restriction map given by $\specN x\subset\specM x$.
\end{itemize}
\end{cor}
\begin{proof}
Get $1)$ by $2)$ in Lemma \ref{LEM.Wstar_L2Red_III} and $1)$ in Lemma \ref{LEM.Wstar_Compression_Preservation}. Diagram \ref{EQ.LEM.Wstar_Compression_Preservation_2} is the left diagram in both Diagram \ref{EQ.COR.Wstar_Compression_Preservation_I_1} and Diagram \ref{EQ.COR.Wstar_Compression_Preservation_I_2}. Diagram \ref{EQ.COR.Wstar_L2Red_III_1} and Diagram \ref{EQ.COR.Wstar_L2Red_III_2} yield right diagrams by restriction to bounded operators. This shows both $2)$ and $3)$.
\end{proof}

\begin{cor}\label{COR.Wstar_Compression_Preservation_II}
If $x\in L^{0}(N,\tau)_{h}$ and $g\in L^{\infty}\lc\specM x,dE_{x,M}\rc$ s.t.~$g(0)=0$, then

\begin{align}\label{EQ.COR.Wstar_Compression_Preservation_II_1}
\Gamma_{x,M}(g)=\Gamma_{x,N}(g).   
\end{align}
\end{cor}
\begin{proof}
Note $N\subset N[1_{M}]\subset (M,\tau)$ by $2)$ in Proposition \ref{PRP.Wstar_Trace_NCE_II}. By Theorem \ref{THM.Wstar_L2Red}, we assume $M=N[1_{M}]$ without loss of generality. Let $x\in L^{0}(N,\tau)_{h}$. We know $\nu_{x}=0$ by $2)$ in Lemma \ref{LEM.Wstar_L2Red_III}. Let $g\in L^{\infty}\lc\specM x,dE_{x,M}\rc$ s.t.~$g(0)=0$. Equation \ref{EQ.LEM.Wstar_Compression_Preservation_1} for $N\subset \lc{}N[1_{M}],\tau\rc$ and $\langle 1_{N}^{\perp}\rangle_{\mathbb{C}}\subset \lc{}N[1_{M}],\tau\rc$ each lets us calculate

\begin{align}\label{EQ.COR.Wstar_Compression_Preservation_II_2}
\Gamma_{x,M}(g)=\comunit\Gamma_{x,M}(g)+\comunitperp\Gamma_{x,M}(g)=\Gamma_{x,N}(g)+\Gamma_{0,\langle 1_{N}^{\perp}\rangle_{\mathbb{C}}}(g).
\end{align}

\noindent Get $\Gamma_{0,\langle 1_{N}^{\perp}\rangle_{\mathbb{C}}}(g)=g(0)=0$ by hypothesis. Equation \ref{EQ.COR.Wstar_Compression_Preservation_II_2} shows Equation \ref{EQ.COR.Wstar_Compression_Preservation_II_1}.
\end{proof}

\begin{cor}\label{COR.Wstar_Compression_Preservation_III}
If $x,y\in L^{0}(N,\tau)_{h}$, then we have

\begin{itemize}
\item[1)] $\specM x\times y=\lc\specN x\cup\lset{}0\rset\rc\times\lc\specN x\cup\lset{}0\rset\rc$ and $\mathcal{N}\lc{}E_{x,y,M}\rc\subset\mathcal{N}\lc{}E_{x,y,N}\rc$,

\item[2)] $\restr{0.925}{g\lc{}L_{x,M},R_{y,M}\rc{}}{L^{2}(N,\tau)}=g\lc{}L_{x,N},R_{y,N}\rc$ for all $g\in L^{\infty}\lc\specM x\times y,dE_{x,y,M}\rc$,

\item[3)] commutative diagram of normal unital surjective $^{*}$-homomorphisms

\begin{equation}\label{EQ.COR.Wstar_Compression_Preservation_III_1}
\begin{tikzcd}
L^{\infty}\lc\specM x\times y,dE_{x,y,M}\rc\arrow[rr,"\Gamma_{x,y,M}"]\arrow[dd,"\res"] & & W_{M}^{*}(x,y)\arrow[dd,"\comunitunit"]\arrow[rr,"L_{M}\otimes R_{M}"] & & \BII\lc{}L^{2}(M,\tau)\rc\arrow[dd,swap,"\com_{L^{2}(N,\tau)}"] \\
& & & & \\
L^{\infty}\lc\specN x\times y,dE_{x,y,N}\rc\arrow[rr,"\Gamma_{x,y,N}"] & & W_{N}^{*}(x,y)\arrow[rr,"L_{N}\otimes R_{N}"] & & \BII\lc{}L^{2}(N,\tau)\rc{}
\end{tikzcd}
\end{equation}

\begin{reapply}
\end{reapply}

\noindent with $\res$ the restriction map given by $\specN x\times y\subset\specM x\times y$.
\end{itemize}
\end{cor}
\begin{proof}
Note $2)$ Lemma \ref{LEM.Wstar_CLRA_JFC} and $1)$ in Corollary \ref{COR.Wstar_Compression_Preservation_I} show Lemma \ref{LEM.Compression_Preservation_II} applies to the outer diagram in Diagram \ref{EQ.COR.Wstar_Compression_Preservation_III_1}. Said lemma therefore shows $1)$, $2)$ and the outer diagram. Normality of all maps involved reduces the left diagram in Diagram \ref{EQ.COR.Wstar_Compression_Preservation_III_1} to elementary tensors. Apply $3)$ in Corollary \ref{COR.Wstar_Compression_Preservation_III}.
\end{proof}

%%%%%%%%%%%%%
%%% PART %%%%
%%%%%%%%%%%%%

\subsubsection*{The compression theorem}

Let $(M,\tau)$ be a tracial $W^{*}$-algebra and $H$ a Hilbert space. Let $N\subset (M,\tau)$ and $V\subset H$ be a Hilbert subspace.

\begin{dfn}\label{DFN.JFC_Compression_I}
We say that a normal unital $^{*}$-homomorphism $\phi:M\longrightarrow\BII(H)$ is $(N,V)$-compressible if $\phi(N)\subset\BII(V)$ and $\pi_{V}=\phi\lc{}1_{N}\rc\pi_{V}$.
\end{dfn}

\begin{prp}\label{PRP.JFC_Compression_I}
If $\phi:M\longrightarrow\BII(H)$ is $(N,V)$-compressible, then $\phi\vert_{N}:N\longrightarrow\BII(V)$ is a normal unital $^{*}$-homomorphism.
\end{prp}
\begin{proof}
Note $\phi\lc{}1_{N}\rc\in\BII(V)$ and $\pi_{V}=\phi\lc{}1_{N}\rc\pi_{V}$ shows $\phi\lc{}1_{N}\rc{}=\comV\phi\lc{}1_{N}\rc{}=\pi_{V}$. Since $\phi(N)\subset\BII(V)$, we see $\phi\vert_{N}:N\longrightarrow\BII(V)$ is a normal unital $^{*}$-homomorphism.
\end{proof}

%NEWPAGE
%NEWPAGE
%NEWPAGE

\pagebreak

%NEWPAGE
%NEWPAGE
%NEWPAGE

\begin{rem}
Let $\phi:M\longrightarrow\BII(H)$ be $(N,V)$-compressible. For all $x\in L^{0}(N,\tau)_{h}$, we have $\im E_{x,N}\subset N$ as per $1)$ in Definition \ref{DFN.Wstar_CLRA_FC_I} and therefore $\phi\lc\im E_{x,N}\rc\subset\BII(V)$.
\end{rem}

\begin{dfn}\label{DFN.JFC_Compression_II}
Let $\phi:M\longrightarrow\BII(H)$ be $(N,V)$-compressible. For all $x\in L^{0}(N,\tau)_{h}$, we define the push-forward spectral measure $\phi\lc{}E_{x,N}\rc$ of $x$ in $N$ under $\phi$ to $V$ by setting

\begin{align}\label{EQ.DFN.JFC_Compression_II_1}
\phi\lc{}E_{x,N}\rc(Z):=\phi\lc{}E_{x,N}(Z)\rc{}
\end{align}

\noindent for all $Z\in\mathfrak{B}(\mathbb{R})$.
\end{dfn}

\begin{lem}\label{LEM.JFC_Compression_II}
Let $\phi:M\longrightarrow\BII(H)$ be $(N,V)$-compressible. If $x\in L^{0}(N,\tau)_{h}$ and further $T\in\UBII_{V}(H)$ s.t.~$\phi\lc\Gamma_{x,M}\lc{}R_{\pm i}\rc\rc{}=R_{\pm i}(T)$, then we have $\phi\lc\Gamma_{x,N}\lc{}R_{\pm i}\rc\rc{}=R_{\pm i}(\restr{0.925}{T}{V})$ and $\phi\lc{}E_{x,N}\rc{}=E_{\restr{0.925}{T}{V}}$.
\end{lem}
\begin{proof}
For all $y\in N$, we have $\phi\lc{}1_{N}\rc\in\BII(V)$, $\lb\phi\lc{}1_{N}\rc{},\pi_{V}\rb{}=0$ and 

\begin{align}\label{EQ.LEM.JFC_Compression_II_1}
\comV\phi(y)=\comV\lc\phi\lc{}1_{N}\rc\phi(y)\phi\lc{}1_{N}\rc\rc{}=\comV\phi\lc\comunit y\rc{}
\end{align}

\noindent since $\phi$ is $(N,V)$-compressible. If $y=\Gamma_{z,M}(g)$ for $z\in N$ and $g\in L^{\infty}\lc\specM z,dE_{z,M}\rc$, then Equation \ref{EQ.LEM.JFC_Compression_II_1} and $2)$ in Lemma \ref{LEM.Wstar_Compression_Preservation} show

\begin{align}\label{EQ.LEM.JFC_Compression_II_2}
\comV\phi\lc\Gamma_{z,M}(g)\rc{}=\comV\phi\lc\comunit\Gamma_{z,M}(g)\rc{}=\phi\lc\Gamma_{z,N}(g)\rc{}.
\end{align}

Let $x\in L^{0}(N,\tau)_{h}$ and $T\in\UBII_{V}(H)$ s.t.~$\phi\lc\Gamma_{x,M}\lc{}R_{\pm i}\rc\rc{}=R_{\pm i}(T)$. Then using $2)$ in Lemma \ref{LEM.Compression_Preservation_I}, Equation \ref{EQ.LEM.JFC_Compression_II_2} lets us calculate

\begin{align}\label{EQ.LEM.JFC_Compression_II_3}
R_{\pm i}(\restr{0.925}{T}{V})=\comV R_{\pm i}(T)=\comV\phi\lc\Gamma_{x,M}\lc{}R_{\pm i}\rc\rc{}=\phi\lc\Gamma_{x,N}\lc{}R_{\pm i}\rc\rc{}.
\end{align}

\noindent Proposition \ref{PRP.JFC_Compression_I} shows $\phi\circ L_{N}^{-1}:L^{\infty}(N,\tau)\longrightarrow\BII(V)$ is normal unital $^{*}$-homomorphism. Set $\phi_{L,N}:=\phi\circ L_{N}^{-1}$. Using $2)$ in Lemma \ref{LEM.Wstar_CLRA_FC}, Equation \ref{EQ.LEM.JFC_Compression_II_3} implies $\phi_{L,N}\lc{}R_{\pm i}\lc{}L_{x,N}\rc\rc{}=\phi\lc\Gamma_{x,N}\lc{}R_{\pm i}\rc\rc{}=R_{\pm i}(\restr{0.925}{T}{V})$. We see approximating in norm shows

\begin{align}\label{EQ.LEM.JFC_Compression_II_4}
\phi\lc\Gamma_{x,N}(g)\rc{}=\phi_{L,N}\lc{}g\lc{}L_{x,N}\rc\rc{}=g(\restr{0.925}{T}{V})    
\end{align}

\noindent for all $g\in C_{0}(\mathbb{R})$. We have push-forward measure $\phi_{L,N}\lc{}E_{L_{x,N}}\rc$ as per Definition \ref{DFN.FC_Preservation}. Precomposing with $L_{N}^{-1}$ maps Equation \ref{EQ.DFN.JFC_Compression_II_1} to Equation \ref{EQ.DFN.FC_Preservation_1} for $\phi_{L,N}\lc{}E_{L_{x,N}}\rc$, i.e.~

\begin{align}\label{EQ.LEM.JFC_Compression_II_5}
\phi\lc{}E_{x,N}\rc{}=\phi_{L,N}\lc{}E_{L_{x,N}}\rc{}.
\end{align}

\noindent Equation \ref{EQ.LEM.JFC_Compression_II_4} shows Lemma \ref{LEM.FC_Preservation_I} applies to $\phi_{L,N}\lc{}E_{L_{x,N}}\rc$. As such, Equation \ref{EQ.LEM.JFC_Compression_II_5} shows $\phi\lc{}E_{x,N}\rc{}=\phi_{L,N}\lc{}E_{L_{x,N}}\rc{}=E_{\restr{0.925}{T}{V}}$ by Lemma \ref{LEM.FC_Preservation_I}. We therefore know $\phi\lc{}E_{x,N}\rc$ is a spectral measure on $\mathbb{R}$ with values in $\BII(V)$.
\end{proof}

\begin{dfn}\label{DFN.JFC_Compression_III}
Let $\phi:M\longrightarrow\BII(H)$ be $(N,V)$-compressible and $\bpsi:M^{\op}\longrightarrow\BII(H)$ $\lc{}N^{\op},V\rc$-compressible. The pair $(\phi,\bpsi)$ is $(N,V)$-compressible. Set $\phi\otimes_{V}\bpsi:=\phi\vert_{N}\otimes\bpsi\vert_{N^{\op}}$.
\end{dfn}

\begin{thm}\label{THM.JFC_Compression}
Let $N\subset (M,\tau)$ and $V\subset H$ be a Hilbert subspace. Let $(\phi,\bpsi)$ be an $(N,V)$-compressible pair. If $x,y\in L^{0}(N,\tau)_{h}$ and further $T,S\in\UBII_{V}(H)$ commute strongly s.t.~$\phi\lc\Gamma_{x,M}\lc{}R_{\pm i}\rc\rc{}=R_{\pm i}(T)$ and $\bpsi\lc\Gamma_{y,M}\lc{}R_{\pm i}\rc\rc{}=R_{\pm i}(S)$, then we have

\begin{itemize}
\item[1)] $\specN x\times y=\spec \restr{0.925}{T}{V}\times \restr{0.925}{S}{V}$ and $\mathcal{N}\lc{}E_{x,y,N}\rc{}=\mathcal{N}\lc{}E_{\restr{0.925}{T}{V},\restr{0.925}{S}{V}}\rc$,

\item[2)] $\lc\phi\otimes_{V}\bpsi\rc\lc{}g(x,y)\rc{}=g\lc\restr{0.925}{T}{V},\restr{0.925}{S}{V}\rc$ for all $g\in L^{\infty}\lc\specN x\times y,dE_{x,y,N}\rc$,

\item[3)] commutative diagram of normal unital surjective $^{*}$-homomorphisms

\begin{equation}\label{EQ.THM.JFC_Compression_1}
\begin{tikzcd}
L^{\infty}\lc\spec T\times S,dE_{T,S}\rc\arrow[rr,"\Gamma_{T,S}"]\arrow[dddddd,start anchor={[xshift=-1.5cm]},end anchor={[xshift=-1.5cm]},bend right=40,swap,"\res"] & & W^{*}\lc{}T,S\rc\arrow[dddddd,"\comV"] \\
& & \\
L^{\infty}\lc\specM x\times y,dE_{x,y,M}\rc\arrow[r,"\Gamma_{x,y,M}"]\arrow[dd,"\res"]\arrow[uu,swap,"\id"] & W_{M}^{*}(x,y)\arrow[ruu,swap,"\phi\otimes\bpsi"]\arrow[dd,"\comunitunit"] & \\
& & \\
L^{\infty}\lc\specN x\times y,dE_{x,y,N}\rc\arrow[r,"\Gamma_{x,y,N}"]\arrow[dd,"\id"] & W_{N}^{*}(x,y)\arrow[rdd,"\phi\otimes_{V}\bpsi"] & \\
& & \\
L^{\infty}\lc\spec \restr{0.925}{T}{V}\times \restr{0.925}{S}{V},dE_{\restr{0.925}{T}{V},\restr{0.925}{S}{V}}\rc\arrow[rr,"\Gamma_{\restr{0.925}{T}{V},\restr{0.925}{S}{V}}"] & & W^{*}\lc\restr{0.925}{T}{V},\restr{0.925}{S}{V}\rc{} \\
\end{tikzcd}
\end{equation}

\begin{reapply}
\end{reapply}

\noindent with restriction maps given by $\spec \restr{0.925}{T}{V}\times \restr{0.925}{S}{V}\subset\spec T\times S$, $\specN x\times y\subset\specM x\times y$.
\end{itemize}
\end{thm}
\begin{proof}
Let $x,y\in L^{0}(N,\tau)_{h}$ and $T,S\in\UBII_{V}(H)$ commute strongly s.t.~$\phi\lc\Gamma_{x,M}\lc{}R_{\pm i}\rc\rc{}=R_{\pm i}(T)$ and $\bpsi\lc\Gamma_{y,M}\lc{}R_{\pm i}\rc\rc{}=R_{\pm i}(S)$. By construction of $W^{*}$-tensor products, the normal unital $^{*}$-isomorphism $L_{N}\otimes R_{N}:W_{M}^{*}(x,y)\longrightarrow W^{*}\lc{}L_{x,M},R_{y,M}\rc$ has inverse $\lc{}L_{N}\otimes R_{N}\rc^{-1}=L_{N}^{-1}\otimes R_{N}^{-1}$ b . Thus $2)$ in Lemma \ref{LEM.Wstar_CLRA_JFC} shows we have commutative diagram

\smallskip

\begin{equation}\label{EQ.THM.JFC_Compression_2}
\begin{tikzcd}
L^{\infty}\lc\spec L_{x,N}\times R_{y,N},dE_{L_{x,N},R_{y,N}}\rc\arrow[rr,"\Gamma_{L_{x,N},R_{y,N}}"]\arrow[dd,"\id"] & & W^{*}\lc{}L_{x,N},R_{y,N}\rc\arrow[dd,"L_{N}^{-1}\otimes R_{N}^{-1}"] \\
& & \\
L^{\infty}\lc\specN x\times y,dE_{x,y,N}\rc\arrow[rr,"\Gamma_{x,y,N}"] & & W_{N}^{*}(x,y)
\end{tikzcd}
\end{equation}

\medskip

\noindent of normal unital $^{*}$-isomorphisms.\par

%NEWPAGE
%NEWPAGE
%NEWPAGE

\pagebreak

%NEWPAGE
%NEWPAGE
%NEWPAGE

Lemma \ref{LEM.JFC_Compression_II} implies $\phi\lc\Gamma_{x,N}\lc{}R_{\pm i}\rc\rc{}=R_{\pm i}(\restr{0.925}{T}{V})$ and $\bpsi\lc\Gamma_{y,N}\lc{}R_{\pm i}\rc\rc{}=R_{\pm i}\lc\restr{0.925}{S}{V}\rc$. Arguing as in the proof of Lemma \ref{LEM.RP}, mapping $C^{*}$-generators onto and closing in $\sigma$-weak operator topology provides normal unital $^{*}$-isomorphisms $\phi:W_{N}^{*}(x)\longrightarrow W^{*}(\restr{0.925}{T}{V})$ and $\phi:W_{N}^{*}(y)\longrightarrow W^{*}\lc\restr{0.925}{S}{V}\rc$. Corollary \ref{COR.Wstar_TP} lets us tensor these two $^{*}$-isomorphisms to the normal unital $^{*}$-isomorphism $\phi\otimes_{V}\bpsi:W_{N}^{*}(x,y)\longrightarrow W^{*}\lc\restr{0.925}{T}{V},\restr{0.925}{S}{V}\rc$.\par
Arguing as in the proof of Lemma \ref{LEM.JFC_Compression_II}, set $\phi_{L,N}:=\phi\circ L_{N}^{-1}$ and $\bpsi_{R,N}:=\bpsi\circ R_{N}^{-1}$. Lemma \ref{LEM.Wstar_CLRA_II} shows $R_{N}=L_{N}^{\op}$ on $N$. Moreover, $2)$ in Lemma \ref{LEM.Wstar_CLRA_FC} and Lemma \ref{LEM.JFC_Compression_II} show we in fact have normal unital $^{*}$-isomorphisms $\phi_{L,N}:W^{*}\lc{}L_{x,N}\rc\longrightarrow W^{*}(\restr{0.925}{T}{V})$ and $\bpsi_{R,N}:W^{*}\lc{}R_{y,N}\rc\longrightarrow W^{*}\lc\restr{0.925}{S}{V}\rc$ s.t.~$\phi_{L,N}\lc{}E_{L_{x,N}}\rc{}=E_{\restr{0.925}{T}{V}}$ and $\bpsi_{R,N}\lc{}E_{R_{x,N}}\rc{}=E_{\restr{0.925}{S}{V}}$.\par
Thus Lemma \ref{LEM.JFC_Preservation} applies using $\phi_{L,N}$ and $\bpsi_{R,N}$, resp.~their inverses. The concrete analogue of $1)$ hence follows by $1)$ in Lemma \ref{LEM.JFC_Preservation}. We pull back along Diagram \ref{EQ.THM.JFC_Compression_2} to the abstract case. This is $1)$. In our setting, Diagram \ref{EQ.LEM.JFC_Preservation_1} in Lemma \ref{LEM.JFC_Preservation} is the commutative diagram

\smallskip

\begin{equation}\label{EQ.THM.JFC_Compression_3}
\begin{tikzcd}
L^{\infty}\lc\spec L_{x,N}\times R_{y,N},dE_{L_{x,N},R_{y,N}}\rc\arrow[rr,"\Gamma_{L_{x,N},R_{y,N}}"]\arrow[dd,"\phantom{\id}",phantom]\arrow[dddd,"\id"] & & W^{*}\lc{}L_{x,N},R_{y,N}\rc\arrow[dd,"\phantom{L_{N}^{-1}\otimes R_{N}^{-1}}",phantom]\arrow[dddd,"\phi_{L,N}\otimes\bpsi_{R,N}"] \\
& & \\
\phantom{L^{\infty}\lc\specN x\times y,dE_{x,y,N}\rc{}}\arrow[rr,"\phantom{\Gamma_{x,y,N}}",phantom]\arrow[dd,"\phantom{\id}",phantom] & & \phantom{W_{N}^{*}(x,y)}\arrow[dd,"\phantom{\phi\otimes_{V}\bpsi}",phantom] \\
& & \\
L^{\infty}\lc\spec \restr{0.925}{T}{V}\times \restr{0.925}{S}{V},dE_{\restr{0.925}{T}{V},\restr{0.925}{S}{V}}\rc\arrow[rr,"\Gamma_{\restr{0.925}{T}{V},\restr{0.925}{S}{V}}"] & & W^{*}\lc\restr{0.925}{T}{V},\restr{0.925}{S}{V}\rc{}
\end{tikzcd}
\end{equation}

\medskip

\noindent of normal unital $^{*}$-isomorphisms. Using $\sigma$-weak closure, we directly verify

\begin{align}\label{EQ.THM.JFC_Compression_4}
\phi_{L,N}\otimes\bpsi_{R,N}=\lc\phi\otimes_{V}\bpsi\rc\circ\lc{}L_{N}^{-1}\otimes R_{N}^{-1}\rc{}    
\end{align}

\noindent on elementary tensors. Equation \ref{EQ.THM.JFC_Compression_4} shows Diagram \ref{EQ.THM.JFC_Compression_3} factors into the upper and lower diagrams

\smallskip

\begin{equation}\label{EQ.THM.JFC_Compression_5}
\begin{tikzcd}
L^{\infty}\lc\spec L_{x,N}\times R_{y,N},dE_{L_{x,N},R_{y,N}}\rc\arrow[rr,"\Gamma_{L_{x,N},R_{y,N}}"]\arrow[dd,"\id"]\arrow[dddd,"\phantom{\id}",phantom] & & W^{*}\lc{}L_{x,N},R_{y,N}\rc\arrow[dd,"L_{N}^{-1}\otimes R_{N}^{-1}"]\arrow[dddd,"\phantom{\phi_{L,N}\otimes\bpsi_{R,N}}",phantom]\\
& & \\
L^{\infty}\lc\specN x\times y,dE_{x,y,N}\rc\arrow[rr,"\Gamma_{x,y,N}"]\arrow[dd,"\id"] & & W_{N}^{*}(x,y)\arrow[dd,"\phi\otimes_{V}\bpsi"] \\
& & \\
L^{\infty}\lc\spec \restr{0.925}{T}{V}\times \restr{0.925}{S}{V},dE_{\restr{0.925}{T}{V},\restr{0.925}{S}{V}}\rc\arrow[rr,"\Gamma_{\restr{0.925}{T}{V},\restr{0.925}{S}{V}}"] & & W^{*}\lc\restr{0.925}{T}{V},\restr{0.925}{S}{V}\rc{}
\end{tikzcd}
\end{equation}

\medskip

\noindent of normal unital $^{*}$-isomorphisms. The outer diagram in Diagram \ref{EQ.THM.JFC_Compression_5} is therefore given by Diagram \ref{EQ.THM.JFC_Compression_3}, whereas the upper one is Diagram \ref{EQ.THM.JFC_Compression_2}. Thus both outer and upper diagrams commute, hence the lower diagram in Diagram \ref{EQ.THM.JFC_Compression_5} commutes.\par
We apply the above discussion to $(N,V)$ and its special case $(N,V)=\lc{}M,H\rc$. We thus combine both to have commutative diagram

\vspace{-0.075cm}
\begin{equation}\label{EQ.THM.JFC_Compression_6}
\begin{tikzcd}
L^{\infty}\lc\spec T\times S,dE_{T,S}\rc\arrow[rr,"\Gamma_{T,S}"]\arrow[dddddd,start anchor={[xshift=-1.5cm]},end anchor={[xshift=-1.5cm]},bend right=40,swap,"\phantom{\res}",phantom] & & W^{*}\lc{}T,S\rc\arrow[dddddd,"\phantom{\comV}",phantom] \\
& & \\
L^{\infty}\lc\specM x\times y,dE_{x,y,M}\rc\arrow[r,"\Gamma_{x,y,M}"]\arrow[dd,"\phantom{\res}",phantom]\arrow[uu,swap,"\id"] & W_{M}^{*}(x,y)\arrow[ruu,swap,"\phi\otimes\bpsi"]\arrow[dd,"\phantom{\comunitunit}",phantom] & \\
& & \\
L^{\infty}\lc\specN x\times y,dE_{x,y,N}\rc\arrow[r,"\Gamma_{x,y,N}"]\arrow[dd,"\id"] & W_{N}^{*}(x,y)\arrow[rdd,"\phi\otimes_{V}\bpsi"] & \\
& & \\
L^{\infty}\lc\spec \restr{0.925}{T}{V}\times \restr{0.925}{S}{V},dE_{\restr{0.925}{T}{V},\restr{0.925}{S}{V}}\rc\arrow[rr,"\Gamma_{\restr{0.925}{T}{V},\restr{0.925}{S}{V}}"] & & W^{*}\lc\restr{0.925}{T}{V},\restr{0.925}{S}{V}\rc{} \\
\end{tikzcd}
\end{equation}
\vspace{-0.15cm}

\noindent of normal unital $^{*}$-isomorphisms. We further have commutative diagram

\vspace{-0.075cm}
\begin{equation}\label{EQ.THM.JFC_Compression_7}
\begin{tikzcd}
L^{\infty}\lc\spec T\times S,dE_{T,S}\rc\arrow[rr,"\Gamma_{T,S}"]\arrow[dddddd,start anchor={[xshift=-1.5cm]},end anchor={[xshift=-1.5cm]},bend right=40,swap,"\res"] & & W^{*}\lc{}T,S\rc\arrow[dddddd,"\comV"] \\
& & \\
L^{\infty}\lc\specM x\times y,dE_{x,y,M}\rc\arrow[r,"\Gamma_{x,y,M}"]\arrow[dd,"\res"]\arrow[uu,swap,"\phantom{\id}",phantom] & W_{M}^{*}(x,y)\arrow[ruu,swap,"\phantom{\phi\otimes\bpsi}",phantom]\arrow[dd,"\comunitunit"] & \\
& & \\
L^{\infty}\lc\specN x\times y,dE_{x,y,N}\rc\arrow[r,"\Gamma_{x,y,N}"]\arrow[dd,"\phantom{\id}",phantom] & W_{N}^{*}(x,y)\arrow[rdd,"\phantom{\phi\otimes_{V}\bpsi}",phantom] & \\
& & \\
L^{\infty}\lc\spec \restr{0.925}{T}{V}\times \restr{0.925}{S}{V},dE_{\restr{0.925}{T}{V},\restr{0.925}{S}{V}}\rc\arrow[rr,"\Gamma_{\restr{0.925}{T}{V},\restr{0.925}{S}{V}}"] & & W^{*}\lc\restr{0.925}{T}{V},\restr{0.925}{S}{V}\rc{} \\
\end{tikzcd}
\end{equation}
\vspace{-0.15cm}

\noindent of normal unital $^{*}$-homomorphisms. Indeed, note Diagram \ref{EQ.LEM.Compression_Preservation_II_2} as per Lemma \ref{LEM.Compression_Preservation_II} is the outer diagram in Diagram \ref{EQ.THM.JFC_Compression_7}, whereas the left diagram in Diagram \ref{EQ.COR.Wstar_Compression_Preservation_III_1} as per Corollary \ref{COR.Wstar_Compression_Preservation_III} is the inner one.\par

%NEWPAGE
%NEWPAGE
%NEWPAGE

\pagebreak

%NEWPAGE
%NEWPAGE
%NEWPAGE

We combine Diagram \ref{EQ.THM.JFC_Compression_6} and Diagram \ref{EQ.THM.JFC_Compression_7} to Diagram \ref{EQ.THM.JFC_Compression_1}. Commutativity of the latter therefore implies both $2)$ and $3)$ follow if the diagram

\begin{equation}\label{EQ.THM.JFC_Compression_8}
\begin{tikzcd}
\phantom{L^{\infty}\lc\spec T\times S,dE_{T,S}\rc{}}\arrow[rr,"\phantom{\Gamma_{T,S}}",phantom]\arrow[dddddd,start anchor={[xshift=-1.5cm]},end anchor={[xshift=-1.5cm]},bend right=40,swap,"\phantom{\res}",phantom] & & W^{*}\lc{}T,S\rc\arrow[dddddd,"\comV"] \\
& & \\
\phantom{L^{\infty}\lc\specM x\times y,dE_{x,y,M}\rc{}}\arrow[r,"\phantom{\Gamma_{x,y,M}}",phantom]\arrow[dd,"\phantom{\res}",phantom]\arrow[uu,swap,"\phantom{\id}",phantom] & W_{M}^{*}(x,y)\arrow[ruu,swap,"\phi\otimes\bpsi"]\arrow[dd,"\comunitunit"] & \\
& & \\
\phantom{L^{\infty}\lc\specN x\times y,dE_{x,y,N}\rc{}}\arrow[r,"\phantom{\Gamma_{x,y,N}}",phantom]\arrow[dd,"\phantom{\id}",phantom] & W_{N}^{*}(x,y)\arrow[rdd,"\phi\otimes_{V}\bpsi"] & \\
& & \\
\phantom{L^{\infty}\lc\spec \restr{0.925}{T}{V}\times \restr{0.925}{S}{V},dE_{\restr{0.925}{T}{V},\restr{0.925}{S}{V}}\rc{}}\arrow[rr,"\phantom{\Gamma_{\restr{0.925}{T}{V},\restr{0.925}{S}{V}}}",phantom] & & W^{*}\lc\restr{0.925}{T}{V},\restr{0.925}{S}{V}\rc{} \\
\end{tikzcd}
\end{equation}

\noindent of normal unital $^{*}$-homomorphisms commutes. Normality reduces to commutativity on elementary tensors. Note $\comunitunit=\comunit\otimes\comunit$. Equation \ref{EQ.LEM.JFC_Compression_II_2} for $\phi_{L,N}$ and $\bpsi_{R,N}$ implies commutativity on elementary tensors.
\end{proof}

\begin{cor}\label{COR.JFC_Compression_I}
Assume the setting of Theorem \ref{THM.JFC_Compression}. For all real $g\in\SII\lc{}E_{x,y,N}\rc$ s.t~

\begin{itemize}
\item[1)] $(t,s)\mapsto g_{\varepsilon}(t,s):=g\lc{}t+\varepsilon,\s+\varepsilon\rc$ lies in $C_{b}\lc\spec T_{0}\times S_{0}\rc$ for all $\varepsilon>0$,

\item[2)] $g\lc\restr{0.925}{T}{V},\restr{0.925}{S}{V}\rc{}=\sr$-$\lim_{\varepsilon\downarrow 0}g_{\varepsilon}\lc\restr{0.925}{T}{V},\restr{0.925}{S}{V}\rc$ on $V$,
\end{itemize}

\noindent we have $g\in\SII\lc{}E_{\restr{0.925}{T}{V},\restr{0.925}{S}{V}}\rc$ with $g\lc\restr{0.925}{T}{V},\restr{0.925}{S}{V}\rc{}=\sr$-$\lim_{\varepsilon\downarrow 0}\hspace{0.025cm}\lc\phi\otimes_{V}\bpsi\rc\lc{}g_{\varepsilon}(x,y)\rc$ on $V$.
\end{cor}
\begin{proof}
Apply Lemma \ref{LEM.JFC_Compression_II} and Theorem \ref{THM.JFC_Compression} to reduce to Corollary \ref{COR.JFC_Preservation}.
\end{proof}

\begin{dfn}\label{DFN.JFC_Compression_IV}
Assume the setting of Theorem \ref{THM.JFC_Compression}.

\begin{itemize}
\item[1)] We call $\Gamma_{x,y,N}^{\phi,\bpsi}:=\lc\phi\otimes_{V}\bpsi\rc\circ\Gamma_{x,y,N}$ the bounded measurable joint functional calculus of $x\otimes y$ in $N\otimes N^{\op}$ under $\phi\otimes_{V}\bpsi$.

\item[2)] Let $\mathcal{S}_{V}\lc{}E_{x,y,N}\rc$ be the set of all real $g\in\SII\lc{}E_{x,y,N}\rc$ s.t.~$1)$ and $2)$ in Corollary \ref{COR.JFC_Compression_I} are satisfied. For all $g\in\mathcal{S}_{V}\lc{}E_{x,y,N}\rc$, set

\begin{align}\label{EQ.DFN.JFC_Compression_IV_1}
\Gamma_{x,y,N}^{\phi,\bpsi}(g):=g\lc\restr{0.925}{T}{V},\restr{0.925}{S}{V}\rc{}.
\end{align}

\begin{reapply}
\end{reapply}

\item[3)] We call $\Gamma_{x,y,N}^{\phi,\bpsi}:\mathcal{S}_{V}\lc{}E_{x,y,N}\rc\longrightarrow\UBII\lc{}V\rc_{h}$ the joint functional calculus of $x\otimes y$ in $N\otimes N^{\op}$ under $\phi\otimes_{V}\bpsi$.
\end{itemize}
\end{dfn}

%NEWPAGE
%NEWPAGE
%NEWPAGE

\pagebreak

%NEWPAGE
%NEWPAGE
%NEWPAGE

\begin{cor}\label{COR.JFC_Compression_II}
Let $N_{0}\subset N_{1}\subset (M,\tau)$ and $V_{0}\subset V_{1}\subset H$ be Hilbert subspaces. Let $(\phi,\bpsi)$ be an $\lc{}N_{0},V_{0}\rc$-~and $\lc{}N_{1},V_{1}\rc$-compressible pair. If $x,y\in L^{0}\lc{}N_{0},\tau\rc_{h}$ and $T,S\in\UBII_{V}(H)$ commute strongly s.t.~$\phi\lc\Gamma_{x,M}\lc{}R_{\pm i}\rc\rc{}=R_{\pm i}(T)$ and $\bpsi\lc\Gamma_{y,M}\lc{}R_{\pm i}\rc\rc{}=R_{\pm i}(S)$, then we have

\begin{itemize}
\item[1)] $\specNone x\times y\subset\specM x\times y$ and $\mathcal{N}\lc{}E_{x,y,M}\rc\subset\mathcal{N}\lc{}E_{x,y,N_{1}}\rc$,

\item[2)] $\specNnull x\times y\subset\specNone x\times y$ and $\mathcal{N}\lc{}E_{x,y,N_{1}}\rc\subset\mathcal{N}\lc{}E_{x,y,N_{0}}\rc$,

\item[3)] commutative diagram of normal unital surjective $^{*}$-homomorphisms

\begin{equation}\label{EQ.COR.JFC_Compression_II_1}
\begin{tikzcd}
L^{\infty}\lc\specM x\times y,dE_{x,y,M}\rc\arrow[rr, "\Gamma_{x,y}^{\phi,\bpsi}"]\arrow[dd,"\res"] & & \BII(H)\arrow[dd,"\comVone"] \\
& & \\
L^{\infty}\lc\specNone x\times y,dE_{x,y,N_{1}}\rc\arrow[rr, "\Gamma_{x,y,N_{1}}^{\phi,\bpsi}"]\arrow[dd,"\res"] & & \BII\lc{}V_{1}\rc\arrow[dd,"\comVnull"] \\
& & \\
L^{\infty}\lc\specNnull x\times y,dE_{x,y,N_{0}}\rc\arrow[rr, "\Gamma_{x,y,N_{0}}^{\phi,\bpsi}"] & & \BII\lc{}V_{0}\rc{}
\end{tikzcd}
\end{equation}

\begin{reapply}
\end{reapply}

\noindent with restriction maps given by $\specNone x\times y\subset\specM x\times y$, $\specNnull x\times y\subset\specNone x\times y$.
\end{itemize}
\end{cor}
\begin{proof}
Get $1)$ and $2)$ by Corollary \ref{COR.Wstar_Compression_Preservation_III}. Lemma \ref{LEM.JFC_Compression_II} shows $\phi\lc\Gamma_{x,N_{1}}\lc{}R_{\pm i}\rc\rc{}=R_{\pm i}\lc\restr{0.925}{T}{V_{1}}\rc$ and $\bpsi\lc\Gamma_{y,N_{1}}\lc{}R_{\pm i}\rc\rc{}=R_{\pm i}\lc\restr{0.925}{S}{V_{1}}\rc$. Theorem \ref{THM.JFC_Compression} applies using $N_{1}\subset (M,\tau)$ and $V_{1}$, as well as $N_{0}\subset \lc{}N_{1},\tau\rc$ and $V_{0}$. We use $\phi$ and $\bpsi$ in both cases. Using $3)$ in Corollary \ref{COR.Wstar_Compression_Preservation_III} and $3)$ in Theorem \ref{THM.JFC_Compression} applied twice accordingly, we directly verify Diagram \ref{EQ.COR.JFC_Compression_II_1}.
\end{proof}

\begin{cor}\label{COR.JFC_Compression_III}
Assume the setting of Theorem \ref{THM.JFC_Compression}. If $x,y\in L^{0}(N,\tau)_{+}$, $\alpha,\beta\geq 0$ and $g\in C_{b}\lc{}[0,\infty)\times [0,\infty)\rc$, then

\begin{align}\label{EQ.COR.JFC_Compression_III_1}
\comunitunit\lc\Gamma_{x+\alpha 1_{N}^{\perp},y+\beta 1_{N}^{\perp},M}(g)\rc{}=\Gamma_{x,y,N}(g).
\end{align}
\end{cor}
\begin{proof}
Let $x,y\in L^{0}(N,\tau)_{+}$, $\alpha,\beta\geq 0$ and $g\in C_{b}\lc{}[0,\infty)\times [0,\infty)\rc$. Proposition \ref{PRP.Compression_Abstract_Bd} shows abstract compression maps are normal. By normality and Lemma \ref{LEM.Wstar_CLRA_JFC_SR}, we assume $\lset{}0\rset\in\mathcal{N}\lc{}E_{x,N}\rc\cap\mathcal{N}\lc{}E_{y,N}\rc$ without loss of generality.\par
Set $Z_{x}:=\specM x\setminus\lset{}0\rset$ and $Z_{y}:=\specM y\setminus\lset{}0\rset$. Note $1)$ in Corollary \ref{COR.Wstar_Compression_Preservation_I} shows

\begin{align}\label{EQ.COR.JFC_Compression_III_2}
\Gamma_{x,N}\big(\chi_{Z_{x}}\big)=1_{N},\ \Gamma_{y,N}\big(\chi_{Z_{y}}\big)=1_{N}.
\end{align}

\noindent Using Corollary \ref{COR.Wstar_Compression_Preservation_II}, Equation \ref{EQ.COR.JFC_Compression_III_2} implies

\begin{align}\label{EQ.COR.JFC_Compression_III_3}
\Gamma_{x,M}\big(\chi_{Z_{x}}\big)=\Gamma_{x,N}\big(\chi_{Z_{x}}\big)=1_{N}=\Gamma_{y,N}\big(\chi_{Z_{y}}\big)=\Gamma_{y,M}\big(\chi_{Z_{y}}\big).
\end{align}

Equation \ref{EQ.COR.JFC_Compression_III_3} yields $1_{N}^{\perp}\in W_{M}^{*}(x)\cap W_{M}^{*}(y)$ and

\begin{align}\label{EQ.COR.JFC_Compression_III_4}
\Gamma_{x,M}(\delta_{0})=1_{N}^{\perp}=\Gamma_{y,M}(\delta_{0}).
\end{align}

\noindent For all $t,s\in\mathbb{R}$, let $g^{\alpha,\beta}(t,s):=g\lc{}t+\alpha\delta_{0}(t),s+\beta\delta_{0}(s)\rc$. We obtain $g^{\alpha,\beta}\in C_{b}\lc\mathbb{R}\times\mathbb{R}\rc$ and

\begin{align}\label{EQ.COR.JFC_Compression_III_5}
\Gamma_{x,y,N}\lc{}g^{\alpha,\beta}\rc{}=\Gamma_{x,y,N}(g).    
\end{align}

\noindent Using Theorem \ref{THM.JFC_Compression}, we calculate

\begin{align}\label{EQ.COR.JFC_Compression_III_6}
\comunitunit\lc\Gamma_{x+\alpha 1_{N}^{\perp},y+\beta 1_{N}^{\perp},M}(g)\rc{}=\comunitunit\lc\Gamma_{x,y,M}\lc{}g^{\alpha,\beta}\rc\rc{}= \Gamma_{x,y,N}\lc{}g^{\alpha,\beta}\rc{}.
\end{align}

\noindent Equation \ref{EQ.COR.JFC_Compression_III_5} and Equation \ref{EQ.COR.JFC_Compression_III_6} show Equation \ref{EQ.COR.JFC_Compression_III_1}.
\end{proof}

%%%%%%%%%%%%%%%%
%%%% CHAPTER %%%
%%%%%%%%%%%%%%%%

\chapter{Clifford Calculations}\label{APP.C}

We give calculations in Clifford algebras for our discussion. In Section \ref{SEC.C_Identities}, Lemma \ref{LEM.Wstar_Derivation_QG_Intertwining_Clifford_Identities} gives three identities for twisted dynamic quantum gradients induced by intertwining sets of Clifford generators. In Section \ref{SEC.C_Implementation_Fock}, Lemma \ref{LEM.QOT_Second_Quantisation_Implementation_Fock} gives, in detail, implementation of Bogoliubov automorphisms as per Equation \ref{EQ.BSP.QOT_Type_II_5} on anti-symmetric Fock space.

%%%%%%%%%%%%%%%%
%%% SECTION %%%%
%%%%%%%%%%%%%%%%

\section{Identities for intertwining sets of Clifford generators}\label{SEC.C_Identities}

We use the three identities in Lemma \ref{LEM.Wstar_Derivation_QG_Intertwining_Clifford_Identities} to prove Lemma \ref{LEM.Wstar_Derivation_QG_Intertwining_Clifford}.

\begin{lem}\label{LEM.Wstar_Derivation_QG_Intertwining_Clifford_Identities}
Assume the setting of Lemma \ref{LEM.Wstar_Derivation_QG_Intertwining_Clifford}.

\begin{itemize}
\item[1)] For all $n,k\in\lset{}1,\ldots,m\rset$, we have

\begin{align}\label{EQ.LEM.Wstar_Derivation_QG_Intertwining_Clifford_Identities_1}
\partial_{n}\partial_{k}=-\partial_{k}\partial_{n}.
\end{align}

\begin{reapply}
\end{reapply}

\item[2)] For all $n,k\in\lset{}1,\ldots,m\rset$ s.t.~$n\neq k$, we have

\begin{align}\label{EQ.LEM.Wstar_Derivation_QG_Intertwining_Clifford_Identities_2}
\partial_{n}\mathrlap{\phantom{\partial}^{*}}\partial_{k}=-\mathrlap{\phantom{\partial}^{*}}\partial_{k}\partial_{n}.
\end{align}

\begin{reapply}
\end{reapply}

\item[3)] For all $n\in\lset{}1,\ldots,m\rset$, we have

\begin{align}\label{EQ.LEM.Wstar_Derivation_QG_Intertwining_Clifford_Identities_3}
\partial_{n}\Delta_{n}=4C\partial_{n}.
\end{align}

\begin{reapply}
\end{reapply}

\end{itemize}
\end{lem}
\begin{proof}
For all $n\in\lset{}1,\ldots,m\rset$, Example \ref{BSP.Wstar_Derivation_QG_Intertwining} shows $L_{d_{n}}\in\BII\lc{}L^{2}(A,\tau)\rc_{h}$ is $\phi$-intertwining with $\sgn\lc{}L_{d_{n}}\rc{}=-1$. We pull back along $L^{-1}$. Note $\phi$ is involutive by hypothesis. Let $n,k\in\lset{}1,\ldots,m\rset$ and $x\in A_{0}$. Using $\lset{}d_{n}\rset_{n=1}^{m}\in L^{\infty}(A,\tau)_{h}$, Corollary \ref{COR.Wstar_Derivation_QG_Intertwining_I} shows 

\begin{align}\label{EQ.LEM.Wstar_Derivation_QG_Intertwining_Clifford_Identities_4}
\partial_{n}x=d_{n}x-\phi(x)d_{n},\ \mathrlap{\phantom{\partial}^{*}}\partial_{n}x=d_{n}x+\phi(x)d_{n}
\end{align}

\noindent and

\begin{align}\label{EQ.LEM.Wstar_Derivation_QG_Intertwining_Clifford_Identities_5}
\Delta_{n}=d_{n}^{2}x+xd_{n}^{2}-2d_{n}\phi(x)d_{n}.
\end{align}

%NEWPAGE
%NEWPAGE
%NEWPAGE

\pagebreak

%NEWPAGE
%NEWPAGE
%NEWPAGE

Note $1.2)$ in Definition \ref{DFN.Wstar_Derivation_QG_Intertwining_Clifford} gives the Clifford relation $d_{n}d_{k}+d_{k}d_{n}=2C\delta_{nk}1_{A}$. We use the first identity in Equation \ref{EQ.LEM.Wstar_Derivation_QG_Intertwining_Clifford_Identities_4} to calculate

\begin{align}\label{EQ.LEM.Wstar_Derivation_QG_Intertwining_Clifford_Identities_6}
\partial_{n}\partial_{k}x=d_{n}d_{k}x-d_{n}\phi(x)d_{k}+d_{k}\phi(x)d_{n}-xd_{k}d_{n}.
\end{align}

\noindent Interchanging $n$ and $k$ in Equation \ref{EQ.LEM.Wstar_Derivation_QG_Intertwining_Clifford_Identities_6} yields $\partial_{k}\partial_{n}x$. We apply the Clifford relation to the first and fourth summand on the right-hand side of Equation \ref{EQ.LEM.Wstar_Derivation_QG_Intertwining_Clifford_Identities_6}. We obtain $-\partial_{k}\partial_{n}x$. Equation \ref{EQ.LEM.Wstar_Derivation_QG_Intertwining_Clifford_Identities_6} therefore implies Equation \ref{EQ.LEM.Wstar_Derivation_QG_Intertwining_Clifford_Identities_1} by Clifford relations. Get $1)$. We likewise use both identities in Equation \ref{EQ.LEM.Wstar_Derivation_QG_Intertwining_Clifford_Identities_4} to calculate

\begin{align}\label{EQ.LEM.Wstar_Derivation_QG_Intertwining_Clifford_Identities_7}
\partial_{n}\mathrlap{\phantom{\partial}^{*}}\partial_{k}x=d_{n}d_{k}x+d_{n}\phi(x)d_{k}+d_{k}\phi(x)d_{n}+xd_{k}d_{n}
\end{align}

\noindent and

\begin{align}\label{EQ.LEM.Wstar_Derivation_QG_Intertwining_Clifford_Identities_8}
\mathrlap{\phantom{\partial}^{*}}\partial_{k}\partial_{n}x=d_{k}d_{n}x-d_{k}\phi(x)d_{n}-d_{n}\phi(x)d_{k}+xd_{n}d_{k}.
\end{align}

\noindent If $n\neq k$, then $d_{n}d_{k}=-d_{k}d_{n}$. Using the latter, we see Equation \ref{EQ.LEM.Wstar_Derivation_QG_Intertwining_Clifford_Identities_7} and Equation \ref{EQ.LEM.Wstar_Derivation_QG_Intertwining_Clifford_Identities_8} imply Equation \ref{EQ.LEM.Wstar_Derivation_QG_Intertwining_Clifford_Identities_2} by Clifford relations. Get $2)$.\par
If $n=k$, then $d_{n}^{2}=C1_{A}$. Equation \ref{EQ.LEM.Wstar_Derivation_QG_Intertwining_Clifford_Identities_5} is $\Delta_{n}x=2Cx-2d_{n}\phi(x)d_{n}$ in this case. We use the latter and the first identity in Equation \ref{EQ.LEM.Wstar_Derivation_QG_Intertwining_Clifford_Identities_4} to calculate

\begin{align}\label{EQ.LEM.Wstar_Derivation_QG_Intertwining_Clifford_Identities_9}
\partial_{n}\Delta_{n}x=2Cd_{n}x-2C\phi(x)d_{n}-2C\phi(x)d_{n}+2Cd_{n}x=4C\partial_{n}x.
\end{align}

\noindent Equation \ref{EQ.LEM.Wstar_Derivation_QG_Intertwining_Clifford_Identities_9} shows Equation \ref{EQ.LEM.Wstar_Derivation_QG_Intertwining_Clifford_Identities_3}. Get $3)$.
\end{proof}

%%%%%%%%%%%%%%%%
%%% SECTION %%%%
%%%%%%%%%%%%%%%%

\section{Implementation on anti-symmetric Fock space}\label{SEC.C_Implementation_Fock}

We use Lemma \ref{LEM.QOT_Second_Quantisation_Implementation_Fock} to derive explicit formula for Equation \ref{EQ.BSP.QOT_Type_II_5} in Example \ref{BSP.QOT_Second_Quantisation}.

\begin{lem}\label{LEM.QOT_Second_Quantisation_Implementation_Fock}
Assume the setting of Example \ref{BSP.QOT_Second_Quantisation}. For all $t\in\mathbb{R}$ and $x\in\mathcal{A}(H)$, get

\begin{align}\label{EQ.LEM.QOT_Second_Quantisation_Implementation_Fock_1}
\rho_{J}\lc\Cliff\lc{}e^{itD}\rc{}(x)\rc{}=\bigwedge e^{it\absv{1.15}{D}}\rho_{J}(x)\bigwedge e^{-it\absv{1.15}{D}}\in\rho_{J}\lc\AII(H)\rc{}.
\end{align} 
\end{lem}
\begin{proof}
We solve the associated implementation problem at each time, i.e.~we show each $\Cliff\lc{}e^{itD}\rc$ to be the Bogoliubov automorphism implemented on $\FII(H[J])$ using $\bigwedge e^{it\absv{1.15}{D}}$ as its unique unitary operator \lc{}cf.~Section 3.2 and Section 3.3 in \cite{BK.Ply_Rob.1994.Clifford_Algebras}\rc{}. For this, we construct suitable unitary equivalences of faithful unital $^{*}$-representations of $\AII(H)$ over $\FII(H[J])$ using cyclic vectors.\par

%NEWPAGE
%NEWPAGE
%NEWPAGE

\pagebreak

%NEWPAGE
%NEWPAGE
%NEWPAGE

For all $t\in\mathbb{R}$, set $\rho_{J}^{t}:=\rho_{J}\circ\Cliff\lc{}e^{itD}\rc$ and note a cyclic unit vector $\Omega\in H$ of $\rho_{J}^{t}$ satisfies the $J$-vacuum condition if 

\begin{align}\label{EQ.LEM.QOT_Second_Quantisation_Implementation_Fock_2}
\rho_{J}^{t}\big(u+iJ(u)\big)(\Omega)=0
\end{align}

\noindent for all $u\in H$. Let $\Omega_{J}$ be the Fock state of $\mathcal{F}(H[J])$. It is the unique cyclic unit vector of $\rho_{J}^{0}:=\rho_{J}$ satisfying the $J$-vacuum condition. For all $t\in\mathbb{R}$, set $\Omega_{J}^{t}:=\bigwedge e^{it\absv{1.15}{D}}(\Omega_{J})$.\par
Let $t\in\mathbb{R}$. Theorem 3.2.5 in \cite{BK.Ply_Rob.1994.Clifford_Algebras} states $e^{it\absv{1.15}{D}}\in\UII\lc\BII(H)\rc$ is implemented on $\mathcal{F}(H[J])$ by $\bigwedge e^{it\absv{1.15}{D}}\in\UII\lc\BII\lc\mathcal{F}(H[J])\rc\rc$. Using $H\cong b(H)\subset\AII(H)$ as set of generators and by norm continuity, Equation \ref{EQ.LEM.QOT_Second_Quantisation_Implementation_Fock_1} reduces to $\rho_{J}^{t}=\rho_{J}\circ e^{itD}$ and

\begin{align}\label{EQ.LEM.QOT_Second_Quantisation_Implementation_Fock_3}
\rho_{J}^{t}(u)=\bigwedge e^{it\absv{1.15}{D}}\rho_{J}(u)\bigwedge e^{-it\absv{1.15}{D}}\in\rho_{J}\lc\AII(H)\rc{}
\end{align}

\noindent for all $u\in H$. Note $\lb{}e^{itD},J\rb{}=0$ ensures Theorem 3.3.3 in \cite{BK.Ply_Rob.1994.Clifford_Algebras} yields implementation as per Equation \ref{EQ.LEM.QOT_Second_Quantisation_Implementation_Fock_3} for unique but unspecified unitary operators. We require $\bigwedge e^{it\absv{1.15}{D}}$ to be the unique unitary operator used. Since $\Omega$ is a cyclic unit vector of $\rho_{J}$, we know $\Omega_{J}^{t}$ is a cyclic unit vector of $\rho_{J}^{t}$ s.t.~$\bigwedge e^{it\absv{1.15}{D}}(\Omega_{J})=\Omega_{J}^{t}$ by construction. If $\Omega_{J}^{t}$ satisfies the $J$-vacuum condition, then Theorem 2.4.7 in \cite{BK.Ply_Rob.1994.Clifford_Algebras} shows Equation \ref{EQ.LEM.QOT_Second_Quantisation_Implementation_Fock_3}.\par
We show the $J$-vacuum condition for $\Omega_{J}^{t}$. Since $\lb{}e^{itD},J\rb{}=0$, we calculate

\begin{align}\label{EQ.LEM.QOT_Second_Quantisation_Implementation_Fock_4}
\rho_{J}\lc{}e^{itD}u\rc(\Omega_{J})=-i\rho_{J}\lc{}J\lc{}e^{itD}u\rc\rc(\Omega_{J})=\rho_{J}\lc{}e^{itD}\lc{}P_{+}-P_{-}\rc{}(u)\rc(\Omega_{J})
\end{align}

\noindent for all $u\in H$. We have $e^{itD}\lc{}P_{+}-P_{-}\rc{}=e^{it\absv{1.15}{D}}$ on non-negative, and $e^{itD}\lc{}P_{+}-P_{-}\rc{}=e^{it\absv{1.15}{D}}-2I$ on negative eigenvalues. For all $v\in H$, note $P_{-}(v)=v$ implies $J(v)=-iv$ and therefore $2v=v+iJ(v)$. Using the $J$-vacuum condition for $\Omega_{J}$, Equation \ref{EQ.LEM.QOT_Second_Quantisation_Implementation_Fock_4} implies

\begin{align}\label{EQ.LEM.QOT_Second_Quantisation_Implementation_Fock_5}
\rho_{J}\lc{}e^{itD}u\rc(\Omega_{J})=\rho_{J}\big(e^{it\absv{1.15}{D}}u\big)(\Omega_{J})   
\end{align}

\noindent for all $u\in H$. Using implementation of $e^{it\absv{1.15}{D}}$ on $\FII(H[J])$ by $\bigwedge e^{it\absv{1.15}{D}}$, Equation \ref{EQ.LEM.QOT_Second_Quantisation_Implementation_Fock_5} lets us calculate

\begin{align}\label{EQ.LEM.QOT_Second_Quantisation_Implementation_Fock_6}
\rho_{J}\lc{}e^{itD}u\rc(\Omega_{J})=\rho_{J}\big(e^{it\absv{1.15}{D}}u\big)(\Omega_{J})=\lc\bigwedge e^{it\absv{1.15}{D}}\rho_{J}(u)\bigwedge e^{-it\absv{1.15}{D}}\rc{}(\Omega_{J})
\end{align}

\noindent for all $u\in H$. Equation \ref{EQ.LEM.QOT_Second_Quantisation_Implementation_Fock_6} shows

\begin{align}\label{EQ.LEM.QOT_Second_Quantisation_Implementation_Fock_7}
\bigwedge e^{it\absv{1.15}{D}}\lc\rho_{J}(u)(\Omega_{J})\rc{}=\bigwedge e^{it\absv{1.15}{D}}\lc\rho_{J}\lc{}e^{-itD}e^{itD}u\rc(\Omega_{J})\rc{}=\rho_{J}^{t}(u)(\Omega_{J}^{t})
\end{align}

\noindent for all $u\in H$. Using $u:=v+iJ(v)$ for all $v\in H$, the $J$-vacuum condition for $\Omega_{J}$ and Equation \ref{EQ.LEM.QOT_Second_Quantisation_Implementation_Fock_7} imply $\Omega_{J}^{t}$ satisfies the $J$-vacuum condition. Thus Theorem 2.4.7 in \cite{BK.Ply_Rob.1994.Clifford_Algebras} shows Equation \ref{EQ.LEM.QOT_Second_Quantisation_Implementation_Fock_3}, hence Equation \ref{EQ.LEM.QOT_Second_Quantisation_Implementation_Fock_1}.
\end{proof}
\end{appendices}

%%%%%%%%%%%%%%%%%%%%%
%%% BIBLIOGRAPHY %%%%
%%%%%%%%%%%%%%%%%%%%%

\markboth{BIBLIOGRAPHY}{}
\bibliography{references}

%%%%%%%%%%%%%%%%%%%%%
%%% END DOCUMENT %%%%
%%%%%%%%%%%%%%%%%%%%%

\end{document}